\newtheoremstyle{citing}
  {3pt}
  {3pt}
  {\itshape}
  {}
  {\bfseries}
  {.}
  {.5em}
  {\thmnote{#3}}
\theoremstyle{citing}
\newtheorem*{citing}{}
\theoremstyle{definition}
\theoremstyle{plain}
\newtheorem{theorem}{Theorem}[section]
\newtheorem{lemma}[theorem]{Lemma}
\newtheorem{corollary}[theorem]{Corollary}
\theoremstyle{remark}
\newtheorem{remark}[theorem]{Remark}
\newtheorem{example}[theorem]{Example}
\theoremstyle{definition}
\newtheorem*{intro_definition}{Definition}
\newtheorem{definition}[theorem]{Definition}
\newtheorem{miniremark}[theorem]{}
\newcounter{counter1}
\newcounter{counter2}
\newcommand{\Var}{\mathbf{V}}     
\newcommand{\RVar}{\mathbf{RV}}   
\newcommand{\IVar}{\mathbf{IV}}   
\newcommand{\var}{\mathbf{v}}     
\newcommand{\Lp}[1]{\mathbf{L}_{#1}}
\newcommand{\Lploc}[1]{\mathbf{L}_{#1}^{\mathrm{loc}}}
\newcommand{\Sob}[3]{\mathbf{W}_{#1}^{{#2},{#3}}}
\newcommand{\trunc}{\mathbf{T}}
\newcommand{\ccspace}[1]{\mathscr{K}(#1)}
\newcommand{\nat}{\mathscr{P}}
\newcommand{\integers}{\integers}
\newcommand{\rel}{\mathbf{R}}
\newcommand{\complex}{\mathbf{C}}
\newcommand{\grass}[2]{\mathbf{G}(#1,#2)}
\newcommand{\orthproj}[2]{\mathbf{O}^\ast({#1},{#2})}
\newcommand{\oball}[2]{\mathbf{U}(#1,#2)}
\newcommand{\cball}[2]{\mathbf{B}(#1,#2)}
\newcommand{\density}{\boldsymbol{\Theta}}
\newcommand{\unitmeasure}[1]{\boldsymbol{\alpha}(#1)}
\newcommand{\besicovitch}[1]{\boldsymbol{\beta}(#1)}
\newcommand{\isoperimetric}[1]{\boldsymbol{\gamma}(#1)}
\newcommand{\id}[1]{\mathbf{1}_{#1}}
\newcommand{\weakD}{\mathbf{D}}
\newcommand{\derivative}[2]{{#1}\,\weakD{#2}}
\newcommand{\boundary}[2]{{#1}\,\partial{#2}}
\newcommand{\ud}{\ensuremath{\,\mathrm{d}}}
\DeclareMathOperator{\with}{:}
\newcommand{\classification}[3]{{#1} \cap \{ {#2} \with {#3} \}}
\newcommand{\eqclassification}[3]{{(#1)} \cap \{ {#2} \with {#3} \}}
\newcommand{\project}[1]{#1_\natural}
\newcommand{\eqproject}[1]{(#1)_\natural}
\newcommand{\perpproject}[1]{#1_\natural^\perp}
\newcommand{\lIm}{[}
\newcommand{\rIm}{]}
\newcommand{\vdim}{{m}}
\newcommand{\codim}{{n-m}}
\newcommand{\adimI}{{m+1}}
\newcommand{\adim}{{n}}
\newcommand{\norm}[3]{\boldsymbol{|} #1 \boldsymbol{|}_{#2;#3}}
\newcommand{\nuqnorm}[3]{\boldsymbol{\nu}_{#1}(#2,#3)}
\newcommand{\nuqastnorm}[3]{\boldsymbol{\nu}_{#1}^\ast(#2,#3)}
\newcommand{\intertextenum}[1]{\setcounter{counter2}{\value{enumi}}\end{enumerate}#1\begin{enumerate}\setcounter{enumi}{\value{counter2}}}
\newcommand{\printRoman}[1]{\setcounter{counter1}{#1}\Roman{counter1}}
\DeclareMathOperator{\without}{\sim}
\newcommand{\restrict}{\mathop{\llcorner}}
\newcommand{\class}[1]{#1}
\newcommand{\tint}[2]{{\textstyle\int_{#1}^{#2}}}
\newcommand{\tfint}[2]{{\textstyle\fint_{#1}^{#2}}}
\newcommand{\tsum}[2]{{\textstyle\sum_{#1}^{#2}}}
\DeclareMathOperator{\card}{card}
\newcommand{\Clos}[1]{\mathop{\mathrm{Clos}}#1}
\DeclareMathOperator{\Bdry}{Bdry}
\newcommand{\measureball}[2]{{#1}\,{#2}}
\DeclareMathOperator{\Nor}{Nor}     
\DeclareMathOperator{\Tan}{Tan}     
\DeclareMathOperator{\spt}{spt}     
\DeclareMathOperator{\im}{im}       
\DeclareMathOperator{\Int}{Int}     
\DeclareMathOperator{\diam}{diam}   
\DeclareMathOperator{\Lip}{Lip}     
\DeclareMathOperator{\grad}{grad}   
\DeclareMathOperator{\discr}{discr} 
\DeclareMathOperator{\trace}{trace} 
\DeclareMathOperator{\dmn}{dmn}     
\DeclareMathOperator{\dist}{dist}   
\DeclareMathOperator{\Hom}{Hom}     
\DeclareMathOperator{\Div}{div}     
\DeclareMathOperator{\sign}{sign}   
\DeclareMathOperator{\Span}{span}   
\DeclareMathOperator{\ap}{ap}       
\DeclareMathOperator*{\aplim}{\mathrm{ap}\, \lim}   
\newcommand{\Lpnorm}[3]{{#1}_{({#2})}({#3})}
\newcommand{\eqLpnorm}[3]{{(#1)}_{({#2})}({#3})}
\newcommand{\SWnorm}[3]{\mathbf{SW}_{#1} ( {#2}, {#3} )}
\newcommand{\SWloc}[1]{\mathbf{SW}_{#1}^{\mathrm{loc}}}
\newcommand{\SW}[1]{\mathbf{SW}_{#1}}
\newcommand{\VSobnorm}[3]{\mathbf{W}_{#1} ( {#2}, {#3} )}
\newcommand{\VSobloc}[1]{\mathbf{W}_{#1}^{\mathrm{loc}}}
\newcommand{\VSob}[1]{\mathbf{W}_{#1}}
\newcommand{\verify}{\textbf{To verify! }}
\newcommand{\optional}[1]{}      
\theoremstyle{definition}
\newtheorem{step}{Step}	
\newenvironment{proofinsteps}{\begin{proof}\setcounter{step}{0}}{\end{proof}}
\begin{document}


\title{A sharp lower bound on the mean curvature integral with critical power
for integral varifolds}
\author{Ulrich Menne}
\date{August 20, 2014}
\maketitle


\part{Weakly differentiable functions} \label{part:weakly}
\begin{abstract}
	The present paper is intended to provide the basis for the study of
	weakly differentiable functions on a rectifiable varifold with locally
	bounded first variation. The concept proposed here is defined by means
	of integration by parts identities for certain compositions with
	smooth functions. In this class the idea of ``zero boundary values''
	is realised using the relative perimeter of level sets.  Results
	include a variety of Sobolev Poincar{\'e} type embeddings, embeddings
	into spaces of continuous and sometimes H{\"o}lder continuous
	functions, pointwise differentiability results both of approximate and
	integral type as well as a coarea formula.
	
	As applications the finiteness of the geodesic distance associated to
	varifolds with suitable integrability of the mean curvature and a
	characterisation of curvature varifolds are obtained.
\end{abstract}
\addcontentsline{toc}{section}{\numberline{}Introduction}
\section*{Introduction} \subsubsection*{Overview}
The main purpose of this paper is to present a concept of weakly
differentiable functions on nonsmooth ``surfaces'' in Euclidean space with
arbitrary dimension and codimension arising in variational problems involving
the area functional. The model used for such surfaces are rectifiable
varifolds whose first variation with respect to area is representable by
integration (that is, in the terminology of Simon \cite[39.2]{MR756417},
rectifiable varifolds with locally bounded first variation). This includes
area minimising rectifiable currents\footnote{See Allard
\cite[4.8\,(4)]{MR0307015}.}, in particular perimeter minimising ``Caccioppoli
sets'', or almost every time slice of Brakke's mean curvature
flow\footnote{See Brakke \cite[\S 4]{MR485012}.} just as well as surfaces
occuring in diffuse interface models\footnote{See for instance Hutchinson and
Tonegawa \cite{MR1803974} or R\"oger and Tonegawa \cite{MR2377408}.} or image
restoration models\footnote{See for example Ambrosio and Masnou
\cite{MR1959769}.}. The envisioned concept should be defined without reference
to an approximation by smooth functions and it should be as broad as possible
so as to still allow for substantial positive results.

In order to integrate well the first variation of the varifold into the
concept of weakly differentiable function, it appeared necessary to provide an
entirely new notion rather than to adapt one the many concepts of weakly
differentiable functions which have been invented for different purposes. For
instance, to study the support of the varifold as metric space with its
geodesic distance, stronger conditions on the first variation are needed, see
Section \ref{sec:geodesic_distance}.

\subsection*{Description of results} \subsubsection*{Setup and basic results}
To describe the results obtained, consider the following set of hypotheses;
the notation is explained in Section \ref{sec:notation}.
\begin{citing} [General hypothesis]
	Suppose $\vdim$ and $\adim$ are positive integers, $\vdim \leq \adim$,
	$U$ is an open subset of $\rel^\adim$, $V$ is an $\vdim$ dimensional
	rectifiable varifold in $U$ whose first variation $\delta V$ is
	representable by integration.
\end{citing}
The study of weakly differentiable functions is closely related to the study
of connectedness properties of the underlying space or varifold. Therefore it
is instructive to begin with the latter.
\begin{intro_definition}
	If $V$ is as in the general hypothesis, it is called
	\emph{indecomposable} if and only if there is no $\| V \| + \| \delta
	V \|$ measurable set $E$ such that $\| V \| ( E ) > 0$, $\| V \| ( U
	\without E ) > 0$ and $\delta ( V \restrict E \times
	\grass{\adim}{\vdim} ) = ( \delta V ) \restrict E$.
\end{intro_definition}
The basic theorem involving this notion is the following.
\begin{citing} [Decomposition theorem, see \ref{thm:decomposition}]
	If $V$ is as in the general hypothesis, then there exists a countable
	disjointed collection $F$ of Borel sets whose union is $U$ such that
	$V \restrict E \times \grass{\adim}{\vdim}$ is nonzero and
	indecomposable and $\delta ( V \restrict E \times
	\grass{\adim}{\vdim}) = ( \delta V ) \restrict E$ whenever $E \in F$.
\end{citing}
Employing the following definition, the Borel partition $F$ of $U$ is required
to consist of members whose distributional $V$ boundary vanishes and which
cannot be split nontrivially into smaller Borel sets with that property.
\begin{intro_definition}
	If $V$ is as in the general hypothesis and $E$ is $\| V \| + \| \delta
	V \|$ measurable, then the \emph{distributional $V$ boundary of $E$}
	is defined by
	\begin{gather*}
		\boundary{V}{E} = ( \delta V ) \restrict E - \delta ( V
		\restrict E \times \grass{\adim}{\vdim} ) \in \mathscr{D}' (
		U, \rel^\adim ).
	\end{gather*}
\end{intro_definition}
However, unlike the decomposition into connected components for topological
spaces, the preceding decomposition is nonunique in an essential way. In fact,
the varifold corresponding to the union of three lines in $\rel^2$ meeting at
the origin at equal angles may also be decomposed into two ``Y-shaped''
varifolds each consisting of three rays meeting at equal angles, see
\ref{remark:nonunique_decomposition}.

The seemingly most natural definition of weakly differentiable function would
be to require an integration by parts identity involving the first variation
of the varifold. However, the resulting class of real valued functions is
neither stable under truncation of its members nor does a coarea formula hold,
see \ref{remark:too_big_sobolev} and
\ref{remark:no_coarea_ineq_for_too_big_sobolev}. Therefore, instead one
requires an integration by parts identity for the composition of the function
in question with smooth functions whose derivative has compact support, see
\ref{def:v_weakly_diff}. The resulting class of functions of $\rel^l$ valued
functions is denoted by
\begin{gather*}
	\trunc (V,\rel^l).
\end{gather*}
Whenever it is defined the weak derivative $\derivative{V}{f} (a)$ is a linear
map of $\rel^\adim$ into $\rel^l$. In this process it seems natural not to
require local integrability of $\derivative{V}{f}$ but only
\begin{gather*}
	\tint{K \cap \{ z \with |f(z)| \leq s \}}{} | \derivative{V}{f} | \ud
	\| V \| < \infty
\end{gather*}
whenever $K$ is a compact subset of $U$ and $0 \leq s < \infty$; this is in
analogy with definition of the space ``$\mathscr{T}_{\mathrm{loc}}^{1,1} ( U
)$'' introduced by B{\'e}nilan, Boccardo, Gallou{\"e}t, Gariepy, Pierre and
V{\'a}zquez in \cite[p.~244]{MR1354907} for the case of Lebesgue measure, see
\ref{remark:comparison_trunc_spaces}. In both cases the letter ``T'' in the
name of the space stands for truncation.

Stability properties under composition (for example truncation) then follow
readily, see \ref{lemma:basic_v_weakly_diff} and \ref{lemma:comp_lip}. Also,
it is evident that members of $\trunc(V,\rel^l)$ may be defined on compenents
separately, see \ref{thm:tv_on_decompositions}. The space $\trunc
(V,\rel^l)$ is stable under addition of a locally Lipschitzian function but it
is not closed with respect to addition in general, see
\ref{thm:addition}\,\eqref{item:addition:add} and \ref{example:star}. A
similar statement holds for multiplication of functions, see
\ref{thm:addition}\,\eqref{item:addition:mult} and \ref{example:star}.
Adopting an axiomatic viewpoint, consider a class of functions which
satisfies the following three conditions:
\begin{enumerate}
	\item It admits members constant on components.
	\item It is closed under truncation.
	\item It is closed under addition.
\end{enumerate}
Then this class necessarily contains characteristic functions with
nonvanishing distributional boundary representable by integration, see
\ref{example:axioms}. Clearly, these characteristic functions should not
belong to a class of ``weakly differentiable'' functions but rather to a class
of functions of ``bounded variation''. The reason for this behaviour is the
afore-mentioned nonuniqueness of decompositions of varifolds.

Much of the further development of the theory rests on the following
observation.
\begin{citing} [Coarea formula, functional analytic form, see
\ref{remark:associated_distribution} and \ref{thm:tv_coarea}]
	Suppose $V$ satisfies the general hypothesis, $f \in \trunc(V)$, and
	$E(t) = \{ z \with f(z) > t \}$ for $t \in \rel$.

	Then there holds
	\begin{gather*}
		\tint{}{} \left < \psi (z,f(z)), \derivative{V}{f}(z) \right >
		\ud \| V \| z = \tint{}{} \boundary{V}{E(t)} ( \psi (\cdot, t
		) ) \ud \mathscr{L}^1 t, \\
		\tint{}{} g (z,f(z)) | \derivative{V}{f}(z) | \ud \| V \| z =
		\tint{}{} \tint{}{} g (z,t) \ud \| \boundary{V}{E(t)} \| z \ud
		\mathscr{L}^1 t.
	\end{gather*}
	whenever $\psi \in \mathscr{D} (U \times \rel, \rel^\adim )$ and
	$g : U \times \rel \to \rel$ is a continuous function with compact
	support.
\end{citing}
The conclusions in fact hold for a natural wider class of functions, see
\ref{lemma:push_on_product}.

An example is proved by the notion of ``zero boundary values'' on a relatively
open part $G$ of the boundary of $U$ for functions on $\trunc(V, \rel^l)$ is
defined, see \ref{def:trunc_g}. The definition is based on properties of the
distributional $V$ boundaries of the sets $E(t) = \{ z \with |f(z)| > t \}$
for $\mathscr{L}^1$ almost all $0 < t < \infty$. Essentially, one requires
that the first variation of the injection of the varifold $V \restrict E(t)
\times \grass{\adim}{\vdim}$ into $\rel^\adim \without B$ only consists of
parts induced from $(\delta V) \restrict E(t)$ and the distributional $V$
boundary of $E(t)$. The definition gives rise to the subspaces $\trunc_G ( V,
\rel^l )$ of $\trunc (V, \rel^l )$ with $\trunc_\varnothing ( V, \rel^l ) =
\trunc ( V, \rel^l )$, see \ref{remark:trunc}. The space satisfies useful
truncation and closure properties, see \ref{lemma:trunc_tg} and
\ref{lemma:closeness_tg}. Moreover, under a natural integrability hypothesis,
the multiplication of a member of $\trunc_G (V,\rel^l)$ by a Lipschitzian
function belongs to $\trunc_G (V,\rel^l)$, see \ref{thm:mult_tg}. Whereas the
usage of level sets in the definition of $\trunc_G (V,\rel)$ is taylored to
work nicely in the proofs of embedding results in Section
\ref{sec:embeddings}, the stability property under multiplication requires a
more delicate proof in turn.

\subsubsection*{Embedding results and structural results} To proceed to deeper
results on functions in $\trunc (V,\rel^l)$, the usage of the isoperimetric
inequality for varifolds seems inevitable. The latter works best under the
following additional hypothesis.
\begin{citing} [Density hypothesis]
	Suppose $V$ is as in the general hypothesis and satisfies
	\begin{gather*}
		\density^\vdim ( \| V \|, z ) \geq 1 \quad \text{for $\| V \|$
		almost all $z$}.
	\end{gather*}
\end{citing}
The key to prove effective versions of Sobolev Poincar{\'e} embedding results
is the following theorem.
\begin{citing} [Relative isoperimetric inquality, see \ref{corollary:rel_iso_ineq}]
	Suppose $V$ satisfies the general hypothesis and the density
	hypothesis, $E$ is $\|V \| + \| \delta V \|$ measurable, $1 \leq Q
	\leq M < \infty$, $\adim \leq M$, $\Gamma =
	\Gamma_{\ref{thm:rel_iso_ineq}} ( M )$, $0 < r < \infty$,
	\begin{gather*}
		\| V \| (E) \leq (Q-M^{-1}) \unitmeasure{\vdim} r^\vdim, \quad
		\| V \| ( \classification{E}{z}{ \density^\vdim ( \| V \|, z )
		< Q } ) \leq \Gamma^{-1} r^\vdim,
	\end{gather*}
	and $A = \{ z \with \oball{z}{r} \subset U \}$.

	Then
	\begin{gather*}
		\| V \| ( E \cap A)^{1-1/\vdim} \leq \Gamma \big ( \|
		\boundary{V}{E} \| (U) + \| \delta V \| ( E ) \big ),
	\end{gather*}
	where $0^0 = 0$.
\end{citing}
In the special case that $\boundary{V}{E} = 0$, $\| \delta V \| (E) = 0$ and
\begin{gather*}
	\density^\vdim ( \| V \|, z ) \geq Q \quad \text{for $\| V \|$ almost
	all $z$}
\end{gather*}
the varifold $V \restrict E \times \grass{\adim}{\vdim}$ is stationary and the
support of its weight measure, $\spt \| V \|$, cannot intersect $A$ by the
monotonicity identity. The value of the theorem lies in quantifying this
behaviour. Much of the usefulness of the result for the purposes of the
present paper stems from the fact that values of $Q$ larger than $1$ are
permitted. This allows to effectively apply the result in neighbourhoods of
points where the density function $\density^\vdim ( \| V \|, \cdot )$ has a
value larger than $1$ and is approximately continuous. The case $Q=1$ is
conceptionally contained in a result of Hutchinson \cite[Theorem 1]{MR1066398}
which treats Lipschitzian functions.

If the set $E$ satisfies a ``zero boundary value'' condition, see
\ref{miniremark:zero_boundary}, on a relatively open subset $G$ of $\Bdry U$,
the conclusion may be sharped by replacing $A = \{ z \with \oball{z}{r}
\subset U \}$ by
\begin{gather*}
	A'= \{ z \with \oball{z}{r} \subset \rel^\adim \without B \}, \quad
	\text{where $B = ( \Bdry U ) \without G$}.
\end{gather*}

In order to state a version of the resulting Sobolev Poincar{\'e}
inequalities, recall a less known notation from Federer's treatise on
geometric measure theory, see \cite[2.4.12]{MR41:1976}.
\begin{intro_definition}
	Suppose $\phi$ measures $X$ and $f$ is a $\phi$ measurable function
	with values in some Banach space $Y$.

	Then one defines $\phi_{(p)} (f)$ for $1 \leq p \leq \infty$ by the
	formulae
	\begin{gather*}
		\phi_{(p)}(f) = ( \tint{}{} |f|^p \ud \phi )^{1/p}
		\quad \text{in case $1 \leq p < \infty$}, \\
		\phi_{(\infty)}(f) = \inf \big \{ t \with \text{$t \geq 0$,
		$\phi ( \{ x \with |f(x)| > t \} ) = 0$} \big \}.
	\end{gather*}
\end{intro_definition}
In comparison to the more common notation $\| f \|_{\mathbf{L}_p ( \phi, Y
)}$ it puts the measure in focus and avoids iterated subscripts.

\begin{citing} [Sobolev Poincar{\'e} inequality, zero median version, see
\ref{thm:sob_poin_summary}\,\eqref{item:sob_poin_summary:interior:p=1}]
	Suppose $1 \leq M < \infty$.

	Then there exists a positive, finite number $\Gamma$ with the
	following property.

	If $\vdim$, $\adim$, $U$, $V$ satisfy the general hypothesis and the
	density hypothesis, $\adim \leq M$, $f \in \trunc ( V, \rel^l )$, $1
	\leq Q \leq M$, $0 < r < \infty$, $E = U \cap \{ z \with f(z) \neq 0
	\}$,
	\begin{gather*}
		\| V \| ( E ) \leq ( Q-M^{-1} ) \unitmeasure{\vdim} r^\vdim,
		\\
		\| V \| ( E \cap \{ z \with \density^\vdim ( \| V \|, z ) < Q
		\} ) \leq \Gamma^{-1} r^\vdim, \\
	 	\text{$\beta = \infty$ if $\vdim = 1$}, \quad
		\text{$\beta = \vdim/(\vdim-1)$ if $\vdim > 1$},
	\end{gather*}
	$A = \{ z \with \oball{z}{r} \subset U \}$, then
	\begin{gather*}
		\eqLpnorm{\| V \| \restrict A }{\beta}{f} \leq \Gamma \big (
		\Lpnorm{\| V \|}{1}{ \derivative{V}{f} } + \Lpnorm{\| \delta V
		\|}{1} {f} \big ).
	\end{gather*}
\end{citing}
Again, the main improvement is the applicability with values $Q > 1$. If $f$
additionally belongs to $\trunc_G(V,\rel^l)$, then $A$ may be by $A'$
as in the relative isoperimetric inequality. Not surprisingly, there also
exists a version of the Sobolev inequality for members of $\trunc_{\Bdry U} (
V, \rel )$. For Lipschitzian functions a Sobolev inequality was obtained by
Allard \cite[7.1]{MR0307015} and Michael and Simon \cite[Theorem
2.1]{MR0344978} for varifolds and by Federer in \cite[\S 2]{MR0388226} for
rectifiable currents which are absolutely minimising with respect to a
positive, parametric integrand.
\begin{citing} [Sobolev inequality, see \ref{thm:sob_poin_summary}\,\eqref{item:sob_poin_summary:global:p=1}]
	Suppose $1 \leq M < \infty$.

	Then there exists a positive, finite number $\Gamma$ with the
	following property.

	If $\vdim$, $\adim$, $U$, $V$, satisfy the general hypothesis and the
	density hypothesis, $\adim \leq M$, $f \in \trunc_{\Bdry U} ( V,
	\rel^l )$,
	\begin{gather*}	
		E = U \cap \{ z \with f(z) \neq 0 \}, \quad \| V \| ( E ) <
		\infty, \\
	 	\text{$\beta = \infty$ if $\vdim = 1$}, \quad
		\text{$\beta = \vdim/(\vdim-1)$ if $\vdim > 1$},
	\end{gather*}
	then
	\begin{gather*}
		\Lpnorm{\| V \|}{\beta}{f} \leq \Gamma \big ( \Lpnorm{\| V
		\|}{1}{ \derivative{V}{f} } + \Lpnorm{\| \delta V \|}{1} {f}
		\big ).
	\end{gather*}
\end{citing}
Coming back the Sobolev Poincar{\'e} inequalities, one may also establish a
version with several ``medians''. The number of medians needed is controlled
by the total weight measure of the varifold a natural way.
\begin{citing} [Sobolev Poincar{\'e} inequality, several medians, see
\ref{thm:sob_poincare_q_medians}\,\eqref{item:sob_poincare_q_medians:p=1}]
	Suppose $1 \leq M < \infty$.

	Then there exists a positive, finite number $\Gamma$ with the
	following property.

	If $l$ is a positive integer, $\vdim$, $\adim$, $U$, $V$ satisfy the
	general hypothesis and the density hypothesis, $\sup \{ l, \adim \}
	\leq M$, $f \in \trunc (V,\rel^l)$, $1 \leq Q \leq M$, $P$ is a
	positive integer, $0 < r < \infty$,
	\begin{gather*}
		\| V \| ( U ) \leq ( Q-M^{-1} ) (P+1) \unitmeasure{\vdim}
		r^\vdim, \\
		\| V \| ( \{ z \with \density^\vdim ( \| V \|, z ) < Q \} )
		\leq \Gamma^{-1} r^\vdim, \\
	 	\text{$\beta = \infty$ if $\vdim = 1$}, \quad
		\text{$\beta = \vdim/(\vdim-1)$ if $\vdim > 1$},
	\end{gather*}	
	and $A = \{ z \with \oball{z}{r} \subset U \}$, then there exists a
	subset $Y$ of $\rel^l$ such that $1 \leq \card Y \leq P$ and
	\begin{gather*}
		\eqLpnorm{\| V \| \restrict A}{\beta}{f_Y} \leq \Gamma
		P^{1/\beta} \big ( \Lpnorm{\| V \|}{1}{\derivative{V}{f}} + \|
		\delta V \|(f_Y) \big),
	\end{gather*}
	where $f_Y (z) = \dist (f(z),Y)$ for $z \in \dmn f$.
\end{citing}
If $l = 1$, the approach of Hutchinson, see \cite[Theorem 3]{MR1066398},
carries over unchanged and, in fact, yields a somewhat stronger statement, see
\ref{thm:sob_poin_several_med}\,\eqref{item:sob_poin_several_med:p=1}. If $l >
1$ and $P > 1$ the selection procedure for $Y$ is significantly more delicate.

In order to precisely state the next result, the concept of approximate
tangent vectors and approximate differentiability (see
\cite[3.2.16]{MR41:1976}) will be recalled.
\begin{intro_definition}
	Suppose $\phi$ measures an open subset $U$ of a normed vectorspace
	$X$, $a \in U$, and $m$ is a positive integer.

	Then $\Tan^m ( \phi, a )$ denotes the closed cone of \emph{$(\phi,m)$
	approximate tangent vectors} at $a$ consisting of all $v \in X$ such
	that
	\begin{gather*}
		\density^{\ast m} ( \phi \restrict \mathbf{E}
		(a,v,\varepsilon), a ) > 0 \quad \text{for every $\varepsilon
		> 0$}, \\
		\text{where $\mathbf{E}(a,v,\varepsilon) = X \cap \{ x
		\with \text{$|r(x-a) -v| < \varepsilon$ for some $r > 0$} \}$}.
	\end{gather*}
	Moreover, if $X$ is an inner product space, then the cone of
	\emph{$(\phi,m)$ approximate normal vectors} at $a$ is defined to be
	\begin{gather*}
		\Nor^m ( \phi, a ) = X \cap \{ u \with \text{$u \bullet v \leq
		0$ for $v \in \Tan^m ( \phi, a )$} \}.
	\end{gather*}
\end{intro_definition}
If $V$ is an $\vdim$ dimensional rectifiable varifold in an open subset $U$ of
$\rel^\adim$, then at $\| V \|$ almost all $a$, $T = \Tan^\vdim ( \| V \|, a
)$ is an $\vdim$ dimensional plane such that
\begin{gather*}
	r^{-\vdim} \tint{}{} f(r^{-1}(z-a)) \ud \| V \| z \to \density^\vdim (
	\| V \|, a ) \tint{T}{} f \ud \mathscr{H}^\vdim \quad \text{as
	$r \to 0+$}
\end{gather*}
whenever $f : U \to \rel$ is a continuous function with compact support.
However, at individual points the requirement that $\Tan^\vdim ( \|V \|, a )$
forms an $\vdim$ dimensional plane differs from the condition that $\|V\|$
admits an $\vdim$ dimensional approximate tangent plane in the sense of
\cite[11.8]{MR756417}.
\begin{intro_definition}
	Suppose $\phi$, $U$, $a$ and $m$ are as in the preceding definition
	and $f$ maps a subset of $X$ into another normed vectorspace $Y$.

	Then $f$ is called \emph{$(\phi,m)$ approximately differentiable} at
	$a$ if and only if there exist $\eta \in Y$ and a continuous linear
	map $\zeta : X \to Y$ such that
	\begin{gather*}
		\density^\vdim ( \phi \restrict X \without \{ x \with
		|f(x)-\eta-\zeta (x-a)| \leq \varepsilon |x-a| \}, a ) = 0
		\quad \text{for every $\varepsilon > 0$}.
	\end{gather*}
	In this case $\zeta | \Tan^\vdim ( \phi, a )$ is unique and it is
	called the \emph{$(\phi,m)$ approximate differential} of $f$ at $a$,
	denoted
	\begin{gather*}
		(\phi,m) \ap Df(a).
	\end{gather*}
\end{intro_definition}
Moreover, the following abbreviation will be convenient.
\begin{intro_definition}
	Whenever $S$ is an $\vdim$ dimensional plane in $\rel^\adim$, the
	orthogonal projection of $\rel^\adim$ onto $S$ will be denoted by
	$\project{S}$.
\end{intro_definition}

The Sobolev Poincar{\'e} inequality with one median, that is with $P=1$, and a
suitably chosen number $Q$ is the key to prove the following structural result
for weakly diffentiable functions.
\begin{citing} [Approximate differentiability, see \ref{thm:approx_diff}]
	Suppose $\vdim$, $\adim$, $U$, and $V$ satisfy the general hypothesis
	and the density hypothesis, and $f \in \trunc (V,\rel^l)$.

	Then $f$ is $( \| V \|, \vdim )$ approximately differentiable with
	\begin{gather*}
		\derivative{V}{f} (a) = ( \| V \|, \vdim ) \ap Df(a) \circ
		\project{\Tan^\vdim ( \| V \|, a )}
	\end{gather*}
	at $\| V \|$ almost all $a$.
\end{citing}
The preceding assertion consists of two parts. Firstly, it yields that
\begin{gather*}
	\derivative{V}{f}(a) | \Nor^\vdim ( \| V \|, a ) = 0 \quad \text{for
	$\| V \|$ almost all $a$};
\end{gather*}
a property that is not required by the definition. Secondly, it asserts that
\begin{gather*}
	\derivative{V}{f}(a) | \Tan^\vdim ( \| V \|, a ) = ( \| V \|, \vdim )
	\ap D f(a) \quad \text{for $\|V \|$ almost all $a$}.
\end{gather*}
This is somewhat similar to the situation for the generalised mean curvature
of an integral $\vdim$ varifold where it was first obtained that its
tangential compenent vanishes by Brakke in \cite[5.8]{MR485012} and secondly
that its normal component is induced by approximate quantities by the author
in \cite[4.8]{snulmenn.c2}.

\begin{citing} [Differentiability in Lebesgue spaces, see
\ref{thm:diff_lebesgue_spaces}\,\eqref{item:diff_lebesgue_spaces:m>1=p}]
	Suppose $\vdim$, $\adim$, $U$, and $V$ satisfy the general hypothesis
	and the density hypothesis, and $f \in \trunc (V,\rel^l)$.

	If $\vdim > 1$, $\beta = \vdim/(\vdim-1)$, and $f \in \Lploc{1} ( \|
	\delta V \|, \rel^l )$, then
	\begin{gather*}
		\lim_{r \to 0+} r^{-\vdim} \tint{\cball ar}{} ( | f(z)-f(a)-
		\derivative Vf (a) (z-a) | / | z-a| )^\beta \ud \| V \| z = 0
	\end{gather*}
	for $\| V \|$ almost all $a$.
\end{citing}
The result is derived from the approximate differentiability result mainly by
means of the zero median version of the Sobolev Poincar{\'e} inequality.
Another consequence of the approximate differentiability result is the
rectifiability of the distributional boundary allmost all superlevelsets of
weakly differentiable functions which supplements the functional analytic form
of the coarea formula.
\begin{citing} [Coarea formula, measure theoretic form, see \ref{corollary:coarea}]
	Suppose $\vdim$, $\adim$, $U$, and $V$ satisfy the general hypothesis
	and the density hypothesis, $f \in \trunc (V,\rel)$, and $E(t) = \{ z
	\with f(z) > t \}$ for $t \in \rel$.

	Then there exists an $\mathscr{L}^1$ measurable function $W$ with
	values in the weakly topologised space of $\vdim-1$ dimensional
	rectifiable varifolds in $U$ such that for $\mathscr{L}^1$ almost all
	$t$ there holds
	\begin{gather*}
		\Tan^{\vdim-1} ( \| W(t) \|, z ) = \Tan^\vdim ( \| V \|, z )
		\cap \ker \derivative{V}{f} (z) \in \grass{\adim}{\vdim-1}, \\
		\density^{\vdim-1} ( \| W(t) \|, z ) = \density^\vdim ( \| V
		\|, z )
	\end{gather*}
	for $\| W(t) \|$ almost all $z$ and
	\begin{gather*}
		\boundary{V}{E(t)}(\theta) = \tint{}{}
		|\derivative{V}{f}(z)|^{-1} \derivative{V}{f} (z)(\theta(z))
		\ud \| W(t) \| z \quad \text{for $\theta \in \mathscr{D}
		(U,\rel^\adim )$}.
	\end{gather*}
\end{citing}
Notice that this structural result does extend to general sets whose
distributional boundary is representable by integration, see
\ref{remark:no_rectifiable_structure}. This is in contrast to the behaviour of
sets of locally finite perimeter in Euclidean space.  \subsubsection*{Critical
mean curvature} Several of the preceding estimates and structural results may
be sharped in case the generalised mean curvature satisfies an appropriate
integrability condition.
\begin{citing} [Mean curvature hypothesis]
	Suppose $\vdim$, $\adim$, $U$ and $V$ are as in the general
	hypothesis and satisfies the following condition.

	If $\vdim > 1$ then for some $h \in \Lploc{\vdim} ( \| V \|,
	\rel^\adim )$ there holds
	\begin{gather*}
		( \delta V ) ( g ) = - \tint{}{} g \bullet h
		\ud \| V \| z \quad \text{for $g \in
		\mathscr{D}(U,\rel^\adim)$}.
	\end{gather*}
	In this case $\psi$ will denote the Radon measure over $U$
	characterised by the condition $\psi (B) = \tint{B}{} |h|^\vdim \ud \|
	V \|$ whenever $B$ is a Borel subset of $U$.
\end{citing}
Clearly, if this condition is satisfied, then the function $h$ is $\| V \|$
almost equal to the generalised mean curvature vector $\mathbf{h}(V,\cdot)$ of
$V$.  The integrability exponent $\vdim$ is ``critical'' with respect to
homothetic rescaling of the varifold. Replacing it by a slightly larger number
would entail upper semicontinuity of $\density^\vdim ( \| V \|, \cdot)$ and
the applicability of Allard's regularity theory, see Allard \cite[\S
8]{MR0307015}. In contrast, replacing the exponent by a slightly smaller
number would allow for examples in which the varifold is locally highly
disconnected, see \cite[1.2]{snulmenn.isoperimetric}.

The importance of this hypothesis for the present considerations lies in the
fact that in the relative isoperimetric inequality, the summand ``$\| \delta V
\| ( U )$'' may be dropped provided $V$ satisfies additionally the mean
curvature hypothesis with $\psi (E)^{1/\vdim} \leq \Gamma^{-1}$. As a first
consequence, one obtains the following version of the Sobolev Poincar{\'e}
inequality.
\begin{citing} [Sobolev Poincar{\'e} inequality, zero median, critial mean
curvature, see
\ref{thm:sob_poin_summary}\,\eqref{item:sob_poin_summary:interior:q<m=p}\,\eqref{item:sob_poin_summary:interior:p=m<q}]
	Suppose $1 \leq M < \infty$.

	Then there exists a positive, finite number $\Gamma$ with the
	following property.

	If $\vdim$, $\adim$, $U$, $V$, and $\psi$ satisfy the general
	hypothesis, the density hypothesis and the mean curvature hypothesis,
	$\adim \leq M$, $f \in \trunc ( V, \rel )$, $1 \leq Q \leq M$, $0 < r
	< \infty$, $E = U \cap \{ z \with f(z) \neq 0 \}$,
	\begin{gather*}
		\| V \| ( E ) \leq ( Q-M^{-1} ) \unitmeasure{\vdim} r^\vdim,
		\quad \psi ( E ) \leq \Gamma^{-1}, \\
		\| V \| ( E \cap \{ z \with \density^\vdim ( \| V \|, z ) < Q
		\} ) \leq \Gamma^{-1} r^\vdim,
	\end{gather*}
	and $A = \{ z \with \oball{z}{r} \subset U \}$, then the following two
	statements hold:
	\begin{enumerate}
		\item If $1 \leq q < \vdim = p$, then
		\begin{gather*}
			\eqLpnorm{\| V \| \restrict A}{\vdim q/(\vdim-q)}{f}
			\leq \Gamma (\vdim-q)^{-1} \Lpnorm{\| V \|}{q}{
			\derivative{V}{f} }.
		\end{gather*}
		\item If $1 < p = \vdim < q \leq \infty$, then
		\begin{gather*}
			\eqLpnorm{\| V \| \restrict A}{\infty}{f} \leq
			\Gamma^{1/(1/\vdim-1/q)} \| V \| ( E)^{1/\vdim-1/q}
			\Lpnorm{\| V \|}{q}{ \derivative{V}{f} }.
		\end{gather*}
	\end{enumerate}
\end{citing}
Even if $Q = q = 1$ and $f$ and $V$ correspond to smooth objects, this
estimate appears to be new; at least, it seems not to be straightforward to
derive from Hutchinson \cite[Theorem 1]{MR1066398}.

A similar result holds for $\vdim = 1$, see
\ref{thm:sob_poin_summary}\,\eqref{item:sob_poin_summary:interior:p=m=1}.
Again, if $f$ additionally belongs to $\trunc_G(V,\rel^l)$, then $A$ may be
replaced by $A'$. Also, appropriate versions of the Sobolev inequality may be
furnished, see
\ref{thm:sob_poin_summary}\,\eqref{item:sob_poin_summary:global:p=m=1}--\eqref{item:sob_poin_summary:global:p=m<q}.
The same is true with respect to the Sobolev Poincar{\'e} inequalities with
several medians, see
\ref{thm:sob_poincare_q_medians}\,\eqref{item:sob_poincare_q_medians:p=m=1}--\eqref{item:sob_poincare_q_medians:p=m<q}
and
\ref{thm:sob_poin_several_med}\,\eqref{item:sob_poin_several_med:p=m=1}--\eqref{item:sob_poin_several_med:p=m<q}.

Before stating the stronger differentiability properties that result from the
mean curvature hypothesis, recall the definition of relative differential from
\cite[3.1.21-22]{MR41:1976}.
\begin{intro_definition}
	Suppose $X$ and $Y$ are normed vectorspaces, $S \subset X$, and $a \in
	\Clos S$, anf $f : S \to Y$.

	Then the \emph{tangent cone} of $S$ at $a$, denoted $\Tan(S,a)$, is
	the set of all $v \in X$ such that for every $\varepsilon > 0$ there
	exist $x \in S$ and $0 < r \in \rel$ with $|x-a| < \varepsilon$ and $|
	r(x-a)-v| < \varepsilon$. Moreover, $f$ is called \emph{differentiable
	relative to $S$ at $a$} if and only if there exist $\eta \in Y$ and a
	continuous linear map $\zeta : X \to Y$ such that
	\begin{gather*}
		|f(x)-\eta-\zeta(x-a) |/|x-a| \to 0 \quad \text{as $S \owns x
		\to a$}.
	\end{gather*}
	In this case $\zeta|\Tan(S,a)$ is unique and denoted $Df(a)$.
\end{intro_definition}
\begin{citing} [Differentiability in Lebesgue spaces, critical mean curvature,
see \ref{thm:diff_lebesgue_spaces}]
	Suppose $\vdim$, $\adim$, $U$, and $V$ satisfy the general hypothesis,
	the density hypothesis and the mean curvature hypothesis, $f \in
	\trunc (V,\rel^l)$, $1 \leq q \leq \infty$, and $\derivative Vf \in
	\Lploc{q} ( \| V \|, \Hom (\rel^\adim, \rel^l) )$.

	Then the following two statements hold.
	\begin{enumerate}
		\item If $q < \vdim = p$ and $\eta = \vdim q/(\vdim-q)$, then
		\begin{gather*}
			\lim_{r \to 0+} r^{-\vdim} \tint{\cball ar}{} ( |
			f(z)-f(a)- \derivative Vf (a)(z-a) | / | z-a| )^\eta
			\ud \| V \| z = 0
		\end{gather*}
		for $\| V \|$ almost all $a$.
		\item If $\vdim =1$ or $\vdim < q$, then there exists a
		subset $S$ of $U$ with $\| V \| ( U \without S ) = 0$ such
		that $f|S$ is differentiable relative to $S$ at $a$ with
		\begin{gather*}
			D (f|S) (a) = \derivative Vf (a) | \Tan^\vdim ( \| V
			\|, a) \quad \text{for $\| V \|$ almost all $a$},
		\end{gather*}
		in particular $\Tan(S,a) = \Tan^\vdim ( \| V \|, a )$ for such
		$a$.
	\end{enumerate}
\end{citing}
Considering $V$ associated to two crossing lines, it is evident that one
cannot expect a function in $\trunc(V,\rel)$ to be $\| V \|$ almost equal to a
continuous function even if $\delta V = 0$ and $\derivative{V}{f} = 0$. Yet,
in case $f$ is continuous, its modulus of continuity may be locally estimated
by its weak derivate. This estimate depends on $V$ but not on $f$ as
formulated in the next theorem.
\begin{citing} [Oscillation estimate, see \ref{thm:mod_continuity}\,\eqref{item:mod_continuity:m>1}]
	Suppose $\vdim$, $\adim$, $U$, and $V$ satisfy the general hypothesis,
	the density hypothesis and the mean curvature hypothesis, $K$ is a
	compact subset of $U$, $0 < \varepsilon < \dist ( K, \rel^\adim
	\without U )$, and $1 < \vdim < q$.

	Then there exists a positive, finite number $\Gamma$ with the
	following property.

	If $f : \spt \| V \| \to \rel^l$ is a continuous function, $f \in
	\trunc (V,\rel^l)$, and $\gamma = \sup \{ \eqLpnorm{\| V \| \restrict
	\cball{a}{\varepsilon}}{q}{ \derivative{V}{f} } \with a \in K \}$,
	then
	\begin{gather*}
		|f(z)-f(w)| \leq l^{1/2} \varepsilon \gamma \quad
		\text{whenever $z,w \in K \cap \spt \| V \|$ and $|z-w| \leq
		\Gamma^{-1}$}.
	\end{gather*}
\end{citing}
A similar result holds for $\vdim = 1$, see
\ref{thm:mod_continuity}\,\eqref{item:mod_continuity:m=1}. This theorem rests
on the fact that connected components of $\spt \| V \|$ are relatively open in
$\spt \| V \|$, see
\ref{corollary:conn_structure}\,\eqref{item:conn_structure:open}. If a
varifold $V$ satisfies the general hypothesis, the density hypothesis and the
mean curvature hypothesis then the decomposition of $\spt \| V \|$ into
connected components yields a locally finite decomposition into relatively
open and closed subsets whose distributional $V$ boundary vanishes, see
\ref{corollary:conn_structure}\,\eqref{item:conn_structure:piece}--\eqref{item:conn_structure:open}.
Moreover, any decomposition of the varifold $V$ will refine the decomposition
of the topological space $\spt \| V \|$ into connected components, see
\ref{corollary:conn_structure}\,\eqref{item:conn_structure:union}.

When the amount of total weight measure (``mass'') available excludes the
possibility of two separate sheets, the oscillation estimate may be sharpened
to yield H{\"o}lder continuity even without assuming a priori the continuity
of the function, see \ref{thm:hoelder_continuity}.

\subsubsection*{Two applications} Despite the rather weak oscillation
estimate in the general case, this estimate is still sufficient to prove that
the geodesic distance between any two points in the same connected component
of the support is finite.
\begin{citing} [Geodesic distance, see \ref{thm:conn_path_finite_length}]
	Suppose $\vdim$, $\adim$, $U$, and $V$ satisfy the general hypothesis,
	the density hypothesis and the mean curvature hypothesis, $C$ is a
	connected component of $\spt \| V \|$, and $w,z \in C$.

	Then there exist $- \infty < a \leq b < \infty$ and a Lipschitzian
	function $g : \{ t \with a \leq t \leq b \} \to \spt \| V \|$ such
	that $g(a) = w$ and $g(b) = z$.
\end{citing}
The proof follows a common pattern in theory of metric spaces, see
\ref{remark:metric_spaces}.

Finally, the presently introduced notion of weak differentiability may also be
used to reformulate the defining condition for curvature varifolds, see
\ref{def:curvature_varifold}, \ref{remark:hutchinson_reformulations}.
\begin{citing} [Characterisation of curvature varifolds, see
\ref{thm:curvature_varifolds}]
	Suppose $\vdim$ and $\adim$ are positive integers, $\vdim \leq \adim$,
	$U$ is an open subset of $\rel^\adim$, $V$ is an $\vdim$ dimensional
	integral varifold in $U$,
	\begin{gather*}
		Y = \Hom (\rel^\adim, \rel^\adim ) \cap \{ \tau \with \tau =
		\tau^\ast \}, \quad Z = U \cap \{ z \with \Tan^\vdim ( \| V
		\|, z ) \in \grass{\adim}{\vdim} \},
	\end{gather*}
	and $R : Z \to Y$ satisfies
	\begin{gather*}
		R(z) = \project{\Tan^\vdim ( \| V \|, z )} \quad
		\text{whenever $z \in Z$}.
	\end{gather*}

	Then $V$ is a curvature varifold if and only if $\| \delta V \|$ is a
	Radon measure absolutely continuous with respect to $\| V \|$ and $R
	\in \trunc(V, Y )$.
\end{citing}
The condition is clearly necessary. The key to prove its sufficiency is to
relate the mean curvature vector of $V$ to the weak differential of the
tangent plane map $R$. This may be accomplished, for instance, by means of the
approximate differentiability result, see \ref{thm:approx_diff}, applied to
$R$, in conjunction with the previously obtained second order
rectifiability result for such varifolds, see \cite[4.8]{snulmenn.c2}.
\subsection*{Possible lines of further study}
\paragraph{Sobolev spaces.} The results obtained by the author on the area
formula for the Gauss map, see \cite[Theorem 3]{snulmenn.mfo1230}, rest on
several estimates for Lipschitzian solutions of certain linear elliptic
equations on varifolds satisfying the mean curvature hypothesis. The
formulation of these estimates necessitated several ad hoc formulations of
concepts such as ``zero boundary values'' which would not seem natural from
the point of partial differential equations. In order to avoid repetition the
author decided to directly build an adequate framework for these results
rather than first publish the Lipschitzian case along with a proof of the
results announced in \cite{snulmenn.mfo1230}.\footnote{Would the author have
fully anticipated the effort needed for such a construction he might have
reconciled himself with some repetition.} The first part of such framework is
provided in the present paper. Continuing this programme, a notion of Sobolev
spaces, complete vectorspaces contained in $\trunc(V,\rel^l)$ in which locally
Lipschitzian functions are suitably dense, has already been developed but is
not included here for length considerations.
\paragraph{Functions of locally bounded variation.} It seems worthwhile to
investigate to which extend results of the present paper for weakly
differentiable functions extend to a class of ``functions of locally bounded
variation''. A possible definition is suggested in \ref{remark:bv}.
\paragraph{Intermediate conditions on the mean curvature.} The mean curvature
hypothesis may be weakened by replacing $\Lploc{\vdim}$ by $\Lploc{p}$ for
some $1 < p < \vdim$. In view of the Sobolev Poincar{\'e} type inequalities
obtained for height functions in \cite[Theorem 4.4]{snulmenn.poincare}, one
might seek for adaquate formulations of the Sobolev Poincar{\'e} inequalities
and the resulting differentiability results for functions belonging to
$\trunc(V,\rel^l)$ in these intermediate cases. This could potentially have
applications to structural results for curvature varifolds, see
\ref{remark:curv_flatness} and \ref{remark:curv_to_do}.  Additionally, the
case $p = \vdim-1$ seems to be related to the study of the geodesic distance
for indecomposable varifolds, see \ref{remark:topping}.
\paragraph{Multiple valued weakly differentiable functions.} For convergence
considerations it appears useful to extend the concept of weakly
differentiable functions to a more general class of ``multiple-valued''
functions in the spirit of Moser \cite[\S 4]{62659}, see \ref{remark:moser}.
\subsection*{Organisation of the paper} In Section \ref{sec:notation} the
notation is introduced. Sections \ref{sec:tvs} and \ref{sec:monotonicity}
provide variations of known results for later reference. In the remaining
sections the theory of weakly differentiable functions as described above is
presented.

\subsection*{Acknowledgements} The author would like to thank Dr.~S{\l}awomir
Ko{\l}asinski and Dr.~Leobardo Rosales for several conversations concerning
parts of this paper.

\section{Notation} \label{sec:notation}
The notation of \cite[\S 1,\,\S 2]{snulmenn.decay} is used. Additionally,
whenever $M$ is a submanifold of $\rel^\adim$ of class $\class{2}$ the mean
curvature of $M$ at $z \in M$ is denoted by $\mathbf{h} (M;z)$, cp. Allard
\cite[2.5\,(2)]{MR0307015}. The reader might want to recall in particular the
following less commonly used symbols: $\nat$ denoting the positive integers,
$\oball{a}{r}$ and $\cball{a}{r}$ denoting respectively the open and closed
ball with centre $a$ and radius $r$, and $\bigodot^i ( V,W )$ and $\bigodot^i
V$ denoting the vector space of all $i$ linear symmetric functions (forms)
mapping $V^i$ into $W$ and $\rel$ respectively, see \cite[2.2.6, 2.8.1,
1.10.1]{MR41:1976}.

Define $\RVar_0 ( U )$ etc.

A function $f$ mapping a subset of $\rel$ into $\overline{\rel}$ is called
\emph{nondecreasing} if and only if $f(s) \leq f(t)$ whenever $s,t \in \dmn f$
and $s \leq t$.

Whenever $f$ maps a topological space $X$ continuously into a topological
vector space $V$ the support of $f$ is given by
\begin{gather*}
	\spt f = \Clos ( \classification{X}{x}{f(x)\neq 0} ).
\end{gather*}
In this part several concepts of weakly differentiable functions on varifolds
are explored. They provide the natural setting in which to prove results on
partial differential equations on varifolds.
\section{Topological vector spaces} \label{sec:tvs}
Some basic results on topological vector spaces and Lusin spaces are gathered
here mainly from \cite{MR0426084,MR910295}. In particular, the topology on
$\mathscr{D}(U,Y)$ employed in \cite[4.1.1]{MR41:1976} is compared to the more
commonly used locally convex topology on that space, see
\ref{remark:top_comp}.
\begin{definition} [see \protect{\cite[Chapter \printRoman{2}, Definition 2,
p.~94]{MR0426084}}] \label{def:lusin}
	Suppose $X$ is a topological space.

	Then $X$ is called a \emph{Lusin space} if and only if $X$ is a
	Hausdorff topological space and there exists a complete, separable
	metric space $W$ and a continuous univalent map $f : W \to X$ whose
	image is $X$.
\end{definition}
\begin{remark} \label{remark:lusin}
	Any subset of a Lusin space is sequentially separable.
\end{remark}
\begin{definition} [see \protect{\cite[\printRoman{2}, p.~23,
def.~1]{MR910295}}]
	A topological vector space is called a \emph{locally convex space} if
	and only if there exists a fundamental system of neighbourhoods of $0$
	that are convex sets; its topology is called \emph{locally convex
	topology}.
\end{definition}
\begin{definition}
	The locally convex spaces form a category; its morphisms are the
	continuous linear maps. An inductive limit in this category is called
	\emph{strict} if and only if the morphisms of the corresponding
	inductive system are homeomorphic embeddings.
\end{definition}
\begin{remark}
	The notion of inductive limit is employed in accordance with
	\cite[p.~67--68]{MR1712872}.
\end{remark}
\begin{definition} [see \protect{\cite[\printRoman{1}, \S 2.4,
prop.~6]{MR1726779}, \cite[\printRoman{2}, p.~27,
prop.~5]{MR910295}}]
	Suppose $E$ is a set [vector space], $E_i$ are topological spaces
	[topological vector spaces] and $f_i : E_i \to E$ are functions
	[linear maps] for $i \in I$.

	Then there exists a unique topology [unique locally convex topology]
	on $E$ such that a function [linear map] $g : E \to F$ into a
	topological space [locally convex space] $F$ is continuous if and only
	if $g \circ f_i$ are continuous for $i \in I$. This topology is called
	\emph{final topology} [\emph{locally convex final topology}] on $E$
	with respect to the family $f_i$.
\end{definition}
\begin{definition} [see \protect{\cite[\printRoman{2}, p.~29, Example
\printRoman{2}]{MR910295}}\footnote{In the terminology of
\cite[\printRoman{2}, p.~29, Example \printRoman{2}]{MR910295} a locally
compact Hausdorff space is called a locally compact space.}] \label{def:kx}

	Suppose $X$ is a locally compact Hausdorff space. Consider the
	inductive system consisting of the locally convex spaces
	$\mathscr{K}(X) \cap \{ f \with \spt f \subset K \}$ with the topology
	of uniform convergence corresponding to all compact subsets $K$ of $U$
	and its inclusion maps.

	Then $\mathscr{K}(X)$ endowed with the locally convex final topology
	with respect to the inclusions of the topological vector spaces
	$\mathscr{K} (X) \cap \{ f \with \spt f \subset K \}$ is the inductive
	limit of the above system in the category of locally convex spaces.
\end{definition}
\begin{remark} [see \protect{\cite[\printRoman{2}, p.~29, Example
\printRoman{2}]{MR910295}}]
	The locally convex topology on $\mathscr{K} (X)$ is Hausdorff and
	induces the given topology on each closed subspace $\mathscr{K} ( X)
	\cap \{ f \with \spt f \subset K \}$. Moreover, the space
	$\mathscr{K}(X)^\ast$ of Daniell integrals on $\mathscr{K}(X)$ agrees
	with the space of continuous linear functionals on $\mathscr{K} (X)$.
\end{remark}
\begin{remark}
	If $K(i)$ is a sequence of compact subsets of $X$ with $K(i) \subset
	\Int K(i+1)$ for $i \in \nat$ and $X = \bigcup_{i=1}^\infty K(i)$,
	then $\mathscr{K}(X)$ is the strict inductive limit of the sequence of
	locally convex spaces $\mathscr{K}(X) \cap \{ f \with \spt f \subset
	K(i) \}$.
\end{remark}
\begin{definition} \label{def:duy}
	Suppose $U$ is an open subset of $\rel^\vdim$ and $Y$ is a Banach
	space. Consider the inductive system consisting of the locally convex
	spaces $\mathscr{D}_K (U,Y)$ with the topology induced from
	$\mathscr{E}(U,Y)$ corresponding to all compact subsets $K$ of $U$ and
	its inclusion maps.

	Then $\mathscr{D}(U,Y)$ endowed with the locally convex final topology
	with respect to the inclusions of the topological vector spaces
	$\mathscr{D}_K (U,Y)$ is the inductive limit of the above inductive
	system in the category of locally convex spaces.
\end{definition}
\begin{remark} \label{remark:basic}
	The locally convex topology on $\mathscr{D} (U,Y)$ is Hausdorff and
	induces the given topology on each closed subspace
	$\mathscr{D}_K(U,Y)$.
\end{remark}
\begin{remark} \label{remark:ind-limit}
	If $K(i)$ is a sequence of compact subsets of $U$ with $K(i) \subset
	\Int K(i+1)$ for $i \in \nat$ and $U = \bigcup_{i=1}^\infty K(i)$,
	then $\mathscr{D} (U,Y)$ is the strict inductive limit of the sequence
	of locally convex spaces $\mathscr{D}_{K(i)} ( U,Y )$. In particular,
	the convergent sequences in $\mathscr{D}(U,Y)$ are precisely the
	convergent sequences in some $\mathscr{D}_K(U,Y)$, see
	\cite[\printRoman{3}, p.~3, prop.~2; \printRoman{2}, p.~32,
	prop.~9\,(i)\,(ii); \printRoman{3}, p.~5, prop.~6]{MR910295}.
\end{remark}
\begin{remark} \label{remark:tvs}
	The locally convex topology on $\mathscr{D} (U,Y)$ is also the
	inductive limit topology in the category of topological vector spaces,
	see \cite[\printRoman{1}, p.~9, Lemma to prop.~7; \printRoman{2},
	p.~75, exerc.~14]{MR910295}.
\end{remark}
\begin{remark} \label{remark:top_comp}
	Consider the inductive system consisting of the topological spaces
	$\mathscr{D}_K (U,Y)$ corresponding to all compact subsets $K$ of $U$
	and its inclusion maps. Then $\mathscr{D}(U,Y)$ endowed with the final
	topology with respect to the inclusions of the topological vector
	spaces $\mathscr{D}_K (U,Y)$ is the inductive limit of this inductive
	system in the category of topological spaces.

	Denoting this topology by $S$ and the topology described in
	\ref{def:duy} by $T$, the following three statements hold.
	\begin{enumerate}
		\item Amongst all locally convex topologies on
		$\mathscr{D}(U,Y)$, the topology $T$ is characterised by the
		following property: A linear map from $\mathscr{D}(U,Y)$ into
		some locally convex space is $T$ continuous if and only if it
		is $S$ continuous.
		\item The $S$ closed sets are precisely the $T$ sequentially
		closed sets, see \ref{remark:ind-limit}.
		\item If $U$ is nonempty and $\dim Y > 0$, then $S$ is
		strictly finer than $T$, compare Shirai \cite[Th{\'e}or{\`e}me
		5]{MR0105613}.\footnote{In \cite{MR0105613} the terms
		\guillemotleft topologie\guillemotright{} [respectively
		\guillemotleft v{\'e}ritable-topologie\guillemotright{}] are
		employed without being fully explained. In verifying the cited
		result, the reader may replace them with operators satisfying
		conditions (a), (b) and (d) [respectively all] of the
		Kuratowski closure axioms, see \cite[p.~43]{MR0370454}.
		Finally, Shirai treats the case $U = \rel^\adim$ and $Y =
		\rel$. However, the extension to the case stated poses no
		difficulty.} In particular, in this case $S$ is not compatible
		with the vector space structure of $\mathscr{D}(U,Y)$ by
		\ref{remark:tvs}.
	\end{enumerate}
\end{remark}
\begin{theorem}
	Suppose $E$ is the strict inductive limit of an increasing sequence of
	locally convex spaces $E_i$ and the spaces $E_i$ are separable,
	complete (with respect to its topological vector space structure) and
	metrisable for $i \in \nat$.\footnote{That is, $E_i$ are separable
	Fr{\'e}chet spaces in the terminology of \cite[\printRoman{2},
	p.~24]{MR910295}.}

	Then $E$ and the dual $E'$ of $E$ endowed with the compact open
	topology are Lusin spaces and the Borel family of $E'$ is generated by
	the sets
	\begin{gather*}
		E' \cap \{ u \with u(x) < t \}
	\end{gather*}
	corresponding to $x \in D$ and $t \in \rel$ whenever $D$ is a dense
	subset of $E$.
\end{theorem}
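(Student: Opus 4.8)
The plan is to treat the three assertions separately, drawing on the structure theory of strict inductive limits of Fr\'echet spaces \cite{MR910295} and on the permanence properties of the class of Lusin spaces together with Blackwell's theorem on Borel structures \cite{MR0426084}. First I would record some preliminary facts about $E$. Since each $E_i$ is complete, metrisable and topologically embedded in $E_{i+1}$, its image is a complete, hence closed, subspace of the Hausdorff space $E_{i+1}$; thus $E$ is the strict inductive limit of a sequence of separable Fr\'echet spaces each closed in the next, and I may invoke the standard consequences: $E$ is Hausdorff, each $E_i$ carries the topology induced from $E$ and is closed in $E$, and, by the Dieudonn\'e--Schwartz boundedness theorem, every bounded --- in particular every compact --- subset of $E$ is contained in some $E_i$. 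Moreover, since $E = \bigcup_i E_i$ with each $E_i$ second countable, choosing for every $i$ a countable subset of $D \cap E_i$ dense in $D \cap E_i$ and forming the union over $i$ produces, for any dense $D \subseteq E$, a countable dense subset $D_1 \subseteq D$; in particular $E$ itself is separable.

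Next I would show that $E$ and $E'$ are Lusin. Each $E_i$ is a separable Fr\'echet space, hence a Polish space. Putting $E_0 = \varnothing$, the sets $E_{i+1} \without E_i$ are open in $E_{i+1}$, hence Polish, so their topological sum is a Polish space which maps continuously and bijectively onto $E = \bigcup_i E_i$; as $E$ is Hausdorff, $E$ is a Lusin space. For the dual, the boundedness theorem shows that the compact subsets of $E$ are exactly the compact subsets of the various $E_i$, so the restriction maps $u \mapsto u | E_i$ identify $E'$ with the compact-open topology with a closed linear subspace of the countable product $\prod_i E_i'$, where each factor $E_i'$ carries its compact-open topology. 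Now if $F$ is a separable Fr\'echet space with a decreasing countable base $(U_n)$ of neighbourhoods of $0$, then $F' = \bigcup_n U_n^\circ$, each polar $U_n^\circ$ is $\sigma(F',F)$-compact by Banach--Alaoglu and $\sigma(F',F)$-metrisable by separability of $F$, and on the equicontinuous set $U_n^\circ$ the compact-open topology coincides with $\sigma(F',F)$ by Ascoli's theorem; therefore $F'$ with the compact-open topology is a Hausdorff space that is a countable union of Polish subspaces, hence Lusin. Applying this with $F = E_i$ and using that countable products and closed subspaces of Lusin spaces are Lusin, $E'$ with the compact-open topology is Lusin.

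Finally I would identify the Borel $\sigma$-algebra. Fix a dense $D \subseteq E$ and a countable dense $D_1 \subseteq D$ as above, and write $\mathcal B$ for the Borel $\sigma$-algebra of $E'$ with the compact-open topology. Each set $\{ u : u(x) < t \}$ with $x \in D$ is open in $E'$, since $u \mapsto u(x)$ is continuous for the compact-open topology (the set $\{x\}$ being compact), so the $\sigma$-algebra $\mathcal A$ generated by these sets satisfies $\mathcal A \subseteq \mathcal B$. On the other hand, the $\sigma$-algebra $\mathcal A_1$ generated by the sets $\{ u : u(x) < t \}$ with $x \in D_1$ and $t \in \qspace$ is countably generated, is contained in $\mathcal B$, and separates the points of $E'$: if $u \neq v$ in $E'$, then $\{ x \in E : u(x) \neq v(x) \}$ is open and non-empty, hence meets the dense set $D_1$. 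Since $E'$ with the compact-open topology is a Lusin space, its Borel structure is standard, and Blackwell's theorem yields $\mathcal A_1 = \mathcal B$. As $D_1 \subseteq D$, this gives $\mathcal B = \mathcal A_1 \subseteq \mathcal A \subseteq \mathcal B$, whence $\mathcal A = \mathcal B$, which is the claim.

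The step I expect to be the main obstacle is the proof that $E'$ with the compact-open topology is Lusin, since it combines the Dieudonn\'e--Schwartz description of bounded sets in a strict (LF)-space, the Ascoli identification of the compact-open topology with the weak-$*$ topology on equicontinuous sets, Banach--Alaoglu together with metrisability coming from separability, and the permanence properties of Lusin spaces. By contrast, the concluding identification of the Borel $\sigma$-algebra is a clean application of Blackwell's theorem once Lusinness is available, the only subtlety being the passage --- carried out in the preliminaries --- from an arbitrary dense $D$ to a countable dense subset, which is what makes that theorem applicable.
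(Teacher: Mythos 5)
Your proof is correct, and while it targets exactly the same intermediate facts as the paper (namely that $E$ and $E'$ are Lusin, that compact subsets of $E$ sit inside some $E_i$ by Dieudonn\'e--Schwartz, and that the claimed sets generate the Borel family), you prove two of the three key steps by genuinely different routes. For Lusinness of $E'$, the paper invokes a single packaged result of Schwartz on duals of strict inductive limits, whereas you give a hands-on argument: identify $E'$ (compact-open) with a closed subspace of $\prod_i E_i'$ using the Dieudonn\'e--Schwartz description of compact sets, and show each $E_i'$ is a Lusin space by writing it as the increasing union of polars $U_n^\circ$, which are weak-$*$-compact (Alaoglu), metrisable (separability of $E_i$) and on which the compact-open topology agrees with the weak-$*$ topology (Ascoli). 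That buys transparency at the cost of length. For the Borel identification, the paper works with $\iota : E' \to \rel^D$ for the given (possibly uncountable) $D$ and appeals to a Schwartz comparison theorem asserting that the Borel family of a Lusin topology is unchanged under passage to a coarser Hausdorff topology; the one subtlety this hides is that one still needs the coarser topology to be Lindel\"of (which Lusinness provides) so that the Borel $\sigma$-algebra of the initial topology is actually generated by the subbasic sets $\{u \with u(x) < t\}$. You sidestep this entirely by first extracting a countable dense $D_1 \subset D$ --- your construction via countable dense subsets of each $D \cap E_i$ does work, since $\Clos D_1 \supset \bigcup_i (D \cap E_i) = D$ is dense --- and then applying Blackwell's theorem to the resulting countably generated, point-separating sub-$\sigma$-algebra. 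That is arguably the cleaner way to reach the stated form of the conclusion.
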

\begin{proof}
	First, note that $E_i$ is a Lusin space for $i \in \nat$, since it may
	be metrised by a translation invariant metric by \cite[\printRoman{2},
	p.~34, prop.~11]{MR910295}. Second, note that $E$ is Hausdorff by
	\cite[\printRoman{2}, p.~32, prop.~9\,(i)]{MR910295}. Therefore $E$ is
	a Lusin space by \cite[Chapter \printRoman{2}, Corollary 2 to Theorem
	5, p.~102]{MR0426084}. Since every compact subset $K$ of $E$ is a
	compact subset of $E_i$ for some $i$ by \cite[\printRoman{3}, p.~3,
	prop.~2; \printRoman{2}, p.~32, prop.~9\,(ii); \printRoman{3}, p.~5,
	prop.~6]{MR910295}, it follows that $E'$ is a Lusin space from
	\cite[Chapter 2, Theorem 7]{MR0426084}. Defining the continuous,
	univalent map $\iota : E' \to \rel^D$ by $\iota (u) = u|D$ for $u \in
	E'$, the initial topology on $E'$ induced by $\iota$ is a Hausdorff
	topology coarser than the compact open topology, hence their Borel
	families agree by \cite[Chapter \printRoman{2}, Corollary 2 to Theorem
	4, p.~101]{MR0426084}.
\end{proof}
\begin{example} \label{example:kx_lusin}
	Suppose $X$ is a locally compact Hausdorff space which admits a
	countable base of its topology.

	Then $\mathscr{K}(X)$ with its locally convex topology (see
	\ref{def:kx}) and $\mathscr{K}(X)^\ast$ with its weak topology are
	Lusin spaces and the Borel family of $\mathscr{K}(X)^\ast$ is
	generated by the sets $\mathscr{K}(X)^\ast \cap \{ \mu \with \mu(f) <
	t \}$ corresponding to $f \in \mathscr{K}(X)$ and $t \in \rel$.
	Moreover, the topological space
	\begin{gather*}
		\mathscr{K}(X)^\ast \cap \{ \mu \with \text{$\mu$ is monotone}
		\}
	\end{gather*}
	is homeomorphic to a complete, separable metric space;\footnote{Such
	spaces are termed polish spaces in \cite[Chapter \printRoman{2},
	Definition 1]{MR0426084}.} in fact, choosing a countable dense subset
	$D$ of $\mathscr{K}(X)^+$, the image of its homeomorphic embedding
	into $\rel^D$ is closed.
\end{example}
\begin{example} \label{example:distrib_lusin}
	Suppose $U$ is an open subset of $\rel^\vdim$ and $Y$ is a separable
	Banach space.

	Then $\mathscr{D}(U,Y)$ with its locally convex topology (see
	\ref{def:duy}) and $\mathscr{D}'(U,Y)$ with its weak topology are
	Lusin spaces and the Borel family of $\mathscr{D}'(U,Y)$ is generated
	by the sets $\mathscr{D}'(U,Y) \cap \{ T \with T (\phi) < t \}$
	corresponding to $\phi \in \mathscr{D}(U,Y)$ and $t \in \rel$.
	Moreover, recalling \ref{remark:lusin} and \cite[4.1.5]{MR41:1976}, it
	follows that
	\begin{gather*}
		\big ( \mathscr{D}'(U,Y) \times \mathscr{K}(U)^\ast \big )
		\cap \{ ( T, \| T \| ) \with \text{$T$ is representable by
		integration} \}
	\end{gather*}
	is a Borel function whose domain is a Borel set (with respect to the
	weak topology on both spaces).
\end{example}
\section{Distributions on products}
The purpose of the present section is to separate functional analytic
considerations from those employing properties of the varifold or the weakly
differentiable function in deriving the various coarea type formulas in
\ref{lemma:meas_fct}, \ref{thm:tv_coarea}, \ref{thm:ap_coarea}, and
\ref{corollary:coarea}.
\begin{miniremark} \label{miniremark:extension}
	Suppose $U$ is an open subset of $\rel^\adim$, $Y$ is a Banach space
	and $S \in \mathscr{D}' ( U,Y )$.

	Then $\| S \|$ is defined to be the largest Borel regular measure on
	$U$ such that
	\begin{gather*}
		\| S \| (C) = \sup \{ S ( \theta ) \with \text{$\theta \in
		\mathscr{D} (U,Y)$ with $\spt \theta \subset C$ and $| \theta
		(z) | \leq 1$ for $z \in U$} \}
	\end{gather*}
	whenever $C$ is an open subset of $U$; existence may be shown by use
	of the techniques in \cite[2.5.14]{MR41:1976}. Moreover, $S$ is
	representable by integration if and only if $\| S \|$ is a Radon
	measure; in this case the value of the unique $\| S\|_{(1)}$
	continuous extension of $S$ to $\Lp{1} ( \| S \|, Y )$ at $\theta \in
	\Lp{1} ( \| S \|, Y )$ will be denoted $S(\theta)$.

	This concept is modelled on Allard \cite[4.2]{MR0307015}. It agrees
	agrees with the one employed in \cite[4.1.5]{MR41:1976} in case $S$ is
	representable by integration. However, in general they are different
	since with the present definition it may happen that $| S ( \theta ) |
	> \| S \| ( |\theta|)$ for some $\theta \in \mathscr{D} ( U, Y )$.
\end{miniremark}
\begin{remark} \label{remark:extension_additive}
	Suppose $U$ is an open subset of $\rel^\adim$, $Y$ is a Banach space
	and $S, T \in \mathscr{D}' ( U,Y )$ are representable by integration.
	Then
	\begin{gather*}
		(S+T) ( \theta ) = S ( \theta ) + T ( \theta ) \quad \text{for
		$\theta \in \Lp{1} ( \| S \| + \| T \|, Y )$};
	\end{gather*}
	in fact, since $\| S + T \| \leq \| S \| + \| T \|$, considering a
	sequence $\theta_i \in \mathscr{D} ( U,Y )$ with $\eqLpnorm{\| S \|+\|
	T \|}{1}{\theta-\theta_i} \to 0$ as $i \to \infty$ yields the
	assertion.
\end{remark}
\begin{miniremark} \label{miniremark:distrib_on_products}
	Suppose $U$ and $V$ are open subsets of Euclidean spaces and
	$Y$ is a Banach space.

	Then the image of $\mathscr{D}^0 (U) \otimes \mathscr{D}^0 ( V)
	\otimes Y$ in $\mathscr{D} ( U \times V, Y )$ is sequentially dense,
	compare \cite[1.1.3, 4.1.2, 4.1.3]{MR41:1976}.
\end{miniremark}
\begin{definition}
	Suppose $1 \leq p \leq \infty$, $\phi$ is a Radon measure on a locally
	compact Hausdorff space $X$, and $Y$ is a Banach space.

	Then $f \in \Lploc{p} ( \phi, Y )$ if and only if $f \in \Lp{p} ( \phi
	\restrict K, Y )$ whenever $K$ is a compact subset of $X$.
\end{definition}
\begin{miniremark} \label{miniremark:distrib_on_r}
	If $R \in \mathscr{D}_0 ( \rel )$ is representable by integration and
	$\| R\|$ is absolutely continuous with respect to $\mathscr{L}^1$,
	then there exists $k \in \Lploc{1} ( \mathscr{L}^1 )$ such that
	\begin{gather*}
		R ( \gamma ) = \tint{}{} \gamma k \ud \mathscr{L}^1 \quad
		\text{whenever $\gamma \in \Lp{1} ( \| R \| )$}
	\end{gather*}
	by \cite[2.5.8]{MR41:1976}\footnote{In \cite[2.5.8]{MR41:1976} the
	hypothesis $\mu (f) = 0$ whenever $f(x) =0$ for $\phi$ almost all $x$
	should be added. (Consider $\phi = 0$, $X = \{ 0 \}$
	and $\mu = \boldsymbol{\delta}_0$.)} and \ref{miniremark:extension},
	hence
	\begin{gather*}
		k(t) = \lim_{\varepsilon \to 0+} \varepsilon^{-1} R
		(i_{t,\varepsilon}) \quad\text{for $\mathscr{L}^1$ almost all
		$t$},
	\end{gather*}
	where $i_{t,\varepsilon}$ is the characteristic function of the
	interval $\{ u \with t < u \leq t + \varepsilon \}$, by \cite[2.8.17,
	2.9.8]{MR41:1976}\footnote{In \cite[2.8.17]{MR41:1976} the hypothesis
	``$0 \leq (V)\,\limsup_{S \to x} [\delta(S) + \phi ( \widehat{S}) /
	\phi (S) ]$ for $\phi$ almost all $x$'' should be replaced by ``$V
	\cap \{ (x,S) \with \phi (S) = 0 \}$ is not fine at $x$ for $\phi$
	almost all $x$''. (Consider $\phi = \mathscr{L}^1$, $V = \{ (x,\{x\})
	\with x \in \rel \} \cup \{ (x,\cball{x}{r}) \with x \in \rel, 0 < r <
	\infty \}$, $\delta = \diam$, and $\tau = 2$.)}.
\end{miniremark}
\begin{lemma} \label{lemma:push_on_product}
	Suppose $\adim \in \nat$, $\mu$ is a Radon measure over an open subset
	$U$ of $\rel^\adim$, $f$ is a $\mu$ measurable real valued function,
	$F$ is a $\mu$ measurable $\Hom ( \rel^\adim, \rel )$ valued function
	with
	\begin{gather*}
		\tint{K \cap \{ z \with a < f (z) < b \}}{} |F| \ud \mu <
		\infty
	\end{gather*}
	whenever $K$ is a compact subset of $U$ and $- \infty < a \leq
	b < \infty$, and $T \in \mathscr{D}' ( U \times \rel, \rel^\adim )$
	satisfies
	\begin{gather*}
		T ( \psi ) = \tint{}{} \left < \psi (z,f(z)), F(z) \right >
		\ud \mu z \quad \text{for $\psi \in \mathscr{D} ( U \times
		\rel, \rel^\adim )$}
	\end{gather*}

	Then $T$ is representable by integration and
	\begin{gather*}
		T ( \psi ) = \tint{}{} \left < \psi (z,f(z)), F(z) \right >
		\ud \mu z, \quad \tint{}{} g \ud \| T \| = \tint{}{} g(z,f(z))
		| F(z)| \ud \mu z
	\end{gather*}
	whenever $\psi \in \Lp{1} ( \| T \|, \rel^\adim )$ and $g$ is an
	$\overline{\rel}$ valued $\| T \|$ integrable function.
\end{lemma}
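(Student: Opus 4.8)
The plan is to reduce the statement to the extension mechanism of \ref{miniremark:extension} by first identifying the measure $\|T\|$ as the pushforward-type measure $z\mapsto |F(z)|\,\mu$ read through the graph map $z\mapsto(z,f(z))$, and then extending the defining integral identity from $\mathscr{D}(U\times\rel,\rel^\adim)$ to $\Lp{1}(\|T\|,\rel^\adim)$ by density. First I would fix compact $K\subset U$ and $-\infty<a\le b<\infty$, and for $\psi\in\mathscr{D}(U\times\rel,\rel^\adim)$ with $\spt\psi\subset K\times\{t:a<t<b\}$ (say; one handles the endpoints by the monotone convergence/exhaustion argument below) estimate
\begin{gather*}
	|T(\psi)| \le \tint{}{} |\psi(z,f(z))|\,|F(z)| \ud\mu z
	\le \big(\max\{|\psi(z,t)|: (z,t)\in U\times\rel\}\big)\tint{K\cap\{z:a<f(z)<b\}}{} |F| \ud\mu,
\end{gather*}
which shows $T$ is representable by integration: the local finiteness of $\|T\|$ on open sets of the form $\Int K\times(a,b)$ follows, and a countable exhaustion of $U\times\rel$ by such products (using that $f$ is $\mu$ measurable and real valued, so $\mu(K\cap\{z:|f(z)|\ge b\})\to 0$ is not needed — rather one exhausts in $b$) gives that $\|T\|$ is a Radon measure.

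Next I would pin down $\|T\|$ precisely. Define the Radon measure $\nu$ over $U\times\rel$ by $\nu(B)=\int_{\{z:(z,f(z))\in B\}} |F(z)| \ud\mu z$ for Borel $B$; this is Radon by the same exhaustion estimate. The claim $\|T\|=\nu$ is proved by the two inequalities. For $\|T\|\le\nu$: for open $C\subset U\times\rel$ and admissible $\theta$ with $\spt\theta\subset C$, $|\theta|\le 1$, one has $T(\theta)=\int\langle\theta(z,f(z)),F(z)\rangle\ud\mu z\le\int_{\{z:(z,f(z))\in C\}}|F(z)|\ud\mu z=\nu(C)$, so by the defining supremum property of $\|T\|$ in \ref{miniremark:extension} we get $\|T\|(C)\le\nu(C)$, hence $\|T\|\le\nu$. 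For $\nu\le\|T\|$: approximate $\sign$-type sections; given $C$ open, choose via Lusin's theorem and \ref{miniremark:distrib_on_products} (density of the algebraic tensor product, so that vector-valued test functions adapted to $F$ are available) a sequence $\theta_i\in\mathscr{D}(U\times\rel,\rel^\adim)$ with $\spt\theta_i\subset C$, $|\theta_i|\le 1$, and $\theta_i(z,f(z))\to F(z)/|F(z)|$ in $\Lp{1}(|F|\,\mu\restrict\{z:(z,f(z))\in C\})$; then $T(\theta_i)\to\int_{\{z:(z,f(z))\in C\}}|F(z)|\ud\mu z=\nu(C)$, whence $\nu(C)\le\|T\|(C)$. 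Taking suprema over open $C$ and using Borel regularity of both measures yields $\|T\|=\nu$; in particular the second displayed identity $\int g\ud\|T\|=\int g(z,f(z))|F(z)|\ud\mu z$ holds for all nonnegative Borel $g$ by the definition of $\nu$ and monotone convergence, and then for $\|T\|$ integrable $g$ by splitting into positive and negative parts.

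Finally, for the first displayed identity on all of $\Lp{1}(\|T\|,\rel^\adim)$: the map $\psi\mapsto\int\langle\psi(z,f(z)),F(z)\rangle\ud\mu z$ is, by the measure identity just established, nothing but $\psi\mapsto\int\langle\psi,\,F\circ(\text{section})\rangle\,\mathrm{d}(\text{something bounded by }\|T\|)$, hence $\|T\|_{(1)}$ continuous on $\mathscr{D}(U\times\rel,\rel^\adim)$; since $\mathscr{D}(U\times\rel,\rel^\adim)$ is $\|T\|_{(1)}$ dense in $\Lp{1}(\|T\|,\rel^\adim)$, this functional coincides with the unique continuous extension of $T$ named $T(\cdot)$ in \ref{miniremark:extension}, giving $T(\psi)=\int\langle\psi(z,f(z)),F(z)\rangle\ud\mu z$ for $\psi\in\Lp{1}(\|T\|,\rel^\adim)$. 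I expect the main obstacle to be the lower bound $\nu\le\|T\|$: one must produce \emph{smooth, compactly supported, $\rel^\adim$-valued} test functions whose composition with the graph map approximates the (merely $\mu$ measurable) unit field $F/|F|$ in $L^1$, which requires combining Lusin's theorem for $F$ with the sequential density of $\mathscr{D}^0(U)\otimes\mathscr{D}^0(\rel)\otimes\rel^\adim$ from \ref{miniremark:distrib_on_products} while keeping supports inside the prescribed open set $C$ and the sup-norm bounded by $1$ — a mollification-plus-truncation argument that is routine in spirit but needs the product structure to be handled carefully.
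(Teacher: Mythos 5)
Your proof is correct and follows essentially the same route as the paper: both identify $\| T \|$ with the pushforward of $|F|\,\mu$ under the graph map $z \mapsto (z,f(z))$ and then extend the defining identity by $\Lp{1}(\|T\|)$-density. The only meaningful difference is that you establish $\| T \| = \nu$ by proving the two inequalities directly (the lower bound via a Lusin-type approximation of $F/|F|$ by test functions supported in $C$), whereas the paper expresses $T(\psi)$ as an integral of $\langle \psi, F\circ p / |F\circ p| \rangle$ against $G_\# \nu$ and then invokes Federer's representation machinery (\cite[2.4.10, 2.4.18\,(1)\,(2)]{MR41:1976}) to identify $\| T \| = G_\# \nu$ in one step.
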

\begin{proof}
	Define $p : U \times \rel \to U$ by
	\begin{gather*}
		p (z,t) = z \quad \text{for $(z,t) \in U \times \rel$}.
	\end{gather*}
	Define the measure $\nu$ over $U$ by $\nu (A) = \tint{A}{\ast} |F| \ud
	\mu$ for $A \subset U$.  Let $G : \dmn f \to U \times \rel$ be defined
	by $G(z) = (z,f(z))$ for $z \in \dmn f$. Employing \cite[2.2.2, 2.2.3,
	2.4.10]{MR41:1976} yields that $\nu \restrict \{ z \with a < f(z) < b
	\}$ is a Radon measure for $- \infty < a \leq b < \infty$, hence so is
	$G_\# ( \nu \restrict A )$ whenever $A$ is $\mu$ measurable by
	\cite[2.2.2, 2.2.3, 2.2.17, 2.3.5]{MR41:1976}. Noting $F | \dmn f = F
	\circ p \circ G$ and hence
	\begin{gather*}
		T(\psi) = \tint{}{} \left < \psi \circ G, |F|^{-1} F \right
		> \ud \nu = \tint{}{} \left < \psi, |F \circ p |^{-1} F \circ
		p \right > \ud G_\# \nu
	\end{gather*}
	for $\psi \in \mathscr{D} (U \times \rel, \rel^\adim )$ by
	\cite[2.4.10, 2.4.18\,(1)]{MR41:1976}, one infers $\| T \| = G_\# \nu$
	and hence the conclusion by means of \cite[2.2.2, 2.2.3, 2.4.10,
	2.4.18\,(1)\,(2)]{MR41:1976}.
\end{proof}
\begin{theorem} \label{thm:distribution_on_product}
	Suppose $U$ is an open subset of $\rel^\adim$, $Y$ is a separable
	Banach space, $T \in \mathscr{D}' ( U \times \rel, Y)$ is
	representable by integration, $R_\theta \in \mathscr{D}_0 ( \rel )$
	satisfy
	\begin{gather*}
		R_\theta ( \gamma ) = T_{(z,t)} ( \gamma (t) \theta (z) )
		\quad \text{whenever $\gamma \in \mathscr{D}^0 ( \rel )$ and
		$\theta \in \mathscr{D} (U,Y )$}.
	\end{gather*}
	and $S(t) : \mathscr{D} ( U, Y ) \to \rel$ satisfy, see
	\ref{miniremark:distrib_on_r},
	\begin{gather*}
		S(t)( \theta ) = \lim_{\varepsilon \to 0+} \varepsilon^{-1}
		R_\theta ( i_{t,\varepsilon} ) \in \rel \quad \text{for
		$\theta \in \mathscr{D} (U, Y )$}
	\end{gather*}
	whenever $t \in \rel$, that is $t \in \dmn S$ if and only if the limit
	exists and belongs to $\rel$ for $\theta \in \mathscr{D} ( U,
	Y )$.

	Then the following two statements hold.
	\begin{enumerate}
		\item \label{item:distribution_on_product:inequality} If $g$
		is an $\{ u \with 0 \leq u \leq \infty \}$ valued $\| T \|$
		measurable function, then
		\begin{gather*}
			\tint{}{} \tint{}{} g(z,t) \ud \| S(t) \| z \ud
			\mathscr{L}^1 t \leq \tint{}{} g \ud \| T \|.
		\end{gather*}
		\item \label{item:distribution_on_product:absolute} If $\|
		R_\theta \|$ is absolutely continuous with respect to
		$\mathscr{L}^1$ for $\theta \in \mathscr{D}(U,Y)$,
		then
		\begin{gather*}
			T ( \psi ) = \tint{}{} S(t) ( \psi (\cdot, t ) ) \ud
			\mathscr{L}^1 t, \quad \tint{}{} g \ud \| T \| =
			\tint{}{} \tint{}{} g (z,t) \ud \| S(t) \| z \ud
			\mathscr{L}^1 t.
		\end{gather*}
		whenever $\psi \in \Lp{1} ( \| T \|, Y )$ and $g$ is
		an $\overline{\rel}$ valued $\| T \|$ integrable function.
	\end{enumerate}
\end{theorem}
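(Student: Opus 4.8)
The plan is to build from the slices a single auxiliary Borel measure $\Lambda$ over $U \times \rel$, given by $\Lambda(A) = \tint{}{} \|S(t)\| ( \{ z \with (z,t) \in A \} ) \ud \mathscr{L}^1 t$, to prove $\Lambda \leq \|T\|$ in general and $\Lambda = \|T\|$ under the hypothesis of \eqref{item:distribution_on_product:absolute}, and then to deduce both assertions from this by Fubini--Tonelli together with the sequential density of $\mathscr{D}^0(U) \otimes \mathscr{D}^0(\rel) \otimes Y$ in $\mathscr{D}(U \times \rel, Y)$ recorded in \ref{miniremark:distrib_on_products}.

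I would begin with the preliminaries on $R_\theta$. Since $T$ is representable by integration, $| R_\theta(\gamma) | \leq \tint{}{} |\gamma(t)| \, |\theta(z)| \ud \|T\|(z,t)$; hence, $\|T\|$ being a Radon measure and $\spt \theta$ being compact, $\|R_\theta\|$ is a Radon measure on $\rel$ and $R_\theta$ is a signed Radon measure representable by integration, see \ref{miniremark:extension}. The Lebesgue decomposition of $R_\theta$ with respect to $\mathscr{L}^1$ and the differentiation theory underlying \ref{miniremark:distrib_on_r} (see \cite[2.8.17, 2.9.8]{MR41:1976}) then yield that $\lim_{\varepsilon \to 0+} \varepsilon^{-1} R_\theta(i_{t,\varepsilon})$ exists for $\mathscr{L}^1$ almost all $t$, equals the density of the absolutely continuous part of $R_\theta$ there, and obeys $\tint{B}{} | S(t)(\theta) | \ud \mathscr{L}^1 t \leq \tint{U \times B}{} |\theta| \ud \|T\|$ for every Borel set $B \subset \rel$. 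Using that $\mathscr{D}(U,Y)$ is sequentially separable, see \ref{remark:lusin} and \ref{example:distrib_lusin}, I would verify that $\dmn S$ contains a Borel set of full $\mathscr{L}^1$ measure and, invoking the Borel structure of $\mathscr{D}'(U,Y) \times \mathscr{K}(U)^\ast$ from \ref{example:distrib_lusin}, that $t \mapsto S(t)$ and $t \mapsto \|S(t)\|$ are Borel measurable there; in particular the iterated integrals in the conclusions make sense, and $S(t)$ is representable by integration for $\mathscr{L}^1$ almost all $t$.

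The crux is the estimate $\tint{B}{} \|S(t)\|(C) \ud \mathscr{L}^1 t \leq \|T\| ( C \times B )$ for open $C \subset U$ and Borel $B \subset \rel$. To prove it, fix a compact exhaustion of $C$ and a countable family $\{\theta_j\}$ in $\mathscr{D}(U,Y)$ which contains $0$ and is dense in each of the sets $\{ \theta \with \spt \theta \subset K \text{ and } |\theta(z)| \leq 1 \text{ for } z \in U \}$, so that $\|S(t)\|(C) = \sup_j S(t)(\theta_j)$ for the relevant $t$ by the definition of $\|S(t)\|$ and the continuity of $S(t)$. Truncating the supremum to $j \leq N$, partitioning $\rel$ according to which index attains it, and applying on each piece $E_i$ the estimate $\tint{E_i}{} | S(t)(\theta_i) | \ud \mathscr{L}^1 t \leq \tint{U \times E_i}{} |\theta_i| \ud \|T\|$ together with $|\theta_i(z)| \leq 1$ and $\spt \theta_i \subset C$, one obtains $\tint{B}{} \sup_{j \leq N} S(t)(\theta_j) \ud \mathscr{L}^1 t \leq \|T\|(C \times B)$; letting $N \to \infty$ and using monotone convergence gives the estimate. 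A disintegration of $\|T\|$ along $(z,t) \mapsto t$ then upgrades it to $\Lambda \leq \|T\|$ on all Borel subsets of $U \times \rel$, whence conclusion \eqref{item:distribution_on_product:inequality}: for nonnegative $\|T\|$ measurable $g$, Fubini--Tonelli gives $\tint{}{} \tint{}{} g(z,t) \ud \|S(t)\| z \ud \mathscr{L}^1 t = \tint{}{} g \ud \Lambda \leq \tint{}{} g \ud \|T\|$.

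For \eqref{item:distribution_on_product:absolute}, the hypothesis $\|R_\theta\| \ll \mathscr{L}^1$ permits applying \ref{miniremark:distrib_on_r} directly to each $R_\theta$, so that $T( \gamma \otimes \theta ) = R_\theta(\gamma) = \tint{}{} \gamma(t) S(t)(\theta) \ud \mathscr{L}^1 t = \tint{}{} S(t) ( (\gamma \otimes \theta)(\cdot,t) ) \ud \mathscr{L}^1 t$ for $\gamma \in \mathscr{D}^0(\rel)$ and $\theta \in \mathscr{D}(U,Y)$. Both sides are sequentially continuous linear functionals of the test function --- for the right hand side one uses that $t \mapsto \|S(t)\|(C)$ lies in $\Lploc{1}(\mathscr{L}^1)$ by the estimate of the preceding paragraph, together with dominated convergence --- so by \ref{miniremark:distrib_on_products} and \ref{remark:ind-limit} the identity $T(\psi) = \tint{}{} S(t)(\psi(\cdot,t)) \ud \mathscr{L}^1 t$ passes to all $\psi \in \mathscr{D}(U \times \rel, Y)$, and then, using $| \tint{}{} S(t)(\psi(\cdot,t)) \ud \mathscr{L}^1 t | \leq \tint{}{} |\psi| \ud \Lambda \leq \tint{}{} |\psi| \ud \|T\|$ and density of $\mathscr{D}(U \times \rel, Y)$ in $\Lp{1}(\|T\|, Y)$, to all $\psi \in \Lp{1}(\|T\|, Y)$. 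Testing the latter with $\psi$ supported in an open set $W$ and bounded by $1$ shows $\|T\|(W) \leq \Lambda(W)$, hence $\Lambda = \|T\|$; the second identity of \eqref{item:distribution_on_product:absolute} is then $\tint{}{} g \ud \Lambda = \tint{}{} g \ud \|T\|$ read through Fubini--Tonelli. I expect the principal difficulty to lie in the third paragraph --- converting the pointwise-in-$\theta$ information about the slices $S(t)$ into the measure inequality $\Lambda \leq \|T\|$ --- for which the separability and Borel-structure results of Section \ref{sec:tvs} and the differentiation of Radon measures are precisely what is required; the extension arguments in \eqref{item:distribution_on_product:absolute} are comparatively routine.
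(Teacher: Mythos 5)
Your proposal is correct, and its architecture --- prove the comparison inequality of part (1), then verify the slice identity on a sequentially dense subset and deduce $\Lambda = \|T\|$ for part (2) --- mirrors the paper's. The derivation of the comparison inequality, however, takes a genuinely different technical route. The paper bounds $\|S(t)\|(f)$ for $f \in \mathscr{K}(U)^+$ directly by the derivate $\mathbf{D}(q_\#(\|T\| \restrict f \circ p), \mathscr{L}^1, V, t)$ of a pushed-forward measure and closes via Federer's derivate theorems \cite[2.4.10, 2.8.17, 2.9.7]{MR41:1976}, reducing general $g \in \mathscr{K}(U \times \rel)^+$ to nonnegative combinations of product functions; you instead estimate $\int_B |S(t)(\theta)| \ud \mathscr{L}^1 t$ one $\theta$ at a time via the Lebesgue decomposition of $R_\theta$, recover $\|S(t)\|(C)$ as a countable supremum $\sup_j S(t)(\theta_j)$, and move the supremum inside the $t$-integral by truncation and a Borel partition of $\rel$. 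Your route is more hands-on and elementary, while the paper's is more compact once Federer's apparatus is granted. Two cautions. First, making $\dmn S$ a full-measure set with $t \mapsto S(t)$ and $t \mapsto \|S(t)\|$ Borel is heavier than your ``I would verify'' suggests: $\dmn S$ is a priori an uncountable intersection over $\theta \in \mathscr{D}(U,Y)$, and upgrading existence of the limit from a countable dense family to all of $\mathscr{D}(U,Y)$ requires combining a uniform bound such as $\varepsilon^{-1}|R_\theta(i_{t,\varepsilon})| \leq \varepsilon^{-1}\,\|T\|\big(K \times \{ s \with t < s \leq t + \varepsilon \}\big)\cdot \sup|\theta|$ with the Lebesgue-point property of these sectional measures; this is exactly what \cite[2.9.19]{MR41:1976}, invoked in the paper, packages, and both arguments ultimately rest on it together with \ref{example:distrib_lusin}. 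Second, passing from $\Lambda \leq \|T\|$ on Borel rectangles to all Borel subsets is an outer-regularity and Carath{\'e}odory-extension argument, not a disintegration of $\|T\|$; the word is misapplied there.
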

\begin{proof}
	Define $p : U \times \rel \to \rel$ and $q : U \times \rel \to U$ by
	\begin{gather*}
		p (z,t) = z \quad \text{and} \quad q (z,t) = t \quad \text{for
		$(z,t) \in U \times \rel$}.
	\end{gather*}
	
	In order to prove \eqref{item:distribution_on_product:inequality}, one
	may assume $g \in \mathscr{K} (U \times \rel)^+$. Observe that one may
	use \cite[2.9.19]{MR41:1976} in conjunction with \ref{remark:lusin},
	\ref{remark:ind-limit}, and \ref{example:distrib_lusin} to deduce that
	$S(t)$ belongs to $\mathscr{D}' (U,Y)$ and is representable
	by integration for $\mathscr{L}^1$ almost all $t$ and that $\| S(t) \|
	\in \mathscr{K} (U)^\ast$ depends $\mathscr{L}^1$ measurably on $t$.
	Suppose $f \in \mathscr{K} (U)^+$, $\gamma \in \mathscr{K} ( \rel)^+$
	and $g (z,t) = f (z) \gamma (t)$ for $(z,t) \in U \times \rel$.
	\begin{align*}
		& \| S (t) \| ( f ) \leq \mathbf{D} ( q_\# ( \| T \| \restrict
		f \circ p ), \mathscr{L}^1, V, t ) \quad \text{for
		$\mathscr{L}^1$ almost all $t$}, \\
		& \qquad \text{where $V = \{ (s,I) \with \text{$s \in I
		\subset \rel$ and $I$ is a compact interval} \}$}.
	\end{align*}
	Employing \cite[2.4.10, 2.8.17, 2.9.7]{MR41:1976}, one infers that
	\begin{gather*}
		\begin{aligned}
			\tint{}{} \| S (t) \| ( f ) \gamma(t) \ud
			\mathscr{L}^1 t & \leq \tint{}{} \mathbf{D} ( q_\# (
			\| T \| \restrict f \circ p ), \mathscr{L}^1, V, t )
			\gamma (t) \ud \mathscr{L}^1 t \\
			& \leq q_\# ( \| T \| \restrict ( f \circ p ) ) (
			\gamma ) = \| T \| ( ( f \circ p ) ( \gamma \circ q )
			) = \| T \| (g).
		\end{aligned}
	\end{gather*}
	An arbitrary $g \in \mathscr{K} (U \times \rel)^+$ may be approximated
	by a sequence of functions which are nonnegative linear combinations
	of functions of the previously considered type, compare \cite[4.1.2,
	4.1.3]{MR41:1976}.

	To prove the first equation in
	\eqref{item:distribution_on_product:absolute}, it is sufficient to
	exhibit a sequentially dense subset $D$ of $\mathscr{D} (U \times
	\rel, Y )$ such that
	\begin{gather*}
		T ( \psi ) = \tint{}{} S(t) (\psi(\cdot,t)) \ud \mathscr{L}^1
		t \quad \text{whenever $\psi \in D$}
	\end{gather*}
	by \eqref{item:distribution_on_product:inequality} and
	\ref{remark:ind-limit}. If $\theta \in \mathscr{D} (U,Y)$, then
	\ref{miniremark:distrib_on_r} implies
	\begin{gather*}
		R_\theta ( \gamma ) = \tint{}{} \gamma (t) S(t) ( \theta ) \ud
		\mathscr{L}^1 t, \\
		T_{(z,t)} ( \gamma (t) \theta (z) ) = R_\theta ( \gamma ) =
		\tint{}{} S(t)_z ( \gamma(t) \theta (z) ) \ud \mathscr{L}^1 t
	\end{gather*}
	for $\gamma \in \mathscr{D}^0 ( \rel )$. One may now take $D$ to be
	the image of $\mathscr{D}^0 ( \rel ) \otimes \mathscr{D} (U,Y)$ in
	$\mathscr{D} ( U \times \rel, Y )$, see
	\ref{miniremark:distrib_on_products}.

	The second equation in \eqref{item:distribution_on_product:absolute}
	follows from \eqref{item:distribution_on_product:inequality} and the
	first equation in \eqref{item:distribution_on_product:absolute}.
\end{proof}
\section{Monotonicity identity} \label{sec:monotonicity}
In preparation to the oscillation estimate in \ref{thm:mod_continuity}, some
simple consequences of the monotonicity identity are proven here in
\ref{corollary:monotonicity} and \ref{corollary:density_1d}.
\begin{lemma} \label{lemma:calculus}
	Suppose $\vdim > 1$, $0 \leq s < r < \infty$, $0 \leq \gamma <
	\infty$, $I = \{ t \with s < t \leq r \}$, and $f : I \to \{ t : 0 < t
	< \infty \}$ is a function satisfying for $t \in I$
	\begin{gather*}
		\limsup_{u \to t-} f(u) \leq f (t), \quad
		f (t) \leq f(r) + \gamma \tint{t}{r} u^{-1} f (u)^{1-1/\vdim}
		\ud \mathscr{L}^1 u.
	\end{gather*}

	Then
	\begin{gather*}
		f (t) \leq \big ( 1 + ( \gamma/ \vdim ) f (r)^{-1/\vdim} \log
		( r/t ) \big )^\vdim f (r) \quad \text{for $t \in I$}.
	\end{gather*}
\end{lemma}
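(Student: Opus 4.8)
The plan is to dominate $f$ by the solution of the associated integral \emph{equation} and to extract a pointwise bound by a first--crossing argument in which the left upper semicontinuity of $f$ is the only topological input. Concretely, for $0 \leq \varepsilon < \infty$ I would introduce
\[
	G_\varepsilon (t) = \big ( f(r)+\varepsilon \big ) \big ( 1 + ( \gamma/\vdim )( f(r)+\varepsilon )^{-1/\vdim} \log (r/t) \big )^\vdim , \qquad t \in I ,
\]
and record the elementary facts that $G_\varepsilon$ is continuous and nonincreasing on $I$ with $G_\varepsilon \geq f(r)+\varepsilon > 0$, that $G_\varepsilon (t)^{1/\vdim} = (f(r)+\varepsilon)^{1/\vdim} + (\gamma/\vdim)\log(r/t)$, and hence --- differentiating this identity and integrating back from $t$ to $r$ --- that
\[
	G_\varepsilon (t) = f(r) + \varepsilon + \gamma \int_t^r u^{-1} G_\varepsilon (u)^{1-1/\vdim} \ud \mathscr{L}^1 u \qquad \text{for } t \in I .
\]
Since the asserted estimate is exactly ``$f \leq G_0$'' on $I$ and $G_\varepsilon (t) \to G_0 (t)$ as $\varepsilon \to 0+$ for each $t \in I$, it suffices to prove $f(t) < G_\varepsilon (t)$ for all $t \in I$ and all $\varepsilon > 0$; the case $\gamma = 0$ is immediate from the hypothesis, so I would also assume $\gamma > 0$.

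Fixing $\varepsilon > 0$, let $S$ be the set of $t \in I$ such that $f(u) < G_\varepsilon (u)$ for $t \leq u \leq r$. Then $S$ is a subinterval of $I$ containing $r$, and, because $\limsup_{u \to r-} f(u) \leq f(r) < f(r)+\varepsilon = G_\varepsilon(r)$ while $G_\varepsilon$ is continuous, $S$ is nonempty; set $c = \inf S$. The claim is $c = s$, which gives $S = I$. Suppose instead $c > s$, so $c \in I$. By definition of $c$ one has $f(u) < G_\varepsilon (u)$ for $c < u \leq r$, hence $f(u)^{1-1/\vdim} \leq G_\varepsilon (u)^{1-1/\vdim}$ there since $t \mapsto t^{1-1/\vdim}$ is increasing on $\{ t : 0 < t < \infty \}$; monotonicity of the (upper) integral together with the integral equation for $G_\varepsilon$ then yields
\begin{align*}
	f(c) & \leq f(r) + \gamma \int_c^r u^{-1} f(u)^{1-1/\vdim} \ud \mathscr{L}^1 u \\
	& \leq f(r) + \gamma \int_c^r u^{-1} G_\varepsilon (u)^{1-1/\vdim} \ud \mathscr{L}^1 u = G_\varepsilon (c) - \varepsilon .
\end{align*}
In particular $f(c) < G_\varepsilon (c)$, so $f < G_\varepsilon$ on all of $[c,r]$; applying $\limsup_{u \to c-} f(u) \leq f(c) < G_\varepsilon (c)$ and the continuity of $G_\varepsilon$ once more produces $\delta > 0$ with $f < G_\varepsilon$ on $\{ u : c-\delta < u \leq c \}$. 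Thus $f < G_\varepsilon$ on $\{ u : c-\delta < u \leq r \}$, which places a point of $I$ strictly below $c$ in $S$ (or forces $S = I$ when $c-\delta \leq s$), contradicting $c = \inf S > s$. Hence $c = s$ and $f < G_\varepsilon$ on $I$; letting $\varepsilon \to 0+$ finishes the proof.

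The one step that requires care is obtaining the \emph{strict} inequality $f(c) < G_\varepsilon (c)$ at the putative first crossing point $c$: this is precisely where the slack built into $G_\varepsilon$ by the additive $\varepsilon$, and the hypothesis $\limsup_{u \to t-} f(u) \leq f(t)$, enter, and without both the closing continuity argument collapses. Everything else --- the closed form for $G_\varepsilon$, its monotonicity, and the limit $\varepsilon \to 0+$ --- is routine. I would also remark that no measurability of $f$ is needed: reading $\int \ud \mathscr{L}^1$ as an upper integral, the pointwise bound $f(u)^{1-1/\vdim} \leq G_\varepsilon (u)^{1-1/\vdim}$ on $\{ u : c < u \leq r \}$ already forces the relevant upper integral to be finite and no larger than $\int_c^r u^{-1} G_\varepsilon (u)^{1-1/\vdim} \ud \mathscr{L}^1 u$, so the displayed chain of inequalities is legitimate. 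For measurable $f$ one could instead differentiate the right-hand side of the hypothesis inequality directly and run a one-line Gronwall comparison, but the crossing argument avoids any such regularity assumption.
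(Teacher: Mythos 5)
Your proof is correct and follows essentially the same route as the paper: the paper replaces $f(r)$ by $\alpha > f(r)$ (your $f(r)+\varepsilon$), observes that $\alpha(1+\beta\log(r/\cdot))^\vdim$ solves the comparison integral equation, and runs a relatively-open/relatively-closed (first-crossing) argument on the set of $t$ for which the bound holds on $[t,r]$, using the left upper semicontinuity exactly at the continuation step. The only cosmetic difference is that the paper first notes that the left upper semicontinuity forces $f$ to be $\mathscr{L}^1 \restrict I$ measurable via \cite[2.9.11]{MR41:1976}, whereas you sidestep measurability by interpreting the integral as an upper integral — both are legitimate.
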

\begin{proof}
	Whenever $s < t < r$ and $f (t) < y$ there holds
	$\density^{\ast 1} ( \mathscr{L}^1 \restrict \{ u \with f(u) \geq y
	\}, t ) \leq 1/2$, hence $f$ is $\mathscr{L}^1 \restrict I$ measurable
	by \cite[2.9.11]{MR41:1976}.

	Suppose $f (r) < \alpha < \infty$ and $\beta = ( \gamma/ \vdim
	) \alpha^{-1/\vdim}$ and consider the set $J$ of all $t \in I$ such
	that
	\begin{gather*}
		f (u) \leq ( 1 + \beta \log ( r/u ) )^\vdim \alpha \quad
		\text{whenever $t \leq u \leq r$}.
	\end{gather*}
	Clearly, $J$ is an interval and $r$ belongs to the interior of $J$
	relative to $I$. The same holds for $t$ with $s < t \in \Clos J$
	since
	\begin{gather*}
		\begin{aligned}
			f ( t ) & \leq f ( r ) + \gamma \alpha^{1-1/\vdim}
			\tint{t}{r} u^{-1} ( 1 + \beta \log ( r/u))^{\vdim-1}
			\ud \mathscr{L}^1 u \\
			& = f ( r ) + \alpha \big ( ( 1 + \beta \log ( r/t)
			)^\vdim - 1 \big ) < \alpha ( 1 + \beta \log ( r/t )
			)^\vdim.
		\end{aligned}
	\end{gather*}
	Therefore $I$ equals $J$.
\end{proof}
\begin{theorem} \label{thm:monotonicity}
	Suppose $\vdim, \adim \in \nat$, $\vdim \leq \adim$, $a \in
	\rel^\adim$, $0 < r < \infty$, $V \in \Var_\vdim ( \oball{a}{r}
	)$, and $\zeta \in \mathscr{D}^0 ( \{ s \with 0 < s < r \} )$.

	Then there holds
	\begin{gather*}
		\begin{aligned}
			& - \tint{0}{r} \zeta'(s) s^{-\vdim}
			\measureball{\| V \|}{ \cball{a}{s} } \ud
			\mathscr{L}^1 s \\
			& \qquad = \tint{( \oball{a}{r} \without \{ a \} )
			\times \grass{\adim}{\vdim} }{} \zeta ( |z-a| )
			|z-a|^{-\vdim-2} | \perpproject{S} (z-a) |^2 \ud V
			(z,S) \\
			& \qquad \quad - ( \delta V )_z \left ( \big (
			\tint{|z-a|}{r} s^{-\vdim-1} \zeta (s) \ud
			\mathscr{L}^1 s \big ) (z-a) \right ).
		\end{aligned}
	\end{gather*}
\end{theorem}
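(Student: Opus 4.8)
The plan is to derive the identity from the first variation formula applied to an appropriately chosen radial vector field, following the classical route to the monotonicity identity.

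First I would reduce to the case $a = 0$: replacing $V$ by $(\trans{-a})_\# V$ leaves both sides of the asserted equation unchanged, so one may assume $a = 0$. Choosing $0 < s_0 < s_1 < r$ with $\spt \zeta \subset \{ s \with s_0 \leq s \leq s_1 \}$, define $\gamma \colon \{ t \with 0 \leq t < r \} \to \rel$ by
\[
\gamma(t) = \tint{t}{r} s^{-\vdim-1} \zeta(s) \ud \mathscr{L}^1 s .
\]
Then $\gamma$ is smooth with $\gamma'(t) = - t^{-\vdim-1} \zeta(t)$, it is constant on $\{ t \with 0 \leq t < s_0 \}$ and vanishes on $\{ t \with s_1 \leq t < r \}$; consequently $g \colon \oball{0}{r} \to \rel^\adim$ defined by $g(z) = \gamma(|z|)\,z$ is a smooth vector field with $\spt g \subset \cball{0}{s_1}$, hence $g \in \mathscr{D}(\oball{0}{r}, \rel^\adim)$, and $g$ is precisely the vector field appearing in the last term of the assertion.

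Next, for $z \neq 0$ and $S \in \grass{\adim}{\vdim}$ I would record that $Dg(z)$ is the linear map $v \mapsto \gamma(|z|)\,v + \gamma'(|z|)\,|z|^{-1}(z \bullet v)\,z$, whence, using $\trace \project{S} = \vdim$ and $|\project{S}(z)|^2 = |z|^2 - |\perpproject{S}(z)|^2$ and then substituting $\gamma'(t) = -t^{-\vdim-1}\zeta(t)$,
\[
\Div_S g(z) = \vdim \gamma(|z|) + \gamma'(|z|)|z| - \gamma'(|z|)|z|^{-1}|\perpproject{S}(z)|^2 = \vdim \gamma(|z|) - |z|^{-\vdim}\zeta(|z|) + |z|^{-\vdim-2}\zeta(|z|)|\perpproject{S}(z)|^2 ,
\]
the same formula being valid trivially at $z = 0$, where $\gamma'$ and $\zeta$ vanish. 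Integrating over $\oball{0}{r} \times \grass{\adim}{\vdim}$ with respect to $V$ and invoking $\delta V(g) = \tint{}{} \Div_S g(z) \ud V(z,S)$ therefore yields
\[
\delta V(g) = \vdim \tint{}{} \gamma(|z|) \ud \| V \| z - \tint{}{} |z|^{-\vdim}\zeta(|z|) \ud \| V \| z + \tint{}{} |z|^{-\vdim-2}\zeta(|z|)|\perpproject{S}(z)|^2 \ud V(z,S) ,
\]
in which the first two summands involve $z$ only (and the last one is unaffected by deleting $z=0$ from the domain, since $\zeta$ vanishes near $0$).

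It remains to evaluate those two $\| V \|$-integrals. Writing $\gamma(|z|) = \tint{0}{r} \id{\{ |z| < s \}}\, s^{-\vdim-1}\zeta(s) \ud \mathscr{L}^1 s$ and, since $s \mapsto s^{-\vdim}\zeta(s)$ vanishes for $s \geq s_1$, also $|z|^{-\vdim}\zeta(|z|) = \tint{0}{r} \id{\{ |z| < s \}}\, \big( \vdim s^{-\vdim-1}\zeta(s) - s^{-\vdim}\zeta'(s) \big) \ud \mathscr{L}^1 s$, an application of Fubini's theorem — legitimate because all integrations take place over the compact set $\cball{0}{s_1} \subset \oball{0}{r}$, on which $\| V \|$ is finite — together with the fact that $\| V \|(\oball{0}{s}) = \measureball{\| V \|}{\cball{0}{s}}$ for all but countably many $s$, gives $\tint{}{} \gamma(|z|) \ud \| V \| z = \tint{0}{r} s^{-\vdim-1}\zeta(s) \measureball{\| V \|}{\cball{0}{s}} \ud \mathscr{L}^1 s$ and
\[
\tint{}{} |z|^{-\vdim}\zeta(|z|) \ud \| V \| z = \vdim \tint{0}{r} s^{-\vdim-1}\zeta(s) \measureball{\| V \|}{\cball{0}{s}} \ud \mathscr{L}^1 s - \tint{0}{r} s^{-\vdim}\zeta'(s) \measureball{\| V \|}{\cball{0}{s}} \ud \mathscr{L}^1 s .
\]
Substituting these into the above expression for $\delta V(g)$, the two copies of $\vdim \tint{0}{r} s^{-\vdim-1}\zeta(s) \measureball{\| V \|}{\cball{0}{s}} \ud \mathscr{L}^1 s$ cancel, and the asserted identity follows after moving $\delta V(g)$ to the other side and undoing the translation by $a$. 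This is essentially the classical monotonicity computation, so I do not expect a serious obstacle; the steps requiring care are the verification that $g$ is a genuine test vector field — in particular that $z \mapsto \gamma(|z|)\,z$ is smooth across the origin, which rests on $\gamma$ being locally constant near $0$ because $\spt \zeta$ is a compact subset of $\{ s \with 0 < s < r \}$ — and the bookkeeping of signs in the two Fubini reductions and the final cancellation.
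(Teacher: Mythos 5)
Your proof is correct and follows essentially the same route as the paper's: both test the first variation against the radial vector field $g(z)=\gamma(|z-a|)(z-a)$ with $\gamma(t)=\int_t^r s^{-\vdim-1}\zeta(s)\ud\mathscr{L}^1 s$, compute the tangential divergence using $|\project{S}(z)|^2 = |z|^2 - |\perpproject{S}(z)|^2$, and convert the remaining $\|V\|$--integrals into $\int_0^r(\cdot)\,\measureball{\|V\|}{\cball{a}{s}}\ud\mathscr{L}^1 s$ via Fubini. The only cosmetic difference is that the paper bundles the two radial terms into $\phi(s)=s\eta'(s)+\vdim\eta(s)$ (with $\eta=-\gamma$), observes $\phi'(s)=s^{-\vdim}\zeta'(s)$, and applies Fubini once, whereas you apply Fubini separately to $\gamma(|z|)$ and $|z|^{-\vdim}\zeta(|z|)$ and let the $\vdim s^{-\vdim-1}\zeta(s)$ terms cancel by hand.
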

\begin{proof}
	Assume $a=0$ and let $I = \{ s \with - \infty < s < r \}$
	and $J = \{ s \with 0 < s < r \}$.
	
	If $\eta \in \mathscr{E}^0 (I)$, $\sup \spt \eta < r$, $0 \notin
	\spt \eta'$ and $g : \oball{a}{r} \to \rel^\adim$ is associated to
	$\eta$ by $g(z) = \eta ( |z| ) z$ for $z \in \oball{a}{r}$, then
	$Dg(0) \bullet \project{S} = \vdim \eta(0)$ and
	\begin{align*}
		Dg(z) \bullet \project{S} & = | \project{S} (z) |^2 |z|^{-1}
		\eta' (|z|) + \vdim \eta (|z|) \\
		& = - | \perpproject{S} (z) |^2 |z|^{-1} \eta'(|z|) + |z|
		\eta' (|z|) + \vdim \eta ( |z| )
	\end{align*}
	whenever $z \in \rel^\adim$, $0 < |z| < r$, and $S \in
	\grass{\adim}{\vdim}$. Define $\eta, \phi \in \mathscr{E}^0 (I)$ by
	\begin{gather*}
		\eta (s) = - \tint{\sup \{ s, 0 \}}{r} u^{-\vdim-1} \zeta(u)
		\ud \mathscr{L}^1 u \quad \text{and} \quad \phi (s) = s
		\eta'(s) + \vdim \eta(s)
	\end{gather*}
	for $s \in I$, hence $\sup \spt \phi \leq \sup \spt \eta <
	r$, $0 \notin \spt \eta'$, and
	\begin{gather*}
		\eta'(s) = s^{-\vdim-1} \zeta(s), \quad \eta''(s) = -
		(\vdim+1) s^{-\vdim-2} \zeta(s) + s^{-\vdim-1} \zeta'(s)
	\end{gather*}
	for $s \in J$. Using Fubini's theorem, one computes with $g$ as before
	that
	\begin{gather*}
		\begin{aligned}
			& \delta V (g) + \tint{( \rel^\adim \times
			\grass{\adim}{\vdim}) \cap \{ (z,S) \with 0 < |z| < r
			\}}{} \zeta (|z|) |z|^{-\vdim-2} | \perpproject{S} (z)
			|^2 \ud V (z,S) \\
			& \quad = \tint{}{} \phi (|z|) \ud \| V \| z = -
			\tint{}{} \tint{|z|}{r} \phi '(s) \ud \mathscr{L}^1 s
			\ud \| V \| z = - \tint{0}{r} \phi'(s) \measureball{\|
			V \|}{\cball{a}{s}} \ud \mathscr{L}^1 s.
		\end{aligned}
	\end{gather*}
	Finally, notice that $\phi'(s) = s \eta''(s) + ( \vdim+1)
	\eta' (s) = s^{-\vdim} \zeta'(s)$ for $s \in J$.
\end{proof}
\begin{remark}
	This is a slight generalisation of Simon's version of the monotonicity
	identity, see \cite[17.3]{MR756417}, included here for the convenience
	of the reader.
\end{remark}
\begin{miniremark} \label{miniremark:situation_general_varifold}
	Suppose $\vdim, \adim \in \nat$, $\vdim \leq \adim$, $U$ is an
	open subset of $\rel^\adim$, $V \in \Var_\vdim ( U )$, $\| \delta V
	\|$ is a Radon measure, and $\eta ( V, \cdot )$ is a $\| \delta V \|$
	measurable $\mathbf{S}^{\adim-1}$ valued function satisfying
	\begin{gather*}
		( \delta V ) (g) = \tint{}{} g(z) \bullet \eta ( V, z ) \ud \|
		\delta V \| z \quad \text{for $g \in \mathscr{D} ( U,
		\rel^\adim )$},
	\end{gather*}
	see Allard \cite[4.3]{MR0307015}.
\end{miniremark}
\begin{corollary} \label{corollary:monotonicity}
	Suppose $\vdim$, $\adim$, $U$, $V$, and $\eta$ are as in
	\ref{miniremark:situation_general_varifold}.

	Then there holds
	\begin{gather*}
		\begin{aligned}
			&s^{-\vdim} \measureball{\| V \|}{ \cball as } +
			\tint{( \cball ar \without \cball as )\times
			\grass{\adim}{\vdim}}{} | z-a |^{-\vdim-2} |
			\perpproject{S} (z-a) |^2 \ud V(z,S) \\
			& \qquad = r^{-\vdim} \measureball{\| V \|}{ \cball ar
			} \\
			& \qquad \phantom{=}\ + \vdim^{-1}
			\tint{\cball{a}{r}}{} ( \sup \{ |z-a|,s \}^{-\vdim}-
			r^{-\vdim}) (z-a) \bullet \eta (V,z) \ud \| \delta V
			\| z
		\end{aligned}
	\end{gather*}
	whenever $a \in \rel^\adim$, $0 < s < r < \infty$, and $\cball ar
	\subset U$.
\end{corollary}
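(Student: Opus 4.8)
The plan is to deduce the identity from the monotonicity identity \ref{thm:monotonicity} by inserting for the test function $\zeta$ a sequence approximating the characteristic function of the half open interval $\{ u \with s < u \leq r \}$. Since $\cball ar$ is a compact subset of the open set $U$, one may first fix $r'$ with $r < r' < \infty$ and $\oball{a}{r'} \subset U$. Then $V \restrict \oball{a}{r'} \times \grass{\adim}{\vdim}$ belongs to $\Var_\vdim ( \oball{a}{r'} )$, its weight agrees with $\| V \|$ on $\cball{a}{u}$ whenever $u < r'$, and every $g \in \mathscr{D} ( U, \rel^\adim )$ with $\spt g \subset \oball{a}{r'}$ satisfies $\delta ( V \restrict \oball{a}{r'} \times \grass{\adim}{\vdim} ) (g) = ( \delta V ) (g) = \tint{}{} g \bullet \eta ( V, \cdot ) \ud \| \delta V \|$ by \ref{miniremark:situation_general_varifold}. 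Hence the identity of \ref{thm:monotonicity}, with $r'$ in place of $r$, is at one's disposal for every $\zeta \in \mathscr{D}^0 ( \{ u \with 0 < u < r' \} )$.

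For $k \in \nat$ large enough that $s + 1/k < r$ and $r + 1/k < r'$, I would choose $\zeta_k \in \mathscr{D}^0 ( \{ u \with 0 < u < r' \} )$ with $0 \leq \zeta_k \leq 1$, $\zeta_k (u) = 1$ for $s + 1/k \leq u \leq r$, $\zeta_k (u) = 0$ for $u \notin \{ u \with s < u < r + 1/k \}$, with $\zeta_k$ nondecreasing on $\{ u \with s \leq u \leq s + 1/k \}$ and nonincreasing on $\{ u \with r \leq u \leq r + 1/k \}$. The decisive feature of this choice is that the descent of $\zeta_k$ from $1$ to $0$ takes place to the \emph{right} of $r$ — which is possible only because of the room provided by the enlargement $r' > r$ — so that the boundary contribution at $r$ will involve the closed ball $\cball ar$ rather than $\oball ar$; likewise the ascent near $s$ takes place to the right of $s$.

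Then I would pass to the limit $k \to \infty$ in the three terms of the identity of \ref{thm:monotonicity} applied to $V \restrict \oball{a}{r'} \times \grass{\adim}{\vdim}$ with $\zeta = \zeta_k$. On the left, $u \mapsto u^{-\vdim} \| V \| ( \cball au )$ is right continuous on $\{ u \with 0 < u < r' \}$ by continuity from above of $\| V \|$, while $\zeta_k'$ vanishes off $\{ u \with s \leq u \leq s + 1/k \} \cup \{ u \with r \leq u \leq r + 1/k \}$, is nonnegative with $\mathscr{L}^1$ integral $1$ on the first of these intervals and nonpositive with $\mathscr{L}^1$ integral $-1$ on the second; hence $- \tint{0}{r'} \zeta_k'(u) u^{-\vdim} \| V \| ( \cball au ) \ud \mathscr{L}^1 u \to r^{-\vdim} \| V \| ( \cball ar ) - s^{-\vdim} \| V \| ( \cball as )$. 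For the integral over $( \oball{a}{r'} \without \{ a \} ) \times \grass{\adim}{\vdim}$ one has $\zeta_k ( |z-a| ) \to 1$ when $s < |z-a| \leq r$ and $\zeta_k ( |z-a| ) \to 0$ otherwise, and $|z-a|^{-\vdim-2} | \perpproject{S} (z-a) |^2 \zeta_k ( |z-a| ) \leq s^{-\vdim}$ with the left hand side supported, as a function of $z$, in a fixed compact subset of $U$; so dominated convergence yields the integral of $|z-a|^{-\vdim-2} | \perpproject{S} (z-a) |^2$ over $( \cball ar \without \cball as ) \times \grass{\adim}{\vdim}$. For the first variation term, $\tint{|z-a|}{r'} u^{-\vdim-1} \zeta_k (u) \ud \mathscr{L}^1 u \to \vdim^{-1} ( \sup \{ |z-a|, s \}^{-\vdim} - r^{-\vdim} )$ when $|z-a| \leq r$ and $\to 0$ when $|z-a| > r$, is bounded, and, multiplied by $z - a$, supported in a fixed compact subset of $U$; so dominated convergence and \ref{miniremark:situation_general_varifold} give convergence of $- ( \delta V )_z ( ( \tint{|z-a|}{r'} u^{-\vdim-1} \zeta_k (u) \ud \mathscr{L}^1 u ) (z-a) )$ to $- \vdim^{-1} \tint{\cball ar}{} ( \sup \{ |z-a|, s \}^{-\vdim} - r^{-\vdim} ) (z-a) \bullet \eta ( V, z ) \ud \| \delta V \| z$. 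Equating the three limits and rearranging produces the asserted identity.

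The point demanding care is not any single estimate but the coordination of these ingredients: the auxiliary radius $r'$ and the shapes of the $\zeta_k$ must be arranged so that every boundary term surviving in the limit is expressed through a \emph{closed} ball — which is why both the ascent near $s$ and the descent near $r$ are placed to the right of these radii and why the right continuity of $u \mapsto \| V \| ( \cball au )$ enters — while simultaneously all the supports occurring must stay inside one fixed compact subset of $U$, so that the dominated convergence theorem may be invoked for both $\| V \|$ and $\| \delta V \|$.
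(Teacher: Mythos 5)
Your proposal is correct and follows exactly the approach indicated in the paper, whose own proof is the one-line remark that one lets $\zeta$ approximate the characteristic function of $\{t \with s < t \leq r\}$ and invokes \ref{thm:monotonicity}; you have supplied the details, in particular the point that the one-sided (right-sided) approximation near both $s$ and $r$ is what makes the boundary terms come out as closed balls via right-continuity of $u \mapsto \measureball{\| V \|}{\cball{a}{u}}$.
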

\begin{proof}
	Letting $\zeta$ approximate the charateristic function of $\{ t \with
	s < t \leq r \}$, the assertion is a consequence of
	\ref{thm:monotonicity}.
\end{proof}
\begin{remark} \label{remark:monotonicity}
	Using Fubini's theorem, the last summand can be expressed as
	\begin{gather*}
		\tint{s}{r} t^{-\vdim-1} \tint{\cball at}{} (z-a) \bullet \eta
		(V,z) \ud \| \delta V \| z \ud \mathscr{L}^1 t.
	\end{gather*}
\end{remark}
\begin{remark}
	\ref{corollary:monotonicity} and \ref{remark:monotonicity} are
	a minor variations of Simon \cite[17.3, 17.4]{MR756417}.
\end{remark}
\begin{corollary} \label{corollary:density_1d}
	Suppose $\vdim$, $\adim$, $U$, and $V$ are as in
	\ref{miniremark:situation_general_varifold}, $\vdim = 1$, and $T = U
	\cap \{ z \with \| \delta V \| ( \{ z \} ) > 0 \}$.

	Then the following four statements hold.
	\begin{enumerate}
		\item \label{item:density_1d:upper_bound} If $a \in
		\rel^\adim$, $0 < s < r < \infty$, and $\cball{a}{r} \subset
		U$, then
		\begin{gather*}
			s^{-1} \measureball{\| V \|}{ \cball as } +
			\tint{(\cball{a}{r} \without \{ a \} ) \times
			\grass{\adim}{\vdim}}{} |z-a|^{-3} | \perpproject{S}
			(z-a) |^2 \ud V (z,S) \\
			\leq r^{-1} \measureball{\| V \|}{\cball{a}{r}} + \|
			\delta V \| ( \cball{a}{r} \without \{ a \} )
		\end{gather*}
		\item \label{item:density_1d:real} $\density^1 ( \| V
		\|, \cdot )$ is a real valued function on $U$.
		\item \label{item:density_1d:upper_semi} $\density^1 ( \| V
		\|, \cdot )$ is upper semicontinuous at $a$ whenever $a \in U
		\without T$.
		\item \label{item:density_1d:lower_bound} If $\density^1 ( \|
		V \|, z ) \geq 1$ for $\| V \|$ almost all $z$, then $a
		\in \spt \| V \|$ implies
		\begin{gather*}
			\density^1 ( \| V \|, a ) \geq 1 \quad \text{if $a
			\not \in T$} \qquad \text{and} \qquad \density^1 ( \| V
			\|, a) \geq 1/2 \quad \text{if $a \in T$}.
		\end{gather*}
	\end{enumerate}
\end{corollary}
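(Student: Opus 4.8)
The plan is to read off all four assertions from the monotonicity identity in the forms \ref{corollary:monotonicity} and \ref{remark:monotonicity}. Throughout write $M_a(\rho) = \rho^{-1} \measureball{\| V \|}{\cball a\rho}$, and abbreviate $Q_a(s,r) = \tint{(\cball ar \without \cball as) \times \grass{\adim}{1}}{} |z-a|^{-3} | \perpproject S (z-a) |^2 \ud V(z,S) \geq 0$ and $E_a(s,r) = \tint sr t^{-2} \tint{\cball at}{} (z-a) \bullet \eta (V,z) \ud \| \delta V \| z \ud \mathscr L^1 t$, so that for $\cball ar \subset U$ the content of \ref{corollary:monotonicity} and \ref{remark:monotonicity} is $M_a(s) + Q_a(s,r) = M_a(r) + E_a(s,r)$. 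Two elementary observations will be used repeatedly: since $|(z-a) \bullet \eta(V,z)| \leq |z-a|$ and this expression vanishes at $z = a$, dominated convergence yields $E_a(s,r) \to E_a(0^+,r) \in \rel$ as $s \to 0+$ with $| E_a(0^+,r) | \leq \measureball{\| \delta V \|}{\cball ar \without \{a\}}$, whereas monotone convergence yields $Q_a(s,r) \to Q_a(0^+,r) \in [0,\infty]$. For \eqref{item:density_1d:upper_bound} I plan to invoke the identity with the prescribed $s$ and $r$, combine it with these bounds, and use a short limiting argument (letting the inner radius tend to $0$) to replace the region $\cball ar \without \cball as$ by $\cball ar \without \{a\}$ on the left.

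For \eqref{item:density_1d:real}, I would fix $a \in U$ and $r$ with $\cball ar \subset U$ and rewrite the identity as $M_a(s) = M_a(r) + E_a(s,r) - Q_a(s,r)$. As $s \to 0+$ the right-hand side converges to $M_a(r) + E_a(0^+,r) - Q_a(0^+,r)$ in $[-\infty,\infty)$; since $M_a(s) \geq 0$ for every $s$, this limit is forced to be a nonnegative real number (in particular $Q_a(0^+,r) < \infty$). Hence $\lim_{s \to 0+} M_a(s)$ exists in $[0,\infty)$, which is exactly the statement that $\density^1 ( \| V \|, a) = \unitmeasure1^{-1} \lim_{s \to 0+} M_a(s)$ is real.

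For \eqref{item:density_1d:upper_semi} let $a \in U \without T$. For $z$ near $a$, running the identity at $z$ with inner radius tending to $0$ (as in the proof of \eqref{item:density_1d:real}, but keeping the inequality $Q_z \geq 0$) gives $\unitmeasure1 \density^1 ( \| V \|, z ) = \lim_{\sigma \to 0+} M_z(\sigma) \leq M_z(s) + \measureball{\| \delta V \|}{\cball zs}$ for $0 < s$ with $\cball zs \subset U$. Taking $s = r - |z-a|$, so that $\cball zs \subset \cball ar$, and estimating $M_z(s) \leq s^{-1} \measureball{\| V \|}{\cball ar} = \tfrac{r}{r-|z-a|} M_a(r)$ and $\measureball{\| \delta V \|}{\cball zs} \leq \measureball{\| \delta V \|}{\cball ar}$, I obtain $\unitmeasure1 \density^1 ( \| V \|, z ) \leq \tfrac{r}{r-|z-a|} M_a(r) + \measureball{\| \delta V \|}{\cball ar}$. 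Given $\varepsilon > 0$, I would then choose $r$ small enough that $M_a(r) < \unitmeasure1 \density^1 ( \| V \|, a ) + \varepsilon$ by \eqref{item:density_1d:real} and — here $a \notin T$ is used, so that $\measureball{\| \delta V \|}{\cball ar} \to \| \delta V \| ( \{a\} ) = 0$ — also $\measureball{\| \delta V \|}{\cball ar} < \varepsilon$; letting $z \to a$, whence $r / (r - |z-a|) \to 1$, gives $\limsup_{z \to a} \density^1 ( \| V \|, z ) \leq \density^1 ( \| V \|, a ) + 2\varepsilon/\unitmeasure1$, and $\varepsilon > 0$ is arbitrary.

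For \eqref{item:density_1d:lower_bound} assume $\density^1 ( \| V \|, z) \geq 1$ for $\| V \|$ almost all $z$ and $a \in \spt \| V \|$. Since $V$ is rectifiable and $T$ is countable, $\| V \| (T) = 0$, so for every $\delta > 0$ there is $b \in \oball a\delta$ with $b \neq a$ and $\density^1 ( \| V \|, b ) \geq 1$; running the identity at $b$ with inner radius tending to $0$ gives $M_b(t) \geq \unitmeasure1 \density^1 ( \| V \|, b) - \measureball{\| \delta V \|}{\cball bt \without \{b\}} \geq \unitmeasure1 - \measureball{\| \delta V \|}{\cball bt \without \{b\}}$ whenever $\cball bt \subset U$. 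If $a \notin T$, I would take $t = \rho - |b-a|$, so $\cball bt \subset \cball a\rho$, use $M_b(t) \leq t^{-1} \measureball{\| V \|}{\cball a\rho}$ and $\measureball{\| \delta V \|}{\cball bt} \leq \measureball{\| \delta V \|}{\cball a\rho}$ to get $\measureball{\| V \|}{\cball a\rho} \geq (\rho - |b-a|) ( \unitmeasure1 - \measureball{\| \delta V \|}{\cball a\rho} )$, then let $|b-a| \to 0$ and afterwards $\rho \to 0+$, using $\measureball{\| \delta V \|}{\cball a\rho} \to 0$, to conclude $\density^1 ( \| V \|, a ) \geq 1$. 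If $a \in T$, I would instead use only radii $t < |b-a|$, so that $\cball bt \subset \oball b{|b-a|} \subset \oball a{2|b-a|} \without \{a\}$ avoids the atom at $a$; then $M_b(t) \leq t^{-1} \measureball{\| V \|}{\oball a{2|b-a|}}$ and $\measureball{\| \delta V \|}{\cball bt \without \{b\}} \leq \measureball{\| \delta V \|}{\oball a{2|b-a|} \without \{a\}}$, and letting $t \uparrow |b-a|$ and afterwards $|b-a| \to 0$ — whence $\measureball{\| \delta V \|}{\oball a{2|b-a|} \without \{a\}} \to 0$ — gives $M_a ( 2|b-a| ) \geq \tfrac12 \big ( \unitmeasure1 - \measureball{\| \delta V \|}{\oball a{2|b-a|} \without \{a\}} \big )$, hence $\density^1 ( \| V \|, a ) \geq \tfrac12$. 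The delicate point is precisely this last case: centring the comparison balls at $a$ is impossible because $\measureball{\| \delta V \|}{\cball a\rho}$ need not vanish as $\rho \to 0+$, so one is forced to use nearby centres $b$ carrying density at least $1$ whose balls — together with a concentric ball about $a$ twice as large — exclude the atom, and the factor $2$ lost in this enlargement is exactly what degrades the bound from $1$ to $1/2$; the remaining technical work is the dominated/monotone convergence bookkeeping for $E_a$ and $Q_a$.
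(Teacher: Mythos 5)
Your proof is correct. For \eqref{item:density_1d:upper_bound}, \eqref{item:density_1d:real}, \eqref{item:density_1d:upper_semi} and the first half of \eqref{item:density_1d:lower_bound} you are fleshing out what the paper leaves as ``readily implies'', and the bookkeeping is sound; the first half of \eqref{item:density_1d:lower_bound} also drops out immediately from \eqref{item:density_1d:upper_semi} combined with the density of $\{ z \with \density^1 ( \| V \|, z ) \geq 1 \}$ in $\spt \| V \|$, which is essentially what you wrote.

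For the second half of \eqref{item:density_1d:lower_bound} you take a genuinely different route. The paper normalises $a = 0$ and $U = \rel^\adim$ via Allard \cite[4.10\,(2)]{MR0307015}, forms the reflection $f(z) = z - 2 (z \bullet v) v$ across the hyperplane orthogonal to $v = \eta (V, 0)$, and notes that the singular parts of $\delta V$ and $\delta ( f_\# V )$ at $0$ cancel, so $0$ is not an atom of $\| \delta ( V + f_\# V ) \|$; the first half then applies to $V + f_\# V$, and since $\| V + f_\# V \|$ has twice the density of $\| V \|$ at $0$, the constant $1/2$ falls out. Your argument is instead a direct ball-enlargement: you centre at a nearby $b \neq a$ of density $\geq 1$ (available because $\| V \| ( \{ a \} ) = 0$ by \eqref{item:density_1d:real}), keep the radius $t < |b-a|$ so that $\cball b t$ avoids the atom at $a$, and pay a factor $2$ in passing from $\oball b{|b-a|}$ to $\oball a{2|b-a|}$ — which is exactly what degrades the constant to $1/2$. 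Both arguments are correct; the reflection trick is slicker and is a standard device for free-boundary-type monotonicity arguments, while your covering version is more elementary, avoids the extension lemma, and makes the origin of the factor $1/2$ completely transparent.
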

\begin{proof}
	If $a \in U$, $0 < s < r < \infty$ and $\cball ar \subset U$, then
	\begin{gather*}
		\big | \sup \{ |z-a|, s \}^{-1} - r^{-1} \big | |z-a| \leq 1
		\quad \text{whenever $z \in \cball ar$}.
	\end{gather*}
	Therefore \eqref{item:density_1d:upper_bound} follows from
	\ref{corollary:monotonicity}. \eqref{item:density_1d:upper_bound}
	readily implies \eqref{item:density_1d:real} and
	\eqref{item:density_1d:upper_semi} and the first half of
	\eqref{item:density_1d:lower_bound}. To prove the second half, choose
	$\eta$ as in \ref{miniremark:situation_general_varifold} and consider
	$a \in T$. One may assume $a = 0$ and in view of Allard
	\cite[4.10\,(2)]{MR0307015} also $U = \rel^\adim$. Abbreviating $v =
	\eta (V,0) \in \mathbf{S}^{\adim-1}$ and defining the reflection $f :
	\rel^\adim \to \rel^\adim$ by $f(z) = z- 2 (z \bullet v) v$ for $z \in
	\rel^\adim$, one infers the second half of
	\eqref{item:density_1d:lower_bound} by applying the first half of
	\eqref{item:density_1d:lower_bound} to the varifold $V + f_\# V$.
\end{proof}
\begin{corollary} \label{corollary:density_ratio_estimate}
	Suppose $\vdim$, $\adim$, $U$, and $V$ are as in
	\ref{miniremark:situation_general_varifold}, $a \in \rel^\adim$, $0 <
	s < r < \infty$, $\cball{a}{r} \subset U$, $0 \leq \beta < \infty$,
	\begin{gather*}
		\measureball{\| \delta V \|}{\cball{a}{t}} \leq \beta r^{-1}
		\| V \| ( \cball{a}{r} )^{1/\vdim} \| V \| ( \cball {a}{t}
		)^{1-1/\vdim} \quad \text{for $s < t < r$}.
	\end{gather*}

	Then there holds
	\begin{gather*}
		s^{-\vdim} \measureball{\| V \|}{\cball{a}{s}} \leq \big ( 1 +
		\vdim^{-1} \beta \log ( r/s) \big )^\vdim r^{-\vdim}
		\measureball{\| V \|}{\cball{a}{r}}.
	\end{gather*}
\end{corollary}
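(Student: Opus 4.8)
The plan is to pair the monotonicity identity with the one-variable Gronwall-type estimate of \ref{lemma:calculus}. Since the asserted inequality holds trivially when $\| V \| ( \cball{a}{s} ) = 0$, I would first reduce to $\| V \| ( \cball{a}{s} ) > 0$; then, because $t \mapsto \| V \| ( \cball{a}{t} )$ is nondecreasing and $\| V \|$ is finite on the compact set $\cball{a}{r} \subset U$, one has $0 < \| V \| ( \cball{a}{t} ) < \infty$ whenever $s \leq t \leq r$. Choosing $\eta$ as in \ref{miniremark:situation_general_varifold} and using $| \eta ( V, \cdot ) | = 1$, the hypothesis gives
\begin{gather*}
	\tint{\cball{a}{t}}{} ( z-a ) \bullet \eta ( V, z ) \ud \| \delta V
	\| z \leq t \, \| \delta V \| ( \cball{a}{t} ) \leq \beta t r^{-1}
	\| V \| ( \cball{a}{r} )^{1/\vdim} \| V \| ( \cball{a}{t}
	)^{1-1/\vdim} \quad \text{for $s < t < r$}.
\end{gather*}

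Next I would apply \ref{corollary:monotonicity} and \ref{remark:monotonicity} with any inner radius $\sigma$ satisfying $0 < \sigma < r$ (admissible since $\cball{a}{r} \subset U$), discard the nonnegative term with integrand $| z-a |^{-\vdim-2} | \perpproject{S} ( z-a ) |^2$, and insert the bound just obtained, thereby obtaining
\begin{gather*}
	\sigma^{-\vdim} \| V \| ( \cball{a}{\sigma} ) \leq r^{-\vdim} \| V \|
	( \cball{a}{r} ) + \beta r^{-1} \| V \| ( \cball{a}{r} )^{1/\vdim}
	\tint{\sigma}{r} t^{-\vdim} \| V \| ( \cball{a}{t} )^{1-1/\vdim} \ud
	\mathscr{L}^1 t
\end{gather*}
for $s \leq \sigma < r$, the case $\sigma = r$ being trivial. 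In the case $\vdim = 1$ the exponent $1 - 1/\vdim$ vanishes, so this inequality at $\sigma = s$ already reads $s^{-1} \| V \| ( \cball{a}{s} ) \leq ( 1 + \beta \log ( r/s ) ) r^{-1} \| V \| ( \cball{a}{r} )$, which is the assertion.

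For $\vdim > 1$ I would introduce $f ( t ) = t^{-\vdim} \| V \| ( \cball{a}{t} )$ for $s < t \leq r$ together with $\gamma = \beta r^{-1} \| V \| ( \cball{a}{r} )^{1/\vdim}$. The point of this choice is that $\gamma f ( r )^{-1/\vdim} = \beta$ and $t^{-\vdim} \| V \| ( \cball{a}{t} )^{1-1/\vdim} = t^{-1} f ( t )^{1-1/\vdim}$, so the displayed inequality becomes $f ( t ) \leq f ( r ) + \gamma \tint{t}{r} u^{-1} f ( u )^{1-1/\vdim} \ud \mathscr{L}^1 u$ for $s < t \leq r$. The function $f$ is positive and finite by the first step, and $\limsup_{u \to t-} f ( u ) = t^{-\vdim} \| V \| ( \oball{a}{t} ) \leq f ( t )$ by monotonicity, so \ref{lemma:calculus} applies and yields $f ( t ) \leq ( 1 + ( \beta/\vdim ) \log ( r/t ) )^\vdim f ( r )$ for $s < t \leq r$. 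Finally, from $\| V \| ( \cball{a}{s} ) \leq \| V \| ( \cball{a}{t} )$ one deduces
\begin{gather*}
	s^{-\vdim} \| V \| ( \cball{a}{s} ) \leq ( t/s )^\vdim \big ( 1 + (
	\beta/\vdim ) \log ( r/t ) \big )^\vdim r^{-\vdim} \| V \| (
	\cball{a}{r} ) \quad \text{for $s < t \leq r$},
\end{gather*}
and letting $t \to s+$ completes the proof.

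I do not expect a real obstacle: the argument is a routine combination of the monotonicity identity and a Gronwall lemma in one variable. The only slightly delicate point is that \ref{lemma:calculus} controls $f$ on the half-open interval $\{ t \with s < t \leq r \}$ rather than at the endpoint $s$ itself, which is the reason for passing to the limit $t \to s+$ at the end; and one must track the exponents so that $t^{-\vdim} \| V \| ( \cball{a}{t} )^{1-1/\vdim}$ is recognised as $t^{-1} f ( t )^{1-1/\vdim}$.
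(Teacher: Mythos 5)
Your proof is correct and follows essentially the same route as the paper: reduce to the nontrivial case, invoke \ref{corollary:monotonicity} and \ref{remark:monotonicity}, discard the nonnegative tilt term, insert the hypothesis, recognise the resulting integral inequality as the one in \ref{lemma:calculus} with $f(t) = t^{-\vdim}\|V\|(\cball{a}{t})$ and $\gamma = \beta r^{-1}\|V\|(\cball{a}{r})^{1/\vdim}$, and pass to the limit $t \to s+$. The only difference is cosmetic: the paper reduces via $t = \inf\{u : \mu(u) > 0\}$ whereas you reduce via $\|V\|(\cball{a}{s}) > 0$, and you explicitly dispose of the case $\vdim = 1$ (which falls outside the stated hypotheses of \ref{lemma:calculus}) by direct computation — a small gap the paper's one-line proof glosses over.
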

\begin{proof}
	Assuming $\mu (r) > 0$ and taking $t = \inf \{ u \with \mu (u) > 0 \}$
	and $f (u) = u^{-\vdim} \mu (u)$ for $\sup \{s,t\} < u \leq r$, this
	is a consequence of \ref{corollary:monotonicity},
	\ref{remark:monotonicity} and \ref{lemma:calculus}.
\end{proof}
\section{Decompositions of varifolds} \label{sec:decomposition}
In this section the existence of a decomposition of rectifiable varifolds
whose first variation is representable by integration is established in
\ref{thm:decomposition}. If the first variation is sufficiently regular, this
decomposition may be linked to the decomposition of the support of the weight
measure into connected components, see \ref{corollary:conn_structure}.
\begin{miniremark} \label{miniremark:situation_general}
	Suppose $\vdim, \adim \in \nat$, $\vdim \leq \adim$, $1 \leq p \leq
	\vdim$, $U$ is an open
	subset of $\rel^\adim$, $V \in \Var_\vdim ( U )$, $\| \delta V \|$ is
	a Radon measure, $\density^\vdim ( \| V \|, z ) \geq 1$ for $\| V \|$
	almost all $z$. If $p > 1$, then suppose additionally that
	$\mathbf{h} ( V; \cdot ) \in \Lploc{p} ( \| V \|, \rel^\adim)$ and
	\begin{gather*}
		\delta V (g) = - \tint{}{} g (z) \bullet \mathbf{h} (V;z) \ud
		\| V \| z \quad \text{for $g \in \mathscr{D} ( U, \rel^\adim
		)$}.
	\end{gather*}
	Therefore $V \in \RVar_\vdim ( U)$ by Allard
	\cite[5.5\,(1)]{MR0307015}. If $p < \infty$, then define
	\begin{gather*}
		\psi = \| \delta V \| \quad \text{if $p=1$}, \qquad \psi = \|
		V \| \restrict | \mathbf{h} ( V ; \cdot ) |^p \quad
		\text{else.}
	\end{gather*}
\end{miniremark}
\begin{definition} \label{def:v_boundary}
	Suppose $\vdim, \adim \in \nat$, $\vdim \leq \adim$, $U$ is an open
	subset of $\rel^\adim$, $V \in \Var_\vdim ( U )$, $\| \delta V \|$ is
	a Radon measure, and $E$ is $\| V \| + \| \delta V \|$ measurable.

	Then the \emph{distributional $V$ boundary of $E$} is given by
	\begin{gather*}
		\boundary{V}{E} = ( \delta V ) \restrict E - \delta ( V
		\restrict E \times \grass{\adim}{\vdim} ) \in \mathscr{D}' (
		U, \rel^\adim ).
	\end{gather*}
\end{definition}
\begin{remark} \label{remark:v_boundary}
	If $E_1$ and $E_2$ are $\| V \| + \| \delta V \|$ measurable sets and
	$E_1 \subset E_2$, then
	\begin{gather*}
		\boundary{V}{(E_2 \without E_1)} = ( \boundary{V}{E_2} ) - (
		\boundary{V}{E_1} ).
	\end{gather*}
\end{remark}
\begin{remark} \label{remark:partition}
	If $G$ is a countable, disjointed collection of $\| V \| + \| \delta V
	\|$ measurable sets with $\boundary{V}{E} = 0$ for $E \in G$ and $\| V
	\| ( U \without \bigcup G ) = 0$, then $\| \delta V \| ( U \without
	\bigcup G ) = 0$; in fact $\delta V = \delta \big ( \sum_{E \in G} V
	\restrict E \times \grass{\adim}{\vdim} \big ) = \sum_{E \in G} \delta
	( V \restrict E \times \grass{\adim}{\vdim} ) = \sum_{E \in G} (
	\delta V) \restrict E$.
\end{remark}
\begin{example}
	If $\adim = \vdim$, $U = \rel^\vdim$, and $\| V \| (A) =
	\mathscr{L}^\vdim (A)$ for $A \subset \rel^\vdim$, then
	$\boundary{V}{E}$ is representable by integration if and only if
	$\partial ( \mathbf{E}^\vdim \restrict E )$ is representable by
	integration; in this case
	\begin{gather*}
		\boundary{V}{E} ( \theta ) = - \tint{E}{} \Div \theta
		\ud\mathscr{L}^\vdim = - \tint{}{} \theta (z) \bullet
		\mathbf{n} ( E, z ) \ud \mathscr{H}^{\vdim-1} z
	\end{gather*}
	for $\theta \in \mathscr{D} ( \rel^\vdim, \rel^\vdim )$ by
	\cite[4.5.16, 4.5.6]{MR41:1976}. In the terminology of \cite[\S
	3.3]{MR2003a:49002} such $E$ is called ``of locally finite
	perimeter in $\rel^\vdim$''.
\end{example}
\begin{lemma} \label{lemma:piece}
	Suppose $\vdim$, $\adim$, $p$, $U$ and $V$ are as in
	\ref{miniremark:situation_general}, $p = \vdim$, $A$ is a $\| V \| +
	\| \delta V \|$ measurable set with $\boundary{V}{A} = 0$.
	
	Then $A$ is $\| V \|$ almost equal to $\spt ( \| V \| \restrict A )$.
\end{lemma}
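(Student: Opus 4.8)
The plan is to show the two set-theoretic inclusions, modulo $\|V\|$-null sets: that $\|V\|$ almost no point of $A$ lies outside $\spt(\|V\|\restrict A)$ (this is automatic for any set, since $\|V\|\restrict A$ gives no mass to the complement of its own support), and conversely that $\|V\|$ almost every point of $\spt(\|V\|\restrict A)$ already lies in $A$. Only the second inclusion carries content, and it is here that the hypothesis $\boundary{V}{A}=0$ together with $p=\vdim$ (so that $\mathbf{h}(V;\cdot)\in\Lploc{\vdim}(\|V\|,\rel^\adim)$) must be used.

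First I would set $W = V \restrict A \times \grass{\adim}{\vdim}$ and observe that $W \in \RVar_\vdim(U)$ with $\|W\| = \|V\|\restrict A$, and that the condition $\boundary{V}{A}=0$ means precisely $\delta W = (\delta V)\restrict A$. Using the assumed representation $\delta V (g) = -\int g\bullet\mathbf{h}(V;\cdot)\ud\|V\|$ from \ref{miniremark:situation_general}, this gives $\delta W (g) = -\int_A g\bullet\mathbf{h}(V;\cdot)\ud\|V\| = -\int g\bullet\mathbf{h}(V;\cdot)\ud\|W\|$, so $W$ again satisfies the structural assumptions of \ref{miniremark:situation_general} with $\mathbf{h}(W;\cdot) = \mathbf{h}(V;\cdot)$ restricted to the support of $\|W\|$, in particular $\mathbf{h}(W;\cdot)\in\Lploc{\vdim}(\|W\|,\rel^\adim)$ and $\density^\vdim(\|W\|,z)\geq 1$ for $\|W\|$ almost all $z$. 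The point of the rewriting is that $W$ is a rectifiable varifold whose first variation is given by an $\mathbf{L}_\vdim^{\mathrm{loc}}$ mean curvature, so it satisfies the mean curvature hypothesis with the critical exponent.

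Next I would invoke the monotonicity consequences of Section \ref{sec:monotonicity}, specifically \ref{corollary:density_ratio_estimate}: for $W$ at a point $a$, the hypothesis $\|\delta W\|(\cball{a}{t}) = \int_{\cball{a}{t}}|\mathbf{h}(W;\cdot)|\ud\|W\| \leq \big(\int_{\cball{a}{r}}|\mathbf{h}(W;\cdot)|^\vdim\ud\|W\|\big)^{1/\vdim}\|W\|(\cball{a}{t})^{1-1/\vdim}$ by H\"older's inequality lets one bound the density ratio $s^{-\vdim}\|W\|(\cball{a}{s})$ from above by $\big(1 + \vdim^{-1}\beta\log(r/s)\big)^\vdim$ times $r^{-\vdim}\|W\|(\cball{a}{r})$, where $\beta = \big(\int_{\cball{a}{r}}|\mathbf{h}(W;\cdot)|^\vdim\ud\|W\|\big)^{1/\vdim}$. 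If $a\in\spt\|W\|$, then for each small $r$ with $\cball{a}{r}\subset U$ one has $\|W\|(\cball{a}{r}) > 0$; letting $s\to 0+$ and using that $\log(r/s)\cdot s^0$-type terms are controlled while $\beta\to 0$ as $r\to 0$ (by absolute continuity of the integral $\int|\mathbf{h}(W;\cdot)|^\vdim\ud\|W\|$), one deduces that $\density^\vdim(\|W\|,a)$ is bounded below by a positive quantity, hence is positive. Since $\|W\| = \|V\|\restrict A$, positivity of the $\vdim$-dimensional density of $\|V\|\restrict A$ at $a$ forces, at $\|V\|$ almost every $a$ (by the rectifiability of $V$ and Federer's density theory, \cite[2.9.11]{MR41:1976} together with \cite[2.8.18, 2.9.5]{MR41:1976}), that $a$ is a point of $\|V\|$-density one of $A$, in particular $a\in A$ up to a $\|V\|$-null set; thus $\|V\|(\spt(\|V\|\restrict A)\without A) = 0$.

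The main obstacle I anticipate is making the argument at the level of density ratios airtight: $\density^\vdim(\|W\|,a)$ need not exist as a genuine limit for every $a\in\spt\|W\|$, only as a $\limsup$, and one must argue carefully that the monotonicity bound of \ref{corollary:density_ratio_estimate} yields a \emph{positive} lower bound for the upper density that is uniform enough to be transferred, via comparison of $\|W\|$ with $\mathscr{H}^\vdim\restrict(A\cap\{z : \Tan^\vdim(\|V\|,z)\in\grass{\adim}{\vdim}\})$, into the statement ``$a$ is a Lebesgue point of the characteristic function of $A$ with value $1$.'' Concretely, one should first pass to the $\|V\|$ almost all points where $V$ has an approximate tangent plane with multiplicity $\density^\vdim(\|V\|,a)\in[1,\infty)$, note that at such points $\density^\vdim(\|V\|\restrict A,a)$ equals $\density^\vdim(\|V\|,a)$ times the $\|V\|$-density of $A$ at $a$, and then the positivity extracted from monotonicity forces that density of $A$ to be positive; a second application of the same density comparison (or of \cite[2.9.11]{MR41:1976}) upgrades ``positive'' to ``equal to one'' off a $\|V\|$-null set, giving $a\in A$. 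The rest is bookkeeping.
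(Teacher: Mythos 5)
Your overall scaffolding matches the paper's: set $W = V \restrict A \times \grass{\adim}{\vdim}$ (so $\| W \| = \| V \| \restrict A$, hence $\| V \| ( A \without \spt \| W \| ) = 0$ automatically), and then show that $\| V \| ( \spt \| W \| \without A ) = 0$. But there is a genuine gap in how you get the lower density bound at support points.

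You invoke \ref{corollary:density_ratio_estimate} and claim that letting $s \to 0+$ yields a \emph{positive lower} bound on $\density^\vdim ( \| W \|, a )$. This does not follow: \ref{corollary:density_ratio_estimate} controls $s^{-\vdim} \| W \| ( \cball as )$ from \emph{above} by $( 1 + \vdim^{-1} \beta \log (r/s) )^\vdim r^{-\vdim} \| W \| ( \cball ar )$, and as $s \to 0+$ this bound blows up and gives no information in the direction you need. Knowing merely $\| W \| ( \cball ar ) > 0$ for $a \in \spt \| W \|$ is not enough to extract a positive density; one must exploit either the isoperimetric inequality — this is what the paper does, citing \cite[2.5]{snulmenn.isoperimetric} (cf.~also \ref{lemma:lower_mass_bound_Q} with $Q = 1$, and \ref{remark:kuwert_schaetzle}) to get $\density_\ast^\vdim ( \| W \|, z ) \geq 1$ for every $z \in \spt \| W \|$ with $\| \delta W \| ( \{ z \} ) = 0$ — or the \emph{increasing} direction of the almost-monotonicity formula applied at nearby points where the a.e.\ density bound $\density^\vdim ( \| W \|, z ) \geq 1$ is known. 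Either way a different ingredient than \ref{corollary:density_ratio_estimate} is required; as written, your derivation of positivity is a non sequitur.

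The second step is also underspecified. After the positivity, the paper uses the concrete fact $\density^\vdim ( \| W \|, z ) = 0$ for $\mathscr{H}^\vdim$ almost all $z \in U \without A$ (from the rectifiability of $V$ and \cite[2.10.19\,(4)]{MR41:1976}), and then concludes $\| V \| ( ( \spt \| W \| ) \without A ) = 0$ via Allard \cite[3.5\,(1b)]{MR0307015}. Your phrase ``a second application of the same density comparison \dots upgrades `positive' to `equal to one''' points in roughly the right direction, but it does not isolate this statement, and without it (and without the correct lower density bound) the conclusion does not follow. If you fix the lower density step by citing the isoperimetric result and then explicitly invoke the upper density vanishing off $A$, your proof becomes the paper's proof.
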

\begin{proof}
	Define $W = V \restrict ( A \times \grass{\adim}{\vdim} )$, hence
	\begin{gather*}
		\| W \| = \| V \| \restrict A, \quad \| V \| ( A \without \spt
		\| W \| ) = 0.
	\end{gather*}
	Moreover, one obtains
	\begin{gather*}
		\density_\ast^\vdim ( \| W \|, z ) \geq 1 \quad \text{whenever
		$z \in \spt \| W \|$ and $\| \delta W \| ( \{ z \} ) = 0$}, \\
		\density^\vdim ( \| W \|, z ) = 0 \quad \text{for
		$\mathscr{H}^\vdim$ almost all $z \in U \without A$}
	\end{gather*}
	by \cite[2.5]{snulmenn.isoperimetric} and
	\cite[2.10.19\,(4)]{MR41:1976}, hence $\| V \| ( ( \spt \| W \| )
	\without A ) = 0$ by Allard \cite[3.5\,(1b)]{MR0307015}.
\end{proof}
\begin{remark}
	If $A = \rel^2 \cap \{ (x_1,x_2) \with x_2 = 0 < |x_1| < 1
	\}$, $B = \rel^2 \cap \{ (x_1,x_2) \with x_1 = 0 \leq x_2 \leq 1 \}$
	and $V \in \IVar_1 ( \rel^2 )$ is characterised by $\| V \| =
	\mathscr{H}^1 \restrict ( A \cup B )$, then $\boundary{V}{A} = 0$ but
	$A$ is not $\| V \| + \| \delta V \|$ almost equal to $\spt ( \| V \|
	\restrict A )$.
\end{remark}
\begin{definition}
	Suppose $\vdim, \adim \in \nat$, $\vdim \leq \adim$, $U$ is an open
	subset of $\rel^\adim$, $V \in \Var_\vdim ( U )$ and $\| \delta V \|$
	is a Radon measure.

	Then $V$ is called \emph{indecomposable} if there exists no $\| V \| +
	\| \delta V \|$ measurable set $B$ such that
	\begin{gather*}
		\| V \| (B) > 0, \quad \| V \| ( U \without B ) > 0, \quad
		\boundary{V}{B} = 0.
	\end{gather*}
\end{definition}
\begin{remark}
	The same definition results if $B$ is required to be a Borel set.
\end{remark}
\begin{remark}
	If $V$ is indecomposable then so is $\lambda V$ for $0 < \lambda <
	\infty$. This is in contrast with the notion bearing the same name
	employed by Mondino in \cite[2.15]{MR3148123}.
\end{remark}
\begin{lemma} \label{lemma:connected}
	Suppose $\vdim, \adim \in \nat$, $\vdim \leq \adim$, $U$ is an open
	subset of $\rel^\adim$, $V \in \Var_\vdim ( U )$, $\| \delta V \|$
	is a Radon measure, and $V$ is indecomposable.

	Then $\spt \| V \|$ is connected.
\end{lemma}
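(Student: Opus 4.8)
The plan is to argue by contraposition: from a hypothetical disconnection of $\spt \| V \|$ I would manufacture a set witnessing that $V$ is decomposable. So suppose $\spt \| V \|$ is not connected and write $\spt \| V \| = C_1 \cup C_2$, where $C_1, C_2$ are nonempty, disjoint, and relatively open — hence also relatively closed — in $\spt \| V \|$. Since $\spt \| V \|$ is closed in $U$, both $C_1$ and $C_2$ are closed, hence Borel, subsets of $U$; in particular $C_1$ is $\| V \| + \| \delta V \|$ measurable. Moreover, since $C_1$ is nonempty and relatively open in $\spt \| V \|$, a small ball about one of its points meets $\spt \| V \|$ inside $C_1$, so $\| V \| ( C_1 ) > 0$; the same reasoning gives $\| V \| ( C_2 ) > 0$, and since $C_2 \subset U \without C_1$ also $\| V \| ( U \without C_1 ) > 0$. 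Thus it suffices to establish $\boundary{V}{C_1} = 0$, for then $C_1$ contradicts the indecomposability of $V$.

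To prove $\boundary{V}{C_1} = 0$ I would separate $C_1$ and $C_2$ — disjoint closed subsets of $U$ — by a smooth function: choose a smooth $\chi : U \to [0,1]$ with $\chi = 1$ on an open neighbourhood of $C_1$ and $\chi = 0$ on an open neighbourhood of $C_2$ (separate them first by disjoint open sets, then apply a standard smooth Urysohn construction). The key feature of this $\chi$ is that $D \chi$ vanishes identically on $C_1 \cup C_2 = \spt \| V \|$, while $\chi$ agrees there with $\id{C_1}$. Before computing, I record two routine facts: since $\| V \| ( U \without \spt \| V \| ) = 0$, the varifold $V$ is concentrated on $\spt \| V \| \times \grass{\adim}{\vdim}$, and then testing $\delta V$ against forms supported in $U \without \spt \| V \|$ shows $\| \delta V \| ( U \without \spt \| V \| ) = 0$; and, $\| \delta V \|$ being a Radon measure, $\delta V$ is representable by integration, so \ref{miniremark:situation_general_varifold} furnishes $\eta ( V, \cdot )$ with $( \delta V ) ( \theta ) = \tint{}{} \theta (z) \bullet \eta ( V, z ) \ud \| \delta V \| z$ for $\theta \in \mathscr{D} ( U, \rel^\adim )$.

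Now fix $g \in \mathscr{D} ( U, \rel^\adim )$; then $\chi g \in \mathscr{D} ( U, \rel^\adim )$ as well. Using that $\chi = \id{C_1}$ on $\spt \| V \| \supset \spt \| \delta V \|$, that the Leibniz rule gives $D ( \chi g ) = \chi \, D g$ on $\spt \| V \|$ (where $D \chi$ vanishes), and that $V$ is concentrated on $\spt \| V \| \times \grass{\adim}{\vdim}$ on which $\chi = \id{C_1}$, one computes
\begin{gather*}
	( \delta V ) \restrict C_1 ( g ) = \tint{C_1}{} g (z) \bullet \eta ( V, z ) \ud \| \delta V \| z \\
	= \tint{}{} \chi (z) \, g (z) \bullet \eta ( V, z ) \ud \| \delta V \| z = \delta V ( \chi g ), \\
	\delta V ( \chi g ) = \tint{}{} D ( \chi g ) (z) \bullet \project{S} \ud V (z,S) \\
	= \tint{C_1 \times \grass{\adim}{\vdim}}{} D g (z) \bullet \project{S} \ud V (z,S) = \delta ( V \restrict C_1 \times \grass{\adim}{\vdim} ) ( g ).
\end{gather*}
Subtracting these identities gives $\boundary{V}{C_1} ( g ) = 0$ for every such $g$, hence $\boundary{V}{C_1} = 0$, completing the contradiction.

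The genuinely non-mechanical point — the one I would want to get exactly right — is the choice of the cut-off $\chi$: the whole argument hinges on being able to take $\chi$ locally constant on a full neighbourhood of $\spt \| V \|$, which is possible precisely because the two topological pieces of the support are disjoint \emph{closed} subsets of $U$. Everything after that is bookkeeping with the inclusion $\spt \| \delta V \| \subset \spt \| V \|$ and the concentration of $\| V \|$ and $\| \delta V \|$ on $\spt \| V \|$; the Radon hypothesis on $\| \delta V \|$ enters only to give meaning to $( \delta V ) \restrict C_1$ and to let one replace $\id{C_1} g$ by the smooth function $\chi g$ inside $\delta V$. I anticipate no further difficulty.
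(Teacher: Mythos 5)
Your proof is correct and takes essentially the same route as the paper: disconnect $\spt \| V \|$ into two relatively clopen pieces $C_1, C_2$, observe both carry positive $\| V \|$ measure, and then separate them by a smooth cut-off $\chi$ that is locally constant on a full neighbourhood of the support. The only cosmetic difference is that you carry out the final computation (using $\chi g$ as a test field, $D(\chi g) = \chi\,Dg$ near $\spt\|V\|$, and the concentration of $\| V \|$ and $\| \delta V \|$ on the support) explicitly, whereas the paper delegates exactly that step to Allard \cite[4.10\,(1)]{MR0307015}.
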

\begin{proof}
	If $\spt \| V \|$ were not connected, there would exist nonempty
	relatively closed subsets $A_1$ and $A_2$ of $\spt \| V \|$ such that
	$A_1 \cap A_2 = \varnothing$ and $A_1 \cup A_2 = \spt \| V \|$, hence
	$U \without A_1$ and $U \without A_2$ would be open and
	\begin{gather*}
		\| V \| ( A_1 ) = \| V \| ( U \without A_2 ) > 0, \quad
		\| V \| ( A_2 ) = \| V \| ( U \without A_1 ) > 0.
	\end{gather*}
	Therefore one could construct $f \in \mathscr{E}^0 ( U )$ such
	that
	\begin{gather*}
		A_i \subset \Int \{ z \with f(z) = i \} \quad
		\text{for $i = \{ 1, 2 \}$};
	\end{gather*}
	in fact applying \cite[3.1.13]{MR41:1976} with $\Phi = \{ U \without
	A_1, U \without A_2 \}$ one would obtain $h$, $S$ and $v_s$ and
	could take
	\begin{gather*}
		f (z) = 1 + \sum_{s \in T} v_s (z) \quad \text{for $z \in
		U$},
	\end{gather*}
	where $T = S \cap \big \{ s \with 20 h(s) = \inf \{ 1, \dist
	(s,\rel^\adim \without ( U \without A_1 ) ) \} \big \}$. This would
	yield $\delta ( V \restrict A_i \times \grass{\adim}{\vdim} ) = (
	\delta V ) \restrict A_i$ for $i = \{ 1, 2 \}$ by Allard
	\cite[4.10\,(1)]{MR0307015}.
\end{proof}
\begin{definition} \label{def:component}
	Suppose $\vdim, \adim \in \nat$, $\vdim \leq \adim$, $U$ is an open
	subset of $\rel^\adim$, $V \in \Var_\vdim ( U )$, and $\| \delta V \|$
	is a Radon measure.

	Then $W$ is called a \emph{component of $V$} if and only if $0 \neq W
	\in \Var_\vdim ( U )$ is indecomposable and there exists a $\| V \| +
	\| \delta V \|$ measurable set $A$ such that
	\begin{gather*}
		W = V \restrict ( A \times \grass{\adim}{\vdim} ), \quad
		\boundary{V}{A} = 0.
	\end{gather*}
\end{definition}
\begin{remark} \label{remark:unique_component}
	Suppose $B$ is a $\| V \| + \| \delta V \|$ measurable set. Then $A$
	is $\| V \| + \| \delta V \|$ almost equal to $B$ if and only if 
	\begin{gather*}
		W = V \restrict ( B \times \grass{\adim}{\vdim} ), \quad
		\boundary{V}{B} = 0.
	\end{gather*}
\end{remark}
\begin{remark} \label{remark:component}
	If $C$ is a connected component of $\spt \| V \|$ and $W$ is a
	component of $V$ with $C \cap \spt \| W \| \neq \varnothing$, then
	$\spt \| W \| \subset C$ by \ref{lemma:connected}.
\end{remark}
\begin{definition} \label{def:decomposition}
	Suppose $\vdim, \adim \in \nat$, $\vdim \leq \adim$, $U$ is an open
	subset of $\rel^\adim$, $V \in \Var_\vdim ( U )$,  $\| \delta V \|$ is
	a Radon measure, and $G \subset \Var_\vdim ( U )$.

	Then $G$ is called a \emph{decomposition of $V$} if and only if the
	following three conditions are satisfied:
	\begin{enumerate}
		\item Each member of $G$ is a component of $V$.
		\item Whenever $W$ and $X$ are distinct members of $G$ there
		exist disjoint $\| V \| + \| \delta V \|$ measurable sets $A$
		and $B$ with $\boundary{V}{A} = 0 = \boundary{V}{B}$ and
		\begin{gather*}
			W = V \restrict ( A \times \grass{\adim}{\vdim} ),
			\quad X = V \restrict ( B \times \grass{\adim}{\vdim}
			).
		\end{gather*}
		\item $V ( f ) = \tsum{W \in G}{} W (f)$ whenever $f \in
		\mathscr{K} ( U \times \grass{\adim}{\vdim} )$.
	\end{enumerate}
\end{definition}
\begin{remark} \label{remark:decomp_rep}
	Clearly, $G$ is countable.

	Moreover, using \ref{remark:unique_component} one constructs a
	function $\zeta$ mapping $G$ into the class of all Borel subsets of
	$U$ such that distinct members of $G$ are mapped onto disjoint sets
	and
	\begin{gather*}
		W = V \restrict ( \zeta(W) \times \grass{\adim}{\vdim} ),
		\quad \boundary{V}{(\zeta(W))} = 0
	\end{gather*}
	whenever $W \in G$. Consequently,
	\begin{gather*}
		{\textstyle ( \| V \| + \| \delta V \| ) \big ( U \without
		\bigcup \im \zeta \big ) = 0}
	\end{gather*}
	and $\boundary{V}{\left ( \bigcup \zeta \lIm H \rIm \right )} = 0$
	whenever $H \subset G$.
\end{remark}
\begin{remark} \label{remark:decomposition}
	Suppose $\vdim$, $\adim$, $p$, $U$ and $V$ are as in
	\ref{miniremark:situation_general}, $p = \vdim$, and $G$ is a
	decomposition of $W$. Observe that \ref{remark:decomp_rep} and
	\cite[2.5]{snulmenn.isoperimetric} imply
	\begin{gather*}
		\card (G \cap \{ W \with K \cap \spt \| W \| \neq \varnothing
		\} ) < \infty
	\end{gather*}
	whenever $K$ is a compact subset of $U$, hence
	\begin{gather*}
		\spt \| V \| = {\textstyle \bigcup \{ \spt \| W \| \with W \in
		G \}}.
	\end{gather*}
	Notice that \cite[1.2]{snulmenn.isoperimetric} readily shows that both
	assertions need not to hold in case $p < \vdim$.
\end{remark}
\begin{theorem} \label{thm:decomposition}
	Suppose $\vdim, \adim \in \nat$, $\vdim \leq \adim$, $U$ is an open
	subset of $\rel^\adim$, $V \in \RVar_\vdim ( U )$, and $\| \delta V
	\|$ is a Radon measure.

	Then there exists a decomposition of $V$.
\end{theorem}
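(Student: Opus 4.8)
The plan is to realise a decomposition as a maximal disjointed family of indecomposable pieces and to reduce everything to the single assertion that below every zero boundary set of positive weight there sits an indecomposable one. One may assume $V \neq 0$. Since $U$ is $\sigma$ compact and $\| V \| + \| \delta V \|$ is a Radon measure, I would first fix a finite Borel measure $\lambda$ over $U$ with the same null sets, and let $\mathcal{A}$ be the collection of $\| V \| + \| \delta V \|$ measurable sets $E$ with $\boundary{V}{E} = 0$, taken modulo $\| V \| + \| \delta V \|$ null sets. Applying countable additivity, respectively dominated convergence, to the distributions $( \delta V ) \restrict E$ and $\delta ( V \restrict E \times \grass{\adim}{\vdim} )$ --- as in the computation of \ref{remark:partition} --- one checks that $U \in \mathcal{A}$, that $\mathcal{A}$ is closed under complementation in $U$, under countable disjointed unions, hence (with \ref{remark:v_boundary}) under differences of nested members, and under countable decreasing intersections. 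Call $E \in \mathcal{A}$ with $\| V \| (E) > 0$ \emph{indecomposable} if $V \restrict E \times \grass{\adim}{\vdim}$ is indecomposable, which by $\boundary{V}{E} = 0$ means precisely that there is no $\| V \| + \| \delta V \|$ measurable $B \subset E$ with $\| V \| (B) > 0$, $\| V \| ( E \without B ) > 0$ and $\boundary{V}{B} = 0$. By \ref{remark:partition} and \ref{def:decomposition}, a decomposition of $V$ is then exactly a countable disjointed family $G$ of indecomposable members of $\mathcal{A}$ with $\| V \| ( U \without \bigcup G ) = 0$.

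Next I would apply Zorn's lemma to the collection of all countable disjointed families of indecomposable members of $\mathcal{A}$, ordered by inclusion; the union of a chain is again countable since its members are disjoint, each of positive $\lambda$ measure, and $\lambda ( U ) < \infty$. Let $G^\ast$ be maximal and set $N = U \without \bigcup G^\ast$, so that $N \in \mathcal{A}$. If $\| V \| (N) = 0$ the proof is complete, so the whole matter comes down to: \emph{every $A \in \mathcal{A}$ with $\| V \| (A) > 0$ contains, modulo null sets, an indecomposable member of $\mathcal{A}$} --- applying this to $N$ would contradict the maximality of $G^\ast$ unless $\| V \| (N) = 0$.

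For this reduced statement put $\delta (A) = \inf \{ \lambda (W) : W \in \mathcal{A},\ W \subset A,\ \| V \| (W) > 0 \}$. The case $\delta (A) > 0$ is easy: starting from $A$, as long as the current set is decomposable split it into two members of $\mathcal{A}$ of positive weight and pass to the one of smaller $\lambda$ measure; since both pieces have $\lambda$ measure at least $\delta (A)$, the $\lambda$ measure drops by at least $\delta (A)$ at each step, so an indecomposable subset is reached after finitely many steps. The substantial case, and the point where the hypothesis $V \in \RVar_\vdim ( U )$ rather than $V \in \Var_\vdim ( U )$ enters, is $\delta (A) = 0$, that is, excluding that $\mathcal{A}$ is ``non-atomic'' below $A$. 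Here I would exploit rectifiability to produce a countable scaffold: $\| V \|$ is carried by countably many disjoint connected submanifolds (or Lipschitz pieces) $N_1, N_2, \dots$, and the divergence theorem on $N_i$ combined with $\boundary{V}{W} = 0$ forces every $W \in \mathcal{A}$ to agree, off an $\mathscr{H}^{\vdim-1}$ set carried by $\spt \| \delta V \|$, with a union of members of $\{ N_i \}$; since $\| \delta V \|$ is Radon, its $( \vdim - 1 )$ dimensional part is in turn carried by countably many rectifiable hypersurfaces, so $\mathcal{A}$ is generated by countably many sets and is therefore atomic below any $A$ of positive weight, giving the required indecomposable subset (whose support is connected, consistently with \ref{lemma:connected}). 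An alternative is a transfinite exhaustion: split the members of a decreasing chain of countable partitions by nearly optimal splits while monitoring $\sum_W \lambda (W)^2$, which is nonincreasing at successor steps and continuous at limit steps once one observes that the ``dispersed'' leftover at a limit ordinal again lies in $\mathcal{A}$ and may be reinserted --- the same rectifiability input keeping that leftover of zero weight and forcing termination at a countable ordinal. In either presentation, controlling the limit behaviour, equivalently ruling out non-atomicity of $\mathcal{A}$, is the genuine obstacle; the rest is bookkeeping with the closure properties recorded above.
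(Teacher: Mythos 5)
Your framework — reduce the problem to showing every zero-boundary set of positive weight contains an indecomposable zero-boundary subset, then Zornify — is a perfectly good way to organize matters, and your treatment of the case $\delta(A) > 0$ by repeated splitting is fine. The gap is in the case $\delta(A) = 0$, which you correctly identify as the whole point, but do not actually close. You propose that rectifiability gives a countable cover of $\|V\|$-a.e.\ of $U$ by disjoint connected pieces $N_1, N_2, \dots$ and that ``the divergence theorem on $N_i$ combined with $\boundary{V}{W} = 0$'' forces every zero-boundary $W$ to agree, modulo $\|V\|$-nulls, with a union of $N_i$'s. This is not established and I do not believe it is true in the form stated: $\boundary{V}{W}$ is a first variation of the varifold $V$, with its density $\theta$ and its own approximate tangent planes, whereas the divergence theorem on $N_i$ controls the first variation of the submanifold $N_i$; the two agree only $\|V\|$-a.e., precisely the regime where a rough $W$ could hide its ``interior boundary.'' Moreover the $N_i$'s in a rectifiability cover are not canonical and may be arbitrarily small, so nothing forces $W$ to respect them. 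Your fallback to ``$\mathcal{A}$ is generated by countably many sets, hence atomic'' only works once one knows the $N_i$'s are themselves atoms of $\mathcal{A}$, which is exactly what is in question. The transfinite-exhaustion variant you sketch at the end openly relies on ``the same rectifiability input'' at limit ordinals, so it inherits the same gap.

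What is actually needed, and what the paper supplies, is a \emph{quantitative} lower bound preventing atomlessness. Using that $\density^\vdim(\|V\|,z) \geq $ (some positive number) for $\|V\|$-a.e.\ $z$ (this is where $V \in \RVar_\vdim(U)$ rather than $\Var_\vdim(U)$ enters), and that $\|\delta V\|$ is Radon and therefore locally bounded in a density sense a.e., the paper applies the monotonicity inequality \ref{lemma:monotonicity} to the varifold $V \restrict A \times \grass{\adim}{\vdim}$ for $A$ with $\boundary{V}{A} = 0$. This yields a fixed constant $\delta_i > 0$ such that $\|V\|(A \cap \cball{a}{\varepsilon_i}) \geq \delta_i$ whenever $A$ has zero distributional $V$-boundary and positive $\|V\|\restrict A$-density at a point $a$ of the good set $E_i$. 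That bound gives the needed cardinality estimate on disjointed families of zero-boundary sets meeting $E_i$, rules out $\delta(A) = 0$ locally, and is what drives the paper's iterative refinement construction to terminate. Without some such mass lower bound your argument does not go through; with it, your Zorn reduction would indeed close.
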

\begin{proof}
	Assume $V \neq 0$.

	Denote by $R$ the family of Borel subsets $A$ of $U$ such that
	$\boundary{V}{A} = 0$. Notice that
	\begin{gather*}
		\text{$A \in R$ if and only if $A \without B \in R$} \quad
		\text{whenever $B \subset A$ and $B \in R$}, \\
		\bigcap_{i=1}^\infty A_i \in R \quad \text{whenever $A_i$ is a
		sequence in $R$ with $A_{i+1} \subset A_i$ for $i \in \nat$}.
	\end{gather*}
	Let $P = R \cap \{ A \with \| V \| ( A ) > 0 \}$. Next, define
	\begin{gather*}
		\delta_i = \unitmeasure{\vdim} 2^{-\vdim-1} i^{-1-2\vdim},
		\quad \varepsilon_i = 2^{-1} i^{-2}
	\end{gather*}
	for $i \in \nat$ and let $E_i$ denote the Borel set of $a \in
	\rel^\adim$ satisfying
	\begin{gather*}
		|a| \leq i, \quad \oball{a}{2\varepsilon_i} \subset U, \quad
		\density^\vdim ( \| V \|, a ) \geq 1/i, \\
		\measureball{\| \delta V \|}{ \cball{a}{r} } \leq
		\unitmeasure{\vdim} i r^\vdim \quad \text{for $0 < r <
		\varepsilon_i$}
	\end{gather*}
	whenever $i \in \nat$. Clearly, $E_i \subset E_{i+1}$ for $i \in \nat$
	and $\| V \| ( U \without \bigcup_{i=1}^\infty E_i ) = 0$ by
	Allard \cite[3.5\,(1a)]{MR0307015} and \cite[2.8.18,
	2.9.5]{MR41:1976}. Moreover, define
	\begin{gather*}
		P_i = R \cap \{ A \with \| V \| ( A \cap E_i ) > 0 \}
	\end{gather*}
	and notice that $P_i \subset P_{i+1}$ for $i \in \nat$ and $P =
	\bigcup_{i=1}^\infty P_i$. One observes the lower bound given by
	\begin{gather*}
		\| V \| ( A \cap \cball{a}{\varepsilon_i} ) \geq \delta_i
	\end{gather*}
	whenever $A \in R$, $i \in \nat$, $a \in E_i$ and $\density^{\ast
	\vdim} ( \| V \| \restrict A, a ) \geq 1/i$; in fact, noting
	\begin{gather*}
		\tint{s}{t} u^{-\vdim} \measureball{\| \delta ( V \restrict A
		\times \grass{\adim}{\vdim} ) \|}{ \cball{a}{u} } \ud
		\mathscr{L}^1 u \leq \unitmeasure{\vdim} i (t-s)
	\end{gather*}
	for $0 < s < t < \varepsilon_i$, the inequality follows from
	\ref{lemma:monotonicity}. Let $Q_i$ denote the set of $A \in P$ such
	that there is no $B$ satisfying
	\begin{gather*}
		B \subset A, \quad B \in P_i, \quad A \without B \in P_i.
	\end{gather*}

	Denote by $\Omega$ the class of Borel partitions $H$ of $U$ with $H
	\subset P$ and let $G_0 = \{ U \} \in \Omega$. The previously observed
	lower bound implies
	\begin{gather*}
		\delta_i \card ( H \cap P_i ) \leq \| V \| ( U \cap \{ z \with
		\dist (z,E_i) \leq \varepsilon_i \} ) < \infty
	\end{gather*}
	whenever $H$ is a disjointed subfamily of $P$, since for each $A \in
	H \cap P_i$ there exists $a \in E_i$ with $\density^\vdim ( \| V \|
	\restrict A, a ) = \density^\vdim ( \| V \|, a ) \geq 1/i$ by
	\cite[2.8.18, 2.9.11]{MR41:1976}, hence
	\begin{gather*}
		\| V \| ( A \cap \{ z \with \dist (z,E_i) \leq \varepsilon_i
		\} ) \geq \| V \| ( A \cap \cball{a}{\varepsilon_i} ) \geq
		\delta_i.
	\end{gather*}
	In particular, such $H$ is countable.

	Next, one inductively (for $i \in \nat$) defines $\Omega_i$ to be the
	class of all $H \in \Omega$ such that every $A \in G_{i-1}$ is the
	union of some subfamily of $H$ and chooses $G_i \in \Omega_i$ such
	that
	\begin{gather*}
		\card ( G_i \cap P_i ) \geq \card ( H \cap P_i ) \quad
		\text{whenever $H \in \Omega_i$}.
	\end{gather*}
	The maximality of $G_i$ implies $G_i \subset Q_i$; in fact, if there
	would exist $A \in G_i \without Q_i$ there would exist $B$ satisfying
	\begin{gather*}
		B \subset A, \quad B \in P_i, \quad A \without B \in P_i
	\end{gather*}
	and $H = ( G_i \without \{ A \} ) \cap \{ B, A \without B \}$ would
	belong to $\Omega_i$ with
	\begin{gather*}
		\card ( H \cap P_i ) > \card ( G_i \cap P_i ).
	\end{gather*}
	Moreover, it is evident that to each $z \in U$ there corresponds a
	sequence $A_i$ uniquely characterised by the requirements $z \in
	\bigcap_{i=1}^\infty A_i$ and $A_{i+1} \subset A_i \in G_i$ for $i \in
	\nat$.

	Define $G = \bigcup_{i=1}^\infty G_i$ and notice that $G$ is
	countable. Define $C$ to be the collection of sets
	$\bigcap_{i=1}^\infty A_i$ with positive $\| V \|$ measure
	corresponding to all sequences $A_i$ with $A_{i+1} \subset A_i \in
	G_i$ for $i \in \nat$. Clearly, $C$ is a disjointed subfamily of $P$,
	hence $C$ is countable. Next, it will be shown that
	\begin{gather*}
		\| V \| ( U \without {\textstyle \bigcup C } ) = 0.
	\end{gather*}
	In view of \cite[2.8.18, 2.9.11]{MR41:1976} it is sufficent to prove
	\begin{gather*}
		E_i \without {\textstyle \bigcup C} \subset {\textstyle
		\bigcup } \big \{ A \cap \{ z \with \density^{\ast \vdim} ( \|
		V \| \restrict A, z ) < \density^{\ast \vdim} ( \| V \|, z )
		\} \with A \in G \big \}
	\end{gather*}
	for $i \in \nat$. For this purpose consider $a \in E_i \without
	\bigcup C$ with corresponding sequence $A_j$. It follows that $\| V \|
	( \bigcap_{j=1}^\infty A_j ) = 0$, hence there exists $j$ with $\| V
	\| ( A_j \cap \cball{a}{\varepsilon_i} ) < \delta_i$ and the lower
	bound implies
	\begin{gather*}
		\density^{\ast \vdim} ( \| V \| \restrict A_j, a ) < 1/i \leq
		\density^\vdim ( \| V \|, a ).
	\end{gather*}
	One infers that also
	\begin{gather*}
		\| \delta V \| ( U \without {\textstyle \bigcup C} ) = 0,
	\end{gather*}
	since $\delta V = \sum_{A \in C} \delta ( V \restrict A \times
	\grass{\adim}{\vdim} ) = \sum_{A \in C} ( \delta V ) \restrict A = (
	\delta V ) \restrict \bigcup C$.

	It remains to prove that each varifold $V \restrict A \times
	\grass{\adim}{\vdim}$ corresponding to $A \in C$ is indecomposable. If
	this were not the case, then there would exist $A = \bigcap_{i=1} A_i
	\in C$ with $A_{i+1} \subset A_i \in G_i$ for $i \in \nat$ and a Borel
	set $B$ such that
	\begin{gather*}
		\| V \| ( A \cap B ) > 0, \quad \| V \| ( A \without B ) > 0,
		\\
		\delta ( V \restrict (A \cap B) \times \grass{\adim}{\vdim} )
		= \big ( \delta ( V \restrict A \times \grass{\adim}{\vdim} )
		\big ) \restrict B = ( \delta V ) \restrict ( A \cap B ).
	\end{gather*}
	This would imply $A \cap B \in P$ and $A \without B \in P$, hence for
	some $i$ also
	\begin{gather*}
		A \cap B \subset A_i, \quad A \cap B \in P_i, \quad A \without
		B \in P_i
	\end{gather*}
	which would yield
	\begin{gather*}
		A_i \without ( A \cap B ) = ( A_i \without A ) \cup ( A
		\without B ) \in P_i,
	\end{gather*}
	in contradiction to $A_i \in Q_i$.
\end{proof}
\begin{remark} \label{remark:nonunique_decomposition}
	The decomposition of $V$ may be nonunique. In fact,
	considering the six rays
	\begin{gather*}
		R_j = \{ t \exp ( \pi \mathbf{i} j/3) \with 0 < t < \infty
		\} \subset \complex = \rel^2, \quad \text{where $\pi =
		\boldsymbol{\Gamma} (1/2)^2$},
	\end{gather*}
	corresponding to $j \in \{ 0, 1, 2, 3, 4, 5 \}$ and their associated
	varifolds $V_j \in \IVar_1 ( \rel^2 )$ with $\| V_j \| = \mathscr{H}^1
	\restrict R_j$, one notices that $V = \sum_{j=0}^5 V_j \in \IVar_1 (
	\rel^2 )$ is a stationary varifold such that
	\begin{gather*}
		\{ V_0 + V_2 + V_4, V_1 + V_3 + V_5 \} \quad \text{and} \quad
		\{ V_0 + V_3, V_1 + V_4, V_2 + V_5 \}
	\end{gather*}
	are distinct decompositions of $V$.
\end{remark}
\begin{corollary} \label{corollary:conn_structure}
	Suppose $\vdim$, $\adim$, $p$, $U$ and $V$ are as in
	\ref{miniremark:situation_general}, $p = \vdim$, and $H$ is the family
	of all connected components of $\spt \| V \|$.

	Then the following four statements hold.
	\begin{enumerate}
		\item \label{item:conn_structure:union} If $C \in H$, then
		\begin{gather*}
			C = {\textstyle \bigcup \{ \spt \| W \| \with W \in G,
			C \cap \spt \| W \| \neq \varnothing \}}
		\end{gather*}
		whenever $G$ is decomposition of $V$.
		\item \label{item:conn_structure:piece} If $C \in H$, then
		$\boundary{V}{C} = 0$.
		\item \label{item:conn_structure:finite} $\card ( H \cap \{ C
		\with C \cap K \neq \varnothing \}) < \infty$ whenever $K$ is
		a compact subset of $U$.
		\item \label{item:conn_structure:open} If $C \in H$, then $C$
		is open relative to $\spt \| V \|$.
	\end{enumerate}
\end{corollary}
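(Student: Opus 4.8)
The plan is to fix, by \ref{thm:decomposition}, a decomposition $G$ of $V$ and to transport the good properties of the pieces $\spt \| W \|$, $W \in G$, to the topological components $C \in H$. The starting point is \ref{remark:decomposition} --- this is exactly where the hypothesis $p = \vdim$ is used --- which provides $\spt \| V \| = \bigcup \{ \spt \| W \| \with W \in G \}$ together with $\card ( G \cap \{ W \with K \cap \spt \| W \| \neq \varnothing \} ) < \infty$ for every compact $K \subset U$; since $U$ is locally compact, this renders the family $\{ \spt \| W \| \with W \in G \}$ locally finite in $U$. I will also note that each $W \in G$ is indecomposable, hence $\spt \| W \|$ is connected by \ref{lemma:connected} and closed in $U$ (being the support of a Radon measure), and that $\spt \| W \| \neq \varnothing$ since $W \neq 0$; consequently each $\spt \| W \|$ is contained in exactly one member of $H$. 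All of this applies equally to any decomposition of $V$.

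Granting this, \eqref{item:conn_structure:union} and \eqref{item:conn_structure:finite} are essentially bookkeeping. For \eqref{item:conn_structure:union} I would take the given (arbitrary) decomposition $G$ and use \ref{remark:component} for the inclusion $\bigcup \{ \spt \| W \| \with W \in G, \ C \cap \spt \| W \| \neq \varnothing \} \subset C$, and the covering identity $\spt \| V \| = \bigcup \{ \spt \| W \| \with W \in G \}$ for the reverse inclusion. For \eqref{item:conn_structure:finite}, given a compact $K$, I would attach to each $C \in H$ meeting $K$ some $W \in G$ with $\spt \| W \| \subset C$ and $\spt \| W \| \cap K \neq \varnothing$ (pick a point of $C \cap K$, which lies in some $\spt \| W \|$, and apply \ref{remark:component}); distinct components receive distinct $W$'s because a nonempty $\spt \| W \|$ cannot lie in two components, so the cardinality in question is bounded by $\card ( G \cap \{ W \with \spt \| W \| \cap K \neq \varnothing \} ) < \infty$.

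For \eqref{item:conn_structure:open} the key point is that each $\spt \| W \|$, being connected, is either contained in $C$ or disjoint from $C$; therefore $\spt \| V \| \without C = \bigcup \{ \spt \| W \| \with W \in G, \ \spt \| W \| \cap C = \varnothing \}$ is a locally finite union of sets closed in $U$, hence closed in $U$, so $C$ is open relative to $\spt \| V \|$. Finally, \eqref{item:conn_structure:piece} will follow from \eqref{item:conn_structure:open} by mimicking the proof of \ref{lemma:connected}: by \eqref{item:conn_structure:open} the sets $C$ and $\spt \| V \| \without C$ are disjoint and closed in $U$, so applying \cite[3.1.13]{MR41:1976} to the open cover $\{ U \without C, \ U \without ( \spt \| V \| \without C ) \}$ of $U$ furnishes $f \in \mathscr{E}^0 ( U )$ with $C \subset \Int \{ z \with f(z) = 1 \}$ and $\spt \| V \| \without C \subset \Int \{ z \with f(z) = 0 \}$; then $\grad f$ vanishes on a neighbourhood of $\spt \| V \|$ and $\boundary{V}{C} = ( \delta V ) \restrict C - \delta ( V \restrict C \times \grass{\adim}{\vdim} ) = 0$ by Allard \cite[4.10\,(1)]{MR0307015}.

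I expect the only genuine input to be \ref{remark:decomposition}, i.e.\ the local finiteness and the identity $\spt \| V \| = \bigcup \{ \spt \| W \| \with W \in G \}$, which is false when $p < \vdim$, see \cite[1.2]{snulmenn.isoperimetric}; once it is available, the remaining delicate points are merely (i) that connectedness of $\spt \| W \|$ genuinely forces the dichotomy ``contained in $C$ or disjoint from $C$'' exploited in \eqref{item:conn_structure:open}, and (ii) that the partition of unity construction of \ref{lemma:connected} carries over verbatim to the relatively clopen decomposition $\{ C, \ \spt \| V \| \without C \}$ of $\spt \| V \|$.
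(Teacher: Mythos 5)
Your arguments for \eqref{item:conn_structure:union}, \eqref{item:conn_structure:finite} and \eqref{item:conn_structure:open} run along the same lines as the paper's: all three are extracted from \ref{remark:component} and the two facts in \ref{remark:decomposition} (the covering $\spt \| V \| = \bigcup \{ \spt \| W \| \with W \in G \}$ and the local finiteness), and the derivation of openness from local finiteness is exactly what the paper means by ``\eqref{item:conn_structure:open} follows from \eqref{item:conn_structure:finite}''.

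For \eqref{item:conn_structure:piece} you take a genuinely different route. The paper forms $X = \sum_{W \in G'} W$ with $G' = G \cap \{ W \with C \cap \spt \| W \| \neq \varnothing \}$, observes $C = \spt \| X \|$, and then invokes \ref{remark:decomp_rep} (which gives a Borel set $A = \bigcup \zeta \lIm G' \rIm$ with $\boundary V A = 0$ and $X = V \restrict A \times \grass\adim\vdim$) together with \ref{lemma:piece} (which gives that $A$ and $C = \spt \| X \|$ are $\| V \|$ almost equal) to transport $\boundary V A = 0$ to $\boundary V C = 0$. You instead prove \eqref{item:conn_structure:open} first and then mimic the proof of \ref{lemma:connected}: since $C$ and $\spt \| V \| \without C$ are disjoint and closed in $U$, one separates them by a smooth $f$ that is locally constant near $\spt \| V \|$ and concludes $\delta ( V \restrict C \times \grass\adim\vdim ) = ( \delta V ) \restrict C$ via Allard \cite[4.10\,(1)]{MR0307015}. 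Both are valid, but your version is arguably the more robust of the two: the paper's passage from ``$\| V \|$ almost equal'' to ``$\boundary V A = \boundary V C$'' implicitly requires $A$ and $C$ to be $\| V \| + \| \delta V \|$ almost equal, which is automatic when $\vdim > 1$ (because the mean curvature hypothesis $p = \vdim > 1$ forces $\| \delta V \| \ll \| V \|$) but asks for an extra word when $\vdim = 1$, where $\| \delta V \|$ may have atoms not seen by $\| V \|$ (cp.\ the remark following \ref{lemma:piece}). Your topological separation argument never touches this issue since the constructed $f$ agrees with the characteristic function of $C$ on all of $\spt \| V \| \supset \spt \| \delta V \|$. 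The one point you gloss over is the construction of $f$ with $C \subset \Int \{ z \with f ( z ) = 1 \}$ and $\spt \| V \| \without C \subset \Int \{ z \with f ( z ) = 0 \}$; a bare partition of unity subordinate to $\{ U \without C, U \without ( \spt \| V \| \without C ) \}$ does not immediately hand you the interiors, and one should use the explicit selection of bumps as in the proof of \ref{lemma:connected} (or first shrink to smaller disjoint open neighbourhoods of the two closed sets). This is a routine repair, not a gap.
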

\begin{proof}
	\eqref{item:conn_structure:union} is a consequence of
	\ref{remark:component} and \ref{remark:decomposition}.

	To prove \eqref{item:conn_structure:piece}, let $G' = G \cap \{ W
	\with C \cap \spt \| W \| \neq \varnothing \}$ and $X = \sum_{W \in
	G'} W$. One readily verifies $C = \spt \| X \|$ by means of
	\ref{remark:decomposition} and \eqref{item:conn_structure:union}.
	Hence \eqref{item:conn_structure:piece} follows from
	\ref{lemma:piece} and \ref{remark:decomp_rep}.
	
	In view of \eqref{item:conn_structure:union} and
	\ref{thm:decomposition}, \eqref{item:conn_structure:finite} is a
	consequence of \ref{remark:decomposition}. Finally,
	\eqref{item:conn_structure:open} follows from
	\eqref{item:conn_structure:finite}.
\end{proof}
\begin{remark}
	If $V$ is stationary, then \eqref{item:conn_structure:piece} and
	\eqref{item:conn_structure:open} imply that $V \restrict C \times
	\grass{\adim}{\vdim}$ is stationary and $\spt \| V \restrict C \times
	\grass{\adim}{\vdim} \| = C$ whenever $C \in H$. This observation
	might prove useful in considerations involving a strong maximum
	principle such as Wickramasekera \cite[Theorem
	1.1]{arxiv.1310.4031v1}.
\end{remark}
\section{Basic properties}
In this section generalised weakly differentiable functions are defined in
\ref{def:v_weakly_diff}. Properties studied include behaviour under
composition, see \ref{lemma:basic_v_weakly_diff}, \ref{example:composite} and
\ref{lemma:comp_lip}, addition and multiplication, see
\ref{thm:addition}\,\eqref{item:addition:add}\,\eqref{item:addition:mult}, and
decomposition of the varifold, see \ref{thm:tv_on_decompositions} and
\ref{thm:zero_derivative}. Moreover, coarea formulae in terms of the
distributional boundary of superlevel sets are established, see
\ref{lemma:meas_fct} and \ref{thm:tv_coarea}. A measure theoric description of
the superlevel sets will appear in \ref{corollary:coarea}. The theory is
illustrated by examples in \ref{example:star} and \ref{example:axioms}.
\begin{lemma} \label{lemma:meas_fct}
	Suppose $\vdim, \adim \in \nat$, $U$ is an open subset of
	$\rel^\adim$, $V \in \Var_\vdim (U)$, $\| \delta V \|$ is a Radon
	measure, $f$ is a real valued $\| V \| + \| \delta V \|$ measurable
	function with $\dmn f \subset U$, and $E(t) = \{ z \with f(z) > t \}$
	for $t \in \rel$.

	Then there exists a unique $T \in \mathscr{D}' (U \times \rel,
	\rel^\adim )$ such that, see \ref{miniremark:extension},
	\begin{gather*}
		( \delta V ) ( (\eta \circ f ) \theta ) = \tint{}{} \eta (
		f(z)) D \theta (z) \bullet \project{S} \ud V(z,S) + T_{(z,t)}
		( \eta'(t) \theta (z) )
	\end{gather*}
	whenever $\theta \in \mathscr{D} (U,\rel^\adim)$, $\eta \in
	\mathscr{E}^0 ( \rel )$, $\spt \eta'$ is compact and $\inf \spt \eta >
	- \infty$. Moreover, there holds
	\begin{gather*}
		T ( \psi ) = \tint{}{} \boundary{V}{E(t)} ( \psi ( \cdot, t )
		) \ud \mathscr{L}^1t \quad \text{for $\psi \in \mathscr{D} (U
		\times \rel, \rel^\adim )$}.
	\end{gather*}
\end{lemma}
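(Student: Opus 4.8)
The plan is to construct $T$ as a distribution on the product $U \times \rel$ whose action is governed by the first variation of $V$ and the level-set structure of $f$, then to identify its disintegration in $t$ with the distributional boundaries $\boundary{V}{E(t)}$. First I would reduce the defining identity to a more tractable form: given $\theta \in \mathscr{D}(U,\rel^\adim)$ and $\eta \in \mathscr{E}^0(\rel)$ with $\spt \eta'$ compact and $\inf \spt \eta > -\infty$, the quantity
\begin{gather*}
	R_\theta(\eta') := (\delta V)((\eta \circ f)\theta) - \tint{}{} \eta(f(z)) D\theta(z) \bullet \project{S} \ud V(z,S)
\end{gather*}
depends, I claim, only on $\eta' \in \mathscr{D}^0(\rel)$ and not on $\eta$ itself; indeed, changing $\eta$ by an additive constant $c$ changes the left side by $c\,(\delta V)(\theta)$ and the right side by $c \tint{}{} D\theta \bullet \project{S} \ud V = c\,(\delta V)(\theta)$, using the definition of $\delta V$ applied to $\theta$. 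Hence $R_\theta$ is a well-defined linear functional on $\mathscr{D}^0(\rel)$, and since every $\gamma \in \mathscr{D}^0(\rel)$ with $\int \gamma \ud \mathscr{L}^1 = 0$ is the derivative of a compactly supported primitive (while a general $\gamma$ differs from such a one by a multiple of a fixed bump), $R_\theta$ extends canonically to all of $\mathscr{D}^0(\rel)$, linearly in $\theta$ as well. This gives a bilinear form $(\gamma,\theta) \mapsto R_\theta(\gamma)$ on $\mathscr{D}^0(\rel) \times \mathscr{D}(U,\rel^\adim)$, and by the density of $\mathscr{D}^0(\rel) \otimes \mathscr{D}(U,\rel^\adim)$ in $\mathscr{D}(U \times \rel, \rel^\adim)$ (see \ref{miniremark:distrib_on_products}) together with a continuity/boundedness estimate, it extends to the desired $T \in \mathscr{D}'(U \times \rel, \rel^\adim)$, which is unique since it is prescribed on a sequentially dense subspace.

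The continuity estimate needed is the crux of the first half. Fixing a compact $K \subset U$ and a compact interval $[a,b] \ni \inf \spt \eta$, I would bound $|R_\theta(\eta')|$ in terms of $\sup|\theta|$, $\sup|D\theta|$ (both over $K$) and $\sup|\eta'|$ by writing $(\delta V)((\eta\circ f)\theta)$ using $\eta \circ f = \eta(a) + \tint{a}{f(z)} \eta'$, so the first-variation term against the constant part is controlled by $\|\delta V\|(K) \sup|\theta|$, and the remaining part is an integral over $t$ of $\boundary{V}{E(t)}$-type quantities; the second term is controlled by $\|V\|(K \cap \spt\theta) \sup|D\theta|$ after noting $\eta \circ f$ is bounded on $\spt \theta$. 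This is essentially bookkeeping once the level-set identity $(\delta V)(\text{const}\cdot\theta) - \tint{}{} \text{const}\cdot D\theta\bullet\project S \ud V = \text{const}\cdot \boundary{V}{U}(\theta) = 0$ is in hand.

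For the second conclusion, I would show directly that for $\theta \in \mathscr{D}(U,\rel^\adim)$ and $\gamma \in \mathscr{D}^0(\rel)$,
\begin{gather*}
	R_\theta(\gamma) = \tint{}{} \gamma(t)\, \boundary{V}{E(t)}(\theta) \ud \mathscr{L}^1 t.
\end{gather*}
By Fubini and the definition of $\boundary{V}{E(t)} = (\delta V)\restrict E(t) - \delta(V \restrict E(t)\times\grass{\adim}{\vdim})$, the right side equals $\tint{}{}\gamma(t) \big( \tint{E(t)}{} \theta \bullet \eta(V,\cdot) \ud\|\delta V\| - \tint{E(t)\times\grass{\adim}{\vdim}}{} D\theta\bullet\project S \ud V \big) \ud t$; interchanging the $t$-integral with the $z$-integrals and using $\tint{\{t : f(z)>t\}}{}\gamma(t)\ud t = \tint{-\infty}{f(z)}\gamma$ — which is precisely $\eta(f(z))$ for the primitive $\eta(s)=\tint{-\infty}{s}\gamma$ (legitimate since $\gamma$ has compact support, so $\inf\spt\eta > -\infty$ after a constant shift) — recovers exactly $R_\theta(\eta')$. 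Then, since $\gamma \mapsto \boundary{V}{E(t)}(\theta)$-integrand is $\mathscr{L}^1$-measurable and locally integrable (by the measurability of $t \mapsto \boundary{V}{E(t)}$, which follows from \ref{example:distrib_lusin} applied to the Borel dependence of $E(t)$ on $t$), the general formula $T(\psi) = \tint{}{}\boundary{V}{E(t)}(\psi(\cdot,t))\ud\mathscr{L}^1 t$ follows by extending from $\psi = \gamma\otimes\theta$ to all of $\mathscr{D}(U\times\rel,\rel^\adim)$ via sequential density.

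The main obstacle I anticipate is the measurability in $t$ of the assignment $t \mapsto \boundary{V}{E(t)} \in \mathscr{D}'(U,\rel^\adim)$ together with the local integrability making the iterated integral meaningful; this is where the functional-analytic machinery of Section~\ref{sec:tvs} (that $\mathscr{D}'(U,\rel^\adim)$ with the weak topology is Lusin, with Borel structure generated by evaluations) and of Section~\ref{sec:monotonicity} is really used, rather than the relatively soft algebraic manipulations that make up the rest.
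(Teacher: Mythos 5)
Your proposal is correct in essence but takes a genuinely different route. The paper defines $T$ in one stroke by an explicit integral formula over the hypergraph $A = \{(z,t) : z \in E(t)\}$: after establishing that $A$ is $\|V\|\times\mathscr{L}^1$- and $\|\delta V\|\times\mathscr{L}^1$-measurable (and $g^{-1}\lIm A\rIm$ is $V\times\mathscr{L}^1$-measurable), it sets
\begin{align*}
T(\psi) & = \tint{A}{} \psi(z,t)\bullet\eta(V,z)\ud(\|\delta V\|\times\mathscr{L}^1)(z,t) \\
& \phantom{=} \ - \tint{g^{-1}\lIm A\rIm}{} (D\psi(z,t)\circ p^\ast)\bullet\project{S}\ud(V\times\mathscr{L}^1)((z,S),t),
\end{align*}
and both defining identities then drop out of a single application of Fubini — disintegrating in $z$ yields the first identity (via $\eta(f(z)) = \int_{-\infty}^{f(z)}\gamma$) and disintegrating in $t$ yields the second. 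You instead start from the defect functional $R_\theta(\gamma)$ on the algebraic tensor product $\mathscr{D}^0(\rel)\otimes\mathscr{D}(U,\rel^\adim)$ and extend by density, which forces you to supply a continuity estimate and to handle the measurability of $t\mapsto\boundary{V}{E(t)}(\psi(\cdot,t))$ separately (you invoke the Lusin machinery of \ref{example:distrib_lusin}). Both obstacles disappear in the paper's formulation because product measurability of $A$ encodes the measurability-in-$t$, and the explicit integrals give the boundedness for free. A minor remark: your ``well-definedness modulo constant shifts'' check is redundant, since the constraints $\spt\eta'$ compact and $\inf\spt\eta > -\infty$ already determine $\eta$ uniquely from $\eta'$ as $\eta(s) = \int_{-\infty}^s\eta'$; the observation is still a useful sanity check but not a needed step. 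Overall your plan is workable, just with more moving parts than the paper's direct construction.
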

\begin{proof}
	Define $A = \{ (z,t) \with z \in E(t) \}$ and $g : (U \times
	\grass{\adim}{\vdim} ) \times \rel \to U \times \rel$ by $g((z,S),t) =
	(z,t)$ for $z \in U$, $S \in \grass{\adim}{\vdim}$ and $t \in \rel$.
	Using \cite[2.2.2, 2.2.3, 2.2.17, 2.6.2]{MR41:1976}, one obtains that
	$A$ is $\| V \| \times \mathscr{L}^1$ and $\| \delta V \| \times
	\mathscr{L}^1$ measurable, hence that $g^{-1} \lIm A \rIm$ is $V
	\times \mathscr{L}^1$ measurable since $\| V \| \times \mathscr{L}^1 =
	g_\# ( V \times \mathscr{L}^1 )$

	Define $p : \rel^\adim \times \rel \to \rel^\adim$ by $p(z,t) = z$ for
	$(z,t) \in \rel^\adim \times \rel$ and let $T \in \mathscr{D} ' ( U
	\times \rel, \rel^\adim )$ be defined by
	\begin{align*}
		T ( \psi ) & = \tint{A}{} \psi (z,t) \bullet \eta (V,z) \ud (
		\| \delta V \| \times \mathscr{L}^1 ) (z,t) \\
		& \phantom{=} \ - \tint{g^{-1} \lIm A \rIm}{} ( D \psi (z,t)
		\circ p^\ast ) \bullet \project{S} \ud (V \times
		\mathscr{L}^1) ((z,S),t)
	\end{align*}
	whenever $\psi \in \mathscr{D} ( U \times \rel, \rel^\adim )$, see
	\ref{miniremark:situation_general_varifold}. Fubini's theorem then
	yields the two equations. The uniqueness of $T$ follows from
	\ref{miniremark:distrib_on_products}.
\end{proof}
\begin{remark}
	Notice that characterising equation for $T$ also holds if the
	requirement $\inf \spt \eta > - \infty$ is dropped.
\end{remark}
\begin{definition} \label{def:v_weakly_diff}
	Suppose $l, \vdim, \adim \in \nat$, $\vdim \leq \adim$, $U$ is an open
	subset of $\rel^\adim$, $V \in \RVar_\vdim ( U )$, and $\| \delta V
	\|$ is a Radon measure.

	Then an $\rel^l$ valued $\| V \| + \| \delta V \|$ measurable function
	$f$ with $\dmn f \subset U$ is called \emph{generalised $V$ weakly
	differentiable} if and only if for some $\| V \|$ measurable $\Hom
	(\rel^\adim, \rel^l )$ valued function $F$ the following two
	conditions hold:
	\begin{enumerate}
		\item \label{item:v_weakly_diff:int} If $\eta \in \mathscr{K}
		( \rel^l )$ then $( \eta \circ f ) \cdot F \in \Lploc{1} ( \|
		V \|, \Hom ( \rel^\adim, \rel^l) )$.
		\item \label{item:v_weakly_diff:partial} If $\theta \in
		\mathscr{D} ( U, \rel^\adim )$, $\eta \in \mathscr{E}^0 ( \rel^l
		)$ and $\spt D \eta$ is compact then, see
		\ref{miniremark:extension},
		\begin{gather*}
			\begin{aligned}
				& ( \delta V ) ( ( \eta \circ f ) \theta ) \\
				& \qquad = \tint{}{} \eta(f(z)) \project{S}
				\bullet D \theta (z) \ud V (z,S) + \tint{}{}
				\left < \theta(z), D\eta (f(z)) \circ F (z)
				\right > \ud \| V \| z.
			\end{aligned}
		\end{gather*}
	\end{enumerate}
	The function $F$ is $ \| V \|$ almost unique. Therefore, one may
	define the \emph{generalised $V$ weak derivative of $f$} to be the
	function $\derivative{V}{f}$ characterised by $a \in \dmn
	\derivative{V}{f}$ if and only if
	\begin{gather*}
		(\| V \|, C ) \aplim_{z\to a} F (z) = \tau \quad \text{for
		some $\tau \in \Hom ( \rel^\adim, \rel^l)$}
	\end{gather*}
	and in this case $\derivative{V}{f}(a) = \tau$, where $C = \{
	(a,\cball{a}{r}) \with \cball{a}{r} \subset U \}$. Moreover, the set
	of all $\rel^l$ valued generalised $V$ weakly differentiable functions
	will be denoted by $\trunc ( V, \rel^l )$ and $\trunc (V) = \trunc
	(V,\rel)$.
\end{definition}
\begin{remark} \label{remark:eq_bounded_condition}
	The condition \eqref{item:v_weakly_diff:int} is equivalent to
	$\tint{\classification{K}{z}{|f(z)|\leq s}}{} |F| \ud \| V \| <
	\infty$ whenever $K$ is compact subset of $U$ and $0 \leq s < \infty$.
\end{remark}
\begin{remark} \label{remark:eq_condition_weak_diff}
	The condition \eqref{item:v_weakly_diff:partial} is equivalent to the
	following one: If $\gamma \in \mathscr{D}^0 ( U )$, $v \in
	\rel^\adim$, $\eta \in \mathscr{E}^0 ( \rel^l )$ and $\spt D \eta$ is
	compact then
	\begin{gather*}
		\begin{aligned}
			& ( \delta V ) ( ( \eta \circ f ) \gamma \cdot v ) \\
			& \qquad = \tint{}{} \eta(f(z)) \left <
			\project{S}(v), D \gamma (z) \right > \ud V (z,S) +
			\tint{}{} \gamma (z) \left < v , D\eta (f(z)) \circ F
			(z) \right > \ud \| V \| z.
		\end{aligned}
	\end{gather*}
\end{remark}
\begin{remark} \label{remark:associated_distribution}
	If $f \in \trunc(V)$ then the distribution $T$ associated to $f$ in
	\ref{lemma:meas_fct} is representable by integration and, see
	\ref{lemma:push_on_product},
	\begin{gather*}
		T ( \psi ) = \tint{}{} \left < \psi (z,f(z)),
		\derivative{V}{f} (z) \right > \ud \| V \| z, \\
		\tint{}{} g \ud \| T \| = \tint{}{} g(z,f(z)) |
		\derivative{V}{f} (z) | \ud \| V \| z
	\end{gather*}
	whenever $\psi \in \Lp{1} ( \| T \|, \rel^\adim )$ and $g$ is an
	$\overline{\rel}$ valued $\| T \|$ integrable function.
\end{remark}
\begin{example} \label{example:lipschitzian}
	If $f : U \to \rel^l$ is a locally Lipschitzian function then $f$ is
	generalised $V$ weakly differentiable with
	\begin{gather*}
		\derivative{V}{f}(z) = ( \| V \|, \vdim ) \ap Df (z) \circ
		\project{\Tan^\vdim ( \| V \|, z )} \quad \text{for $\| V\|$
		almost all $z$},
	\end{gather*}
	as may be verified by means of \cite[4.5\,(4)]{snulmenn.decay}.
	Moreover, if $\density^\vdim ( \| V \|, z ) \geq 1$ for $\| V \|$
	almost all $z$, then the equality holds for $f \in \trunc (V,\rel^l)$
	as will be shown in \ref{thm:approx_diff}.
\end{example}
\begin{remark} \label{remark:integration_by_parts}
	\emph{If $\alpha \in \Hom ( \rel^l, \rel )$, $f \in \trunc (V,\rel^l)
	\cap \Lploc{1} ( \| V \| + \| \delta V \|, \rel^l )$, and
	$\derivative{V}{f} \in \Lploc{1} ( \| V \|, \Hom ( \rel^\adim,
	\rel^l))$ then $\alpha \circ f \in \trunc (V)$ and
	\begin{gather*}
		\derivative{V}{(\alpha \circ f)} (z) = \alpha \circ
		\derivative{V}{f} (z) \quad \text{for $\| V \|$ almost all
		$z$}, \\
		( \delta V ) ( ( \alpha \circ f ) \theta ) = \alpha \big (
		\tint{}{} ( \project{S} \bullet D \theta (z) ) f(z) + \left <
		\theta(z), \derivative{V}{f} (z) \right > \ud V (z,S) \big )
	\end{gather*}
	whenever $\theta \in \mathscr{D} ( U, \rel^\adim )$}. To prove this
	suppose $\eta \in \mathscr{E}^0 ( \rel )$ with $\Lip \eta < \infty$,
	choose $\phi \in \mathscr{D}^0 ( \rel^l )$ and $\phi(0)=1$, define
	$\eta_r = ( \phi \circ \boldsymbol{\mu}_r ) ( \eta \circ \alpha )$ for
	$0 < r < \infty$, observe
	\begin{gather*}
		\eta_r (y) \to \eta ( \alpha (y)) \quad \text{and} \quad
		D\eta_r (y) \to D \eta ( \alpha (y) ) \circ \alpha \qquad
		\text{as $r \to 0+$ for $y \in \rel^l$}, \\
		\sup \{ | \eta_r (0) | \with 0 < r \leq 1 \} + \sup \{ | D
		\eta_r (y) | \with y \in \rel^l, 0 < r \leq 1 \} < \infty
	\end{gather*}
	and consider the limit $r \to 0+$ in
	\ref{def:v_weakly_diff}\,\eqref{item:v_weakly_diff:partial} with
	$\eta$ replaced by $\eta_r$.
\end{remark}
\begin{remark}
	The prefix ``generalised'' has been chosen in analogy with the notion
	of ``generalised function of bounded variation'' treated in
	\cite[\S 4.5]{MR2003a:49002} originating from De~Giorgi and Ambrosio
	\cite{MR1152641}.
\end{remark}
\begin{remark}
	The usefulness of partial integration identities involving the first
	variation in defining a concept of weakly differentiable functions on
	varifolds has already been ``expected'' by Anzellotti, Delladio and
	Scianna who developed two notions of functions of bounded variation on
	integral currents, see \cite[p.~261]{MR1441622},
\end{remark}
\begin{remark} \label{remark:bv}
	In order to define a concept of ``generalised (real valued) functions
	of bounded variation'' with respect to a varifold, it could be of
	interest to study the class of those functions $f$ satisfying the
	hypotheses of \ref{lemma:meas_fct} such that the associated function
	$T$ is representable by integration.
\end{remark}
\begin{remark} \label{remark:moser}
	A concept related to the present one has been proposed by Moser in
	\cite[Definition 4.1]{62659} in the context of curvature varifolds
	(see \ref{def:curvature_varifold} and
	\ref{remark:hutchinson_reformulations}); in fact, it allows for
	certain ``multiple-valued'' functions. In studying convergence of
	pairs of varifolds and weakly differentiable functions, it would seem
	natural to investigate the extension of the present concept to such
	functions. (Notice that the usage of the term ``multiple-valued'' here
	is different but related to the one of Almgren in \cite[\S
	1]{MR1777737}).
\end{remark}
\begin{lemma} \label{lemma:basic_v_weakly_diff}
	Suppose $l$, $\vdim$, $\adim$, $U$, and $V$ are as in
	\ref{def:v_weakly_diff}, $f \in \trunc (V,\rel^l)$, $k \in \nat$, $0
	\leq c < \infty$, $A$ is a closed subset of $\rel^l$, $\phi : \rel^l
	\to \rel^k$, $\zeta : \rel^l \to \Hom ( \rel^l, \rel^k )$, $\phi_i \in
	\mathscr{E} ( \rel^l, \rel^k)$, $\spt D \phi_i \subset A$, $\Lip
	\phi_i \leq c$, $\phi | A$ is proper,
	\begin{gather*}
		\phi(y) = \lim_{i \to \infty} \phi_i (y) \quad \text{uniformly
		in $y \in \rel^l$}, \\
		\zeta (y) = \lim_{i \to \infty} D \phi_i ( y ) \quad \text{for
		$y \in \rel^l$}.
	\end{gather*}
	
	Then $\phi \circ f \in \trunc (V,\rel^k)$ and
	\begin{gather*}
		\derivative{V}{( \phi \circ f )} (z) = \zeta ( f (z) ) \circ
		\derivative{V}{f} (z) \quad \text{for $\| V \|$ almost all
		$z$}.
	\end{gather*}
\end{lemma}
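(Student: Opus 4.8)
The plan is to verify directly the two conditions in \ref{def:v_weakly_diff} for the function $\phi \circ f$, with the $\Hom(\rel^\adim,\rel^k)$ valued function there taken to be $G$, defined by $G(z) = \zeta(f(z)) \circ F(z)$, where $F$ is a $\| V \|$ measurable representative of $\derivative{V}{f}$. First I would record three elementary consequences of the hypotheses: since $\Lip \phi_i \leq c$ one has $|D\phi_i| \leq c$ on $\rel^l$, hence $|\zeta| \leq c$; since $\spt D\phi_i \subset A$ with $A$ closed, $D\phi_i$ vanishes on $\rel^l \without A$, hence $\zeta$ vanishes on $\rel^l \without A$; and therefore $G$ is a $\| V \|$ measurable function (being a composition and product of such) that vanishes at every $z$ with $f(z) \notin A$. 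Also $\phi = \lim \phi_i$ is continuous, so $\phi \circ f$ is $\| V \| + \| \delta V \|$ measurable.

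For \ref{def:v_weakly_diff}\,\eqref{item:v_weakly_diff:int} I would use the reformulation in \ref{remark:eq_bounded_condition}. Fix a compact $K \subset U$ and $0 \leq s < \infty$. Since $\phi | A$ is proper, the set $A \cap \phi^{-1}(\cball{0}{s})$ is compact, hence $A \cap \phi^{-1}(\cball{0}{s}) \subset \cball{0}{t}$ for some $0 < t < \infty$; consequently $\{ z \with f(z) \in A,\ |\phi(f(z))| \leq s \} \subset \{ z \with |f(z)| \leq t \}$. Using that $G$ vanishes where $f(z) \notin A$ together with $|G| \leq c\,|F|$ gives $\tint{K \cap \{ z \with |(\phi \circ f)(z)| \leq s \}}{} |G| \ud \| V \| \leq c \tint{K \cap \{ z \with |f(z)| \leq t \}}{} |F| \ud \| V \| < \infty$ by \ref{remark:eq_bounded_condition} applied to $f$.

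The substantial step is \ref{def:v_weakly_diff}\,\eqref{item:v_weakly_diff:partial}. Fix $\theta \in \mathscr{D}(U,\rel^\adim)$ and $\eta \in \mathscr{E}^0(\rel^k)$ with $\spt D\eta$ compact; note $\eta$ is bounded, as $D\eta$ has compact support. For $i$ large enough that $\sup|\phi_i - \phi| \leq 1$, the function $\eta \circ \phi_i$ lies in $\mathscr{E}^0(\rel^l)$, and $\spt D(\eta \circ \phi_i) \subset A \cap \phi_i^{-1}(\spt D\eta) \subset A \cap \phi^{-1}\big( \{ y \with \dist(y,\spt D\eta) \leq 1 \} \big)$, which is compact because $\phi | A$ is proper; hence \ref{def:v_weakly_diff}\,\eqref{item:v_weakly_diff:partial} applied to $f$ with $\eta$ replaced by $\eta \circ \phi_i$ yields
\begin{gather*}
	( \delta V ) ( ( \eta \circ \phi_i \circ f ) \theta )
	= \tint{}{} \eta(\phi_i(f(z))) \project{S} \bullet D\theta(z) \ud V(z,S) \\
	{} + \tint{}{} \left< \theta(z), D\eta(\phi_i(f(z))) \circ D\phi_i(f(z)) \circ F(z) \right> \ud \| V \| z.
\end{gather*}
I would then pass to the limit $i \to \infty$ term by term using dominated convergence. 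On the left, $(\eta \circ \phi_i \circ f)\theta \to (\eta \circ \phi \circ f)\theta$ uniformly with supports in the compact set $\spt \theta$, on which $\| \delta V \|$ is finite. In the first integral on the right, $\eta(\phi_i(f(\cdot))) \to \eta(\phi(f(\cdot)))$ pointwise and boundedly while $z \mapsto \project{S} \bullet D\theta(z)$ is bounded with support in $\spt \theta$, hence $V$ integrable. The second integral on the right is the delicate one: since $D\phi_i(f(z)) = 0$ whenever $f(z) \notin A$ and $D\eta(\phi_i(f(z))) = 0$ whenever $\phi_i(f(z)) \notin \spt D\eta$, its integrand is supported, uniformly for all large $i$, on $\{ z \with f(z) \in A \cap \phi_i^{-1}(\spt D\eta) \} \subset \{ z \with |f(z)| \leq t \}$ for a fixed $t$ (again by properness of $\phi | A$ and $\sup|\phi_i - \phi| \leq 1$), on which it is dominated by $(\sup|\theta|)(\sup|D\eta|)\, c\, |F|$; this dominant is $\| V \|$ integrable over $\spt \theta \cap \{ z \with |f(z)| \leq t \}$ by \ref{remark:eq_bounded_condition}, and $D\eta(\phi_i(f(z))) \circ D\phi_i(f(z)) \to D\eta(\phi(f(z))) \circ \zeta(f(z))$ pointwise. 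The resulting limit identity is exactly \ref{def:v_weakly_diff}\,\eqref{item:v_weakly_diff:partial} for $\phi \circ f$ with the function $G$, whence $\phi \circ f \in \trunc(V,\rel^k)$.

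Finally, to identify $\derivative{V}{(\phi \circ f)}$, observe that at $\| V \|$ almost every $z$ both the $\| V \|$ measurable function $F$ and the bounded $\| V \|$ measurable function $\zeta \circ f$ are approximately continuous (see \cite[2.9.13]{MR41:1976}), hence so is $G$, so that $(\| V \|, C) \aplim_{w \to z} G(w) = G(z)$ with $C$ as in \ref{def:v_weakly_diff}; since also $F(z) = \derivative{V}{f}(z)$ at $\| V \|$ almost all $z$, the definition of the generalised weak derivative gives $\derivative{V}{(\phi \circ f)}(z) = \zeta(f(z)) \circ \derivative{V}{f}(z)$ at $\| V \|$ almost all $z$. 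The main obstacle in all of this is the uniform-in-$i$ localisation of the second integrand onto a sublevel set $\{ z \with |f(z)| \leq t \}$; this is precisely where the truncation-type integrability built into the definition of $\trunc(V,\rel^l)$ must be combined with the properness of $\phi | A$, and once it is in place the remaining passages to the limit are routine.
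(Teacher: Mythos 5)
Your proof is correct and follows essentially the same route as the paper: both verify the two conditions of Definition \ref{def:v_weakly_diff} directly for $\phi \circ f$ with candidate derivative $\zeta(f(\cdot)) \circ \derivative{V}{f}(\cdot)$, using that $\zeta$ is bounded by $c$ and vanishes off $A$, exploiting properness of $\phi|A$ to control both the integrability condition and the supports of $D(\eta \circ \phi_i)$, and then applying \ref{def:v_weakly_diff}\,\eqref{item:v_weakly_diff:partial} to $f$ with $\eta$ replaced by $\eta \circ \phi_i$ and passing to the limit $i \to \infty$. The only difference is that you spell out the dominated-convergence argument and the final identification of the weak derivative via approximate continuity, which the paper leaves to the reader with ``considering the limit $i \to \infty$ yields the conclusion.''
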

\begin{proof}
	Note $\Lip \phi \leq c$, $\| \zeta(y) \| \leq c$ for $y \in \rel^l$,
	and $\zeta(y) = 0$ for $y \in \rel^l \without A$, hence
	\begin{gather*}
		\{y \with \eta ( \phi (y ) ) \zeta (y) \neq 0 \} \subset (
		\phi | A )^{-1} \lIm \spt \eta \rIm \quad \text{for $\eta \in
		\mathscr{K} ( \rel^k )$}.
	\end{gather*}
	One infers that the function mapping $z \in \dmn \derivative{V}{f}$
	onto $\eta ( \phi ( f (z) ) ) \zeta ( f(z)) \circ \derivative{V}{f}
	(z)$ belongs to $\Lploc{1} ( \| V \|, \Hom ( \rel^\adim, \rel^k ) )$.

	Suppose $\eta \in \mathscr{E}^0 ( \rel^k )$ and $K$ is a compact subset
	of $\rel^k$ with $\spt D \eta \subset \Int K$ and $C = ( \phi |
	A)^{-1} \lIm K \rIm$. Then $\im \eta$ is bounded, $C$ is compact and
	\begin{gather*}
		\{ y \with D \eta ( \phi_i (y ) ) \circ D \phi_i (y) \neq 0 \}
		\subset ( \phi_i | A )^{-1} \lIm \spt D \eta \rIm \subset C
		\quad \text{for large $i$},
	\end{gather*}
	in particular $\spt D ( \eta \circ \phi_i ) \subset C$ for such $i$.
	Therefore
	\begin{gather*}
		\begin{aligned}
			& ( \delta V ) ( ( \eta \circ \phi_i \circ f ) \theta
			) = \tint{}{} \eta(\phi_i(f(z))) \project{S}
			\bullet D \theta (z) \ud V (z,S) \\
			& \qquad + \tint{}{} \left < \theta(z), D\eta
			(\phi_i(f(z))) \circ D \phi_i (f(z)) \circ
			\derivative{V}{f} (z) \right > \ud \| V \| z
		\end{aligned}
	\end{gather*}
	and considering the limit $i \to \infty$ yields the conclusion.
\end{proof}
\begin{example} \label{example:composite}
	Amongst the functions $\phi$ and $\zeta$ admitting an approximation as
	in \ref{lemma:basic_v_weakly_diff} are the following:
	\begin{enumerate}
		\item \label{item:composite:scalar_mult} If $r \in \rel$
		then $\phi = \boldsymbol{\mu}_r$ with $\zeta = D
		\boldsymbol{\mu}_r$ is admissible.
		\item \label{item:composite:add_constant} If $y \in \rel^l$
		then $\phi = \boldsymbol{\tau}_y$ with $\zeta =
		D\boldsymbol{\tau}_y$ is admissible.
		\item \label{item:composite:mod} If $c \in \rel^l$
		then one may take $\phi$ and $\zeta$ such that $\phi (y) =
		|y-c|$ for $y \in \rel^l$,
		\begin{gather*}
			\text{$\zeta (y)(v) = |y-c|^{-1} (y-c) \bullet v$ if
			$y \neq c$}, \quad \text{$\zeta (y) = 0$ if $y=c$}
		\end{gather*}
		whenever $v, y \in \rel^l$.
		\item \label{item:composite:1d} If $l=1$ and $t \in
		\rel$ then one may take $\phi$ and $\zeta$ such that
		\begin{gather*}
			\phi (y) = \sup \{ y, t \}, \quad \text{$\zeta (y)(v)
			= v$ if $y > t$}, \quad \text{$\zeta (y) = 0$ if $y
			\leq t$}
		\end{gather*}
		whenever $v,y \in \rel$.
	\end{enumerate}

	To prove \eqref{item:composite:mod}, choose $\Phi \in \mathscr{D}^0 (
	\rel^l )^+$ with $\int \Phi \ud \mathscr{L}^l = 1$ and $\Phi (y) = \Phi
	(-y)$ for $y \in \rel^l$ and take $k=1$, $c=1$, $A = \rel^l$, and
	$\phi_i = \Phi_{1/i} \ast \phi$ in \ref{lemma:basic_v_weakly_diff}
	noting $(\Phi_{1/i} \ast \phi) (c-y) = (\Phi_{1/i} \ast \phi)(c+y)$
	for $y \in \rel^l$, hence $D ( \Phi_{1/i} \ast \phi ) (c)=0$.

	To prove \eqref{item:composite:1d}, choose $\Phi \in \mathscr{D}^0 (
	\rel )^+$ with $\int \Phi \ud \mathscr{L}^1 = 1$, $\spt \Phi \subset
	\cball{0}{1}$ and $\varepsilon = \inf \spt \Phi > 0$, and take $k=1$,
	$c=1$, $A = \classification{\rel}{y}{y \geq t}$, and $\phi_i =
	\Phi_{1/i} \ast \phi$ in \ref{lemma:basic_v_weakly_diff} noting
	$\phi_i (y) = t$ if $-\infty < y \leq t + \varepsilon/i$, hence $D
	\phi_i (y) = 0$ for $- \infty < y \leq t$.
\end{example}
\begin{remark} \label{remark:comparison_trunc_spaces}
	If $l=1$, $\vdim = \adim$, and $\| V \| ( A ) = \mathscr{L}^\vdim (A)$
	for $A \subset U$, then $f \in \trunc (V)$ if and only if $f$ belongs
	to the class $\mathscr{T}^{1,1}_{\mathrm{loc}} ( U )$ introduced by
	B{\'e}nilan, Boccardo, Gallou{\"e}t, Gariepy, Pierre and V{\'a}zquez
	in \cite[p.~244]{MR1354907} and in this case $\derivative{V}{f}$
	corresponds to ``the derivative $Df$ of $f \in
	\mathscr{T}^{1,1}_{\mathrm{loc}} ( U)$'' of \cite[p.~246]{MR1354907}
	as may be verified by use of \ref{lemma:basic_v_weakly_diff},
	\ref{example:composite}\,\eqref{item:composite:scalar_mult}\,\eqref{item:composite:1d}
	and \cite[Lemmas 2.1, 2.3]{MR1354907}.
\end{remark}
\begin{definition} [see \protect{\cite[p.~419]{MR90g:47001a}}]
	Whenever $X$ is Banach space the weak topology is the topology on $X$
	induced by all continuous linear functionals mapping $X$ into $\rel$.
	Topological notions referring to the weak topology will be designated
	by the prefix ``weakly'' whereas topological notions without
	qualification will refer to the metric topology on $X$.
\end{definition}
\begin{lemma} \label{lemma:functional_analysis}
	Suppose $U$ is an open subset of $\rel^\adim$, $\mu$ is a Radon
	measure on $U$, $Y$ is a finite dimensional normed space, $g \in
	\Lp{1} ( \mu )$, $K$ denotes the set of all $f \in \Lp{1} ( \mu, Y )$
	such that
	\begin{gather*}
		|f(x)| \leq g ( x) \quad \text{for $\mu$ almost all $x$},
	\end{gather*}
	$L_1 ( \mu, Y ) = \Lp{1} ( \mu , Y ) / \{ f \with \Lpnorm{\mu}{1}{f} =
	0 \}$ is the (usual) quotient Banach space, and $\pi : \Lp{1} ( \mu, Y
	) \to L_1 ( \mu, Y )$ denotes the canonical projection.

	Then $\pi \lIm K \rIm$ with the topology induced by the weak topology
	on $L_1 ( \mu, Y )$ is compact and metrisable.
\end{lemma}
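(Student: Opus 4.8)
The plan is to deduce both assertions from the Banach--Alaoglu theorem by changing the underlying measure: pass from $\mu$ to the \emph{finite} Radon measure $\nu = \mu \restrict g$ over $U$, that is $\nu(A) = \tint{A}{} g \ud \mu$ (we may assume $g \geq 0$; finiteness of $\nu$ is $g \in \Lp{1}(\mu)$). The only topological input will be the elementary principle that a continuous bijection of a compact space onto a Hausdorff space is a homeomorphism.

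First I would record the bookkeeping attached to $\nu$: since $\nu$ is absolutely continuous with respect to $\mu$ and finite, every bounded $\mu$ measurable function lies in $\Lp{1}(\nu,\cdot)$, and $\{x : g(x) = 0\}$ is $\nu$ null, so that the class in $L_\infty(\nu,Y)$ of a function does not depend on its values there; moreover, for any $\mu$ measurable $h$, the product $g \cdot h$ is $\mu$ measurable and, up to $\mu$ null sets, depends only on the $\nu$ class of $h$ (because $\{h \neq \tilde{h}\}$ being $\nu$ null forces $g = 0$ $\mu$ almost everywhere there). Next I would identify $\pi\lIm K\rIm$ with the closed unit ball $B$ of $L_\infty(\nu,Y)$ by means of the multiplication map $\Psi(h) = \pi(g \cdot h)$; this is well defined since $|g \cdot h| \leq g \in \Lp{1}(\mu)$, and it is a bijection of $B$ onto $\pi\lIm K\rIm$, its inverse sending $f \in K$ to the class of $f/g$ on $\{g > 0\}$ and $0$ on $\{g = 0\}$.

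Then I would invoke standard duality: since $\nu$ is $\sigma$ finite and $Y$ is finite dimensional, $L_\infty(\nu,Y)$ is the dual of $L_1(\nu,Y^\ast)$ (one may reduce to the scalar identity $L_1(\nu)^\ast = L_\infty(\nu)$ by fixing a basis of $Y$), and $L_1(\nu,Y^\ast)$ is separable because $\nu$ is a Radon measure over the second countable space $U$ and $Y^\ast$ is finite dimensional; hence $B$ with its weak-$\ast$ topology is compact by Banach--Alaoglu and metrisable. It then remains to check that $\Psi$ is continuous from $B$ with its weak-$\ast$ topology into $L_1(\mu,Y)$ with its weak topology: that topology is generated by the functionals $f \mapsto \tint{}{} \phi \bullet f \ud \mu$ with $\phi \in L_\infty(\mu,Y^\ast) = L_1(\mu,Y)^\ast$, and composition with $\Psi$ yields $h \mapsto \tint{}{} \phi \bullet h \ud \nu$, which is weak-$\ast$ continuous on $B$ because $\phi$ is bounded and $\nu$ is finite, so $\phi \in L_1(\nu,Y^\ast)$. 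Since the weak topology on $L_1(\mu,Y)$ is Hausdorff, the continuous bijection $\Psi$ of the compact space $B$ onto $\pi\lIm K\rIm$ is a homeomorphism; therefore $\pi\lIm K\rIm$ is weakly compact and, being homeomorphic to $B$, metrisable.

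No single step is deep; the main obstacle is the measure-theoretic bookkeeping around the density $\ud\nu = g\ud\mu$ — keeping the $L_\infty(\nu,Y)$ class insensitive to $\{g = 0\}$, and ensuring bounded $\mu$ measurable functions pass into $L_1(\nu,Y^\ast)$ — together with lining up the identifications $L_1(\mu,Y)^\ast = L_\infty(\mu,Y^\ast)$ and $L_1(\nu,Y^\ast)^\ast = L_\infty(\nu,Y)$ for finite dimensional $Y$; the degenerate case $g = 0$ $\mu$ almost everywhere ($\nu = 0$, $B = \{0\} = \pi\lIm K\rIm$) is covered by the same argument. As an alternative for weak compactness one could argue directly via the Dunford--Pettis theorem, domination by the single function $g \in \Lp{1}(\mu)$ giving uniform integrability and tightness while $\pi\lIm K\rIm$ is convex and norm closed hence weakly closed, and obtain metrisability from separability of $L_1(\mu,Y)$; but the route through $\nu$ and Banach--Alaoglu yields both conclusions together.
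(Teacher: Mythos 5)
Your proof is correct, but it follows a genuinely different route from the paper. The paper's proof stays inside $\Lp{1}(\mu,Y)$: it first observes that $\pi\lIm K \rIm$ is convex and norm closed, hence weakly closed by \cite[\printRoman{5}.3.13]{MR90g:47001a}, reduces to $Y = \rel$ via a basis of $Y$ and \cite[\printRoman{5}.3.15]{MR90g:47001a}, and then cites \cite[\printRoman{4}.8.9, \printRoman{5}.6.1, \printRoman{5}.6.3]{MR90g:47001a} — that is, the Dunford--Pettis characterisation of relative weak compactness in $L_1$ (domination by the single integrable function $g$ giving uniform integrability) together with the fact that a weakly compact subset of a separable Banach space is weakly metrisable. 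This is essentially the ``alternative'' you sketch in your last sentence. Your main argument instead changes the measure to $\nu = \mu \restrict g$ and identifies $\pi\lIm K\rIm$ homeomorphically with the closed unit ball of $L_\infty(\nu,Y) = L_1(\nu,Y^\ast)^\ast$ in its weak-$\ast$ topology, so that compactness comes from Banach--Alaoglu and metrisability from separability of the predual $L_1(\nu,Y^\ast)$. The trade-off is clean: the paper's route avoids the bookkeeping around the density $\ud\nu = g\,\ud\mu$ and the identification of $\pi\lIm K\rIm$ with an $L_\infty$ ball, but it leans on Dunford--Pettis, which is a specifically $L_1$ result of some depth; your route replaces that by the more elementary and universally applicable Banach--Alaoglu theorem, at the cost of a measure change, the care about $\{g = 0\}$, and the verification that multiplication by $g$ is a weak-$\ast$-to-weak homeomorphism. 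Your handling of the degenerate case $g = 0$ $\mu$~a.e.\ and of the well-definedness of $\Psi$ on $\nu$ equivalence classes is exactly what is needed to make the alternative rigorous.
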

\begin{proof}
	First, notice that $\pi \lIm K \rIm$ is convex and closed, hence
	weakly closed by \cite[\printRoman{5}.3.13]{MR90g:47001a}.  Therefore
	one may assume that $Y = \rel$ employing the isomorphism $\Lp{1} ( \mu
	)^{\dim Y} \simeq \Lp{1} ( \mu, Y )$ induced by a basis of $Y$ and
	\cite[\printRoman{5}.3.15]{MR90g:47001a}. Since $L_1 ( \mu, \rel )$ is
	separable, the conclusion now follows combining
	\cite[\printRoman{4}.8.9, \printRoman{5}.6.1,
	\printRoman{5}.6.3]{MR90g:47001a}.
\end{proof}
\begin{lemma} \label{lemma:comp_lip}
	Suppose $l$, $\vdim$, $\adim$, $U$, and $V$ are as in
	\ref{def:v_weakly_diff}, $f \in \trunc (V, \rel^l)$, $k \in \nat$, $A$
	is a closed subset of $\rel^l$, $a$ is the characteristic function of
	$f^{-1} \lIm A \rIm$, and $\phi : \rel^l \to  \rel^k$ is a
	Lipschitzian function such that $\phi | A$ is proper and $\phi |
	\rel^l \without A$ is locally constant.

	Then $\phi \circ f \in \trunc (V, \rel^k)$ and
	\begin{gather*}
		\| \derivative{V}{( \phi \circ f )} (z) \| \leq \Lip ( \phi )
		a(z) \| \derivative{V}{f} (z) \| \quad \text{for $\| V \|$
		almost all $z$}.
	\end{gather*}
\end{lemma}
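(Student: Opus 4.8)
The plan is to obtain the conclusion from \ref{lemma:basic_v_weakly_diff} by approximating $\phi$ by smooth functions adapted to $A$, and then to remove the approximation by invoking the weak $\Lp{1}$ compactness from \ref{lemma:functional_analysis}. Since $\rel^l \without A$ is open and $\phi$ is locally constant there, it is the union of a countable disjointed family of open sets on each of which $\phi$ equals a constant, and $\phi$ equals that same constant on the closure of such a set by continuity; hence only the values of $\phi$ on $A$ need to be regularised. I would construct, for $i \in \nat$, functions $\phi_i \in \mathscr{E} ( \rel^l, \rel^k )$ with $\spt D \phi_i \subset A$, $\Lip \phi_i \leq ( 1 + 1/i ) \Lip \phi$, and $\sup \{ | \phi_i(y) - \phi(y) | \with y \in \rel^l \} \to 0$: take $\phi_i$ to equal $\phi$ on $\rel^l \without A$, to be a mollification of $\phi$ on the part of the interior of $A$ at distance at least $2/i$ from $\rel^l \without A$, and to be patched smoothly, through a collar of $\partial A$ of width of order $1/i$, to the neighbouring constant values; as convolution does not enlarge the Lipschitz constant, and on the collar $\phi$ differs from the pertinent constant by at most $\Lip ( \phi )$ times the collar width (the ambiguity of the constant where collars of distinct components overlap being of the same order), a harmless rescaling produces the asserted bound on $\Lip \phi_i$ and the uniform convergence, whereas $D \phi_i$ vanishes on the open set $\rel^l \without A$ by construction.

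For fixed $i$ the map $\phi_i | A$ is proper because $\phi | A$ is proper and $\phi_i$ is uniformly close to $\phi$, so \ref{lemma:basic_v_weakly_diff}, applied with the constant approximating sequence $\phi_i, \phi_i, \dotsc$, shows $\phi_i \circ f \in \trunc ( V, \rel^k )$ with $\derivative{V}{(\phi_i \circ f)} (z) = D \phi_i ( f(z) ) \circ \derivative{V}{f} (z)$ for $\| V \|$ almost all $z$; since $D \phi_i$ vanishes off $A$ this yields $\| \derivative{V}{(\phi_i \circ f)} (z) \| \leq ( 1 + 1/i ) \Lip ( \phi ) \, a(z) \, \| \derivative{V}{f} (z) \|$ for $\| V \|$ almost all $z$. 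Consequently the functions $F_i = \derivative{V}{(\phi_i \circ f)}$ are, for large $i$, dominated by $2 \Lip ( \phi ) \, a \, \| \derivative{V}{f} \|$; moreover, in the identity of \ref{def:v_weakly_diff}\,\eqref{item:v_weakly_diff:partial} written for $\phi_i \circ f$, the properness of $\phi | A$ together with the compactness of $\spt D \eta$ confine, uniformly in large $i$, the contributing part of $\dmn f$ to a set on which $| f |$ is bounded, where, by \ref{remark:eq_bounded_condition} and $f \in \trunc ( V, \rel^l )$, this dominating function is $\| V \|$ summable.

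By \ref{lemma:functional_analysis} and a diagonal argument over a suitable countable family of such sets exhausting the relevant regions of $U$, I would pass to a subsequence along which $F_i$ converges weakly in $\Lp{1}$ to a $\| V \|$ measurable $F$ with $\| F(z) \| \leq \Lip ( \phi ) \, a(z) \, \| \derivative{V}{f} (z) \|$ for $\| V \|$ almost all $z$ — the latter because for each $j$ the set of $g$ with $| g | \leq ( 1 + 1/j ) \Lip ( \phi ) \, a \, \| \derivative{V}{f} \|$ is convex and closed, hence weakly closed, and contains $F_i$ for $i \geq j$. Letting $i \to \infty$ along the subsequence in the identity of \ref{def:v_weakly_diff}\,\eqref{item:v_weakly_diff:partial} for $\phi_i \circ f$, the first variation term and the integral against $D \theta$ converge by the uniform convergence $\phi_i \to \phi$ and dominated convergence for the Radon measures $\| \delta V \|$ and $\| V \|$, and the remaining integral converges to the corresponding expression in $F$ by splitting off $D \eta \circ \phi_i \circ f - D \eta \circ \phi \circ f$, which tends to $0$ uniformly, and using the weak $\Lp{1}$ convergence of $F_i$ for the rest; this shows $\phi \circ f \in \trunc ( V, \rel^k )$ with $F$ as the associated function, and since $\derivative{V}{(\phi \circ f)}$ is an approximate limit of $F$ while $a \, \| \derivative{V}{f} \|$ is approximately continuous at $\| V \|$ almost every point, the stated estimate follows.

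The step I expect to be the main obstacle is the construction of the smooth approximants $\phi_i$: the requirement $\spt D \phi_i \subset A$ forces $\phi_i$ to be locally constant near $\partial A$, so the regularisation must be carried out in the interior of $A$ and interpolated across a collar, and one has to verify that this can be arranged while keeping $\Lip \phi_i$ arbitrarily close to $\Lip \phi$ and $\phi_i$ uniformly close to $\phi$.
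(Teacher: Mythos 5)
Your overall strategy --- approximate $\phi$ by smooth functions, apply \ref{lemma:basic_v_weakly_diff} to the approximants, pass to a weak $\Lp{1}$ limit via \ref{lemma:functional_analysis} and a diagonal argument over compact pieces where $|f|$ is bounded --- is exactly the paper's. But the construction of the approximants contains a genuine error. You require $\spt D\phi_i \subset A$, and this cannot be achieved in general: if $A$ has empty interior, then a function $\phi_i$ of class $\mathscr{E}$ with $\spt D\phi_i \subset A$ has $D\phi_i$ vanishing on the dense open set $\rel^l \without A$, hence (since $D\phi_i$ is continuous) everywhere, so $\phi_i$ is constant. Yet $\phi$ need not be constant; for instance take $l=k=1$, let $A$ be a fat Cantor set in $[0,1]$ (closed, nowhere dense, positive measure), and let $\phi (y) = \mathscr{L}^1 ( A \cap \{ t \with t \leq y \} )$ --- this is Lipschitzian with constant $1$, non-constant, locally constant on the complementary intervals of $A$, and $\phi|A$ is proper since $A$ is compact. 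No sequence $\phi_i$ with your stated properties exists here, so the proof breaks down. Your own description is in fact internally inconsistent on this point: you assert that $D\phi_i$ vanishes on $\rel^l \without A$, but the ``collar of $\partial A$ of width $\sim 1/i$'' through which you patch necessarily protrudes into $\rel^l \without A$, where the patching would make $D\phi_i \neq 0$.

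The paper avoids this entirely: it sets $B = \rel^l \cap \{ y \with \dist (y,A) \leq 1 \}$ and constructs $\phi_i$ by plain convolution, which automatically gives $\spt D\phi_i \subset B$ (any ball of radius $<1$ about a point of $\rel^l \without B$ lies in a single component of $\rel^l \without A$, where $\phi$ is constant), $\Lip \phi_i \leq \Lip \phi$, and uniform convergence --- no collar, no interpolation, and valid for arbitrary closed $A$. The weak limit $F$ then satisfies $\| F(z) \| \leq \Lip (\phi) \, b(z) \| \derivative{V}{f} (z) \|$ with $b$ the characteristic function of $f^{-1} \lIm B \rIm$; sharpening $b$ to $a$ is a separate (easy) observation, e.g. via \ref{thm:addition}\,\eqref{item:addition:zero} applied on the preimages of the components of $\rel^l \without A$, on which $\phi \circ f$ is constant. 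The lesson is that the support condition on $D\phi_i$ must be allowed to bleed out beyond $A$; your requirement $\spt D\phi_i \subset A$ is strictly stronger than what the limiting argument needs and is in fact unachievable.
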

\begin{proof}
	Abbreviate $c = \Lip \phi$. Define $B = \rel^l \cap \{ y \with \dist
	(y,A) \leq 1 \}$ and let $b$ denote the characteristic function of
	$f^{-1} \lIm B \rIm$. Since $\phi | B$ is proper, one may employ
	convolution to construct $\phi_i \in \mathscr{E} ( \rel^l, \rel^k )$
	satisfying $\Lip \phi_i \leq c$, $\spt D\phi_i \subset B$ and
	\begin{gather*}
		\delta_i = \sup \{ | (\phi - \phi_i) (y) | \with y \in
		\rel^l \} \to 0 \quad \text{as $i \to \infty$}.
	\end{gather*}
	Therefore, if $\delta_i < \infty$ then $\phi_i|B$ is proper and
	$\phi_i \circ f \in \trunc (V,\rel^k)$ with
	\begin{gather*}
		\| \derivative{V}{( \phi_i \circ f )}(z) \| \leq c \, b(z) \|
		\derivative{V}{f} (z) \| \quad \text{for $\| V \|$ almost all
		$z$}
	\end{gather*}
	by \ref{lemma:basic_v_weakly_diff} with $A$, $\phi$, and $\zeta$
	replaced by $B$, $\phi_i$, and $D\phi_i$. Choose a sequence of compact
	sets $K_j$ such that $K_j \subset \Int K_{j+1}$ for $j \in \nat$ and
	$U = \bigcup_{j=1}^\infty K_j$ and define $E(j) = K_j \cap \{ z \with
	|f(z)| < j \}$ for $j \in \nat$. In view of
	\ref{lemma:functional_analysis}, possibly passing to a subsequence by
	means of a diagonal process, there exist functions $F_j \in \Lp{1} (
	\| V \| \restrict E(j), \Hom ( \rel^\adim, \rel^k ) )$ such that
	\begin{gather*}
		\| F_j(z) \| \leq c \, b(z) \| \derivative{V}{f} (z) \| \quad
		\text{for $\| V \|$ almost all $z \in E_j$}, \\
		\tint{E(j)}{} G \bullet
		\derivative{V}{(\phi_i \circ f )} \ud \| V \| \to
		\tint{E(j)}{} G \bullet F_j \ud \| V \| \quad \text{as $i \to
		\infty$}
	\end{gather*}
	whenever $G \in \Lp{\infty} ( \| V \|, \Hom (\rel^\adim, \rel^k) )$
	and $j \in \nat$. Noting $E_j \subset E_{j+1}$ and $F_j (z) = F_{j+1}
	(z)$ for $\| V \|$ almost all $E(j)$ for $j \in \nat$, one may define
	a $\| V \|$ measurable function $F$ by $F(z) = \lim_{j \to \infty}
	F_j(z)$ whenever $z \in U$.
	
	In order to verify 
	$\phi \circ f \in \trunc (V,\rel^k)$ with
	$\derivative{V} ( \phi \circ f )(z) = F(z)$ for 
	$\| V \|$ almost all $z$, suppose $\theta \in \mathscr{D} ( U,
	\rel^\adim )$, $\eta \in \mathscr{E}^0 ( \rel^k )$ with $\spt D\eta$
	compact. Then there exists $j \in \nat$ with $\spt \theta \subset K_j$
	and $(\phi|B)^{-1} \lIm \spt D \eta \rIm \subset \oball{0}{j}$.
	Define $G_i, G \in \Lp{\infty} ( \| V \|, \Hom ( \rel^\adim, \rel^k)
	)$ by the requirements
	\begin{gather*}
		G_i(z) \bullet \tau = \left < \theta (z), D\eta ( \phi_i(f(z))
		) \circ \tau \right>, \quad G(z) \bullet \tau = \left < \theta
		(z), D\eta ( \phi(f(z)) ) \circ \tau \right>
	\end{gather*}
	whenever $z \in \dmn f$ and $\tau \in \Hom ( \rel^\adim, \rel^k )$,
	hence
	\begin{gather*}
		\Lpnorm{\| V \|}{\infty}{G_i-G} \to 0 \quad \text{as $i \to
		\infty$}.
	\end{gather*}
	Observing $( \phi_i|B )^{-1} \lIm \spt D \eta \rIm \subset
	\oball{0}{j}$ for large $i$, one infers
	\begin{gather*}
		\begin{aligned}
			& \tint{}{} \left < \theta (z), D \eta ( \phi (f(z)))
			\circ F (z) \right > \ud \| V \| z = \tint{E(j)}{} G
			\bullet F \ud \| V \| \\
			& \qquad = \lim_{i \to \infty} \tint{E(j)}{} G_i (z)
			\bullet \derivative{V}{(\phi_i \circ f)} (z) \ud \| V
			\| z \\
			& \qquad = \lim_{i \to \infty} \tint{}{} \left <
			\theta (z), D\eta ( \phi_i(f(z)) \circ
			\derivative{V}{(\phi_i \circ f )}(z) \right > \ud \| V
			\| z
		\end{aligned}
	\end{gather*}
	as $(G \bullet F) (z) = 0 = ( G_i \bullet \derivative{V}{( \phi_i
	\circ f)} ) (z) $ for $\| V \|$ almost all $z \in U \without E(j)$.
\end{proof}
\begin{miniremark} \label{miniremark:trunc}
	The following approximation of $\id{\rel^l}$ will be useful.

	\emph{There exists a family of functions $\phi_s \in \mathscr{D} (
	\rel^l, \rel^l )$ with $0 < s < \infty$ satisfying
	\begin{gather*}
		\{ y \with \alpha ( \phi_s (y) ) > 0 \} \subset \{ y \with
		\alpha (y) > 0 \} \quad \text{whenever $\alpha \in \Hom (
		\rel^l, \rel )$, $0 < s < \infty$}, \\
		\phi_s (y) = y \quad \text{whenever $y \in \rel^l \cap
		\cball{0}{s}$, $0 < s < \infty$}, \\
		\sup \{ \Lip \phi_s : 0 < s < \infty \} < \infty;
	\end{gather*}}
	in fact one may select $\gamma \in \mathscr{D}^0 ( \rel )$ such that
	\begin{gather*}
		0 \leq \gamma (t) \leq t \quad \text{for $0 \leq t < \infty$},
		\qquad \gamma (t) = t \quad \text{for $-1 \leq t \leq 1$},
	\end{gather*}
	define $\gamma_s = s \gamma \circ \boldsymbol{\mu}_{1/s}$ and $\phi_s
	\in \mathscr{D} ( \rel^l, \rel^l )$ by
	\begin{gather*}
		\phi_s (y) = 0 \quad \text{if $y=0$}, \qquad \phi_s (y) =
		\gamma_s (|y|) |y|^{-1} y \quad \text{if $y \neq 0$},
	\end{gather*}
	whenever $y \in \rel^l$ and $0 < s < \infty$, and conclude
	\begin{gather*}
		\gamma_s (t) = t \quad \text{for $-s \leq t \leq s$}, \qquad
		\Lip \gamma_s = \Lip \gamma, \\
		D \phi_s (y) (v) = \gamma_s' ( |y| ) ( |y|^{-1} y ) \bullet v
		( |y|^{-1} y ) + \gamma_s ( |y| ) |y|^{-1} \big ( v - (
		|y|^{-1} y ) \bullet v ( |y|^{-1} y ) \big )
	\end{gather*}
	whenever $y \in \rel^l \without \{ 0 \}$, $v \in \rel^l$, and $0 < s <
	\infty$, hence $\Lip \phi_s \leq 2 \Lip \gamma < \infty$.
\end{miniremark}
\begin{theorem} \label{thm:addition}
	Suppose $l, \vdim, \adim \in \nat$, $\vdim \leq \adim$, $U$ is an open
	subset of $\rel^\adim$, $V \in \RVar_\vdim (U)$, $\| \delta V \|$ is a
	Radon measure, and $f \in \trunc (V,\rel^l)$.

	Then the following four statements hold:
	\begin{enumerate}
		\item \label{item:addition:zero} If $A =
		\classification{U}{z}{f(z) = 0}$ then
		\begin{gather*}
			\derivative{V}{f} (z) = 0 \quad \text{for $\| V \|$
			almost all $z \in A$}.
		\end{gather*}
		\item \label{item:addition:join} If $k \in \nat$, $g : U \to
		\rel^k$ is locally Lipschitzian, and $h(z) = (f(z),g(z))$ for
		$z \in \dmn f$ then $h \in \trunc (V, \rel^l \times \rel^k )$
		and
		\begin{gather*}
			\derivative{V}{h} (z)(v) = ( \derivative{V}{f}(z)(v),
			\derivative{V}{g} (z)(v)) \quad \text{whenever $v \in
			\rel^\adim$}
		\end{gather*}
		for $\| V \|$ almost all $z$.
		\item \label{item:addition:add} If $g : U \to \rel^l$ is
		locally Lipschitzian then $f+g \in \trunc (V,\rel^l)$ and
		\begin{gather*}
			\derivative{V}{(f+g)} (z) = \derivative{V}{f} (z) +
			\derivative{V}{g} (z) \quad \text{for $\| V \|$ almost
			all $z$}.
		\end{gather*}
		\item \label{item:addition:mult} If $f \in \Lploc{1} ( \| V
		\|, \rel^l )$, $\derivative{V}{f} \in \Lploc{1} ( \| V \|,
		\Hom ( \rel^\adim, \rel^l ) )$, and $g : U \to \rel$ is
		locally Lipschitzian, then $gf \in \trunc ( V, \rel^l )$ and
		\begin{gather*}
			\derivative{V}{(gf)} (z) = \derivative{V}{g} (z)\,f(z)
			+ g(z) \derivative{V}{f} (z) \quad \text{for $\| V \|$
			almost all $z$}.
		\end{gather*}
	\end{enumerate}
\end{theorem}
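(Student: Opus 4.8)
The plan is to establish the four assertions in the order \eqref{item:addition:zero}, \eqref{item:addition:add}, \eqref{item:addition:join}, \eqref{item:addition:mult}: \eqref{item:addition:zero} is self contained, \eqref{item:addition:join} will follow from \eqref{item:addition:add} by composing with the coordinate inclusions, and \eqref{item:addition:mult} will follow from \eqref{item:addition:join} by truncating $f$ and composing with multiplication on a bounded set. I expect \eqref{item:addition:add} to be the only genuinely difficult step.

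For \eqref{item:addition:zero} I would mimic the classical argument for weakly differentiable functions on Euclidean space. For $0<\delta<\infty$ pick a Lipschitzian map $\Phi_\delta:\rel^l\to\rel^l$ of class $\mathscr{E}$ with $\Phi_\delta(y)=0$ for $|y|\leq\delta$, $\Phi_\delta(y)=y-\delta|y|^{-1}y$ for $|y|\geq2\delta$, $\Lip\Phi_\delta$ bounded independently of $\delta$, and $|\Phi_\delta(y)-y|\leq2\delta$ for $y\in\rel^l$; then \ref{lemma:basic_v_weakly_diff}, applied with $A=\{y\with|y|\geq\delta\}$ on which $\Phi_\delta$ is proper, gives $\Phi_\delta\circ f\in\trunc(V,\rel^l)$ and $\derivative V{(\Phi_\delta\circ f)}=D\Phi_\delta(f)\circ\derivative Vf$ at $\|V\|$ almost all points. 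Fixing $\eta\in\mathscr{E}^0(\rel^l)$ with $\spt D\eta$ compact, say contained in $\cball0R$, and $\theta\in\mathscr{D}(U,\rel^\adim)$, I would write \ref{def:v_weakly_diff}\,\eqref{item:v_weakly_diff:partial} for $\Phi_\delta\circ f$ and let $\delta\to0+$. Since $\Phi_\delta\circ f\to f$ uniformly and $\eta$ is bounded and Lipschitzian, the terms $(\delta V)((\eta\circ\Phi_\delta\circ f)\theta)$ and $\int\eta(\Phi_\delta\circ f)\project S\bullet D\theta\ud V$ converge to the corresponding terms for $f$, while the integrand of the remaining term $\int\langle\theta,D\eta(\Phi_\delta\circ f)\circ D\Phi_\delta(f)\circ\derivative Vf\rangle\ud\|V\|$ vanishes outside $\{z\with|f(z)|\leq R+2\}$ — on which $\derivative Vf$ is $\|V\|\restrict\spt\theta$ summable by the definition of $\trunc(V,\rel^l)$ — is dominated by a constant multiple of $|\derivative Vf|$, and converges pointwise to $\langle\theta,\chi\,(D\eta(f)\circ\derivative Vf)\rangle$, where $\chi$ is the characteristic function of $\{z\with f(z)\neq0\}$; so it converges by dominated convergence. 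Comparing the limiting identity with \ref{def:v_weakly_diff}\,\eqref{item:v_weakly_diff:partial} for $f$ itself, and using $D\eta(f(z))=D\eta(0)$ on $\{z\with f(z)=0\}$, I get $\int\langle\theta,D\eta(0)\circ\derivative Vf\rangle\ud\|V\|=0$, the integral extended over $\{z\with f(z)=0\}$; since $D\eta(0)$ may be prescribed arbitrarily in $\Hom(\rel^l,\rel)$ and $\theta$ is arbitrary, this forces $\derivative Vf=0$ at $\|V\|$ almost all points of $\{z\with f(z)=0\}$.

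For \eqref{item:addition:add} note first that here $\derivative Vg=(\|V\|,\vdim)\ap Dg\circ\project{\Tan^\vdim(\|V\|,\cdot)}$ by \ref{example:lipschitzian}, and that \ref{def:v_weakly_diff}\,\eqref{item:v_weakly_diff:int} for $f+g$ with the candidate derivative $\derivative Vf+\derivative Vg$ is immediate, because for $\eta\in\mathscr{K}(\rel^l)$ the set $\{z\with(f+g)(z)\in\spt\eta\}$ meets each compact subset of $U$ in a sublevel set of $|f|$ while $\derivative Vg$ is locally bounded. Since \ref{def:v_weakly_diff}\,\eqref{item:v_weakly_diff:partial} involves only the restriction of $g$ to the (compact) support of the test field, one may assume $g$ bounded and globally Lipschitzian. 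I would then approximate $g$ by maps $g_j$ of class $\mathscr{E}$ with $g_j\to g$ locally uniformly and $\sup_j\Lip g_j<\infty$, verify \ref{def:v_weakly_diff}\,\eqref{item:v_weakly_diff:partial} for each $f+g_j$, and pass to the limit. For smooth $g_j$ one can feed the localized test fields $\chi_\alpha\theta$ — with $\{\chi_\alpha\}$ a partition of unity subordinate to a cover of $\spt\theta$ by balls on which $g_j$ is nearly constant — together with $\eta$ composed with suitable translations, into the defining identity for $f\in\trunc(V,\rel^l)$; a first order Taylor expansion of $\eta$ and the relation $\sum_\alpha D\chi_\alpha=0$ control the errors. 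The hard part, which I expect to be the main obstacle of the whole theorem, is the passage to the limit $j\to\infty$ in the resulting term $\int\langle\theta,D\eta(f+g_j)\circ\derivative V{g_j}\rangle\ud\|V\|$: this needs $\derivative V{g_j}\to\derivative Vg$ in $\Lp1(\|V\|\restrict\spt\theta)$, which does \emph{not} follow from the $\mathscr{L}^\adim$ almost everywhere convergence of $Dg_j$ and must instead be extracted from the rectifiability of $V$ — for instance by choosing the $g_j$ through a Lusin type approximation of $g$ by maps of class $\class{1}$ agreeing with $g$ off sets of arbitrarily small $\|V\|$ measure, whose tangential differentials then converge to $\derivative Vg$ at $\|V\|$ almost all points.

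Finally, \eqref{item:addition:join} follows from \eqref{item:addition:add}: writing $\iota_1:\rel^l\to\rel^l\times\rel^k$, $\iota_2:\rel^k\to\rel^l\times\rel^k$ for the coordinate inclusions, \ref{lemma:basic_v_weakly_diff} (with the constant approximating sequence $\iota_1$ and $A=\rel^l$, on which $\iota_1$ is proper) gives $\iota_1\circ f\in\trunc(V,\rel^l\times\rel^k)$ with derivative $\iota_1\circ\derivative Vf$, while $\iota_2\circ g$ is locally Lipschitzian with derivative $\iota_2\circ\derivative Vg$ by \ref{example:lipschitzian}; since $h=\iota_1\circ f+\iota_2\circ g$, assertion \eqref{item:addition:add} applies and yields the stated formula. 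For \eqref{item:addition:mult} one may again reduce to $g$ bounded and globally Lipschitzian; for $0<t<\infty$ let $f_t=\phi_t\circ f$ with $\phi_t$ as in \ref{miniremark:trunc}, so that $f_t\in\trunc(V,\rel^l)$ with $\derivative V{f_t}=D\phi_t(f)\circ\derivative Vf$ by \ref{lemma:basic_v_weakly_diff}, $|f_t|\leq|f|$, $\|D\phi_t(f)\|$ bounded independently of $t$, and $f_t=f$, $\derivative V{f_t}=\derivative Vf$ on $\{z\with|f(z)|<t\}$. By \eqref{item:addition:join} the map $z\mapsto(g(z),f_t(z))$ lies in $\trunc(V,\rel\times\rel^l)$ and takes values in a compact subset of $\rel\times\rel^l$ on which the multiplication $(c,y)\mapsto cy$ is Lipschitzian; composing by \ref{lemma:basic_v_weakly_diff}, after extending this multiplication to a proper Lipschitzian map on all of $\rel\times\rel^l$, gives $gf_t\in\trunc(V,\rel^l)$ with $\derivative V{(gf_t)}=\derivative Vg\,f_t+g\,\derivative V{f_t}$. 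Letting $t\to\infty$, the hypotheses $f\in\Lploc1(\|V\|,\rel^l)$ and $\derivative Vf\in\Lploc1(\|V\|,\Hom(\rel^\adim,\rel^l))$ together with the boundedness of $g$ give $gf_t\to gf$ and $\derivative V{(gf_t)}\to\derivative Vg\,f+g\,\derivative Vf$ in $\Lploc1(\|V\|)$ and $gf_t\to gf$ pointwise, which suffices to pass to the limit in \ref{def:v_weakly_diff}\,\eqref{item:v_weakly_diff:partial} and conclude.
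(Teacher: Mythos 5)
Your overall plan reverses the paper's dependency structure: you prove \eqref{item:addition:add} directly and derive \eqref{item:addition:join} from it, whereas the paper proves \eqref{item:addition:join} directly (it is by far the longest step) and then obtains \eqref{item:addition:add} and \eqref{item:addition:mult} from it by precomposing the test function $\eta$ with addition and multiplication respectively, using cutoffs in the second variable. Your derivation of \eqref{item:addition:join} from \eqref{item:addition:add} via $h = \iota_1\circ f + \iota_2\circ g$ is correct, as is the reduction of \eqref{item:addition:mult} to \eqref{item:addition:join}. Your argument for \eqref{item:addition:zero}, using $\Phi_\delta$ vanishing near $0$ and letting $\delta\to 0+$, is a valid alternative to the paper's, which reduces to $l=1$ via \ref{remark:integration_by_parts} and then compares $\derivative{V}{f^+}$ and $\derivative{V}{f^-}$ using \ref{example:composite}\,\eqref{item:composite:1d}.

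The weak link is your direct proof of \eqref{item:addition:add}, and I think the gap is larger than you acknowledge. First, the partition-of-unity step for smooth $g_j$ is not an ``error estimate'' in the way your phrasing suggests: the term $\tsum{\alpha}{}\tint{}{}\bigl[\eta(f+c_\alpha)-\eta(f+g_j)\bigr]\,D\chi_\alpha(\project{S}(\theta))\ud V$ does \emph{not} vanish as the mesh tends to $0$ — it must instead be shown to converge to the desired $\tint{}{}\left<\theta,D\eta(f+g_j)\circ\derivative{V}{g_j}\right>\ud\|V\|$ term, since that is the only place the $\derivative{V}{g_j}$ contribution can arise. Making this rigorous requires $\tsum{\alpha}{}(z_\alpha-z)\otimes D\chi_\alpha(z)\to\id{\rel^\adim}$, which is not automatic for an arbitrary partition of unity (it holds, e.g., for a B-spline type partition reproducing affine functions), and the resulting computation is genuinely delicate. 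Second, the Lusin-type approximation you invoke for the limit $j\to\infty$ — smooth $g_j$ with $\sup_j\Lip g_j<\infty$, $\derivative{V}{g_j}\to\derivative{V}{g}$ in $\Lploc1(\|V\|)$, and $g_j\to g$ locally uniformly on $\spt\|V\|$ (which you do need, to control the $\delta V$ term) — is nontrivial; it is essentially \ref{corollary:approximation_lip}, which is proved only in Part~\ref{part:pde} and whose proof itself rests on \ref{thm:approximation_lip}, \ref{lemma:c1_extension}, and an Arzel\`a--Ascoli argument. So as written, your proof of \eqref{item:addition:add} is a plausible programme rather than a proof.

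The paper avoids all of this by never approximating the data $f$ or $g$: the proof of \eqref{item:addition:join} fixes $h=(f,g)$ and approximates the test function $\eta$ on $\rel^l\times\rel^k$. For tensor products $\eta_1\otimes\eta_2$ the defining identity for $f\in\trunc(V,\rel^l)$ is applied with the Lipschitzian vector field $(\eta_2\circ g)\theta$ — approximated by mollification, using \cite[4.5\,(3)]{snulmenn.decay} — together with \ref{example:lipschitzian} for $g$; the general $\eta$ is then reached by partial convolution in the second variable and Riemann sums. This keeps the argument entirely on the test-function side, needs no Lusin approximation, and sidesteps the commutator computation.
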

\begin{proof} [Proof of \eqref{item:addition:zero}]
	By \ref{miniremark:trunc} in conjunction with
	\ref{lemma:basic_v_weakly_diff} one may assume $f$ to be bounded and
	$\derivative{V}{f} \in \Lploc{1} ( \| V \|, \Hom ( \rel^\adim, \rel^l)
	)$, hence by \ref{remark:integration_by_parts} also $l=1$. In this
	case it follows from \ref{lemma:basic_v_weakly_diff} and
	\ref{example:composite}\,\eqref{item:composite:1d} that $f^+$ and
	$f^-$ satisfy the same hypotheses as $f$, hence
	\begin{gather*}
		( \delta V ) ( g \theta ) = \tint{}{} ( \project{S} \bullet
		D\theta (z) ) g (z) + \left < \theta (z), \derivative{V}{g}
		(z) \right > \ud V (z,S)
	\end{gather*}
	for $\theta \in \mathscr{D}^0 ( U )$ and $g \in \{ f, f^+, f^- \}$ by
	\ref{remark:integration_by_parts}. Since $f=f^+-f^-$, this implies
	\begin{gather*}
		\derivative{V}{f} (z) = \derivative{V}{f^+} (z) -
		\derivative{V}{f^-} (z) \quad \text{for $\| V \|$ almost all
		$z$}
	\end{gather*}
	and the formulae derived in
	\ref{example:composite}\,\eqref{item:composite:1d} yield the
	conclusion.
\end{proof}
\begin{proof} [Proof of \eqref{item:addition:join}]
	Define a $\| V \|$ measurable function $H$ with values in $\Hom (
	\rel^\adim, \rel^l \times \rel^k )$ by
	\begin{gather*}
		H(z)(v) = ( \derivative{V}{f} (z)(v), \derivative{V}{g} (z)(v)
		) \quad \text{for $v \in \rel^\adim$}
	\end{gather*}
	whenever $z \in \dmn \derivative{V}{f} \cap \dmn \derivative{V}{g}$.
	Suppose $\theta \in \mathscr{D} ( U, \rel^\adim )$, $C$ is a compact
	subset of $\rel^l$, and $A = g \lIm \spt \theta \rIm$. Choose a
	compact convex subset $D$ of $\rel^k$ with
	\begin{gather*}
		\cball{\xi}{1} \subset D \quad \text{whenever $\xi \in A$}.
	\end{gather*}
	
	Next, it will be shown that \emph{every $\eta \in \mathscr{E}^0 (
	\rel^l \times \rel^k )$ such that $\spt D \eta$ is a compact subset of
	$C \times \rel^k$ belongs to the class $T$ of such $\eta$ satisfying
	\begin{gather*}
		( \delta V ) ( ( \eta \circ h ) \theta ) = \tint{}{} \eta
		(h(z)) \project{S} \bullet D \theta (z) + \left < \theta (z),
		D \eta ( h (z) ) \circ H(z) \right > \ud V (z,S).
	\end{gather*}}
	For this purpose, first consider the case that for some $\eta_1 \in
	\mathscr{E}^0 ( \rel^k )$ and $\eta_2 \in \mathscr{E}^0 ( \rel^l )$
	with
	\begin{gather*}
		\eta ( y_1, y_2 ) = \eta_1 ( y_1 ) \eta_2 ( y_2 ) \quad
		\text{for $(y_1,y_2) \in \rel^l \times \rel^k$}, \\
		\spt D \eta_1 \subset C \quad \text{and} \quad \text{$\spt D
		\eta_2$ is compact}.
	\end{gather*}
	In this case one computes, approximating $( \eta_2 \circ g ) \theta$
	by means of convolution and \cite[4.5\,(3)]{snulmenn.decay} and
	using \ref{example:lipschitzian},
	\begin{gather*}
		\begin{aligned}
			& ( \delta V ) ( ( \eta \circ h ) \theta ) = ( \delta
			V ) ( ( \eta_1 \circ f ) ( \eta_2 \circ g ) \theta )
			\\
			& \qquad = \tint{}{} \eta_1 ( f(z)) \big ( \left <
			\theta (z), D \eta_2 ( g(z) ) \circ \derivative{V}{g}
			(z) \right > + \eta_2 ( g(z)) \project{S} \bullet D
			\theta (z) \big ) \ud V (z,S) \\
			& \qquad \phantom{=} \ + \tint{}{} \eta_2 ( g (z)
			) \left < \theta (z), D \eta_1 ( f (z) ) \circ
			\derivative{V}{f} (z) \right > \ud \| V \| z \\
			& \qquad = \tint{}{} \eta ( h (z) ) \project{S}
			\bullet D \theta (z) + \left <
			\theta (z), D \eta ( h (z) ) \circ H (z) \right > \ud
			V(z,S),
		\end{aligned}
	\end{gather*}
	hence $\eta \in T$. Second, $T$ is a vectorspace. Third, observe if
	$\eta \in \mathscr{E}^0 ( \rel^l \times \rel^k )$, $\spt D \eta$ is a
	compact subset of $C \times \rel^k$, and $\eta_i$ is a sequence in $T$
	with
	\begin{gather*}
		\text{$\eta_i (y) \to \eta(y)$ and $D \eta_i (y) \to D \eta
		(y)$ as $i \to \infty$ uniformly in $y \in \rel^l \times A$}
	\end{gather*}
	then $\eta \in T$. Finally, consider $\eta \in \mathscr{E}^0 ( \rel^l
	\times \rel^k )$ such that $\spt D \eta$ is a compact subset of $C
	\times \rel^k$. Choose $\Phi \in \mathscr{D}^0 ( \rel^k )^+$ with
	$\tint{}{} \Phi \ud \mathscr{L}^k = 1$ and $\spt \Phi \subset
	\oball{0}{1}$ and define $\Phi_\varepsilon ( \xi ) = \varepsilon^{-k}
	\Phi ( \varepsilon^{-1} \xi )$ for $0 < \varepsilon \leq 1$ and $\xi
	\in \rel^k$. One approximates $\eta$ by functions $\phi$ defined by
	\begin{gather*}
		\phi ( y_1, y_2 ) = \tint{}{} \Phi_\varepsilon ( y_2 - \xi )
		\eta ( y_1, \xi ) \ud \mathscr{L}^k \xi \quad \text{for
		$(y_1,y_2) \in \rel^l \times \rel^k$}
	\end{gather*}
	corresponding to small $\varepsilon$ and $\phi$ by functions $\zeta$
	defined by
	\begin{gather*}
		\zeta (y_1,y_2) = \tsum{i=1}{j} \mathscr{L}^k ( B_i )
		\Phi_\varepsilon ( y_2 - b_i ) \eta ( y_1, b_i ) \quad
		\text{for $(y_1,y_2) \in \rel^l \times \rel^k$}
	\end{gather*}
	corresponding to Borel partitions $B_1, \ldots, B_j$ of $D$ with small
	diameter where $b_i \in B_i$. By the first two considerations the
	functions $\zeta$ belong to $T$. Noting
	\begin{gather*}
		\begin{aligned}
			D \phi (y_1,y_2) (v_1,v_2) & = \tint{}{}
			\Phi_\varepsilon ( y_2 - \xi ) \left <(v_1,0), D \eta
			( y_1, \xi ) \right > + \\
			& \phantom{= \tint{}{}} \ \left < v_2, D
			\Phi_\varepsilon ( y_2 - \xi ) \right > \eta ( y_1,
			\xi) \ud \mathscr{L}^k \xi, \\
			D \zeta (y_1,y_2) (v_1,v_2) & = \tsum{i=1}{j}
			\mathscr{L}^k ( B_i ) \big ( \Phi_\varepsilon ( y_2 -
			b_i ) \left < ( v_1,0 ), D \eta ( y_1, b_i ) \right >
			\\
			& \phantom{= \tsum{i=1}{j} \mathscr{L}^k ( B_i ) \big (}
			\ + \left < v_2, D \Phi_\varepsilon ( y_2 - b_i )
			\right > \eta ( y_1, b_i ) \big )
		\end{aligned}
	\end{gather*}
	whenever $(v_1,v_2),(y_1,y_2) \in \rel^l \times \rel^k$ and
	\begin{gather*}
		\Lip ( \eta | \rel^l \times D ) + \Lip ( D\eta | \rel^l \times
		D ) < \infty,
	\end{gather*}
	one infers that $\zeta$ and $D \zeta$ approach $\phi$ and $D \phi$
	uniformly on $\rel^l \times A$ as the diameter of the partition
	approaches $0$, hence the functions $\phi$ belong to $T$. Noting
	\begin{gather*}
		D \phi (y_1,y_2) = \tint{}{} \Phi_\varepsilon ( \xi ) D
		\eta ( y_1, y_2-\xi ) \ud \mathscr{L}^k \xi \quad \text{for
		$(y_1,y_2) \in \rel^l \times \rel^k$},
	\end{gather*}
	one obtains that $\phi$ and $D \phi$ approach $\eta$ and $D \eta$
	uniformly on $\rel^l \times A$ as $\varepsilon$ approaches $0$, hence
	also $\eta \in T$.

	The assertion of the preceding paragraph readily implies the
	conclusion.
\end{proof}
\begin{proof} [Proof of \eqref{item:addition:add}]
	Assume for some $0 < r < \infty$ that $\im g \subset \cball{0}{r}$.
	Suppose $\eta \in \mathscr{E}^0 ( \rel^l )$ and $\spt D \eta$ is
	compact. Let $\alpha : \rel^l \times \rel^l \to \rel^l$ denote
	addition and define $A = \rel^l \times \cball{0}{2r}$. Choosing $\zeta
	\in \mathscr{D}^0 ( \rel^l )$ with
	\begin{gather*}
		\cball{0}{r} \subset \Int \{ y \with \zeta(y) = 1 \}, \quad
		\spt \zeta \subset \cball{0}{2r},
	\end{gather*}
	one defines $\phi : \rel^l \times \rel^l \to \rel^l$ by
	\begin{gather*}
		\phi (y_1,y_2) = \zeta (y_2) ( \eta \circ \alpha ) ( y_1,y_2 )
		\quad \text{for $(y_1,y_2) \in \rel^l \times \rel^l$}.
	\end{gather*}
	Defining $h$ as in \eqref{item:addition:join} and noting
	\begin{gather*}
		\text{$\alpha | A$ is proper}, \quad \spt D \phi \subset A
		\cap \alpha^{-1} \lIm \spt \eta \rIm, \quad \text{$\spt D
		\phi$ is compact}, \\
		\phi (y_1,y_2) = ( \eta \circ \alpha ) ( y_1,y_2 ) \quad
		\text{and} \quad D \phi ( y_1,y_2 ) = D ( \eta \circ \alpha )
		( y_1,y_2 )
	\end{gather*}
	for $y_1 \in \rel^l$, $y_2 \in \rel^l \cap \cball{0}{r}$, the
	conclusion now follows from \eqref{item:addition:join} by use of
	\ref{def:v_weakly_diff}\,\eqref{item:v_weakly_diff:partial} with $f$
	and $\eta$ replaced by $h$ and $\phi$.
\end{proof}
\begin{proof} [Proof of \eqref{item:addition:mult}]
	Assume $g$ to be bounded and define $h$ as in
	\eqref{item:addition:join}.

	First, consider the special case that $f$ is bounded. Then there
	exists $0 < r < \infty$ such that $\im h \subset A(r)$, where $A(s) =
	( \rel^l \cap \cball{0}{s} ) \times ( \rel \cap \cball{0}{s} )$ for $0
	< s < \infty$. Define $\alpha : \rel^l \times \rel \to \rel^l$ by
	$\alpha (y_1,y_2) = y_2 y_1$ for $(y_1,y_2) \in \rel^l \times \rel$
	and choose $\zeta \in \mathscr{D}^0 ( \rel^l \times \rel )$ such that
	\begin{gather*}
		A(r) \subset \Int \{ y \with \zeta (y) = 1 \}, \quad \spt \zeta
		\subset A(2r).
	\end{gather*}
	For $\eta \in \mathscr{E}^0 ( \rel^l )$ define $\phi = \zeta \cdot (
	\eta \circ \alpha )$, note
	\begin{gather*}
		\phi (y) = ( \eta \circ \alpha ) (y) \quad \text{and} \quad D
		\phi (y) = D ( \eta \circ \alpha ) ( y )
	\end{gather*}
	whenever $y \in A(r)$, and the conclusion in the special case follows
	from \eqref{item:addition:join} by use of
	\ref{def:v_weakly_diff}\,\eqref{item:v_weakly_diff:partial} with $f$
	and $\eta$ replaced by $h$ and $\phi$.

	In the general case choose $\phi_s$ as in \ref{miniremark:trunc},
	hence the functions $\phi_s \circ f$ satisfy the hypotheses of the
	special case by \ref{lemma:basic_v_weakly_diff} and passing to the
	limit $s \to \infty$ yields the conclusion.
\end{proof}
\begin{remark}
	The approximation procedure in the proof of \eqref{item:addition:join}
	uses ideas from \cite[4.1.2,\,3]{MR41:1976}.
\end{remark}
\begin{remark}
	The need for some strong restriction on $g$ in
	\eqref{item:addition:join} and \eqref{item:addition:add} will be
	illustrated in \ref{example:star}.
\end{remark}
\begin{miniremark} \label{miniremark:measurablity}
	If $\phi$ is a measure, $A$ is $\phi$ measurable and $B$ is a
	$\phi \restrict A$ measurable subset of $A$, then $B$ is $\phi$
	measurable.
\end{miniremark}
\begin{theorem} \label{thm:tv_on_decompositions}
	Suppose $l, \vdim, \adim \in \nat$, $\vdim \leq \adim$, $U$ is an open
	subset of $\rel^\adim$, $V \in \Var_\vdim ( U )$, $\| \delta V \|$ is
	a Radon measure, $G$ is a decomposition of $V$, $\zeta$ is associated
	to $G$ as in \ref{remark:decomp_rep}, $f_W \in \trunc (W, \rel^l )$
	for $W \in G$, and
	\begin{gather*}
		{\textstyle f = \bigcup \{ f_W | \zeta (W) \with W \in G \},
		\quad F = \bigcup \{ \derivative{W}{f_W} | \zeta (W) \with W
		\in G \}}.
	\end{gather*}
	
	Then the following three statements hold:
	\begin{enumerate}
		\item \label{item:tv_on_decompositions:f} $f$ is $\| V \| + \|
		\delta V \|$ measurable.
		\item \label{item:tv_on_decompositions:F} $F$ is $\| V \|$
		measurable.
		\item \label{item:tv_on_decompositions:tv} If $( \eta \circ
		f ) \cdot F \in \Lploc{1} ( \| V \|, \Hom ( \rel^\adim, \rel^l
		) )$ for $\eta \in \ccspace{\rel^l}$, then $f \in \trunc
		(V,\rel^l)$ and
		\begin{gather*}
			\derivative{V}{f} (z) = F (z) \quad \text{for $\| V
			\|$ almost all $z$}.
		\end{gather*}
	\end{enumerate}
\end{theorem}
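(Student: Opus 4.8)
The plan is to reduce all three assertions, component by component, to the corresponding properties of $f_W$ on $W$, using the bookkeeping of \ref{remark:decomp_rep}: the family $G$ is countable, the sets $\zeta ( W )$ are pairwise disjoint Borel sets with $( \| V \| + \| \delta V \| ) ( U \without \bigcup \im \zeta ) = 0$, and $W = V \restrict ( \zeta ( W ) \times \grass{\adim}{\vdim} )$ with $\boundary{V}{\zeta(W)} = 0$. The last two facts give $\| W \| = \| V \| \restrict \zeta ( W )$ and, by \ref{def:v_boundary}, $\delta W = ( \delta V ) \restrict \zeta ( W )$, whence $\| \delta W \| = \| \delta V \| \restrict \zeta ( W )$; in particular $\| V \| = \tsum{W \in G}{} \| W \|$ and $\| \delta V \| = \tsum{W \in G}{} \| \delta W \|$ as Radon measures on $U$. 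For \eqref{item:tv_on_decompositions:f} and \eqref{item:tv_on_decompositions:F} I would argue as follows: if $B$ is a Borel set, then $f^{-1} \lIm B \rIm = \bigcup \{ f_W^{-1} \lIm B \rIm \cap \zeta ( W ) \with W \in G \}$, and each set $f_W^{-1} \lIm B \rIm \cap \zeta ( W )$ is $( \| V \| + \| \delta V \| ) \restrict \zeta ( W )$ measurable (since $f_W$ is $\| W \| + \| \delta W \|$ measurable) and contained in the $\| V \| + \| \delta V \|$ measurable set $\zeta ( W )$, hence $\| V \| + \| \delta V \|$ measurable by \ref{miniremark:measurablity}; a countable union of such sets is again $\| V \| + \| \delta V \|$ measurable. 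The proof of \eqref{item:tv_on_decompositions:F} is identical, with $\| V \| + \| \delta V \|$ replaced by $\| V \|$ and $f_W$ replaced by $\derivative{W}{f_W}$ throughout.

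For \eqref{item:tv_on_decompositions:tv}, condition \ref{def:v_weakly_diff}\,\eqref{item:v_weakly_diff:int} for $f$, $F$, $V$ is precisely the hypothesis, so it remains to verify \ref{def:v_weakly_diff}\,\eqref{item:v_weakly_diff:partial}. Fix $\theta \in \mathscr{D} ( U, \rel^\adim )$ and $\eta \in \mathscr{E}^0 ( \rel^l )$ with $\spt D \eta$ compact; then $\eta$ is bounded, so $( \eta \circ f ) \theta \in \Lp{1} ( \| \delta V \|, \rel^\adim )$, and decomposing $( \eta \circ f ) \theta$ according to the sets $\zeta ( W )$ — the partial sums converging in $\Lp{1} ( \| \delta V \|, \rel^\adim )$ by dominated convergence — the $\Lp{1} ( \| \delta V \| )$ continuity of $\delta V$, see \ref{miniremark:extension}, together with $\delta W = ( \delta V ) \restrict \zeta ( W )$, gives $( \delta V ) ( ( \eta \circ f ) \theta ) = \tsum{W \in G}{} ( \delta W ) ( ( \eta \circ f ) \theta ) = \tsum{W \in G}{} ( \delta W ) ( ( \eta \circ f_W ) \theta )$, the last equality because $f$ and $f_W$ agree on $\zeta ( W )$ and $\| \delta W \|$ is carried by $\zeta ( W )$. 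Applying \ref{def:v_weakly_diff}\,\eqref{item:v_weakly_diff:partial} to $f_W \in \trunc ( W, \rel^l )$ to each summand, the two resulting series over $W$ converge absolutely: the ``$V$ integral'' summands are dominated by a constant multiple of $\max | \eta | \, \max | D\theta | \, \| V \| ( \spt \theta )$, and, choosing $\tilde{\eta} \in \ccspace{\rel^l}$ with $\tilde{\eta} \geq 1$ on $\spt D\eta$, the ``$\| V \|$ integral'' summands are dominated by $\max | D\eta | \, \max | \theta | \tint{\spt \theta}{} | ( \tilde{\eta} \circ f ) \cdot F | \ud \| V \| < \infty$ by the hypothesis; hence summation may be interchanged with integration. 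Reassembling single integrals by means of $W = V \restrict ( \zeta ( W ) \times \grass{\adim}{\vdim} )$, $\| W \| = \| V \| \restrict \zeta ( W )$, $f | \zeta ( W ) = f_W | \zeta ( W )$, $F | \zeta ( W ) = \derivative{W}{f_W} | \zeta ( W )$, and $\| V \| ( U \without \bigcup \im \zeta ) = 0$, one arrives at
\[
	( \delta V ) ( ( \eta \circ f ) \theta ) = \tint{}{} \eta ( f ( z ) ) \project{S} \bullet D \theta ( z ) \ud V ( z, S ) + \tint{}{} \left < \theta ( z ), D \eta ( f ( z ) ) \circ F ( z ) \right > \ud \| V \| z,
\]
which is \ref{def:v_weakly_diff}\,\eqref{item:v_weakly_diff:partial}.

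Consequently $f \in \trunc ( V, \rel^l )$ and $F$ is an admissible datum for $f$ in the sense of \ref{def:v_weakly_diff}. By its $\| V \|$ almost uniqueness, $F$ agrees $\| V \|$ almost everywhere with the canonical datum, so $\derivative{V}{f} ( a )$ equals the $( \| V \|, C )$ approximate limit of $F$ at $a$ for $\| V \|$ almost all $a$, where $C = \{ ( a, \cball{a}{r} ) \with \cball{a}{r} \subset U \}$; since a $\| V \|$ measurable, finite valued function is $( \| V \|, C )$ approximately continuous at $\| V \|$ almost every point (cf.\ \cite[2.9.13]{MR41:1976}), we conclude $\derivative{V}{f} ( z ) = F ( z )$ for $\| V \|$ almost all $z$.

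The part I expect to require the most care is the manipulation in the middle step: pushing the countable decomposition $V = \tsum{W \in G}{} W$ through the $\Lp{1}$ extension of $\delta V$ and justifying the two interchanges of summation and integration. Everything else is routine once the hypothesis $( \eta \circ f ) \cdot F \in \Lploc{1} ( \| V \|, \Hom ( \rel^\adim, \rel^l ) )$ for $\eta \in \ccspace{\rel^l}$ is used to dominate the tangential term; note that this hypothesis cannot be dropped and is not automatic, because the individual derivatives $\derivative{W}{f_W}$ need not be locally $\| V \|$ integrable even though each $( \eta \circ f_W ) \cdot \derivative{W}{f_W}$ is locally $\| W \|$ integrable.
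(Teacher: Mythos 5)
Your proof follows the same route as the paper's: establish $\|W\| = \|V\|\restrict\zeta(W)$ and $\|\delta W\| = \|\delta V\|\restrict\zeta(W)$, deduce \eqref{item:tv_on_decompositions:f} and \eqref{item:tv_on_decompositions:F} via \ref{miniremark:measurablity}, and then verify \ref{def:v_weakly_diff}\,\eqref{item:v_weakly_diff:partial} by decomposing $(\delta V)((\eta\circ f)\theta)$ into the sum $\tsum{W\in G}{}(\delta W)((\eta\circ f_W)\theta)$, applying the defining identity for each $f_W\in\trunc(W,\rel^l)$, and reassembling. The only difference is that you make explicit the absolute-convergence bounds that license the two interchanges of summation and integration (which the paper leaves tacit), and you spell out the final passage from ``$F$ is an admissible datum'' to ``$\derivative Vf=F$ $\|V\|$-a.e.''; both additions are correct and in the spirit of the paper's \ref{def:v_weakly_diff}.
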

\begin{proof}
	Clearly, $f | \zeta (W) = f_W | \zeta (W)$ and $F | \zeta (W) =
	\derivative{W}{f_W} | \zeta (W)$ for $W \in G$.
	\eqref{item:tv_on_decompositions:f} and
	\eqref{item:tv_on_decompositions:F} readily follow by means of
	\ref{miniremark:measurablity}, since $\| W \| = \| V \| \restrict
	\zeta ( W )$ and $\| \delta W \| = \| \delta V \| \restrict \zeta (W)$
	for $W \in G$. If $\theta \in \mathscr{D} ( U, \rel^l )$, $\eta \in
	\mathscr{E}^0 ( \rel^l )$ and $\spt D\eta$ is compact, then the
	hypothesis of \eqref{item:tv_on_decompositions:tv} implies
	\begin{gather*}
		\begin{aligned}
			& ( \delta V ) ( ( \eta \circ f ) \theta ) = \tsum{W
			\in G}{} ( \delta W ) ( ( \eta \circ f_W ) \theta ) \\
			& \ = \tsum{W \in G}{} \tint{}{} \eta ( f_W (z))
			D \theta (z) \bullet \project{S} + \left < \theta (z),
			D \eta ( f_W ( z ) ) \circ \derivative{W}{f_W} (z)
			\right > \ud W (z,S) \\
			& \ = \tsum{W \in G}{} \tint{\zeta(W) \times
			\grass{\adim}{\vdim}}{} \eta ( f(z) ) D \theta (z)
			\bullet \project{S} + \left < \theta (z), D\eta (f(z))
			\circ F(z) \right > \ud V (z,S) \\
			& \ = \tint{}{} \eta (f(z)) D \theta (z) \bullet
			\project{S} + \left < \theta (z), D \eta(f(z)) \circ F
			(z) \right > \ud V (z,S).
		\end{aligned} \qedhere
	\end{gather*}
\end{proof}
\begin{example} \label{example:star}
	Consider $R_j = \mathbf{C} \cap \{ r \exp ( \pi \mathbf{i} j/3) \with
	0 < r < \infty \}$ and $V_j = \IVar_1 ( \mathbf{C} )$ such that $\|
	V_j \| = \mathscr{H}^1 \restrict R_j$ for $j \in \{ 1, 2, 3, 4, 5, 6
	\}$ and note $V_1 + V_3 + V_5$, $V_2 + V_4 + V_6$, $V_1 + V_4$, and
	$V_2 + V_3 + V_5 + V_6$ are stationary, define $f \in \trunc ( V_1 +
	V_3 + V_5 )$ and $g \in \trunc ( V_1 + V_4 )$ by
	\begin{gather*}
		f(z) = 1 \quad \text{if $z \in R_1 \cup R_3 \cup R_5$}, \quad
		f(z) = 0 \quad \text{else}, \\
		g(z) = 1 \quad \text{if $z \in R_1 \cup R_4$}, \qquad g(z) = 0
		\quad \text{else}
	\end{gather*}
	whenever $z \in \mathbf{C}$, define $h : \mathbf{C} \to \rel \times
	\rel$ by $h(z)=(f(z),g(z))$ for $z \in \mathbf{C}$, and let $V =
	\sum_{j=1}^6 V_j$.

	Then \ref{thm:tv_on_decompositions} implies $f, g \in \trunc (V)$
	with
	\begin{gather*}
		\derivative{V}{f} (z) = 0 = \derivative{V}{g}(z) \quad
		\text{for $\| V \|$ almost all $z$},
	\end{gather*}
	but neither $h \notin \trunc (V, \rel \times \rel)$ nor $f+g \in
	\trunc (V)$ nor $gf \in \trunc (V)$; in fact the characteristic
	function of $R_1$ which equals $gf$ does not belong to $\trunc (V)$,
	hence $f+g \notin \trunc (V)$ by \ref{lemma:basic_v_weakly_diff} and
	$h \in \trunc (V,\rel \times \rel)$ would imply $f+g \in \trunc (V)$
	by the method of \ref{thm:addition}\,\eqref{item:addition:add}.
\end{example}
\begin{remark} \label{remark:too_big_sobolev}
	Here some properties of the class $\mathbf{W} ( V, \rel^l )$
	consisting of all $f \in \Lploc{1} ( \| V \| + \| \delta V \|,
	\rel^l)$ such that for some $F \in \Lploc{1} ( \| V \|, \Hom (
	\rel^\adim, \rel^l ) )$
	\begin{gather*}
		( \delta V ) ( ( \alpha \circ f ) \theta ) = \alpha \big (
		\tint{}{} ( \project{S} \bullet D \theta (z)) f (z) + \left <
		\theta (z), F(z) \right > \ud V (z,S) \big )
	\end{gather*}
	whenever $\theta \in \mathscr{D} (U, \rel^\adim )$ and $\alpha \in
	\Hom ( \rel^l, \rel )$, associated with $V$ and $\rel^l$ whenever $l,
	\vdim, \adim \in \nat$, $\vdim \leq \adim$, $U$ is an open subset of
	$\rel^\adim$, $V \in \RVar_\vdim (U)$ and $\| \delta V \|$ is a Radon
	measure, will be discussed briefly.

	Clearly, $F$ is $\| V \|$ almost unique and $\mathbf{W} (V, \rel^l )$
	is a vectorspace. However, it may happen that $f \in \mathbf{W} (V,
	\rel^l )$ but $\phi \circ f \notin \mathbf{W} (V,\rel^l)$ for some
	$\phi \in \mathscr{D}^0 ( \rel^l )$; in fact the function $f+g$
	constructed in \ref{example:star} provides an example.  Note also that
	\begin{gather*}
		\classification{\trunc (V, \rel^l ) \cap \Lploc{1} ( \| V \| +
		\| \delta V \|, \rel^l )}{f} { \derivative{V}{f} \in \Lploc{1}
		( \| V \|, \Hom ( \rel^\adim, \rel^l ) }
	\end{gather*}
	is contained in $\mathbf{W} (V,\rel^l)$ by
	\ref{remark:integration_by_parts}.
\end{remark}
\begin{example} \label{example:axioms}
	The considerations of \ref{example:star} may be axiomatised as
	follows.

	Suppose $S$ denotes the family of stationary one dimensional integral
	varifolds in $\rel^2$ and, whenever $V \in S$, $C(V)$ is a class of
	real valued functions on $\rel^2$ satisfying the following conditions.
	\begin{enumerate}
		\item \label{item:axioms:decomp} If $G$ is a decomposition of
		$V$, $\zeta$ is associated to $G$ as in
		\ref{remark:decomp_rep}, $\xi : G \to \rel$ and $f : \rel^2
		\to \rel$ satisfies
		\begin{gather*}
			f(z) = \xi (W) \quad \text{if $z \in \zeta(W)$, $W \in
			G$}, \qquad f(z) = 0 \quad \text{if $z \in \rel^2
			\without {\textstyle\bigcup} \im \zeta$},
		\end{gather*}
		then $f \in C(V)$.
		\item \label{item:axioms:addition} If $f,g \in C(V)$, then
		$f+g \in C(V)$.
		\item \label{item:axioms:truncation} If $f \in C(V)$, $\phi
		\in \mathscr{E}^0 ( \rel )$ and $\spt \phi'$ is compact, then
		$\phi \circ f \in C(V)$.  \intertextenum{
		
		Then, using the terminology of
		\ref{remark:nonunique_decomposition}, the characteristic
		function of any ray $R_j$ belongs to $C(V)$. Moreover, the
		same holds, if the conditions \eqref{item:axioms:addition} and
		\eqref{item:axioms:truncation} are replaced by the following
		condition.}
		\item \label{item:axioms:multiplication} If $f,g \in C(V)$,
		then $fg \in C(V)$.
	\end{enumerate}
\end{example}
\begin{lemma} \label{lemma:level_sets}
	Suppose $\vdim, \adim \in \nat$, $\vdim \leq \adim$, $U$ is an open
	subset of $\rel^\adim$, $V \in \RVar_\vdim ( U )$, $\| \delta V \|$ is
	a Radon measure, $f \in \trunc (V)$, $t \in \rel$, and $E = \{ z \with
	f(z)>t \}$.

	Then there holds
	\begin{gather*}
		\boundary{V}{E}(\theta) = \lim_{\varepsilon \to 0+}
		\varepsilon^{-1} \tint{\{z \with t < f(z) \leq
		t+\varepsilon\}}{} \left < \theta, \derivative{V}{f} \right >
		\ud \| V \| \quad \text{for $\theta \in \mathscr{D} (U,
		\rel^\adim )$}.
	\end{gather*}
\end{lemma}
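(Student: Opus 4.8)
The plan is to express both sides of the identity through the distribution $T \in \mathscr{D}'(U\times\rel,\rel^\adim)$ that \ref{lemma:meas_fct} associates to $f$, and then to upgrade the ``almost every $t$'' statement a differentiation argument provides to the single arbitrary $t$ of the hypotheses by exploiting that superlevel sets shrink right continuously.

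First I would fix $\theta\in\mathscr{D}(U,\rel^\adim)$, take $T$ as in \ref{lemma:meas_fct}, and recall from \ref{remark:associated_distribution} that $T$ is representable by integration with
\begin{gather*}
	T(\psi) = \tint{}{}\left<\psi(z,f(z)),\derivative{V}{f}(z)\right>\ud\|V\|z, \quad \tint{}{}g\ud\|T\| = \tint{}{}g(z,f(z))|\derivative{V}{f}(z)|\ud\|V\|z
\end{gather*}
for $\psi\in\Lp{1}(\|T\|,\rel^\adim)$ and $\|T\|$ integrable $g$, whereas \ref{lemma:meas_fct} also gives $T(\psi) = \tint{}{}\boundary{V}{E(\tau)}(\psi(\cdot,\tau))\ud\mathscr{L}^1\tau$ with $E(\tau) = \{z\with f(z)>\tau\}$. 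Whenever $-\infty<a\leq b<\infty$, letting $\chi_{(a,b]}$ be the characteristic function of $\{s\with a<s\leq b\}$, the set $\{z\with a<f(z)\leq b\}$ lies in a sublevel set of $|f|$, so \ref{remark:eq_bounded_condition} and the second displayed formula show $(z,s)\mapsto\chi_{(a,b]}(s)\theta(z)$ belongs to $\Lp{1}(\|T\|,\rel^\adim)$ and
\begin{gather*}
	\Xi(a,b) := \tint{\{z\with a<f(z)\leq b\}}{}\left<\theta,\derivative{V}{f}\right>\ud\|V\| = T(\chi_{(a,b]}\otimes\theta).
\end{gather*}
Approximating $\chi_{(a,b]}$ pointwise and boundedly by a sequence $\gamma_j\in\mathscr{D}^0(\rel)$ supported in a fixed compact interval and letting $j\to\infty$ in the two expressions $T(\gamma_j\otimes\theta) = \tint{}{}\gamma_j(\tau)\boundary{V}{E(\tau)}(\theta)\ud\mathscr{L}^1\tau = \tint{}{}\gamma_j(f(z))\left<\theta(z),\derivative{V}{f}(z)\right>\ud\|V\|z$ by dominated convergence --- the function $\tau\mapsto\boundary{V}{E(\tau)}(\theta)$ being locally $\mathscr{L}^1$ integrable, as \ref{lemma:meas_fct} and Fubini's theorem show --- yields
\begin{gather*}
	\Xi(a,b) = \tint{a}{b}\boundary{V}{E(\tau)}(\theta)\ud\mathscr{L}^1\tau \quad \text{whenever $-\infty<a\leq b<\infty$}.
\end{gather*}

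Next I would check that $\tau\mapsto\boundary{V}{E(\tau)}(\theta)$ is right continuous at every $\tau$: by \ref{def:v_boundary} and \ref{miniremark:situation_general_varifold} it equals $\int_{E(\tau)}\theta(z)\bullet\eta(V,z)\ud\|\delta V\|z - \int_{E(\tau)\times\grass{\adim}{\vdim}}\project{S}\bullet D\theta(z)\ud V(z,S)$, and since $\bigcup_{\sigma>\tau}\{z\with f(z)>\sigma\} = \{z\with f(z)>\tau\}$ the sets $E(\sigma)$ increase to $E(\tau)$ as $\sigma\minuslim{\tau}$, so dominated convergence --- using that $\|\delta V\|$ and $V$ are Radon and $\theta$ has compact support --- gives $\boundary{V}{E(\sigma)}(\theta)\to\boundary{V}{E(\tau)}(\theta)$. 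Hence, for the given $t$ and $\varepsilon>0$, using the previous paragraph,
\begin{gather*}
	\varepsilon^{-1}\tint{\{z\with t<f(z)\leq t+\varepsilon\}}{}\left<\theta,\derivative{V}{f}\right>\ud\|V\| = \varepsilon^{-1}\Xi(t,t+\varepsilon) = \varepsilon^{-1}\tint{t}{t+\varepsilon}\boundary{V}{E(\tau)}(\theta)\ud\mathscr{L}^1\tau,
\end{gather*}
and the modulus of the difference of the right hand member and $\boundary{V}{E(t)}(\theta)$ does not exceed $\max\{|\boundary{V}{E(\tau)}(\theta)-\boundary{V}{E(t)}(\theta)|\with t\leq\tau<t+\varepsilon\}$, which tends to $0$ as $\varepsilon\pluslim{0}$ by right continuity. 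This is the asserted equation.

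The step I expect to be the main obstacle is exactly this passage from the generic Lebesgue point statement, which \ref{miniremark:distrib_on_r} would supply only for $\mathscr{L}^1$ almost all $t$, to the conclusion at the single arbitrary $t$ of the hypotheses. It is resolved by the observation that $\tau\mapsto E(\tau)$ is right continuous for monotone convergence of sets, hence so is $\tau\mapsto\boundary{V}{E(\tau)}(\theta)$, combined with the choice of the half open intervals $\{z\with t<f(z)\leq t+\varepsilon\}$ (matching the convention in \ref{miniremark:distrib_on_r}) so that $\Xi(t,t+\varepsilon)$ is precisely the integral of that right continuous function over $[t,t+\varepsilon]$. The ancillary identity relating $\Xi(a,b)$ to the integral of $\tau\mapsto\boundary{V}{E(\tau)}(\theta)$ is the only other point needing care, and it follows routinely by comparing the two descriptions of $T$ on test functions approximating $\chi_{(a,b]}$.
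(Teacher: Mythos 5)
Your proposal is correct, but it takes a genuinely different route from the paper. The paper's own proof is short and direct: it composes $f$ with the truncations $\phi_\varepsilon(u) = \varepsilon^{-1}\inf\{\varepsilon,\sup\{u-t,0\}\}$, observes via \ref{lemma:basic_v_weakly_diff}, \ref{example:composite} and \ref{thm:addition}\,\eqref{item:addition:zero} that $\phi_\varepsilon\circ f\in\trunc(V)$ with $\derivative{V}{(\phi_\varepsilon\circ f)}$ equal to $\varepsilon^{-1}\derivative{V}{f}$ on the layer $\{z\with t<f(z)\leq t+\varepsilon\}$ and $0$ elsewhere, inserts this into the integration by parts identity \ref{remark:integration_by_parts} applied with $\alpha=\id{\rel}$, and lets $\varepsilon\to 0+$ exploiting the monotone convergence of $\phi_\varepsilon\circ f$ to the characteristic function of $E$. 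You instead first establish, by approximating $\chi_{(a,b]}\otimes\theta$ inside both characterisations of the distribution $T$ from \ref{lemma:meas_fct} and \ref{remark:associated_distribution}, the layer-cake identity $\tint{\{z\with a<f(z)\leq b\}}{}\left<\theta,\derivative{V}{f}\right>\ud\|V\| = \tint{a}{b}\boundary{V}{E(\tau)}(\theta)\ud\mathscr{L}^1\tau$, and then convert the almost-everywhere Lebesgue differentiation into a statement at every $t$ by showing $\tau\mapsto\boundary{V}{E(\tau)}(\theta)$ is right continuous, using $E(\sigma)\uparrow E(\tau)$ as $\sigma\pluslim{\tau}$ together with dominated convergence in the unwrapped expression for $\boundary{V}{E(\tau)}(\theta)$. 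Both arguments are sound; the paper's is more economical and self-contained, whereas yours in effect derives a fragment of \ref{thm:tv_coarea} on the way, making the logical relationship between $\boundary{V}{E(\tau)}$ and the layer integrals more explicit at the cost of the extra work on measurability, dominated convergence, and right continuity. One small typo: you write $\sigma\minuslim{\tau}$ where the accompanying union $\bigcup_{\sigma>\tau}\{z\with f(z)>\sigma\}=E(\tau)$ makes clear you mean $\sigma\pluslim{\tau}$.
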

\begin{proof}
	Define $\phi_\varepsilon (u) = \varepsilon^{-1} \inf \{ \varepsilon ,
	\sup \{ u-t, 0 \} \}$ for $u \in \rel$, $0 < \varepsilon \leq 1$ and
	notice that
	\begin{gather*}
		\phi_\varepsilon (u) = 0 \quad \text{if $u \leq t$}, \qquad
		\phi_\varepsilon (u) = 1 \quad \text{if $u \geq
		t+\varepsilon$}, \\
		\phi_\varepsilon (u) \uparrow 1 \quad \text{as $\varepsilon
		\to 0+$ if $u > t$}
	\end{gather*}
	whenever $u \in \rel$. Consequently, one infers $\phi_\varepsilon
	\circ f \in \trunc (V)$ with
	\begin{gather*}
		\text{$\derivative{V}{( \phi_\varepsilon \circ f )} (z) =
		\varepsilon^{-1} \derivative{V}{f} (z)$ if $t < f(z) \leq
		t+\varepsilon$}, \quad \text{$\derivative{V}{(
		\phi_\varepsilon \circ f )} (z) = 0$ else}
	\end{gather*}
	for $\| V \|$ almost all $z$ from \ref{lemma:basic_v_weakly_diff},
	\ref{example:composite}\,\eqref{item:composite:scalar_mult}\,\eqref{item:composite:add_constant}\,\eqref{item:composite:1d}
	and \ref{thm:addition}\,\eqref{item:addition:zero}. It follows
	\begin{gather*}
		\begin{aligned}
			\boundary{V}{E} ( \theta ) & = \lim_{\varepsilon
			\to 0+} ( \delta V ) ( ( \phi_\varepsilon \circ f )
			\theta ) - \lim_{\varepsilon \to 0+} \tint{}{} (
			\phi_\varepsilon \circ f ) (z) \project{S} \bullet D
			\theta (z) \ud V (z,S) \\
			& = \lim_{\varepsilon \to 0+} \varepsilon^{-1}
			\tint{\{ z \with t < f (z) \leq t + \varepsilon \}}{}
			\left < \theta(z), \derivative{V}{f} (z) \right > \ud
			\| V \| z,
		\end{aligned}
	\end{gather*}
	where \ref{remark:integration_by_parts} with $\alpha$ and $f$ replaced
	by $\id{\rel}$ and $\phi_\varepsilon \circ f$ was employed.
\end{proof}
\begin{theorem} \label{thm:tv_coarea}
	Suppose $\vdim, \adim \in \nat$, $U$ is an open subset of
	$\rel^\adim$, $V \in \RVar_\vdim (U)$, $\| \delta V \|$ is a Radon
	measure, and $f \in \trunc(V)$, $T \in \mathscr{D} ( U \times \rel,
	\rel^\adim )$ satisfies
	\begin{gather*}
		T ( \psi ) = \tint{}{} \left < \psi (z,f(z)),
		\derivative{V}{f} (z) \right > \ud \| V \|z \quad \text{for
		$\psi \in \mathscr{D} (U \times \rel, \rel^\adim )$},
	\end{gather*}
	and $E(t) = \{ z \with f(z) > t \}$ for $t \in \rel$.

	Then $T$ is representable by integration and
	\begin{gather*}
		T ( \psi ) = \tint{}{} \boundary{V}{E(t)} ( \psi (\cdot, t ) )
		\ud \mathscr{L}^1 t, \quad \tint{}{} g \ud \| T \| = \tint{}{}
		\tint{}{} g (z,t) \ud \| \boundary{V}{E(t)} \| z \ud
		\mathscr{L}^1 t.
	\end{gather*}
	whenever $\psi \in \Lp{1} ( \| T \|, \rel^\adim )$ and $g$ is an
	$\overline{\rel}$ valued $\| T \|$ integrable function.
\end{theorem}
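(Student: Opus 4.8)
The strategy is to recognise $T$ as the distribution attached to $f$ in \ref{lemma:meas_fct} and to invoke \ref{thm:distribution_on_product}. Since a distribution is determined by its restriction to $\mathscr{D}(U\times\rel,\rel^\adim)$, the $T$ in the statement coincides with the distribution $T$ associated to $f$ in \ref{lemma:meas_fct}; hence, by \ref{remark:associated_distribution} (via \ref{lemma:push_on_product}), $T$ is representable by integration, $T(\psi)=\int\langle\psi(z,f(z)),\derivative{V}{f}(z)\rangle\ud\|V\|z$ for $\psi\in\Lp{1}(\|T\|,\rel^\adim)$, and $\int g\ud\|T\|=\int g(z,f(z))|\derivative{V}{f}(z)|\ud\|V\|z$ for $\overline{\rel}$ valued $\|T\|$ integrable $g$; moreover, by \ref{lemma:meas_fct}, $T(\psi)=\int\boundary{V}{E(t)}(\psi(\cdot,t))\ud\mathscr{L}^1t$ for $\psi\in\mathscr{D}(U\times\rel,\rel^\adim)$. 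Applying \ref{thm:distribution_on_product} with $Y=\rel^\adim$ to this $T$ furnishes $R_\theta\in\mathscr{D}_0(\rel)$ and $S(t)\colon\mathscr{D}(U,\rel^\adim)\to\rel$; the crux of the proof is to show that $S(t)=\boundary{V}{E(t)}$ for $\mathscr{L}^1$ almost all $t$.

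For $\gamma\in\mathscr{D}^0(\rel)$ and $\theta\in\mathscr{D}(U,\rel^\adim)$ one has, taking $\psi(z,t)=\gamma(t)\theta(z)$ in the last equation of the preceding paragraph, $R_\theta(\gamma)=T_{(z,t)}(\gamma(t)\theta(z))=\int\gamma(t)\,\boundary{V}{E(t)}(\theta)\ud\mathscr{L}^1t$; since $\|T\|$ is a Radon measure, the function $t\mapsto\boundary{V}{E(t)}(\theta)$ lies in $\Lploc{1}(\mathscr{L}^1)$, so this identity exhibits it as an $\mathscr{L}^1$ density of $R_\theta$. Consequently $\|R_\theta\|$ is absolutely continuous with respect to $\mathscr{L}^1$ and, by \ref{miniremark:distrib_on_r} together with the definition of $S(t)$, one obtains $S(t)(\theta)=\lim_{\varepsilon\to0+}\varepsilon^{-1}R_\theta(i_{t,\varepsilon})=\boundary{V}{E(t)}(\theta)$ for $\mathscr{L}^1$ almost all $t$. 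Choosing a countable dense subset of $\mathscr{D}(U,\rel^\adim)$, available by \ref{remark:lusin} and \ref{example:distrib_lusin}, and using continuity of $S(t)$ (valid for almost all $t$ by the proof of \ref{thm:distribution_on_product}) and of $\boundary{V}{E(t)}$, one concludes $S(t)=\boundary{V}{E(t)}$, hence $\|S(t)\|=\|\boundary{V}{E(t)}\|$, for $\mathscr{L}^1$ almost all $t$. (Alternatively, \ref{lemma:level_sets} identifies the limit defining $S(t)(\theta)$ with $\boundary{V}{E(t)}(\theta)$ once one notes, using \ref{remark:eq_bounded_condition}, that $\|R_\theta\|$ is finite on bounded intervals and that $R_\theta(i_{t,\varepsilon})=\int_{\{z\with t<f(z)\le t+\varepsilon\}}\langle\theta,\derivative{V}{f}(z)\rangle\ud\|V\|z$.)

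With the absolute continuity of $\|R_\theta\|$ in hand for every $\theta\in\mathscr{D}(U,\rel^\adim)$, \ref{thm:distribution_on_product}\,\eqref{item:distribution_on_product:absolute} applies and yields $T(\psi)=\int S(t)(\psi(\cdot,t))\ud\mathscr{L}^1t$ for $\psi\in\Lp{1}(\|T\|,\rel^\adim)$ and $\int g\ud\|T\|=\int\int g(z,t)\ud\|S(t)\|z\ud\mathscr{L}^1t$ for $\overline{\rel}$ valued $\|T\|$ integrable $g$. Substituting $S(t)=\boundary{V}{E(t)}$ and $\|S(t)\|=\|\boundary{V}{E(t)}\|$, which hold for $\mathscr{L}^1$ almost all $t$, gives both displayed equations of the statement, and together with the representability of $T$ already recorded this completes the proof.

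The step I expect to require the most care is the identification $S(t)=\boundary{V}{E(t)}$: one must handle the $\|R_\theta\|$ integrable extension of $R_\theta$ used in the very definition of $S(t)$, together with the measurability and local integrability bookkeeping needed to apply Fubini's theorem even though $\derivative{V}{f}$ is only locally integrable over sublevel sets of $f$ rather than over $\|V\|$. These matters are, however, precisely what \ref{lemma:meas_fct}, \ref{lemma:level_sets} and \ref{thm:distribution_on_product} were designed to absorb, so no essentially new obstacle should appear.
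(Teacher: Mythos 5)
Your proposal is correct and follows essentially the same route as the paper: identify $T$ with the distribution of \ref{lemma:meas_fct} via \ref{remark:associated_distribution}, then apply \ref{thm:distribution_on_product}\,\eqref{item:distribution_on_product:absolute} with the slices $S(t)$ identified as $\boundary{V}{E(t)}$ through \ref{lemma:level_sets}. The first, more laborious identification of $S(t)$ you propose — reading off the density via \ref{miniremark:distrib_on_r} and then invoking a countable dense subset of $\mathscr{D}(U,\rel^\adim)$ — is unnecessary: the parenthetical ``alternative'' you offer is the paper's actual argument, and it is cleaner, because \ref{lemma:level_sets} holds for \emph{every} $t$ and every $\theta$ (not merely almost every $t$ for each fixed $\theta$), so no separability argument is needed to get the identity $S(t)=\boundary{V}{E(t)}$ simultaneously across all test functions.
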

\begin{proof}
	Taking \ref{remark:associated_distribution} into account,
	\ref{lemma:meas_fct} yields
	\begin{gather*}
		\tint{}{} \gamma \circ f \left < \theta, \derivative{V}{f}
		\right > \ud \| V \| = T_{(z,t)} ( \gamma(t) \theta (z) ) =
		\tint{}{} \gamma(t) \boundary{V}{E(t)} ( \theta ) \ud
		\mathscr{L}^1 t
	\end{gather*}
	for $\gamma \in \mathscr{D}^0 ( \rel )$ and $\theta \in \mathscr{D}
	(U,\rel^\adim)$. In view of \ref{lemma:level_sets}, the conclusion is
	implied by
	\ref{thm:distribution_on_product}\,\eqref{item:distribution_on_product:absolute}
	with $Y = \rel^\adim$.
\end{proof}
\begin{remark}
	The formulation of \ref{thm:tv_coarea} is modelled on
	\cite[4.5.9\,(13)]{MR41:1976}.
\end{remark}
\optional{
\begin{lemma} \label{lemma:coarea_inequality}
	Suppose $\vdim, \adim \in \nat$, $\vdim \leq \adim$, $U$ is an open
	subset of $\rel^\adim$, $V \in \RVar_\vdim ( U )$, $\| \delta V \|$ is
	a Radon measure, $f \in \trunc (V)$, and $E(t) =
	\classification{U}{z}{f(z) > t }$ for $t \in \rel$.

	Then the following two statements hold.
	\begin{enumerate}
		\item For $\mathscr{L}^1$ almost all $t$, $\boundary{V}{E(t)}$
		is representable by integration and
		\begin{gather*}
			\boundary{V}{E(t)}(\theta) = \lim_{\varepsilon \to 0+}
			\varepsilon^{-1} \tint{\{z \with t < f(z) \leq
			t+\varepsilon\}}{} \left < \theta, \derivative{V}{f}
			\right > \ud \| V \| \quad \text{for $\theta \in
			\mathscr{D} (U, \rel^\adim )$}.
		\end{gather*}
		\item \label{item:coarea_inequality:ineq} If $- \infty \leq a
		\leq b \leq \infty$ and $A$ is $\| V \|$ measurable, then
		\begin{gather*}
			\tint{a}{b} \| \boundary{V}{E(t)} \| (A) \ud
			\mathscr{L}^1 t \leq \tint{\classification{A}{z}{a <
			f(z) < b}}{} | \derivative{V}{f} | \ud \| V \|.
		\end{gather*}
	\end{enumerate}
\end{lemma}
\begin{proof}
	Suppose $- \infty < a < b < \infty$ and observe that it is enough to
	treat the case that $A$ is an open set whose closure is a compact
	subset of $U$. Define $f_t = \inf \{ f, t \}$ for $t \in \rel$. It
	follows from \ref{example:distrib_lusin} that the function mapping $t
	\in \rel$ onto $\| \boundary{V}{E(t)} \| (A)$ is a Borel function.

	Next, suppose $a < t < b$, define $\phi_\varepsilon (s) =
	\varepsilon^{-1} \inf \{ \varepsilon , \sup \{ s-t, 0 \} \}$ for $0 <
	\varepsilon < \infty$, $s \in \rel$ and note
	\begin{gather*}
		\phi_\varepsilon (s) = 0 \quad \text{if $s \leq t$}, \qquad
		\phi_\varepsilon (s) = 1 \quad \text{if $s \geq
		t+\varepsilon$}, \\
		\phi_\varepsilon (s) \uparrow 1 \quad \text{as $\varepsilon
		\to 0+$ if $s > t$}
	\end{gather*}
	whenever $s \in \rel$ and $\phi_\varepsilon \circ f \in \trunc (V)$
	with
	\begin{gather*}
		\text{$\derivative{V}{( \phi_\varepsilon \circ f )} (z) =
		\varepsilon^{-1} \derivative{V}{f} (z)$ if $t < f(z) \leq
		t+\varepsilon$}, \quad \text{$\derivative{V}{(
		\phi_\varepsilon \circ f )} (z) = 0$ else}
	\end{gather*}
	for $\| V \|$ almost all $z$ by \ref{lemma:basic_v_weakly_diff},
	\ref{example:composite}\,\eqref{item:composite:scalar_mult}\,\eqref{item:composite:add_constant}\,\eqref{item:composite:1d}
	and \ref{thm:addition}\,\eqref{item:addition:zero}. Therefore, using
	\ref{remark:integration_by_parts} with $\alpha$ and $f$ replaced by
	$\id{\rel}$ and $\phi_\varepsilon \circ f$, one obtains
	\begin{gather*}
		\begin{aligned}
			\boundary{V}{E(t)} ( \theta ) & = \lim_{\varepsilon
			\to 0+} ( \delta V ) ( ( \phi_\varepsilon \circ f )
			\theta ) - \lim_{\varepsilon \to 0+} \tint{}{} (
			\phi_\varepsilon \circ f ) (z) \project{S} \bullet D
			\theta (z) \ud V (z,S) \\
			& = \lim_{\varepsilon \to 0+} \varepsilon^{-1}
			\tint{\{ z \with t < f (z) \leq t + \varepsilon \}}{}
			\left < \theta(z), \derivative{V}{f} (z) \right > \ud
			\| V \| z,
		\end{aligned}
	\end{gather*}
	whenever $\theta \in \mathscr{D}(U,\rel^\adim)$, hence
	\begin{gather*}
		\| \boundary{V}{E(t)} \| (A) \leq \liminf_{\varepsilon \to 0+}
		\varepsilon^{-1} \tint{\classification{A}{z}{t < f(z) \leq
		t+\varepsilon}}{} | \derivative{V}{f} | \ud \| V \|.
	\end{gather*}

	The conclusion now follows by use of \cite[2.9.19]{MR41:1976}.
\end{proof}
}
\begin{remark} \label{remark:no_coarea_ineq_for_too_big_sobolev}
	The equalities in \ref{thm:tv_coarea} are not valid for functions in
	$\mathbf{W} (V)$ with $\derivative{V}{f}$ in the definition of $T$
	replaced by the function $F$ occuring in the definition of $\mathbf{W}
	(V)$, see \ref{remark:too_big_sobolev}; in fact, the function $f+g$
	constructed in \ref{example:star} provides an example.
\end{remark}
\begin{corollary} \label{corollary:boundary_controls_interior}
	Suppose $l, \vdim, \adim \in \nat$, $\vdim \leq \adim$, $U$ is an open
	subset of $\rel^\adim$, $V \in \RVar_\vdim ( U )$, $\| \delta V
	\|$ is a Radon measure, and $f \in \trunc ( V, \rel^l )$.

	Then there holds
	\begin{gather*}
		\eqLpnorm{\| \delta V \| \restrict Z}{\infty}{f} \leq
		\eqLpnorm{\| V \| \restrict Z}{\infty}{f}
	\end{gather*}
	whenever $Z$ is an open subset of $U$.
\end{corollary}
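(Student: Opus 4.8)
The plan is to feed the defining integration-by-parts identity for $f \in \trunc(V,\rel^l)$ with smooth cut-off functions of $|f|$ that vanish identically on the ball of radius $s := \eqLpnorm{\| V \| \restrict Z}{\infty}{f}$, and to exploit that this identity involves $\| V \|$ and, through $\delta V$, the measure $\| \delta V \|$ at once.

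If $s = \infty$ there is nothing to prove; assume therefore $0 \le s < \infty$ and fix $n \in \nat$. First I would pick $\rho \in \mathscr{E}^0(\rel)$ with $0 \le \rho \le 1$, $\rho$ nondecreasing, $\rho(u) = 0$ for $u \le s + 1/n$, $\rho(u) > 0$ for $u > s + 1/n$, and $\spt \rho'$ compact, and set $\eta : \rel^l \to \rel$, $\eta(y) = \rho(|y|)$. Then $\eta \in \mathscr{E}^0(\rel^l)$ --- it is smooth away from the origin as a composition of smooth maps and identically $0$ on $\{ y \with |y| \le s + 1/n \}$, a neighbourhood of $0$ --- with $0 \le \eta \le 1$, with $\spt D\eta$ compact, with $\eta$ and $D\eta$ both vanishing on $\{ y \with |y| \le s + 1/n \}$, and with $\eta(y) = 0$ only when $|y| \le s + 1/n$. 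Since $|f(z)| \le s$ for $\| V \|$ almost all $z \in Z$, it follows that $\eta(f(z)) = 0$ and $D\eta(f(z)) = 0$ for $\| V \|$ almost all $z \in Z$; hence, whenever $\theta \in \mathscr{D}(U,\rel^\adim)$ and $\spt \theta \subset Z$, both terms on the right-hand side of the identity in \ref{def:v_weakly_diff}\,\eqref{item:v_weakly_diff:partial} vanish, so that $(\delta V)\big( (\eta \circ f) \theta \big) = 0$.

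Next I would use that $\delta V$ is representable by integration, so that --- by \ref{miniremark:situation_general_varifold} and \ref{miniremark:extension} --- there is an $\mathbf{S}^{\adim-1}$ valued $\| \delta V \|$ measurable function $w$ with $(\delta V)(\psi) = \tint{}{} \psi \bullet w \ud \| \delta V \|$ whenever $\psi \in \Lp 1(\| \delta V \|, \rel^\adim)$; and $(\eta \circ f) \theta$ belongs to $\Lp 1(\| \delta V \|, \rel^\adim)$, being $\| \delta V \|$ measurable, bounded, and compactly supported. Consequently $\tint{}{} \eta(f(z))\, \theta(z) \bullet w(z) \ud \| \delta V \| z = 0$ for all $\theta \in \mathscr{D}(U,\rel^\adim)$ with $\spt \theta \subset Z$; since $\| \delta V \|$ is a Radon measure and $|w| = 1$ $\| \delta V \|$ almost everywhere, this forces $\eta(f(z)) = 0$, and therefore $|f(z)| \le s + 1/n$, for $\| \delta V \|$ almost all $z \in Z$. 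Letting $n$ range over $\nat$ and taking the union of the $\| \delta V \|$ negligible exceptional sets yields $|f(z)| \le s$ for $\| \delta V \|$ almost all $z \in Z$, i.e. $\eqLpnorm{\| \delta V \| \restrict Z}{\infty}{f} \le s$, as asserted.

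No real obstruction arises. The two points deserving attention are that $\eta(y) = \rho(|y|)$ be genuinely smooth at the origin, which is why $\rho$ is taken to vanish on a whole neighbourhood of $0$ rather than merely to be even, and the passage from annihilation against all test fields to almost everywhere vanishing, which here is immediate because $w$ has unit length $\| \delta V \|$ almost everywhere.
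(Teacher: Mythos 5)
Your argument is correct, and it takes a genuinely different route from the paper. The paper reduces to $Z = U$, applies the coarea theorem \ref{thm:tv_coarea} to $|f|$ to conclude that $\boundary{V}{E(t)} = 0$ for $\mathscr{L}^1$ almost all $t > s$ (where $E(t) = \{ z \with |f(z)| > t \}$), and then combines this with the fact that $\| V \| ( E(t) ) = 0$ (hence $V \restrict E(t) \times \grass{\adim}{\vdim} = 0$) to conclude $( \delta V ) \restrict E(t) = 0$ and thus $\| \delta V \| ( E(t) ) = 0$. Your proof instead feeds a radial cut-off $\eta = \rho \circ | \cdot |$, vanishing on $\cball{0}{s+1/n}$ and with $\spt D\eta$ compact, directly into the defining identity \ref{def:v_weakly_diff}\,\eqref{item:v_weakly_diff:partial}; the right-hand side dies because both $\eta \circ f$ and $D\eta \circ f$ vanish $\| V \|$ almost everywhere on $Z$, so the extended distribution $( \delta V ) ( ( \eta \circ f ) \theta )$ vanishes for $\theta \in \mathscr{D} ( U, \rel^\adim )$ with $\spt \theta \subset Z$, and the unit-length of $\eta ( V; \cdot )$ then forces $\eta \circ f = 0$ for $\| \delta V \| \restrict Z$ almost all points. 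Your approach is more elementary and self-contained, bypassing the coarea and level-set machinery entirely, while the paper's proof doubles as a small illustration of the just-developed coarea theorem; both are clean, and the estimate $\| V \| ( E(t) ) = 0$ for $t > s$ does the same conceptual work in each (in the paper it kills one summand of $\boundary V E(t)$; in yours it makes $\eta \circ f$ trivial on the bulk term).
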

\begin{proof}
	Assume $Z = U$ and abbreviate $s = \Lpnorm{\| V \|}{\infty}{f}$.
	Recalling \ref{lemma:basic_v_weakly_diff} and
	\ref{example:composite}\,\eqref{item:composite:mod}, one applies
	\ref{thm:tv_coarea} with $f$ replaced by $|f|$ to infer
	\begin{gather*}
		\boundary{V}{E(t)} = 0 \quad \text{for $\mathscr{L}^1$ almost
		all $s < t < \infty$},
	\end{gather*}
	where $E(t) = \{ z \with |f(z)|>t \}$, hence $\| \delta V \| (E(t)) =
	0$ for those $t$.
\end{proof}
\begin{theorem} \label{thm:zero_derivative}
	Suppose $l, \vdim, \adim \in \nat$, $\vdim \leq \adim$, $U$ is an open
	subset of $\rel^\adim$, $V \in \RVar_\vdim ( U )$, $\| \delta V \|$ is
	a Radon measure, and $f \in \trunc ( V, \rel^l )$ with
	\begin{gather*}
		\derivative{V}{f} (z) = 0 \quad \text{for $\| V \|$ almost all
		$z$}.
	\end{gather*}

	Then there exists a decomposition $G$ of $V$ and $\xi : G \to \rel^l$
	such that
	\begin{gather*}
		f(z)=\xi(W) \quad \text{for $\| W \| + \| \delta W \|$ almost
		all $z$}
	\end{gather*}
	whenever $W \in G$.
\end{theorem}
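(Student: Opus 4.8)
The plan is to show first that $f$ pulls Borel sets of $\rel^l$ back to sets with vanishing distributional $V$ boundary, then to repeat the construction in the proof of \ref{thm:decomposition} inside the sub-$\sigma$-algebra that $f$ generates, and finally to reassemble with the help of \ref{thm:decomposition} itself. For the first point I would prove that $\boundary{V}{f^{-1}\lIm B\rIm} = 0$ whenever $B$ is a Borel subset of $\rel^l$. For $C \subset \rel^l$ compact the function $\dist(\cdot,C)$ is Lipschitz, and $\dist(\cdot,C)|\rel^l$ is proper whereas $\dist(\cdot,C)|\varnothing$ is vacuously locally constant, so \ref{lemma:comp_lip} (applied with $A = \rel^l$) gives $z \mapsto \dist(f(z),C) \in \trunc(V)$ whose weak derivative vanishes $\|V\|$ almost everywhere, because $\derivative{V}{f} = 0$; hence \ref{lemma:level_sets} yields $\boundary{V}{\{z \with \dist(f(z),C) > 0\}} = 0$, and since this set is the complement of $f^{-1}\lIm C\rIm$ in $\dmn f$ one gets $\boundary{V}{f^{-1}\lIm C\rIm} = 0$ from \ref{remark:v_boundary} together with $\boundary{V}{U} = 0$. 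The family $\{ B \in \mathcal B(\rel^l) \with \boundary{V}{f^{-1}\lIm B\rIm} = 0 \}$ contains $\rel^l$, is closed under proper differences (again \ref{remark:v_boundary}), and is closed under countable increasing unions — the last because $V$ and $\|\delta V\|$ are Radon, so the defining pairings of $\delta ( V \restrict f^{-1}\lIm B\rIm \times \grass{\adim}{\vdim} )$ and of $(\delta V) \restrict f^{-1}\lIm B\rIm$ against a fixed $\theta \in \mathscr D(U,\rel^\adim)$ pass to the limit by dominated convergence. Since the compact sets form a $\pi$-system generating $\mathcal B(\rel^l)$, Dynkin's lemma proves the claim; in particular $R_f := \{ f^{-1}\lIm B\rIm \with B \in \mathcal B(\rel^l) \}$ is a $\sigma$-algebra of $\| V\| + \| \delta V\|$ measurable sets each of which has vanishing distributional $V$ boundary.

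Next I would carry out the construction in the proof of \ref{thm:decomposition} word for word, except that the family there denoted $R$ (Borel sets with vanishing $V$ boundary) is replaced throughout by $R_f$. The monotonicity based lower bound and the density theoretic facts of \cite{MR41:1976} used in that proof only require membership of a set in the larger family, while the combinatorial maximality argument only uses that $R_f$ is stable under complements, countable monotone limits, and the proper differences of \ref{remark:v_boundary}; so the argument transcribes and produces a countable disjointed family $\{ A_n \}_n \subset R_f$ with $(\| V\| + \| \delta V\|)(U \without \bigcup_n A_n) = 0$ — the $\| \delta V\|$ part following as in \ref{remark:partition} — such that no $A_n$ can be split inside $R_f$, i.e.\ $\| V\|(A_n \cap B) = 0$ or $\| V\|(A_n \without B) = 0$ whenever $B \in R_f$. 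The verification of indecomposability of the pieces at the end of the proof of \ref{thm:decomposition} is the one part that is dropped — it relies on splitting sets possibly outside $R_f$ — and it is recovered in the last step.

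Now fix $n$. For every compact $K \subset U$ the finite Borel measure $B \mapsto \| V\|(A_n \cap K \cap f^{-1}\lIm B\rIm)$ takes only the values $0$ and $\| V\|(A_n \cap K)$ by the previous step, hence, being two valued and finite on the standard Borel space $\rel^l$, is a point mass; letting $K$ exhaust $U$ and using inner regularity of $\| V\|$ one obtains $y_n \in \rel^l$ with $f = y_n$ $\| V\|$ almost everywhere on $A_n$. Since $A_n \without f^{-1}\lIm\{y_n\}\rIm \in R_f$, \ref{remark:partition} applied to $V \restrict A_n \times \grass{\adim}{\vdim}$ gives $\| \delta V\|(A_n \without f^{-1}\lIm\{y_n\}\rIm) = 0$ as well. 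Applying \ref{thm:decomposition} to each $V_n := V \restrict A_n \times \grass{\adim}{\vdim} \in \RVar_\vdim(U)$ — whose first variation is Radon because $\boundary{V}{A_n} = 0$ — yields a decomposition $G_n$ of $V_n$; I would then set $G := \bigcup_n G_n$ and $\xi(W) := y_n$ for $W \in G_n$. That $G$ is a decomposition of $V$ follows because $\boundary{V}{D} = \boundary{V_n}{D}$ for every $\| V\| + \| \delta V\|$ measurable $D \subset A_n$ (so each member of $G_n$ is a component of $V$), the sets $A_n$ are disjoint, and $(\| V\| + \| \delta V\|)(U \without \bigcup_n A_n) = 0$ handles the summation condition; and $f = \xi(W)$ holds $\| W\| + \| \delta W\|$ almost everywhere on $\zeta(W) \subset A_n$ since $\| W\| + \| \delta W\| \leq (\| V\| + \| \delta V\|) \restrict A_n$.

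The step I expect to require the most care is the second one: one must check, essentially line by line, that every appeal to the family $R$ in the somewhat intricate exhaustion argument of \ref{thm:decomposition} either needs only $R_f \subset R$ (the monotonicity estimates and the density computations) or survives the passage to the sub-$\sigma$-algebra $R_f$ (the maximality argument fixing $G_i \subset Q_i$ and the final covering estimate). The only remaining nonroutine ingredient is the standard fact that a finite two valued Borel measure on $\rel^l$ is a point mass.
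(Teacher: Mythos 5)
Your proposal is correct, but it takes a genuinely different and more indirect route than the paper. The paper proceeds directly: it first observes (via \ref{lemma:basic_v_weakly_diff}, \ref{example:composite}\,\eqref{item:composite:mod} and \ref{thm:tv_coarea}, with a limiting argument) that $\boundary{V}{B(y,r)} = 0$ for the closed balls $B(y,r) = \{z \with |f(z)-y| \le r\}$, including $r=0$; it then sets up the same $\delta_i, \varepsilon_i, E_i$ as in the proof of \ref{thm:decomposition} and uses the monotonicity lower bound from \ref{lemma:monotonicity} to show that the pushforwards $\mu_i = f_\# (\|V\|\restrict E_i)$ have \emph{finite} support, yielding directly a countable set $Y$ with $f(z) \in Y$ for $\|V\|$--almost all $z$; finally it applies \ref{remark:partition} and \ref{thm:decomposition} to the level sets $B(y,0)$ with $y \in Y$. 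Your route instead (i) upgrades the ball observation to vanishing boundary for \emph{all} Borel preimages via the Dynkin $\pi$--$\lambda$ theorem, (ii) reruns the entire exhaustion machinery of \ref{thm:decomposition} inside the sub--$\sigma$--algebra $R_f$ to extract its atoms, (iii) invokes the ``two-valued finite Borel measure is a point mass'' fact to identify each atom with a level set, and (iv) applies \ref{thm:decomposition} a second time per atom. Both arguments are powered by the same monotonicity lower bound, but the paper uses it once and directly, whereas you run it as a black box inside a transcription of \ref{thm:decomposition}. Your version is logically sound (I checked in particular that $R_f$ is a $\sigma$--algebra closed under the operations the exhaustion argument needs, that the maximality argument carries over to yield that no $A_n$ can be $R_f$--split, and that the point-mass fact applies after exhausting $U$ by compacta), but it is substantially longer, requires the reader to re-verify the entire decomposition proof with a substituted family, and leans on an additional measure-theoretic fact the paper does not need; it also handwaves the modest technicality that $\dmn f$ may be a proper $(\|V\|+\|\delta V\|)$--null complement of $U$ while the exhaustion argument works with partitions of $U$. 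What your approach buys is a somewhat more structural conclusion as an intermediate step: the level-set $\sigma$--algebra $R_f$ is atomic with atoms of positive weight, which is a reusable observation; what the paper's approach buys is brevity and the avoidance of a second pass through the decomposition machinery.
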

\begin{proof}
	Define $B(y,r) = \{ z \with | f(z) - y | \leq r \}$ for $y \in \rel^l$
	and $0 \leq r < \infty$. First, one observes that \ref{thm:tv_coarea}
	in conjunction with \ref{lemma:basic_v_weakly_diff},
	\ref{example:composite}\,\eqref{item:composite:mod} implies
	\begin{gather*}
		\boundary{V}{B(y,r)} = 0 \quad \text{for $y \in \rel^l$ and $0
		\leq r < \infty$}.
	\end{gather*}

	Next, a countable subset $Y$ of $\rel^l$ such that
	\begin{gather*}
		f(z) \in Y \quad \text{for $\| V \|$ almost all $z$}
	\end{gather*}
	will be constructed. For this purpose define $\delta_i =
	\unitmeasure{\vdim} 2^{-\vdim-1} i^{-1-2\vdim}$, $\varepsilon_i =
	2^{-1} i^{-2}$ and Borel sets $E_i$ consisting of all $a \in
	\rel^\adim$ satisfying
	\begin{gather*}
		|a| \leq i, \quad \oball{a}{2\varepsilon_i} \subset U, \quad
		\density^\vdim ( \| V \|, a ) \geq 1/i, \\
		\measureball{\| \delta V \|}{\cball{a}{r}} \leq
		\unitmeasure{\vdim} is^\vdim \quad \text{for $0 < s <
		\varepsilon_i$}
	\end{gather*}
	whenever $i \in \nat$. Clearly, $E_i \subset E_{i+1}$ for $i \in \nat$
	and $\| V \| ( U \without \bigcup_{i=1}^\infty E_i ) = 0$ by Allard
	\cite[3.5\,(1a)]{MR0307015} and \cite[2.8.18, 2.9.5]{MR41:1976}.
	Abbreviating $\mu_i = f_\# ( \| V \| \restrict E_i )$, one obtains
	\begin{gather*}
		\| V \| ( B(y,r) \cap \{ z \with \dist ( z, E_i ) \leq
		\varepsilon_i \} ) \geq \delta_i
	\end{gather*}
	whenever $y \in \spt \mu_i$, $0 \leq r < \infty$ and $i \in \nat$; in
	fact, assuming $r > 0$ there exists $a \in E_i$ with $\density^\vdim (
	\| V \| \restrict B(y,r), a ) = \density^\vdim ( \| V \|, a ) \geq
	1/i$ by \cite[2.8.18, 2.9.11]{MR41:1976}, hence
	\begin{gather*}
		\tint{s}{t} u^{-\vdim} \measureball{\| \delta ( V \restrict
		B(y,r) \times \grass{\adim}{\vdim} ) \|}{\cball{a}{u}} \ud
		\mathscr{L}^1 u \leq \unitmeasure{\vdim} i (t-s)
	\end{gather*}
	for $0 < s < t < \varepsilon_i$ implying $\| V \| ( B (y,r) \cap
	\cball{a}{\varepsilon_i} ) \geq \delta_i$ by \ref{lemma:monotonicity}.
	Consequently,
	\begin{gather*}
		\delta_i \card \spt \mu_i \leq \| V \| ( U \cap \{ z \with
		\dist (z,E_i) \leq \varepsilon_i \} ) < \infty
	\end{gather*}
	and one may take $Y = \bigcup_{i=1}^\infty \spt \mu_i$.

	In view of \ref{remark:partition} it follows $( \| V \| + \| \delta V
	\| ) ( U \without \bigcup \{ B (y,0) \with y \in Y \} ) = 0$. Applying
	\ref{thm:decomposition} to $V \restrict B(y,0) \times
	\grass{\adim}{\vdim}$ for $y \in Y$ now readily yields the conclusion.
\end{proof}
\section{Zero boundary values} \label{sec:zero}
In this section a notion of zero boundary values for weakly differentiable
functions based on the behaviour of superlevel sets of their modulus functions
is introduced. Stability of this class under composition, see
\ref{lemma:trunc_tg}, convergence, see \ref{lemma:closeness_tg} and
\ref{remark:closeness_tg}, and multiplication by a Lipschitzian function, see
\ref{thm:mult_tg}, are investigated. The deeper parts of this study rest on a
characterisation of such functions in terms of an associated distribution
built from the distributional boundaries of superlevel sets, see
\ref{thm:char_tg}.
\begin{definition} \label{def:trunc_g}
	Suppose $l, \vdim, \adim \in \nat$, $\vdim \leq \adim$, $U$ is an open
	subset of $\rel^\adim$, $V \in \RVar_\vdim ( U )$, $\| \delta V \|$ is
	a Radon measure, and $G$ is a relatively open subset of $\Bdry U$.

	Then $\trunc_G (V, \rel^l)$ is defined to be the set of all $f \in
	\trunc ( V, \rel^l )$ such that with
	\begin{gather*}
		B = ( \Bdry U ) \without G \qquad \text{and} \qquad E(t) = \{
		z \with |f(z)| > t \} \quad \text{for $0 < t < \infty$}
	\end{gather*}
	the following conditions hold for $\mathscr{L}^1$ almost all $0 < t <
	\infty$, see \ref{miniremark:extension},
	\begin{gather*}
		\| V \| ( E(t) \cap K ) + \| \delta V \| ( E(t) \cap K ) + \|
		\boundary{V}{E(t)} \| ( U \cap K ) < \infty, \\
		\tint{E(t) \times \grass{\adim}{\vdim}}{} D \theta (z) \bullet
		\project{S} \ud V(z,S) = ( (\delta V) \restrict E(t) ) (
		\theta | U ) - ( \boundary{V}{E(t)} ) ( \theta|U )
	\end{gather*}
	whenever $K$ is a compact subset of $\rel^\adim \without B$ and
	$\theta \in \mathscr{D} ( \rel^\adim \without B, \rel^\adim )$.
	Moreover, let $\trunc_G ( V ) = \trunc_G (V, \rel)$.
\end{definition}
\begin{remark} \label{remark:trunc}
	Note $\boundary{V}{E(t)}$ is representable by integration for
	$\mathscr{L}^1$ almost all $0 < t < \infty$ by
	\ref{lemma:basic_v_weakly_diff},
	\ref{example:composite}\,\eqref{item:composite:mod} and
	\ref{thm:tv_coarea}, hence $\trunc_\varnothing (V,\rel^l) = \trunc
	(V,\rel^l)$.
\end{remark}
\begin{lemma} \label{lemma:boundary}
	Suppose $l$, $\vdim$, $\adim$, $U$, $V$, $G$, $B$, and $E(t)$ are as
	in \ref{def:trunc_g}, $f \in \trunc_G ( V, \rel^l )$, and
	\begin{gather*}
		\Clos \spt \big ( ( \| V \| + \| \delta V \| ) \restrict E(t)
		\big ) \subset \rel^\adim \without B \quad \text{for
		$\mathscr{L}^1$ almost all $0 < t < \infty$}.
	\end{gather*}

	Then $f \in \trunc_{\Bdry U} ( V, \rel^l )$.
\end{lemma}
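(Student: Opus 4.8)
The idea is that the extra hypothesis forces, for $\mathscr{L}^1$ almost all $0<t<\infty$, every measure relevant to \ref{def:trunc_g} to be concentrated on the closed set $C(t) = \Clos \spt ( ( \| V \| + \| \delta V \| ) \restrict E(t) )$, and that $C(t) \subset \rel^\adim \without B$ by assumption. Since the defining conditions of $\trunc_{\Bdry U} ( V, \rel^l )$ differ from those of $\trunc_G ( V, \rel^l )$ only in that they also involve compact sets and test functions meeting $B$, against which these measures see nothing, the conditions for $\trunc_{\Bdry U}$ will follow from those for $\trunc_G$ by a localisation argument.

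First I would fix $t$ in the set of full measure for which the conditions of \ref{def:trunc_g} defining $f \in \trunc_G ( V, \rel^l )$ hold, $\boundary{V}{E(t)}$ is representable by integration (see \ref{remark:trunc}), and $C(t) \subset \rel^\adim \without B$. Because $E(t) \subset U$ and $\spt ( ( \| V \| + \| \delta V \| ) \restrict E(t) ) = C(t) \cap U$, one gets $( \| V \| + \| \delta V \| ) ( E(t) \without C(t) ) = 0$. Testing the identity $\boundary{V}{E(t)} = ( (\delta V) \restrict E(t) ) - \delta ( V \restrict E(t) \times \grass{\adim}{\vdim} )$ against those $\theta \in \mathscr{D} ( U, \rel^\adim )$ whose support misses $C(t)$ and using this concentration, both terms on the right vanish; hence $\spt \boundary{V}{E(t)} \subset C(t)$ and, in particular, $\| \boundary{V}{E(t)} \| ( U \without C(t) ) = 0$ too.

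Now turn to the two conditions of \ref{def:trunc_g} with $B$ replaced by $\varnothing$. For finiteness, given a compact $K \subset \rel^\adim$, the set $K \cap C(t)$ is a compact subset of $\rel^\adim \without B$, so the finiteness condition for $\trunc_G$ applied to $K \cap C(t)$ in place of $K$ yields $\| V \| ( E(t) \cap K \cap C(t) ) + \| \delta V \| ( E(t) \cap K \cap C(t) ) + \| \boundary{V}{E(t)} \| ( U \cap K \cap C(t) ) < \infty$, which by the concentration statements above equals $\| V \| ( E(t) \cap K ) + \| \delta V \| ( E(t) \cap K ) + \| \boundary{V}{E(t)} \| ( U \cap K )$. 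For the integration by parts identity, given $\theta \in \mathscr{D} ( \rel^\adim, \rel^\adim )$ I would choose $\chi \in \mathscr{D} ( \rel^\adim \without B )$ with $0 \leq \chi \leq 1$ equal to $1$ on an open neighbourhood of the compact set $( \spt \theta ) \cap C(t) \subset \rel^\adim \without B$ (taking $\chi = 0$ if this set is empty) and apply the identity from the definition of $\trunc_G$ to $\chi \theta \in \mathscr{D} ( \rel^\adim \without B, \rel^\adim )$. Since $( 1 - \chi ) \theta$ vanishes on that neighbourhood and off $\spt \theta$, its support is disjoint from $C(t)$ and $D ( ( 1 - \chi ) \theta )$ vanishes on $C(t)$; as $V \restrict E(t) \times \grass{\adim}{\vdim}$, $( \delta V ) \restrict E(t)$ and $\boundary{V}{E(t)}$ are concentrated on $C(t)$ — using \ref{miniremark:extension} to evaluate the latter two at the bounded functions $\theta | U$ and $( \chi \theta ) | U$ — passing from $\chi \theta$ back to $\theta$ changes none of the three terms. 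Hence the identity holds for $\theta$, and, $t$ ranging over a set of full $\mathscr{L}^1$ measure, $f \in \trunc_{\Bdry U} ( V, \rel^l )$.

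The step needing most care is justifying that $( \delta V ) \restrict E(t)$ and $\boundary{V}{E(t)}$ may be treated as Radon measures concentrated on $C(t)$ for almost every $t$ — the inclusion $\spt \boundary{V}{E(t)} \subset C(t)$ being the crux — and the observation that $\spt ( ( 1 - \chi ) \theta )$ is disjoint from all of $C(t)$, not merely from $( \spt \theta ) \cap C(t)$, which rests on $\spt ( ( 1 - \chi ) \theta ) \subset \spt \theta$ being closed.
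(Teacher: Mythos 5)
Your proof is correct and follows essentially the same approach as the paper: fix a good $t$, note that $K \cap C(t)$ is a compact subset of $\rel^\adim \without B$, introduce a cutoff $\chi$ equal to $1$ near $C(t) \cap \spt \theta$ with support avoiding $B$, and apply the $\trunc_G$ conditions to $\chi\theta$, concluding via concentration of all relevant measures on $C(t)$. The only difference is that you spell out the support inclusion $\spt \boundary{V}{E(t)} \subset C(t)$ explicitly, whereas the paper simply asserts that all relevant distributions are supported in $A(t)$.
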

\begin{proof}
	Define $A(t) = \Clos \spt \big ( ( \| V \| + \| \delta V \| )
	\restrict E(t) \big )$ for $0 < t < \infty$. Consider $0 < t < \infty$
	with $A(t) \subset \rel^\adim \without B$ and
	\begin{gather*}
		\| V \| ( E(t) \cap K ) + \| \delta V \| ( E(t) \cap K ) + \|
		\boundary{V}{E(t)} \| ( U \cap K ) < \infty, \\
		\tint{E(t) \times \grass{\adim}{\vdim}}{} D \theta (z) \bullet
		\project{S} \ud V(z,S) = ( (\delta V) \restrict E(t) ) (
		\theta | U ) - ( \boundary{V}{E(t)} ) ( \theta|U )
	\end{gather*}
	whenever $K$ is a compact subset of $\rel^\adim \without B$ and
	$\theta \in \mathscr{D} ( \rel^\adim \without B, \rel^\adim )$.

	Suppose $K$ is a compact subset of $\rel^\adim$ and $\theta \in
	\mathscr{D} ( \rel^\adim, \rel^\adim)$. Then $K \cap A(t)$ is a
	compact subset of $\rel^\adim \without B$, hence there exists $\phi
	\in \mathscr{D}^0 ( \rel^\adim )$ such that
	\begin{gather*}
		A(t) \cap \spt \theta \subset \Int \{ z \with \phi (z) = 1
		\}, \quad \spt \phi \subset \rel^\adim \without B.
	\end{gather*}
	Since $\theta$ and $\phi \theta$ agree on a neighbourhood of $A(t)$,
	one infers from the first paragraph with $K$ and $\theta$ replaced by
	$K \cap A(t)$ and $\phi \theta | \rel^\adim \without B$ that
	\begin{gather*}
		\| V \| ( E(t) \cap K ) + \| \delta V \| ( E(t) \cap K ) + \|
		\boundary{V}{E(t)} \| ( U \cap K ) < \infty, \\
		\tint{E(t) \times \grass{\adim}{\vdim}}{} D \theta (z) \bullet
		\project{S} \ud V(z,S) = ( (\delta V) \restrict E(t) ) (
		\theta | U ) - ( \boundary{V}{E(t)} ) ( \theta|U ),
	\end{gather*}
	as all relevant distributions are supported in $A(t)$.
\end{proof}
\begin{lemma} \label{lemma:restriction_tg}
	Suppose $l$, $\vdim$, $\adim$, $U$, $V$, $G$, and $B$ are as in
	\ref{def:trunc_g}, $f \in \trunc_G ( V, \rel^l )$, $Z$ is an open
	subset of $\rel^\adim \without B$, $H = Z \cap \Bdry U$, and $X = V |
	\mathbf{2}^{( U \cap Z ) \times \grass{\adim}{\vdim}}$.

	Then $f | Z \in \trunc_H ( X, \rel^l )$.
\end{lemma}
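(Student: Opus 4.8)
The plan is to combine a purely local reduction with an elementary point‑set observation about $U\cap Z$ and a cut‑off argument carried out at the level of superlevel sets.

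First I would settle the underlying membership $f|Z\in\trunc(X,\rel^l)$. Since $V\in\RVar_\vdim(U)$, its restriction $X$ lies in $\RVar_\vdim(U\cap Z)$; testing the first variation against $g\in\mathscr{D}(U\cap Z,\rel^\adim)$ gives $(\delta X)(g)=(\delta V)(g)$, so $\|\delta X\|=\|\delta V\|\restrict(U\cap Z)$ is a Radon measure and the generalised mean curvature direction of $X$ agrees $\|\delta V\|$ almost everywhere on $U\cap Z$ with that of $V$. Restricting the test functions in \ref{def:v_weakly_diff} to those with support in $U\cap Z$, and using \ref{miniremark:extension} to handle the extended pairings, then shows at once that $f|Z$ is generalised $X$ weakly differentiable with $\derivative{X}{(f|Z)}(z)=\derivative{V}{f}(z)$ for $\|V\|$ almost all $z\in U\cap Z$; by \ref{remark:trunc} the distributional boundaries of the relevant superlevel sets are representable by integration for $\mathscr{L}^1$ almost all $t$.

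Next I would record the topology. As $G$ is relatively open in the closed set $\Bdry U$, the set $B$ is closed, so $Z$ and $U\cap Z$ are open in $\rel^\adim$. Writing $B'=(\Bdry(U\cap Z))\without H$, one has $\Bdry(U\cap Z)\subset H\cup\Bdry Z$ and $H\cap\Bdry Z=\varnothing$ (since $H\subset Z$), hence $B'=\Bdry(U\cap Z)\cap\Bdry Z\subset\Bdry Z$, so in particular $Z\subset\rel^\adim\without B'$. From this I would deduce the two inclusions
\begin{gather*}
	\Clos(U\cap Z)\cap(\rel^\adim\without B')\subset Z,\qquad
	\Clos(U\cap Z)\cap B\subset B',
\end{gather*}
which are the geometric heart of the matter: the first says a compact subset of $\rel^\adim\without B'$ meets $\Clos(U\cap Z)$ only inside $Z\subset\rel^\adim\without B$, while the second shows that for a compact $K'\subset\rel^\adim\without B'$ the compact set $K^{\ast}:=K'\cap\Clos(U\cap Z)$ is a compact subset of $\rel^\adim\without B$.

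Then, with $E(t)=\{z\with|f(z)|>t\}$ and $E'(t)=E(t)\cap Z$, I would check from the first step that, as distributions on $U\cap Z$, the objects $(\delta X)\restrict E'(t)$, $\delta(X\restrict E'(t)\times\grass{\adim}{\vdim})$ and $\boundary{X}{E'(t)}$ are the restrictions to $U\cap Z$ of the corresponding objects built from $V$ and $E(t)$, with their total variation measures restricting the same way. The finiteness assertions of \ref{def:trunc_g} for $X$, $H$, $E'(t)$ on a compact $K'\subset\rel^\adim\without B'$ then follow from those for $V$, $G$, $E(t)$ applied with $K=K^{\ast}$. For the integration by parts identity, given $\theta\in\mathscr{D}(\rel^\adim\without B',\rel^\adim)$, the first inclusion makes $\spt\theta\cap\Clos(U\cap Z)$ a compact subset of $Z$, so I would pick $\phi\in\mathscr{D}^0(\rel^\adim)$ with $\phi=1$ on a neighbourhood of it and $\spt\phi\subset Z$; then $\phi\theta\in\mathscr{D}(\rel^\adim\without B,\rel^\adim)$, $\phi\theta$ agrees with $\theta$ on $U\cap Z$, and $\phi\theta$ vanishes off a compact subset of $Z$. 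Applying the identity of \ref{def:trunc_g} for $f$ to $\phi\theta$ (valid for $\mathscr{L}^1$ almost all $t$) and simplifying the three terms — the integral over $E(t)\times\grass{\adim}{\vdim}$ collapses to the one over $E'(t)\times\grass{\adim}{\vdim}$ because $D(\phi\theta)$ is supported in $Z$ and equals $D\theta$ there; the $(\delta V)\restrict E(t)$ term collapses by the agreement of mean curvature directions; and the boundary term is handled by approximating $\theta|U\cap Z$ in $\Lp{1}(\|\boundary{X}{E'(t)}\|,\rel^\adim)$ by members of $\mathscr{D}(U\cap Z,\rel^\adim)$, extending them by zero, and passing to the limit, using that $\phi\theta|U$ is exactly the zero extension of $\theta|U\cap Z$ — turns it into the identity required of $f|Z$ relative to $X$, $H$, $E'(t)$ applied to $\theta$. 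I expect this boundary term to be the only real obstacle: one must ensure that replacing $\theta$ by $\phi\theta$ does not alter $\boundary{V}{E(t)}$ applied to it, i.e. that no boundary mass is lost in passing from $\rel^\adim\without B$ to $\rel^\adim\without B'$, and this is precisely what the inclusion $\Clos(U\cap Z)\cap(\rel^\adim\without B')\subset Z$ guarantees, since the part of $\|\boundary{V}{E(t)}\|$ relevant near $\spt\theta$ is carried by $\Clos(U\cap Z)$ and $\phi$ can be taken identically $1$ there.
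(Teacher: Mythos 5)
Your argument is correct and follows essentially the same route as the paper's own proof: establish that the varifold, first variation, and distributional boundary of $X$ restrict those of $V$ on $U\cap Z$, observe that $H = Z\cap\Bdry(U\cap Z)$ so that $C=(\Bdry(U\cap Z))\without H$ sits in $\Bdry Z$ and $\Clos(U\cap Z)\cap(\rel^\adim\without C)\subset Z$, then multiply a test function on $\rel^\adim\without C$ by a cut-off supported in $Z$ which equals $1$ near $\spt\theta\cap\Clos(U\cap Z)$ and transfer the defining identity of $\trunc_G(V,\rel^l)$. The only presentational difference is that you unwrap the approximation for the extended boundary pairing explicitly, where the paper appeals directly to the extension of \ref{miniremark:extension}.
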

\begin{proof}
	Clearly, $X \in \RVar_\vdim ( U \cap Z )$ and
	\begin{gather*}
		\delta X ( \theta |Z ) = \delta V ( \theta ), \quad
		\boundary{X}{(Z \cap E)} ( \theta | Z ) = \boundary{V}{E} (
		\theta )
	\end{gather*}
	whenever $\theta \in \mathscr{D} ( U, \rel^\adim )$ with $\spt \theta
	\subset U \cap Z$ and $E$ is $\| V \| + \| \delta V \|$ measurable.

	Noting $H = Z \cap \Bdry ( U \cap Z )$, one infers that $H$ is a
	relatively open subset of $\Bdry ( U \cap Z )$.

	Let $C = ( \Bdry ( U \cap Z ) ) \without H$. Suppose $0 < t < \infty$
	satisfies the conditions of \ref{def:trunc_g}, define $E = \{ z \with |
	f(z) | > t \}$, and note
	\begin{gather*}
		\text{$K \cap \Clos ( U \cap Z )$ is a compact subset of $Z$},
		\\
		Z \cap E \cap K \subset E \cap K \cap \Clos ( U \cap K ),
		\quad U \cap Z \cap K \subset U \cap K \cap \Clos ( U \cap Z
		), \\
		\| X \| ( Z \cap E \cap K ) + \| \delta X \| ( Z \cap E \cap
		K) + \| \boundary{X}{(Z \cap E)} \| ( U \cap Z \cap K ) <
		\infty
	\end{gather*}
	whenever $K$ is a compact subset of $\rel^\adim \without C$. Suppose
	$\theta \in \mathscr{D} ( \rel^\adim, \rel^\adim )$ with $\spt \theta
	\subset \rel^\adim \without C$ and choose $\eta \in \mathscr{D}^0 (
	\rel^\adim )$ with
	\begin{gather*}
		\spt \eta \subset Z, \quad ( \spt \theta ) \cap \Clos ( U \cap
		Z ) \subset \Int \{ z \with \eta (z) = 1 \}.
	\end{gather*}
	One obtains
	\begin{gather*}
		\theta | U \cap Z = \eta \theta | U \cap Z, \quad \eta \theta
		| U \without Z = 0, \quad D ( \eta \theta ) | U \without Z =
		0, \\
		( ( \delta X) \restrict Z \cap E ) ( \theta | U \cap Z ) = ((
		\delta V ) \restrict E ) ( \eta \theta | U ), \\
		( \boundary{X}{(Z \cap E )} ) ( \theta | U \cap Z ) = (
		\boundary{V}{E} ) ( \eta \theta | U ), \\
		\tint{(Z \cap E ) \times \grass{\adim}{\vdim}}{} \project{S}
		\bullet D \theta (z) \ud X (z,S) = \tint{E \times
		\grass{\adim}{\vdim}}{} \project{S} \bullet D ( \eta \theta )
		(z) \ud V (z,S)
	\end{gather*}
	and the conclusion follows.
\end{proof}
\begin{remark} \label{remark:restriction_tg}
	Recalling $H = Z \cap \Bdry ( U \cap Z )$ and defining $C = ( \Bdry (
	U \cap Z ) ) \without H$, one notes
	\begin{gather*}
		\classification{U}{z}{ \oball{z}{r} \subset Z } \subset
		\classification{U \cap Z}{z}{\oball{z}{r} \subset \rel^\adim
		\without C } \quad \text{for $0 < r < \infty$};
	\end{gather*}
	this fact will be useful for localisation procedures.
\end{remark}
\begin{example}
	Suppose $\vdim = \adim = 1$, $U = \rel \without \{ 0 \}$, $V \in
	\RVar_\vdim ( U )$ with $\| V \| ( A ) = \mathscr{L}^1 ( A )$ for $A
	\subset U$, $f : U \to \rel$ with $f(z) = 1$ for $z \in U$, $Z =
	\classification{U}{z}{z<0}$, and $X = V | \mathbf{2}^{Z \times
	\grass{\adim}{\vdim}}$.

	Then $\delta V = 0$ and $f \in \trunc_{\{ 0 \}} ( V )$ but $f | Z
	\notin \trunc_{\{ 0 \}} (X)$.
\end{example}
\begin{lemma} \label{lemma:trunc_tg}
	Suppose $\vdim$, $\adim$, $U$, $V$, $G$, and $B$ are as in
	\ref{def:trunc_g}, $f \in \trunc_G ( V )^+$, $0 \leq c < \infty$, $A$
	is a closed subset of $\rel$, $\phi : \rel \to \rel \cap \{ y \with y
	\geq 0 \}$, $\phi(0)=0$, $\zeta : \rel \to \Hom ( \rel, \rel )$,
	$\phi_i \in \mathscr{E}^0 ( \rel )$, $\spt D\phi_i \subset A$, $\Lip
	\phi_i \leq c$, $\phi|A$ is proper, and
	\begin{gather*}
		\phi (y) = \lim_{i \to \infty} \phi_i (y) \quad
		\text{uniformly in $y \in \rel$}, \\
		\zeta (y) = \lim_{i \to \infty} D\phi_i (y) \quad \text{for $y
		\in \rel$}.
	\end{gather*}

	Then $\phi \circ f \in \trunc_G ( V )^+$ and
	\begin{gather*}
		\derivative{V}{( \phi \circ f )} (z) = \zeta (f(z)) \circ
		\derivative{V}{f} (z) \quad \text{for $\| V \|$ almost all
		$z$}.
	\end{gather*}
\end{lemma}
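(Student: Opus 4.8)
The plan is to obtain the assertions $\phi\circ f\in\trunc(V)^+$ and $\derivative{V}{(\phi\circ f)}(z)=\zeta(f(z))\circ\derivative{V}{f}(z)$ for $\|V\|$ almost all $z$ directly from \ref{lemma:basic_v_weakly_diff} (with $k=1$), noting $\phi\circ f\geq 0$ since $\im\phi\subset\{y\with y\geq 0\}$; the only remaining content is the ``zero boundary values'' condition of \ref{def:trunc_g}. Write $g=\phi\circ f$, $E_g(t)=\{z\with g(z)>t\}$ and $E_f(s)=\{z\with f(z)>s\}$ for $0<t,s<\infty$; recall $f\geq 0$. Being a uniform limit of the $\phi_i$ with $\Lip\phi_i\leq c$, the map $\phi$ is continuous with $\Lip\phi\leq c$, and since $\phi(0)=0$ the open set $O(t)=\{y\with y\geq 0,\ \phi(y)>t\}$ satisfies $\inf O(t)>0$; consequently $E_g(t)=f^{-1}\lIm O(t)\rIm\subset E_f(s)$ whenever $0<s<\inf O(t)$.

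Next I would fix a countable dense subset $D$ of $\{s\with 0<s<\infty\}$ whose complement is $\mathscr{L}^1$ negligible and such that, for every $s\in D$, one has $\|V\|(\{f=s\})=\|\delta V\|(\{f=s\})=0$, the distribution $\boundary{V}{E_f(s)}$ is representable by integration with $\|\boundary{V}{E_f(s)}\|(U\cap K)<\infty$ for each $K$ in a fixed countable family of compact subsets of $\rel^\adim\without B$ exhausting $\rel^\adim\without B$, and the two identities of \ref{def:trunc_g} hold for $E_f(s)$; here $f\in\trunc_G(V)$ gives the last requirement for almost all $s$, while \ref{remark:trunc}, \ref{thm:tv_coarea} and the coarea identity contained in \ref{thm:tv_coarea} (applied with $g$ replaced by $f$, whence $\tint{0}{T}\|\boundary{V}{E_f(s)}\|(U\cap K)\ud\mathscr{L}^1 s=\tint{\{0<f<T\}\cap U\cap K}{}|\derivative{V}{f}|\ud\|V\|<\infty$) give the others. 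For $p,q\in D$ with $p<q$ the set $f^{-1}\lIm(p,q)\rIm$ equals $E_f(p)\without E_f(q)$ up to a $\|V\|+\|\delta V\|$ negligible set; using \ref{remark:v_boundary}, the additivity of $E\mapsto(\delta V)\restrict E$ and of $E\mapsto\tint{E\times\grass{\adim}{\vdim}}{}D\theta\bullet\project{S}\ud V$, and \ref{miniremark:extension}, one checks that $f^{-1}\lIm I\rIm$ satisfies the conditions of \ref{def:trunc_g} whenever $I$ is a finite (disjoint) union of intervals with endpoints in $D\cup\{\infty\}$.

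Since $D$ is dense and $\inf O(t)>0$, one may write $O(t)=\bigcup_{i=1}^\infty I_i$ with $I_i$ such intervals; setting $O_m(t)=I_1\cup\dots\cup I_m$ one has $O_m(t)\uparrow O(t)$, hence $f^{-1}\lIm O_m(t)\rIm\uparrow E_g(t)$ with each $f^{-1}\lIm O_m(t)\rIm$ satisfying \ref{def:trunc_g}. For $\mathscr{L}^1$ almost all $t$ — those for which $\boundary{V}{E_g(t)}$ is representable by integration and locally finite on $\rel^\adim\without B$ (valid for almost all $t$ by \ref{remark:trunc}, \ref{thm:tv_coarea} and the coarea identity applied to $g$), and for which $\inf O(t)$ and the endpoints of the components of $O(t)$ avoid the negligible exceptional sets above — I would pass to the limit $m\to\infty$ in the identities of \ref{def:trunc_g} for $f^{-1}\lIm O_m(t)\rIm$: the terms built from $(\delta V)\restrict f^{-1}\lIm O_m(t)\rIm$ and from $\tint{f^{-1}\lIm O_m(t)\rIm\times\grass{\adim}{\vdim}}{}D\theta\bullet\project{S}\ud V$ converge by dominated convergence, the local finiteness of $\|V\|$ and $\|\delta V\|$ on $E_g(t)\cap\spt\theta$ being supplied by the inclusion $E_g(t)\subset E_f(s)$.

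The main obstacle is the remaining term: one must show $\boundary{V}{f^{-1}\lIm O_m(t)\rIm}(\theta|U)\to\boundary{V}{E_g(t)}(\theta|U)$ for $\theta\in\mathscr{D}(\rel^\adim\without B,\rel^\adim)$, equivalently $\boundary{V}{f^{-1}\lIm W_m(t)\rIm}(\theta|U)\to 0$ where $W_m(t)=O(t)\without O_m(t)\downarrow\varnothing$. Against test functions supported in $U$ this is immediate from $f^{-1}\lIm W_m(t)\rIm\downarrow\varnothing$ and dominated convergence; the content is the behaviour at $G$. I would resolve it by choosing the decomposition $O(t)=\bigcup_i I_i$ adapted to the coarea structure of $f$: using that $A\mapsto\tint{A}{}\|\boundary{V}{E_f(s)}\|(U\cap K)\ud\mathscr{L}^1 s$ is absolutely continuous with respect to $\mathscr{L}^1$ by \ref{thm:tv_coarea}, one arranges the $I_i$ with good endpoints so that $\sum_{i=1}^\infty\boundary{V}{f^{-1}\lIm I_i\rIm}$ is representable by integration; by countable additivity of the measures and varifolds involved it then agrees with $\boundary{V}{E_g(t)}$ on $\mathscr{D}(U,\rel^\adim)$, hence has the same $\|\cdot\|_{(1)}$ continuous extension, and $\boundary{V}{E_g(t)}(\theta|U)=\sum_{i=1}^\infty\boundary{V}{f^{-1}\lIm I_i\rIm}(\theta|U)=\lim_{m\to\infty}\boundary{V}{f^{-1}\lIm O_m(t)\rIm}(\theta|U)$. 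Alternatively one localises through \ref{lemma:restriction_tg} and \ref{remark:restriction_tg}, reducing to $\theta$ supported in a small ball meeting $\Bdry U$ only within $G$, where this summability is transparent. Assembling these limits yields both identities of \ref{def:trunc_g} for $E_g(t)$, hence $\phi\circ f\in\trunc_G(V)^+$.
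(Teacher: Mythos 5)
Your reduction to the zero boundary value condition and your observation that $E_g(t)=f^{-1}\lIm O(t)\rIm$ with $\inf O(t)>0$ are correct, and so is the step proving that finite unions of ``good'' level intervals belong to the relevant class. The gap is exactly the one you flag as ``the main obstacle'': you set up $O(t)=\bigcup_{i=1}^\infty I_i$ as a potentially infinite union of intervals with endpoints in $D$ and then try to pass $m\to\infty$ in the defining identities of \ref{def:trunc_g} for $f^{-1}\lIm O_m(t)\rIm$. For test functions $\theta$ with $\spt\theta\subset U$ the dominated-convergence argument goes through, but for $\theta\in\mathscr{D}(\rel^\adim\without B,\rel^\adim)$ you need $\sum_i\boldsymbol{|}\boundary{V}{f^{-1}\lIm I_i\rIm}\boldsymbol{|}(U\cap K)<\infty$, and the coarea formula \ref{thm:tv_coarea} only controls the $\mathscr{L}^1$-integral of $s\mapsto\|\boundary{V}{E_f(s)}\|(U\cap K)$, not a countable sum of its values at the particular null sequence of endpoints $p_i,q_i$; there is no structural reason that such a sum is finite, and ``arranging the endpoints so that the sum is representable by integration'' is precisely the claim left unproved. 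The localisation via \ref{lemma:restriction_tg} does not remove this summation issue.

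The paper's proof avoids the infinite union altogether. Writing $Y=(\phi|\Clos I)^{-1}\lIm\{y\with t<y<\infty\}\rIm$, one checks that $\Bdry Y\subset\phi^{-1}\lIm\{t\}\rIm$ and, crucially, that this fibre is \emph{finite} for $\mathscr{L}^1$ almost all $t$: since $\spt D\phi_i\subset A$ and $\phi_i\to\phi$, the map $\phi$ is locally constant on $\rel\without A$, so apart from the countably many constant values of $\phi$ on components of $\rel\without A$, the fibre $\{y\with\phi(y)=t\}$ is contained in $A$, hence compact because $\phi|A$ is proper; combining this with $N(\phi|\{y\with|y|\leq i\},t)<\infty$ a.e.\ from \cite[3.2.3\,(1)]{MR41:1976} gives $N(\phi,t)<\infty$. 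Thus $Y$ has only finitely many boundary points, so it is obtained from $\{y\with\inf Y<y<\infty\}$ by removing finitely many closed intervals with endpoints outside the exceptional set $C$, and a finite induction using \ref{remark:extension_additive} and \ref{remark:v_boundary} places $Y$ in the class $F$. If you add this finiteness observation — the properness of $\phi|A$ and local constancy off $A$ are precisely the hypotheses that make it work — the rest of your outline reduces to a finite argument and the limit passage becomes unnecessary.
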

\begin{proof}
	Clearly, $\Lip \phi \leq c$.

	Let $I = \{ t \with 0 < t < \infty \}$. Define $F$ to be the class of
	all Borel subsets $Y$ of $I$ such that $E = f^{-1} \lIm Y \rIm$
	satisfies
	\begin{gather*}
		( \| V \| + \| \delta V \| ) ( E \cap K ) + \| \boundary{V}{E}
		\| ( U \cap K ) < \infty, \\
		\tint{E \times \grass{\adim}{\vdim}}{} D \theta (z) \bullet
		\project{S} \ud V(z,S) = ( ( \delta V ) \restrict E ) (
		\theta|U ) - ( \boundary{V}{E} ) ( \theta|U )
	\end{gather*}
	whenever $K$ is a compact subset of $\rel^\adim \without B$ and
	$\theta \in \mathscr{D} ( \rel^\adim \without B, \rel^\adim )$. In
	view of \ref{lemma:basic_v_weakly_diff} it is sufficient to prove
	\begin{gather*}
		(\phi|\Clos I)^{-1} \lIm \{ y \with t < y < \infty \} \rIm \in
		F \quad \text{for $\mathscr{L}^1$ almost all $t \in I$}.
	\end{gather*}

	Let $C$ denote the set of $t \in I$ such that either $\{ y \with t < y
	< \infty \}$ or $\{ y \with t \leq y < \infty \}$ does not belong to
	$F$. Since $( \| V \| + \| \delta V \| ) ( f^{-1} \lIm \{ t \} \rIm )
	= 0$ for all but countably many $t$, one observes that $\mathscr{L}^1 (
	C ) = 0$. Moreover, if $Y_1 \in F$ and $Y_1 \subset Y_2 \in F$
	then $Y_2 \without Y_1 \in F$, as one readily verifies using
	\ref{remark:extension_additive} and \ref{remark:v_boundary}.

	Next, the following assertion will be shown: \emph{If $Y$ is an open
	subset of $I$ with $\inf Y > 0$ and if $\Bdry Y$ consists of finitely
	many points not meeting $C$, then $Y \in F$}. To prove this, assume $Y
	\neq \varnothing$ and let $J = \{ y \with \inf Y < y < \infty \} \in
	F$. Let $G$ denote the family of connected components of $J \without
	Y$. Observe that $G$ is a finite disjointed family of possibly
	degenerated closed intervals. Notice that $G \subset F$ since
	\begin{gather*}
		\Bdry I \subset \Bdry Y, \quad I = \{ y \with \inf I \leq y <
		\infty \} \without \{ y \with \sup I < y < \infty \}
	\end{gather*}
	whenever $I \in G$. It follows
	\begin{gather*}
		{\textstyle J \without \bigcup H \in F} \quad \text{whenever
		$H \subset G$}
	\end{gather*}
	by means of induction on the cardinality of $H$. In particular, $Y = J
	\without \bigcup G \in F$.

	Suppose now $t \in I \without \phi \lIm C \rIm$ and
	\begin{gather*}
		N ( \phi | \{ y \with |y| \leq i \}, t ) < \infty \quad
		\text{whenever $i \in \nat$}.
	\end{gather*}
	Notice that these conditions are satisfied by $\mathscr{L}^1$ almost
	all $t \in I$ by \cite[3.2.3\,(1)]{MR41:1976}. Let $Y = (\phi|\Clos
	I)^{-1} \lIm \{ y \with t < y < \infty \} \rIm$. From
	\begin{gather*}
		\Bdry Y \subset \{ y \with \phi (y) = t \}
	\end{gather*}
	one infers $C \cap \Bdry Y = \varnothing$ and $N(\phi,t) < \infty$
	since $A \cap \{ y \with \phi(y)=t \}$ is compact and $\phi$ is
	locally constant on $\rel \without A$. Therefore $Y \in F$ by the
	assertion of the preceding paragraph.
\end{proof}
\begin{example}
	Suppose $\vdim = \adim = 1$, $U = \rel \without \{ 0 \}$, $V \in
	\RVar_\vdim ( U )$ with $\| V \| ( A ) = \mathscr{L}^1 (A)$ for $A
	\subset U$ and $f = \sign | U$.

	Then $\delta V = 0$, $f \in \trunc_{\{0\}} ( V )$ but $f^+ \notin
	\trunc_{\{0\}} (V)$.
\end{example}
\begin{theorem} \label{thm:char_tg}
	Suppose $l$, $\vdim$, $\adim$, $U$, $V$, $G$, and $B$ are as in
	\ref{def:trunc_g}, $f \in \trunc (V,\rel^l)$, $J = \{ t \with 0 < t <
	\infty \}$, $E(t) = \classification{U}{z}{|f(z)| > t }$ for $t \in J$,
	\begin{gather*}
		( \| V \| + \| \delta V \| ) ( K \cap E(t) ) < \infty
	\end{gather*}
	whenever $K$ is a compact subset of $\rel^\adim \without B$ and $t \in
	J$, and the distributions $R(t) \in \mathscr{D}' ( \rel^\adim \without
	B, \rel^\adim )$ and $T \in \mathscr{D}' ( ( \rel^\adim \without B )
	\times J, \rel^\adim )$ satisfy, see \ref{miniremark:extension},
	\begin{gather*}
		R(t)(\theta) = ( ( \delta V ) \restrict E(t) ) ( \theta | U )
		- \tint{E(t) \times \grass{\adim}{\vdim}}{} \project{S}
		\bullet D \theta (z) \ud V (z,S) \quad \text{for $t \in J$},
		\\
		T ( \eta ) = \tint{0}{\infty} R(t) ( \eta (\cdot,t) ) \ud
		\mathscr{L}^1 t
	\end{gather*}
	whenever $\theta \in \mathscr{D} ( \rel^\adim \without B, \rel^\adim
	)$ and $\eta \in \mathscr{D} ( ( \rel^\adim \without B ) \times J,
	\rel^\adim )$.

	Then the following three statements are equivalent:
	\begin{enumerate}
		\item \label{item:char_tg:def} $\tint{I}{} \|
		\boundary{V}{E(t)} \| ( U \cap K ) \ud \mathscr{L}^1 t <
		\infty$ whenever $K$ is a compact subset of $\rel^\adim
		\without B$ and $I$ is a compact subset of $J$, and $f \in
		\trunc_G ( V, \rel^l )$.
		\item \label{item:char_tg:slices} $\tint{I}{} \| R(t) \| ( K )
		\ud \mathscr{L}^1 t < \infty$ whenever $K$ is a compact subset
		of $\rel^\adim \without B$ and $I$ is a compact subset of $J$,
		and $\| R(t) \| ( ( \rel^\adim \without B ) \without U ) = 0$
		for $\mathscr{L}^1$ almost all $t \in J$.
		\item \label{item:char_tg:graph} $T$ is representable by
		integration and $\| T \| ( ( ( \rel^\adim \without B ) \without
		U ) \times J ) = 0$.
	\end{enumerate}
\end{theorem}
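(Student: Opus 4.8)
The plan is to prove the two equivalences $(1)\Leftrightarrow(2)$ and $(2)\Leftrightarrow(3)$ separately: the first by a slicewise analysis, the second by invoking \ref{thm:distribution_on_product} with $\rel^\adim\without B$ and $J$ playing the roles of its ``$U$'' and ``$\rel$'' (the argument there being insensitive to replacing $\rel$ by the open set $J$, or one may extend $T$ by zero). The standing hypothesis $(\|V\|+\|\delta V\|)(K\cap E(t))<\infty$ is what makes $R(t)$ and $T$ well defined in the first place. The guiding picture is that, for $\mathscr{L}^1$ almost all $t\in J$, the distribution $R(t)$ is the canonical extension to $\rel^\adim\without B$ of the distributional $V$ boundary $\boundary{V}{E(t)}$, and $T$ assembles the $R(t)$ over $t$ just as the distribution in \ref{thm:tv_coarea} assembles the $\boundary{V}{E(t)}$; statement $(1)$ records that these extensions are representable by integration and carry no mass on $\Bdry U\without B$, while the three integrability clauses are Fubini-type reformulations of one another.

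For $(1)\Leftrightarrow(2)$ I would first note, via \ref{remark:trunc}, that $\boundary{V}{E(t)}$ is representable by integration for $\mathscr{L}^1$ almost all $t\in J$, and that $R(t)$ restricted to $\mathscr{D}(U,\rel^\adim)$ coincides with $\boundary{V}{E(t)}$ by the very definitions of $R(t)$ and of the distributional $V$ boundary. The core claim is then: for such $t$, the conditions imposed at $t$ in \ref{def:trunc_g} hold if and only if $R(t)$ is representable by integration with $\|R(t)\|(K)<\infty$ for every compact $K\subset\rel^\adim\without B$ and $\|R(t)\|((\rel^\adim\without B)\without U)=0$; and, when they hold, $\|R(t)\|$ is the extension by zero to $\rel^\adim\without B$ of $\|\boundary{V}{E(t)}\|$, so that $\|R(t)\|(K)=\|\boundary{V}{E(t)}\|(U\cap K)$. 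Granting this, $(1)$ and $(2)$ assert the same thing about the slices and their integrability clauses become equal integrals of $\mathscr{L}^1$ almost everywhere equal functions; measurability of $t\mapsto\|R(t)\|(K)$ follows from \ref{example:distrib_lusin} since $t\mapsto R(t)$ is a Borel map into $\mathscr{D}'(\rel^\adim\without B,\rel^\adim)$ and the total variation depends Borel measurably on a representable distribution, and one exhausts $\rel^\adim\without B$ by countably many compact sets to keep the exceptional null set independent of $K$.

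The core claim is established by unwinding \ref{def:trunc_g}: with its finiteness clause in force, the identity there says precisely that $R(t)(\theta)=(\boundary{V}{E(t)})(\theta|U)$ for every $\theta\in\mathscr{D}(\rel^\adim\without B,\rel^\adim)$, the right hand side being the value of the $\|\boundary{V}{E(t)}\|$ continuous extension of \ref{miniremark:extension} (legitimate because $\theta|U$ is bounded with support on which $\|\boundary{V}{E(t)}\|$ is finite); testing $\theta$ against the density of $\boundary{V}{E(t)}$ then shows $\|R(t)\|(C)=\|\boundary{V}{E(t)}\|(C\cap U)$ for open, hence Borel, $C\subset\rel^\adim\without B$. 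Conversely, if $R(t)$ is representable by integration and concentrated on $U$, then $\|R(t)\|\restrict U=\|\boundary{V}{E(t)}\|$ with matching densities because the two distributions agree on $\mathscr{D}(U,\rel^\adim)$, and integrating $R(t)$ against its density recovers the identity and finiteness clause of \ref{def:trunc_g}. For $(2)\Leftrightarrow(3)$ the auxiliary distributions $R_\theta$ of \ref{thm:distribution_on_product} satisfy $R_\theta(\gamma)=\tint{0}{\infty}\gamma(t)R(t)(\theta)\ud \mathscr{L}^1 t$, so $\|R_\theta\|$ is absolutely continuous with respect to $\mathscr{L}^1$ and, by \ref{miniremark:distrib_on_r}, the slice there equals $R(t)$ for $\mathscr{L}^1$ almost all $t$ (a countable dense family of test functions and \ref{example:distrib_lusin} make the null set independent of $\theta$). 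Then $(3)\Rightarrow(2)$ follows from \ref{thm:distribution_on_product}\,\eqref{item:distribution_on_product:inequality} applied to $g=\id{K\times I}$ and to $g=\id{((\rel^\adim\without B)\without U)\times J}$; and for $(2)\Rightarrow(3)$ one checks first that $|T(\eta)|\leq\tint{I}{}\|R(t)\|(K)\ud \mathscr{L}^1 t$ whenever $\spt\eta\subset K\times I$ and $|\eta|\leq1$, so that $\|T\|$ is finite on compact subsets of $(\rel^\adim\without B)\times J$ and $T$ is representable by integration, and then \ref{thm:distribution_on_product}\,\eqref{item:distribution_on_product:absolute} gives $\|T\|(A)=\tint{}{}\tint{}{}\id{A}(z,t)\ud\|R(t)\| z\ud \mathscr{L}^1 t$ for $\|T\|$ integrable Borel $A$, whence $\|T\|(((\rel^\adim\without B)\without U)\times J)=0$ by a compact exhaustion and the concentration clause of $(2)$.

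The main obstacle is the core claim in the $(1)\Leftrightarrow(2)$ step: because $R(t)(\theta)$ carries a $D\theta$ under the $V$ integral, $\|R(t)\|(K)$ may be infinite even when $\|V\|(E(t)\cap K)$, $\|\delta V\|(E(t)\cap K)$ and $\|\boundary{V}{E(t)}\|(U\cap K)$ are all finite, so that finiteness of $\|R(t)\|$ near $\Bdry U\without B$, together with its vanishing on $\Bdry U\without B$ itself, is exactly the substance of ``zero boundary values'' and must be disentangled from the ambient regularity hypothesis --- all the while keeping careful track of which terms in \ref{def:trunc_g} are honest distributional pairings and which are $\Lp{1}$ extensions in the sense of \ref{miniremark:extension}.
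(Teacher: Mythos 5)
Your proposal is correct, and its overall structure matches the paper's: prove $(1)\Leftrightarrow(2)$ by unwinding the definitions and observing that $R(t)$ restricts to $\boundary{V}{E(t)}$ on $\mathscr{D}(U,\rel^\adim)$, so that the $\trunc_G$ conditions at $t$ are exactly representability of $R(t)$ by integration with $\|R(t)\|$ finite on compacts and concentrated on $U$, and then prove $(2)\Leftrightarrow(3)$ by slicing. The slicing step is, however, executed differently. The paper proves $(2)\Rightarrow(3)$ from the explicit estimate $\|T\|(Z\times H)\le\int_H\|R(t)\|(Z)\,d\mathscr{L}^1 t$ together with monotone approximation, and proves $(3)\Rightarrow(2)$ by evaluating the extension of $T$ against products of the form $(b_{t,\varepsilon}\circ q)\cdot(\theta\circ p)$ to obtain $\|R(t)\|(Z)\le\density^{\ast 1}(q_\#(\|T\|\restrict Z\times H),t)$ for $\mathscr{L}^1$ almost all $t$, using a countable dense family of test functions, and then integrating. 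You instead reduce to the packaged slicing theorem \ref{thm:distribution_on_product}, which requires you to first verify $\|R_\theta\|\ll\mathscr{L}^1$ (you do, since $R_\theta$ is integration against the bounded measurable function $t\mapsto R(t)(\theta)$) and to identify the slice $S(t)$ with $R(t)$ for $\mathscr{L}^1$ almost all $t$ via \ref{miniremark:distrib_on_r} and a countable dense family. Both routes ultimately run through the same Federer differentiation material; yours is more modular but pays for it with the hypothesis check and the slice identification, while the paper's is more hands-on. Your ``core claim'' for $(1)\Leftrightarrow(2)$ is in fact more complete than what the paper records, which only writes out that $(1)$ implies $(2)$; the converse is a genuine part of the asserted equivalence, and you supply it correctly by noting that representability of $R(t)$ and concentration of $\|R(t)\|$ on $U$ force the integration by parts identity of \ref{def:trunc_g}. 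Finally, the technical care you flag about $J$ and $\rel^\adim\without B$ being proper open sets is real but harmless, since the arguments in \ref{thm:distribution_on_product} and \ref{miniremark:distrib_on_r} are local.
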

\begin{proof}
	Clearly, \eqref{item:char_tg:def} implies \eqref{item:char_tg:slices}.

	If \eqref{item:char_tg:slices} holds and $H$ is an open set such that
	$\Clos H$ is a compact subset of $J$, then
	\begin{gather*}
		\| T \| ( Z \times H ) \leq \tint{H}{} \| R (t) \| ( Z ) \ud
		\mathscr{L}^1 t < \infty
	\end{gather*}
	whenever $Z$ is an open set such that $\Clos Z$ is a compact subset of
	$\rel^\adim \without B$, hence by approximation the same holds with
	$Z$ replaced by $K \without U$ whenever $K$ is a compact subset of
	$\rel^\adim \without B$ and \eqref{item:char_tg:graph} follows.

	Suppose now that \eqref{item:char_tg:graph} holds and define $p : (
	\rel^\adim \without B ) \times J \to \rel^\adim \without B$ and $q : (
	\rel^\adim \without B ) \times J \to J$ by
	\begin{gather*}
		p (z,t) = z \quad \text{and} \quad q(z,t)=t \quad \text{for
		$z \in \rel^\adim \without B$ and $t \in J$}.
	\end{gather*}
	Since $\sup \{ | R(s)(\theta) | \with \delta < s < \infty \} < \infty$
	for $\theta \in \mathscr{D} ( \rel^\adim \without B, \rel^\adim )$ and
	$0 < \delta < \infty$, denoting by $b_{t,\varepsilon}$ the
	characteristic function of $\cball{t}{\varepsilon}$, one obtains by
	approximating $b_{t,\varepsilon}$ that
	\begin{gather*}
		T ( ( b_{t,\varepsilon} \circ q ) \cdot ( \theta \circ p ) ) =
		\tint{\cball{t}{\varepsilon}}{} R(s)(\theta) \ud \mathscr{L}^1
		s
	\end{gather*}
	whenever $\theta \in \mathscr{D} ( \rel^\adim \without B, \rel^\adim)$
	and $0 < \varepsilon < t < \infty$. Employing a countable dense subset
	of
	\begin{gather*}
		\classification{\mathscr{D} ( \rel^\adim \without B,
		\rel^\adim ) }{\theta}{ \text{$\spt \theta \subset Z$ and $|
		\theta (z) | \leq 1$ for $z \in \rel^\adim \without B$} },
	\end{gather*}
	one concludes, by use of \cite[2.2.17, 2.8.18,
	2.9.2,\,5,\,7,\,9]{MR41:1976},
	\begin{gather*}
		\| R(t) \| (Z) \leq \density^{\ast 1} ( q_\# ( \| T \|
		\restrict ( Z \times H) ), t ) < \infty \quad \text{for
		$\mathscr{L}^1$ almost all $t \in H$}, \\
		\tint{H}{} \| R (t) \| (Z) \ud \mathscr{L}^1 t \leq \| T
		\| ( Z \times H ) = (p_\# (  \| T \| \restrict q^{-1} \lIm H
		\rIm ) ) (Z)
	\end{gather*}
	whenever $Z$ is an open set such that $\Clos Z$ is a compact subset of
	$\rel^\adim \without B$ and $H$ is an open set such that $\Clos H$ is
	a compact subset of $J$. Since $p_\# ( \| T \| \restrict q^{-1} \lIm H
	\rIm )$ is a Radon measure, one may use approximation to obtain the
	last inequality for every Borel subset $Z$ of $\rel^\adim \without B$,
	in particular, taking $Z = ( \rel^\adim \without B ) \without U$,
	\eqref{item:char_tg:slices} follows.
\end{proof}
\begin{lemma} \label{lemma:closeness_tg}
	Suppose $l$, $\vdim$, $\adim$, $U$, $V$, $G$, and $B$ are as in
	\ref{def:trunc_g}, $J = \{ t \with 0 < t < \infty$, $f \in \trunc (V,
	\rel^l )$, $f_i \in \trunc_G ( V, \rel^l )$, and
	\begin{gather*}
		( \| V \| + \| \delta V \| ) ( \classification{K}{z}{ |f(z)|>t
		} ) < \infty, \\
		f_i \to f \quad \text{as $i \to \infty$ in $( \| V \| + \|
		\delta V \| ) \restrict ( U \cap K )$ measure}, \\
		\varrho ( K, I, t, \delta ) < \infty \quad \text{for $0 <
		\delta < \infty$}, \qquad \varrho ( K, I, t, \delta ) \to 0
		\quad \text{as $\delta \to 0+$}
	\end{gather*}
	whenever $K$ is a compact subset of $\rel^\adim \without B$, $I$ is a
	compact subset of $J$, and $\inf I > t \in J$, where $\varrho (K, I, t,
	\delta )$ denotes the supremum of all numbers
	\begin{gather*}
		\limsup_{i \to \infty} \tint{\classification{K \cap A}{z}{|f_i
		(z) | \in I}}{} | \derivative{V}{f_i} | \ud \| V \|
	\end{gather*}
	corresponding to $\| V \|$ measurable sets $A$ with $\| V \| (
	\classification{A \cap K}{z}{|f(z)| > t} ) \leq \delta$.

	Then $f \in \trunc_G ( V, \rel^l )$.
\end{lemma}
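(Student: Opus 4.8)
The plan is to verify, for $f$, the statement \ref{thm:char_tg}\,\eqref{item:char_tg:slices}; since the standing hypotheses of \ref{thm:char_tg} are satisfied --- in particular the requirement $( \| V \| + \| \delta V \| ) ( K \cap E(t) ) < \infty$ for compact $K \subset \rel^\adim \without B$ and $t \in J$ is among the present hypotheses --- this yields \ref{thm:char_tg}\,\eqref{item:char_tg:def}, hence $f \in \trunc_G ( V, \rel^l )$. Write $E_i(t) = \classification{U}{z}{|f_i(z)| > t}$ for $t \in J$, let $R(t)$ be associated with $f$ as in \ref{thm:char_tg}, and let $R_i(t)$ be the distributions corresponding to $f_i$. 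By \ref{lemma:basic_v_weakly_diff} and \ref{example:composite}\,\eqref{item:composite:mod} each of $|f|$ and $|f_i|$ belongs to $\trunc (V)$ and the weak derivative of $|f_i|$ has modulus at most $| \derivative{V}{f_i} |$ at $\| V \|$ almost every point. Since $f_i \in \trunc_G ( V, \rel^l )$, \ref{thm:char_tg}\,\eqref{item:char_tg:slices} holds for each $f_i$: for compact $K$ and compact $I \subset J$ there holds $\tint{I}{} \| R_i(t) \| (K) \ud \mathscr{L}^1 t < \infty$, and for $\mathscr{L}^1$ almost all $t$ the measure $\| R_i(t) \|$ is concentrated on $U$, where $R_i(t)$ agrees with $\boundary{V}{E_i(t)}$; in particular $\boundary{V}{E_i(t)}$ is representable by integration for $\mathscr{L}^1$ almost all $t$.

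The first step is a pointwise convergence: for $\mathscr{L}^1$ almost all $t$ there holds $R_i(t) ( \theta ) \to R(t) ( \theta )$ as $i \to \infty$ for every $\theta \in \mathscr{D} ( \rel^\adim \without B, \rel^\adim )$. Indeed $| R_i(t)(\theta) - R(t)(\theta) |$ is bounded by a constant depending only on $\theta$ and $\vdim$ times $( \| V \| + \| \delta V \| ) \big ( K \cap ( ( E_i(t) \without E(t) ) \cup ( E(t) \without E_i(t) ) ) \big )$ for any compact $K \supset \spt \theta$; by the assumed convergence in measure and the finiteness hypotheses this quantity tends to $0$ as $i \to \infty$ whenever $( \| V \| + \| \delta V \| ) ( \classification{K}{z}{|f(z)| = t} ) = 0$, which excludes only countably many $t$. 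A single $\mathscr{L}^1$ null set of exceptional $t$ is obtained using the sequential separability of $\mathscr{D} ( \rel^\adim \without B, \rel^\adim )$, see \ref{remark:lusin}. As a consequence one gets the lower semicontinuity $\| R(t) \| (O) \leq \liminf_{i} \| R_i(t) \| (O)$ for every open $O \subset \rel^\adim \without B$ and $\mathscr{L}^1$ almost all $t$.

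Next fix compact sets $I = \{ t \with a \leq t \leq b \} \subset J$ and $K \subset \rel^\adim \without B$, and choose a compact interval $I' = \{ t \with a' \leq t \leq b' \}$ with $0 < a' < a \leq b < b'$ and a number $t_0$ with $0 < t_0 < a'$. Applying \ref{lemma:level_sets} to $|f_i|$ and Fubini's theorem gives the coarea estimate $\tint{I}{} \| \boundary{V}{E_i(t)} \| (A) \ud \mathscr{L}^1 t \leq \tint{ \classification{A}{z}{|f_i(z)| \in I'} }{} | \derivative{V}{f_i} | \ud \| V \|$ for every open $A \subset U$ (using the bound on the weak derivative of $|f_i|$ recorded above). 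Combining this with the lower semicontinuity, the concentration of $\| R_i(t) \|$ on $U$, and Fatou's lemma yields, for every open $O$ with $K \subset O$ and $\Clos O$ a compact subset of $\rel^\adim \without B$,
\begin{gather*}
	\tint{I}{} \| R(t) \| (K) \ud \mathscr{L}^1 t \leq \liminf_{i}
	\tint{I}{} \| R_i(t) \| (O) \ud \mathscr{L}^1 t \\
	\leq \limsup_{i} \tint{ \classification{\Clos O}{z}{|f_i(z)| \in I'}
	}{} | \derivative{V}{f_i} | \ud \| V \| \leq \varrho ( \Clos O, I',
	t_0, \delta )
\end{gather*}
whenever $\delta \geq \| V \| ( \classification{\Clos O}{z}{|f(z)| > t_0} )$, since then $\rel^\adim$ is an admissible set in the definition of $\varrho$. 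Taking $\delta = \| V \| ( \classification{\Clos O}{z}{|f(z)| > t_0} )$, which is finite by hypothesis, gives the first half of \ref{thm:char_tg}\,\eqref{item:char_tg:slices}. For the second half, note that $( \rel^\adim \without B ) \without U$ is the disjoint union of the open set $\rel^\adim \without \Clos U$ with $G$, and that $R(t) ( \theta ) = 0$ whenever $\spt \theta \subset \rel^\adim \without \Clos U$ since then $\theta | U = 0$; so it suffices to show $\| R(t) \| (L) = 0$ for $\mathscr{L}^1$ almost all $t$ and every compact $L \subset G$, and since $G$ is relatively open in the $\sigma$-compact set $\Bdry U$ it is enough to treat countably many such $L$. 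Given $L$, choose open sets $O_j$ with $L \subset O_j$, $\Clos O_j$ compact in $\rel^\adim \without B$, and $\Clos O_j \downarrow L$; the same chain of inequalities with $\Clos O$ replaced by $\Clos O_j$ and with $\delta = \delta_j := \| V \| ( \classification{\Clos O_j}{z}{|f(z)| > t_0} )$, together with the monotonicity of $\varrho$ in its first argument, gives $\tint{I}{} \| R(t) \| (L) \ud \mathscr{L}^1 t \leq \varrho ( \Clos O_1, I', t_0, \delta_j )$. Since $\Clos O_j \cap U \downarrow \varnothing$ and $\| V \| ( \classification{\Clos O_1}{z}{|f(z)| > t_0} ) < \infty$ one has $\delta_j \to 0$, whence $\varrho ( \Clos O_1, I', t_0, \delta_j ) \to 0$ by hypothesis and $\tint{I}{} \| R(t) \| (L) \ud \mathscr{L}^1 t = 0$; letting $I$ exhaust $J$ completes the verification of \ref{thm:char_tg}\,\eqref{item:char_tg:slices}.

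I expect the main obstacle to be this last step: one must exclude that boundary mass of the distributional boundaries $\boundary{V}{E_i(t)}$ escapes onto $G$ in the limit $i \to \infty$, and it is precisely the uniform absolute continuity encoded in $\varrho ( K, I, t, \delta ) \to 0$ as $\delta \to 0+$ that rules this out; the somewhat delicate bookkeeping is to arrange the nested neighbourhoods $O_j$ and the parameters $t_0$, $\delta_j$, $I'$ so that the admissibility of $\rel^\adim$ in the definition of $\varrho$ and the convergence $\delta_j \to 0$ hold simultaneously.
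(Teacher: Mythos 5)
Your proof is correct, but it reaches the conclusion by a genuinely different route from the paper's. You verify \ref{thm:char_tg}\,\eqref{item:char_tg:slices}, working slicewise with the distributions $R(t)$ and $R_i(t)$: pointwise convergence $R_i(t)(\theta) \to R(t)(\theta)$ for $\mathscr{L}^1$ almost all $t$ (via sequential separability of $\mathscr{D} ( \rel^\adim \without B, \rel^\adim )$ and the countable exceptional set where $( \| V \| + \| \delta V \| ) ( K \cap \{ z \with |f(z)| = t \} ) > 0$), then lower semicontinuity of mass, Fatou, the coarea identity for $|f_i|$, and finally the $\varrho$ hypothesis — with the nested neighbourhoods $\Clos O_j \downarrow L$ and $\delta_j \to 0$ excluding boundary mass on $G$. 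The paper instead verifies \ref{thm:char_tg}\,\eqref{item:char_tg:graph}: it constructs the product-space distribution $T \in \mathscr{D}' ( ( \rel^\adim \without B ) \times J, \rel^\adim )$ attached to $f$ directly, shows that the characteristic functions of the subgraphs $\{ (z,t) \with z \in E_i(t) \}$ converge to that of $\{ (z,t) \with z \in E(t) \}$ in $( \| V \| + \| \delta V \| ) \times \mathscr{L}^1$ over $( U \cap K ) \times I$, whence $T$ is a distributional limit of the corresponding quantities for $f_i$, and then bounds $\| T \|$ on product sets $( ( \Int K ) \without C ) \times \Int I$ via the coarea identity and $\varrho$, a compact $C \subset U$ with $\| V \| ( \eqclassification{K \without C}{z}{|f(z)| > t} )$ small playing the role of your nested neighbourhoods. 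The paper's route lets the product-space machinery of \ref{thm:distribution_on_product} (already packaged into \ref{thm:char_tg}) absorb all measurability and almost-everywhere bookkeeping, which your slicewise route must carry explicitly; conversely, your approach is more elementary in that it works directly with the objects $R(t)$ appearing in \ref{def:trunc_g} rather than passing through the product-space formalism. Both implement the same essential mechanism: the uniform absolute continuity encoded in $\varrho ( K, I, t, \delta ) \to 0$ as $\delta \to 0+$ rules out escape of boundary measure onto $G$.
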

\begin{proof}
	Define
	\begin{gather*}
		E_i (t) = \classification{U}{z}{ |f_i(z)| > t } \quad
		\text{and} \quad E(t) = \classification{U}{z}{ |f(z)| > t }
		\quad \text{for $0 < t < \infty$}, \\
		G_i = \eqclassification{U \times J}{(z,t)}{z \in E_i (t)},
		\quad G = \eqclassification{U \times J}{(z,t)}{ z \in E(t) }.
	\end{gather*}
	Denoting by $g_i$ and $g$ the characteristic functions of $G_i$ and
	$G$ respectively and noting
	\begin{gather*}
		E_i (t) \subset E(s) \cup \{ z \with | (f-f_i)(z) | > t-s \}
		\quad \text{for $0 < s < t < \infty$},
	\end{gather*}
	one obtains
	\begin{gather*}
		\tint{( U \cap K ) \times I}{} |g| \ud ( \| V \| + \| \delta V
		\| ) \times \mathscr{L}^1 < \infty, \\
		\lim_{i \to \infty} \tint{( U \cap K ) \times I}{} |g-g_i|
		\ud ( \| V \| + \| \delta V \| ) \times \mathscr{L}^1 = 0
	\end{gather*}
	whenever $K$ is a compact subset of $\rel^\adim \without B$ and $I$ is
	a compact subset of $J$. Therefore one may define $T \in \mathscr{D}'
	( ( \rel^\adim \without B ) \times J , \rel^\adim )$ by
	\begin{gather*}
			T ( \eta ) = \tint{J}{} ( ( \delta V ) \restrict
			E(t) ) ( \eta ( \cdot, t ) | U )
			- \tint{E(t) \times
			\grass{\adim}{\vdim}}{} \project{S} \bullet D \eta (
			\cdot, t ) (z) \ud \| V \| z \ud \mathscr{L}^1 t
	\end{gather*}
	for $\eta \in \mathscr{D} ( ( \rel^\adim \without B ) \times J,
	\rel^\adim )$ and infer by use of Fubini's theorem
	\begin{gather*}
		\begin{aligned}
			T ( \eta ) & = \lim_{i \to \infty} \tint{J}{} ( (
			\delta V ) \restrict E_i (t) ) ( \eta ( \cdot, t ) | U
			) \\
			& \phantom{=\lim_{i \to \infty}} \ \quad - \tint{E_i (t)
			\times \grass{\adim}{\vdim}}{} \project{S} \bullet D
			\eta ( \cdot, t ) (z) \ud \| V \| z \ud \mathscr{L}^1
			t
		\end{aligned}
	\end{gather*}
	for $\eta \in \mathscr{D} ( ( \rel^\adim \without B ) \times J ,
	\rel^\adim )$. From \ref{thm:tv_coarea} in conjunction with
	\ref{lemma:basic_v_weakly_diff},
	\ref{example:composite}\,\eqref{item:composite:mod} one obtains
	\begin{gather*}
		\tint{H}{} \| \boundary{V}{E_i(t)} \| (A) \ud \mathscr{L}^1 t
		= \tint{\classification{A}{z}{|f_i(z)| \in H}}{} |
		\derivative{V}{f_i} | \ud \| V \|
	\end{gather*}
	whenever $A$ is $\| V \|$ measurable, $H$ is an open subset of $J$,
	and $i \in \nat$. Consequently, taking $A = U \cap ( \Int K ) \without
	C$ and $H = \Int I$, one infers
	\begin{gather*}
		\| T \| \big ( ( ( \Int K ) \without C ) \times \Int I \big )
		\leq \varrho ( K, I, t, \delta )
	\end{gather*}
	whenever $K$ is a compact subset of $\rel^\adim \without B$, $I$ is a
	compact subset of $J$, $\inf I > t \in J$, $0 < \delta < \infty$, $C$
	is a compact subset of $U$, and
	\begin{gather*}
		\| V \| ( \eqclassification{K \without C}{z}{|f(z)| > t} )
		\leq \delta.
	\end{gather*}
	In particular, taking $C = \varnothing$ and $\delta$ sufficiently large,
	one concludes that $T$ is representable by integration and taking $C$
	such that $\| V \| ( \eqclassification{K \without C}{z}{|f(z)| > t }
	)$ is small yields
	\begin{gather*}
		\| T \| ( ( ( \rel^\adim \without B ) \without U ) \times J )
		= 0.
	\end{gather*}
	The conclusion now follows from
	\ref{thm:char_tg}\,\eqref{item:char_tg:def}\,\eqref{item:char_tg:graph}.
\end{proof}
\begin{remark} \label{remark:closeness_tg}
	The conditions on $\varrho$ are satisfied for instance if for any
	compact subset $K$ of $\rel^\adim \without B$ there holds
	\begin{gather*}
		\begin{aligned}
			\text{either} & \quad \tint{U \cap K}{} |
			\derivative{V}{f} | \ud \| V \| < \infty, \quad
			\lim_{i \to \infty} \eqLpnorm{\| V \| \restrict U \cap
			K}{1}{ \derivative{V}{f} - \derivative{V}{f_i} } = 0,
			\\
			\text{or} & \quad \limsup_{i \to \infty} \eqLpnorm{\|
			V \| \restrict U \cap K}{q}{\derivative{V}{f_i}} <
			\infty \quad \text{for some $1 < q \leq \infty$};
		\end{aligned}
	\end{gather*}
	in fact if $I$ is a compact subset of $J$ and $\inf I > t \in J$ then,
	in the first case,
	\begin{gather*}
		\limsup_{i \to \infty} \tint{\classification{K \cap A}{z}{|f_i
		(z) | \in I }}{} | \derivative{V}{f_i} | \ud \| V \| \leq
		\tint{\classification{K \cap A}{z}{|f(z)| > t}}{} |
		\derivative{V}{f} | \ud \| V \|
	\end{gather*}
	whenever $A$ is $\| V \|$ measurable and, in the second case,
	\begin{gather*}
		\varrho ( K, I, t, \delta ) \leq \delta^{1-1/q} \limsup_{i \to
		\infty} \eqLpnorm{\| V \| \restrict U \cap
		K}{q}{\derivative{V}{f_i}} \quad \text{for $0 < \delta <
		\infty$}.
	\end{gather*}
\end{remark}
\begin{miniremark} \label{miniremark:stepfunction}
	If $f$ is a nonnegative $\mathscr{L}^1$ measurable $\overline{\rel}$
	valued function, $N \subset \rel$, $\mathscr{L}^1 ( N ) = 0$,
	$\varepsilon > 0$, and $j \in \nat$, then there exist $r_1, \ldots,
	r_j$ such that 
	\begin{gather*}
		\varepsilon (i-1) < r_i < \varepsilon i \quad \text{and} \quad
		r_i \notin N \qquad \text{for $i=1, \ldots, j$}, \\
		\tsum{i=1}{j} ( r_i-r_{i-1} ) f(r_i) \leq 2 \tint{}{} f \ud
		\mathscr{L}^1,
	\end{gather*}
	where $r_0=0$; in fact it is sufficient to choose $r_i$ such that
	\begin{gather*}
		\varepsilon (i-1) < r_i < \varepsilon i, \quad r_i \notin N,
		\quad \varepsilon f(r_i) \leq \tint{\varepsilon
		(i-1)}{\varepsilon i} f \ud \mathscr{L}^1
	\end{gather*}
	for $i=1, \ldots, j$, and note $r_i - r_{i-1} \leq 2 \varepsilon$.
\end{miniremark}
\begin{theorem} \label{thm:mult_tg}
	Suppose $\vdim$, $\adim$, $U$, $V$, $G$, and $B$ are as in
	\ref{def:trunc_g}, $f \in \trunc_G ( V, \rel^l )$, $g : U \to \rel$,
	and
	\begin{gather*}
		\tint{U \cap K}{} | f | + | \derivative{V}{f} | \ud \| V \| <
		\infty, \quad \Lip ( g | K ) < \infty
	\end{gather*}
	whenever $K$ is a compact subset of $\rel^\adim \without B$.

	Then $g f \in \trunc_G ( V, \rel^l )$.
\end{theorem}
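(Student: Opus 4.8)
The plan is to reduce, in several steps, to the case in which both $f$ and $g$ are bounded, and then to verify the conditions of \ref{def:trunc_g} for $gf$ by means of the characterisation in \ref{thm:char_tg}, approximating the superlevel sets of $gf$ by finite combinations of superlevel sets of $f$ cut by superlevel sets of $g$. First I would carry out the reductions. Since whether $gf$ belongs to $\trunc_G (V,\rel^l)$ depends only on $gf$ lying in $\trunc(V,\rel^l)$ and on the superlevel sets of $|gf| = |g|\,|f|$, I would use \ref{lemma:basic_v_weakly_diff}, \ref{example:composite}\,\eqref{item:composite:mod} and \ref{lemma:trunc_tg} to reduce to $l = 1$, $0 \leq f$, $0 \leq g$, and extend $g$ to a locally Lipschitzian function on the open set $\rel^\adim \without B$. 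Every compact subset of $U$ is a compact subset of $\rel^\adim \without B$, so the hypotheses make $g$ locally Lipschitzian on $U$ and ensure $f, \derivative{V}{f} \in \Lploc{1}(\|V\|, \cdot)$; hence \ref{thm:addition}\,\eqref{item:addition:mult} gives $gf \in \trunc(V,\rel^l)$ with $\derivative{V}{(gf)}(z) = \derivative{V}{g}(z)\,f(z) + g(z)\,\derivative{V}{f}(z)$ for $\|V\|$ almost all $z$. Then, invoking \ref{lemma:trunc_tg} for the truncations $\inf\{f,m\}$ and the Lipschitzian truncations $\inf\{g,n\}$, together with \ref{lemma:closeness_tg} and the first alternative of \ref{remark:closeness_tg} — which applies because the product formula and $\int_{U \cap K} |f| + |\derivative{V}{f}| \ud \|V\| < \infty$ yield, by dominated convergence, $\Lp{1}(\|V\| \restrict U \cap K)$ convergence of the weak derivatives — I would reduce first to $0 \leq g \leq C < \infty$ and then also to $0 \leq f \leq R < \infty$.

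With $f$ and $g$ bounded, write $J = \{ t \with 0 < t < \infty \}$, $E(t) = \{ z \with g(z)f(z) > t \}$ and $E'(r) = \{ z \with f(z) > r \}$. By \ref{thm:char_tg}\,\eqref{item:char_tg:def}\,\eqref{item:char_tg:graph} it suffices to show that the distribution $T$ on $(\rel^\adim \without B) \times J$ obtained by integrating over $t$ the functionals $\theta \mapsto ((\delta V)\restrict E(t))(\theta|U) - \int_{E(t) \times \grass{\adim}{\vdim}} \project{S} \bullet D\theta \ud V$ is representable by integration with $\|T\|$ carried by $U \times J$. On a compact $K \subset \rel^\adim \without B$ one has $|g| \leq C_K$, so $E(t) \cap K \subset E'(t/C_K) \cap K$, and \ref{def:trunc_g} for $f$ together with monotonicity controls $(\|V\| + \|\delta V\|)(E(t) \cap K)$, while $\int_I \|\boundary{V}{E(t)}\|(U \cap K) \ud \mathscr{L}^1 t < \infty$ for compact $I \subset J$ follows from \ref{thm:tv_coarea} applied to $gf$ and the bound $|\derivative{V}{(gf)}| \leq (\Lip(g|K) + C_K)(|f| + |\derivative{V}{f}|)$ on $U \cap K$, which in turn rests on \ref{example:lipschitzian}. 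I would also introduce the comparison functional $\hat T(\theta) = \int \langle \theta(z, g(z)f(z)), \derivative{V}{(gf)}(z) \rangle \ud \|V\| z$; by \ref{lemma:push_on_product} applied over $U$ and the same bound, $\hat T$ is representable by integration with $\|\hat T\|$ carried by $U \times J$, and $T = \hat T$ on $\mathscr{D}(U \times J, \rel^\adim)$ by \ref{thm:tv_coarea}, \ref{lemma:level_sets} and \ref{remark:associated_distribution}. It therefore remains to prove $T(\theta) = \hat T(\theta)$ also when $\spt\theta$ reaches into $G$.

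For this, the essential step, I would fix $\theta$ with $\spt\theta \subset K \times \{ t \with \tau \leq t \leq \sigma \}$, $K$ compact in $\rel^\adim \without B$, and apply \ref{miniremark:stepfunction} to the $\Lp{1}$ function $t \mapsto \|\boundary{V}{E'(t)}\|(U \cap K)$ to obtain, for small $\varepsilon > 0$, levels $0 = r_0 < r_1 < \dots < r_N$ with $r_N > R$ and mesh at most $2\varepsilon$, avoiding the $\mathscr{L}^1$ null set of parameters for which the conditions of \ref{def:trunc_g} applied to $f$ fail, and such that $\sum_k (r_k - r_{k-1}) \|\boundary{V}{E'(r_k)}\|(U \cap K)$ remains bounded by $2 \int_0^\infty \|\boundary{V}{E'(t)}\|(U \cap K) \ud \mathscr{L}^1 t < \infty$. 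On the layer $L_k = E'(r_{k-1}) \without E'(r_k)$, where $r_{k-1} < f \leq r_k$, the set $E(t) \cap L_k$ is trapped between $\{ z \with g(z) > t/r_{k-1} \} \cap L_k$ and $\{ z \with g(z) > t/r_k \} \cap L_k$, with a discrepancy that, after integration in $t$ over $[\tau,\sigma]$, is controlled by the preceding Riemann sums and by the coarea formula for $gf$, and tends to $0$ with $\varepsilon$. Letting $\mathscr{F}$ denote the class of Borel subsets of $U$ satisfying the finiteness and integration by parts conditions of \ref{def:trunc_g}, which by \ref{remark:v_boundary} and \ref{remark:extension_additive} is stable under relative complements of nested pairs and under finite disjoint unions and contains each $L_k$, the point is that $\{ z \with g(z) > c \} \cap L_k \in \mathscr{F}$ for $\mathscr{L}^1$ almost all $c$; granting this and writing $E(t)$ as the finite disjoint union $\bigcup_k (E(t) \cap L_k)$, letting $\varepsilon \to 0+$ gives $E(t) \in \mathscr{F}$ for $\mathscr{L}^1$ almost all $t$, hence $T = \hat T$ and $gf \in \trunc_G(V,\rel^l)$.

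I expect the main obstacle to be precisely the claim that $\{ z \with g(z) > c \} \cap L_k \in \mathscr{F}$ for almost all $c$, together with the control of the discrepancy mentioned above. The set in question lies in $E'(r_{k-1})$, so its weights are finite near $G$ because $f \in \trunc_G(V)$; the delicate part is the additional boundary created along $\{ z \with g(z) = c \}$, whose total variation must be estimated on compact subsets of $\rel^\adim \without B$ — including near $G$ — by quantities that are summable against the chosen partition, so that the estimates survive the limit $\varepsilon \to 0+$. I would control this by combining the coarea estimate for $f$ furnished by $f \in \trunc_G(V)$, which locates the distributional boundary of $L_k$, with an argument in the spirit of \ref{lemma:level_sets} and of the differentiation of measures underlying the proof of \ref{thm:char_tg}, which handles the contribution of $\{ z \with g(z) = c \}$ using that $g$ is Lipschitzian on $K$.
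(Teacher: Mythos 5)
Your overall strategy — step-function approximation of $f$ via \ref{miniremark:stepfunction}, verification through \ref{thm:char_tg}, and analysis of the superlevel sets of the approximate products — is essentially the one the paper uses. But you have correctly identified, and then left open, the decisive step: that $\{z \with g(z) > c\} \cap L_k$ satisfies the conditions of \ref{def:trunc_g}, i.e.\ that the new boundary created by cutting a superlevel set of $f$ along $\{g = c\}$ does not concentrate on $G$. The tools you name for this — \ref{lemma:level_sets} and ``the differentiation of measures underlying the proof of \ref{thm:char_tg}'' — do not by themselves say anything about what happens at $G$: \ref{lemma:level_sets} produces a representation of the boundary purely inside $U$, and the differentiation argument in \ref{thm:char_tg} only translates between equivalent formulations of the same hypothesis, it does not create the hypothesis. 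The appeal to ``$g$ Lipschitzian on $K$'' is the right instinct, but it needs to be turned into an actual device.

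The device the paper uses, and which is missing from your sketch, is this: form the extension $e = (\Clos g)\,|\,\rel^\adim \without B$, which is a function on $U \cup G$ agreeing with $g$ on $U$ and satisfying $\Lip(e|K) < \infty$ for compact $K \subset \rel^\adim \without B$. For a superlevel set $C(r)$ of $|f|$ at a good level $r$ (where the conditions of \ref{def:trunc_g} for $f$ hold), the restricted varifold $W_r \in \Var_\vdim(\rel^\adim \without B)$ given by $W_r(\phi) = \tint{C(r) \times \grass{\adim}{\vdim}}{} \phi \ud V$ has $\|\delta W_r\|$ a Radon measure on $\rel^\adim \without B$ precisely because $f \in \trunc_G(V,\rel^l)$. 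Kirszbraun's extension theorem together with \ref{example:lipschitzian} then gives $e \in \trunc(W_r)$, and applying \ref{thm:tv_coarea} to the pair $(W_r, e)$ on the ambient open set $\rel^\adim \without B$ — where $\|W_r\|$ is concentrated on $U$ — yields that $\boundary{W_r}{D(s)}$ is representable by integration with $\| \boundary{W_r}{D(s)} \|\big( (\rel^\adim \without B) \without U \big) = 0$ for $\mathscr{L}^1$ almost all $s$, $D(s) = \{z \with |e(z)| > s\}$. Combined with the algebraic identity $S(r,s) = R(r)\restrict D(s) + \boundary{W_r}{D(s)}$, this is exactly the statement that the $g$-cuts of $C(r)$ remain admissible. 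Without constructing $e$ and the restricted varifolds on $\rel^\adim \without B$, the Lipschitz hypothesis on $g$ has no foothold at $G$, so the claim you flag as your main obstacle is a genuine gap, not a routine verification.

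Two smaller remarks. Your preliminary reduction to bounded $f$ and $g$ via \ref{lemma:trunc_tg} and \ref{lemma:closeness_tg} is defensible but unnecessary: the paper works with unbounded $f$ and $g$ directly, and the effort spent verifying the first alternative of \ref{remark:closeness_tg} for the truncations is better spent on the core estimate. Also, the quantitative control you get from \ref{miniremark:stepfunction} should be applied to the function $r \mapsto (\|R(r)\| \restrict Z)(|e|)$ rather than to $\|\boundary{V}{E'(t)}\|(U \cap K)$: it is the interaction of the levels of $f$ with the sizes of $e$ on those levels that has to stay summable, which is what the paper's bound $\| T_I \| ( Z \times J ) \leq ( ( p_\# \| R_I \| ) \restrict Z ) ( |e| ) + \tint{U \cap Z}{} |f|\,| \derivative{V}{g} | \ud \| V \|$ encodes.
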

\begin{proof}
	Define $h= gf$ and note $h \in \trunc ( V, \rel^l )$ by
	\ref{thm:addition}\,\eqref{item:addition:mult}. Define a function
	$e$ by $e = ( \Clos g ) | \rel^\adim \without B$ and note
	\begin{gather*}
		\dmn e = U \cup G, \quad e | U = g, \quad \Lip ( e | K ) <
		\infty
	\end{gather*}
	whenever $K$ is a compact subset of $\rel^\adim \without B$. Moreover,
	let
	\begin{gather*}
		J = \{ t \with 0 < t < \infty \}, \quad 
		A = \eqclassification{U \times J}{(z,t)}{|h(z)| > t }.
	\end{gather*}
	and define $p : (
	\rel^\adim \without B ) \times J \to \rel^\adim \without B$ by
	\begin{gather*}
		p (z,t) = z \quad \text{for $z \in \rel^\adim \without B$ and
		$t \in J$}.
	\end{gather*}
	Noting $( \| V \| + \| \delta V \| ) ( \classification{K}{z}{ |h(z)| >
	t } ) < \infty$ whenever $K$ is a compact subset of $\rel^\adim
	\without B$ and $t \in J$, the proof may be carried out by showing
	that the distribution $T \in \mathscr{D}' ( ( \rel^\adim \without B)
	\times J, \rel^\adim )$ defined by
	\begin{gather*}
		\begin{aligned}
			T ( \eta ) & = \tint{J}{} ( ( \delta V ) \restrict \{
			z \with | h(z) | > t \} ) ( \eta ( \cdot, t ) | U ) \\
			& \phantom{=} \ \qquad- \tint{\{ z \with | h(z) | > t
			\}}{} \project{\Tan^\vdim ( \| V \|, z )} \bullet D
			\eta ( \cdot, t ) (z) \ud \| V \| z \ud \mathscr{L}^1
			t
		\end{aligned}
	\end{gather*}
	for $\eta \in \mathscr{D} ( ( \rel^\adim \without B ) \times J,
	\rel^\adim )$ satisfies the conditions of
	\ref{thm:char_tg}\,\eqref{item:char_tg:graph} with $f$ replaced by
	$h$.

	For this purpose, define subsets $C(r)$, $D(s)$, and $E(r,s)$ of $U$,
	varifolds $W_r \in \Var_\vdim ( \rel^\adim \without B )$, and
	distributions $R(r), S(r,s) \in \mathscr{D}' ( \rel^\adim \without B,
	\rel^\adim )$ by
	\begin{gather*}
		C (r) = \{ z \with |f(z)| > r \}, \quad D (s) = \{ z \with |
		e(z) | > s \}, \\
		E (r,s) = C(r) \cap D(s), \quad W_r ( \phi ) = \tint{C(r)
		\times \grass{\adim}{\vdim}}{} \phi \ud V, \\
		R(r) ( \theta ) = ( ( \delta V ) \restrict C(r) ) ( \theta | U
		) - \tint{C(r)}{} \project{\Tan^\vdim ( \| V \|, z )} \bullet
		D \theta (z) \ud \| V \| z, \\
		S(r,s) ( \theta ) = ( ( \delta V ) \restrict E(r,s) ) ( \theta
		| U ) - \tint{E(r,s)}{} \project{\Tan^\vdim ( \| V \|, z )}
		\bullet D \theta (z) \ud \| V \| z
	\end{gather*}
	whenever $0 < r < \infty$, $0 < s < \infty$, $\phi \in \mathscr{K} ( (
	\rel^\adim \without B ) \times \grass{\adim}{\vdim} )$, and $\theta
	\in \mathscr{D} ( \rel^\adim \without B, \rel^\adim )$. Let $Q$
	consist of all $r \in J$ such that
	\begin{gather*}
		\text{$R(r)$ is representable by integration and $\| R (r) \|
		( ( \rel^\adim \without B ) \without U ) = 0$}.
	\end{gather*}
	Note $\mathscr{L}^1 ( J \without Q ) = 0$ as $f \in \trunc_G
	(V,\rel^l)$ and
	\begin{gather*}
		S(r,s) = R(r) \restrict D(s) + \boundary{W_r}{D(s)} \quad
		\text{whenever $r \in Q$ and $0 < s < \infty$}.
	\end{gather*}
	One readily verifies by means of \ref{example:lipschitzian} in
	conjunction with Kirszbraun's extension theorem, see
	e.g.~\cite[2.10.43]{MR41:1976}, and \cite[2.10.19\,(4)]{MR41:1976}
	that $e \in \trunc ( W )$ and
	\begin{gather*}
		\derivative{W}{e} (z) = \derivative{V}{g} (z)
		\quad \text{for $\| V \|$ almost all $z \in A$}
	\end{gather*}
	whenever $A$ is $\| V \|$ measurable, $W \in \Var_\vdim ( \rel^\adim
	\without B )$, $W ( \phi ) = \tint{A \times \grass{\adim}{\vdim}}{}
	\phi \ud V$ for $\phi \in \mathscr{K} ( ( \rel^\adim \without B )
	\times \grass{\adim}{\vdim} )$ and $\| \delta W \|$ is a Radon
	measure. In particular, if $r \in Q$ then for $\mathscr{L}^1$ almost
	all $s \in J$
	\begin{gather*}
		\text{$\boundary{W_r}{D(s)}$ is representable by integration
		and $\| \boundary{W_r}{D(s)} \| ( ( \rel^\adim \without B )
		\without U ) = 0$}
	\end{gather*}
	by \ref{thm:tv_coarea}.

	Whenever $I$ is a nonempty finite subset of $Q$ define functions $f_I
	: U \to \rel$ and $h_I : U \to \rel$ by
	\begin{gather*}
		f_I (z) = \sup ( \{ 0 \} \cup ( \classification{I}{r}{ z \in
		C(r) } )), \quad h_I (z) = f_I (z) g(z)
	\end{gather*}
	whenever $z \in U$ and distributions $R_I \in \mathscr{D}' ( (
	\rel^\adim \without B ) \times J, \rel^\adim )$ and $T_I \in
	\mathscr{D}' ( ( \rel^\adim \without B ) \times J, \rel^\adim )$ by
	\begin{gather*}
		\begin{aligned}
			R_I ( \eta ) & = \tint{J}{} ( ( \delta V ) \restrict
			\{ z \with | f_I (z) | > r \} ) ( \eta ( \cdot, r ) |
			U ) \\
			& \phantom{=} \ \qquad - \tint{\{ z \with |f_I (z)| >
			r \}}{} \project{\Tan^\vdim ( \| V \|, z )} \bullet D
			\eta ( \cdot , r ) ( z ) \ud \| V \| z \ud
			\mathscr{L}^1 r, \\
			T_I ( \eta ) & = \tint{J}{} ( ( \delta V ) \restrict
			\{ z \with | h_I (z) | > t \} ) ( \eta ( \cdot, t ) |
			U ) \\
			& \phantom{=} \ \qquad - \tint{\{ z \with |h_I (z)| >
			t \}}{} \project{\Tan^\vdim ( \| V \|, z )} \bullet D
			\eta ( \cdot , t ) ( z ) \ud \| V \| z \ud
			\mathscr{L}^1 t
		\end{aligned}
	\end{gather*}
	whenever $\eta \in \mathscr{D} ( ( \rel^\adim \without B ) \times J,
	\rel^\adim )$.

	Next, \emph{it will be shown
	\begin{gather*}
		\| T_I \| ( Z \times J ) \leq ( ( p_\# \| R_I \| ) \restrict Z
		) ( |e| ) + \tint{U \cap Z}{} |f| | \derivative{V}{g} | \ud \|
		V \|
	\end{gather*}
	whenever $Z$ is an open subset of $\rel^\adim \without B$}. For this
	purpose suppose for some $j$ and some $0 < r_1 < \ldots < r_j <
	\infty$ that $I = \{ r_i \with i = 1, \ldots, j \}$, let $r_0 = 0$,
	note
	\begin{gather*}
		f_I (z) = r_i \quad \text{if $z \in C(r_i) \without C(r_{i+1})$
		for some $i=1, \ldots, j-1$}, \\
		f_I (z) = 0 \quad \text{if $z \in U \without C(r_1)$}, \qquad
		f_I (z) = r_j \quad \text{if $z \in C(r_j)$}
	\end{gather*}
	and define distributions $X, Y \in \mathscr{D}' ( ( \rel^\adim
	\without B ) \times J, \rel^\adim )$ by
	\begin{gather*}
		X ( \eta ) = \tint{J}{} \big ( R (r_1) \restrict D(t/r_1) +
		\tsum{i=2}{j} ( R(r_i) \restrict D (t/r_i) \without D
		(t/r_{i-1}) ) \big ) ( \eta ( \cdot, t ) ) \ud \mathscr{L}^1
		t, \\
		Y ( \eta ) = \tint{J}{} \big ( \tsum{i=1}{j-1} \boundary{(
		W_{r_i} - W_{r_{i+1}} )}{D(t/r_i)} +
		\boundary{W_{r_j}}{D(t/r_j) } \big ) ( \eta (\cdot, t ) ) \ud
		\mathscr{L}^1 t
	\end{gather*}
	for $\eta \in \mathscr{D} ( ( \rel^\adim \without B ) \times J,
	\rel^\adim )$. One computes
	\begin{gather*}
		R_I ( \eta ) = \tsum{i=1}{j} \tint{r_{i-1}}{r_i} R(r_i) ( \eta
		( \cdot, r ) ) \ud \mathscr{L}^1 r \quad \text{for $\eta \in
		\mathscr{D} ( ( \rel^\adim \without B ) \times J, \rel^\adim
		)$}
	\end{gather*}
	and deduces
	\begin{gather*}
		\| R_I \| ( \zeta ) = \tsum{i=1}{j} \tint{r_{i-1}}{r_i} \| R
		(r_i) \| ( \zeta ( \cdot, r ) ) \ud \mathscr{L}^1 r \quad
		\text{for $\zeta \in \mathscr{D}^0 ( ( \rel^\adim \without B )
		\times J )$}
	\end{gather*}
	with the help of \cite[4.1.3]{MR41:1976}. Noting
	\begin{gather*}
		\{ z \with | h_I (z) | > t \} \cap ( C (r_i) \without C
		(r_{i+1}) ) = E (r_i,t/r_i) \without E(r_{i+1},t/r_i), \\
		\{ z \with | h_I (z) | > t \} \cap ( U \without C(r_1) ) =
		\varnothing, \quad \{ z \with | h_I (z) | > t \} \cap C (r_j) =
		E (r_j,t/r_j)
	\end{gather*}
	for $i=1, \ldots, j$ and $t \in J$, one obtains
	\begin{gather*}
		T_I ( \eta ) = \tint{J}{} \big ( \tsum{i=1}{j-1} ( S
		(r_i,t/r_i) - S (r_{i+1}, t/r_i ) ) + S ( r_j,t/r_j ) \big
		)( \eta ( \cdot, t )) \ud \mathscr{L}^1 t
	\end{gather*}
	whenever $\eta \in \mathscr{D} ( ( \rel^\adim \without B ) \times J,
	\rel^\adim )$. Computing
	\begin{gather*}
		\begin{aligned}
			& \tsum{i=1}{j-1} ( S(r_i,t/r_i) - S (r_{i+1}, t/r_i)
			) + S (r_j, t/r_j) \\
			& \qquad = \tsum{i=1}{j} R (r_i) \restrict D (t/r_i) +
			\tsum{i=1}{j} \boundary{W_{r_i}}{D (t/r_i)} \\
			& \qquad \phantom{=} \ - \tsum{i=1}{j-1} R (r_{i+1})
			\restrict D ( t/r_i ) - \tsum{i=1}{j-1}
			\boundary{W_{r_{i+1}}}{D(t/r_i)} \\
			& \qquad = R(r_1) \restrict D (t/r_1) + \tsum{i=2}{j}
			R (r_i) \restrict ( D ( t/r_i) \without D (t/r_{i-1})
			) \\
			& \qquad \phantom{=} \ + \tsum{i=1}{j-1}
			\boundary{(W_{r_i}-W_{r_{i+1}})}{D ( t/r_i )} +
			\boundary{W_{r_j}}{D(t/r_j)}
		\end{aligned}
	\end{gather*}
	whenever $t \in J$ yields
	\begin{gather*}
		T_I = X + Y.
	\end{gather*}
	Then $\| X \| ( Z \times J )$ does not exceed
	\begin{gather*}
		\begin{aligned}
			& \tint{J}{} \big ( \| R(r_1) \| \restrict D (t/r_1) +
			\tsum{i=2}{j} \| R (r_i ) \| \restrict ( D (t/r_i)
			\without D(t/r_{i-1}) ) \big ) (Z) \ud \mathscr{L}^1 t
			\\
			& \qquad = \tsum{i=1}{j} \tint{J}{} ( \| R (r_i) \|
			\restrict Z ) ( D (t/r_i) ) \ud \mathscr{L}^1 t \\
			& \qquad \phantom{=} \ - \tsum{i=2}{j} \tint{J}{} ( \|
			R (r_i) \| \restrict Z ) ( D (t/r_{i-1}) ) \ud
			\mathscr{L}^1 t \\
			& \qquad = \tsum{i=1}{j} (r_i-r_{i-1}) ( \| R (r_i) \|
			\restrict Z ) ( |e| ) = ( ( p_\# \| R_I \| ) \restrict
			Z ) ( |e| ).
		\end{aligned}
	\end{gather*}
	Moreover, $\| Y \| ( Z \times J )$ does not exceed, using
	\ref{thm:tv_coarea},
	\begin{gather*}
		\begin{aligned}
			& \tint{J}{} \big ( \tsum{i=1}{j-1} \|
			\boundary{(W_{r_i}-W_{r_{i+1}})}{D (t/r_i)} \| + \|
			\boundary{W_{r_j}}{D(t/r_j)} \| \big ) (Z) \ud
			\mathscr{L}^1 t \\
			& \qquad = \tsum{i=1}{j-1} r_i \tint{Z}{} |
			\derivative{(W_{r_i}-W_{r_{i+1}})}{e} | \ud \|
			W_{r_i}-W_{r_{i+1}} \| + r_j \tint{Z}{} |
			\derivative{W_{r_j}}{e} | \ud \| W_{r_j} \| \\
			& \qquad = \tsum{i=1}{j-1} r_i \tint{Z \cap ( C(r_i)
			\without C(r_{i+1}))}{} | \derivative{V}{g} | \ud \| V
			\| + r_j \tint{Z \cap C(r_j)}{} | \derivative{V}{g} |
			\ud \| V \| \\
			& \qquad = \tint{U \cap Z}{} |f| | \derivative{V}{g} |
			\ud \| V \|
		\end{aligned}
	\end{gather*}
	The asserted estimate of $\| T_I \| ( Z \times J )$ follows.

	Next, \emph{it will be proven
	\begin{gather*}
		\| T \| ( Z \times J ) \leq \tint{U \cap Z}{} 2 |g| |
		\derivative{V}{f} | + |f| | \derivative{V}{g} | \ud \| V \|
	\end{gather*}
	whenever $Z$ is an open subset of $\rel^\adim \without B$}. Recalling
	the formula for $\| R_I \|$, one may use \ref{miniremark:stepfunction}
	with $f(r)$ replaced by $( \| R (r) \| \restrict Z ) ( | e | )$ to
	construct a sequence $I(k)$ of finite subsets of $Q$ such that
	\begin{gather*}
		( ( p_\# \| R_{I(k)} \| ) \restrict Z ) ( |e| ) \leq 2
		\tint{J}{} ( \| R (r) \| \restrict Z ) ( | e | ) \ud
		\mathscr{L}^1 r, \\
		\dist ( r, I (k) ) \to 0 \quad \text{as $k \to \infty$ for $r
		\in J$}.
	\end{gather*}
	Define
	\begin{gather*}
		A(k) = \eqclassification{U \times J}{(z,t)}{ |h_{I(k)} (z) | >
		t }
	\end{gather*}
	for $k \in \nat$. Noting
	\begin{gather*}
		|h_{I(k)} (z) | \to | h(z) | \quad \text{as $k \to \infty$ for
		$z \in U$}
	\end{gather*}
	and recalling $( \| V \| + \| \delta V \| ) ( \classification{K}{z}{
	|h(z)| > t } ) < \infty$ for $t \in J$ whenever $K$ is a compact
	subset of $\rel^\adim \without B$, one infers
	\begin{gather*}
		\big ( ( \| V \| + \| \delta V \| ) \times \mathscr{L}^1 \big)
		( L \cap A \without A (k) ) \to 0
	\end{gather*}
	as $k \to \infty$ whenever $L$ is a compact subset of $( \rel^\adim
	\without B ) \times J$. Since $A(k) \subset A$ for $k \in \nat$, it
	follows
	\begin{gather*}
		T_{I(k)} \to T \quad \text{as $k \to \infty$}
	\end{gather*}
	and, in conjunction with the assertion of the preceding paragraph,
	\begin{gather*}
		\| T \| ( Z \times J ) \leq 2 \tint{J}{} ( \| R (r) \|
		\restrict Z ) ( | e | ) \ud \mathscr{L}^1 r + \tint{U \cap
		Z}{} |f| | \derivative{V}{g} | \ud \| V \|.
	\end{gather*}
	Therefore the assertion of the present paragraph is a consequence of
	\ref{thm:char_tg}\,\eqref{item:char_tg:def}\,\eqref{item:char_tg:slices}
	and \ref{remark:associated_distribution}, \ref{thm:tv_coarea}.
	
	Finally, the assertion of the preceding paragraph extends to all
	Borels subset $Z$ of $\rel^\adim \without B$ by approximation and the
	conclusion follows.
\end{proof}
\section{Relative isoperimetric inequality} \label{sec:rel_iso}
In this section a general isoperimetric inequality for sets on varifolds
satisfying a lower density bound is established, see \ref{thm:rel_iso_ineq}.
As corollary one obtains two relative isoperimetric inequalities, see
\ref{corollary:rel_iso_ineq} and \ref{corollary:true_rel_iso_ineq}, under the
relevant conditions on the first variation of the varifold, that is $p = 1$
and $p = \vdim$ in \ref{miniremark:situation_general}.
\begin{miniremark} \label{miniremark:zero_boundary}
	Suppose $\vdim$, $\adim$, $U$, $V$ are as in
	\ref{miniremark:situation_general}, $E$ is $\| V \| + \| \delta V \|$
	measurable, $B$ is a closed subset of $\Bdry U$, and, see
	\ref{miniremark:extension},
	\begin{gather*}
		\| V \| ( E \cap K ) + \| \delta V \| ( E \cap K ) + \|
		\boundary{V}{E} \| ( U \cap K ) < \infty, \\
		\tint{E \times \grass{\adim}{\vdim}}{} \project{S} \bullet D
		\theta (z) \ud V (z,S) =  ( ( \delta V ) \restrict E ) (
		\theta |U ) - ( \boundary{V}{E} ) ( \theta | U )
	\end{gather*}
	whenever $K$ is compact subset of $\rel^\adim \without B$ and $\theta
	\in \mathscr{D} ( \rel^\adim \without B, \rel^\adim )$. Defining $W
	\in \Var_\vdim ( \rel^\adim \without B )$ by
	\begin{gather*}
		W (A) = V ( A \cap ( E \times \grass{\adim}{\vdim} ) ) \quad
		\text{for $A \subset ( \rel^\adim \without B ) \times
		\grass{\adim}{\vdim}$},
	\end{gather*}
	this implies
	\begin{gather*}
		\| \delta W \| (A) \leq \| \boundary{V}{E} \| ( U \cap A ) +
		\| \delta V \| ( E \cap A ) \quad \text{for $A \subset
		\rel^\adim \without B$}.
	\end{gather*}
\end{miniremark}
\begin{lemma} \label{lemma:lower_mass_bound_Q}
	Suppose $1 \leq M < \infty$.

	Then there exists a positive, finite number $\Gamma$ with the
	following property.

	If $\vdim, \adim \in \nat$, $\vdim \leq \adim \leq M$, $1 \leq Q \leq
	M$, $a \in \rel^\adim$, $0 < r < \infty$, $V \in \Var_\vdim (
	\oball{a}{r} )$, $\| \delta V \|$ is a Radon measure, $\density^\vdim
	( \| V \|, z ) \geq 1$ for $\| V \|$ almost all $z$, $a \in \spt \| V
	\|$,
	\begin{gather*}
		\measureball{\| \delta V \|}{\cball{a}{s}} \leq \Gamma^{-1} \|
		V \| ( \cball{a}{s} )^{1-1/\vdim} \quad \text{for $0 < s <
		r$}, \\
		\| V \| ( \{ z \with \density^\vdim ( \| V \|, z ) < Q \} )
		\leq \Gamma^{-1} \measureball{\| V \|}{\oball{a}{r}},
	\end{gather*}
	then
	\begin{gather*}
		\measureball{\| V \|}{\oball{a}{r}} \geq ( Q-M^{-1} )
		\unitmeasure{\vdim} r^\vdim.
	\end{gather*}
\end{lemma}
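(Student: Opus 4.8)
The plan is to prove this threshold‑type statement by contradiction, using Allard's compactness theorem for varifolds. So suppose no such $\Gamma$ existed. Then for each $j \in \nat$ there would be $\vdim_j, \adim_j \in \nat$ with $\vdim_j \leq \adim_j \leq M$, a number $Q_j$ with $1 \leq Q_j \leq M$, a point $a_j \in \rel^{\adim_j}$, a radius $0 < r_j < \infty$, and $V_j \in \Var_{\vdim_j}(\oball{a_j}{r_j})$ satisfying all the hypotheses with $\Gamma = j$, yet with $\|V_j\|(\oball{a_j}{r_j}) < (Q_j - M^{-1})\unitmeasure{\vdim_j}r_j^{\vdim_j}$. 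Passing to a subsequence I may assume $\vdim_j = \vdim$ and $\adim_j = \adim$ are constant and $Q_j \to Q$ with $1 \leq Q \leq M$. Every hypothesis and the conclusion are invariant under composing with the homothety $z \mapsto r_j^{-1}(z - a_j)$ — the displayed bound on $\|\delta V\|$ being homogeneous of the correct degree under this rescaling — so I may and do take $a_j = 0$ and $r_j = 1$. By Allard \cite[5.5\,(1)]{MR0307015} each $V_j$ then belongs to $\RVar_\vdim(\oball{0}{1})$.

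The next step is to extract a limit. From $\|V_j\|(\oball{0}{1}) < M\unitmeasure{\vdim}$ one has the uniform bounds $\|V_j\|(\cball{0}{s}) \leq M\unitmeasure{\vdim}$ and $\|\delta V_j\|(\cball{0}{s}) \leq j^{-1}(M\unitmeasure{\vdim})^{1-1/\vdim}$ for $0 < s < 1$, so Allard's compactness theorem (see \cite[6.4]{MR0307015}) provides, along a further subsequence, a varifold $V \in \RVar_\vdim(\oball{0}{1})$ with $V_j \to V$ in the varifold topology, $\density^\vdim(\|V\|, z) \geq 1$ for $\|V\|$ almost all $z$, and, by lower semicontinuity of the total variation, $\|\delta V\|(\oball{0}{1}) \leq \liminf_j \|\delta V_j\|(\oball{0}{1}) = 0$, that is $\delta V = 0$. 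Since $\|V_j\|(\{z : \density^\vdim(\|V_j\|, z) < Q_j\}) \leq j^{-1}\|V_j\|(\oball{0}{1}) \to 0$ and $Q_j \to Q$, decomposing $V_j$ into the part carried by $\{z : \density^\vdim(\|V_j\|, z) \geq Q_j\}$ and the complementary part, whose mass tends to $0$, and using the standard lower semicontinuity of density bounds under varifold convergence, one also gets $\density^\vdim(\|V\|, z) \geq Q$ for $\|V\|$ almost all $z$. Finally, since $0 \in \spt\|V_j\|$ and $\|\delta V_j\|$ is small, the isoperimetric lower mass bound at support points (see \cite[2.5]{snulmenn.isoperimetric}) furnishes a positive constant $c = c(\adim, \vdim)$ with $\|V_j\|(\cball{0}{s}) \geq c\,s^\vdim$ for $0 < s \leq 1/2$ and all large $j$; passing to the limit along radii $s$ with $\|V\|(\Bdry\cball{0}{s}) = 0$, and then using monotonicity of $s \mapsto \|V\|(\cball{0}{s})$, gives $\|V\|(\cball{0}{s}) \geq c\,s^\vdim$ for $0 < s \leq 1/2$, whence $0 \in \spt\|V\|$.

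Thus $V$ is a stationary rectifiable $\vdim$ varifold in $\oball{0}{1}$ with $\density^\vdim(\|V\|, z) \geq Q$ for $\|V\|$ almost all $z$ and $0 \in \spt\|V\|$. By \ref{corollary:monotonicity} with $\eta(V, \cdot) = 0$, the function $s \mapsto s^{-\vdim}\|V\|(\cball{0}{s})$ is nondecreasing on $0 < s < 1$ with limit $\unitmeasure{\vdim}\density^\vdim(\|V\|, 0)$ as $s \to 0+$; applied at an arbitrary point in place of $0$ it also shows that $z \mapsto \density^\vdim(\|V\|, z)$ is the infimum of the upper semicontinuous functions $z \mapsto \unitmeasure{\vdim}^{-1}s^{-\vdim}\|V\|(\cball{z}{s})$, hence upper semicontinuous. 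As $\density^\vdim(\|V\|, \cdot) \geq Q$ almost everywhere and $0 \in \spt\|V\|$, every neighbourhood of $0$ meets $\{z : \density^\vdim(\|V\|, z) \geq Q\}$, so $\density^\vdim(\|V\|, 0) \geq Q$ by upper semicontinuity. Therefore $s^{-\vdim}\|V\|(\cball{0}{s}) \geq \unitmeasure{\vdim}Q$ for $0 < s < 1$, and letting $s \uparrow 1$ gives $\|V\|(\oball{0}{1}) \geq \unitmeasure{\vdim}Q$. On the other hand, lower semicontinuity of mass on the open set $\oball{0}{1}$ gives $\|V\|(\oball{0}{1}) \leq \liminf_j \|V_j\|(\oball{0}{1}) \leq \liminf_j (Q_j - M^{-1})\unitmeasure{\vdim} = (Q - M^{-1})\unitmeasure{\vdim}$, so $Q \leq Q - M^{-1}$, which is absurd.

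The main obstacle, where most of the real work lies, is the pair of inheritance claims in the second paragraph: that the weak limit is still rectifiable with $\density^\vdim(\|V\|, \cdot) \geq Q$ almost everywhere, and that $0$ remains in its support. Densities are not continuous under varifold convergence, so both rest on the quantitative lower density estimates coming from the monotonicity identity together with the isoperimetric inequality, and on verifying carefully that the low‑density part of $V_j$ — which a priori need not have controlled first variation — carries mass that escapes in the limit. The remaining steps are routine measure theory.
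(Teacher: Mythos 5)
Your proposal follows essentially the same route as the paper: argue by contradiction, normalise to $a=0$, $r=1$, pass to a subsequence, extract a stationary varifold limit $V$, use \cite[2.5]{snulmenn.isoperimetric} to keep $0$ in $\spt\|V\|$, deduce $\density^\vdim(\|V\|,0)\geq Q$ from an a.e.\ density bound and upper semicontinuity, and read off the contradiction via monotonicity. The use of \ref{corollary:monotonicity} to establish upper semicontinuity of the density is a perfectly fine variant of the step the paper cites from Allard.

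The one genuine soft spot is the step you yourself single out: asserting $\density^\vdim(\|V\|,z)\geq Q$ for $\|V\|$-a.e.\ $z$. As written, the argument ``decompose $V_j$ into the part carried by $\{\density^\vdim(\|V_j\|,\cdot)\geq Q_j\}$ and the small-mass remainder, then apply standard lower semicontinuity of density bounds'' does not go through, because standard density semicontinuity under varifold convergence is formulated for a sequence with \emph{uniformly controlled first variation and a uniform a.e.\ density lower bound}. The high-density restriction $V_j\restrict\{z : \density^\vdim(\|V_j\|,z)\geq Q_j\}\times\grass{\adim}{\vdim}$ converges to $V$ and has a uniform density bound, but its first variation is \emph{not} controlled; it picks up a boundary term along $\{\density^\vdim(\|V_j\|,\cdot)=Q_j\}$ that you have no estimate for. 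So the hypothesis of the semicontinuity theorem is not available for the sequence you apply it to, and the conclusion does not follow without further work. What is actually needed is to argue with the full, unrestricted varifolds $V_j$ (whose first variation \emph{is} controlled) and transfer the pointwise density information $\density^\vdim(\|V_j\|,z_j)\geq Q_j$ via the approximate monotonicity inequality at the points $z_j$; this is precisely what the citation of Allard \cite[5.4, 8.6, 5.1\,(2)]{MR0307015} in the paper's proof supplies in one stroke. You correctly flag this as the crux, but the proposal does not resolve it — the decomposition device you invoke is not the right tool here.
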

\begin{proof}
	If the lemma were false for some $M$, there would exist a sequence
	$\Gamma_i$ with $\Gamma_i \uparrow 0$ as $i \to \infty$, and sequences
	$\vdim_i$, $\adim_i$, $Q_i$, $a_i$, $r_i$, and $V_i$ showing that
	$\Gamma = \Gamma_i$ does not have the asserted property.

	One could assume for some $\vdim, \adim \in \nat$, $1 \leq Q \leq M$
	that $\vdim \leq \adim \leq M$,
	\begin{gather*}
		\vdim = \vdim_i, \quad \adim = \adim_i, \quad  a_i = 0, \quad
		r_i = 1
	\end{gather*}
	for $i \in \nat$ and $Q_i \to Q$ as $i \to \infty$ and, by
	\cite[2.5]{snulmenn.isoperimetric},
	\begin{gather*}
		\measureball{\| V_i \|}{\cball{0}{s}} \geq ( 2 \vdim
		\isoperimetric{\vdim} )^{-\vdim} s^\vdim \quad \text{whenever
		$0 < s < 1$ and $i \in \nat$}.
	\end{gather*}
	Defining $V \in \Var_\vdim ( \rel^\adim \cap \oball{0}{1} )$ to be the
	limit of some subsequence of $V_i$, one would obtain
	\begin{gather*}
		\measureball{\| V \|}{\oball{0}{1}} \leq ( Q - M^{-1} )
		\unitmeasure{\vdim}, \quad 0 \in \spt \| V \|, \quad \delta V
		= 0.
	\end{gather*}
	Finally, using Allard \cite[5.4, 8.6, 5.1\,(2)]{MR0307015}, one would
	then conclude that
	\begin{gather*}
		\density^\vdim ( \| V \|, z ) \geq Q \quad \text{for $\| V \|$
		almost all $z$}, \\
		\density^\vdim ( \| V \|, 0 ) \geq Q, \quad \measureball{\| V
		\|}{\oball{0}{1}} \geq Q \unitmeasure{\vdim},
	\end{gather*}
	a contradiction.
\end{proof}
\begin{remark}
	Considering stationary varifolds whose support is contained in two
	affine planes with $\density^\vdim ( \| V \| , a )$ a small positive
	number, shows that the hypotheses ``$\density^\vdim ( \| V \|, z )
	\geq 1$ for $\| V \|$ almost all $z$'' cannot be omitted.
\end{remark}
\begin{remark} \label{remark:kuwert_schaetzle}
	Taking $Q=1$ in \ref{lemma:lower_mass_bound_Q} (or applying
	\cite[2.6]{snulmenn.isoperimetric}) yields the following proposition:
	\emph{If $\vdim$, $\adim$, $p$, $U$, $V$, and $\psi$ are as in
	\ref{miniremark:situation_general}, $p = \vdim$, $a \in \spt \| V \|$,
	and $\psi ( \{ a \} ) = 0$, then $\density_\ast^\vdim ( \| V \|, a )
	\geq 1$.} If $\| \delta V \|$ is absolutely continuous with respect to
	$\| V \|$ then the condition $\psi ( \{ a \} ) = 0$ is redundant.

	If $\vdim = \adim$ and $f : \rel^\vdim \to \{ t \with 1 \leq t <
	\infty \}$ is a weakly differentiable function with $\weakD f \in
	\Lploc{\vdim} ( \mathscr{L}^\vdim, \Hom ( \rel^\vdim, \rel ) )$, then
	the varifold $V \in \RVar_\vdim ( \rel^\vdim )$ defined by the
	requirement $\| V \| ( B ) = \tint{B}{} f \ud \mathscr{L}^\vdim$
	whenever $B$ is a Borel subset of $\rel^\vdim$ satisfies the
	conditions of \ref{miniremark:situation_general} with $p = \vdim$
	since
	\begin{gather*}
		\delta V ( g) = - \tint{}{} \left < g (z), \weakD (\log \circ
		f ) (z) \right > \ud \| V \| z \quad \text{for $g \in
		\mathscr{D} ( \rel^\adim, \rel^\adim )$}.
	\end{gather*}
	If $\vdim > 1$ and $T$ is a countable subset of $\rel^\vdim$, one may
	use well known properties of Sobolev functions, in particular example
	\cite[4.43]{MR2424078}, to construct $f$ such that $T \subset \{ z
	\with \density^\vdim ( \| V \|, z ) = \infty \}$.  It is therefore
	evident that the conditions of \ref{miniremark:situation_general} with
	$p = \vdim>1$ are insufficient to guarantee finiteness or upper
	semicontinuity of $\density^\vdim ( \| V \|, \cdot )$ at each point of
	$U$. However, employing Brakke \cite[5.8]{MR485012}, the following
	proposition was obtained by Kuwert and Sch\"atzle in \cite[Appendix
	A]{MR2119722}: \emph{If $\vdim$, $\adim$, $p$, $U$, and $V$ are as in
	\ref{miniremark:situation_general}, $p = \vdim = 2$, and $V \in
	\IVar_2 ( U )$, then $\density^2 ( \| V \|, \cdot )$ is a real valued,
	upper semicontinuous function on $U$.}
\end{remark}
\begin{remark}
	The preceding remark is a corrected and extended version of the
	author's remark in \cite[2.7]{snulmenn.isoperimetric} where the last
	two sentences should have referred to integral varifolds.
\end{remark}
\begin{theorem} \label{thm:rel_iso_ineq}
	Suppose $1 \leq M < \infty$.

	Then there exists a positive, finite number $\Gamma$ with the
	following properity.

	If $\vdim, \adim \in \nat$, $\vdim \leq \adim \leq M$, $1 \leq Q \leq
	M$, $U$ is an open subset of $\rel^\adim$, $W \in \Var_\vdim ( U )$,
	$S, T \in \mathscr{D}' ( U, \rel^\adim )$ are representable by
	integration, $\delta W = S + T$, $\density^\vdim ( \| W \|, z ) \geq
	1$ for $\| W \|$ almost all $z$, $0 < r < \infty$, and
	\begin{gather*}
		\| S \| ( U ) \leq \Gamma^{-1} \qquad \text{if $\vdim = 1$},
		\\
		S(f) \leq \Gamma^{-1} \Lpnorm{\| W\|}{\vdim/(\vdim-1)}{f}
		\quad \text{for $f \in \mathscr{D} ( U, \rel^\adim )$} \qquad
		\text{if $\vdim > 1$}, \\
		\| W \| ( U ) \leq ( Q-M^{-1} ) \unitmeasure{\vdim} r^\vdim,
		\quad
		\| W \| ( \{ z \with \density^\vdim ( \| W \|, z )
		< Q \} ) \leq \Gamma^{-1} r^\vdim,
	\end{gather*}
	then
	\begin{gather*}
		\| W \| ( \{ z \with \oball{z}{r} \subset U \})^{1-1/\vdim}
		\leq \Gamma \, \| T \| ( U ),
	\end{gather*}
	where $0^0=0$.
\end{theorem}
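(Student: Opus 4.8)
The plan is to adapt the covering argument underlying the Michael--Simon and Allard isoperimetric inequalities, the only genuinely new local input being the mass lower bound \ref{lemma:lower_mass_bound_Q}, which is precisely what accommodates values $Q>1$. Write $A=\{z\with\oball zr\subset U\}$ and $\Gamma_0=\Gamma_{\ref{lemma:lower_mass_bound_Q}}(M)$, and require the number $\Gamma$ to exceed finitely many quantities depending only on $M$, in particular $2\Gamma_0$, $\Gamma_0(2\vdim\isoperimetric{\vdim})^\vdim$, and $2\Gamma_0\besicovitch{\adim}$; one may assume $Q-M^{-1}>0$ and $\|T\|(U)<\infty$, since otherwise $\|W\|=0$ or nothing is to prove, and since $\|W\|$ lives on $\spt\|W\|$ it suffices to bound $\|W\|(A\cap\spt\|W\|)$.

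The core of the proof is the assertion that \emph{for each $a\in A\cap\spt\|W\|$ there exists $\sigma=\sigma(a)\in(0,r)$ with $\|T\|(\cball a\sigma)\geq\tfrac12\Gamma_0^{-1}\|W\|(\cball a\sigma)^{1-1/\vdim}$.} Call $s\in(0,r)$ \emph{bad for $a$} if $\|\delta W\|(\cball as)>\Gamma_0^{-1}\|W\|(\cball as)^{1-1/\vdim}$, and suppose towards a contradiction that no $s\in(0,r)$ is bad for a given $a$. The absence of bad radii makes the hypotheses of \ref{lemma:lower_mass_bound_Q} available for $W\restrict\oball ar\times\grass{\adim}{\vdim}$: the first variation condition holds by definition, the density-$Q$ set is controlled using $\|W\|(\oball ar)\geq(2\vdim\isoperimetric{\vdim})^{-\vdim}r^\vdim$ (a consequence of \cite[2.5]{snulmenn.isoperimetric}, legitimate because there are no bad radii) together with the choice of $\Gamma$, and $a\in\spt\|W\|$ with $\density^\vdim(\|W\|,\cdot)\geq1$ almost everywhere. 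Hence $\|W\|(\oball ar)\geq(Q-M^{-1})\unitmeasure{\vdim}r^\vdim$, which forces $\|W\|(\oball ar)=\|W\|(U)$, so that the weight and the first variation of $W$ are entirely carried by $\cball ar$. Viewing $W$ as a varifold in $\oball a{2r}$ --- which alters none of the quantities involved because everything sits inside $\cball ar$ --- the absence of bad radii and the density information persist at all radii up to $2r$, so a second application of \ref{lemma:lower_mass_bound_Q}, now at radius $2r$, yields $\|W\|(U)=\|W\|(\oball a{2r})\geq(Q-M^{-1})\unitmeasure{\vdim}(2r)^\vdim=2^\vdim\|W\|(U)$, impossible since $\|W\|(U)>0$ and $\vdim\geq1$. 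Therefore a bad radius $\sigma=\sigma(a)\in(0,r)$ exists; since the hypothesis on $S$, tested against (approximations of) the characteristic function of $\cball a\sigma$, gives $\|S\|(\cball a\sigma)\leq\Gamma^{-1}\|W\|(\cball a\sigma)^{1-1/\vdim}$ (for $\vdim=1$ reading $\|W\|(\cball a\sigma)^{0}=1$ on sets of positive weight), and $\|T\|\geq\|\delta W\|-\|S\|$, the claim follows from $\Gamma\geq2\Gamma_0$.

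Granting the local claim, the conclusion follows by covering: the closed balls $\cball a{\sigma(a)}$, $a\in A\cap\spt\|W\|$, all lie in $U$ and cover $A\cap\spt\|W\|$, so Besicovitch's theorem \cite[2.8.14]{MR41:1976} furnishes a countable subfamily $\cball{a_i}{\sigma_i}$ still covering $A\cap\spt\|W\|$ with $\sum_i\id{\cball{a_i}{\sigma_i}}\leq\besicovitch{\adim}$. If $\vdim>1$, then combining the local claim with $\sum_i x_i^{\vdim/(\vdim-1)}\leq(\sum_i x_i)^{\vdim/(\vdim-1)}$ for $x_i\geq0$ and the overlap bound gives
\[
	\|W\|(A)\leq\sum_i\|W\|(\cball{a_i}{\sigma_i})\leq\big(2\Gamma_0\besicovitch{\adim}\|T\|(U)\big)^{\vdim/(\vdim-1)},
\]
which is the desired inequality; if $\vdim=1$ and $\|W\|(A)>0$ then some such $a$ exists and $\|T\|(U)\geq\tfrac12\Gamma_0^{-1}$, again the desired inequality in view of $\|W\|(A)^{0}=1$ and the convention $0^0=0$.

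The crux --- and the step requiring the most care --- is the existence of a bad radius, that is, ruling out a point $a$ at which the first variation is small at every scale below $r$. Here the mass upper bound $\|W\|(U)\leq(Q-M^{-1})\unitmeasure{\vdim}r^\vdim$ is indispensable: it converts ``small first variation'' into ``all weight concentrated in $\oball ar$'', a situation then destroyed by reapplying \ref{lemma:lower_mass_bound_Q} on the doubled ball. Making the reembedding and the propagation of the hypotheses fully rigorous requires some routine attention to the weight and the first variation on the sphere $\{z\with|z-a|=r\}$, and, when $\cball ar\not\subset U$, near $\partial U$; this can be dispatched by first treating the case of strict inequality $\|W\|(U)<(Q-M^{-1})\unitmeasure{\vdim}r^\vdim$ --- where a single application of \ref{lemma:lower_mass_bound_Q} already excludes the absence of bad radii, with no doubling needed --- and then recovering the general case by replacing $r$ with slightly larger radii and passing to the limit.
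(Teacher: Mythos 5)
Your overall architecture matches the paper's: fix $a\in A\cap\spt\|W\|$, use the failure of the conclusion of \ref{lemma:lower_mass_bound_Q} to force a ``bad'' radius at which $\|\delta W\|$, and hence $\|T\|$, is large relative to $\|W\|^{1-1/\vdim}$, then close with a Besicovitch covering. That part is sound. The gap is in how you exclude the equality case, and it is genuine.

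Your argument derives $\measureball{\|W\|}{\oball ar}\geq(Q-M^{-1})\unitmeasure\vdim r^\vdim$ from \ref{lemma:lower_mass_bound_Q} applied with the given $M$; since the hypothesis only gives $\|W\|(U)\leq(Q-M^{-1})\unitmeasure\vdim r^\vdim$, this yields equality rather than a contradiction, and you then try to finish by (i) reembedding $W$ into $\oball a{2r}$ and reapplying the lemma at radius $2r$, or (ii) reducing to the strict-inequality case via $r\mapsto r'>r$. Step (i) is not ``alter[ing] none of the quantities'': from $\|W\|(U\without\oball ar)=0$ one gets $\spt\|W\|\subset\cball ar\cap U$, but if $\cball ar\not\subset U$ the trivial extension $W'\in\Var_\vdim(\oball a{2r})$ can acquire first variation along $\partial U\cap\cball ar$ that is invisible to $\|\delta W\|$ (a half-open segment approaching $\partial U$ already does this). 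Moreover, even when $\cball ar\subset U$, the absence of bad radii gives $\measureball{\|\delta W\|}{\cball as}\leq\Gamma_0^{-1}\|W\|(\cball as)^{1-1/\vdim}$ only for $0<s<r$, which by $s\uparrow r$ controls $\|\delta W\|(\oball ar)$ but not $\|\delta W\|(\cball ar)$; since $\|W\|$ vanishing on $U\without\oball ar$ does not force $\|\delta W\|$ to vanish there, the hypotheses of \ref{lemma:lower_mass_bound_Q} at scales $s\geq r$ can fail for $W'$. Step (ii) does not recover the general case either: for $r'>r$ one has $A_{r'}=\{z\with\oball z{r'}\subset U\}$ and
\begin{gather*}
	{\textstyle\bigcup_{r'>r}} A_{r'} = \{ z \with \cball zr \subset U \} \subsetneq A,
\end{gather*}
and the difference, i.e.\ $\{z\with\oball zr\subset U,\ \cball zr\not\subset U\}$, can carry positive $\|W\|$-measure (for instance $U=\oball0{2}$, $r=1$, $W$ associated to the unit circle), so the bounds you obtain for $\|W\|(A_{r'})$ do not pass to $\|W\|(A)$ in the limit.

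The paper's mechanism avoids all of this by building slack into the constant rather than into the geometry: it defines $\Gamma_{\ref{thm:rel_iso_ineq}}(M)$ using $\Gamma_{\ref{lemma:lower_mass_bound_Q}}(2M)$, exploits $\|W\|(U)\leq(Q-M^{-1})\unitmeasure\vdim r^\vdim<(Q-(2M)^{-1})\unitmeasure\vdim r^\vdim$ so the conclusion of \ref{lemma:lower_mass_bound_Q} at parameter $2M$ fails \emph{strictly} at scale $r$, and then reads off from the contrapositive that the density-defect set must exceed $\Gamma_{\ref{lemma:lower_mass_bound_Q}}(2M)^{-1}\measureball{\|W\|}{\oball zr}$. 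Combined with the lower mass bound $\measureball{\|W\|}{\oball zr}\geq(2\vdim\isoperimetric\vdim)^{-\vdim}r^\vdim$ from \cite[2.5]{snulmenn.isoperimetric} (valid in the absence of bad radii), this contradicts $\|W\|(\{z\with\density^\vdim(\|W\|,z)<Q\})\leq\Gamma^{-1}r^\vdim$. A single application at scale $r$ suffices; no reembedding, no doubling, no approximation. Replacing your use of $\Gamma_{\ref{lemma:lower_mass_bound_Q}}(M)$ by $\Gamma_{\ref{lemma:lower_mass_bound_Q}}(2M)$ and arguing via the contrapositive and the density-defect set is the fix you want.
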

\begin{proof}
	Define
	\begin{gather*}
		\Delta_1 = \Gamma_{\ref{lemma:lower_mass_bound_Q}} ( 2 M
		)^{-1}, \quad \Delta_2 = \inf \{ ( 2
		\isoperimetric{\vdim} )^{-1} \with M \geq \vdim \in \nat
		\}, \\
		\Delta_3 = \Delta_1 \inf \{ ( 2 \vdim
		\isoperimetric{\vdim})^{-\vdim} \with M \geq \vdim \in \nat
		\}, \quad \Delta_4 = \sup \{ \besicovitch{\adim} \with M \geq
		\adim \in \nat \}, \\
		\Delta_5 = (1/2) \inf \{ \Delta_2, \Delta_3 \}, \quad \Gamma =
		\Delta_4 \Delta_5^{-1}.
	\end{gather*}
	Notice that $\isoperimetric{1} \geq 1/2$, hence $2 \Delta_5 \leq
	\Gamma_{\ref{lemma:lower_mass_bound_Q}} ( 2M )^{-1}$.

	Suppose $\vdim$, $\adim$, $Q$, $U$, $W$, $S$, $T$, and $r$ satisfy the
	hypotheses in the body of the theorem with $\Gamma$.

	Abbreviate $A = \classification{U}{z}{ \oball{z}{r} \subset U }$.
	Clearly, if $\vdim=1$ then $\| S \| ( U ) \leq \Delta_5$.  Observe, if
	$\vdim>1$ then
	\begin{gather*}
		\| S \| (C) \leq \Delta_5 \| W \| ( C )^{1-1/\vdim} \quad
		\text{whenever $C \subset U$}.
	\end{gather*}

	Next, the following assertion will be shown: \emph{If $z \in A \cap
	\spt \| W \|$ then there exists $0 < s < r$ such that
	\begin{gather*}
		\Delta_5 \| W \| ( \cball{z}{s} )^{1-1/\vdim} <
		\measureball{\| T \|}{ \cball{z}{s} }.
	\end{gather*}}
	Since $\measureball{\| \delta W \|}{\cball{z}{s}} \leq \Delta_5 \|
	W \| ( \cball{z}{s} )^{1-1/\vdim} + \measureball{\| T
	\|}{\cball{z}{s}}$, it is sufficient to exhibit $0 < s < r$ with
	\begin{gather*}
		2 \Delta_5 \| W \| ( \cball{z}{s} )^{1-1/\vdim} <
		\measureball{\| \delta W \|}{ \cball{z}{s} }.
	\end{gather*}
	As $\measureball{\| W \|}{\oball{z}{r}} \leq \| W \| ( U ) < \big (
	Q-(2M)^{-1} \big ) \unitmeasure{\vdim} r^\vdim$ the nonexistence of
	such $s$ would imply by use of \cite[2.5]{snulmenn.isoperimetric} and
	\ref{lemma:lower_mass_bound_Q}
	\begin{gather*}
		\begin{aligned}
			\Delta_1 ( 2 \vdim \isoperimetric{\vdim} )^{-\vdim}
			r^\vdim & \leq \Delta_1 \measureball{\| W
			\|}{\oball{z}{r}} \\
			& < \| W \| ( \classification{\oball{z}{r}}{\xi}{
			\density^\vdim ( \| W \|, \xi ) < Q } ) \leq \Delta_5
			r^\vdim,
		\end{aligned}
	\end{gather*}
	a contradiction.

	By the assertion of the preceding paragraph there exist countable
	disjointed families of closed balls $F_1, \ldots,
	F_{\besicovitch{\adim}}$ such that
	\begin{gather*}
		A \cap \spt \| W \| \subset {\textstyle\bigcup\bigcup} \{ F_i
		\with i=1, \ldots, \besicovitch{\adim} \} \subset U, \\
		\| W \| ( D )^{1-1/\vdim} \leq \Delta_5^{-1} \| T \| ( D )
		\quad \text{for $D \in {\textstyle\bigcup} \{ F_i \with i=1,
		\ldots, \besicovitch{\adim} \}$}.
	\end{gather*}
	If $\vdim > 1$ then, defining $\beta = \vdim / (\vdim-1)$, one
	estimates
	\begin{gather*}
		\begin{aligned}
			\| W \| (A) & \leq \tsum{i=1}{\besicovitch{\adim}}
			\tsum{D \in F_i}{} \| W \| ( D ) \leq
			\Delta_5^{-\beta} \tsum{i=1}{\besicovitch{\adim}}
			\tsum{D \in F_i}{} \| T \| ( D )^\beta \\
			& \leq \Delta_5^{-\beta}
			\tsum{i=1}{\besicovitch{\adim}} \big ( \tsum{D \in
			F_i}{} \| T \| (D) \big )^\beta \leq
			\Delta_5^{-\beta} \besicovitch{\adim} \| T \| ( U
			)^\beta.
		\end{aligned}
	\end{gather*}
	If $\vdim = 1$ and $\| W \| (A) > 0$ then $\Delta_5 \leq \| T \| (D)
	\leq \| T \| ( U )$ for some $D \in \bigcup \{ F_i \with i = 1,
	\ldots, \besicovitch{\adim} \}$.
\end{proof}
\begin{corollary} \label{corollary:rel_iso_ineq}
	Suppose $\vdim$, $\adim$, $U$, $V$, $E$, and $B$ are as in
	\ref{miniremark:zero_boundary}, $1 \leq Q \leq M < \infty$, $\adim
	\leq M$, $\Gamma = \Gamma_{\ref{thm:rel_iso_ineq}} ( M )$, $0 < r <
	\infty$, and
	\begin{gather*}
		\| V \| (E) \leq (Q-M^{-1}) \unitmeasure{\vdim} r^\vdim, \quad
		\| V \| ( \classification{E}{z}{ \density^\vdim ( \| V \|, z )
		< Q } ) \leq \Gamma^{-1} r^\vdim.
	\end{gather*}
	
	Then
	\begin{gather*}
		\| V \| ( \classification{E}{z}{ \oball{z}{r} \subset
		\rel^\adim \without B } )^{1-1/\vdim} \leq \Gamma \big ( \|
		\boundary{V}{E} \| (U) + \| \delta V \| ( E ) \big ),
	\end{gather*}
	where $0^0 = 0$.
\end{corollary}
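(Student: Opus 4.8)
The plan is to apply the general isoperimetric inequality \ref{thm:rel_iso_ineq} to the varifold $W \in \Var_\vdim ( \rel^\adim \without B )$ furnished by \ref{miniremark:zero_boundary}, working over the open set $\rel^\adim \without B$ in place of $U$ and with the trivial splitting $\delta W = S + T$ given by $S = 0$ and $T = \delta W$. The point of this choice is that \emph{no} smallness of the first variation is needed here: $S = 0$ is representable by integration with $\| S \| = 0$, so both forms of the hypothesis on $S$ in \ref{thm:rel_iso_ineq} hold automatically, namely $\| S \| ( \rel^\adim \without B ) \leq \Gamma^{-1}$ in case $\vdim = 1$ and $S ( f ) \leq \Gamma^{-1} \Lpnorm{\| W \|}{\vdim/(\vdim-1)}{f}$ for $f \in \mathscr{D} ( \rel^\adim \without B, \rel^\adim )$ in case $\vdim > 1$; and \ref{miniremark:zero_boundary} already records that $\| \delta W \|$ is a Radon measure with
\begin{gather*}
	\| \delta W \| ( A ) \leq \| \boundary{V}{E} \| ( U \cap A ) + \| \delta V \| ( E \cap A ) \quad \text{whenever $A \subset \rel^\adim \without B$},
\end{gather*}
so that $T$ is representable by integration. (The sharper variant in which $\| \delta V \| ( E )$ is replaced by a quantity built from the mean curvature is obtained instead by absorbing the mean curvature term into $S$ and invoking the second form of the $S$-hypothesis above; this is \ref{corollary:true_rel_iso_ineq}.)

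It then remains to match the geometric hypotheses. Since $E \subset U \subset \rel^\adim \without B$ one has $\| W \| = \| V \| \restrict E$ as Radon measures on $\rel^\adim \without B$, whence $\| W \| ( \rel^\adim \without B ) = \| V \| ( E ) \leq ( Q-M^{-1} ) \unitmeasure{\vdim} r^\vdim$; moreover $\vdim \leq \adim \leq M$ and $1 \leq Q \leq M$ hold by hypothesis and $\rel^\adim \without B$ is open as $B$ is closed. For the density conditions one uses that $\density^\vdim ( \| V \|, z ) \geq 1$ for $\| V \|$ almost all $z$ by \ref{miniremark:situation_general} and that $\density^\vdim ( \| V \| \restrict E, z ) = \density^\vdim ( \| V \|, z )$ for $\| V \| \restrict E$ almost all $z$, by \cite[2.8.18, 2.9.11]{MR41:1976}; consequently $\density^\vdim ( \| W \|, z ) \geq 1$ for $\| W \|$ almost all $z$, and $\{ z \with \density^\vdim ( \| W \|, z ) < Q \}$ agrees, up to a $\| W \|$ null set, with $\classification{E}{z}{ \density^\vdim ( \| V \|, z ) < Q }$, so that
\begin{gather*}
	\| W \| ( \{ z \with \density^\vdim ( \| W \|, z ) < Q \} ) \leq \| V \| ( \classification{E}{z}{ \density^\vdim ( \| V \|, z ) < Q } ) \leq \Gamma^{-1} r^\vdim.
\end{gather*}

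Applying \ref{thm:rel_iso_ineq} with $\Gamma = \Gamma_{\ref{thm:rel_iso_ineq}} ( M )$ now yields
\begin{gather*}
	\| W \| ( \{ z \with \oball{z}{r} \subset \rel^\adim \without B \} )^{1-1/\vdim} \leq \Gamma \, \| T \| ( \rel^\adim \without B ).
\end{gather*}
Using $\| W \| = \| V \| \restrict E$ on the left and, on the right, $\| T \| ( \rel^\adim \without B ) = \| \delta W \| ( \rel^\adim \without B ) \leq \| \boundary{V}{E} \| ( U ) + \| \delta V \| ( E )$ by the estimate recalled above (together with $U \subset \rel^\adim \without B$ and $E \subset U$), one arrives at
\begin{gather*}
	\| V \| ( \classification{E}{z}{ \oball{z}{r} \subset \rel^\adim \without B } )^{1-1/\vdim} \leq \Gamma \big ( \| \boundary{V}{E} \| ( U ) + \| \delta V \| ( E ) \big ),
\end{gather*}
with the convention $0^0 = 0$ inherited from \ref{thm:rel_iso_ineq}, which is the assertion. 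The only step requiring genuine care is the transfer of the two density conditions from $V$ to $W$, which is routine measure theory; everything else is direct substitution into \ref{thm:rel_iso_ineq}.
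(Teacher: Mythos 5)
Your proposal is correct and takes essentially the same route as the paper: define $W$ as in \ref{miniremark:zero_boundary}, transfer the density bounds from $V$ to $W$ via \cite[2.9.11]{MR41:1976}, and apply \ref{thm:rel_iso_ineq} over $\rel^\adim \without B$ with $S = 0$ and $T = \delta W$. The paper's proof is terser but uses exactly this decomposition; your only additions are careful bookkeeping of the hypotheses, which is sound.
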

\begin{proof}
	Define $W$ as in \ref{miniremark:zero_boundary} and note
	$\density^\vdim ( \| W \|, z ) = \density^\vdim ( \| V \|, z )$ for
	$\| W \|$ almost all $z$ by \cite[2.9.11]{MR41:1976}. Therefore
	applying \ref{thm:rel_iso_ineq} with $U$, $S$, and $T$ replaced by
	$\rel^\adim \without B$, $0$, and $\delta W$ yields the conclusion.
\end{proof}
\begin{corollary} \label{corollary:true_rel_iso_ineq}
	Suppose $\vdim$, $\adim$, $p$, $U$, $V$, and $\psi$ are as in
	\ref{miniremark:situation_general}, $p = \vdim$, $\adim \leq M$, $E$
	and $B$ are related to $\vdim$, $\adim$, $U$, and $V$, as in
	\ref{miniremark:zero_boundary}, $1 \leq Q \leq M < \infty$, $\Gamma =
	\Gamma_{\ref{thm:rel_iso_ineq}} ( M )$, $0 < r < \infty$, and
	\begin{gather*}
		\| V \| ( E ) \leq (Q-M^{-1}) \unitmeasure{\vdim} r^\vdim,
		\quad \psi (E)^{1/\vdim} \leq \Gamma^{-1} , \\
		\| V \| ( \classification{E}{z}{ \density^\vdim ( \| V \|, z )
		< Q } ) \leq \Gamma^{-1} r^\vdim.
	\end{gather*}

	Then
	\begin{gather*}
		\| V \| ( \classification{E}{z}{ \oball{z}{r} \subset
		\rel^\adim \without B } )^{1-1/\vdim} \leq \Gamma \, \|
		\boundary{V}{E} \| ( U ),
	\end{gather*}
	where $0^0=0$.
\end{corollary}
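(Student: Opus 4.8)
The plan is to apply \ref{thm:rel_iso_ineq} to the varifold $W \in \Var_\vdim ( \rel^\adim \without B )$ introduced in \ref{miniremark:zero_boundary}, after splitting its first variation as $\delta W = S + T$ in such a way that $T$ carries the distributional boundary $\boundary{V}{E}$ while $S$ carries the first variation of $V$ on $E$ and is controlled by $\psi ( E )^{1/\vdim}$ rather than by $\| \delta V \| ( E )$; this last point is precisely the improvement over \ref{corollary:rel_iso_ineq}.

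Since the asserted inequality is trivial when $\| \boundary{V}{E} \| ( U ) = \infty$, I would assume it finite. By the defining identity in \ref{miniremark:zero_boundary} one has $\delta W ( \theta ) = ( ( \delta V ) \restrict E ) ( \theta | U ) - ( \boundary{V}{E} ) ( \theta | U )$ for $\theta \in \mathscr{D} ( \rel^\adim \without B, \rel^\adim )$, so I set $T = - \boundary{V}{E}$ and $S = ( \delta V ) \restrict E$, both viewed as distributions on $\rel^\adim \without B$; then $\| T \| ( \rel^\adim \without B ) \leq \| \boundary{V}{E} \| ( U ) < \infty$. If $\vdim = 1$, then $\psi = \| \delta V \|$ and $\| S \| ( \rel^\adim \without B ) \leq \| \delta V \| ( E ) = \psi ( E ) = \psi ( E )^{1/\vdim} \leq \Gamma^{-1}$. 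If $\vdim > 1$, then $\psi = \| V \| \restrict | \mathbf{h} ( V ; \cdot ) |^\vdim$ and $\delta V ( g ) = - \tint{}{} g \bullet \mathbf{h} ( V ; z ) \ud \| V \| z$, so $S ( f ) = - \tint{E}{} f \bullet \mathbf{h} ( V ; z ) \ud \| V \| z$, and H\"older's inequality with exponents $\vdim$ and $\vdim/(\vdim-1)$, together with $\| W \| ( A ) = \| V \| ( A \cap E )$ for $A \subset \rel^\adim \without B$ and $\spt f \subset \rel^\adim \without B$, gives
\begin{gather*}
	| S ( f ) | \leq \Big ( \tint{E}{} | \mathbf{h} ( V ; z ) |^\vdim \ud \| V \| z \Big )^{1/\vdim} \Big ( \tint{}{} | f |^{\vdim/(\vdim-1)} \ud \| W \| \Big )^{(\vdim-1)/\vdim} \\
	= \psi ( E )^{1/\vdim} \Lpnorm{\| W \|}{\vdim/(\vdim-1)}{f} \leq \Gamma^{-1} \Lpnorm{\| W \|}{\vdim/(\vdim-1)}{f}
\end{gather*}
for $f \in \mathscr{D} ( \rel^\adim \without B, \rel^\adim )$. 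In either case $\| S \| ( \rel^\adim \without B ) \leq \psi ( E )^{1/\vdim} \| V \| ( E )^{1-1/\vdim} < \infty$, so $S$ and $T$ — and hence $\delta W = S + T$ — are representable by integration, and $\delta W = S + T$ is a decomposition of the type demanded by \ref{thm:rel_iso_ineq}.

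It then remains to transfer the three smallness hypotheses from $V$ to $W$: $\density^\vdim ( \| W \|, z ) = \density^\vdim ( \| V \|, z ) \geq 1$ for $\| W \|$ almost all $z$ by \cite[2.9.11]{MR41:1976}; $\| W \| ( \rel^\adim \without B ) = \| V \| ( E ) \leq ( Q - M^{-1} ) \unitmeasure{\vdim} r^\vdim$; and $\| W \| ( \{ z \with \density^\vdim ( \| W \|, z ) < Q \} ) = \| V \| ( \classification{E}{z}{ \density^\vdim ( \| V \|, z ) < Q } ) \leq \Gamma^{-1} r^\vdim$. Applying \ref{thm:rel_iso_ineq} with $U$, $W$, $S$, $T$ there taken to be $\rel^\adim \without B$, $W$, $S$, $T$ here yields
\begin{gather*}
	\| W \| ( \{ z \with \oball{z}{r} \subset \rel^\adim \without B \} )^{1-1/\vdim} \leq \Gamma \, \| T \| ( \rel^\adim \without B ) \leq \Gamma \, \| \boundary{V}{E} \| ( U ),
\end{gather*}
and since $\| W \| ( \{ z \with \oball{z}{r} \subset \rel^\adim \without B \} ) = \| V \| ( \classification{E}{z}{ \oball{z}{r} \subset \rel^\adim \without B } )$ this is exactly the assertion, with the convention $0^0 = 0$ inherited from \ref{thm:rel_iso_ineq}.

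The substantive step is the H\"older estimate that recasts the mean curvature part of $\delta W$ in the dual Sobolev form required by \ref{thm:rel_iso_ineq}; everything else is bookkeeping comparing $W$ with $V \restrict E \times \grass{\adim}{\vdim}$. The one spot to be careful with is the claim that $S$ and $T$ are \emph{separately} representable by integration, not merely their sum $\delta W$ — this is precisely what the finite bounds on $\| S \| ( \rel^\adim \without B )$ and $\| T \| ( \rel^\adim \without B )$ supply.
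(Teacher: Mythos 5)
Your proof follows the paper's almost verbatim: both define $W$ as in \ref{miniremark:zero_boundary}, take $S = (\delta V) \restrict E$ and $T = -\boundary{V}{E}$ viewed on $\rel^\adim \without B$, bound $S$ by H\"older (or directly when $\vdim = 1$) so as to exploit $\psi(E)^{1/\vdim} \leq \Gamma^{-1}$, and then apply \ref{thm:rel_iso_ineq} with the ambient open set $\rel^\adim \without B$. The only step you gloss over is the identity $S(\theta) = -\tint{E}{} \theta(z) \bullet \mathbf{h}(V;z) \ud \|V\|z$ for $\theta \in \mathscr{D}(\rel^\adim \without B, \rel^\adim)$: since $\theta | U$ generally lacks compact support in $U$, this requires noting $\theta|U \in \Lp{1}(\|\delta V\| \restrict E, \rel^\adim)$ and approximating by test functions supported in $U$ via the extension of \ref{miniremark:extension} — a point the paper records explicitly but which is otherwise routine.
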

\begin{proof}
	Define $W$ as in \ref{miniremark:zero_boundary} and note
	$\density^\vdim ( \| W \|, z ) = \density^\vdim ( \| V \|, z )$ for
	$\| W \|$ almost all $z$ by \cite[2.9.11]{MR41:1976}. If $\vdim > 1$
	then approximation shows
	\begin{gather*}
		( ( \delta V ) \restrict E ) ( \theta | U ) = - \tint{E}{}
		\theta (z) \bullet \mathbf{h} (V;z) \ud \| V \| z \quad
		\text{for $\theta \in \mathscr{D} ( \rel^\adim \without B,
		\rel^\adim )$};
	\end{gather*}
	in fact since $\| ( \delta V ) \restrict E \| = \| \delta V \|
	\restrict E$ and $\theta |U \in \Lp{1} ( \| \delta V \| \restrict E,
	\rel^\adim )$ the problem reduces to the case $\spt \theta \subset U$
	which is readily treated. Therefore applying \ref{thm:rel_iso_ineq}
	with $U$, $S(\theta)$ and $T( \theta )$ replaced by $\rel^\adim
	\without B$, $(( \delta V ) \restrict E ) ( \theta | U )$ and $- (
	\boundary{V}{E} ) ( \theta | U )$ yields the conclusion.
\end{proof}
\begin{remark}
	The corollary is related to the ``relative isoperimetric inequality'',
	see \cite[4.5.2, 4.5.3]{MR41:1976} or \cite[(3.43)]{MR2003a:49002} for
	the case $\vdim = \adim$. In case $\vdim = \adim-1$ an inequality of
	this kind for area minimising currents was obtained by De~Giorgi, see
	Bombieri and Giusti \cite[Theorem 2]{MR0308945}.
\end{remark}
\begin{remark}
	Evidently, considering $E=U$, $B = \Bdry U$, $Q=1$, and stationary
	varifolds $V$ such that $\| V \| ( U ) $ is a small positive number
	shows that $\{ z \with \oball{z}{r} \cap B = \varnothing \}$ cannot be
	replaced by $U$ in the conclusion of the corollary.
\end{remark}
\begin{example}
	The set $\classification{U}{z}{ \oball{z}{r} \subset \rel^\adim
	\without B }$ may be rather small even if $V \in \IVar_\vdim (U)$ is
	stationary and indecomposable.

	Consider $\vdim = 2$, $\adim=3$,
	\begin{gather*}
		U = \classification{\rel^3}{(z_1,z_2,z_3)}{ z_1^2 + z_2^2 <1
		}, \quad B = \Bdry U, \quad Q = 1, \\
		N = \classification{\rel^3}{(z_1,z_2,z_3)}{ \cosh z_3 = (
		z_1^2 + z_2^2)^{1/2} },
	\end{gather*}
	$V_i = \var ( U \cap \boldsymbol{\mu}_{1/i} \lIm N \rIm ) \in \Var_2 (
	U )$ for $i \in \nat$, $E = \classification{U}{(z_1,z_2,z_3)}{z_3 <
	0}$ and choose $r_i$ as in \ref{corollary:true_rel_iso_ineq}. Since
	$N$ is the catenoid, $\delta V_i = 0$. Moreover, one readily verifies
	\begin{gather*}
		\lim_{i \to \infty} \| \boundary{V_i}{E} \| ( U ) = 0, \qquad
		\lim_{i \to \infty} \| V_i \| ( E \cap \cball{0}{s} ) =
		\unitmeasure{2} s^2 \quad \text{for $0 < s \leq 1$}.
	\end{gather*}
	Therefore \ref{corollary:true_rel_iso_ineq} implies $\liminf_{i \to
	\infty} r_i \geq 1$. The indecomposability of the $V_i$ is a
	consequence of Allard \cite[4.6\,(3)]{MR0307015}.
\end{example}
\section{Embeddings into Lebesgue spaces} \label{sec:embeddings}
In this section a variety of Sobolev-Poincar{\'e} type inequalities for weakly
differentiable functions are established by means of the relative
isoperimetric inequalities \ref{corollary:rel_iso_ineq} and
\ref{corollary:true_rel_iso_ineq}. The key are local estimates under a
smallness condition on set of points where the function is nonzero, see
\ref{thm:sob_poin_summary}\,\eqref{item:sob_poin_summary:interior}. These
estimate are formulated in such a way as to improve in case the function is
zero on an open part of the boundary. Consequently, Sobolev inequalities are
essentially a special case, see
\ref{thm:sob_poin_summary}\,\eqref{item:sob_poin_summary:global}. Local
integrability results also follow, see \ref{corollary:integrability}. Finally,
versions without the previously hypothesed smallness condition are derived in
\ref{thm:sob_poincare_q_medians} and \ref{thm:sob_poin_several_med}.

The differentiability results which will be derived in \ref{thm:approx_diff}
and \ref{thm:diff_lebesgue_spaces} is based on
\ref{thm:sob_poin_summary}\,\eqref{item:sob_poin_summary:interior} whereas the
oscillation estimates which will be proven in \ref{thm:mod_continuity} and
\ref{thm:hoelder_continuity} employ \ref{thm:sob_poin_several_med}.

\begin{theorem} \label{thm:sob_poin_summary}
	Suppose $1 \leq M < \infty$.

	Then there exists a positive, finite number $\Gamma$ with the
	following property.

	If $l \in \nat$, $\vdim$, $\adim$, $p$, $U$, $V$, and $\psi$ are as in
	\ref{miniremark:situation_general}, $\adim \leq M$, $G$ is a
	relatively open subset of $\Bdry U$, $B = ( \Bdry U ) \without G$, and
	$f \in \trunc_G ( V, \rel^l )$, then the following two statements
	hold:
	\begin{enumerate}
		\item \label{item:sob_poin_summary:interior} Suppose $1 \leq Q
		\leq M$, $0 < r < \infty$, $E = U \cap \{ z \with f(z) \neq 0
		\}$,
		\begin{gather*}
			\| V \| ( E ) \leq ( Q-M^{-1} ) \unitmeasure{\vdim}
			r^\vdim, \\
			\| V \| ( E \cap \{ z \with \density^\vdim ( \| V \|,
			z ) < Q \} ) \leq \Gamma^{-1} r^\vdim,
		\end{gather*}
		and $A = U \cap \{ z \with \oball{z}{r} \subset \rel^\adim
		\without B \}$. Then the following four implications hold:
		\begin{enumerate}
			\item \label{item:sob_poin_summary:interior:p=1} If $p
			= 1$, $\beta = \infty$ if $\vdim = 1$ and $\beta =
			\vdim/(\vdim-1)$ if $\vdim > 1$, then
			\begin{gather*}
				\eqLpnorm{\| V \| \restrict A}{\beta}{f} \leq
				\Gamma \big ( \Lpnorm{\| V \|}{1}{
				\derivative{V}{f} } + \Lpnorm{\| \delta V
				\|}{1} {f} \big ).
			\end{gather*}
			\item \label{item:sob_poin_summary:interior:p=m=1} If
			$p = \vdim = 1$ and $\psi ( E ) \leq \Gamma^{-1}$,
			then
			\begin{gather*}
				\eqLpnorm{\| V \| \restrict A}{\infty}{f} \leq
				\Gamma \, \Lpnorm{\| V \|}{1}{
				\derivative{V}{f} }.
			\end{gather*}
			\item \label{item:sob_poin_summary:interior:q<m=p} If
			$1 \leq q < \vdim = p$ and $\psi ( E ) \leq
			\Gamma^{-1}$, then
			\begin{gather*}
				\eqLpnorm{\| V \| \restrict A}{\vdim
				q/(\vdim-q)}{f} \leq \Gamma (\vdim-q)^{-1}
				\Lpnorm{\| V \|}{q}{ \derivative{V}{f} }.
			\end{gather*}
			\item \label{item:sob_poin_summary:interior:p=m<q} If
			$1 < p = \vdim < q \leq \infty$ and $\psi ( E ) \leq
			\Gamma^{-1}$, then
			\begin{gather*}
				\eqLpnorm{\| V \| \restrict A}{\infty}{f} \leq
				\Gamma^{1/(1/\vdim-1/q)} \| V \| ( E
				)^{1/\vdim-1/q} \Lpnorm{\| V \|}{q}{
				\derivative{V}{f} }.
			\end{gather*}
		\end{enumerate}
		\item \label{item:sob_poin_summary:global} Suppose $G = \Bdry
		U$, $E = U \cap \{ z \with f(z) \neq 0 \}$, and $\| V \| ( E )
		< \infty$. Then the following four implications hold:
		\begin{enumerate}
			\item \label{item:sob_poin_summary:global:p=1} If $p
			= 1$, $\beta = \infty$ if $\vdim = 1$ and $\beta =
			\vdim/(\vdim-1)$ if $\vdim > 1$, then
			\begin{gather*}
				\Lpnorm{\| V \|}{\beta}{f} \leq \Gamma \big
				( \Lpnorm{\| V \|}{1}{ \derivative{V}{f} } +
				\Lpnorm{\| \delta V \|}{1} {f} \big ).
			\end{gather*}
			\item \label{item:sob_poin_summary:global:p=m=1} If
			$p = \vdim = 1$ and $\psi ( E ) \leq \Gamma^{-1}$,
			then
			\begin{gather*}
				\Lpnorm{\| V \|}{\infty}{f} \leq \Gamma \,
				\Lpnorm{\| V \|}{1}{ \derivative{V}{f} }.
			\end{gather*}
			\item \label{item:sob_poin_summary:global:q<m=p} If
			$1 \leq q < \vdim = p$ and $\psi ( E ) \leq
			\Gamma^{-1}$, then
			\begin{gather*}
				\Lpnorm{\| V \|}{\vdim q/(\vdim-q)}{f} \leq
				\Gamma (\vdim-q)^{-1} \Lpnorm{\| V \|}{q}{
				\derivative{V}{f} }.
			\end{gather*}
			\item \label{item:sob_poin_summary:global:p=m<q} If
			$1 < p = \vdim < q \leq \infty$ and $\psi ( E ) \leq
			\Gamma^{-1}$, then
			\begin{gather*}
				\Lpnorm{\| V \|}{\infty}{f} \leq
				\Gamma^{1/(1/\vdim-1/q)} \| V \| (
				E)^{1/\vdim-1/q} \Lpnorm{\| V \|}{q}{
				\derivative{V}{f} }.
			\end{gather*}
		\end{enumerate}
	\end{enumerate}
\end{theorem}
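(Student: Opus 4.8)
The plan is to reduce everything to the relative isoperimetric inequalities \ref{corollary:rel_iso_ineq} and \ref{corollary:true_rel_iso_ineq} applied to the superlevel sets of $|f|$, together with the functional‐analytic coarea formula \ref{thm:tv_coarea}. First I would fix $M$ and set $\Gamma$ to be a suitable multiple of $\Gamma_{\ref{thm:rel_iso_ineq}}(M)$, with the precise constant chosen at the end of each case. Abbreviate $E(t) = \{ z \with |f(z)| > t \}$ for $0 < t < \infty$; by \ref{lemma:basic_v_weakly_diff} and \ref{example:composite}\,\eqref{item:composite:mod} we have $|f| \in \trunc_G ( V )$, so for $\mathscr{L}^1$ almost all $t$ the set $E(t)$ is $\|V\| + \|\delta V\|$ measurable with $\boundary{V}{E(t)}$ representable by integration, the conditions of \ref{miniremark:zero_boundary} hold with $B = (\Bdry U) \without G$, and $\| V \| ( E(t) ) \leq \| V \| ( E )$, $\density^\vdim ( \| V \|, z ) = \density^\vdim ( \| W \|, z )$ for the associated varifold $W$ (by \cite[2.9.11]{MR41:1976}). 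Since each such $E(t)$ satisfies the density smallness hypothesis inherited from that on $E$, and $\|V\|(E(t)) \leq (Q - M^{-1}) \unitmeasure{\vdim} r^\vdim$, I may apply \ref{corollary:rel_iso_ineq} (for $p=1$) or \ref{corollary:true_rel_iso_ineq} (when $\psi(E) \leq \Gamma^{-1}$, noting $\psi(E(t)) \leq \psi(E)$) to obtain, writing $A(t) = \{ z \with \oball{z}{r} \subset \rel^\adim \without B \}$ and $\mu(t) = \| V \| ( E(t) \cap A )$,
\begin{gather*}
	\mu(t)^{1-1/\vdim} \leq \Gamma \big ( \| \boundary{V}{E(t)} \| ( U ) + \| \delta V \| ( E(t) ) \big ) \quad \text{resp.} \quad \mu(t)^{1-1/\vdim} \leq \Gamma \, \| \boundary{V}{E(t)} \| ( U )
\end{gather*}
for $\mathscr{L}^1$ almost all $0 < t < \infty$.

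Next I would integrate in $t$. Using \ref{thm:tv_coarea} applied to $|f|$, one has $\tint{0}{\infty} \| \boundary{V}{E(t)} \| ( U ) \ud \mathscr{L}^1 t = \tint{}{} | \derivative{V}{|f|} | \ud \| V \| \leq \tint{}{} | \derivative{V}{f} | \ud \| V \|$ (the inequality from \ref{lemma:basic_v_weakly_diff}), and similarly $\tint{0}{\infty} \| \delta V \| ( E(t) ) \ud \mathscr{L}^1 t = \tint{}{} |f| \ud \| \delta V \|$ by Fubini. For the left-hand sides: since $t \mapsto \mu(t)$ is nonincreasing, Cavalieri together with the layer-cake identity $\Lpnorm{\| V \| \restrict A}{\beta}{f}^\beta$ expressed via $\mu$ gives the desired norm. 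Concretely, for \eqref{item:sob_poin_summary:interior:p=1} with $\vdim > 1$ and $\beta = \vdim/(\vdim-1)$ I would write $\Lpnorm{\| V \| \restrict A}{\beta}{f} = \big( \tint{0}{\infty} \beta t^{\beta-1} \mu(t) \ud \mathscr{L}^1 t \big)^{1/\beta}$ and apply the elementary inequality $\big( \tint{0}{\infty} \beta t^{\beta-1} \mu(t) \ud t \big)^{1/\beta} \leq \tint{0}{\infty} \mu(t)^{1-1/\vdim} \ud t$ valid for nonincreasing $\mu$ (this is the one-dimensional Sobolev inequality for monotone functions, essentially Hardy's inequality — see the argument in Hutchinson \cite[proof of Theorem 1]{MR1066398} or the $\vdim = \adim$ case in \cite[4.5.9\,(31)]{MR41:1976}); combining this with the integrated isoperimetric estimate yields \eqref{item:sob_poin_summary:interior:p=1}. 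For $\vdim = 1$, the isoperimetric estimate reads $\mu(t) \leq \Gamma \big( \| \boundary{V}{E(t)} \| ( U ) + \| \delta V \| ( E(t) ) \big)$ for a.e. $t$, and since $\mu$ is nonincreasing, $\Lpnorm{\| V \| \restrict A}{\infty}{f} = \inf \{ t \with \mu(t) = 0 \} \leq \Gamma \big( \tint{}{} | \derivative{V}{f} | \ud \| V \| + \tint{}{} |f| \ud \| \delta V \| \big)$, giving the $\beta = \infty$ statement.

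The critical‐mean‐curvature cases \eqref{item:sob_poin_summary:interior:q<m=p} and \eqref{item:sob_poin_summary:interior:p=m<q} require the refined isoperimetric estimate $\mu(t)^{1-1/\vdim} \leq \Gamma \| \boundary{V}{E(t)} \| ( U )$ (no $\| \delta V \|$ term, available since $\psi(E) \leq \Gamma^{-1}$) combined with Hölder in $t$. For \eqref{item:sob_poin_summary:interior:q<m=p}, with $1 \leq q < \vdim$ and target exponent $\eta = \vdim q/(\vdim - q)$, I would use the sharp monotone-rearrangement Sobolev inequality $\Lpnorm{\| V \| \restrict A}{\eta}{f} \leq c(\vdim, q) \big( \tint{0}{\infty} \mu(t)^{(1-1/\vdim) \cdot q'/\ ?} \cdots \big)$ — more precisely, one runs the layer-cake argument keeping track of the gradient in $\Lp{q}$ rather than $\Lp{1}$, for which the clean route is: apply the already-proven $\Lp{1}$-gradient version \eqref{item:sob_poin_summary:interior:p=1} to the truncated/powered function $g = |f|^{\gamma}$ for the right $\gamma = (\vdim - 1)q/(\vdim - q) > 1$ (legitimate by \ref{lemma:basic_v_weakly_diff}, \ref{lemma:comp_lip} and the fact that $g \in \trunc_G$ by \ref{lemma:trunc_tg}), note $\derivative{V}{g} = \gamma |f|^{\gamma-1} \derivative{V}{f}$, and bound $\tint{}{} |f|^{\gamma-1} | \derivative{V}{f} | \ud \| V \|$ by Hölder with exponents $q/(q-1)$ and $q$; the $\| V \| ( E )^{\text{power}}$ factor that appears is harmless on the set $E$ and is absorbed, while the $\psi(E) \leq \Gamma^{-1}$ hypothesis is what allowed dropping the $\| \delta V \|$-term so that the chain closes. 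The case \eqref{item:sob_poin_summary:interior:p=m<q} with $q > \vdim$ follows the same bootstrap but terminates with an $\Lp{\infty}$ bound via a Morrey-type iteration or directly by letting the power $\gamma \to \infty$ and tracking the constant $\Gamma^{1/(1/\vdim - 1/q)} \| V \|(E)^{1/\vdim - 1/q}$. The main obstacle I expect is precisely the bookkeeping in this bootstrap: verifying that $|f|^\gamma$ (or the appropriate smooth truncation of it) lies in $\trunc_G$ with the claimed weak derivative, and that all the integrability conditions in \ref{def:v_weakly_diff}\,\eqref{item:v_weakly_diff:int} and \ref{def:trunc_g} are preserved under these compositions — this is where \ref{lemma:trunc_tg}, \ref{thm:mult_tg}, and \ref{lemma:comp_lip} must be invoked carefully, and where the sharp dependence of $\Gamma$ on $\vdim - q$ must be extracted. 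Part \eqref{item:sob_poin_summary:global} is then the special case $G = \Bdry U$, $B = \varnothing$, so that $A = U$ and $E = \{ z \with f(z) \neq 0 \}$ has $\|V\|(E) < \infty$ by hypothesis; the same four arguments apply verbatim (choosing $r$ large enough that $\|V\|(E) \leq (Q - M^{-1})\unitmeasure{\vdim} r^\vdim$ and the density-smallness condition is vacuous since we may take $Q = 1$), so no separate work is needed.
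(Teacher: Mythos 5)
Your plan for \eqref{item:sob_poin_summary:interior:p=1}, \eqref{item:sob_poin_summary:interior:p=m=1}, and all of \eqref{item:sob_poin_summary:global} is essentially sound. For the interior $p=1$ case you use the layer-cake formula and the elementary inequality $\big( \int_0^\infty \beta t^{\beta-1}\mu(t)\,\ud t \big)^{1/\beta} \leq \int_0^\infty \mu(t)^{1/\beta}\,\ud t$ for nonincreasing $\mu$, whereas the paper truncates $f_t = \inf\{f,t\}$, shows $\phi(t) = \eqLpnorm{\|V\|\restrict A}{\beta}{f_t}$ is Lipschitzian via Minkowski, and integrates $\phi'$; these are equivalent in difficulty and either works. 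The deduction of the global versions from the interior ones by taking $Q=1$ and $r$ large is exactly the paper's Step 3.

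The gap is in the interior cases with general $q$, namely \eqref{item:sob_poin_summary:interior:q<m=p} and \eqref{item:sob_poin_summary:interior:p=m<q}. You propose to apply the interior $q=1$ inequality directly to $g = |f|^\gamma$ with $\gamma = (\vdim-1)q/(\vdim-q)$ and then use H\"older on $\int |f|^{\gamma-1}|\derivative{V}{f}|\,\ud\|V\|$. But the left-hand side of the interior inequality lives on the shrunken set $A$ — you get $\eqLpnorm{\|V\|\restrict A}{\beta\gamma}{f}^\gamma$ — while the H\"older estimate on the right produces $\Lpnorm{\|V\|}{\eta}{f}^{\gamma-1}$ over \emph{all of $U$}, since $\derivative{V}{g}$ need not vanish off $A$. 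You cannot absorb the right-hand side into the left because they are norms over different sets, so the bootstrap does not close. This is precisely why the paper's power-trick argument (the method of \cite[4.5.15]{MR41:1976}) is applied \emph{only} to the global case \eqref{item:sob_poin_summary:global:q<m=p}, where $A = U$ and the two sides match; and the interior version \eqref{item:sob_poin_summary:interior:q<m=p} is then obtained by a genuinely different device: introduce a nested family $A_1 \supset A_2 \supset \cdots \supset A_\vdim = A$ with cutoff functions $g_i$ supported in $A_i$ and equal to $1$ on $A_{i+1}$, verify $g_i f \in \trunc_{\Bdry U}(V)$ using \ref{thm:addition}\,\eqref{item:addition:mult}, \ref{thm:mult_tg}, and \ref{lemma:boundary}, apply the global inequality to $g_i f$, and iterate one Lebesgue exponent per step. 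Your proposal never introduces these cutoffs. Likewise for \eqref{item:sob_poin_summary:interior:p=m<q}, ``letting $\gamma \to \infty$'' does not control the constant $\Gamma^{1/(1/\vdim-1/q)}\|V\|(E)^{1/\vdim-1/q}$; the paper instead runs a Stampacchia/De~Giorgi-type iteration over nested radii $r_i$ and thresholds $t_i$, estimating the measures $a_i = \|V\|(\{z \in U_i : |f(z)| > t_i\})$ inductively and using \ref{lemma:restriction_tg} to stay inside $\trunc$ of the restricted varifolds. The bookkeeping you flag as ``the main obstacle'' is not where the difficulty lies; the obstruction is the domain mismatch between $A$ and $U$, and the remedy is the cutoff-and-iterate scheme.
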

\begin{proofinsteps}
	Denote by \eqref{item:sob_poin_summary:interior:q<m=p}$'$
	[respectively \eqref{item:sob_poin_summary:global:q<m=p}$'$] the
	implication resulting from
	\eqref{item:sob_poin_summary:interior:q<m=p} [respectively
	\eqref{item:sob_poin_summary:global:q<m=p}] through omission of the
	factor $(\vdim-q)^{-1}$ and addition of the requirement $q=1$. It is
	sufficient to construct functions
	$\Gamma_{\eqref{item:sob_poin_summary:interior:p=1}}$,
	$\Gamma_{\eqref{item:sob_poin_summary:interior:p=m=1}}$,
	$\Gamma_{\eqref{item:sob_poin_summary:interior:q<m=p}}$,
	$\Gamma_{\eqref{item:sob_poin_summary:interior:q<m=p}'}$,
	$\Gamma_{\eqref{item:sob_poin_summary:interior:p=m<q}}$,
	$\Gamma_{\eqref{item:sob_poin_summary:global:p=1}}$,
	$\Gamma_{\eqref{item:sob_poin_summary:global:p=m=1}}$,
	$\Gamma_{\eqref{item:sob_poin_summary:global:q<m=p}}$,
	$\Gamma_{\eqref{item:sob_poin_summary:global:q<m=p}'}$, and
	$\Gamma_{\eqref{item:sob_poin_summary:global:p=m<q}}$ corresponding to
	the implications \eqref{item:sob_poin_summary:interior:p=1},
	\eqref{item:sob_poin_summary:interior:p=m=1},
	\eqref{item:sob_poin_summary:interior:q<m=p},
	\eqref{item:sob_poin_summary:interior:q<m=p}$'$,
	\eqref{item:sob_poin_summary:interior:p=m<q},
	\eqref{item:sob_poin_summary:global:p=1},
	\eqref{item:sob_poin_summary:global:p=m=1},
	\eqref{item:sob_poin_summary:global:q<m=p},
	\eqref{item:sob_poin_summary:global:q<m=p}$'$, and
	\eqref{item:sob_poin_summary:global:p=m<q} whose value at $M$ for $1
	\leq M < \infty$ is a positive, finite number such that the respective
	implication is true for $M$ with $\Gamma$ replaced by this value.
	Define
	\begin{gather*}
		\Gamma_{\eqref{item:sob_poin_summary:interior:p=1}} (M) =
		\Gamma_{\eqref{item:sob_poin_summary:interior:p=m=1}} (M) =
		\Gamma_{\eqref{item:sob_poin_summary:interior:q<m=p}'} (M) =
		\Gamma_{\ref{thm:rel_iso_ineq}} ( M ), \quad
		\Gamma_{\eqref{item:sob_poin_summary:global:p=1}} ( M )=
		\Gamma_{\eqref{item:sob_poin_summary:interior:p=1}} ( \sup \{
		2, M \} ), \\
		\Gamma_{\eqref{item:sob_poin_summary:global:p=m=1}} ( M ) =
		\Gamma_{\eqref{item:sob_poin_summary:interior:p=m=1}} ( \sup
		\{ 2, M \} ), \quad
		\Gamma_{\eqref{item:sob_poin_summary:global:q<m=p}'} ( M ) =
		\Gamma_{\eqref{item:sob_poin_summary:interior:q<m=p}'} ( \sup
		\{ 2, M \} ), \\
		\Gamma_{\eqref{item:sob_poin_summary:global:q<m=p}} (M) = M^2
		\Gamma_{\eqref{item:sob_poin_summary:global:q<m=p}'} ( M ), \\
		\Delta_1 (M) = ( \sup \{ 1/2, 1- 1/( 2M^2-1) \})^{1/M},
		\quad \Delta_2 (M) = 2 M / (1-\Delta_1(M)), \\
		\Delta_3 (M) = M \sup \{ \unitmeasure{\vdim}
		\with M \geq \vdim \in \nat \}, \\
		\Delta_4 (M) = \inf \{ \unitmeasure{\vdim} \with M \geq \vdim
		\in \nat \} / 2, \\
		\Delta_5 (M) = \sup \{ 1,
		\Gamma_{\eqref{item:sob_poin_summary:global:q<m=p}} (M)(
		\Delta_2(M) \Delta_3(M) + 1) \}, \\
		\Delta_6 (M) = 4^{M+1} \Delta_3(M)^M \Delta_5(M)^{M+1}, \\
		\Gamma_{\eqref{item:sob_poin_summary:interior:q<m=p}} (M) =
		\sup \{ 2
		\Gamma_{\eqref{item:sob_poin_summary:interior:q<m=p}'} ( 2M),
		\Delta_6 (M) ( 1 + \Delta_3(M)
		\Gamma_{\eqref{item:sob_poin_summary:interior:q<m=p}'} (2M) )
		\}, \\
		\Delta_7 (M) = \sup \{ 1,
		\Gamma_{\eqref{item:sob_poin_summary:interior:q<m=p}'} (2M)
		\}, \\
		\Delta_8 (M) = \inf \big \{ \inf \{ 1, \Delta_3(M)^{-1}
		\Delta_4(M) \Delta_1(M)^M \} ( 1 - \Delta_1(M) )^M, 1/2 \big
		\}, \\
		\Delta_9 ( M ) = 2 \Delta_7(M) \Delta_8(M)^{-1}, \\
		\Gamma_{\eqref{item:sob_poin_summary:interior:p=m<q}} (M) =
		\sup \big \{ \Delta_9 (M), \Delta_1 (M)^{-M}
		\Gamma_{\eqref{item:sob_poin_summary:interior:q<m=p}'} ( 2M )
		\big \}, \\
		\Gamma_{\eqref{item:sob_poin_summary:global:p=m<q}} (M) =
		\Gamma_{\eqref{item:sob_poin_summary:interior:p=m<q}} ( \sup
		\{ 2, M \} ).
	\end{gather*}
	whenever $1 \leq M < \infty$.

	In order to verify the asserted properties suppose $1 \leq M < \infty$
	and abbreviate $\delta_i = \Delta_i(M)$ whenever $i = 1, \ldots, 9$.
	In the verification of assertion ($X$) [respectively ($X$)$'$], where
	$X$ is one of \ref{item:sob_poin_summary:interior:p=1},
	\ref{item:sob_poin_summary:interior:p=m=1},
	\ref{item:sob_poin_summary:interior:q<m=p},
	\ref{item:sob_poin_summary:interior:p=m<q},
	\ref{item:sob_poin_summary:global:p=1},
	\ref{item:sob_poin_summary:global:p=m=1},
	\ref{item:sob_poin_summary:global:q<m=p}, and
	\ref{item:sob_poin_summary:global:p=m<q}, [respectively
	\ref{item:sob_poin_summary:interior:q<m=p} and
	\ref{item:sob_poin_summary:global:q<m=p}] it will be assumed that the
	quantities occuring in its hypotheses are defined and satisfy these
	hypotheses with $\Gamma$ replaced by $\Gamma_{(X)} (M)$ [respectively
	$\Gamma_{(X)'} (M)$].

	\begin{step} \label{step:sob_poin_summary:1}
		Verification of the property of
		$\Gamma_{\eqref{item:sob_poin_summary:interior:p=1}}$.
	\end{step}

	Replacing $l$ and $f$ by $1$ and $|f|$, one may assume $l=1$ and $f
	\geq 0$ by \ref{lemma:basic_v_weakly_diff},
	\ref{example:composite}\,\eqref{item:composite:mod} and define $E(t) =
	\{ z \with f(z) > t \}$ for $0 \leq t < \infty$. If $\vdim = 1$ then
	$\Gamma_{\ref{thm:rel_iso_ineq}}(M)^{-1} \leq \| \boundary{V}{E(t)} \|
	(U) + \| \delta V \| ( E(t))$ for $\mathscr{L}^1$ almost all $t$ with
	$0 < t < \eqLpnorm{\| V \| \restrict A}{\beta}{f}$ by
	\ref{corollary:rel_iso_ineq} and the conclusion follows from
	\ref{thm:tv_coarea}. If $\vdim > 1$, define
	\begin{gather*}
		f_t = \inf \{ f, t \}, \quad \phi (t) = \eqLpnorm{\| V \|
		\restrict A}{\beta}{f_t} \leq t \, \| V \| ( E )^{1/\beta} <
		\infty
	\end{gather*}
	for $0 \leq t < \infty$ and use Minkowski's inequality to conclude
	\begin{gather*}
		0 \leq \phi (t+h) - \phi (t) \leq \eqLpnorm{\| V \| \restrict
		A}{\beta}{f_{t+h}-f_t} \leq h \, \| V \| ( A \cap E(t)
		)^{1/\beta}
	\end{gather*}
	for $0 \leq t < \infty$ and $0 < h < \infty$. Therefore $\phi$ is
	Lipschitzian and one infers from \cite[2.9.19]{MR41:1976} and
	\ref{corollary:rel_iso_ineq} that
	\begin{gather*}
		0 \leq \phi'(t) \leq \| V \| ( A \cap E(t) )^{1-1/\vdim} \leq
		\Gamma_{\ref{thm:rel_iso_ineq}}(M) \big ( \|
		\boundary{V}{E(t)} \| ( U ) + \| \delta V \| (E(t)) \big)
	\end{gather*}
	for $\mathscr{L}^1$ almost all $0 < t < \infty$, hence $\eqLpnorm{\| V
	\| \restrict A}{\beta}{f} = \lim_{t \to \infty} \phi (t) =
	\tint{0}{\infty} \phi' \ud \mathscr{L}^1$ by \cite[2.9.20]{MR41:1976}
	and \ref{thm:tv_coarea} implies the conclusion.

	\begin{step} \label{step:sob_poin_summary:2}
		Verification of the properties of
		$\Gamma_{\eqref{item:sob_poin_summary:interior:p=m=1}}$ and
		$\Gamma_{\eqref{item:sob_poin_summary:interior:q<m=p}'}$.
	\end{step}

	Omitting the terms involving $\delta V$ from the proof of Step
	\ref{step:sob_poin_summary:1} and using
	\ref{corollary:true_rel_iso_ineq} instead of
	\ref{corollary:rel_iso_ineq}, a proof of Step
	\ref{step:sob_poin_summary:2} results.
	
	\begin{step}
		Verification of the properties of
		$\Gamma_{\eqref{item:sob_poin_summary:global:p=1}}$,
		$\Gamma_{\eqref{item:sob_poin_summary:global:p=m=1}}$, and
		$\Gamma_{\eqref{item:sob_poin_summary:global:q<m=p}'}$.
	\end{step}

	Taking $Q = 1$, one may apply
	\eqref{item:sob_poin_summary:interior:p=1},
	\eqref{item:sob_poin_summary:interior:p=m=1}, and
	\eqref{item:sob_poin_summary:interior:q<m=p}$'$ with $M$ replaced by
	$\sup \{ 2, M \}$ and a sufficiently large number $r$.

	\begin{step}
		Verification of the property of
		$\Gamma_{\eqref{item:sob_poin_summary:global:q<m=p}}$.
	\end{step}

	Replacing $f$ and $l$ by $|f|$ and $1$, one may assume $l=1$ and $f
	\in \trunc_{\Bdry U} ( V )^+$ by \ref{lemma:basic_v_weakly_diff},
	\ref{example:composite}\,\eqref{item:composite:mod}. Moreover, one may
	assume $f$ to be bounded by \ref{lemma:basic_v_weakly_diff},
	\ref{example:composite}\,\eqref{item:composite:1d}.  Noting
	$f^{q(\vdim-1)/(\vdim-q)} \in \trunc_{\Bdry U} ( V )^+$ by
	\ref{lemma:trunc_tg}, one now applies
	\eqref{item:sob_poin_summary:global:q<m=p}$'$ with $f$ replaced by
	$f^{q(\vdim-1)/(\vdim-q)}$ to deduce the assertion by the method of
	\cite[4.5.15]{MR41:1976}.

	\begin{step}
		Verification of the property of
		$\Gamma_{\eqref{item:sob_poin_summary:interior:q<m=p}}$.
	\end{step}

	One may assume $\Lpnorm{\| V \|}{q}{ \derivative{V}{f} } < \infty$
	and, possibly rescaling, also $r=1$. Moreover, observe that one may
	assume $f$ to be bounded and $l=1$ by replacing $f(z)$ by $\inf \{
	|f(z)|, t \}$ for $0 < t < \infty$ and considering $t \to \infty$ by
	\ref{lemma:basic_v_weakly_diff},
	\ref{example:composite}\,\eqref{item:composite:mod}\,\eqref{item:composite:1d}.
	Consequently,
	\begin{gather*}
		\tint{}{} |f| + | \derivative{V}{f} | \ud \| V \| < \infty
	\end{gather*}
	by \ref{thm:addition}\,\eqref{item:addition:zero} and H\"older's
	inequality. Define
	\begin{gather*}
		r_i = \delta_1 + (i-1)(1-\delta_1)/M, \quad
		A_i = \classification{U}{z}{ \oball{z}{r_i} \subset \rel^\adim
		\without B } \quad
	\end{gather*}
	whenever $i=1, \ldots, \vdim$. Observe that
	\begin{gather*}
		Q-M^{-1} \leq r_1^\vdim \big (Q-(2M)^{-1}\big ), \quad
		r_1^\vdim \geq 1/2, \quad r_\vdim \leq 1, \\
		r_{i+1} - r_i = 2 / \delta_2 \quad \text{for $i=1, \ldots,
		\vdim-1$}.
	\end{gather*}
	Choose $g_i \in \mathscr{E}^0 ( U )$ with
	\begin{gather*}
		\spt g_i \subset A_i, \qquad g_i(z) = 1 \quad \text{for $z \in
		A_{i+1}$}, \\
		0 \leq g_i (z) \leq 1 \qquad \text{for $z \in U$}, \quad \Lip
		g_i \leq \delta_2
	\end{gather*}
	whenever $i=1, \ldots, \vdim-1$. Notice that $g_i f \in \trunc (V)$ by
	\ref{thm:addition}\,\eqref{item:addition:mult} with
	\begin{gather*}
		\derivative{V}{(g_if)} (z) = \derivative{V}{g_i} (z) f(z) +
		g_i (z) \derivative{V}{f} (z) \quad \text{for $\| V \|$ almost
		all $z$}.
	\end{gather*}
	Moreover, one infers $g_i f \in \trunc_{\Bdry U} ( V )$ by
	\ref{thm:mult_tg} and \ref{lemma:boundary}.

	If $i \in \{ 1, \ldots, \vdim-1 \}$ and $1-(i-1)/\vdim \geq 1/q \geq
	1-i/\vdim$ then by \eqref{item:sob_poin_summary:global:q<m=p} and
	H\"older's inequality
	\begin{gather*}
		\begin{aligned}
			& \eqLpnorm{\| V \| \restrict A_{i+1}}{\vdim
			q/(\vdim-q)}{f} \leq \Lpnorm{\| V \|}{\vdim
			q/(\vdim-q)} { g_i f } \\
			& \qquad \leq
			\Gamma_{\eqref{item:sob_poin_summary:global:q<m=p}}
			(M) (\vdim-q)^{-1} \big ( \delta_2 \eqLpnorm{\| V \|
			\restrict A_i}{q}{f} + \Lpnorm{\| V \|}{q}{
			\derivative{V}{f} } \big ) \\
			& \qquad \leq \delta_5 ( \vdim-q )^{-1} \big (
			\eqLpnorm{\| V \| \restrict A_i }{\vdim/(\vdim-i)} {f}
			+ \Lpnorm{\| V \|}{q}{ \derivative{V}{f} } \big ).
		\end{aligned}
	\end{gather*}
	In particular, replacing $q$ by $\vdim/(\vdim-i)$ and using H\"older's
	inequality, one infers
	\begin{gather*}
		\eqLpnorm{ \| V \| \restrict A_{i+1} }{\vdim/(\vdim-i-1)} {f}
		\leq 2 \delta_3 \delta_5 \big ( \eqLpnorm{\| V \| \restrict
		A_i }{\vdim/(\vdim-i)} {f} + \Lpnorm{\| V \|}{q}
		{\derivative{V}{f}} \big )
	\end{gather*}
	whenever $i \in \{ 1, \ldots, \vdim-2 \}$ and $1-i/\vdim \geq 1/q$,
	choosing $j \in \{ 1, \ldots, \vdim-1 \}$ with $1-(j-1)/\vdim \geq 1/q
	> 1-j/\vdim$ and iterating this $j-1$ times, also
	\begin{gather*}
		\begin{aligned}
			& \eqLpnorm{ \| V \| \restrict A_j }{\vdim/(\vdim-j)}
			{f} \\
			& \qquad \leq ( 4 \delta_3 \delta_5)^{j-1} \big (
			\eqLpnorm{\| V \| \restrict A_1 }{\vdim/(\vdim-1)} {f}
			+ \Lpnorm{\| V \|}{q} {\derivative{V}{f}} \big ).
		\end{aligned}
	\end{gather*}
	Together this yields
	\begin{gather*}
		\begin{aligned}
			& \eqLpnorm{\| V \| \restrict A_{j+1} }{\vdim
			q/(\vdim-q)} {f} \\
			& \qquad \leq \delta_6 (\vdim-q)^{-1} \big (
			\eqLpnorm{\| V \| \restrict A_1}{\vdim/(\vdim-1)} {f}
			+ \Lpnorm{\| V \|}{q}{ \derivative{V}{f} } \big ).
		\end{aligned}
	\end{gather*}
	and the conclusion then follows from
	\eqref{item:sob_poin_summary:interior:q<m=p}$'$ with $M$ and $r$
	replaced by $2M$ and $r_1$.

	\begin{step}
		Verification of the property of
		$\Gamma_{\eqref{item:sob_poin_summary:interior:p=m<q}}$.
	\end{step}

	Assume
	\begin{gather*}
		r=1, \quad \| V \| ( E ) > 0, \quad \Lpnorm{\| V
		\|}{q}{\derivative{V}{f}} < \infty,
	\end{gather*}
	abbreviate $\beta = \vdim / ( \vdim-1 )$ and $\eta = 1/(1/\vdim-1/q)$,
	and suppose
	\begin{gather*}
		\mu = \| V \| ( E ), \quad \Lpnorm{\| V \|}{q}{
		\derivative{V}{f} } < \gamma < \infty.
	\end{gather*}
	Notice that $0 < \delta_1 < 1$, $0 < \delta_8 < 1$, $\eta \geq 1$ and
	$\mu > 0$, and define
	\begin{gather*}
		r_i = 1 - (1-\delta_1)^i, \quad t_i = \delta_9^\eta
		\mu^{1/\vdim-1/q} \gamma ( 1 - 2^{-i} ), \\
		Z_i = \classification{\rel^\adim}{z}{\dist (z,B) > r_i },
		\quad U_i = U \cap Z_i, \\
		H_i = Z_i \cap \Bdry U, \quad C_i = ( \Bdry U_i ) \without
		H_i, \quad X_i = V | \mathbf{2}^{U_i \times
		\grass{\adim}{\vdim} }, \\
		a_i = \| V \| ( \classification{U_i}{z}{|f(z)|>t_i} ), \\
		f_i (z) = \sup \{ | f(z) | - t_i, 0 \} \quad \text{whenever $z
		\in \dmn f$}
	\end{gather*}
	whenever $i$ is a nonnegative integer. Notice that
	\begin{gather*}
		r_{i+1} - r_i = \delta_1 (1-\delta_1)^i, \quad U_{i+1} \subset
		U_i, \quad t_{i+1} - t_i = \delta_9^\eta \mu^{1/\vdim-1/q}
		\gamma 2^{-i-1} > 0
	\end{gather*}
	whenever $i$ is a nonnegative integer. The conclusion is readily
	deduced from the assertion,
	\begin{gather*}
		a_i \leq \mu \delta_8^{\eta i} \quad \text{\emph{whenever $i$
		is a nonnegative integer}},
	\end{gather*}
	which will be proven by induction.

	The case $i=0$ is trivial. To prove the case $i=1$, one applies
	\eqref{item:sob_poin_summary:interior:q<m=p}$'$ with $M$ and $r$
	replaced by $2M$ and $\delta_1$ and H\"older's inequality in
	conjunction with \ref{thm:addition}\,\eqref{item:addition:zero} to
	obtain
	\begin{gather*}
		\eqLpnorm{\| V \| \restrict U_1}{\beta}{f} \leq
		\Gamma_{\eqref{item:sob_poin_summary:interior:q<m=p}'} (2M)
		\Lpnorm{\| V \|}{1}{ \derivative{V}{f} } \leq \delta_7
		\mu^{1-1/q} \gamma,
	\end{gather*}
	hence
	\begin{gather*}
		a_1^{1-1/\vdim} \leq (t_1-t_0)^{-1} \eqLpnorm{\| V \|
		\restrict U_1}{\beta}{f} \leq \delta_9^{-\eta} \mu^{1-1/\vdim} 2
		\delta_7 \leq \mu^{1-1/\vdim} \delta_8^{\eta ( 1-1/\vdim )}.
	\end{gather*}
	Assuming the assertion to be true for some $i \in \nat$, notice that
	\begin{gather*}
		\mu \leq \delta_3, \quad \eta i \geq 1, \quad a_i \leq \mu
		\delta_8^{\eta i} \leq \delta_4 \delta_1^\vdim
		(1-\delta_1)^{i \vdim} \leq (1/2) \unitmeasure{\vdim}
		(r_{i+1}-r_i)^\vdim, \\
		f_i \in \trunc_G ( V, \rel^l ), \quad f_i | Z_i \in
		\trunc_{H_i} ( X_i, \rel^l ), \\
		H_i = Z_i \cap \Bdry U_i, \quad U_{i+1} \subset
		\classification{U_i}{z}{ \oball{z}{r_{i+1}-r_i} \subset
		\rel^\adim \without C_i },
	\end{gather*}
	by \ref{lemma:basic_v_weakly_diff},
	\ref{example:composite}\,\eqref{item:composite:mod}\,\eqref{item:composite:1d}
	and \ref{lemma:restriction_tg}, \ref{remark:restriction_tg} with $f$,
	$Z$, $H$, $X$, $C$, and $r$ replaced by $f_i$, $Z_i$, $H_i$, $X_i$,
	$C_i$, and $r_{i+1}-r_i$. Therefore
	\eqref{item:sob_poin_summary:interior:q<m=p}$'$ applied with $U$, $V$,
	$Q$, $M$, $G$, $B$, $f$, and $r$ replaced by $U_i$, $X_i$, $1$, $2M$,
	$H_i$, $C_i$, $f_i | Z$, and $r_{i+1}-r_i$ yields
	\begin{gather*}
		\eqLpnorm{\| V \| \restrict U_{i+1} }{\beta}{f_i} \leq
		\Gamma_{\eqref{item:sob_poin_summary:interior:q<m=p}'} (2M)
		\eqLpnorm{\| V \| \restrict U_i }{1}{ \derivative{V}{f_i} },
	\end{gather*}
	hence, using H\"older's inequality in conjunction with
	\ref{lemma:basic_v_weakly_diff},
	\ref{example:composite}\,\eqref{item:composite:mod}\,\eqref{item:composite:1d}
	and
	\ref{thm:addition}\,\eqref{item:addition:zero},
	\begin{gather*}
		\eqLpnorm{\| V \| \restrict U_{i+1}}{\beta}{f_i} \leq \delta_7
		a_i^{1-1/q} \gamma.
	\end{gather*}
	Noting $\delta_9^{-\eta} 2 \delta_7 \leq \delta_8^{\eta (1-1/\vdim)}$
	and $2 \delta_8^{\eta (1-1/q)} \leq \delta_8^{\eta (1-1/\vdim)}$, it
	follows
	\begin{gather*}
		\begin{aligned}
			a_{i+1}^{1-1/\vdim} & \leq ( t_{i+1} - t_i )^{-1}
			\eqLpnorm{\| V \| \restrict U_{i+1}}{\beta}{f_i} \\
			& \leq \mu^{1-1/\vdim} \delta_9^{-\eta} 2 \delta_7
			\big ( 2 \delta_8^{\eta(1-1/q)} \big )^i \leq
			\mu^{1-1/\vdim} \delta_8^{\eta (1-1/\vdim) (i+1)}
		\end{aligned}
	\end{gather*}
	and the assertion is proven.

	\begin{step}
		Verification of the property of
		$\Gamma_{\eqref{item:sob_poin_summary:global:p=m<q}}$.
	\end{step}

	Taking $Q = 1$, one may apply
	\eqref{item:sob_poin_summary:interior:p=m<q} with $M$ replaced by
	$\sup \{ 2, M \}$ and a sufficiently large number $r$.
\end{proofinsteps}
\begin{remark}
	The role of \ref{corollary:rel_iso_ineq} and
	\ref{corollary:true_rel_iso_ineq} respectively in the Steps
	\ref{step:sob_poin_summary:1} and \ref{step:sob_poin_summary:2} of the
	preceding proof is identical to the one of Allard
	\cite[7.1]{MR0307015} in Allard \cite[7.3]{MR0307015}.

	The method of deduction of \eqref{item:sob_poin_summary:global:q<m=p}
	from \eqref{item:sob_poin_summary:global:q<m=p}$'$ is classical, see
	\cite[4.5.15]{MR41:1976}.

	The method of deduction of
	\eqref{item:sob_poin_summary:interior:q<m=p} from
	\eqref{item:sob_poin_summary:interior:q<m=p}$'$ and
	\eqref{item:sob_poin_summary:global:q<m=p} is outlined by Hutchinson
	in \cite[pp.~60--61]{MR1066398}.

	The iteration procedure employed in the proof
	\eqref{item:sob_poin_summary:interior:p=m<q} bears formal resemblance
	with \cite[Lemma 4.1\,(i)]{MR0251373}. However, here the estimate of
	$a_{i+1}$ in terms of $a_i$ requires a smallness hypothesis on $a_i$.
\end{remark}
\begin{corollary} \label{corollary:integrability}
	Suppose $l \in \nat$, $\vdim$, $\adim$, $U$, $V$, $p$, and $\psi$ are
	as in \ref{miniremark:situation_general}, $1 \leq q \leq \infty$, $f
	\in \trunc ( V, \rel^l )$, and $\derivative{V}{f} \in \Lploc{q} ( \| V
	\|, \Hom ( \rel^\adim, \rel^l ) )$.

	Then the following four statements hold:
	\begin{enumerate}
		\item \label{item:integrability:m=1} If $\vdim = 1$, then $f
		\in \Lploc{\infty} ( \| V \| + \| \delta V \|, \rel^l )$.
		\item \label{item:integrability:m>1} If $\vdim > 1$ and $f \in
		\Lploc{1} ( \| \delta V \|, \rel^l )$, then $f \in
		\Lploc{\vdim/(\vdim-1)} ( \| V \|, \rel^l )$.
		\item \label{item:integrability:q<m=p} If $1 \leq q < \vdim =
		p$, then $f \in \Lploc{\vdim q/(\vdim-q)} ( \| V \|, \rel^l
		)$.
		\item \label{item:integrability:m=p<q} If $1 < \vdim = p < q
		\leq \infty$, then $f \in \Lploc{\infty} ( \| V \|, \rel^l )$.
	\end{enumerate}
\end{corollary}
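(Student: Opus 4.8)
The plan is to localise, pass to the modulus, truncate in the target, multiply by a cut off, and feed the result into \ref{thm:sob_poin_summary}\,\eqref{item:sob_poin_summary:interior} with $Q=1$. First I would reduce to the scalar case: by \ref{lemma:basic_v_weakly_diff} and \ref{example:composite}\,\eqref{item:composite:mod} one has $|f|\in\trunc(V)$ with $|\derivative{V}{|f|}|\le|\derivative{V}{f}|$ holding $\|V\|$ almost everywhere, and membership of $f$ in a given $\Lploc{p}(\|V\|,\rel^l)$ is equivalent to that of $|f|$ in $\Lploc{p}(\|V\|)$; moreover, once $f\in\Lploc{\infty}(\|V\|,\rel^l)$ is known, \ref{corollary:boundary_controls_interior} gives $f\in\Lploc{\infty}(\|\delta V\|,\rel^l)$, disposing of the extra summand in \eqref{item:integrability:m=1}. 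So one may assume $f$ nonnegative and scalar and it suffices, for each compact $K\subset U$, to bound $\tint{K}{}f^{\beta}\ud\|V\|$ (resp.\ $\eqLpnorm{\|V\|\restrict K}{\infty}{f}$) with $\beta$ the exponent of the relevant conclusion. Since $\|V\|$ is, by \ref{miniremark:situation_general}, a Radon measure absolutely continuous with respect to $\mathscr{H}^{\vdim}$ (so $\|V\|(\{a\})=0$ for each $a$), one may cover $K$ by finitely many balls $\oball{a}{\rho}$ with $\oball{a}{2\rho}\subset U$ and $\|V\|(\oball{a}{2\rho})$ — as well as $\psi(\oball{a}{2\rho})$ and, off the countably many atoms of $\|\delta V\|$, also $\|\delta V\|(\oball{a}{2\rho})$ — as small as wanted; hence it is enough to bound the norm of $f$ over a single such $\oball{a}{\rho}$.

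On such a ball I would pick a Lipschitzian cut off $g$ with $g=1$ on $\oball{a}{\rho}$, $\spt g\subset\oball{a}{2\rho}$, $0\le g\le 1$, and set $f^{(t)}=\inf\{f,t\}$; by \ref{lemma:basic_v_weakly_diff}, \ref{example:composite}\,\eqref{item:composite:1d} and \ref{thm:addition}\,\eqref{item:addition:zero} one has $f^{(t)}\in\trunc(V)$ with $\derivative{V}{f^{(t)}}$ equal to $\derivative{V}{f}$ on $\{f<t\}$ and to $0$ elsewhere, and, $f^{(t)}$ being bounded, $gf^{(t)}\in\trunc(V)$ by \ref{thm:addition}\,\eqref{item:addition:mult} with $\derivative{V}{(gf^{(t)})}=f^{(t)}\derivative{V}{g}+g\,\derivative{V}{f^{(t)}}$. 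Since $U\cap\{gf^{(t)}\ne 0\}\subset\spt g$, for $M=\sup\{2,\adim\}$, $Q=1$, $G=\varnothing$ and a suitable $r$ — chosen so that $\rho+r<\dist(a,\rel^{\adim}\without U)$ and $\|V\|(\oball{a}{2\rho})\le(1-M^{-1})\unitmeasure{\vdim}r^{\vdim}$, which forces $\oball{a}{\rho}$ into the set $A$ of that theorem while the density hypothesis makes the second smallness requirement automatic — the implication of \ref{thm:sob_poin_summary}\,\eqref{item:sob_poin_summary:interior} matching the pair $(p,q)$ applies to $gf^{(t)}$ and, using $g\le 1$, $|\derivative{V}{g}|\le\Lip g$ and $|\derivative{V}{f^{(t)}}|\le|\derivative{V}{f}|$, yields an estimate of the form
\[
\eqLpnorm{\|V\|\restrict\oball{a}{\rho}}{\beta}{f^{(t)}}\le\Gamma\big(\Lip(g)\,\Lpnorm{\|V\|\restrict\spt g}{q}{f^{(t)}}+\Lpnorm{\|V\|\restrict\spt g}{q}{\derivative{V}{f}}+\eqLpnorm{\|\delta V\|\restrict\spt g}{1}{f^{(t)}}\big),
\]
where the last summand is present only when $p=1$ (in \eqref{item:integrability:m>1} it is $\le\Lpnorm{\|\delta V\|\restrict\spt g}{1}{f}<\infty$, independent of $t$; in \eqref{item:integrability:m=1} one instead invokes \eqref{item:sob_poin_summary:interior:p=m=1}, legitimate because $\psi=\|\delta V\|$ and $\psi(\spt g)$ was made small off the atoms).

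The main obstacle is the passage $t\to\infty$, after which monotone convergence gives the desired finiteness: the summand $\Lip(g)\,\Lpnorm{\|V\|\restrict\spt g}{q}{f^{(t)}}$ is a priori only $\le\Lip(g)\,t\,\|V\|(\spt g)^{1/q}$. I would remove it by treating the four statements in a bootstrapping order. Statement \eqref{item:integrability:q<m=p} is proved by finite induction on $q\in[1,\vdim)$: at the inductive step $f\in\Lploc{q}(\|V\|)$ is available from \eqref{item:integrability:q<m=p} applied to a smaller exponent $q'\in(q\vdim/(\vdim+q),q)$ (whose conclusion exponent $\vdim q'/(\vdim-q')$ exceeds $q$), so $\Lpnorm{\|V\|\restrict\spt g}{q}{f^{(t)}}\le\Lpnorm{\|V\|\restrict\spt g}{q}{f}$ uniformly in $t$ and the limit is immediate; \eqref{item:integrability:m=p<q} then follows from \eqref{item:integrability:q<m=p} (which, letting $q'\uparrow\vdim$, gives $f\in\Lploc{s}(\|V\|)$ for every finite $s$, hence $f\in\Lploc{q}(\|V\|)$) together with \eqref{item:sob_poin_summary:interior:p=m<q}, and \eqref{item:integrability:m=1}, \eqref{item:integrability:m>1} likewise once the pertinent $\Lploc{q}(\|V\|)$ membership is secured. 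The genuinely delicate points are the base case $q=1$ of \eqref{item:integrability:q<m=p} and statement \eqref{item:integrability:m>1}, where no $\Lploc{q}(\|V\|)$ bound is yet in hand: there I would iterate over a finite nested chain of cut offs $g=g_0,g_1,\dots,g_N$ with $\spt g_j\subset\{g_{j+1}=1\}$, use H\"older to replace $\Lpnorm{\|V\|\restrict\spt g_j}{1}{f^{(t)}}$ by $\|V\|(\spt g_j)^{1/\vdim}\,\eqLpnorm{\|V\|\restrict\spt g_{j+1}}{\vdim/(\vdim-1)}{f^{(t)}}$, and exploit the smallness of $\|V\|(\spt g_j)$ to make the recursion contractive, the defect at the end of the chain being absorbed by the $\|\delta V\|$ summand (controlled by $\Lpnorm{\|\delta V\|\restrict\spt g_N}{1}{f}$). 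Getting the contraction constant below $1$ uniformly, and arranging the chain so this remains compatible with the small-ball cover of $K$, is where the real bookkeeping lies.
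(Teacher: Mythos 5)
Your plan diverges from the paper at the very first step, and the divergence is the source of the difficulty you flag at the end. The paper never truncates $f$ from above, never multiplies by a cut-off, and consequently never needs a bootstrap. Having reduced at once to $(\|V\|+\psi)(U)+\Lpnorm{\|V\|}{q}{\derivative{V}{f}}<\infty$ (and $\Lpnorm{\|\delta V\|}{1}{f}<\infty$ for \eqref{item:integrability:m>1}) and fixed a single $r$ with $\oball{z}{r}\subset U$ for $z\in K$, it chooses $s$ so large that $E=\{z\with|f(z)|>s\}$ satisfies the smallness hypotheses of \ref{thm:sob_poin_summary}\,\eqref{item:sob_poin_summary:interior}, and then \emph{truncates from below}: it composes $f$ with a Lipschitzian $\phi:\rel^l\to\rel^l$ that vanishes on $\cball 0s$ and equals the identity off $\oball 0{s+1}$. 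By \ref{lemma:basic_v_weakly_diff} the function $\phi\circ f$ lies in $\trunc(V,\rel^l)$, its nonvanishing set is contained in $E$ (so the smallness hypotheses hold with $Q=1$), and crucially $\|\derivative{V}{(\phi\circ f)}\|\le(\Lip\phi)\|\derivative{V}{f}\|$ — no zero-order term arises, because one has composed with a Lipschitz map rather than multiplied by a cut-off. Applying the matching clause of \ref{thm:sob_poin_summary}\,\eqref{item:sob_poin_summary:interior} directly to $\phi\circ f$ gives a bound on $\eqLpnorm{\|V\|\restrict A}{\beta}{\phi\circ f}$ in terms of $\Lpnorm{\|V\|}{q}{\derivative{V}{f}}$ alone, and since $|f|\le|\phi\circ f|+(s+1)$ the conclusion follows on $K\subset A$. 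The step you rightly identify as \emph{the real bookkeeping} simply does not occur.

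That step, moreover, has a genuine gap as you have sketched it. In the recursion you obtain after H\"older, the coefficient of $\eqLpnorm{\|V\|\restrict\spt g_{j+1}}{\beta}{f^{(t)}}$ is of size $\Lip(g_j)\,\|V\|(\spt g_j)^{1/\vdim}$. But the density hypothesis $\density^{\vdim}(\|V\|,\cdot)\ge1$ together with the monotonicity of \ref{lemma:monotonicity} (see also \cite[2.5]{snulmenn.isoperimetric}) forces $\|V\|(\oball{a}{\rho})\gtrsim\rho^{\vdim}$ for every $a\in\spt\|V\|$ and small $\rho$, while any cut-off supported in $\oball{a}{2\rho}$ and equal to $1$ on $\oball{a}{\rho}$ has $\Lip\ge\rho^{-1}$. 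The product is therefore bounded \emph{below} by a positive dimensional constant, independently of how small you take the balls. The recursion never becomes contractive, and the leftover term $(\prod_jc_j)\,a_N\le\text{const}\cdot t\,\|V\|(\spt g_N)^{1/\beta}$ does not vanish as $t\to\infty$ for any fixed chain. Invoking the $\|\delta V\|$ summand does not help either: in \eqref{item:integrability:q<m=p} and \eqref{item:integrability:m=p<q} the relevant clauses \eqref{item:sob_poin_summary:interior:q<m=p}--\eqref{item:sob_poin_summary:interior:p=m<q} have no $\|\delta V\|$ term at all, and in \eqref{item:integrability:m>1} it appears additively on the right and does not absorb the growing zero-order contribution. (Two smaller remarks: the smallness hypotheses in \ref{thm:sob_poin_summary} concern $\|V\|$ and $\psi$ only, so the digression about atoms of $\|\delta V\|$ is unnecessary; and the theorem already treats $\rel^l$-valued $f$, so the initial reduction to $|f|$ is dispensable.)
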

\begin{proof}
	Assume $( \| V \| + \psi ) ( U ) + \Lpnorm{\| V \|}{q}{
	\derivative{V}{f} } < \infty$ and $\Lpnorm{\| \delta V \|}{1}{f} <
	\infty$ in case of \eqref{item:integrability:m>1}. Suppose $K$ is a
	compact subset of $U$. Choose $0 < r < \infty$ with $\oball{z}{r}
	\subset U$ for $z \in K$ and $0 < s < \infty$ with
	\begin{gather*}
		\| V \| ( E ) \leq (1/2) \unitmeasure{\vdim} r^\vdim, \quad
		\psi ( E ) \leq \Gamma_{\ref{thm:sob_poin_summary}} ( 2M
		)^{-1},
	\end{gather*}
	where $E = \{ z \with |f(z)| > s \}$. Select $\gamma \in \mathscr{E}^0
	( \rel )$ with $\gamma (t) = 0$ if $t \leq s$ and $\gamma (t) = t$ if
	$t \geq s+1$, define $\phi : \rel^l \to \rel^l$ by $\phi (0) = 0$ and
	$\phi (y) = \gamma ( |y| ) |y|^{-1} y$ for $y \in \rel^l \without \{ 0
	\}$ and notice that $\phi \circ f \in \trunc ( V, \rel^l )$ with
	\begin{gather*}
		\| \derivative{V}{(\phi \circ f)}(z) \| \leq ( \Lip \phi ) \|
		\derivative{V}{f} \| ( z ) \quad \text{for $\| V \|$ almost
		all $z$}
	\end{gather*}
	by \ref{lemma:basic_v_weakly_diff}. Applying
	\ref{thm:sob_poin_summary}\,\eqref{item:sob_poin_summary:interior}
	with $M$, $G$, $f$, and $Q$ replaced by $2M$, $\varnothing$, $\phi
	\circ f$, and $1$ in conjunction with H{\"o}lder's inequality, and
	noting \ref{corollary:boundary_controls_interior} in case of
	\eqref{item:integrability:m=1}, the conclusion follows.
\end{proof}
\begin{definition}
	Whenever $G$ is a set of $\overline{\rel}$ valued functions $\sum_G$
	shall denote the function whose domain is the set of $z$ such that
	$\sum_{g \in G} g(z) \in \overline{\rel}$ and whose value at $z$
	equals $\sum_{g \in G} g(z)$. This function will also be denoted by
	$\sum_{g \in G} g$.
\end{definition}
\begin{miniremark} \label{miniremark:lpnorms}
	\emph{If $1 \leq q \leq p \leq \infty$, $\phi$ measures $X$, $G$ is a
	countable collection of $\phi$ measurable $\{ t \with 0 \leq t \leq
	\infty \}$ valued functions, $1 \leq \gamma < \infty$,
	\begin{gather*}
		\card ( G \cap \{ g \with g(z) \neq 0 \} ) \leq \gamma \quad
		\text{for $\phi$ almost all $z$},
	\end{gather*}
	and $f (z) = \sum_{g \in G} g(z)$ for $\phi$ almost all $z$, then 
	\begin{align*}
		\Lpnorm{\phi}{p}{f} & \leq \gamma \big ( \tsum{g \in G}{}
		\Lpnorm{\phi}{p}{g}^q \big )^{1/q} &&
		\text{if $q < \infty$}, \\
		\Lpnorm{\phi}{p}{f} & \leq \gamma \sup ( \{ 0 \} \cup \{
		\Lpnorm{\phi }{p}{g} \with g \in G \} ) && \text{if $q =
		\infty$};
	\end{align*}}
	in fact $\Lpnorm{\phi}{p}{f}^q \leq \gamma^q \big ( \sum_{g \in G} \int
	g^p \ud \phi \big )^{q/p} \leq \gamma^q \sum_{g \in G}
	\Lpnorm{\phi}{p}{g}^q$ if $p < \infty$ and $\Lpnorm{\phi}{\infty}{f^q}
	\leq \gamma^q \Lpnorm{\phi}{\infty}{ \sum_{g \in G} g^q} \leq \gamma^q
	\sum_{g\in G} \Lpnorm{\phi}{\infty}{g}^q$ if $1 \leq q < p = \infty$.
\end{miniremark}
\begin{lemma} \label{lemma:partition_of_unity}
	Suppose $l \in \nat$.

	Then there exists a positive, finite number $\Gamma$ with the
	following property.

	If $D$ is a closed subset of $\rel^l$ and $0 < t < \infty$, then there
	exists a countable family $S$ contained in $\mathscr{D}^0 ( \rel^l )$
	satisfying the following four conditions.
	\begin{enumerate}
		\item \label{item:partition_of_unity:contained} If $v \in S$
		then $\spt v \subset \oball{d}{t}$ for some $d \in D$.
		\item \label{item:partition_of_unity:card} If $y \in \rel^l$
		then $\card ( S \cap \{ v \with \cball{y}{t/4} \cap \spt v
		\neq \varnothing \} ) \leq \Gamma$.
		\item \label{item:partition_of_unity:sum} There holds $D
		\subset \Int \left \{ \sum_{v \in S} v(y) = 1 \right \}$ and
		$\sum_{v \in S} v(y) \leq 1$ for $y \in \rel^l$.
		\item \label{item:partition_of_unity:estimate} If $v \in S$
		and $y \in \rel^l$ then $0 \leq v(y) \leq 1$ and $\| Dv(y) \|
		\leq \Gamma/t$.
	\end{enumerate}
\end{lemma}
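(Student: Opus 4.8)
The plan is to distribute a single rescaled radial bump over the points of a maximal $t/4$--separated subset of $D$ and then renormalise the resulting locally finite sum so that it never exceeds $1$ and equals $1$ on a neighbourhood of $D$. One may assume $D\neq\varnothing$, the case $D=\varnothing$ being trivial with $S=\varnothing$. First I would fix, using the separability of $\rel^l$, a subset $P$ of $D$ maximal with respect to the requirement that $|p-p'|\geq t/4$ whenever $p$ and $p'$ are distinct members of $P$. Then $P$ is countable, the balls $\oball{p}{t/4}$ with $p\in P$ cover $D$ by maximality, and the balls $\oball{p}{t/8}$ with $p\in P$ are pairwise disjoint; comparing $\mathscr{L}^l$ of these disjoint balls (which lie in $\oball{y}{\varrho+t/8}$) with $\mathscr{L}^l(\oball{y}{\varrho+t/8})$ shows that $P\cap\cball{y}{\varrho}$ has at most $(1+8\varrho/t)^l$ members for $y\in\rel^l$ and $0<\varrho<\infty$.

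Next I would fix $\gamma\in\mathscr{D}^0(\rel)$ with $0\leq\gamma\leq 1$, $\gamma(s)=1$ for $|s|\leq 1/4$, $\spt\gamma\subset\{s\with|s|\leq 1/2\}$, and $\Lip\gamma<\infty$, and set $u_p(y)=\gamma(|y-p|/t)$ for $p\in P$ and $y\in\rel^l$; then $u_p\in\mathscr{D}^0(\rel^l)$ with $0\leq u_p\leq 1$, $u_p|\oball{p}{t/4}=1$, $\spt u_p\subset\cball{p}{t/2}\subset\oball{p}{t}$, and $\|Du_p(y)\|\leq t^{-1}\Lip\gamma$. By the overlap estimate the sum $\sigma=\sum_{p\in P}u_p$ is locally finite, hence $\sigma\in\mathscr{E}^0(\rel^l)$; moreover $\sigma\geq 1$ on the open set $Z=\bigcup_{p\in P}\oball{p}{t/4}\supset D$, and $\|D\sigma(y)\|\leq 7^l\,t^{-1}\Lip\gamma$ since at most $7^l$ of the $u_p$ are nonzero near any given point.

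Then I would choose $\rho\in\mathscr{E}^0(\rel)$ with $0\leq\rho(s)\leq\inf\{s,1\}$ for $s\geq 0$, $\rho(s)=s$ for $0\leq s\leq 1/2$, $\rho(s)=1$ for $s\geq 1$, and $\sup\{|\rho'(s)|\with s\in\rel\}<\infty$, define $v_p(y)=u_p(y)\rho(\sigma(y))/\sigma(y)$ if $\sigma(y)>0$ and $v_p(y)=0$ if $\sigma(y)=0$, and take $S=\{v_p\with p\in P\}$. Since $\rho(s)/s=1$ for $0<s\leq 1/2$, the function $v_p$ agrees with $u_p$ on the open set $\{\sigma<1/2\}$ and with the smooth function $u_p\rho(\sigma)/\sigma$ on the open set $\{\sigma>0\}$, and these two open sets cover $\rel^l$; hence $v_p\in\mathscr{D}^0(\rel^l)$ with $\spt v_p\subset\spt u_p\subset\oball{p}{t}$, which is \eqref{item:partition_of_unity:contained}. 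From $0\leq u_p\leq 1$ and $0\leq\rho(s)/s\leq 1$ one gets $0\leq v_p\leq 1$, and the product rule with the bounds on $\|Du_p\|$, $\|D\sigma\|$ and $\sup|\rho'|$ yields $\|Dv_p(y)\|\leq t^{-1}\Lip\gamma\,(1+7^l\sup|\rho'|)$, which is \eqref{item:partition_of_unity:estimate}. Moreover $\sum_{p\in P}v_p=\rho\circ\sigma\leq 1$ everywhere and $\rho\circ\sigma=1$ on $Z\supset D$, so $D\subset\Int\{\sum_{v\in S}v=1\}$, which is \eqref{item:partition_of_unity:sum}; and if $\cball{y}{t/4}$ meets $\spt v_p\subset\cball{p}{t/2}$ then $p\in\cball{y}{3t/4}$, so by the overlap estimate there are at most $7^l$ such $p$, which is \eqref{item:partition_of_unity:card}. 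One then lets $\Gamma$ be the supremum of $7^l$ and the constants just produced.

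I do not expect a genuine obstacle here; the only points needing attention are the smoothness of the renormalised functions $v_p$ across $\partial\{\sigma>0\}$ and the requirement that a single $\Gamma$ govern all four conclusions. Both are handled by arranging $\rho(s)/s$ to equal $1$ for small $s$, so that $v_p$ coincides with the elementary bump $u_p$ wherever $\sigma$ is small and all constants reduce to $\Lip\gamma$, $\sup|\rho'|$ and the purely dimensional overlap bound.
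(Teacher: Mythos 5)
Your argument is correct but follows a genuinely different route from the paper. The paper invokes Federer's partition of unity construction, \cite[3.1.13]{MR41:1976}, applied to the family $\Phi = \{\rel^l \without D\} \cup \{\oball{d}{t} \with d \in D\}$ (after rescaling to $t=1$), and then simply discards those pieces of the resulting partition whose supports miss $D$; the constant $\Gamma = \sup\{(129)^l, 40 V_1\}$ is read off from the overlap and Lipschitz constants in Federer's lemma. You instead build the family from scratch: fix a maximal $t/4$-separated net $P$ in $D$, attach a radial bump $u_p$ to each $p$, verify by a volume-comparison argument that the sum $\sigma = \sum_p u_p$ has bounded overlap and is $\geq 1$ on a neighbourhood of $D$, and renormalise by $v_p = u_p\,\rho(\sigma)/\sigma$ with a cutoff $\rho$ arranged so that $\rho(s)/s \equiv 1$ near $0$ (giving smoothness across $\{\sigma = 0\}$) and $\rho \equiv 1$ for large $s$ (giving $\sum_p v_p = 1$ on $\{\sigma \geq 1\} \supset D$). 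Your version is self-contained and avoids the detour through Federer's general lemma, at the cost of a somewhat longer write-up; the paper's is shorter but requires pruning and re-examining the output of a canned construction.

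One slip worth correcting: the cutoff $\rho$ as you specify it does not exist. You require $\rho$ smooth with $\rho(s) \leq \inf\{s,1\}$ for $s \geq 0$, $\rho(s) = s$ for $0 \leq s \leq 1/2$, and $\rho(s) = 1$ for $s \geq 1$. But then $\rho(1) = 1$ while $\rho(s) \leq s$ for $s < 1$ forces $(\rho(1)-\rho(s))/(1-s) \geq 1$, hence $\rho'(1) \geq 1$ from the left, whereas $\rho \equiv 1$ for $s \geq 1$ gives $\rho'(1) = 0$ from the right — no such $C^1$ function exists. The fix is easy and leaves everything else intact: drop the constraint $\rho(s) \leq s$ and keep only $0 \leq \rho \leq 1$ (together with $\rho(s)=s$ on $[0,1/2]$ and $\rho(s)=1$ for $s\geq 1$). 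The bound $v_p \leq 1$ should then be derived not from $\rho(s)/s \leq 1$ but from $u_p \leq \sigma$ (since $u_p$ is one of the nonnegative summands of $\sigma$), giving $v_p = u_p\,\rho(\sigma)/\sigma \leq \rho(\sigma) \leq 1$. The first term in the product rule for $Dv_p$ then uses $\rho(\sigma)/\sigma \leq 2$ rather than $\leq 1$, which only inflates the final constant by an absolute factor.
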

\begin{proof}
	Let $\Gamma = \sup \{ (129)^l, 40 V_1 \}$, where $V_1$ is obtained
	from \cite[3.1.13]{MR41:1976} with $m$ replaced by $l$.

	In order to verify that $\Gamma$ has the asserted property, suppose
	$D$ and $t$ are as in the body of the lemma. One may assume $t=1$.

	Define $\Phi = \{ \rel^l \without D \} \cup \{ \oball{d}{1} \with d
	\in D \}$ and $h : \rel^l \to \rel$ by
	\begin{gather*}
		h(y) = {\textstyle\frac{1}{20}} \sup \big \{ \inf \{ 1, \dist
		(y, \rel^l \without U ) \with U \in \Phi \} \big \} \quad
		\text{for $y \in \rel^l$}.
	\end{gather*}
	Observe that $h(y) \geq \frac{1}{40}$ for $y \in \rel^l$. Employing
	\cite[3.1.13]{MR41:1976}, one obtains a countable family $T$ contained
	in $\mathscr{D}^0 ( \rel^l )$ satisfying the following four
	conditions.
	\begin{enumerate}
		\item [\eqref{item:partition_of_unity:contained}$'$] If $v \in
		T$ then $\spt v \subset \cball{y}{10h(y)}$ for some $y \in
		\rel^l$.
		\item [\eqref{item:partition_of_unity:card}$'$] If $y \in
		\rel^l$ then $\card ( T \cap \{ v \with \cball{y}{10h(y)} \cap
		\spt v \neq \varnothing \} ) \leq (129)^l$.
		\item [\eqref{item:partition_of_unity:sum}$'$] If $y \in
		\rel^l$ then $\sum_{v \in T} v(y)= 1$.
		\item [\eqref{item:partition_of_unity:estimate}$'$] If $v \in
		T$ and $y \in \rel^l$ then $0 \leq v(y) \leq 1$ and $\| Dv(y)
		\| \leq V_1 \cdot h(y)^{-1}$.
	\end{enumerate}
	Now, one readily verifies that $S = T \cap \{ v \with D \cap \spt v
	\neq \varnothing \} )$ has the asserted property.
\end{proof}
\begin{theorem} \label{thm:sob_poincare_q_medians}
	Suppose $1 \leq M < \infty$.

	Then there exists a positive, finite number $\Gamma$ with the
	following property.

	If $l \in \nat$, $\vdim$, $\adim$, $p$, $U$, $V$, and $\psi$ are as in
	\ref{miniremark:situation_general}, $\sup \{ l, \adim \} \leq M$, $f
	\in \trunc (V,\rel^l)$, $1 \leq Q \leq M$, $P \in \nat$, $0 < r <
	\infty$,
	\begin{gather*}
		\| V \| ( U ) \leq ( Q-M^{-1} ) (P+1) \unitmeasure{\vdim}
		r^\vdim, \\
		\| V \| ( \{ z \with \density^\vdim ( \| V \|, z ) < Q \} )
		\leq \Gamma^{-1} r^\vdim,
	\end{gather*}	
	$A = \{ z \with \oball{z}{r} \subset U \}$, and $f_Y : \dmn f \to
	\rel$ is defined by
	\begin{gather*}
		f_Y (z) = \dist (f(z),Y) \quad \text{whenever $z \in \dmn f$,
		$\varnothing \neq Y \subset \rel^l$},
	\end{gather*}
	then the following four statements hold.
	\begin{enumerate}
		\item \label{item:sob_poincare_q_medians:p=1} If $p = 1$,
		$\beta = \infty$ if $\vdim = 1$ and $\beta = \vdim/(\vdim-1)$
		if $\vdim > 1$, then there exists a subset $Y$ of $\rel^l$
		such that $1 \leq \card Y \leq P$ and
		\begin{gather*}
			\eqLpnorm{\| V \| \restrict A}{\beta}{f_Y} \leq \Gamma
			P^{1/\beta} \big ( \Lpnorm{\| V
			\|}{1}{\derivative{V}{f}} + \| \delta V \|(f_Y) \big
			).
		\end{gather*}
		\item \label{item:sob_poincare_q_medians:p=m=1} If $p = \vdim
		= 1$ and $\psi (U) \leq \Gamma^{-1}$, then there exists a
		subset $Y$ of $\rel^l$ such that $1 \leq \card Y \leq P$ and
		\begin{gather*}
			\eqLpnorm{\| V \| \restrict A}{\infty}{f_Y} \leq
			\Gamma \, \Lpnorm{\| V \|}{1}{\derivative{V}{f}}.
		\end{gather*}
		\item \label{item:sob_poincare_q_medians:q<m=p} If $1 \leq q <
		\vdim = p$ and $\psi (U) \leq \Gamma^{-1}$, then there exists
		a subset $Y$ of $\rel^l$ such that $1 \leq \card Y \leq P$ and
		\begin{gather*}
			\eqLpnorm{\| V \| \restrict A}{\vdim q/(\vdim-q)}{f_Y}
			\leq \Gamma P^{1/q-1/\vdim} (\vdim-q)^{-1} \Lpnorm{\|
			V \|}{q}{\derivative{V}{f}}.
		\end{gather*}
		\item \label{item:sob_poincare_q_medians:p=m<q} If $1 < p =
		\vdim < q \leq \infty$ and $\psi ( U ) \leq \Gamma^{-1}$, then
		there exists a subset $Y$ of $\rel^l$ such that $1 \leq \card
		Y \leq P$ and
		\begin{gather*}
			\eqLpnorm{\| V \| \restrict A}{\infty}{f_Y} \leq
			\Gamma^{1/(1/\vdim-1/q)} r^{1-\vdim/q} \Lpnorm{\| V
			\|}{q}{\derivative{V}{f}}.
		\end{gather*}
	\end{enumerate}
\end{theorem}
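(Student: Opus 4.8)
The plan is to deduce all four assertions from the one–median estimates of \ref{thm:sob_poin_summary}\,\eqref{item:sob_poin_summary:interior} with $G=\varnothing$, applied to $f_Y$ and its truncations, once a suitable finite set $Y$ of at most $P$ points has been chosen; the single constant $\Gamma$ is then the maximum of the finitely many constants arising, over the admissible values $l\le M$ and over the four implication indices, from $\Gamma_{\ref{thm:sob_poin_summary}}$ and $\Gamma_{\ref{lemma:partition_of_unity}}$. As usual one first reduces to the case that $f$ is bounded, $\|V\|(U)<\infty$, $\psi(U)<\infty$, and $\Lpnorm{\|V\|}{q}{\derivative{V}{f}}<\infty$ — replacing $f$ by $z\mapsto\inf\{|f(z)|,s\}\,|f(z)|^{-1}f(z)$ by means of \ref{lemma:basic_v_weakly_diff} and \ref{example:composite}\,\eqref{item:composite:mod}\,\eqref{item:composite:1d} and letting $s\to\infty$, noting $f\in\Lploc{1}(\|V\|+\|\delta V\|,\rel^l)$ then holds by \ref{thm:addition}\,\eqref{item:addition:zero}, H\"older's inequality and \ref{corollary:integrability}. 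Write $c=(Q-M^{-1})\unitmeasure{\vdim}r^\vdim$, so the mass hypothesis reads $\|V\|(U)\le(P+1)c$, and record that for every nonempty $Y\subset\rel^l$ the function $\dist(\cdot,Y)$ is $1$ Lipschitzian and proper, hence $f_Y\in\trunc(V)$ with $|\derivative{V}{f_Y}|\le|\derivative{V}{f}|$ $\|V\|$ almost everywhere by \ref{lemma:comp_lip}, and $\psi(\{z\with f_Y(z)\neq0\})\le\psi(U)$; consequently \ref{thm:sob_poin_summary}\,\eqref{item:sob_poin_summary:interior} applies to $f_Y$, or to any Lipschitzian composition of it, as soon as the relevant superlevel set has $\|V\|$ measure at most $c$, the density condition $\|V\|(\{z\with\density^\vdim(\|V\|,z)<Q\})\le\Gamma^{-1}r^\vdim$ being inherited from the global hypothesis.

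In the case $l=1$ the choice of $Y$ is the classical one and the argument of Hutchinson \cite[Theorem~3]{MR1066398} carries over: since $t\mapsto\|V\|(\{z\with f(z)\le t\})$ is nondecreasing with total variation at most $(P+1)c$, one picks $y_1\le\dots\le y_P$, perturbing away from the at most countably many atoms of $f_\#(\|V\|)$, so that each of the $P+1$ cells cut out by $f\le y_1$, $y_i<f\le y_{i+1}$, $f>y_P$ has $\|V\|$ measure at most $c$, and sets $Y=\{y_1,\dots,y_P\}$. Writing the piecewise linear function $t\mapsto\dist(t,Y)$ as a sum $\sum_{j=0}^{P}\psi_j$ of its uniquely determined ``tent'' pieces supported on the closures of the successive cells, with $\psi_0$ and $\psi_P$ altered to be constant off a large ball, which is harmless as $f$ is bounded, the functions $g_j=\psi_j\circ f$ belong to $\trunc(V)$ by \ref{lemma:basic_v_weakly_diff} and \ref{example:composite}, are nonnegative with at most one nonzero at each point, satisfy $\sum_jg_j=f_Y$ and $\{z\with g_j(z)\neq0\}$ contained in the $j$‑th cell, and obey $|\derivative{V}{g_j}|\le|\derivative{V}{f}|$ there. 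One applies to each $g_j$ the relevant implication of \ref{thm:sob_poin_summary}\,\eqref{item:sob_poin_summary:interior}, which is legitimate because $\|V\|(\{g_j\neq0\})\le c$ and, in the cases $p=\vdim$, $\psi(\{g_j\neq0\})\le\psi(U)\le\Gamma^{-1}$. Summing the resulting inequalities by \ref{miniremark:lpnorms} with $\gamma=1$, while using the essential disjointness of the sets $\{g_j\neq0\}$ to bound $\sum_j\Lpnorm{\|V\|}{q}{\derivative{V}{g_j}}^q$ by $\Lpnorm{\|V\|}{q}{\derivative{V}{f}}^q$ and the exact decomposition $f_Y=\sum_jg_j$ to obtain $\sum_j\|\delta V\|(g_j)=\|\delta V\|(f_Y)$, one arrives at the four inequalities, in fact without the factor $P^{1/\beta}$, respectively $P^{1/q-1/\vdim}$, see \ref{thm:sob_poin_several_med}; those factors are $\ge1$ and here only absorb the passage from $P+1$ to $P$ summands.

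For $l>1$ and $P>1$ the total ordering of the target is lost and choosing $Y$ becomes the delicate point — this is the main obstacle. The plan is to pick $Y$ with $1\le\card Y\le P$ for which the uncovered set $\{z\with\dist(f(z),Y)\ge\rho\}$ has $\|V\|$ measure at most $c$, with $\rho>0$ as small as this constraint permits; the bound $\|V\|(U)\le(P+1)c$ ensures such $Y$ and $\rho$ exist. One then splits $f_Y=\sup\{f_Y-\rho,0\}+\inf\{f_Y,\rho\}$: the first summand is nonzero only on a set of $\|V\|$ measure at most $c$ and is treated directly by \ref{thm:sob_poin_summary}\,\eqref{item:sob_poin_summary:interior}, after raising it to the appropriate power in the cases $p=\vdim$ as in the proof of \ref{thm:sob_poin_summary}; for the second one invokes \ref{lemma:partition_of_unity} with $D=Y$ and parameter comparable to $\rho$, obtaining a family $S\subset\mathscr{D}^0(\rel^l)$ with $\card S\le\Gamma_{\ref{lemma:partition_of_unity}}(l)\,P$, bounded overlap, and $\sum_{v\in S}v\le1$ with value $1$ near $Y$, so that the functions $g_v=(v\circ f)\cdot|f-y_v|$, where $\spt v\subset\oball{y_v}{\rho}$ with $y_v\in Y$, lie in $\trunc(V)$ by \ref{thm:addition}\,\eqref{item:addition:mult} and \ref{lemma:basic_v_weakly_diff}, dominate $\inf\{f_Y,\rho\}$ on the covered set, and have derivative bounded by $|\derivative{V}{f}|$ and the Lipschitz constant of the cutoffs. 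The crux is to show that the adaptively chosen scale $\rho$, equivalently the measures $\|V\|(\{z\with f(z)\in\spt v\})$, is controlled by the Dirichlet energy $\Lpnorm{\|V\|}{q}{\derivative{V}{f}}$, so that each $g_v$, or a further truncation of it, is nonzero on a set of $\|V\|$ measure at most $c$ and \ref{thm:sob_poin_summary}\,\eqref{item:sob_poin_summary:interior} applies; granting this, summing the $\le\Gamma_{\ref{lemma:partition_of_unity}}(l)P$ one–median estimates for the $g_v$ through \ref{miniremark:lpnorms}, the overlap constant together with $\card S$ producing exactly the factor $P^{1/\beta}$, respectively $P^{1/q-1/\vdim}$, and adding the contribution of $\sup\{f_Y-\rho,0\}$ yields the conclusion, whereupon $\Gamma$ is taken as the stated maximum.
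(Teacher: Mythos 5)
Your $l=1$ argument is essentially Hutchinson's and is fine (it is in fact the separate result \ref{thm:sob_poin_several_med}). The difficulty, as you correctly identify, is $l>1$ and $P>1$, and there your proposal has a genuine gap: you choose $\rho$ adaptively as the smallest scale for which some $Y$ with $\card Y\le P$ leaves an uncovered set of measure $\le c$, and you then say ``the crux is to show that the adaptively chosen scale $\rho$ \ldots\ is controlled by the Dirichlet energy \ldots; granting this \ldots.'' But you never prove this, and it is not merely a verification to be deferred. The infimal $\rho$ is determined solely by the distribution $f_\#(\|V\|)$, with no a priori dependence on $\Lpnorm{\|V\|}{q}{\derivative Vf}$; establishing such control \emph{is} a Sobolev--Poincar\'e statement, and proving it would have to go through the one--median inequality applied to suitable cutoffs of $f$ --- which is precisely what the rest of your sketch assumes is already available. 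Moreover, even with $\rho$ controlled, your functions $g_v=(v\circ f)\cdot|f-y_v|$ have $\{g_v\neq0\}\subset f^{-1}\lIm\oball{y_v}{\rho}\rIm$ with $y_v\in Y$, and $Y$ is exactly where $f_\#(\|V\|)$ is concentrated, so there is no reason $\|V\|(\{g_v\neq0\})\le c$; \ref{thm:sob_poin_summary}\,\eqref{item:sob_poin_summary:interior} need not apply to the $g_v$.

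The paper avoids both problems by reversing your logic. It fixes a scale $t=\Delta_6\gamma$ directly from the Dirichlet data $\gamma$, lets $T$ be the set of \emph{heavy} centres $y$ with $\|V\|(f^{-1}\lIm\oball yt\rIm)>(Q-M^{-1})\unitmeasure\vdim$, and takes $Y$ to be a maximal $2t$--separated subset of $T$; the mass bound $\|V\|(U)\le(Q-M^{-1})(P+1)\unitmeasure\vdim$ then forces $\card Y\le P$. The partition of unity from \ref{lemma:partition_of_unity} is built over the \emph{far} set $D=\{y\with\dist(y,Y)\ge2\Delta_6\delta\}$, not over $Y$: since $2\Delta_6\delta\ge2t$, maximality of $Y$ gives $D\cap T=\varnothing$, so every cutoff $v\in S$ has $\|V\|(\{v\circ f\neq0\})\le(Q-M^{-1})\unitmeasure\vdim$ automatically, and the one--median inequality applies to each $v\circ f$. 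Summing these (via \ref{miniremark:lpnorms}) shows that $\{z\with f_Y(z)\ge2\Delta_6\delta\}$ has $\|V\|$ measure $\le(1/2)\unitmeasure\vdim(1-\Delta_2)^\vdim$ (or zero in the $p=\vdim$ cases); the main estimate then comes from applying the one--median inequality once to $(f_Y-2\Delta_6\delta)^+$ and adding back the piece $\le2\Delta_6\delta$. You would also need to treat the degenerate case $\gamma=0$ separately, which the paper does via the decomposition theorem \ref{thm:zero_derivative}; your scheme does not address it.
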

\begin{proof}
	Define
	\begin{gather*}
		\Delta_1 = \sup \{ 1, \Gamma_{\ref{thm:rel_iso_ineq}} ( M )
		\}^M, \quad \Delta_2 = ( \sup \{ 1/2, 1-1/(2M^2-1) \} )^{1/M},
		\\
		\Delta_3 = \sup ( \{ 1 \} \cup \{
		\Gamma_{\ref{lemma:partition_of_unity}} (l) \with M \geq l \in
		\nat \} ), \quad \Delta_4 = \Delta_2^{-M}
		\Gamma_{\ref{thm:sob_poin_summary}} ( 2M), \\
		\Delta_5 = (1/2) \inf \{ \unitmeasure{\vdim} \with M \geq
		\vdim \in \nat \}, \\
		\Delta_6 = \sup \{ 1, \Gamma_{\ref{thm:sob_poin_summary}} (
		2M) \} ( \Delta_3^3 + 1 ) \Delta_5^{-1} (1-\Delta_2)^{-M}, \\
		\Delta_7 = \sup \{ \unitmeasure{\vdim} \with M \geq \vdim \in
		\nat \}, \quad \Delta_8 = 4 M \Delta_6 \Delta_7 +
		\Gamma_{\ref{thm:sob_poin_summary}} ( 2M ), \\
		\Delta_9 = M \Delta_7 \Gamma_{\ref{thm:sob_poin_summary}} ( 2M
		), \quad \Gamma = \sup \{ \Delta_1, \Delta_4, \Delta_8, 2
		\Delta_6 ( 1 + \Delta_9 ) \}
	\end{gather*}
	and notice that $\Delta_5 \leq 1 \leq \inf \{ \Delta_6, \Delta_7 \}$.

	In order to verify that $\Gamma$ has the asserted property, suppose
	$l$, $\vdim$, $\adim$, $p$, $U$, $V$, $\psi$, $f$, $Q$, $P$, $r$, $A$,
	and $f_Y$ are related to $\Gamma$ as in the body of the theorem.
	Abbreviate $\eta = \vdim q/(\vdim-q)$ if
	\eqref{item:sob_poincare_q_medians:q<m=p} holds and $\zeta =
	1/\vdim-1/q$ if \eqref{item:sob_poincare_q_medians:p=m<q} holds.
	Moreover, define
	\begin{align*}
		\gamma & = \Lpnorm{\| V \|}{1}{\derivative{V}{f}} && \quad
		\text{if \eqref{item:sob_poincare_q_medians:p=1} or
		\eqref{item:sob_poincare_q_medians:p=m=1} hold}, \\
		\gamma & = (\vdim-q)^{-1} \Lpnorm{\| V
		\|}{q}{\derivative{V}{f}} && \quad \text{if
		\eqref{item:sob_poincare_q_medians:q<m=p} holds}, \\
		\gamma & = \Delta_9^{1/\zeta} \Lpnorm{\| V
		\|}{q}{\derivative{V}{f}} && \quad \text{if
		\eqref{item:sob_poincare_q_medians:p=m<q} holds}
	\end{align*}
	and assume $r = 1$.
	
	First, the \emph{case $\gamma = 0$} will be considered.
	
	For this purpose choose $G$ and $\xi$ as in \ref{thm:zero_derivative}
	and let
	\begin{gather*}
		H = G \cap \{ W \with \| W \| ( U ) \leq ( Q-M^{-1} )
		\unitmeasure{\vdim} \}.
	\end{gather*}
	Applying \ref{corollary:rel_iso_ineq} and
	\ref{corollary:true_rel_iso_ineq} with $V$, $E$, and $B$ replaced by
	$W$, $U$, and $\Bdry U$, one infers
	\begin{align*}
		& \| W \| ( A )^{1/\beta} \leq \Delta_1 \, \| \delta W \| ( U
		) && \quad \text{if \eqref{item:sob_poincare_q_medians:p=1}
		holds, where $0^0=0$,} \\
		& \| W \| ( A ) = 0 && \quad \text{if
		\eqref{item:sob_poincare_q_medians:p=m=1} or
		\eqref{item:sob_poincare_q_medians:q<m=p} or
		\eqref{item:sob_poincare_q_medians:p=m<q} hold}
	\end{align*}
	whenever $W \in H$. Since $\card ( G \without H ) \leq P$, there
	exists $Y$ such that
	\begin{gather*}
		\xi \lIm G \without H \rIm \subset Y \subset \rel^l \quad
		\text{and} \quad 1 \leq \card Y \leq P.
	\end{gather*}
	If \eqref{item:sob_poincare_q_medians:p=1} holds, it follows that,
	using \ref{remark:decomp_rep},
	\begin{align*}
		& \eqLpnorm{\| V \| \restrict A}{\beta}{f_Y} \leq \tsum{W \in
		H}{} \dist (\xi(W),Y) \| W \| (A)^{1/\beta} \\
		& \qquad \leq \Delta_1 \tsum{W \in H}{} \dist (\xi(W),Y) \|
		\delta W \| ( U ) = \Delta_1 \| \delta V \|(f_Y).
	\end{align*}
	If \eqref{item:sob_poincare_q_medians:p=m=1} or
	\eqref{item:sob_poincare_q_medians:q<m=p} or
	\eqref{item:sob_poincare_q_medians:p=m<q} hold, the corresponding
	estimate is trivial.

	Second, the \emph{case $\gamma > 0$} will be considered.

	For this purpose assume $\gamma < \infty$ and define
	\begin{gather*}
		Z = \{ z \with \cball{z}{\Delta_2} \subset U \}, \quad t =
		\Delta_6 \gamma, \\
		T = \rel^l \cap \big \{ y \with \| V \| ( f^{-1} \lIm \oball
		yt \rIm ) > (Q-M^{-1}) \unitmeasure{\vdim} \big \}.
	\end{gather*}
	Choose $Y \subset \rel^l$ satisfying
	\begin{gather*}
		1 \leq \card Y \leq P \quad \text{and} \quad T \subset \{ y
		\with \dist (y,Y) < 2t \};
	\end{gather*}
	in fact, if $T \neq \varnothing$ then one may take $Y$ to be a maximal
	subset of $T$ with respect to inclusion such that $\{ \oball yt \with
	y \in Y \}$ is disjointed. Abbreviate $g = \dist ( \cdot, Y )$ and, if
	\eqref{item:sob_poincare_q_medians:p=1} holds, define
	\begin{gather*}
		\delta = \gamma + \| \delta V \| ( g \circ f )
	\end{gather*}
	and assume $\delta < \infty$. If
	\eqref{item:sob_poincare_q_medians:p=m=1} or
	\eqref{item:sob_poincare_q_medians:q<m=p} or
	\eqref{item:sob_poincare_q_medians:p=m<q} hold, define $\delta =
	\gamma$. Let
	\begin{gather*}
		D = \rel^l \cap \{ y \with g(y) \geq 2 \Delta_6 \delta \},
		\quad C = f^{-1} \lIm D \rIm.
	\end{gather*}

	Next, \emph{it will be shown that
	\begin{align*}
		& \| V \| ( Z \cap C ) \leq (1/2) \unitmeasure{\vdim} (
		1-\Delta_2)^\vdim && \quad \text{if
		\eqref{item:sob_poincare_q_medians:p=1} or
		\eqref{item:sob_poincare_q_medians:q<m=p} hold}, \\
		& \| V \| ( Z \cap C ) = 0 && \quad \text{if
		\eqref{item:sob_poincare_q_medians:p=m=1} or
		\eqref{item:sob_poincare_q_medians:p=m<q} hold}.
	\end{align*}}
	To prove this assertion, choose $S$ as in
	\ref{lemma:partition_of_unity} and let $C_v = U \cap \{ z \with
	v(f(z)) \neq 0 \}$ for $v \in S$. Since $D \cap T = \varnothing$ as
	$\Delta_6 \delta \geq t$, it follows that
	\begin{gather*}
		\| V \|( C_v ) \leq ( Q-M^{-1} ) \unitmeasure{\vdim} \leq \big
		(Q-(2M)^{-1} \big ) \unitmeasure{\vdim} \Delta_2^\vdim, \\
		\| V \| ( C_v \cap \{ z \with \density^\vdim ( \| V \| , z ) <
		Q \} ) \leq \Gamma^{-1} \leq \Delta_4^{-1} \leq
		\Gamma_{\ref{thm:sob_poin_summary}} ( 2M )^{-1}
		\Delta_2^\vdim, \\
		\psi ( C_v ) \leq \Gamma^{-1} \leq
		\Gamma_{\ref{thm:sob_poin_summary}} ( 2M )^{-1} \quad \text{if
		\eqref{item:sob_poincare_q_medians:p=m=1} or
		\eqref{item:sob_poincare_q_medians:q<m=p} or
		\eqref{item:sob_poincare_q_medians:p=m<q} hold}
	\end{gather*}
	whenever $v \in S$. Denoting by $c_v$ the characteristic function of
	$C_v$, applying \ref{lemma:comp_lip} with $A$ replaced by $\spt v$ and
	noting \ref{thm:addition}\,\eqref{item:addition:zero} yields $v
	\circ f \in \trunc(V)$ with
	\begin{gather*}
		| \derivative{V}{(v \circ f)} (z) | \leq \Delta_3 t^{-1}
		c_v(z) | \derivative{V}{f} (z) | \quad \text{for $\| V \|$
		almost all $z$}
	\end{gather*}
	whenever $v \in S$. Moreover, notice that
	\begin{gather*}
		\card ( S \cap \{ v \with z \in C_v \} ) \leq \Delta_3 \quad
		\text{for $z \in \dmn f$}, \\
		\tsum{v \in S}{} v(f(z)) = 1 \quad \text{for $z \in C$},
		\qquad \spt v \subset \{ y \with g(y) \geq \Delta_6 \delta \}
		\quad \text{for $v \in S$}.
	\end{gather*}
	If \eqref{item:sob_poincare_q_medians:p=1} holds, one estimates, using
	\ref{thm:sob_poin_summary}\,\eqref{item:sob_poin_summary:interior:p=1}
	with $M$, $G$, $f$, and $r$ replaced by $2M$, $\varnothing$, $v \circ
	f$, and $\Delta_2$ and \cite[2.4.18\,(1)]{MR41:1976},
	\begin{align*}
		& \| V \| ( Z \cap C )^{1/\beta} \leq \eqLpnorm{\| V \|
		\restrict Z}{\beta}{ \tsum{v \in S}{} v \circ f } \leq \tsum{v
		\in S}{} \eqLpnorm{\| V \| \restrict Z}{\beta}{ v \circ f} \\
		& \qquad \leq \Gamma_{\ref{thm:sob_poin_summary}} ( 2M )
		\tsum{v \in S}{} \big ( \Delta_3 t^{-1} \eqLpnorm{\| V \|
		\restrict C_v}{1}{ \derivative{V}{f} } + \| \delta V \| ( v
		\circ f ) \big ), \\
		& \qquad \leq \Gamma_{\ref{thm:sob_poin_summary}} ( 2M ) \big
		( \Delta_6^{-1} \Delta_3^2 + ( f_\# \| \delta V \| ) ( \{ y
		\with g (y) \geq \Delta_6 \delta \} ) \big ) \\
		& \qquad \leq \Gamma_{\ref{thm:sob_poin_summary}} ( 2M ) (
		\Delta_3^2 + 1 ) \Delta_6^{-1} \leq \Delta_5 ( 1- \Delta_2)^M
		\leq \big ((1/2) \unitmeasure{\vdim} (1-\Delta_2)^\vdim \big
		)^{1/\beta},
	\end{align*}
	where $0^0=0$. If \eqref{item:sob_poincare_q_medians:p=m=1} holds,
	using
	\ref{thm:sob_poin_summary}\,\eqref{item:sob_poin_summary:interior:p=m=1}
	with $M$, $G$, $f$, and $r$ replaced by $2M$, $\varnothing$, $v \circ
	f$, and $\Delta_2$, one estimates
	\begin{align*}
		& \Lpnorm{\| V \|}{\infty}{ \tsum{v \in S}{} v \circ f } \leq
		\tsum{v \in S}{} \eqLpnorm{\| V \| \restrict Z}{\infty}{v
		\circ f} \\
		& \qquad \leq \Gamma_{\ref{thm:sob_poin_summary}} ( 2M )
		\tsum{v \in S}{} \Delta_3 t^{-1} \eqLpnorm{ \| V \| \restrict
		C_v }{1}{ \derivative{V}{f} } \\
		& \qquad \leq \Gamma_{\ref{thm:sob_poin_summary}} ( 2M )
		\Delta_3^2 \Delta_6^{-1} \leq \Delta_5 ( 1-\Delta_2 )^M < 1,
	\end{align*}
	hence $\| V \| ( Z \cap C ) = 0$. If
	\eqref{item:sob_poincare_q_medians:q<m=p} holds, using
	\ref{thm:sob_poin_summary}\,\eqref{item:sob_poin_summary:interior:q<m=p}
	with $M$, $G$, $f$, and $r$ replaced by $2M$, $\varnothing$, $v \circ
	f$, and $\Delta_2$ and \ref{miniremark:lpnorms}, one estimates
	\begin{align*}
		& \| V \| ( Z \cap C )^{1/\eta} \leq \eqLpnorm{\| V \|
		\restrict Z}{\eta}{ \tsum{v \in S}{} v \circ f } \\
		& \qquad \leq \Delta_3 \big ( \tsum{v \in S}{} \eqLpnorm{\| V
		\| \restrict Z}{\eta}{ v \circ f }^q \big )^{1/q} \\
		& \qquad \leq \Gamma_{\ref{thm:sob_poin_summary}} ( 2M )
		(\vdim-q)^{-1} \Delta_3^2 t^{-1} \big ( \tsum{v \in S}{}
		\eqLpnorm{\| V \| \restrict C_v}{q}{ \derivative{V}{f} }^q
		\big )^{1/q} \\
		& \qquad \leq \Gamma_{\ref{thm:sob_poin_summary}} ( 2M )
		\Delta_3^3 \Delta_6^{-1} \leq \Delta_5 (1-\Delta_2)^M \leq
		\big ( (1/2) \unitmeasure{\vdim} ( 1-\Delta_2)^\vdim \big
		)^{1/\eta}.
	\end{align*}
	If \eqref{item:sob_poincare_q_medians:p=m<q} holds, using
	\ref{thm:sob_poin_summary}\,\eqref{item:sob_poin_summary:interior:p=m<q}
	with $M$, $G$, $f$, and $r$ replaced by $2M$, $\varnothing$, $v \circ
	f$, and $\Delta_2$ and noting $\Gamma_{\ref{thm:sob_poin_summary}} (
	2M)^{1/\zeta} (M\Delta_7)^\zeta \leq \Delta_9^{1/\zeta}$, one obtains
	\begin{gather*}
		\eqLpnorm{\| V \| \restrict Z}{\infty}{v \circ f} \leq
		\Delta_9^{1/\zeta} \Delta_3 t^{-1} \eqLpnorm{\| V \| \restrict
		C_v}{q}{\derivative{V}{f}}
	\end{gather*}
	and therefore $\| V \| (Z \cap C) = 0$, since if $q < \infty$ then,
	using \ref{miniremark:lpnorms},
	\begin{align*}
		& \eqLpnorm{\| V \| \restrict Z}{\infty}{ \tsum{v \in S}{} v
		\circ f } \leq \Delta_3 \big ( \tsum{v \in S}{} \eqLpnorm{\| V
		\| \restrict Z}{\infty}{v \circ f}^q \big )^{1/q} \\
		& \qquad \leq \Delta_9^{1/\zeta} \Delta_3^2 t^{-1} \big (
		\tsum{v \in S}{} \eqLpnorm{\| V \| \restrict
		C_v}{q}{\derivative{V}{f}}^q \big)^{1/q} \\
		& \qquad \leq \Delta_3^3 \Delta_6^{-1} \leq \Delta_5
		(1-\Delta_2)^M < 1,
	\end{align*}
	and if $q = \infty$ then $\eqLpnorm{ \| V \| \restrict Z}{\infty}{
	\sum_{v \in S} v \circ f} < 1$ follows similarly.

	If \eqref{item:sob_poincare_q_medians:p=m=1} or
	\eqref{item:sob_poincare_q_medians:p=m<q} hold, noting $2 \Delta_6
	\Delta_9^{1/\zeta} \leq \Gamma^{1/\zeta}$ if
	\eqref{item:sob_poincare_q_medians:p=m<q} holds, the conclusion is
	evident from the assertion of the preceding paragraph. Suppose now
	that \eqref{item:sob_poincare_q_medians:p=1} or
	\eqref{item:sob_poincare_q_medians:q<m=p} hold and define $h : \rel^l
	\to \rel$ by
	\begin{gather*}
		h(y) = \sup \{ 0, g(y)- 2 \Delta_6 \delta \} \quad \text{for
		$y \in \rel^l$}.
	\end{gather*}
	Notice that $h \circ f \in \trunc (V)$ with
	\begin{gather*}
		| \derivative{V}{(h \circ f)} (z) | \leq | \derivative{V}{f}
		(z) | \quad \text{for $\| V \|$ almost all $z$}
	\end{gather*}
	by \ref{lemma:comp_lip} with $A$ replaced by $\rel^l$. If
	\eqref{item:sob_poincare_q_medians:p=1} holds, applying
	\ref{thm:sob_poin_summary}\,\eqref{item:sob_poin_summary:interior:p=1}
	with $M$, $U$, $V$, $G$, $f$, $Q$, and $r$ replaced by $2M$, $Z$, $V |
	\mathbf{2}^{Z \times \grass{\adim}{\vdim}}$, $\varnothing$, $h \circ
	f$, $1$, and $1-\Delta_2$ implies
	\begin{gather*}
		\eqLpnorm{\| V \| \restrict A}{\beta}{ h \circ f} \leq
		\Gamma_{\ref{thm:sob_poin_summary}} ( 2M ) \big ( \Lpnorm{\| V
		\|}{1}{ \derivative{V}{f} } + \| \delta V \| ( h \circ f )
		\big ), \\
		\eqLpnorm{\| V \| \restrict A}{\beta}{g \circ f} \leq 4 M
		\Delta_6 \Delta_7 P^{1/\beta} \delta +
		\Gamma_{\ref{thm:sob_poin_summary}} ( 2M) \delta \leq \Gamma
		P^{1/\beta} \delta,
	\end{gather*}
	since $\| V \| ( A )^{1/\beta} \leq 2 M \Delta_7 P^{1/\beta}$. If
	\eqref{item:sob_poincare_q_medians:q<m=p} holds, applying
	\ref{thm:sob_poin_summary}\,\eqref{item:sob_poin_summary:interior:q<m=p}
	with $M$, $U$, $V$, $G$, $f$, $Q$, and $r$ replaced by $2M$, $Z$, $V |
	\mathbf{2}^{Z \times \grass{\adim}{\vdim}}$, $\varnothing$, $h \circ
	f$, $1$, and $1-\Delta_2$ implies
	\begin{gather*}
		\eqLpnorm{\| V \| \restrict A}{\eta}{ h \circ f} \leq
		\Gamma_{\ref{thm:sob_poin_summary}} ( 2M ) \gamma, \\
		\eqLpnorm{\| V \| \restrict A}{\eta}{ g \circ f} \leq 4 M
		\Delta_6 \Delta_7 P^{1/\eta} \gamma +
		\Gamma_{\ref{thm:sob_poin_summary}} ( 2M ) \gamma \leq \Gamma
		P^{1/\eta} \gamma
	\end{gather*}
	since $\| V \| (A)^{1/\eta} \leq 2M \Delta_7 P^{1/\eta}$.
\end{proof}
\begin{theorem} \label{thm:sob_poin_several_med}
	Suppose $\vdim$, $\adim$, $p$, $U$, $V$, and $\psi$ are as in
	\ref{miniremark:situation_general}, $\adim \leq M < \infty$, $P \in
	\nat$, $1 \leq Q \leq M$, $f \in \trunc (V)$, $0 < r < \infty$, $Z$ is
	a finite subset of $U$,
	\begin{gather*}
		\| V \| ( U ) \leq ( Q-M^{-1} ) (P+1) \unitmeasure{\vdim}
		r^\vdim, \\
		\| V \| ( \{ z \with \density^\vdim ( \| V \|, z ) < Q \} )
		\leq \Gamma^{-1} r^\vdim,
	\end{gather*}
	and $A = \{ z \with \oball{z}{r} \subset U \}$, then there exists a
	subset $Y$ of $\rel$ with $1 \leq \card Y \leq P + \card Z$ such that
	the following four statements hold with $g = \dist ( \cdot, Y ) \circ
	f$:
	\begin{enumerate}
		\item \label{item:sob_poin_several_med:p=1} If $p = 1$, $\beta
		= \infty$ if $\vdim = 1$ and $\beta = \vdim/(\vdim-1)$ if
		$\vdim > 1$, then
		\begin{gather*}
			\eqLpnorm{\| V \| \restrict A}{\beta}{g} \leq
			\Gamma_{\ref{thm:sob_poin_summary}} (M) \big (
			\Lpnorm{\| V \|}{1}{ \derivative{V}{f} } + \| \delta V
			\|(g) \big ).
		\end{gather*}
		\item \label{item:sob_poin_several_med:p=m=1} If $p = \vdim =
		1$ and $\psi ( U \without Z ) \leq
		\Gamma_{\ref{thm:sob_poin_summary}} (M)^{-1}$, then
		\begin{gather*}
			\eqLpnorm{\| V \| \restrict A}{\infty}{g} \leq
			\Gamma_{\ref{thm:sob_poin_summary}} (M) \, \Lpnorm{\|
			V \|}{1}{ \derivative{V}{f} }.
		\end{gather*}
		\item \label{item:sob_poin_several_med:q<m=p} If $1 \leq q <
		\vdim = p$ and $\psi ( U ) \leq
		\Gamma_{\ref{thm:sob_poin_summary}} (M)^{-1}$, then
		\begin{gather*}
			\eqLpnorm{\| V \| \restrict A}{\vdim q/(\vdim-q)}{g}
			\leq \Gamma_{\ref{thm:sob_poin_summary}} (M)
			(\vdim-q)^{-1} \Lpnorm{\| V \|}{q}{ \derivative{V}{f}
			}.
		\end{gather*}
		\item \label{item:sob_poin_several_med:p=m<q} If $1 < p =
		\vdim < q \leq \infty$ and $\psi ( U ) \leq
		\Gamma_{\ref{thm:sob_poin_summary}} (M)^{-1}$, then
		\begin{gather*}
			\eqLpnorm{\| V \| \restrict A}{\infty}{g} \leq
			\Gamma^{1/(1/\vdim-1/q)} Q^{1/\vdim-1/q} r^{1-\vdim/q}
			\Lpnorm{\| V \|}{q}{ \derivative{V}{f} },
		\end{gather*}
		where $\Gamma = \Gamma_{\ref{thm:sob_poin_summary}} (M) \sup
		\{ \unitmeasure{k} \with M \geq k \in \nat \}$.
	\end{enumerate}
\end{theorem}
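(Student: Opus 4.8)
The plan is to follow the approach of Hutchinson, see \cite[Theorem 3]{MR1066398}: choose the median set $Y$ by chopping the finite Borel measure $f_\# \| V \|$ on $\rel$, and then write $\dist ( \cdot, Y ) \circ f$ as a finite sum of ``tent'' and ``ramp'' compositions, each supported on one of the slabs between consecutive medians, to which the local estimates \ref{thm:sob_poin_summary}\,\eqref{item:sob_poin_summary:interior} apply directly with $G = \varnothing$. (Here and below $\Gamma$ in the hypotheses denotes $\Gamma_{\ref{thm:sob_poin_summary}} ( M )$, matching the conclusions.)

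First I would construct $Y$. Abbreviate $c = ( Q - M^{-1} ) \unitmeasure{\vdim} r^\vdim$, so $( f_\# \| V \| ) ( \rel ) = \| V \| ( U ) \leq ( P + 1 ) c$. Setting $y_0 = - \infty$ and, inductively for $i = 1, \ldots, P$, $y_i = \inf \{ t \with ( f_\# \| V \| ) ( \{ u \with y_{i-1} < u \leq t \} ) > c \}$ with $\inf \varnothing = + \infty$, one obtains from the right continuity of $t \mapsto ( f_\# \| V \| ) ( \{ u \with y_{i-1} < u \leq t \} )$ that $( f_\# \| V \| ) ( ( y_{i-1}, y_i ) ) \leq c$ and, when $y_i < \infty$, $( f_\# \| V \| ) ( \{ u \with y_{i-1} < u \leq y_i \} ) \geq c$; comparison with the total mass bound then forces $( f_\# \| V \| ) ( ( y_P, + \infty ) ) \leq c$ as well. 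Put $Y = \{ y_i \with 1 \leq i \leq P, \, y_i < \infty \} \cup \{ f ( z ) \with z \in Z \}$; then $1 \leq \card Y \leq P + \card Z$, and since adjoining points to the cut set only refines the induced partition of $\rel$, writing $Y = \{ \tilde y_1 < \cdots < \tilde y_k \}$ one has
\begin{gather*}
	\| V \| ( \{ z \with f(z) < \tilde y_1 \} ) \leq c, \qquad
	\| V \| ( \{ z \with f(z) > \tilde y_k \} ) \leq c, \\
	\| V \| ( \{ z \with \tilde y_i < f(z) < \tilde y_{i+1} \} ) \leq c
	\quad \text{for $1 \leq i < k$}.
\end{gather*}
Moreover $f ( z ) \in [ \tilde y_1, \tilde y_k ]$ for $z \in Z$, so none of the sets displayed above meets $Z$.

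Next I would decompose $\dist ( \cdot, Y )$. Define functions on $\rel$, each with Lipschitz constant $1$, by $\rho^- ( t ) = \sup \{ 0, \tilde y_1 - t \}$, $\rho^+ ( t ) = \sup \{ 0, t - \tilde y_k \}$, and $\tau_i ( t ) = \sup \{ 0, \inf \{ t - \tilde y_i, \tilde y_{i+1} - t \} \}$ for $1 \leq i < k$; a direct check gives $\dist ( \cdot, Y ) = \rho^- + \rho^+ + \tsum{i=1}{k-1} \tau_i$ on $\rel$. Each summand vanishes outside a closed set $A$ ($\{ t \with t \leq \tilde y_1 \}$, $\{ t \with t \geq \tilde y_k \}$, or $\{ t \with \tilde y_i \leq t \leq \tilde y_{i+1} \}$ respectively) on whose complement it is locally constant, and restricts to a proper map on $A$; so \ref{lemma:comp_lip} shows that, letting $g_1, \ldots, g_{k+1}$ denote the compositions $\rho^- \circ f$, $\rho^+ \circ f$, $\tau_1 \circ f, \ldots, \tau_{k-1} \circ f$, one has $g_j \in \trunc ( V )$ with $| \derivative{V}{g_j} | \leq a_j | \derivative{V}{f} |$ $\| V \|$ almost everywhere, where $a_j$ is the characteristic function of the corresponding preimage. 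Consequently $g = \dist ( \cdot, Y ) \circ f = \tsum{j=1}{k+1} g_j$, the sets $\{ g_j \neq 0 \}$ are pairwise disjoint and each is contained in one of the sets displayed above, and by \ref{thm:addition}\,\eqref{item:addition:zero} applied to $g_j$ the function $\derivative{V}{g_j}$ vanishes $\| V \|$ almost everywhere off $\{ g_j \neq 0 \}$; hence $\tsum{j=1}{k+1} | \derivative{V}{g_j} |^q \leq | \derivative{V}{f} |^q$ $\| V \|$ almost everywhere for $1 \leq q < \infty$, while $\tsum{j=1}{k+1} g_j^\beta = g^\beta$ pointwise for every $\beta$.

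Finally I would apply the appropriate implication among \ref{thm:sob_poin_summary}\,\eqref{item:sob_poin_summary:interior:p=1}, \eqref{item:sob_poin_summary:interior:p=m=1}, \eqref{item:sob_poin_summary:interior:q<m=p}, \eqref{item:sob_poin_summary:interior:p=m<q} with $l = 1$, $G = \varnothing$, and the given $\vdim$, $\adim$, $p$, $U$, $V$, $\psi$, $Q$, $M$, $r$, to each $g_j$ in place of $f$. The requirement $\| V \| ( \{ g_j \neq 0 \} ) \leq c$ holds by construction; the requirement on $\| V \| ( \{ g_j \neq 0 \} \cap \{ z \with \density^\vdim ( \| V \|, z ) < Q \} )$ holds since that set lies in $\{ z \with \density^\vdim ( \| V \|, z ) < Q \}$; and, where the mean curvature hypothesis enters, the requirement $\psi ( \{ g_j \neq 0 \} ) \leq \Gamma_{\ref{thm:sob_poin_summary}} ( M )^{-1}$ follows from $\psi ( U \without Z ) \leq \Gamma_{\ref{thm:sob_poin_summary}} ( M )^{-1}$ when $\vdim = 1$ — using that $\{ g_j \neq 0 \}$ misses $Z$ — and from $\psi ( U ) \leq \Gamma_{\ref{thm:sob_poin_summary}} ( M )^{-1}$ when $\vdim > 1$. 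Summing the $k+1$ estimates so obtained and combining, via $( \tsum{j}{} x_j^s )^{1/s} \leq ( \tsum{j}{} x_j^{s'} )^{1/s'}$ for $s \geq s' \geq 1$, with the two pointwise bounds of the previous paragraph (which yield $\tsum{j}{} \tint{}{} | \derivative{V}{g_j} |^q \ud \| V \| \leq \tint{}{} | \derivative{V}{f} |^q \ud \| V \|$ and, in case \eqref{item:sob_poin_several_med:p=1}, also $\tsum{j}{} \tint{}{} g_j \ud \| \delta V \| = \| \delta V \| ( g )$), one arrives at \eqref{item:sob_poin_several_med:p=1}--\eqref{item:sob_poin_several_med:p=m<q}; in case \eqref{item:sob_poin_several_med:p=m<q} one moreover bounds $\| V \| ( \{ g_j \neq 0 \} ) \leq Q \unitmeasure{\vdim} r^\vdim$ and absorbs $\unitmeasure{\vdim}$ into $\sup \{ \unitmeasure{k} \with M \geq k \in \nat \}$, which produces the stated constant. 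I expect the only delicate part to be this final bookkeeping — keeping the summation inequalities sharp enough that the constant comes out exactly as claimed, and, for $\vdim = 1$, exploiting that the medians forced at the points of $Z$ remove $Z$ from every slab; everything else is routine.
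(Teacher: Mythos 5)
Your proposal is correct and follows essentially the same approach as the paper: choose the cut set $Y$ so that $f\lIm Z\rIm\subset Y$ and each connected component $I$ of $\rel\without Y$ has preimage mass at most $(Q-M^{-1})\unitmeasure{\vdim} r^\vdim$, decompose $g$ into the functions $\dist(\cdot,\rel\without I)\circ f$ (your $\rho^\pm$ and $\tau_i$ are exactly the paper's $f_I$), apply \ref{thm:sob_poin_summary}\,\eqref{item:sob_poin_summary:interior} to each piece with $G=\varnothing$, and sum via \ref{miniremark:lpnorms}. Your explicit construction of $Y$ via the pushforward $f_\#\|V\|$ and the explicit verification that the pieces lie in $\trunc(V)$ with the right derivative bound (via \ref{lemma:comp_lip} and \ref{thm:addition}\,\eqref{item:addition:zero}) fill in details the paper leaves implicit, but the argument and the route to the constant in \eqref{item:sob_poin_several_med:p=m<q} coincide with the paper's.
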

\begin{proof}
	Choose a nonempty subset $Y$ of $\rel$ satisfying
	\begin{gather*}
		\card Y \leq P + \card Z, \quad f \lIm Z \rIm \subset Y, \\
		\| V \| ( U \cap \{ z \with f(z) \in I \} ) \leq (Q-M^{-1})
		\unitmeasure{\vdim} r^\vdim \quad \text{for $I \in F$},
	\end{gather*}
	where $F$ is the family of connected components of $\rel \without Y$.
	Notice that
	\begin{gather*}
		\dist (b,Y) = \dist (b, \rel\without I) \quad \text{and} \quad
		\dist (b,\rel \without J) = 0
	\end{gather*}
	whenever $b \in I \in F$ and $I \neq J \in F$, and
	\begin{gather*}
		\dist (b,Y) = \dist (b,\rel \without I) = 0 \quad
		\text{whenever $b \in Y$ and $I \in F$}.
	\end{gather*}
	Defining $f_I = \dist ( \rel \without I ) \circ f$, one infers $f_I
	\in \trunc (V)$ and
	\begin{gather*}
		| \derivative{V}{f} (z) | = | \derivative{V}{f_I}(z) | \quad
		\text{for $\| V \|$ almost all $z \in f^{-1} \lIm I \rIm$}, \\
		\derivative{V}{f} (z) = 0 \quad \text{for $\| V \|$ almost all
		$z \in f^{-1} \lIm Y \rIm$}, \\
		\derivative{V}{f_I} (z) = 0 \quad \text{for $\| V \|$ almost
		all $z \in f^{-1} \lIm \rel \without I \rIm$}
	\end{gather*}
	whenever $I \in F$ by \ref{lemma:basic_v_weakly_diff},
	\ref{example:composite} and
	\ref{thm:addition}\,\eqref{item:addition:zero}.

	The conclusion now will be obtained by applying
	\ref{thm:sob_poin_summary}\,\eqref{item:sob_poin_summary:interior}
	with $l$, $G$, and $f$ replaced by $1$, $\varnothing$, and $f_I$ for
	$I \in F$ and employing \ref{miniremark:lpnorms}. For instance, in
	case of \eqref{item:sob_poin_several_med:q<m=p} one estimates
	\begin{gather*}
		\begin{aligned}
			\eqLpnorm{\| V \| \restrict A}{\vdim q/(\vdim-q)}{g} &
			\leq \big ( \tsum{I \in F}{} \eqLpnorm{\| V \|
			\restrict A}{q}{ f_I }^q \big )^{1/q} \\
			& \leq \gamma \big ( \tsum{I \in F}{} \Lpnorm{\| V
			\|}{q}{ \derivative{V}{f_I} }^q \big )^{1/q} = \gamma
			\, \Lpnorm{\| V \|}{q}{ \derivative{V}{f} },
		\end{aligned}
	\end{gather*}
	where $\gamma = \Gamma_{\ref{thm:sob_poin_summary}} (M)
	(\vdim-q)^{-1}$. The cases \eqref{item:sob_poin_several_med:p=1} and
	\eqref{item:sob_poin_several_med:p=m=1} follow similarly. Defining
	$\Delta = \sup \{ \unitmeasure{k} \with M \geq k \in \nat \}$ and
	noting $\Delta \geq 1$, hence $\unitmeasure{\vdim}^{(1/\vdim-1/q)^2}
	\leq \Delta^{(1/\vdim-1/q)^2} \leq \Delta$, the same holds for
	\eqref{item:sob_poin_several_med:p=m<q}.
\end{proof}
\begin{remark}
	The method of deduction of \ref{thm:sob_poin_several_med} from
	\ref{thm:sob_poin_summary} is that of Hutchinson \cite[Theorem
	3]{MR1066398} which is derived from Hutchinson \cite[Theorem
	1]{MR1066398}.
\end{remark}
\begin{remark}
	A nonempty choice of $Z$ will occur in \ref{thm:mod_continuity}.
\end{remark}
\section{Differentiability properties}
In this section, approximate differentiability, see \ref{thm:approx_diff}, and
differentiability in Lebesgue spaces, see \ref{thm:diff_lebesgue_spaces} are
established for weakly differentiable functions. The primary ingredient is the
Sobolev Poincar{\'e} type inequality
\ref{thm:sob_poin_summary}\,\eqref{item:sob_poin_summary:interior}.
\begin{miniremark} \label{miniremark:convention}
	Following \cite{MR41:1976}, the convention that $a = b$ is true if
	and only if either both expressions ``$a$'' and ``$b$'' are defined
	and equal or both expressions are undefined will be employed.
\end{miniremark}
\begin{lemma} \label{lemma:approx_diff}
	Suppose $l, \vdim, \adim \in \nat$, $\vdim \leq \adim$, $U$ is an open
	subset of $U$, $V \in \RVar_\vdim (U)$, $f$ is a $\| V \|$ measurable
	$\rel^l$ valued function, and $A$ is the set of points at which $f$ is
	$( \| V \|, \vdim )$ approximately differentiable.

	Then the following three statements hold.
	\begin{enumerate}
		\item \label{item:approx_diff:measurable} The set $A$ is $\| V
		\|$ measurable and $( \| V \|, \vdim ) \ap Df(z) \circ
		\project{\Tan^\vdim ( \| V \|, z )}$ depends $\| V \|
		\restrict A$ measurably on $z$.
		\item \label{item:approx_diff:cover} There exists a sequence
		of functions $f_i : U \to \rel^l$ of class $1$ such that
		\begin{gather*}
			\| V \| ( A \without \{ z \with \text{$f(z) = f_i(z)$
			for some $i$} \} ) = 0.
		\end{gather*}
		\item \label{item:approx_diff:unrectifiable} If $g : U \to
		\rel^l$ is locally Lipschitzian, then
		\begin{gather*}
			\| V \| ( U \cap \{ z \with f(z) = g(z) \} \without A
			) = 0.
		\end{gather*}
		\item \label{item:approx_diff:comp} If $g$ is a $\| V \|$
		measurable $\rel^l$ valued function and $B = U \cap \{ z \with
		f(z) = g(z) \}$, then (see \ref{miniremark:convention})
		\begin{gather*}
			( \| V \|, \vdim ) \ap Df(z) = ( \| V \|, \vdim ) \ap
			Dg(z) \quad \text{for $\| V \|$ almost all $z \in
			B$}.
		\end{gather*}
	\end{enumerate}
\end{lemma}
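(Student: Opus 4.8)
The plan is to reduce everything to the classical theory of approximate differentiability for Lebesgue measure via the structure of rectifiable varifolds, using that $\|V\|$ is $(\mathscr{H}^\vdim,\vdim)$ rectifiable and countably covered by graphs of $C^1$ functions over $\vdim$-dimensional planes. Recall that since $V \in \RVar_\vdim(U)$, there exist countably many $\vdim$ dimensional submanifolds $M_j$ of $\rel^\adim$ of class $C^1$ with $\|V\|(U \without \bigcup_j M_j) = 0$, and at $\|V\|$ almost every $z \in M_j$ one has $\Tan^\vdim(\|V\|,z) = \Tan(M_j,z)$. The reference point is Federer \cite[3.2.16, 3.2.19]{MR41:1976}, which characterises $(\phi,\vdim)$ approximate differentiability of $f$ at $a$ in terms of the existence of a set $S$ with $\density^\vdim(\phi\restrict X \without S,a)=0$ such that $f|S$ is differentiable relative to $S$ at $a$.

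First I would prove \eqref{item:approx_diff:measurable}: the defining condition for membership in $A$ may be phrased as a countable conjunction (over rational $\varepsilon$) of conditions asserting that the approximate upper density of certain $\|V\|$ measurable sets vanishes; measurability of these density functions follows from \cite[2.9.11, 2.9.13]{MR41:1976}, and measurability of $z \mapsto (\|V\|,\vdim)\ap Df(z) \circ \project{\Tan^\vdim(\|V\|,z)}$ follows by expressing it as a limit of difference quotients along $\|V\|$ measurable selections. Then for \eqref{item:approx_diff:cover} I would intersect $A$ with each $M_j$; on $A \cap M_j$, pushing forward via a $C^1$ chart of $M_j$, the function $f$ becomes an $(\mathscr{L}^\vdim,\vdim)$ approximately differentiable function on an open subset of $\rel^\vdim$ (using \cite[2.10.19\,(4)]{MR41:1976} to transfer the density statements between $\|V\|$ and $\mathscr{H}^\vdim\restrict M_j$ and then to $\mathscr{L}^\vdim$), so the classical Whitney-type covering by $C^1$ functions \cite[3.1.8 or 3.1.16]{MR41:1976} applies; composing with the inverse chart and extending to a $C^1$ function $f_i$ on $U$ yields the claim.

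For \eqref{item:approx_diff:unrectifiable}, locally Lipschitzian $g$ is $(\|V\|,\vdim)$ approximately differentiable everywhere by \ref{example:lipschitzian} (or directly by \cite[3.2.16]{MR41:1976} applied on each $M_j$ together with \cite[3.1.8]{MR41:1976}); hence if $f = g$ on a $\|V\|$ measurable set $B$, then at $\|V\|$ almost every density point $z$ of $B$ in $B \cap M_j$ one has $\density^\vdim(\|V\|\restrict U\without B, z)=0$, so the approximate differential of $g$ at $z$ serves as the approximate differential of $f$ at $z$, giving $z \in A$. The same density-point argument, run symmetrically, yields \eqref{item:approx_diff:comp}: at a common density point of $B$ the approximate differentials of $f$ and $g$ relative to $\Tan^\vdim(\|V\|,z)$ must agree, and away from such points both sides are governed by the convention in \ref{miniremark:convention}. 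The main obstacle I expect is bookkeeping in \eqref{item:approx_diff:cover}: one must be careful that the covering $C^1$ functions produced chart-by-chart on the $M_j$ can be reassembled into a single countable family on $U$ whose graphs still capture $\|V\|$ almost all of $A$, and that the transfer of approximate density statements between $\|V\|$, $\mathscr{H}^\vdim\restrict M_j$, and $\mathscr{L}^\vdim$ via the charts does not lose a null set of exceptional points — this is where \cite[2.10.19\,(4)]{MR41:1976} and the $C^1$ regularity of the charts are essential.
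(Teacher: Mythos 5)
Your plan takes essentially the same route as the paper's proof: reduce to a single $C^1$ submanifold, transfer through a chart to the Lebesgue-measure case, apply Federer's Lusin-type theorem \cite[3.1.16]{MR41:1976} for \eqref{item:approx_diff:cover}, and argue \eqref{item:approx_diff:unrectifiable} and \eqref{item:approx_diff:comp} by a density-point argument on the agreement set (the paper records that \eqref{item:approx_diff:comp} yields \eqref{item:approx_diff:unrectifiable}, which is in effect what you do directly). One step to make explicit in your reduction for \eqref{item:approx_diff:cover}: before invoking \cite[2.10.19\,(4)]{MR41:1976} to transfer density-zero statements between $\|V\|$ and $\mathscr{H}^\vdim\restrict M_j$ you need an intermediate decomposition into pieces on which $\tau^{-1}\le\density^\vdim(\|V\|,z)\le\tau$ for some finite $\tau$, since the density of a rectifiable varifold is only almost everywhere positive and finite, and without this the measures are not boundedly comparable so \cite[2.10.19\,(4)]{MR41:1976} does not directly apply; the paper inserts exactly this normalisation as a separate named reduction before passing to $\mathscr{L}^\vdim$.
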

\begin{proof}
	\eqref{item:approx_diff:measurable} is a consequence of
	\cite[4.5\,(1)]{snulmenn.decay}.

	In order to prove \eqref{item:approx_diff:cover}, one may reduce the
	problem. Firstly, to the case that $\| V \| ( U \without M ) = 0$ for
	some $\vdim$ dimensional submanifold of $U$ of class $1$ by
	\cite[2.10.19\,(4), 3.2.29]{MR41:1976}. Secondly, to the case that for
	some $1 < \tau < \infty$ the varifold satisfies additionally that
	$\tau^{-1} \leq \density^\vdim ( \| V \|, z ) \leq \tau$ for $\| V \|$
	almost all $z$ by \cite[2.10.19\,(4)]{MR41:1976}, hence thirdly to the
	case that $\density^\vdim ( \| V \|, z ) = 1$ for $\| V \|$ almost all
	$z$ by \cite[2.10.19\,(1)\,(3)]{MR41:1976}. Finally, to the case $\| V
	\| = \mathscr{L}^\vdim$ by \cite[3.1.19\,(4), 3.2.3, 2.8.18,
	2.9.11]{MR41:1976} which may be treated by means of
	\cite[3.1.16]{MR41:1976}.

	\eqref{item:approx_diff:comp} is a consequence of
	\cite[2.10.19\,(4)]{MR41:1976} and implies
	\eqref{item:approx_diff:unrectifiable} by
	\cite[4.5\,(2)]{snulmenn.decay}.
\end{proof}
\begin{theorem} \label{thm:approx_diff}
	Suppose $l \in \nat$, $\vdim$, $\adim$, $U$, and $V$ are as in
	\ref{miniremark:situation_general}, and $f \in \trunc (V,\rel^l)$.

	Then $f$ is $( \| V \|, \vdim )$ approximately differentiable with
	\begin{gather*}
		\derivative{V}{f} (a) = ( \| V \|, \vdim ) \ap Df(a) \circ
		\project{\Tan^\vdim ( \| V \|, a )}
	\end{gather*}
	at $\| V \|$ almost all $a$.
\end{theorem}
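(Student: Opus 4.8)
The plan is to reduce the claim to the already-established Sobolev Poincar\'e inequality with one median, \ref{thm:sob_poin_summary}\,\eqref{item:sob_poin_summary:interior:p=1}, which I shall apply in small balls centred at $\|V\|$ almost every point $a$. First I would fix a point $a$ at which several standard good properties hold simultaneously: $T=\Tan^\vdim(\|V\|,a)$ is an $\vdim$ dimensional plane with $\density^\vdim(\|V\|,a)$ a finite positive number, $a$ is a Lebesgue point (with respect to $\|V\|$) of $\derivative{V}{f}$ in the sense that $\|V\|$ has the asymptotic blow-up behaviour recalled in the introduction and $\derivative{V}{f}$ is approximately continuous at $a$, and $a$ is a point where the density ratio of $\|\delta V\|$ with respect to $\|V\|$ vanishes, i.e.\ $r^{-\vdim}\|\delta V\|(\cball ar)\to 0$; all of these hold $\|V\|$ almost everywhere by \cite{MR41:1976} (2.8.18, 2.9.5, 2.9.11, 2.9.13) together with \cite[4.5]{snulmenn.decay}. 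By \ref{remark:integration_by_parts} and \ref{thm:addition}\,\eqref{item:addition:add}, replacing $f$ by $f-\derivative{V}{f}(a)\circ(\id{\rel^\adim}-a)$ — a function which is again in $\trunc(V,\rel^l)$ since we subtract a Lipschitzian (indeed affine) map — we may assume $a=0$, $f(0)=0$ in an appropriate approximate sense, and $\derivative{V}{f}(0)=0$; it then suffices to prove that $f$ has approximate differential $0$ at $0$ relative to $T$, which in particular yields $\derivative{V}{f}(0)=0$ vanishes on $\Nor^\vdim(\|V\|,0)$ as well since the linear map subtracted already annihilated the normal directions.

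Next I would run a standard Campanato--Morrey type iteration. For $0<\rho<\infty$ with $\oball 0\rho\subset U$ set $f_\rho$ equal to $f$ truncated at a height comparable to the Sobolev quasi-norm of $f$ on $\oball 0\rho$; more precisely, using \ref{lemma:basic_v_weakly_diff} and \ref{example:composite}\,\eqref{item:composite:mod}\,\eqref{item:composite:1d} one replaces $f$ on the ball by $\phi\circ f$ where $\phi$ is an appropriate truncation so that the set $E=\{z\in\oball 0\rho\with (\phi\circ f)(z)\neq 0\}$ has $\|V\|(E)$ small compared with $\unitmeasure{\vdim}\rho^\vdim$; the hypotheses of \ref{thm:sob_poin_summary}\,\eqref{item:sob_poin_summary:interior:p=1} with $Q=1$ are then met for a slightly enlarged $M$ once $\rho$ is small, because $\density^\vdim(\|V\|,\cdot)\geq 1$ holds $\|V\|$ almost everywhere and the density ratio of $\|V\|$ at $0$ is finite. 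This gives
\begin{gather*}
	\eqLpnorm{\|V\|\restrict\oball 0{\theta\rho}}{\beta}{f}\leq\Gamma\big(\Lpnorm{\|V\|\restrict\oball 0\rho}{1}{\derivative{V}{f}}+\Lpnorm{\|\delta V\|\restrict\oball 0\rho}{1}{f}\big)
\end{gather*}
for a fixed $0<\theta<1$ and $\beta=\vdim/(\vdim-1)$ (or $\beta=\infty$ if $\vdim=1$), where $f$ on the left means the truncated function, which agrees with the original $f$ off a small set. Dividing by $\rho^\vdim$, applying H\"older to pass from the $\beta$ norm to the $1$ norm on the left, using that $\derivative{V}{f}$ is approximately continuous at $0$ with value $0$ so that $\rho^{-\vdim}\Lpnorm{\|V\|\restrict\oball 0\rho}{1}{\derivative{V}{f}}\to 0$, and absorbing the $\|\delta V\|$ term via $\Lpnorm{\|\delta V\|\restrict\oball 0\rho}{1}{f}\leq\eqLpnorm{\|\delta V\|\restrict\oball 0\rho}{\infty}{f}\,\|\delta V\|(\oball 0\rho)$ together with \ref{corollary:boundary_controls_interior} and the vanishing density ratio of $\|\delta V\|$, one obtains that the normalised mean oscillation $\rho^{-\vdim}\Lpnorm{\|V\|\restrict\oball 0{\theta\rho}}{1}{f}$ tends to $0$ faster than $\rho$, which is exactly the statement that $0$ lies in the approximate differential locus with differential $0$.

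The main obstacle I anticipate is the bookkeeping around the truncation: one must ensure that the truncated function used to meet the smallness hypothesis $\|V\|(E)\leq(Q-M^{-1})\unitmeasure{\vdim}\rho^\vdim$ differs from $f$ only on a set that is negligible at scale $\rho$ for the purpose of extracting approximate differentiability, and simultaneously that its weak derivative is controlled by $\derivative{V}{f}$ (which is where \ref{lemma:basic_v_weakly_diff} and \ref{thm:addition}\,\eqref{item:addition:zero} enter, the latter guaranteeing $\derivative{V}{f}=0$ where $f=0$). A second, more technical point is the transition from an $\Lp1$-Campanato estimate to genuine approximate differentiability in the sense of \cite[3.2.16]{MR41:1976}: here one invokes \ref{lemma:approx_diff}\,\eqref{item:approx_diff:comp} and a covering/measure-differentiation argument (in the spirit of \cite[2.9.13]{MR41:1976}), comparing $f$ with the affine competitors on the approximate tangent plane and using that $\density^\vdim(\|V\|,0)$ is positive so that $\|V\|$ restricted to small balls is genuinely $\vdim$-dimensional. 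Once these two points are handled the conclusion follows, and the identity $\derivative{V}{f}(a)=(\|V\|,\vdim)\ap Df(a)\circ\project{\Tan^\vdim(\|V\|,a)}$ at $\|V\|$ almost all $a$ is read off directly, since both sides agree on $\Tan^\vdim(\|V\|,a)$ by the above blow-up and both annihilate $\Nor^\vdim(\|V\|,a)$.
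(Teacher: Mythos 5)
Your outline attempts to prove, at $\|V\|$-almost every $a$, that the $\beta$-averaged oscillation of $f$ around an affine competitor decays faster than $\rho$, i.e.\ that the normalised Campanato quantity vanishes. The paper does not prove this, and indeed the Sobolev Poincar\'e inequality does not yield it: the right-hand side of \ref{thm:sob_poin_summary}\,\eqref{item:sob_poin_summary:interior:p=1} contains the term $\Lpnorm{\| \delta V \|}{1}{f}$, and at $\|V\|$-almost every $a$ the density ratio $s^{-\vdim}\|\delta V\|(\cball{a}{s})$ is only \emph{bounded} (by Besicovitch differentiation), not vanishing. After subtracting your affine competitor and a median, the term $\int_{\cball{a}{\tau s}} |f - f(a) - L| \,\mathrm{d}\|\delta V\|$ is still of order $s^\vdim$ — not $o(s^\vdim)$ — because $\|\delta V\|(\cball{a}{\tau s}) = O(s^\vdim)$ and the integrand is merely bounded, not small, on that ball. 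Your appeal to \ref{corollary:boundary_controls_interior} converts $\|\delta V\|$-essential suprema to $\|V\|$-essential suprema, but the latter also need not tend to $0$ unless $f$ is actually continuous at $a$, which is not assured. Consequently the estimate one can extract is only $\eqLpnorm{s^{-\vdim}\|V\|\restrict\cball{a}{s}}{\beta}{f(\cdot)-f(a)} = O(s)$, never $o(s)$.

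This is exactly what the paper proves in its first step, and the crucial second step is the one your proposal hand-waves. From the \emph{bounded} Campanato-type estimate $\limsup_{s\to 0+} s^{-1}\eqLpnorm{s^{-\vdim}\|V\|\restrict\cball{a}{s}}{\beta}{f(\cdot)-f(a)}<\infty$, the paper invokes the Lusin-type Lipschitz approximation result \cite[3.7\,(i)]{snulmenn.isoperimetric}, which produces a countable family of locally Lipschitzian $f_i : U \to \rel$ and sets $B_i = U \cap \{z \with f(z) = f_i(z)\}$ with $\|V\|(U \without \bigcup_i B_i) = 0$. Approximate differentiability of each $f_i$ is known (\ref{example:lipschitzian}), and the comparison lemma \ref{lemma:approx_diff}\,\eqref{item:approx_diff:comp} together with \ref{thm:addition}\,\eqref{item:addition:zero}\,\eqref{item:addition:add} then transfers this to $f$ a.e.\ on each $B_i$. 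Your ``covering/measure-differentiation argument in the spirit of \cite[2.9.13]{MR41:1976}'' appears to invoke the comparison lemma for a different purpose (passing from a Campanato decay to approximate differentiability), which is not what is needed; the Lusin-type Lipschitz covering is what converts boundedness of the oscillation into pointwise approximate differentiability. Also, your reduction ``$f(0)=0$ in an appropriate approximate sense'' glosses over the first reduction the paper actually makes (\ref{miniremark:trunc} to render $f$ bounded and $\derivative{V}{f}\in\Lploc{1}$, then $l=1$ via \ref{remark:integration_by_parts}), which is used to make the medians well-behaved and the $\|\delta V\|$ term finite.
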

\pagebreak
\begin{proof}
	In view of \ref{lemma:approx_diff}\,\eqref{item:approx_diff:comp}, one
	employs \ref{miniremark:trunc} and \ref{lemma:basic_v_weakly_diff} to
	reduce the problem to the case that $f$ is bounded and
	$\derivative{V}{f} \in \Lploc{1} ( \| V \|, \Hom ( \rel^\adim,
	\rel^l))$. In particular, one may assume $l = 1$ by
	\ref{remark:integration_by_parts}.  Define $\beta = \infty$ if $\vdim
	= 1$ and $\beta = \vdim/(\vdim-1)$ if $\vdim > 1$.

	First, it will be shown that
	\begin{gather*}
		\limsup_{s \to 0+} s^{-1} \eqLpnorm{s^{-\vdim} \| V \|
		\restrict \cball{a}{s}}{\beta}{f(\cdot)-f(a)} < \infty \quad
		\text{for $\| V \|$ almost all $a$}.
	\end{gather*}
	For this purpose define $C = \{ (a, \cball{a}{r} ) \with \cball{a}{r}
	\subset U \}$ and consider a point $a$ satisfying for some $M$
	the conditions
	\begin{gather*}
		\sup \{ 4, \adim \} \leq M < \infty, \quad 1 \leq
		\density^\vdim ( \| V \|, a ) \leq M, \quad \eqLpnorm{\| V \|
		+ \| \delta V \|}{\infty}{f} \leq M, \\
		\limsup_{s \to 0+} s^{-\vdim} \big ( \tint{\cball{a}{s}}{} |
		\derivative{V}{f} | \ud \| V \| + \measureball{\| \delta V
		\|}{ \cball{a}{s} } \big ) < M, \\
		\text{$\density^\vdim ( \| V \|, \cdot )$ and $f$ are $(\| V
		\|, C )$ approximately continuous at $a$}
	\end{gather*}
	which are met by $\| V \|$ almost all $a$ by \cite[2.10.19\,(1)\,(3),
	2.8.18, 2.9.13]{MR41:1976}. Define $\Delta =
	\Gamma_{\ref{thm:sob_poin_summary}} (M)$. Choose $1 \leq Q \leq M$ and
	$1 < \tau \leq 2$ subject to the requirements $\density^\vdim ( \| V
	\|, a ) < 2 \tau^{-\vdim} ( Q- 1/4)$ and
	\begin{gather*}
		\text{either $Q = \density^\vdim ( \| V \|, a ) = 1$} \quad
		\text{or $Q < \density^\vdim ( \| V \|, a )$}.
	\end{gather*}
	Then pick $0 < r < \infty$ such that $\cball{a}{\tau r} \subset U$
	and
	\begin{gather*}
		\measureball{\| V \|}{\cball{a}{s}} \geq (1/2)
		\unitmeasure{\vdim} s^\vdim, \quad \measureball{\| V
		\|}{\oball{a}{\tau s}} \leq 2 (Q-M^{-1}) \unitmeasure{\vdim}
		s^\vdim, \\
		\| V \| ( \classification{\oball{a}{\tau s}}{z}{
		\density^\vdim ( \| V \|, z ) < Q } ) \leq \Delta^{-1}
		s^\vdim, \\
		\tint{\cball{a}{\tau s}}{} | \derivative{V}{f} | \ud \| V \| +
		\measureball{\| \delta V \|}{ \cball{a}{\tau s} } \leq M
		\tau^\vdim s^\vdim
	\end{gather*}
	for $0 < s \leq r$. Choose $y (s) \in \rel$ such that
	\begin{gather*}
		\| V \| ( \classification{\oball{a}{\tau s}}{z}{ f ( z ) < y
		(s) } ) \leq (1/2) \measureball{\| V \|}{ \oball{a}{\tau s} },
		\\
		\| V \| ( \classification{\oball{a}{\tau s}}{z}{ f ( z ) > y
		(s) } ) \leq (1/2) \measureball{\| V \|}{ \oball{a}{\tau s} }
	\end{gather*}
	for $0 < s \leq r$, in particular
	\begin{gather*}
		| y (s) | \leq M \quad \text{and} \quad f(a) = \lim_{s \to 0+}
		y(s).
	\end{gather*}
	Define $f_s ( z ) = f(z)-y(s)$ whenever $0 < s \leq r$ and $z \in
	\dmn f$. Recalling \ref{lemma:basic_v_weakly_diff} and
	\ref{example:composite}\,\eqref{item:composite:1d}, one applies
	\ref{thm:sob_poin_summary}\,\eqref{item:sob_poin_summary:interior:p=1}
	with $U$, $G$, $f$, and $r$ replaced by $\oball{a}{\tau s}$,
	$\varnothing$, $f_s^+$ respectively $f_s^-$, and $s$ to infer
	\begin{gather*}
		\begin{aligned}
			\eqLpnorm{\| V \| \restrict \cball{a}{(\tau-1)
			s}}{\beta}{f_s} & \leq \Delta \tint{\cball{a}{\tau
			s}}{} | \derivative V{f_s} | \ud \| V \| +
			\tint{\cball{a}{\tau s}}{} |f_s| \ud \| \delta V \| \\
			& \leq 2 \Delta M \tau^\vdim s^\vdim
		\end{aligned}
	\end{gather*}
	for $0 < s \leq r$. Abbreviating $\gamma = 2 M \Delta \tau^\vdim$ and
	noting that
	\begin{gather*}
		\begin{aligned}
			& | y(s)-y(s/2) | \cdot \| V \| ( \cball{a}{(\tau-1)
			s/2})^{1/\beta} \\
			& \leq \eqLpnorm{\| V \| \restrict
			\cball{a}{(\tau-1) s/2}}{\beta}{f_{s/2}} +
			\eqLpnorm{\| V \| \restrict \cball{a}{(\tau-1)
			s}}{\beta}{f_s} \leq 2 \gamma s^\vdim,
		\end{aligned} \\
		| y(s)-y(s/2) | \leq 2^{\vdim+1} \gamma
		\unitmeasure{\vdim}^{-1/\beta} ( \tau-1 )^{1-\vdim} s,
	\end{gather*}
	one obtains for $0 < s \leq r$ that
	\begin{gather*}
		|y(s)-f(a)| \leq 2^{\vdim+2} \gamma
		\unitmeasure{\vdim}^{-1/\beta} (\tau-1)^{1-\vdim} s, \\
		\eqLpnorm{\| V \| \restrict \cball{a}{(\tau-1)s}
		}{\beta}{f(\cdot)-f(a)} \leq \gamma \big ( 1 + 2^{\vdim+3} M (
		\tau-1 )^{1-\vdim} \big ) s^\vdim.
	\end{gather*}

	Combining the assertion of the preceding paragraph with
	\cite[3.7\,(i)]{snulmenn.isoperimetric} applied with $\alpha$, $q$,
	and $r$ replaced by $1$, $1$, and $\infty$, one obtains a sequence of
	locally Lipschitzian functions $f_i : U \to \rel$ such that
	\begin{gather*}
		{\textstyle \| V \| \big ( U \without \bigcup \{ B_i \with i
		\in \nat \} \big ) = 0, \quad \text{where $B_i = U \cap \{ z
		\with f(z)=f_i(z) \}$}}.
	\end{gather*}
	Since $f-f_i \in \trunc (V)$ with
	\begin{gather*}
		\derivative Vf (a) - \derivative{V}{f_i} (a) =
		\derivative{V}{(f-f_i)} (a) = 0 \quad \text{for $\| V \|$
		almost all $a \in B_i$}
	\end{gather*}
	by
	\ref{thm:addition}\,\eqref{item:addition:zero}\,\eqref{item:addition:add},
	the conclusion follows from of \ref{example:lipschitzian} and
	\ref{lemma:approx_diff}\,\eqref{item:approx_diff:comp}.
\end{proof}
\begin{miniremark} \label{miniremark:tangent_spaces}
	\emph{If $\vdim$, $\adim$, $U$, $V$, $\psi$, and $p$ are as in
	\ref{miniremark:situation_general}, $p = \vdim$, $a \in U$, and $\psi
	( \{ a \} ) < \isoperimetric{\vdim}^{-1}$, then
	\begin{gather*}
		\Tan^\vdim ( \| V \|, a ) = \Tan ( \spt \| V \|, a );
	\end{gather*}}
	in fact, this is a consequence of \cite[2.5]{snulmenn.isoperimetric},
	compare \cite[Lemma 17.11]{MR756417}.
\end{miniremark}
\begin{theorem} \label{thm:diff_lebesgue_spaces}
	Suppose $l \in \nat$, $\vdim$, $\adim$, $p$, $U$, and $V$ are as in
	\ref{miniremark:situation_general}, $1 \leq q \leq \infty$, $f \in
	\trunc (V,\rel^l)$, $\derivative Vf \in \Lploc{q} ( \| V \|, \Hom
	(\rel^\adim, \rel^l) )$,
	\begin{gather*}
		C = \{ ( a, \cball{a}{r} ) \with \cball{a}{r} \subset U \},
	\end{gather*}
	and $S$ is the set of points in $\spt \| V \|$ at which $f$ is $( \| V
	\|,C )$ approximately continuous.

	Then the following four statements hold.
	\begin{enumerate}
		\item \label{item:diff_lebesgue_spaces:m>1=p} If $\vdim > 1$,
		$\beta = \vdim/(\vdim-1)$, and $f \in \Lploc{1} ( \| \delta V
		\|, \rel^l )$, then
		\begin{gather*}
			\lim_{r \to 0+} r^{-\vdim} \tint{\cball ar}{} ( |
			f(z)-f(a)- \left <z-a, \derivative Vf (a) \right > | /
			| z-a| )^\beta \ud \| V \| z = 0
		\end{gather*}
		for $\| V \|$ almost all $a$.
		\item \label{item:diff_lebesgue_spaces:m=p=1} If $\vdim = 1$,
		then $f|S$ is differentiable relative to $S$ at $a$ with
		\begin{gather*}
			D (f|S) (a) = \derivative Vf (a) | \Tan^\vdim ( \| V
			\|, a) \quad \text{for $\| V \|$ almost all $a$}.
		\end{gather*}
		\item \label{item:diff_lebesgue_spaces:q<m=p} If $q < \vdim =
		p$ and $\eta = \vdim q/(\vdim-q)$, then
		\begin{gather*}
			\lim_{r \to 0+} r^{-\vdim} \tint{\cball ar}{} ( |
			f(z)-f(a)- \left <z-a, \derivative Vf (a) \right > | /
			| z-a| )^\eta \ud \| V \| z = 0
		\end{gather*}
		for $\| V \|$ almost all $a$.
		\item \label{item:diff_lebesgue_spaces:p=m<q} If $p = \vdim <
		q$, then $f|S$ is differentiable relative to $S$ at $a$ with
		\begin{gather*}
			D (f|S) (a) = \derivative Vf (a) | \Tan^\vdim ( \| V
			\|, a) \quad \text{for $\| V \|$ almost all $a$}.
		\end{gather*}
	\end{enumerate}
\end{theorem}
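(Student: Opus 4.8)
The plan is to deduce the four statements from the approximate differentiability result \ref{thm:approx_diff} together with the relevant Sobolev Poincar\'e inequality \ref{thm:sob_poin_summary}\,\eqref{item:sob_poin_summary:interior}, applied locally around a generic point. By \ref{thm:approx_diff} the function $f$ is $(\|V\|,\vdim)$ approximately differentiable with $\derivative Vf(a) = (\|V\|,\vdim)\ap Df(a)\circ\project{\Tan^\vdim(\|V\|,a)}$ at $\|V\|$ almost all $a$; combined with \ref{lemma:approx_diff}\,\eqref{item:approx_diff:cover} this means that, outside a $\|V\|$ null set, each point $a$ lies on some set $B_i = \{z \with f(z)=f_i(z)\}$ with $f_i$ of class $1$, and there $\derivative Vf(a)|\Tan^\vdim(\|V\|,a) = Df_i(a)|\Tan^\vdim(\|V\|,a)$. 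I would first fix such a generic point $a$, at which additionally $\density^\vdim(\|V\|,a)$ is finite and positive, $f$ is $(\|V\|,C)$ approximately continuous, the relevant density ratios and first variation ratios of $\|V\|$ and (if needed) $\psi$ are controlled for small radii, and — crucially — $\derivative Vf$ is $(\|V\|,\vdim)$ approximately continuous at $a$ with respect to $\|\cdot\|_{(q)}$; all these conditions hold at $\|V\|$ almost all $a$ by the usual measure theoretic results \cite[2.8.18, 2.9.13]{MR41:1976}.

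Next I would replace $f$ near $a$ by $g_r(z) = f(z) - f(a) - \langle z-a,\derivative Vf(a)\rangle$, which again lies in $\trunc(V,\rel^l)$ by \ref{thm:addition}\,\eqref{item:addition:add} with weak derivative $\derivative V{g_r}(z) = \derivative Vf(z) - \derivative Vf(a)\circ\project{\Tan^\vdim(\|V\|,z)}$ for $\|V\|$ almost all $z$ (here one uses that the affine function $z\mapsto\langle z-a,\derivative Vf(a)\rangle$ is Lipschitzian and \ref{example:lipschitzian}). The point of this substitution is that $\derivative V{g_r}$ has small $\|\cdot\|_{(q)}$ norm on $\cball ar$ as $r\to0+$, by approximate continuity of $\derivative Vf$ at $a$ and the fact that $\derivative Vf(a)$ annihilates the approximate normal space. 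Applying \ref{thm:sob_poin_summary}\,\eqref{item:sob_poin_summary:interior} on a ball $\oball a{\tau r}$ (with $Q$ and $\tau$ chosen as in the proof of \ref{thm:approx_diff}, so that the density smallness hypothesis is met) to a truncation of $g_r$ — in case \eqref{item:diff_lebesgue_spaces:m>1=p} using \eqref{item:sob_poin_summary:interior:p=1} with the $\|\delta V\|$ term, and in case \eqref{item:diff_lebesgue_spaces:q<m=p} using \eqref{item:sob_poin_summary:interior:q<m=p} — yields
\begin{gather*}
	r^{-\vdim}\eqLpnorm{\|V\|\restrict\cball{a}{(\tau-1)r}}{\beta}{g_r}^\beta \to 0 \quad\text{or}\quad r^{-\vdim}\eqLpnorm{\|V\|\restrict\cball{a}{(\tau-1)r}}{\eta}{g_r}^\eta \to 0
\end{gather*}
as $r\to0+$. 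A Calder\'on--Zygmund / dyadic telescoping argument over the annuli $\cball a{2^{-k}}\without\cball a{2^{-k-1}}$, exactly as in the passage from a $\|g_r\|$ estimate to a $\|g_r(\cdot)/|\cdot-a|\|$ estimate in \cite[3.7]{snulmenn.isoperimetric}, then converts this into the asserted limit with the quotient $|z-a|^{-1}$ inside. For the $\|\delta V\|$ summand in case \eqref{item:diff_lebesgue_spaces:m>1=p} one uses $f\in\Lploc1(\|\delta V\|,\rel^l)$ together with \ref{corollary:density_1d}-type control, or rather the critical mean curvature hypothesis when $\vdim>1$, which makes $\|\delta V\| = \|V\|\restrict|\mathbf h(V;\cdot)|$ and hence $r^{-\vdim}\int_{\cball{a}{\tau r}}|g_r|\ud\|\delta V\|$ negligible by H\"older with exponent $\vdim$ at a Lebesgue point of $|\mathbf h(V;\cdot)|^\vdim$.

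For the pointwise differentiability statements \eqref{item:diff_lebesgue_spaces:m=p=1} and \eqref{item:diff_lebesgue_spaces:p=m<q}, the same scheme produces an $\Lp\infty$ rather than $\Lp\beta$ estimate: one applies \ref{thm:sob_poin_summary}\,\eqref{item:sob_poin_summary:interior:p=m=1} or \eqref{item:sob_poin_summary:interior:p=m<q} to the truncated $g_r$ (the mean curvature hypothesis supplies the required $\psi(E)\le\Gamma^{-1}$ once $r$ is small), giving $\sup\{|g_r(z)| \with z\in\cball a{(\tau-1)r}\cap\spt\|V\|\} = o(r)$; telescoping over dyadic annuli upgrades this to $|g_r(z)| = o(|z-a|)$ as $\spt\|V\|\owns z\to a$, which is precisely differentiability of $f|S$ relative to $S$ at $a$ with differential $\derivative Vf(a)|\Tan^\vdim(\|V\|,a)$, using that $S$ is a set of full $\|V\|$ measure by \cite[2.8.18, 2.9.13]{MR41:1976} and that $\Tan(S,a)=\Tan^\vdim(\|V\|,a)$ by \ref{miniremark:tangent_spaces}. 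I expect the main obstacle to be the bookkeeping in the dyadic telescoping step: one must sum the scale $2^{-k}$ contributions against the quotient weight $|z-a|^{-\beta}$ (or $|z-a|^{-\eta}$) and verify that the resulting series converges to zero, which requires the $o(r^\vdim)$ decay to be uniform enough in $r$ — this is handled by noting that the bound from \ref{thm:sob_poin_summary} involves $\eqLpnorm{\|V\|\restrict\cball{a}{\tau r}}{q}{\derivative V{g_r}}$, which by approximate continuity of $\derivative Vf$ tends to $0$ monotonically along a suitable sequence of radii, so the argument of \cite[3.7]{snulmenn.isoperimetric} applies verbatim. The secondary technical point is ensuring the truncation of $g_r$ does not destroy the hypotheses of \ref{thm:sob_poin_summary} — but this is exactly the device used in \ref{thm:approx_diff} via \ref{lemma:basic_v_weakly_diff} and \ref{example:composite}\,\eqref{item:composite:1d}, so it carries over.
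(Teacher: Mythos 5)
The overall scaffolding of your proposal — approximate differentiability from \ref{thm:approx_diff}, a local Sobolev Poincar\'e estimate from \ref{thm:sob_poin_summary}\,\eqref{item:sob_poin_summary:interior}, and a dyadic annular decomposition to introduce the quotient $|z-a|^{-1}$ — is the same as in the paper. However there is a genuine gap in the middle step, and it is not a small one.

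You propose to apply \ref{thm:sob_poin_summary}\,\eqref{item:sob_poin_summary:interior} directly to $g_r(z)=f(z)-f(a)-\langle z-a,\derivative Vf(a)\rangle$ (up to a truncation). But that inequality carries the \emph{zero median} hypothesis $\|V\|(E)\leq(Q-M^{-1})\unitmeasure{\vdim}r^\vdim$ with $E=\{z\with g_r(z)\neq 0\}$, and your $g_r$ vanishes only at $a$, so $E$ fills essentially the whole ball $\oball{a}{\tau r}$, which has measure $\approx\density^\vdim(\|V\|,a)\unitmeasure{\vdim}(\tau r)^\vdim$. With $Q$ and $\tau$ chosen as in \ref{thm:approx_diff}, that choice is paired there with the \emph{median} device ($y(s)$ and $f_s^\pm$), which halves the offending set; truncation of $g_r$ does not do this, because it preserves where $g_r\neq 0$. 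If you instead adapt the median, you obtain an estimate for $g_r-y(r)$ and must separately show $|y(r)|=o(r)$, which re-introduces the very quantity you are trying to estimate and requires an additional loop in the argument that your proposal does not close. The paper circumvents all of this with a cleaner device: it writes $g_i=f-f_i$ with $f_i$ of class $1$ coinciding with $f$ on $B_i$, so $g_i$ vanishes on $B_i$, a set of $\|V\|$ density $1$ at the generic point $a\in B_i$; then $\{g_i\neq 0\}\cap\oball{a}{2s}\subset\oball{a}{2s}\without B_i$ has measure $\leq(1/2)\unitmeasure{\vdim}s^\vdim$ for small $s$, so the zero median version applies directly with $Q=1$, $M=2\adim$, and the right-hand side of the inequality is controlled by $\density^\vdim(\mu_i,a)=0$ where $\mu_i$ encodes both $|\derivative V{g_i}|^q\ud\|V\|$ and (in case \eqref{item:diff_lebesgue_spaces:m>1=p}) $|g_i|\ud\|\delta V\|$. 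The remaining passage to $h_a$ is then the easy $C^1$ special case, handled by the structure of the tangent and normal spaces via \cite[3.2.16]{MR41:1976} and \ref{miniremark:tangent_spaces}.

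A secondary error: in case \eqref{item:diff_lebesgue_spaces:m>1=p} you invoke the critical mean curvature hypothesis ($p=\vdim$) to dispose of the $\|\delta V\|$ summand, but that case only assumes $p\geq 1$, $f\in\Lploc{1}(\|\delta V\|,\rel^l)$; the mean curvature hypothesis is not available. The paper handles this summand, too, by the choice of $\mu_i$: the point $a$ is selected so that $\density^\vdim(\mu_i,a)=0$, and the $\int|g_i|\ud\|\delta V\|$ contribution is absorbed there. You also rely on an $L^q$ average control of $\derivative Vf(z)-\derivative Vf(a)\circ\project{\Tan^\vdim(\|V\|,z)}$ which entails $L^q$ tilt decay; while recoverable from approximate continuity of the tangent plane and the bound $\|\project{}-\project{}\|\leq 2$, this adds nothing and is entirely sidestepped by the paper's subtraction of a $C^1$ function, so I would advise replacing your $g_r=h_a$ by the $g_i=f-f_i$ reduction.
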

\begin{proof}
	Assume $p = q = 1$ in case of \eqref{item:diff_lebesgue_spaces:m>1=p}
	or \eqref{item:diff_lebesgue_spaces:m=p=1}. In case of
	\eqref{item:diff_lebesgue_spaces:p=m<q} also assume $p > 1$ and $q <
	\infty$. Define $\zeta = \beta$ in case of
	\eqref{item:diff_lebesgue_spaces:m>1=p}, $\zeta = \eta$ in case of
	\eqref{item:diff_lebesgue_spaces:q<m=p}, and $\zeta = \infty$ in case
	of \eqref{item:diff_lebesgue_spaces:m=p=1} or
	\eqref{item:diff_lebesgue_spaces:p=m<q}. Moreover, let $h_a : \dmn f
	\to \rel^l$ be defined by
	\begin{gather*}
		h_a(z) = f(z)-f(a) - \left <z-a, \derivative Vf (a) \right > 
	\end{gather*}
	whenever $a \in \dmn f \cap \dmn \derivative Vf$ and $z \in \dmn f$.
	
	The following assertion will be shown. \emph{There holds
	\begin{gather*}
		\lim_{s \to 0+} s^{-1} \eqLpnorm{s^{-\vdim} \| V \| \restrict
		\cball{a}{s}}{\zeta}{h_a} = 0 \quad \text{for $\| V \|$ almost
		all $a$}.
	\end{gather*}}
	 In the special case that $f$ is of class
	$1$, in view of \ref{example:lipschitzian}, it is sufficient to
	prove that
	\begin{gather*}
		\lim_{s \to 0+} s^{-1} \eqLpnorm{s^{-\vdim} \| V \| \restrict
		\cball{a}{s}}{\zeta}{\project{\Nor^\vdim ( \| V \|, a )} (
		\cdot - a )} = 0
	\end{gather*}
	for $\| V \|$ almost all $a$; in fact, if $\varepsilon > 0$,
	$\Tan^\vdim ( \| V \|, a ) \in \grass{\adim}{\vdim}$ and $\psi ( \{ a
	\} ) < \isoperimetric{\vdim}^{-1}$, then
	\begin{gather*}
		\density^\vdim ( \| V \| \restrict U \cap \{ z \with |
		\project{\Nor^\vdim ( \| V \|, a )} (z-a) | > \varepsilon
		|z-a| \}, a ) = 0
	\end{gather*}
	in case of \eqref{item:diff_lebesgue_spaces:m>1=p} by
	\cite[3.2.16]{MR41:1976} and
	\begin{gather*}
		\cball{a}{s} \cap \spt \| V \| \subset \{ z \with |
		\project{\Nor^\vdim ( \| V \|, a )} (z-a) | \leq \varepsilon
		|z-a| \} \quad \text{for some $s>0$}
	\end{gather*}
	in case of \eqref{item:diff_lebesgue_spaces:m=p=1} or
	\eqref{item:diff_lebesgue_spaces:q<m=p} or
	\eqref{item:diff_lebesgue_spaces:p=m<q} by
	\ref{miniremark:tangent_spaces} and \cite[3.1.21]{MR41:1976}. To treat
	the general case, one obtains a sequence of functions $f_i : U \to
	\rel^l$ of class $1$ such that
	\begin{gather*}
		{\textstyle \| V \| \big ( U \without \bigcup \{ B_i \with i
		\in \nat \} \big ) = 0, \quad \text{where $B_i = U \cap \{ z
		\with f(z)=f_i(z) \}$}}
	\end{gather*}
	from \ref{thm:approx_diff} and
	\ref{lemma:approx_diff}\,\eqref{item:approx_diff:cover}. Define $g_i =
	f - f_i$ and notice that $g_i \in \trunc ( V, \rel^l )$ and
	\begin{gather*}
		\derivative{V}{g_i} (a) = \derivative{V}{f} (a) -
		\derivative{V}{f_i} (a) \quad \text{for $\| V \|$ almost all
		$a$}
	\end{gather*}
	by \ref{thm:addition}\,\eqref{item:addition:add}. Define Radon
	measures $\mu_i$ over $U$ by
	\begin{gather*}
		\mu_i ( A ) = \tint{A}{\ast} | \derivative{V}{g_i} | \ud \| V
		\| + \tint{A}{\ast} |g_i| \ud \| \delta V \| \quad \text{in
		case of \eqref{item:diff_lebesgue_spaces:m>1=p}}, \\
		\mu_i ( A ) = \tint{A}{\ast} | \derivative{V}{g_i} |^q \ud \|
		V \| \quad \text{in case of
		\eqref{item:diff_lebesgue_spaces:m=p=1} or
		\eqref{item:diff_lebesgue_spaces:q<m=p} or
		\eqref{item:diff_lebesgue_spaces:p=m<q}}
	\end{gather*}
	whenever $A \subset U$ and $i \in \nat$ and notice that $\mu_i ( B_i )
	= 0$ by \ref{thm:addition}\,\eqref{item:addition:zero} (or
	alternately by \ref{thm:approx_diff} and
	\ref{lemma:approx_diff}\,\eqref{item:approx_diff:comp}). The assertion
	will be shown to hold at a point $a$ satisfying for some $i$ that
	\begin{gather*}
		f(a) = f_i (a), \quad \derivative Vf (a) = \derivative V{f_i}
		(a), \\
		\lim_{s \to 0+} s^{-1} \eqLpnorm{s^{-\vdim} \| V \| \restrict
		\cball as }{\zeta}{f_i(\cdot)-f_i(a) - \left < \cdot - a ,
		\derivative V{f_i} (a) \right >} = 0, \\
		\density^\vdim ( \| V \| \restrict
		U \without B_i, a ) = 0, \quad \density^\vdim ( \mu_i, a ) = 0
	\end{gather*}
	and in case of \eqref{item:diff_lebesgue_spaces:m=p=1} also $\psi ( \{
	a \} ) = 0$. These conditions are met by $\| V \|$ almost all $a$ in
	view of the special case, \cite[2.10.19\,(4)]{MR41:1976} and Allard
	\cite[3.5\,(1b)]{MR0307015}. Choosing $0 < r < \infty$ with
	$\cball{a}{2r} \subset U$ and
	\begin{gather*}
		\| V \| ( \oball{a}{2s} \without B_i ) \leq (1/2)
		\unitmeasure{\vdim} s^\vdim, \\
		\measureball{\psi}{\oball{a}{2r}} \leq
		\Gamma_{\ref{thm:sob_poin_summary}} ( 2\adim )^{-1} \quad
		\text{in case of \eqref{item:diff_lebesgue_spaces:m=p=1} or
		\eqref{item:diff_lebesgue_spaces:q<m=p} or
		\eqref{item:diff_lebesgue_spaces:p=m<q}}
	\end{gather*}
	for $0 < s \leq r$, one infers from
	\ref{thm:sob_poin_summary}\,\eqref{item:sob_poin_summary:interior}
	with $U$, $M$, $G$, $f$, $Q$, and $r$ replaced by $\oball a{2s}$,
	$2\adim$, $\varnothing$, $g_i$, $1$, and $s$ that
	\begin{gather*}
		\eqLpnorm{\| V \| \restrict \cball{a}{s} }{\zeta}{g_i} \leq
		\Delta \mu_i ( \cball{a}{2s} )^{1/q} \quad \text{in case of
		\eqref{item:diff_lebesgue_spaces:m>1=p} or
		\eqref{item:diff_lebesgue_spaces:m=p=1} or
		\eqref{item:diff_lebesgue_spaces:q<m=p}}, \\
		\eqLpnorm{\| V \| \restrict \cball{a}{s} }{\zeta}{g_i} \leq
		\Delta s^{1-\vdim/q} \mu_i ( \cball{a}{2s} )^{1/q} \quad
		\text{in case of \eqref{item:diff_lebesgue_spaces:p=m<q}}
	\end{gather*}
	for $0 < s \leq r$, where $\Delta =
	\Gamma_{\ref{thm:sob_poin_summary}} ( 2 \adim )$ in case of
	\eqref{item:diff_lebesgue_spaces:m>1=p} or
	\eqref{item:diff_lebesgue_spaces:m=p=1}, $\Delta = (\vdim-q)^{-1}
	\Gamma_{\ref{thm:sob_poin_summary}} ( 2 \adim )$ in case of
	\eqref{item:diff_lebesgue_spaces:q<m=p}, and $\Delta =
	\Gamma_{\ref{thm:sob_poin_summary}} (2 \adim )^{1/(1/\vdim-1/q)}
	\unitmeasure{\vdim}^{1/\vdim-1/q}$ in case of
	\eqref{item:diff_lebesgue_spaces:p=m<q}. Consequently,
	\begin{gather*}
		\lim_{s \to 0+} s^{-1} \eqLpnorm { s^{-\vdim} \| V \|
		\restrict \cball{a}{s} }{\zeta} {g_i} = 0
	\end{gather*}
	and the assertion follows.

	From the assertion of the preceding paragraph one obtains
	\begin{gather*}
		\lim_{s \to 0+} \eqLpnorm{s^{-\vdim} \| V \| \restrict
		\cball{a}{s} }{\zeta}{ | \cdot - a |^{-1} h_a ( \cdot ) } = 0
		\quad \text{for $\| V \|$ almost all $a$};
	\end{gather*}
	in fact, if $\cball{a}{r} \subset U$ and $\gamma = \sup \{ s^{-1}
	\eqLpnorm{s^{-\vdim} \| V \| \restrict \cball{a}{s} }{\zeta}{h_a}
	\with 0 < s \leq r \}$, then
	\begin{gather*}
		\begin{aligned}
			& \eqLpnorm{s^{-\vdim} \| V \| \restrict \cball{a}{s}
			}{\zeta}{ |\cdot-a|^{-1} h_a} \\
			& \qquad \leq s^{-\vdim/\zeta} \tsum{i=1}{\infty}
			2^{i} s^{-1} \eqLpnorm{\| V \| \restrict (
			\cball{a}{2^{1-i} s} \without \cball{a}{2^{-i}s}
			)}{\zeta}{h_a} \\
			& \qquad \leq 2 \gamma \tsum{i=1}{\infty}
			2^{(1-i)\vdim/\zeta} = 2 \gamma / (1-2^{-\vdim/\zeta})
		\end{aligned}
	\end{gather*}
	in case of \eqref{item:diff_lebesgue_spaces:m>1=p} or
	\eqref{item:diff_lebesgue_spaces:q<m=p} and clearly
	$\eqLpnorm{s^{-\vdim} \| V \| \restrict \cball{a}{s} }{\zeta}{
	|\cdot-a|^{-1} h_a} \leq \gamma$ in case of
	\eqref{item:diff_lebesgue_spaces:m=p=1} or
	\eqref{item:diff_lebesgue_spaces:p=m<q}. This yields the conclusion in
	case of \eqref{item:diff_lebesgue_spaces:m>1=p} or
	\eqref{item:diff_lebesgue_spaces:q<m=p} and that $f|S$ is
	differentiable relative to $S$ at $a$ with
	\begin{gather*}
		D(f|S) (a) = \derivative Vf (a) | \Tan (S,a) \quad \text{for
		$\| V \|$ almost all $a$}
	\end{gather*}
	in case of \eqref{item:diff_lebesgue_spaces:m=p=1} or
	\eqref{item:diff_lebesgue_spaces:p=m<q} by \cite[3.1.22]{MR41:1976}.
	To complete the proof, note that $S$ is dense in $\spt \| V \|$ by
	\cite[2.8.18, 2.9.13]{MR41:1976} and hence if $p = \vdim$, then
	\begin{gather*}
		\Tan (S,a) = \Tan ( \spt \| V \|, a ) = \Tan^\vdim ( \| V \|,
		a )
	\end{gather*}
	whenever $a \in U$ with $\psi ( \{ a \} ) <
	\isoperimetric{\vdim}^{-1}$ by \cite[3.1.21]{MR41:1976} and
	\ref{miniremark:tangent_spaces}.
\end{proof}
\begin{remark}
	The usage of $h_a$ in the last paragraph of the proof is adapted from
	the proof of \cite[4.5.9\,(26)\,(\printRoman{2})]{MR41:1976}.
\end{remark}
\section{Coarea formula}
In this section rectifiability properties of the distributional boundary of
allmost all superlevel sets of real valued weakly differentiable functions are
established, see \ref{corollary:coarea}. The result rests on the approximate
differentiability of such functions, see \ref{thm:approx_diff}. To underline
this fact, it is derived as corollary to a general result for approximately
differentiable functions, see \ref{thm:ap_coarea}.
\begin{theorem} \label{thm:ap_coarea}
	Suppose $\vdim, \adim \in \nat$, $\vdim \leq \adim$, $U$ is an open
	subset of $\rel^\adim$, $V \in \RVar_\vdim ( U )$, $f$ is a $\| V \|$
	measurable real valued function which is $( \| V \|, \vdim )$
	approximately differentiable at $\| V \|$ almost all points, $F$ is a
	$\| V \|$ measurable $\Hom ( \rel^\adim, \rel )$ valued function with
	\begin{gather*}
		F(z) = ( \| V \|, \vdim ) \ap Df(z) \circ \project{\Tan^\vdim
		( \| V \|, z )} \quad \text{for $\| V \|$ almost all $z$},
	\end{gather*}
	and $\tint{K \cap \{ z \with a < f (z) < b \}}{} |F| \ud \| V \| <
	\infty$ whenever $K$ is a compact subset of $U$ and $- \infty < a \leq
	b < \infty$, and $T \in \mathscr{D}' ( U \times \rel, \rel^\adim )$
	and $S(t) : \mathscr{D} ( U, \rel^\adim ) \to \rel$ satisfy
	\begin{gather*}
		T ( \psi ) = \tint{}{} \left < \psi (z,f(z)), F(z) \right >
		\ud \| V \| z \quad \text{for $\psi \in \mathscr{D} ( U \times
		\rel, \rel^\adim )$}, \\
		S(t)( \theta ) = \lim_{\varepsilon \to 0+} \varepsilon^{-1}
		\tint{\{ z \with t < f(z) \leq t+\varepsilon \}}{} \left <
		\theta, F \right > \ud \| V \| \in \rel \quad \text{for
		$\theta \in \mathscr{D} (U, \rel^\adim )$}
	\end{gather*}
	whenever $t \in \rel$, that is $t \in \dmn S$ if and only if the limit
	exists and belongs to $\rel$ for $\theta \in \mathscr{D} ( U,
	\rel^\adim )$.

	Then the following two statements hold.
	\begin{enumerate}
		\item \label{item:ap_coarea:equation} If $\psi \in \Lp{1} ( \|
		T \|, \rel^\adim )$ and $g$ is an $\overline{\rel}$ valued $\|
		T \|$ integrable function, then
		\begin{gather*}
			T ( \psi )= \tint{}{} S(t) ( \psi (\cdot, t ) ) \ud
			\mathscr{L}^1 t, \quad
			\tint{}{} g \ud \| T \| = \tint{}{} \tint{}{} g(z,t)
			\ud \| S(t) \| z \ud \mathscr{L}^1 t.
		\end{gather*}
		\item \label{item:ap_coarea:structure} There exists an
		$\mathscr{L}^1$ measurable function $W$ with values in
		$\RVar_{\vdim-1} (U)$ (with the weak topology) such that for
		$\mathscr{L}^1$ almost all $t$ there holds
		\begin{gather*}
			\Tan^{\vdim-1} ( \| W(t) \|, z ) = \Tan^\vdim ( \| V
			\|, z ) \cap \ker F(z) \in \grass{\adim}{\vdim-1}, \\
			\density^{\vdim-1} ( \| W(t) \|, z ) = \density^\vdim
			( \| V \|, z )
		\end{gather*}
		for $\| W(t) \|$ almost all $z$ and
		\begin{gather*}
			S(t)(\theta) = \tint{}{} \left < \theta, |F|^{-1} F
			\right > \ud \| W(t) \| \quad \text{for $\theta \in
			\mathscr{D} (U,\rel^\adim )$}.
		\end{gather*}
	\end{enumerate}
\end{theorem}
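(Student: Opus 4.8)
The plan is to reduce everything to the two already-available tools: the functional-analytic coarea/slicing result \ref{thm:distribution_on_product} and the pushforward-on-product lemma \ref{lemma:push_on_product}, applied to the approximately differentiable function $f$. First I would set up the ingredients exactly as in the hypotheses: let $\mu = \| V \|$, and decompose $U$ (up to a $\| V \|$-null set) into the $( \| V \|, \vdim )$ approximate differentiability set $A$ together with, by \ref{lemma:approx_diff}\,\eqref{item:approx_diff:cover}, a countable cover of $A$ by sets $B_i = U \cap \{ z \with f(z) = f_i(z) \}$ on which $f$ coincides with a function $f_i$ of class $1$. On each $B_i$ the approximate differential equals the ordinary differential, so $F = ( \| V \|, \vdim ) \ap Df \circ \project{\Tan^\vdim ( \| V \|, \cdot )}$ agrees $\| V \|$-almost everywhere with $Df_i \circ \project{\Tan^\vdim ( \| V \|, \cdot )}$ on $B_i$. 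This lets me apply the classical coarea formula for $C^1$ functions on rectifiable sets, \cite[3.2.22]{MR41:1976}, on each piece $B_i$, and then assemble the pieces. Since the $B_i$ can be chosen pairwise disjoint up to $\| V \|$-null sets, the resulting slice varifolds add with no overlap.

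The concrete construction of $W$ goes as follows. On $B_i \cap \{ z \with F(z) \neq 0 \}$ the set $\{ z \with f_i(z) = t \}$ is, for $\mathscr{L}^1$-almost all $t$, an $(\vdim-1)$-rectifiable set whose approximate tangent plane at $z$ is $\Tan^\vdim ( \| V \|, z ) \cap \ker Df_i(z) = \Tan^\vdim ( \| V \|, z ) \cap \ker F(z)$. I would define $W(t)$ to be the $(\vdim-1)$ dimensional rectifiable varifold whose weight measure is $\sum_i \mathscr{H}^{\vdim-1} \restrict ( B_i \cap f_i^{-1}\{t\} ) \cdot \density^\vdim ( \| V \|, \cdot )$ with the natural tangent planes; concretely $\| W(t) \| = \density^\vdim ( \| V \|, \cdot ) \cdot \mathscr{H}^{\vdim-1} \restrict \{ z \in \dmn F \with f(z) = t, \, F(z) \neq 0, \, \Tan^\vdim ( \| V \|, z ) \in \grass{\adim}{\vdim} \}$. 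The coarea formula \cite[3.2.22]{MR41:1976} applied to $f_i | B_i$ with the multiplicity $\density^\vdim ( \| V \|, \cdot )$ (which is a legitimate density on the rectifiable set $B_i$, finite $\| V \|$-a.e.) gives, for every nonnegative Borel $g$,
\begin{gather*}
	\tint{}{} g(z,f(z)) | F(z) | \ud \| V \| z = \tint{}{} \tint{}{} g(z,t) \ud \| W(t) \| z \ud \mathscr{L}^1 t,
\end{gather*}
and similarly the vector-valued version yields $S(t)(\theta) = \int \langle \theta, |F|^{-1} F \rangle \ud \| W(t) \|$ for $\mathscr{L}^1$-almost all $t$; this is exactly \eqref{item:ap_coarea:structure}. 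The measurability of $t \mapsto W(t)$ in the weak topology on $\RVar_{\vdim-1}(U)$ follows from \ref{example:distrib_lusin} (or rather its varifold analogue) together with the fact that $t \mapsto \| W(t) \|$ is obtained as a disintegration of the Radon measure on $U \times \rel$ pushed forward from $\| V \| \restrict | F |$, which is the content of \ref{thm:distribution_on_product}\,\eqref{item:distribution_on_product:inequality}.

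For statement \eqref{item:ap_coarea:equation}, I would verify the hypotheses of \ref{thm:distribution_on_product}: the distribution $T$ on $U \times \rel$ is exactly the one of \ref{lemma:push_on_product} applied with $\mu = \| V \|$ and the given $F$, hence $T$ is representable by integration with $\| T \| = G_\# \nu$ where $\nu = \| V \| \restrict | F |$ and $G(z) = (z, f(z))$; in particular $\int g \ud \| T \| = \int g(z,f(z)) |F(z)| \ud \| V \| z$. The one-dimensional distributions $R_\theta$ of \ref{thm:distribution_on_product} are then $R_\theta(\gamma) = \int \gamma(f(z)) \langle \theta, |F|^{-1} F \rangle \ud \nu$, whose total variation measures are absolutely continuous with respect to $\mathscr{L}^1$ precisely because the image measure $f_\#(\| V \| \restrict |F|)$ is — this is where the coarea structure is used, via the classical coarea inequality $\int^* \card( f^{-1}\{t\} \cap \cdot ) \ud \mathscr{L}^1 t \lesssim \int |F| \ud \| V \|$ on each $B_i$. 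Granting that absolute continuity, \ref{thm:distribution_on_product}\,\eqref{item:distribution_on_product:absolute} gives both equations in \eqref{item:ap_coarea:equation}, with $S(t)$ as defined there, and a separate check identifies that $S(t)$ with the difference-quotient limit in the statement (this is the same computation as in \ref{lemma:level_sets}, using that $i_{t,\varepsilon}\circ f$ times $\langle\theta,F\rangle$ integrates to the stated expression).

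The main obstacle I anticipate is not the functional analysis — that is essentially packaged in \ref{thm:distribution_on_product} and \ref{lemma:push_on_product} — but the careful bookkeeping needed to transfer the $C^1$ coarea formula from the approximating functions $f_i$ to $f$ itself while keeping the correct multiplicity $\density^\vdim(\| V \|,\cdot)$. One must check: that $\Tan^\vdim(\|V\|,z)$ is $\|V\|$-a.e.\ a genuine $\vdim$-plane so that $\project{\Tan^\vdim}$ makes sense and $\ker F(z)$ has dimension $\vdim-1$ wherever $F(z)\neq 0$; that on the overlap sets $B_i \cap B_j$ the slices $f_i^{-1}\{t\}$ and $f_j^{-1}\{t\}$ agree $\mathscr{H}^{\vdim-1}$-a.e.; that the null set where $F = 0$ but $f$ is still approximately differentiable contributes nothing to either side for a.e.\ $t$ (by the coarea inequality the set $\{F=0\}$ meets $\mathscr{L}^1$-a.e.\ level set in an $\mathscr{H}^{\vdim-1}$-null set once one restricts to where $f=f_i$ is $C^1$, by Sard-type reasoning built into \cite[3.2.22]{MR41:1976}); and that the resulting $W(t)$ is genuinely an element of $\RVar_{\vdim-1}(U)$, i.e.\ its weight measure is Radon and its density is a.e.\ positive and finite — the latter because $\density^\vdim(\|V\|,\cdot)$ is $\|V\|$-a.e.\ positive and finite and $\mathscr{H}^{\vdim-1}\restrict f_i^{-1}\{t\}$ is locally finite for a.e.\ $t$ by the coarea formula applied with $g$ a characteristic function of a compact set.
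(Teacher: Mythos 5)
Your proposal follows essentially the same route as the paper's proof: reduce to $C^1$ pieces via \ref{lemma:approx_diff}, apply the rectifiable coarea formula \cite[3.2.22]{MR41:1976} piecewise with multiplicity $\density^\vdim(\|V\|,\cdot)$ to build $W(t)$, verify representability of $T$ via \ref{lemma:push_on_product}, and deduce part \eqref{item:ap_coarea:equation} from \ref{thm:distribution_on_product}\,\eqref{item:distribution_on_product:absolute}. The only cosmetic difference is that the paper chooses a disjoint sequence of Borel sets $B_i$ contained in $C^1$ submanifolds $M_i$ (carrying $C^1$ extensions $f_i : M_i \to \rel$ with $Df_i$ matching $F$ on $B_i$), whereas you work with the globally defined $C^1$ functions $f_i : U \to \rel$ from \ref{lemma:approx_diff}\,\eqref{item:approx_diff:cover} and disjointify afterward; these are interchangeable bookkeeping choices. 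One small inaccuracy: identifying the $S(t)$ of \ref{thm:distribution_on_product} with the difference-quotient $S(t)$ in the theorem statement is a one-line unwinding of $R_\theta(i_{t,\varepsilon}) = \tint{\{t < f \leq t+\varepsilon\}}{} \langle \theta, F\rangle \ud \| V \|$, not really an appeal to the mechanism of \ref{lemma:level_sets} (which would require $f \in \trunc(V)$); but this does not affect the validity of the argument.
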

\begin{proof}
	First, notice that \ref{lemma:push_on_product} implies
	\begin{gather*}
		T ( \psi ) = \tint{}{} \left < \psi(z,f(z)), F(z)
		\right > \ud \| V \| z, \quad \| T \| (g) = \tint{}{}
		g(z,f(z)) | F(z) | \ud \| V \| z
	\end{gather*}
	whenever $\psi \in \Lp{1} ( \| T \|, \rel^\adim )$ and $g$ is an
	$\overline{\rel}$ valued $\| T \|$ integrable function.

	Next, the following assertion will be shown. \emph{There exists an
	$\mathscr{L}^1$ measurable function $W$ with values in
	$\RVar_{\vdim-1} (U)$ such that for $\mathscr{L}^1$ almost all $t$
	there holds
	\begin{gather*}
		\Tan^{\vdim-1} ( \| W(t) \|, z ) = \Tan^\vdim ( \| V \|, z )
		\cap \ker F(z) \in \grass{\adim}{\vdim-1}, \\
		\density^{\vdim-1} ( \| W(t) \|, z ) = \density^\vdim ( \| V
		\|, z )
	\end{gather*}
	for $\| W(t) \|$ almost all $z$ and
	\begin{gather*}
		T( \psi ) = \tint{}{} \tint{}{} \left < \psi(z,f(z)),
		|F(z)|^{-1} F(z) \right > \ud \| W(t) \| z \ud \mathscr{L}^1
		t, \\
		\tint{}{} g \ud \| T \| = \tint{}{} \tint{}{} g(z,t) \ud \| W
		(t) \| z \ud \mathscr{L}^1 t
	\end{gather*}
	whenever $\psi \in \Lp{1} ( \| T \|, \rel^\adim )$ and $g$ is an
	$\overline{\rel}$ valued $\| T \|$ integrable function.} For this
	purpose choose a disjoint sequence of Borel sets $B_i$, sequences
	$M_i$ of $\vdim$ dimensional submanifolds of $\rel^\adim$ of class $1$
	and functions $f_i : M_i \to \rel$ of class $1$ satisfying $\| V \|
	\big ( U \without \bigcup_{i=1}^\infty B_i \big ) = 0$ and
	\begin{gather*}
		B_i \subset M_i, \quad f_i (z) = f(z), \quad Df_i (z) = ( \| V
		\|, \vdim ) \ap Df(z) = F(z) | \Tan^\vdim ( \| V \|, z )
	\end{gather*}
	whenever $i \in \nat$ and $z \in B_i$, see \ref{lemma:approx_diff},
	\cite[2.8.18, 2.9.11, 3.2.17, 3.2.29]{MR41:1976} and Allard
	\cite[3.5\,(2)]{MR0307015}. Let $B = \bigcup_{i=1}^\infty B_i$. If $t
	\in \rel$ satisfies
	\begin{gather*}
		\mathscr{H}^{\vdim-1} ( B  \cap \{ z \with \text{$f(z) = t$
		and $( \| V \|, \vdim ) \ap Df (z)=0$} \}) = 0, \\
		\tint{B \cap K \cap \{ z \with f(z) = t \}}{} \density^\vdim (
		\| V \|, z ) \ud \mathscr{H}^{\vdim-1} z< \infty
	\end{gather*}
	whenever $K$ is a compact subset of $U$, then define $W(t) \in
	\RVar_{\vdim-1} ( U)$ by
	\begin{gather*}
		W(t)(h) = \tint{B \cap \{ z \with f(z) = t \}}{} h (z, \ker \,
		( \| V \|, \vdim ) \ap Df(z) ) \density^{\vdim-1} ( \| V \|, z
		) \ud \mathscr{H}^{\vdim-1} z
	\end{gather*}
	for $h \in \mathscr{K} ( U \times \grass{\adim}{\vdim-1} )$. From
	\cite[3.2.22]{MR41:1976} one infers that
	\begin{gather*}
		\tint{A \cap \{ z \with a < f(z) < b \}}{} |F| \ud \| V \| =
		\tint{a}{b} \tint{A \cap B \cap \{ z \with f(z) = t \}}{}
		\density^\vdim ( \| V \|, z ) \ud \mathscr{H}^{\vdim-1} z \ud
		\mathscr{L}^1 t
	\end{gather*}
	whenever $A$ is $\| V \|$ measurable and $- \infty < a < b < \infty$,
	hence
	\begin{gather*}
		\tint{}{} g(z,f(z)) | F(z) | \ud \| V \| z = \tint{}{}
		\tint{}{} g (z,t) \ud \| W(t) \| z \ud \mathscr{L}^1 t
	\end{gather*}
	whenever $g$ is an $\overline{\rel}$ valued $\| T \|$ integrable
	function. The remaining parts of the assertion now follow by
	considering appropriate choices of $g$ and recalling
	\ref{example:kx_lusin}.
	
	Consequently, \eqref{item:ap_coarea:equation} is implied by
	\ref{thm:distribution_on_product}\,\eqref{item:distribution_on_product:absolute}.
	Finally, notice that
	\begin{gather*}
		S(t)(\theta) = \lim_{\varepsilon \to 0+} \varepsilon^{-1}
		\tint{t}{t+\varepsilon} \tint{}{} \left < \theta, |F|^{-1} F
		\right > \ud \| W(s) \| \ud \mathscr{L}^1 s \quad \text{for
		$\theta \in \mathscr{D} (U, \rel^\adim )$}
	\end{gather*}
	whenever $t \in \rel$, hence \eqref{item:ap_coarea:structure} follows
	using \cite[2.8.17, 2.9.8]{MR41:1976}.
\end{proof}
\begin{corollary} \label{corollary:coarea}
	Suppose $\vdim$, $\adim$, $U$, and $V$ are as in
	\ref{miniremark:situation_general}, $f \in \trunc (V)$, and $E(t) = \{
	z \with f(z) > t \}$ for $t \in \rel$.

	Then there exists an $\mathscr{L}^1$ measurable function $W$ with
	values in $\RVar_{\vdim-1} (U)$ (with the weak topology) such that for
	$\mathscr{L}^1$ almost all $t$ there holds
	\begin{gather*}
		\Tan^{\vdim-1} ( \| W(t) \|, z ) = \Tan^\vdim ( \| V \|, z )
		\cap \ker \derivative{V}{f} (z) \in \grass{\adim}{\vdim-1}, \\
		\density^{\vdim-1} ( \| W(t) \|, z ) = \density^\vdim ( \| V
		\|, z )
	\end{gather*}
	for $\| W(t) \|$ almost all $z$ and
	\begin{gather*}
		\boundary{V}{E(t)} (\theta) = \tint{}{} \left < \theta,
		|\derivative{V}{f}|^{-1} \derivative{V}{f} \right > \ud \|
		W(t) \| \quad \text{for $\theta \in \mathscr{D} (U,\rel^\adim
		)$}.
	\end{gather*}
\end{corollary}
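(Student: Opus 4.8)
The plan is to derive this as a direct corollary of \ref{thm:ap_coarea} by applying that theorem with $F = \derivative{V}{f}$. First I would verify the hypotheses of \ref{thm:ap_coarea}: by \ref{thm:approx_diff}, $f$ is $(\|V\|,\vdim)$ approximately differentiable at $\|V\|$ almost all points with $\derivative{V}{f}(z) = (\|V\|,\vdim)\ap Df(z) \circ \project{\Tan^\vdim(\|V\|,z)}$ for $\|V\|$ almost all $z$, so taking $F = \derivative{V}{f}$ satisfies the requested relation. The local integrability condition $\tint{K \cap \{z \with a < f(z) < b\}}{} |F| \ud \|V\| < \infty$ for compact $K \subset U$ and finite $a \leq b$ follows from $f \in \trunc(V)$ via \ref{def:v_weakly_diff}\,\eqref{item:v_weakly_diff:int} in the form of \ref{remark:eq_bounded_condition}, since $\{z \with a < f(z) < b\} \subset \{z \with |f(z)| \leq \sup\{|a|,|b|\}\}$. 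Thus the distribution $T$ and the maps $S(t)$ of \ref{thm:ap_coarea} are defined.

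Next I would identify $S(t)$ with $\boundary{V}{E(t)}$. By \ref{lemma:level_sets}, for every $t \in \rel$ and every $\theta \in \mathscr{D}(U,\rel^\adim)$ there holds
\begin{gather*}
	\boundary{V}{E(t)}(\theta) = \lim_{\varepsilon \to 0+} \varepsilon^{-1} \tint{\{z \with t < f(z) \leq t+\varepsilon\}}{} \left < \theta, \derivative{V}{f} \right > \ud \|V\|,
\end{gather*}
which is exactly the defining formula for $S(t)(\theta)$ in \ref{thm:ap_coarea}; moreover the limit exists and is finite for every $t$, so $\dmn S = \rel$ and $S(t) = \boundary{V}{E(t)}$ as distributions for all $t$. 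Consequently, the structural conclusion \ref{thm:ap_coarea}\,\eqref{item:ap_coarea:structure} provides an $\mathscr{L}^1$ measurable function $W$ with values in $\RVar_{\vdim-1}(U)$ such that for $\mathscr{L}^1$ almost all $t$ the tangent plane and density identities hold for $\|W(t)\|$ almost all $z$, and
\begin{gather*}
	\boundary{V}{E(t)}(\theta) = S(t)(\theta) = \tint{}{} \left < \theta, |\derivative{V}{f}|^{-1} \derivative{V}{f} \right > \ud \|W(t)\| \quad \text{for } \theta \in \mathscr{D}(U,\rel^\adim),
\end{gather*}
since $\ker F(z) = \ker \derivative{V}{f}(z)$. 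This is precisely the asserted conclusion.

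There is essentially no hard step here: the corollary is a repackaging of \ref{thm:ap_coarea} once one has \ref{thm:approx_diff} and \ref{lemma:level_sets} in hand. The only point requiring a modicum of care is checking that the pointwise limit defining $S(t)$ agrees with the one defining $\boundary{V}{E(t)}$ for \emph{every} $t$ (not merely almost every $t$), so that $S(t) = \boundary{V}{E(t)}$ identically and the exceptional $\mathscr{L}^1$ null set in the conclusion comes solely from \ref{thm:ap_coarea}\,\eqref{item:ap_coarea:structure}; this is immediate from the statement of \ref{lemma:level_sets}, whose conclusion is valid for each fixed $t \in \rel$. One should also note in passing that $\derivative{V}{f}(z) \neq 0$ for $\|W(t)\|$ almost all $z$ (for a.e.\ $t$), which is implicit in the formula $|\derivative{V}{f}|^{-1}\derivative{V}{f}$ and follows from the construction of $W$ in the proof of \ref{thm:ap_coarea} via the coarea factorisation $\tint{}{}|F|\ud\|V\| = \tint{}{}\tint{}{}1\,\ud\|W(t)\|\ud\mathscr{L}^1 t$ restricted to $\{F \neq 0\}$.
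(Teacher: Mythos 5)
Your proof is correct and follows the same route as the paper: the paper's own proof reads simply ``In view of \ref{lemma:level_sets} and \ref{thm:approx_diff}, this is a consequence of \ref{thm:ap_coarea}'', and your write-up fleshes out precisely these three references in the natural order (verifying the hypotheses of \ref{thm:ap_coarea} via \ref{thm:approx_diff} and \ref{remark:eq_bounded_condition}, then identifying $S(t)$ with $\boundary{V}{E(t)}$ via \ref{lemma:level_sets}).
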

\begin{proof}
	In view of \ref{lemma:level_sets} and \ref{thm:approx_diff}, this is a
	consequence of \ref{thm:ap_coarea}.
\end{proof}
\begin{remark} \label{remark:no_rectifiable_structure}
	The formulation of \ref{corollary:coarea} is modelled on similar
	results for sets of locally finite perimeter, see \cite[4.5.6\,(1),
	4.5.9\,(12)]{MR41:1976}. Observe however that the structural
	description of $\boundary{V}{E(t)}$ given here for $\mathscr{L}^1$
	almost all $t$ does not extend to arbitrary $\| V \| + \| \delta V \|$
	measurable sets $E$ such that $\boundary{V}{E}$ is representable by
	integration. (Using \cite[2.10.28]{MR41:1976} to construct $V \in
	\IVar_1 ( \rel^2 )$ such that $\| \delta V \|$ is a nonzero Radon
	measure with $\| \delta V \| ( \{ z \} ) = 0$ for $z \in \rel^2$ and
	$\| V \| ( \spt \delta V) = 0$, one may take $E = \spt \delta V$.)
\end{remark}
\section{Oscillation estimates}
In this section two situations are studied where the oscillation of a weakly
differentiable function may be controlled by its weak derivative; assuming a
suitable integrability of the mean curvature of the underlying varifold. In
general, such control is necessarily rather weak, see \ref{thm:mod_continuity}
and \ref{remark:mod_continuity}, but under special circumstances one may
obtain H\"older continuity, see \ref{thm:hoelder_continuity}. These main
ingredients are the analysis of the connectedness structure of a varifold, see
\ref{corollary:conn_structure}, and the Sobolev Poincar{\'e} type inequalities
with several medians, see \ref{thm:sob_poin_several_med}.
\begin{theorem} \label{thm:mod_continuity}
	Suppose $\vdim$, $\adim$, $p$, $U$, and $V$ are as in
	\ref{miniremark:situation_general}, $p = \vdim$, $K$ is a compact
	subset of $U$, $0 < \varepsilon < \dist ( K, \rel^\adim \without U )$,
	and either
	\begin{enumerate}
		\item \label{item:mod_continuity:m=1} $1 = \vdim = q$ and
		$\lambda = 2^{\vdim+5} \unitmeasure{\vdim}^{-1}
		\Gamma_{\ref{thm:sob_poin_summary}} ( 2 \adim ) \sup \{
		\density^1 ( \| V \|, a ) \with a \in K \}$, or
		\item \label{item:mod_continuity:m>1} $1 < \vdim < q$ and
		$\lambda = \varepsilon$.
	\end{enumerate}

	Then there exists a positive, finite number $\Gamma$ with the
	following property.

	If $l \in \nat$, $f : \spt \| V \| \to \rel^l$ is a continuous
	function, $f \in \trunc (V,\rel^l)$, and $\gamma = \sup \{
	\eqLpnorm{\| V \| \restrict \cball{a}{\varepsilon}}{q}{
	\derivative{V}{f} } \with a \in K \}$, then
	\begin{gather*}
		|f(z)-f(w)| \leq l^{1/2} \lambda \gamma \quad \text{whenever
		$z,w \in K \cap \spt \| V \|$ and $|z-w| \leq \Gamma^{-1}$}.
	\end{gather*}
\end{theorem}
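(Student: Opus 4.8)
The plan is to reduce the oscillation estimate to a Sobolev--Poincar\'e inequality with several medians, exploiting that continuity of $f$ forces the ``medians'' to disappear locally. First I would fix $z, w \in K \cap \spt \| V \|$ and suppose for contradiction that $|f(z)-f(w)| > l^{1/2} \lambda \gamma$ while $|z-w|$ is very small. Since $f$ is continuous on $\spt \| V \|$, the same strict inequality persists on small neighbourhoods of $z$ and of $w$ in $\spt \| V \|$, and hence on sets of positive $\| V \|$ measure. The key is then to apply \ref{thm:sob_poin_several_med} on a ball $\cball{a}{\varepsilon}$ (with $a$ chosen near $z$ and $w$, so that $\cball{a}{\varepsilon} \subset U$) with $P = 1$ and $Z = \varnothing$ in case \eqref{item:mod_continuity:m>1}, or with a suitable choice of $Z$ (as anticipated in the remark following \ref{thm:sob_poin_several_med}) in case \eqref{item:mod_continuity:m=1}. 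The output is a single median $Y = \{ y \}$ with one point, and the estimate controls $\eqLpnorm{\| V \| \restrict A}{\infty}{ \dist(f(\cdot),Y) }$ (case \eqref{item:mod_continuity:m>1}) or an $\mathbf{L}_1$-type quantity (case \eqref{item:mod_continuity:m=1}) by $\gamma$.

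Second, I would analyse the consequences of having a single median $y$. If both $z$ and $w$ were ``close'' to $y$ in the sense measured by the inequality, the triangle inequality would give $|f(z)-f(w)| \leq 2 l^{1/2}\lambda\gamma/(\text{something})$, contradicting the assumed lower bound provided the constants are arranged correctly --- this is where the factor $l^{1/2}$ and the precise value of $\lambda$ enter. The point requiring care is that \ref{thm:sob_poin_several_med} controls $f_Y(z) = \dist(f(z),Y)$ only $\| V \|$-almost everywhere on $A = \{ \oball{z}{r} \subset U \}$, not pointwise on $\spt \| V \|$; so I would use the continuity of $f$ together with the fact that $\spt(\| V \| \restrict \cball{a}{s})$ has full measure and $z, w \in \spt \| V \|$ to upgrade the a.e.\ bound near $z$ and $w$ to a genuine pointwise bound at $z$ and at $w$. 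Here the lower density bound $\density^\vdim(\| V \|, \cdot) \geq 1$ (from \ref{miniremark:situation_general}) guarantees that every point of $\spt \| V \|$ is a point of positive lower density, so neighbourhoods of $z$ and $w$ in $\spt \| V \|$ do carry positive weight measure.

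Third, in case \eqref{item:mod_continuity:m=1} the $\mathbf{L}_\infty$ bound in \ref{thm:sob_poin_several_med}\,\eqref{item:sob_poin_several_med:p=m=1} together with the $\mathbf{L}_1$-normalisation requires converting the ball radius $\varepsilon$ and the total mass $\| V \|(U)$ into the stated constant $\lambda$; this uses $\density^1(\| V \|, a)$ to estimate $\| V \|(\cball{a}{\varepsilon})$ from above via the one-dimensional monotonicity consequences in \ref{corollary:density_1d}, which is why $\lambda$ carries the factor $\sup\{\density^1(\| V \|, a) : a \in K\}$. In case \eqref{item:mod_continuity:m>1}, the Sobolev--Poincar\'e inequality \ref{thm:sob_poin_several_med}\,\eqref{item:sob_poin_several_med:p=m<q} already produces an $\mathbf{L}_\infty$ estimate with constant independent of the function, and $\lambda = \varepsilon$ absorbs the scaling; the number $\Gamma^{-1}$ bounding $|z-w|$ is then chosen so that the smallness hypothesis $\| V \|(U \cap \cball{a}{\varepsilon}) \leq (Q - M^{-1})(P+1)\unitmeasure{\vdim}\varepsilon^\vdim$ of \ref{thm:sob_poin_several_med} can be met after passing to a sufficiently small ball, and so that the ``nonzero set'' has controlled measure --- this is where one needs $|z-w|$ small rather than merely $z, w \in K$.

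\textbf{Main obstacle.} The hardest part will be the last point: arranging that a single median suffices and that the smallness hypotheses of \ref{thm:sob_poin_several_med} are genuinely satisfied on the chosen ball. One must pick the centre $a$, the radius (here $\varepsilon$), the density threshold $Q$, and the bound $\Gamma^{-1}$ on $|z-w|$ in a compatible way, using that for $|z-w|$ small both $z$ and $w$ lie in a common ball $\cball{a}{\varepsilon} \subset U$ on which the mass is not too large; and one must verify that the ``$Q$-small-density'' set and (in the critical-curvature case) $\psi$ are small there. The bookkeeping is delicate because $\Gamma$ is required to depend on $V$ but not on $f$, so every threshold must be extracted from $V$, $K$, $\varepsilon$ alone --- in particular from a uniform upper bound on the density ratio of $\| V \|$ over balls of radius $\varepsilon$ centred in $K$, which exists by \ref{miniremark:situation_general} (via \cite[2.5]{snulmenn.isoperimetric} for $p = \vdim$) and, for $\vdim = 1$, via \ref{corollary:density_1d}.
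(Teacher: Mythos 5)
Your plan hinges on the step you flag as the ``hardest part'': arranging that a single median suffices, i.e.\ applying \ref{thm:sob_poin_several_med} with $P = 1$. This is where the argument breaks. With $P = 1$, the hypothesis of \ref{thm:sob_poin_several_med} requires $\| V \|(\oball{a}{\varepsilon}) \leq 2 (Q - M^{-1}) \unitmeasure{\vdim} \varepsilon^\vdim$, a genuine smallness condition on the mass. It is not satisfiable by shrinking the ball: the density ratio $s^{-\vdim} \measureball{\| V \|}{\cball{a}{s}}$ converges to $\density^\vdim(\| V \|, a) \unitmeasure{\vdim}$ as $s \to 0+$, and $\density^\vdim(\| V \|, a)$ may well exceed $2$, so no small radius helps. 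And you cannot shrink only in directions that reduce the mass ratio while keeping both $z$ and $w$ inside. Once $P > 1$ you lose the triangle-inequality argument: $f(z)$ and $f(w)$ could each be close to different medians $y_1, y_2 \in Y$ that are far apart, and \ref{thm:sob_poin_several_med} gives no control on $\diam Y$.

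The paper's proof resolves this by a genuinely different mechanism which your proposal does not anticipate. It first reduces to $l = 1$ via \ref{remark:integration_by_parts} (which is also where $l^{1/2}$ enters). It then covers $K \cap \spt \| V \|$ by finitely many balls $\oball{a_i}{s_i}$ chosen, using \ref{corollary:conn_structure}\,\eqref{item:conn_structure:open}, so that $\spt \| V \| \cap \oball{a_i}{s_i}$ lies in a single connected component $H_i$ of $\spt \| V \| \cap \oball{a_i}{r_i/2}$. For $|z-w| \leq \Gamma^{-1}$ both $z, w$ lie in a common $\oball{a_i}{s_i}$, hence in $H_i$. Now \ref{thm:sob_poin_several_med} is applied with $P$ chosen proportional to the mass ratio $\measureball{\| V \|}{\oball{a_i}{r_i}} / \unitmeasure{\vdim} (r_i/2)^\vdim$ and $Z = \{ a_i \}$ — not with $P=1$ — producing $\leq P+1$ medians. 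The crucial new input is the \emph{connectedness of the image}: $f \lIm H_i \rIm$ is a connected subset of $\rel$, i.e.\ an interval, contained in $\bigcup\{\cball{y}{\delta_i} : y \in Y\}$; therefore $f \lIm H_i \rIm$ lies in a single connected component $E$ of this union, and $\diam E = \mathscr{L}^1(E) \leq 2(P+1)\delta_i$. The factor $P+1$ is then absorbed because $\delta_i$, extracted from \ref{thm:sob_poin_several_med}, is small enough. Without the connectedness observation (of $\spt \| V \|$ locally, and hence of $f \lIm H_i \rIm$ via continuity) the several-median estimate does not yield any oscillation bound, and this is precisely why the theorem requires the hypothesis that $f$ be continuous on $\spt \| V \|$ and why the decomposition theory of Section \ref{sec:decomposition} is a prerequisite. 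Your ``upgrade the a.e.\ bound to a pointwise bound by continuity and density'' is correct but addresses a much easier issue; the missing idea is the structural one.
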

\begin{proof}
	Let $\psi$ be as in \ref{miniremark:situation_general}.  Abbreviate $S
	= \spt \| V \|$ and $\alpha = 1-\vdim/q$. Abbreviate
	\begin{gather*}
		\Delta_1 = 2^{\vdim+3} \unitmeasure{\vdim}^{-1}, \quad
		\Delta_2 = \Gamma_{\ref{thm:sob_poin_summary}} ( 2 \adim ).
	\end{gather*}
	Notice that $\density_\ast^\vdim ( \| V \|, a ) \geq 1/2$ for $a \in
	S$ by \ref{corollary:density_1d}\,\eqref{item:density_1d:lower_bound}
	and \ref{remark:kuwert_schaetzle}. If \eqref{item:mod_continuity:m=1}
	holds, then
	\begin{gather*}
		\dmn \density^1 ( \| V \|, \cdot ) = U, \quad \Delta_3 = \sup
		\{ \density^1 ( \| V \|, a ) \with a \in K \} < \infty
	\end{gather*}
	by
	\ref{corollary:density_1d}\,\eqref{item:density_1d:upper_bound}\,\eqref{item:density_1d:real}.
	If \eqref{item:mod_continuity:m>1} holds, $\density^{\vdim-\alpha} (
	\| V \|, a ) = 0$ for $a \in S$ by
	\ref{corollary:density_ratio_estimate}.  Moreover, if $a \in S$ and
	$\cball{a}{r} \subset U$ then, by
	\ref{corollary:conn_structure}\,\eqref{item:conn_structure:open},
	there exists $0 < s \leq r$ such that $S \cap \oball{a}{s}$ is a
	subset of the connected component of $S \cap \oball{a}{r/2}$ which
	contains $a$. Since $K \cap S$ is compact, one may therefore construct
	$j \in \nat$ and $a_i$, $s_i$ and $r_i$ for $i = 1, \ldots, j$ such
	that $K \cap S \subset \bigcup_{i=1}^j \oball{a_i}{s_i}$ and
	\begin{gather*}
		a_i \in K \cap S, \quad 0 < s_i \leq r_i \leq \varepsilon,
		\quad S \cap \oball{a_i}{s_i} \subset H_i, \quad
		\cball{a_i}{r_i} \subset U, \\
		\psi ( \oball{a_i}{r_i} \without \{ a_i \} ) \leq
		\Delta_2^{-1}, \quad \measureball{\| V \|}{ \oball{a_i}{r_i} }
		\geq (1/2) \unitmeasure{\vdim} (r_i/2)^\vdim, \\
		r_i^{-1} \measureball{\| V \|}{\cball{a_i}{r_i}} \leq 4
		\Delta_3 \quad \text{if \eqref{item:mod_continuity:m=1}
		holds}, \\
		\Delta_1
		\Gamma_{\ref{thm:sob_poin_several_med}\,\eqref{item:sob_poin_several_med:p=m<q}}
		( 2\adim )^{\vdim/\alpha} r_i^{\alpha-\vdim}
		\measureball{\| V \|}{ \oball{a_i}{r_i} } \leq \varepsilon
		\quad \text{if \eqref{item:mod_continuity:m>1} holds}
	\end{gather*}
	for $i = 1, \ldots, j$, where $H_i$ is the connected component of $S
	\cap \oball{a_i}{r_i/2}$ which contains $a_i$. Since $K \cap S$ is
	compact, there exists a positive, finite number $\Gamma$ with the
	following property. If $z,w \in K \cap S$ and $|z-w| \leq \Gamma^{-1}$
	then $\{ z,w \} \subset \oball{a_i}{s_i}$ for some $i$.

	In order to verify that $\Gamma$ has the asserted property, suppose
	that $l$, $f$, $\gamma$, $z$, and $w$ are related to $\Gamma$ as in
	the body of the theorem.

	In view of \ref{remark:integration_by_parts} one may assume $l = 1$.
	Choose $a_i \in K \cap S$ such that $\{ z,w \} \subset H_i$. Choose $P
	\in \nat$ satisfying
	\begin{gather*}
		(1/2) P \unitmeasure{\vdim} (r_i/2)^\vdim \leq \measureball{\|
		V \|}{ \oball{a_i}{r_i} } \leq (1/2) (P+1) \unitmeasure{\vdim}
		(r_i/2)^\vdim.
	\end{gather*}
	Applying
	\ref{thm:sob_poin_several_med}\,\eqref{item:sob_poin_several_med:p=m=1}\,\eqref{item:sob_poin_several_med:p=m<q}
	with $U$, $M$, $Q$, $r$, and $Z$ replaced by $\oball{a_i}{r_i}$,
	$2 \adim$, $1$, $r_i/2$, and $\{ a_i \}$ yields a subset $Y$ of $\rel$
	such that $\card Y \leq P +1$ and
	\begin{gather*}
		{\textstyle f \lIm \oball{a_i}{r_i/2} \rIm \subset \bigcup \{
		\cball{y}{\delta_i} \with y \in Y \}},
	\end{gather*}
	where $\delta_i = \Delta_2 \gamma$ if \eqref{item:mod_continuity:m=1}
	holds and $\delta_i =
	\Gamma_{\ref{thm:sob_poin_several_med}\,\eqref{item:sob_poin_several_med:p=m<q}}
	( 2 \adim )^{\vdim/\alpha} r_i^\alpha \gamma$ if
	\eqref{item:mod_continuity:m>1} holds. Defining $E$ to be the
	connected component of $\bigcup \{ \cball{y}{\delta_i} \with y \in Y
	\}$ which contains $f(a)$, one infers
	\begin{gather*}
		|f(z)-f(w)| \leq \diam E = \mathscr{L}^1 (E) \leq 2 (P+1)
		\delta_i \leq \Delta_1 \delta_i r_i^{-\vdim} \measureball{\| V
		\|}{ \oball{a_i}{r_i} } \leq \lambda \gamma
	\end{gather*}
	since $\{ f(z), f(w) \} \subset f \lIm H_i \rIm \subset E$ and $E$ is
	an interval.
\end{proof}
\begin{remark} \label{remark:mod_continuity}
	Considering varifolds corresponding to two parallel planes, it is
	clear that $\Gamma$ may not be chosen independently of $V$. \verify
	Also the continuity hypothesis on $f$ is essential as may be seen
	considering $V$ associated to two parallel lines.
\end{remark}
\begin{theorem} \label{thm:hoelder_continuity}
	Suppose $1 \leq M < \infty$.

	Then there exists a positive, finite number $\Gamma$ with the
	following property.

	If $\vdim$, $\adim$, $p$, $U$, $V$, and $\psi$ are as in
	\ref{miniremark:situation_general}, $p = \vdim$, $\adim \leq M$, $0 <
	r < \infty$, $A = \{ z \with \oball zr \subset U \}$, $C = \{
	(z,\cball zs ) \with \cball zs \subset U \}$,
	\begin{gather*}
		\measureball{\psi}{ \oball{a}{r} } \leq \Gamma^{-1}, \qquad
		\measureball{\| V \|}{ \oball as } \leq (2 - M^{-1} )
		\unitmeasure{\vdim} s^\vdim \quad \text{for $0 < s \leq r$}
	\end{gather*}
	whenever $a \in A \cap \spt \| V \|$, $l \in \nat$, $\vdim < q \leq
	\infty$, $\alpha = 1-\vdim/q$, $f \in \trunc (V,\rel^l)$, $B$ is the
	set of $a \in A \cap \spt \| V \|$ such that $f$ is $(\| V \|,C)$
	approximately continuous at $a$, and $\gamma = \sup \{ \eqLpnorm{ \| V
	\| \restrict \oball ar}{q}{ \derivative Vf } \with a \in A \cap \spt
	\| V \| \}$, then
	\begin{gather*}
		| f(z)-f(w) | \leq l^{1/2} \lambda |z-w|^\alpha \gamma \quad
		\text{whenever $z,w \in B$ and $|z-w| \leq r/\Gamma$},
	\end{gather*}
	where $\lambda = \Gamma$ if $\vdim = 1$ and $\lambda =
	\Gamma^{1/\alpha}$ if $\vdim > 1$.
\end{theorem}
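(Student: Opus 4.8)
The plan is to prove H\"older continuity by combining the oscillation estimate of \ref{thm:mod_continuity} (case \eqref{item:mod_continuity:m>1}) together with the several--medians Sobolev Poincar\'e inequality \ref{thm:sob_poin_several_med}\,\eqref{item:sob_poin_several_med:p=m<q}, applied on balls of \emph{variable} radius so as to capture the exponent $\alpha = 1-\vdim/q$. The key observation making H\"older continuity (rather than mere continuity) available is the mass bound $\measureball{\| V \|}{\oball as} \leq (2-M^{-1}) \unitmeasure{\vdim} s^\vdim$, which forbids two distinct sheets through a point of $\spt \| V \|$ in the sense that there is essentially one connected piece of the support locally; this is where the constant $2$ enters, via \ref{corollary:conn_structure} and the lower density bound $\density_\ast^\vdim ( \| V \|, a ) \geq 1$ coming from \ref{remark:kuwert_schaetzle}. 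The role of $B$ (the set of approximate continuity points) is that on $B$ the pointwise value of $f$ is controlled by the local averages appearing in the Sobolev Poincar\'e inequality, cf.\ the use of such sets in \ref{thm:diff_lebesgue_spaces}.

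First I would fix $1 \leq M < \infty$ and define $\Gamma$ in terms of $\Gamma_{\ref{thm:sob_poin_summary}} ( 2 \adim )$, $\Gamma_{\ref{thm:sob_poin_several_med}\,\eqref{item:sob_poin_several_med:p=m<q}} ( 2 \adim )$, the constants $\unitmeasure{k}$ for $k \leq M$, and Besicovitch-type covering constants, large enough to absorb all the estimates below; I would record that $\Gamma$ depends only on $M$. By \ref{remark:integration_by_parts} one reduces to $l=1$. Given $z, w \in B$ with $\rho := |z-w| \leq r/\Gamma$, set $t = \Gamma' \rho$ for a suitable fixed multiple $\Gamma'$ so that $\oball zt \cup \oball wt \subset \oball z{r}$ and $t \leq r$. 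On $\oball zt$ apply \ref{thm:sob_poin_several_med}\,\eqref{item:sob_poin_several_med:p=m<q} with $U$, $M$, $Q$, $r$, $Z$ replaced by $\oball zt$, $2\adim$, $1$, a radius comparable to $t$, and $Z = \{ z, w \}$; the hypothesis $\psi ( \oball zt ) \leq \psi ( \oball zr ) \leq \Gamma^{-1}$ is inherited, and the mass bound gives the required $\| V \|( \oball zt ) \leq (Q-M^{-1})(P+1) \unitmeasure{\vdim} (t/2)^\vdim$ for an appropriate $P$ (controlled by $M$ via the mass bound). This produces a finite set $Y$ with $\card Y \leq P + 2$ and
\begin{gather*}
	\eqLpnorm{\| V \| \restrict A'}{\infty}{ \dist ( f ( \cdot ), Y ) } \leq \Gamma^{1/\alpha} t^{1-\vdim/q} \gamma,
\end{gather*}
where $A'$ is the corresponding concentric smaller ball.

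Next I would use that $z, w \in B \cap Z$ are approximate continuity points to deduce $\dist ( f(z), Y ) \leq \Gamma^{1/\alpha} t^\alpha \gamma$ and likewise for $w$; since $f(z), f(w)$ each lie within $\Gamma^{1/\alpha} t^\alpha \gamma$ of $Y$, it remains to bound the separation \emph{within} $Y$ that is actually realised by a connected piece of $\spt \| V \|$ joining $z$ to $w$. Here the mass constraint $(2-M^{-1})\unitmeasure{\vdim} s^\vdim$ and \ref{corollary:conn_structure}\,\eqref{item:conn_structure:open} force $\spt \| V \| \cap \oball z{ct}$ (for a small dimensional constant $c$) to lie in a single connected component of $\spt \| V \| \cap \oball zt$ that contains both $z$ and $w$ — this is exactly the ``no two sheets'' mechanism, proved by the monotonicity identity in the form of \ref{lemma:lower_mass_bound_Q} (with $Q$ slightly below $2$). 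Consequently the image of that connected piece under $f$ lies in a single connected component $E$ of $\bigcup \{ \cball y{\Gamma^{1/\alpha} t^\alpha \gamma} \with y \in Y \}$, which is an interval of length $\leq 2(P+2)\Gamma^{1/\alpha} t^\alpha \gamma$. Using the mass bound once more to absorb $P+2 \lesssim t^{-\vdim} \| V \| ( \oball zr ) \lesssim_M 1$ and rewriting $t^\alpha \simeq \rho^\alpha$ yields
\begin{gather*}
	| f(z) - f(w) | \leq \lambda | z - w |^\alpha \gamma,
\end{gather*}
with $\lambda = \Gamma$ when $\vdim = 1$ (where $\Gamma_{\ref{thm:sob_poin_several_med}}$ contributes a single power) and $\lambda = \Gamma^{1/\alpha}$ when $\vdim > 1$; the case $\vdim = 1$ is handled identically but invoking \ref{thm:sob_poin_several_med}\,\eqref{item:sob_poin_several_med:p=m=1} in place of the $\vdim>1$ clause, exactly as in the two branches of \ref{thm:mod_continuity}.

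The main obstacle I anticipate is the bookkeeping that makes the radius $t$ genuinely proportional to $|z-w|$ while keeping all the smallness hypotheses ($\psi$--mass small, density ratio small, $\| V \|(\oball zt)$ bounded by the $(P+1)\unitmeasure{\vdim}$ threshold) satisfied simultaneously and uniformly; in particular one must verify that the connected component of $\spt\|V\|\cap\oball zt$ containing $z$ also contains $w$ once $|z-w|$ is small relative to $t$, which requires quantifying \ref{corollary:conn_structure}\,\eqref{item:conn_structure:open} via the monotonicity-based lower mass bound rather than using it qualitatively. The rest — passing from the $L^\infty$ Sobolev estimate to pointwise values on $B$, and converting the cardinality $\card Y$ into a power of $|z-w|$ via the mass bound — is routine and parallels the arguments already carried out in \ref{thm:mod_continuity} and \ref{thm:diff_lebesgue_spaces}.
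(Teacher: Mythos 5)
Your proposal has a genuine gap: the connectivity argument you plan to borrow from \ref{thm:mod_continuity} requires the function $f$ to be \emph{continuous} on $\spt \| V \|$, and that hypothesis is absent from \ref{thm:hoelder_continuity}. In \ref{thm:mod_continuity} the step ``the image of the connected piece $H_i$ under $f$ is connected, hence lands in a single component $E$ of $\bigcup_{y\in Y}\cball{y}{\delta_i}$'' is exactly where continuity is used, and it is why that theorem explicitly carries ``$f : \spt\|V\| \to \rel^l$ is a continuous function'' among its hypotheses. Here $f \in \trunc(V,\rel^l)$ is only assumed to be $(\|V\|,C)$ approximately continuous at the two points $z,w\in B$, which does not give you connectedness of the image. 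Moreover, taking $Z=\{z,w\}$ in \ref{thm:sob_poin_several_med} actively hurts: it forces $f(z),f(w)\in Y$, so $\dist(f(z),Y)=\dist(f(w),Y)=0$ and the Sobolev estimate on $g=\dist(\cdot,Y)\circ f$ says nothing directly about $|f(z)-f(w)|$; the only way to recover a bound from that setup is precisely the connectivity argument you cannot run.

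What you miss is why the hypothesis carries the constant $2$. The mass bound $\measureball{\|V\|}{\oball{a}{s}}\le(2-M^{-1})\unitmeasure{\vdim}s^\vdim$ is calibrated so that, after shrinking the radius by a fixed geometric factor (the paper takes $s=2|z-w|$, $t=(1-\Delta_1)^{-1}s$ with $\Delta_1=(1-1/(4M-1))^{1/M}$), one has $\measureball{\|V\|}{\oball{z}{t}}\le(2-(2M)^{-1})\unitmeasure{\vdim}(t-s)^\vdim = (1-(2M)^{-1})(P+1)\unitmeasure{\vdim}(t-s)^\vdim$ with $P=1$. Applying \ref{thm:sob_poin_several_med} with $P=1$, $Q=1$, $Z=\varnothing$, and $M$ replaced by $2M$ then yields a single median, $\card Y=1$, and the bound $\eqLpnorm{\|V\|\restrict\oball zs}{\infty}{\dist(f(\cdot),Y)}\lesssim t^\alpha\gamma$. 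Since $z,w\in B$ are approximate continuity points, each of $f(z),f(w)$ is within that bound of the one point of $Y$, giving $|f(z)-f(w)|\le 2\eqLpnorm{\|V\|\restrict\oball zs}{\infty}{f_Y}$ with no need to compare distinct medians and no connectivity. This is the structural reason the proof avoids a regularity hypothesis on $f$; your route, which allows $P$ to grow and then tries to collapse the medians, re-introduces a hypothesis you do not have.
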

\begin{proof}
	Define
	\begin{gather*}
		\Delta_1 = (1-1/(4M-1))^{1/M}, \quad \Delta_2 = \sup \{ 1,
		\Gamma_{\ref{thm:sob_poin_several_med}\,\eqref{item:sob_poin_several_med:p=m<q}}
		(2M) \}^M, \\
		\Gamma = 4 (1-\Delta_1)^{-1} \sup \{ 4
		\Gamma_{\ref{thm:sob_poin_summary}} (2M), \Delta_2 \}
	\end{gather*}
	and notice that $\Gamma \geq 4 (1-\Delta_1)^{-1}$.

	In order to verify that $\Gamma$ has the asserted property, suppose
	that $\vdim$, $\adim$, $p$, $U$, $V$, $\psi$, $M$, $r$, $A$, $C$, $l$,
	$q$, $f$, $B$, $\gamma$, $w$, $z$, and $\lambda$ are related to
	$\Gamma$ as in the body of the theorem.

	In view of \ref{remark:integration_by_parts} and
	\ref{corollary:integrability}\,\eqref{item:integrability:m=1}\,\eqref{item:integrability:m=p<q}
	one may assume $l = 1$. Define $s = 2 |z-w|$ and $t =
	(1-\Delta_1)^{-1} s$ and notice that $0 < s < t \leq r$ and
	\begin{gather*}
		\begin{aligned}
			& \measureball{\| V \|}{ \oball{z}{t}} \leq (2-M^{-1})
			\unitmeasure{\vdim} (1-\Delta_1)^{-\vdim} s^\vdim \\
			& \qquad \leq \big (2- (2M)^{-1} \big )
			\unitmeasure{\vdim} \Delta_1^\vdim
			(1-\Delta_1)^{-\vdim} s^\vdim = \big (2-(2M)^{-1} \big
			) \unitmeasure{\vdim} (t-s)^\vdim,
		\end{aligned} \\
		\measureball{\psi}{ \oball zt } \leq \Gamma^{-1} \leq
		\Gamma_{\ref{thm:sob_poin_summary}} ( 2M )^{-1}.
	\end{gather*}
	Therefore applying
	\ref{thm:sob_poin_several_med}\,\eqref{item:sob_poin_several_med:p=m=1}\,\eqref{item:sob_poin_several_med:p=m<q}
	with $U$, $M$, $P$, $Q$, $r$, and $Z$ replaced by $\oball{z}{t}$,
	$2M$, $1$, $1$, $t-s$, and $\varnothing$ yields a subset $Y$ of $\rel$
	with $\card Y = 1$ such that $\eqLpnorm{\| V \| \restrict
	\oball{z}{s}}{\infty}{f_Y}$ is bounded by
	\begin{gather*}
		\Gamma_{\ref{thm:sob_poin_summary}} ( 2M ) \eqLpnorm{\| V \|
		\restrict \oball zt}{1}{ \derivative{V}{f} } \leq 4
		\Gamma_{\ref{thm:sob_poin_summary}} ( 2M ) t^\alpha \gamma
		\quad \text{if $\vdim = 1$, and by} \\
		\Gamma_{\ref{thm:sob_poin_several_med}\,\eqref{item:sob_poin_several_med:p=m<q}}
		( 2M )^{\vdim/\alpha} t^\alpha \eqLpnorm{\| V \| \restrict
		\oball zt}{q}{ \derivative Vf } \leq \Delta_2^{1/\alpha}
		t^\alpha \gamma \quad \text{if $\vdim > 1$}.
	\end{gather*}
	Noting $|f(z)-f(w)| \leq 2 \eqLpnorm{\| V \| \restrict \oball zs
	}{\infty}{f_Y}$ and $t = 2 (1-\Delta_1)^{-1} |z-w|$, the conclusion
	follows.
\end{proof}
\section{Geodesic distance} \label{sec:geodesic_distance}
Reconsidering the support of the weight measure of a varifold whose mean
curvature satisfies a suitable integrability of the mean curvature, the
oscillation estimate \ref{thm:mod_continuity} is used to established in this
section that its connected components agree with the components induced by its
geodesic distance, see \ref{thm:conn_path_finite_length}.
\begin{example} \label{example:space_fill}
	Whenever $1 \leq p < \vdim < \adim$, $U$ is an open subset of
	$\rel^\adim$ and $G$ is an open subset of $U$, there exists $V$
	related to $\vdim$, $\adim$, $U$, and $p$ as in
	\ref{miniremark:situation_general} such that $\spt \| V \|$ equals the
	closure of $G$ relative to $U$ as is readily seen taking the into
	account the behaviour of $\psi$ and $V$ under homotheties; compare
	\cite[1.2]{snulmenn.isoperimetric}.
\end{example}
\begin{theorem} \label{thm:conn_path_finite_length}
	Suppose $\vdim$, $\adim$, $U$, $V$, and $p$ are as in
	\ref{miniremark:situation_general}, $p = \vdim$, and $C$ is a
	connected component of $\spt \| V \|$, and $w,z \in C$.

	Then there exist $- \infty < a \leq b < \infty$ and a Lipschitzian
	function $g : \{ t \with a \leq t \leq b \} \to \spt \| V \|$ such
	that $g(a) = w$ and $g(b) = z$.
\end{theorem}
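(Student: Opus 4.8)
The statement asserts that any two points $w,z$ in the same connected component $C$ of $\spt\|V\|$ can be joined by a Lipschitzian path lying in $\spt\|V\|$. This is the kind of assertion best proved by the standard metric-space argument: define the intrinsic (geodesic) distance $\varrho$ on $\spt\|V\|$ by $\varrho(x,y) = \inf\{ \text{length of Lipschitzian paths in }\spt\|V\|\text{ from }x\text{ to }y\}$ (with $\inf\varnothing = \infty$), show that the relation ``$\varrho(x,y)<\infty$'' is an equivalence relation whose classes are open in $\spt\|V\|$, hence coincide with connected components, and then upgrade ``finite distance'' to ``realised by an actual Lipschitzian path'' by a length-minimising/reparametrisation argument (arc-length parametrisation of a near-optimal curve, or an Arzel\`a--Ascoli limit of a minimising sequence of $1$-Lipschitz paths on $[0,\varrho(w,z)]$). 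The pattern is the one alluded to in \ref{remark:metric_spaces}.

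First I would establish the \emph{local} statement: for every $a \in \spt\|V\|$ there is a radius $\rho>0$ such that every point of $\spt\|V\| \cap \oball{a}{\rho}$ can be joined to $a$ by a Lipschitzian path in $\spt\|V\|$, with a length bound of the form $\Gamma |x-a|$. This is exactly where the oscillation estimate \ref{thm:mod_continuity}\,\eqref{item:mod_continuity:m>1} (and its $\vdim=1$ analogue) enters, via its proof mechanism rather than its literal statement: by \ref{corollary:conn_structure}\,\eqref{item:conn_structure:open} connected components of $\spt\|V\|$ are relatively open, so for a suitable $r$ with $\cball{a}{r}\subset U$ and $\psi(\oball{a}{r}\without\{a\})$ small there is $0<s\le r$ with $\spt\|V\|\cap\oball{a}{s}$ contained in the connected component $H$ of $\spt\|V\|\cap\oball{a}{r/2}$ containing $a$. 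Within such a small ball, one applies the Sobolev--Poincar\'e inequality with several medians, \ref{thm:sob_poin_several_med}, to the \emph{coordinate functions} $f = \id{\rel^\adim}|\spt\|V\|$ (which lies in $\trunc(V,\rel^\adim)$ since it is locally Lipschitzian, by \ref{example:lipschitzian}), exactly as in the proof of \ref{thm:mod_continuity}: the median set $Y$ has one point for the relevant mass budget, and connectedness of $H$ forces $f\lIm H\rIm$ to lie in a single connected piece of a union of small balls, whose diameter is controlled by $\gamma = \sup_a \eqLpnorm{\|V\|\restrict\cball{a}{\varepsilon}}{q}{\derivative Vf}$. Since here $\derivative{V}{f}(z) = \project{\Tan^\vdim(\|V\|,z)}$ has $|\derivative{V}{f}|\le \adim^{1/2}$ bounded, $\gamma$ is finite and controlled by $\|V\|(\oball{a}{\varepsilon})$, giving a diameter bound on $H$ comparable to its radius. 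Iterating this on a sequence of balls of geometrically decreasing radii along a chain of overlapping balls produces, in each such ball, a finite ``polygonal-type'' connecting path in $\spt\|V\|$ of controlled length; concatenating yields a rectifiable, hence (after arc-length reparametrisation) Lipschitzian, connecting curve.

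The \emph{global} step then proceeds by a now-routine chaining and compactness argument: given $w,z$ in the same component $C$, cover a connected path of balls (each contained in $U$ and satisfying the local hypotheses) joining $w$ to $z$ — possible because $C$ is path-connected-by-small-balls since it is open and $U$ is locally connected — extract a finite subcover, and concatenate the finitely many local Lipschitzian paths. To obtain a single path defined on an interval $[a,b]$ one reparametrises by arc length. I expect the main obstacle to be the careful bookkeeping in the local step: ensuring that the constant $\Gamma$ in the oscillation-type estimate can be chosen uniformly over the finitely many balls used (this is handled exactly as in the proof of \ref{thm:mod_continuity}, using compactness of $K\cap\spt\|V\|$ for suitable compact $K\ni w,z$ inside $U$), and verifying that the ``near-minimising curve'' construction actually lands in $\spt\|V\|$ rather than only in some neighbourhood — this requires that the connecting curves produced inside each small ball be the images of Lipschitzian maps into $\spt\|V\|$, which is why one must run the argument through the several-medians inequality applied to the identity map, rather than merely citing an abstract diameter bound. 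The finiteness of the mean curvature in $\Lploc{\vdim}$ (the mean curvature hypothesis with $p=\vdim$, built into \ref{miniremark:situation_general}) is essential precisely to make \ref{corollary:conn_structure} and \ref{thm:mod_continuity} available; without it $\spt\|V\|$ may be locally dense (see \ref{example:space_fill}) and no such path exists.
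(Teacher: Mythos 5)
Your overall framing — intrinsic metric, $\delta$-chain distance, equivalence classes relatively open, upgrade to an actual Lipschitzian curve by a limiting argument — is the right pattern and is the one the paper follows. But the central mechanism you propose is wrong, and the gap is exactly at the point where the argument has any content.

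You propose to apply \ref{thm:sob_poin_several_med} (via the proof of \ref{thm:mod_continuity}) to the \emph{identity map} $f = \id{\rel^\adim}|\spt\|V\|$ and conclude a ``diameter bound on $H$ comparable to its radius''. That is circular: $H$ is by construction contained in $\oball{a}{r/2}$, so its diameter is $\le r$ trivially, and the oscillation estimate applied to the identity gives only $|z-w| \lesssim r$, which carries no information about chain-connectivity. Nothing in this step produces a polygonal chain or controls its length. The paper instead applies the oscillation estimate \ref{thm:mod_continuity} to the \emph{$\delta$-chain distance function}
\begin{gather*}
	f_\delta ( \xi ) = \inf \Big\{ \sum_{i=1}^j |z_i-z_{i-1}| \with z_0 = c,\ z_j = \xi,\ z_i \in C,\ |z_i-z_{i-1}| \leq \delta \Big\},
\end{gather*}
whose key properties are that it is real-valued on the connected set $C$, that $\Lip(f_\delta|Z)\le 1$ whenever $\diam Z \le \delta$, hence $f_\delta \in \trunc(V)$ with $\Lpnorm{\|V\|}{\infty}{\derivative{V}{f_\delta}} \le 1$ by \ref{example:lipschitzian}. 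Feeding this bounded-gradient function into \ref{thm:mod_continuity} gives $f_\delta(\xi) \le r$ on $C\cap\cball{c}{s}$ \emph{uniformly in $\delta$}, which is the actual content; one then builds near-Lipschitzian approximating curves $g_\delta$ with $\dist(g_\delta(t),C)\le\delta$ and passes to the limit via \cite[2.10.21]{MR41:1976} to land in $\spt\|V\|$. Your blueprint never identifies a candidate function with the two properties needed simultaneously (finiteness from connectedness, bounded weak gradient from local Lipschitzness); without that, the oscillation estimate has nothing to bite on. A secondary issue: defining $\varrho$ as the infimum of lengths of Lipschitzian paths \emph{in} $\spt\|V\|$ and then trying to show its finiteness classes are open also begs the question, since a priori $\varrho\equiv\infty$ off the diagonal — again, $f_\delta$ is introduced precisely to avoid this.
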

\begin{proof}
	In view of
	\ref{corollary:conn_structure}\,\eqref{item:conn_structure:piece}\,\eqref{item:conn_structure:open}
	one may assume $C = \spt \| V \|$. Whenever $c \in C$ denote by $Z(c)$
	the set of $\xi \in C$ such that there exist $- \infty < a \leq b <
	\infty$ and a Lipschitzian function $g : \{ t \with a \leq t \leq b \}
	\to C$ such that $g(a) = c$ and $g(b) = \xi$. Observe that it is
	sufficient to prove that $c$ belongs to the interior of $Z(c)$
	relative to $C$ whenever $c \in C$.
	
	For this purpose suppose $c \in C$, choose $0 < r < \infty$ with
	$\cball{c}{2r} \subset U$. Let $K = \cball{c}{r}$. If $\vdim = 1$,
	define $\lambda$ as in
	\ref{thm:mod_continuity}\,\eqref{item:mod_continuity:m=1} and notice
	that $\lambda < \infty$ by
	\ref{corollary:density_1d}\,\eqref{item:density_1d:upper_bound}.
	Define $q = 1$ if $\vdim = 1$ and $q = 2 \vdim$ if $\vdim > 1$. Choose
	$0 < \varepsilon \leq r$ such that
	\begin{gather*}
		\lambda \sup \{ \| V \| ( \cball{a}{\varepsilon} )^{1/q} \with
		a \in K \} \leq r,
	\end{gather*}
	where $\lambda = \varepsilon$ in accordance with
	\ref{thm:mod_continuity}\,\eqref{item:mod_continuity:m>1} if $\vdim >
	1$, and define
	\begin{gather*}
		s = \inf \{ \Gamma_{\ref{thm:mod_continuity}} ( \vdim, \adim,
		\vdim, U, V, K, \varepsilon, q )^{-1}, r \}.
	\end{gather*}

	Next, define functions $f_\delta : C \to \overline{\rel}$ by
	\begin{gather*}
		f_\delta ( \xi ) = \inf \left \{ \sum_{i=1}^j
		|z_i-z_{i-1}| \with z_0 = c, z_j = \xi, z_i \in C,
		|z_i-z_{i-1}| \leq \delta \right \}
	\end{gather*}
	whenever $\xi \in C$ and $\delta > 0$. Notice that
	\begin{gather*}
		f_\delta (\xi) \leq f_\delta (\zeta) + |\zeta-\xi|
		\quad \text{whenever $\zeta,\xi \in \spt \| V \|$ and
		$|\zeta-\xi| \leq \delta$}.
	\end{gather*}
	Since $C$ is connected and $f_\delta (c) = 0$, it follows that
	$f_\delta$ is a locally Lipschitzian real valued function
	satisfying
	\begin{gather*}
		\Lip ( f_\delta | Z ) \leq 1 \quad \text{whenever $Z
		\subset U$ and $\diam Z \leq \delta$},
	\end{gather*}
	in particular $f_\delta \in \trunc (V)$ and $\Lpnorm{\| V
	\|}{\infty}{\derivative{V}{f_\delta}} \leq 1$ by
	\ref{example:lipschitzian}. One infers
	\begin{gather*}
		f_\delta (\xi) \leq r \quad \text{whenever $\xi \in C
		\cap \cball{c}{s}$ and $\delta > 0$}
	\end{gather*}
	from \ref{thm:mod_continuity}. It follows that $C \cap \cball{c}{s}
	\subset Z(c)$; in fact, if $\xi \in C \cap \cball{c}{s}$  one readily
	constructs $g_\delta : \{ u \with 0 \leq t \leq r \} \to
	\rel^\adim$ satisfying
	\begin{gather*}
		g_\delta (0)=c, \quad g_\delta (r) = \xi, \quad \Lip
		g_\delta \leq 1+\delta, \\
		\dist ( g_\delta (t), C ) \leq \delta \quad
		\text{whenever $0 \leq t \leq r$},
	\end{gather*}
	hence, noting that $\im g_\delta \subset \cball{c}{(1+\delta)r}$, the
	existence of $g : \{ t \with 0 \leq t \leq r \} \to C$ satisfying $g
	(0)=c$, $g (r) = \xi$, and $\Lip g \leq 1$ now is a consequence of
	\cite[2.10.21]{MR41:1976}.
\end{proof}
\begin{remark} \label{remark:metric_spaces}
	The deduction of \ref{thm:conn_path_finite_length} from
	\ref{thm:mod_continuity} is adapted from Cheeger \cite[\S
	17]{MR1708448} who attributes the argument to Semmes, see also David
	and Semmes \cite{MR1132876}.
\end{remark}
\begin{remark} \label{remark:topping}
	In view of \ref{example:space_fill} it is not hard to construct
	examples showing that the hypothesis ``$p = \vdim$'' in
	\ref{thm:mod_continuity} may not be replaced by ``$p \geq q$'' for any
	$1 \leq q < \vdim$ if $\vdim < \adim$. Yet, for indecomposable $V$,
	the study of possible extensions of \ref{thm:conn_path_finite_length}
	as well as related questions seems to be most natural under the
	hypothesis ``$p = \vdim-1$'', see Topping \cite{MR2410779}.
\end{remark}
\section{Curvature varifolds} \label{sec:curvature_varifolds}
In this section Hutchinson's concept of curvature varifold is rephrased in
terms of the concept of weakly differentiable function proposed in the present
paper, see \ref{thm:curvature_varifolds}. To indicate possible benefits of
this perspective, a result on the differentiability of the tangent plane map
is included, see \ref{corollary:curv_var_diff}.
\begin{miniremark} \label{miniremark:trace}
	Suppose $\adim \in \nat$ and $Y = \Hom ( \rel^\adim, \rel^\adim ) \cap
	\{ \tau \with \tau = \tau^\ast \}$. Then $T : \Hom ( \rel^\adim, Y )
	\to \rel^\adim$ denotes the linear map which is given by the
	composition (see \cite[1.1.1, 1.1.2, 1.1.4, 1.7.9]{MR41:1976})
	\begin{align*}
		\Hom ( \rel^\adim, Y ) & \xrightarrow{\subset}
		\Hom ( \rel^\adim, \Hom (\rel^\adim, \rel^\adim)) \\
		& \simeq \Hom ( \rel^\adim, \rel ) \otimes \Hom ( \rel^\adim,
		\rel ) \otimes \rel^\adim \xrightarrow{\Phi \otimes
		\id{\rel^\adim}} \rel \otimes \rel^\adim \simeq \rel^\adim,
	\end{align*}
	where the linear map $\Phi$ is induced by the inner product on $\Hom (
	\rel^\adim, \rel )$, hence $T ( g ) = \sum_{i=1}^\adim \left < v_i,
	g(v_i) \right >$ whenever $g \in \Hom (\rel^\adim, Y )$ and $v_1,
	\ldots, v_\adim$ form an orthonormal basis of $\rel^\adim$.
\end{miniremark}
\begin{miniremark} \label{miniremark:mean_curv}
	Suppose $\adim$, $Y$ and $T$ are as in \ref{miniremark:trace}, $\adim
	\geq \vdim \in \nat$, $M$ is an $\vdim$ dimensional submanifold of
	$\rel^\adim$ of class $2$, and $S : M \to Y$ is defined by $S(z) =
	\project{\Tan ( M,z)}$ for $z \in M$. Then one computes
	\begin{gather*}
		\mathbf{h} ( M,z ) = T ( DS(z) \circ S(z) ) \quad
		\text{whenever $z \in M$};
	\end{gather*}
	in fact, differentiating the equation $S(z) \circ S(z) = S(z)$ for $z
	\in M$, one obtains
	\begin{gather*}
		S(z) \circ \left < v, DS(z) \right > \circ S(z) = 0 \quad
		\text{for $v \in \Tan(M,z)$}, \\
		T(DS(z) \circ S(z)) \in \Nor (M,z)
	\end{gather*}
	for $z \in M$ and, denoting by $v_1, \ldots, v_\adim$ an orthonormal
	base of $\rel^\adim$, one computes
	\begin{align*}
		& S(z) \bullet ( Dg(z) \circ S(z) ) = \tsum{i=1}{\adim} \left
		< S(z)(v_i), Dg(z) \right > \bullet S(z)(v_i) \\
		& \qquad = - g(z) \bullet \tsum{i=1}{\adim} \left < S(z)(v_i),
		DS(z) \right > (v_i) = - g(z) \bullet T ( DS(z) \circ S(z))
	\end{align*}
	whenever $g : M \to \rel^\adim$ is of class $1$ and $g(z) \in \Nor
	(M,z)$ for $z \in M$.
\end{miniremark}
\begin{lemma} \label{lemma:gen_sym_endo}
	Suppose $\adim$ and $Y$ are as in \ref{miniremark:trace} and $\adim >
	\vdim \in \nat$.

	Then the vectorspace $Y$ is generated by $\{\project{S} \with S \in
	\grass{\adim}{\vdim} \}$.
\end{lemma}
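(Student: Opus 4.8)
The plan is to show that the span of $\{\project{S} \with S \in \grass{\adim}{\vdim}\}$ is all of $Y$ by exhibiting enough elements of this span to generate $Y$. Since $\project{S}$ is the orthogonal projection onto the $\vdim$-plane $S$, we have $\project{S} = \id{\rel^\adim} - \perpproject{S}$, and $\perpproject{S}$ is the orthogonal projection onto the $(\adim-\vdim)$-plane $S^\perp$. Because $\adim > \vdim$, the number $\adim-\vdim$ is a positive integer, so first I would reduce to understanding the span of all rank-$k$ orthogonal projections for a fixed $k$ with $1 \leq k \leq \adim-1$ (here $k = \vdim$). Note $Y$ already contains $\id{\rel^\adim}$, which is itself $\project{S}$ for $S = \rel^\adim$ when $\vdim = \adim$, but since we are in the strict case we instead get $\id{\rel^\adim}$ as a limit/combination; more directly, given an orthonormal basis $v_1,\dots,v_\adim$, summing $\project{S}$ over the $\binom{\adim}{\vdim}$ coordinate $\vdim$-planes spanned by subsets of the basis yields $\binom{\adim-1}{\vdim-1}\id{\rel^\adim}$, so $\id{\rel^\adim}$ lies in the span.

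The key computational step is this: for a unit vector $u$, let $\pi_u$ denote the rank-one projection $v \mapsto (u \bullet v)u$. I would show that each $\pi_u$ lies in the span. To do this, pick an orthonormal basis $u = e_1, e_2, \dots, e_\adim$ and consider, for each subset $I \subset \{2,\dots,\adim\}$ with $\card I = \vdim$, the projection onto $\Span\{e_i \with i \in I\}$ and the projection onto $\Span(\{e_1\}\cup\{e_i \with i \in I\setminus\{\max I\}\})$ — or more cleanly, average $\project{S}$ over all coordinate $\vdim$-planes containing $e_1$ and subtract the average over all coordinate $\vdim$-planes not containing $e_1$ (suitably normalised). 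A short counting argument shows such a combination equals a positive multiple of $\pi_{e_1}$ plus a multiple of $\id{\rel^\adim}$; since $\id{\rel^\adim}$ is already in the span, so is $\pi_{e_1} = \pi_u$. As $u$ ranges over all unit vectors, the $\pi_u$ span the set of all symmetric rank-one endomorphisms $u \otimes u$, and these are well known to span all of $Y = \{\tau \with \tau = \tau^\ast\}$ (e.g.\ by the spectral theorem, every symmetric endomorphism is a linear combination of its eigenprojections, each of which is a sum of rank-one symmetric projections, or by polarisation $u\otimes v + v\otimes u = (u+v)\otimes(u+v) - u\otimes u - v\otimes v$).

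The main obstacle I anticipate is getting the combinatorial averaging identity exactly right — namely producing a single explicit linear combination of coordinate $\vdim$-plane projections that isolates $\pi_{e_1}$ modulo $\id{\rel^\adim}$. The cleanest route is to note that any linear combination $\sum_S c_S \project{S}$ over coordinate planes, with $c_S$ depending only on whether $1 \in S$, is diagonal in the basis $e_1,\dots,e_\adim$ with at most two distinct diagonal entries (one for the $e_1$-slot, one for the rest), and these two entries can be chosen independently by choosing the two coefficient values, so one can arrange the entry at $e_1$ to be $1$ and the others to be $0$ — this is a $2\times 2$ invertibility check, which holds because a coordinate plane containing $e_1$ and one not containing $e_1$ give genuinely different diagonal patterns. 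Once $\pi_{e_1}$ is obtained in the span, rotational symmetry (the span is invariant under conjugation by orthogonal maps, as conjugating $\project{S}$ gives $\project{g[S]}$) immediately gives all $\pi_u$, and the spectral/polarisation argument finishes. I would keep the write-up to the averaging identity, the $2\times 2$ check, the rotation-invariance remark, and the citation of the spectral theorem, without belabouring the binomial coefficients.
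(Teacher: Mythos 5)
Your proof is correct, and it shares the paper's overall strategy — reduce to showing that every rank-one orthogonal projection lies in the span, then invoke the standard fact that these generate the symmetric endomorphisms — but your reduction step is carried out differently. You first obtain $\id{\rel^\adim}$ by summing over all coordinate $\vdim$-planes, then isolate $\pi_{e_1}$ by a two-parameter averaging over coordinate planes split according to whether they contain $e_1$, and finally invoke rotation-invariance of the span. This works (your $2\times 2$ matrix has determinant $\binom{\adim-1}{\vdim-1}\binom{\adim-2}{\vdim-1}$, which is nonzero precisely because $\vdim < \adim$), but it requires two steps and a separate argument for $\id{\rel^\adim}$. The paper instead produces a rank-one projection directly via a single explicit identity: if $v_1,\ldots,v_{\vdim+1}$ are orthonormal, $S_i = \Span\{v_j \with j\neq i,\ j\leq\vdim+1\}$, and $T = \Span\{v_{\vdim+1}\}$, then
\begin{gather*}
\project{T} + \eqproject{S_{\vdim+1}} = \tfrac{1}{\vdim}\tsum{i=1}{\vdim+1}\eqproject{S_i},
\end{gather*}
so $\project{T}$ is in the span for an arbitrary line $T$ (no rotation-invariance or preliminary step needed), after which the case $\vdim=1$ is cited from \cite[1.7.3]{MR41:1976}. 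Your route is more combinatorial and buys the observation that the span is a conjugation-invariant subspace containing $\id{\rel^\adim}$; the paper's identity is shorter and handles general orthonormal frames directly, avoiding the coordinate-plane bookkeeping.
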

\begin{proof}
	If $v_1, \ldots, v_{\vdim+1}$ are orthonormal vectors in $\rel^\adim$,
	\begin{gather*}
		S_i = \Span \big \{ v_j \with \text{$j \in \{ 1,\ldots,\vdim
		+1 \}$ and $j \neq i$} \big \} \in \grass{\adim}{\vdim}
	\end{gather*}
	for $i=1, \ldots, \vdim+1$, and $v_{\vdim+1} \in T \in
	\grass{\adim}{1}$, then $\project{T} +  \eqproject{S_{\vdim+1}} =
	\frac{1}{\vdim} \sum_{i=1}^{\vdim+1} \eqproject{S_i}$. Hence one may
	assume $\vdim=1$ in which case \cite[1.7.3]{MR41:1976} yields the
	assertion.
\end{proof}
\begin{definition} \label{def:curvature_varifold}
	Suppose $\adim$, $Y$ and $T$ are as in \ref{miniremark:trace}, $\adim
	\geq \vdim \in \nat$, $U$ is an open subset of $\rel^\adim$, $V \in
	\IVar_\vdim (U)$, $Z = U \cap \{ z \with \Tan^\vdim ( \| V \|, z ) \in
	\grass{\adim}{\vdim} \}$, and $R : Z \to Y$ satisfies
	\begin{gather*}
		R(z) = \project{\Tan^\vdim ( \| V \|, z )} \quad
		\text{whenever $z \in Z$}.
	\end{gather*}

	Then $V$ is called a \emph{curvature varifold} if and only if there
	exists $A \in \Lploc{1} \big ( \| V \|, \Hom ( \rel^\adim, Y) \big )$
	satisfying
	\begin{gather*}
		\tint{}{} \left < (R(z)(v), A(z)(v)), D \phi (z,R(z)) \right >
		+ \phi (z,R(z)) T(A(z)) \bullet v \ud \| V \| z = 0
	\end{gather*}
	whenever $v \in \rel^\adim$ and $\phi \in \mathscr{D}^0 ( U \times Y
	)$.
\end{definition}
\begin{remark} \label{remark:hutchinson_reformulations}
	Using approximation and the fact that $R$ is a bounded function, one
	may verify that the same definition results if the equation is
	required for every $\phi : U \times Y \to \rel$ of class $1$ such that
	\begin{gather*}
		\Clos ( U \cap \{ (z,\tau) \with \text{$\phi(z,\tau) \neq 0$
		for some $\tau \in Y$} \} )
	\end{gather*}
	is compact. Extending $T$ accordingly, one may also replace $Y$ by
	$\Hom ( \rel^\adim, \rel^\adim )$ in the definition; in fact, if $A
	\in \Lploc{1} \big ( \| V \|, \Hom ( \rel^\adim, \Hom ( \rel^\adim,
	\rel^\adim) ) \big )$ satisfies the condition of the modified
	definition then $\im A(z) \subset Y$ for $\| V \|$ almost all $z$ as
	may be seen by enlarging the class of $\phi$ as before and considering
	$\phi ( z, \tau ) = \gamma ( z ) g ( \tau )$ for $z \in U$, $\tau \in
	\Hom ( \rel^\adim, \rel^\adim )$, $\gamma \in \mathscr{D}^0 (
	\rel^\adim )$ and $g : \Hom ( \rel^\adim, \rel^\adim ) \to \rel$ is
	linear with $Y \subset \ker g$, compare Hutchinson
	\cite[5.2.4\,(i)]{MR825628}.

	Consequently, the definition is equivalent to Hutchinson's definition
	in \cite[5.2.1]{MR825628}. The present formulation is motivated by
	\ref{lemma:gen_sym_endo}. A less obvious reformulation will be
	provided in \ref{thm:curvature_varifolds}.
\end{remark}
\begin{theorem} \label{thm:curvature_varifolds}
	Suppose $\adim$, $Y$ and $T$ are as in \ref{miniremark:trace},
	$\adim \geq \vdim \in \nat$, $U$ is an open subset of $\rel^\adim$, $V
	\in \IVar_\vdim (U)$, $Z = U \cap \{ z \with \Tan^\vdim ( \| V \|, z )
	\in \grass{\adim}{\vdim} \}$, and $R : Z \to Y$ satisfies
	\begin{gather*}
		R(z) = \project{\Tan^\vdim ( \| V \|, z )} \quad
		\text{whenever $z \in Z$}.
	\end{gather*}

	Then $V$ is a curvature varifold if and only if $\| \delta V \|$ is a
	Radon measure absolutely continuous with respect to $\| V \|$ and $R
	\in \trunc(V, Y )$. In this case, there holds
	\begin{gather*}
		\mathbf{h} ( V,z ) = T ( \derivative{V}{R} (z) ) \quad
		\text{for $\| V \|$ almost all $z$}, \\
		( \delta V )_z ( \phi (z,R(z)) v ) = \tint{}{} \left <
		( R(z)(v), \derivative{V}{R}(z)(v) ), D \phi (z,R(z))
		\right > \ud \| V \| z
	\end{gather*}
	whenever $\phi : U \times Y \to \rel$ is of class $1$ such that
	\begin{gather*}
		\Clos ( U \cap \{ (z,\tau) \with \text{$\phi(z,\tau) \neq 0$
		for some $\tau \in Y$} \} )
	\end{gather*}
	is compact and $v \in \rel^\adim$.
\end{theorem}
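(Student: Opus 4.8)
The plan is to prove the two directions separately, deriving both displayed identities along the way. For the ``only if'' direction, assume $V$ is a curvature varifold with associated $A \in \Lploc{1}(\|V\|, \Hom(\rel^\adim, Y))$. First I would record, using the reformulation in \ref{remark:hutchinson_reformulations}, that the defining equation holds for all $\phi : U \times Y \to \rel$ of class $1$ whose relevant support is compact. Taking $\phi$ independent of $\tau$, that is $\phi(z,\tau) = \gamma(z)$ with $\gamma \in \mathscr{D}^0(U)$, the equation becomes $\tint{}{} R(z)(v) \bullet D\gamma(z) + \gamma(z) T(A(z)) \bullet v \ud \|V\| z = 0$; since $R(z) = \project{\Tan^\vdim(\|V\|,z)}$ for $\|V\|$ almost all $z$ (as $V$ is integral, so $\Tan^\vdim(\|V\|,z) \in \grass{\adim}{\vdim}$ almost everywhere), this says exactly that $\delta V$ is representable by integration with $\mathbf{h}(V,z) = T(A(z))$, in particular $\|\delta V\|$ is a Radon measure absolutely continuous with respect to $\|V\|$. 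Next, to show $R \in \trunc(V,Y)$ with $\derivative{V}{R} = A$, I would verify condition \ref{def:v_weakly_diff}\,\eqref{item:v_weakly_diff:partial}: given $\eta \in \mathscr{E}^0(Y)$ with $\spt D\eta$ compact and $\theta \in \mathscr{D}(U,\rel^\adim)$, apply the curvature varifold equation with $v$ ranging over an orthonormal basis and with test functions built from $\eta$ and the components of $\theta$, summing to assemble the required identity $(\delta V)((\eta \circ R)\theta) = \tint{}{} \eta(R(z)) \project{S} \bullet D\theta(z) \ud V(z,S) + \tint{}{} \langle \theta(z), D\eta(R(z)) \circ A(z)\rangle \ud \|V\| z$; the term $T(A(z)) \bullet \theta(z)$ coming from the mean curvature combines with the $\delta V$ term via the identity $\mathbf{h}(V,z) = T(A(z))$ just established. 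The integrability condition \ref{def:v_weakly_diff}\,\eqref{item:v_weakly_diff:int} is automatic because $R$ is bounded ($|\project{S}| $ is bounded on $\grass{\adim}{\vdim}$) and $A \in \Lploc{1}$. This also yields the second displayed identity with $\derivative{V}{R} = A$ and, taking $\phi$ independent of $\tau$ again, the first displayed identity $\mathbf{h}(V,z) = T(\derivative{V}{R}(z))$.

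\textbf{Converse direction.} Conversely, assume $\|\delta V\|$ is a Radon measure absolutely continuous with respect to $\|V\|$ and $R \in \trunc(V,Y)$. Set $A = \derivative{V}{R}$, which lies in $\Lploc{1}(\|V\|, \Hom(\rel^\adim,Y))$ once I check local integrability: here I would invoke \ref{thm:approx_diff} applied to $R$, which gives $\derivative{V}{R}(z) = (\|V\|,\vdim)\ap DR(z) \circ \project{\Tan^\vdim(\|V\|,z)}$ almost everywhere, together with the second order rectifiability of integral varifolds from \cite[4.8]{snulmenn.c2} — the latter ensures that $R$, being $\|V\|$ almost equal to the tangent plane map of a countable union of $\class{2}$ submanifolds, has an approximate differential that is locally $\|V\|$ integrable; alternatively the local integrability of $\derivative{V}{R}$ can be extracted from \ref{corollary:integrability} once one knows $R$ is approximately differentiable with the stated structure. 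The key remaining step is to show $\mathbf{h}(V,z) = T(\derivative{V}{R}(z))$ for $\|V\|$ almost all $z$: using \ref{thm:approx_diff} and \ref{lemma:approx_diff}\,\eqref{item:approx_diff:cover}, cover $\|V\|$ almost all of $U$ by Borel sets $B_i$ on which $R$ agrees with a $\class{1}$ map $R_i$ defined on a $\class{1}$ (indeed $\class{2}$, by the second order rectifiability) submanifold $M_i$, and on each piece apply the pointwise computation in \ref{miniremark:mean_curv}, namely $\mathbf{h}(M_i,z) = T(DR_i(z) \circ R_i(z))$, noting $R_i(z) = \project{\Tan(M_i,z)}$ so that $DR_i(z) \circ R_i(z) = DR_i(z)|\Tan^\vdim(\|V\|,z) = \derivative{V}{R}(z)$; that the generalized mean curvature of $V$ restricted to $B_i \subset M_i$ coincides $\mathscr{H}^\vdim$ almost everywhere with the classical $\mathbf{h}(M_i,\cdot)$ follows from \cite[2.10.19\,(4)]{MR41:1976} and the locality of the first variation via Allard \cite[4.10\,(1)]{MR0307015}. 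With $\mathbf{h}(V,z) = T(\derivative{V}{R}(z))$ in hand, the curvature varifold equation is obtained by running the computation of the ``only if'' direction in reverse: plug into \ref{def:v_weakly_diff}\,\eqref{item:v_weakly_diff:partial} test functions of the form $\eta(R(z))$ times components of $\theta$, use a partition-of-unity/approximation argument in $\tau$ (as in \ref{remark:hutchinson_reformulations}) to reach general $\phi \in \mathscr{D}^0(U \times Y)$, and rearrange the mean curvature term using $\mathbf{h}(V,z) \bullet \theta(z) = \langle \theta(z), T(\derivative{V}{R}(z))\rangle$ to recover the form $\langle (R(z)(v), \derivative{V}{R}(z)(v)), D\phi(z,R(z))\rangle + \phi(z,R(z)) T(\derivative{V}{R}(z)) \bullet v$ integrated to zero.

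\textbf{Assembling the general $\phi$ and the final identities.} The passage between the integration-by-parts identity with $\eta \in \mathscr{E}^0(Y)$ (which only tests functions of $z$ linearly through $\theta$) and the curvature-varifold identity with a genuinely two-variable $\phi(z,\tau)$ deserves care: I would first treat $\phi(z,\tau) = \gamma(z) \psi(\tau)$ with $\gamma \in \mathscr{D}^0(\rel^\adim)$ and $\psi$ a polynomial (or $\class{1}$ with appropriate support), writing $\psi(R(z))$ as a composition $\eta \circ R$ with $\eta \in \mathscr{E}^0(Y)$, and observing $D\phi(z,R(z)) = (\gamma(z) D\psi(R(z)), \psi(R(z)) D\gamma(z))$ pairs against $(R(z)(v), \derivative{V}{R}(z)(v))$ to reproduce exactly the two terms in \ref{def:v_weakly_diff}\,\eqref{item:v_weakly_diff:partial} with $\theta = \gamma v$. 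General $\phi \in \mathscr{D}^0(U \times Y)$ is then reached by the same approximation scheme as in the proof of \ref{thm:addition}\,\eqref{item:addition:join} (convolution in the $\tau$ variable and Riemann-sum approximation over a compact set containing $\{R(z) : z \in \spt\gamma\}$, which is compact since $R$ is bounded), using that $R$ is bounded so that only a compact range of $\tau$ matters. Both displayed identities in the theorem statement then fall out of this final form: the mean curvature identity by taking $\phi$ independent of $\tau$, and the second identity directly. \textbf{The main obstacle} I anticipate is the local $\|V\|$-integrability of $\derivative{V}{R}$ and the pointwise identification $\mathbf{h}(V,z) = T(\derivative{V}{R}(z))$ in the converse direction — both hinge on importing the second order rectifiability of integral varifolds and carefully transferring the classical Gauss map computation of \ref{miniremark:mean_curv} through the locality of the first variation; the functional-analytic approximation needed to reconcile the two notions of test function is technically fiddly but routine, closely paralleling \ref{thm:addition}\,\eqref{item:addition:join} and \ref{remark:hutchinson_reformulations}.
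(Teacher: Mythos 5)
Your proposal follows essentially the same architecture as the paper's proof: the ``only if'' direction specializes $\phi$ to product form and to $\tau$-independent test functions to extract both $\mathbf{h}(V,\cdot) = T(A(\cdot))$ and the weak-differentiability identity; the converse direction reduces the mean curvature equation to a pointwise computation on class $2$ submanifolds via \ref{thm:approx_diff}, \ref{lemma:approx_diff}\,\eqref{item:approx_diff:comp}, and \ref{miniremark:mean_curv}; and the final passage from $\mathscr{E}^0(Y)$ test functions to general $\phi \in \mathscr{D}^0(U \times Y)$ is handled by the same approximation machinery as \ref{thm:addition}\,\eqref{item:addition:join} (the paper packages this by defining $f(z)=(z,R(z))$, showing $f \in \trunc(V,\rel^\adim \times Y)$, and applying \ref{remark:eq_condition_weak_diff}; your direct convolution-plus-Riemann-sum scheme is the same device unwrapped).

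There is, however, one step that would not survive scrutiny as written. You claim that the coincidence of the generalised mean curvature of $V$ with the classical $\mathbf{h}(M_i,\cdot)$ on $B_i \subset M_i$ follows from \cite[2.10.19\,(4)]{MR41:1976} and Allard's locality of the first variation \cite[4.10\,(1)]{MR0307015}. That locality statement concerns restrictions to \emph{open} sets, whereas $B_i$ is merely Borel, and on an integral varifold with density possibly exceeding $1$ the first variation of $V$ is not simply the first variation of a multiplicity-one copy of $M_i$; the tangential contribution from the varying multiplicity has to be ruled out. The argument genuinely requires Brakke's perpendicularity of the mean curvature together with the second-order structure --- exactly what \cite[4.8]{snulmenn.c2} supplies and what the paper invokes at precisely this point. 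Since you already cite \cite[4.8]{snulmenn.c2} for the covering by $C^2$ submanifolds, the fix is just to lean on it for the mean-curvature identification as well, in place of the Allard/Federer pair, and the rest of your argument goes through.

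Two smaller remarks. First, the local $\|V\|$-integrability of $\derivative{V}{R}$ in the converse direction does not need the detour through \ref{thm:approx_diff}, second-order rectifiability, or \ref{corollary:integrability}: since $R$ is bounded, \ref{remark:eq_bounded_condition} directly upgrades condition \ref{def:v_weakly_diff}\,\eqref{item:v_weakly_diff:int} to $\derivative{V}{R} \in \Lploc{1}$, which is the route the paper takes. Second, when you write $DR_i(z) \circ R_i(z) = DR_i(z)|\Tan^\vdim(\|V\|,z) = \derivative{V}{R}(z)$, the middle equality requires knowing that $\Tan(M_i,z) = \Tan^\vdim(\|V\|,z)$ for $\|V\|$-a.e.\ $z \in B_i$, which the paper justifies via \cite[2.8.18, 2.9.11, 3.2.17]{MR41:1976} and Allard \cite[3.5\,(2)]{MR0307015}; this is implicit in your sketch but worth surfacing.
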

\begin{proof}
	Suppose $V$ is a curvature varifold and $A$ is as in
	\ref{def:curvature_varifold}. If $\gamma \in \mathscr{D}^0 ( U )$,
	$\eta \in \mathscr{E}^0 ( Y )$ and $v \in \rel^\adim$, one may take
	$\phi (z,\tau) = \gamma(z) \eta (\tau)$ in
	\ref{def:curvature_varifold}, \ref{remark:hutchinson_reformulations}
	to obtain
	\begin{multline*}
		\tint{}{} \eta ( R(z)) \left < (R(z)(v),D\gamma(z) \right > +
		\gamma (z) \left < A(z)(v), D \eta (R(z)) \right > \ud \| V \|
		z \\
		= - \tint{}{} \gamma (z) \eta (R(z)) T (A(z)) \bullet
		v \ud \| V \| z.
	\end{multline*}
	In particular, taking $\eta = 1$, one infers that $\| \delta V \|$ is
	a Radon measure absolutely continuous with respect to $\| V \|$ with
	\begin{gather*}
		T(A(z)) = \mathbf{h}(V,z) \quad \text{$\| V \|$ almost all
		$z$}.
	\end{gather*}
	It follows that
	\begin{gather*}
		( \delta V ) ( ( \eta \circ R ) \gamma \cdot v ) = - \tint{}{}
		\gamma (z) \eta (R(z)) T ( A(z) ) \bullet v \ud \| V \|z
	\end{gather*}
	for $\gamma \in \mathscr{D}^0 ( U )$, $\eta \in \mathscr{E}^0 ( Y )$
	and $v \in \rel^\adim$. Together with the first equation this implies
	that $R \in \trunc(V,Y)$ with $\derivative{V}{R} (z) = A(z)$ for $\| V
	\|$ almost all $z$ by \ref{remark:eq_condition_weak_diff}.
	
	To prove the converse, suppose $\| \delta V \|$ is a Radon measure
	absolutely continuous with respect to $\| V \|$ and $R \in \trunc(V,
	Y)$. Since $R$ is a bounded function, $\derivative{V}{R} \in \Lploc{1}
	\big ( \| V \|, \Hom ( \rel^\adim, Y ) \big )$ by
	\ref{remark:eq_bounded_condition}. In order to prove the equation for
	the generalised mean curvature vector of $V$, in view of
	\ref{thm:approx_diff} and \cite[4.8]{snulmenn.c2}, it is sufficient to
	prove that
	\begin{gather*}
		\mathbf{h} (M,z) = T \big ( ( \| V \|, \vdim ) \ap DR(z) \circ
		R(z) \big ) \quad \text{for $\| V \|$ almost all $z \in U \cap
		M$}
	\end{gather*}
	whenever $M$ is an $\vdim$ dimensional submanifold of $\rel^\adim$ of
	class $2$. The latter equation however is evident from
	\ref{lemma:approx_diff}\,\eqref{item:approx_diff:comp} and
	\ref{miniremark:mean_curv} since
	\begin{gather*}
		\Tan (M,z) = \Tan^\vdim ( \| V \|, z ) \quad \text{for $\| V
		\|$ almost all $z \in U \cap M$}
	\end{gather*}
	by \cite[2.8.18, 2.9.11, 3.2.17]{MR41:1976} and Allard
	\cite[3.5\,(2)]{MR0307015}.

	Next, define $f : Z \to \rel^\adim \times Y$ by
	\begin{gather*}
		f(z) = (z,R(z)) \quad \text{for $z \in \dmn R$}
	\end{gather*}
	and notice that $f \in \trunc ( V, \rel^\adim \times Y )$ with
	\begin{gather*}
		\derivative{V}{f} (z) (v) = ( R(z)(v), \derivative{V}{R}(z)(v)
		) \quad \text{for $v \in \rel^\adim$}
	\end{gather*}
	for $\| V \|$ almost all $z$ by
	\ref{thm:addition}\,\eqref{item:addition:join} and
	\ref{example:lipschitzian}. The proof may be concluded by establishing
	the last equation of the postscript. Using approximation
	and the fact that $R$ is a bounded function yields that is sufficient
	to consider $\phi \in \mathscr{D}^0 ( U \times Y )$. Choose $\gamma
	\in \mathscr{D}^0 (U)$ with
	\begin{gather*}
		\Clos ( U \cap \{ z \with \text{$\phi (z,\tau) \neq 0$ for
		some $\tau \in Y$} \} ) \subset \Int \{ z \with \gamma (z) = 1
		\}.
	\end{gather*}
	Now, applying \ref{remark:eq_condition_weak_diff} with $\eta$ replaced
	by $\phi$ yields equation in question.
\end{proof}
\begin{remark}
	The first paragraph of the proof is essentially contained in
	Hutchinson in \cite[5.2.2, 5.2.3]{MR825628} and is included here for
	completeness.
\end{remark}
\begin{remark}
	If $\vdim = \adim$ then $\density^\vdim ( \| V \|, \cdot )$ is a
	locally constant, integer valued function whose domain is $U$; in
	fact, $R$ is constant, hence $V$ is stationary by
	\ref{thm:curvature_varifolds} and the asserted structure follows from
	Allard \cite[4.6\,(3)]{MR0307015}.
\end{remark}
\begin{corollary} \label{corollary:curv_var_diff}
	Suppose $\vdim, \adim \in \nat$, $1 < \vdim < \adim$, $\beta =
	\vdim/(\vdim-1)$, $U$ is an open subset of $\rel^\adim$, $V \in
	\IVar_\vdim (U)$ is a curvature varifold, $Z = U \cap \{ z \with
	\Tan^\vdim ( \| V \|, z ) \in \grass{\adim}{\vdim} \}$, and $R : Z \to
	Y$ satisfies
	\begin{gather*}
		R(z) = \project{\Tan^\vdim ( \| V \|, z )} \quad
		\text{whenever $z \in Z$}.
	\end{gather*}

	Then $\| V \|$ almost all $a$ satisfy
	\begin{gather*}
		\lim_{r \to 0+} r^{-\vdim} \tint{\cball ar}{} ( |
		R(z)-R(a)- \left <z-a, \derivative VR (a) \right > | /
		| z-a| )^\beta \ud \| V \| z = 0.
	\end{gather*}
\end{corollary}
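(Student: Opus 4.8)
The plan is to recognize this corollary as a direct instance of the differentiability-in-Lebesgue-spaces theorem applied to the tangent plane map. First I would invoke \ref{thm:curvature_varifolds}: since $V$ is a curvature varifold, $\| \delta V \|$ is a Radon measure absolutely continuous with respect to $\| V \|$ and $R \in \trunc(V, Y)$, with $\mathbf{h}(V,z) = T(\derivative{V}{R}(z))$ for $\| V \|$ almost all $z$. Because $R$ is a bounded function (its values are orthogonal projections, so $|R(z)| \leq \vdim^{1/2}$), \ref{remark:eq_bounded_condition} gives $\derivative{V}{R} \in \Lploc{1}(\| V \|, \Hom(\rel^\adim, Y))$.

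Next I would check that $V$ satisfies the general hypothesis and the mean curvature hypothesis of \ref{miniremark:situation_general} with $p = \vdim$. Integral varifolds satisfy $\density^\vdim(\| V \|, z) \geq 1$ for $\| V \|$ almost all $z$ by \cite[2.10.19\,(3)]{MR41:1976}, and the relation $\mathbf{h}(V,z) = T(\derivative{V}{R}(z))$ together with $\derivative{V}{R} \in \Lploc{1}(\| V \|)$ and the boundedness of $T$ shows $\mathbf{h}(V;\cdot) \in \Lploc{1}(\| V \|, \rel^\adim)$. However, \ref{thm:diff_lebesgue_spaces} in the case $q = 1$, $\vdim > 1$ requires only $p = 1$, not $p = \vdim$; reading the hypotheses of \ref{miniremark:situation_general}, taking $p = 1$ simply means $\psi = \| \delta V \|$ and imposes no integrability beyond $\| \delta V \|$ being a Radon measure. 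So I would apply \ref{thm:diff_lebesgue_spaces}\,\eqref{item:diff_lebesgue_spaces:m>1=p} with $l = \dim Y$, $\rel^l$ identified with $Y$, $f = R$, $p = q = 1$, and $\beta = \vdim/(\vdim-1)$. The hypothesis $f \in \Lploc{1}(\| \delta V \|, \rel^l)$ is satisfied because $R$ is bounded and $\| \delta V \|$ is a Radon measure, hence $\| \delta V \|$-locally integrable on any compact set.

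The conclusion of \ref{thm:diff_lebesgue_spaces}\,\eqref{item:diff_lebesgue_spaces:m>1=p} is then precisely
\begin{gather*}
	\lim_{r \to 0+} r^{-\vdim} \tint{\cball ar}{} ( | R(z)-R(a)- \left < z-a, \derivative{V}{R}(a) \right > | / |z-a| )^\beta \ud \| V \| z = 0
\end{gather*}
for $\| V \|$ almost all $a$, which is the asserted statement. There is essentially no obstacle here beyond bookkeeping: the only points requiring a line of justification are (i) that $R$ is bounded, so that $R \in \trunc(V,Y)$ implies $\derivative{V}{R} \in \Lploc{1}$ and $R \in \Lploc{1}(\| V \| + \| \delta V \|, Y)$; and (ii) that the general hypothesis and density hypothesis hold for integral varifolds, using \cite[2.10.19\,(3)]{MR41:1976}. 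I would state the proof in two or three sentences citing \ref{thm:curvature_varifolds}, \ref{remark:eq_bounded_condition}, and \ref{thm:diff_lebesgue_spaces}\,\eqref{item:diff_lebesgue_spaces:m>1=p}.
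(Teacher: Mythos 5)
Your proposal is correct and takes essentially the same route as the paper, which simply cites \ref{thm:curvature_varifolds} and \ref{thm:diff_lebesgue_spaces}\,\eqref{item:diff_lebesgue_spaces:m>1=p}. Your explicit verification of the hypotheses --- in particular that one should take $p=1$ in \ref{miniremark:situation_general} since a curvature varifold need not have $\mathbf{h}(V;\cdot) \in \Lploc{\vdim}$, and that boundedness of $R$ furnishes the integrability needed for $\derivative{V}{R} \in \Lploc{1}$ and $R \in \Lploc{1}(\| \delta V \|)$ --- is exactly the bookkeeping the paper leaves implicit.
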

\begin{proof}
	This is an immediate consequence of \ref{thm:curvature_varifolds} and
	\ref{thm:diff_lebesgue_spaces}\,\eqref{item:diff_lebesgue_spaces:m>1=p}.
\end{proof}
\begin{remark} \label{remark:curv_flatness_m=1}
	Using
	\ref{thm:diff_lebesgue_spaces}\,\eqref{item:diff_lebesgue_spaces:m=p=1},
	one may formulate a corresponding result for the case $\vdim = 1$.
\end{remark}
\begin{remark} \label{remark:curv_flatness}
	Notice that one may deduce decay results for height quantities
	from this result by use of \cite[4.11\,(1)]{snulmenn.poincare}.
\end{remark}
\begin{remark} \label{remark:curv_to_do}
	If $1 < p < \vdim$ and $\derivative{V}{R} \in \Lploc{p} \big ( \| V \|
	, \Hom ( \rel^\adim, Y ) \big )$, see \ref{miniremark:trace}, one may
	investigate whether the conclusion still holds with $\beta$ replaced
	by $\vdim p/(\vdim-p)$.
\end{remark}
\optional{
\section{Topologies on $\mathscr{D}(U,Y)$}
\begin{remark} \label{remark:duy}
	The locally convex topology on $\mathscr{D} (U,Y)$ is Hausdorff and
	induces the given topology on each closed subspace
	$\mathscr{D}_K(U,Y)$. Moreover, if $K(0) = \varnothing$ and $K(i)$ is
	a sequence of compact subsets of $U$ with $K(i) \subset \Int K(i+1)$
	for $i \in \nat$ and $U = \bigcup_{i=1}^\infty K(i)$, then
	$\mathscr{D} (U,Y)$ is the strict inductive limit of sequence of
	locally convex spaces $\mathscr{D}_{K(i)} ( U,Y )$ and the sets
	\begin{gather*}
		V_n = \mathscr{D} ( U,Y ) \cap \big \{ \xi \with
		\text{$\boldsymbol{\nu}_{K(i) \without \Int K(i-1)}^{n(i)} (
		\xi ) < n(i)^{-1}$ for $i \in \nat$} \big \}
	\end{gather*}
	corresponding to $n \in \mathscr{N}$ form a fundamental system of open
	neighbourhoods of $0$, compare \cite[p.~66, Th{\'e}or{\`e}me
	\printRoman{2}]{MR0209834}.
\end{remark}
\begin{lemma} \label{lemma:topo_comparison}
	Suppose $U$ is a nonempty open subset of $\rel^\vdim$, $Y$ is a Banach
	space with $\dim Y > 0$, $S$ is the final topology with respect to all
	inclusions of $\mathscr{D}_K (U,Y)$ into $\mathscr{D}(U,Y)$
	corresponding to compact subsets $K$ of $U$ and $T$ is the locally
	convex final topology with respect to the same inclusions.

	Then $S$ is strictly finer than $T$.
\end{lemma}
\begin{proof}
	Clearly, $S$ is finer than $T$.
 
	Choose a sequence of compact subsets $K(i)$ of $U$ such that $\Int
	K(1) \neq \varnothing$, $K(i) \subset \Int K(i+1)$ for $i \in \nat$
	and $U = \bigcup_{i=1}^\infty K(i)$. Let $K(0)=\varnothing$. Recalling
	\cite[\printRoman{2}, p.~27, prop.~5\,(ii)]{MR910295}, define $T$
	continuous seminorms $p_A^k$ on $\mathscr{D} (U,Y)$ by
	\begin{gather*}
		p_A^k ( \phi ) = \sup \{ \| D^j \phi (x) \| \with \text{$x
		\in A$ and $j = 0, \ldots, k$} \} \quad \text{for $\phi \in
		\mathscr{D} (U,Y)$}
	\end{gather*}
	whenever $A$ is a nonempty subset of $U$ and $k$ is a nonnegative
	integer.

	Choosing a disjoint sequence of nonempty open subsets $G(j)$ of
	$K(1)$, define
	\begin{gather*}
		D(j) = \mathscr{D}(U,Y) \cap \{ \phi \with \text{$p_{U
		\without G(j)}^0 ( \phi ) < p_U^0 ( \phi )$} \} \quad
		\text{for $j \in \nat$}
	\end{gather*}
	and notice that $0 \notin D(j)$ and that the sequence $D(j)$ is
	disjoint. Whenever $0 \neq \phi \in \mathscr{D}(U,Y)$ define $T$ open
	neighbourhoods of $\phi$ by
	\begin{gather*}
		X(\phi) = \mathscr{D}(U,Y) \cap \{ \xi \with \text{$p_U^0 (
		\xi-\phi ) < 2^{-1} p_U^0 ( \phi )$} \}.
	\end{gather*}
	Notice the following implication: \emph{If $0 \neq \phi \in
	\mathscr{D} ( U,Y)$, $\xi \in X(\phi)$, $j \in \nat$, and
	\begin{gather*}
		p_{U \without G(j)}^0 ( \xi ) < 3^{-1} p_U^0 ( \xi ),
	\end{gather*}
	then $\phi \in D(j)$}; in fact,
	\begin{gather*}
		p_U^0 ( \xi ) \leq p_U^0 ( \phi ) + p_U^0 ( \xi - \phi )
		< (3/2) p_U^0 ( \phi ), \\
		p_{U \without G(j)}^0 ( \phi ) \leq p_{U \without G(j)}^0
		( \xi ) + p_U^0 ( \xi- \phi ) < 3^{-1} p^0_U ( \xi ) +
		2^{-1} p_U^0 ( \phi ) < p_U^0 ( \phi ).
	\end{gather*}
	Next, define $T$ open neighbourhoods of $\phi$ by
	\begin{align*}
		& Y ( \phi ) = X ( \phi ) \cap \big \{ \xi \with \text{$p_{U
		\without K(\alpha(\phi))}^j ( \xi-\phi) < j^{-1}$ and
		$p_U^{\alpha(\phi)} ( \xi-\phi ) < \alpha ( \phi)^{-1}$}
		\big \}, \\
		& \qquad \text{where $\alpha ( \phi ) = \inf ( \nat \cap \{ i
		\with \spt \phi \subset K(i) \} )$},
	\end{align*}
	if $j \in \nat$ and $\phi \in D(j)$ and $Y(\phi) = X ( \phi )$ if $0
	\neq \phi \in \mathscr{D}(U,Y) \without \bigcup_{j=1}^\infty D(j)$.
	Let
	\begin{gather*}
		{\textstyle H = \bigcup_{i=1}^\infty \big ( \mathscr{D}_{K(i)}
		(U,Y) \cap \{ \phi \with p_U^i ( \phi ) < i^{-1} \} \big )},
		\\
		W' = {\textstyle\bigcup} \{ Y(\phi) \with 0 \neq \phi \in H
		\}, \quad W = \{ 0 \} \cup W'.
	\end{gather*}
	Then $H \subset W$ and $W$ is $S$ open; in fact, whenever $K$ is a
	compact subset of $U$ the set $W' \cap \mathscr{D}_K (U,Y)$ is open in
	$\mathscr{D}_K(U,Y)$ and $0$ belongs to the interior of $H \cap
	\mathscr{D}_K (U,Y)$ relative to $\mathscr{D}_K (U,Y)$.

	In view of \ref{remark:duy}, the proof may be concluded by showing
	that $V_n \without W \neq \varnothing$ for $n \in \mathscr{N}$. For
	this purpose suppose $n \in \mathscr{N}$, choose $i,j \in \nat$
	and $\zeta \in \mathscr{D}(U,Y)$ such that
	\begin{gather*}
		i > 4n(1), \quad j > n(i), \quad \spt \zeta \subset G(j),
		\quad p_U^{n(1)} ( \zeta ) = (2n(1))^{-1}.
	\end{gather*}
	Observe, that there exists $\psi \in \mathscr{D}(U,Y)$ such that
	\begin{gather*}
		\spt \psi \subset K(i) \without K(i-1), \quad p_U^0 ( \psi )
		< 3^{-1} p_U^0 ( \zeta ), \quad j^{-1} < p_U^{n(i)} (
		\psi ) < n(i)^{-1}.
	\end{gather*}
	Notice that $\spt \zeta \cap \spt \psi = \varnothing$ and define $\xi
	= \zeta + \psi$. Clearly, $\xi \in V_n$.

	Suppose now $\xi$ would belong to $W$. Then there would exist $0 \neq
	\phi \in H$ such that $\xi \in Y(\phi)$. It would follow that $\phi
	\in D(j)$ since $\xi \in X(\phi)$ and
	\begin{gather*}
		p_{U \without G(j)}^0 ( \xi ) = p_U^0 ( \psi ) < 3^{-1}
		p_U^0 ( \zeta  ) \leq 3^{-1} p_U^0 ( \xi ).
	\end{gather*}
	One would infer that $\alpha ( \phi ) \geq i$ since
	\begin{gather*}
		p_{U \without K(\alpha(\phi))}^j ( \xi ) < j^{-1} < p_{U
		\without K(i-1)}^{n(i)} ( \psi ) \leq p_{U \without
		K(i-1)}^j ( \xi ).
	\end{gather*}
	Recalling $\phi \in H$ and $\xi \in Y(\phi)$, this would imply
	\begin{gather*}
		4n(1) < \alpha(\phi), \quad p_U^{\alpha(\phi)} (\phi) <
		\alpha(\phi)^{-1} , \quad p_U^{\alpha(\phi)} ( \xi-\phi) <
		\alpha(\phi)^{-1}, \\
		(2n(1))^{-1} = p_{U}^{n(1)} ( \zeta ) \leq
		p_U^{\alpha(\phi)} ( \xi ) \leq p_U^{\alpha(\phi)} (\phi)
		+ p_U^{\alpha(\phi)} ( \xi-\phi) < (2n(1))^{-1},
	\end{gather*}
	a contradiction.
\end{proof}
}

{\small \noindent Max Planck Institut for Gravitational Physics (Albert
Einstein Institute) \newline Am M{\"u}hlen\-berg 1, 14476 Potsdam, Germany
\newline \texttt{Ulrich.Menne@aei.mpg.de} \medskip \newline University of
Potsdam, Institute for Mathematics, \newline Am Neuen Palais 10, 14469
Potsdam, Germany \newline \texttt{Ulrich.Menne@math.uni-potsdam.de} }

\newpage
\part{PDEs on varifolds} \label{part:pde}
This part presents the estimates for locally Lipschitzian solutions of second
order linear elliptic partial differential equations to be employed in Part
\ref{part:geom}. Boundary conditions are phrased in terms of compactness
hypotheses on certain superlevel sets.

Whereas this framework is sufficient for the particular applications of Part
\ref{part:geom} it is the purpose of the theory developed in Part
\ref{part:weakly} to provide a more natural (and larger) classes of functions
in which to phrase the results. Therefore the results of this part are
supposed to be accordingly extended before completation of this project.
\section{Sobolev spaces}
In this section Sobolev spaces are introduced by means of suitable
completation procedures starting from Lipschitzian functions, see
\ref{def:strict_local_sobolev_space}, \ref{def:local_sobolev_space},
\ref{def:strict_sobolev_space} and \ref{def:sobolev_space}. All these spaces
are complete linear spaces consisting of generalised weakly differentiable
functions and, by the approximation result \ref{corollary:approximation_lip},
one could have alternately started from smooth rather than Lipschitzian
function in the usual cases, see \ref{remark:strict_local_sobolev_space},
\ref{remark:local_sobolev_space}, \ref{remark:strict_sobolev_space} and
\ref{remark:sobolev_space}. Finally, the notion is compared to a notion of
Sobolev space for general measures introduced by Bouchitt\'e, Buttazzo and
Seppecher in \cite{MR1424348}, see \ref{remark:other_sobolev_space}.
\begin{lemma} \label{lemma:c1_extension}
	Suppose $l, \adim \in \nat$, $A \subset U \subset \rel^\adim$, $U$ is
	open, and $A$ is closed relative to $U$, $f : U \to \rel^l$ is of
	class $1$, and $\varepsilon > 0$.

	Then there exist an open subset $Z$ of $U$ and a function $g : U \to
	\rel^l$ of class $1$ such that $A \subset Z$, $f|Z = g|Z$, and
	\begin{gather*}
		\Lip g \leq \varepsilon + \sup \{ \Lip ( f|A ), \sup \| Df \|
		\lIm A \rIm \}.
	\end{gather*}
	Moreover, if $l = 1$ and $f \geq 0$ then one may require $g \geq 0$.
\end{lemma}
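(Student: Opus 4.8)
The plan is to reduce to a local problem near $A$, extend by a partition of unity, and control the Lipschitz constant by working in sufficiently small balls where $Df$ is nearly constant. First I would observe that, since $f$ is of class $1$, the function $\| Df \|$ is continuous on $U$; hence for each $a \in A$ there is an open ball $B_a = \oball{a}{r_a}$ with $\cball{a}{2r_a} \subset U$ on which $\| Df(x) \| < \| Df (a) \| + \varepsilon/2 \leq \sup \| Df \| \lIm A \rIm + \varepsilon/2$ for all $x \in B_a$. Because $f$ is differentiable with continuous derivative, on such a ball the mean value inequality gives $\Lip ( f | B_a ) \leq \sup \| Df \| \lIm A \rIm + \varepsilon/2$. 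The collection $\{ B_a \with a \in A \}$ together with the open set $U \without A$ forms an open cover of $U$; set $Z_0 = \bigcup_{a \in A} B_a$, an open neighbourhood of $A$.

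The main point is then to glue. One cannot simply take $g = f | Z_0$ extended arbitrarily, since a crude extension would inflate the Lipschitz constant; instead I would use a $C^1$ partition of unity subordinate to the cover $\{ U \without A \} \cup \{ B_a \with a \in A \}$ to interpolate between $f$ and a constant (or, when $l = 1$ and $f \geq 0$ is required, the constant $0$, which preserves nonnegativity). More precisely, pick $\psi \in \mathscr{E}^0 ( U )$ with $0 \leq \psi \leq 1$, $\psi = 1$ on a neighbourhood $Z \subset Z_0$ of $A$, and $\spt \psi$ a closed subset of $Z_0$; then define $g = \psi \cdot f$ if $l=1$ and $f \geq 0$, and in the general case $g = \psi \cdot f + (1-\psi) c$ for a fixed constant $c \in \rel^l$ — but this still has the defect that $\| Dg \|$ involves $\| D\psi \|$ times $| f - c |$, which need not be small. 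The cleaner route, which I would actually carry out, is the classical Whitney-type construction: cover $Z_0 \without A$ by a locally finite family of balls whose radii are comparable to their distance from $A$, take an associated $C^1$ partition of unity $\{ \phi_i \}$ with $\| D \phi_i \| \lesssim \dist ( \cdot, A )^{-1}$, and on each such ball replace $f$ by its first-order Taylor polynomial $P_i$ at a chosen point; then set $g = f$ on a neighbourhood $Z$ of $A$ and $g = \sum_i \phi_i P_i$ away from $A$, checking that the pieces match to first order along $\partial Z$. The Lipschitz bound then follows because near any point the relevant $P_i$ all have derivatives within $\varepsilon$ of $Df$ at a nearby point of $A$, and the error terms $\| D\phi_i \| \cdot | f - P_i |$ are $O ( \dist ( \cdot, A ) )$ hence negligible upon shrinking $Z$.

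The hard part will be the bookkeeping that makes $g$ genuinely $C^1$ across the transition region between ``$g = f$ near $A$'' and ``$g = \sum_i \phi_i P_i$ away from $A$'': one must arrange that the Taylor polynomials are taken at points of $A$ (or of $Z$) close enough that the first-order data agree in the overlap, so that the two formulas and all their first derivatives coincide on an open set. A convenient way to avoid a delicate matching is to include $A$ itself (with the single ``polynomial'' $f$) as one chart of the partition of unity covering all of $Z_0$, i.e. choose $\phi_0 \in \mathscr{E}^0(U)$ with $\phi_0 = 1$ on a neighbourhood $Z$ of $A$, $\spt \phi_0 \subset Z_0$, and Whitney balls $B_i \subset Z_0 \without A$ refining the rest, with $\{\phi_0\} \cup \{\phi_i\}$ a partition of unity on $Z_0$; then $g = \phi_0 f + \sum_i \phi_i P_i$ equals $f$ on $Z$, is $C^1$ on $Z_0$, and extends by $g = f$ (class $1$) outside $\spt$ of the deformation. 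For the last sentence, note that if $l=1$, $f \geq 0$, and each $P_i$ is replaced by $\sup \{ P_i, 0 \}$ — which is still Lipschitz with the same constant and agrees with $f$ where $f$ is used — one gets $g \geq 0$; smoothing this corner by a standard $C^1$ mollification of $t \mapsto \sup\{t,0\}$ (not increasing the Lipschitz constant by more than $\varepsilon$) restores the class $1$ requirement while keeping $g \geq 0$. Finally I would set $Z$ as above and verify $\Lip g \leq \varepsilon + \sup \{ \Lip ( f | A ), \sup \| Df \| \lIm A \rIm \}$ by the local estimates assembled in the previous paragraph, noting $\Lip ( f | A ) \geq \sup \| Df \| \lIm A \rIm$ is not assumed, so both terms must be retained in the supremum.
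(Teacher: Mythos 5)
Your proposal misses the essential difficulty, which is that $\Lip g$ in the statement is the \emph{global} Lipschitz constant of $g : U \to \rel^l$, not a bound on $\|Dg\|$. These are not the same when $U$ is non-convex: two points of $U$ can be close in $\rel^\adim$ yet joined in $U$ only by a long path, so a uniform bound on $\|Dg\|$ does not bound $|g(x)-g(y)|/|x-y|$. Your construction controls $\|Dg\|$ on a neighbourhood of $A$ and then sets $g=f$ away from $A$, but $f$ itself may have an arbitrarily large (even infinite) global Lipschitz constant on $U$, so $\Lip g$ would be uncontrolled. Moreover, you acknowledge that $\Lip(f|A)$ must appear in the bound but never actually invoke it anywhere in the argument — yet it is exactly this quantity that controls $|f(a)-f(b)|$ for $a,b\in A$ that are close in $\rel^\adim$ but far apart in $U$, which no local derivative estimate can recover.

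The paper's proof handles precisely these two issues. It first constructs a carefully shrinking neighbourhood $W=\bigcup_i W_i$ of $A$ (with $W_i$ a thin tube around $A\cap K_i$ of radius $\eta\delta_i$, where $\eta=(\varepsilon+4\lambda)^{-1}\varepsilon/2$) and proves $\Lip(f|W)\leq\varepsilon/2+\lambda$ as a \emph{genuine global} Lipschitz bound on $W$: for $z\in W_i$, $w\in W_j$ at distance $\geq\delta_i$ it detours via nearby points $a\in A\cap K_i$, $b\in A\cap K_j$ and uses $\Lip(f|A)$ for the middle leg $|f(a)-f(b)|$. It then extends $f|W$ to a function $h$ on all of $\rel^\adim$ with $\Lip h=\Lip(f|W)$ by Kirszbraun's theorem \cite[2.10.43]{MR41:1976}, so the final construction never needs to refer to $f$ away from $A$ at all; the function $g$ is built as a locally finite partition-of-unity combination $\sum_{i\ge0}\phi_i g_i$ of $h$ (for $i=0$) and mollifications $g_i$ of $h$ (for $i\ge1$), each satisfying $\Lip g_i\le\Lip h$ and $(\Lip\phi_i)|(g_i-h)|\le 2^{-i-1}\varepsilon$ pointwise, which gives the global bound $\Lip g\le\varepsilon/2+\Lip h\le\varepsilon+\lambda$. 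To fix your approach you would at minimum need to (i) prove a global Lipschitz bound for $f$ on a neighbourhood of $A$, using $\Lip(f|A)$ for the detour through $A$, and (ii) replace ``glue $f$ back on away from $A$'' with an extension of $f|W$ by Kirszbraun and mollification — at which point you have essentially reproduced the paper's argument.
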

\begin{proof}
	Assume $\lambda = \sup \{ \Lip (f|A), \sup \| D f \| \lIm A \rIm \} <
	\infty$.

	Define $\Phi$ to be the family of all open subsets $V$ of $U$ such
	that
	\begin{gather*}
		\Lip ( f | V ) \leq \varepsilon/2 + \lambda
	\end{gather*}
	and note $A \subset \bigcup \Phi$. Choose compact sets $K_i$
	satisfying
	\begin{gather*}
		K_i \subset \Int K_{i+1} \quad \text{for $i \in \nat$} \qquad
		\text{and} \qquad {\textstyle \bigcup \{ K_i \with i \in \nat
		\} = \bigcup \Phi }
	\end{gather*}
	and inductively a sequence $\delta_i$ with the following properties
	\begin{gather*}
		0 < \delta_{i+1} \leq \delta_i, \quad
		\classification{\rel^\adim}{z}{\dist (z,A \cap K_i) \leq 2
		\delta_i } \subset \Int K_{i+1}, \\
		\text{if $\{z,w\} \subset K_{i+1}$ and $|z-w| \leq \delta_i$
		then $\{z,w\} \subset V$ for some $V \in \Phi$}
	\end{gather*}
	whenever $i \in \nat$. In particular,
	\begin{gather*}
		|f(z)-f(w)| \leq ( \varepsilon/2 + \lambda ) |z-w| \quad
		\text{whenever $\{z,w\} \subset K_{i+1}$, $|z-w| \leq
		\delta_i$}.
	\end{gather*}
	Define $\eta = ( \varepsilon + 4 \lambda )^{-1}
	\varepsilon/2$ and note $\eta \leq 1$. Let
	\begin{gather*}
		W_i = \classification{U}{z}{\dist(z,A \cap K_i) < \eta
		\delta_i} \quad \text{whenever $i \in \nat$}
	\end{gather*}
	and $W = \bigcup \{ W_i \with i \in \nat \}$. Note $A \cap K_i \subset
	W_i$ for $i \in \nat$, hence $A \subset W$.

	Next, it will be shown
	\begin{gather*}
		\Lip ( f | W ) \leq \varepsilon / 2 + \lambda.
	\end{gather*}
	For this purpose suppose $i, j \in \nat$, $i \leq j$, $z \in W_i$, and
	$w \in W_j$. If $|z-w| < \delta_i$ then
	\begin{gather*}
		\{z,w\} \subset \Int K_{i+1}, \quad |f(z)-f(w)| \leq (
		\varepsilon/2 + \lambda ) |z-w|.
	\end{gather*}
	If $|z-w| \geq \delta_i$ then, choosing
	\begin{gather*}
		\text{$a \in A \cap K_i$ with $|z-a| < \eta \delta_i$} \quad
		\text{and} \quad \text{$b \in A \cap K_j$ with $|w-b| <
		\eta \delta_j$},
	\end{gather*}
	one obtains
	\begin{gather*}
		\{z,a\} \subset K_{i+1}, \quad \{ w,b \} \subset K_{j+1},
		\quad |a-b| \leq |z-w| + 2 \eta \delta_i, \\
		\begin{aligned}
			& |f(z)-f(w)| \leq |f(z) - f(a)| + |f(a) - f(b)| +
			|f(b) - f(w)| \\
			& \qquad \leq 2 ( \varepsilon/2 + \lambda ) \eta
			\delta_i + \lambda |a-b| \leq \eta \delta_i
			\varepsilon + 4 \eta \delta_i \lambda + |z-w| \,
			\lambda \\
			& \qquad \leq ( \eta ( \varepsilon + 4 \lambda ) +
			\lambda ) |z-w| \leq ( \varepsilon/2 + \lambda )
			|z-w|.
		\end{aligned}
	\end{gather*}

	Kirszbraun's extension theorem, see \cite[2.10.43]{MR41:1976}, yields
	a function $h : \rel^\adim \to \rel^l$ with $h | W = f | W$ and $\Lip
	h = \Lip ( f | W)$. If $l=1$ and $f \geq 0$ then possibly replacing
	$h$ by $h^+$ one may require $h \geq 0$. Using
	\cite[3.1.13]{MR41:1976} with $\Phi$ replaced by $\{ W, U \without A
	\}$, one constructs nonnegative functions $\phi_0 \in \mathscr{E}^0 (
	U )$ and $\phi_i \in \mathscr{D}^0 (U)$ for $i \in \nat$ such that
	\begin{gather*}
		\card \{ i \with K \cap \spt \phi_i \neq \varnothing \} < \infty
		\quad \text{whenever $K$ is compact subset of $U$},
		\\
		A \subset \Int ( \classification{U}{z}{\phi_0 (z) = 1 }),
		\qquad \spt \phi_0 \subset W, \qquad \spt \phi_i \subset
		U \without A \quad \text{for $i \in \nat$}, \\
		\tsum{i=0}{\infty} \phi_i (z) = 1 \quad \text{for $z \in U$}.
	\end{gather*}
	Now, define $g_0 = h$ and use convolution to obtain for each $i \in
	\nat$ a function $g_i : \rel^\adim \to \rel^l$ of class $1$ such that
	\begin{gather*}
		\text{if $l=1$ and $f \geq 0$ then $g_i \geq 0$}, \\
		\Lip g_i \leq \Lip h, \qquad ( \Lip \phi_i ) | ( g_i - h )(z)
		| \leq 2^{-i-1} \varepsilon \quad \text{for $z \in
		\rel^\adim$}.
	\end{gather*}
	Define $g = \sum_{i=0}^\infty \phi_i g_i$ and observe that $g$ is of
	class $1$. Also for $z,w \in U$
	\begin{gather*}
		g(z)-g(w) = \tsum{i=0}{\infty} \big ( \phi_i (z) (
		g_i(z)-g_i(w)) + ( \phi_i (z) - \phi_i (w) ) ( g_i (w) - h(w)
		) \big ), \\
		\Lip g \leq \varepsilon/2 + \Lip h = \varepsilon/2 + \Lip
		(f|W) \leq \varepsilon + \lambda.
	\end{gather*}
	Therefore one may take $Z = \Int ( \classification{U}{z}{\phi_0 (z) =
	1 })$.
\end{proof}
\begin{lemma} \label{lemma:submanifold_extension}
	Suppose $\vdim, \adim \in \nat$, $\vdim \leq \adim$, $M$ is an
	$\vdim$ dimensional submanifold of class $1$ of $\rel^\adim$, $Y$ is a
	normed space, and $f : M \to Y$ is of class $1$ relative to $M$.

	Then the following two statements hold:
	\begin{enumerate}
		\item \label{item:submanifold_extension:uni_diff} Denote by
		$\varrho (C,\delta)$ the supremum of all numbers
		\begin{gather*}
			| f(z)-f(a)-\left < \project{\Tan ( M,a)}(z-a), D f
			(a) \right > |/|z-a|
		\end{gather*}
		corresponding to $\{ z,a \} \subset C$ with $0 < |z-a| \leq
		\delta$ whenever $C$ is a compact subset of $M$ and $\delta >
		0$. Then $\varrho ( C, \delta ) \to 0$ as $\delta \to 0+$
		whenever $C$ is a compact subset of $M$.
		\item \label{item:submanifold_extension:extension} There
		exist an open subset $U$ of $\rel^\adim$ with $M \subset U$
		and a  function $g : U \to Y$ of class $1$ with $g|M = f$ and
		\begin{gather*}
			Dg(a) = Df(a) \circ \project{\Tan(M,a)} \quad
			\text{for $a \in M$}.
		\end{gather*}
	\end{enumerate}
\end{lemma}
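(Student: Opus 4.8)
The plan is to reduce \eqref{item:submanifold_extension:uni_diff} to the uniform differentiability of a $Y$ valued map of class $1$ on an open subset of $\rel^\vdim$ by passing to $C^1$ charts of $M$, and then to obtain \eqref{item:submanifold_extension:extension} by feeding the data $a \mapsto \big( f(a), Df(a) \circ \project{\Tan(M,a)} \big)$ into a Whitney type extension construction, with \eqref{item:submanifold_extension:uni_diff} supplying precisely the compatibility estimate that is needed and the continuity of the tangent plane map of $M$ supplying the rest.

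To prove \eqref{item:submanifold_extension:uni_diff}, fix a compact $C \subset M$. Since $M$ is a submanifold of class $1$, every point of $C$ has a relatively open neighbourhood of the form $\phi[\Omega]$ with $\Omega$ an open subset of $\rel^\vdim$ and $\phi : \Omega \to \rel^\adim$ a univalent map of class $1$ whose differential has rank $\vdim$ throughout, satisfying $\Tan(M,\phi(v)) = \im D\phi(v)$ for $v \in \Omega$; by compactness $C$ is covered by finitely many sets $\phi_i[K_i]$ with $K_i \subset \Omega_i$ compact, and a Lebesgue number argument furnishes $\delta_0 > 0$ such that any $\{z,a\} \subset C$ with $|z-a| \leq \delta_0$ lies in a single $\phi_i[\Omega_i]$. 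It therefore suffices, for a fixed chart $\phi = \phi_i$, to estimate the relevant quotient when $z = \phi(u)$ and $a = \phi(v)$ with $u,v$ ranging over a compact subset of $\Omega$. Here $h = f \circ \phi$ is of class $1$ on $\Omega$, so the mean value inequality for normed space valued maps, applied to $x \mapsto h(x) - Dh(v)(x)$, gives $|h(u) - h(v) - Dh(v)(u-v)| \leq |u-v| \sup \{ \| Dh(w) - Dh(v) \| \with \text{$w$ on the segment from $v$ to $u$} \}$, whence uniform differentiability of $h$ on compact subsets of $\Omega$ by uniform continuity of $Dh$; likewise $\phi$ is uniformly differentiable on compact subsets of $\Omega$, and since $D\phi(v)$ is injective with inverse bounded on compacta, $|u-v|$ and $|z-a|$ are comparable there. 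Combining these estimates with the chain rule $Dh(v) = Df(\phi(v)) \circ D\phi(v)$, with $\project{\Tan(M,a)}(D\phi(v)(u-v)) = D\phi(v)(u-v)$ (valid because $D\phi(v)(u-v) \in \Tan(M,a)$), and with the boundedness of $\| Df(a) \|$ for $a \in \phi_i[K_i]$, one concludes that the quotient in question tends to $0$ uniformly over the admissible pairs, that is $\varrho(C,\delta) \to 0$ as $\delta \to 0+$.

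For \eqref{item:submanifold_extension:extension} set $L_a = Df(a) \circ \project{\Tan(M,a)} \in \Hom(\rel^\adim, Y)$ for $a \in M$. Since $M$ is of class $1$ the tangent plane projection depends continuously on the base point, and since $f$ is of class $1$ relative to $M$ the differential does so too — this is visible in a chart $\phi$, where the chart data $v \mapsto \big( D(f\circ\phi)(v), D\phi(v) \big)$ is continuous — so $a \mapsto L_a$ is a continuous $\Hom(\rel^\adim, Y)$ valued map on $M$; moreover $\eqref{item:submanifold_extension:uni_diff}$ says that $f(z) - f(a) - L_a(z-a)$ divided by $|z-a|$ tends to $0$ uniformly on compact subsets of $M$. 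These are exactly the hypotheses under which a Whitney type construction produces the desired extension: one chooses a locally finite covering of a suitable open neighbourhood $U$ of $M$ by open balls together with a subordinate partition of unity $\{\theta_i\}$ in the manner of \cite[3.1.13]{MR41:1976}, picks for each $i$ a point $a_i \in M$ nearly realising $\dist(\spt \theta_i, M)$, and defines $g = \sum_i \theta_i P_{a_i}$ with $P_a(x) = f(a) + L_a(x-a)$; one then verifies that $g : U \to Y$ is of class $1$ with $g|M = f$ and $Dg(a) = L_a$ for $a \in M$.

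The point requiring care is that the conclusion demands $Dg(a) = L_a$ as a linear map on all of $\rel^\adim$, hence that $Dg(a)$ annihilate $\Nor(M,a)$ and not merely agree with $Df(a)$ on $\Tan(M,a)$. An extension obtained by straightening $M$ to a flat $\vdim$ plane and extending $f$ to be constant in the complementary directions would only deliver $Dg(a) \circ \project{\Tan(M,a)} = L_a$, since the straightening diffeomorphism need not carry $\Nor(M,a)$ into the complementary directions of the model. It is exactly the use of the affine jets $P_{a_i}$, whose own differentials already equal the intended $L_{a_i}$, in the Whitney sum that forces the full identity, and the main technical work is the class $1$ verification for $g$: controlling $\sum_i \theta_i(x)\,|P_{a_i}(x) - P_{a_j}(x)|$ and the corresponding derivative sums on overlapping balls near $M$, which is precisely where the uniform estimate \eqref{item:submanifold_extension:uni_diff} and the continuity of $a \mapsto L_a$ are consumed.
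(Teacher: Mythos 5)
Part \eqref{item:submanifold_extension:uni_diff} of your argument is correct and runs along the same lines as the paper's cited references \cite[3.1.19\,(1), 3.1.11]{MR41:1976}. For \eqref{item:submanifold_extension:extension}, your idea of extending via the Whitney jets $P_a$ with $DP_a = Df(a) \circ \project{\Tan(M,a)}$ is exactly the paper's, and your final paragraph correctly identifies why an extension through straightening charts would only control $Dg(a)$ on $\Tan(M,a)$.

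However, the Whitney-type construction as you describe it does not give $g|M = f$. You take a partition of unity $\{\theta_i\}$ subordinate to a cover of a full open neighbourhood $U$ of $M$ and set $g = \sum_i \theta_i P_{a_i}$ throughout $U$. For $a \in M$ this yields $g(a) = \sum_i \theta_i(a)\,P_{a_i}(a)$, and since each $P_{a_i}$ agrees with $f$ only to first order near $a_i$, this is in general different from $f(a)$; the announced verification of $g|M=f$ would therefore fail. In the correct Whitney construction the partition of unity is subordinate to a decomposition of $U \without M$ alone (balls with radius comparable to $\dist(\cdot,M)$), one defines $g := f$ on $M$ and $g := \sum_i \theta_i P_{a_i}$ on $U \without M$, and the substance of the argument is the $C^1$ verification across $M$; one must also shrink $U$ so that $M$ is relatively closed in $U$. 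The paper sidesteps redoing the construction: it applies Whitney's extension theorem \cite[3.1.14]{MR41:1976} directly to each closed $A \subset M$, noting the jet compatibility $DP_a(b) - DP_b(b) = Df(a)\circ\project{\Tan(M,a)} - Df(b)\circ\project{\Tan(M,b)}$ together with \eqref{item:submanifold_extension:uni_diff} to verify Whitney's hypotheses, and then glues the resulting extensions by a partition of unity; there $g|M=f$ is automatic because each local extension already restricts to $f$ on its $A$.
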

\begin{proof}
	\eqref{item:submanifold_extension:uni_diff} is readily
	verified by use of \cite[3.1.19\,(1), 3.1.11]{MR41:1976}.

	Define $P_a : \rel^\adim \to Y$ by $P_a(z) = f (a) + \left <
	\project{\Tan(M,a)} (z-a), D f(a) \right >$ for $a \in M$ and $z \in
	\rel^\adim$. Noting
	\begin{gather*}
		DP_a(b)-DP_b(b) = D f(a) \circ \project{\Tan(M,a)} - Df(b)
		\circ \project{\Tan(M,b)} \quad \text{for $a,b \in M$}
	\end{gather*}
	and \eqref{item:submanifold_extension:uni_diff}, Whitney's extension
	theorem, see \cite[3.1.14]{MR41:1976}, yields for each closed set $A$
	with $A \subset M$ a function $g_A : \rel^\adim \to Y$ of class
	$1$ with $g_A|A = f |A$ and
	\begin{gather*}
		Dg_A(a) = Df(a) \circ \project{\Tan (M,a)} \quad \text{for $a
		\in A$}.
	\end{gather*}
	Therefore $g$ is constructable by use of a partition of unity.
\end{proof}
\begin{remark}
	A consequence is the following proposition: \emph{Whenever $M$ is a
	submanifold of class $1$ of $\rel^\adim$ there exists a function
	$r$ retracting some open subset of $\rel^\adim$ onto $M$ satisfying
	\begin{gather*}
		Dr(a) = \project{\Tan(M,a)} \quad \text{whenever $a \in M$};
	\end{gather*}}
	in fact, observing that one may assume $M$ to be connected, one
	obtains from \cite[3.1.20]{MR41:1976} a map $h$ of class $1$
	retracting some open subset of $\rel^\adim$ onto $M$ and from
	\eqref{item:submanifold_extension:extension} with $Y = \rel^\adim$ and
	$f = \id{M}$ an open set $U$ and a function $g$ and one may take $r =
	h \circ g$.

	In case $M$ is of class $2$ such $r$ could have alternately been
	constructed using a retraction which maps each point of a
	neighbourhood of $M$ onto its unique nearest point in $M$, cp. Federer
	\cite[4.12, 4.8]{MR0110078}.
\end{remark}
\begin{theorem} \label{thm:approx_stepanoff}
	Suppose $l, \adim \in \nat$, $A \subset \rel^\adim$, $f : A \to
	\rel^l$, and
	\begin{gather*}
		\ap \limsup_{z \to a} |f(z)-f(a)|/|z-a| < \infty
	\end{gather*}
	for $\mathscr{L}^\adim$ almost all $a \in A$, then for each
	$\varepsilon > 0$ there exists a map $g : \rel^\adim \to \rel^l$ of
	class $1$ such that
	\begin{gather*}
		\mathscr{L}^\adim ( A \without \{ z \with f(z) = g(z) \} ) <
		\varepsilon.
	\end{gather*}
	Moreover, if $l=1$ and $f \geq 0$ then one may take $g \geq 0$.
\end{theorem}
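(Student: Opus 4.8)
The plan is to reduce the statement to the classical Whitney-type approximation theorem of Federer, namely \cite[3.1.16]{MR41:1976} (or its refinements \cite[3.1.15]{MR41:1976}), by first producing a countable collection of measurable subsets of $A$ on each of which $f$ is already Lipschitzian and then gluing the resulting $C^1$ pieces together with a partition of unity. The hypothesis that $\ap\limsup_{z\to a}|f(z)-f(a)|/|z-a|<\infty$ for $\mathscr{L}^\adim$ almost all $a\in A$ is the measure-theoretic substitute for the finite pointwise Lipschitz constant that appears in Stepanoff's theorem; the task is to convert this approximate bound into a genuine countable decomposition.

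First I would, after discarding an $\mathscr{L}^\adim$-null subset of $A$, assume that the approximate $\limsup$ above is everywhere finite on $A$; then for each pair of positive integers $i,j$ let
\begin{gather*}
	A_{i,j} = A \cap \{ a \with \mathscr{L}^\adim ( \{ z \with |z-a| \leq r, \ |f(z)-f(a)| > i|z-a| \} ) \leq \unitmeasure{\adim} r^\adim / j \ \text{for $0 < r \leq 1/j$} \}.
\end{gather*}
These sets are $\mathscr{L}^\adim$ measurable (their defining condition involves only measurable quantities depending measurably on $a$, cf.\ \cite[2.9.13]{MR41:1976}) and by the definition of approximate $\limsup$ their union over all $i,j$ exhausts $A$ up to an $\mathscr{L}^\adim$ null set. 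Next, covering $\rel^\adim$ by a countable family of cubes of sidelength less than $1/j$ and intersecting with $A_{i,j}$, one obtains a countable family of sets $B_k$ with $\mathscr{L}^\adim(A\without\bigcup_k B_k)=0$, each contained in some $A_{i,j}$ and of diameter at most $1/j$; on each such $B_k$ a standard covering and density argument (exactly the one used to prove Stepanoff's theorem, see \cite[3.1.8, 3.1.9]{MR41:1976}) shows that $f|B_k$ is Lipschitzian with $\Lip(f|B_k)\leq 2i$. Apply \cite[3.1.16]{MR41:1976} to each $f|B_k$ to obtain a function $h_k:\rel^\adim\to\rel^l$ of class $1$ and an $\mathscr{L}^\adim$ measurable set $C_k\subset B_k$ with $\mathscr{L}^\adim(B_k\without C_k)$ as small as desired and $f|C_k = h_k|C_k$.

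Then I would assemble the global $g$. Given $\varepsilon>0$, choose the $C_k$ so that $\mathscr{L}^\adim(A\without\bigcup_k C_k)<\varepsilon$, and set $C=\bigcup_k C_k$; by \cite[2.9.11]{MR41:1976} (or Lusin-type reasoning) one may, after shrinking slightly, arrange that $C$ is contained in a countable union of closed sets on which the $h_k$ agree, so that $f|C$ extends to a function of class $1$ on an open neighbourhood by Whitney's extension theorem \cite[3.1.14]{MR41:1976} applied to the compatible family $(h_k, C_k)$ — the compatibility of the first-order data being automatic wherever two $C_k$ overlap since there $f=h_k=h_{k'}$ and the derivatives are forced to agree on the (positive-density) intersection by \cite[3.1.11]{MR41:1976} or \cite[3.1.19]{MR41:1976}. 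Restricting this extension gives $g$ of class $1$ with $\mathscr{L}^\adim(A\without\{z\with f(z)=g(z)\})<\varepsilon$. For the final sentence, when $l=1$ and $f\geq 0$, I would observe that each $h_k$ may be replaced by $h_k^+$ (which only improves agreement with $f\geq 0$ on $C_k$), but $h_k^+$ need not be $C^1$; instead one applies the nonnegative version of Whitney's theorem exactly as in the nonnegative extension variants already invoked in \ref{lemma:c1_extension}, i.e.\ composing with a smooth nondecreasing truncation that is the identity on $[0,\infty)$, or more simply noting that the extension in \cite[3.1.14]{MR41:1976} of nonnegative first-order data with the value constraint $\geq 0$ can be taken nonnegative by the same partition-of-unity plus mollification argument used in the proof of \ref{lemma:c1_extension}.

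\textbf{Main obstacle.} The routine part is the covering argument turning the approximate Lipschitz bound into genuine Lipschitz estimates on the pieces $B_k$ (this is Stepanoff's theorem, essentially verbatim). The delicate point is the gluing: one must ensure that the separately constructed $C^1$ functions $h_k$ have \emph{compatible first-order jets} on the overlaps of the closed sets where they coincide with $f$, so that a single Whitney extension applies. Where $C_k\cap C_{k'}$ has an $\mathscr{L}^\adim$ density point, $h_k$ and $h_{k'}$ agree to first order there; but at boundary points of positive codimension one must either arrange (by a further measurable selection) that only one $h_k$ is relevant, or invoke the version of Whitney's theorem that tolerates overlapping data provided the jets agree on the overlap as a set (not just a.e.). Handling this bookkeeping cleanly — essentially reducing to a countable partition of $C$ into pairwise disjoint measurable sets and using that $f$ restricted to the closure of each piece still has the correct Whitney data — is where the real work lies, and it mirrors the partition-of-unity techniques already deployed in \ref{lemma:c1_extension} and \ref{lemma:submanifold_extension}.
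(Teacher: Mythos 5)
The first sentence of the statement (everything before ``Moreover'') is precisely the Whitney-type Lusin theorem of Federer, \cite[3.1.16]{MR41:1976}, and the paper's proof just cites it (the text's reference to 3.1.13 there appears to be a slip for 3.1.16). So the Stepanoff decomposition into the sets $A_{i,j}$, the covering argument, and the Whitney gluing you carry out in your second and third paragraphs are a re-derivation of a result the paper simply imports; your version is correct but spends most of its effort on the part that needs none.

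The postscript ($l=1$, $f\geq 0$ $\Rightarrow$ $g\geq 0$) is the real content, and here your argument has a genuine gap. Replacing each $h_k$ by $h_k^+$ destroys $C^1$, as you acknowledge; but your proposed repair — post-composing with ``a smooth nondecreasing truncation that is the identity on $[0,\infty)$'' — does not exist. A nondecreasing $T\geq 0$ with $T(0)=0$ must vanish on $(-\infty,0]$, and if also $T(t)=t$ on $[0,\infty)$ then $T$ is exactly $t\mapsto t^+$, which is not $C^1$ at $0$. So one cannot make $g\geq 0$ by truncating the values of a previously constructed $C^1$ approximation; nor does the nonnegative variant of \ref{lemma:c1_extension} apply directly here, since that lemma's argument (truncate the merely Lipschitz Kirszbraun extension, then mollify) leans on the assumed $C^1$-smoothness of the \emph{given} $f$, which is absent in the present theorem. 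The paper's device is different: take any $C^1$ approximation $h$ from the first part (not required to be $\geq 0$), note that $A$ is $\mathscr L^\adim$ measurable by \cite[2.9.11]{MR41:1976}, and choose disjoint closed sets $B\subset\{z\in A\with f(z)=0\}$ and $C\subset\{z\in A\with f(z)=h(z)>0\}$ leaving out at most $\varepsilon$ of $A$. Then glue the constant function $0$ (used near $B$) with $h$ (used near $C$) by a smooth cutoff $\psi$ that equals $1$ near $C$, equals $0$ near $B$, and is supported in the open set $\{h>0\}$. The product $g=\psi h$ is $C^1$, nonnegative, equals $0=f$ on $B$, and equals $h=f$ on $C$. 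The crucial point your proposal misses is that nonnegativity is not salvaged by modifying $h$ pointwise; it comes from the fact that on $\{f=0\}$ the correct local model is already the zero function, and on $\{f=h>0\}$ the cutoff keeps the support inside $\{h>0\}$.
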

\begin{proof}
	With the exception of the postscript this is the content of
	\cite[3.1.13]{MR41:1976}.

	To prove the postscript choose $h : \rel^\adim \to \rel$ of class $1$
	such that
	\begin{gather*}
		\mathscr{L}^\adim ( A \without \{ z \with f(z) = h(z) \} ) <
		\varepsilon/3.
	\end{gather*}
	Since $A$ is $\mathscr{L}^\adim$ measurable by
	\cite[2.9.11]{MR41:1976}, one may choose closed sets $B$ and $C$ such
	that
	\begin{gather*}
		B \subset \classification{A}{z}{f(z) = 0}, \quad
		\mathscr{L}^\adim ( \classification{A}{z}{f(z) = 0} \without B
		) < \varepsilon/3, \\
		C \subset \classification{A}{z}{f(z) = h(z) > 0}, \quad
		\mathscr{L}^\adim ( \classification{A}{z}{f(z) = h (z) > 0}
		\without C ) < \varepsilon /3.
	\end{gather*}
	Noting $\mathscr{L}^\adim ( A \without ( B \cup C ) ) < \varepsilon$,
	one constructs $g$ from $h$ and the function mapping $\rel^\adim$ onto
	$\{0\}$ by use a partition of unity subordinate to $\{ \rel^\adim
	\without B, \rel^\adim \without C \}$, see \cite[3.1.13]{MR41:1976}.
\end{proof}
\begin{theorem} \label{thm:approximation_lip}
	Suppose $l, \vdim, \adim \in \nat$, $\vdim \leq \adim$, $U$ is an open
	subset of $\rel^\adim$, $V \in \RVar_\vdim ( U )$, $f : U \to \rel^l$
	is Lipschitzian, and $\varepsilon > 0$.

	Then there exists $g : U \to \rel^l$ of class $1$ satisfying
	\begin{gather*}
		\spt g \subset \classification{U}{z}{\dist (z, \spt f) \leq
		\varepsilon }, \quad \Lip g \leq \varepsilon + \Lip f, \\
		\| V \| ( \{ z \with f(z) \neq g(z) \} ) \leq
		\varepsilon.
	\end{gather*}
	Moreover, if $l=1$ and $f \geq 0$ then one may require $g \geq 0$.
\end{theorem}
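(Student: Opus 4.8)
The plan is to reduce the statement to the two extension results that precede it, namely the Stepanoff-type approximation \ref{thm:approx_stepanoff} and the $C^1$ extension lemma \ref{lemma:c1_extension}, together with the basic fact that a Lipschitzian function is $(\|V\|,\vdim)$ approximately differentiable at $\|V\|$ almost all points. The first step is to discard the behaviour of $f$ away from its support: since $f$ is Lipschitzian with $\spt f$ closed, one may first multiply $f$ by a cutoff to arrange that $f$ vanishes outside a slightly enlarged neighbourhood of $\spt f$ without increasing $\Lip f$ by more than $\varepsilon/2$ and without changing $f$ on $\spt f$; alternatively one carries the support constraint through the extension construction. I would then extend $f$ to a Lipschitzian function on all of $\rel^\adim$ by Kirszbraun's theorem (\cite[2.10.43]{MR41:1976}), preserving nonnegativity by replacing the extension with its positive part when $l=1$ and $f\geq 0$.

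Next I would apply \ref{thm:approx_stepanoff} with $A$ taken to be a compact (or merely $\mathscr{L}^\adim$ measurable) set carrying most of $\|V\|$ to obtain a function $g_0: \rel^\adim \to \rel^l$ of class $1$ which agrees with $f$ outside a set of small measure. The difficulty here is that \ref{thm:approx_stepanoff} controls the \emph{Lebesgue} measure of the disagreement set, whereas the conclusion requires smallness with respect to $\|V\|$. Since $V$ is a rectifiable varifold, $\|V\|$ is carried by a countably $\vdim$ rectifiable set, and $\|V\|$ is not in general absolutely continuous with respect to $\mathscr{L}^\adim$; so a direct appeal to \ref{thm:approx_stepanoff} on $\rel^\adim$ does not suffice. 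The remedy — and the main technical step — is to localise on the approximating $C^1$ submanifolds. By \cite[2.10.19\,(4), 3.2.29]{MR41:1976} (or the reductions used in the proof of \ref{lemma:approx_diff}\,\eqref{item:approx_diff:cover}) one writes $\|V\|\,(U\without M)=0$ for a $\vdim$ dimensional submanifold $M$ of class $1$, reduces to the case $\|V\|=\mathscr{H}^\vdim\restrict M$ up to a bounded density factor, and then transfers the problem to $\rel^\vdim$ via $C^1$ charts, where $\mathscr{H}^\vdim$ comparisons with $\mathscr{L}^\vdim$ become available and \ref{thm:approx_stepanoff} applies. One applies it to $f$ restricted to (a chart image of a piece of) $M$, using that the restriction is Lipschitzian relative to $M$ and hence has finite approximate Lipschitz constant $\mathscr{L}^\vdim$ almost everywhere.

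Having produced a $C^1$ function $g_0$ on $\rel^\adim$ that agrees with $f$ off a $\|V\|$-small set, the final step is to correct its Lipschitz constant and its support. For the Lipschitz bound I would apply \ref{lemma:c1_extension} with the closed set $A$ there taken to be the subset of $\spt f$ on which $g_0=f$ (intersected with the relevant submanifold pieces): on this set $\Lip(f|A)\leq \Lip f$ and $\|Dg_0\|$ is controlled by $\Lip f$ up to an arbitrarily small error by the approximate differentiability, so \ref{lemma:c1_extension} yields an open set $Z\supset A$ and a $C^1$ function $g$ with $g|Z=g_0|Z=f|Z$ and $\Lip g\leq \varepsilon+\Lip f$, preserving $g\geq 0$ in the scalar nonnegative case. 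Multiplying by a fixed $C^1$ cutoff that is $1$ on $\{z:\dist(z,\spt f)\leq\varepsilon/2\}$ and supported in $\{z:\dist(z,\spt f)\leq\varepsilon\}$ enforces the support condition while only enlarging the Lipschitz constant by a controllable amount (which one absorbs by running the whole argument with $\varepsilon$ replaced by a suitable smaller constant at the outset), and since this cutoff equals $1$ near $\spt f$ it does not destroy the equality $g=f$ on the good set. Collecting the estimates, $\|V\|\,(\{z:f(z)\neq g(z)\})$ is bounded by the sum of the finitely many small $\|V\|$-measures produced in the localisation, which can be made $\leq\varepsilon$. The only genuinely delicate point is the bookkeeping in the reduction to submanifold charts so that the Lebesgue-measure smallness in \ref{thm:approx_stepanoff} translates into $\|V\|$-measure smallness; everything else is routine.
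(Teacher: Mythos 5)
Your approach is fundamentally the same as the paper's: localise $\|V\|$ to a $C^1$ submanifold $M$, use \ref{thm:approx_stepanoff} in charts to produce a $C^1$ approximant agreeing with $f$ on a large piece of $M$, and feed the agreement set into \ref{lemma:c1_extension} to retrieve a globally defined $C^1$ function with Lipschitz constant close to $\Lip f$. You have also correctly identified that the Lebesgue-measure smallness in \ref{thm:approx_stepanoff} must be converted to $\|V\|$-smallness by passing to the submanifold, and that this is the bookkeeping to be done carefully.

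There is, however, one genuine gap in the step where you invoke \ref{lemma:c1_extension}. That lemma requires control of the quantity $\sup \{ \Lip(f|A), \sup \| Df \| \lIm A \rIm \}$, i.e. both the Lipschitz constant of the restriction to the closed set $A$ \emph{and} the supremum of the full ambient differential over $A$. On the agreement set, the restriction of your $g_0$ to $A$ is $f|A$ and so has Lipschitz constant $\leq \Lip f$, and the Whitney/Stepanoff construction lets you arrange that the \emph{tangential} part $Dg_0(z)|\Tan(M,z)$ equals $D(f|M)(z)$ at density points of $A$; but the \emph{normal} component of $Dg_0(z)$ is unconstrained by any of this. Your assertion that ``$\|Dg_0\|$ is controlled by $\Lip f$ up to an arbitrarily small error by the approximate differentiability'' is therefore not justified: a Stepanoff approximant agreeing with $f$ on a large set can nevertheless have large normal derivative there. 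The paper closes this gap by explicitly invoking \ref{lemma:submanifold_extension}\,\eqref{item:submanifold_extension:extension} to replace the approximant by an extension $h$ with $Dh(z) = D(f|M)(z) \circ \project{\Tan(M,z)}$ for $z$ in a closed set $B \subset M$ of nearly full $\|V\|$-measure; this choice annihilates the normal directions and yields $\|Dh(z)\| \leq \Lip(f|M) \leq \Lip f$ on $B$, which is exactly the bound \ref{lemma:c1_extension} needs. Once this is fixed your argument goes through, though I would also note that the paper's treatment of the support condition — choosing $M$ inside the open $\varepsilon$-neighbourhood $Z$ of $\spt f$ from the outset and putting $U \without Z$ directly into the closed set $A$ with $h=0$, $Dh=0$ there — is cleaner than your post-hoc multiplication by a cutoff, which forces you to rerun the argument with a smaller $\varepsilon$ and to verify that the cutoff does not destroy the equality $g=f$ on the good set.
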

\begin{proof}
	Define $Z = \classification{U}{z}{\dist(z,\spt f) <
	\varepsilon}$ and choose an $\vdim$ dimensional submanifold $M$ of
	class $1$ of $\rel^\adim$ such that
	\begin{gather*}
		M \subset Z, \quad \| V \| ( Z \without M ) \leq \varepsilon /
		2.
	\end{gather*}
	Using \cite[3.1.22]{MR41:1976}, \ref{thm:approx_stepanoff} and a
	partition of unity, one constructs an open subset $H$ of $U$ and a
	function $h : H \to \rel^l$ of class $1$ with
	\begin{gather*}
		\text{if $l=1$ and $f \geq 0$ then $h \geq 0$}, \quad \| V \|
		( M \without \{ z \with f (z) \neq h(z) \} ) < \varepsilon/2.
	\end{gather*}
	Noting \cite[2.9.11, 3.1.22, 3.1.5]{MR41:1976}, there exists a set $B$
	which is closed relative to $U$ satisfying
	\begin{gather*}
		B \subset H \cap M, \quad h|B = f|B, \quad \| V \| ( M
		\without B ) \leq \varepsilon/2, \\
		D (h|M) (z) = D (f|M) (z) \quad \text{for $z \in B$}.
	\end{gather*}
	By
	\ref{lemma:submanifold_extension}\,\eqref{item:submanifold_extension:extension}
	with $f$ replaced by $h$ one may assume
	\begin{gather*}
		Dh(z) = D (f|M) (z) \circ \project{\Tan(M,z)} \quad \text{for
		$z \in B$}.
	\end{gather*}
	Since $B$ and $U \without Z$ are closed relative to $U$ and $B \cap (
	U \without Z ) = \varnothing$, one may also assume $U \without Z \subset
	H$ and
	\begin{gather*}
		h(z) = 0 \quad \text{and} \quad Dh(z)=0 \quad \text{for $z \in
		U \without Z$}.
	\end{gather*}
	Defining $A = B \cup ( U \without Z )$, one notes $h|A = f|A$ and
	\begin{gather*}
		\sup \{ \Lip (h|A), \sup \| Dh \| \lIm A \rIm \} \leq \Lip f.
	\end{gather*}
	Therefore \ref{lemma:c1_extension} yields a function $g : U \to
	\rel^l$ of class $1$ satisfying
	\begin{gather*}
		\text{if $l=1$ and $f \geq 0$ then $g \geq 0$}, \quad g|A =
		h|A, \quad \Lip g \leq \varepsilon + \Lip f.
	\end{gather*}
	One now readily verifies that $g$ has the asserted properties.
\end{proof}
\begin{corollary} \label{corollary:approximation_lip}
	Suppose $l, \vdim, \adim \in \nat$, $\vdim \leq \adim$, $U$ is an open
	subset of $\rel^\adim$, $V \in \RVar_\vdim ( U)$, and $f : U \to
	\rel^l$ is locally Lipschitzian.

	Then there exists a sequence $f_i \in \mathscr{E} ( U, \rel^l )$ such
	that
	\begin{gather*}
		\lim_{i \to \infty} \sup \{ | (f-f_i)(z) | \with z \in K \cap
		\spt \| V \| \} + \eqLpnorm{\| V \| \restrict K}{q}{
		| \ap D (f-f_i)| } = 0
	\end{gather*}
	whenever $K$ is a compact subset of $U$ and $1 \leq q < \infty$, where
	``$\ap$'' denotes approximate differentiation with respect to $( \| V
	\|, \vdim )$. Moreover, if $l=1$ and $f \geq 0$ then one may require
	$f_i \geq 0$.
\end{corollary}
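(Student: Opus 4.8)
The plan is to reduce, by a locally finite partition of unity, to the case that $f$ is Lipschitzian with $\spt f$ a compact subset of $U$, then to sharpen Theorem~\ref{thm:approximation_lip} so that its $C^1$ approximants are \emph{uniformly} close to $f$ on $\spt\|V\|$ (not merely equal off a set of small $\|V\|$ measure), and finally to mollify. First I would choose a locally finite open cover $\{W_m\}$ of $U$ with $\Clos W_m$ compact, $\Clos W_m\subset U$, and $f$ Lipschitzian near $\Clos W_m$, together with a subordinate smooth partition of unity $\{\chi_m\}$; since every compact $K\subset U$ meets only finitely many $W_m$ and a locally finite sum of compactly supported smooth functions is smooth, it suffices to approximate each $\chi_m f$, so I may assume $f$ Lipschitzian with compact support in $U$. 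Second, $\|V\|$ being Radon gives $\|V\|(K)<\infty$, so by H\"older's inequality it is enough to arrange $\|V\|(\{f_i\ne f\})\to0$ together with a pointwise $\|V\|$ bound on $\|\ap D(f_i-f)\|$ depending only on $f$ (not on $i$), where $\ap D$ is $(\|V\|,\vdim)$ approximate differentiation; then $\eqLpnorm{\|V\|\restrict K}{q}{|\ap D(f_i-f)|}$ is at most that bound times $\|V\|(\{f_i\ne f\})^{1/q}$, which tends to $0$ for every compact $K$ and every $1\le q<\infty$.

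For the refinement, fix Lipschitzian $f$ with compact support in $U$, and given $\rho>0$, $\delta>0$ cover $\spt f$ by balls $\oball{x_k}{\rho}$, $k=1,\dots,N$, choose a smooth partition of unity $\psi_k$ subordinate to $\{\oball{x_k}{2\rho}\}$ with $\sum_k\psi_k=1$ on $\spt f$ and $\Lip\psi_k\le C_\adim/\rho$, and write $f=s+\sum_{k=1}^N c_k$ with $s=\sum_k\psi_k f(x_k)$ smooth and $c_k=\psi_k\cdot\bigl(f-f(x_k)\bigr)$, so that $\spt c_k\subset\cball{x_k}{2\rho}$, $\|c_k\|_\infty\le 4\rho\,\Lip f$, and $\Lip c_k\le(4C_\adim+1)\Lip f$. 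Then I would apply Theorem~\ref{thm:approximation_lip} to each $c_k$ with parameter $\inf\{\rho,\delta/N\}$ to obtain a $C^1$ function $g_k'$ with $\spt g_k'\subset\cball{x_k}{3\rho}$, $\Lip g_k'\le\rho+(4C_\adim+1)\Lip f$ and $\|V\|(\{g_k'\ne c_k\})\le\delta/N$, and replace it by $g_k:=\tau_k\circ g_k'$, where $\tau_k:\rel^l\to\rel^l$ is a smooth map equal to the identity on $\cball{0}{\|c_k\|_\infty}$, with image in $\cball{0}{2\|c_k\|_\infty}$ and Lipschitz constant at most $2$ (taking $g_k=0$ if $c_k=0$). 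Thus $g_k$ is $C^1$, equals $c_k$ wherever $g_k'=c_k$, has $\|g_k\|_\infty\le 2\|c_k\|_\infty$, $\Lip g_k\le 2\rho+2(4C_\adim+1)\Lip f$, and $\spt g_k\subset\cball{x_k}{3\rho}$. Setting $g_\rho^\delta=s+\sum_k g_k$, a $C^1$ function supported in a fixed compact subset of $U$, and using that the balls $\cball{x_k}{3\rho}$ have overlap bounded by a dimensional number $D$, I get $|g_\rho^\delta-f|=\bigl|\sum_k(g_k-c_k)\bigr|\le 3D\max_k\|c_k\|_\infty\le 12D\rho\,\Lip f$, uniformly on $\rel^\adim$, which tends to $0$ as $\rho\to0+$. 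For the derivative, \ref{thm:approx_diff}, \ref{lemma:approx_diff}\,\eqref{item:approx_diff:comp} and \ref{example:lipschitzian} give $\ap D(g_k-c_k)=0$ for $\|V\|$ almost all points of $\{g_k'=c_k\}$, whereas off the union of these sets — of total $\|V\|$ measure at most $\delta$ — one has $\|\ap D(g_\rho^\delta-f)\|\le D\bigl(2\rho+3(4C_\adim+1)\Lip f\bigr)$, a quantity that stays bounded as $\rho\to0+$.

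Finally, since $g_\rho^\delta$ is $C^1$, its mollifications $g_\rho^\delta*\phi_\sigma$ and their first derivatives converge, uniformly on compact sets, to $g_\rho^\delta$ and its first derivative as $\sigma\to0+$; hence $\ap D(g_\rho^\delta*\phi_\sigma)(z)=D(g_\rho^\delta*\phi_\sigma)(z)\,|\,\Tan^\vdim(\|V\|,z)$ converges uniformly on compact sets to $\ap D g_\rho^\delta(z)$, and for $\sigma$ small the support stays in $U$, giving a function in $\mathscr{E}(U,\rel^l)$ with compact support that is as close to $g_\rho^\delta$ as desired, uniformly and in approximate derivative, on any prescribed compact set. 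The sequence $f_i$ is then extracted diagonally: choose $\rho_i\to0+$ (controlling the uniform error), $\delta_i=1/i$, and $\sigma_i$ small; summing over $m$ and reinstating the partition of unity produces the required $f_i\in\mathscr{E}(U,\rel^l)$. For $l=1$ and $f\ge0$ the same construction works — mollification and the nonnegativity clause of \ref{thm:approximation_lip} preserve nonnegativity, and one may instead run the construction on $\chi_m f+\varepsilon_i\chi_m\ge\varepsilon_i\chi_m$ and let $\varepsilon_i\to0+$ along the sequence so that the approximants remain nonnegative. The hard part is exactly the step that \ref{thm:approximation_lip} does not itself perform: upgrading convergence ``in $\|V\|$ measure, with approximate differentials agreeing almost everywhere on the set where the functions agree'' to genuine uniform convergence on $\spt\|V\|$ — a rectifiable varifold can carry pieces of $\spt\|V\|$ of positive diameter but arbitrarily small $\|V\|$ mass, on which the plain $C^1$ approximant need not be uniformly close to $f$ — and the ``subtract local constants on a fine cover, then clip the local approximants'' device above circumvents this by keeping every local correction $c_k$, hence its approximant, uniformly small, at the cost of only harmless dimensional constants in the estimate for the approximate derivative and with no loss of control on the bad set.
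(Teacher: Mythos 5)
Your proof is correct in the main and takes a genuinely different route from the paper's. Both rest on Theorem~\ref{thm:approximation_lip} and must overcome the same difficulty you diagnose precisely at the end: that theorem produces $C^{1}$ functions agreeing with $f$ off a set of small $\|V\|$ measure, which by itself does not give uniform closeness on $K\cap\spt\|V\|$, since the support may contain pieces of positive diameter carrying arbitrarily small mass. The paper's proof resolves this by compactness: the approximants $g_i$ from \ref{thm:approximation_lip} have uniformly bounded Lipschitz constants and are uniformly bounded, so by Arzel\`a--Ascoli a subsequence converges locally uniformly to some Lipschitzian $g$; since $g_i\to f$ and $g_i\to g$ in $\|V\|\restrict K$ measure, $g=f$ on $\spt(\|V\|\restrict K)$, whence $g_i\to f$ uniformly there; the proof then finishes by a compact exhaustion, a diagonal choice, and convolution. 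Your proof resolves it constructively: localize to scale $\rho$, subtract the constant $f(x_k)$ so each correction $c_k=\psi_k(f-f(x_k))$ has size $O(\rho)$, apply \ref{thm:approximation_lip} to each $c_k$, and clip the resulting $g_k'$ back to size $O(\|c_k\|_\infty)$; bounded overlap then gives $\|g_\rho^\delta-f\|_\infty\lesssim\rho$ uniformly and a pointwise $O(\Lip f)$ bound on $\|\ap D(g_\rho^\delta-f)\|$ off a set of $\|V\|$ measure at most $\delta$, after which convolution closes the argument. The paper's route is a few lines and essentially effortless; yours is longer but avoids subsequence extraction and lays the mechanism bare (and your citation of \ref{thm:approx_diff} is superfluous --- \ref{lemma:approx_diff}\,\eqref{item:approx_diff:comp} and \ref{example:lipschitzian} suffice and do not need the density hypothesis).

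The one step that, as written, does not go through is the nonnegativity postscript. The local corrections $c_k$ are generally signed even when $f\ge0$, so the nonnegativity clause of \ref{thm:approximation_lip} is not available for them; and the proposed shift $\chi_m f\mapsto\chi_m f+\varepsilon_i\chi_m$ does not force $g_\rho^\delta\ge0$, because the uniform error bound $\kappa(\rho)\sim\rho\Lip f$ is a positive constant while $\varepsilon_i\chi_m$ vanishes near $\Bdry(\spt\chi_m)$ and outside it. A repair within your framework: after building $g_\rho^\delta$, replace it by $g_\rho^\delta+\kappa(\rho)\zeta$, where $\zeta\in\mathscr{D}^0(U)$ satisfies $0\le\zeta\le1$, $\zeta=1$ on a neighbourhood of $\spt g_\rho^\delta\cup\spt f$, and $\zeta$ is kept fixed as $\rho,\delta\to0$; then $g_\rho^\delta+\kappa(\rho)\zeta\ge f\ge0$ on $\spt g_\rho^\delta\cup\spt f$ and equals $\kappa(\rho)\zeta\ge0$ elsewhere, while the extra errors $\kappa(\rho)$ in the uniform norm and $\kappa(\rho)\|D\zeta\|_\infty$ in the derivative both tend to $0$, and convolution preserves nonnegativity. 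In the paper's proof this postscript is immediate, since a locally uniform limit of the nonnegative approximants supplied by \ref{thm:approximation_lip} is nonnegative.
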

\begin{proof}
	Whenever $K$ is a compact subset of $U$ one may use
	\ref{thm:approximation_lip} to construct a sequence of functions $g_i
	: U \to \rel^l$ of class $\class{1}$ such that
	\begin{gather*}
		\sup \{ \Lip g_i \with i \in \nat \} < \infty, \quad \lim_{i
		\to \infty} \| V \| ( \classification{K}{z}{g_i(z) \neq f(z) }
		) = 0, \\
		\text{if $l=1$ and $f \geq 0$ then $g_i \geq 0$},
	\end{gather*}
	hence, in view of \cite[2.10.19\,(4)]{MR41:1976},
	\begin{gather*}
		\lim_{i \to \infty} \eqLpnorm{\| V \| \restrict K}{q}{
		| \ap D (f-g_i) | } = 0 \quad \text{for $1 \leq q < \infty$}.
	\end{gather*}
	The Arz\'el\`a-Ascoli theorem allows to require additionally that the
	functions $g_i$ converge locally uniformly to some Lipschitzian
	function $g : U \to \rel^l$ as $i \to \infty$ which then satisfies
	\begin{gather*}
		f(z) = g(z) \quad \text{for $z \in  \spt ( \| V \| \restrict K
		)$}
	\end{gather*}
	as the functions $g_i$ converge to $f$ in $\| V \| \restrict K$
	measure as $i \to \infty$.

	Taking a sequence of compact sets $K_i$ with $K_i \subset \Int
	K_{i+1}$ and $\bigcup \{ K_i \with i \in \nat \} = U$, one constructs
	$f_i \in \mathscr{E} ( U, \rel^l )$ satisfying
	\begin{gather*}
		| (f-f_i)(z) | \leq 1/i \quad \text{for $z \in  \spt ( \| V \|
		\restrict K_i )$}, \\
		\eqLpnorm{\| V \| \restrict K_i}{i} { | \ap D (f-f_i) | } \leq
		1/i, \quad \text{if $l=1$ and $f \geq 0$ then $f_i \geq 0$}
	\end{gather*}
	by use of the preceding paragraph and convolution. The conclusion now
	readily follows.
\end{proof}
\begin{definition} \label{def:strict_local_sobolev_space}
	Suppose $l, \vdim, \adim \in \nat$, $\vdim \leq \adim$, $U$ is an open
	subset of $\rel^\adim$, $V \in \RVar_\vdim (U)$, $\| \delta V \|$ is a
	Radon measure, and $1 \leq q \leq \infty$.

	Then the \emph{strict local Sobolev space with respect to $V$ and
	exponent $q$}, denoted by $\SWloc{q} ( V , \rel^l )$, is defined to be
	the vectorspace consisting of all $f \in \Lploc{q} ( \| V \| + \|
	\delta V \|, \rel^l )$ such that for some $F \in \Lploc{q} ( \| V \|,
	\Hom ( \rel^\adim, \rel^l ) )$ there exists a sequence of locally
	Lipschitzian functions $f_i : U \to \rel^l$ satisfying
	\begin{gather*}
		\eqLpnorm{(\| V \| + \| \delta V \|) \restrict K}{q}{f-f_i} +
		\eqLpnorm{\| V \| \restrict K}{q}{F-\derivative{V}{f_i} } \to
		0 \quad \text{as $i \to \infty$}
	\end{gather*}
	whenever $K$ is a compact subset of $U$. Abbreviate $\SWloc{q}
	(V,\rel) = \SWloc{q}(V)$.
\end{definition}
\begin{remark} \label{remark:strict_local_sobolev_space}
	Note $f \in \trunc (V,\rel^l)$ and $\derivative{V}{f} (z) = F(z)$ for
	$\| V \|$ almost all $z$. In view of \ref{example:lipschitzian} and
	\ref{corollary:approximation_lip} one may require the functions $f_i$
	to be of class $\class{\infty}$ if $q < \infty$.

	Also note, if $f \in \Lploc{q} ( \| V \| + \| \delta V \|, \rel^l )$,
	$F \in \Lploc{q} ( \| V \|, \Hom ( \rel^\adim, \rel^l ) )$, and $f_i
	\in \SWloc{q} ( V, \rel^l )$ with
	\begin{gather*}
		\eqLpnorm{(\| V \| + \| \delta V \|) \restrict K}{q}{f-f_i} +
		\eqLpnorm{\| V \| \restrict K}{q}{F-\derivative{V}{f_i} } \to
		0 \quad \text{as $i \to \infty$}
	\end{gather*}
	whenever $K$ is a compact subset of $U$, then $f \in \SWloc{q}
	(V,\rel^l)$ with
	\begin{gather*}
		\derivative{V}{f} (z) = F(z) \quad \text{for $\| V \|$ almost
		all $z$}.
	\end{gather*}
	As a consequence one obtains the following three propositions:

	If $f \in \SWloc{q} ( V, \rel^l )$ and $g : U \to \rel$
	is locally Lipschitzian, then $gf \in \SWloc{q} (V,\rel^l)$ and
	\begin{gather*}
		\derivative{V}{(gf)} (z) = \derivative{V}{g}(z)\,f(z) + g(z)
		\derivative{V}{f} (z) \quad \text{for $\| V \|$ almost all
		$z$}.
	\end{gather*}

	If $f \in \SWloc{q} ( V, \rel^l )$, $k \in \nat$, and $\phi : \rel^l
	\to \rel^k$ is of class $\class{1}$ with $\Lip \phi < \infty$, then
	$\phi \circ f \in \SWloc{q} ( V, \rel^l )$ and
	\begin{gather*}
		\derivative{V}{( \phi \circ f )} (z) = D \phi ( f(z) ) \circ
		\derivative{V}{f} (z) \quad \text{for $\| V \|$ almost all
		$z$}.
	\end{gather*}

	If $f \in \SWloc{q} (V)$ and $q < \infty$, then $\{ f^+, f^-, |f| \}
	\subset \SWloc{q} (V)$; in fact one may use functions $f_i$ as in the
	definition of $\SWloc{q} (V)$ in conjunction with
	\ref{lemma:basic_v_weakly_diff} and
	\ref{example:composite}\,\eqref{item:composite:1d}, noting
	\ref{thm:addition}\,\eqref{item:addition:zero}.
\end{remark}
\begin{definition} \label{def:local_sobolev_space}
	Suppose $l, \vdim, \adim \in \nat$, $\vdim \leq \adim$, $U$ is an open
	subset of $\rel^\adim$, $V \in \RVar_\vdim (U)$, $\| \delta V \|$ is a
	Radon measure which is absolutely continuous with respect to $\| V
	\|$, and $1 \leq q \leq \infty$.

	Then the \emph{local Sobolev space with respect to $V$ and exponent
	$q$}, denoted by $\VSobloc{q} ( V , \rel^l )$, is defined to be the
	vectorspace consisting of all $f \in \Lploc{q} ( \| V \|, \rel^l )$
	such that for some $F \in \Lploc{q} ( \| V \|, \Hom ( \rel^\adim,
	\rel^l ) )$ there exists a sequence of locally Lipschitzian functions
	$f_i : U \to \rel^l$ satisfying
	\begin{gather*}
		\eqLpnorm{\| V \| \restrict K}{q}{f-f_i} + \eqLpnorm{\| V \|
		\restrict K}{q}{F-\derivative{V}{f_i} } \to 0 \quad \text{as
		$i \to \infty$}
	\end{gather*}
	whenever $K$ is a compact subset of $U$. Abbreviate $\VSobloc{q}
	(V,\rel) = \VSobloc{q} (V)$.
\end{definition}
\begin{remark} \label{remark:local_sobolev_space}
	A remark analogous to \ref{remark:strict_local_sobolev_space} holds;
	in fact, it sufficient to replace $\| V \| + \| \delta V \|$ and
	$\SWloc{q}$ by $\| V \|$ and $\VSobloc{q}$.
\end{remark}
\begin{definition} \label{def:strict_sobolev_space}
	Suppose $l$, $\vdim$, $\adim$, $U$, $V$, and $q$ are as in
	\ref{def:strict_local_sobolev_space}.

	Then let
	\begin{gather*}
		\SWnorm{q}{V}{f} = \eqLpnorm{\| V \| + \| \delta V \|}{q}{f} +
		\Lpnorm{\| V \|}{q}{ \derivative{V}{f} } \quad \text{for $f
		\in \trunc (V,\rel^l)$}
	\end{gather*}
	and define the \emph{strict Sobolev space with respect to $V$ and
	exponent $q$} by
	\begin{gather*}
		\SW{q} (V,\rel^l ) = \classification{\SWloc{q} (V,\rel^l)}{f}{
		\SWnorm{q}{V}{f} < \infty}.
	\end{gather*}
	Abbreviate $\SW{q} (V,\rel) = \SW{q}(V)$.
\end{definition}
\begin{remark} \label{remark:strict_sobolev_space}
	Note \emph{$\SW{q} ( V, \rel^l )$ is a $\SWnorm{q}{V}{\cdot}$ complete
	vectorspace}. Moreover, \emph{the vector subspaces
	\begin{gather*}
		\text{$\SW{q} ( V, \rel^l ) \cap \mathscr{E} (U, \rel^l)$ if
		$q < \infty$}, \\
		\text{and $\classification{\SW{q} (V,\rel^l) }{f}{\text{$f$ is
		locally Lipschitzian} }$ if $q = \infty$}
	\end{gather*}
	are $\SWnorm{q}{V}{\cdot}$ dense in $\SW{q} (V,\rel^l)$}; in fact
	suppose $\varepsilon > 0$ and $g \in \SW{q} ( V, \rel^l )$, choose a
	sequence $\phi_i$ forming a partition of unity on $U$ associated with
	$\{ U \}$ as in \cite[3.1.13]{MR41:1976} and $f_i : U \to \rel^l$
	locally Lipschitzian with
	\begin{multline*}
		( 1 + \sup \im | D \phi_i | ) \eqLpnorm{(\| V \| + \| \delta V
		\|) \restrict \spt \phi_i}{q}{g-f_i} \\
		+ \eqLpnorm{\| V \| \restrict \spt \phi_i
		}{q}{\derivative{V}{g} - \derivative{V}{f_i}} \leq \varepsilon
		2^{-i}
	\end{multline*}
	and $f_i \in \mathscr{E} (U,\rel^l)$ if $q < \infty$, note $f =
	\sum_{i=1}^{\infty} \phi_i f_i$ is locally Lipschitzian with $f \in
	\mathscr{E} ( U, \rel^l )$ if $q < \infty$, $f \in \SWloc{q}
	(V,\rel^l)$, and $g-f = \sum_{i=1}^\infty \phi_i (g-f_i)$, and compute
	\begin{gather*}
		\derivative{V}{(g-f)}(z) = \tsum{i=1}{\infty}
		\derivative{V}{\phi_i}(z)\,(g-f_i)(z) + \phi_i(z)
		\derivative{V}{(g-f_i)}(z)
	\end{gather*}
	for $\| V \|$ almost all $z$, hence $\SWnorm{q}{V}{g-f} \leq
	\varepsilon$.
\end{remark}
\begin{definition} \label{def:sobolev_space}
	Suppose $l$, $\vdim$, $\adim$, $U$, $V$, and $q$ are as in
	\ref{def:local_sobolev_space}.

	Then let
	\begin{gather*}
		\VSobnorm{q}{V}{f} = \Lpnorm{\| V \|}{q}{f} + \Lpnorm{\| V
		\|}{q}{ \derivative{V}{f} } \quad \text{for $f \in \trunc
		(V,\rel^l)$}
	\end{gather*}
	and define the \emph{Sobolev space with respect to $V$ and exponent
	$q$} by
	\begin{gather*}
		\VSob{q} (V,\rel^l ) = \classification{\VSobloc{q}
		(V,\rel^l)}{f}{ \VSobnorm{q}{V}{f} < \infty}.
	\end{gather*}
	Abbreviate $\VSob{q} (V,\rel) = \VSob{q} (V)$.
\end{definition}
\begin{remark} \label{remark:sobolev_space}
	Replacing $\| V \| + \| \delta V \|$ and $\SW{q}$ by $\| V \|$ and
	$\VSob{q}$ in \ref{remark:strict_sobolev_space} one obtains:
	\emph{$\VSob{q} ( V, \rel^l )$ is a $\VSobnorm{q}{V}{\cdot}$ complete
	vectorspace such that the vector subspaces
	\begin{gather*}
		\text{$\VSob{q} ( V, \rel^l ) \cap \mathscr{E} ( U, \rel^l )$
		if $q < \infty$}, \\
		\text{and $\classification{\VSob{q} (V, \rel^l)}{f}{ \text{$f$
		is locally Lipschitzian} }$ if $q = \infty$}
	\end{gather*}
	are $\VSobnorm{q}{V}{\cdot}$ dense in $\VSob{q} (V,\rel^l)$}. Observe
	that $\mathscr{E} ( U, \rel^l )$ may be replaced by $\classification{
	\mathscr{E} ( U, \rel^l ) }{f}{ \text{$\spt f$ is bounded} }$ if $q <
	\infty$.
\end{remark}
\begin{remark} \label{remark:other_sobolev_space}
	Here, the relation to the notion of Sobolev space introduced by
	Bouchitt\'e, Buttazzo and Seppecher in \cite{MR1424348} is discussed.

	Suppose $M$ is the set of Radon measures $\mu$ on $\rel^\adim$ with
	$\mu ( \rel^\adim ) < \infty$ and $C = \{ ( a, \cball{a}{r} ) \with
	\text{$a \in \rel^\adim$, $0 < r < \infty$} \}$. Define $X_\mu^p$ to
	be the vectorspace consisting of all $g \in \Lp{p} ( \mu, \rel^\adim
	)$ such that for some $0 \leq \alpha < \infty$ there holds
	\begin{gather*}
		\tint{}{} \left < g, D \theta \right > \ud \mu \leq
		\alpha \, \Lpnorm{\mu}{p/(p-1)}{\theta} \quad \text{for
		$\theta \in \mathscr{D}^0 ( \rel^\adim )$}
	\end{gather*}
	whenever $\mu \in M$ and $1 < p < \infty$. Let $T_\mu^q ( z )$ denote
	the vectorspace of all $y \in \rel^\adim$ such that
	\begin{gather*}
		y = ( \mu, C ) \aplim_{\xi \to z} g ( \xi ) \quad \text{for
		some $g \in X_\mu^{q/(q-1)}$}
	\end{gather*}
	whenever $\mu \in M$, $1 < q < \infty$, and $z \in \rel^\adim$.
	Observe that the function $T_\mu^q$ is a representative of the
	equivalence class bearing that name in \cite[p.~38]{MR1424348} by
	\cite[2.9.13]{MR41:1976}. In \ref{example:tangent_difference} it will
	be shown that it may happen that for some $1 < q < r < \infty$ there
	occurs
	\begin{gather*}
		T_\mu^q (z) \neq T_\mu^r (z) \quad \text{for $\mu$ almost all
		$z$}
	\end{gather*}
	even if $\mu$ is a weight of a rectifiable varifold. However, if
	$\adim \geq \vdim \in \nat$, $V \in \RVar_\vdim ( \rel^\adim )$, $\| V
	\| ( \rel^\adim ) < \infty$, $1 < q < \infty$, $0 \leq \alpha <
	\infty$, and $( \delta V) ( g ) \leq \alpha \Lpnorm{\| V \|}{q}{g}$
	for $g \in \mathscr{D} ( \rel^\adim, \rel^\adim )$, then
	\begin{gather*}
		T^q_{\| V\|} (z) = \Tan^\vdim ( \| V \|, z ) \quad \text{for
		$\| V \|$ almost all $z$},
	\end{gather*}
	cp.~e.g.~Fragal\`a and Mantegazza \cite[Lemma 2.4, Theorem
	3.8]{MR1686704}, hence in this case the Sobolev space $W_{\| V
	\|}^{1,q}$ with notion of derivative $D_{\| V \|}$ and norm $\| \cdot
	\|_{1,q,\| V \|}$ defined by Bouchitt\'e, Buttazzo, and Seppecher in
	\cite[p.~39]{MR1424348} is isomorphic to $\VSob{q} ( V )$ with
	$\derivative{V}{}$ and $\VSobnorm{q}{V}{\cdot}$ by
	\ref{remark:sobolev_space}.
\end{remark}
\begin{example} \label{example:tangent_difference}
	Here, the example announced in \ref{remark:other_sobolev_space} is
	constructed.

	Suppose $1 < q < r < \infty$. Choose an $\mathscr{L}^1$ measurable
	function $f : \rel \to \rel$ such that
	\begin{gather*}
		0 < f(z) \leq 1 \quad \text{for $z \in \oball{0}{1}$}, \qquad
		f(z) = 0 \quad \text{for $z \in \rel \without \oball{0}{1}$},
		\\
		\tint{\oball{0}{1}}{} f^{1/(1-r)} \ud \mathscr{L}^1 z <
		\infty, \quad \tint{U}{} f^{1/(1-q)} \ud \mathscr{L}^1 =
		\infty
	\end{gather*}
	whenever $U$ is a nonempty open subset of $\oball{0}{1}$, let $\mu
	= f \mathscr{L}^1 \restrict \oball{0}{1}$ and note
	$\Lpnorm{\mu}{r/(r-1)}{1/f} < \infty$. One readily verifies
	\begin{gather*}
		\{ g \with \text{$g= h/f$ for some $h \in \mathscr{D}^0 (
		\rel)$ with $\spt h \subset \oball{0}{1}$} \} \subset
		X_\mu^{r/(r-1)},
	\end{gather*}
	hence $T_\mu^r ( z ) = \rel$ for $z \in \rel \cap \oball{0}{1}$.

	Next, suppose $g \in X_\mu^{q/(q-1)}$. Then $fg | \oball{0}{1}$ is
	weakly differentiable, hence by
	\cite[4.5.9\,(30)\,(\printRoman{1})\,(\printRoman{5}),
	4.5.16]{MR41:1976} there exists a continuous function $h : \rel \cap
	\oball{0}{1} \to \rel$ with $f(z) g (z) = h(z)$ for $\mathscr{L}^1$
	almost all $z \in \oball{0}{1}$. This yields
	\begin{gather*}
		\tint{\oball{0}{1}}{} |h|^{q/(q-1)} f^{1/(1-q)} \ud
		\mathscr{L}^1 = \tint{}{} |g|^{q/(q-1)} \ud \mu < \infty, \\
		h(z) = 0 \quad \text{for $z \in \oball{0}{1}$}, \quad g(z) = 0
		\quad \text{for $\mu$ almost all $z$},
	\end{gather*}
	hence $T_\mu^q (z) = \{ 0 \}$ for $z \in \rel$.

	Taking $\nu = \mathscr{L}^1 \restrict \oball{0}{1}$, this example also
	shows that
	\begin{gather*}
		T_\nu^q ( z ) \not \subset T_\mu^q (z) \quad \text{for $z \in
		\rel \cap \oball{0}{1}$}
	\end{gather*}
	despite $\nu$ is absolutely continuous with respect to $\mu$, contrary
	to the assertion in Bouchitt\'e, Buttazzo and Seppecher \cite[Remark
	2.2\,(iii)]{MR1424348}.
\end{example}
\optional{
\section{Connectedness properties of varifolds}
\begin{lemma} \label{lemma:eps_length_metrics}
	Suppose $X$ is metrised by $\varrho$, $\varrho_\varepsilon : X \times
	X \to \overline{\rel} \cap \{ t \with t \geq 0 \}$ is defined by
	\begin{gather*}
		\varrho_\varepsilon ( x,y ) = \inf \left \{ \sum_{i=1}^j
		\varrho (x_i,x_{i-1}) \with x_0 = x, x_j = y, \varrho
		(x_i,x_{i-1}) \leq \varepsilon \right \}
	\end{gather*}
	whenever $x,y \in X$ and $0 < \varepsilon \leq \infty$.

	Then the following five statements hold whenever $0 < \varepsilon \leq
	\infty$:
	\begin{enumerate}
		\item \label{item:eps_length_metrics:comp}
		$\varrho = \varrho_\infty \leq \varrho_\varepsilon \leq
		\varrho_\delta$ for $0 < \delta \leq \varepsilon$.
		\item \label{item:eps_length_metrics:path} If $- \infty < a <
		b < \infty$ and $f : \{ t \with a \leq t \leq b \} \to X$ is
		continuous, then
		\begin{gather*}
			\varrho_\varepsilon (f(a),f(b)) \leq \mathbf{V}_a^b f.
		\end{gather*}
		\item \label{item:eps_length_metrics:pseudo_metric}
		$\varrho_\varepsilon$ satisfies the following three
		conditions:
		\begin{enumerate}
			\item $\varrho_\varepsilon (x,y) = 0$ if and only if
			$x = y$ whenever $x, y \in X$.
			\item $\varrho_\varepsilon (x,y) = \varrho_\varepsilon
			(y,x)$ for $x, y \in X$.
			\item $\varrho_\varepsilon (x,y) \leq
			\varrho_\varepsilon (x,z) + \varrho_\varepsilon (z,y)$
			for $x,y,z \in X$.
		\end{enumerate}
		\item \label{item:eps_length_metrics:agree}
		$\varrho_\varepsilon (x,y) = \varrho (x,y)$ whenever $x,y \in
		X$ and $\varrho (x,y) \leq \varepsilon$.
		\item \label{item:eps_length_metrics:lip} If $x \in X$, $Y
		\subset X$, $\varrho_\varepsilon (x,y) < \infty$ for some $y
		\in Y$ and $\diam Y \leq \varepsilon$, then
		$\varrho_\varepsilon ( x, \cdot ) | Y$ is real valued and
		$\Lip ( \varrho_\varepsilon (x,\cdot) | Y ) \leq 1$.
	\end{enumerate}
\end{lemma}
\begin{proof}
	\eqref{item:eps_length_metrics:comp}--\eqref{item:eps_length_metrics:pseudo_metric}
	are immediate consequences of the definitions. In view of
	\begin{gather*}
		\varrho_\varepsilon (x,y) \leq \varrho_\varepsilon (x,z) +
		\varrho (y,z) \quad \text{whenever $x,y,z \in X$ and $\varrho
		(y,z) \leq \varepsilon$},
	\end{gather*}
	\eqref{item:eps_length_metrics:lip} is readily verified.
	\eqref{item:eps_length_metrics:comp} and
	\eqref{item:eps_length_metrics:lip} imply
	\eqref{item:eps_length_metrics:agree}.
\end{proof}
\begin{lemma} \label{lemma:length_metric}
	Suppose $X$ is metrised by $\varrho$, $\varrho_\varepsilon$ are as in
	\ref{lemma:eps_length_metrics}, and $\varrho_0 : X \times X \to
	\overline{\rel} \cap \{ t \with t \geq 0 \}$ is defined by $\varrho_0
	(x,y) = \lim_{\varepsilon \to 0+} \varrho_\varepsilon (x,y)$ for $x,y
	\in X$ and $\sigma : X \times X \to \overline{\rel} \cap \{ t \with t
	\geq 0 \}$ is defined by requiring that $\sigma (x,y)$ for $x,y \in X$
	equals the infimum of all numbers
	\begin{gather*}
		\mathbf{V}_a^b g
	\end{gather*}
	corresponding to continuous mappings $g : \{ t \with a \leq t \leq b
	\} \to X$ with $g(a) = x$, $g(b) = y$ and $- \infty < a \leq b <
	\infty$.

	Then the following six statements hold:
	\begin{enumerate}
		\item \label{item:length_metric:comp} $\sigma \geq \varrho_0$.
		\item \label{item:length_metric:pseudo_metric} $\varrho_0$
		satisfies the following three conditions:
		\begin{enumerate}
			\item $\varrho_0 (x,y) = 0$ if and only if $x = y$
			whenever $x,y \in X$.
			\item $\varrho_0 (x,y) =
			\varrho_0 (y,x)$ for $x,y \in X$.
			\item $\varrho_0 (x,y) \leq \varrho_0 (x,z) +
			\varrho_0 (z,y)$ for $x,y,z \in X$.
		\end{enumerate}
		\item \label{item:length_metric:pseudo_metric2} $\sigma$
		satisfies the following three conditions:
		\begin{enumerate}
			\item $\sigma (x,y) = 0$ if and only if $x = y$
			whenever $x,y \in X$.
			\item $\sigma (x,y) =
			\sigma (y,x)$ for $x,y \in X$.
			\item $\sigma (x,y) \leq \sigma (x,z) +
			\sigma (z,y)$ for $x,y,z \in X$.
		\end{enumerate}
		\item \label{item:length_metric:lip} The same definition for
		$\sigma$ results if $g$ is required to satisfy additionally
		$\Lip g \leq 1$ and $b-a = \mathbf{V}_a^b g$.
		\item \label{item:length_metric:path} If $X$ is boundedly
		compact, $x,y \in X$ and $b = \varrho_0 (x,y) < \infty$, then
		there exists a function $g : \{ t \with 0 \leq t \leq b \} \to
		X$ with $g(0)=x$, $g(b) = y$, $\Lip g \leq 1$ and
		$\mathbf{V}_0^b g = \varrho_0 (x,y)$.
		\item \label{item:length_metric:agree} If $X$ is boundedly
		compact, then $\varrho_0 = \sigma$.
	\end{enumerate}
\end{lemma}
\begin{proof}
	\eqref{item:length_metric:pseudo_metric2} is trivial.
	\eqref{item:length_metric:comp} follows from
	\ref{lemma:eps_length_metrics}\,\eqref{item:eps_length_metrics:comp}\,\eqref{item:eps_length_metrics:path}.
	\eqref{item:length_metric:pseudo_metric} follows from
	\ref{lemma:eps_length_metrics}\,\eqref{item:eps_length_metrics:comp}\,\eqref{item:eps_length_metrics:pseudo_metric}.
	\eqref{item:length_metric:lip} is a consequence of
	\cite[2.5.16]{MR41:1976}. \eqref{item:length_metric:path} implies
	\eqref{item:length_metric:agree}.

	Embedding $X$ isometrically into a Banach space $Y$ (see
	\cite[2.5.16]{MR41:1976}) to prove \eqref{item:length_metric:path},
	one readily constructs piecewise affine functions $g_i : \{ t \with 0
	\leq t \leq b \} \to Y$ satisfying
	\begin{gather*}
		\dist (g_i(t),X) \leq 1/i \quad \text{for $0 \leq t \leq b$},
		\\
		g_i (0) = x, \quad g_i (b)= y, \quad \Lip g_i \leq 1 + 1/i.
	\end{gather*}
	Observing that $X \cup \bigcup_{i=1}^\infty \im g_i$ is boundedly
	compact, the existence of a suitable $g$ now follows from
	\cite[2.10.21]{MR41:1976}.
\end{proof}
\begin{remark} \label{remark:sigma_path_conn}
	\ref{lemma:eps_length_metrics}\,\eqref{item:eps_length_metrics:comp}\,\eqref{item:eps_length_metrics:lip}
	and
	\ref{lemma:length_metric}\,\eqref{item:length_metric:pseudo_metric}\,\eqref{item:length_metric:agree}
	imply the following proposition. \emph{If $X$ is boundedly compact,
	then
	\begin{gather*}
		F = \big \{ \{ x \with \sigma (a,x) < \infty \} \with a \in X
		\big \}
	\end{gather*}
	is disjointed family of Borel sets with $\bigcup F = X$.}
\end{remark}
\begin{lemma} \label{lemma:path_conn_comp}
	Suppose $\vdim, \adim \in \nat$, $\vdim \leq \adim$, $V \in
	\RVar_\vdim ( \rel^\adim )$, $\| \delta V \|$ is a Radon measure, and
	$\sigma$ and $F$ are defined as in \ref{lemma:length_metric} and
	\ref{remark:sigma_path_conn} with $X$ replaced
	by $\spt \| V \|$.

	Then $\boundary{V}{B} = 0$ whenever $B \in F$.
\end{lemma}
\begin{proof}
	Suppose $b \in B \in F$ with $B = \{ z \with \sigma (b,z) < \infty
	\}$. Abbreviate $g = \sigma ( b, \cdot )$ and define $E(t) = \{ z
	\with g(z) \leq t \}$ for $0 < t < \infty$, hence $E(s) \subset E(t)$
	for $0 < s \leq t < \infty$.

	The following assertion will be proven: \emph{There holds
	\begin{gather*}
		\tint{0}{\infty} \| \boundary{V}{E(t)} \| ( Z ) \ud
		\mathscr{L}^1 t < \infty
	\end{gather*}
	whenever $Z$ is a bounded open subset of $\rel^\adim$}. Clearly,
	$\boundary{V}{E(t)} ( \theta )$ is continuous from the right as a
	function of $t$ whenever $\theta \in \mathscr{D} ( \rel^\adim,
	\rel^\adim )$. Therefore $\| \boundary{V}{E(t)} \| (Z)$ depends
	$\mathscr{L}^1$ measurably on $0 < t < \infty$ whenever $Z$ is an open
	subset of $\rel^\adim$. To prove the estimate, define $g_i =
	\varrho_{1/i} ( b, \cdot )$ and recall $g_i \uparrow g$ as $i \to
	\infty$ from
	\ref{lemma:eps_length_metrics}\,\eqref{item:length_metric:comp} and
	\ref{lemma:length_metric}\,\eqref{item:length_metric:agree}.
	By \ref{lemma:eps_length_metrics}\,\eqref{item:eps_length_metrics:lip}
	the sets $Z_i = \{ z \with g_i (z) < \infty \}$ are relatively open in
	$\spt \| V \|$, hence there exist open sets $U_i$ with $Z_i = U_i \cap
	\spt \| V \|$. Defining $V_i = V | \mathbf{2}^{U_i \times
	\grass{\adim}{\vdim}}$, one verifies that
	\begin{gather*}
		g_i \in \trunc ( V_i )^+, \quad \Lpnorm{\| V_i \|}{\infty}{
		\derivative{V_i}{g_i} } \leq 1
	\end{gather*}
	by means of
	\ref{lemma:eps_length_metrics}\,\eqref{item:eps_length_metrics:lip},
	\cite[2.10.43 or 2.10.44]{MR41:1976}, and \ref{example:lipschitzian}.
	Define $E_i (t) = \{ z \with g_i (z) \leq t \}$ for $0 < t < \infty$
	and $i \in \nat$. By
	\ref{lemma:eps_length_metrics}\,\eqref{item:eps_length_metrics:comp}\,\eqref{item:eps_length_metrics:lip}
	the set $E_i (t)$ is a closed subset of $\cball{b}{t}$, hence a
	compact subset of $U_i$. It follows
	\begin{gather*}
		\| \boundary{V_i}{E_i(t)} \| ( \theta|U_i ) = \|
		\boundary{V}{E_i(t)} \| ( \theta ) \quad \text{whenever
		$\theta \in \mathscr{K} ( \rel^\adim )$}.
	\end{gather*}
	Moreover, since $E_{i+1} (t) \subset E_i (t)$ and
	$E(t) = \bigcap_{i=1}^\infty E_i (t)$,
	\begin{gather*}
		\boundary{V}{E_i(t)} \to \boundary{V}{E(t)} \quad \text{as $i
		\to \infty$ for $0 < t < \infty$}.
	\end{gather*}
	\verify Consequently, \cite[4.1.5, 2.4.6]{MR41:1976} and
	\ref{lemma:coarea_inequality} yield
	\begin{gather*}
		\tint{0}{\infty} \| \boundary{V}{E(t)} \| (Z) \ud
		\mathscr{L}^1 t \leq
		\liminf_{i \to \infty} \tint{0}{\infty} \|
		\boundary{V_i}{E_i(t)} \| (Z \cap U_i) \ud \mathscr{L}^1 t
		\leq \| V \| (Z) < \infty
	\end{gather*}
	whenever $Z$ is a bounded open subset of $\rel^\adim$.

	The assertion of the preceding paragraph allows to select $t_i$ with
	$t_i \uparrow \infty$ as $i \to \infty$ and
	\begin{gather*}
		\| \boundary{V}{E(t_i)} \| ( \theta ) \to 0 \quad \text{as $i
		\to \infty$ for $\theta \in \mathscr{K} ( \rel^\adim )$}.
	\end{gather*}
	Since $\boundary{V}{E(t_i)} \to \boundary{V}{B}$ as $i \to \infty$,
	this implies $\boundary{V}{B} = 0$.
\end{proof}
\begin{lemma} \label{lemma:geodesic_ball_lower_bound}
	Suppose $\vdim, \adim \in \nat$, $U$ is an open subset of
	$\rel^\adim$, $V \in \Var_\vdim ( U )$, $\| \delta V \|$ is a Radon
	measure, $\density^\vdim ( \| V \|, z ) \geq 1$ for $\| V \|$ almost
	all $z$, $f \in \trunc (V)^+$, $\Lpnorm{\| V \|}{\infty}{
	\derivative{V}{f} } \leq 1$, $0 < r < \infty$, $E(s) = \{ z \with f
	(z) < s \}$ for $0 < s \leq r$, $0 < \varepsilon \leq
	\isoperimetric{\vdim}^{-1}$, and
	\begin{gather*}
		\text{$E(s)$ satisfies the conditions of
		\ref{miniremark:zero_boundary} with $B = \varnothing$}, \\
		0 < \| V \| ( E(s))  < \infty, \quad \| \delta V \| ( E(s) )
		\leq \varepsilon \, \| V \| ( E(s) )^{1-1/\vdim}
	\end{gather*}
	whenever $0 < s \leq r$.

	Then
	\begin{gather*}
		\| V \| ( E (s) ) \geq \big ( ( \isoperimetric{\vdim}^{-1} -
		\varepsilon ) / \vdim \big )^\vdim s^\vdim \quad \text{for $0
		< s \leq r$}.
	\end{gather*}
\end{lemma}
\begin{proof}
	\verify For $0 < s \leq r$ define $W_s \in \Var_\vdim ( \rel^\adim )$
	by
	\begin{gather*}
		W_s (A) = V ( A \cap ( E(s) \times \grass{\adim}{\vdim} ) )
		\quad \text{for $A \subset \rel^\adim \times
		\grass{\adim}{\vdim}$}
	\end{gather*}
	and let $g(s) = \| W_s \| ( \rel^\adim )$. Since
	\begin{gather*}
		\tint{s}{t} \| \boundary{V}{E(u)} \| ( U ) \ud \mathscr{L}^1 u
		\leq \| V \| ( E (t) \without E(s) ) \quad \text{for $0 < s
		\leq t \leq r$}
	\end{gather*}
	by \ref{lemma:coarea_inequality}, one infers
	\begin{gather*}
		\| \boundary{V}{E(s)} \| ( U ) \leq g'(s) \quad \text{for
		$\mathscr{L}^1$ almost all $0 < s < r$}
	\end{gather*}
	by \cite[2.8.18, 2.9.8]{MR41:1976}. The isoperimetric inequality and
	\ref{miniremark:zero_boundary} imply
	\begin{gather*}
		g(s)^{1-1/\vdim} \leq \isoperimetric{\vdim} \| \delta W_s \| (
		\rel^\adim ) \leq \isoperimetric{\vdim} \big ( \| \delta V \|
		( E(s) ) + \| \boundary{V}{E(s)} \| ( U ) \big ).
	\end{gather*}
	Therefore one obtains for $\mathscr{L}^1$ almost all $0 < s \leq r$
	that
	\begin{gather*}
		( \isoperimetric{\vdim}^{-1} - \varepsilon ) g (s)^{1-1/\vdim}
		\leq g'(s), \quad ( g^{1/\vdim} )' (s) \geq ( (
		\isoperimetric{\vdim}^{-1} - \varepsilon ) / \vdim )
	\end{gather*}
	hence $g(r)^{1/\vdim} \geq \tint{0}{r} (g^{1/\vdim})' \ud
	\mathscr{L}^1 \geq ( ( \isoperimetric{\vdim}^{-1} - \varepsilon ) /
	\vdim ) r$ by \cite[2.9.19]{MR41:1976}.
\end{proof}
\begin{remark}
	\verify The method of proof is classical, see \cite[5.1.6]{MR41:1976}
	and Allard \cite[8.3]{MR0307015} or
	\cite[2.5]{snulmenn.isoperimetric}.
\end{remark}
\begin{lemma}
	Suppose $\vdim$, $\adim$, $U$, $V$, $p$ are as in
	\ref{miniremark:situation_general}, $p = \sup \{ \vdim-1, 1 \}$,
	\begin{gather*}
		\alpha = \| V \| \quad \text{if $\vdim = 1$}, \qquad \alpha =
		\| \delta V \| \quad \text{if $\vdim = 2$}, \\
		\alpha = | \mathbf{h} ( V; \cdot ) |^{\vdim-1} \| V \| \quad
		\text{if $\vdim > 2$},
	\end{gather*}
	$S$ is the set of all $z \in \spt \| V \|$ satisfying
	\begin{gather*}
		\limsup_{r \to 0+} \frac{\measureball{\| \delta V
		\|}{\cball{z}{r}}}{\| V \| ( \cball{z}{r} )^{1-1/\vdim}} < ( 2
		\isoperimetric{\vdim} )^{-1},
	\end{gather*}
	$A$ is a closed subset of $\rel^\adim$, $A \subset U$, $0 <
	\varepsilon \leq \infty$, $\varrho_\varepsilon$ is defined as in
	\ref{lemma:eps_length_metrics} with $X$ replaced by $\spt \| V \|$,
	$G$ is the set of all $a \in \spt \| X \|$ satisfying
	\begin{gather*}
		\| V \| ( \{ z \with \varrho_\varepsilon (a,z) \leq r \} )
		\geq ( 2 \isoperimetric{\vdim} \vdim )^{-\vdim} r^\vdim \quad
		\text{whenever $\{ z \with \varrho_\varepsilon (a,z) \leq r \}
		\subset A$}.
	\end{gather*}

	Then
	\begin{gather*}
		\mathscr{L}^1 \big ( \rel \cap \varrho_\varepsilon (b,\cdot)
		\lIm S \without G \rIm \big ) \leq \Gamma \, \alpha (A) \quad
		\text{for $b \in \spt \| V \|$},
	\end{gather*}
	where $\Gamma = 2 \vdim \besicovitch{1} ( 2 \isoperimetric{\vdim}
	)^\vdim$.
\end{lemma}
\begin{proof}
	\verify Assume $\alpha ( A ) < \infty$ and suppose $b \in \spt \| V
	\|$.

	Abbreviate $B_\varepsilon ( a,r ) = \{ z \with \varrho_\varepsilon
	(a,z) \leq r \}$ for $a \in \spt \| V \|$ and $0 < r \leq \infty$.
	Define $f_z = \varrho_\varepsilon ( z, \cdot )$ whenever $z \in \spt
	\| V |$. Observe that
	\ref{lemma:eps_length_metrics}\,\eqref{item:eps_length_metrics:pseudo_metric}\,\eqref{item:eps_length_metrics:lip}
	imply that one may assume $\im \varrho_\varepsilon \subset \rel$,
	possibly replacing $V$ by $V \restrict \{ (z,S) \with f_b(z)< \infty
	\}$. Therefore $f_z$ are locally Lipschitzian with $\Lpnorm{\| V
	\|}{\infty}{ \derivative{V}{f_z} } \leq 1$ for $z \in \spt \| V \|$ by
	\ref{lemma:eps_length_metrics}\,\eqref{item:eps_length_metrics:lip}
	and \ref{example:lipschitzian}. Moreover, if $B_\varepsilon (a,r)
	\subset A$ for some $a \in \spt \| V \|$ and $0 < r < \infty$, then
	$B_\varepsilon (a,r)$ is compact, as it is relatively closed in $A
	\cap \spt \| V \|$ and contained in $\cball{a}{r}$ by
	\ref{lemma:eps_length_metrics}\,\eqref{item:eps_length_metrics:comp}.
	
	Observe that \cite[2.2.3, 2.2.17]{MR41:1976} may be used to verify
	that $\beta = (f_b)_\# (\alpha \restrict A)$ is a Radon measure. Next,
	the following assertion will be shown: \emph{If $y \in f_b \lIm S
	\without G \rIm$, then there exists $0 < r < \infty$ such that
	\begin{gather*}
		r \leq \Delta \, \measureball{\beta}{ \cball{y}{r} },
	\end{gather*}
	where $\Delta = \vdim ( 2 \isoperimetric{\vdim} )^\vdim$.} For this
	purpose choose $z \in S \without G$ with $f_b(z) = x$ and take
	\begin{gather*}
		r = \inf \big \{ s \with \text{$B_\varepsilon (z,s) \subset A$
		and $\measureball{\| \delta V
		\|}{B_\varepsilon (z,s)} > ( 2 \isoperimetric{\vdim} )^{-1} \|
		V \| ( B_\varepsilon (z,s) )^{1-1/\vdim}$} \big \},
	\end{gather*}
	hence $0 < r < \infty$ and $\measureball{\| V \|}{B_\varepsilon(z,r)}
	\geq ( 2 \vdim \isoperimetric{\vdim} )^{-\vdim} r^\vdim$ by
	\ref{lemma:eps_length_metrics}\,\eqref{item:eps_length_metrics:agree}
	and \ref{lemma:geodesic_ball_lower_bound}. In case $\vdim >
	1$ this implies
	\begin{gather*}
		\begin{aligned}
			( 2 \isoperimetric{\vdim} )^{-1} \| V \| (
			B_\varepsilon(z,r))^{1-1/\vdim} & \leq \measureball{\|
			\delta V \|}{ B_\varepsilon (z,r) } \\
			& \leq \| V \| ( B_\varepsilon (z,r))^{1-1/(\vdim-1)}
			\alpha ( B_\varepsilon (z,r))^{1/(\vdim-1)},
		\end{aligned} \\
		( 2 \vdim \isoperimetric{\vdim} )^{-1} r \leq \| V \| (
		B_\varepsilon (z,r) )^{1/\vdim} \leq ( 2
		\isoperimetric{\vdim})^{\vdim-1}
		\measureball{\alpha}{B_\varepsilon (z,r)}.
	\end{gather*}
	Noting $B_\varepsilon (z,r) \subset A \cap f_b^{-1} \lIm \cball{y}{r}
	\rIm$, as $\varrho_\varepsilon$ is metric by
	\ref{lemma:eps_length_metrics}\,\eqref{item:eps_length_metrics:pseudo_metric},
	the assertion is now evident.

	Define $Y_i$ for $i \in \nat$ to be the set of $y \in f_b \lIm S
	\without G \rIm$ such that there exists $0 < r \leq i$ with $r \leq
	\Delta \, \measureball{\beta}{\cball{y}{r}}$. Then
	\cite[2.1.5\,(1)]{MR41:1976} implies
	\begin{gather*}
		\mathscr{L}^1 ( f_b \lIm S \without G \rIm ) = \lim_{i \to
		\infty} \mathscr{L}^1 (A_i) \leq 2 \besicovitch{1} \Delta \,
		\beta ( \rel ) = \Gamma \, \alpha ( A ),
	\end{gather*}
	where Besicovitch's covering theorem was employed.
\end{proof}
}
\section{Preliminaries}
In this section the dual seminorms $\nuqnorm{r}{V}{\cdot}$ and
$\nuqastnorm{q}{V}{\cdot}$ are defined, see \ref{def:nuqnorm} and
\ref{def:nuqastnorm}.
\begin{lemma} \label{lemma:coarea}
	Suppose $\vdim, \adim \in \nat$, $\vdim \leq \adim$, $U$ is an open
	subset of $\rel^\adim$, $V \in \RVar_\vdim ( U )$, $\| \delta V \|$ is
	a Radon measure, $\eta ( V; \cdot ) : U \to \mathbf{S}^{\adim-1}$ is
	$\| \delta V \|$ measurable,
	\begin{gather*}
		\delta V ( \theta ) = \tint{}{} \theta \bullet \eta ( V ;
		\cdot ) \ud \| \delta V \| \quad \text{whenever $\theta \in
		\mathscr{D} ( U, \rel^\adim )$}
	\end{gather*}
	(see Allard \cite[4.3]{MR0307015}), $f : U \to \rel$ is locally
	Lipschitzian and whenever $t \in \rel$ and $\theta \in \mathscr{D} (
	U, \rel^\adim )$
	\begin{gather*}
		E(t) = \classification{U}{z}{f(z) > t}, \\
		T(t) (\theta) = \tint{E(t)}{} \theta \bullet \eta ( V ; \cdot)
		\ud \| \delta V \| - \tint{E(t) \times \grass{\adim}{\vdim}}{}
		\project{S} \bullet D \theta (z) \ud V(z,S),
	\end{gather*}
	denote by ``$\ap\!$'' approximate differentiation with respect to $(
	\| V \|, \vdim)$ and abbreviate $R(z) = \project{\Tan^\vdim ( \| V \|,
	z)}$ whenever $z \in U$ with $\Tan^\vdim ( \| V \|, z ) \in
	\grass{\adim}{\vdim}$.

	Then for $\mathscr{L}^1$ almost all $t$
	\begin{gather*}
		\begin{aligned}
			& T(t)(\theta) \\
			& \quad = \tint{\classification{U}{z}{f(z)=t}}{} |
			\ap Df(z) |^{-1} \left < R(z) (\theta(z)), \ap Df (z)
			\right > \density^\vdim ( \| V \|, z ) \ud
			\mathscr{H}^{\vdim-1} z
		\end{aligned}
	\end{gather*}
	whenever $\theta \in \mathscr{D} ( U, \rel^\adim )$ and
 	\begin{gather*}
		\tint{}{} T(t)(\theta) \ud \mathscr{L}^1 t = \tint{}{} \left <
		R(z) ( \theta (z)), \ap D f (z) \right > \ud \| V \| z \quad
		\text{for $\theta \in \mathscr{D} ( U, \rel^\adim )$}, \\
		\tint{}{} \| T (t) \| ( g ) \ud \mathscr{L}^1 t = \tint{}{} g
		| \ap Df| \ud \| V \| \quad \text{for $g \in \mathscr{K}
		(U)$}.
	\end{gather*}
\end{lemma}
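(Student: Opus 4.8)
The plan is to derive the lemma from the coarea results of Part~\ref{part:weakly}, principally Theorem~\ref{thm:ap_coarea}, after identifying $T(t)$ with the functional $S(t)$ occurring there.

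First I would record that, $f$ being locally Lipschitzian, Example~\ref{example:lipschitzian} gives $f \in \trunc(V)$ with $\derivative{V}{f}(z) = ( \| V \|, \vdim ) \ap Df(z) \circ \project{\Tan^\vdim ( \| V \|, z )} = \ap Df(z) \circ R(z)$ for $\| V \|$ almost all $z$; in particular $f$ is $( \| V \|, \vdim )$ approximately differentiable at $\| V \|$ almost all points. Fix a $\| V \|$ measurable $\Hom ( \rel^\adim, \rel )$ valued $F$ with $F(z) = \ap Df(z) \circ R(z)$ wherever the right side is defined, so $F = \derivative{V}{f}$ $\| V \|$ almost everywhere; since $R(z)$ is the orthogonal projection onto the domain $\Tan^\vdim ( \| V \|, z )$ of $\ap Df(z)$ one has $|F(z)| = |\ap Df(z)|$ and $\langle R(z)(\theta(z)), \ap Df(z) \rangle = \langle \theta(z), F(z) \rangle$ there, so the right hand side of the asserted pointwise identity is $\tint{\{ z \with f(z)=t \}}{} |F(z)|^{-1} \langle \theta(z), F(z) \rangle \density^\vdim ( \| V \|, z ) \ud \mathscr{H}^{\vdim-1} z$. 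I would also note $T(t) = \boundary{V}{E(t)}$, its first integral being $( ( \delta V ) \restrict E(t) )(\theta)$ by Allard~\cite[4.3]{MR0307015} and its second $\delta ( V \restrict E(t) \times \grass{\adim}{\vdim} )(\theta)$.

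Next, since $f \in \trunc(V)$, Lemma~\ref{lemma:level_sets} gives, for every $t$,
\begin{gather*}
	T(t)(\theta) = \lim_{\varepsilon \to 0+} \varepsilon^{-1} \tint{\{ z \with t < f(z) \leq t+\varepsilon \}}{} \langle \theta, F \rangle \ud \| V \| \quad \text{for $\theta \in \mathscr{D} (U, \rel^\adim)$},
\end{gather*}
so $T(t)$ coincides with the functional $S(t)$ of Theorem~\ref{thm:ap_coarea} built from this $F$, and $t \in \dmn S$ for every $t$; the integrability hypothesis of Theorem~\ref{thm:ap_coarea} holds as $|F| = |\ap Df|$ is locally bounded and $\| V \|$ is a Radon measure. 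Then Theorem~\ref{thm:ap_coarea}\,\eqref{item:ap_coarea:structure} yields an $\mathscr{L}^1$ measurable $W$ with values in $\RVar_{\vdim-1}(U)$ such that for $\mathscr{L}^1$ almost all $t$ there hold $\density^{\vdim-1}( \| W(t) \|, z ) = \density^\vdim ( \| V \|, z )$ for $\| W(t) \|$ almost all $z$, $\Tan^{\vdim-1}( \| W(t) \|, z ) = \Tan^\vdim ( \| V \|, z ) \cap \ker F(z)$, and $T(t)(\theta) = S(t)(\theta) = \tint{}{} \langle \theta, |F|^{-1} F \rangle \ud \| W(t) \|$; reading off the construction in the proof of Theorem~\ref{thm:ap_coarea}, $\| W(t) \| = \density^\vdim ( \| V \|, \cdot ) \, \mathscr{H}^{\vdim-1} \restrict ( B \cap \{ z \with f(z)=t \} )$ for a Borel set $B$ on which $f$ coincides with functions of class $1$ and $\| V \| ( U \without B ) = 0$. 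Substituting this into the last display and using the first paragraph yields the first assertion, once $B \cap \{ f = t \}$ is enlarged to $\{ f = t \}$ inside the integral; this is legitimate because, for $\mathscr{L}^1$ almost all $t$, $\density^\vdim ( \| V \|, z ) = 0$ for $\mathscr{H}^{\vdim-1}$ almost all $z \in \{ f = t \} \without B$, a routine coarea consequence of $\| V \| ( U \without B ) = 0$ via \cite[2.9.11, 2.10.25, 3.2.22]{MR41:1976}.

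Finally the two global identities follow either directly from Theorem~\ref{thm:ap_coarea}\,\eqref{item:ap_coarea:equation} (using $\| S(t) \| = \| W(t) \|$ since $\big| |F|^{-1} F \big| = 1$ on $\spt \| W(t) \|$) or, equivalently, by integrating the pointwise identity in $t$ and invoking \cite[3.2.22]{MR41:1976} once more, with integrand $|\ap Df(z)|^{-1} \langle R(z)(\theta(z)), \ap Df(z) \rangle$ to get $\tint{}{} T(t)(\theta) \ud \mathscr{L}^1 t = \tint{}{} \langle R(z)(\theta(z)), \ap Df(z) \rangle \ud \| V \| z$ and with integrand $g \in \mathscr{K}(U)$ to get $\tint{}{} \| T(t) \|(g) \ud \mathscr{L}^1 t = \tint{}{} g \, |\ap Df| \ud \| V \|$. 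I expect the main obstacle to be the measure-theoretic bookkeeping of the third step: extracting the explicit form of $\| W(t) \|$ from the piecewise class $1$ decomposition used in the proof of Theorem~\ref{thm:ap_coarea}, and checking for $\mathscr{L}^1$ almost all $t$ that the sets in play — $B \cap \{ f = t \}$ versus $\{ f = t \}$, and the domains of $\ap Df$ and $R$ — match up to $\mathscr{H}^{\vdim-1}$ null sets, so that the displayed integrals are both meaningful and equal.
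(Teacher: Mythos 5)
Your proposal is correct in substance but takes a genuinely different route from the paper's proof. The paper argues directly: it applies the first variation to the vector fields $(f_{t+h}-f_t)\theta$, uses the integration-by-parts identity from \cite[4.5\,(4)]{snulmenn.decay} to rewrite $\delta V ( ( f_{t+h}-f_t ) \theta )$ in terms of $\int_{E_t\without E_{t+h}} \langle R\theta, \ap Df \rangle \ud \| V \|$, divides by $h$, and passes to the limit $h \to 0+$ via the classical coarea formula \cite[3.2.22]{MR41:1976} and Lebesgue differentiation \cite[2.9.9]{MR41:1976}; the pointwise identity and the formula for $\| T(t) \|$ drop out in one stroke, and the integrated identities then follow from \cite[3.2.22]{MR41:1976} once more. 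This is short, self-contained, and requires only the first variation formula plus classical coarea. Your route instead identifies $T(t)$ with $\boundary{V}{E(t)}$, uses Lemma \ref{lemma:level_sets} to recognise it as the $S(t)$ of Theorem \ref{thm:ap_coarea}, and then invokes \ref{thm:ap_coarea}\,\eqref{item:ap_coarea:structure}\,\eqref{item:ap_coarea:equation}. This is a legitimate packaging of essentially the same underlying computation through the Part~\ref{part:weakly} machinery, and arguably the ``intended'' modern phrasing given the remark at the start of Part~\ref{part:pde} that these results are to be absorbed into the Part~\ref{part:weakly} framework.

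One caveat you should make explicit: the stated conclusion of Theorem \ref{thm:ap_coarea}\,\eqref{item:ap_coarea:structure} gives the tangent planes and densities of $\| W(t) \|$, and the formula $S(t)(\theta) = \int \langle \theta, |F|^{-1} F \rangle \ud \| W(t) \|$, but it does \emph{not} tell you that $\| W(t) \|$ is carried by the level set $\{ z \with f(z) = t \}$. To obtain the first displayed assertion of Lemma \ref{lemma:coarea} you therefore cannot use Theorem \ref{thm:ap_coarea} as a black box; you must, as you say, read off from its proof that $\| W(t) \| = \density^\vdim ( \| V \|, \cdot ) \mathscr{H}^{\vdim-1} \restrict ( B \cap \{ f = t \} )$ for the piecewise-$C^1$ cover $B$ of $\| V \|$. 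Your Eilenberg-inequality step to pass from $B \cap \{ f = t \}$ to $\{ f = t \}$ is correct: $\| V \| ( U \without B ) = 0$ together with $V \in \RVar_\vdim ( U )$ forces $\mathscr{H}^\vdim \big ( ( U \without B ) \cap \{ z \with \density^\vdim ( \| V \|, z ) > 0 \} \big ) = 0$, and then \cite[2.10.25]{MR41:1976} applied to the locally Lipschitzian $f$ shows the $\mathscr{H}^{\vdim-1}$-negligibility of the discrepancy on almost every level set. In short: the argument goes through, but you trade the paper's one-step direct computation for a detour through \ref{thm:ap_coarea} that still requires you to open its proof; the detour buys conceptual unification with Part~\ref{part:weakly}, not economy.
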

\begin{proof}
	Whenever $t \in \rel$ define $f_t = \inf \{ f, t \}$. From
	\cite[4.5\,(4)]{snulmenn.decay} one obtains for $\mathscr{L}^1$
	almost all $t \in \rel$, $0 < h < \infty$ and $\theta \in
	\mathscr{D} ( U, \rel^\adim )$
	\begin{gather*}
		\begin{aligned}
			\tint{}{} (f_{t+h}-f_t) \theta \bullet \eta ( V ;
			\cdot ) \ud \| \delta V \| & = \tint{}{} (f_{t+h}-f_t)
			(z) \project{S} \bullet D \theta (z) \ud V (z,S) \\
			& \phantom{=}\ + \tint{E_t \without E_{t+h}}{} \left <
			R(z) ( \theta (z) ), \ap D f (z) \right > \ud \| V \|
			z,
		\end{aligned}
	\end{gather*}
	hence dividing by $h$ and considering the limit $h \to 0+$ with the
	help of the coarea formula \cite[3.2.22]{MR41:1976} in conjunction
	with \cite[2.9.9]{MR41:1976} one infers the asserted expression for
	$T(t)(\theta)$ for $\mathscr{L}^1$ almost all $t$, in particular
	\begin{gather*}
		\| T (t) \| (g) = \tint{\classification{U}{z}{f(z)=t}}{} g (z)
		\density^\vdim ( \| V \|, z ) \ud \mathscr{H}^{\vdim-1} z
		\quad \text{for $g \in \mathscr{K} ( U )$}
	\end{gather*}
	for such $t$. Now, the conclusion follows from
	\cite[3.2.22]{MR41:1976}.
\end{proof}
\begin{definition} \label{def:nuqnorm}
	Suppose $l, \vdim, \adim \in \nat$, $\vdim \leq \adim$, $1 \leq r \leq
	\infty$, $U$ is an open subset of $\rel^\adim$, and $V \in \Var_\vdim
	( U )$.

	Then one defines the seminorm $\nuqnorm{r}{V}{\cdot}$ by
	\begin{gather*}
		\nuqnorm{r}{V}{\theta} = \Lpnorm{V}{r}{f} \quad \text{where
		$f(z,S) = D \theta (z) \circ \project{S}$ for $(z,S) \in U
		\times \grass{\adim}{\vdim}$}
	\end{gather*}
	whenever $\theta : U \to \rel^l$ is of class $1$. In case $V \in
	\RVar_\vdim (U)$ the domain of $\nuqnorm{r}{V}{\cdot}$ may be extended
	by requiring
	\begin{gather*}
		\nuqnorm{r}{V}{\theta} = \Lpnorm{\| V \|}{r}{|\ap D\theta|}
	\end{gather*}
	whenever $\theta : U \to \rel^l$ is a locally Lipschitzian function
	with compact support where ``$\ap D$'' denotes approximate
	differentiation with respect to $( \| V \|, \vdim )$.
\end{definition}
\begin{remark} \label{remark:nuqnorm}
	If $r < \infty$ one readily verifies by means of convolution that
	$\mathscr{D} (U,\rel^l)$ is $\|V\|_{(\infty)} + \nuqnorm{r}{V}{\cdot}$
	dense in the space $C$ of all functions of class $1$ with compact
	support mapping $U$ into $\rel^l$ and that if additionally $l=1$ then
	$\mathscr{D}^0 (U)^+$ is $\| V \|_{(\infty)} + \nuqnorm{r}{V}{\cdot}$
	dense in $\classification{C}{\theta}{\theta \geq 0}$.
\end{remark}
\begin{theorem} \label{corollary:nuq_density}
	Suppose $l, \vdim, \adim \in \nat$, $\vdim \leq \adim$, $U$ is an open
	subset of $\rel^\adim$, $K$ is a compact subset of $U$, $V \in
	\RVar_\vdim ( U )$, $f : U \to \rel^l$ is locally Lipschitzian
	with
	\begin{gather*}
		\spt \| V \| \cap \spt f \subset \Int K,
	\end{gather*}
	$1 \leq r < \infty$, and $\varepsilon > 0$.

	Then there exists $g \in \mathscr{D} ( U, \rel^l )$ with
	\begin{gather*}
		\spt g \subset K, \quad \Lpnorm{\| V \|}{\infty}{f-g} +
		\nuqnorm{r}{V}{f-g} \leq \varepsilon.
	\end{gather*}
	If $l=1$ and $f \geq 0$ then one may require $g \geq 0$.
\end{theorem}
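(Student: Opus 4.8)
The plan is to reduce, by means of a cut-off, to a Lipschitzian function compactly supported well inside $K$, to smooth this out using \ref{corollary:approximation_lip}, and to restore the support condition by a second cut-off; the point throughout is that $\Lpnorm{\| V \|}{\infty}{\cdot}$ and $\nuqnorm{r}{V}{\cdot}$ --- the latter being $\Lpnorm{\| V \|}{r}{ | \ap D \cdot | }$ on locally Lipschitzian arguments (see \ref{def:nuqnorm}) --- depend only on the behaviour of a function, and of its $( \| V \|, \vdim )$ approximate differential, on $\spt \| V \|$ up to $\| V \|$ negligible sets.

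First I would pick $\phi \in \mathscr{D}^0 ( U )$ with $0 \leq \phi \leq 1$, with $\spt \phi$ a compact subset of $\Int K$, and with $\phi = 1$ on an open neighbourhood of the compact set $\spt \| V \| \cap \spt f$ (which is closed in $U$ and contained in $\Int K$). Then $\phi f$ is locally Lipschitzian with $\spt ( \phi f ) \subset \spt \phi \subset K$, one has $\phi f \geq 0$ if $l = 1$ and $f \geq 0$, and $\phi f = f$ on $\spt \| V \|$: on $\spt \| V \| \cap \spt f$ because $\phi = 1$ there, and elsewhere because $f = 0$. Since $\phi f$ and $f$ are both $( \| V \|, \vdim )$ approximately differentiable $\| V \|$ almost everywhere by \ref{example:lipschitzian}, \ref{lemma:approx_diff}\,\eqref{item:approx_diff:comp} gives $( \| V \|, \vdim ) \ap D ( \phi f ) ( z ) = ( \| V \|, \vdim ) \ap D f ( z )$ for $\| V \|$ almost all $z$. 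Hence $\phi f = f$ and $\ap D ( \phi f ) = \ap D f$ hold $\| V \|$ almost everywhere, so replacing $f$ by $\phi f$ alters neither $\Lpnorm{\| V \|}{\infty}{f - g}$ nor $\nuqnorm{r}{V}{f - g}$ for any locally Lipschitzian $g$; I may therefore assume that $f$ is Lipschitzian with $\spt f$ a compact subset of $\Int K$.

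Next I would apply \ref{corollary:approximation_lip} to $f$ with exponent $q = r$, obtaining $f_i \in \mathscr{E} ( U, \rel^l )$ --- with $f_i \geq 0$ if $l = 1$ and $f \geq 0$ --- such that, writing $h_i = f - f_i$,
\begin{gather*}
	\lim_{i \to \infty} \big ( \sup \{ | h_i ( z ) | \with z \in L \cap \spt \| V \| \} + \eqLpnorm{\| V \| \restrict L}{r}{ | \ap D h_i | } \big ) = 0
\end{gather*}
for every compact $L \subset U$. Since $\spt f$ is a compact subset of the open set $\Int K$, I may choose $\psi \in \mathscr{D}^0 ( U )$ with $0 \leq \psi \leq 1$, with $\spt \psi$ a compact subset of $K$, and with $\psi = 1$ on a neighbourhood of $\spt f$, so that $( 1 - \psi ) f = 0$ identically. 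Set $g = g_i = \psi f_i \in \mathscr{D} ( U, \rel^l )$; then $\spt g_i \subset \spt \psi \subset K$, $g_i \geq 0$ in the nonnegative case, and $f - g_i = \psi f - \psi f_i = \psi h_i$. Using $0 \leq \psi \leq 1$ one has $\Lpnorm{\| V \|}{\infty}{f - g_i} \leq \sup \{ | h_i ( z ) | \with z \in \spt \psi \cap \spt \| V \| \}$, and by the product rule for approximate differentials (\ref{thm:addition}\,\eqref{item:addition:mult} applied to the locally Lipschitzian functions $\psi$ and $h_i$) together with the fact that $\ap D ( \psi h_i )$ is supported in $\spt \psi$,
\begin{gather*}
	\nuqnorm{r}{V}{f - g_i} \leq \eqLpnorm{\| V \| \restrict \spt \psi}{r}{ | \ap D h_i | } + \big ( \sup \{ | D \psi ( z ) | \with z \in U \} \big ) \| V \| ( \spt \psi )^{1/r} \sup \{ | h_i ( z ) | \with z \in \spt \psi \cap \spt \| V \| \},
\end{gather*}
where I used $\eqLpnorm{\| V \| \restrict \spt \psi}{r}{ | h_i | } \leq \| V \| ( \spt \psi )^{1/r} \sup \{ | h_i ( z ) | \with z \in \spt \psi \cap \spt \| V \| \}$, valid since $h_i$ is continuous, $\| V \| ( U \without \spt \| V \| ) = 0$, and $\| V \| ( \spt \psi ) < \infty$. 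Both right-hand sides tend to $0$ as $i \to \infty$ by the displayed limit with $L = \spt \psi$, so for $i$ large enough $g = g_i$ has all the required properties.

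The only genuine difficulty is reconciling the prescribed inclusion $\spt g \subset K$ with the fact that neither $f$ nor its smooth approximants from \ref{corollary:approximation_lip} are supported in $K$; this is handled by the two cut-offs $\phi$ and $\psi$, and what makes them harmless is that each equals $1$ on a neighbourhood of the part of $f$, respectively of $\spt f$, that the seminorms in question can see. The remaining ingredients --- the product-rule bound $| \ap D ( \psi h_i ) | \leq | \ap D h_i | + ( \sup | D \psi | ) | h_i |$, local finiteness of $\| V \|$ on compact sets, and the passage from an $\| V \|$-integral over $\spt \psi$ to a supremum over $\spt \psi \cap \spt \| V \|$ for continuous integrands --- are routine.
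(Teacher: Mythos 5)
Your proof is correct, and it takes a genuinely different route from the paper's. The paper applies \ref{thm:approximation_lip} once, directly to $h = \eta f$ (with a single cut-off $\eta \in \mathscr{D}^0(U)^+$, $\spt\eta \subset \Int K$, $\eta = 1$ near $\spt\|V\|\cap\spt f$): since \ref{thm:approximation_lip} already produces approximants $g_i$ with $\spt g_i \subset \{z : \dist(z, \spt h) \leq \varepsilon_i\} \subset K$ and $\Lip g_i \leq \varepsilon_i + \Lip h$, the support condition comes for free, and the two seminorm bounds follow from $\|V\|(\{g_i \neq h\}) \to 0$ together with bounded Lipschitz constants (for the $\nuqnorm{r}{V}{\cdot}$ part) and a subsequence converging uniformly (for the $\Lpnorm{\|V\|}{\infty}{\cdot}$ part). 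You instead use the corollary \ref{corollary:approximation_lip}, whose approximants $f_i \in \mathscr{E}(U,\rel^l)$ have no controlled support, and then impose the support condition after the fact via a second cut-off $\psi$ and the Leibniz estimate $|\ap D(\psi h_i)| \leq |\ap D h_i| + \|D\psi\|\,|h_i|$. Both are sound; yours is a bit more modular (it leans on the derived corollary rather than the underlying theorem), while the paper's is a bit shorter and avoids the second cut-off entirely because \ref{thm:approximation_lip} was engineered with exactly this support control built in. One minor efficiency remark: your two cut-offs could be merged into one, since the identity $\psi f = f$ only needs to hold $\|V\|$ almost everywhere (i.e., on $\spt\|V\|$) for the seminorm computations; choosing a single $\psi$ equal to $1$ on a neighbourhood of the compact set $\spt\|V\| \cap \spt f$ with $\spt\psi \subset K$ suffices, without first replacing $f$ by $\phi f$.
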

\begin{proof}
	It is sufficient to construct $g : U \to \rel^l$ of class $1$, cp.
	\ref{remark:nuqnorm}. Choose $\eta \in \mathscr{D}^0 ( U )^+$
	satisfying
	\begin{gather*}
		\spt \eta \subset \Int K, \quad \spt \| V \| \cap \spt f
		\subset \Int \{ z \with \eta (z) = 1 \},
	\end{gather*}
	define $h = \eta f$ and note $h$ is Lipschitzian with
	\begin{gather*}
		h(z) = f (z) \quad \text{and} \quad \ap D h (z) = \ap D f (z)
		\qquad \text{for $\| V \|$ almost all $z$}.
	\end{gather*}
	Suppose $\varepsilon_i > 0$ with $\varepsilon_i \to 0$ as $i \to
	\infty$. Then one obtains from \ref{thm:approximation_lip} a sequence
	of functions $g_i : U \to \rel^l$ of class $1$ such that
	\begin{gather*}
		\spt g_i \subset K, \quad \Lip g_i \leq \varepsilon_i + \Lip
		h, \\
		\| V \| ( \classification{U}{z}{g_i (z) \neq h(z)} ) \leq
		\varepsilon_i, \quad \text{if $l=1$ and $f \geq 0$ then $g_i
		\geq 0$}.
	\end{gather*}
	Possibly passing to a subsequence, one may assume that $g_i$ converges
	uniformly to some function $G : U \to \rel^l$. Since $g_i$ converge to
	both $h$ and $f$ in $\| V \|$ measure $\Lpnorm{\| V \|}{\infty}{g_i -
	f} \to 0$ as $i \to \infty$ and the conclusion follows.
\end{proof}
\begin{definition} \label{def:nuqastnorm}
	Suppose $l, \vdim, \adim \in \nat$, $\vdim \leq \adim$, $1 \leq q \leq
	\infty$, $1 \leq r \leq \infty$, $1/q + 1/r = 1$, $U$ is an open
	subset of $\rel^\adim$, and $V \in \Var_\vdim ( U )$.

	Then one defines the seminorm $\nuqastnorm{q}{V}{\cdot}$ by
	\begin{gather*}
		\nuqastnorm{q}{V}{T} = \sup \{ T ( \theta ) \with
		\text{$\theta \in \mathscr{D} (U,\rel^l)$ and
		$\nuqnorm{r}{V}{\theta} \leq 1$} \}
	\end{gather*}
	whenever $T : \mathscr{D} ( U, \rel^l ) \to \rel$ is a linear map.
\end{definition}
\begin{remark} \label{remark:nuqastnorm}
	If $\nuqastnorm{q}{V}{T} < \infty$ then $T$ is $\nuqnorm{r}{V}{\cdot}$
	continuous and $T \in \mathscr{D}' ( U, \rel^l)$. If additionally $q
	> 1$ and $V \in \RVar_\vdim ( U )$ then $T$ admits a unique
	$\nuqnorm{r}{V}{\cdot}$ continuous extension to the space of all
	locally Lipschitzian functions $\theta : U \to \rel^l$ such that $\spt
	\| V \| \cap \spt \theta$ is compact by \ref{corollary:nuq_density}.
\end{remark}
\begin{theorem}
	Suppose $l$, $\vdim$, $\adim$, $q$, $U$, $V$, and $T$ are as in
	\ref{def:nuqastnorm}, $q>1$, and $\nuqastnorm{q}{V}{T}< \infty$.

	Then there exists $g \in \Lp{q} ( V, \Hom ( \rel^\adim, \rel^l ) )$
	such that
	\begin{gather*}
		T(\theta) = \tint{}{} ( D \theta (z) \circ \project{S} )
		\bullet g(z,S) \ud V (z,S) \quad \text{whenever $\theta \in
		\mathscr{D} ( U, \rel^l )$}, \\
		\Lpnorm{V}{q}{g} = \nuqastnorm{q}{V}{T}.
	\end{gather*}
	Moreover, any such $g$ satisfies
	\begin{gather*}
		g (z,S) \circ \project{S} = g(z,S) \quad \text{for $V$ almost
		all $(z,S)$}.
	\end{gather*}
\end{theorem}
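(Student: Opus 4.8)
The plan is to recognise $T$ as the composition of the ``symbol map'' $\Phi\colon\mathscr{D}(U,\rel^l)\to\Lp{r}(V,\Hom(\rel^\adim,\rel^l))$, $\Phi(\theta)(z,S)=D\theta(z)\circ\project{S}$, with a bounded linear functional on an $\Lp{r}$ space, to extend that functional by Hahn--Banach, and to invoke $\Lp{r}$--$\Lp{q}$ duality. Note first that $\nuqnorm{r}{V}{\theta}=\Lpnorm{V}{r}{\Phi(\theta)}$ and that $r=q/(q-1)<\infty$ because $q>1$. The hypothesis $\nuqastnorm{q}{V}{T}<\infty$ forces $T$ to vanish on the kernel of $\theta\mapsto\nuqnorm{r}{V}{\theta}$: if $\nuqnorm{r}{V}{\theta}=0$ then $\nuqnorm{r}{V}{\pm t\theta}=0\le1$ for every $t>0$, whence $\pm t\,T(\theta)\le\nuqastnorm{q}{V}{T}<\infty$ and $T(\theta)=0$. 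By linearity $T$ therefore factors as $T=\bar T\circ\Phi$ for a unique linear functional $\bar T$ on $W=\im\Phi\subset\Lp{r}(V,\Hom(\rel^\adim,\rel^l))$, and the operator norm of $\bar T$ relative to $\Lpnorm{V}{r}{\cdot}$ equals $\sup\{\,T(\theta)\with\nuqnorm{r}{V}{\theta}\le1\,\}=\nuqastnorm{q}{V}{T}$ (inserting $|T(\theta)|$ in place of $T(\theta)$ in this supremum changes nothing, since $-\theta$ is admissible whenever $\theta$ is).

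Next I would apply Hahn--Banach to extend $\bar T$ to a functional $\tilde T$ on all of $\Lp{r}(V,\Hom(\rel^\adim,\rel^l))$ with $\|\tilde T\|=\nuqastnorm{q}{V}{T}$. Since $V$ is a Radon measure over the $\sigma$-compact space $U\times\grass{\adim}{\vdim}$ it is $\sigma$-finite, and since $1\le r<\infty$ while $\Hom(\rel^\adim,\rel^l)$ is finite dimensional with $\bullet$ its inner product, the pairing $(h,g)\mapsto\int h\bullet g\ud V$ identifies $\Lp{q}(V,\Hom(\rel^\adim,\rel^l))$ isometrically with the dual of $\Lp{r}(V,\Hom(\rel^\adim,\rel^l))$. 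This produces $g\in\Lp{q}(V,\Hom(\rel^\adim,\rel^l))$ with $\tilde T(h)=\int h\bullet g\ud V$ for all $h$; taking $h=\Phi(\theta)$ gives the asserted identity $T(\theta)=\int(D\theta(z)\circ\project{S})\bullet g(z,S)\ud V(z,S)$, and $\Lpnorm{V}{q}{g}=\|\tilde T\|=\nuqastnorm{q}{V}{T}$. Conversely H\"older's inequality gives $\Lpnorm{V}{q}{g'}\ge\nuqastnorm{q}{V}{T}$ for any $g'$ representing $T$, so the $g$ just constructed is one of minimal $\Lp{q}$ norm among the representatives.

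For the postscript the key observation is that the pointwise endomorphism $P$ of $\Hom(\rel^\adim,\rel^l)$ given by $P(\tau)=\tau\circ\project{S}$ is an orthogonal projection for $\bullet$: it is idempotent because $\project{S}\circ\project{S}=\project{S}$, and self-adjoint because $(\tau\circ\project{S})\bullet\rho=\tau\bullet(\rho\circ\project{S})$, which follows by expressing $\bullet$ through an orthonormal basis of $\rel^\adim$ and using the symmetry of $\project{S}$. In particular $\Phi(\theta)$ is pointwise fixed by $P$, so for every $g'$ representing $T$ the function $(z,S)\mapsto g'(z,S)\circ\project{S}$ (that is, $Pg'$ applied pointwise) also represents $T$, while $|Pg'(z,S)|\le|g'(z,S)|$ and $|g'(z,S)|^2=|Pg'(z,S)|^2+|g'(z,S)-Pg'(z,S)|^2$ hold at every point. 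If $g'$ is moreover as in the theorem, so $\Lpnorm{V}{q}{g'}=\nuqastnorm{q}{V}{T}$, minimality of the norm among representatives yields $\Lpnorm{V}{q}{Pg'}\ge\nuqastnorm{q}{V}{T}=\Lpnorm{V}{q}{g'}\ge\Lpnorm{V}{q}{Pg'}$, hence $\int|Pg'|^q\ud V=\int|g'|^q\ud V$; as $q<\infty$ and the integrands are pointwise ordered, $|Pg'|=|g'|$ $V$-almost everywhere, whence $g'-Pg'=0$ $V$-almost everywhere, i.e. $g'(z,S)\circ\project{S}=g'(z,S)$ for $V$-almost all $(z,S)$.

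The step most in need of care is the $\Lp{r}$--$\Lp{q}$ duality for a vector valued, merely $\sigma$-finite measure: it is classical, but one should pin down the precise reference (Federer's treatise being the operative one here) and the attendant measurability conventions. Everything else is a routine Hahn--Banach plus Riesz representation argument; the only other point worth flagging is that the $q=\infty$ instance of the postscript lacks strict convexity of the norm and is handled separately, using that $\bar T$ already prescribes $Pg$ on the $\Lp{1}$ closure of $W$ together with the least-norm property of the constructed $g$.
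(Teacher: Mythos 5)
Your proposal is correct and follows essentially the same route as the paper's proof: factor $T$ through the symbol map into $\Lp{r}(V,\Hom(\rel^\adim,\rel^l))$, extend by Hahn--Banach, represent via $\Lp{r}$--$\Lp{q}$ duality from Federer's treatise, and for the postscript exploit that post-composition with $\project{S}$ is a pointwise orthogonal projection fixing the image of the symbol map. Your remark that the strict-convexity step requires $q<\infty$ is well taken; the paper's own proof of the postscript passes over that point silently.
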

\begin{proof}
	Let $1 \leq r \leq \infty$ with $1/q+1/r=1$, abbreviate $X = \Lp{r}(
	V, \Hom ( \rel^\adim, \rel^l ) )$, define $L : \mathscr{D} ( U, \rel^l
	) \to X$ by
	\begin{gather*}
		L ( \theta ) (z,S) = D \theta (z) \circ \project{S} \quad
		\text{whenever $\theta \in \mathscr{D} ( U, \rel^l )$, $z \in
		U$, $S \in \grass{\adim}{\vdim}$},
	\end{gather*}
	and note
	\begin{gather*}
		T ( \theta ) \leq \nuqastnorm{q}{V}{T}
		\Lpnorm{V}{r}{L(\theta)} \quad \text{for $\theta \in
		\mathscr{D} ( U, \rel^l )$}.
	\end{gather*}
	Therefore Hahn-Banach's theorem, see \cite[2.4.12]{MR41:1976},
	furnishes a linear map $\mu : X \to \rel$ with
	\begin{gather*}
		\mu | \im L = T \circ L^{-1}, \quad \mu \leq
		\nuqastnorm{q}{V}{T} \Lpnorm{V}{r}{\cdot}.
	\end{gather*}
	Using \cite[2.5.7]{MR41:1976}, one constructs $g \in \Lp{q} ( V, \Hom (
	\rel^\adim, \rel^l ) )$ with
	\begin{gather*}
		\mu (x) = \tint{}{} x \bullet g \ud V \quad \text{for $x \in
		X$},
	\end{gather*}
	hence
	\begin{gather*}
		T ( \theta ) = \tint{}{} ( D \theta (z) \circ \project{S} )
		\bullet g(z,S) \ud V (z,S) \quad \text{for $\theta \in
		\mathscr{D} (U,\rel^l )$}.
	\end{gather*}
	Adapting \cite[2.4.16]{MR41:1976}, one obtains
	\begin{gather*}
		\Lpnorm{V}{q}{g} = \sup \{ \mu (h) \with \text{$h \in X$ and
		$\Lpnorm{V}{r}{h} \leq 1$} \} \leq \nuqastnorm{q}{V}{T}.
	\end{gather*}

	Noting
	\begin{gather*}
		| \tau |^2 = | \tau \circ \project{S} |^2 + | \tau \circ
		\perpproject{S} |^2, \quad
		( \sigma \circ \project{S} ) \bullet \tau = ( \sigma \circ
		\project{S} ) \bullet ( \tau \circ \project{S} )
	\end{gather*}
	for $\sigma, \tau \in \Hom ( \rel^\adim, \rel^l )$, $S \in
	\grass{\adim}{\vdim}$ and defining $h \in \Lp{q} ( V, \Hom (
	\rel^\adim , \rel^l ) )$ by
	\begin{gather*}
		h(z,S) = g(z,S) \circ \project{S} \quad \text{for $(z,S) \in
		U \times \grass{\adim}{\vdim}$},
	\end{gather*}
	H\"older's inequality implies
	\begin{gather*}
		\nuqastnorm{q}{V}{T} \leq \Lpnorm{V}{q}{h} \leq
		\Lpnorm{V}{q}{g},
	\end{gather*}
	hence
	\begin{gather*}
		g(z,S) \circ \project{S} = g(z,S) \quad \text{for $V$ almost
		all $(z,S)$}.
	\end{gather*}

	Since the argument of the preceding paragraph is applicable to any $g$
	satisfying the main part of the conclusion, the proof is complete.
\end{proof}
\begin{remark}
	If $V \in \RVar_\vdim ( U )$ the value of the extension of $T$
	described in \ref{remark:nuqastnorm} at $\theta$ equals
	\begin{gather*}
		\tint{}{} \ap D \theta (z) \bullet ( g (z,S)|S ) \ud V (z,S)
	\end{gather*}
	whenever $\theta : U \to \rel^l$ is a Lipschitzian function with
	compact support where ``$\ap D$'' denotes approximate differentiation
	with respect to $( \| V \|, \vdim )$.
\end{remark}
\section{Maximum estimates} \label{sec:local_max}
This section concerns varifolds with critical integrability their first
variation ($p=\vdim$ in \ref{miniremark:situation_general}). The main results
of this section are the global and local maximum estimates for subsolutions
obtained in \ref{thm:global_maximum_estimates} and
\ref{thm:local_maximum_estimates}. (The results concerning functions in
\ref{thm:sobolev_embedding}--\ref{corollary:sobolev}, \ref{corollary:emb_loo}
are essentially superseded by the newer results in Section
\ref{sec:embeddings}.)
\begin{theorem} \label{thm:sobolev_embedding}
	Suppose $\vdim$, $\adim$, $p$, $U$, $V$, and ``$\ap\!$'' are as in
	\ref{miniremark:situation_general}, $p=1$, $\beta = \infty$ if $\vdim
	=1$ and $\beta = \vdim/(\vdim-1)$ if $\vdim > 1$, $f : U \to \rel$ is
	locally Lipschitzian, and
	\begin{gather*}
		\text{$\classification{\spt \| V \|}{z}{|f(z)| \geq t}$ is
		compact whenever $0 < t < \infty$}.
	\end{gather*}

	Then
	\begin{gather*}
		\Lpnorm{\| V \|}{\beta}{f} \leq \isoperimetric{\vdim} \big (
		\Lpnorm{\| V \|}{1}{| \ap D f |} + \Lpnorm{\| \delta V
		\|}{1}{f} \big ).
	\end{gather*}
\end{theorem}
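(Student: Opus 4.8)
The plan is to follow the classical coarea route: reduce to nonnegative $f$, slice into superlevel sets, apply the isoperimetric inequality for varifolds to each slice, and integrate over the level parameter. First I would replace $f$ by $|f|$, which changes neither side since $|\ap D|f||\le|\ap Df|$ for $\|V\|$ almost all points; thus assume $f\ge0$. If $\Lpnorm{\|V\|}{1}{|\ap Df|}+\Lpnorm{\|\delta V\|}{1}{f}=\infty$ there is nothing to prove, so assume it is finite. For $0<t<\infty$ set $E(t)=\classification{U}{z}{f(z)>t}$ and $W(t)=V\restrict(E(t)\times\grass{\adim}{\vdim})$, so that $\|W(t)\|=\|V\|\restrict E(t)$, $\density^\vdim(\|W(t)\|,z)=\density^\vdim(\|V\|,z)\ge1$ for $\|W(t)\|$ almost all $z$ by \cite[2.9.11]{MR41:1976}, and $\delta W(t)=(\delta V)\restrict E(t)-\boundary{V}{E(t)}$ by \ref{def:v_boundary}. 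The hypothesis that $\classification{\spt\|V\|}{z}{f(z)\ge t}$ is compact for $t>0$ forces $\spt\|W(t)\|$ to be a compact subset of $U$, so that after extension by zero $W(t)$ is a rectifiable varifold in $\rel^\adim$ whose first variation, for those $t$ at which $\boundary{V}{E(t)}$ is representable by integration, is a finite Radon measure with $\|\delta W(t)\|(\rel^\adim)=\|\delta W(t)\|(U)\le\|\delta V\|(E(t))+\|\boundary{V}{E(t)}\|(U)$.

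Next I would invoke Lemma \ref{lemma:coarea} with the given $f$: its distribution $T(t)$ equals $\boundary{V}{E(t)}$ (compare \ref{def:v_boundary}), whence for $\mathscr{L}^1$ almost all $t$ the functional $\boundary{V}{E(t)}$ is representable by integration, and, choosing $g\in\mathscr{K}(U)^+$ increasing to $1$ and passing to the limit in $\int\|T(t)\|(g)\ud\mathscr{L}^1t=\int g|\ap Df|\ud\|V\|$,
\begin{gather*}
\tint{0}{\infty}\|\boundary{V}{E(t)}\|(U)\ud\mathscr{L}^1t=\tint{}{}|\ap Df|\ud\|V\|.
\end{gather*}
Fubini's theorem gives $\int_0^\infty\|\delta V\|(E(t))\ud\mathscr{L}^1t=\int f\ud\|\delta V\|=\Lpnorm{\|\delta V\|}{1}{f}$ since $f\ge0$.

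For $\vdim>1$ I would then combine the isoperimetric inequality for varifolds, $\|W(t)\|(\rel^\adim)^{1-1/\vdim}\le\isoperimetric{\vdim}\|\delta W(t)\|(\rel^\adim)$, with the layer-cake representation of $f$ and Minkowski's integral inequality in $\Lp{\beta}(\|V\|)$ (recall $1/\beta=1-1/\vdim$):
\begin{gather*}
\Lpnorm{\|V\|}{\beta}{f}\le\tint{0}{\infty}\|V\|(E(t))^{1-1/\vdim}\ud\mathscr{L}^1t\le\isoperimetric{\vdim}\tint{0}{\infty}\big(\|\delta V\|(E(t))+\|\boundary{V}{E(t)}\|(U)\big)\ud\mathscr{L}^1t,
\end{gather*}
and the two identities of the previous paragraph close the case. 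For $\vdim=1$ the isoperimetric inequality says $\isoperimetric{1}^{-1}\le\|\delta W(t)\|(\rel^\adim)$ whenever $\|V\|(E(t))>0$, so $\isoperimetric{1}^{-1}\le\|\delta V\|(E(t))+\|\boundary{V}{E(t)}\|(U)$ for $\mathscr{L}^1$ almost all $0<t<\Lpnorm{\|V\|}{\infty}{f}$; integrating over $(0,\Lpnorm{\|V\|}{\infty}{f})$ and using the same two identities yields $\isoperimetric{1}^{-1}\Lpnorm{\|V\|}{\infty}{f}\le\Lpnorm{\|V\|}{1}{|\ap Df|}+\Lpnorm{\|\delta V\|}{1}{f}$.

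The step I expect to be the main obstacle is the passage from the distributional slice identity to a genuine application of the global isoperimetric inequality: one must verify, for $\mathscr{L}^1$ almost all $t$, that $\boundary{V}{E(t)}$ is representable by integration with finite total variation --- which the coarea identity supplies once the right-hand side is assumed finite --- that the density bound $\density^\vdim\ge1$ is inherited by $\|W(t)\|$, and, crucially, that no first-variation mass of $W(t)$ escapes to $\Bdry U$, which is exactly where the compactness assumption on the superlevel sets of $f$ enters. The remaining manipulations (layer cake, Minkowski, Fubini) are routine.
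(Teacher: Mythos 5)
Your proposal is correct and follows essentially the same route as the paper's proof: slice into superlevel sets $E(t)$, apply the isoperimetric inequality for varifolds to each slice $V\restrict E(t)\times\grass{\adim}{\vdim}$ (noting that the hypothesised compactness of $\{z\in\spt\|V\|:|f(z)|\geq t\}$ gives each slice compact support), and integrate over $t$ using the coarea identity from \ref{lemma:coarea}. The only cosmetic differences are that the paper organises the integration in $t$ by differentiating the Lipschitz function $\phi(t)=\Lpnorm{\|V\|}{\beta}{\inf\{f,t\}}$ where you invoke Minkowski's integral inequality on the layer-cake representation, and that the paper first treats the case where $\spt\|V\|\cap\spt f$ is compact and then reduces the general case by passing to $\sup\{|f|-t,0\}$ and letting $t\to0+$, whereas you work directly with the superlevel sets; both devices serve the identical purpose of ensuring the slices have compact support and finite mass.
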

\begin{proof}
	This is a slight extension of Allard \cite[7.3]{MR0307015} and Michael
	and Simon \cite[Theorem 2.1]{MR0344978}.

	First, \emph{the case $\spt \| V \| \cap \spt f$ is compact} will be
	treated. Assume $f \geq 0$. Define $E(t)$ and $T(t)$ as in
	\ref{lemma:coarea}. Let $f_t = \inf \{ f, t \}$ and $V_t = V \restrict
	E(t) \times \grass{\adim}{\vdim}$ for $0 < t < \infty$.  There holds
	\begin{gather*}
		\| V_t \| ( U )^{1/\beta} \leq \isoperimetric{\vdim} \| \delta
		V_t \| ( U ) \leq \isoperimetric{\vdim} \big ( \| T (t) \| ( U
		) + \| \delta V \| (E(t)) \big )
	\end{gather*}
	for $0 < t < \infty$ where $0^0 = 0$. If $\vdim = 1$ then the
	assertion of the present case follows from \ref{lemma:coarea}. If
	$\vdim > 1$, defining $\phi : \{ t : 0 < t < \infty \} \to \rel$ by
	$\phi (t) = \Lpnorm{\| V \|}{\beta}{f_t}$ for $0 < t < \infty$,
	Minkowski's inequality implies
	\begin{gather*}
		0 \leq \phi (t+h) - \phi (t) \leq \Lpnorm{\| V
		\|}{\beta}{f_{t+h}-f_t} \leq h \| V \| ( E(t))^{1/\beta}
	\end{gather*}
	for $0 < t < \infty$ and $0 < h < \infty$. Therefore $\phi$ is
	Lipschitzian and
	\begin{gather*}
		0 \leq \phi'(t) \leq \| V_t \| ( U )^{1/\beta} \quad \text{for
		$\mathscr{L}^1$ almost all $0 < t < \infty$},
	\end{gather*}
	hence $\Lpnorm{\| V \|}{\beta}{f} = \lim_{t \to \infty} \phi (t) =
	\tint{0}{\infty} \phi' \ud \mathscr{L}^1$ and \ref{lemma:coarea}
	implies the assertion of the present case.

	In \emph{the general case} suppose $0 < t < \infty$, apply the special
	case with $f(z)$ replaced by $\sup \{ |f(z)|-t, 0 \}$ and consider the
	limit $t \to 0+$.
\end{proof}
\begin{remark} \label{remark:embedding_1d}
	If $\vdim = 1$, $0 \leq \alpha < \isoperimetric{\vdim}^{-1}$, and
	$\psi ( \classification{U}{z}{f(z) \neq 0} ) \leq \alpha$ with $\psi$
	as in \ref{miniremark:situation_general} then
	\begin{gather*}
		\Lpnorm{\| V \|}{\infty}{f} \leq \isoperimetric{\vdim} ( 1 -
		\alpha \isoperimetric{\vdim} )^{-1} \Lpnorm{\| V \|}{1}{| \ap
		Df| };
	\end{gather*}
	in fact assuming $\spt \| V \| \cap \spt f$ to be compact this follows
	from \ref{thm:sobolev_embedding}.
\end{remark}
\begin{corollary} \label{corollary:sobolev}
	Suppose $\vdim$, $\adim$, $p$, $U$, $V$, $\psi$, and ``$\ap\!$'' are
	as in \ref{miniremark:situation_general}, $p = \vdim$, $1 \leq q <
	\vdim$, $0 \leq \alpha < \isoperimetric{\vdim}^{-1}$, $f : U \to \rel$
	is locally Lipschitzian, $\psi ( \classification{U}{z}{f(z) \neq
	0})^{1/\vdim} \leq \alpha$, and
	\begin{gather*}
		\text{$\classification{\spt \| V \|}{z}{ |f(z)| \geq t }$ is
		compact whenever $0 < t < \infty$}.
	\end{gather*}

	Then
	\begin{gather*}
		\Lpnorm{\| V \|}{\vdim q / ( \vdim-q )}{f} \leq \Gamma
		\Lpnorm{\| V \|}{q}{| \ap Df |}
	\end{gather*}
	where $\Gamma = \isoperimetric{\vdim} ( 1-\alpha \isoperimetric{\vdim}
	)^{-1} q ( \vdim-1 )/(\vdim-q)$.
\end{corollary}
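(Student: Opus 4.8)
The plan is to deduce the inequality from the ``$p=1$'' Sobolev inequality of Allard and Michael--Simon, \ref{thm:sobolev_embedding}, by applying that inequality to a power of $|f|$ in the manner of \cite[4.5.15]{MR41:1976}, the critical integrability $p=\vdim$ entering precisely through the fact that the resulting first-variation term can be absorbed back into the left-hand side. Throughout $\vdim>1$, so write $\beta=\vdim/(\vdim-1)$ and set $\gamma=q(\vdim-1)/(\vdim-q)$; then $\gamma\ge1$ since $1\le q<\vdim$, and $1-\alpha\isoperimetric{\vdim}>0$ by hypothesis. The exponent identities $\gamma\beta=\vdim q/(\vdim-q)$, and, for $q>1$, $(\gamma-1)q/(q-1)=\vdim q/(\vdim-q)$, together with $(\vdim-1)/\vdim-(q-1)/q=(\vdim-q)/(\vdim q)$, are what make the bookkeeping close up with the stated constant $\Gamma=\isoperimetric{\vdim}(1-\alpha\isoperimetric{\vdim})^{-1}\gamma$.

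First I would reduce to the case $f\ge0$ with $\spt\|V\|\cap\spt f$ compact. Replacing $f$ by $|f|$ — locally Lipschitzian, with the same $\Lpnorm{\|V\|}{\vdim q/(\vdim-q)}{\cdot}$ value, the same set $\classification{U}{z}{f(z)\neq0}$, and $|\ap D(|f|)|\le|\ap Df|$ for $\|V\|$ almost all $z$ — gives $f\ge0$. Then, for $\tau>0$, the function $f_\tau=\sup\{f-\tau,0\}$ is locally Lipschitzian with $|\ap Df_\tau|\le|\ap Df|$ $\|V\|$ almost everywhere, $\psi(\classification{U}{z}{f_\tau(z)\neq0})\le\psi(\classification{U}{z}{f(z)\neq0})\le\alpha^\vdim$, and $\spt\|V\|\cap\spt f_\tau$ a closed — hence compact — subset of $\classification{\spt\|V\|}{z}{f(z)\ge\tau}$, while $f_\tau\uparrow f$ as $\tau\to0+$; so it suffices to prove the inequality for each $f_\tau$ and let $\tau\to0+$ by monotone convergence. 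In the resulting compact-support case the Radon measure $\|V\|$ is finite on $\spt\|V\|\cap\spt f$, hence $\|V\|(\classification{U}{z}{f(z)\neq0})<\infty$ and $f$ is bounded on $\spt\|V\|$, so every integral occurring below is finite.

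Next I would apply \ref{thm:sobolev_embedding} to $g=|f|^\gamma$, which is locally Lipschitzian with $\spt g\cap\spt\|V\|$ compact and, by the chain rule for approximate differentials (\cite[2.10.19\,(4)]{MR41:1976}; cf. \ref{example:lipschitzian}), $|\ap Dg|=\gamma|f|^{\gamma-1}|\ap Df|$ for $\|V\|$ almost all $z$ (read as $0$ where $f=0$ when $\gamma>1$, and trivial when $\gamma=q=1$). Since $\|\delta V\|=\|V\|\restrict|\mathbf{h}(V;\cdot)|$ and $\psi=\|V\|\restrict|\mathbf{h}(V;\cdot)|^\vdim$ by \ref{miniremark:situation_general}, H\"older's inequality with exponents $\vdim$ and $\beta$ gives $\Lpnorm{\|\delta V\|}{1}{g}\le\psi(\classification{U}{z}{f(z)\neq0})^{1/\vdim}\Lpnorm{\|V\|}{\beta}{g}\le\alpha\Lpnorm{\|V\|}{\beta}{g}$, using $g^\beta=|f|^{\gamma\beta}$; absorbing this term (legitimate because $\Lpnorm{\|V\|}{\beta}{g}<\infty$) yields $\Lpnorm{\|V\|}{\beta}{g}\le\isoperimetric{\vdim}(1-\alpha\isoperimetric{\vdim})^{-1}\gamma\int|f|^{\gamma-1}|\ap Df|\ud\|V\|$. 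A further H\"older with exponents $q$ and $q/(q-1)$ (trivial when $q=1$), together with the exponent identities $\Lpnorm{\|V\|}{\beta}{g}=(\int|f|^{\vdim q/(\vdim-q)}\ud\|V\|)^{(\vdim-1)/\vdim}$ and $(\gamma-1)q/(q-1)=\vdim q/(\vdim-q)$, then gives, after cancelling the common finite power $(\int|f|^{\vdim q/(\vdim-q)}\ud\|V\|)^{(q-1)/q}$ (if it vanishes the claim is trivial), the asserted $\Lpnorm{\|V\|}{\vdim q/(\vdim-q)}{f}\le\Gamma\Lpnorm{\|V\|}{q}{|\ap Df|}$. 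The only slightly delicate points are arranging through the two reductions that every integral is finite so that the absorption step is valid, and justifying the approximate chain rule for $|f|^\gamma$; neither is a real obstacle given the cited results.
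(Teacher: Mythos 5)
Your argument is correct and is essentially the paper's proof: the paper reduces to compact support, settles $q=1$ by applying Theorem~\ref{thm:sobolev_embedding} together with H\"older (absorbing the $\|\delta V\|$ term using $\alpha\isoperimetric{\vdim}<1$), and then handles $q>1$ by applying that case to $|f|^{\eta/\beta}$ where $\eta/\beta=q(\vdim-1)/(\vdim-q)$, which is exactly your $|f|^\gamma$. You have merely unfolded the two-step structure into a single application of \ref{thm:sobolev_embedding} to $|f|^\gamma$ followed by the two H\"older inequalities, and spelled out the truncation/compactness reduction that the paper leaves implicit.
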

\begin{proof}
	One may assume $\spt \| V \| \cap \spt f$ to be compact. \emph{The
	case $q = 1$} is then immediate from \ref{thm:sobolev_embedding} and
	H\"older's inequality and \emph{the case $q > 1$} follows by
	applying the special case with $f$ replaced by $|f|^{\eta/\beta}$
	where $\eta = \vdim q / ( \vdim-q)$ and $\beta = \vdim/ (
	\vdim-1)$, see e.g. \cite[4.5.15]{MR41:1976}.
\end{proof}
\begin{corollary} \label{corollary:dual_embedding}
	Suppose $\vdim$, $\adim$, $p$, $U$, $V$, $\psi$, and ``$\ap\!$'' are
	as in \ref{miniremark:situation_general}, $p = \vdim$, $0 \leq \alpha
	< \isoperimetric{\vdim}^{-1}$, $\psi ( U )^{1/\vdim} \leq \alpha$, $1
	\leq q \leq \vdim$, $q>1$ if $\vdim > 1$, $1 \leq r \leq \infty$,
	$1/q+1/r=1$, $1 \leq s \leq \infty$, $1/s = 1/q-1/\vdim$, and $T \in
	\mathscr{D}^0 (U)$.

	Then
	\begin{gather*}
		\nuqastnorm{s}{V}{T} \leq \Gamma \sup \{ T ( \theta ) \with
		\text{$\theta \in \mathscr{D}^0 ( U )$ and $\Lpnorm{\| V
		\|}{r}{ \theta } \leq 1$} \}
	\end{gather*}
	where $\Gamma = \isoperimetric{\vdim} ( 1 - \alpha
	\isoperimetric{\vdim} )^{-1}$ if $\vdim = 1$ and
	$\Gamma = \isoperimetric{\vdim} ( 1 - \alpha
	\isoperimetric{\vdim} )^{-1} (1-1/\vdim)/(1-1/q)$ if $\vdim > 1$.
\end{corollary}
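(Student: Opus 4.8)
The plan is to deduce the estimate directly from the Sobolev inequality \ref{corollary:sobolev} (and, in the borderline case $\vdim=1$, from \ref{remark:embedding_1d}) combined with the positive homogeneity of $T$. The key observation is that if $s'$ denotes the conjugate exponent of $s$, then the Sobolev exponent associated with $s'$ is precisely $r$: from $1/s+1/s'=1$ and $1/s=1/q-1/\vdim$ one gets $1/s'=1/r+1/\vdim$, hence $1/s'-1/\vdim=1/r$ and so $\vdim s'/(\vdim-s')=r$. One also checks $1\leq s'<\vdim$: indeed $1/s'=1-1/q+1/\vdim\leq 1$ since $q\leq\vdim$, and $s'=q\vdim/(q\vdim-\vdim+q)<\vdim$ since $q>1$ when $\vdim>1$ (whereas $q=1$ and $s'=1$ when $\vdim=1$). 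Finally, replacing $q$ by $s'$ in the constant of \ref{corollary:sobolev}, the factor $s'(\vdim-1)/(\vdim-s')$ simplifies to $q(\vdim-1)/(\vdim(q-1))=(1-1/\vdim)/(1-1/q)$, which is exactly the value $\Gamma$ claimed here; for $\vdim=1$ the constant in \ref{remark:embedding_1d} already equals the claimed $\Gamma$.

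Next I would fix $\theta\in\mathscr{D}^0(U)$ with $\nuqnorm{s'}{V}{\theta}=\Lpnorm{\| V \|}{s'}{|\ap D\theta|}\leq 1$ and verify that $\theta$ meets the hypotheses of \ref{corollary:sobolev} (respectively \ref{remark:embedding_1d}) with $s'$ playing the role of $q$ there. The set $\classification{\spt \| V \|}{z}{|\theta(z)|\geq t}$ is contained in the compact set $\spt\theta$ for every $0<t<\infty$, and $\psi(\classification{U}{z}{\theta(z)\neq 0})^{1/\vdim}\leq\psi(U)^{1/\vdim}\leq\alpha$ by monotonicity of $\psi$ and the hypothesis $\psi(U)^{1/\vdim}\leq\alpha$. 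Hence the embedding gives $\Lpnorm{\| V \|}{r}{\theta}\leq\Gamma\,\nuqnorm{s'}{V}{\theta}\leq\Gamma$, using $\vdim s'/(\vdim-s')=r$ (and recalling that $V\in\RVar_\vdim(U)$, so the two forms of $\nuqnorm{s'}{V}{\cdot}$ in \ref{def:nuqnorm} agree on functions of class $1$).

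It then remains to combine this with the linearity of $T$: since $\theta/\Gamma\in\mathscr{D}^0(U)$ and $\Lpnorm{\| V \|}{r}{\theta/\Gamma}\leq 1$, we obtain $T(\theta)=\Gamma\,T(\theta/\Gamma)\leq\Gamma\sup\{T(\phi)\with\phi\in\mathscr{D}^0(U),\ \Lpnorm{\| V \|}{r}{\phi}\leq 1\}$, and taking the supremum over all admissible $\theta$ yields the asserted bound on $\nuqastnorm{s}{V}{T}$. I do not anticipate a genuine obstacle: the argument is a short duality and homogeneity manipulation, and the only points requiring care are the exponent and constant bookkeeping above — in particular the strict inequality $s'<\vdim$ that makes \ref{corollary:sobolev} applicable — together with the separate treatment of the degenerate case $\vdim=1$, where $r=s=\infty$ and $s'=1$.
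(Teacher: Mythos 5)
Your proof is correct and follows exactly the approach the paper uses: apply \ref{remark:embedding_1d} when $\vdim=1$ and \ref{corollary:sobolev} with $q$ replaced by $s'=s/(s-1)$ when $\vdim>1$, then normalize $\theta$ by $\Gamma$ to deduce the dual estimate. Your exponent bookkeeping ($\vdim s'/(\vdim-s')=r$, $1\leq s'<\vdim$) and constant simplification ($s'(\vdim-1)/(\vdim-s')=(1-1/\vdim)/(1-1/q)$) are the verifications the paper leaves implicit.
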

\begin{proof}
	The assertion is readily verified by means of
	\ref{remark:embedding_1d} if $\vdim=1$ and \ref{corollary:sobolev}
	with $q$ replaced by $s/(s-1)$ if $\vdim > 1$.
\end{proof}
\begin{remark} \label{remark:dual_embedding}
	Suppose $U$ is an open subset of $\rel^\adim$, $1 \leq q \leq \infty$,
	$1 \leq r \leq \infty$, $1/q+1/r=1$, $\phi$ is a Radon measure on $U$,
	$T \in \mathscr{D}_0 ( U )$, and
	\begin{gather*}
		M = \sup \{ T ( \theta ) \with \text{$\theta \in \mathscr{D}^0
		( U )$ and $\Lpnorm{\phi}{r}{\theta} \leq 1$} \}.
	\end{gather*}
	If $q = 1$ then $M < \infty$ if and only if $T$ is
	representable by integration, $\spt T \subset \spt \phi$ and $\| T
	\| (U) < \infty$ by \cite[4.1.5]{MR41:1976}. In this case $M = \| T \|
	( U )$. If $q >1$ then $M < \infty$ if and only if there exists $g \in
	\Lp{q}( \| V \| )$ satisfying
	\begin{gather*}
		T ( \theta ) = \tint{}{} \theta g \ud \phi \quad \text{for
		$\theta \in \mathscr{D}^0 ( U )$}
	\end{gather*}
	by \cite[2.5.7]{MR41:1976} as $\mathscr{D}^0 ( U )$ is $\phi_{(r)}$
	dense in $\Lp{r} ( \phi )$. In this case $g$ is $\| V \|$ almost
	unique and $M = \Lpnorm{\phi}{q}{g}$.

	In particular, one may take $\phi = \| V \|$ in
	\ref{corollary:dual_embedding}.
\end{remark}
\begin{lemma} \label{lemma:stampacchia_iteration}
	Suppose $j \in \nat$, $0 \leq c_i < \infty$, $0 < \alpha_i < \infty$
	and $1 < \beta_i < \infty$ for $i=1, \ldots, j$, $\phi : \{ t \with 0
	\leq t < \infty \} \to \rel$ is a nonnegative and nonincreasing
	function satisfying
	\begin{gather*}
		\phi (t) \leq \tsum{i=1}{j} c_i (t-s)^{-\alpha_i} \phi_i
		(s)^{\beta_i} \quad \text{whenever $0 \leq s < t < \infty$},
	\end{gather*}
	$\gamma = \sup \{ 2^{\alpha_i/(\beta_i-1)} \with i = 1, \ldots, j \}$,
	and $d = 2 \sup \{ (j c_i \gamma )^{1/\alpha_i}
	\phi(0)^{(\beta_i-1)/\alpha_i} \with i = 1, \ldots, j \}$.

	Then $\lim_{t \to 0+} \phi (t) = 0$ if $d=0$ and $\phi (d) = 0$ if $d
	> 0$.
\end{lemma}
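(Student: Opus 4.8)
The plan is to run the classical De Giorgi--Stampacchia fast-decay iteration, arranging a single geometric decay rate $\gamma$ so that it simultaneously dominates all $j$ superlinear terms. Write $y_0 = \phi(0) \in \rel$, which is nonnegative and finite. First I would dispose of the degenerate cases. If $y_0 = 0$, then $\phi \equiv 0$ by nonnegativity and monotonicity, so both conclusions hold trivially. If $d = 0$ and $y_0 > 0$, then the very definition of $d$ forces $c_i = 0$ for $i = 1, \ldots, j$ (using $j \geq 1$, $\gamma > 1$, $\alpha_i > 0$, $\beta_i > 1$), whence the hypothesis yields $\phi(t) \leq 0$, that is $\phi(t) = 0$, for every $t > 0$, so $\lim_{t \to 0+}\phi(t) = 0$. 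It remains to treat $y_0 > 0$ and $d > 0$.

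Set $t_k = d(1 - 2^{-k})$ for $k = 0, 1, 2, \ldots$, so that $t_0 = 0$, $t_k \uparrow d$, and $t_{k+1} - t_k = d\,2^{-k-1}$; note also $\gamma > 1$ because each $2^{\alpha_i/(\beta_i - 1)} > 1$. The core of the argument is the estimate
\[
	\phi(t_k) \leq y_0\,\gamma^{-k} \qquad \text{for } k = 0, 1, 2, \ldots,
\]
which I would prove by induction on $k$. The case $k = 0$ is immediate. For the inductive step, apply the hypothesis with $s = t_k$ and $t = t_{k+1}$, then use $\phi(t_k) \geq 0$ together with the inductive bound and monotonicity of $x \mapsto x^{\beta_i}$ on $[0,\infty)$ to get
\[
	\phi(t_{k+1}) \leq \sum_{i=1}^{j} c_i\, d^{-\alpha_i}\, 2^{\alpha_i(k+1)}\, y_0^{\beta_i}\, \gamma^{-k\beta_i},
\]
and verify term by term that the right-hand side is at most $y_0\,\gamma^{-(k+1)}$: after dividing through by $y_0\gamma^{-(k+1)}$, the $i$-th summand equals $\bigl(c_i d^{-\alpha_i} y_0^{\beta_i - 1} 2^{\alpha_i}\gamma\bigr)\bigl(2^{\alpha_i}\gamma^{1-\beta_i}\bigr)^{k}$, where the $k$-th power is $\leq 1$ since $\gamma^{\beta_i - 1} \geq 2^{\alpha_i}$ (this is exactly what $\gamma \geq 2^{\alpha_i/(\beta_i-1)}$ says), and the constant in front is $\leq 1/j$ because $d \geq 2(j c_i \gamma)^{1/\alpha_i} y_0^{(\beta_i - 1)/\alpha_i}$ rearranges to $d^{\alpha_i} \geq 2^{\alpha_i} j c_i \gamma\, y_0^{\beta_i - 1}$. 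Summing over $i = 1, \ldots, j$ gives the desired bound, completing the induction.

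With the claim established, monotonicity of $\phi$ and $t_k < d$ give $0 \leq \phi(d) \leq \phi(t_k) \leq y_0\gamma^{-k}$ for every $k$, and letting $k \to \infty$ yields $\phi(d) = 0$. I do not expect a genuine obstacle: the only point requiring care is the exponent bookkeeping in the inductive step — in particular that one fixed geometric rate can absorb all $j$ terms at once, which is precisely the information packaged into the definitions of $\gamma$ and $d$ — together with the routine, separate handling of the two degenerate cases $y_0 = 0$ and $d = 0$.
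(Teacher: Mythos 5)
Your proof is correct and follows essentially the same De~Giorgi--Stampacchia iteration as the paper, using the identical sequence $t_k = d(1-2^{-k})$ and the identical inductive bound $\phi(t_k)\le\phi(0)\gamma^{-k}$, with the same exponent bookkeeping justifying that the definitions of $\gamma$ and $d$ control all $j$ terms at once. The only difference is that you make explicit the degenerate cases $\phi(0)=0$ and $d=0$ (where $d=0$ with $\phi(0)>0$ forces all $c_i=0$), which the paper's proof simply skips past with ``Assuming $d>0$''.
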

\begin{proof}
	Assuming $d > 0$ and defining $t_k = d - 2^{-k} d$, it will be shown
	inductively
	\begin{gather*}
		\phi ( t_k ) \leq \phi (0) \gamma^{-k} \quad \text{whenever $k
		\in \nat \cup \{ 0 \}$}.
	\end{gather*}
	Trivially, the inequality holds for $k=0$. If it holds for some $k$
	then
	\begin{gather*}
		\begin{aligned}
			\phi ( t_{k+1} ) & \leq \tsum{i=1}{j} c_i
			2^{(k+1)\alpha_i} d^{-\alpha_i} \phi ( t_k )^{\beta_i}
			\\
			& \leq \tsum{i=1}{j} ( 2^{\alpha_i} \gamma^{1-\beta_i}
			)^k ( d^{-\alpha_i} 2^{\alpha_i} c_i
			\phi(0)^{\beta_i-1} ) \phi(0) \gamma^{-k} \leq \phi(0)
			\gamma^{-k-1}.
		\end{aligned}
	\end{gather*}
	Since $\gamma > 1$, the conclusion follows.
\end{proof}
\begin{remark} \label{remark:stampacchia_iteration}
	If $j=1$ then $d = 2^{\beta_1/(\beta_1-1)} c^{1/\alpha_1}
	\phi(0)^{(\beta_1-1)/\alpha_1}$ and the assertion can be found in
	\cite[Lemma 4.1\,(i)]{MR0251373} or \cite[Lemma B.1]{MR567696}.
\end{remark}
\begin{definition} \label{def:multilinear}
	Suppose $V$ and $W$ are real vectorspaces and $i$ is a positive
	integer. Denote by $\bigotimes^i ( V,W )$ the vectorspace of all $i$
	linear functions mapping the $i$ fold product $V^i$ into $W$.
	Whenever $f \in \bigotimes^i (V,W)$ there exists a unique linear map
	$g : \bigotimes_i V \to W$ with
	\begin{gather*}
		f (v_1, \ldots, v_i) = g ( v_1 \otimes \cdots \otimes v_i )
		\quad \text{whenever $v_1, \ldots, v_i \in V$}.
	\end{gather*}
	Associating $g$ with $f$ yields a linear isomorphism
	\begin{gather*}
		{\textstyle \bigotimes^i (V,W)} \simeq \Hom \big (
		{\textstyle\bigotimes_i V}, W \big ).
	\end{gather*}
	Whenever $f \in \bigotimes^i (V,W)$ corresponds to $g \in \Hom \big (
	\bigotimes_i V, W \big )$ under this isomorphism $g ( \xi )$ for $\xi
	\in \bigotimes_i V$ will be denoted alternately by $\left < \xi, g
	\right >$ or $\left < \xi, f \right >$. If $V$ and $W$ are normed
	spaces let
	\begin{gather*}
		\| f \| = \sup \{ | f (v_1, \ldots, v_i) | \with \text{$v_j
		\in V$ and $|v_j| \leq 1$ for $j=1,\ldots,i$} \}
	\end{gather*}
	whenever $f \in \bigotimes^i (V,W)$.
\end{definition}
\begin{remark}
	The preceding definition is modelled on the corresponding
	notation for alternating and symmetric forms, see \cite[1.4.1, 1.10.1,
	1.10.5]{MR41:1976}.
\end{remark}
\begin{miniremark} \label{miniremark:bilinear_form}
	Suppose $\vdim, \adim \in \nat$, $\vdim \leq \adim$, $U$ is an open
	subset of $\rel^\adim$, $0 < \nu \leq \mu < \infty$, and $A$ is a
	function whose domain equals $U \times \grass{\adim}{\vdim}$ such that
	whenever $(z,S) \in U \times \grass{\adim}{\vdim}$
	\begin{gather*}
		A(z,S) \in {\textstyle \bigotimes^2 \Hom (S,\rel)}, \quad \| A
		(z,S) \| \leq \mu, \\
		\nu | \sigma |^2 \leq \left < \sigma \otimes \sigma, A(z,S)
		\right > \quad \text{for $\sigma \in \Hom ( S, \rel )$},
	\end{gather*}
	see \ref{def:multilinear}. Whenever $V$ measures $U \times
	\grass{\adim}{\vdim}$ the function $A$ is said to be $V$ measurable if
	and only if
	\begin{gather*}
		\text{$\left < ( \sigma |S ) \otimes ( \tau|S ), A(z,S) \right
		>$ depends $V$ measurably on $(z,S)$}
	\end{gather*}
	whenever $\sigma, \tau \in \Hom ( \rel^\adim, \rel )$.
\end{miniremark}
\begin{example} \label{example:V_subharmonic}
	If $A$ is such that $\left < \sigma \otimes \tau , A(z,S) \right > =
	\sigma \bullet \tau$ whenever $z \in U$, $S \in \grass{\adim}{\vdim}$,
	$\sigma, \tau \in \Hom (S,\rel)$ one may take $\nu = \mu = 1$ and $f
	\in \mathscr{E}^0 ( U )$ is $V$ subharmonic in the sense of Allard
	\cite[7.5\,(1)]{MR0307015} if and only if 
	\begin{gather*}
		\tint{}{} \left < (D \theta (z)|S) \otimes ( D f (z)|S) ,
		A(z,S) \right > \ud V (z,S) \leq 0 \quad \text{for $\theta \in
		\mathscr{D}^0(U)^+$}.
	\end{gather*}
\end{example}
\begin{example} \label{ex:some_lap}
	Suppose $\vdim, \adim \in \nat$, $\vdim \leq \adim$, $U$ is an open
	subset of $\rel^\vdim$, $V \in \Var_\vdim ( U )$, $\| \delta V \|$ is
	a Radon measure, and $f : U \to \rel$ is a Lipschitzian function.

	Then the following two statements hold:
	\begin{enumerate}
		\item \label{item:some_lap:c2} If $f$ is of class
		$\class{2}$ and $\eta ( V ; \cdot ) : U \to
		\mathbf{S}^{\adim-1}$ is a $\| \delta V \|$ measurable
		function such that
		\begin{gather*}
			( \delta V ) ( g ) = \tint{}{} g (z) \bullet \eta ( V;
			z ) \ud \| \delta V \| z \quad \text{whenever $g \in
			\mathscr{D} ( U, \rel^\adim)$},
		\end{gather*}
		see Allard \cite[4.3]{MR0307015}, then
		\begin{gather*}
			\begin{aligned}
				& \tint{}{} ( D \theta (z) |S ) \bullet ( Df
				(z) | S ) \ud V (z,S) \\
				& \qquad = \tint{}{} \theta \left < \eta
				(V;\cdot), D f \right > \ud \| \delta V \| -
				\tint{}{} \theta (z) \trace ( D^2 f (z)| S
				\times S ) \ud V (z,S)
			\end{aligned}
		\end{gather*}
		whenever $\theta \in \mathscr{D}^0 ( U )$.
		\item \label{item:some_lap:convex} If $U$ and $f$ are convex
		and $V \in \RVar_\vdim ( U )$ then
		\begin{gather*}
			\tint{}{} \ap D \theta \bullet \ap Df \ud \| V \| \leq
			( \Lip f ) \| \delta V \| (\theta) \quad
			\text{whenever $\theta \in \mathscr{D}^0 ( U )^+$}
		\end{gather*}
		where ``$\ap\!$'' is as in \ref{miniremark:situation_general}.
	\end{enumerate}

	Similar to Allard \cite[7.5]{MR0307015}, noting $\trace ( D^2 f (z) |
	S \times S ) = D ( \grad f) (z) \bullet \project{S}$ for $(z,S) \in U
	\times \grass{\adim}{\vdim}$, part \eqref{item:some_lap:c2} may be
	obtained by computing $\delta V ( \theta \grad f )$. Part
	\eqref{item:some_lap:convex} may be reduced to the case that $f$ is of
	class $\infty$, which is a consequence of \eqref{item:some_lap:c2}, by
	means of convolution and \cite[4.5\,(3)]{snulmenn.decay}.
\end{example}
\begin{theorem} \label{thm:global_maximum_estimates}
	Suppose $\vdim$, $\adim$, $p$, $U$, $V$, $\psi$, and
	``$\ap\!$'' are as in \ref{miniremark:situation_general}, $p = \vdim$,
	$\vdim < q \leq \infty$, $q \geq 2$, $0 \leq \alpha <
	\isoperimetric{\vdim}^{-1}$, $0 \leq M < \infty$, $f : U \to \rel$ is
	locally Lipschitzian,
	\begin{gather*}
		\| V \| ( \classification{U}{z}{f(z) > 0} ) \leq M, \quad
		\psi ( \classification{U}{z}{f(z)>0} )^{1/\vdim} \leq \alpha,
		\\
		\text{$\spt \| V \| \cap \{ z \with f(z) \geq t \}$ is compact
		for $0 < t < \infty$},
	\end{gather*}
	$\nu$, $\mu$, and $A$ are related to $\vdim$, $\adim$, and $U$ as in
	\ref{miniremark:bilinear_form}, $A$ is $V$ measurable, and $T \in
	\mathscr{D}_0 ( U )$ satisfies
	\begin{gather*}
		\tint{}{} \left < \ap D \theta (z) \otimes \ap Df(z), A(z,S)
		\right > \ud V (z,S) \leq T ( \theta ) \quad \text{for $\theta
		\in \mathscr{D}^0 ( U )^+$}.
	\end{gather*}

	Then
	\begin{gather*}
		\Lpnorm{\| V \|}{\infty}{f^+} \leq \Gamma M^{1/\vdim-1/q}
		\nu^{-1} \nuqastnorm{q}{V}{T}
	\end{gather*}
	where $\Gamma = \isoperimetric{\vdim}(1-\alpha
	\isoperimetric{\vdim})^{-1} 2^{(1-1/q)/(1/\vdim-1/q)}$.
\end{theorem}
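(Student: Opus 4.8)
The proof will be a De\,Giorgi--Stampacchia iteration on the superlevel sets $\classification{U}{z}{f(z)>k}$, inserting the truncations $(f-s)^+$ into the hypothesised differential inequality and closing the recursion by the Sobolev--Poincar\'e estimates of this section (\ref{corollary:sobolev} for $\vdim>1$, \ref{remark:embedding_1d} for $\vdim=1$). Throughout put $c_0=\isoperimetric{\vdim}(1-\alpha\isoperimetric{\vdim})^{-1}$, $N=\nuqastnorm{q}{V}{T}$, and $a(k)=\measureball{\| V \|}{\classification{U}{z}{f(z)>k}}$, a nonnegative nonincreasing function of $k\ge 0$ with $a(0)\le M$; recall $V\in\RVar_\vdim(U)$ by \ref{miniremark:situation_general}.

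\textbf{Energy estimate.} For $s>0$ the function $(f-s)^+$ is locally Lipschitzian, nonnegative, and $\spt\| V \|\cap\spt(f-s)^+$ is a closed subset of the compact set $\spt\| V \|\cap\classification{U}{z}{f(z)\ge s}$, hence compact and contained in a fixed compact $K$ on which $f$ is Lipschitzian. By \ref{corollary:nuq_density} it is approximable in $\Lpnorm{\| V \|}{\infty}{\cdot}+\nuqnorm{q/(q-1)}{V}{\cdot}$ by nonnegative members of $\mathscr{D}^0(U)$ supported in $K$; passing to the limit in the hypothesised inequality (using $\|A\|\le\mu$, $\Lpnorm{\| V \|\restrict K}{q}{|\ap Df|}<\infty$, and $N<\infty$, cf.~\ref{remark:nuqastnorm}), noting that $\ap D(f-s)^+$ vanishes $\| V \|$ almost everywhere off $\classification{U}{z}{f(z)>s}$ and equals $\ap Df$ there, together with $\langle\sigma\otimes\sigma,A(z,S)\rangle\ge\nu|\sigma|^2$, one gets
\[
	\nu\tint{\classification{U}{z}{f(z)>s}}{}|\ap Df|^2\ud\| V \|\le T((f-s)^+)\le N\,\nuqnorm{q/(q-1)}{V}{(f-s)^+}.
\]
Since $q\ge 2$, H\"older's inequality gives $\nuqnorm{q/(q-1)}{V}{(f-s)^+}\le(\tint{\classification{U}{z}{f(z)>s}}{}|\ap Df|^2\ud\| V \|)^{1/2}a(s)^{1/2-1/q}$, hence $\tint{\classification{U}{z}{f(z)>s}}{}|\ap Df|^2\ud\| V \|\le\nu^{-2}N^2a(s)^{1-2/q}$ and, by Cauchy--Schwarz, $\tint{\classification{U}{z}{f(z)>s}}{}|\ap Df|\ud\| V \|\le\nu^{-1}N\,a(s)^{1-1/q}$.

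\textbf{Closing the recursion.} Applying the Sobolev--Poincar\'e inequality to $g=(f-s)^+$ with $s>0$ (the smallness of $\psi$ on $\classification{U}{z}{g(z)\ne 0}\subset\classification{U}{z}{f(z)>0}$ and the compactness of the level sets $\classification{\spt\| V \|}{z}{|g(z)|\ge t}=\classification{\spt\| V \|}{z}{f(z)\ge s+t}$ hold) yields $\Lpnorm{\| V \|}{\beta}{(f-s)^+}\le c_0\tint{\classification{U}{z}{f(z)>s}}{}|\ap Df|\ud\| V \|$, where $\beta=\infty$ for $\vdim=1$ (use \ref{remark:embedding_1d}, constant $c_0$) and $\beta=\vdim/(\vdim-1)$ for $\vdim>1$ (use \ref{corollary:sobolev} with its exponent $q$ set to $1$, constant $c_0$). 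For $0<s<t$ one has $(f-s)^+>t-s$ on $\classification{U}{z}{f(z)>t}$. When $\vdim=1$ this gives, whenever $a(t)>0$, that $t-s\le c_0\nu^{-1}N\,a(s)^{1-1/q}\le c_0\nu^{-1}NM^{1-1/q}$; letting $s\downarrow0$ yields $\Lpnorm{\| V \|}{\infty}{f^+}\le c_0\nu^{-1}NM^{1/\vdim-1/q}\le\Gamma\nu^{-1}NM^{1/\vdim-1/q}$. When $\vdim>1$ it gives
\[
	a(t)\le(c_0\nu^{-1}N)^{\beta}(t-s)^{-\beta}a(s)^{\beta(1-1/q)}\qquad\text{for }0<s<t,
\]
with $\beta(1-1/q)=(1-1/q)/(1-1/\vdim)>1$ because $q>\vdim$. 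I would then apply \ref{lemma:stampacchia_iteration} (with $j=1$, $c_1=(c_0\nu^{-1}N)^{\beta}$, $\alpha_1=\beta$, $\beta_1=\beta(1-1/q)$) to $u\mapsto a(s_0+u)$ for fixed $s_0>0$: by \ref{remark:stampacchia_iteration} it vanishes at $d_{s_0}=2^{\beta_1/(\beta_1-1)}c_1^{1/\alpha_1}a(s_0)^{(\beta_1-1)/\alpha_1}$, and a direct computation gives $c_1^{1/\alpha_1}=c_0\nu^{-1}N$, $(\beta_1-1)/\alpha_1=1/\vdim-1/q$, $\beta_1/(\beta_1-1)=(1-1/q)/(1/\vdim-1/q)$, so that $d_{s_0}\le\Gamma\nu^{-1}NM^{1/\vdim-1/q}$; hence $\Lpnorm{\| V \|}{\infty}{f^+}\le s_0+\Gamma\nu^{-1}NM^{1/\vdim-1/q}$, and $s_0\downarrow0$ finishes the proof. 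The main obstacle is the energy step: rigorously justifying that the merely locally Lipschitzian $(f-s)^+$ may be used as a test function in the given inequality, i.e.~the density/approximation argument of \ref{corollary:nuq_density} and the attendant integrability and support bookkeeping; the remaining steps are the standard Stampacchia scheme and constant chasing.
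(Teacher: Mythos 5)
Your proposal is correct and follows essentially the same route as the paper's proof: test the differential inequality with the truncations $(f-s)^+$ (justified via \ref{corollary:nuq_density} and \ref{remark:nuqastnorm}), pass through an $L^2$ estimate and Cauchy--Schwarz to obtain $\tint{\{f>s\}}{}|\ap Df|\ud\| V \|\le\nu^{-1}Na(s)^{1-1/q}$, feed this into \ref{thm:sobolev_embedding}/\ref{corollary:sobolev}, and close the Stampacchia iteration with \ref{lemma:stampacchia_iteration}, \ref{remark:stampacchia_iteration} ($j=1$). The only cosmetic difference is that the paper extends the level-set recursion to $s=0$ by letting $s\to 0+$ before invoking the iteration lemma, whereas you fix $s_0>0$, iterate on the shifted function $u\mapsto a(s_0+u)$, and let $s_0\downarrow 0$ at the end; these are equivalent.
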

\begin{proof}
	Assume $\gamma = \nuqastnorm{q}{V}{T} < \infty$, and replacing
	$A(z,S)$ and $T$ by $\nu^{-1} A(z,S)$ and $\nu^{-1}T$ also $\nu=1$.

	Let $1 \leq r \leq 2$ such that $1/q+1/r=1$. Define $f_t : U
	\to \rel$ by $f_t(z) = \sup \{ f(z)-t, 0 \}$ for $z \in U$, $0 \leq t
	< \infty$ and $\phi : \{ t \with 0 \leq t < \infty \} \to \rel$ by
	\begin{gather*}
		\phi (t) = \| V \| ( \classification{U}{z}{f_t(z) > 0} ) \quad
		\text{whenever $0 \leq t < \infty$}.
	\end{gather*}
	For $0 < s < t < \infty$ one estimates, noting
	\ref{corollary:nuq_density} and \ref{remark:nuqastnorm},
	\begin{gather*}
		\begin{aligned}
			& \Lpnorm{\| V \|}{2}{|\ap Df_s|}^2 \leq \tint{}{}
			\left < \ap D f_s (z) \otimes \ap Df(z), A(z,S) \right
			> \ud V (z,S) \\
			& \qquad \leq \gamma \nuqnorm{r}{V}{f_s} \leq \gamma
			\phi(s)^{1/2-1/q} \Lpnorm{\| V \|}{2}{| \ap Df_s|} <
			\infty,
		\end{aligned}
	\end{gather*}
	hence, using H\"older's inequality,
	\begin{gather*}
		\Lpnorm{\| V \|}{1}{|\ap Df_s|} \leq \gamma \phi (s)^{1-1/q}.
	\end{gather*}
	Considering the limit $s \to 0+$, one obtains the same inequality for
	$s=0$.

	\emph{The case $\vdim=1$} now follows from \ref{thm:sobolev_embedding}
	and H\"older's inequality. To treat \emph{the case $\vdim>1$}, define
	\begin{gather*}
		\beta = \vdim/(\vdim-1), \quad \Delta =
		\isoperimetric{\vdim}(1-\alpha \isoperimetric{\vdim})^{-1},
	\end{gather*}
	and use \ref{corollary:sobolev} with $\xi$ replaced by $1$ to obtain
	\begin{gather*}
		\phi (t)^{1/\beta} (t-s) \leq \Lpnorm{\| V \|}{\beta}{f_s}
		\leq \Delta \Lpnorm{\| V \|}{1}{ | \ap Df_s |} \leq \Delta
		\phi(s)^{1-1/q} \gamma, \\
		\phi (t) \leq ( \Delta \gamma)^\beta (t-s)^{-\beta}
		\phi(s)^{\beta(1-1/q)}
	\end{gather*}
	whenever $0 \leq s < t < \infty$. The conclusion then follows from
	\ref{lemma:stampacchia_iteration} and
	\ref{remark:stampacchia_iteration} with $j$, $c_1$, $\alpha_1$, and
	$\beta_1$ replaced by $1$, $(\Delta \gamma)^\beta$, $\beta$, and
	$\beta (1-1/q)$.
\end{proof}
\begin{example}
	Suppose $\adim \in \nat$, $B \subset U \subset \rel^\adim$, $U$ is
	open, $B$ is bounded and closed relative to $U$, and $g : \Clos B \to
	\rel$ is continuous with
	\begin{gather*}
		g(z) \leq 0 \quad \text{whenever $z \in ( \Clos B ) \without
		B \subset \Bdry U$}.
	\end{gather*}
	Then $\classification{B}{z}{g(z) \geq t}$ is compact for $0 < t <
	\infty$.

	In particular, taking $B = \spt \| V \|$ yields a sufficient condition
	for the compactness hypotheses in \ref{thm:global_maximum_estimates}.
\end{example}
\begin{corollary} \label{corollary:emb_loo}
	Suppose $\vdim$, $\adim$, $p$, $U$, $V$, $\psi$, and ``$\ap\!$'' are
	as in \ref{miniremark:situation_general}, $p = \vdim$, $1 < \vdim < q
	\leq \infty$, $0 \leq \alpha < \isoperimetric{\vdim}^{-1}$, $0 \leq M
	< \infty$, $f : U \to \rel$ is locally Lipschitzian,
	\begin{gather*}
		\| V \| ( \classification{U}{z}{f(z) \neq 0} ) \leq M, \quad
		\psi ( \classification{U}{z}{f(z) \neq 0})^{1/\vdim} \leq
		\alpha, \\
		\text{$\classification{\spt \| V \|}{z}{|f(z)| \geq t}$ is
		compact for $0 < t < \infty$}.
	\end{gather*}

	Then
	\begin{gather*}
		\Lpnorm{\| V \|}{\infty}{f} \leq \Gamma M^{1/\vdim-1/q}
		\Lpnorm{ \| V \|}{q}{| \ap Df |}
	\end{gather*}
	where $\Gamma = \isoperimetric{\vdim}(1-\alpha
	\isoperimetric{\vdim})^{-1} 2^{(1-1/q)/(1/\vdim-1/q)}$.
\end{corollary}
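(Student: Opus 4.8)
The plan is to obtain the estimate from the global maximum estimate \ref{thm:global_maximum_estimates}, applied once to $f$ and once to $-f$ with the ``identity'' bilinear form. First I would take $\nu = \mu = 1$ and let $A$ be the function described in \ref{example:V_subharmonic}, so that $\langle \sigma \otimes \tau, A(z,S) \rangle = \sigma \bullet \tau$ for $\sigma, \tau \in \Hom(S,\rel)$ and $A$ is $V$ measurable; note that $1 < \vdim < q$ forces $q > 2$, so the hypothesis $q \geq 2$ of \ref{thm:global_maximum_estimates} holds automatically, the number $\Gamma$ there equals the one in the present statement, and $\nu^{-1} = 1$.

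Next I would define $T \in \mathscr{D}_0 ( U )$ by
\begin{gather*}
	T ( \theta ) = \tint{}{} \langle ( \ap D \theta (z) | S ) \otimes ( \ap D f (z) | S ), A (z,S) \rangle \ud V (z,S) \quad \text{for $\theta \in \mathscr{D}^0 ( U )$}.
\end{gather*}
This is a scalar distribution on $U$: for $\theta$ supported in a compact $K \subset U$ the integral is bounded in absolute value by $( \sup | \ap D \theta | ) \tint{K}{} | \ap Df | \ud \| V \|$, which is finite since $f$ is locally Lipschitzian. With this choice the inequality required of $T$ in \ref{thm:global_maximum_estimates} holds with equality. To bound $\nuqastnorm{q}{V}{T}$, let $1/q + 1/r = 1$ and estimate, using the Cauchy--Schwarz inequality in $\Hom(S,\rel)$ pointwise and then H\"older's inequality on $\| V \|$,
\begin{gather*}
	T ( \theta ) \leq \tint{}{} | \ap D \theta (z) | \, | \ap Df (z) | \ud \| V \| z \leq \Lpnorm{\| V \|}{r}{ | \ap D \theta | } \, \Lpnorm{\| V \|}{q}{ | \ap Df | } = \nuqnorm{r}{V}{\theta} \, \Lpnorm{\| V \|}{q}{ | \ap Df | }
\end{gather*}
for $\theta \in \mathscr{D}^0 ( U )$, whence $\nuqastnorm{q}{V}{T} \leq \Lpnorm{\| V \|}{q}{ | \ap Df | }$; since the constraint set defining $\nuqastnorm{q}{V}{\cdot}$ is invariant under $\theta \mapsto - \theta$ and $\nuqnorm{r}{V}{-\theta} = \nuqnorm{r}{V}{\theta}$, the same bound holds for $- T$ in place of $T$.

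Finally I would verify that the remaining hypotheses of \ref{thm:global_maximum_estimates} transfer to $f$ and to $-f$. Since $\classification{U}{z}{ f(z) > 0 }$ and $\classification{U}{z}{ -f(z) > 0 } = \classification{U}{z}{ f(z) < 0 }$ are contained in $\classification{U}{z}{ f(z) \neq 0 }$, the bounds $\| V \| ( \cdot ) \leq M$ and $\psi ( \cdot )^{1/\vdim} \leq \alpha$ follow by monotonicity; and $\classification{\spt \| V \|}{z}{ f(z) \geq t }$, respectively $\classification{\spt \| V \|}{z}{ -f(z) \geq t }$, is relatively closed in $U$ and contained in the compact set $\classification{\spt \| V \|}{z}{ |f(z)| \geq t }$, hence compact. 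Applying \ref{thm:global_maximum_estimates} to $f$ (with $T$) and to $-f$ (with $-T$, noting $\ap D (-f) = - \ap Df$ and $(-f)^+ = f^-$) yields
\begin{gather*}
	\sup \{ \Lpnorm{\| V \|}{\infty}{f^+}, \Lpnorm{\| V \|}{\infty}{f^-} \} \leq \Gamma M^{1/\vdim - 1/q} \Lpnorm{\| V \|}{q}{ | \ap Df | }.
\end{gather*}
Since $\classification{U}{z}{ |f(z)| > t } = \classification{U}{z}{ f^+(z) > t } \cup \classification{U}{z}{ f^-(z) > t }$ for $0 \leq t < \infty$, one has $\Lpnorm{\| V \|}{\infty}{f} = \sup \{ \Lpnorm{\| V \|}{\infty}{f^+}, \Lpnorm{\| V \|}{\infty}{f^-} \}$, and the conclusion follows. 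There is no substantial obstacle; the only points needing care are the transfer of the level-set compactness hypothesis from $|f|$ to $-f$ and the reduction to $f^\pm$, both of which are bookkeeping.
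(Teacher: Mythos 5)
Your proof is correct and follows essentially the same route as the paper: the paper's argument is simply ``Assuming $f \geq 0$ and taking $\nu$, $\mu$, and $A$ as in \ref{example:V_subharmonic} the conclusion follows from \ref{thm:global_maximum_estimates}.'' The only difference is cosmetic --- the paper reduces to the case $f \geq 0$ (by replacing $f$ with $|f|$), whereas you apply \ref{thm:global_maximum_estimates} twice, to $f$ and to $-f$, and recombine via $f^\pm$; the choice of $T$, the bound on $\nuqastnorm{q}{V}{T}$ via Cauchy--Schwarz and H\"older, and the transfer of hypotheses are exactly the intended details.
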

\begin{proof}
	Assuming $f \geq 0$ and taking $\nu$, $\mu$, and $A$ as in
	\ref{example:V_subharmonic} the conclusion follows from
	\ref{thm:global_maximum_estimates}.
\end{proof}
\begin{theorem} \label{thm:local_maximum_estimates}
	Suppose $\vdim$, $\adim$, $p$, $U$, $V$, $\psi$, and ``$\ap$'' are as
	in \ref{miniremark:situation_general}, $p = \vdim$, $\vdim < q \leq
	\infty$, $q \geq 2$, $1 \leq r < \infty$, $0 \leq \alpha <
	\isoperimetric{\vdim}^{-1}$, $f : U \to \rel$ is locally Lipschitzian,
	$d : U \to \rel$ is a nonnegative function, $\Lip d \leq 1$, $0 <
	\delta < \infty$,
	\begin{gather*}
		\psi ( \classification{U}{z}{\text{$f(z)>0$ and
		$d(z)>0$}} )^{1/\vdim} \leq \alpha, \\
		\text{$\classification{\spt \| V \|}{z}{\text{$f(z) \geq t$
		and $d(z) \geq t$} }$ is compact for $0 < t < \infty$},
	\end{gather*}
	$\nu$, $\mu$, and $A$ are related to $\vdim$, $\adim$, and $U$ as in
	\ref{miniremark:bilinear_form}, $A$ is $V$ measurable, and $T \in
	\mathscr{D}_0 ( U )$ satisfies
	\begin{gather*}
		\tint{}{} \left < \ap D \theta (z) \otimes \ap Df(z), A (z,S)
		\right > \ud V (z,S) \leq T ( \theta ) \quad \text{for $\theta
		\in \mathscr{D}^0 ( U )^+$}.
	\end{gather*}

	Then there holds
	\begin{gather*}
		\eqLpnorm{\| V \| \restrict \{z\with d(z) \geq
		\delta\}}{\infty}{f^+} \leq \Gamma \big ( \delta^{-\vdim/r}
		\Lpnorm{\| V \|}{r}{f^+} + \delta^{1-\vdim/q} \nu^{-1}
		\nuqastnorm{q}{V}{T} \big )
	\end{gather*}
	where $\Gamma$ is a positive, finite number depending only on $\vdim$,
	$q$, $\alpha$, and $\mu/\nu$.
\end{theorem}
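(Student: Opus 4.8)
The plan is to reduce the local estimate to the global estimate \ref{thm:global_maximum_estimates} by a cutoff argument. First I would introduce, for each $\delta/2 \leq s < t \leq \delta$ (or more conveniently on a dyadic scale of radii adapted to the sublevel sets of $d$), a Lipschitzian cutoff function $\chi$ built from $d$, for instance $\chi(z) = \inf\{1, \sup\{0, (\,d(z)-s\,)/(t-s)\}\}$, so that $\chi = 1$ on $\{d \geq t\}$, $\chi = 0$ on $\{d \leq s\}$, $0 \leq \chi \leq 1$, and $\Lip \chi \leq (t-s)^{-1}$. The idea is to test the differential inequality satisfied by $f$ against $\chi^2 \theta$ in place of $\theta$ and thereby derive a differential inequality of the same type for the truncated-and-cut-off function $g_{s} := \chi \cdot f^+$, with a right-hand side controlled by $\nuqastnorm{q}{V}{T}$ plus an extra term of the form $(t-s)^{-1}\Lpnorm{\| V \|}{q}{f^+}$ (and a lower-order term from $\Lip d \leq 1$ times $|\ap Df|$, which is absorbed using the ellipticity constant $\nu$ and a Cauchy–Schwarz/Young inequality with the $\mu/\nu$ ratio entering the constants).

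Once such a cutoff inequality is in place, I would apply \ref{thm:global_maximum_estimates} to $g_s$ on $U$: the compactness hypothesis on $\{f \geq t\} \cap \{d \geq t\}$ together with the cutoff makes $\spt \| V \| \cap \{ g_s \geq t' \}$ compact for every $t' > 0$, the density bound $\psi(\{f > 0, d > 0\})^{1/\vdim} \leq \alpha$ gives the required smallness of $\psi$ on $\{g_s > 0\}$, and the mass bound is supplied by $M = \| V \|(\{g_s > 0\})$, which is finite since $f^+ \in \Lp{r}(\|V\|)$. This yields
\begin{gather*}
	\eqLpnorm{\| V \| \restrict \{d \geq t\}}{\infty}{f^+}
	\leq \eqLpnorm{\| V \|}{\infty}{g_s}
	\leq \Gamma_0 \, \| V \|(\{g_s > 0\})^{1/\vdim - 1/q}
	\big( (t-s)^{-1} \Lpnorm{\| V \|}{q}{f^+} + \nu^{-1} \nuqastnorm{q}{V}{T} \big).
\end{gather*}
Here a first version of the constant $\Gamma$ depending only on $\vdim$, $q$, $\alpha$, and $\mu/\nu$ emerges.

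The remaining work is to convert this into the stated estimate with the $\delta^{-\vdim/r}$ and $\delta^{1-\vdim/q}$ scaling and with $\Lpnorm{\| V \|}{r}{f^+}$ rather than $\Lpnorm{\| V \|}{q}{f^+}$ on the right. For the scaling, I would simply observe that $\{g_s > 0\} \subset \{d > s\}$ and that on the set where $d > s > 0$ one can relate the $\| V \|$-measure to $s^\vdim$-type quantities, or more robustly: one keeps $\| V \|(\{g_s > 0\})^{1/\vdim - 1/q}$ as is, uses $\Lpnorm{\| V \|}{q}{f^+} \leq \| V \|(\{f>0,d>0\})^{1/r - 1/q}\Lpnorm{\| V \|}{r}{f^+}$ by Hölder (valid since $r < q$, adjusting the exponent bookkeeping), and then runs a Stampacchia-type iteration over a geometric sequence of cutoff levels $s_k \uparrow \delta$ exactly as in \ref{lemma:stampacchia_iteration}, \ref{remark:stampacchia_iteration}, absorbing the $\Lpnorm{\| V \|}{\infty}{f^+}$-type terms that appear on both sides. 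The iteration converts the pair of right-hand terms into the clean form $\delta^{-\vdim/r}\Lpnorm{\| V \|}{r}{f^+} + \delta^{1-\vdim/q}\nu^{-1}\nuqastnorm{q}{V}{T}$.

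The main obstacle I anticipate is the algebra of the cutoff testing step: producing, from the assumed inequality $\int \langle \ap D\theta \otimes \ap Df, A\rangle \,dV \leq T(\theta)$, a genuine inequality of the same structure for $g_s = \chi f^+$ with a quantitatively controlled error. One must handle three difficulties simultaneously — replacing $f$ by $f^+$ (using $\ap D f^+ = \ap D f$ a.e. on $\{f > 0\}$, which is legitimate by the approximate differentiation calculus and \ref{thm:addition}(1) applied to the locally Lipschitzian $f$), multiplying by the cutoff $\chi^2$ (which contributes the $\Lip\chi$ term and requires an integration-by-parts / Leibniz computation valid for Lipschitzian functions against $V$), and controlling the cross terms $\langle \ap D\chi \otimes \ap Df, A\rangle$ via Young's inequality with weights chosen so that the $|\ap Df|^2$-piece is absorbed into the ellipticity lower bound $\nu|\ap Df|^2 \leq \langle \ap Df \otimes \ap Df, A\rangle$. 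Keeping the dependence of the final constant on only $\vdim$, $q$, $\alpha$, and $\mu/\nu$ (and not on $r$, $\delta$, or $V$) requires care in how the iteration exponents are tracked, but this is the standard Stampacchia–De Giorgi bookkeeping and should go through.
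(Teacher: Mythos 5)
Your overall strategy — reduce the local estimate to \ref{thm:global_maximum_estimates} by cutting off — is a genuinely different route from the paper, which does not invoke \ref{thm:global_maximum_estimates} at all but runs a direct De Giorgi–Stampacchia iteration. Unfortunately, the reduction step as you describe it has a real gap. To apply \ref{thm:global_maximum_estimates} to $g_s = \chi f^+$ you would need $g_s$ to satisfy a subsolution inequality
\begin{gather*}
	\tint{}{} \left < \ap D \theta (z) \otimes \ap D g_s(z), A (z,S) \right > \ud V (z,S) \leq T' ( \theta ) \quad \text{for $\theta \in \mathscr{D}^0 (U)^+$}
\end{gather*}
with $\nuqastnorm{q}{V}{T'}$ controlled by $\nuqastnorm{q}{V}{T}$ and $(t-s)^{-1}\Lpnorm{\| V \|}{q}{f^+}$. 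Expanding $\ap D g_s = \chi \ap D f^+ + f^+ \ap D\chi$ and testing the hypothesis on $f$ with $\chi\theta$, the natural candidate $T'$ contains a cross term of the form $\int \theta \left < \ap D\chi \otimes \ap D f, A \right > \ud V$. Its modulus is of order $(t-s)^{-1}\mu \int \theta |\ap D f| \ud \| V \|$, and this is \emph{not} controlled by $\nuqnorm{q/(q-1)}{V}{\theta} = \Lpnorm{\| V \|}{q/(q-1)}{|\ap D\theta|}$ unless one already has an a priori bound on $|\ap D f|$ — which is precisely what the energy step of the proof is supposed to produce. The Young–ellipticity absorption you mention works in the Caccioppoli inequality (where one has $\int \chi^2 |\ap D g|^2$ on the left to absorb into), not in a distributional subsolution inequality for $\chi f^+$ (where there is nothing to absorb into). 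So the reduction does not close as stated.

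A second, smaller problem: the Hölder inequality you cite, $\Lpnorm{\| V \|}{q}{f^+} \leq \| V \|(\{f>0,d>0\})^{1/r - 1/q}\Lpnorm{\| V \|}{r}{f^+}$ for $r < q$, runs in the wrong direction — Hölder gives the reverse, $\Lpnorm{\| V \|}{r}{f^+} \leq \mu(E)^{1/r - 1/q}\Lpnorm{\| V \|}{q}{f^+}$. The way the paper passes from $L^2$ to $L^r$ with $r<2$ is by interpolating $\Lpnorm{\| V \|}{2}{f^+}^2 \leq \Lpnorm{\| V \|}{\infty}{f^+}^{2-r}\Lpnorm{\| V \|}{r}{f^+}^r$ and then absorbing the resulting $\Lpnorm{\| V \|}{\infty}{f^+}^{1-r/2}$ factor into the left-hand side by Young's inequality on a shrinking sequence of domains — which is in spirit what you say in the last paragraph, but the explicit Hölder step you write down is invalid and would need to be replaced by this interpolation.

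For comparison, what the paper actually does: for $r=2$ it defines $\phi(t) = \tint{F(t)}{} (f_{\Delta_3 \gamma t})^2 \ud \| V \|$ with $F(t)=\{d\geq t\}$ and $f_s = \sup\{f-s,0\}$, tests the hypothesis with $\theta = \eta^2 g$ where $\eta$ is a $d$-cutoff and $g = f_{\Delta_3\gamma t}$, derives a Caccioppoli estimate $\| V \| (|\ap D(\eta g)|^2) \leq C (t-s)^{-2}\phi(s) + C\gamma^2 N^{1-2/q}$, feeds this into the Sobolev inequality \ref{thm:sobolev_embedding} or \ref{corollary:sobolev} to control $\phi(t)$ by $\phi(s)^{\beta_i}$, and invokes \ref{lemma:stampacchia_iteration} (with \emph{two} exponents, $j=2$) to conclude $\phi(1)=0$. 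The cases $r>2$ and $r<2$ are then reduced to $r=2$ by a truncation and by the interpolation–absorption iteration respectively. The key structural point missed in your reduction is that the cross term is absorbed into the quadratic energy $\| V \|(|\ap D h|^2)$ in the Caccioppoli step, not into a dual norm of a modified $T'$.
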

\begin{proof}
	The problem may reduced to the case $\nu=1$ by replacing $A(z,S)$ and
	$T$ by $\nu^{-1} A(z,S)$ and $\nu^{-1}T$ and to the case $\delta=1$
	via rescaling. Moreover, defining $\gamma = \Lpnorm{\| V \|}{r}{f^+} +
	\nuqastnorm{q}{V}{T}$ one may assume $0 < \gamma < \infty$.

	Define $\xi = 1$ if $\vdim = 1$ and $\xi = 2 \vdim/ ( \vdim+2 )$ if
	$\vdim > 1$, and
	\begin{gather*}
		\alpha_1 = 2 + 4 / \vdim, \quad \alpha_2 = 2 + 4 / \vdim - 4 /
		q, \quad \beta_1 = 1 + 2 / \vdim, \quad \beta_2 = 1 + 2 /
		\vdim - 2/q, \\
		\Delta_1 = \big ( \isoperimetric{\vdim} (1-\alpha
		\isoperimetric{\vdim} )^{-1} \xi ( \vdim-1 )/(\vdim-\xi) \big
		)^2, \\
		\Delta_2 = \sup \{ 2^{\alpha_1/(\beta_1-1)},
		2^{\alpha_2/(\beta_2-1)} \}, \\
		\Delta_3 = \sup \big \{ ( 2^{\alpha_1+1} \Delta_1 \Delta_2 (
		16 \mu^2 + 12 ) )^{\vdim/4}, 2^{1+1/\alpha_2} ( 36 \Delta_1
		\Delta_2 )^{1/\alpha_2} \big \}, \\
		\Gamma_1 = \Delta_3, \quad \Gamma_2 = 1 + \Gamma_1,
		\quad \Gamma_3 = 2^{6\vdim+2}(1+\Gamma_1)^2, \quad \Gamma =
		\sup \{ \Gamma_1, \Gamma_2, \Gamma_3 \}.
	\end{gather*}
	Moreover, note $\Delta_3 > 0$ and let
	\begin{gather*}
		c_1 = \gamma^{-4/\vdim} \Delta_1 \Delta_3^{-4/\vdim} ( 16
		\mu^2 + 12 ), \quad c_2 = \gamma^{-4/\vdim+4/q} 36 \Delta_1
		\Delta_3^{-2-4/\vdim+4/q}, \\
		F(t) = \classification{U}{z}{d(z) \geq t} \quad \text{for $0
		\leq t < \infty$}.
	\end{gather*}

	First, the \emph{case $r=2$} is considered. Define $f_t : U \to \rel$
	by $f_t(z) = \sup \{ f(z)-t, 0 \}$ for $z \in U$. Moreover, let $\phi
	: \{ t \with 0 \leq t < \infty \} \to \rel$ be defined by
	\begin{gather*}
		\phi (t) = \tint{F(t)}{} (f_{\Delta_3 \gamma t})^2 \ud \| V \|
		\quad \text{whenever $0 \leq t < \infty$}.
	\end{gather*}
	Suppose $0 < s < t < \infty$ and $\eta = (t-s)^{-1} ( \inf \{ d,t \} -
	\inf \{ d,s \} )$ and note
	\begin{gather*}
		0 \leq \eta (z) \leq 1 \quad \text{for $z \in U$}, \qquad \Lip
		\eta \leq (t-s)^{-1}, \\
		\eta(z)=1 \quad \text{for $z \in F(t)$}, \qquad \spt \eta
		\subset F(s).
	\end{gather*}
	Abbreviate $N = \| V \| (\classification{F(s)}{z}{f(z)> \Delta_3
	\gamma t})$. Define $g = f_{\Delta_3 \gamma t}$, $\theta = \eta^2 g$
	and note that $\spt \| V \| \cap \spt \theta$ is compact. One computes
	\begin{gather*}
		\begin{aligned}
			& \ap D \theta (z) \otimes \ap Df(z) \\
			& \qquad = 2 \eta (z) g (z) \ap D \eta (z) \otimes
			\ap D g (z) + \eta^2 (z) \ap Dg (z) \otimes \ap
			Dg (z),
		\end{aligned} \\
		\eta^2 (z) | \ap Dg(z) |^2 \leq 4 \mu^2 | \ap D \eta(z)|^2 g
		(z)^2 + 2 \left < \ap D \theta (z) \otimes \ap Df(z), A(z,S)
		\right >
	\end{gather*}
	for $V$ almost all $(z,S)$. Define $h = \eta g$ and estimate, noting
	$\theta = \eta h$ and $h \leq g$,
	\begin{gather*}
		| \ap D h (z) |^2 \leq ( 8 \mu^2 + 2 ) | \ap D \eta (z) |^2
		g(z)^2 + 4 \left < \ap D \theta (z) \otimes \ap Df(z),
		A(z,S) \right >, \\
		| \ap D \theta (z) |^2 \leq 2 | \ap D h (z) |^2 + 2 | \ap D
		\eta (z) |^2 g (z)^2
	\end{gather*}
	for $V$ almost all $(z,S)$, hence, using \ref{corollary:nuq_density},
	\begin{gather*}
		\| V \| ( | \ap D h|^2 ) \leq ( 8 \mu^2 + 2 ) \| V \| ( | \ap D
		\eta |^2 g^2 ) + 4 \gamma N^{1/2-1/q} \Lpnorm{\| V \|}{2}{
		| \ap D \theta | }, \\
		\| V \| ( | \ap D h |^2 ) \leq ( 16 \mu^2 + 12 ) \| V \| ( | \ap
		D \eta |^2 g^2 ) + 36 \gamma^2 N^{1-2/q}.
	\end{gather*}
	Combining the last inequality with
	\begin{gather*}
		N \leq \Delta_3^{-2} \gamma^{-2} (t-s)^{-2} \phi (s), \quad \|
		V \| ( | \ap D \eta |^2 g^2 ) \leq (t-s)^{-2} \phi (s), \\
		\phi (t) \leq N \Lpnorm{\| V \|}{\infty}{h}^2 \leq \Delta_1
		N^2 \| V \| ( | \ap D h |^2 ) \quad \text{if $\vdim=1$}, \\
		\phi (t) \leq \| V \| ( h^2 ) \leq \Delta_1 \Lpnorm{\| V
		\|}{\xi}{|\ap D h|}^2 \leq \Delta_1 N^{2/\vdim} \| V \| ( |
		\ap D h |^2 ) \quad \text{if $\vdim > 1$},
	\end{gather*}
	by \ref{thm:sobolev_embedding} if $\vdim=1$ and by
	\ref{corollary:sobolev} if $\vdim>1$ implies
	\begin{gather*}
		\phi (t) \leq \tsum{i=1}{2} c_i (t-s)^{-\alpha_i}
		\phi(s)^{\beta_i}.
	\end{gather*}
	Taking the limit $s \to 0+$ the inequality also holds for $s=0$.
	Verifying
	\begin{gather*}
		2 \sup \{ (2 c_i \Delta_2 )^{1/\alpha_i}
		\phi(0)^{(\beta_i-1)/\alpha_i} \with i \in \{1,2\} \} \leq
		1,
	\end{gather*}
	one obtains $\phi(1) = 0$ from \ref{lemma:stampacchia_iteration}.
	Therefore in the present case the conclusion is satisfied with
	$\Gamma$ replaced by $\Gamma_1$.

	Second, \emph{the case $r > 2$} is treated. In fact, assuming
	$\Lpnorm{\| V \|}{r}{f^+} =1$ and noting
	\begin{gather*}
		\tint{}{} \sup \{ f(z)-1, 0 \}^2 \ud \| V \| z \leq \Lpnorm{\|
		V \|}{r}{f^+}^r = 1,
	\end{gather*}
	one may apply the preceding case with $f(z)$ replaced by $f(z)-1$ to
	verify that the conclusion is satisfied with $\Gamma$ replaced by
	$\Gamma_2$ in the present case.

	Third and last, \emph{the case $r < 2$} is considered. Applying the
	case $r=2$ with $U$, $d$ and $\delta$ replaced by
	$\classification{U}{z}{d(z)>s}$, the function mapping $z \in U$ with
	$d(z)>s$ onto $\sup \{ d(z)-s,0 \}$ and $t-s$ yields
	\begin{gather*}
		\begin{aligned}
			& \eqLpnorm{\| V \| \restrict F(t)}{\infty}{f^+} \\
			& \qquad \leq \Gamma_1 \big ( (t-s)^{-\vdim/2}
			\eqLpnorm{\| V \| \restrict F(s)}{2}{f^+} +
			(t-s)^{1-\vdim/q} \nuqastnorm{q}{V}{T} \big )
		\end{aligned}
	\end{gather*}
	whenever $0 < s < t \leq 1$. Define
	\begin{gather*}
		a (t) = \eqLpnorm{\| V \| \restrict
		F(t+1/2)}{\infty}{f^+}, \quad b
		(t) = \eqLpnorm{\| V \| \restrict F(t+1/2)}{r}{f^+}
	\end{gather*}
	whenever $0 < t \leq 1/2$. Let
	\begin{gather*}
		\sigma = \sup \{ t^{\vdim/2+\vdim/r} a(t) \with 0 < t \leq 1/2
		\}
	\end{gather*}
	and note $\sigma \leq \eqLpnorm{\| V \| \restrict F(1/2)}{\infty}{f^+}
	< \infty$. One estimates for $0 < t \leq 1/2$
	\begin{gather*}
		\begin{aligned}
			t^{\vdim/2+\vdim/r} a (t) & \leq \Gamma_1
			\big ( 2^\vdim t^{\vdim/r} a(t/2)^{1-r/2}
			b(t/2)^{r/2} + \nuqastnorm{q}{V}{T} \big )
			\\
			& \leq 2^\vdim \Gamma_1 \big ( t^{\vdim/r} (
			\lambda^{2/(2-r)} a(t/2) + \lambda^{-2/r} b(t/2) ) +
			\nuqastnorm{q}{V}{T} \big )
		\end{aligned}
	\end{gather*}
	whenever $0 < \lambda < \infty$. Taking $\lambda = (
	2^{\vdim+1+\vdim/r+\vdim/2} \Gamma_1 )^{(r-2)/2} t^{(2-r) \vdim/4}$
	yields
	\begin{gather*}
		\begin{aligned}
			t^{\vdim/2+\vdim/r} a(t) & \leq \sigma/2 + 2^\vdim
			\Gamma_1 \big ( 2^{3\vdim+1} ( 1 + \Gamma_1 )
			t^{\vdim/2} b (t/2) + \nuqastnorm{q}{V}{T} \big ) \\
			& \leq \sigma/2 + 2^{4\vdim+1} ( 1 + \Gamma_1 )^2
			\gamma
		\end{aligned}
	\end{gather*}
	hence $2^{-\vdim/2-\vdim/r} \eqLpnorm{\| V \| \restrict
	F(1)}{\infty}{f^+} \leq \sigma \leq 2^{4\vdim+2}(1+\Gamma_1)^2 \gamma$
	and the conclusion follows with $\Gamma$ replaced by $\Gamma_3$ in the
	present case.
\end{proof}
\begin{remark}
	Defining $Z = \classification{U}{z}{\text{$f(z)>0$ and $d(z)>0$}}$ and
	choosing $1 \leq s \leq \infty$ with $1/q+1/s=1$, the assertion may be
	sharpened by replacing the sum in the conclusion by
	\begin{gather*}
		\delta^{-\vdim/r} \eqLpnorm{\| V \| \restrict Z}{r}{f^+} +
		\delta^{1-\vdim/q} \sup \{ T ( \theta ) \with \theta \in
		\mathscr{D}^0 ( U ), \spt \theta \subset Z,
		\nuqnorm{s}{V}{\theta} \leq 1 \}
	\end{gather*}
	as may verified by applying \ref{thm:local_maximum_estimates} with $U$
	replaced by $Z$.
\end{remark}
\begin{example}
	Suppose $\adim \in \nat$, $B \subset U \subset \rel^\adim$, $U$ is
	open, $B$ is bounded and closed relative to $U$ hence $( \Clos B )
	\without B \subset \Bdry U$, $d : U \to \rel$ is a nonnegative
	Lipschitzian function, $C = \eqclassification{\Clos B}{c}{\text{$c
	\notin B$ and $\lim_{z \to c} d(z)>0$}}$ hence $C$ is open $U$
	relative to $( \Clos B ) \without B$, and $g : \Clos B \to \rel$ is
	continuous with
	\begin{gather*}
		g(c) \leq 0 \quad \text{whenever $c \in C$}.
	\end{gather*}
	Then $\classification{B}{z}{\text{$g(z) \geq t$ and $d(z) \geq t$}}$
	is compact for $0 < t < \infty$.

	In particular, taking $B = \spt \| V \|$ yields a sufficient condition
	for the compactness hypotheses in \ref{thm:local_maximum_estimates}.

	Given $a \in \rel^\adim$ and $0 < s < \infty$ a typical choice for $d$
	is
	\begin{gather*}
		d(z) = \dist (z, \rel^\adim \without \oball{a}{s} ) \quad
		\text{for $z \in U$}
	\end{gather*}
	in which case $\classification{U}{z}{d(z) \geq \delta} = U \cap
	\cball{a}{s-\delta}$ for $0 < \delta \leq s$.

	Finally, if $U$ is related to $d$ and $\delta$ via $d(z) = \inf \{
	\delta, \dist ( z, \rel^\adim \without U ) \}$ for $z \in U$ then the
	compactness hypotheses in \ref{thm:local_maximum_estimates} may be
	omitted as may be seen by applying \ref{thm:local_maximum_estimates}
	with $U$ replaced by $U \cap \oball{0}{R}$ for $0 < R < \infty$.
\end{example}
\begin{remark}
	The case $r=2$, $f$ is of class $\infty$, $d(z) = \inf \{ \delta,
	\dist ( z, \rel^\adim \without U ) \}$ for $z \in U$ and $T=0$ is
	treated by Allard in \cite[7.5\,(6)]{MR0307015} using Moser's
	iteration method. However, adapting this approach to the case of
	nonvanishing $T$ along the lines of \cite[\S 8.6]{MR1814364} seems to
	require bounds on $\| V \| ( U )$. Therefore Stampacchia's iteration
	method, see e.g.  \cite[Th\'eor\`eme 5.1]{MR0192177}, is used instead.

	In case $A$ is given by \ref{example:V_subharmonic} and $T=0$ a
	similar situation (with $p=\infty$) is analysed by Michael and Simon
	in \cite[Theorem 3.4]{MR0344978} where an explicit bound on $\Gamma$
	is obtained.
\end{remark}
\begin{corollary} \label{corollary:interpolation_sum}
	Suppose $\vdim$, $\adim$, $p$, $U$, $V$, $\psi$, and ``$\ap$'' are as
	in \ref{miniremark:situation_general}, $p=\vdim$, $1 < \vdim < q \leq
	\infty$, $1 \leq r < \infty$, $0 \leq \alpha <
	\isoperimetric{\vdim}^{-1}$, $f : U \to \rel$ is locally Lipschitzian,
	$d : U \to \rel$ is a nonnegative function, $\Lip d \leq 1$, $0 <
	\delta < \infty$, and
	\begin{gather*}
		\psi ( \classification{U}{z}{\text{$f(z)>0$ and
		$d(z)>0$}} )^{1/\vdim} \leq \alpha, \\
		\text{$\classification{\spt \| V \|}{z}{\text{$f(z) \geq t$
		and $d(z) \geq t$} }$ is compact for $0 < t < \infty$}.
	\end{gather*}

	Then
	\begin{gather*}
		\eqLpnorm{\| V \| \restrict \{z\with d(z) \geq
		\delta\}}{\infty}{f} \leq \Gamma \big ( \delta^{-\vdim/r}
		\Lpnorm{\| V \|}{r}{f} + \delta^{1-\vdim/q} \Lpnorm{\| V
		\|}{q}{| \ap Df|} \big )
	\end{gather*}
	where $\Gamma$ is a positive, finite number depending only on $\vdim$,
	$q$, and $\alpha$.
\end{corollary}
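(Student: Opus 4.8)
The plan is to read the estimate off from the local maximum estimate \ref{thm:local_maximum_estimates}, using the locally Lipschitzian function $f$ as its own ``right hand side''. First I would reduce to the case $f \geq 0$: since
\[
	\eqLpnorm{\| V \| \restrict \{ z \with d(z) \geq \delta
	\}}{\infty}{f} \leq \eqLpnorm{\| V \| \restrict \{ z \with d(z) \geq
	\delta \}}{\infty}{f^+} + \eqLpnorm{\| V \| \restrict \{ z \with d(z)
	\geq \delta \}}{\infty}{f^-}
\]
and $f^+$, $f^-$ are nonnegative locally Lipschitzian functions with $| \ap D f^\pm (z) | \leq | \ap D f (z) |$ for $\| V \|$ almost all $z$ --- by the chain rule for $( \| V \|, \vdim )$ approximate differentials, cp.\ \cite[4.5\,(4)]{snulmenn.decay} --- and with superlevel sets contained in those of $|f|$, it suffices to treat a nonnegative $f$. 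One may further assume $\Lpnorm{\| V \|}{r}{f} < \infty$ and $\Lpnorm{\| V \|}{q}{ | \ap Df | } < \infty$, the asserted inequality being trivial otherwise.

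With $f \geq 0$ I would take $\nu = \mu = 1$ and let $A$ be the $V$ measurable bilinear form of \ref{example:V_subharmonic}, so that $\left< \ap D\theta(z) \otimes \ap Df(z), A(z,S) \right> = \ap D\theta(z) \bullet \ap Df(z)$ for $(z,S) \in U \times \grass{\adim}{\vdim}$, and set
\[
	T ( \theta ) = \tint{}{} \ap D\theta \bullet \ap Df \ud \| V \|
	\quad \text{for $\theta \in \mathscr{D}^0 ( U )$}.
\]
Since $f$ is locally Lipschitzian and $\| V \|$ is a Radon measure, the integral runs over the compact support of $\theta$ and $T$ is continuous on every $\mathscr{D}_K ( U )$, hence $T \in \mathscr{D}_0 ( U )$; moreover the subsolution hypothesis of \ref{thm:local_maximum_estimates} holds with equality. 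By H\"older's inequality and \ref{def:nuqnorm}, \ref{def:nuqastnorm},
\[
	T ( \theta ) \leq \Lpnorm{\| V \|}{q/(q-1)}{ | \ap D\theta | }
	\Lpnorm{\| V \|}{q}{ | \ap Df | } = \nuqnorm{q/(q-1)}{V}{\theta}\,
	\Lpnorm{\| V \|}{q}{ | \ap Df | },
\]
so $\nuqastnorm{q}{V}{T} \leq \Lpnorm{\| V \|}{q}{ | \ap Df | } < \infty$.

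It remains to note that the other hypotheses of \ref{thm:local_maximum_estimates} are in force: $q \geq 2$, because the integer $\vdim$ satisfies $\vdim \geq 2$, whence $q > \vdim \geq 2$; and the bound on $\psi$ over $\classification{U}{z}{\text{$f(z) > 0$ and $d(z) > 0$}}$ together with the compactness of $\classification{\spt \| V \|}{z}{\text{$f(z) \geq t$ and $d(z) \geq t$}}$ for $0 < t < \infty$ are exactly those assumed. Applying \ref{thm:local_maximum_estimates} and recalling $f^+ = f$, $\nu = 1$ gives
\[
	\eqLpnorm{\| V \| \restrict \{ z \with d(z) \geq \delta
	\}}{\infty}{f} \leq \Gamma \big ( \delta^{-\vdim/r} \Lpnorm{\| V
	\|}{r}{f} + \delta^{1-\vdim/q} \Lpnorm{\| V \|}{q}{ | \ap Df | } \big
	),
\]
with $\Gamma$ depending only on $\vdim$, $q$, $\alpha$, and $\mu/\nu = 1$, i.e.\ only on $\vdim$, $q$, $\alpha$; the reduction of the first paragraph then yields the general statement. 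I do not expect a genuine obstacle here: this is a substitution into \ref{thm:local_maximum_estimates} wholly analogous to the way \ref{corollary:emb_loo} follows from \ref{thm:global_maximum_estimates}, the only points requiring any care being the elementary verification that $T \in \mathscr{D}_0 ( U )$ with finite $\nuqastnorm{q}{V}{T}$ and the routine bound $| \ap D f^\pm | \leq | \ap D f |$.
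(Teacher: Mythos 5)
Your core argument---take $\nu=\mu=1$ and $A$ as in \ref{example:V_subharmonic}, feed the resulting equality into the subsolution hypothesis of \ref{thm:local_maximum_estimates}, and bound $\nuqastnorm{q}{V}{T}$ by $\Lpnorm{\| V \|}{q}{|\ap Df|}$ via H\"older's inequality---is exactly what the paper's one-line proof does, and your verifications that $T\in\mathscr{D}_0(U)$, that $q\geq 2$ follows from $q>\vdim\geq 2$, and that the conclusion of \ref{thm:local_maximum_estimates} with $f^+=f$ is what is wanted, are all correct and worth spelling out.

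Your opening reduction to $f\geq 0$, however, does not close. You invoke the containment of the superlevel sets of $f^\pm$ in those of $|f|$, but the hypotheses of the corollary are phrased in terms of $f$, not $|f|$: one assumes $\psi(\classification{U}{z}{\text{$f(z)>0$ and $d(z)>0$}})^{1/\vdim}\leq\alpha$ and compactness of $\classification{\spt\|V\|}{z}{\text{$f(z)\geq t$ and $d(z)\geq t$}}$. For $f^+$ these transfer verbatim, since $\{f^+>0\}=\{f>0\}$ and $\{f^+\geq t\}=\{f\geq t\}$ for $t>0$; but for $f^-$ one would need control of $\psi$ on $\{f<0,\,d>0\}$ and compactness of $\{f\leq -t,\,d\geq t\}$, which is not hypothesised and is not implied by what is. Contrast this with \ref{corollary:emb_loo}, whose hypotheses are stated in terms of $\{f\neq 0\}$ and $\{|f|\geq t\}$ and hence \emph{are} symmetric under $f\mapsto -f$, making the reduction there legitimate. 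To be fair, the paper's own proof also just writes ``Assuming $f\geq 0$'' without comment; given that \ref{thm:local_maximum_estimates} produces a bound on $f^+$ only, and given the asymmetry of the hypotheses, the natural reading is that the conclusion of \ref{corollary:interpolation_sum} is meant to bound $\eqLpnorm{\| V \| \restrict \{z\with d(z)\geq\delta\}}{\infty}{f^+}$ rather than the full $\Lp{\infty}$-norm of $f$ (an imprecision inherited from the paper). You should either flag this explicitly, or state and use the $f^+$ version; as written, your reduction step asserts something it does not establish.
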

\begin{proof}
	Assuming $f \geq 0$ and taking $\nu$, $\mu$, and $A$ as in
	\ref{example:V_subharmonic} the conclusion follows from
	\ref{thm:local_maximum_estimates}.
\end{proof}
\begin{miniremark} \label{miniremark:schaukel}
	If $a \geq 0$, $b \geq 0$, $c > 0$ and $d > 0$ then
	\begin{gather*}
		\inf \{ a t^c + b t^{-d} \with 0 < t < \infty \} = \big (
		(d/c)^{c/(c+d)} + (d/c)^{-d/(c+d)} \big ) a^{d/(c+d)}
		b^{c/(c+d)}
	\end{gather*}
\end{miniremark}
\begin{corollary}
	Suppose $\vdim$, $\adim$, $p$, $U$, $V$, $\psi$, and ``$\ap$'' are as
	in \ref{miniremark:situation_general}, $p=\vdim$, $1 < \vdim < q \leq
	\infty$, $1 \leq r < \infty$, $\beta = (1/r) / (1/\vdim - 1/q + 1/r
	)$, $0 \leq \alpha < \isoperimetric{\vdim}^{-1}$, $f : U \to \rel$ is
	locally Lipschitzian, and
	\begin{gather*}
		\Lpnorm{\| V \|}{r}{f} + \Lpnorm{\| V \|}{q}{| \ap Df |} <
		\infty, \quad \psi ( \classification{U}{z}{f(z)>0} )^{1/\vdim}
		\leq \alpha, \\
		\text{$\classification{\spt \| V \|}{z}{f(z) \geq t}$ is
		compact for $0 < t < \infty$}.
	\end{gather*}

	Then
	\begin{gather*}
		\Lpnorm{\| V \|}{\infty}{f} \leq \Gamma \Lpnorm{\| V
		\|}{r}{f}^{1-\beta} \Lpnorm{\| V \|}{q}{| \ap Df|}^\beta
	\end{gather*}
	where $\Gamma$ is a positive, finite number depending only on $\vdim$,
	$q$, and $\alpha$.
\end{corollary}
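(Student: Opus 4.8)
The plan is to reduce to the localised estimate \ref{corollary:interpolation_sum} by taking the auxiliary Lipschitzian function $d$ occurring there to be \emph{constant}, and then to optimise over its free parameter. First I would dispose of degenerate cases: if $\Lpnorm{\| V \|}{r}{f} = 0$ then $f = 0$ for $\| V \|$ almost all points, so $\Lpnorm{\| V \|}{\infty}{f} = 0$ and, since $1 - \beta > 0$, there is nothing to prove; and if $\Lpnorm{\| V \|}{q}{| \ap Df |} = 0$ I would apply \ref{corollary:interpolation_sum} as below and let its parameter tend to $\infty$ to again obtain $\Lpnorm{\| V \|}{\infty}{f} = 0$, which suffices because $\beta > 0$. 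Hence I may assume $\Lpnorm{\| V \|}{r}{f}$ and $\Lpnorm{\| V \|}{q}{| \ap Df |}$ are both positive; by hypothesis they are finite.

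Next, fix $0 < \delta < \infty$ and apply \ref{corollary:interpolation_sum} with $d : U \to \rel$ the function constantly equal to $\delta$. This is legitimate: $d$ is nonnegative with $\Lip d = 0 \leq 1$; since $d(z) = \delta > 0$ for every $z$, the set $\{ z \with f(z) > 0 \text{ and } d(z) > 0 \}$ coincides with $\{ z \with f(z) > 0 \}$, so the smallness hypothesis on $\psi$ transfers; and $\spt \| V \| \cap \{ z \with f(z) \geq t \text{ and } d(z) \geq t \}$ equals $\spt \| V \| \cap \{ z \with f(z) \geq t \}$ for $0 < t \leq \delta$ and is empty for $t > \delta$, hence compact in either case. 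Since $\{ z \with d(z) \geq \delta \} = U$ and $\| V \| \restrict U = \| V \|$, the conclusion of \ref{corollary:interpolation_sum} reads
\begin{gather*}
	\Lpnorm{\| V \|}{\infty}{f} \leq \Gamma_0 \big( \delta^{-\vdim/r} \Lpnorm{\| V \|}{r}{f} + \delta^{1-\vdim/q} \Lpnorm{\| V \|}{q}{| \ap Df |} \big)
\end{gather*}
for all $0 < \delta < \infty$, where $\Gamma_0$ denotes the number $\Gamma$ furnished by \ref{corollary:interpolation_sum}, depending only on $\vdim$, $q$, and $\alpha$.

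Finally I would optimise in $\delta$. Since $q > \vdim$, one has $1 - \vdim/q > 0$, hence $1 - \vdim/q + \vdim/r > 0$, and choosing $\delta = ( \Lpnorm{\| V \|}{r}{f} / \Lpnorm{\| V \|}{q}{| \ap Df |} )^{1/(1-\vdim/q+\vdim/r)}$ equalises the two summands, each then being $\Lpnorm{\| V \|}{r}{f}^{1-\beta} \Lpnorm{\| V \|}{q}{| \ap Df |}^{\beta}$, because $(\vdim/r)/(1-\vdim/q+\vdim/r) = (1/r)/(1/\vdim-1/q+1/r) = \beta$. This gives the asserted inequality with $\Gamma = 2 \Gamma_0$; alternatively one may apply \ref{miniremark:schaukel} directly and absorb the extra factor it produces, which is bounded over $r \in \{ t \with 1 \leq t < \infty \}$ by a quantity depending only on $\vdim$ and $q$. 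I expect no real obstacle here: the only points requiring attention are that a constant function is an admissible choice of $d$ in \ref{corollary:interpolation_sum} and that the resulting constant remains independent of $r$.
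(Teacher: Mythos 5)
Your proof is correct and takes essentially the same route as the paper: reduce to \ref{corollary:interpolation_sum} and optimise in $\delta$. The two (minor) points of divergence are both simplifications. First, the paper applies \ref{corollary:interpolation_sum} with the distance function $\dist(\cdot,\rel^\adim\without\oball{z}{\delta+\varepsilon})$ for each fixed $z\in\spt\|V\|$ and lets $\varepsilon\to 0+$ to recover $|f(z)|$, whereas you take $d\equiv\delta$ and obtain the global $\Lp{\infty}$ bound in one stroke; your check that a constant is admissible (Lipschitz constant $0\leq 1$, the $\psi$-smallness set is unchanged, the compactness set equals $\{z\in\spt\|V\|\with f(z)\geq t\}$ for $t\leq\delta$ and is empty for $t>\delta$) is exactly right. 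Second, in the optimisation step the paper invokes \ref{miniremark:schaukel} and observes that the resulting constant is bounded uniformly in $1\leq r<\infty$, whereas you equalise the two summands directly, yielding $\Gamma=2\Gamma_0$ with no need to control the $r$-dependence separately; this is clean and suffices since only an upper bound is needed. Your handling of the degenerate cases via $0<\beta<1$ is also fine.
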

\begin{proof}
	Define $\Delta_1 = \Gamma_{\ref{corollary:interpolation_sum}}
	( \vdim, q, \alpha )$, observe that $\Delta_2$ defined to be the
	supremum of all numbers
	\begin{gather*}
		\left ( \frac{1/r}{1/\vdim-1/q} \right )
		^{(1/\vdim - 1/q)/(1/\vdim - 1/q + 1/r)} + \left (
		\frac{1/r}{1/\vdim - 1/q} \right )^{-(1/r)/(1/\vdim - 1/q +
		1/r)}
	\end{gather*}
	corresponding $1 \leq r < \infty$ is finite and let $\Gamma = \Delta_1
	\Delta_2$. Whenever $z \in \spt \| V \|$ and $0 < \delta < \infty$
	applying \ref{corollary:interpolation_sum} for every $\varepsilon > 0$
	with $d(z) = \dist ( z, \rel^\adim \without \oball{z}{\delta +
	\varepsilon} )$ yields
	\begin{gather*}
		|f(z)| \leq \Delta_1 \big ( \delta^{-\vdim/r} \Lpnorm{\| V
		\|}{r}{f} + \delta^{1-\vdim/q} \Lpnorm{\| V \|}{q}{| \ap Df|}
		\big ).
	\end{gather*}
	Therefore \ref{miniremark:schaukel} implies the conclusion.
\end{proof}
\begin{remark}
	In case $U = \rel^\adim$ the compactness hypotheses may be omitted.
\end{remark}
\section{Second order flatness in Lebesgue spaces for integral varifolds with
subcritical integrability of the mean curvature}
In the present section a substitute for the results of Section
\ref{sec:local_max} for integral varifolds with subcritical integrability (the
case $1 < p < \vdim$ in \ref{miniremark:situation_general}) is developed to
the extend necessary to deduce an optimal second flatness result in Lebesgue
spaces from second order rectifiability results of the author previously
obtained in \cite[4.8]{snulmenn.c2}, see \ref{thm:second_order_lq_flatness}.
\begin{theorem} \label{thm:subcritical_sobolev_embedding}
	Suppose $\vdim$, $\adim$, $p$, $U$, $V$, and $\psi$ are as in
	\ref{miniremark:situation_general}, $p < \vdim$, $1 \leq \xi < \vdim$,
	$f : U \to \rel$ is Lipschitzian, $\spt f$ is a compact subset of $U$,
	and $\alpha = ( \vdim-p ) \xi / ( \vdim-\xi )$.

	Then
	\begin{gather*}
		\Lpnorm{\| V \|}{\vdim \xi/(\vdim-\xi)}{f} \leq \Gamma \big (
		\Lpnorm{\| V \|}{\xi}{ | \ap Df | } + \psi ( |f|^\alpha
		)^{1/\alpha} \big )
	\end{gather*}
	where ``$\ap$'' denotes approximate differentiation with respect to $(
	\| V \|, \vdim )$ and $\Gamma$ is a positive, finite number depending
	only on $\vdim$, $p$, and $\xi$.
\end{theorem}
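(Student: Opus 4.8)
The plan is to reduce the inequality to a scaling-invariant isoperimetric estimate of Michael--Simon type, following the pattern of Section~\ref{sec:local_max} but now exploiting the subcritical integrability of the first variation through the classical trick of testing with a power of $|f|$. First I would record that, since $V$ satisfies \ref{miniremark:situation_general} with $p < \vdim$, the first variation is representable by integration with generalised mean curvature $\mathbf{h}(V;\cdot) \in \Lploc{p}(\|V\|,\rel^\adim)$ and $\psi = \|V\| \restrict |\mathbf{h}(V;\cdot)|^p$. Approximating $f$ by functions of class $\class\infty$ (or simply using approximate differentiability, \cite[4.5\,(4)]{snulmenn.decay}) one may assume $f$ is nonnegative, since $|f|$ is Lipschitzian with $|\ap D |f|| \leq |\ap Df|$ $\|V\|$~almost everywhere.

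The core estimate is the Michael--Simon--Allard Sobolev inequality for Lipschitzian functions on varifolds with locally bounded first variation, see Allard \cite[7.1]{MR0307015} and Michael and Simon \cite[Theorem 2.1]{MR0344978}, in the form: for $g$ Lipschitzian with compact support in $U$,
\begin{gather*}
	\Lpnorm{\| V \|}{\vdim/(\vdim-1)}{g} \leq \isoperimetric{\vdim} \big (
	\Lpnorm{\| V \|}{1}{ | \ap Dg | } + \tint{}{} |g| |\mathbf{h}(V;\cdot)|
	\ud \| V \| \big ),
\end{gather*}
which is the case $p=1$ essentially recorded in \ref{thm:sobolev_embedding}. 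I would apply this with $g = f^{\gamma}$ for a suitable exponent $\gamma = \gamma(\vdim,\xi) > 1$ chosen so that $g^{\vdim/(\vdim-1)}$ matches the target power $f^{\vdim\xi/(\vdim-\xi)}$; this is the classical iteration of \cite[4.5.15]{MR41:1976} already invoked in Step of \ref{thm:sob_poin_summary}. Then $|\ap Dg| = \gamma f^{\gamma-1} |\ap Df|$, so by H\"older's inequality with exponents $\xi$ and $\xi/(\xi-1)$ the term $\Lpnorm{\|V\|}{1}{|\ap Dg|}$ is bounded by $\gamma\, \Lpnorm{\|V\|}{\xi}{|\ap Df|}\, \Lpnorm{\|V\|}{(\gamma-1)\xi/(\xi-1)}{f}$, and the mean curvature term $\int f^{\gamma} |\mathbf{h}(V;\cdot)| \ud \|V\|$ is bounded by H\"older with exponents $p$ and $p/(p-1)$ (or the appropriate endpoint if $p=1$) by $\psi(|f|^{\alpha})^{1/\alpha}\, \Lpnorm{\|V\|}{\beta}{f^{\gamma}}$ for the right auxiliary exponent $\beta$. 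One checks that with $\alpha = (\vdim-p)\xi/(\vdim-\xi)$ the exponents $(\gamma-1)\xi/(\xi-1)$ and the one appearing on the right of the mean curvature term both coincide with $\vdim\xi/(\vdim-\xi)$, so the left-hand quantity $\Lpnorm{\|V\|}{\vdim\xi/(\vdim-\xi)}{f}^{\gamma}$ appears on both sides and, being finite because $\spt f$ is compact and $f$ is bounded, may be absorbed; dividing through yields the asserted inequality with $\Gamma$ depending only on $\vdim$, $p$, $\xi$ (and $\isoperimetric{\vdim}$, which in turn depends only on $\adim$ hence on $\vdim,p,\xi$ via $\vdim \le \adim$ — here one uses that the constant in the Michael--Simon inequality is $\besicovitch{}$-controlled, or one states $\Gamma$ as depending on $\adim$ as well; the cleaner route is to quote \ref{thm:rel_iso_ineq} with $Q=1$, $M=\vdim$, which gives a dimension-free constant).

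The main obstacle is the bookkeeping of exponents: one must verify that the single choice $\alpha = (\vdim-p)\xi/(\vdim-\xi)$ simultaneously makes the H\"older pairing in the gradient term and in the curvature term produce exactly the critical Lebesgue exponent $\vdim\xi/(\vdim-\xi)$ on the right, so that the self-improving absorption closes. Concretely, $\gamma$ is forced by $\gamma \cdot \vdim/(\vdim-1) = \vdim\xi/(\vdim-\xi)$, i.e. $\gamma = (\vdim-1)\xi/(\vdim-\xi)$, and then $(\gamma-1)\xi/(\xi-1) = \vdim\xi/(\vdim-\xi)$ must be checked to hold — it does, and similarly the dual exponent to $p$ in the curvature term, applied to $f^{\gamma}$, gives $\gamma p/(p-1) = \vdim\xi/(\vdim-\xi)$ exactly when the curvature is raised to power $\alpha$ with $1/\alpha + (\vdim-\xi)/(\vdim\xi) \cdot (\text{something})$ balancing; this is where the formula for $\alpha$ comes from. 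A secondary technical point is handling the endpoint $p=1$ (where $p/(p-1)$ is $\infty$ and one uses $\psi = \|\delta V\|$ and $\Lpnorm{\|V\|}{\infty}$ in place of the H\"older split, recovering \ref{thm:sobolev_embedding} directly), and the endpoint $\xi = 1$ versus $\xi > 1$ in the H\"older step for the gradient term; both are routine once the generic case is in place. Finally, to pass from compactly supported Lipschitzian $f$ to the statement as written one notes the hypothesis already assumes $\spt f$ compact, so no further truncation is needed; the approximation by $\class\infty$ functions, if used, is justified by \ref{corollary:approximation_lip}.
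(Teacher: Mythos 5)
Your approach is essentially the paper's: apply the $p=1$ Sobolev inequality \ref{thm:sobolev_embedding} to $g=f^c$ with $c=(\vdim-1)\xi/(\vdim-\xi)$, H\"older-split the gradient term into $\Lpnorm{\|V\|}{\vdim\xi/(\vdim-\xi)}{f}^{c-1}\Lpnorm{\|V\|}{\xi}{|\ap Df|}$ and the mean-curvature term into $\Lpnorm{\|V\|}{\vdim\xi/(\vdim-\xi)}{f}^{c\beta}\psi(f^\alpha)^{1/p}$ with $\beta=(1-1/p)/(1-1/\vdim)$, then absorb the self-referential factor via the arithmetic--geometric mean inequality (the paper uses this directly rather than invoking the \cite[4.5.15]{MR41:1976} iteration). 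The only point not fully pinned down in your sketch is the precise factorisation $f^c=f^{c\beta}f^{c(1-\beta)}$ needed to make $c(1-\beta)p=\alpha$ and $c\beta p/(p-1)=\vdim\xi/(\vdim-\xi)$ come out, but the bookkeeping does close exactly as you anticipate (and the $p=1$ case degenerates cleanly to $\|\delta V\|(f^c)=\psi(f^\alpha)$ since then $c=\alpha$).
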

\begin{proof}
	Let $c = (\vdim-1) \xi / ( \vdim-\xi )$, note $c \geq 1$ and define
	\begin{gather*}
		\Delta_1 = 1 \quad \text{and} \quad \Delta_2 = 1 \quad
		\text{if $c=1$}, \\
		\Delta_1 = ( 3 \isoperimetric{\vdim} (c-1) )^{(1-c)/c} \quad
		\text{and} \quad \Delta_2 = c^{-1} ( \Delta_1 )^{-c} \quad
		\text{if $c > 1$}.
	\end{gather*}
	Moreover, let $\beta = (1-1/p)/(1-1/\vdim)$, note $0 \leq \beta < 1$ and
	define
	\begin{gather*}
		\Delta_3 = 1 \quad \text{and} \quad \Delta_4 = 1 \quad
		\text{if $\beta = 0$}, \\
		\Delta_3 = ( 3 \isoperimetric{\vdim} \beta )^{-\beta} \quad
		\text{and} \quad \Delta_4 = (1-\beta) ( \Delta_3
		)^{1/(\beta-1)} \quad \text{if $\beta > 0$}, \\
		\Gamma = (3\isoperimetric{\vdim})^{1/c} \sup \{ c \Delta_2,
		\Delta_4 \}^{1/c}.
	\end{gather*}
	Assume $f \geq 0$ and apply \ref{thm:sobolev_embedding} with $f$
	replaced $f^c$ to obtain
	\begin{gather*}
		\Lpnorm{\| V\|}{\vdim \xi/(\vdim-\xi)}{f}^c \leq
		\isoperimetric{\vdim} \big ( c \| V \| ( f^{c-1} | \ap Df | )
		+ \| \delta V \| ( f^c ) \big ),
	\end{gather*}
	here $0^0=1$.

	If $c = 1$ then $\xi=1$. If $c > 1$ then, noting $\xi > 1$ and $(c-1)
	\xi/(\xi-1) = \vdim \xi / ( \vdim - \xi)$, one uses H\"older's
	inequality and the inequality relating arithmetic and geometric means
	to infer
	\begin{gather*}
		\begin{aligned}
			& \| V \| ( f^{c-1} | \ap D f | ) \leq \Lpnorm{\| V
			\|}{\vdim\xi/(\vdim-\xi)}{f}^{c-1} \Lpnorm{\| V
			\|}{\xi}{| \ap Df|} \\
			& \qquad \leq (1-1/c) ( \Delta_1 )^{c/(c-1)}
			\Lpnorm{\| V \|}{\vdim \xi/(\vdim-\xi)}{f}^c + c^{-1}
			( \Delta_1 )^{-c}\Lpnorm{\| V \|}{\xi}{|\ap Df|}^c \\
			& \qquad = ( 3 \isoperimetric{\vdim} c )^{-1}
			\Lpnorm{\| V \|}{\vdim \xi/(\vdim-\xi)}{f}^c +
			\Delta_2 \Lpnorm{\| V \|}{\xi}{| \ap D f |}^c.
		\end{aligned}
	\end{gather*}
	If $\beta = 0$ then $p=1$ and $c = \alpha$. If $\beta > 0$ then,
	noting
	\begin{gather*}
		p > 1, \quad c \beta p / (p-1) = \vdim \xi / ( \vdim-\xi ),
		\quad c (1-\beta) p = \alpha,
	\end{gather*}
	one deduces similarly as before
	\begin{gather*}
		\begin{aligned}
			\| \delta V \| (f^c) & \leq \Lpnorm{\| V
			\|}{p/(p-1)}{f^{c \beta}}
			\Lpnorm{\| V \|}{p}{f^{c(1-\beta)} \mathbf{h}
			(V;\cdot)} \\
			& = \Lpnorm{\| V \|}{\vdim\xi/(\vdim-\xi)}{f}^{c
			\beta} \psi ( f^\alpha )^{(c/\alpha)(1-\beta)}
			\\
			& \leq \beta ( \Delta_3 )^{1/\beta} \Lpnorm{\| V
			\|}{\vdim\xi/(\vdim-\xi)}{f}^c + (1-\beta) ( \Delta_3
			)^{1/(\beta-1)} \psi ( f^\alpha )^{c/\alpha} \\
			& \leq (3 \isoperimetric{\vdim})^{-1} \Lpnorm{\| V
			\|}{\vdim \xi/(\vdim-\xi)}{f}^c + \Delta_4 \psi (
			f^\alpha )^{c/\alpha}.
		\end{aligned}
	\end{gather*}
	Now, the conclusion readily follows.
\end{proof}
\begin{definition}
	Suppose $\phi$ measures $X$.

	Then $\phi$ is called \emph{nonatomic} if and only if for every $\phi$
	measurable set $A$ with $0 < \phi (A) < \infty$ there exists a $\phi$
	measurable subset $B$ of $A$ with $0 < \phi (B) < \phi(A)$.
\end{definition}
\begin{remark} \label{remark:nonatomic_measures}
	Note $0 < \inf \{ \phi (B), \phi ( A \without B ) \} \leq \phi (A)/2$.
	
	Among the basic properties of nonatomic measures are the
	following: \emph{Whenever $t \in \rel$ and $A$ is a $\phi$ measurable
	set with $0 < t < \phi (A) < \infty$ there exists a $\phi$ measurable
	subset $B$ of $A$ with $\phi (B) = t$;} in fact denoting by $F$ the
	family of $\phi$ measurable subsets $C$ of $A$ with $\phi (C) > 0$ and
	observing
	\begin{gather*}
		\inf \{ \phi (C) \with \text{$C \in F$ and $C \subset D$} \} =
		0 \quad \text{for $D \in F$},
	\end{gather*}
	one may take $B$ to be the union of a maximal member of the family of
	all disjointed subfamilies $G$ of $F$ with $\sum_{C \in G} \phi (C)
	\leq t$. Therefore, \emph{if $f : X \to \rel$ is a $\phi$ measurable
	function, $1 \leq p < \infty$, and $0 < \lambda \leq \phi (X) <
	\infty$ then there exists a $\phi$ measurable set $B$ with $\phi (B)
	\leq \lambda$ minimising
	\begin{gather*}
		\eqLpnorm{\phi \restrict X \without B}{p}{f}
	\end{gather*}
	among all such sets;} in fact it is sufficient to construct a $\phi$
	measurable set $B$ such that $\phi (B) = \lambda$ and
	\begin{gather*}
		\classification{X}{x}{|f(x)|>s} \subset B \subset
		\classification{X}{x}{|f(x)| \geq s} \quad \text{for some $s
		\in \rel$}.
	 \end{gather*}
\end{remark}
\pagebreak
\begin{example} \label{example:nonatomic}
	If $\phi$ measures $\rel^\adim$, $\phi ( \rel^\adim ) < \infty$ and
	all closed sets are $\phi$ measurable then $\phi$ is nonatomic if and
	only if $\phi ( \{ z \} ) = 0$ for $z \in \rel^\adim$.
\end{example}
\begin{lemma} \label{lemma:basic_interpolation}
	Suppose $1 \leq p < q < r \leq \infty$, $0 < \alpha < 1$, $1/q =
	\alpha / p + (1-\alpha)/r$, $\phi$ measures $X$, $\phi$ is nonatomic,
	$f$ is a real valued $\phi$ measurable function, $0 < \lambda <
	\infty$, and $B$ is a $\phi$ measurable set with $\phi ( B ) \leq
	\lambda^{1/(1/q-1/r)} \leq \phi (X) < \infty$.

	Then
	\begin{gather*}
		\Lpnorm{\phi}{q}{f} \leq 2 \lambda^{1-1/\alpha} \eqLpnorm{\phi
		\restrict X \without B}{p}{f} + 2 \lambda \Lpnorm{\phi}{r}{f}.
	\end{gather*}
\end{lemma}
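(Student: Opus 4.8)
The plan is to prove \ref{lemma:basic_interpolation} by splitting the $\mathbf{L}_q$ norm over the sets $B$ and $X \without B$ and estimating each piece separately. First I would observe that on $X \without B$ the standard interpolation inequality for Lebesgue spaces applies: since $1/q = \alpha/p + (1-\alpha)/r$ with $0 < \alpha < 1$, H\"older's inequality gives
\begin{gather*}
	\eqLpnorm{\phi \restrict X \without B}{q}{f} \leq \eqLpnorm{\phi
	\restrict X \without B}{p}{f}^\alpha \eqLpnorm{\phi \restrict X
	\without B}{r}{f}^{1-\alpha} \leq \eqLpnorm{\phi \restrict X \without
	B}{p}{f}^\alpha \Lpnorm{\phi}{r}{f}^{1-\alpha}.
\end{gather*}
The right hand side is a product $a^\alpha b^{1-\alpha}$ with $a = \eqLpnorm{\phi \restrict X \without B}{p}{f}$ and $b = \Lpnorm{\phi}{r}{f}$, and Young's inequality in the form $a^\alpha b^{1-\alpha} \leq \mu^{-(1-\alpha)/\alpha} a + \mu \, b$ (valid for $\mu>0$, or more precisely $a^\alpha b^{1-\alpha}\le \alpha t a + (1-\alpha)t^{-\alpha/(1-\alpha)}b$ for $t>0$, whichever normalisation is cleanest) lets me insert the weight $\lambda$: choosing the free parameter so that the coefficients become $\lambda^{1-1/\alpha}$ and $\lambda$ respectively, I obtain $\eqLpnorm{\phi \restrict X \without B}{q}{f} \leq \lambda^{1-1/\alpha} a + \lambda \, b$, using $-(1-\alpha)/\alpha = 1 - 1/\alpha$.

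Next I would handle the part over $B$. Here one uses that $p < q$ and H\"older's inequality with exponents $q/p$ and its conjugate, or more directly the inclusion estimate comparing $\mathbf{L}_q$ and $\mathbf{L}_r$ norms on the finite-measure set $B$: since $q < r$, H\"older gives
\begin{gather*}
	\eqLpnorm{\phi \restrict B}{q}{f} \leq \phi(B)^{1/q-1/r}
	\Lpnorm{\phi}{r}{f}.
\end{gather*}
By hypothesis $\phi(B) \leq \lambda^{1/(1/q-1/r)}$, hence $\phi(B)^{1/q-1/r} \leq \lambda$, so $\eqLpnorm{\phi \restrict B}{q}{f} \leq \lambda \Lpnorm{\phi}{r}{f}$. (If $r = \infty$ the exponent $1/q-1/r$ is just $1/q$ and the same estimate holds with the essential supremum; the nonatomicity of $\phi$ is what guarantees a set $B$ of the prescribed measure can be chosen in applications, though for the inequality as stated one only needs the given $B$.) Finally, combining the two pieces via the elementary inequality $\Lpnorm{\phi}{q}{f} \leq \eqLpnorm{\phi \restrict B}{q}{f} + \eqLpnorm{\phi \restrict X \without B}{q}{f}$ yields
\begin{gather*}
	\Lpnorm{\phi}{q}{f} \leq \lambda^{1-1/\alpha} \eqLpnorm{\phi
	\restrict X \without B}{p}{f} + 2 \lambda \Lpnorm{\phi}{r}{f} \leq 2
	\lambda^{1-1/\alpha} \eqLpnorm{\phi \restrict X \without B}{p}{f} + 2
	\lambda \Lpnorm{\phi}{r}{f},
\end{gather*}
which is the assertion.

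I do not expect any serious obstacle here; the main care needed is bookkeeping the exponents so that Young's inequality produces exactly the powers $\lambda^{1-1/\alpha}$ and $\lambda$, and checking the degenerate case $r = \infty$ where $\mathbf{L}_r$ is replaced by $\mathbf{L}_\infty$ and the relation $1/q = \alpha/p$ holds. The role of nonatomicity of $\phi$ is not in the inequality itself but in the later application, where one will want to choose $B$ minimising $\eqLpnorm{\phi \restrict X \without B}{p}{f}$ subject to $\phi(B) \leq \lambda^{1/(1/q-1/r)}$, as in \ref{remark:nonatomic_measures}; for the present lemma any admissible $B$ works and no existence argument is required.
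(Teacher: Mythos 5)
Your proof is correct, and it takes a genuinely different route from the paper's. The paper's proof first invokes the nonatomicity of $\phi$ via \ref{remark:nonatomic_measures} to replace the given set $B$ by an optimal superlevel set $\classification{X}{x}{|f(x)|>s} \subset B \subset \classification{X}{x}{|f(x)| \geq s}$ of the prescribed measure (which only decreases $\eqLpnorm{\phi \restrict X \without B}{p}{f}$, so it is no loss); it then decomposes $f=g+h$ with $g$ the truncation of $f$ at height $s$ and $h$ supported on $B$, exploits the pointwise bound $\Lpnorm{\phi}{\infty}{g}\le s$ together with $s\phi(B)^{1/q}\le\Lpnorm{\phi}{q}{f}$, and finally absorbs a copy of $\Lpnorm{\phi}{q}{f}$ back to the left hand side after an AM--GM inequality with a tuned parameter $\delta$. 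Your argument instead decomposes the \emph{domain} as $X = B \cup (X\without B)$, applies H\"older on $B$ and the log-convexity (Riesz--Thorin) interpolation of $\mathbf{L}_p$ norms on $X\without B$, and then inserts the weight $\lambda$ through a scaled Young's inequality; you correctly check that the weights $\lambda^{1-1/\alpha}$ and $\lambda$ satisfy the required normalisation $(\lambda^{1-1/\alpha})^\alpha\lambda^{1-\alpha}=1$. Your route avoids the absorption step entirely, needs no replacement of $B$ and hence no nonatomicity to prove the inequality itself (you observe this explicitly and correctly), and in fact delivers the sharper estimate $\Lpnorm{\phi}{q}{f}\le\lambda^{1-1/\alpha}\eqLpnorm{\phi\restrict X\without B}{p}{f}+2\lambda\Lpnorm{\phi}{r}{f}$ before you weaken to the stated factor of $2$ on the first term. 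Both approaches handle $r=\infty$ by the conventions you mention.
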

\begin{proof}
	Assume $\Lpnorm{\phi}{r}{f} < \infty$. By
	\ref{remark:nonatomic_measures} one may assume that $\phi (B) =
	\lambda^{1/(1/q-1/r)}$ and
	\begin{gather*}
		\classification{X}{x}{|f(x)|>s} \subset B \subset
		\classification{X}{x}{|f(x)| \geq s} \quad \text{for some $s
		\in \rel$}.
	 \end{gather*}
	Clearly, this implies $s \lambda^{1/(1-q/r)} = s \phi (B)^{1/q} \leq
	\Lpnorm{\phi}{q}{f}$. Defining $\chi$ to be the characteristic
	function of $X \without B$, $g = \chi f$ and $h = f-g$, note
	\begin{gather*}
		\Lpnorm{\phi}{\infty}{g} \leq s, \quad (1-p/q)/(1-q/r) = (p/q)
		(1/\alpha-1)
	\end{gather*}
	and estimate
	\begin{gather*}
		\begin{aligned}
			\Lpnorm{\phi}{q}{f} & \leq \Lpnorm{\phi}{q}{g} +
			\Lpnorm{\phi}{q}{h} \leq s^{1-p/q}
			\Lpnorm{\phi}{p}{g}^{p/q} + \phi (B)^{1/q-1/r}
			\Lpnorm{\phi}{r}{f} \\
			& \leq \lambda^{(p/q)(1-1/\alpha)}
			\Lpnorm{\phi}{q}{f}^{1-p/q} \Lpnorm{\phi}{p}{g}^{p/q}
			+ \lambda \Lpnorm{\phi}{r}{f}.
		\end{aligned}
	\end{gather*}
	Using the inequality relating the arithmetic and geometric mean, one
	notes
	\begin{gather*}
		\Lpnorm{\phi}{q}{f}^{1-p/q} \Lpnorm{\phi}{p}{g}^{p/q} \leq
		\delta^{q/(q-p)} \Lpnorm{\phi}{q}{f} + \delta^{-q/p}
		\Lpnorm{\phi}{p}{q} \quad \text{for $0 < \delta < \infty$}.
	\end{gather*}
	Upon choosing $\delta = 2^{{p/q}-1}
	\lambda^{(p/q)(1/\alpha-1)(1-p/q)}$ one infers from the two preceding
	estimates that
	\begin{gather*}
		\Lpnorm{\phi}{q}{f} \leq \textstyle{\frac{1}{2}}
		\Lpnorm{\phi}{q}{f} + 2^{-1+p/q} \lambda^{1-1/\alpha}
		\Lpnorm{\phi}{p}{g} + \lambda \Lpnorm{\phi}{r}{f},
	\end{gather*}
	hence the conclusion as $\Lpnorm{\phi}{q}{f} < \infty$.
\end{proof}
\begin{remark} \label{remark:basic_interpolation}
	Note $1-1/\alpha = (1/q-1/p)/(1/q-1/r)$.
\end{remark}
\begin{lemma} \label{lemma:subsolutions}
	Suppose $\vdim, \adim \in \nat$, $\vdim < \adim$, $U$ is an open
	subset of $\rel^\adim$, $V \in \RVar_\vdim ( U )$, $T_i \in
	\mathscr{D}_0 ( U )$ are representable by integration for $i \in \{ 1,
	2 \}$, and $u_i : U \to \rel$ for $i \in \{1,2\}$ are Lipschitzian
	functions satisfying
	\begin{gather*}
		\tint{}{} \ap D \theta (z) \bullet \ap D u_i (z) \ud \| V \| z
		\leq T_i ( \theta ) \quad \text{whenever $\theta \in
		\mathscr{D}^0 ( U )^+$, $i \in \{1,2\}$},
	\end{gather*}
	$A = \classification{U}{z}{ u_1 (z) > u_2 (z) }$, and $T = T_1
	\restrict A + T_2 \restrict U \without A$.

	Then $u = \sup \{ u_1, u_2 \}$ satisfies
	\begin{gather*}
		\tint{}{} \ap D \theta (z) \bullet \ap D u(z) \ud \| V \| z
		\leq T ( \theta ) \quad \text{whenver $\theta \in
		\mathscr{D}^0 ( U )^+$}.
	\end{gather*}
\end{lemma}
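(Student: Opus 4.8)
The plan is to replace $\sup\{u_1,u_2\}$ by a one parameter family of locally Lipschitzian functions interpolating between $u_1$ and $u_2$ across the open set $A$, to test the two hypothesised inequalities against complementary cutoffs of a given $\theta$, to add them, and to observe that the arising cross term carries a favourable sign and may simply be discarded.

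First I would record the preliminaries. Write $v = u_1-u_2$; it is locally Lipschitzian, hence so are $u = \sup\{u_1,u_2\} = u_2 + v^+$ and all the products occurring below, so that all approximate differentials with respect to $(\|V\|,\vdim)$ appearing in the argument are defined $\|V\|$ almost everywhere and obey the usual product and chain rules $\|V\|$ almost everywhere (see \cite[4.5]{snulmenn.decay}, \cite[3.1.19, 3.1.11]{MR41:1976}). Since $u$ coincides with $u_1$ on $\{z \with v(z) \geq 0\} \supset A$ and with $u_2$ on $\{z \with v(z) \leq 0\} = U \without A$, \ref{lemma:approx_diff}\,\eqref{item:approx_diff:comp} yields
\begin{gather*}
	\ap Du(z) = \ap Du_1(z) \quad \text{for $\|V\|$ almost all $z \in A$}, \\
	\ap Du(z) = \ap Du_2(z) \quad \text{for $\|V\|$ almost all $z \in U \without A$},
\end{gather*}
and by linearity of the approximate differential $\ap Du_1(z) - \ap Du_2(z) = \ap Dv(z)$ for $\|V\|$ almost all $z$. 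Moreover, since $T_1$ and $T_2$ are representable by integration, I would extend each of them to $\Lp{1}(\|T_i\|)$ as in \ref{miniremark:extension}; then, writing $\chi_E$ for the characteristic function of a set $E$, there holds $(T_1 \restrict A)(\theta) = T_1(\chi_A\theta)$ and $(T_2 \restrict U \without A)(\theta) = T_2(\chi_{U \without A}\theta)$ for $\theta \in \mathscr{D}^0(U)$. Finally, by an approximation argument based on \ref{corollary:approximation_lip} (multiplying the smooth approximants by a fixed cutoff that equals $1$ near the support of the given function) together with the boundedness of $\ap Du_1$ and $\ap Du_2$, the hypothesised inequalities extend to all nonnegative locally Lipschitzian test functions with compact support in $U$.

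Next I would carry out the main step. Fix $\theta \in \mathscr{D}^0(U)^+$ and, for $0 < \varepsilon < \infty$, choose $\psi_\varepsilon \in \mathscr{E}^0(\rel)$ with $0 \leq \psi_\varepsilon \leq 1$, $0 \leq \psi_\varepsilon'$, $\psi_\varepsilon(t) = 0$ for $t \leq 0$, and $\psi_\varepsilon(t) = 1$ for $t \geq \varepsilon$. Then $\theta\psi_\varepsilon(v)$ and $\theta(1-\psi_\varepsilon(v))$ are nonnegative locally Lipschitzian functions with support contained in $\spt\theta$, with
\begin{gather*}
	\ap D(\theta\psi_\varepsilon(v)) = \psi_\varepsilon(v)\,\ap D\theta + \theta\,\psi_\varepsilon'(v)\,\ap Dv, \\
	\ap D(\theta(1-\psi_\varepsilon(v))) = (1-\psi_\varepsilon(v))\,\ap D\theta - \theta\,\psi_\varepsilon'(v)\,\ap Dv
\end{gather*}
$\|V\|$ almost everywhere. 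Testing the inequality for $u_1$ against $\theta\psi_\varepsilon(v)$ and the one for $u_2$ against $\theta(1-\psi_\varepsilon(v))$ and adding, the two summands involving $\psi_\varepsilon'(v)$ combine, using $\ap Du_1 - \ap Du_2 = \ap Dv$, to
\begin{gather*}
	\tint{}{} \theta\,\psi_\varepsilon'(v)\,\ap Dv \bullet (\ap Du_1 - \ap Du_2) \ud \|V\| = \tint{}{} \theta\,\psi_\varepsilon'(v)\,|\ap Dv|^2 \ud \|V\| \geq 0.
\end{gather*}
Discarding this nonnegative term gives
\begin{gather*}
	\tint{}{} \psi_\varepsilon(v)\,\ap D\theta \bullet \ap Du_1 + (1-\psi_\varepsilon(v))\,\ap D\theta \bullet \ap Du_2 \ud \|V\| \leq T_1(\theta\psi_\varepsilon(v)) + T_2(\theta(1-\psi_\varepsilon(v))).
\end{gather*}

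Finally I would let $\varepsilon \to 0+$. Since $\psi_\varepsilon(0) = 0$ one has $\psi_\varepsilon(v) \to \chi_A$ and $1-\psi_\varepsilon(v) \to \chi_{U \without A}$ pointwise, so that dominated convergence applied on the left with respect to $\|V\| \restrict \spt\theta$ and on the right with respect to $\|T_1\|$ and $\|T_2\|$, together with the almost everywhere identities for $\ap Du$ recorded above, yields
\begin{gather*}
	\tint{}{} \ap D\theta \bullet \ap Du \ud \|V\| = \tint{A}{} \ap D\theta \bullet \ap Du_1 \ud \|V\| + \tint{U \without A}{} \ap D\theta \bullet \ap Du_2 \ud \|V\| \leq (T_1 \restrict A)(\theta) + (T_2 \restrict U \without A)(\theta) = T(\theta),
\end{gather*}
which is the assertion. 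The only point demanding genuine care is the extension of the hypothesised inequalities to Lipschitzian test functions in the preliminary step; the remainder of the argument rests solely on the sign of the cross term, which is its entire content.
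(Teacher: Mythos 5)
Your argument is correct and essentially identical to the paper's: the paper tests the $u_1$ inequality against $\eta_j\theta$ and the $u_2$ inequality against $(1-\eta_j)\theta$ where $\eta_j = \inf\{j(u_1-u_2)^+, 1\}$ is a piecewise-linear cutoff, adds them, discards the nonnegative cross term $\int \theta \ap D\eta_j \bullet \ap D(u_1-u_2) \ud \|V\| = j\int_{\{0<u_1-u_2<1/j\}}\theta|\ap D(u_1-u_2)|^2 \ud \|V\| \geq 0$, and lets $j \to \infty$; your smooth interpolant $\psi_\varepsilon$ in place of the piecewise-linear truncation is a purely cosmetic variation, and your preliminary step of extending the hypothesised inequalities to Lipschitzian test functions matches the paper's invocation of the approximation in \cite[4.5\,(3)]{snulmenn.decay}.
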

\begin{proof}
	Using approximation by means of \cite[4.5\,(3)]{snulmenn.decay},
	one infers that the inequality in the hypotheses remains valid for any
	nonnegative, Lipschitzian function $\theta$ with compact support.

	Define $B (f,g) = \tint{}{} f (z) \bullet g (z) \ud \| V \| z$
	whenever $f$ and $g$ are $\|V\|$ measurable functions with $f(z),g(z)
	\in \Hom ( \Tan^\vdim ( \| V \|, z ), \rel )$ for $\| V \|$ almost all
	$z$ and $f \bullet g \in \Lp{1} ( \| V \| )$. (Here, $\| V \|$
	measurability of $f$ means $\| V \|$ measurability of the function
	mapping $z$ onto $f (z) \circ \project{\Tan^\vdim ( \| V \|, a)} \in
	\Hom ( \rel^\adim, \rel^\adim )$ whenever $z \in U$ with $\Tan^\vdim (
	\| V \|, a ) \in \grass{\adim}{\vdim}$ and $(f \bullet g) (z) = f(z)
	\bullet g (z)$ for $z \in \dmn f \cap \dmn g$.)

	Suppose $\theta \in \mathscr{D}^0 ( U )^+$.  Define $\eta_j : U \to
	\rel$ by $\eta_j (z) = \inf \{ j (u_1 - u_2)^+ (z), 1 \}$ for $z \in
	U$ and $j \in \nat$. Noting $0 \leq \eta_j (z) \leq 1$ and
	\begin{gather*}
		\eta_j (z) = 0 \quad \text{if $u_1 (z) \leq u_2 (z)$}, \qquad
		\eta_j (z) = 1 \quad \text{if $u_1 (z) \geq u_2 (z) +
		j^{-1}$}
	\end{gather*}
	whenever $z \in U$ and $j \in \nat$, one estimates
	\begin{gather*}
		\begin{aligned}
			& T_1 ( \eta_j \theta ) + T_2 ( (1-\eta_j) \theta )
			\\
			& \qquad \geq B ( \ap D ( \eta_j \theta ), \ap D u_1 )
			+ B ( \ap D ((1-\eta_j)\theta) , \ap Du_2 ) \\
			& \qquad = B ( \eta_j \ap D \theta, \ap D u_1 ) + B (
			\theta \ap D \eta_j, \ap Du_1) \\
			& \qquad \phantom{=} \ + B ( (1-\eta_j) \ap D \theta,
			\ap D u_2) - B ( \theta \ap D \eta_j, \ap Du_2 ) \\
			& \qquad \geq B ( \eta_j \ap D \theta , \ap D u_1 ) +
			B ( (1-\eta_j) \ap D \theta , \ap D u_2 ).
		\end{aligned}
	\end{gather*}
	Letting $j \to \infty$, this yields the conclusion.
\end{proof}
\begin{lemma} \label{lemma:lebesgue_space_estimate}
	Suppose $\vdim$, $\adim$, $p$, $U$, $V$, and $\psi$ are as in
	\ref{miniremark:situation_general}, $p < \vdim$, $1 \leq M < \infty$,
	$\| V \| ( U ) \leq M$, $1 < s < \infty$, $1 \leq \zeta < \infty$,
	$\zeta \leq \vdim/ (\vdim-2)$ if $\vdim>2$, $t = s \zeta ( \vdim-p
	)/\vdim$, $q = ( 1 - 1 / \zeta + 1 / (\zeta s ) )^{-1}$, $N =
	M^{1/\vdim+1/(2\zeta)-1/2}$, $u : U \to \rel$ Lipschitzian, $f \in
	\Lp{q} ( \| V \| )$,
	\begin{gather*}
		\tint{}{} \ap D \theta \bullet \ap Du \ud \| V \| \leq
		\tint{}{} \theta f \ud \| V \| \quad \text{whenever $\theta
		\in \mathscr{D}^0 ( U )^+$}
	\end{gather*}
	where ``ap'' denotes approximate differentiation with respect to $( \|
	V \|, \vdim )$, $K$ is a compact subset of $U$, and $r = \dist ( K,
	\rel^\adim \without U ) < \infty$.

	Then
	\begin{gather*}
		\eqLpnorm{\| V \| \restrict K}{\zeta s}{ u^+ } \leq \Gamma
		\big ( r^{-2/s} \Lpnorm{\| V \|}{s}{u^+} + \Lpnorm{\| V
		\|}{q}{f} + \psi ( ( u^+ )^t )^{1/t} \big )
	\end{gather*}
	where $\Gamma$ is a positive, finite number depending only on $\vdim$,
	$p$, $s$, $\zeta$, and $N$.
\end{lemma}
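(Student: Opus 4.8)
The plan is to combine a Caccioppoli estimate, obtained by testing the subsolution inequality with $\theta = \eta^2 (u^+)^{s-1}$ for suitable cutoff functions $\eta$, with the subcritical Sobolev inequality \ref{thm:subcritical_sobolev_embedding} applied to $\eta (u^+)^{s/2}$, the Sobolev exponent being tuned so that every power of $u^+$, $r$ and $\psi$ matches the conclusion, and then to remove by a radius iteration the self-referential $\Lp{\zeta s}$ term produced by $f$. First I would dispose of trivialities: one may assume $\Lpnorm{\| V \|}{s}{u^+} < \infty$ and $\psi ( (u^+)^t ) < \infty$, and since $u$ is Lipschitzian, $u^+$ is bounded on each compact subset of $U$; hence, abbreviating $K_\rho = \{ z \with \dist (z,K) \leq \rho \}$ for $0 \leq \rho < r$, a compact subset of $U$, all the quantities $\eqLpnorm{\| V \| \restrict K_\rho}{\zeta s}{u^+}$ are finite. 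Next I would set $\xi = 2 \zeta \vdim / ( \vdim + 2 \zeta )$ and record, using the hypothesis $\zeta \leq \vdim/(\vdim-2)$ when $\vdim > 2$ (the case $\vdim = 2$ being automatic), the identities $1 \leq \xi \leq 2 < \vdim$, $1/\xi = 1/\vdim + 1/(2\zeta)$, $\vdim \xi/(\vdim-\xi) = 2 \zeta$ and $(\vdim-p)\xi/(\vdim-\xi) = 2t/s$; in particular $\| V \| (U)^{1/\xi - 1/2} \leq M^{1/\vdim + 1/(2\zeta) - 1/2} = N$.

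For the Caccioppoli step, fix $0 \leq \rho < \rho' < r$ and pick $\eta \in \mathscr{D}^0 (U)^+$ with $\eta | K_\rho = 1$, $\spt \eta \subset K_{\rho'}$, $0 \leq \eta \leq 1$ and $\Lip \eta \leq 2/(\rho'-\rho)$; test the subsolution inequality with $\theta = \eta^2 (u^+)^{s-1}$, which is admissible after the approximation of \cite[4.5\,(3)]{snulmenn.decay} and, when $1 < s < 2$, a routine truncation of $u^+$, recalling that $\ap Du^+ = 0$ for $\| V \|$ almost all $z \in \{ z \with u(z) \leq 0 \}$. Absorbing half of the quadratic term one obtains
\begin{gather*}
	\tint{}{} \eta^2 (u^+)^{s-2} | \ap Du^+ |^2 \ud \| V \| \leq
	\frac{4}{(s-1)^2} \tint{}{} (u^+)^s | \ap D\eta |^2 \ud \| V \| +
	\frac{2}{s-1} \tint{}{} \eta^2 (u^+)^{s-1} |f| \ud \| V \|,
\end{gather*}
and the last integral is estimated by H\"older's inequality with exponents $q$ and $q' = \zeta s/(s-1)$, noting $(s-1) q' = \zeta s$, by $\Lpnorm{\| V \|}{q}{f} ( \eqLpnorm{\| V \| \restrict K_{\rho'}}{\zeta s}{u^+} )^{s-1}$.

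Then I would apply \ref{thm:subcritical_sobolev_embedding} with this $\xi$ to $\eta (u^+)^{s/2}$ (again admissible after a mollification when $s < 2$). Its left hand side is $\Lpnorm{\| V \|}{2\zeta}{\eta (u^+)^{s/2}}$, which dominates $( \eqLpnorm{\| V \| \restrict K_\rho}{\zeta s}{u^+} )^{s/2}$; on the right hand side the $\psi$ summand equals $\psi ( \eta^{2t/s} (u^+)^t )^{s/(2t)} \leq ( \psi ( (u^+)^t )^{1/t} )^{s/2}$, while the gradient summand is, since $\xi \leq 2$ and $\| V \|(U) \leq M$, at most
\begin{gather*}
	N \big ( \tfrac{s}{2} \Lpnorm{\| V \|}{2}{\eta (u^+)^{s/2-1} | \ap
	Du^+ |} + \Lpnorm{\| V \|}{2}{(u^+)^{s/2} | \ap D\eta |} \big ).
\end{gather*}
Inserting the Caccioppoli bound and writing $\Phi(\rho) = ( \eqLpnorm{\| V \| \restrict K_\rho}{\zeta s}{u^+} )^{s/2}$ produces an inequality
\begin{gather*}
	\Phi(\rho) \leq \tfrac14 \Phi(\rho') + \frac{A}{\rho'-\rho} + B
	\quad \text{for $0 \leq \rho < \rho' < r$},
\end{gather*}
in which $A$ is a constant depending only on $\vdim, p, s, \zeta, N$ times $( \Lpnorm{\| V \|}{s}{u^+} )^{s/2}$, and $B$ such a constant times $\Lpnorm{\| V \|}{q}{f}^{s/2} + ( \psi((u^+)^t)^{1/t} )^{s/2}$; here the contribution $\Lpnorm{\| V \|}{q}{f} \Phi(\rho')^{2(s-1)/s}$ stemming from $f$ is split by Young's inequality into $\tfrac14 \Phi(\rho')$ and a multiple of $\Lpnorm{\| V \|}{q}{f}^{s/2}$. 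Since each $\Phi(\rho')$ is finite, a standard iteration lemma (iterate over geometrically spaced radii) yields $\Phi(0) \leq C ( A/r + B )$; as $r^{-1} ( \Lpnorm{\| V \|}{s}{u^+} )^{s/2} = ( r^{-2/s} \Lpnorm{\| V \|}{s}{u^+} )^{s/2}$, raising to the power $2/s$ and using $a^{s/2} + b^{s/2} + c^{s/2} \leq 3 (a+b+c)^{s/2}$ for $a,b,c \geq 0$ gives the conclusion with a suitable $\Gamma$ depending only on $\vdim, p, s, \zeta, N$.

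I expect the main obstacle to be the bookkeeping: one has to see that the exponent $s/2$ in the test function --- forced by matching $\tint{}{} (u^+)^{(s/2)\xi} | \ap D\eta |^\xi$ against the target term $r^{-2/s} \Lpnorm{\| V \|}{s}{u^+}$ --- is at the same time the exponent that makes both the $\psi$ summand and the Sobolev exponent $\vdim\xi/(\vdim-\xi) = 2\zeta$ come out exactly, and one must eliminate through the Young--plus--iteration device, rather than a single absorption, the $\Lp{\zeta s}$ norm of $u^+$ generated by the inhomogeneity $f$. By contrast, the admissibility of the non-Lipschitzian test functions $(u^+)^{s-1}$ and $(u^+)^{s/2}$ when $1 < s < 2$ and the precise tracking of the constants are routine.
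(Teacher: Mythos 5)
Your plan is correct and gives the right estimate, but it follows a somewhat different route from the paper's proof, and there is one point where you are more casual than the argument allows.

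The paper does not iterate over nested sets. It fixes a single cutoff $\eta$ with $\eta|K = 1$, $|\ap D\eta| \leq 2r^{-1}$, tests with $\theta = \eta^2 h\circ u$ where $h(y) = y^{s-1}$, and observes that after H\"older's inequality the self-referential term is $\Lpnorm{\| V \|}{\zeta s}{\eta^{2/s}u}^{s-1} = \Lpnorm{\| V \|}{2\zeta}{w}^{2(s-1)/s}$ with $w = \eta u^{s/2}$ — exactly the quantity that the subcritical Sobolev inequality \ref{thm:subcritical_sobolev_embedding} controls. Splitting this by the arithmetic–geometric mean inequality, with the free constant $\Delta_3$ chosen so small that $\Delta_1\Delta_2(\Delta_3)^{1/(s-1)} \leq 1/2$, the Sobolev bound feeds back into the Caccioppoli bound and the $\Lp{2}$ gradient norm of $w$ absorbs into the left-hand side directly. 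This is a tighter matching than your version: because you H\"older against $\eqLpnorm{\| V \| \restrict K_{\rho'}}{\zeta s}{u^+}$ rather than $\Lpnorm{\| V \|}{\zeta s}{\eta^{2/s}u^+}$, the absorption is forced off by one cutoff shell and you need the Young-plus-radius-iteration device. Your version is perfectly valid (and perhaps more robust), but it spends a lemma that the paper's bookkeeping avoids. Note also that the paper takes $f \geq 0$ (which you may do after replacing $f$ by $|f|$, as the hypothesis only gives an inequality); your writeup works with $|f|$ implicitly.

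The one genuine gap is the treatment of the non-Lipschitzian power $(u^+)^{s-1}$ when $1 < s < 2$. You write ``a routine truncation of $u^+$,'' but replacing $u^+$ by $\sup\{u^+,\delta\} = \sup\{u,\delta\}$ requires knowing that $\sup\{u,\delta\}$ is again a subsolution of the same inequality, which is precisely what \ref{lemma:subsolutions} provides (and it needs $f \geq 0$ for the restricted distribution to remain bounded by $\int\theta f\ud\| V\|$). The paper's proof is structured around this: it first derives the estimate assuming $\inf\im u > 0$, where $u^{s-1}$ and $u^{s/2}$ are Lipschitzian on $\spt\eta$, and then passes to the general case by applying the lemma already proved to $\sup\{u,\delta\}$ and letting $\delta\to0+$. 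A plain mollification of $(u^+)^{s-1}$ need not remain in the cone of admissible (nonnegative, compactly supported) test functions, and one cannot simply plug the cut-off power function into the inequality without such a comparison lemma. Your argument would become complete if you routed the truncation through \ref{lemma:subsolutions} exactly as the paper does.
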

\begin{proof}
	Define $\xi = 2 \zeta \vdim / ( 2 \zeta + \vdim )$ and note $1
	\leq \xi < \vdim$ and $\xi \leq 2$. Let $\alpha = ( \vdim-p ) \xi / (
	\vdim-\xi )$ and note $t = \alpha s / 2$. Define
	\begin{gather*}
		\Delta_1 = \sup \big \{ 8 + 8 s^2 (s-1)^{-2}, s^2 (s-1)^{-1}
		\big \}^{1/s}, \quad \Delta_2 = 2 ( N
		\Gamma_{\ref{thm:subcritical_sobolev_embedding}} ( \vdim, p ,
		\xi ) )^{2/s}, \\
		\Delta_3 = ( 2 \Delta_1 \Delta_2 )^{1-s}, \quad \Gamma = 2
		\Delta_2 \sup \{ \Delta_1, \Delta_1/\Delta_3, 1 \}
	\end{gather*}
	and let $h : \{ y : y > 0 \} \to \rel$ by $h(y) = y^{s-1}$ for $y>0$.
	Suppose $\eta \in \mathscr{D}^0 (U)$ with
	\begin{gather*}
		0 \leq \eta (z) \leq 1 \quad \text{and} \quad | D \eta (z) |
		\leq 2 r^{-1} \quad \text{for $z \in U$}, \\
		\eta (z) = 1 \quad \text{for $z \in K$}.
	\end{gather*}

	Assume $f \geq 0$. First, \emph{the case $\inf \im u > 0$} is treated.

	Let $\theta = \eta^2 h \circ u$. Note $\theta$ is Lipschitzian and
	\begin{gather*}
		\ap D \theta (z) = \eta^2 (z) h ' (u(z)) \ap D u (z) + 2 \eta
		(z) h ( u(z)) \ap D \eta (z)
	\end{gather*}
	for $\| V \|$ almost all $z \in U$. Via approximation by
	\cite[4.5\,(3)]{snulmenn.decay}, one infers that $\theta$ is
	admissible in the hypotheses. Therefore one estimates
	\begin{gather*}
		\tint{}{} \eta^2 h' \circ u | \ap D u |^2 \ud \| V \| \leq
		\tint{}{} 2 \eta h \circ u | \ap D \eta | | \ap Du | + \eta^2
		h \circ u f \ud \| V \|, \\
		2 \eta h \circ u | \ap D \eta | | \ap Du | \leq
		{\textstyle\frac{1}{2}} (s-1) \eta^2 u^{s-2} | \ap Du |^2 + 2
		(s-1)^{-1} | \ap D \eta |^2 u^s, \\
		\begin{aligned}
			& \tint{}{} \eta^2 u^{s-2} | \ap D u |^2 \ud \| V \|
			\\
			& \qquad \leq 4 (s-1)^{-2} \tint{}{} | \ap D \eta |^2
			u^s \ud \| V \| + 2 (s-1)^{-1} \tint{}{} \eta^2
			u^{s-1} f \ud \| V \|.
		\end{aligned}
	\end{gather*}
	Defining $v = u^{s/2}$ and noting that $v$ is Lipschitzian with
	\begin{gather*}
		| \ap D v(z) |^2 = {\textstyle\frac{1}{4}} s^2 u^{s-2} (z) |
		\ap D u (z) |^2 \quad \text{for $\| V \|$ almost all $z$},
	\end{gather*}
	one obtains
	\begin{gather*}
		\begin{aligned}
			& \tint{}{} \eta^2 | \ap D v |^2 \ud \| V \| \\
			& \qquad \leq 4 s^2 (s-1)^{-2} r^{-2} \tint{}{} u^s
			\ud \| V \| + \textstyle\frac{1}{2} s^2 (s-1)^{-1}
			\tint{}{} \eta^2 u^{s-1} f \ud \| V \|,
		\end{aligned} \\
		\tint{}{} | \ap D ( \eta v ) |^2 \ud \| V \| \leq (
		\Delta_1)^s \big ( r^{-2} \tint{}{} u^s \ud \| V \| +
		\tint{}{} \eta^2 u^{s-1} f \ud \| V \| \big )
	\end{gather*}
	In order to estimate the last term, note $1-1/q=(1/\zeta)
	(1-1/s)$, hence $1 < q < \infty$, and compute by use of H\"older's
	inequality and the inequality relating arithmetic and geometric means
	\begin{gather*}
		\begin{aligned}
			& \tint{}{} \eta^2 u^{s-1} f \ud \| V \| \leq
			\Lpnorm{\| V \|}{\zeta s}{\eta^{2/s} u}^{s-1}
			\Lpnorm{\| V \|}{q}{\eta^{2/s} f} \\
			& \qquad \leq (1-1/s) (\Delta_3)^{s/(s-1)} \Lpnorm{\|
			V \|}{\zeta s}{\eta^{2/s} u}^s + (1/s) (\Delta_3)^{-s}
			\Lpnorm{\| V \|}{q}{f}^s.
		\end{aligned}
	\end{gather*} Defining $w = \eta v$, this implies
	\begin{multline*}
		\Lpnorm{\| V \|}{2}{| \ap D w|}^{2/s} \\
		\leq \Delta_1 \big ( r^{-2/s} \Lpnorm{\| V \|}{s}{u} +
		(\Delta_3)^{1/(s-1)} \Lpnorm{\| V \|}{2\zeta}{w}^{2/s} + (
		\Delta_3 )^{-1} \Lpnorm{\| V \|}{q}{f} \big ).
	\end{multline*}
	By \ref{thm:subcritical_sobolev_embedding} and
	H\"older's inequality there holds
	\begin{gather*}
		\Lpnorm{\| V \|}{2\zeta}{w} \leq
		\Gamma_{\ref{thm:subcritical_sobolev_embedding}} ( \vdim, p,
		\xi ) \big ( \Lpnorm{\| V \|}{\xi}{| \ap Dw |} + \psi (
		w^\alpha)^{1/\alpha} \big ), \\
		\Lpnorm{\| V \|}{2 \zeta}{w}^{2/s} \leq \Delta_2 \big (
		\Lpnorm{\| V \|}{2}{| \ap Dw|}^{2/s} + ( \psi \restrict \spt
		\eta ) ( u^t)^{1/t} \big ).
	\end{gather*}
	Since $\Delta_1 \Delta_2 (\Delta_3)^{1/(s-1)} \leq 1/2$, combining
	this with the preceding estimate yields the conclusion in the
	present case with $\psi$ replaced by $\psi \restrict \spt \eta$.

	To treat the \emph{general case}, note that for $\delta \in \rel$ the
	hypotheses of the lemma are also satisfied with $u$ replaced by $\sup
	\{ u, \delta \}$ by \ref{lemma:subsolutions} and consider the limit
	$\delta \to 0+$.
\end{proof}
\begin{remark} \label{remark:lebesgue_space_estimate}
	In the special case that for some $a \in \rel^\adim$ and some $0 < r <
	\infty$, $U = \oball{a}{2r}$ and $K = \cball{a}{r}$ the conclusion
	takes the form
	\begin{multline*}
		r^{-1-\vdim/(\zeta s)} \eqLpnorm{\| V \| \restrict
		\cball{a}{r}}{\zeta s}{u^+} \\
		\leq \Gamma \big ( 2 r^{-1-\vdim/s} \Lpnorm{\| V \|}{s}{u^+} +
		r^{1-\vdim/q} \Lpnorm{\| V \|}{q}{f} + r^{-1+p/t-\vdim/t} \psi
		( ( u^+ )^t)^{1/t} \big )
	\end{multline*}
	as may be verified first in the case $r=1$ and then via rescaling in
	the general case.
\end{remark}
\begin{theorem} \label{thm:second_order_lq_flatness}
	Suppose $\vdim$, $\adim$, $p$, $U$, $V$ are as in
	\ref{miniremark:situation_general}, $1 < p < \vdim$, $\beta = \vdim
	p/(\vdim-p)$, and $V \in \IVar_\vdim ( U )$.

	Then for $V$ almost all $(a,T)$
	\begin{gather*}
		\limsup_{r \to 0+} r^{-2-\vdim/\beta} \big (
		\tint{\cball{a}{r}}{} \dist (z-a,T)^\beta \ud \| V \| z \big
		)^{1/\beta} < \infty.
	\end{gather*}
\end{theorem}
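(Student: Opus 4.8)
The plan is to reduce, via the second order rectifiability theorem \cite[4.8]{snulmenn.c2}, to a statement about each $C^2$ sheet of $V$, and then to run a scale recursion built on the Lebesgue space estimate \ref{lemma:lebesgue_space_estimate}. First I would invoke \cite[4.8]{snulmenn.c2} to obtain a countable family of $\vdim$ dimensional submanifolds $M_i$ of $\rel^\adim$ of class $2$ with $\| V \| ( U \without \bigcup_i M_i ) = 0$. Since $V \in \IVar_\vdim (U)$, the disintegration of $V$ over $\| V \|$ assigns $\Tan^\vdim ( \| V \|, a )$ to $\| V \|$ almost all $a$, so it suffices to show, for each $i$ and $\| V \|$ almost all $a \in M_i$, that the displayed $\limsup$ is finite with $T = \Tan^\vdim ( \| V \|, a )$. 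Using \cite[2.8.18, 2.9.5, 2.9.11, 2.10.19\,(4), 3.2.17]{MR41:1976} and Allard \cite[3.5\,(2)]{MR0307015} one restricts to the $\| V \|$ co-null set of those $a \in M_i$ for which simultaneously: $\density^{\ast\vdim} ( \| V \|, a ) < \infty$; $r^{-\vdim} \| V \| ( \cball{a}{r} \without M_i ) \to 0$ and $r^{-\vdim} \tint{\cball{a}{r}}{} ( 1 + |\mathbf{h}(V;\cdot)|^p ) \ud \| V \|$ stays bounded as $r \to 0+$; $\psi ( \{ a \} ) = 0$; $T := \Tan ( M_i, a ) = \Tan^\vdim ( \| V \|, a ) \in \grass{\adim}{\vdim}$; and there are $c < \infty$, $\rho_a > 0$ with $\dist ( z-a, T ) \leq c | z-a |^2$ for $z \in M_i \cap \cball{a}{\rho_a}$, the constant $c$ being locally bounded on $M_i$ since $M_i$ is of class $2$.

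Fix such a point $a$ and abbreviate $h = \mathbf{h}(V;\cdot)$. For a unit vector $\nu \in \rel^\adim$ the affine function $z \mapsto \nu \bullet ( z-a )$ is of class $2$ with vanishing second derivative, so \ref{ex:some_lap}\,\eqref{item:some_lap:c2} together with the mean curvature hypothesis and Allard \cite[4.3]{MR0307015} yields $\tint{}{} \ap D \theta \bullet \ap D ( \nu \bullet ( \cdot - a ) ) \ud \| V \| = - \tint{}{} \theta\, ( \nu \bullet h ) \ud \| V \|$ for $\theta \in \mathscr{D}^0 ( U )$, where $\ap D$ denotes approximate differentiation with respect to $( \| V \|, \vdim )$. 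Hence, by \ref{lemma:subsolutions} applied with $u_2 = 0$, the nonnegative Lipschitzian function $u_\nu = \sup \{ \nu \bullet ( \cdot - a ), 0 \}$ satisfies
\begin{gather*}
	\tint{}{} \ap D \theta \bullet \ap D u_\nu \ud \| V \| \leq \tint{}{} \theta\, |h| \ud \| V \| \quad \text{for $\theta \in \mathscr{D}^0 ( U )^+$}.
\end{gather*}
Choosing an orthonormal basis $\nu_1, \dots, \nu_{\adim - \vdim}$ of the orthogonal complement of $T$ one has $\dist ( z-a, T ) \leq \sum_k ( u_{\nu_k}(z) + u_{-\nu_k}(z) )$, so it is enough to bound $\Psi_\nu ( r ) := r^{-2-\vdim/\beta} \eqLpnorm{\| V \| \restrict \cball{a}{r}}{\beta}{u_\nu}$ as $r \to 0+$ for each of the $2 ( \adim - \vdim )$ vectors $\nu = \pm \nu_k$.

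Here \ref{lemma:lebesgue_space_estimate} enters. I would set $\zeta = \vdim/(\vdim-1)$ and $s = \beta ( \vdim-1 )/\vdim$; one checks $s > 1$ (as $\beta > \vdim/(\vdim-1)$, which holds because $p > 1$), $\zeta \leq \vdim/(\vdim-2)$ when $\vdim > 2$, and that with these values the auxiliary exponents of \ref{lemma:lebesgue_space_estimate} satisfy $\zeta s = \beta$ and $t = q = p$, so $|h| \in \Lploc{p} ( \| V \| )$ provides the right hand side. Applying \ref{lemma:lebesgue_space_estimate} in the ball form of \ref{remark:lebesgue_space_estimate} to the restriction of $V$ to $\oball{a}{2r}$ (of finite weight for small $r$, by the density bound) and to $u = u_\nu$ gives, for a constant $\Gamma$ independent of $r$ for $r$ small,
\begin{gather*}
	r\, \Psi_\nu ( r ) \leq \Gamma \big ( r^{-1-\vdim/s} \eqLpnorm{\| V \| \restrict \oball{a}{2r}}{s}{u_\nu} + r^{1-\vdim/p} \eqLpnorm{\| V \| \restrict \oball{a}{2r}}{p}{h} + r^{-\vdim/p} \eqLpnorm{\| V \| \restrict \oball{a}{2r}}{p}{u_\nu\, h} \big ) .
\end{gather*}
The two mean curvature terms are $O ( r )$ using $u_\nu \leq 2r$ on $\oball{a}{2r}$ and $\psi ( \oball{a}{2r} ) = O ( r^\vdim )$. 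For the first term one splits $\oball{a}{2r}$ into $M_i$ and its complement: over $M_i$ one has $u_\nu \leq \dist ( \cdot - a, T ) \leq c | \cdot - a |^2$, contributing $O ( r^{2+\vdim/s} )$ — this is the term responsible for the finite (and in general nonzero) value of $\limsup_{r \to 0+} \Psi_\nu$, controlled by the second fundamental form of $M_i$ at $a$; over the complement, H\"older's inequality with exponents $\beta/s$ and $( 1 - s/\beta )^{-1}$ together with $\| V \| ( \oball{a}{2r} \without M_i ) = o ( r^\vdim )$ bounds the contribution by $o(1)\, r^{2+\vdim/s}\, \Psi_\nu ( 2r )$. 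Dividing through by $r$ one obtains $\Psi_\nu ( r ) \leq C + o(1)\, \Psi_\nu ( 2r )$ as $r \to 0+$, with $C$ depending only on $\vdim$, $\adim$, $p$, $c$ and the densities $\density^\vdim ( \| V \|, a )$ and $\density^\vdim ( \psi, a )$. Iterating over dyadic scales (and comparing arbitrary $r$ with the nearest dyadic ones) then yields $\limsup_{r \to 0+} \Psi_\nu ( r ) < \infty$, which gives the assertion.

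The main obstacle is precisely the contribution of the $\| V \|$ mass lying off the distinguished sheet $M_i$ near $a$: there the only elementary bound is $\dist ( z-a, T ) \leq | z-a |$, which is far too weak, so one must feed the Lebesgue space estimate the a priori unknown self-similar quantity $\Psi_\nu ( 2r )$ and absorb it using that this mass has $\vdim$ density zero at $a$. Making this absorption legitimate forces the exact relation $\beta = \vdim p / ( \vdim - p )$, which is what produces $t = q = p$ in \ref{lemma:lebesgue_space_estimate} and hence lets the scale recursion close; this is why the exponent is sharp. A secondary, more routine, difficulty is the measure-theoretic bookkeeping isolating the $\| V \|$ co-null set of admissible base points $a$, together with the verification that the auxiliary exponent $\zeta = \vdim/(\vdim-1)$ is admissible in \ref{lemma:lebesgue_space_estimate} for every $1 < p < \vdim$ and every $\vdim \geq 2$.
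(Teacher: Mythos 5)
Your proposal is correct and follows essentially the same line as the paper: reduce via \cite[4.8]{snulmenn.c2} to a point $a$ at which some sheet carries $\| V \|$--full density and on which $\dist(\cdot-a,T) = O(|\cdot-a|^2)$, set up a subsolution inequality with right hand side $|\mathbf{h}(V;\cdot)|$, apply the $L^s\to L^{\zeta s}$ estimate of \ref{lemma:lebesgue_space_estimate}/\ref{remark:lebesgue_space_estimate} with the same critical choice $\zeta=\vdim/(\vdim-1)$, $s=\beta/\zeta$ (which indeed forces $t=q=p$, the computation you do is the one the paper needs too), and close a dyadic-scale recursion by absorbing the off-sheet mass, which has $\vdim$ density zero at $a$. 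Two implementation details differ from the paper. First, the paper works with the single convex function $u(z)=\dist(z-a,T)$, for which \ref{ex:some_lap}\,\eqref{item:some_lap:convex} gives the subsolution inequality directly with constant $\Lip u = 1$; you instead use the $2(\adim-\vdim)$ truncated affine functions $u_\nu$ and pass through \ref{ex:some_lap}\,\eqref{item:some_lap:c2} plus \ref{lemma:subsolutions}. Both are valid; the paper's is a bit more economical and avoids the extra step of summing $u_\nu$ over a frame. Second, for the passage from the pointwise bound on the good sheet to the $L^s$ term fed into \ref{remark:lebesgue_space_estimate}, the paper invokes the nonatomic interpolation lemma \ref{lemma:basic_interpolation} with a fixed small $\lambda$, while you simply split $\eqLpnorm{\| V \|}{s}{u_\nu}$ over $M_i$ and its complement and use H\"older with a vanishing coefficient $\| V \|(\oball{a}{2r}\without M_i)^{1/s-1/\beta}$. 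Your version is more elementary (no nonatomicity needed, since you know which part of the measure is good); the paper's lemma is stated more generally because it is also used in \ref{thm:locality_mean_curvature}. Either way the recursion $\Psi_\nu(r) \leq C + o(1)\Psi_\nu(2r)$ closes, and the remaining iteration and sandwiching between dyadic radii are the same.
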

\begin{proof}
	Suppose $(a,T)$ is such that for some $\| V \|$ measurable set $A$
	there holds
	\begin{gather*}
		0 < \density^\vdim ( \| V \|, a ) < \infty, \quad
		\density^{\ast \vdim} ( \psi, a ) < \infty, \quad
		\density^\vdim ( \| V \| \restrict U \without A, a ) = 0, \\
		\limsup_{r \to 0+} r^{-2} \sup \{ \dist (z-a,T) \with z \in A
		\cap \cball{a}{r} \} < \infty
	\end{gather*}
	where $\psi$ is as in \ref{miniremark:situation_general}. Note that
	these conditions are met by $V$ almost all $(a,T)$ by
	\cite[2.10.19\,(3)\,(4)]{MR41:1976} and \cite[4.8]{snulmenn.c2}.
	Defining $f = | \mathbf{h} ( V ; \cdot ) |$ and $u : U \to \rel$ by
	$u(z) = \dist (z-a,T)$ for $z \in U$, there holds
	\begin{gather*}
		\tint{}{} \ap D \theta \bullet \ap D u \ud \| V \| \leq
		\tint{}{} \theta f \ud \| V \| \quad \text{for $\theta \in
		\mathscr{D}^0 ( U )^+$}
	\end{gather*}
	where ``ap'' denotes approximate differentiation with respect to $( \|
	V \|, \vdim )$ by \ref{ex:some_lap}\,\eqref{item:some_lap:convex}.

	Define $\zeta = \vdim/(\vdim-1)$, $s = \beta / \zeta$. Choose $1 \leq
	M < \infty$ and $0 < R \leq 1$ such that
	\begin{gather*}
		\cball{a}{2R} \subset U, \quad M^{-1} r^\vdim \leq
		\measureball{\| V \|}{\cball{a}{r}} \leq M r^\vdim, \\
		\measureball{\psi}{\cball{a}{r}} \leq M r^\vdim, \quad \sup u
		\lIm A \cap \cball{a}{r} \rIm \leq M r^2
	\end{gather*}
	for $0 < r \leq 2R$. Let
	\begin{gather*}
		N = 2^{1/2} M^{1/(2\vdim)}, \quad \Delta_1 =
		\Gamma_{\ref{lemma:lebesgue_space_estimate}} ( \vdim, p, s,
		\zeta, N ) \sup \{ 2^{2+\vdim/s} , 2^{2+\vdim/p} M^{1/p} \},
		\\
		\lambda = \inf \big \{ M^{1/(\beta(1-\vdim))}, ( 8
		\Delta_1)^{-1} \big \}, \quad \gamma = (1-1/s)/(1/s-1/\beta),
		\\
		\Delta_2 = 2 \lambda^{-\gamma} M^2 + 2 \Delta_1.
	\end{gather*}
	Possibly replacing $R$ by a smaller number, one may assume that
	\begin{gather*}
		\| V \| ( \cball{a}{r} \without A ) \leq
		\lambda^{\beta(\vdim-1)} r^\vdim \quad \text{for $0 < r \leq
		2R$}.
	\end{gather*}
	Define $g : \{ r \with 0 < r \leq 2R \} \to \rel$ and $h : \{ r \with
	0 < r \leq 2R \} \to \rel$ by
	\begin{gather*}
		g(r) = r^{-1-\vdim/\beta} \eqLpnorm{\| V \| \restrict
		\cball{a}{r}}{\beta}{u}, \quad h(r) = r^{-1-\vdim/s}
		\eqLpnorm{\| V \| \restrict \cball{a}{r}}{s}{u}
	\end{gather*}
	for $0 < r \leq 2R$.

	Noting that $r^{-\vdim} \| V \| \restrict \cball{a}{r}$ is nonatomic
	by \ref{example:nonatomic} and $s>1$, $s/(\beta-s) = \vdim-1$, and
	\ref{remark:basic_interpolation},
	\begin{gather*}
		\| V \| ( \cball{a}{r} \without A ) \leq \lambda^{\beta
		(\vdim-1)} r^\vdim \leq M^{-1} r^\vdim \leq \measureball{\| V
		\|}{\cball{a}{r}} < \infty, \\
		r^{-\vdim} \eqLpnorm{\| V \| \restrict \cball{a}{r} \cap
		A}{1}{u} \leq M^2 r^2,
	\end{gather*}
	one applies \ref{lemma:basic_interpolation} with $p$, $q$, $r$,
	$\phi$, $X$, $f$, $B$ replaced by $1$, $s$, $\beta$, $r^{-\vdim} \| V
	\| \restrict \cball{a}{r}$, $U$, $u$, $U \without A$ to infer that
	\begin{gather*}
		r h(r) \leq 2 \lambda^{-\gamma} M^2 r^2 + 2 \lambda r g (r)
		\quad \text{for $0 < r \leq R$}.
	\end{gather*}
	Also, \ref{lemma:lebesgue_space_estimate} and
	\ref{remark:lebesgue_space_estimate} imply, noting $s \zeta ( \vdim-p
	) / \vdim = p$, $1-1/\zeta+1/(\zeta s) = 1/p$, and $1/\vdim + 1/
	(2\zeta) - 1/2 = 1/(2\vdim)$,
	\begin{gather*}
		g(r) \leq \Gamma_{\ref{lemma:lebesgue_space_estimate}} (
		\vdim, p, s, \zeta, N ) \big ( 2^{2+\vdim/s} h (2r) +
		2^{2+\vdim/p} M^{1/p} r \big ) \leq \Delta_1 ( h(2r) + r )
	\end{gather*}
	for $0 < r \leq R$. Combining these two inequalities, one obtains
	\begin{gather*}
		r^{-1} h(r) \leq  2 \Delta_1 \lambda r^{-1} h (2r) + \Delta_2
		\quad \text{for $0 < r \leq R$}.
	\end{gather*}
	Defining $\Gamma = \sup \{ 2^{2+\vdim/s} R^{-1} h (R), 2 \Delta_2 \}$,
	\emph{it will be shown that
	\begin{gather*}
		h(r) \leq \Gamma r \quad \text{whenever $0 <r \leq R$};
	\end{gather*}}
	in fact this clearly holds if $r \geq R/2$ and if $h(2r) \leq 2 \Gamma
	r$ for some $0 < r \leq R/2$ then
	\begin{gather*}
		r^{-1} h(r) \leq 4 \Delta_1 \lambda \Gamma + \Delta_2 \leq
		\Gamma.
	\end{gather*}
	Therefore $g(r) \leq \Delta_1 ( 2 \Gamma + 1 ) r$ for $0 < r \leq R/2$.
\end{proof}
\section{Second order diffentiability of the support of integral
varifolds with critical integrability of the mean curvature} In
\ref{def:diffferentiability}--\ref{remark:twice_diff} a concept of second
order differentiability of closed sets is introduced. It is applied in
combination with the results of Section \ref{sec:local_max} to the study of
the support of varifolds with critical integrability of the first variation
($p=\vdim$ in \ref{miniremark:situation_general}), see
\ref{thm:locality_mean_curvature}. In case of integral varifolds, combining
these results with the author's second order rectifiability result in
\cite[4.8]{snulmenn.c2}, one obtains that the support is twice differentiable
with the mean curvature of the support agreeing with the variationally defined
mean curvature of the varifold at almost all points, see
\ref{corollary:second_order_agreement}.
\begin{miniremark} \label{miniremark:hausdorff_distance}
	Suppose $\varrho$ is the metric of a nonempty, boundedly compact
	metric space $X$, $F$ is the family of nonempty closed subsets of $X$
	equipped with the topology induced from the cartesian product topology
	on $\rel^X$ via the univalent map $\Omega : F \to
	\classification{\rel^X}{f}{\Lip f \leq 1}$ defined by
	\begin{gather*}
		\Omega_A (x) = \dist (x,A) = \inf \varrho \lIm \{x\} \times A
		\rIm \quad \text{for $A \in F$ and $x \in X$}.
	\end{gather*}
	The same topology results if uniform convergence on bounded subsets of
	$X$ is considered on $\rel^X \cap \{ f \with \Lip f \leq 1 \}$, see
	\cite[2.10.21]{MR41:1976}. The resulting notion of convergence in $F$
	is termed \emph{locally in Hausdorff distance}. One notes for $A, B, C
	\in F$
	\begin{gather*}
		\sup \Omega_A \lIm B \cap C \rIm \cup \Omega_B \lIm A \cap C
		\rIm \leq \sup | \Omega_A - \Omega_B | \lIm C \rIm, \\
		\sup | \Omega_A - \Omega_B | \lIm C \rIm \leq \sup \Omega_A
		\lIm C(s) \cap B \rIm \cup \Omega_B \lIm C(s) \cap A \rIm
	\end{gather*}
	where $s = \sup \Omega_A \lIm C \rIm \cup \Omega_B \lIm C \rIm$ and
	$C(s) = \bigcup \{ \cball{c}{s} \with c \in C \}$. If $C$ is bounded
	so is $C(s)$. In particular, $A_i$ converges to $A$ locally in
	Hausdorff distance if and only if for some $x \in X$
	\begin{gather*}
		\sup \Omega_{A_i} \lIm A \cap \cball{x}{r} \rIm \cup \Omega_A
		\lIm A_i \cap \cball{x}{r} \rIm \to 0 \quad \text{as $i \to
		\infty$ for $0 < r < \infty$}.
	\end{gather*}
\end{miniremark}
\begin{miniremark} \label{miniremark:distance_comparison}
	If $\vdim, \adim \in \nat$, $\vdim < \adim$, $f : \rel^\vdim \to
	\rel^\codim$ is locally Lipschitzian, and $(x,y) \in \rel^\vdim \times
	\rel^\codim$ then (see \ref{miniremark:hausdorff_distance})
	\begin{gather*}
		\Omega_f (x,y) \leq | f(x)-y | \leq ( 1 + \Lip f |
		\cball{x}{\Omega_f (x,y)} ) \Omega_f (x,y);
	\end{gather*}
	in fact, abbreviating $L = \Lip f | \cball{x}{\Omega_f(x,y)}$ and
	choosing $v \in \rel^\vdim$ with with $\Omega_f (x,y) =
	|(v,f(v))-(x,y)|$, one notes $|x-v| \leq \Omega_f (x,y)$ and
	estimates
	\begin{gather*}
		\begin{split}
			|f(x)-y| & \leq |f(x)-f(v)| + |f(v)-y| \\
			& \leq L |x-v| + | (v,f(v))
			- (x,y) | \leq (1+L) \Omega_f (x,y).
		\end{split}
	\end{gather*}
\end{miniremark}
\begin{lemma} \label{lemma:equivalent_blowups}
	Suppose $\vdim, \adim \in \nat$, $\vdim < \adim$, $1 \leq \mu <
	\infty$, $A \subset \rel^\vdim \times \rel^\codim$, $f : \rel^\vdim
	\to \rel^\codim$ is locally Lipschitzian, $h : \rel^\vdim \times
	\rel^\codim \to \rel$, $\phi_r : \rel^\vdim \times \rel^\codim \to
	\rel^\vdim \times \rel^\codim$, and
	\begin{gather*}
		f(tx)=t^\mu f(x), \quad h(x,y)=|y-f(x)|, \quad \phi_r(x,y) =
		(r^{-1} x,r^{-\mu} y)
	\end{gather*}
	whenever $(x,y) \in \rel^\vdim \times \rel^\codim$, $0 \leq t <
	\infty$ and $0 < r < \infty$.

	Then the following conditions are equivalent (see
	\ref{miniremark:hausdorff_distance}):
	\begin{enumerate}
		\item \label{item:equivalent_blowups:g} $r^{-\mu} \sup
		\Omega_f \lIm
		A \cap \cball{0}{r} \rIm \to 0$ as $r \to 0+$,
		\item \label{item:equivalent_blowups:h} $r^{-\mu} \sup h \lIm
		A \cap \cball{0}{r} \rIm \to 0$ as $r \to 0+$,
		\item \label{item:equivalent_blowups:g_scaling} $\sup \Omega_f
		\lIm \phi_r \lIm A \rIm \cap \cball{0}{R} \rIm \to 0$ as $r
		\to 0+$ whenever $0 < R < \infty$,
		\item \label{item:equivalent_blowups:h_scaling} $\sup h \lIm
		\phi_r \lIm A \rIm \cap \cball{0}{R} \rIm \to 0$ as $r \to 0+$
		whenever $0 < R < \infty$.
	\end{enumerate}
	Moreover, the list may be amplified by replacing ``whenever'' by ``for
	some'' in \eqref{item:equivalent_blowups:g_scaling} and
	\eqref{item:equivalent_blowups:h_scaling}. If the conditions
	hold and
	\begin{gather*}
		\sup \Omega_{\boldsymbol{\tau}_{1/r} \lIm A \rIm} \lIm (
		\rel^\vdim \times \{0\} ) \cap \cball{0}{R} \rIm \to 0 \quad
		\text{as $r \to 0+$ for $0 < R < \infty$},
	\end{gather*}
	then
	\begin{gather*}
		\phi_r \lIm \Clos A \rIm \to f \quad \text{locally in
		Hausdorff distance as $r \to 0+$}.
	\end{gather*}
\end{lemma}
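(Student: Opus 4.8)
The plan is to prove the equivalences by a cycle of implications and one key scaling observation, then treat the final \guillemotleft moreover\guillemotright{} assertion as a separate corollary of the homogeneity of $f$.

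First I would record the scaling identities that make the whole lemma go. Since $f$ is $\mu$ homogeneous, one has $\phi_r \circ \boldsymbol{\tau}_{(x,f(x))} = \boldsymbol{\tau}_{(r^{-1}x, r^{-\mu}f(x))} \circ \phi_r$ and, more to the point, $\phi_r$ maps $\graph f$ onto $\graph f$, because $(r^{-1}x, r^{-\mu}f(x)) = (r^{-1}x, f(r^{-1}x))$. Consequently, for any point $(u,v)$ and any $r>0$, $\dist(\phi_r(u,v), \graph f)$ compares to $\dist((u,v),\graph f)$ with the anisotropic weights $r^{-1}$ in the horizontal and $r^{-\mu}$ in the vertical directions; since $0<r\le 1$ may be assumed and $\mu\ge 1$, one gets the clean bound $\Omega_f(\phi_r(u,v)) \le r^{-\mu}\Omega_f(u,v)$ together with a matching lower bound $\Omega_f(\phi_r(u,v)) \ge r^{-1}\Omega_f(u,v)$ valid whenever the nearest point stays in a fixed ball. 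The precise statement I want is: if $(u,v)\in A$ with $|(u,v)|\le r$, then $\phi_r(u,v)\in \phi_r\lIm A\rIm \cap \cball{0}{R}$ for an appropriate $R$ (using $\mu\ge 1$ again to control $|r^{-\mu}v|$ by $|r^{-1}u|$ up to the \guillemotleft $\graph f$ is asymptotically flat\guillemotright{} hypothesis, or simply by noting $|(u,v)|\le r$ forces $|\phi_r(u,v)|$ bounded once we know $A$ clusters near $\rel^\vdim\times\{0\}$ — but for the bare equivalence of \eqref{item:equivalent_blowups:g}--\eqref{item:equivalent_blowups:h_scaling} one does not even need that and can take $R$ depending on a crude Lipschitz bound for $f$ near $0$). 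Pairing these two bounds shows $r^{-\mu}\sup\Omega_f\lIm A\cap\cball{0}{r}\rIm$ and $\sup\Omega_f\lIm\phi_r\lIm A\rIm\cap\cball{0}{R}\rIm$ are comparable as $r\to 0+$, which is exactly the equivalence \eqref{item:equivalent_blowups:g}$\Leftrightarrow$\eqref{item:equivalent_blowups:g_scaling}, and likewise \eqref{item:equivalent_blowups:h}$\Leftrightarrow$\eqref{item:equivalent_blowups:h_scaling}; the \guillemotleft for some $R$\guillemotright{} versions follow because a nonnegative quantity tending to $0$ along one exhausting sequence of radii forces it along all, by monotonicity of the supremum in $R$ combined with the homogeneity.

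Next I would dispatch \eqref{item:equivalent_blowups:g}$\Leftrightarrow$\eqref{item:equivalent_blowups:h}. Here the tool is \ref{miniremark:distance_comparison}, which gives $\Omega_f(x,y)\le h(x,y)=|f(x)-y|\le (1+\Lip f|\cball{x}{\Omega_f(x,y)})\,\Omega_f(x,y)$. On $A\cap\cball{0}{r}$ with $r$ small, $\Omega_f(x,y)$ is small (once \eqref{item:equivalent_blowups:g} is assumed) and $|x|\le r$, so the relevant Lipschitz constant is taken over a fixed small ball around $0$ and is hence bounded by $\Lip(f|\cball{0}{2})$, say. That uniform bound on the multiplicative factor turns the sandwich into the desired equivalence of the $o(r^\mu)$ statements. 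The direction \eqref{item:equivalent_blowups:h}$\Rightarrow$\eqref{item:equivalent_blowups:g} is immediate from the left inequality of \ref{miniremark:distance_comparison} with no Lipschitz bound needed at all.

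Finally, for the concluding convergence assertion, I would combine the already-established \eqref{item:equivalent_blowups:g_scaling} with the added hypothesis that $\sup\Omega_{\boldsymbol{\tau}_{1/r}\lIm A\rIm}\lIm(\rel^\vdim\times\{0\})\cap\cball{0}{R}\rIm\to 0$. By the characterisation of convergence locally in Hausdorff distance recalled at the end of \ref{miniremark:hausdorff_distance}, I must show both one-sided suprema tend to $0$: that points of $\phi_r\lIm\Clos A\rIm$ in $\cball{0}{R}$ are close to $\graph f$, and that points of $\graph f$ in $\cball{0}{R}$ are close to $\phi_r\lIm\Clos A\rIm$. The first is exactly \eqref{item:equivalent_blowups:g_scaling} (extended from $A$ to $\Clos A$ by continuity of $\Omega_f$, which also preserves the $o$-estimate because the supremum over $\Clos A$ equals that over $A$). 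For the second, I would fix $x$ with $(x,f(x))\in\cball{0}{R}$; the flatness hypothesis on $\boldsymbol{\tau}_{1/r}\lIm A\rIm$ near $\rel^\vdim\times\{0\}$ plus \eqref{item:equivalent_blowups:g_scaling} lets me locate a point of $\phi_r\lIm A\rIm$ whose horizontal coordinate is within $o(1)$ of $x$ and whose distance to $\graph f$ is $o(1)$; applying \ref{miniremark:distance_comparison} once more (with the roles reversed, using the local Lipschitz bound for $f$ near $x$) converts closeness to $\graph f$ into closeness to the actual point $(x,f(x))$. The main obstacle I anticipate is precisely this second half: keeping the ball $\cball{0}{R}$ and the auxiliary radii straight while simultaneously using two separate smallness hypotheses (the one on $\Omega_f\circ\phi_r$ and the one on the $\rel^\vdim\times\{0\}$-distance of $\boldsymbol{\tau}_{1/r}\lIm A\rIm$) and converting vertical-distance control into graph-distance control via the homogeneity-driven comparison of $\phi_r$ and $\boldsymbol{\tau}_{1/r}$ — a careful but routine bookkeeping of constants and radii rather than a conceptual difficulty.
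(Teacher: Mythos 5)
Your forward direction (from the unscaled conditions to the scaled ones) and your $\eqref{item:equivalent_blowups:g}\Leftrightarrow\eqref{item:equivalent_blowups:h}$ argument via \ref{miniremark:distance_comparison} are sound, and your sketch of the final postscript matches the paper's. The gap is in the reverse implication from the scaled conditions back to the unscaled ones. The two bounds you pair, $\Omega_f(\phi_r(u,v))\leq r^{-\mu}\Omega_f(u,v)$ and $\Omega_f(\phi_r(u,v))\geq r^{-1}\Omega_f(u,v)$, are not matching: they differ by a factor $r^{\mu-1}$. To deduce \eqref{item:equivalent_blowups:g} you must show $\Omega_f(u,v)\leq\varepsilon\,r^\mu$ for $(u,v)\in A\cap\cball{0}{r}$, but the lower bound only yields $\Omega_f(u,v)\leq r\,\Omega_f(\phi_r(u,v))$, i.e.\ $r^{-\mu}\Omega_f(u,v)\leq r^{1-\mu}\Omega_f(\phi_r(u,v))$, and $r^{1-\mu}\to\infty$ when $\mu>1$. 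There is a second, independent obstruction: $\phi_r(u,v)\in\cball{0}{R}$ forces $|v|\leq r^\mu R$, not merely $|v|\leq r$, so for small $r$ the rescaling $\phi_r$ expels from $\cball{0}{R}$ precisely those points $(u,v)\in A\cap\cball{0}{r}$ whose vertical component is not already of order $|u|^\mu$. Conditions \eqref{item:equivalent_blowups:g_scaling}/\eqref{item:equivalent_blowups:h_scaling} at a fixed $r$ therefore say nothing about such points, yet these are exactly the points that could make \eqref{item:equivalent_blowups:g}/\eqref{item:equivalent_blowups:h} fail; no uniform choice of $R$, Lipschitz or otherwise, repairs this.

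The paper closes this direction with a compactness argument and a point-dependent rescaling radius, which is the ingredient your proposal is missing. Assuming \eqref{item:equivalent_blowups:h} fails, one extracts $(x_i,y_i)\in A$ with $(x_i,y_i)\to 0$ and $h(x_i,y_i)>\varepsilon|x_i|^\mu$, then chooses for each $i$ the radius $r(i)$ with $\bigl|\phi_{r(i)}(x_i,y_i)\bigr|=R$ (where $R$ is the radius provided by the ``for some $R$'' version \eqref{item:equivalent_blowups:g_scaling}$'$). Since $(x_i,y_i)\to 0$ one has $r(i)\to 0+$; passing to a subsequence gives a limit $(x,y)$ with $|(x,y)|=R$. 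The scaled hypothesis forces $\Omega_f(\phi_{r(i)}(x_i,y_i))\to 0$, hence $y=f(x)$; the exact identity $h\circ\phi_{r(i)}^{-1}=r(i)^\mu h$ then gives $\varepsilon\,|r(i)^{-1}x_i|^\mu< h(\phi_{r(i)}(x_i,y_i))\to h(x,y)=0$, forcing $r(i)^{-1}x_i\to 0$, so $x=0$ and $y=f(0)=0$, contradicting $|(x,y)|=R>0$. Note also that this single argument simultaneously handles the ``for some $R$ implies for all $R$'' upgrade, which your proposal dispatches with a vague appeal to monotonicity and homogeneity; you would need to make that step explicit as well.
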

\begin{proof}
	Denote the conditions resulting from
	\eqref{item:equivalent_blowups:g_scaling} and
	\eqref{item:equivalent_blowups:h_scaling} by replacing ``whenever'' by
	``for some'' by \eqref{item:equivalent_blowups:g_scaling}$'$ and
	\eqref{item:equivalent_blowups:h_scaling}$'$. Clearly,
	\eqref{item:equivalent_blowups:g_scaling} implies
	\eqref{item:equivalent_blowups:g_scaling}$'$ and
	\eqref{item:equivalent_blowups:h_scaling} implies
	\eqref{item:equivalent_blowups:h_scaling}$'$. Note also that each of
	the six conditions implies $0 \in \Clos A$. Abbreviating $L = \Lip ( f
	| \cball{0}{1} ) < \infty$, one obtains
	\begin{gather*}
		\Lip ( f | \cball{x}{\Omega_f(x,y)} ) \leq \Lip ( f |
		\cball{0}{2R} ) \leq ( 2R )^{\mu-1} L, \\
		\Omega_f(x,y) \leq h(x,y) \leq ( 1 + (2R)^{\mu-1} L ) \Omega_f
		(x,y)
	\end{gather*}
	whenever $(x,y) \in \cball{0}{R} \subset \rel^\vdim \times
	\rel^\codim$ and $0 < R < \infty$ by
	\ref{miniremark:distance_comparison}. Hence
	\eqref{item:equivalent_blowups:g} is equivalent to
	\eqref{item:equivalent_blowups:h},
	\eqref{item:equivalent_blowups:g_scaling} is equivalent to
	\eqref{item:equivalent_blowups:h_scaling} and
	\eqref{item:equivalent_blowups:g_scaling}$'$ is equivalent to
	\eqref{item:equivalent_blowups:h_scaling}$'$. Therefore in order to
	prove the equivalences it is sufficient to show
	\eqref{item:equivalent_blowups:h} implies
	\eqref{item:equivalent_blowups:h_scaling} and
	\eqref{item:equivalent_blowups:g_scaling}$'$ implies
	\eqref{item:equivalent_blowups:h}.

	The first implication is a consequence of the facts
	\begin{gather*}
		h \circ \phi_r^{-1} = r^\mu h, \quad \phi_r^{-1} \lIm
		\cball{0}{R} \rIm \subset \cball{0}{rR}
	\end{gather*}
	whenever $0 < r \leq 1$ and $0 < R < \infty$.

	If the second implication were false there would exist $0 < R <
	\infty$ with
	\begin{gather*}
		\sup \Omega_f \lIm \phi_r \lIm A \rIm \cap \cball{0}{R} \rIm
		\to 0 \quad \text{as $r \to 0+$},
	\end{gather*}
	$\varepsilon > 0$, and a sequence $(x_i,y_i) \in A$ tending to $0$ as
	$i \to \infty$ with $h(x_i,y_i) > \varepsilon |x_i|^\mu$. Choosing $0
	< r(i) < \infty$ with $|\phi_{r(i)} ( x_i,y_i )| = R$, one could
	assume, possibly passing to a subsequence, the existence of $(x,y) \in
	\rel^\vdim \times \rel^\codim$ such that
	\begin{gather*}
		\phi_{r(i)} (x_i,y_i) \to (x,y) \quad \text{as $i \to
		\infty$} \qquad \text{and} \qquad |(x,y)| = R.
	\end{gather*}
	Since $\Omega_f ( \phi_{r(i)} ( x_i, y_i ) ) \to 0$ as $i \to \infty$,
	one would infer $f(x)=y$ and, since
	\begin{gather*}
		\varepsilon |x|^\mu \gets \varepsilon | r(i)^{-1} x_i |^\mu <
		r(i)^{-\mu} h (x_i,y_i) = h ( \phi_{r(i)} (x_i,y_i) ) \to
		h(x,y) = 0
	\end{gather*}
	as $i \to \infty$, also $x=0$, hence $y=f(0)=0$ in contradiction to
	$|(x,y)|=R$.

	To prove the postscript, it is sufficient to show
	\begin{gather*}
		\sup \Omega_{\phi_r \lIm A \rIm} \lIm f \cap \cball{0}{R} \rIm
		\to 0 \quad \text{as $r \to 0+$ whenever $0 < R < \infty$}
	\end{gather*}
	by \ref{miniremark:hausdorff_distance}. For this purpose suppose $0 <
	R < \infty$, $L = \Lip ( f | \cball{0}{2R} )$ and $0 < \varepsilon
	\leq R$ and choose $0 < s < \infty$ such that
	\begin{gather*}
		\sup \Omega_{\boldsymbol{\tau}_{1/r} \lIm A \rIm} \lIm (
		\rel^\vdim \times \{ 0 \} ) \cap \cball{0}{R} \rIm <
		\varepsilon, \quad \sup h \lIm A \cap \cball{0}{3Rr} \rIm \leq
		\varepsilon r^\mu
	\end{gather*}
	whenever $0 < r \leq s$. If $(x,y) \in f \cap \cball{0}{R}$ and $0 < r
	\leq s$ there exists $(v,w) \in A$ with $| \boldsymbol{\tau}_{1/r}
	(v,w) - (x,0) | \leq \varepsilon$ hence
	\begin{gather*}
		|v-rx| \leq \varepsilon r, \quad |v| \leq 2Rr, \quad |w| \leq
		Rr, \quad (v,w) \in \cball{0}{3Rr}, \\
		|\phi_r(v,w) - (x,y)| \leq |r^{-1}v-x| + r^{-\mu} |w-f(v)| +
		|f(r^{-1}v)-f(x)| \leq (2+L) \varepsilon
	\end{gather*}
	and the postscript follows.
\end{proof}
\begin{definition} \label{def:diffferentiability}
	Suppose $a \in A \subset \rel^\adim$ and $U \cap \Clos A = U \cap A$
	for some neighbourhood $U$ of $a$.
	
	Then $A$ is called \emph{differentiable at $a$} if and only if for
	some linear subspace $T$ of $\rel^\adim$ there holds
	\begin{gather*}
		\boldsymbol{\mu}_{1/r} \circ \boldsymbol{\tau}_{-a} \lIm \Clos
		A \rIm \to T \quad \text{locally in Hausdorff distance as $r
		\to 0+$}.
	\end{gather*}
\end{definition}
\begin{remark} \label{remark:once_diff}
	Note $T = \Tan (A,a)$ in case $A$ is differentiable at $a$ but
	$A$ needs not to be differentiable at $a$ if $\Tan(A,a)$ is a linear
	subspace of $\rel^\adim$. If $A$ is a submanifold of
	$\rel^\adim$ then $A$ is differentiable at $a$ by
	\ref{miniremark:distance_comparison}.
\end{remark}
\begin{definition} \label{def:twice_diff}
	Suppose $a \in A \subset \rel^\adim$ and $U \cap \Clos A = U \cap A$
	for some neighbourhood $U$ of $a$.
	
	Then $A$ is called \emph{twice differentiable at $a$} if and only if
	$A$ is differentiable at $a$ and, in case $0 < \vdim = \dim \Tan(A,a)
	< \adim$, there exists a homogeneous polynomial function $Q : \Tan
	(A,a) \to \Nor (A,a)$ of degree $2$ such that with
	\begin{gather*}
		\tau : \Tan (A,a) \times \Nor (A,a) \to \rel^\adim, \\
		\tau (v,w)=v+w \quad \text{for $v \in \Tan (A,a)$, $w \in \Nor
		(A,a)$}, \\
		\phi_r = r^{-1} \project{\Tan(A,a)} + r^{-2} \project{\Nor
		(A,a)} \quad \text{for $0 < r < \infty$}
	\end{gather*}
	there holds
	\begin{gather*}
		\phi_r \circ \boldsymbol{\tau}_{-a} \lIm \Clos A \rIm \to \tau
		\lIm Q \rIm \quad \text{locally in Hausdorff distance as $r
		\to 0+$.}
	\end{gather*}
	Since $\tau$ is an isometry, $Q$ is uniquely determined by $A$ and
	$a$, hence the \emph{second fundamental form $\mathbf{b} (A;a)$ of $A$
	at $a$} may be defined by $\mathbf{b} (A;a) = D^2 Q (0)$.
\end{definition}
\begin{remark} \label{remark:twice_diff}
	If $A$ is a submanifold of class $2$ of $\rel^\adim$ then $A$ is twice
	differentiable at $a$ and the definition of $\mathbf{b}(A;a)$ in
	\ref{def:twice_diff} agrees with classical definitions by
	\ref{remark:once_diff} and
	\ref{lemma:equivalent_blowups}\,\eqref{item:equivalent_blowups:h}.
\end{remark}
\begin{lemma} \label{lemma:differentiation}
	Suppose $\vdim$, $\adim$, $p$, $U$, $V$, and are as in
	\ref{miniremark:situation_general}, $p=\vdim$, $f : U \to \{ t \with
	0 \leq t \leq \infty \}$, $0 \leq \nu < \infty$, $0 < \mu < \infty$,
	and $A$ is the set of all $a \in \spt \| V \|$ such that
	\begin{gather*}
		f (z) \leq \nu |z-a|^\mu \quad \text{whenever $z \in \spt \| V
		\|$ with $|z-a| < \nu^{-1}$}.
	\end{gather*}

	Then $A$ is closed relative to $U$ and if $a \in \spt \| V \|$ with
	$\density^\vdim ( \| V \| \restrict U \without A, a ) = 0$ and, in
	case $\vdim=1$, $\| \delta V \| ( \{a\} ) < \isoperimetric{1}^{-1}$,
	then
	\begin{gather*}
		\lim_{r \to 0+} \sup \{ |z-a|^{-\mu} f(z) \with z \in \spt \|
		V \| \cap \cball{a}{r} \without \{ a \} \} = 0.
	\end{gather*}
\end{lemma}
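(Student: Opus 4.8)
The plan is to prove the two assertions in turn. For the first, to see that $A$ is relatively closed in $U$, I would take $a\in U$ and a sequence $a_i\in A$ with $a_i\to a$; then $a\in\spt\|V\|$ since $\spt\|V\|$ is relatively closed in $U$, and for any $z\in\spt\|V\|$ with $|z-a|<\nu^{-1}$ one has $|z-a_i|<\nu^{-1}$ for large $i$, hence $f(z)\leq\nu|z-a_i|^\mu\to\nu|z-a|^\mu$, so that $a\in A$. (If $\nu=0$ one reads $\nu^{-1}$ as $\infty$, so that $A$ is either $\varnothing$ or all of $\spt\|V\|$, both relatively closed.) Next I would record the key preliminary observation that the density hypothesis actually forces $a\in A$: since $a\in\spt\|V\|$, monotonicity together with \ref{corollary:density_1d}\,\eqref{item:density_1d:lower_bound} gives $\measureball{\|V\|}{\cball{a}{\rho}}\geq c_0\rho^\vdim$ for small $\rho$ with $c_0>0$, while $\density^\vdim(\|V\|\restrict U\without A,a)=0$ makes $\|V\|((U\without A)\cap\cball{a}{\rho})=o(\rho^\vdim)$; hence $A\cap\cball{a}{\rho}$ is $\|V\|$ nonnull for every small $\rho$, so meets $\spt\|V\|$, so $a\in\Clos A$, and since $A$ is relatively closed in $U$ and $a\in U$ this yields $a\in A$. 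Consequently $f(z)\leq\nu|z-a|^\mu$ for all $z\in\spt\|V\|$ with $|z-a|<\nu^{-1}$; in particular the $\limsup$ in the conclusion is bounded by $\nu$ and finite, and if $\nu=0$ it is $0$, so from now on I assume $\nu>0$.

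For the main statement I would argue by contradiction: suppose the $\limsup$ equals $\lambda$ with $0<\lambda\leq\nu$. Choose $\varepsilon\in(0,\lambda)$ and a sequence $z_i\in\spt\|V\|\without\{a\}$ with $z_i\to a$, $s_i:=|z_i-a|\to0$, and $f(z_i)\geq\varepsilon s_i^\mu$, and set $\delta:=\tfrac12(\varepsilon/\nu)^{1/\mu}$, so $0<\delta<\tfrac12$. The geometric heart of the proof is the claim that $\cball{z_i}{\delta s_i}\cap A\cap\spt\|V\|=\varnothing$ for $i$ large. Indeed, if $z'\in A\cap\spt\|V\|$ with $|z_i-z'|\leq\delta s_i<\nu^{-1}$ (valid for large $i$), then applying the defining property of $A$ at the point $z'$ to $z_i\in\spt\|V\|$ gives $f(z_i)\leq\nu|z_i-z'|^\mu\leq\nu(\delta s_i)^\mu=2^{-\mu}\varepsilon s_i^\mu<\varepsilon s_i^\mu$, a contradiction; moreover $\delta<1$ forces $a\notin\cball{z_i}{\delta s_i}$. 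Since $\|V\|$ is concentrated on $\spt\|V\|$ and $\cball{z_i}{\delta s_i}\subset U$ for large $i$, this means that the entire $\|V\|$ mass carried by $\cball{z_i}{\delta s_i}$ lies in $U\without A$.

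It then remains to bound $\measureball{\|V\|}{\cball{z_i}{\delta s_i}}$ below, uniformly in $i$. Here I would invoke the monotonicity-based lower density ratio estimate at the support point $z_i$ on scale $\delta s_i$: \cite[2.5]{snulmenn.isoperimetric} (the mass lower bound underlying \ref{lemma:lower_mass_bound_Q}) for $\vdim>1$, and \ref{corollary:density_1d}\,\eqref{item:density_1d:upper_bound}\,\eqref{item:density_1d:lower_bound} for $\vdim=1$. Because $\cball{z_i}{\delta s_i}\subset\cball{a}{2s_i}$ for $i$ large and avoids $a$, the relevant first-variation error is $\psi(\cball{a}{2s_i})$ for $\vdim>1$, which tends to $0$ since $\psi$ is a Radon measure, respectively $\|\delta V\|(\cball{a}{2s_i}\without\{a\})$ for $\vdim=1$, which tends to $0$ since $\|\delta V\|$ is Radon; for $\vdim=1$ the hypothesis $\|\delta V\|(\{a\})<\isoperimetric{1}^{-1}$ is exactly what guarantees that this error stays below the isoperimetric threshold for $i$ large, so that the lower density bound applies (for $\vdim>1$ no such hypothesis is needed, as $\|\delta V\|$ is absolutely continuous with respect to $\|V\|$). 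One thereby obtains $c>0$, independent of $i$, with $\measureball{\|V\|}{\cball{z_i}{\delta s_i}}\geq c(\delta s_i)^\vdim$ for all large $i$. Combining with the previous paragraph, $\|V\|((U\without A)\cap\cball{a}{2s_i})\geq c\delta^\vdim s_i^\vdim$, whence $\density^{\ast\vdim}(\|V\|\restrict U\without A,a)\geq c\delta^\vdim(2^\vdim\unitmeasure{\vdim})^{-1}>0$, contradicting $\density^\vdim(\|V\|\restrict U\without A,a)=0$. The main obstacle I anticipate is making this uniform lower density estimate at the moving points $z_i$ precise: tracking the first-variation term in the monotonicity identity near $a$ and, when $\vdim=1$, controlling the interaction with the possible atom of $\|\delta V\|$ at $a$ and with the (necessarily small, once the ambient radius is small) atoms nearby.
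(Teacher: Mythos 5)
Your proof is correct and rests on the same ingredients as the paper's: the relative closedness of $A$, the observation that the density hypotheses force $a\in A$, and the quantitative lower mass bound at a nearby support point $z$ on a ball avoiding $A$ and contained in $\cball{a}{2|z-a|}$ (for which the paper invokes \cite[2.5]{snulmenn.isoperimetric} uniformly in $\vdim$, with $\psi=\|\delta V\|$ handling the $\vdim=1$ case). The only difference is organizational: the paper argues directly by taking $w\in A$ nearest to $z$ and estimating $|z-w|\leq\varepsilon|z-a|/\Delta$, whereas you argue by contradiction with a ball of radius $\delta s_i$ (a lower bound for $\dist(z_i,A)$), which is the contrapositive of the same estimate.
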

\begin{proof}
	One readily verifies that $A$ is closed relative to $U$.

	Note $f (z) = 0$ for $z \in A$, suppose $a$ satisfies the hypotheses
	of the second part of the conclusion, let $\varepsilon > 0$ and choose
	$0 \leq \alpha < \isoperimetric{\vdim}^{-1}$ and $0 < r < \nu^{-1}$
	such that $\cball{a}{2r} \subset U$ and
	\begin{gather*}
		\psi(\cball{a}{2r})^{1/\vdim} < \alpha \qquad \| V \| (
		\cball{a}{2s} \without A ) \leq \varepsilon^\vdim s^\vdim
		\quad \text{whenever $0 < s \leq r$}
	\end{gather*}
	with $\psi$ as in \ref{miniremark:situation_general}. Observe $a \in A
	\cap \Clos ( \spt \| V \| \without \{a \} )$ since
	$\density_\ast^\vdim ( \| V \|, a) > 0$ by
	\cite[2.5]{snulmenn.isoperimetric}. Suppose $z \in \spt \| V \| \cap
	\cball{a}{r} \without A$ and $w \in A$ such that $|z-w| = \dist
	(z,A)$. Note $0 < |z-w| \leq |z-a| \leq r < \nu^{-1}$. One infers with
	$\Delta = ( \isoperimetric{\vdim}^{-1} - \alpha ) / \vdim$
	\begin{gather*}
		\measureball{\| V \|}{\oball{z}{|z-w|}} \geq \Delta^\vdim
		|z-w|^\vdim
	\end{gather*}
	from \cite[2.5]{snulmenn.isoperimetric}, hence, noting
	$\oball{z}{|z-w|} \subset \cball{a}{2|z-a|} \without A$,
	\begin{gather*}
		| z-w |^\vdim \leq \Delta^{-\vdim} \| V \| ( \cball{a}{2|z-a|}
		\without A ) \leq (\varepsilon |z-a|/\Delta)^\vdim, \\
		f (z) \leq \nu |z-w|^\mu \leq \nu ( \varepsilon |z-a|/ \Delta
		)^\mu. \qedhere
	\end{gather*}
\end{proof}
\begin{definition}
	Suppose $V$ is a finite dimensional inner product space, $W$ is a
	vector space, and $f : V \times V \to W$ is a bilinear map.

	Then $\trace f \in W$ is characterised by
	\begin{gather*}
		q ( \trace f ) = \trace ( q \circ f )
	\end{gather*}
	whenever $q : W \to \rel$ is a linear map.
\end{definition}
\begin{lemma} \label{lemma:virtual_tangent_planes}
	Suppose $\vdim, \adim \in \nat$, $\vdim < \adim$, $Z$ is an open
	subset of $\rel^\adim$,
	\begin{gather*}
		G = \classification{\Hom ( \rel^\adim,
		\rel^\adim)}{h}{\text{$h^\ast = h = h \circ h$ and $\trace h =
		\vdim$} },
	\end{gather*}
	$Q : Z \to G$ is of class $\class{1}$ and $N$ is the set of all
	functions $g$ of class $\class{1}$ mapping $Z$ into $\rel^\adim$ such
	that
	\begin{gather*}
		Q (z)(g(z)) = 0 \quad \text{for $z \in Z$}.
	\end{gather*}
	
	Then there exists a function $h : Z \to \rel^\adim$ uniquely
	characterised by
	\begin{gather*}
		Q(z)(h(z)) = 0, \quad Q(z) \bullet Dg(z) = - h (z) \bullet
		g(z)
	\end{gather*}
	whenever $z \in Z$ and $g \in N$. The function $h$ is continuous.
	Moreover, if $M$ is an $\vdim$ dimensional submanifold of $\rel^\adim$
	of class $\class{2}$ satisfying
	\begin{gather*}
		\Tan ( M,z ) = \im Q (z) \quad \text{whenever $z \in Z \cap
		M$}
	\end{gather*}
	then $h(z) = \mathbf{h} ( M;z)$ whenever $z \in Z \cap M$.
\end{lemma}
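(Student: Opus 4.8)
The plan is to produce $h$ by an explicit pointwise formula, to verify the two asserted identities by differentiating the relation that defines membership in $N$, to get uniqueness by exhibiting for each prescribed normal vector an element of $N$ attaining it, and finally to identify $h$ with the mean curvature on $Z \cap M$ by reducing everything to \ref{miniremark:mean_curv}.

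First I would fix an orthonormal basis $v_1, \dots, v_\adim$ of $\rel^\adim$ and set
\[
	d(z) = \tsum{i=1}{\adim} \langle v_i, DQ(z) \rangle ( v_i ) \in \rel^\adim, \qquad h(z) = ( \id{\rel^\adim} - Q(z) ) ( d(z) ) \quad \text{for $z \in Z$};
\]
the contraction defining $d(z)$ is independent of the choice of orthonormal basis (it is a trace over two copies of $\rel^\adim$ carrying its inner product), and $h$ is continuous because $Q$ is of class $1$. Since $Q(z) = Q(z) \circ Q(z)$ one has $Q(z)(h(z)) = ( Q(z) - Q(z) \circ Q(z) )(d(z)) = 0$, so the first condition holds. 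For $g \in N$, differentiating $Q(z')(g(z')) = 0$ in the direction $v_i$ gives $\langle v_i, DQ(z) \rangle(g(z)) + Q(z)(\langle v_i, Dg(z) \rangle) = 0$; using that $Q(z)$ and $\langle v_i, DQ(z) \rangle$ are symmetric and that $Q(z)(g(z)) = 0$, a short computation yields $Q(z) \bullet Dg(z) = \tsum{i=1}{\adim} \langle v_i, Q(z) ( \langle v_i, Dg(z) \rangle ) \rangle = - \tsum{i=1}{\adim} \langle \langle v_i, DQ(z) \rangle ( v_i ), g(z) \rangle = - d(z) \bullet g(z) = - h(z) \bullet g(z)$, the last equality because $d(z) - h(z) = Q(z)(d(z)) \in \im Q(z)$ is orthogonal to $g(z) \in \ker Q(z)$. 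This establishes existence together with the two identities.

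For uniqueness I would observe that for any $z_0 \in Z$ and any $w \in \ker Q(z_0)$ the field $g$ defined by $g(z) = ( \id{\rel^\adim} - Q(z) )(w)$ lies in $N$ — indeed $Q(z)(g(z)) = ( Q(z) - Q(z) \circ Q(z) )(w) = 0$ — and $g(z_0) = w$. Hence $\{ g(z_0) \with g \in N \} = \ker Q(z_0)$, so a vector of $\ker Q(z_0)$ that is $\bullet$-orthogonal to every $g(z_0)$, $g \in N$, must vanish; applying this to $h_1(z_0) - h_2(z_0)$ shows that two functions satisfying the stated conditions agree. Note this argument is pointwise in $z_0$.

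Finally, on $Z \cap M$ I would write $S(z) = \project{\Tan(M,z)}$ and note that, $Q(z)$ and $S(z)$ being symmetric idempotents with common image $\Tan(M,z)$, they coincide there. For $g \in N$ the restriction $g | Z \cap M$ is of class $1$ relative to $M$ with values in $\Nor(M,z)$; using cyclicity of the trace and $Q(z) \circ Q(z) = Q(z)$ one gets $Q(z) \bullet Dg(z) = \trace ( Q(z) \circ Dg(z) \circ Q(z) )$, and $( Q(z) \circ Dg(z) \circ Q(z) )(v) = S(z) ( \langle S(z)(v), Dg(z) \rangle )$ depends only on $g | M$ and equals $S(z) \bullet ( Dg(z) \circ S(z) )$ in the notation of \ref{miniremark:mean_curv}. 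That miniremark then gives $Q(z) \bullet Dg(z) = - g(z) \bullet T ( DS(z) \circ S(z) )$ with $T ( DS(z) \circ S(z) ) = \mathbf{h}(M;z) \in \Nor(M,z) = \ker Q(z)$. Thus $\mathbf{h}(M;\cdot)$ satisfies at each point of $Z \cap M$ the two conditions characterising $h$, whence $h(z) = \mathbf{h}(M;z)$ there by the pointwise uniqueness just proved. The main obstacle I anticipate is the bookkeeping in the two differentiation identities — in particular the basis-independence of $d(z)$ and the careful matching of the full Euclidean derivatives appearing in the formula for $h$ with the intrinsic derivatives along $M$ used in \ref{miniremark:mean_curv}.
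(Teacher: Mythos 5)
Your proof is correct and follows essentially the same plan as the paper's: exhibit $h$ by a pointwise formula built from $Q$ and $DQ$, derive the second identity by differentiating the relation $Q(z)(g(z))=0$, settle uniqueness via $\{g(z)\with g\in N\}=\ker Q(z)$, and reduce the postscript to a known formula for $\mathbf{h}(M;\cdot)$. The paper uses the formula $h(z)=\trace b(z)$ with $b(z)(v,w)=\left<Q(z)(v),\left<Q(z)(w),DQ(z)\right>\right>$, which lands in $\ker Q(z)$ automatically after differentiating $Q\circ Q=Q$, whereas you contract $DQ$ directly to get $d(z)$ and then apply $\id-Q(z)$; both yield the same $h$ and the discrepancy $d(z)-h(z)\in\im Q(z)$ is correctly disposed of. You make the uniqueness step more explicit than the paper (which merely notes $\ker Q(z)=\{g(z)\with g\in N\}$) by producing $g(z)=(\id-Q(z))(w)$, and you identify $h$ with $\mathbf{h}(M;\cdot)$ by reducing to \ref{miniremark:mean_curv} rather than the paper's citation of Allard; both routes are sound. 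Minor notational slips aside — you conflate $\left<v,\cdot\right>$ with $\bullet$ in one display, and equate the endomorphism $(Q\circ Dg\circ Q)(v)$ with its trace $S\bullet(Dg\circ S)$ — the argument is complete.
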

\begin{proof}
	Since $\ker Q(z) = \{ g (z) \with g \in N \}$ for $z \in Z$, the
	uniqueness of $h$ follows.

	Define $b(z) : \rel^\adim \times \rel^\adim \to \rel^\adim$ for $z \in
	Z$ by
	\begin{gather*}
		b(z)(v,w) = \left < Q(z)(v), \left < Q(z)(w), DQ(z) \right >
		\right > \quad \text{for $v,w \in \rel^\adim$}
	\end{gather*}
	and $h : Z \to \rel^\adim$ by
	\begin{gather*}
		h(z) = \trace b(z) \quad \text{for $z \in Z$}.
	\end{gather*}
	Differentiating the identity $Q(z) \circ Q(z) = Q(z)$ for $z \in Z$ in
	direction $v$ and composing the result with $Q(z)$ yields
	\begin{gather*}
		Q(z) \circ \left < v, DQ(z) \right > \circ Q(z) = 0 \quad
		\text{for $z \in Z$, $v \in \rel^\adim$},
	\end{gather*}
	hence $\im b(z) \subset \ker Q(z)$ and $Q(z)(h(z)) = 0$ for $z \in Z$.
	Whenever $z \in Z$, $v \in \rel^\adim$, $g \in N$ and $e_1, \ldots,
	e_\adim$ form an orthonormal base of $\rel^\adim$, one uses $Q(\xi) =
	Q(\xi)^\ast$ for $\xi \in Z$ hence $\left < v, DQ(z) \right
	> = \left < v, DQ (z) \right >^\ast$ for $v \in \rel^\adim$ to compute
	\begin{gather*}
		\begin{aligned}
			g(z) & = \left < g(z), \id{\rel^\adim} - Q(z) \right
			>, \\
			\left < v, Dg(z) \right > & = \left < \left < v, Dg(z)
			\right > , \id{\rel^\adim} - Q(z) \right > - \left < g
			(z), \left < v, DQ(z) \right > \right >, \\
			Q (z) \bullet Dg(z) & = - \tsum{i=1}{\adim} \left < g
			(z), \left < Q(z)(e_i), DQ(z) \right > \right >
			\bullet Q(z)(e_i) \\
			& = - g(z) \bullet \tsum{i=1}{\adim} \left <
			Q(z)(e_i), \left < Q(z)(e_i), DQ(z) \right > \right >
			= - h (z) \bullet g(z).
		\end{aligned}
	\end{gather*}

	The postscript now follows from the analoguous characterisation of
	$\mathbf{h} ( M;\cdot )$ cp. e.g. Allard \cite[2.5\,(2)]{MR0307015}.
\end{proof}
\begin{theorem} \label{thm:locality_mean_curvature}
	Suppose $\vdim$, $\adim$, $p$, $U$, $V$, and ``$\ap\!$'' are as in
	\ref{miniremark:situation_general}, $p = \vdim < \adim$, $M$ is an
	$\vdim$ dimensional submanifold of $\rel^\adim$ of class $\class{2}$,
	$\Omega_M : U \to \rel$ satisfies $\Omega_M (z) = \dist (z,M)$ for $z
	\in \rel^\adim$ and $R(z) = \project{\Tan^\vdim ( \| V \|, z )}$
	whenever $\Tan^\vdim ( \| V \|, z ) \in \grass{\adim}{\vdim}$.

	Then for $\mathscr{H}^\vdim$ almost all $a \in M \cap \spt \| V \|$
	there holds
	\begin{enumerate}
		\item \label{item:locality_mean_curvature:flatness} $\lim_{r
		\to 0+} r^{-2} \sup \Omega_M \lIm \cball{a}{r} \cap \spt \| V
		\| \rIm = 0$.
		\item \label{item:locality_mean_curvature:differentiability}
		$\spt \| V \|$ is twice differentiable at $a$ with
		\begin{gather*}
			\Tan ( \spt \| V \|, a ) = \Tan ( M, a) = \Tan^\vdim (
			\| V \|, a ), \\
			\mathbf{b} ( \spt \| V \|; a ) = \mathbf{b} ( M; a ),
			\quad \project{\Nor ( M, a )} ( \mathbf{h}
			(V;a) ) = \mathbf{h} (M;a).
		\end{gather*}
		\item \label{item:locality_mean_curvature:tangent_map} $R$ is
		$( \| V \|, \vdim )$ approximately differentiable at $a$ with
		\begin{gather*}
			\tfint{\cball{a}{r}}{} ( | R(z)-R(a)-
			\left < R(a)(z-a), \ap DR (a) \right > |/|z-a| )^2 \ud
			\| V \| z \to 0
		\end{gather*}
		as $r \to 0+$.
	\end{enumerate}
\end{theorem}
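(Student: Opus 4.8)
The plan is to prove \eqref{item:locality_mean_curvature:flatness} by applying a maximum estimate from Section~\ref{sec:local_max} to the squared distance from $M$, and then to read off \eqref{item:locality_mean_curvature:differentiability} and \eqref{item:locality_mean_curvature:tangent_map} from \eqref{item:locality_mean_curvature:flatness} by a blow-up argument. First I would fix a tubular neighbourhood $Z$ of $M$ inside $U$ on which the nearest point retraction $\pi$ onto $M$ is of class $1$ and $u = \tfrac12 \Omega_M^2$ is of class $2$ with $D u (z) = z - \pi (z)$ (under the inner product identification) and $\| D^2 u (z) - \project{\Nor ( M, \pi (z) )} \| \leq \Gamma_M \Omega_M (z)$ for a constant $\Gamma_M$ depending only on local second fundamental form bounds of $M$; this is classical, cp.~Federer \cite[4.12, 4.8]{MR0110078}. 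Using \ref{ex:some_lap}\,\eqref{item:some_lap:c2}, the mean curvature hypothesis (so $\| \delta V \| = | \mathbf{h} ( V ; \cdot ) | \, \| V \|$), the bound $\trace ( \project{\Nor ( M, \pi (z) )} | S \times S ) \geq 0$, and $| D u (z) | = \Omega_M (z)$, one obtains
\begin{gather*}
	\tint{}{} ( D \theta (z) | S ) \bullet ( D u (z) | S ) \ud V (z,S) \leq \tint{}{} \theta (z) \big ( | \mathbf{h} ( V ; z ) | + \Gamma_M \big ) \Omega_M (z) \ud \| V \| z
\end{gather*}
for $\theta \in \mathscr{D}^0 ( Z )^+$, which exhibits $u$ as a subsolution as required in \ref{thm:local_maximum_estimates} with $A$ chosen according to \ref{example:V_subharmonic} and $T ( \theta ) = \tint{}{} \theta ( | \mathbf{h} (V;\cdot) | + \Gamma_M ) \Omega_M \ud \| V \|$, whose norm $\nuqastnorm{q}{V}{T}$ is controlled by an $\Lp{}$ norm of $( | \mathbf{h} (V;\cdot)| + \Gamma_M ) \Omega_M$ by \ref{corollary:dual_embedding} once $\vdim < q$ and $\psi$ is locally small.

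Next I would pass to a point $a \in M \cap \spt \| V \|$ lying in the set of $\mathscr{H}^\vdim$ full measure on which $\density^\vdim ( \| V \|, a )$ exists and is positive (by \ref{remark:kuwert_schaetzle}), $\psi ( \{ a \} ) = 0$, $\density^{\ast \vdim} ( \psi, a ) < \infty$, $\Tan^\vdim ( \| V \|, a ) = \Tan (M,a)$, $\density^\vdim ( \| V \| \restrict U \without M, a ) = 0$, and $r^{-\vdim-2} \tint{\cball{a}{r}}{} \dist ( z-a, \Tan (M,a) )^2 \ud \| V \| z \to 0$; that these hold $\mathscr{H}^\vdim$ almost everywhere on $M \cap \spt \| V \|$ follows from the density hypothesis, the rectifiability of $V$, the class $2$ regularity of $M$, and the density theorems of \cite[2.8, 2.9, 2.10, 3.2]{MR41:1976} — note that, since $\dist ( \cdot - a, \Tan (M,a) ) \leq \Omega_M + \Gamma_M | \cdot - a |^2$ near $a$, the last condition means $\tint{\cball{a}{r}}{} \Omega_M^2 \ud \| V \| = o ( r^{\vdim+2} )$. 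Applying \ref{thm:local_maximum_estimates} (or \ref{corollary:interpolation_sum}) on $\oball{a}{2r}$ to the subsolution $u$ with $d(z) = \dist ( z, \rel^\adim \without \oball{a}{2r} )$, estimating the $\Lp{}$ norms of $u$ by $\tint{}{} \Omega_M^2 \ud \| V \|$ and $\nuqastnorm{q}{V}{T}$ by $\Omega_M$ times $\psi$ and $\| V \|$ contributions as above, using $\| V \| ( \cball{a}{s} ) \leq \Gamma s^\vdim$ from monotonicity, and then iterating the resulting recursion between scales $r$ and $2r$ while feeding back the first order flatness, one reaches $\limsup_{r \to 0+} r^{-2} \sup \Omega_M \lIm \cball{a}{r} \cap \spt \| V \| \rIm < \infty$ at such $a$. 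Hence $a$ belongs to one of the closed sets $A_\nu = \spt \| V \| \cap \{ b \with \Omega_M (z) \leq \nu | z-b |^2$ whenever $z \in \spt \| V \|$ and $| z-b | < \nu^{-1} \}$ to which \ref{lemma:differentiation} applies with $f = \Omega_M$ and $\mu = 2$; since the $A_\nu$ increase and exhaust $M \cap \spt \| V \|$ up to an $\mathscr{H}^\vdim$ null set, $\mathscr{H}^\vdim$ almost every such $a$ is additionally a $\| V \|$ density point of some $A_\nu$, and \ref{lemma:differentiation} then upgrades the $\limsup$ to the limit $0$ asserted in \eqref{item:locality_mean_curvature:flatness}.

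Granting \eqref{item:locality_mean_curvature:flatness}, I would get \eqref{item:locality_mean_curvature:differentiability} from \ref{lemma:equivalent_blowups}: as $M$ is of class $2$, $\phi_r \circ \boldsymbol{\tau}_{-a} \lIm M \rIm$ converges locally in Hausdorff distance to the graph of the second order Taylor polynomial of $M$ at $a$ (see \ref{remark:twice_diff}), and \eqref{item:locality_mean_curvature:flatness} forces $\phi_r \circ \boldsymbol{\tau}_{-a} \lIm \spt \| V \| \rIm$ to the same limit, the lower density bound $\density_\ast^\vdim ( \| V \|, a ) \geq 1$ excluding degeneration of the blow-up below the tangent plane; this yields $\Tan ( \spt \| V \|, a ) = \Tan (M,a) = \Tan^\vdim ( \| V \|, a )$ and $\mathbf{b} ( \spt \| V \|; a ) = \mathbf{b} (M;a)$. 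For \eqref{item:locality_mean_curvature:tangent_map}, since $\density^\vdim ( \| V \| \restrict U \without M, a ) = 0$ and $R$ agrees with the class $1$ map $z \mapsto \project{\Tan ( M, \pi (z) )}$ at $\| V \| \restrict M$ almost all $z$, the integral in \eqref{item:locality_mean_curvature:tangent_map} is governed near $a$ by that map; hence \ref{thm:approx_diff} applied to $R$ (or a direct computation using \eqref{item:locality_mean_curvature:flatness}) gives approximate differentiability of $R$ at $a$ with $\ap DR(a)$ equal to the classical differential of $z \mapsto \project{\Tan(M,\pi(z))}$ at $a$, and the $\Lp{2}$ integral form of \eqref{item:locality_mean_curvature:tangent_map} follows from \eqref{item:locality_mean_curvature:flatness} by a Caccioppoli estimate of the same maximum-estimate type. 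Finally, combining \ref{ex:some_lap}\,\eqref{item:some_lap:c2}, \ref{miniremark:mean_curv} and \ref{lemma:virtual_tangent_planes} on the blow-up with $\delta V ( g ) = - \tint{}{} g \bullet \mathbf{h} ( V; \cdot ) \ud \| V \|$ identifies $\project{\Nor (M,a)} ( \mathbf{h} (V;a) )$ with $\mathbf{h} (M;a)$.

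The main obstacle is the iteration in the proof of \eqref{item:locality_mean_curvature:flatness}: the source $( | \mathbf{h} (V;\cdot)| + \Gamma_M ) \Omega_M$ vanishes only at the first order rate, so converting the first order flatness into the genuinely second order estimate $\sup \Omega_M \lIm \cball{a}{r} \cap \spt \| V \| \rIm = o ( r^2 )$ requires a careful balancing of the recursion against both the decay rate in the first order flatness and the margin $\vdim < q$ — this is the critical integrability analogue of the reasoning behind \ref{thm:second_order_lq_flatness}, and it is where the estimates of Section~\ref{sec:local_max} (including \ref{lemma:subsolutions}) are used essentially. A secondary difficulty is the purely measure-theoretic reduction to a single point $a$ satisfying all the listed regularity conditions simultaneously, together with the promotion from $\limsup < \infty$ to $\lim = 0$ via the density point hypothesis in \ref{lemma:differentiation}.
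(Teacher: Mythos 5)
Your plan follows the same architecture as the paper's proof of \ref{thm:locality_mean_curvature}: first establish second order flatness of $\spt \| V \|$ about $M$ at $\mathscr{H}^\vdim$ almost all $a \in M \cap \spt \| V \|$, upgrade $\limsup < \infty$ to $\lim = 0$ via \ref{lemma:differentiation}, deduce \eqref{item:locality_mean_curvature:differentiability} from \ref{lemma:equivalent_blowups} and \ref{miniremark:distance_comparison}, and identify the normal part of the mean curvature via \ref{lemma:virtual_tangent_planes}. The substantive deviation is the subsolution you use in the quantitative step: you take $u = \tfrac12 \Omega_M^2$ (global, quadratic, class $2$), whereas the paper freezes a Vitali point $a$ with tangent plane $T = \Tan^\vdim ( \| V \|, a ) = \Tan ( M, a )$ and uses the convex, first-power function $f ( z ) = \dist ( z-a, T )$ whose subsolution source, via \ref{ex:some_lap}\,\eqref{item:some_lap:convex}, is simply $\| \delta V \| ( \theta )$ with no tubular-neighbourhood constants or factors of $\Omega_M$.

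Your iteration for \eqref{item:locality_mean_curvature:flatness}, as written, has a genuine gap in its rates. You propose feeding $\tint{\cball{a}{r}}{} \Omega_M^2 \ud \| V \| = o ( r^{\vdim+2} )$ as the $\Lp{}$ input to \ref{thm:local_maximum_estimates}. But that normalizes to $r^{-\vdim} \Lpnorm{\| V \|}{1}{u} = o(r^2)$, while the target is $\sup u = O(r^4)$; the maximum estimate gains integrability, not homogeneity, so there is an $r^2$ deficit it cannot bridge. Concretely, writing $\tau (r) = \sup \Omega_M \lIm \cball{a}{r} \cap \spt \| V \| \rIm$, your recursion reads
\begin{gather*}
\tau(r)^2 \leq o(r^2) + O(r^2)\,\tau(2r),
\end{gather*}
and $o(r^2)$ dominates any $A r^4$ for small $r$; the induction fails.

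What is missing is the nonatomic-measure interpolation \ref{lemma:basic_interpolation}, which is the actual engine of the paper's Step 1. Because $u$ vanishes identically on $M$ and $\density^\vdim ( \| V \| \restrict U \without M, a ) = 0$, that lemma (applied with $p = 1$, $q = 2$, $r = \infty$, $\alpha = 1/2$, $\phi = r^{-\vdim} \| V \| \restrict \cball{a}{r}$, $B = U \without M$) lets you replace $\Lpnorm{\phi}{2}{u}$ by $2\lambda^{-1} \eqLpnorm{\phi \restrict M}{1}{u} + 2 \lambda \Lpnorm{\phi}{\infty}{u} = 2 \lambda \sup u$, with $\lambda$ arbitrarily small for small $r$. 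Fed into \ref{thm:local_maximum_estimates} with exponents $r = 2$, $q = \infty$, together with the dual-norm estimate
\begin{gather*}
\nuqastnorm{\infty}{V}{T} \leq \Gamma \, \tau(2r) \big( \psi ( \cball{a}{2r} )^{1/\vdim} + \Gamma_M \| V \| ( \cball{a}{2r} )^{1/\vdim} \big) = O(r)\,\tau(2r)
\end{gather*}
obtained from \ref{corollary:dual_embedding} and \ref{remark:dual_embedding}, H\"older with $1/\beta + 1/\vdim = 1$, $\density^{\ast\vdim}(\psi,a) < \infty$, and the monotonicity upper bound, one reaches $\tau(r)^2 \leq \Gamma\lambda\,\tau(2r)^2 + \Gamma\,r^2\,\tau(2r)$, which does close by dyadic induction once $\lambda$ is small and the constant $A$ in $\tau(r) \leq A r^2$ is taken large. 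So your subsolution $u$ can in principle be made to work, but only after the interpolation step is inserted — omitting it, and feeding in the total first order flatness integral instead, is where your argument breaks. The paper's $f = \dist(\cdot - a, T)$ is cleaner for the same purpose: convexity removes the source factor $\Omega_M$ entirely, the Lipschitz constant is $1$, and the osculation at the Vitali point furnishes the required $\Lp{1}$ bound on the $M$-part directly.

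Two further points are too thin in your sketch. For \eqref{item:locality_mean_curvature:tangent_map}, the paper uses the quantitative tilt-excess decay \cite[4.14]{snulmenn.decay} with $q = \beta$ and \cite[3.7,\,9]{snulmenn.isoperimetric}, not \ref{thm:approx_diff} (which requires $R \in \trunc(V,Y)$ — exactly the curvature-varifold hypothesis that is not available here a priori). And for the identity $\project{\Nor(M,a)} ( \mathbf{h}(V;a) ) = \mathbf{h}(M;a)$ in \eqref{item:locality_mean_curvature:differentiability}, the blow-up of $\delta V$ against the test family $g_r(z) = \phi( r^{-1}(z-a) ) ( \id{\rel^\adim} - Q(z) )(v)$ built from the extended tangent-plane map $Q$ of \ref{lemma:virtual_tangent_planes} needs to be made explicit; \ref{miniremark:mean_curv} by itself only covers the computation on a fixed submanifold.
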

\begin{proof}
	Abbreviate $S = \spt \| V \|$ and suppose $\psi$ and $\eta$
	are as in \ref{miniremark:situation_general} and $\beta = \infty$ if
	$\vdim = 1$ and $\beta = \vdim/(\vdim-1)$ if $\vdim>1$. Note
	$\density_\ast^\vdim ( \| V \|, a ) \geq 1$ for $a \in S$ with $\psi (
	\{ a \} ) = 0$ by \cite[2.7]{snulmenn.isoperimetric}, hence $\| V \|
	(B) = 0$ implies $\mathscr{H}^\vdim ( B \cap S ) = 0$ for $B \subset
	U$ by \cite[2.10.19\,(3)]{MR41:1976}.

	\emph{\emph{Step 1:~} For $\mathscr{H}^\vdim$ almost all $a \in M \cap
	S$ there holds
	\begin{gather*}
		\Tan (S,a) = \Tan (M,a) = \Tan^\vdim ( \| V \|, a ), \\
		\limsup_{r \to 0+} r^{-2} \sup \{ \dist (z-a, \Tan(S,a)) \with
		z \in \cball{a}{r} \cap S \} < \infty.
	\end{gather*}}
	One can use \cite[2.5]{snulmenn.isoperimetric} to deduce
	\begin{gather*}
		\boldsymbol{\mu}_{1/r} \circ \boldsymbol{\tau}_{-a} \lIm \Clos
		S \rIm \to \Tan^\vdim ( \| V \|, a ) \quad \text{locally in
		Hausdorff distance as $r \to 0+$}
	\end{gather*}
	for $\| V \|$ almost all $a$, cp. \cite[Lemma 17.11]{MR756417}.
	Hence the assertion on the tangent spaces follows e.g. from Allard
	\cite[3.5\,(1)]{MR0307015}.

	To prove the inequality consider $(a,T) \in ( M \cap S ) \times
	\grass{\adim}{\vdim}$ such that
	\begin{gather*}
		0 < \density^\vdim ( \| V \|, a) < \infty, \quad
		\density^{\ast \vdim} ( \psi, a ) < \infty, \quad
		\density^\vdim ( \| V \| \restrict U \without M, a ) = 0, \\
		T = \Tan (S,a), \quad \limsup_{r \to 0+} r^{-2} \sup \{ \dist
		(z-a,T) \with z \in M \cap \cball{a}{r} \} < \infty
	\end{gather*}
	and note that these conditions are met by $V$ almost all $(a,T) \in (
	M \cap S ) \times \grass{\adim}{\vdim}$ by Allard
	\cite[3.5\,(1)]{MR0307015} and \cite[2.10.19\,(3)\,(4)]{MR41:1976}.
	Defining $f : U \to \rel$ by $f(z) = \dist (z-a, T )$ for $z \in U$,
	there holds
	\begin{gather*}
		\tint{}{} \ap D \theta \bullet \ap Df \ud \| V || \leq \|
		\delta V \| ( \theta ) \quad \text{whenever $\theta \in
		\mathscr{D}^0 ( U )^+$}
	\end{gather*}
	by \ref{ex:some_lap}\,\eqref{item:some_lap:convex}. Choose $0 \leq
	\alpha < \isoperimetric{\vdim}^{-1}$, $1 \leq N < \infty$ and $0 < R
	\leq 1$ such that
	\begin{gather*}
		\cball{a}{2R} \subset U, \quad N^{-1} r^\vdim \leq
		\measureball{\| V \|}{\cball{a}{r}} \leq N r^\vdim, \\
		\measureball{\psi}{\cball{a}{r}} \leq N r^\vdim \leq
		\alpha^\vdim, \quad \sup f \lIm M \cap \cball{a}{r} \rIm \leq
		N r^2
	\end{gather*}
	for $0 < r \leq 2R$. Let
	\begin{gather*}
		\Delta_1 = \isoperimetric{\vdim} ( 1 - \alpha
		\isoperimetric{\vdim} )^{-1}, \quad \Delta_2 = 2
		\Gamma_{\ref{thm:local_maximum_estimates}} ( \vdim, \infty,
		\alpha, 1 ) \sup \big \{ 2^\vdim, \Delta_1 N^{1/\vdim} \big
		\}, \\
		\lambda = \inf \big \{ N^{-1/2} , ( 8 \Delta_2 )^{-1} \big \},
		\quad \Delta_3 = 2 \lambda^{-1} N^2 + 2 \Delta_2.
	\end{gather*}
	Possibly replacing $R$ by a smaller number, one may assume that
	\begin{gather*}
		\| V \| ( \cball{a}{r} \without M ) \leq \lambda^2 r^\vdim
		\quad \text{for $0 < r \leq 2R$}.
	\end{gather*}
	Define $g : \{ r \with 0 < r \leq 2R \} \to \rel$ and $h : \{ r \with
	0 < r \leq 2R \} \to \rel$ by
	\begin{gather*}
		g(r) = r^{-1} \eqLpnorm{\| V \| \restrict
		\cball{a}{r}}{\infty} {f}, \quad h(r) = r^{-1-\vdim/2}
		\eqLpnorm{\| V \| \restrict \cball{a}{r}}{2}{f}
	\end{gather*}
	for $0 < r \leq 2R$. Noting that $r^{-\vdim} \| V \| \restrict
	\cball{a}{r}$ is nonatomic by \ref{example:nonatomic} and
	\begin{gather*}
		\| V \| ( \cball{a}{r} \without M ) \leq \lambda^2 r^\vdim
		\leq N^{-1} r^\vdim \leq \measureball{\| V \|}{\cball{a}{r}} <
		\infty, \\
		r^{-\vdim} \eqLpnorm{\| V \| \restrict \cball{a}{r} \cap
		M}{1}{f} \leq N^2 r^2,
	\end{gather*}
	one then applies \ref{lemma:basic_interpolation} with $p$, $q$, $r$,
	$\alpha$, $\phi$, $X$, and $B$ replaced by $1$, $2$, $\infty$, $1/2$,
	$r^{-\vdim} \| V \| \restrict \cball{a}{r}$, $U$, and $U \without M$,
	to infer that
	\begin{gather*}
		rh(r) \leq 2 \lambda^{-1} N^2 r^2 + 2 \lambda r g (r) \quad
		\text{for $0 < r \leq R$}.
	\end{gather*}
	Also, \ref{thm:local_maximum_estimates} with $q$, $r$, $d$, $\delta$,
	and $T(\theta)$ replaced by $\infty$, $2$, $\dist (z,\rel^\adim
	\without \oball{a}{2r} )$, $r$, and $\| \delta V \| ( \theta )$ and
	$\nu$, $\mu$, and $A$ as in \ref{example:V_subharmonic} implies in
	conjuction with \ref{corollary:dual_embedding},
	\ref{remark:dual_embedding} with $q$, $r$, and $s$ replaced by
	$\vdim$, $\beta$, and $\infty$ that
	\begin{gather*}
		g (r) \leq \Gamma_{\ref{thm:local_maximum_estimates}} ( \vdim,
		\infty, \alpha, 1 ) \big ( 2^{1+\vdim} h (2r) + 2 \Delta_1
		N^{1/\vdim} r \big ) \leq \Delta_2 ( h(2r) + r )
	\end{gather*}
	for $0 < r \leq R$. Combining these two inequalities, one obtains
	\begin{gather*}
		r^{-1} h(r) \leq 2\Delta_2 \lambda r^{-1} h (2r) + \Delta_3
		\quad \text{for $0 < r \leq R$}.
	\end{gather*}
	Defining $\gamma = \sup \big \{ 2^{2+\vdim} R^{-1} h(R), 2 \Delta_3
	\big \}$, \emph{it will be shown that
	\begin{gather*}
		h(r) \leq \gamma r \quad \text{whenever $0 < r \leq R$};
	\end{gather*}}
	in fact this clearly holds if $r \geq R/2$ and if $h(2r) \leq 2 \gamma
	r$ for some $0 < r \leq R/2$ then
	\begin{gather*}
		r^{-1} h(r) \leq 4 \Delta_2 \lambda \gamma + \Delta_3 \leq
		\gamma.
	\end{gather*}
	Therefore $g(r) \leq \Delta_2 ( 2 \gamma + 1 ) r$ for $0 < r \leq
	R/2$.

	\emph{\emph{Step 2:~} Proof of
	\eqref{item:locality_mean_curvature:flatness}.} Observing that
	\ref{miniremark:distance_comparison} may used to infer
	\begin{gather*}
		\limsup_{r \to 0+} r^{-2} \sup | \Omega_M -
		\Omega_{\boldsymbol{\tau}_a \lIm \Tan ( M, a ) \rIm} | \lIm
		\cball{a}{r} \rIm < \infty \quad \text{whenever $a \in M$}
	\end{gather*}
	one infers from step 1 that
	\begin{gather*}
		\Tan (M,a) = T \quad \text{for $V$ almost all $(a,T) \in ( U
		\cap M ) \times \grass{\adim}{\vdim}$}, \\
		\limsup_{r \to 0+} r^{-2} \sup \Omega_M \lIm \cball{a}{r} \cap
		S \rIm < \infty \quad \text{for $\| V \|$ almost
		all $a \in U \cap M$}.
	\end{gather*}
	Then applying \ref{lemma:differentiation} in conjunction with
	\cite[2.10.19\,(4)]{MR41:1976} for each $\nu \in \nat$ with $f=
	\Omega_M$ and $\mu=2$ yields the assertion of step 2.

	\emph{\emph{Step 3:~} Proof of
	\eqref{item:locality_mean_curvature:tangent_map}.} From
	\cite[4.14]{snulmenn.decay} with $q$ replaced by $\beta$ one infers
	whenever $a \in U$, $0 < r < \infty$, $\cball{a}{2r} \subset U$, $T
	\in \grass{\adim}{\vdim}$ and $h : U \to \rel$ satisfies $h(z) = \dist
	(z-a,T)$ for $z \in U$ that
	\begin{gather*}
		\begin{aligned}
			& \tint{\cball{a}{r} \times \grass{\adim}{\vdim}}{} |
			R(z)-\project{T}|^2 \ud \| V \| z \\
			& \qquad \leq \Delta \big ( \psi ( \cball{a}{2r}
			)^{1/\vdim} \eqLpnorm{\| V \| \restrict
			\cball{a}{2r}}{\beta}{h} + r^{-2} (\| V \|
			\restrict \cball{a}{2r})(h^2) \big )
		\end{aligned}
	\end{gather*}
	whenever $\Delta$ is a positive, finite number depending only on
	$\vdim$. Multiplying by $r^{-\vdim}$ one readily deduces with the help
	of step 1 and \cite[2.10.19\,(3)]{MR41:1976} that
	\begin{gather*}
		\limsup_{r \to 0+} r^{-1} \big ( \tfint{\cball{a}{r}}{} | R(z)
		- R(a) |^2 \ud \| V \| z \big )^{1/2} < \infty
	\end{gather*}
	for $\| V \|$ almost all $z \in U \cap M$. The assertion of step 3 is
	then a consequence of \cite[3.7,\,9]{snulmenn.isoperimetric}.

	\emph{\emph{Step 4:~} $S$ is twice differentiable at $a$ with
	$\mathbf{b} ( S; a ) = \mathbf{b} ( M;a )$ for $\| V \|$ almost all $a
	\in U \cap M$.} If $a$ satisfies the conclusion of
	\eqref{item:locality_mean_curvature:flatness}, and $Q$, $\tau$ are
	related to $a$ as in \ref{def:twice_diff} with $A$ replaced by $M$
	then one readily verifies by use of
	\ref{miniremark:distance_comparison} that
	\begin{gather*}
		r^{-2} \sup \Omega_{\tau \lIm Q \rIm} \lIm \cball{a}{r} \cap S
		\rIm \to 0 \quad \text{as $r \to 0+$}.
	\end{gather*}
	If additionally $0 < \density^\vdim ( \| V \|, a ) < \infty$ and $\psi
	( \{ a \} ) = 0$ then
	\begin{gather*}
		r^{-\vdim} \tint{}{} f ( r^{-1} (z-a) ) \ud \| V \| z \to
		\density^\vdim ( \| V \|, a ) \tint{\Tan(M,a)}{} f \ud
		\mathscr{H}^\vdim \quad \text{as $r \to 0+$}
	\end{gather*}
	whenever $f \in \mathscr{K} ( \rel^\adim )$ by Allard \cite[6.5,
	4.6\,(3)]{MR0307015} and $S$ is twice differentiable at $a$ with
	$\mathbf{b}(S;a)= \mathbf{b}(M;a)$ by \ref{lemma:equivalent_blowups}.
	
	\emph{\emph{Step 5:~} $\project{\Nor ( M, a )} ( \mathbf{h} ( V;z ) )
	= \mathbf{h} ( M;z )$ for $\| V \|$ almost all $z \in U \cap M$.}
	Assume $M$ to be connected and let $G$ be as in
	\ref{lemma:virtual_tangent_planes}. Since the function mapping $z \in
	M$ onto $\project{\Tan (M,a)} \in G$ is of class $\class{1}$ relative
	to $M$, one uses \cite[3.1.20,\,22]{MR41:1976} to construct an open
	subset $Z$ of $U$ and a map $Q : Z \to G$ of class $\class{1}$ such
	that
	\begin{gather*}
		Q(z) = \project{\Tan(M,z)} \quad \text{whenever $z \in M$}.
	\end{gather*}
	The equality will be shown to hold at a point $a \in U
	\cap M$ satisfying
	\begin{gather*}
		a \in \dmn R, \quad 0 < \density^\vdim ( \| V \|, a ) <
		\infty, \quad
		\density^\vdim ( \| \delta V \| - | \mathbf{h} ( V;\cdot ) |
		\| V \|, a ) = 0, \\
		r^{-\vdim} \tint{}{} f (r^{-1}(z-a)) \ud \| V \| z \to
		\density^\vdim (\| V \|, a ) \tint{\im R(a)}{} f \ud
		\mathscr{H}^\vdim \quad \text{for $f \in
		\ccspace{\rel^\adim}$}, \\
		r^{-\vdim-1} \tint{\cball{a}{r}}{} | R-Q | \ud \| V \|
		\to 0, \quad
		r^{-\vdim} \tint{\cball{a}{r}}{} | \mathbf{h} (V;z) -
		\mathbf{h} (V;a) | \ud \| V \| z \to 0
	\end{gather*}
	as $r \to 0+$. These conditions are met by $\| V \|$ almost all $z \in
	U \cap M$ by Allard \cite[3.5\,(1)]{MR0307015},
	\cite[2.9.9,\,10]{MR41:1976}, step 3 and
	\cite[3.7\,(i)]{snulmenn.isoperimetric}. Take $h$ as in
	\ref{lemma:virtual_tangent_planes}, choose $0 < R < \infty$ with
	$\cball{a}{R} \subset Z$ and $\phi \in \mathscr{D}^0 ( \rel^\adim )$
	with $\spt \phi \subset \cball{0}{1}$ with $\int_{\im R(a)} \phi \ud
	\mathscr{H}^\vdim = 1$. Suppose $v \in \rel^\adim$ and define $g_r : U
	\to \rel^\adim$ by
	\begin{gather*}
		g_r(z) = \phi ( r^{-1} (z-a) ) ( \id{\rel^\adim} - Q(z) ) (v)
		\quad \text{if $z \in Z$}, \qquad g_r(z) = 0 \quad \text{if $z
		\in U\without Z$}
	\end{gather*}
	whenever $z \in Z$ and $0 < r \leq R$. Note that $g_r$ is of class
	$\class{1}$ and compute
	\begin{gather*}
		\begin{aligned}
			& - \density^\vdim ( \| V \|, a ) h(a) \bullet (
			\id{\rel^\adim} - Q (a) ) (v) \\
			& \qquad = \lim_{r \to 0+} - r^{-\vdim} \tint{}{} h
			\bullet g_r \ud \| V \| \\
			& \qquad = \lim_{r \to 0+} r^{-\vdim} \tint{}{} Q
			\bullet Dg_r \ud \| V \| \\
			& \qquad = \lim_{r \to 0+} r^{-\vdim} \tint{}{} R
			\bullet Dg_r \ud \| V \| \\
			& \qquad = \lim_{r \to 0+} r^{-\vdim}
			\tint{\cball{a}{R}}{} \phi
			(r^{-1} (z-a)) \eta ( V;z ) \bullet ( \id{\rel^\adim}
			- Q(z) ) (v) \ud \| \delta V \| z \\
			& \qquad = - \density^\vdim ( \| V \|, a ) \mathbf{h}
			( V;a ) \bullet ( \id{\rel^\adim}
			- Q(a) ) (v)
		\end{aligned}
	\end{gather*}
	to obtain $\project{\Nor ( M, a )} ( h(a)-\mathbf{h}
	(V;a) ) = 0$, hence the conclusion as $h(a) = \mathbf{h} (M;a) \in
	\Nor ( M, a )$ by \ref{lemma:virtual_tangent_planes}.
\end{proof}
\begin{remark}
	In the special case $V \in \IVar_\vdim ( U )$ and $\vdim = \adim-1$
	and $p$ satisfies $p > \vdim$ and $p \geq 2$ instead of $p = \vdim$
	the inequality of step 1 could have been alternately be deduced from
	Sch\"atzle \cite[Prop.  4.1]{MR2064971}. The idea to use the
	subsolution property of the distance function to an $\vdim$
	dimensional plane is taken from Ecker \cite[1.6, 1.7]{MR1384839}, see
	also \cite[Corollary D.3]{MR2024995}.

	The proof of step 5 essentially follows Sch\"atzle, see e.g.
	\cite[Theorem 4.1]{rsch:willmore}.
\end{remark}
\begin{corollary} \label{corollary:second_order_agreement}
	Suppose $\vdim$, $\adim$, $p$, $U$, and $V$ are as in
	\ref{miniremark:situation_general}, $p = \vdim< \adim$, and $V \in
	\IVar_\vdim ( U )$.

	Then $\spt \| V \|$ is twice differentiable at $a$ with
	\begin{gather*}
		\trace \mathbf{b} ( \spt \| V \|; a ) = \mathbf{h} ( V; a)
	\end{gather*}
	for $\mathscr{H}^\vdim$ almost all $a \in \spt \| V \|$.
\end{corollary}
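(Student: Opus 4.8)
The plan is to combine the second order rectifiability result for integral varifolds with critical integrability of the mean curvature, see \cite[4.8]{snulmenn.c2}, with the locality result for the mean curvature of the support established in \ref{thm:locality_mean_curvature}. First I would recall that \cite[4.8]{snulmenn.c2} provides, for such $V \in \IVar_\vdim(U)$, a countable collection of $\vdim$ dimensional submanifolds $M_i$ of $\rel^\adim$ of class $2$ together with $\| V \|$ measurable sets $B_i \subset M_i$ such that $\| V \| ( U \without \bigcup_{i=1}^\infty B_i ) = 0$; in particular, $\mathscr{H}^\vdim ( ( U \cap \spt \| V \| ) \without \bigcup_{i=1}^\infty M_i ) = 0$ using $\density_\ast^\vdim ( \| V \|, a ) \geq 1$ for $\| V \|$ almost all $a$, cp. \ref{remark:kuwert_schaetzle} and \cite[2.10.19\,(3)]{MR41:1976}.

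Next I would apply \ref{thm:locality_mean_curvature} to each submanifold $M = M_i$. That theorem yields, for $\mathscr{H}^\vdim$ almost all $a \in M_i \cap \spt \| V \|$, that $\spt \| V \|$ is twice differentiable at $a$ with $\Tan ( \spt \| V \|, a ) = \Tan ( M_i, a )$, $\mathbf{b} ( \spt \| V \|; a ) = \mathbf{b} ( M_i; a )$, and $\project{\Nor ( M_i, a )} ( \mathbf{h} ( V;a ) ) = \mathbf{h} ( M_i;a )$. Since $\trace$ of the second fundamental form of a $2$ dimensional—here $\vdim$ dimensional—submanifold of class $2$ equals its mean curvature vector, i.e.\ $\trace \mathbf{b} ( M_i; a ) = \mathbf{h} ( M_i; a )$, one obtains $\trace \mathbf{b} ( \spt \| V \|; a ) = \mathbf{h} ( M_i; a ) = \project{\Nor ( M_i, a )} ( \mathbf{h} ( V; a ) )$. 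It remains to remove the projection, that is, to show $\project{\Nor ( M_i, a )} ( \mathbf{h} ( V; a ) ) = \mathbf{h} ( V; a )$ for $\| V \|$ almost all $a$; this follows from the fact that the tangential part of the generalised mean curvature vanishes, $\mathbf{h}(V;a) \in \Nor^\vdim ( \| V \|, a )$ for $\| V \|$ almost all $a$ by Brakke \cite[5.8]{MR485012} (cited after the approximate differentiability theorem), together with $\Tan^\vdim ( \| V \|, a ) = \Tan ( M_i, a )$ for $\| V \|$ almost all $a \in M_i$ by \cite[2.8.18, 2.9.11, 3.2.17]{MR41:1976} and Allard \cite[3.5\,(2)]{MR0307015}, whence $\Nor^\vdim ( \| V \|, a ) = \Nor ( M_i, a )$ and the projection acts as the identity on $\mathbf{h}(V;a)$.

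Assembling these pieces, one forms the countable union over $i$ of the exceptional sets, each of $\| V \|$ measure zero, and concludes that $\spt \| V \|$ is twice differentiable at $a$ with $\trace \mathbf{b} ( \spt \| V \|; a ) = \mathbf{h} ( V; a )$ for $\mathscr{H}^\vdim$ almost all $a \in \spt \| V \|$, which is the assertion. The main obstacle here is not in the assembly but lies entirely in \ref{thm:locality_mean_curvature}, whose proof carries the analytic weight—the flatness estimate (Step 1 there, via the maximum estimates of Section \ref{sec:local_max} applied to the subsolution property of the squared distance function) and the identification of the normal part of the mean curvature in Step 5; given that theorem, the present corollary is a routine localisation combined with the classical identity relating the trace of the second fundamental form to the mean curvature vector of a $C^2$ submanifold and the vanishing of the tangential part of $\mathbf{h}(V;\cdot)$.
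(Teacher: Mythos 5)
Your argument is correct and is essentially the paper's proof spelled out in full: the paper's own proof is the one-line remark that the corollary follows from \ref{thm:locality_mean_curvature}\,\eqref{item:locality_mean_curvature:differentiability} together with the second order rectifiability result \cite[4.8]{snulmenn.c2}, and your expansion (countable cover by $C^2$ submanifolds, application of the locality theorem on each piece, the classical identity $\trace \mathbf{b}(M_i;a) = \mathbf{h}(M_i;a)$, and removal of the normal projection via perpendicularity of the mean curvature, which is also contained in \cite[4.8]{snulmenn.c2}) fills in exactly the steps the paper leaves implicit. One small point of precision: the bridge between ``$\|V\|$ almost all'' and ``$\mathscr{H}^\vdim$ almost all $a \in \spt\|V\|$'' should be made via $\density_\ast^\vdim(\|V\|,a) \geq 1$ for every $a \in \spt\|V\|$ (from \cite[2.7]{snulmenn.isoperimetric}, not merely $\|V\|$-a.e.) combined with \cite[2.10.19(3)]{MR41:1976}, as the paper does in the proof of \ref{thm:locality_mean_curvature}.
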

\begin{proof}
	This is a consequence of
	\ref{thm:locality_mean_curvature}\,\eqref{item:locality_mean_curvature:differentiability}
	and \cite[4.8]{snulmenn.c2}.
\end{proof}
\section{Harnack inequality} \label{sec:harnack}
In this section based on the results of Section \ref{sec:local_max} and a
relative isoperimetric inequality, a Harnack type inequality for
supersolutions is established, see \ref{thm:weak_harnack_inequality}. This
result entails a strong maximum principle, see
\ref{corollary:strong_maximum_principle}. (The preliminary results
\ref{lemma:mydiss}--\ref{remark:role} are essentially contained in the newer
results of Sections \ref{sec:rel_iso} and \ref{sec:embeddings}.)
\begin{lemma} \label{lemma:mydiss}
	Suppose $\vdim, \adim \in \nat$, $\vdim \leq \adim$, $a \in \rel^\adim$,
	$0 < r < \infty$, $V \in \Var_\vdim (\oball{a}{r})$, $\| \delta V\|$
	is a Radon measure, $\density^\vdim ( \| V \| , z ) \geq 1$ for $\| V
	\|$ almost all $z$, $a \in \spt \| V \|$, and $f : \{ s \with 0 < s <
	r \} \to \rel$ satisfies
	\begin{gather*}
		f(s) = \measureball{\| V \|}{\cball{a}{s}} \quad
		\text{whenever $0 < s < r$}.
	\end{gather*}

	Then
	\begin{gather*}
		\isoperimetric{\vdim}^{-1} \leq f(s)^{1/\vdim-1} (
		\measureball{\| \delta V \|}{\cball{a}{s}} + f'(s)) \quad
		\text{for $\mathscr{L}^1$ almost all $0 < s < r$}.
	\end{gather*}
\end{lemma}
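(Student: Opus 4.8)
The plan is to derive this as a differential-inequality consequence of the isoperimetric inequality for varifolds applied to the varifolds obtained by restricting $V$ to concentric balls. First I would fix $a\in\spt\|V\|$ and, for $0<s<r$, introduce the varifold $W_s\in\Var_\vdim(\rel^\adim)$ defined by $W_s(A)=V(A\cap(\cball as\times\grass\adim\vdim))$, so that $\|W_s\|=\|V\|\restrict\cball as$ and $f(s)=\|W_s\|(\rel^\adim)$. The key structural fact is that $f$ is nondecreasing, hence differentiable $\mathscr L^1$ almost everywhere with
\begin{gather*}
	\tint st f'(u)\ud\mathscr L^1 u\leq f(t)-f(s)=\|V\|(\cball at\without\cball as)
\end{gather*}
for $0<s<t<r$, by the standard theory of monotone functions (see \cite[2.9.19]{MR41:1976}). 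Combined with the coarea-type estimate that the first variation mass of $W_s$ controls the boundary term plus an interior term, this will give the pointwise bound $\|\boundary{V}{\cball as}\|(\rel^\adim)\leq f'(s)$ for $\mathscr L^1$ almost all $s$.

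Next I would estimate $\|\delta W_s\|(\rel^\adim)$. The first variation of $W_s$ splits as $\delta W_s=(\delta V)\restrict\cball as+\boundary{V}{\cball as}$ in the sense that $\|\delta W_s\|(A)\leq\|\boundary{V}{\cball as}\|(\rel^\adim\cap A)+\|\delta V\|(\cball as\cap A)$, exactly as recorded in \ref{miniremark:zero_boundary} (with $B=\varnothing$, $E=\cball as$, and $U$ replaced by $\oball ar$). The distributional boundary $\boundary{V}{\cball as}$ arises from the derivative of $s\mapsto\|W_s\|$: concretely, testing with $\theta\in\mathscr D(\oball ar,\rel^\adim)$ and using that the difference quotients $h^{-1}(\|V\|\restrict\cball a{s+h}-\|V\|\restrict\cball as)$ converge to a measure supported on $\Bdry\cball as$, one gets $\|\boundary{V}{\cball as}\|(\oball ar)\leq f'(s)$ for $\mathscr L^1$ almost all $0<s<r$. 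Since $\density^\vdim(\|W_s\|,z)=\density^\vdim(\|V\|,z)\geq1$ for $\|W_s\|$ almost all $z$ by \cite[2.9.11]{MR41:1976}, and $a\in\spt\|V\|$ with $f(s)>0$, the isoperimetric inequality for varifolds (Allard \cite[7.1]{MR0307015} or Michael–Simon \cite[Theorem 2.1]{MR0344978}, whose constant is $\isoperimetric\vdim$) applies to $W_s$ and yields
\begin{gather*}
	f(s)^{1-1/\vdim}=\|W_s\|(\rel^\adim)^{1-1/\vdim}\leq\isoperimetric\vdim\,\|\delta W_s\|(\rel^\adim)\leq\isoperimetric\vdim\big(\measureball{\|\delta V\|}{\cball as}+f'(s)\big)
\end{gather*}
for $\mathscr L^1$ almost all $0<s<r$. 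Dividing by $f(s)$ gives the claimed inequality $\isoperimetric\vdim^{-1}\leq f(s)^{1/\vdim-1}\big(\measureball{\|\delta V\|}{\cball as}+f'(s)\big)$.

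The main obstacle I anticipate is the rigorous identification of the derivative $f'(s)$ with (an upper bound for) the boundary mass $\|\boundary{V}{\cball as}\|(\oball ar)$: one must show that for $\mathscr L^1$ almost every $s$, the distribution $\boundary{V}{\cball as}$ is representable by integration and its total mass on $\oball ar$ does not exceed $f'(s)$. This is handled by a standard argument: for a bounded open $Z$ with $\Clos Z$ compact in $\oball ar$, the function $s\mapsto\|\boundary{V}{\cball as}\|(Z)$ is $\mathscr L^1$ measurable (compare the measurability discussion using \ref{example:distrib_lusin}), and a Fatou-type estimate against the difference quotients of $s\mapsto\|V\|(\cball as\cap Z)$ combined with \cite[2.8.18, 2.9.8]{MR41:1976} gives $\tint0r\|\boundary{V}{\cball as}\|(Z)\ud\mathscr L^1 s\leq\|V\|(Z)<\infty$, whence $\|\boundary{V}{\cball as}\|(\oball ar)\leq f'(s)$ for $\mathscr L^1$ almost all $s$ by letting $Z$ exhaust $\oball ar$. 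The one subtlety is that the sets $\cball as$ are closed rather than open, so one approximates the characteristic function of $\{t\with s<t\leq s+h\}$ by functions $\zeta\in\mathscr D^0(\{t\with 0<t<r\})$ and passes to the limit, exactly in the spirit of the proof of \ref{corollary:monotonicity} from \ref{thm:monotonicity}; the requisite coarea identity is also contained in \ref{lemma:coarea_inequality}\,\eqref{item:coarea_inequality:ineq} applied with $f$ the distance function to $a$ and the interval $(s,t)$.
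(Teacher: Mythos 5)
The paper defers the proof to \cite[A.7]{mydiss} after observing (via Allard \cite[5.5\,(1)]{MR0307015}) that $V$ is rectifiable, so there is no proof in the text to compare against; your argument reconstructs the deferred proof and is essentially correct. The strategy — slice $V$ by the distance function to $a$, control $\|\delta W_s\|(\rel^\adim)$ by $\measureball{\|\delta V\|}{\cball as}+f'(s)$ for a.e.\ $s$, then apply the isoperimetric inequality for compactly supported rectifiable varifolds with density at least one — is exactly the expected one, and rectifiability of $W_s$ (needed for the coarea identity you invoke) comes from the same Allard reference.

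Three small points worth fixing. First, a sign: by \ref{def:v_boundary} one has $\delta W_s=(\delta V)\restrict\cball as-\boundary{V}{\cball as}$, not a sum; the mass estimate $\|\delta W_s\|\leq\|\boundary{V}{\cball as}\|(U\cap\cdot)+\|\delta V\|(\cball as\cap\cdot)$ which you actually use is of course unaffected. Second, your final paragraph cites \ref{lemma:coarea_inequality}, which sits inside an \verb|\optional| block and is not part of the compiled paper; the same bound $\tint st\|\boundary{V}{\cball au}\|(Z)\ud\mathscr L^1u\leq\|V\|\big(Z\cap(\cball at\without\cball as)\big)$ is however available from \ref{example:lipschitzian} combined with \ref{remark:associated_distribution} and \ref{thm:tv_coarea} applied to $z\mapsto-|z-a|$, or directly from the cutoff computation you sketch (which is the route of \ref{corollary:monotonicity}). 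Third, Allard \cite[7.1]{MR0307015} and Michael–Simon \cite[Theorem~2.1]{MR0344978} are Sobolev inequalities; the isoperimetric statement you need, namely that $\|W\|(\rel^\adim)^{1-1/\vdim}\leq\isoperimetric\vdim\|\delta W\|(\rel^\adim)$ for a compactly supported rectifiable $\vdim$ varifold with $\density^\vdim(\|W\|,\cdot)\geq1$ a.e., is the corollary obtained by taking the test function identically one near $\spt\|W\|$ (for instance Allard \cite[8.3]{MR0307015}), and it is this corollary that fixes the constant $\isoperimetric\vdim$.
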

\begin{proof}
	Note $V \in \RVar_\vdim ( \oball{a}{r} )$ by Allard
	\cite[5.5\,(1)]{MR0307015}; hence the statement is \cite[A.7]{mydiss}
	if $\vdim<\adim$ and the case $\vdim = \adim$ may be proved
	identically.
\end{proof}
\begin{theorem} \label{thm:relative_iso}
	Suppose $\vdim, \adim \in \nat$, $\vdim \leq \adim$, $0 \leq M <
	\infty$, $U$ is an open subset of $\rel^\adim$, $W \in \Var_\vdim ( U
	)$, $S, T \in \mathscr{D}' ( U, \rel^\adim )$ are representable by
	integration, $\delta W = S + T$, $\density_\ast^\vdim ( \| W \|, z )
	\geq 1$ for $\| W \|$ almost all $z$,
	\begin{gather*}
		B = \Clos ( \spt \| W \| ) \without \spt \| W \|, \\
		\beta = \infty \quad \text{if $\vdim = 1$}, \quad \beta =
		\vdim / (\vdim-1) \quad \text{if $\vdim > 1$}, \\
		\alpha = \sup \{ S(f) \with f \in \text{$\mathscr{D} ( U,
		\rel^\adim )$ and $\Lpnorm{\| W \|}{\beta}{f} \leq 1$} \},
	\end{gather*}
	$A$ is the set of all $z \in \spt \| W \|$ such that
	\begin{gather*}
		\| W \| ( U \cap \oball{z}{r}) \leq M r^\vdim \quad \text{for
		some $0 < r < \infty$ with $\oball{z}{r} \cap B = \emptyset$},
	\end{gather*}
	$0 < \Gamma < \infty$, and one of the following two conditions is
	satisfied:
	\begin{enumerate}
		\item
		\label{item:relative_iso:some_bound}
		There holds $\alpha < \isoperimetric{\vdim}^{-1}$, $M \leq
		\Gamma_{\eqref{item:relative_iso:some_bound}}^{-1}$
		and $\Gamma =
		\Gamma_{\eqref{item:relative_iso:some_bound}}$
		where
		$\Gamma_{\eqref{item:relative_iso:some_bound}}$
		is a positive, finite number depending only on $\vdim$ and
		$\alpha$.
		\item
		\label{item:relative_iso:sharp_bound} For
		some $\delta$ there holds $\delta > 0$, $\alpha \leq
		\Gamma_{\eqref{item:relative_iso:sharp_bound}}^{-1}$,
		$M \leq (1-\delta) \unitmeasure{\vdim}$ and $\Gamma =
		\Gamma_{\eqref{item:relative_iso:sharp_bound}}$
		where
		$\Gamma_{\eqref{item:relative_iso:sharp_bound}}$
		is a positive, finite number depending only on $\adim$ and
		$\delta$.
	\end{enumerate}

	Then
	\begin{gather*}
		\| W \| ( A )^{1-1/\vdim} \leq \Gamma \| T \| ( U ).
	\end{gather*}
	where $0^0=0$.
\end{theorem}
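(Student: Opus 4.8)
The statement is a relative isoperimetric inequality of the type of \ref{thm:rel_iso_ineq}, and the plan is to run that scheme twice, with the lower mass bound supplied by \ref{lemma:mydiss} in case \eqref{item:relative_iso:some_bound} and by \ref{lemma:lower_mass_bound_Q} in case \eqref{item:relative_iso:sharp_bound}. First I would dispose of the trivial case $\| T \| ( U ) = \infty$, so assume it finite. Since $\density_\ast^\vdim ( \| W \|, z ) \geq 1 > 0$ for $\| W \|$ almost all $z$ and $\| \delta W \| = \| S + T \|$ is a Radon measure, Allard's rectifiability theorem, see \cite[5.5\,(1)]{MR0307015}, gives $W \in \RVar_\vdim ( U )$, whence $\density^\vdim ( \| W \|, z ) \geq 1$ for $\| W \|$ almost all $z$. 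From the definition of $\alpha$ (finite in either case) and inner regularity one obtains $\| S \| ( C ) \leq \alpha \| W \| ( C )^{1/\beta}$ for every Borel $C \subset U$, with $0^0 = 0$, hence $\| \delta W \| ( C ) \leq \alpha \| W \| ( C )^{1/\beta} + \| T \| ( C )$. Finally, because $\spt \| W \|$ is relatively closed in $U$ one has $B \subset \rel^\adim \without U$; so if $z \in A$ and $r$ is a witness radius (i.e.\ $\oball zr \cap B = \varnothing$ and $\| W \| ( U \cap \oball zr ) \leq M r^\vdim$), then $\oball zr \cap \spt \| W \|$ is a compact subset of $U$ and relatively closed in $\oball zr$, so $W \restrict \oball zr \times \grass{\adim}{\vdim}$ is a varifold in $\oball zr$ to which \ref{lemma:mydiss}, \ref{lemma:lower_mass_bound_Q}, \ref{corollary:monotonicity} and \ref{corollary:density_ratio_estimate} apply.

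The core is the claim that for each $z \in A$ there is a radius $s ( z )$ — lying in $[ r/10, r/6 )$ in case \eqref{item:relative_iso:some_bound} and merely in $( 0, r )$ in case \eqref{item:relative_iso:sharp_bound} — with $\| W \| ( \cball z{s(z)} )^{1/\beta} \leq c_0 \| T \| ( \cball z{s(z)} )$. I would fix a threshold $\lambda > \alpha$, taking $\lambda = \tfrac{1}{2} ( \alpha + \isoperimetric{\vdim}^{-1} )$ in the first case and $\lambda = \Gamma_{\ref{lemma:lower_mass_bound_Q}} ( M' )^{-1}$ with $M' = \sup \{ \adim, 2 \delta^{-1} \}$ in the second. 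If at some admissible $s$ one has $\lambda \| W \| ( \cball zs )^{1/\beta} < \| \delta W \| ( \cball zs )$, then $( \lambda - \alpha ) \| W \| ( \cball zs )^{1/\beta} < \| T \| ( \cball zs )$ and $s ( z ) = s$ works with $c_0 = ( \lambda - \alpha )^{-1}$. Otherwise $\| \delta W \| ( \cball zs ) \leq \lambda \| W \| ( \cball zs )^{1/\beta}$ at every admissible $s$; in case \eqref{item:relative_iso:some_bound}, inserting this into \ref{lemma:mydiss} and integrating the resulting inequality $( \isoperimetric{\vdim}^{-1} - \lambda )/\vdim \leq ( s \mapsto \| W \| ( \cball zs )^{1/\vdim} )'$ over $[ r/10, r/6 )$ forces $\| W \| ( \oball zr ) \geq \big( ( \isoperimetric{\vdim}^{-1} - \lambda )/(15\vdim) \big)^\vdim r^\vdim$; in case \eqref{item:relative_iso:sharp_bound}, \ref{lemma:lower_mass_bound_Q} (with $Q = 1$, this $M'$, and empty density deficit as $\density^\vdim ( \| W \|, \cdot ) \geq 1$ almost everywhere) gives $\| W \| ( \oball zr ) \geq ( 1 - (M')^{-1} ) \unitmeasure{\vdim} r^\vdim$. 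Both contradict $\| W \| ( \oball zr ) = \| W \| ( U \cap \oball zr ) \leq M r^\vdim$: the first once $\Gamma_{\eqref{item:relative_iso:some_bound}}$ is taken large in terms of $\vdim, \alpha$ (so that $M$ is below the displayed bound), the second once $\Gamma_{\eqref{item:relative_iso:sharp_bound}} \geq 2 \Gamma_{\ref{lemma:lower_mass_bound_Q}} ( M' )$, in terms of $\adim, \delta$, which also secures $\alpha < \lambda$.

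Granting the claim, assume first $\vdim > 1$. In case \eqref{item:relative_iso:sharp_bound} I cover $A$ by the balls $\cball z{s(z)}$, extract by Besicovitch's theorem countable disjointed families $F_1, \dots, F_{\besicovitch{\adim}}$ covering $A$, and estimate, via $\| W \| ( D ) \leq ( c_0 \| T \| ( D ) )^{\vdim/(\vdim-1)}$ and superadditivity of $t \mapsto t^{\vdim/(\vdim-1)}$ on the disjoint members of each $F_i$, that $\| W \| ( A ) \leq \sum_i \sum_{D \in F_i} \| W \| ( D ) \leq \besicovitch{\adim} ( c_0 \| T \| ( U ) )^{\vdim/(\vdim-1)}$, hence $\| W \| ( A )^{1-1/\vdim} \leq \besicovitch{\adim}^{1-1/\vdim} c_0 \| T \| ( U )$; since $c_0$ depends only on $\adim, \delta$ and $\adim \leq M'$, taking $\Gamma$ the supremum of $\besicovitch{\adim}^{1-1/\vdim} c_0$ over the finitely many relevant $\adim$ yields the asserted dependence. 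In case \eqref{item:relative_iso:some_bound}, to avoid the dimensional Besicovitch constant I would instead use the (dimension free) $5r$-covering lemma: a disjointed subfamily $\{ \cball{z_j}{s_j} \}$ with $A \subset \bigcup_j \cball{z_j}{5 s_j}$. As $s_j \geq r(z_j)/10$ and $5 s_j < r(z_j)$, one has $\| W \| ( \cball{z_j}{5 s_j} ) \leq \| W \| ( \oball{z_j}{r(z_j)} ) \leq M r(z_j)^\vdim \leq 10^\vdim M s_j^\vdim$, while the lower mass estimate \cite[2.5]{snulmenn.isoperimetric} (available on $\oball{z_j}{r(z_j)}$ by the controls above) gives $\| W \| ( \cball{z_j}{s_j} ) \geq ( 2\vdim\isoperimetric{\vdim} )^{-\vdim} s_j^\vdim$; combining, $\| W \| ( \cball{z_j}{5 s_j} ) \leq 10^\vdim ( 2\vdim\isoperimetric{\vdim} )^\vdim M \| W \| ( \cball{z_j}{s_j} )$, and summing as before gives $\| W \| ( A )^{1-1/\vdim} \leq \Gamma \| T \| ( U )$ with $\Gamma$ depending only on $\vdim$ and $\alpha$ (the factor $M \leq \Gamma_{\eqref{item:relative_iso:some_bound}}^{-1}$ being itself controlled by $\vdim, \alpha$). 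The case $\vdim = 1$ is analogous but simpler: then $1/\beta = 0$, so $\| W \| ( C )^{1/\beta}$ equals $1$ when $\| W \| ( C ) > 0$; the claim reads $1 \leq c_0 \| T \| ( \cball z{s(z)} )$, and if $\| W \| ( A ) > 0$ any $z \in A$ gives $1 = \| W \| ( A )^0 \leq c_0 \| T \| ( U )$, while $\| W \| ( A ) = 0$ makes the inequality trivial with $0^0 = 0$.

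The main obstacle is the covering step in case \eqref{item:relative_iso:some_bound}: Besicovitch would bring in a dimension-dependent constant, so one must exploit the smallness of $M$ to run the dimension-free $5r$-lemma, which only works because the good radius $s(z)$ was forced to be comparable to its witness radius $r(z)$ — this is precisely why I localised $s(z)$ to $[ r/10, r/6 )$ in the claim, and why the smallness threshold on $M$ there has to be absorbed into $\Gamma_{\eqref{item:relative_iso:some_bound}}$. The other delicate point is the sharp lower mass bound in case \eqref{item:relative_iso:sharp_bound}: the near-optimal constant $\unitmeasure{\vdim}$ is not accessible from \ref{lemma:mydiss} (since $\isoperimetric{\vdim}$ is not the sharp isoperimetric constant), so one is forced through the compactness argument packaged in \ref{lemma:lower_mass_bound_Q}, and one must take care with the rectifiability of $W$ and with the passage to the restricted varifold $W \restrict \oball zr \times \grass{\adim}{\vdim}$ in order to invoke it. The remaining points — $\| W \|$-measurability of $A$, the inner-regularity reduction for $\| S \|$, the applicability of \cite[2.5]{snulmenn.isoperimetric}, and the constant bookkeeping — are routine.
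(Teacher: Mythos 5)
Your overall strategy matches the paper's: reduce to $W \in \RVar_\vdim(U)$ via Allard, establish $\| S \| ( C ) \leq \alpha \| W \| ( C )^{1/\beta}$, then run the relative isoperimetric scheme in each of the two alternatives using a lower-mass-bound lemma (\ref{lemma:mydiss} via the monotonicity identity in case \eqref{item:relative_iso:some_bound}, a compactness-based sharp lower bound in case \eqref{item:relative_iso:sharp_bound}), followed by a covering argument. Case \eqref{item:relative_iso:sharp_bound} is essentially the paper's argument (the paper cites \cite[2.6]{snulmenn.isoperimetric} where you cite \ref{lemma:lower_mass_bound_Q}; these play the same role). However, your case \eqref{item:relative_iso:some_bound} has a genuine gap.

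You find a good radius $s(z) \in [r/10, r/6)$ with $\| W \| ( \cball z{s(z)} )^{1/\beta} \leq c_0 \| T \| ( \cball z{s(z)} )$ and then, in the covering step, assert the lower bound $\| W \| ( \cball{z_j}{s_j} ) \geq ( 2 \vdim \isoperimetric{\vdim} )^{-\vdim} s_j^\vdim$ from \cite[2.5]{snulmenn.isoperimetric}. That estimate needs a smallness condition $\measureball{\| \delta W \|}{\cball{z_j}{s}} \leq \gamma \| W \| ( \cball{z_j}{s} )^{1-1/\vdim}$ with $\gamma < \isoperimetric{\vdim}^{-1}$ for \emph{all} $0 < s < s_j$, which is not available here: the universal bound $\| \delta W \| ( C ) \leq \alpha \| W \| ( C )^{1-1/\vdim} + \| T \| ( C )$ has the $\| T \|$-term, and your dichotomy only controls the first variation for $s$ in $[r/10, r/6)$, not for $s < r/10$. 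In fact the ``if'' branch of your dichotomy is precisely the situation where the first variation is large at the chosen scale, so the $\| T \|$-term dominates and the lower mass bound can fail. Without it, the ratio $\| W \| ( \cball{z_j}{5s_j} ) / \| W \| ( \cball{z_j}{s_j} )$ is not controlled by $K = 10^\vdim (2\vdim\isoperimetric{\vdim})^\vdim M$, and the chain $\| W \|(A) \leq \sum_j \| W \|(\cball{z_j}{5s_j}) \leq K \sum_j \| W \|(\cball{z_j}{s_j})$ breaks. The paper avoids exactly this by a more refined dichotomy on scales: choosing $t = \inf Q$ where $Q$ consists of $t$ with $\{ u \with t \leq u \leq 10 t \}$ in the set $P$ of scales with a small mass ratio, one obtains simultaneously both $\phi(5s) \leq \Delta_3 (5s)^\vdim$ and $\phi(s) \geq (10)^{-\vdim} \Delta_3 s^\vdim$ for $t \leq s \leq 2t$, which makes $\phi(5s)^{1-1/\vdim} \phi(s)^{1/\vdim-1} \leq (50)^{\vdim-1}$ and yields the good radius $s$ with $\| W \| ( \cball z{5s} )^{1-1/\vdim} \leq \Delta_1 \| T \| ( \cball zs )$ directly, with $\Delta_1$ independent of $M$ and with the $5s$-ball already on the left; no separate lower mass bound is needed, and the $5r$-covering lemma then closes the argument cleanly.

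Two further, smaller, issues. The $5r$-covering lemma \cite[2.8.5]{MR41:1976} requires uniformly bounded radii; since the witness radius $r(z)$ is unbounded, you need the decomposition $A = \bigcup_{i} A_i$ with $r < i$ on $A_i$ (as the paper does) before covering. Also, the claim has to be established only at $z \in A$ with $\density_\ast^\vdim ( \| W \|, z ) \geq 1$, a set of full $\| W \|$-measure in $A$; and the assertion that $\oball zr \cap \spt \| W \|$ is a compact subset of $U$ is imprecise — the paper's cleaner reduction is to replace $U$ by $\rel^\adim \without B$ via the proper inclusion $\spt \| W \| \hookrightarrow \rel^\adim \without B$, which guarantees $\oball zr$ lies in the (new) domain whenever $\oball zr \cap B = \emptyset$.
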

\begin{proof}
	First, it is proven that one may assume $U = \rel^\adim \without B$.
	In fact, $B$ is closed, $\spt S \subset \spt \| W \|$, hence also
	$\spt T \subset \spt \| W \|$, and one may use the inclusion $\iota :
	U \to \rel^\adim \without B$ to replace $W$, $S$, and $T$ by
	appropriate elements of $\Var_\vdim ( \rel^\adim \without B)$ and
	$\mathscr{D}' ( \rel^\adim \without B, \rel^\adim )$ since $\iota |
	\spt \| W \|$ is proper.

	Define $A_i$ for $i \in \nat$ to be the set of all $z \in \spt \| W
	\|$ such that
	\begin{gather*}
		\measureball{\| W \|}{\oball{z}{r}} \leq M r^\vdim \quad
		\text{for some $0 < r < i$ with $\oball{z}{r} \subset U$}
	\end{gather*}
	and note $A = \bigcup \{ A_i \with i \in \nat \}$ hence $\| W \| ( A )
	= \lim_{i \to \infty} \| W \| ( A_i )$ by \cite[2.1.5\,(1)]{MR41:1976}.
	Clearly, \emph{if $\vdim = 1$ then $\| S \| ( U ) \leq \alpha$}.
	Observe \emph{if $\vdim > 1$ then
	\begin{gather*}
		\| S \| (C) \leq \alpha \| W \| ( C )^{1-1/\vdim} \quad
		\text{whenever $C \subset U$};
	\end{gather*}}
	in fact one may assume $C$ to be open and in this case
	\begin{gather*}
		\| S \| ( C ) = \sup \{ S(f) \with \text{$f \in \mathscr{D} (
		U , \rel^\adim )$ with $\spt f \subset C$ and $|f(z)| \leq 1$
		for $z \in U$} \}
	\end{gather*}
	by \cite[4.1.5]{MR41:1976}. Also note that $W \in \RVar_\vdim (U)$ by
	Allard \cite[5.5\,(1)]{MR0307015}.

	In order to prove the alternative
	\eqref{item:relative_iso:some_bound}, define
	\begin{gather*}
		\Delta_1 = 2 (50)^{\vdim-1} ( \isoperimetric{\vdim}^{-1} -
		\alpha)^{-1}, \quad \Delta_2 = ( \isoperimetric{\vdim}^{-1} -
		\alpha) / (2 \vdim), \\
		\Delta_3 = \inf \{ \unitmeasure{\vdim}/2, ( \Delta_2 )^\vdim
		\}, \quad
		\Gamma_\eqref{item:relative_iso:some_bound}
		= \sup \big \{ 2 (20)^\vdim ( \Delta_3 )^{-1}, \Delta_1 \big \}
	\end{gather*}
	and note $\Delta_2 + ( \vdim \Delta_1 )^{-1} ( 50 )^{\vdim-1} \leq (
	\isoperimetric{\vdim}^{-1} - \alpha )/ \vdim$.

	Next, the following assertion will be shown: \emph{Whenever $i \in
	\nat$, $z \in A_i$ and $\density_\ast^\vdim ( \| W \|, z ) \geq 1$
	there exists $0 < s < \frac{1}{5} \inf \{ i, \dist (z,\rel^\adim
	\without U) \}$ such that $\| W \| ( \cball{z}{5s} )^{1-1/\vdim} \leq
	\Delta_1 \measureball{\| T \|}{\cball{z}{s}}$}. For this purpose
	choose $r$ such that
	\begin{gather*}
		0 < r < \inf \{ i, \dist (z,\rel^\adim \without U) \},
		\quad \measureball{\| W \|}{\cball{z}{r}} \leq (20)^{-\vdim}
		\Delta_3 r^\vdim,
	\end{gather*}
	let $P$ denote the set of all $0 < t < r$ such that
	\begin{gather*}
		\measureball{\| W \|}{\cball{z}{t}} \leq \Delta_3 t^\vdim
	\end{gather*}
	and $Q$ the set of all $0 < t < \frac{r}{10}$ such that $\{ s \with t
	\leq s \leq 10t \} \subset P$. One notes for $\frac{r}{20} \leq s < r$
	\begin{gather*}
		s^{-\vdim} \measureball{\| W \|}{\cball{z}{s}} \leq ( 20
		)^\vdim r^{-\vdim} \measureball{\| W \|}{\cball{z}{r}} \leq
		\Delta_3,
	\end{gather*}
	hence $\frac{r}{20} \in Q$. Let $t = \inf Q$ and note $t \in Q$ since
	$\Delta_3 < \unitmeasure{\vdim}$. Also, whenever $t \leq s \leq 10 t$
	\begin{gather*}
		s^{-\vdim} \measureball{\| W \|}{\cball{z}{s}} \geq
		(10)^{-\vdim} t^{-\vdim} \measureball{\| W
		\|}{\cball{z}{t}} = (10)^{-\vdim} \Delta_3
	\end{gather*}
	because $t \in \Clos ( \{ s \with s < t \} \without P )$.
	Defining $\phi : \{ s : 0 < s < r \} \to \rel$ by $\phi (s) =
	\measureball{\| W \|}{\cball{z}{s}}$ for $0 < s < r$, one infers from
	\ref{lemma:mydiss} and the preceding paragraph that
	\begin{gather*}
		\isoperimetric{\vdim}^{-1} \leq \phi (s)^{1/\vdim-1} \big (
		\measureball{\| \delta W \|}{\cball{z}{s}} + \phi'(s) \big ),
		\\
		\isoperimetric{\vdim}^{-1} - \alpha \leq \phi (s)^{1/\vdim-1}
		\big ( \measureball{\| T \|}{\cball{z}{s}} + \phi ' (s) \big )
	\end{gather*}
	for $\mathscr{L}^1$ almost all $0 < s < r$. If the assertion were
	false then for $\mathscr{L}^1$ almost all $t < s < 2 t$
	\begin{gather*}
		\begin{aligned}
			( \isoperimetric{\vdim}^{-1} - \alpha ) / \vdim & <
			( \vdim \Delta_1 )^{-1} \phi (5s)^{1-1/\vdim} \phi
			(s)^{1/\vdim-1} + ( \phi^{1/\vdim} )' (s) \\
			& \leq ( \vdim \Delta_1 )^{-1} ( 50)^{\vdim-1} + (
			\phi^{1/\vdim} ) ' (s),
		\end{aligned} \\
		( \Delta_3 )^{1/\vdim} \leq \Delta_2 < ( \phi^{1/\vdim} ) '
		(s),
	\end{gather*}
	hence, using $\phi^{1/\vdim} (t) = ( \Delta_3 )^{1/\vdim}
	t$ and \cite[2.9.19]{MR41:1976}, one would obtain for $ t
	< s < 2 t$
	\begin{gather*}
		\phi^{1/\vdim}(s) \geq \phi^{1/\vdim} ( t ) + \tint{t}{s} (
		\phi^{1/\vdim} ) ' \ud \mathscr{L}^1
		> ( \Delta_3 )^{1/\vdim} s, \quad s \notin P, \quad t
		\notin Q
	\end{gather*}
	a contradiction.

	By the assertion of the preceding paragraph and
	\cite[2.8.5]{MR41:1976} for $i \in \nat$ there exists a countable
	disjointed family $G$ of closed balls such that
	\begin{gather*}
		\classification{A_i}{z}{\density_\ast^\vdim ( \| W \|, z )
		\geq 1 } \subset {\textstyle\bigcup} \{ \widehat{D} \with D
		\in G \} \subset U, \\
		\| W \| ( \widehat{D} )^{1-1/\vdim} \leq \Gamma \| T \| (D)
		\quad \text{for $D \in G$}.
	\end{gather*}
	Therefore, if $\vdim > 1$, then
	\begin{gather*}
		\begin{aligned}
			& \| W \| (A_i) = \| W \| (
			\classification{A_i}{z}{\density_\ast^\vdim ( \| W \|,
			z) \geq 1 } ) \leq \tsum{D \in G}{} \| W \| (
			\widehat{D} ) \\
			& \qquad \leq \Gamma^\beta \tsum{D \in G}{} \| T \|
			(D)^\beta \leq \Gamma^\beta \big ( \tsum{D \in G}{} \|
			T \| (D) \big)^\beta \leq \Gamma^\beta \| T \| ( U
			)^\beta.
		\end{aligned}
	\end{gather*}
	If $\vdim = 1$ and $\| W \| (A_i)>0$ then $\Gamma^{-1} \leq \| T \| (D)
	\leq \| T \| ( U )$ for some $D \in G$.

	In order to prove the alternative
	\eqref{item:relative_iso:sharp_bound}, define
	\begin{gather*}
		\Delta_4 = \inf {\textstyle\bigcup} \big \{ \{ ( 2
		\isoperimetric{j})^{-1},
		\varepsilon_{\text{\cite[2.6]{snulmenn.isoperimetric}}} (
		\adim-j, j, \delta/2 ) \} \with \adim \geq j \in \nat \big \},
		\\
		\Delta_5 = 2 ( \Delta_4 )^{-1}, \quad
		\Gamma_{\eqref{item:relative_iso:sharp_bound}}
		= \Delta_5 \besicovitch{\adim}.
	\end{gather*}

	If $i \in \nat$, $z \in A_i$ there exists $0 < s < \inf \{ i, \dist
	(z, \rel^\adim \without U ) \}$ such that
	\begin{gather*}
		\| W \| ( \cball{z}{s} )^{1-1/\vdim} \leq \Delta_5
		\measureball{\| T \|}{\cball{z}{s}};
	\end{gather*}
	in fact there exists $0 < s < \inf \{ i, \dist (z, \rel^\adim
	\without U ) \}$ such that
	\begin{gather*}
		\Delta_4 \| W \| ( \cball{z}{s} )^{1-1/\vdim} <
		\measureball{\| \delta W \|}{\cball{z}{s}}
	\end{gather*}
	by \cite[2.6]{snulmenn.isoperimetric} and
	\begin{gather*}
		\measureball{\| \delta W \|}{\cball{z}{s}} \leq \alpha \| W \|
		( \cball{z}{s} )^{1-1/\vdim} + \measureball{\| T
		\|}{\cball{z}{s}}.
	\end{gather*}
	Therefore for $i \in \nat$ there exist countable disjointed families
	of closed balls $F_1, \ldots, F_{\besicovitch{\adim}}$ such that
	\begin{gather*}
		A_i \subset {\textstyle\bigcup\bigcup} \{ F_j \with j = 1,
		\ldots, \besicovitch{\adim} \}, \\
		\| W \| (D)^{1-1/\vdim} \leq \Delta_5 \| T \| ( D ) \quad
		\text{for $D \in {\textstyle\bigcup} \{ F_j \with j =
		1,\ldots, \besicovitch{\adim} \}$}.
	\end{gather*}
	If $\vdim > 1$ then
	\begin{gather*}
		\begin{aligned}
			\| W \| (A_i) & \leq
			\tsum{j=1}{\besicovitch{\adim}}\tsum{D \in F_j}{} \| W
			\| (D) \leq ( \Delta_5 )^\beta
			\tsum{j=1}{\besicovitch{\adim}} \tsum{D \in F_j}{} \|
			T \| (D)^\beta \\
			& \leq ( \Delta_5 )^\beta
			\tsum{j=1}{\besicovitch{\adim}} \big ( \tsum{D \in
			F_j}{} \| T \| (D) \big )^\beta \leq ( \Delta_5
			)^\beta \besicovitch{\adim} \| T \| ( U )^\beta.
		\end{aligned}
	\end{gather*}
	If $\vdim = 1$ and $\| W \| (A_i) > 0$ then $\Gamma^{-1} \leq \| T \|
	( D ) \leq \| T \| ( U )$ for some $D \in \bigcup \{ F_j \with j = 1,
	\ldots, \besicovitch{\adim} \}$.
\end{proof}
\begin{corollary} \label{corollary:relative_iso}
	Suppose $\vdim$, $\adim$, $p$, $U$, $V$, and $\psi$ are as in
	\ref{miniremark:situation_general}, $p = \vdim$, $0 \leq \alpha <
	\infty$, $0 \leq M < \infty$, $E$ is of locally bounded $V$ perimeter
	and $\| \delta V \|$ measurable, $0 \leq r < \infty$,
	\begin{gather*}
		\| V \| ( E ) \leq M r^\vdim, \quad \psi ( E )^{1/\vdim} \leq
		\alpha, \quad
		B = \Clos ( \spt \| V \| \restrict E ) \without \spt ( \| V \|
		\restrict E ),
	\end{gather*}
	and $0 < \Gamma < \infty$ satisfies
	\begin{gather*}
		\text{either} \quad \text{$\alpha <
		\isoperimetric{\vdim}^{-1}$, $M \leq
		\Gamma_{\ref{thm:relative_iso}\eqref{item:relative_iso:some_bound}}
		( \vdim, \alpha )^{-1}$ and $\Gamma =
		\Gamma_{\ref{thm:relative_iso}\eqref{item:relative_iso:some_bound}}
		( \vdim, \alpha )$}, \\
		\text{or} \quad \text{$\delta > 0$, $\alpha \leq 
		\Gamma_{\ref{thm:relative_iso}\eqref{item:relative_iso:sharp_bound}}
		( \adim, \delta )^{-1}$, $M \leq (1-\delta)
		\unitmeasure{\vdim}$ and $\Gamma = 
		\Gamma_{\ref{thm:relative_iso}\eqref{item:relative_iso:sharp_bound}}
		( \adim, \delta )$}.
	\end{gather*}

	Then 
	\begin{gather*}
		\| V \| ( \classification{E}{z}{ \oball{z}{r} \cap B =
		\emptyset })^{1-1/\vdim} \leq \Gamma \| \boundary{V}{E} \| ( U
		)
	\end{gather*}
	where $0^0=0$.
\end{corollary}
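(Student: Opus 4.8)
The plan is to apply \ref{thm:relative_iso} to the varifold $W \in \Var_\vdim ( \rel^\adim \without B )$ obtained by restricting $V$ to $E$, in the manner of the proof of \ref{corollary:true_rel_iso_ineq}. The new ingredient, since here $B$ is intrinsically defined rather than prescribed, is the verification that $E$ and $B$ are related to $\vdim$, $\adim$, $U$, $V$ as in \ref{miniremark:zero_boundary}. First I would note that $B$ is a closed subset of $\Bdry U$: since $\spt ( \| V \| \restrict E )$ is closed relative to $U$ one has $\Clos ( \spt ( \| V \| \restrict E ) ) \cap U = \spt ( \| V \| \restrict E )$, hence $B = \Clos ( \spt ( \| V \| \restrict E ) ) \cap ( \rel^\adim \without U )$ is closed with $B \subset \Bdry U$; in particular $\spt ( \| V \| \restrict E )$ is closed relative to $\rel^\adim \without B$.

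I would then check the two conditions of \ref{miniremark:zero_boundary}. The finiteness requirement follows from $\| V \| ( E ) \leq M r^\vdim$, from H\"older's inequality yielding $\| \delta V \| ( E ) \leq \| V \| ( E )^{1-1/\vdim} \psi ( E )^{1/\vdim}$ if $\vdim > 1$ and $\| \delta V \| ( E ) = \psi ( E ) \leq \alpha$ if $\vdim = 1$, and from the fact that $\boundary{V}{E}$ is representable by integration together with the observation that $\spt \boundary{V}{E} \subset \spt ( \| V \| \restrict E )$, so that for a compact subset $K$ of $\rel^\adim \without B$ the intersection $K \cap \spt ( \| V \| \restrict E )$ is a compact subset of $U$ on which $\| \boundary{V}{E} \|$ is finite. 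For the integration by parts identity, given $\theta \in \mathscr{D} ( \rel^\adim \without B, \rel^\adim )$ the set $\spt \theta \cap \spt ( \| V \| \restrict E )$ is a compact subset of $U$; choosing $\eta \in \mathscr{D}^0 ( U )$ equal to $1$ on a neighbourhood of it, the functions $\theta$ and $\eta \theta$ agree on a neighbourhood of $\spt ( \| V \| \restrict E )$, hence the corresponding integrals and pairings of $\delta ( V \restrict E \times \grass{\adim}{\vdim} )$, $( \delta V ) \restrict E$, and $\boundary{V}{E}$ (all supported there) are unchanged, and the identity follows from $\boundary{V}{E} = ( \delta V ) \restrict E - \delta ( V \restrict E \times \grass{\adim}{\vdim} )$ applied to $\eta \theta \in \mathscr{D} ( U, \rel^\adim )$.

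With $W$ defined as in \ref{miniremark:zero_boundary} I would put $S ( \theta ) = ( ( \delta V ) \restrict E ) ( \theta | U )$ and $T ( \theta ) = - ( \boundary{V}{E} ) ( \theta | U )$ for $\theta \in \mathscr{D} ( \rel^\adim \without B, \rel^\adim )$; then $\delta W = S + T$, both $S$ and $T$ are representable by integration, $\| S \| \leq \| \delta V \| \restrict E$ and $\spt T \subset \spt ( \| V \| \restrict E ) \subset U$, whence $\| T \| ( \rel^\adim \without B ) = \| \boundary{V}{E} \| ( U )$. Since $\| W \| = \| V \| \restrict E$ one has $\density_\ast^\vdim ( \| W \|, z ) = \density_\ast^\vdim ( \| V \|, z ) \geq 1$ for $\| W \|$ almost all $z$ by \cite[2.9.11]{MR41:1976} and the density hypothesis, and $\Clos ( \spt \| W \| ) \without \spt \| W \| = B$. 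By H\"older's inequality $\sup \{ S ( f ) \with f \in \mathscr{D} ( \rel^\adim \without B, \rel^\adim ), \Lpnorm{\| W \|}{\beta}{f} \leq 1 \} = \psi ( E )^{1/\vdim} \leq \alpha$, where $\beta = \infty$ if $\vdim = 1$ and $\beta = \vdim/(\vdim-1)$ if $\vdim > 1$; since $\Gamma_{\ref{thm:relative_iso}\eqref{item:relative_iso:some_bound}} ( \vdim, \cdot )$ is nondecreasing in its second argument, the hypotheses of \ref{thm:relative_iso} are met in either case with its constant at most $\Gamma$. Finally, since $\| V \| ( E \without \spt ( \| V \| \restrict E ) ) = 0$ and each $z$ in $\classification{E}{z}{ \oball{z}{r} \cap B = \emptyset } \cap \spt ( \| V \| \restrict E )$ belongs to the set ``$A$'' of \ref{thm:relative_iso} (taking the radius there to be $r$, using $\| W \| ( \oball{z}{r} ) \leq \| V \| ( E ) \leq M r^\vdim$), the conclusion of \ref{thm:relative_iso} gives $\| V \| ( \classification{E}{z}{ \oball{z}{r} \cap B = \emptyset } )^{1-1/\vdim} \leq \Gamma \| \boundary{V}{E} \| ( U )$, with the convention $0^0 = 0$.

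I expect the main obstacle to be the integration by parts identity of \ref{miniremark:zero_boundary} for test functions supported in $\rel^\adim \without B$ rather than in $U$; the point on which it rests is that, although $\spt ( \| V \| \restrict E )$ need not be closed in $\rel^\adim$, it is closed in $\rel^\adim \without B$, so that any such test function may be cut off to one in $\mathscr{D} ( U, \rel^\adim )$ without changing any of the relevant integrals. The remaining verifications (the finiteness bounds, the absolute continuity of $\| \delta V \|$ and the density identity, the Sobolev dual estimate for $S$, and the comparison of the two ``$A$'' sets) are routine.
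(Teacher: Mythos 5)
Your proof is correct and applies the same key result, \ref{thm:relative_iso}, that the paper's proof does. The one expositional difference is that you first extend the restricted varifold to one defined on $\rel^\adim \without B$ via \ref{miniremark:zero_boundary} before invoking \ref{thm:relative_iso}, whereas the paper applies \ref{thm:relative_iso} directly with $W = V \restrict E \times \grass{\adim}{\vdim}$ as a varifold in $U$, $S = (\delta V) \restrict E$ and $T = -\boundary{V}{E}$: since \ref{thm:relative_iso} is stated for a varifold in an arbitrary open $U$ with $B$ defined intrinsically as $\Clos(\spt \| W \|) \without \spt \| W \|$, and the first paragraph of its proof carries out exactly the reduction to $\rel^\adim \without B$, your preliminary extension is redundant here (it is needed in \ref{corollary:true_rel_iso_ineq}, whose target theorem \ref{thm:rel_iso_ineq} uses $A = \{z \with \oball{z}{r} \subset U\}$, but not for \ref{thm:relative_iso}). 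Apart from that, your verifications — that $B \subset \Bdry U$ is closed, that the density hypothesis transfers to $\| W \|$ by \cite[2.9.11]{MR41:1976}, the H\"older bound on $S$ yielding $\psi(E)^{1/\vdim}$, and the inclusion of $\classification{E}{z}{\oball{z}{r} \cap B = \emptyset}$ in the set $A$ of \ref{thm:relative_iso} by taking the radius there to be $r$ — match what the paper does (the paper spells out, for $\vdim > 1$, the approximation identity $\tint{E}{} \theta \bullet \eta(V;\cdot) \ud \| \delta V \| = -\tint{E}{} \theta \bullet \mathbf{h}(V;\cdot) \ud \| V \|$ behind your H\"older step).
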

\begin{proof}
	Let $W = V \restrict E \times \grass{\adim}{\vdim}$ and note $\| W \|
	= \| V \| \restrict E$ and $\density_\ast^\vdim ( \| W \|, z) \geq 1$
	for $\| W \|$ almost all $z$ by \cite[2.9.11]{MR41:1976}. If $\vdim >
	1$ then approximation shows
	\begin{gather*}
		\tint{E}{} \theta \bullet \eta ( V ; \cdot ) \ud \| \delta V
		\| = - \tint{E}{} \theta \bullet \mathbf{h} ( V; \cdot ) \ud
		\| V \| \quad \text{for $\theta \in \mathscr{D} ( U,
		\rel^\adim )$}.
	\end{gather*}
	Therefore applying \ref{thm:relative_iso} with
	$S(\theta)$ and $T$ replaced by $\tint{E}{} \theta \bullet \eta
	(V;\cdot) \ud \| \delta V \|$ and $-\boundary{V}{E}$ yields the
	conclusion.
\end{proof}
\begin{theorem} \label{thm:sobolev_poincare_inequality}
	Suppose $\vdim$, $\adim$, $p$, $U$, $V$, and $\psi$ are as in
	\ref{miniremark:situation_general}, $p = \vdim$, $0 \leq \alpha <
	\infty$, $0 \leq M < \infty$, $f : U \to \rel$ is locally
	Lipschitzian, $0 \leq r < \infty$,
	\begin{gather*}
		B = \eqclassification{\Clos (\spt \| V \| ) \without \spt \| V
		\|}{a}{ {\textstyle\limsup_{z \to a}} | ( f | \spt \| V \| )
		(z) | > 0}, \\
		\| V \| ( \classification{U}{z}{f(z) \neq 0} ) \leq M r^\vdim,
		\quad \psi ( \classification{U}{z}{f(z) \neq 0} )^{1/\vdim}
		\leq \alpha, \\
		\beta = \infty \quad \text{if $\vdim=1$}, \qquad \beta =
		\vdim/(\vdim-1) \quad \text{if $\vdim > 1$}
	\end{gather*}
	and $0 < \Gamma < \infty$ satisfies
	\begin{gather*}
		\text{either} \quad \text{$\alpha <
		\isoperimetric{\vdim}^{-1}$, $M \leq
		\Gamma_{\ref{thm:relative_iso}\eqref{item:relative_iso:some_bound}}
		( \vdim, \alpha )^{-1}$ and $\Gamma =
		\Gamma_{\ref{thm:relative_iso}\eqref{item:relative_iso:some_bound}}
		( \vdim, \alpha )$}, \\
		\text{or} \quad \text{$\delta > 0$, $\alpha \leq 
		\Gamma_{\ref{thm:relative_iso}\eqref{item:relative_iso:sharp_bound}}
		( \adim, \delta )^{-1}$, $M \leq (1-\delta)
		\unitmeasure{\vdim}$ and $\Gamma = 
		\Gamma_{\ref{thm:relative_iso}\eqref{item:relative_iso:sharp_bound}}
		( \adim, \delta )$}.
	\end{gather*}

	Then
	\begin{gather*}
		\eqLpnorm{\| V \| \restrict \{ z \with \oball{z}{r} \cap B =
		\emptyset \}}{\beta}{f} \leq \Gamma \Lpnorm{\| V \|}{1}{|\ap
		Df|}.
	\end{gather*}
\end{theorem}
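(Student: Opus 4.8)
The plan is to run the coarea / layer-cake argument of Step \ref{step:sob_poin_summary:1} of \ref{thm:sob_poin_summary}, using the relative isoperimetric inequality \ref{corollary:relative_iso} in place of \ref{corollary:rel_iso_ineq} and the coarea lemma \ref{lemma:coarea} in place of \ref{thm:tv_coarea}. First I would reduce to the case $f \geq 0$: since $|f|$ is locally Lipschitzian, its $( \| V \|, \vdim )$ approximate differential has norm at most $| \ap D f (z) |$ at $\| V \|$ almost all $z$, $\classification{U}{z}{|f|(z) \neq 0} = \classification{U}{z}{f(z) \neq 0}$, and $| ( |f| | \spt \| V \| ) (z) | = | ( f | \spt \| V \| ) (z) |$ for $z \in \spt \| V \|$, so replacing $f$ by $|f|$ alters neither $B$, nor $\classification{U}{z}{\oball{z}{r} \cap B = \emptyset}$, nor the left hand side, while only decreasing the right hand side. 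So assume $f \geq 0$ and put $E(t) = \classification{U}{z}{f(z)>t}$ for $0 \leq t < \infty$; these sets are open, hence $\| V \| + \| \delta V \|$ measurable, and $E(t) \subset \classification{U}{z}{f(z) \neq 0}$ for $t \geq 0$. By \ref{lemma:coarea} (whose distributions $T(t)$ are precisely the $\boundary{V}{E(t)}$) one has, for $\mathscr{L}^1$ almost all $t$, that $\boundary{V}{E(t)}$ is representable by integration — so $E(t)$ is of locally bounded $V$ perimeter — and, inserting an increasing sequence in $\mathscr{K}(U)^+$ converging to the characteristic function of $U$ into the identity $\tint{}{} \| T(t) \| (g) \ud \mathscr{L}^1 t = \tint{}{} g | \ap D f | \ud \| V \|$ of \ref{lemma:coarea} and applying monotone convergence,
\[
	\tint{0}{\infty} \| \boundary{V}{E(t)} \| ( U ) \ud \mathscr{L}^1 t = \Lpnorm{\| V \|}{1}{| \ap D f |}.
\]

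Next I would fix $t > 0$ at which the preceding holds. Since $E(t) \subset \classification{U}{z}{f(z) \neq 0}$, the bounds $\| V \| ( E(t) ) \leq M r^\vdim$ and $\psi ( E(t) )^{1/\vdim} \leq \alpha$ are inherited, and the alternative imposed on $\Gamma$ is exactly the one in \ref{corollary:relative_iso}; applying that corollary to $E(t)$ yields, with $B(t) = \Clos ( \spt ( \| V \| \restrict E(t) ) ) \without \spt ( \| V \| \restrict E(t) )$,
\[
	\| V \| ( \classification{E(t)}{z}{\oball{z}{r} \cap B(t) = \emptyset} )^{1-1/\vdim} \leq \Gamma \| \boundary{V}{E(t)} \| ( U ),
\]
where $0^0=0$. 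The key observation is that $B(t) \subset B$: a point $a \in B(t)$ lies in $\Clos ( \spt \| V \| )$ but not in $\spt \| V \|$ (which is relatively closed in $U$ and contains $\spt ( \| V \| \restrict E(t) )$), and it is a limit of points $z \in \spt \| V \|$ with $f(z)>t$, so $\limsup_{z \to a} | ( f | \spt \| V \| ) (z) | \geq t > 0$. Hence $\classification{U}{z}{\oball{z}{r} \cap B = \emptyset} \subset \classification{U}{z}{\oball{z}{r} \cap B(t) = \emptyset}$, and writing $A$ for the former set I obtain $\| V \| ( A \cap E(t) )^{1-1/\vdim} \leq \Gamma \| \boundary{V}{E(t)} \| ( U )$ for $\mathscr{L}^1$ almost all $t > 0$.

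It then remains to integrate in $t$. If $\vdim = 1$ then $1-1/\vdim = 0$ and $\beta = \infty$; for $\mathscr{L}^1$ almost every $0 < t < \eqLpnorm{\| V \| \restrict A}{\infty}{f}$ one has $\| V \| ( A \cap E(t) ) > 0$, hence $1 \leq \Gamma \| \boundary{V}{E(t)} \| ( U )$, and integrating over this range (both sides read in $\overline{\rel}$) against the coarea identity above gives $\eqLpnorm{\| V \| \restrict A}{\infty}{f} \leq \Gamma \Lpnorm{\| V \|}{1}{| \ap D f |}$. If $\vdim > 1$ then $1/\beta = 1-1/\vdim$; put $f_t = \inf \{ f, t \}$ and $\phi(t) = \eqLpnorm{\| V \| \restrict A}{\beta}{f_t}$, which is finite since $\phi(t) \leq t \, \| V \| ( \classification{U}{z}{f(z) \neq 0} )^{1/\beta} \leq t ( M r^\vdim )^{1/\beta}$. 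Minkowski's inequality gives $0 \leq \phi(t+h) - \phi(t) \leq h \, \| V \| ( A \cap E(t) )^{1/\beta}$ for $h > 0$, so $\phi$ is locally Lipschitzian with $0 \leq \phi'(t) \leq \| V \| ( A \cap E(t) )^{1-1/\vdim} \leq \Gamma \| \boundary{V}{E(t)} \| ( U )$ for $\mathscr{L}^1$ almost all $t$ by \cite[2.9.19]{MR41:1976}. Since $f_t \uparrow f$, monotone convergence gives $\lim_{t \to \infty} \phi(t) = \eqLpnorm{\| V \| \restrict A}{\beta}{f}$, whence by \cite[2.9.20]{MR41:1976} and the coarea identity $\eqLpnorm{\| V \| \restrict A}{\beta}{f} = \tint{0}{\infty} \phi' \ud \mathscr{L}^1 \leq \Gamma \tint{0}{\infty} \| \boundary{V}{E(t)} \| ( U ) \ud \mathscr{L}^1 t = \Gamma \Lpnorm{\| V \|}{1}{| \ap D f |}$.

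The one genuinely delicate step is the inclusion $B(t) \subset B$, which is what renders the boundary-removing set $A$ on the left hand side compatible with the $t$-dependent relative isoperimetric inequality; once that is secured, the remainder is the routine layer-cake computation, and the constant $\Gamma$ together with its two alternatives is inherited verbatim from \ref{corollary:relative_iso}.
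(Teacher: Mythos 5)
Your proposal is correct and follows essentially the same route as the paper: reduce to $f \geq 0$, apply \ref{lemma:coarea} and \ref{corollary:relative_iso} to the superlevel sets $E(t)$, observe the inclusion of the $t$-dependent boundary set $B(t)$ into $B$ (the paper phrases this as $A \subset A(t)$, noting $( \Bdry U ) \cap \Clos ( \spt \| V \| \restrict E(t) ) \subset B$), and integrate via the Lipschitzian function $\phi(t) = \eqLpnorm{\| V \| \restrict A}{\beta}{f_t}$ using Minkowski's inequality and \cite[2.9.19, 2.9.20]{MR41:1976}. The only minor imprecision is that points of $\spt ( \| V \| \restrict E(t) )$ satisfy $f \geq t$ rather than $f > t$, but this does not affect the conclusion $\limsup_{z\to a} |(f|\spt\|V\|)(z)| \geq t > 0$.
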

\begin{proof}
	Assume $f \geq 0$ and define $A = \classification{U}{z}{\oball{z}{r}
	\cap B = \emptyset}$ and $E(t)$, $T(t)$ as in \ref{lemma:coarea}. Let
	$A(t)$ for $0 < t < \infty$ denote the set of all $z \in U$ such that
	\begin{gather*}
		\oball{z}{r} \cap \Clos ( \spt ( \| V \| \restrict E(t) ) )
		\without \spt ( \| V \| \restrict E(t) ) = \emptyset
	\end{gather*}
	and note $A \subset A(t)$ since
	\begin{gather*}
		( \Bdry U ) \cap \Clos ( \spt \| V \| \restrict E(t) ) \subset
		B.
	\end{gather*}

	If $\vdim = 1$ then $1 \leq \Gamma \| T(t) \| (U)$ whenever $0 < t <
	\eqLpnorm{\| V \| \restrict A}{\beta}{f}$ by
	\ref{corollary:relative_iso} and the conclusion
	follows from \ref{lemma:coarea}. If $\vdim > 1$, defining $f_t = \inf
	\{ f, t \}$ for $0 < t < \infty$ and $\phi : \{ t : 0 < t < \infty \}
	\to \overline{\rel}$ by $\phi (t) = \eqLpnorm{\| V \| \restrict
	A}{\beta}{f_t}$ for $0 < t < \infty$, noting
	\begin{gather*}
		\phi (t) \leq t \| V \| ( \classification{U}{z}{f(z) \neq 0}
		)^{1/\beta} < \infty \quad \text{for $0 < t < \infty$},
	\end{gather*}
	Minkowski's inequality implies
	\begin{gather*}
		\phi (t+h) - \phi (t) \leq \eqLpnorm{\| V \| \restrict
		A}{\beta}{f_{t+h}-f_t} \leq h \| V \| ( A \cap E(t)
		)^{1-1/\vdim}
	\end{gather*}
	for $0 < t < \infty$ and $0 < h < \infty$. Therefore $\phi$ is
	Lipschitzian and from
	\ref{corollary:relative_iso}
	\begin{gather*}
		\phi'(t) \leq \| V \| ( A \cap E(t) )^{1-1/\vdim} \leq \Gamma
		\| T (t) \| ( U ) \quad \text{for $\mathscr{L}^1$ almost all
		$0 < t < \infty$},
	\end{gather*}
	hence $\eqLpnorm{\| V \| \restrict A}{\beta}{f} = \lim_{t \to \infty}
	\phi (t) = \tint{0}{\infty} \phi' \ud \mathscr{L}^1$ and
	\ref{lemma:coarea} implies the conclusion.
\end{proof}
\begin{remark} \label{remark:role}
	The role of \ref{corollary:relative_iso} in the
	preceding proof is identical to the one of Allard
	\cite[7.1]{MR0307015} in Allard \cite[7.3]{MR0307015}.
\end{remark}
\begin{lemma} \label{lemma:log_bound}
	Suppose $\vdim, \adim \in \nat$, $\vdim \leq \adim$, $U$ is an open
	subset of $\rel^\adim$, $V \in \RVar_\vdim (U)$, ``$\ap D\!$'' denotes
	approximate differentiation with respect to $( \| V \|, \vdim )$, $0 <
	M < \infty$, $\| V \| ( U ) \leq M$, $f : U \to \rel$ is a
	nonnegative, locally Lipschitzian function, $t = \sup f \lIm \spt \| V
	\| \rIm$, $d : U \to \rel$ is nonnegative, $\Lip d \leq 1$, $0 <
	\delta < \infty$,
	\begin{gather*}
		\text{$\classification{\spt \| V \|}{z}{ \text{$f(z) \leq s$ and
		$d(z) \geq \varepsilon$} }$ is compact for $0 < s < t$ and
		$\varepsilon > 0$},
	\end{gather*}
	$1 \leq q \leq \infty$, $1 \leq r \leq 2$, $N : \mathscr{D}^0 ( U )
	\to \rel$,
	\begin{gather*}
		N ( \theta ) = \nu \delta^{-1} M^{1-1/q} \Lpnorm{\| V
		\|}{q}{\theta} + \nu M^{1-1/r} \Lpnorm{\| V \|}{r}{| \ap D
		\theta |} \quad \text{for $\theta \in \mathscr{D}^0 ( U )$},
	\end{gather*}
	$\nu$, $\mu$, and $A$ are related to $\vdim$, $\adim$ and $U$ as in
	\ref{miniremark:bilinear_form}, $A$ is $V$ measurable, $T \in
	\mathscr{D}_0 ( U )$,
	\begin{gather*}
		\tint{}{} \left < \ap D \theta (z) \otimes \ap Df(z), A(z,S)
		\right > \ud V(z,S) \geq T ( \theta ) \quad \text{whenever
		$\theta \in \mathscr{D}^0 ( U )^+$},
	\end{gather*}
	$0 < \gamma < \infty$ and $\sup \{ T ( \theta ) \with \text{$\theta
	\in \mathscr{D}^0 (U)$ and $N(\theta) \leq 1$} \} \leq \gamma$.

	Then
	\begin{gather*}
		\tint{\{ z \with d(z) \geq \delta \}}{} ( \delta^{-1} f(z) +
		\gamma)^{-2} | \ap Df(z) |^2 \ud \| V \| z \leq 2^6 \mu^2
		\nu^{-2} M.
	\end{gather*}
\end{lemma}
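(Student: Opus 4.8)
The statement is a logarithmic energy bound for supersolutions, and the natural approach is to test the supersolution inequality with a cutoff involving a decreasing function of $\delta^{-1}f+\gamma$. First I would reduce to the case $\nu=1$ by replacing $A(z,S)$ and $T$ by $\nu^{-1}A(z,S)$ and $\nu^{-1}T$; this leaves $N$ unchanged in its $\nu$-dependence only superficially (one should carry the factor through, or simply absorb it and track powers of $\nu$ at the end). After this reduction the ellipticity hypothesis reads $|\sigma|^2\le\langle\sigma\otimes\sigma,A(z,S)\rangle$ and $\|A(z,S)\|\le\mu$. Set $g=\delta^{-1}f+\gamma$, so $g\ge\gamma>0$ and $\ap Dg=\delta^{-1}\ap Df$; the target estimate becomes $\int_{\{d\ge\delta\}}g^{-2}|\ap Dg|^2\,d\|V\|\le 2^6\mu^2 M$ (after multiplying through by $\delta^2$ and adjusting constants). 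The plan is to test with $\theta=\eta^2 g^{-1}$ where $\eta\in\mathscr D^0(U)^+$ is a cutoff with $\eta=1$ on $\{d\ge\delta\}$, $\spt\eta\subset\{d>0\}$, $0\le\eta\le1$ and $|\ap D\eta|\le\delta^{-1}$ — constructed from $d$ itself via $\eta=\inf\{\delta^{-1}d,1\}$ composed with a smooth approximation, using the compactness hypothesis on the relevant superlevel-type sets to ensure $\spt(\|V\|\restrict\spt\theta)$ is compact so that $\theta$ is an admissible test function after approximation by $\mathscr D^0$ functions (cf. the device in \ref{lemma:subsolutions}, \cite[4.5\,(3)]{snulmenn.decay}).

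\textbf{Key computation.} With $\theta=\eta^2 g^{-1}$ one has, for $\|V\|$ almost all $z$,
\begin{gather*}
\ap D\theta(z)=2\eta(z)g(z)^{-1}\ap D\eta(z)-\eta^2(z)g(z)^{-2}\ap Dg(z),
\end{gather*}
hence $\ap D\theta(z)\otimes\ap Dg(z)=2\eta g^{-1}\ap D\eta(z)\otimes\ap Dg(z)-\eta^2 g^{-2}\ap Dg(z)\otimes\ap Dg(z)$. Pairing against $A(z,S)$ and using ellipticity for the second term and Cauchy--Schwarz together with $\|A(z,S)\|\le\mu$ for the first term gives
\begin{gather*}
\eta^2 g^{-2}|\ap Dg|^2\le 4\mu^2|\ap D\eta|^2-2\bigl\langle\ap D\theta(z)\otimes\ap Dg(z),A(z,S)\bigr\rangle.
\end{gather*}
Integrating over $V$ and invoking the supersolution inequality $\int\langle\ap D\theta\otimes\ap Df,A\rangle\,dV\ge T(\theta)=\delta\int\langle\ap D\theta\otimes\ap Dg,A\rangle\,dV$ — so that $\int\langle\ap D\theta\otimes\ap Dg,A\rangle\,dV\le\delta^{-1}T(\theta)$ — one obtains
\begin{gather*}
\tint{}{}\eta^2 g^{-2}|\ap Dg|^2\ud\|V\|\le 4\mu^2\tint{}{}|\ap D\eta|^2\ud\|V\|+2\delta^{-1}T(\theta).
\end{gather*}
The first term on the right is at most $4\mu^2\delta^{-2}M$ by the bound on $|\ap D\eta|$ and $\|V\|(U)\le M$. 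For the second term I estimate $N(\theta)$: since $|\theta|=\eta^2 g^{-1}\le\gamma^{-1}$ and $g\ge\gamma$, one has $\Lpnorm{\|V\|}{q}{\theta}\le\gamma^{-1}M^{1/q}$; and $\ap D\theta=2\eta g^{-1}\ap D\eta-\eta^2 g^{-2}\ap Dg$ with $|\ap D\theta|\le 2\gamma^{-1}|\ap D\eta|+\gamma^{-1}\eta g^{-1}|\ap Dg|$ (using $g^{-1}\le\gamma^{-1}$ twice), so
\begin{gather*}
\Lpnorm{\|V\|}{r}{|\ap D\theta|}\le 2\gamma^{-1}\delta^{-1}M^{1/r}+\gamma^{-1}\Lpnorm{\|V\|}{r}{\eta g^{-1}|\ap Dg|}.
\end{gather*}
By H\"older with $r\le2$ the last factor is controlled by $M^{1/r-1/2}\bigl(\int\eta^2 g^{-2}|\ap Dg|^2\bigr)^{1/2}$; write $I=\int_{\{d\ge\delta\}}g^{-2}|\ap Dg|^2\,d\|V\|\le\int\eta^2 g^{-2}|\ap Dg|^2\,d\|V\|$. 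Assembling, $\delta^{-1}T(\theta)\le\gamma\cdot\delta^{-1}N(\theta)$, and $\delta^{-1}N(\theta)$ is bounded by $C\delta^{-2}M+C\delta^{-1}M^{1/2}I^{1/2}$ for an absolute $C$ (after tracking the explicit constants, I expect $C$ small enough that the final numerical bound $2^6$ comes out). Thus $I\le 4\mu^2\delta^{-2}M+2\gamma\delta^{-1}N(\theta)$, and the right side contains $I$ only through the term $2\gamma\delta^{-1}\cdot\gamma^{-1}M^{1/r-1/2+1/r}\dots I^{1/2}$, i.e. through $c\,\delta^{-1}M^{1/2}I^{1/2}$; applying Young's inequality $c\delta^{-1}M^{1/2}I^{1/2}\le\tfrac12 I+\tfrac12 c^2\delta^{-2}M$ and absorbing $\tfrac12 I$ yields $I\le 2^6\mu^2\delta^{-2}M$ after multiplying back by $\delta^2$, which is the assertion.

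\textbf{Main obstacle.} The delicate point is not the energy identity — which is the standard Caccioppoli/logarithmic test — but ensuring $\theta=\eta^2 g^{-1}$ is a \emph{legitimate} test function in the supersolution inequality, which a priori is only posited for $\theta\in\mathscr D^0(U)^+$. Here I would use the compactness hypothesis on $\{z\in\spt\|V\|:f(z)\le s,\ d(z)\ge\varepsilon\}$ for $0<s<t$, $\varepsilon>0$: $\eta^2 g^{-1}$ is nonnegative and locally Lipschitzian, and its $\|V\|$-support is contained in $\{d>0\}\cap\spt\|V\|$; a truncation argument in $f$ together with a partition of unity subordinate to the compact sets above reduces to genuinely compactly-supported Lipschitz test functions, and then \cite[4.5\,(3)]{snulmenn.decay} (convolution) legitimises the passage from $\mathscr D^0(U)^+$, exactly as in the proof of \ref{lemma:subsolutions}. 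A secondary point is the boundary case $t=\sup f\lIm\spt\|V\|\rIm$: if $t=\infty$ or $t$ is attained, one handles it by first working on $\{f<s\}$ for $s<t$ and letting $s\uparrow t$, using monotone convergence for the left-hand integral. A final bookkeeping task is to restore the $\nu$-dependence: since we divided $A$ and $T$ by $\nu$, the hypothesis on $\sup\{T(\theta):N(\theta)\le1\}$ has $N$ already scaled by $\nu$, so the two $\nu$'s cancel in $\delta^{-1}T(\theta)/\gamma$, and the ellipticity constant enters only through $\mu^2$ in the $|\ap D\eta|^2$ term, producing the stated $2^6\mu^2\nu^{-2}M$ once one divides the normalised $A$ back — a routine check of the powers.
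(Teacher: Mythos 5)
Your approach is essentially the one the paper takes: test the supersolution inequality with $\eta^2$ times a reciprocal of $f$, use ellipticity of $A$ to produce $\eta^2g^{-2}|\ap Dg|^2$ on the left, absorb the cross term with $|\ap D\eta|$ by Young, and bound the $T$-contribution by $\gamma N(\theta)$ with a final absorption. The two proofs differ only in bookkeeping (you use $c\delta^{-1}M^{1/2}I^{1/2}\le\tfrac12I+\tfrac12c^2\delta^{-2}M$, the paper uses $x\le1+x^2/4$), so I would not call this a different route.

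There is, however, a concrete gap that you flag but do not close, and it is precisely the point where the paper's choice of test function earns its keep. With $\eta=\inf\{\delta^{-1}d,1\}$ and $\theta=\eta^2(\delta^{-1}f+\gamma)^{-1}$, the set $\{\theta\neq0\}\cap\spt\|V\|$ is $\{d>0\}\cap\spt\|V\|$, whose closure meets $\{d=0\}$ and on which $f$ is unbounded; the compactness hypothesis only gives compactness of $\{z\in\spt\|V\|:f(z)\le s,\ d(z)\ge\varepsilon\}$ for $s<t$ and $\varepsilon>0$, so neither condition is met and the passage to $\mathscr D^0(U)^+$ test functions is blocked. The paper fixes both at once: it takes $\eta$ vanishing on $\{d\le\delta/2\}$ (so the support stays inside $\{d\ge\varepsilon\}$) and, more importantly, tests with $\eta^2\sup\{(f+\gamma)^{-1}-(s+\gamma)^{-1},0\}$, which vanishes on $\{f>s\}$ and recovers your estimand in the limit $s\uparrow t$ (where $\ap Df=0$ $\|V\|$-a.e.\ on $\{f\ge t\}$). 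Your sketch of ``truncation in $f$ plus partition of unity'' is the right idea, but without the explicit shift-and-truncate the argument does not produce an admissible $\theta$. There is also a small sign slip: from $\int\langle\ap D\theta\otimes\ap Df,A\rangle\,dV\ge T(\theta)$ you need a lower bound $T(\theta)\ge-\gamma N(\theta)$, not the upper bound $T(\theta)\le\gamma N(\theta)$ you quote; this follows from applying $\sup\{T(\vartheta):N(\vartheta)\le1\}\le\gamma$ to $-\theta$ (the constraint is over all of $\mathscr D^0(U)$ and $N$ is even), so it is trivial to fix but worth recording.
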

\begin{proof}
	First, \emph{the case $M = \delta = \nu = 1$} is treated. Suppose $0
	< s < t$ and let $E = \classification{U}{z}{f(z) \leq s}$. Define
	$\eta : U \to \rel$, $\theta : U \to \rel$ and $g : U \to \rel$ by
	\begin{gather*}
		\begin{aligned}
			\eta (z) & = 2 ( \inf \{ d(z), 1 \} - \inf \{ d(z),
			1/2 \} ), \\
			\theta (z) & = \eta(z)^2 \sup \{ ( f(z) + \gamma
			)^{-1} - ( s + \gamma )^{-1}, 0 \}, \\
			g (z) & = \sup \{ \log ( s + \gamma ) - \log ( f(z) +
			\gamma ), 0 \}
		\end{aligned}
	\end{gather*}
	whenever $z \in U$. Note $\Lip \eta \leq 2$ and
	\begin{gather*}
		0 \leq \eta (z) \leq 1, \qquad \eta(z) =1 \quad \text{if $d(z)
		\geq 1$}, \qquad \eta (z) = 0 \quad \text{if $d(z) \leq
		1/2$}, \\
		\theta (z) = 0 \quad \text{and} \quad g(z) = 0 \qquad \text{if
		$z \notin E$}
	\end{gather*}
	whenever $z \in U$. In particular $\spt \| V \| \cap \spt \theta$ is
	compact. One computes
	\begin{gather*}
		\begin{aligned}
			\ap D \theta (z) & = 2 \big ( ( f(z) + \gamma )^{-1} -
			( s + \gamma)^{-1} \big ) \eta (z) \ap D \eta (z) \\
			& \phantom{=} \ - \eta(z)^2 ( f(z) + \gamma)^{-2} \ap
			Df(z), \\
			\ap D g (z) & = - ( f(z)+\gamma )^{-1} \ap Df(z), \\
			\ap D \theta (z) \otimes \ap Df(z) & = 2 \big (( f(z)
			+ \gamma )^{-1} - (s+\gamma)^{-1} \big ) \eta (z) \ap
			D\eta (z) \otimes \ap Df(z) \\
			& \phantom{=} \ - \eta(z)^2 \ap D g (z) \otimes \ap D
			g (z),
		\end{aligned}
	\end{gather*}
	for $\| V \|$ almost all $z \in E$. Therefore
	\begin{gather*}
		0 \leq ( f(z) + \gamma )^{-1} - ( s + \gamma )^{-1} \leq ( f
		(z) + \gamma )^{-1} \leq \gamma^{-1}, \\
		\eta(z)^2 | \ap D g (z) |^2 \leq 4 \mu^2 | \ap D \eta (z) |^2
		- 2 \left < \ap D \theta (z) \otimes \ap Df (z), A(z,S) \right
		>, \\
		\gamma | \ap D \theta (z) | \leq 2 | \ap D \eta(z)| + \eta(z)
		| \ap Dg(z) |
	\end{gather*}
	for $V$ almost all $(z,S) \in E \times \grass{\adim}{\vdim}$. One
	obtains, noting \ref{corollary:nuq_density} and using H\"older's
	inequality,
	\begin{gather*}
		\Lpnorm{\| V \|}{r}{\eta | \ap Dg|} \leq \Lpnorm{\| V
		\|}{2}{\eta | \ap Dg|} \leq 1 + {\textstyle\frac{1}{4}} \| V
		\| ( \eta^2 | \ap Dg|^2 ), \\
		\begin{aligned}
			\| V \| ( \eta^2 | \ap D g |^2 ) & \leq 16 \mu^2 + 2
			\gamma ( \Lpnorm{\| V \|}{q}{\theta} + \Lpnorm{\| V
			\|}{r}{| \ap D \theta|} ) \\
			& \leq 16 \mu^2 + 12 + {\textstyle\frac{1}{2}} \| V \|
			( \eta^2 | \ap D g|^2 ).
		\end{aligned}
	\end{gather*}
	Since $\spt \| V \| \cap \spt \eta \cap \spt g$ is compact $\| V \| (
	\eta^2 | \ap D g|^2 ) < \infty$. Therefore
	\begin{gather*}
		\tint{\{ z \with \text{$f(z) \leq s$ and $d(z) \geq 1$}
		\}}{} ( f(z) + \gamma )^{-2} | \ap Df(z) |^2 \ud \| V \| z
		\leq 32 \mu^2 + 24
	\end{gather*}
	and considering the limit $s \to t-$ yields the conclusion in the
	present case noting $\ap Df(z) = 0$ for $\| V \|$ almost all $z$ with
	$f(z) \geq t$.

	In \emph{the general case} define $W$, $B$, and $R$ by
	\begin{gather*}
		\begin{aligned}
			W(g) & = M^{-1} \tint{}{} g ( \delta^{-1} z, S ) \ud V
			(z,S) && \quad \text{for $g \in \mathscr{K} (
			\boldsymbol{\mu}_{1/\delta} \lIm U \rIm \times
			\grass{\adim}{\vdim}  )$}, \\
			B (z,S) & = \nu^{-1} A ( \delta z, S ) && \quad
			\text{for $(z,S) \in \boldsymbol{\mu}_{1/\delta}
			\lIm U \rIm \times \grass{\adim}{\vdim}$}, \\
			R ( \theta ) & = \delta \nu^{-1} M^{-1} T ( \theta
			\circ \boldsymbol{\mu}_{1/\delta} ) && \quad
			\text{for $\theta \in \mathscr{D}^0 (
			\boldsymbol{\mu}_{1/\delta} \lIm U \rIm )$}.
		\end{aligned}
	\end{gather*}
	Applying the special case with
	\begin{gather*}
		\text{$U$, $V$, $M$, $f$, $t$, $d$, $\delta$, $\nu$, $\mu$,
		$A$, and $T$} \\
		\text{replaced by $\boldsymbol{\mu}_{1/\delta} \lIm U \rIm$,
		$W$, $1$, $\delta^{-1} f \circ \boldsymbol{\mu}_{\delta}$,
		$\delta^{-1} t$, $\delta^{-1} d \circ
		\boldsymbol{\mu}_{\delta}$, $1$, $\mu/\nu$, $B$, and $R$}
	\end{gather*}
	then readily yields the conclusion.
\end{proof}
\begin{lemma} \label{lemma:slicing_pde}
	Suppose $\vdim, \adim \in \nat$, $\vdim \leq \adim$, $U$ is an open
	subset of $\rel^\adim$, $V \in \RVar_\vdim ( U )$, ``$\ap D$'' denotes
	approximate differentiation with respect to $( \| V \|, \vdim )$, $g :
	U \to \rel$ and $f : U \to \rel$ are locally Lipschitzian functions,
	$A$, $\nu$, and $\mu$ are as in \ref{miniremark:bilinear_form}, $T \in
	\mathscr{D}_0 ( U )$ is representable by integration, and
	\begin{gather*}
		\tint{}{} \left < \ap D \theta (z) \otimes \ap Dg (z), A (z,S)
		\right > \ud V (z,S) = T ( \theta ) \quad \text{whenever
		$\theta \in \mathscr{D}^0 (U)$}, \\
		F(z) = | \ap D f (z) |^{-1} \ap D f (z) \quad \text{for $z \in
		\dmn \ap D f$ with $\ap Df(z) \neq 0$}.
	\end{gather*}

	Then for $\mathscr{L}^1$ almost all $t$ there holds
	\begin{gather*}
		\begin{aligned}
			& ( T \restrict \{ z \with f(z) > t \} ) ( \theta ) =
			\tint{\{ z \with f(z) > t \}}{} \left < \ap D \theta
			(z) \otimes \ap D g (z), A(z,S) \right > \ud V (z,S)
			\\
			& \qquad + \tint{}{} \theta (z) \left < F(z) \otimes \ap
			D g (z), A (z,\Tan^\vdim ( \| V \| , z ) ) \right >
			\density^\vdim ( \| V \|, z ) \ud
			\mathscr{H}^{\vdim-1} z
		\end{aligned}
	\end{gather*}
	whenever $\theta \in \mathscr{D}^0 ( U )$.
\end{lemma}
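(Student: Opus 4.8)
The plan is to slice the defining identity for $T$ along the level sets of $f$, in the spirit of the proof of the coarea formula \ref{thm:ap_coarea}; the role played there by \cite[4.5\,(4)]{snulmenn.decay} for the first variation will be taken here by the convolution approximation \cite[4.5\,(3)]{snulmenn.decay} for the operator $\psi \mapsto \int \left< \ap D\psi(z) \otimes \ap Dg(z), A(z,S) \right> \ud V(z,S)$. As a first step I would show that, writing $B(\psi) = \int \left< \ap D\psi(z) \otimes \ap Dg(z), A(z,S) \right> \ud V(z,S)$ — which is defined whenever $\psi : U \to \rel$ is locally Lipschitzian with compact support, since $\|V\|$ is a Radon measure, $\|A\| \le \mu$, and $|\ap Dg|$ is $\|V\|$ essentially bounded on compact subsets of $U$ — one has $T(\psi) = B(\psi)$ for every such $\psi$. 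Indeed, approximate $\psi$ by convolutions $\psi_j \in \mathscr{D}^0(U)$ with supports in a common compact subset of $U$ and with the Lipschitz constants $\Lip\psi_j$ bounded; then $\psi_j \to \psi$ uniformly, hence $T(\psi_j) \to T(\psi)$ because $T$ is representable by integration and therefore $|T(\psi_j - \psi)| \le \|T\|(|\psi_j - \psi|)$ by \ref{miniremark:extension}, while $B(\psi_j) \to B(\psi)$ by \cite[4.5\,(3)]{snulmenn.decay}; since $T(\psi_j) = B(\psi_j)$ by hypothesis, the claim follows, with $T(\psi)$ understood as the $\|T\|_{(1)}$ continuous extension of $T$ evaluated at $\psi$.

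Next, fix $\theta \in \mathscr{D}^0(U)$ and, for $t \in \rel$ and $0 < h < \infty$, put $w_{t,h}(z) = h^{-1}\inf\{h, \sup\{f(z)-t,0\}\}$, so that $w_{t,h}$ is locally Lipschitzian, $0 \le w_{t,h} \le 1$, $w_{t,h}$ increases to the characteristic function of $\{z : f(z) > t\}$ as $h \to 0+$, $\ap Dw_{t,h} = 0$ on the open set $\{f < t\} \cup \{f > t+h\}$, $\ap Dw_{t,h} = h^{-1}\ap Df$ $\|V\|$ almost everywhere on $\{t < f < t+h\}$, and $|\ap Dw_{t,h}| \le h^{-1}|\ap Df|$. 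Applying the first step to the compactly supported Lipschitzian function $w_{t,h}\theta$ and using the product rule for approximate differentials gives
\begin{gather*}
	T(w_{t,h}\theta) = \int w_{t,h}(z) \left< \ap D\theta(z) \otimes \ap Dg(z), A(z,S) \right> \ud V(z,S) \\
	+ h^{-1}\int_{\{t < f < t+h\}} \theta(z) \left< \ap Df(z) \otimes \ap Dg(z), A(z,S) \right> \ud V(z,S),
\end{gather*}
the contribution of $\{z : f(z) \in \{t,t+h\}\}$ to the last summand being discarded, as it is carried by a $\|V\|$ null set for all but countably many values of $t$ and of $h$. Since $V \in \RVar_\vdim(U)$ one may replace $S$ by $\Tan^\vdim(\|V\|,z)$ and $V$ by $\|V\|$ throughout (see \cite[3.5\,(2)]{MR0307015}); writing $\ap Df(z) = |\ap Df(z)|F(z)$ where $\ap Df(z) \ne 0$ and noting that the integrand vanishes where $\ap Df(z) = 0$, the last summand becomes $h^{-1}\int_{\{t < f < t+h\}} \Psi(z)\,|\ap Df(z)|\ud\|V\|z$, where $\Psi(z) = \theta(z)\left< F(z) \otimes \ap Dg(z), A(z, \Tan^\vdim(\|V\|,z)) \right>$ is a bounded, compactly supported, $\|V\|$ measurable function.

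Then the coarea formula \cite[3.2.22]{MR41:1976}, applied on the rectifiable set $\spt\|V\|$ with multiplicity $\density^\vdim(\|V\|,\cdot)$ exactly as in the proof of \ref{thm:ap_coarea}, yields for $-\infty < a < b < \infty$
\begin{gather*}
	\int_{\{a < f < b\}} \Psi\,|\ap Df|\ud\|V\| = \int_a^b \Big( \int_{\{f = t\}} \Psi\,\density^\vdim(\|V\|,\cdot)\ud\mathscr{H}^{\vdim-1} \Big)\ud\mathscr{L}^1 t,
\end{gather*}
the inner integral being locally $\mathscr{L}^1$ summable in $t$; hence by Lebesgue differentiation (\cite[2.8.17, 2.9.8]{MR41:1976}, as used in \ref{miniremark:distrib_on_r}) the last summand of the displayed identity for $T(w_{t,h}\theta)$ tends to $\int_{\{f=t\}}\Psi\,\density^\vdim(\|V\|,\cdot)\ud\mathscr{H}^{\vdim-1}$ as $h \to 0+$, for $\mathscr{L}^1$ almost all $t$. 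Simultaneously the first summand tends to $\int_{\{f>t\}}\left< \ap D\theta \otimes \ap Dg, A \right>\ud V$ by dominated convergence, and $T(w_{t,h}\theta) \to (T\restrict\{f>t\})(\theta)$ by dominated convergence with respect to $\|T\|$ whenever $\|T\|(\{f=t\}) = 0$. This proves the asserted identity for the fixed $\theta$ and $\mathscr{L}^1$ almost all $t$. To obtain a single exceptional null set independent of $\theta$, take a countable sequentially dense subset $D$ of $\mathscr{D}^0(U)$ (available since $\mathscr{D}(U)$ is a Lusin space by \ref{example:distrib_lusin} and \ref{remark:lusin}), and unite the null sets belonging to the $\theta \in D$, the countably many $t$ with $\|V\|(\{f=t\}) > 0$ or $\|T\|(\{f=t\}) > 0$, and the null set of $t$ for which $\int_{\{f=t\}\cap K}\density^\vdim(\|V\|,\cdot)\ud\mathscr{H}^{\vdim-1} = \infty$ for some compact $K \subset U$; for $t$ outside this union both sides of the claimed identity are sequentially continuous linear functionals of $\theta$ agreeing on $D$, hence equal.

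The coarea and differentiation bookkeeping is routine and parallels \ref{thm:ap_coarea}. The genuinely delicate point is the first step, the passage from smooth to locally Lipschitzian test functions: there one must use both that $T$ is representable by integration — so that $T$ is $\|T\|_{(1)}$ continuous and mere uniform convergence of test functions suffices, which matters because $\|T\|$ need not be absolutely continuous with respect to $\|V\|$ — and the convolution approximation \cite[4.5\,(3)]{snulmenn.decay} adapted to the bilinear coefficient $A$. A secondary point, dealt with above, is ensuring that the candidate right hand side is, for $\mathscr{L}^1$ almost all $t$, a genuine distribution in $\theta$, which once more rests on the coarea formula.
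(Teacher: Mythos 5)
Your proof is correct and follows essentially the same route as the paper's: the paper tests against $\xi = \theta(f_{t+h}-f_t)$ (which equals $h$ times your $w_{t,h}\theta$), extends the defining identity to such Lipschitzian test functions via convolution and \cite[4.5\,(3)]{snulmenn.decay}, then divides by $h$ and lets $h\to 0+$ using the coarea formula \cite[3.2.22]{MR41:1976} and differentiation of measures. You supply some bookkeeping the paper leaves implicit — the role of $\|T\|_{(1)}$ continuity in the Lipschitz extension step and the choice of a $t$-null set uniform in $\theta$ — but the method is identical.
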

\begin{proof}
	Suppose $t \in \rel$, $h > 0$ and $\theta \in \mathscr{D}^0 ( U )$,
	define $f_s = \inf \{ f, s \}$ for $s \in \rel$ and and $\xi = \theta
	( f_{t+h} - f_t )$. Approximating $\xi$ by convolution noting
	\cite[4.5\,(3)]{snulmenn.decay}, one obtains
	\begin{gather*}
		\begin{aligned}
			& T ( \theta (f_{t+h}-f_t) ) = \tint{}{}
			(f_{t+h}-f_t) (z) \left < \ap D \theta (z) \otimes
			\ap D g (z) , A (z,S) \right > \ud V (z,S) \\
			& \qquad + \tint{ \{ z \with s < f (z) < s + h
			\}}{} \theta (z) \left < \ap D f (z) \otimes \ap D
			g(z), A (z,S) \right > \ud V (z,S).
		\end{aligned}
	\end{gather*}
	Dividing by $h$ and considering the limit $h \to 0+$ with the help of
	the coarea formula \cite[3.2.22]{MR41:1976} in conjunction with
	\cite[2.9.9]{MR41:1976} one infers the conclusion.
\end{proof}
\begin{remark}
	The first assertion of \ref{lemma:coarea} can be obtained from
	\ref{lemma:slicing_pde} by taking $g$ to be the restriction of a
	standard coordinate function of $\rel^\adim$ to $U$ and $A$ as in
	\ref{example:V_subharmonic}.
\end{remark}
\begin{theorem} \label{thm:weak_harnack_inequality}
	Suppose $\vdim$, $\adim$, $p$, $U$, $V$, $\psi$, and ``$\ap$'' are as
	in \ref{miniremark:situation_general}, $p = \vdim$, $\vdim < q \leq
	\infty$, $q \geq 2$, $0 \leq \alpha < \infty$, $0 \leq M < \infty$,
	$f : U \to \rel$ is a nonnegative, locally Lipschitzian function, $0
	\leq t \leq \infty$, $0 < r < \infty$, $B \subset \Bdry U$,
	\begin{gather*}
		\eqclassification{\Clos ( \spt \| V \| ) \without \spt \|
		V \| }{a}{ { \textstyle\liminf_{z \to a}} ( f | \spt \| V \| )
		(z) < t } \subset B, \\
		\| V \| ( \classification{U}{z}{f(z)<t} ) \leq M r^\vdim,
		\quad \psi ( \classification{U}{z}{f(z)<t} )^{1/\vdim} \leq
		\alpha,
	\end{gather*}
	$\nu$, $\mu$ and $A$ are related to $\vdim$, $\adim$, $U$ as in
	\ref{miniremark:bilinear_form}, $A$ is $V$ measurable, $T \in
	\mathscr{D}_0 ( U )$,
	\begin{gather*}
		\tint{}{} \left < \ap D \theta (z) \otimes \ap Df (z), A (z,S)
		\right > \ud V (z,S) \geq T ( \theta ) \quad \text{whenever
		$\theta \in \mathscr{D}^0 ( U )^+$},
	\end{gather*}
	$0 < \varepsilon < \infty$, and exactly one of the following
	two conditions is satisfied:
	\begin{enumerate}
		\item \label{item:weak_harnack_inequality:some_bound} $\alpha
		< \isoperimetric{\vdim}^{-1}$, $M \leq \varepsilon$, and
		$\varepsilon = 3^{-\vdim}
		\Gamma_{\ref{thm:relative_iso}\eqref{item:relative_iso:some_bound}}
		( \vdim , \alpha )^{-1}$,
		\item \label{item:weak_harnack_inequality:sharp_bound} for
		some $\delta$ there holds
		$\delta > 0$, $\alpha \leq \varepsilon$, $M \leq (1-\delta)
		\unitmeasure{\vdim}$, and $\varepsilon = \inf \big ( \{
		\Gamma_{\ref{thm:relative_iso}\eqref{item:relative_iso:sharp_bound}}
		( \adim, \delta/2 )^{-1} \} \cup \bigcup \{ ( 2
		\isoperimetric{j} )^{-1} \with \adim \geq j \in \nat \} \big )$.
	\end{enumerate}

	Then
	\begin{gather*}
		t \leq \Gamma \big ( \inf f \lIm \classification{\spt \| V
		\|}{z}{ \dist (z,B) > r } \rIm + r^{1-\vdim/q} \nu^{-1}
		\nuqastnorm{q}{V}{T} \big )
	\end{gather*}
	where $\Gamma$ is a positive, finite number which, in case
	\eqref{item:weak_harnack_inequality:some_bound}, depends only on
	$\vdim$, $q$, $\alpha$, and $\mu/\nu$ and, in case
	\eqref{item:weak_harnack_inequality:sharp_bound}, depends only on
	$\adim$, $q$, $\mu/\nu$, and $\delta$.
\end{theorem}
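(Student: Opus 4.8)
The plan is to run a Moser-type iteration for supersolutions in which the logarithm of $f$ is the pivotal object. First I would dispose of trivialities: one may assume $t<\infty$ and $\nuqastnorm{q}{V}{T}<\infty$ and, after replacing $A$ and $T$ by $\nu^{-1}A$ and $\nu^{-1}T$, that $\nu=1$; the case $t=\infty$ then follows from the finite case together with the paper's convention $\inf\varnothing=\infty$, since for each finite $t'$ the hypotheses persist with $t$ replaced by $t'$. Replacing $f$ by $\min\{f,t\}$ — which, by the supersolution analogue of \ref{lemma:subsolutions}, is again a supersolution, now with right-hand side $T\restrict\{z\with f(z)<t\}$ — one may also assume $f\leq t$ and $\ap Df(z)=0$ for $\|V\|$ almost all $z$ with $f(z)=t$, so that only the set $\{z\with f(z)<t\}$, of $\|V\|$ measure at most $Mr^\vdim$ and $\psi$ measure at most $\alpha^\vdim r^\vdim$, carries the gradient. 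Abbreviate $d(z)=\dist(z,B)$, fix $\gamma>0$ a suitable multiple of $\nuqastnorm{q}{V}{T}$ (with the dependence on $r$ chosen so that the seminorm constraint of \ref{lemma:log_bound} for $\delta=r$ is met and the final error comes out as $r^{1-\vdim/q}\nuqastnorm{q}{V}{T}$), and work with $w=\log(r^{-1}f+\gamma)$ and $v=\sup\{\log(r^{-1}t+\gamma)-w,\,0\}$, so that $v\geq 0$, $\{z\with v(z)\neq 0\}\subset\{z\with f(z)<t\}$, and $v$ tends to $0$ near every point of $B$ because there $\liminf f\geq t$.

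The first substantial step is to turn the supersolution inequality for $f$ into a Caccioppoli bound for $w$. Testing $\int\langle\ap D\theta\otimes\ap Df,A\rangle\ud V\geq T(\theta)$ with $\theta=\psi(f+\gamma r)^{-1}$, $\psi\in\mathscr{D}^0(U)^+$, gives by the chain rule $\int\langle\ap D\psi\otimes\ap Dw,A\rangle\ud V-\int\psi\langle\ap Dw\otimes\ap Dw,A\rangle\ud V\geq T(\psi(f+\gamma r)^{-1})$; since $\langle\ap Dw\otimes\ap Dw,A\rangle\geq|\ap Dw|^2\geq 0$, this exhibits $w$ both as a supersolution and, after discarding the favourable term, as the object to which the quantitative logarithmic estimate applies. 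That estimate is precisely \ref{lemma:log_bound} with $\delta=r$: it bounds $\int_{\{z\with d(z)\geq r\}}(r^{-1}f+\gamma)^{-2}|\ap Df|^2\ud\|V\|$ in terms of $\mu$ and the mass, hence $\ap Dv$ in $\Lp{2}(\|V\|\restrict\{z\with d(z)\geq r\})$ with a constant depending only on the admissible data. In parallel the distribution $\psi\mapsto-T(\psi(f+\gamma r)^{-1})$ controlling $v$ as a subsolution must be estimated: the $\Lp{q}$-term of the seminorm $N$ in \ref{lemma:log_bound} together with the $\Lp{2}$ gradient bound just obtained bound its $\nuqastnorm{q}{V}{\cdot}$ by an appropriate multiple of $\nuqastnorm{q}{V}{T}$, the cross term $\psi\ap Dw$ being integrable precisely by the logarithmic estimate.

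Next I would invoke the Sobolev--Poincar\'e inequality \ref{thm:sobolev_poincare_inequality} for $v$, whose hypotheses hold: $\{z\with v(z)\neq 0\}\subset\{z\with f(z)<t\}$ has the required smallness of $\|V\|$ and $\psi$ measure in whichever of the branches \eqref{item:relative_iso:some_bound}, \eqref{item:relative_iso:sharp_bound} corresponds to the present case, and the boundary set relevant to $v$ inside $\Clos(\spt\|V\|)\without\spt\|V\|$ is contained in $B$. This yields $\eqLpnorm{\|V\|\restrict\{z\with d(z)>r\}}{\beta}{v}\leq\Gamma\Lpnorm{\|V\|}{1}{|\ap Dv|}$, and by H\"older's inequality and the mass bound the right-hand side is controlled by $\Lpnorm{\|V\|}{2}{|\ap Dv|}$, hence by the previous step. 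Finally, since $v$ is a subsolution of a linear elliptic inequality of the type admitted by \ref{thm:local_maximum_estimates}, one application of that local maximum estimate — with the Lipschitz cut-off $d$ and scale $r$, fed the $\Lp{\beta}$ bound just obtained — upgrades this to $\eqLpnorm{\|V\|\restrict\{z\with d(z)\geq r\}}{\infty}{v}\leq\Gamma\big(\eqLpnorm{\|V\|\restrict\{z\with d(z)>r\}}{\beta}{v}+r^{1-\vdim/q}\nuqastnorm{q}{V}{T}\big)$, whence $v(z)$ is bounded by a single constant $\Gamma$ at every $z\in\spt\|V\|$ with $\dist(z,B)>r$. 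Unwinding $v=\big(\log((r^{-1}t+\gamma)/(r^{-1}f+\gamma))\big)^+$ and recalling the choice of $\gamma$ gives $t\leq e^{\Gamma}f(z)+r(e^{\Gamma}-1)\gamma\lesssim f(z)+r^{1-\vdim/q}\nuqastnorm{q}{V}{T}$ for such $z$; taking the infimum over $z$ gives the assertion, the two cases \eqref{item:weak_harnack_inequality:some_bound} and \eqref{item:weak_harnack_inequality:sharp_bound} differing only in which branch of \ref{thm:relative_iso} and \ref{thm:sobolev_poincare_inequality} (and which isoperimetric constants) is invoked.

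The main obstacle I anticipate is bookkeeping rather than a new idea. One must check carefully that $v$ — and the intermediate truncations of $w$ needed to justify the chain-rule manipulations and to pass to superlevel sets, for which \ref{lemma:slicing_pde} is the relevant tool — is an admissible test object in \ref{lemma:log_bound}, \ref{thm:sobolev_poincare_inequality} and \ref{thm:local_maximum_estimates}, in particular that the compactness of $\{z\in\spt\|V\|\with\text{$f(z)\leq s$ and $d(z)\geq\varepsilon$}\}$ implicit in the hypothesis on $B$ is inherited by all auxiliary functions and that these have suitably bounded support. Running alongside this is the need to pin down, with the correct powers of $r$, how the $\nuqastnorm{q}{V}{\cdot}$ of the right-hand sides produced by the logarithmic change of variables — which involve the a priori merely $\Lp{2}$-integrable cross term $\psi\ap Dw$ — relate back to $\nuqastnorm{q}{V}{T}$; this is the point at which the smallness thresholds on $M$ and $\alpha$ in the two cases, and the precise form of the error term, are forced.
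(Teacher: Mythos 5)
Your proposal follows essentially the same route as the paper: reduce to $\nu=1$ and a fixed scale, pass to $v = \big(\log(\lambda^{-1}t+\gamma)-\log(\lambda^{-1}f+\gamma)\big)^+$ (the paper's $h=g_t$), control $\ap Dv$ via the logarithmic estimate of \ref{lemma:log_bound}, feed that into \ref{thm:sobolev_poincare_inequality} to get an $\Lp{\beta}$ bound, upgrade to an $\Lp{\infty}$ bound with \ref{thm:local_maximum_estimates}, then exponentiate. The one cosmetic difference is that the paper shrinks the domain to $\{z\with f(z)<t\}$ rather than truncating $f$ — which avoids needing a supersolution analogue of \ref{lemma:subsolutions} — and carries out the verification of the admissibility/compactness hypotheses and the seminorm bookkeeping that you flag as the delicate points.
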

\begin{proof}
	The problem may be reduced to the case $\nu=1$ by replacing
	$A(z,S)$ and $T$ by $\nu^{-1} A (z,S)$ and $\nu^{-1}T$ and to the case
	$r=1$ via rescaling.

	Define $1 \leq s \leq \infty$ such that $1/s+1/q = 1$ and
	\begin{gather*}
		\beta = \infty \quad \text{if $\vdim = 1$}, \qquad \beta =
		\vdim/(\vdim-1) \quad \text{if $\vdim > 1$}.
	\end{gather*}
	Define, in case \eqref{item:weak_harnack_inequality:some_bound},
	\begin{gather*}
		\lambda = 1/3, \quad \Delta_1 = 8 \mu \sup \{ 1, \varepsilon
		\}, \quad \Delta_2 = \lambda^{-1} \varepsilon^{1/\vdim-1/q}
		\Gamma_{\ref{corollary:sobolev}} ( \vdim, \alpha, s ), \\
		\Delta_3 = \lambda^{1-\vdim} \Gamma_{\ref{thm:local_maximum_estimates}} ( \vdim,
		q, \alpha, \mu ).
	\end{gather*}
	and, in case \eqref{item:weak_harnack_inequality:sharp_bound},
	\begin{gather*}
		\begin{aligned}
			\lambda & = \big (1- (1-\delta)^{1/\adim}
			(1-\delta/2)^{-1/\adim} \big )/2, \\
			\Delta_1 & = 8 \mu \sup \big ( \{ 1 \} \cup
			{\textstyle\bigcup} \{ \unitmeasure{i} \with \adim
			\geq i \in \nat \} \big ), \\
			\Delta_2 & = \lambda^{-1} \sup \{
			\Gamma_{\ref{corollary:sobolev}} ( i, \varepsilon, s )
			\unitmeasure{i}^{1/i-1/q} \with \adim \geq i \in \nat,
			i < q \}, \\
			\Delta_3 & = \lambda^{1-\adim} \sup \{
			\Gamma_{\ref{thm:local_maximum_estimates}} ( i, q,
			\varepsilon, \mu ) \with \adim \geq i \in \nat, i < q
			\}.
		\end{aligned}
	\end{gather*}
	Moreover, let
	\begin{gather*}
		\Delta_4 = \varepsilon^{-1} \lambda^{-1} \Delta_1, \quad
		\Delta_5 = \lambda^{-1} ( 1 + \Delta_2 ), \\
		\Delta_6 = \sup \{ \Delta_4, \Delta_3 ( \Delta_4 + \Delta_5 )
		\}, \quad \Gamma = \exp ( \Delta_6 ).
	\end{gather*}
	Define $d_R : U \to \rel$ by
	\begin{gather*}
		d_R (z) = \dist ( z, B \cup ( \rel^\adim \without \oball{0}{R}
		) ) \quad \text{whenever $z \in U$ and $0 < R < \infty$}
	\end{gather*}
	and abbreviate $S = \spt \| V \|$ and $W ( \zeta ) =
	\classification{U}{z}{ \dist (z,B) > \zeta }$ for $0 \leq \zeta <
	\infty$.

	First, it will be shown that one can assume $f(z) < t$ for $z \in U$
	by replacing $U$ by $\classification{U}{z}{f(z) < t}$. In fact,
	defining $U' = \classification{U}{z}{f(z) < t}$, it is sufficient to
	observe
	\begin{gather*}
		\begin{aligned}
			& \eqclassification{\Clos(S) \without S}{a}{
			\liminf_{z \to a} (f|S)(z) < t} \\
			& \qquad =
			\eqclassification{ \Clos ( U' \cap S ) \without ( U'
			\cap S )}{a}{ \liminf_{z \to a} ( f | U' \cap S ) (z)
			< t }
		\end{aligned}
	\end{gather*}
	since $\Bdry U \cap \Clos U' \subset \Bdry U' \subset ( \Bdry U ) \cup
	\{ z \with f(z) = t \}$. Defining $Z (\zeta) = S \cap W (\zeta)$
	whenever $0 \leq \zeta < \infty$, one may also assume $\inf f \lIm Z (
	1 ) \rIm + \nuqastnorm{q}{V}{T} < \infty$, in particular $\| V \| ( Z
	( 1 ) ) > 0$.

	Next, the following two statements are proven.
	\begin{enumerate}
		\item \label{item:weak_harnack_inequality:p1}
		$\eqclassification{\Clos (Z(\zeta)) \without Z ( \zeta )}{a}{
		\liminf_{z \to a} ( f | Z ( \zeta ) ) (z) < t }$ is contained
		in $\classification{S}{z}{\dist (z,B) = \zeta}$ for $0 < \zeta
		< \infty$.
		\item \label{item:weak_harnack_inequality:p2}
		$\classification{Z( \zeta )}{z}{\text{$f(z) \leq \tau$ and $d_R(z)
		\geq \zeta + \eta$} }$ is compact whenever $0 \leq
		\zeta < \infty$, $0 \leq \tau < t$, $0 < R < \infty$ and
		$\eta > 0$.
	\end{enumerate}
	If $a \in \Clos (Z(\zeta)) \without Z ( \zeta )$ with $\liminf_{z \to
	a} ( f | Z ( \zeta ) ) (z) < t$ then there exist $z_i \in Z ( \zeta)$
	with $z_i \to a$ as $i \to \infty$ and $\lim_{i \to \infty} f (z_i) <
	t$ hence
	\begin{gather*}
		\dist (a,B) \geq \zeta, \quad a \notin B, \quad a \notin
		( \Clos S ) \without S, \quad a \in S, \quad \dist (a,B) \leq
		\zeta
	\end{gather*}
	and
	\eqref{item:weak_harnack_inequality:p1} follows. Since
	\begin{gather*}
		\begin{aligned}
			& \classification{Z(\zeta)}{z}{\text{$f(z) \leq \tau$
			and $d_R(z) \geq \zeta + \eta$}} \\
			& \qquad = \classification{S}{z}{\text{$f(z) \leq \tau$
			and $d_R ( z) \geq \zeta + \eta$} } \subset Z (
			\zeta + \eta/2 ),
		\end{aligned}
	\end{gather*}
	any point $a$ in the closure of the set in
	\eqref{item:weak_harnack_inequality:p2} belongs to $S$ either by
	\eqref{item:weak_harnack_inequality:p1} with $\zeta$ replaced by
	$\zeta + \eta /2$ if $a \notin Z ( \zeta + \eta/2 )$ or
	trivially else, hence \eqref{item:weak_harnack_inequality:p2} follows.

	Suppose $\nuqastnorm{q}{V}{T} < \gamma < \infty$. Define $H (z) = (
	\lambda^{-1} f(z) + \gamma )^{-1} |\ap Df(z)|$ whenever $z \in \dmn
	\ap Df$. Recalling \eqref{item:weak_harnack_inequality:p2} with $\zeta
	= 0$ and applying \ref{lemma:log_bound} for $0 < R < \infty$ with $M$,
	$t$, $d$, $\delta$, and $r$ replaced $\sup \{ M, 1 \}$, $f \lIm \spt
	\| V \| \rIm$, $d_R$, $\lambda$, and $s$ and letting $R$ tend to
	$\infty$, one obtains
	\begin{gather*}
		\tint{Z(\lambda)}{} H \ud \| V \| \leq M^{1/2} \big (
		\tint{Z(\lambda)}{} H^2 \ud \| V \| \big)^{1/2} \leq \Delta_1.
	\end{gather*}
	Let $E ( \tau ) = \classification{U}{z}{f(z) < \tau}$ for $0 \leq \tau
	< \infty$, define $g_\tau : U \to \rel$ by
	\begin{gather*}
		g_\tau(z) = \sup \{ \log ( \lambda^{-1} \tau + \gamma ) - \log
		( \lambda^{-1} f (z) + \gamma ), 0 \} \quad \text{for $z \in
		U$, $0 \leq \tau < \infty$},
	\end{gather*}
	note that $g_\tau$ is locally Lipschitzian and compute
	\begin{gather*}
		\begin{aligned}
			\ap D g_\tau (z) & = - \lambda^{-1} ( \lambda^{-1}
			f(z) + \gamma )^{-1} \ap Df (z) && \quad \text{if $z
			\in E(\tau)$}, \\
			\ap D g_\tau (z) & = 0 && \quad \text{if $z \in U
			\without E(\tau)$}
		\end{aligned}
	\end{gather*}
	for $\| V \|$ almost all $z$. Recalling
	\eqref{item:weak_harnack_inequality:p1} with $\zeta = \lambda$,
	applying \ref{thm:sobolev_poincare_inequality} with $U$, $M$, $f$ and
	$r$ replaced by $W ( \lambda )$, $(1-2\lambda)^{-\vdim} M$, $g_\tau |
	W ( \lambda )$, and $1-2\lambda$ and, in case
	\eqref{item:weak_harnack_inequality:p2} also $\delta$ replaced by
	$\delta/2$, yields
	\begin{gather*}
		\eqLpnorm{\| V \| \restrict Z ( 1-\lambda)}{\beta}{g_\tau}
		\leq \varepsilon^{-1}
		\eqLpnorm{\| V \| \restrict Z ( \lambda )}{1}{| \ap Dg_\tau|}
		\leq \Delta_4
	\end{gather*}
	whenever $0 \leq \tau < \infty$ and $\tau \leq t$. Letting $\tau \to
	t-$, one infers $t < \infty$ since $\| V \| ( Z (1) ) > 0$ and,
	abbreviating $h = g_t$,
	\begin{gather*}
		\eqLpnorm{\| V \| \restrict Z ( 1-\lambda)}{\beta}{h} \leq
		\Delta_4.
	\end{gather*}

	Next, the following assertion will be shown: \emph{If $\vdim>1$ then
	\begin{gather*}
		\tint{}{} \left < \ap D \theta (z) \otimes \ap D h (z), A(z,S)
		\right > \ud V (z,S) \leq \Delta_5 \Lpnorm{\| V\|}{s}{ | \ap D
		\theta |}
	\end{gather*}
	whenever $\theta \in \mathscr{D}^0 ( U )^+$}. For this purpose define
	$\xi : U \to \rel$ by
	\begin{gather*}
		\xi (z) = \theta (z) \lambda^{-1} ( \lambda^{-1} f(z) + \gamma
		)^{-1} \quad \text{for $z \in U$},
	\end{gather*}
	note that $\xi$ is a Lipschitzian function with compact support and
	compute
	\begin{gather*}
		\begin{aligned}
			\ap D \xi (z) & = \lambda^{-1} ( \lambda^{-1} f (z) +
			\gamma )^{-1} \ap D \theta (z) \\
			& \phantom{=} \ - \theta (z) \lambda^{-2} (
			\lambda^{-1} f(z) + \gamma )^{-2} \ap D f(z), \\
			\gamma | \ap D \xi (z) | & \leq \lambda^{-1} ( | \ap D
			\theta (z) | + \theta (z) | \ap D h (z) | ), \\
			\ap D \xi (z) \otimes \ap D f(z) & =
			- \ap D \theta (z) \otimes \ap Dh (z) - \theta(z) \ap
			D h(z) \otimes \ap D h (z),
		\end{aligned} \\
		\begin{aligned}
			& \left < \ap D \theta (z) \otimes \ap D h(z), A(z,S)
			\right > \\
			& \qquad \leq - \left < \ap D \xi (z) \otimes \ap D
			f(z), A(z,S) \right > - \theta (z) | \ap D h (z) |^2,
		\end{aligned}
	\end{gather*}
	for $V$ almost all $(z,S)$. Then, noting
	\begin{gather*}
		(2-s)/s = 1/s -1/q > 1/s - 1/\vdim, \quad s < 2 \leq \vdim,
	\end{gather*}
	one estimate by use of H\"older's
	inequality and \ref{corollary:sobolev}
	\begin{gather*}
		\begin{aligned}
			& \Lpnorm{\| V \|}{s}{ \theta | \ap Dh|} \leq
			\Lpnorm{\| V \|}{s/(2-s)}{\theta}^{1/2} \Lpnorm{\| V
			\|}{2}{\theta^{1/2} |\ap Dh|} \\
			& \qquad \leq \lambda^{-1} \Lpnorm{\| V
			\|}{s/(2-s)}{\theta} + \lambda \| V \| ( \theta | \ap
			D h |^2 ) \\
			& \qquad \leq \Delta_2 \Lpnorm{\| V \|}{s} {|\ap
			D\theta|} + \lambda \| V \| ( \theta | \ap Dh|^2 ).
		\end{aligned}
	\end{gather*}
	Now, the assertion readily follows noting \ref{corollary:nuq_density}.
	Recalling \eqref{item:weak_harnack_inequality:p2} with $\zeta =
	1-\lambda$, the assertion implies using
	\ref{thm:local_maximum_estimates} with $U$, $r$, $f$, $d(z)$ and
	$\delta$ replaced by $W( 1-\lambda )$, $\beta$, $h| W( 1-\lambda)$,
	$\sup \{ d_R (z) - ( 1-\lambda ), 0 \}$ and $\lambda$ for $0 < R <
	\infty$ and letting $R$ tend to $\infty$ that
	\begin{gather*}
		\eqLpnorm{\| V \| \restrict Z( 1)}{\infty}{h} \leq \Delta_3
		\big ( \eqLpnorm{\| V \| \restrict Z ( 1-\lambda )}{\beta}{h}
		+ \Delta_5 \big ) \quad \text{if $\vdim>1$}.
	\end{gather*}
	Combining this estimate with the preceding paragraph yields
	\begin{gather*}
		\eqLpnorm{\| V \| \restrict Z ( 1 )}{\infty} {h} \leq
		\Delta_6, \\
		(\lambda^{-1}t + \gamma )/(\lambda^{-1} f(z) + \gamma ) \leq
		\Gamma, \quad t \leq \Gamma \big ( f (z) + \lambda
		\nuqastnorm{q}{V}{T} \big )
	\end{gather*}
	whenever $z \in Z ( 1 )$ and the conclusion follows.
\end{proof}
\begin{corollary} \label{corollary:strong_maximum_principle}
	Suppose $\vdim$, $\adim$, $p$, $U$, $V$, $\psi$, and ``$ap$'' are as
	in \ref{miniremark:situation_general}, $p = \vdim$, $f : U \to \rel$
	is a locally Lipschitzian function, $\psi ( \{ z \} ) <
	\isoperimetric{\vdim}^{-1}$ for $z \in U$,
	\begin{gather*}
		a \in \spt \| V \|, \qquad f(z) \geq f(a) \quad \text{for $z
		\in \spt \| V \|$}, \\
		W = V \restrict \{ (z,S) \with f(z) = f(a) \},
	\end{gather*}
	$\nu$, $\mu$ and $A$ are related to $\vdim$, $\adim$ and $U$ as in
	\ref{miniremark:bilinear_form}, $A$ is $V$ measurable, and
	\begin{gather*}
		\tint{}{} \left < \ap D \theta (z) \otimes \ap D f (z), A
		(z,S) \right > \ud V (z,S) \geq 0 \quad \text{whenever $\theta
		\in \mathscr{D}^0 ( U )^+$}.
	\end{gather*}

	Then
	\begin{gather*}
		a \in \spt \| W \| \quad \text{and} \quad \spt \| W \| \cap
		\spt \| V-W \| = \emptyset.
	\end{gather*}
\end{corollary}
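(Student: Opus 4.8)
The plan is first to normalise. Replace $f$ by $(f-f(a))^{+}$; this is again locally Lipschitzian, it agrees with $f-f(a)$ on $\spt\|V\|$ (so the set $\{z:f(z)=f(a)\}$ underlying $W$ is unchanged up to a $\|V\|$ null set, as is the hypothesis $f\geq f(a)$ on $\spt\|V\|$), and the supersolution inequality persists because the $(\|V\|,\vdim)$ approximate differential of $(f-f(a))^{+}$ agrees $\|V\|$ almost everywhere with that of $f$ (using $f-f(a)\geq 0$ $\|V\|$ almost everywhere and that the approximate differential of a locally Lipschitzian function vanishes $\|V\|$ almost everywhere on the set where it attains the value $0$, cf.\ \ref{example:lipschitzian} and \ref{thm:addition}\,\eqref{item:addition:zero}). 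Write $u$ for the new function, so $u\geq 0$ on $\spt\|V\|$, $u(a)=0$, $\|W\|=\|V\|\restrict\{u=0\}$ and $\|V-W\|=\|V\|\restrict\{u>0\}$. Since $p=\vdim$, $\|\delta V\|$ is absolutely continuous with respect to $\|V\|$, so $\density^{\ast\vdim}_{\ast}(\|V\|,z)\geq 1$ for every $z\in\spt\|V\|$ by \ref{remark:kuwert_schaetzle}; in particular $\spt\|W\|\subset\{z\in\spt\|V\|:u(z)=0\}$ by continuity of $u$.

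Next I would show $\spt\|W\|=\{z\in\spt\|V\|:u(z)=0\}$, which in particular yields $a\in\spt\|W\|$. Fix $z_{0}\in\spt\|V\|$ with $u(z_{0})=0$ and a small $\rho>0$ with $\cball{z_{0}}{\rho}\subset U$, and put $V'=V\restrict(\oball{z_{0}}{\rho}\times\grass{\adim}{\vdim})$; since every test function compactly supported in $\oball{z_{0}}{\rho}$ sees the same first variation, $V'$ again satisfies the general, density and mean curvature hypotheses with $p=\vdim$ and $\psi'=\psi\restrict\oball{z_{0}}{\rho}$, and $\spt\|V'\|\ni z_{0}$. Suppose, for contradiction, $\|V\|(\oball{z_{0}}{\rho}\cap\{u=0\})=0$. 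As $u\geq 0$, the sets $\{u<t\}\cap\oball{z_{0}}{\rho}$ decrease as $t\downarrow 0$ to the $\|V\|$ null (hence, since $\psi\ll\|V\|$, also $\psi$ null) set $\{u=0\}\cap\oball{z_{0}}{\rho}$, so $\|V\|(\oball{z_{0}}{\rho}\cap\{u<t\})\to 0$ and $\psi(\oball{z_{0}}{\rho}\cap\{u<t\})\to 0$. Apply \ref{thm:weak_harnack_inequality} to $V'$ on $\oball{z_{0}}{\rho}$ with $q=\sup\{2,\vdim+1\}$, $B=\Bdry\oball{z_{0}}{\rho}$, $T=0$, $r=\rho/2$, $\alpha=\tfrac12\isoperimetric{\vdim}^{-1}$ and alternative \eqref{item:weak_harnack_inequality:some_bound}, choosing $t>0$ so small that the two smallness hypotheses on $\{u<t\}$ hold; the boundary condition is automatic because $\spt\|V'\|$ is relatively closed in the ball. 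Since $z_{0}\in\spt\|V'\|\cap\oball{z_{0}}{\rho/2}$ and $u(z_{0})=0$, the infimum on the right of the conclusion is $0$, so $t\leq 0$, a contradiction. Hence $\|V\|(\oball{z_{0}}{\rho}\cap\{u=0\})>0$ for all such $\rho$, i.e.\ $z_{0}\in\spt\|W\|$; taking $z_{0}=a$ gives $a\in\spt\|W\|$.

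For the disjointness $\spt\|W\|\cap\spt\|V-W\|=\varnothing$ I would argue that the set $\spt\|W\|=\{z\in\spt\|V\|:u(z)=0\}$ is relatively open in $\spt\|V\|$; this is equivalent to the assertion, for a relatively open neighbourhood of a zero of $u$ in $\spt\|V\|$ consists of zeros, whence $u=0$ $\|V\|$ almost everywhere there, so the point lies outside $\spt(\|V\|\restrict\{u>0\})=\spt\|V-W\|$. By \ref{corollary:conn_structure}\,\eqref{item:conn_structure:piece}\,\eqref{item:conn_structure:open} the connected components $C$ of $\spt\|V\|$ are relatively open and satisfy $\boundary{V}{C}=0$, so each $V_{C}=V\restrict(C\times\grass{\adim}{\vdim})$ again satisfies the general, density and mean curvature hypotheses with $p=\vdim$ and $\spt\|V_{C}\|=C$ (by \ref{lemma:piece}), and $u$ restricted to a neighbourhood of $C$ is a nonnegative supersolution for $V_{C}$. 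Thus it suffices to prove that on each component $C$ the relatively closed set $\{u=0\}\cap C$ is either empty or all of $C$, after which $\{u=0\}\cap\spt\|V\|$ is a union of relatively open components. So the problem reduces to: a nonnegative supersolution $u$ on a connected support $C$ that vanishes at one point vanishes $\|V_{C}\|$ almost everywhere. The plan here is to apply \ref{thm:weak_harnack_inequality} once more, now using the sharp isoperimetric alternative \eqref{item:weak_harnack_inequality:sharp_bound}, at $\|V_{C}\|$ typical points of $\{u=0\}$ and after a homothetic rescaling (legitimate precisely because $\vdim$ is the scale critical exponent, so all hypotheses and the measure $\psi$ are preserved), in order to arrange that $\{u<t\}$ occupies less than the fraction $(1-\delta)\unitmeasure{\vdim}$ of the reference ball while $\psi$ of it stays below the required threshold; the conclusion $t\leq\Gamma\inf u\lIm\cdots\rIm=0$ then forces $u$ to vanish in a neighbourhood, giving relative openness.

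The main obstacle is exactly this last step. Near a point of the zero set the sublevel sets $\{u<t\}$ do not become small — their $\|V\|$ mass is bounded below by that of a full $\|V\|$ ball — so the ``$M$ small'' alternative cannot be used there, and one must instead exploit the sharp constant $(1-\delta)\unitmeasure{\vdim}$ in \eqref{item:weak_harnack_inequality:sharp_bound} together with a very careful choice of centre and scale (and possibly an additional slicing argument in the spirit of \ref{lemma:slicing_pde} to extract a favourable level) so that the density ratio of the relevant sublevel set at that scale falls just below $(1-\delta)\unitmeasure{\vdim}$. Getting this balance right, using the precise constants built into \ref{thm:weak_harnack_inequality} and \ref{thm:relative_iso}, is the crux of the proof.
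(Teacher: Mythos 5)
Your first step (that $a\in\spt\|W\|$, or more generally $\spt\|W\|=\{z\in\spt\|V\|:u(z)=0\}$) is fine, and runs along the same lines as the paper's: at a zero of $u$ one makes the mass of a sublevel set $\{u<t\}$ small by choosing $t$ small and then invokes the weak Harnack inequality \ref{thm:weak_harnack_inequality} to force $t\le 0$. The paper formulates this slightly differently (via a level $t(s)$ chosen to make the smallness exact, obtaining a quantitative lower mass bound $\measureball{\|W\|}{\oball{b}{3s}}\ge\Delta s^\vdim$), but the two arguments amount to the same thing.

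The disjointness step, however, has a genuine gap, which you yourself flag. Your reduction to connected components of $\spt\|V\|$ does not actually reduce the problem: on a connected component $C$ containing a zero $z_0$ of $u$ you still need to prove the very statement you are trying to prove, namely that the relatively closed zero set $\{u=0\}\cap C$ is also relatively open. And the Harnack-with-sharp-constant plan does not close this. Near a point $z_0$ of the zero set that is not already known to have a $\|V\|$-full neighbourhood of zeros, the sublevel set $\{u<t\}\cap\oball{z_0}{r}$ has mass comparable to $\|V\|(\oball{z_0}{r})$ for every $t>0$; if $\density^\vdim(\|V\|,z_0)=1$ this is $(1+o(1))\unitmeasure{\vdim}r^\vdim$, which never falls below the threshold $(1-\delta)\unitmeasure{\vdim}r^\vdim$ required by \ref{thm:weak_harnack_inequality}\,\eqref{item:weak_harnack_inequality:sharp_bound}, and at points of higher density it is worse. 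Neither alternative in the Harnack theorem becomes applicable, and rescaling does not help since the offending ratio is scale invariant. So the ``balance'' you hope to achieve cannot be achieved.

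The missing idea in the paper's proof is to split the first variation along the zero set $B=\{z:f(z)=0\}$, i.e. to show that $\delta X=(\delta V)\restrict U\without B$ where $X=V-W$, equivalently $\boundary{V}{B}=0$. This is accomplished with \ref{lemma:coarea} to select a sequence of levels $u_i\in(2^{-i-1},2^{-i})$ controlled in measure, and with the logarithmic energy bound \ref{lemma:log_bound} (applied with $q=1$) to obtain $\int_{K\without B}f^{-1}|\ap Df|\ud\|V\|<\infty$, which makes the distributional boundary of $E(u_i)$ tend to $0$. Once the first variation has been split, $X$ inherits all the structural hypotheses, so one simply applies the support-membership argument (your step one, the paper's first paragraph) to $X$ at any would-be point $b\in\spt\|W\|\cap\spt\|X\|$: it would force $b\in\spt(\|X\|\restrict B)$, but $\|X\|(B)=0$, a contradiction. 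Without some surrogate for this splitting step, your outline does not yield a proof.
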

\begin{proof}
	Assume $f(a)=0$ and $f$ is nonnegative. Define
	\begin{gather*}
		B = \classification{U}{z} {f(z)=0}, \quad X = V-W, \\
		E(t) = \classification{U}{z}{f(z)<t}, \quad W_t = V \restrict
		E(t) \times \grass{\adim}{\vdim}, \\
		T(t) = ( \delta V ) \restrict E(t) -  \delta W_t
	\end{gather*}
	whenever $0 < t < \infty$. Note
	\begin{gather*}
		W_t(g) \to W(g) \quad \text{as $t \to 0+$ whenever $g \in
		\mathscr{K} ( U \times \grass{\adim}{\vdim} )$}.
	\end{gather*}

	First, \emph{it is proven $\spt \| W \| = B \cap \spt \| V \|$}.
	Clearly, $\spt \| W \| \subset B \cap \spt \| V \|$.  To prove the
	opposite inclusion suppose $b \in B \cap \spt \| V \|$.  There exist
	$0 < r < \infty$ and $0 \leq \alpha < \isoperimetric{\vdim}^{-1}$ such
	that
	\begin{gather*}
		\cball{b}{3r} \subset U, \quad \psi ( \oball{b}{3r} )^{1/\vdim}
		\leq \alpha.
	\end{gather*}
	For $0 < s \leq r$ define with $\Delta =
	\Gamma_{\ref{thm:relative_iso}} ( \vdim, \alpha
	)^{-1} 2^{-\vdim}$
	\begin{gather*}
		t(s) = \sup \{ u \with \| V \| ( \oball{b}{3s} \cap E(u) )
		\leq \Delta s^\vdim \}
	\end{gather*}
	hence $t(s) \geq 0$ and
	\begin{gather*}
		\| V \| ( \oball{b}{3s} \cap \{ z \with f(z) \leq t(s) \} )
		\geq \Delta s^\vdim.
	\end{gather*}
	Since $t(s) = 0$ for $0 < s \leq r$ by
	\ref{thm:weak_harnack_inequality}, one obtains
	\begin{gather*}
		\measureball{\| W \|}{\oball{b}{3s}} \geq \Delta s^\vdim \quad
		\text{for $0 < s \leq r$}, \qquad b \in \spt \| W \|
	\end{gather*}
	as claimed.

	Next, \emph{it is shown $\delta X = ( \delta V ) \restrict U \without
	B$}. For this purpose suppose $K$ is a compact subset of $U$, $0 <
	\delta < \dist ( K, \rel^\adim \without U )$ and $Z =
	\classification{U}{z}{\dist (z,K) < \delta }$. Using
	\ref{lemma:coarea}, one chooses numbers $u_i$ such that $2^{-i-1} <
	u_i < 2^{-i}$ and
	\begin{gather*}
		\| T (u_i) \| (K) \leq 2^{i+1}
		\tint{\classification{K}{z}{2^{-i-1} < f(z) < 2^{-i}}}{} | \ap
		Df | \ud \| V \|.
	\end{gather*}
	Since \ref{lemma:log_bound} with $U$, $M$, $d(z)$, $q$ and $r$
	replaced by $Z$, $\| V \|(Z)$, $\dist (z,\rel^\adim \without Z )$, $1$
	and $1$ implies $\tint{K \without B}{} f^{-1} | \ap Df| \ud \| V \| <
	\infty$, one infers
	\begin{gather*}
		\| T ( u_i ) \| (K) \leq 2 \tint{K \cap E (2^{-i}) \without
		B}{} f^{-1} | \ap Df | \ud \| V \| \to 0 \quad \text{as $i \to
		\infty$}
	\end{gather*}
	hence $\delta W = ( \delta V ) \restrict B$ and the claim follows. As
	$\ap Df(z) = 0$ for $\| V \|$ almost all $z \in B$, one notes
	\begin{gather*}
		\tint{}{} \left < \ap D \theta (z) \otimes \ap Df(z), A(z,S)
		\right > \ud X (z,S) \geq 0 \quad \text{whenever $\theta \in
		\mathscr{D}^0 ( U )^+$}.
	\end{gather*}
	Therefore if there would exist $b \in \spt \| V \| \cap \spt \| V - W
	\|$ the hypotheses would be satisfied with $V$ and $a$ replaced by $X$
	and $b$ implying by the assertion of the preceding paragraph that $b
	\in \spt \| X \| \restrict B$ in contradiction to $\| X \| (B) = 0$.
\end{proof}
\part{The generalised Gauss map} \label{part:geom}
In this part the validity of an area formula for the generalised Gauss map of
integral varifolds whose mean curvature is integrable to the critical power
($2 \leq p = \vdim = \adim-1$ in \ref{miniremark:situation_general}) is
established, see \ref{thm:area_formula_codim1}. As corollary one obtains a
sharp lower bound on the mean curvature integral with critical power for
integral varifolds, see \ref{thm:lowerbound_hm}.

In view of the author's second order rectifiability results in
\cite[4.8]{snulmenn.c2} this boils down to establishing a Lusin property for
the generalised Gauss map.
\section{Points of finite lower density} \label{sec:finite_density}
In this section the Harnack inequality of Section \ref{sec:harnack} is
utilised to establish the necessary Lusin property for points whose lower
density is finite, see \ref{lemma:zero_sets_to_zero_sets}.
\begin{lemma} \label{lemma:radius_from_calculus}
	Suppose $0 < r < \infty$, $0 < \vdim < \infty$, $I = \{ s \with 0 < s
	< r \}$, $j \in \nat$, $f : I \to \rel$ is a nonnegative function,
	$f_i : I \to \rel$ are nondecreasing functions for $i \in \{ 1,
	\ldots, j \}$ with
	\begin{gather*}
		f(s) \leq f_i (s) \quad \text{whenever $s \in I$},
	\end{gather*}
	$0 < \lambda \leq 1$ and $\liminf_{s \to 0+} s^{-\vdim} f(s) > 0$.

	Then there exists $s \in I$ with
	\begin{gather*}
		f_i ( s ) \leq 2 \lambda^{-j\vdim} f_i (\lambda s) \quad
		\text{whenever $i \in \{ 1, \ldots, j \}$}.
	\end{gather*}
\end{lemma}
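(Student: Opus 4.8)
The plan is to argue by contradiction, extracting geometric decay of $f$ near $0$ from the failure of the conclusion. If $\lambda = 1$ the statement is immediate, since $f_i \geq f \geq 0$ gives $f_i(s) \leq 2 f_i(\lambda s)$ for every $s \in I$; so I may assume $0 < \lambda < 1$. Suppose, contrary to the assertion, that for every $s \in I$ there is an index $i \in \{1,\dots,j\}$ with $f_i(s) > 2\lambda^{-j\vdim} f_i(\lambda s)$, equivalently $f_i(\lambda s) < \tfrac12 \lambda^{j\vdim} f_i(s)$.

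First I would fix a base point $s_0 \in I$ and form the geometric sequence $s_k = \lambda^k s_0$, $k \in \nat \cup \{0\}$; since $0 < \lambda < 1$ and $s_0 < r$, each $s_k$ lies in $I$, $s_{k+1} = \lambda s_k \leq s_k$, and $s_k \to 0$. For each $k$ choose $i_k \in \{1,\dots,j\}$ with $f_{i_k}(s_{k+1}) < \tfrac12\lambda^{j\vdim} f_{i_k}(s_k)$. Because every $f_i$ is nondecreasing, $f_i(s_{k+1}) \leq f_i(s_k)$ at every step, with the additional factor $\tfrac12\lambda^{j\vdim}$ at those steps $k$ for which $i_k = i$; telescoping and using $\tfrac12\lambda^{j\vdim} \leq 1$ yields, for every $K \in \nat$ and every $i$,
\[
	f_i(s_K) \leq f_i(s_0)\,\big(\tfrac12\lambda^{j\vdim}\big)^{N_i(K)}, \qquad N_i(K) = \card\{k \with 0 \leq k < K,\ i_k = i\}.
\]
Since $\sum_{i=1}^j N_i(K) = K$, by the pigeonhole principle there is $i^\ast = i^\ast(K)$ with $N_{i^\ast}(K) \geq K/j$. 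Fixing a finite upper bound $C$ for $f_1(s_0),\dots,f_j(s_0)$ and using $0 < \tfrac12\lambda^{j\vdim} < 1$, it follows that
\[
	f(s_K) \leq f_{i^\ast}(s_K) \leq C\,\big(\tfrac12\lambda^{j\vdim}\big)^{K/j} = C\,2^{-K/j}\,\lambda^{\vdim K} = \big(C/s_0^\vdim\big)\,2^{-K/j}\,s_K^\vdim,
\]
so $s_K^{-\vdim} f(s_K) \leq (C/s_0^\vdim)\,2^{-K/j} \to 0$ as $K \to \infty$. As $s_K \to 0$, this contradicts $\liminf_{s\to 0+} s^{-\vdim} f(s) > 0$, completing the proof.

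The argument is essentially bookkeeping, and the one point to get right — the ``hard'' part, such as it is — is the balance in the last display: each of the $\geq K/j$ bad steps for $i^\ast$ contributes a factor $\lambda^{j\vdim}$, so after $K$ steps one has collected at least $\lambda^{\vdim K}$, which exactly cancels the homothety factor in $s_K^\vdim = \lambda^{\vdim K} s_0^\vdim$; the residual $2^{-1}$ per bad step is what produces honest decay of the density ratio. I would also record at the outset that $f_i \geq f \geq 0$, so that $C$ can be taken finite and nonnegative and the monotonicity estimates are valid, and note that letting $i^\ast = i^\ast(K)$ depend on $K$ causes no trouble since $C$ bounds all the values $f_i(s_0)$ uniformly.
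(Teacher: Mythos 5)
Your proof is correct, and it is essentially the same argument as the paper's, organized as a contradiction rather than directly: the paper fixes $t$ with $f(s) \geq \varepsilon s^\vdim$ for $s \leq t$ and $f_i(t) \leq M$, chooses $k$ with $M 2^{-k} \leq \varepsilon \lambda^{j\vdim} t^\vdim$, and then argues by pigeonhole over the scales $t_l = \lambda^l t$, $l = 1, \ldots, jk+1$, that some $l$ is simultaneously ``good'' for every $i$, whereas you assume every scale is ``bad'' for some $i$ and telescope to drive $s_K^{-\vdim} f(s_K) \to 0$. The key balancing of $\lambda^{j\vdim}$ per bad step against the homothety factor $\lambda^\vdim$, with the residual $\tfrac12$ producing honest decay, is the same in both.
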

\begin{proof}
	Abbreviate $\Gamma = 2 \lambda^{-j\vdim}$, choose $\varepsilon
	> 0$, $t \in I$ and $0 \leq M < \infty$ such that
	\begin{gather*}
		f(s) \geq \varepsilon s^\vdim \quad \text{for $0 < s \leq t$},
		\qquad f_i (t) \leq M \quad \text{for $i \in \{ 1, \ldots, j
		\}$}.
	\end{gather*}
	Define $t_l = \lambda^l t$ for $l \in \nat \cup \{ 0 \}$, choose $k
	\in \nat$ such that $M 2^{-k} \leq \varepsilon \lambda^{j\vdim}
	t^\vdim$ hence
	\begin{gather*}
		\Gamma^{-k} M \leq \varepsilon \lambda^{(jk+1)\vdim} t^\vdim.
	\end{gather*}
	Denote by $P(i)$ the set of all positive integers $l$ with
	\begin{gather*}
		l \leq jk+1 \quad \text{and} \quad f_i ( t_{l-1} ) \geq \Gamma
		f_i (t_l)
	\end{gather*}
	whenever $i \in \{ 1, \ldots, j \}$. Since $f_i$ is nondecreasing one
	estimates
	\begin{gather*}
		\varepsilon \lambda^{(jk+1)\vdim} t^\vdim = \varepsilon
		(t_{jk+1})^\vdim \leq f_i (t_{jk+1}) \leq \Gamma^{-\card P(i)}
		f_i (t) \leq \Gamma^{-\card P(i)} M
	\end{gather*}
	hence $\card P(i) \leq k$ for $i \in \{ 1, \ldots, j \}$. Therefore
	there exists a positive integer $l$ with $l \leq jk+1$ and $l \notin
	P(i)$ for $i \in \{ 1, \ldots, j \}$ and one may take $s = t_{l-1}$.
\end{proof}
\begin{definition} [cf.~\protect{\cite[5.4.12]{MR41:1976}}]
	Suppose $\vdim, \adim \in \nat$, $\vdim < \adim$, $M$ is an $\vdim$
	dimensional submanifold of $\rel^\adim$ of class $2$ and $U$ is an
	open subset of $\rel^\adim$.

	Then $f_1, \ldots, f_n$ are called a \emph{Cartan frame for $M$ in
	$U$} if and only if $f_i : U \to \rel$ are of class $1$ for $i \in
	\{1,\ldots,\adim\}$ and
	\begin{gather*}
		\text{$f_1(z), \ldots, f_\adim(z)$ are orthogonal for $z \in
		U$}, \\
		f_i (z) \in \Tan ( M, z ) \quad \text{for $z \in U \cap M$ and
		$i \leq \vdim$}, \\
		f_i(z) \in \Nor ( M, z ) \quad \text{for $z \in U \cap M$ and
		$i > \vdim$}.
	\end{gather*}
	Moreover, such a Cartan frame is called \emph{osculating to $M$ at
	$a$} if and only if $a \in U \cap M$ and
	\begin{gather*}
		\begin{aligned}
			& \left < v, Df_i (a) \right > \bullet f_j (a) = 0
			\quad \text{whenever $v \in \rel^\adim$} \\
			& \text{and $\{i,j\} \subset \{ 1, \ldots, \vdim \}$
			or $\{ i,j \} \subset \{ \vdim+1, \ldots, \adim \}$}.
		\end{aligned}
	\end{gather*}
\end{definition}
\begin{lemma} \label{lemma:unit_sphere_bundle}
	Suppose $\vdim, \adim \in \nat$, $\vdim < \adim$, $M$ is an $\vdim$
	dimensional submanifold of $\rel^\adim$ of class $2$, $N =
	\eqclassification{\rel^\adim \times \mathbf{O}^\ast ( \adim, 1 )
	}{(a,p)}{\text{$a \in M$ and $\Tan (M,a) \subset \ker p$}}$,
	\begin{gather*}
		\begin{aligned}
			X(a,p) & = \{ (v, -p \circ \mathbf{b} ( M; a )
			(v,\cdot) \circ \project{\Tan (M,a)} ) \with v \in
			\Tan ( M, a ) \}, \\
			Y(a,p) & = \eqclassification{\rel^\adim \times \Hom (
			\rel^\adim, \rel)}{(0,h)}{\text{$\Tan (M,a) \subset
			\ker h$ and $h \bullet p = 0$}}
		\end{aligned}
	\end{gather*}
	for $(a,p) \in N$ and $Q : \rel^\adim \times \Hom ( \rel^\adim, \rel )
	\to \Hom ( \rel^\adim, \rel )$ satisfies
	\begin{gather*}
		Q (z,h) = h \quad \text{for $(z,h) \in \rel^\adim \times \Hom
		( \rel^\adim, \rel)$}.
	\end{gather*}

	Then the following three statements hold:
	\begin{enumerate}
		\item \label{item:unit_sphere_bundle:manifold} $N$ is an
		$\adim-1$ dimensional submanifold of $\rel^\adim \times \Hom (
		\rel^\adim, \rel )$ of class $1$.
		\item \label{item:unit_sphere_bundle:tangent_space} $X(a,p)$
		and $Y(a,p)$ are mutually orthogonal vector subspaces of
		$\rel^\adim \times \Hom ( \rel^\adim, \rel )$ of dimension
		$\vdim$ and $\codim-1$ respectively and
		\begin{gather*}
			\Tan ( N, (a,p) ) = X (a,p) \oplus Y (a,p) \quad
			\text{for $(a,p) \in N$}.
		\end{gather*}
		\item \label{item:unit_sphere_bundle:fubini} If $A$ is
		an $\mathscr{H}^{\adim-1}$ measurable subset of $N$ then
		\begin{gather*}
			\begin{aligned}
				& \tint{\mathbf{O}^\ast ( \adim,1 )}{}
				\mathscr{H}^0 ( A \cap Q^{-1} \lIm \{ p \}
				\rIm ) \ud \mathscr{H}^{\adim-1} p \\
				& \qquad = \tint{A}{} \| \Lambda_{\adim-1} ( Q
				| \Tan ( N, (a,p) ) \| \ud
				\mathscr{H}^{\adim-1} p \\
				& \qquad = \tint{M}{}
				\tint{\classification{\mathbf{O}^\ast ( \adim,
				1 )}{p}{(a,p) \in A}}{} | \discr ( p \circ
				\mathbf{b} ( M ; a ) ) | \ud
				\mathscr{H}^{\codim-1} p \ud \mathscr{H}^\vdim
				a.
			\end{aligned}
		\end{gather*}
	\end{enumerate}
\end{lemma}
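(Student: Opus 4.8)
The three assertions are of increasingly geometric character, and my plan is to reduce everything to standard facts about submanifolds of class $1$ or $2$ together with the coarea formula in \cite[3.2.22]{MR41:1976}. For \eqref{item:unit_sphere_bundle:manifold} the plan is to identify $N$ as the total space of the unit normal bundle of $M$: since $M$ is of class $2$, the assignment $a \mapsto \Nor(M,a)$ is a $\codim$ dimensional subbundle of $M \times \rel^\adim$ of class $1$, hence $\{(a,u) \with a \in M, u \in \Nor(M,a), |u|=1\}$ is an $\adim-1$ dimensional submanifold of class $1$; identifying $u$ with the covector $p = u \bullet (\cdot)$ via the inner product on $\rel^\adim$ gives $N$. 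This is a routine application of the submanifold chart description and the implicit function theorem, working locally in a neighbourhood where a Cartan frame $f_1,\dots,f_\adim$ for $M$ exists so that $p$ is a unit combination of $f_{\vdim+1},\dots,f_\adim$.

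For \eqref{item:unit_sphere_bundle:tangent_space} I would compute $\Tan(N,(a,p))$ by differentiating curves in $N$. Given a curve $t \mapsto (a(t),p(t))$ in $N$ with $(a(0),p(0))=(a,p)$, the constraint $\Tan(M,a(t)) \subset \ker p(t)$ differentiates to an equation relating $p'(0)$ to the second fundamental form $\mathbf{b}(M;a)$ applied to $a'(0) \in \Tan(M,a)$; projecting onto $\Tan(M,a)$ and $\Nor(M,a)$ separately and using that $|p(t)|$ is constant produces exactly the decomposition: the "horizontal" directions, where $a'(0)$ ranges over $\Tan(M,a)$ and $p'(0)$ is forced to be $-p \circ \mathbf{b}(M;a)(a'(0),\cdot)\circ \project{\Tan(M,a)}$, give $X(a,p)$; the "vertical" directions, where $a'(0)=0$ and $p'(0)$ is an arbitrary covector annihilating $\Tan(M,a)$ and orthogonal to $p$ (the tangent space to the unit sphere in $\Nor(M,a)^\ast$), give $Y(a,p)$. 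Orthogonality of $X(a,p)$ and $Y(a,p)$ in $\rel^\adim \times \Hom(\rel^\adim,\rel)$ is immediate since the first components of $Y(a,p)$ vanish while the second component of a vector in $X(a,p)$ lies in $\Hom(\rel^\adim,\rel)$ built from $\Tan(M,a)$-directions and the second component of a vector in $Y(a,p)$ annihilates $\Tan(M,a)$; a short bilinear-form check confirms the pairing vanishes. Dimension count: $\dim X(a,p)=\vdim$ (parametrised freely by $v \in \Tan(M,a)$) and $\dim Y(a,p) = \dim\{h \with \Tan(M,a)\subset\ker h, h\bullet p=0\} = \codim - 1$, summing to $\adim-1 = \dim N$, so the direct sum exhausts $\Tan(N,(a,p))$.

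For \eqref{item:unit_sphere_bundle:fubini} the first equality is the coarea formula \cite[3.2.22]{MR41:1976} applied to the Lipschitzian (indeed class $1$) map $Q|N : N \to \mathbf{O}^\ast(\adim,1)$, noting $\mathbf{O}^\ast(\adim,1)$ is a $\codim$ dimensional submanifold so the fibres are $\vdim$ dimensional and $\mathscr{H}^0$ counts the zero-dimensional slices; the coarea factor is $\|\Lambda_{\adim-1}(Q|\Tan(N,(a,p)))\|$. The second equality is where the geometry enters: using the explicit basis of $\Tan(N,(a,p)) = X(a,p)\oplus Y(a,p)$, the map $Q$ (projection onto the covector component) annihilates nothing on $Y(a,p)$ (it is the identity there up to the inclusion) and on $X(a,p)$ it sends $(v,-p\circ\mathbf{b}(M;a)(v,\cdot)\circ\project{\Tan(M,a)})$ to its second coordinate. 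Choosing orthonormal bases $e_1,\dots,e_\vdim$ of $\Tan(M,a)$, the Jacobian of $Q|X(a,p)$ in the $e_i$ directions is the determinant of the bilinear form $(v,w)\mapsto -\langle \mathbf{b}(M;a)(v,w), p^\sharp\rangle$ on $\Tan(M,a)$, i.e.\ $\pm\discr(p\circ\mathbf{b}(M;a))$ in the notation of the paper, while $Q|Y(a,p)$ contributes a factor $1$ (the vertical directions map isometrically to the tangent sphere, which is built into $\mathscr{H}^{\adim-1}$ via Fubini on the bundle). Then a Fubini-type disintegration of $\mathscr{H}^{\adim-1}\restrict N$ over $M$ — valid because $N$ is locally a product of an $M$-chart with a sphere bundle and the fibre measure is the $(\codim-1)$-sphere measure on $\Nor(M,a)$, which matches $\mathscr{H}^{\codim-1}$ on $\classification{\mathbf{O}^\ast(\adim,1)}{p}{(a,p)\in N}$ — yields the stated iterated integral. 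The main obstacle I anticipate is bookkeeping the normalisation in this last disintegration: one must verify that the "fibre over $a$" measure induced by $\mathscr{H}^{\adim-1}\restrict N$ via the area/coarea formula for the bundle projection $N \to M$ is precisely $\mathscr{H}^{\codim-1}$ restricted to the unit sphere of $\Nor(M,a)$ with no extra Jacobian, which requires checking that the horizontal distribution $X(a,p)$ is orthogonal to the fibre $Y(a,p)$ (already established) and that the induced Riemannian structure on the fibres is the round one — both of which follow from the orthogonality in \eqref{item:unit_sphere_bundle:tangent_space} but need to be spelled out carefully with the coarea factor of the bundle projection.
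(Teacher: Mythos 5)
Your overall plan matches the paper's proof: local charts over $M$ times a sphere for \eqref{item:unit_sphere_bundle:manifold}, differentiating the constraints for \eqref{item:unit_sphere_bundle:tangent_space}, and combining the coarea formula for $Q|N$ with a bundle-projection disintegration for \eqref{item:unit_sphere_bundle:fubini}. (The paper implements \eqref{item:unit_sphere_bundle:manifold}--\eqref{item:unit_sphere_bundle:tangent_space} via a Cartan frame osculating at $a$, the explicit parametrisation $f(b,y)=(b,\sum_i y_i g_{\vdim+i}(b))$, and \cite[3.1.19\,(3)]{MR41:1976}; your version is a sketch of the same.)

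The genuine gap is in your Jacobian accounting for \eqref{item:unit_sphere_bundle:fubini}. You compute ``the Jacobian of $Q|X(a,p)$ in the $e_i$ directions'' as $\pm\discr(p\circ\mathbf{b}(M;a))$. But this is the Jacobian of $Q$ composed with the parametrisation $K(v)=(v,L(v))$ of $X(a,p)$, where $L(v)=-p\circ\mathbf{b}(M;a)(v,\cdot)\circ\project{\Tan(M,a)}$, i.e.\ it is $\|\Lambda_\vdim L\|$ — not the intrinsic Jacobian $\|\Lambda_\vdim(Q|X(a,p))\|$. Since $K$ is not an isometry ($|K(v)|^2=|v|^2+|L(v)|^2$), these differ by a factor $\|\Lambda_\vdim K\|^{-1}$. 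This discrepancy is exactly what is compensated by the coarea factor of the bundle projection $P:N\to M$ (one has $\ker(P|\Tan N)=Y(a,p)$ and $P|X(a,p)=K^{-1}$, so $\|\Lambda_\vdim(P|\Tan(N,(a,p)))\|=\|\Lambda_\vdim K\|^{-1}$), and your remark that the fibre measure appears ``with no extra Jacobian'' elides that compensation. The paper resolves this by isolating and proving the algebraic identity
\begin{gather*}
|\discr(p\circ\mathbf{b}(M;a))|
= \|\Lambda_\vdim(P|\Tan(N,(a,p)))\|^{-1}\,
\|\Lambda_{\adim-1}(Q|\Tan(N,(a,p)))\|,
\end{gather*}
via a short computation with $K$, its adjoint, and the inclusion of $X(a,p)$; given this, the second equality of \eqref{item:unit_sphere_bundle:fubini} is one further application of the coarea formula to $P$. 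Your proposal correctly anticipates the obstacle but does not actually state or verify this identity, which is the heart of the argument.
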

\begin{proof}
	Define $P : \rel^\adim \times \Hom ( \rel^\adim, \rel ) \to
	\rel^\adim$ by
	\begin{gather*}
		P (z,h) = z \quad \text{for $(z,h) \in \rel^\adim \times \Hom
		( \rel^\adim, \rel )$}
	\end{gather*}
	and suppose $(a,p) \in N$.

	Choose an open neighbourhood $U$ of $a$ and a Cartan frame for $M$ in
	$U$ osculating to $M$ at $a$ such that $f_n (a) = p^\ast (1)$, cp.
	\cite[5.4.11,\,12]{MR41:1976}. Define $g_i : U \to \Hom ( \rel^\adim,
	\rel )$ by
	\begin{gather*}
		g_i (z) (v) = f_i(z) \bullet v \quad \text{whenever $z \in U$,
		$v \in \rel^\adim$ and $i \in \{ 1, \ldots, \adim \}$}.
	\end{gather*}
	Note $g_n (a) = p$ and observe
	\begin{gather*}
		\left < v, D g_n (a) \right > = -p \circ \mathbf{b}(M;a) ( v,
		\cdot ) \circ \project{\Tan (M,a)} \quad \text{for $v \in \Tan
		(M,a)$};
	\end{gather*}
	in fact $\left < v, Df_n (a) \right > \in \Tan ( M,a )$ and
	\begin{gather*}
		\begin{aligned}
			& \left < v, Dg_n (a) \right > (u) = \left < v, Df_n
			(a) \right > \bullet \project{\Tan(M,a)} (u) \\
			& \qquad = - \mathbf{b} (M;a)
			(v,\project{\Tan(M,a)}(u)) \bullet f_n (a) = - p \circ
			\mathbf{b} (M;z) ( v,\project{\Tan(M,a)}(u) )
		\end{aligned}
	\end{gather*}
	for $u \in \rel^\adim$. If $(v,g) \in X(a,p)$ and $(w,h) \in Y(a,p)$
	then
	\begin{gather*}
		w=0, \quad \Nor (M,a) \subset \ker g , \quad \Tan (M,a)\subset
		\ker h, \\
		g \bullet h = \tsum{i=1}{\adim} g ( f_i(a)) h (f_i(a)) = 0,
	\end{gather*}
	hence $X(a,p)$ and $Y(a,p)$ as well as $Q \lIm X (a,p) \rIm$ and $Q
	\lIm Y (a,p) \rIm$ are mutually orthogonal vector subspaces. Also
	note $Y(a,p) = \{0\}$ if $\codim=1$ and
	\begin{gather*}
		Y (a,p) = \big \{ \big ( 0,
		\tsum{i=1}{\codim-1} w_i g_{\vdim+i} (a) \big ) \with w_i \in
		\rel^{\codim-1} \big \}  \quad \text{if $\codim > 1$}.
	\end{gather*}

	Denote by $\upsilon_1, \ldots, \upsilon_\codim$ the standard base
	vectors of $\rel^\codim$ and define $f : ( U \cap M ) \times
	\mathbf{S}^{\codim-1} \to \rel^\adim \times \Hom ( \rel^\adim, \rel )$
	by
	\begin{gather*}
		f(b,y) = \big ( b, \tsum{i=1}{\codim} y_i g_{\vdim+i} (b) \big
		)
	\end{gather*}
	whenever $b \in U \cap M$ and $y = (y_1,\ldots,y_\codim) \in
	\mathbf{S}^{\codim-1}$. Note
	\begin{gather*}
		f ( a, \upsilon_\codim ) = (a,p), \quad \im f = ( U \times
		\Hom ( \rel^\adim , \rel ) ) \cap N, \\
		f^{-1} (b,q) = \big ( b, ( g_{\vdim+1} (b) \bullet q, \ldots,
		g_\adim (b) \bullet q ) \big ) \quad \text{for $(b,q) \in \im
		f$}.
	\end{gather*}
	One computes for $b \in U \cap M$, $y = (y_1,\ldots,y_\codim) \in
	\mathbf{S}^{\codim-1}$
	\begin{gather*}
		\left < (v,w), Df(b,y) \right > = \big ( v, \tsum{i=1}{\codim}
		y_i \left < v, Dg_{\vdim+i} (b) \right > + w_i g_{\vdim+i} (b)
		\big )
	\end{gather*}
	whenever $v \in \Tan (M,b)$ and $w \in \Tan ( \mathbf{S}^{\codim-1},
	y)$, in particular, $Df(b,y)$ is univalent and
	\eqref{item:unit_sphere_bundle:manifold} follows from
	\cite[3.1.19\,(3)]{MR41:1976}. Comparing the formulae for $Dg_n$ and
	$Df$, one infers \eqref{item:unit_sphere_bundle:tangent_space}.

	Next, the following assertion will be shown:
	\begin{gather*}
		| \discr ( p \circ \mathbf{b} ( M ; a ) ) | = \| \Lambda_\vdim
		( P | \Tan ( N, (a,p) ) ) \|^{-1} \| \Lambda_{\adim-1} ( Q |
		\Tan ( N, (a,p) ) ) \|.
	\end{gather*}
	For this purpose abbreviate $Z = \Tan ( N, (a,p) )$ and let $L : \Tan
	(M,a) \to \Hom ( \rel^\adim, \rel )$ and $K : \Tan (M,a) \to X$ be
	defined by
	\begin{gather*}
		L(v) = - p \circ \mathbf{b} (M;a) (v,\cdot) \circ
		\project{\Tan (M,a)} \quad \text{and} \quad K(v) = (v,L(v))
	\end{gather*}
	whenever $v \in \Tan (M,a)$. Clearly, $K$ is univalent. First,
	denoting by $\iota$ the inclusion of $X(a,p)$ into $Z$, the orthogonal
	projection of $Z$ onto $X(a,p)$ is given by $\iota^\ast$ by
	\eqref{item:unit_sphere_bundle:tangent_space}, and, see
	\cite[1.7.6]{MR41:1976},
	\begin{gather*}
		\iota^\ast \circ \iota = \id{X}, \quad P|Z = (P|Z) \circ
		\iota^\ast, \quad P \circ \iota = K^{-1}, \\
		\iota \circ ( P | Z )^\ast = ( P | Z )^\ast, \quad (P|Z)^\ast
		= \iota^\ast \circ ( P | Z)^\ast = ( P \circ \iota )^\ast = (
		K^{-1} )^\ast \\
		\| \Lambda_\vdim ( P | Z ) \| = \| \Lambda_\vdim K^{-1} \| =
		\| \Lambda_\vdim K \|^{-1}.
	\end{gather*}
	Second, recalling that $X(a,p)$ and $Y(a,p)$ as well as $Q \lIm X
	(a,p) \rIm$ and $Q \lIm Y (a,p) \rIm$ are mutually orthogonal, one
	obtains
	\begin{gather*}
		\| \Lambda_{\adim-1} (Q|Z) \| = \| \Lambda_\vdim ( Q | X(a,p) )
		\|
	\end{gather*}
	since $Q|Y(a,p)$ is an isometry. Therefore, noting $Q \circ K = L$ and
	$\im K = X(a,p)$, one infers
	\begin{gather*}
		\| \Lambda_\vdim L \| = \| \Lambda_{\adim-1} ( Q | Z ) \| \|
		\Lambda_\vdim K \| = \| \Lambda_\vdim ( P |Z ) \|^{-1} \|
		\Lambda_{\adim-1} ( Q | Z ) \|.
	\end{gather*}
	Since $\im L \subset \classification{\Hom ( \rel^\adim, \rel
	)}{h}{\Nor (M,a) \subset \ker h} \simeq \Tan (M,a)$, the assertion now
	readily follows.

	Combining the assertion of the preceding paragraph with the coarea
	formula \cite[3.2.22]{MR41:1976} yields
	\eqref{item:unit_sphere_bundle:fubini}.
\end{proof}
\begin{miniremark} \label{miniremark:det_trace_inq}
	If $V$ is an $\vdim$ dimensional inner product space and $B: V \times
	V \to \rel$ is a symmetric bilinear form with $B(v,v) \geq 0$ for $v
	\in V$ then
	\begin{gather*}
		0 \leq \discr B \leq \vdim^{-\vdim} ( \trace B )^\vdim;
	\end{gather*}
	in fact $B$ admits a representation $B = \sum_{i=1}^\vdim \kappa_i
	\omega_i \odot \omega_i/2$ with $0 \leq \kappa_i < \infty$ and
	$\omega_i$ an orthogonal base of $\Hom ( V, \rel )$, see
	\cite[1.7.3]{MR41:1976}, hence $\discr B =
	\prod_{i=1}^\vdim \kappa_i$ and $\trace B = \sum_{i=1}^\vdim \kappa_i$
	and the assertion follows from the inequality relating arithmetic and
	geometric means.
\end{miniremark}
\begin{theorem} [Classical] \label{thm:classical}
	Suppose $\vdim, \adim \in \nat$, $\vdim < \adim$, $M$ is a nonempty
	compact $\vdim$ dimensional submanifold of $\rel^\adim$ of class $2$.

	Then
	\begin{gather*}
		( \vdim + 1 ) \unitmeasure{\vdim+1} \vdim^\vdim \leq
		\tint{M}{} | \mathbf{h} ( M ; a ) |^\vdim \ud
		\mathscr{H}^\vdim a.
	\end{gather*}
\end{theorem}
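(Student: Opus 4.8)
The plan is to deduce this classical statement from the Michael--Simon/Allard Sobolev inequality applied to a cleverly chosen test function, namely a suitable height-type function built from the Gauss map of $M$. Actually, the cleanest route is to invoke the preceding material: one considers, for a fixed unit covector $p$, the function $f_p : M \to \rel$ defined by $f_p(z) = p(z)$ (the restriction of the linear functional to $M$), or rather one works with the constant map to $\mathbf{S}^{\adim-1}$ via $\project{\Tan(M,z)}$ and uses that the area of the image of the Gauss map controls $\tint{M}{} |\discr(p\circ\mathbf{b}(M;z))| \ud \mathscr{H}^\vdim z$ after integrating over $p$, as recorded in \ref{lemma:unit_sphere_bundle}\,\eqref{item:unit_sphere_bundle:fubini}. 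The key point is the following ``every normal direction is achieved'' fact: since $M$ is compact, for every $p \in \mathbf{O}^\ast(\adim,1)$ with $|p|=1$ there is a point $a \in M$ where the function $z \mapsto p(z)$ attains its maximum on $M$; at such a point $\Tan(M,a) \subset \ker p$, i.e. $(a,p) \in N$ in the notation of \ref{lemma:unit_sphere_bundle}, and moreover the second fundamental form satisfies $p\circ\mathbf{b}(M;a)(v,v) \leq 0$ for $v \in \Tan(M,a)$ (the maximum condition), so $p\circ\mathbf{b}(M;a)$ is semidefinite.

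\textbf{Key steps.} First I would set up the correspondence: let $G : M \to \grass{\adim}{\vdim}$, $G(z) = \Tan(M,z)$, be the generalised Gauss map, and consider the unit normal sphere bundle $N \subset \rel^\adim \times \mathbf{O}^\ast(\adim,1)$ from \ref{lemma:unit_sphere_bundle}, with the projection $Q(z,h)=h$. Second, I would prove the surjectivity claim: the image $Q\lIm N \rIm$ contains all of $\{p \with |p|=1\}$, hence has $\mathscr{H}^{\adim-1}$ measure at least $\mathscr{H}^{\adim-1}(\mathbf{S}^{\adim-1}) = (\adim)\unitmeasure{\adim} = (\vdim+1)\unitmeasure{\vdim+1}$ (with $\adim = \vdim+1$ in mind, but in fact one wants the $(\adim-1)$-sphere of unit covectors in $\rel^\adim$; here since $p$ ranges over rank-one covectors of unit length the relevant sphere is the $(\adim-1)$-sphere). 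The surjectivity follows from the compactness maximum-point argument above, applied to each $p$. Third, I would combine this with the area/coarea formula of \ref{lemma:unit_sphere_bundle}\,\eqref{item:unit_sphere_bundle:fubini}: the map $Q|N$ has multiplicity at least $1$ over each $p$ in the image, so
\begin{gather*}
	(\vdim+1)\unitmeasure{\vdim+1} \leq \mathscr{H}^{\adim-1}(Q\lIm N\rIm) \leq \tint{M}{} \tint{}{} |\discr(p\circ\mathbf{b}(M;a))| \ud \mathscr{H}^{\codim-1}p \ud \mathscr{H}^\vdim a,
\end{gather*}
where the inner integral is over the unit sphere of normal covectors at $a$, a $\codim-1 = n-m-1$ dimensional sphere; but here $\codim = 1$, so this sphere is $\mathscr{H}^0$ of two points $\pm p_a$ and the inner integral is $2|\discr(p_a\circ\mathbf{b}(M;a))|$ for a unit normal $p_a$. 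Fourth, I would bound $|\discr(p_a\circ\mathbf{b}(M;a))|$ by the mean curvature: by \ref{miniremark:det_trace_inq}, applied to the semidefinite form $\pm p_a\circ\mathbf{b}(M;a)$, we get $|\discr(p_a\circ\mathbf{b}(M;a))| \leq \vdim^{-\vdim}|\trace(p_a\circ\mathbf{b}(M;a))|^\vdim = \vdim^{-\vdim}|\mathbf{h}(M;a)|^\vdim$, since $\trace(p_a\circ\mathbf{b}(M;a)) = p_a(\mathbf{h}(M;a))$ and $\mathbf{h}(M;a)$ is normal, $|p_a|=1$. However, this last bound uses semidefiniteness, which holds only at the maximum points, so one must be careful: over a general $(a,p)\in N$ the form need not be semidefinite and $|\discr|$ can exceed $\vdim^{-\vdim}|\mathbf{h}|^\vdim$. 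The fix is to restrict the integration in \ref{lemma:unit_sphere_bundle}\,\eqref{item:unit_sphere_bundle:fubini} to the subset $N^+ \subset N$ of pairs where $p\circ\mathbf{b}(M;a)$ is positive semidefinite; the surjectivity argument produces points in $N^+$ (maximum of $z\mapsto p(z)$ gives negative semidefinite, minimum gives positive semidefinite, so in any case $Q\lIm N^+\rIm \supset \{|p|=1\}$), and on $N^+$ the determinant bound is valid.

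\textbf{Main obstacle.} The hard part will be handling the degenerate directions and the precise constant: one needs $Q\lIm N^+ \rIm$ to have full $\mathscr{H}^{\adim-1}$ measure on the unit sphere (surjectivity is clear, but one should confirm the multiplicity count in the area formula is honestly $\geq 1$ a.e.\ and that $N^+$ is $\mathscr{H}^{\adim-1}$ measurable, which follows since it is closed in $N$), and one must correctly identify the sphere over which $p$ ranges so that $\mathscr{H}^{\adim-1}$ of it equals $(\vdim+1)\unitmeasure{\vdim+1}$ — here the identity $\mathscr{H}^{k-1}(\mathbf{S}^{k-1}) = k\unitmeasure{k}$ with $k = \adim = \vdim+1$ does the job since $\codim=1$. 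Assembling the chain of inequalities then gives
\begin{gather*}
	(\vdim+1)\unitmeasure{\vdim+1} \leq \tint{M}{} 2|\discr(p_a\circ\mathbf{b}(M;a))|\ud\mathscr{H}^\vdim a \leq 2\vdim^{-\vdim}\tint{M}{}|\mathbf{h}(M;a)|^\vdim\ud\mathscr{H}^\vdim a,
\end{gather*}
which rearranges to the claimed $(\vdim+1)\unitmeasure{\vdim+1}\vdim^\vdim \leq \tint{M}{}|\mathbf{h}(M;a)|^\vdim\ud\mathscr{H}^\vdim a$ once the factor of $2$ from the two-point normal sphere is absorbed correctly (it matches: $\mathscr{H}^0$ of $\{\pm p_a\}$ is $2$, and one only uses one of the two signs for the semidefinite bound, so effectively the $2$ and the restriction to $N^+$ cancel, leaving the stated clean constant). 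I would double-check this bookkeeping of factors of $2$ as the final step.
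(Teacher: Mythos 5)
Your core idea matches the paper's: use the ``touching'' set of unit covectors which are semidefinite at some boundary point, apply the unit sphere bundle Jacobian formula of \ref{lemma:unit_sphere_bundle}\,\eqref{item:unit_sphere_bundle:fubini}, and bound the discriminant by the mean curvature via the AM--GM inequality of \ref{miniremark:det_trace_inq}. However, your argument contains a genuine gap in generality, and a bookkeeping error.

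\textbf{The argument only covers codimension one.} You write ``(with $\adim = \vdim+1$ in mind)'' and later ``but here $\codim = 1$'', and you use the identity $\mathscr{H}^{\adim-1}(\mathbf{S}^{\adim-1}) = (\vdim+1)\unitmeasure{\vdim+1}$, which requires $\adim = \vdim+1$. The theorem as stated allows any $\vdim < \adim$, and for $\codim > 1$ the total measure $\mathscr{H}^{\adim-1}(\mathbf{O}^\ast(\adim,1))$ is larger than $(\vdim+1)\unitmeasure{\vdim+1}$, so your bound would not reduce to the claimed constant. Moreover, in higher codimension the inner integral in \ref{lemma:unit_sphere_bundle}\,\eqref{item:unit_sphere_bundle:fubini} is over a $\codim-1 \geq 1$ dimensional sphere, not a pair of points, so the pointwise estimate $|\discr(p\circ\mathbf{b}(M;a))| \leq \vdim^{-\vdim}|\mathbf{h}(M;a)|^\vdim$ (valid at the semidefinite $p$) no longer collapses the inner integral to a clean expression in $|\mathbf{h}|$. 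The paper handles this by introducing a constant $c$ determined by the rotation-invariance of the integral $\int p(w)^\vdim\ud\mathscr{H}^{\codim-1}p$ over the half normal sphere, and then, rather than computing $c$, evaluating the identical chain of inequalities for the round sphere $\iota\lIm\mathbf{S}^\vdim\rIm$ (embedded isometrically into $\rel^\adim$) where every step is an equality; comparing the two determines the constant for free. This comparison trick is the part your proposal is missing and is what allows the theorem to hold for all codimensions.

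\textbf{The factor of $2$ does not cancel.} You correctly note that the determinant bound needs semidefiniteness, so one must restrict to the set $N^+$ where $p\circ\mathbf{b}(M;a)$ is positive semidefinite. But then the inner fibre $\{p : (a,p)\in N^+\}$ is a single point (for $\mathbf{b}(M;a)\neq 0$, only one of $\pm p_a$ gives a semidefinite form), not two, so the inner $\mathscr{H}^0$ integral contributes $|\discr(p_a\circ\mathbf{b}(M;a))|$ without the factor $2$. Your displayed chain keeps the factor $2$ and yields $(\vdim+1)\unitmeasure{\vdim+1}\leq 2\vdim^{-\vdim}\int_M |\mathbf{h}|^\vdim$, which is weaker than the theorem by a factor of $2$. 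The factor never arises once $N^+$ is in place; the claim that ``the $2$ and the restriction to $N^+$ cancel'' is self-contradictory in your writeup and needs to be cleaned up.
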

\begin{proof}
	Suppose $Q$ is as in \ref{lemma:unit_sphere_bundle}.

	Define $C$ to be the closed set of all $(a,p) \in M \times
	\mathbf{O}^\ast ( \adim, 1 )$ such that
	\begin{gather*}
		p (z-a) \geq 0 \quad \text{whenever $z \in M$}.
	\end{gather*}
	Since for $p \in \mathbf{O}^\ast ( \adim, 1 )$ there exists $a \in M$
	with $p (a) = \inf p \lIm M \rIm$ hence $(a,p) \in C$, one notes
	\begin{gather*}
		Q \lIm C \rIm =\mathbf{O}^\ast ( \adim, 1 ).
	\end{gather*}
	Computing the first and second covariant derivative of $p$ at $a$, one
	infers
	\begin{gather*}
		\Tan (M,a) \subset \ker p, \qquad p \circ \mathbf{b}(M;a)
		(v,v) \geq 0 \quad \text{for $v \in \Tan (M,a)$}
	\end{gather*}
	whenever $(a,p) \in C$. From \ref{miniremark:det_trace_inq} one then
	obtains
	\begin{gather*}
		0 \leq \discr ( p \circ \mathbf{b} (M;a) ) \leq \vdim^{-\vdim}
		p ( \mathbf{h} (M;a) )^\vdim \quad \text{for $(a,p) \in C$}.
	\end{gather*}
	Since members of $\mathbf{O}(\adim)$ induce isometries on
	$\mathbf{O}^\ast ( \adim, 1 )$, one may construct a number $0 < c <
	\infty$ determined only by $\vdim$ and $\adim$ such that
	\begin{gather*}
		c | w |^\vdim = \tint{\classification{\mathbf{O}^\ast ( \adim,
		1 )}{p}{\text{$V \subset \ker p$ and $p (w)\geq 0$}}}{}
		p(w)^\vdim \ud \mathscr{H}^{\codim-1} p
	\end{gather*}
	whenever $V$ is an $\vdim$ dimensional vector subspace of $\rel^\adim$
	and $w \in \rel^\adim$ is orthogonal to $V$. Therefore one obtains
	with the help of
	\ref{lemma:unit_sphere_bundle}\,\eqref{item:unit_sphere_bundle:fubini}
	\begin{gather*}
		\begin{aligned}
			& \mathscr{H}^{\adim-1} ( \mathbf{O}^\ast ( \adim, 1 )
			) \leq \tint{\mathbf{O}^\ast ( \adim, 1 )}{}
			\mathscr{H}^0 ( C \cap Q^{-1} \lIm \{ p \} \rIm ) \ud
			\mathscr{H}^{\adim-1} p \\
			& \qquad = \tint{M}{}
			\tint{\classification{\mathbf{O}^\ast ( \adim, 1
			)}{p}{(a,p) \in C}}{} \discr ( p \circ \mathbf{b} ( M;
			a ) ) \ud \mathscr{H}^{\codim-1} p \ud
			\mathscr{H}^\vdim a \\
			& \qquad \leq \vdim^{-\vdim} c \tint{M}{} | \mathbf{h}
			( M; a )|^\vdim \ud \mathscr{H}^\vdim a.
		\end{aligned}
	\end{gather*}

	Next, consider the special case $M = \iota \lIm \mathbf{S}^\vdim
	\rIm$ for some isometric injection $\iota : \rel^{\vdim+1} \to
	\rel^\adim$. Define $B = \classification{\mathbf{O}^\ast ( \adim, 1
	)}{p}{\iota \lIm \rel^{\vdim+1} \rIm \subset \ker p}$ and note
	$\mathscr{H}^{\adim-1} ( B ) = 0$.  Observe that
	\begin{gather*}
		\mathscr{H}^0 ( C \cap Q^{-1} \lIm \{ p \} \rIm ) = 1 \quad
		\text{for $p \in \mathbf{O}^\ast ( \adim, 1 ) \without B$}, \\
		\discr ( p \circ \mathbf{b} ( M; a ) ) = \vdim^{-\vdim} p (
		\mathbf{h} ( M;a ) ) \quad \text{for $(a,p) \in C$}, \\
		\begin{aligned}
			& \classification{\mathbf{O}^\ast ( \adim, 1)}{p}{
			(a,p) \in C} \\
			& \qquad = \classification{\mathbf{O}^\ast ( \adim, 1
			)}{p}{\text{$\Tan (M,a) \subset \ker p$ and $p (
			\mathbf{h} ( M;a ) ) \geq 0$}} \quad \text{for $a \in
			M$}.
		\end{aligned}
	\end{gather*}
	Therefore one computes, using
	\ref{lemma:unit_sphere_bundle}\,\eqref{item:unit_sphere_bundle:fubini},
	\begin{gather*}
		\begin{aligned}
			& \mathscr{H}^{\adim-1} ( \mathbf{O}^\ast ( \adim,1 )
			) = \tint{\mathbf{O}^\ast ( \adim,1 )}{} \mathscr{H}^0
			( C \cap Q^{-1} \lIm \{ p \} \rIm ) \ud
			\mathscr{H}^{\adim-1} p \\
			& \qquad = \tint{M}{}
			\tint{\classification{\mathbf{O}^\ast ( \adim, 1)}{p}{
			(a,p) \in C}}{} \discr ( p \circ \mathbf{b} ( M;a ) )
			\ud \mathscr{H}^{\codim-1} p \ud \mathscr{H}^\vdim a
			\\
			& \qquad = \vdim^{-\vdim} c \tint{M}{} | \mathbf{h} ( M;
			a ) |^\vdim \ud \mathscr{H}^\vdim a = ( \vdim + 1 )
			\unitmeasure{\vdim+1} c.
		\end{aligned}
	\end{gather*}
	Comparing this with the previous estimate for general $M$ yields the
	conclusion.
\end{proof}
\begin{lemma} \label{lemma:shifting}
	Suppose $\vdim, \adim \in \nat$, $\vdim < \adim$, $U$ is an open
	subset  of $\rel^\adim$, $V \in \Var_\vdim ( U )$, $\| \delta V \|$ is
	a Radon measure, $v \in T \in
	\grass{\adim}{\vdim}$, $0 < r < \infty$, $A$ is a compact subset of
	$U$, $B = \{ z + v + w \with \text{$z \in A$ and $w \in \cball{0}{r}$}
	\}$, and
	\begin{gather*}
		K = \{ z + tv+ w \with \text{$z \in A$, $0 \leq t \leq 1$ and
		$w \in \cball{0}{r}$} \} \subset U.
	\end{gather*}
	\pagebreak

	Then
	\begin{gather*}
		\| V \| ( B ) \geq \| V \| ( A ) - |v| \big ( \| \delta V \| (
		K ) + r^{-1} \tint{K \times \grass{\adim}{\vdim}}{} \|
		\project{S} - \project{T} \| \ud V (z,S) \big ).
	\end{gather*}
\end{lemma}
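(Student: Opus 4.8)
<br>

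The plan is to compare $\|V\|(A)$ and $\|V\|(B)$ by transporting mass along the vector field associated with the translation in direction $v$, using the first variation formula to control the error. First I would reduce to the case $|v| > 0$ and set $\nu = v/|v| \in T$; the statement is trivial otherwise. The natural test vector field is $g(z) = \varphi(z)\nu$ for a suitable $\varphi \in \mathscr{D}^0(U)$ that equals $1$ on a neighbourhood of the ``tube'' $K$ between $A$ and $B$ and is supported near $K$; more precisely I would take $\varphi$ Lipschitz with $\varphi = 1$ on $K$, $\operatorname{spt}\varphi$ a compact subset of $U$, and $\operatorname{Lip}\varphi \le r^{-1}$ (possible since $\operatorname{dist}(K, \rel^\adim \setminus U) > 0$ is not needed — rather one uses that the $r$-neighbourhood structure of $K$ gives room, but to be safe one can instead only demand $\varphi = 1$ on $K$ and $\operatorname{spt}\varphi$ compact in $U$, controlling $\operatorname{Lip}\varphi$ by whatever is available and then noting the bound is only claimed with the $r^{-1}$ weight on the tilt term, so the cutoff must be chosen to be $1$ on the wider set $\{z + tv + w : z\in A,\ 0\le t\le 1,\ w \in \cball{0}{r}\}$ with gradient supported outside it).

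The key computation is the following. For the flow $\phi_t(x) = x + t\,\varphi(x)\nu$ one has, for $\|V\|$-almost all $z$ with tangent plane $S$,
\begin{gather*}
	\textstyle\frac{d}{dt}\big|_{t=0}\, |\Lambda_\vdim D\phi_t(z) \circ \project{S}| = D g(z) \bullet \project{S} = \varphi(z)\, \project{S} \bullet (D\varphi(z) \otimes \nu) \quad \text{(roughly)},
\end{gather*}
but the cleaner route is to avoid flows and argue directly with characteristic-type functions. Let $f$ be the characteristic function of $A$ extended suitably; instead I would use the translate $\trans{v}$ and the area-type monotonicity: $\|V\|(B) \ge \|V\|(\{z : z + v \in \text{int}\, A\} \cap \dots)$. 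Actually the efficient argument is: apply the first variation inequality to $g = \psi\nu$ where $\psi$ is a cutoff that is $1$ on $A$, $0$ outside $B$, interpolating along the direction $\nu$ with $|D\psi| \le r^{-1}$ in the transverse directions and along $v$; then $\delta V(g) = \int \psi(z)\, \project{S} \bullet D\psi(z)\,\nu \ud V(z,S) \le r^{-1}\int_{K\times\grass{\adim}{\vdim}} \|\project{S}-\project{T}\|\ud V$ after using $\project{T}\nu = \nu$ and that $\nu \bullet D\psi$ vanishes when $D\psi$ points along $\nu$ — here the decomposition $\project{S}\bullet(D\psi\otimes\nu) = (\project{S}-\project{T})\bullet(D\psi\otimes\nu) + \project{T}\bullet(D\psi\otimes\nu)$ and the fact that $\project{T}\bullet(D\psi\otimes\nu) = D\psi \bullet \nu$ is the directional derivative of $\psi$ along $\nu$, which integrates to a telescoping mass difference. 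Combining $\delta V(g) = -\int g \bullet \mathbf{h}$ when that makes sense, or directly $|\delta V(g)| \le |v|\,\|\delta V\|(K)$ from $|g| \le |v|$ on $\operatorname{spt} g \subset K$, gives the two error terms $|v|\,\|\delta V\|(K)$ and $|v|\,r^{-1}\int_{K}\|\project{S}-\project{T}\|$, while the main term reproduces $\|V\|(A) - \|V\|(B)$.

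So the steps in order: (1) assume $v\ne 0$, normalise $\nu = v/|v|$; (2) construct the Lipschitz cutoff $\psi$ with $\psi|A \equiv 1$, $\psi \equiv 0$ off a neighbourhood of $B$, $\operatorname{spt}\psi \subset K$, and with transverse Lipschitz constant $\le r^{-1}$, built as a function of $\operatorname{dist}(\cdot, A\text{-translated set})$ and of the coordinate along $\nu$; approximate $\psi$ by smooth functions so that $\psi\nu \in \mathscr{D}(U,\rel^\adim)$ is admissible in $\delta V$; (3) expand $\delta V(\psi\nu) = \int \psi(z)\,\project{S}\bullet D\psi(z)\,\text{(with the }\nu\text{ factor)}\ud V(z,S)$, split off $\project{T}$, use $\project{T}\nu = \nu$ to identify the $\project{T}$-part as $\int \langle D\psi(z),\nu\rangle\,\psi(z)\ud\|V\|z = \frac12\int \langle D(\psi^2),\nu\rangle\ud\|V\|$, and Fubini/coarea along lines parallel to $\nu$ to evaluate this as $\ge \|V\|(A) - \|V\|(B)$ up to sign; (4) bound $|\delta V(\psi\nu)| \le |v|\,\|\delta V\|(K)$ directly; (5) bound the $\project{S}-\project{T}$ remainder by $|v|\,r^{-1}\int_{K\times\grass{\adim}{\vdim}}\|\project{S}-\project{T}\|\ud V$; (6) rearrange. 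The main obstacle I expect is step (3): making the ``$\psi^2$ along parallel lines'' bookkeeping rigorous — i.e. showing that the tangential-gradient term, after subtracting the tilt error, really dominates $\|V\|(A)-\|V\|(B)$ and not something smaller, since $V$ need not be rectifiable-with-a-nice-slicing here; this is handled by choosing $\psi$ monotone along $\nu$-lines (increasing as one moves from $B\setminus(\text{nbhd of }A)$ into $A$-region) so that $\langle D\psi^2,\nu\rangle \ge 0$ exactly where needed and using that $\psi^2 = 1$ on $A$ and $0$ on the far side of $B$, with a limiting argument letting the interpolation width shrink.
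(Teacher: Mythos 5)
Your plan has the right ingredients---the first variation with a test field in the direction $\nu=v/|v|$, the decomposition $\project{S}=(\project{S}-\project{T})+\project{T}$, and the observation that $\project{T}\nu=\nu$ kills the tilt term on the good part---but the central step (3) as you describe it does not go through, and you correctly sense this is the obstacle.

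The problem is that you try to extract the mass difference $\|V\|(A)-\|V\|(B)$ from a \emph{single} first-variation identity, by interpreting $\int\langle D\psi(z),\nu\rangle\ud\|V\|z$ as a telescoping quantity via ``Fubini/coarea along lines parallel to $\nu$.'' For a general Radon measure $\|V\|$ there is no such slicing: $\|V\|$ need not disintegrate over $\nu$-lines, and the one-dimensional fundamental theorem of calculus along those lines is simply not available (consider $\|V\|$ a Dirac mass---the integral $\int\langle D\psi,\nu\rangle\ud\|V\|$ is just $\langle D\psi(p),\nu\rangle$ and bears no relation to $\|V\|(A)-\|V\|(B)$). Also, in your expansion of $\delta V(\psi\nu)$ an extraneous factor of $\psi(z)$ appears: $\delta V(\psi\nu)=\int\langle\project{S}(\nu),D\psi(z)\rangle\ud V(z,S)$, with no multiplicative $\psi$, so there is no ``$\tfrac12 D(\psi^2)$'' structure. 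Choosing $\psi$ monotone along $\nu$-lines does not repair this, because the obstruction is the absence of a slicing theorem for $\|V\|$, not a sign.

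What rescues the argument---and is the route the paper takes---is to apply the fundamental theorem of calculus not to the measure but to a scalar function of the flow parameter. Fix one cutoff $\theta$ with $\theta=1$ on $A$, $\spt\theta\subset\{z+w:z\in A,\ w\in\cball{0}{r}\}$, $0\le\theta\le1$, $\|D\theta\|\le\tau r^{-1}$, and set $\phi(t)=\int\theta(z-tv)\ud\|V\|z$. Then $\phi(0)\ge\|V\|(A)$, $\phi(1)\le\|V\|(B)$, $\phi$ is $C^1$, and for each $t\in[0,1]$ one computes $\phi'(t)=-\int\langle v,D\theta(z-tv)\rangle\ud\|V\|z$; now applying your $\project{T}$/$\project{S}$ decomposition pointwise (using $v\in T$) rewrites this as $-\delta V(\theta(\cdot-tv)v)+\int\langle(\project{S}-\project{T})(v),D\theta(z-tv)\rangle\ud V$, both terms of which are bounded on $K$ exactly as you propose in steps (4) and (5). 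Integrating $\phi'$ over $[0,1]$ and letting $\tau\to1+$ gives the claim. The key shift is from one test function to the one-parameter family $\theta(\cdot-tv)$, so that the mass comparison comes from $\phi(1)-\phi(0)=\int_0^1\phi'$ rather than from any slicing of $\|V\|$.
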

\begin{proof}
	Suppose $1 < \tau < \infty$ and choose $\theta \in
	\mathscr{D}^0 ( \rel^\adim )$ such that
	\begin{gather*}
		\spt \theta \subset \{ z + w \with \text{$z \in A$ and $w \in
		\cball{0}{r}$} \}, \qquad \theta (z) = 1 \quad \text{for $z
		\in A$}, \\
		0 \leq \theta (z) \leq 1 \quad \text{and} \quad \| D \theta
		(z) \| \leq \tau r^{-1} \qquad \text{for $z \in \rel^\adim$}.
	\end{gather*}
	Define $C(t) = \{ z + tv + w \with \text{$z \in A$ and $w \in
	\cball{0}{r}$} \}$ for $t \in \rel$ and note $\spt ( \theta \circ
	\boldsymbol{\tau}_{-tv} ) \subset C (t)$. Moreover, define $I = \{ t
	\with C(t) \subset U \}$ and $\phi : I \to \rel$ by $\phi (t) =
	\tint{}{} \theta (z-tv) \ud \| V \| z$ for $t \in I$ and note that $I$
	is open, $\{ t \with 0 \leq t \leq 1 \} \subset I$ and $\phi$ is of
	class $1$. One computes
	\begin{gather*}
		\begin{aligned}
			\phi'(t) & = - \tint{}{} \left < v, D \theta (z-tv)
			\right > \ud \| V \| z \\
			& = - ( \delta V )_z ( \theta (z-tv) v ) + \tint{}{}
			\left < ( \project{S}-\project{T} )(v), D \theta
			(z-tv) \right > \ud V (z,S), \\
			| \phi'(t) | & \leq |v| \big ( \| \delta V \| ( K ) +
			\tau r^{-1} \tint{K \times \grass{\adim}{\vdim}}{} \|
			\project{S} - \project{T} \| \ud V (z,S) \big )
		\end{aligned}
	\end{gather*}
	whenever $0 \leq t \leq 1$. Since $\| V \| (A) \leq \phi (0)$ and
	$\phi (1)\leq \| V \| ( B )$, the conclusion readily follows.
\end{proof}
\begin{miniremark} \label{miniremark:codim1_planes}
	If $1 < \adim \in \nat$, $P \in \mathbf{O}^\ast ( \adim, \adim-1)$, $Q
	\in \mathbf{O}^\ast ( \adim, 1 )$, $Q \circ P^\ast = 0$ and $S \in
	\grass{\adim}{\adim-1}$, then
	\begin{gather*}
		| Q \circ \project{S} | = 2^{-1/2} | P^\ast \circ P -
		\project{S} |;
	\end{gather*}
	in fact $\id{\rel^\adim} = P^\ast \circ P + Q^\ast \circ Q$ and
	\begin{gather*}
		\begin{aligned}
			| Q \circ \project{S} |^2 & = \trace ( \project{S}
			\circ Q^\ast \circ Q ) = \project{S} \bullet ( Q^\ast
			\circ Q ) \\
			& = \project{S} \bullet ( \id{\rel^\adim} - P^\ast
			\circ P ) = \adim -1 - \project{S} \bullet ( P^\ast
			\circ P ) = {\textstyle\frac{1}{2}} | P^\ast \circ P -
			\project{S} |^2.
		\end{aligned}
	\end{gather*}
\end{miniremark}
\begin{lemma} \label{lemma:touching}
	Suppose $\vdim \in \nat$ and $0 \leq M < \infty$.

	Then there exists a positive, finite number $\Gamma$ with the
	following property.

	If $\adim$, $p$, $U$, $V$, and $\psi$ are related to $\vdim$ as in
	\ref{miniremark:situation_general}, $p = \vdim = \adim-1$, $a \in
	\rel^\adim$, $0 < r < \infty$, $U = \oball{a}{r}$, $P \in
	\mathbf{O}^\ast ( \adim, \vdim )$, $Q \in \mathbf{O}^\ast ( \adim,
	1)$, $Q \circ P^\ast = 0$,
	\begin{gather*}
		\| V \| ( U ) \leq M r^\vdim, \quad h = \inf Q \lIm \spt \| V
		\| \rIm < \infty, \quad 0 \leq \gamma \leq \Gamma^{-1}, \\
		r^{-1} \big ( \inf Q \lIm \oball{a}{r/8} \cap \spt \| V \|
		\rIm \big ) + \psi ( U )^{1/\vdim} \leq \gamma + r^{-1} h,
	\end{gather*}
	and $\zeta \in \cball{P(a)}{r/2}$ then there exists $\xi \in
	\cball{a}{3r/4} \cap \spt \| V \|$ such that
	\begin{gather*}
		|P (\xi) - \zeta | \leq r/4 \quad \text{and} \quad Q ( \xi )
		\leq \Gamma \gamma r + h.
	\end{gather*}
\end{lemma}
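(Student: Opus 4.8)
The plan is to read the hypothesis as a quantitative smallness, near the centre $a$, of the nonnegative affine function $u=Q-h$ on $\spt\|V\|$ (nonnegative since $h=\inf Q\lIm\spt\|V\|\rIm$), and to transport this smallness to the column above $\zeta$ by means of the weak Harnack inequality \ref{thm:weak_harnack_inequality} together with the shifting lemma \ref{lemma:shifting}. Write $\Pi=\im P^\ast$ for the $\vdim$ dimensional plane underlying $P$; the condition $Q\circ P^\ast=0$ says $\Pi\perp\ker P$, so $\rel^\adim=\Pi\oplus\ker P$ and $u$ is invariant under translations in $\Pi$, since $u(z+v)-u(z)=Q(v)=0$ for $v\in\Pi$. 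Since $u$ is affine, \ref{ex:some_lap}\,\eqref{item:some_lap:c2} with $A$ as in \ref{example:V_subharmonic} gives, writing $u^+=\sup\{u,0\}$ (which agrees with $u$ on $\spt\|V\|$, hence has the same approximate derivative $\|V\|$ almost everywhere),
\[
\tint{}{}\ap D\theta\bullet\ap Du^+\ud\|V\|=(\delta V)(\theta\,Q^\ast(1))=-\tint{}{}\theta(z)\,Q(\mathbf{h}(V;z))\ud\|V\|z\ge-\tint{}{}\theta\ud\|\delta V\|
\]
for $\theta\in\mathscr{D}^0(U)^+$, the last step using $Q(\mathbf{h}(V;z))\le|\mathbf{h}(V;z)|$. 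Thus $u^+$ is a nonnegative supersolution in the sense of \ref{thm:weak_harnack_inequality} with $\nu=\mu=1$ and $T(\theta)=-\int\theta\ud\|\delta V\|$; and by \ref{corollary:dual_embedding}, \ref{remark:dual_embedding} (with $q=\vdim$, $s=\infty$, which is admissible because $\psi(U)^{1/\vdim}\le\gamma<\isoperimetric{\vdim}^{-1}$ once $\Gamma$ is large) one has $\nuqastnorm{\infty}{V}{T}\le\Gamma_{\ref{corollary:dual_embedding}}\psi(U)^{1/\vdim}\le\Gamma_{\ref{corollary:dual_embedding}}\gamma$.

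From the hypothesis, $\psi(U)^{1/\vdim}\le\gamma$ and $\inf u\lIm\oball{a}{r/8}\cap\spt\|V\|\rIm=\inf Q\lIm\oball{a}{r/8}\cap\spt\|V\|\rIm-h\le\gamma r$; since this set is relatively open in $\spt\|V\|$, hence of positive $\|V\|$ measure (using $\density^\vdim_\ast(\|V\|,\cdot)\ge1$ on $\spt\|V\|$, cf.~\cite[2.5]{snulmenn.isoperimetric}), I may fix $c\in\oball{a}{r/8}\cap\spt\|V\|$ with $u(c)<\gamma r$ and $\density^{\ast\vdim}(\|V\|,c)<\infty$. I would then apply \ref{thm:weak_harnack_inequality} to $f=u^+$ (with exponent $\infty$) on a ball of radius comparable to $r$ centred near $a$, choosing the level $t$ just below the value at which $\|V\|(\{u^+<t\})$ first reaches the smallness threshold of \ref{thm:weak_harnack_inequality}\,\eqref{item:weak_harnack_inequality:some_bound}; the remaining possibility that $\|V\|(\{u^+=0\})$ already exceeds that threshold is disposed of by the strong maximum principle \ref{corollary:strong_maximum_principle}, which then forces $\spt\|V\|$ to contain a full planar piece, making $\xi$ immediate. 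Because $\inf u^+\lIm\{z:\dist(z,\rel^\adim\without U)>r/2\}\cap\spt\|V\|\rIm\le u(c)<\gamma r$ and $\nuqastnorm{\infty}{V}{T}\le\Gamma_{\ref{corollary:dual_embedding}}\gamma$, the conclusion of \ref{thm:weak_harnack_inequality} bounds $t$ by $\Gamma_1\gamma r$; comparing with the lower bound $\|V\|(\{u^+<t\})\ge\density^\vdim_\ast(\|V\|,c)^{-1}$‑type estimate coming from the $1$‑Lipschitz continuity of $u$ and $\density^\vdim_\ast(\|V\|,c)\ge1$, one obtains that a definite amount of mass $\|V\|$ — at least $\Gamma_2^{-1}r^\vdim$ — lies in the thin slab $\{z\in\spt\|V\|:0\le u(z)\le\Gamma_1\gamma r\}$. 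To reach over $\zeta$ I would then combine the shifting lemma \ref{lemma:shifting} with a good–radius selection via \ref{lemma:radius_from_calculus} at $c$ (applied to $s\mapsto\|V\|(\oball{c}{s})$ and $s\mapsto\|V\|(\oball{c}{s})+\int_{\oball{c}{s}}\|\project{S}-P^\ast\circ P\|\ud V$): translating a positive‑measure piece of the slab near $c$ by $v=\zeta-P(c)\in\Pi$ in finitely many steps keeps the swept region inside the slab (translations in $\Pi$ preserve $u$), so the error $|v|\bigl(\|\delta V\|(K)+\rho^{-1}\int_{K\times\grass{\adim}{\vdim}}\|\project{S}-P^\ast\circ P\|\ud V\bigr)$ is controlled by $\|\delta V\|(U)\le\gamma M^{1-1/\vdim}r^{\vdim-1}$ and the tilt of the slab, while the mass of the translated piece stays bounded below; by \ref{lemma:shifting} the translated, thickened piece meets $\spt\|V\|$ in a point $\xi$ with $|P(\xi)-\zeta|\le r/4$ and $Q(\xi)-h\le\Gamma\gamma r$ (choosing the ball radius and thickening small fractions of $r$, and $\Gamma$ a large multiple of $\Gamma_1$).

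Finally $\xi\in\cball{a}{3r/4}$ follows by elementary geometry: $|P(\xi)-P(a)|\le|P(\xi)-\zeta|+|\zeta-P(a)|$, which is kept strictly below $3r/4$ by taking the ball radius plus thickening small, while picking any point of $\oball{a}{r/8}\cap\spt\|V\|$ gives $|h-Q(a)|\le r/8+\gamma r$, hence $|Q(\xi)-Q(a)|\le(Q(\xi)-h)+|h-Q(a)|\le\Gamma\gamma r+r/8+\gamma r$, so $|\xi-a|^2=|P(\xi)-P(a)|^2+|Q(\xi)-Q(a)|^2<(3r/4)^2$ once $\Gamma$ is large and $\gamma\le\Gamma^{-1}$; an explicit choice of $\Gamma$ depending only on $\vdim$, $M$ and the constants of \ref{thm:weak_harnack_inequality}, \ref{corollary:dual_embedding}, \ref{lemma:radius_from_calculus} and \ref{lemma:shifting} then gives the claim. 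I expect the main obstacle to be the coupling of scales in the two middle steps: the weak Harnack inequality carries a genuine small–mass hypothesis while $\zeta$ lies at distance of order $r$ from $a$, so the height estimate must be carried across a macroscopic distance; this is what forces the combination of the good–radius selection (to enter the regime where \ref{thm:weak_harnack_inequality} applies) with the iterated shifting lemma, and the delicate point will be to control the $L^1$ tilt–excess of the thin low slab precisely enough for the accumulated shifting errors to close up.
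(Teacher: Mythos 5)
Your plan correctly identifies the supersolution structure of $f=\sup\{Q-h,0\}$ via \ref{ex:some_lap}\,\eqref{item:some_lap:c2}, the use of \ref{corollary:dual_embedding} to bound $\nuqastnorm{\infty}{V}{T}$ by $\psi(U)^{1/\vdim}$, the weak Harnack inequality \ref{thm:weak_harnack_inequality} to convert pointwise smallness of $Q-h$ near $a$ into measure-theoretic smallness of the low slab, and the shifting lemma \ref{lemma:shifting} to carry mass over $\zeta$. That is exactly the paper's skeleton. However, there is a genuine gap at the step you yourself flag as ``the delicate point'': to invoke \ref{lemma:shifting} you must control, at a definite positive scale, \emph{both} the first-variation mass $\|\delta(V\restrict E\times\grass{\adim}{\vdim})\|$ of the sublevel piece and the tilt integral $\int\|\project{S}-P^\ast\circ P\|\ud V$ over the region swept by the translate. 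Neither \ref{lemma:radius_from_calculus} nor an iteration of \ref{lemma:shifting} supplies this; the first variation of a slice $V\restrict E(t)\times\grass{\adim}{\vdim}$ contains a boundary contribution that is not monotone in $t$ and could be large at a fixed level $t\approx\Delta_3\gamma$, and there is no a priori bound on the tilt-excess on a thin slab unless one first shows the slab is genuinely flat.

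The paper's missing ingredient is to pass to the logarithmic function $g=\log(5f+\gamma)$ and apply \ref{lemma:log_bound}, which gives a dimensionally sharp $L^1$ bound $\int_{\oball{a}{4/5}}|\ap Dg|\ud\|V\|\lesssim M$ independent of $\gamma$. This single bound does double duty. First, combined with the coarea formula \ref{lemma:coarea} over the logarithmic window $s\in(\log((5\Delta_3+1)\gamma),\log((5\Delta_3\Delta_4+1)\gamma))$ it selects a level $s$ at which the boundary term $\|(\delta V)\restrict E(s)-\delta(V\restrict E(s)\times\grass{\adim}{\vdim})\|(\oball{a}{4/5})$ is as small as one likes by taking the window length $\log\Delta_4-\log\Delta_1$ large. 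Second, since $|\ap Df|\le(5f+\gamma)|\ap Dg|/5$ and on the slab $\{f<\Delta_3\Delta_4\gamma\}$ this is $\lesssim\Delta_3\Delta_4\gamma\,|\ap Dg|$, the $L^1$ tilt-excess of the low slab is $\lesssim M\Delta_3\Delta_4\gamma$ via \ref{miniremark:codim1_planes}, hence also small. Only with both controls in hand does a \emph{single} application of \ref{lemma:shifting} with shift vector $P^\ast(\zeta-P(a))$ of length $\le r/2$ produce $\xi$. Without this log-bound/coarea mechanism your argument does not close, and your proposed fall-back to \ref{corollary:strong_maximum_principle} is also unnecessary: the paper simply assumes $\gamma>0$ and the construction of $t$ as a supremum handles the degenerate case automatically.
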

\begin{proof}
	Assume $M \geq 1$. Define $\alpha = ( 2 \isoperimetric{\vdim})^{-1}$,
	\begin{gather*}
		\beta = \infty \quad \text{if $\vdim = 1$}, \qquad \beta =
		\vdim/(\vdim-1) \quad \text{if $\vdim > 1$}, \\
		\varepsilon = \inf \big \{ 2^{-6\vdim}
		\Gamma_{\ref{thm:relative_iso}\eqref{item:relative_iso:some_bound}}
		( \vdim, \alpha )^{-1}, \textstyle{\frac{1}{2}} ( 16 \vdim
		\isoperimetric{\vdim} )^{-\vdim} ( 1 - \alpha
		\isoperimetric{\vdim} )^{-1} \big \}, \\
		\Delta_1 = \isoperimetric{\vdim} ( 1- \alpha
		\isoperimetric{\vdim} )^{-1}, \quad \Delta_2 =
		\Gamma_{\ref{thm:weak_harnack_inequality}} ( \vdim, \infty,
		\alpha, 1 ), \quad \Delta_3 = \Delta_2 ( 1 + \Delta_1 ), \\
		\Delta_4 = ( ( 5 \Delta_3 + 1 ) \exp ( 120M/\varepsilon ) - 1
		)/(5\Delta_3), \\
		\Delta_5 = \sup \big \{ \alpha^{-1}, 3 M^{1/\beta}
		\varepsilon^{-1}, 2^{10} M \varepsilon^{-1} ( 5 \Delta_3
		\Delta_4 + 1 ) \big \}, \quad \Gamma = \sup \{ \Delta_3
		\Delta_4, \Delta_5 \}.
	\end{gather*}

	In order to prove that $\Gamma$ has the asserted property, suppose
	$\vdim$, $M$, $\adim$, $p$, $U$, $V$, $\psi$, $a$, $r$, $P$, $Q$,
	$h$, $\gamma$, and $\zeta$ satisfy the hypotheses in the body of the
	lemma.

	For this purpose one may assume $r = 1$ and $\gamma > 0$. Since
	$\oball{a}{1/8} \cap \spt \| V \| \neq \emptyset$ as $\gamma + h <
	\infty$, one infers from \cite[2.5]{snulmenn.isoperimetric} that
	\begin{gather*}
		\measureball{\| V \|}{\oball{a}{{\textstyle\frac{3}{16}}}}
		\geq (16\vdim \isoperimetric{\vdim} )^{-\vdim} ( 1- \alpha
		\isoperimetric{\vdim} )^{-1}.
	\end{gather*}
	Define $T \in \mathscr{D}_0 ( U )$ by
	\begin{gather*}
		T ( \theta ) = \tint{U}{} \theta (z) \left < \eta ( V; z ), Q
		\right > \ud \| \delta V \| z \quad \text{for $\theta \in
		\mathscr{D}^0 ( U )$}
	\end{gather*}
	with $\eta$ as in \ref{miniremark:situation_general}
	and $f : U \to \rel$ by
	\begin{gather*}
		f (z) = \sup \{ Q (z) - h, 0 \} \quad \text{for $z \in U$}.
	\end{gather*}
	Note that $f$ is a nonnegative Lipschitzian function with
	\begin{gather*}
		f(z) = Q(z)-h \quad \text{and} \quad \ap D f (z) = \ap D Q (z)
	\end{gather*}
	whenever $z \in \spt \| V \|$, hence
	\begin{gather*}
		\tint{U}{} \ap D \theta \bullet \ap Df \ud \| V \| = T (
		\theta ) \quad \text{whenever $\theta \in \mathscr{D}^0 ( U
		)$}
	\end{gather*}
	by \ref{ex:some_lap}\,\eqref{item:some_lap:c2} applied to $Q$. Also
	note that \ref{corollary:dual_embedding} with $q$, $r$, and $s$
	replaced by $\vdim$, $\beta$, and $\infty$ yields in conjunction with
	\ref{remark:dual_embedding} that
	\begin{gather*}
		\nuqastnorm{\infty}{V}{T} \leq \Delta_1 \psi ( U )^{1/\vdim}.
	\end{gather*}
	Defining $t = \sup \{ u \with \| V \| (
	\classification{\oball{a}{\frac{3}{16}}}{z}{f(z) < u} ) \leq
	\varepsilon \}$, one obtains
	\begin{gather*}
		\| V \| (
		\classification{\oball{a}{\textstyle\frac{3}{16}}}{z}{f(z) <
		t} ) \leq \varepsilon \leq \| V \| (
		\classification{\oball{a}{\textstyle\frac{3}{16}}}{z}{f(z)
		\leq t} )
	\end{gather*}
	since $\varepsilon < \measureball{\| V \|}{\oball{a}{\frac{3}{16}}} <
	\infty$. One infers from
	\ref{thm:weak_harnack_inequality}\,\eqref{item:weak_harnack_inequality:some_bound}
	with $U$, $q$, $M$, $r$, and $B$ replaced by
	$\oball{a}{\frac{3}{16}}$, $\infty$, $3^{-\vdim}
	\Gamma_{\ref{thm:relative_iso}\eqref{item:relative_iso:some_bound}} (
	\vdim, \alpha )^{-1}$, $\frac{1}{16}$, $\Bdry \oball{a}{\frac{3}{16}}$
	and $\nu$, $\mu$ and $A$ as in \ref{example:V_subharmonic} that
	\begin{gather*}
		t \leq \Delta_2 \big ( \inf f \lIm \oball{a}{1/8} \cap \spt \|
		V \| \rIm + \Delta_1 \psi (U)^{1/\vdim} \big ) \leq \Delta_3
		\gamma.
	\end{gather*}

	Define $g : U \to \rel$ by $g(z) = \log ( 5 f(z) + \gamma )$ for $z
	\in U$, note that $g$ is a Lipschitzian function and
	\begin{gather*}
		\ap D g (z) = 5 ( 5 f(z) + \gamma )^{-1} \ap D f(z) \quad
		\text{for $\| V \|$ almost all $z$}.
	\end{gather*}
	From \ref{lemma:log_bound} with $d(z)$,  $\delta$, $q$, and $r$
	replaced by $\dist (z, \rel^\adim \without U )$, $1/5$, $\beta$, and
	$1$ and $\nu$, $\mu$ and $A$ as in \ref{example:V_subharmonic} one
	obtains in conjunction with H\"older's inequality
	\begin{gather*}
		\tint{\oball{a}{4/5}}{} | \ap D g | \ud \| V \| \leq 40
		M.
	\end{gather*}
	Let $s_1 = \log ( 5 \Delta_3 + 1 ) + \log \gamma$ and $s_2 = \log ( 5
	\Delta_3 \Delta_4 + 1 ) + \log \gamma $ and note $s_2-s_1= 120
	M/\varepsilon$ by the definition of $\Delta_4$. Defining $E(s) =
	\classification{U}{z}{g(z) < s}$ and $V_s = V \restrict E(s) \times
	\grass{\adim}{\vdim}$ for $s \in \rel$, one infers from
	\ref{lemma:coarea} that
	\begin{gather*}
		\tint{s_1}{s_2} \measureball{\| (\delta V) \restrict E(s) -
		\delta V_s \|} {\oball{a}{4/5}} \ud \mathscr{L}^1 s \leq 40 M.
	\end{gather*}
	Therefore there exists $s$ such that
	\begin{gather*}
		s_1 < s < s_2 \quad \text{and} \quad \measureball{\| ( \delta
		V ) \restrict E(s) - \delta V_s \|}{\oball{a}{4/5}} \leq 40 M
		( s_2-s_1 )^{-1} = \varepsilon/3.
	\end{gather*}
	Since $\psi ( U )^{1/\vdim} \leq \Gamma^{-1} \leq M^{-1/\beta}
	\varepsilon/3$, this implies
	\begin{gather*}
		\measureball{\| \delta V_s \|}{\oball{a}{4/5}} \leq 2
		\varepsilon/3.
	\end{gather*}
	Using \ref{miniremark:codim1_planes}, one also obtains
	\begin{gather*}
		\begin{aligned}
			& 16 \tint{\oball{a}{4/5} \times
			\grass{\adim}{\vdim}}{} \| P^\ast \circ P -
			\project{S} \| \ud V_s (z,S) \\
			& \qquad \leq 2^5
			\tint{\classification{\oball{a}{4/5}}{z}{f(z) <
			\Delta_3 \Delta_4 \gamma}}{} | \ap Df | \ud \| V \|
			\leq 2^8 M ( 5 \Delta_3 \Delta_4 + 1) \gamma \leq
			\varepsilon/3.
		\end{aligned}
	\end{gather*}
	Recalling $t \leq \Delta_3 \gamma$, one notes
	\begin{gather*}
		\classification{U}{z}{f(z)\leq t} \subset E(s), \quad
		\measureball{\| V_s \|}{\cball{a}{\textstyle\frac{3}{16}}}
		\geq \varepsilon.
	\end{gather*}
	Therefore, combining the preceding estimates with \ref{lemma:shifting}
	with $U$, $v$, $T$, $r$, $A$, and $B$ replaced by $\oball{a}{4/5}$,
	$P^\ast ( \zeta -P(a))$, $\im P^\ast$, $\frac{1}{16}$,
	$\cball{a}{\frac{3}{16}}$, and $\cball{a+P^\ast (\zeta-P(a))}{1/4}$,
	one infers
	\begin{gather*}
		\measureball{\| V_s \|}{\cball{a+P^\ast ( \zeta-P(a) )}{1/4}}
		\geq \varepsilon/2.
	\end{gather*}
	Since $\spt \| V_s \| \subset \eqclassification{\spt \| V \|}{z}{f(z)
	\leq \Delta_3 \Delta_4 \gamma}$, this implies the existence of $\xi$
	such that
	\begin{gather*}
		\xi \in \cball{a+P^\ast ( \zeta-P(a))}{1/4} \cap \spt \| V \|,
		\quad Q ( \xi ) - h \leq \Gamma \gamma,
	\end{gather*}
	hence $| P ( \xi ) - \zeta | = | P ( \xi - ( a + P^\ast ( \zeta - P (
	a ) ) ) ) | \leq 1/4$ and $| \xi - a| \leq 3/4$.
\end{proof}
\begin{miniremark} \label{miniremark:planes_1codim}
	Suppose $1 < \adim \in \nat$, $q,Q \in \mathbf{O}^\ast ( \adim, 1 )$
	and $q^\ast ( 1 ) \bullet Q^\ast ( 1 ) \geq 0$.

	Then by Allard \cite[3.2\,(9)]{MR840267}
	\begin{gather*}
		\| \eqproject{\ker q} - \eqproject{\ker Q} \| \leq |q-Q| \leq
		2^{1/2} \| \eqproject{\ker q} - \eqproject{\ker Q} \|.
	\end{gather*}
\end{miniremark}
\begin{lemma} \label{lemma:closed_set_tilt}
	Suppose $\vdim \in \nat$, $a \in \rel^\adim$, $0 < r < \infty$, $S
	\subset \oball{a}{r}$, $0 < \delta < \infty$, $0 \leq \gamma \leq
	\delta$, and $C$ is the set of $(b,Q) \in S \times \mathbf{O}^\ast (
	\vdim+1, 1 )$ satisfying the following two conditions:
	\begin{enumerate}
		\item \label{item:closed_set_tilt:nonnegative} $Q(z-b) \geq -
		\gamma r$ whenever $z \in S$.
		\item \label{item:closed_set_tilt:new_points} Whenever $v \in
		\mathbf{S}^\vdim \cap \ker Q$ there exists $\xi \in S$ such
		that
		\begin{gather*}
			( \xi - b ) \bullet v \geq 4 \delta r \quad \text{and}
			\quad Q ( \xi - b ) \leq \gamma r.
		\end{gather*}
	\end{enumerate}
	Denote by $\phi_\infty$ the size $\infty$ approximating measure
	occuring in the construction of $\mathscr{H}^\vdim$ on
	$\mathbf{O}^\ast ( \vdim + 1, 1 )$.

	Then
	\begin{gather*}
		\phi_\infty ( C \lIm \cball{a}{\delta r} \rIm ) \leq \Gamma
		\delta^{-\vdim} \gamma^\vdim
	\end{gather*}
	where $\Gamma$ is a positive, finite number depending only on $\vdim$.
\end{lemma}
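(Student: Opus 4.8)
The plan is to identify $\orthproj{\vdim+1}{1}$ with the unit sphere $\mathbf{S}^{\vdim} \subset \rel^{\vdim+1}$ via $Q \mapsto Q^\ast(1)$ (a map that is bi-Lipschitz with absolute constants, cf.\ \ref{miniremark:planes_1codim}) and to show that $C \lIm \cball{a}{\delta r} \rIm$ is covered by two balls of radius of order $\gamma/\delta$. First I dispose of trivial cases: if $(b,Q) \in C$ with $b \in \cball{a}{\delta r}$ and $v,\xi$ are as in condition (2), then $4\delta r \le (\xi-b)\bullet v \le |\xi-b| < (1+\delta)r$, forcing $\delta < 1/3$; so one may assume $\delta < 1/3$ (hence $\gamma < 1$) and that $C\lIm\cball{a}{\delta r}\rIm \neq \varnothing$. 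I also record that $\phi_\infty(B) \le \unitmeasure{\vdim}\varrho^{\vdim}$ whenever $B \subset \orthproj{\vdim+1}{1}$ has diameter at most $2\varrho$, and in particular $\phi_\infty(\orthproj{\vdim+1}{1}) \le \unitmeasure{\vdim}$.

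The core is a pairwise estimate. Suppose $(b_1,Q_1),(b_2,Q_2) \in C$ with $b_1,b_2 \in \cball{a}{\delta r}$; put $n_i = Q_i^\ast(1)$ and $\sigma = (1-(n_1\bullet n_2)^2)^{1/2} = \sin\angle(n_1,n_2)$, and assume $\sigma > 0$. Apply condition (2) at $(b_2,Q_2)$ with the unit vector $w \in \ker Q_2$ lying in $\Span\{n_1,n_2\}$ with $n_1\bullet w = -\sigma$, getting $\xi \in S$ with $(\xi-b_2)\bullet w \ge 4\delta r$ and $Q_2(\xi-b_2) \le \gamma r$; using condition (1) at $(b_2,Q_2)$ for the lower bound $Q_2(\xi-b_2) \ge -\gamma r$ and decomposing $\xi-b_2$ orthogonally along $n_2$ and $\ker Q_2$ yields $n_1\bullet(\xi-b_2) \le \gamma r - 4\sigma\delta r$; then condition (1) at $(b_1,Q_1)$ applied to $z=\xi$ gives $4\sigma\delta r \le n_1\bullet(b_2-b_1) + 2\gamma r$, and symmetrically $4\sigma\delta r \le -n_2\bullet(b_2-b_1) + 2\gamma r$. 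Adding, with $|b_2-b_1| \le 2\delta r$, gives $8\sigma\delta r \le |n_1-n_2|\,2\delta r + 4\gamma r$. Since $|n_1-n_2|\le 2$ this already forces $\sigma \le \tfrac12 + \gamma/(2\delta)$ for every such pair; and if moreover $n_1\bullet n_2 \ge 0$ then $|n_1-n_2|^2 = 2(1-n_1\bullet n_2) \le 2\sigma^2$, whence $\sigma \le C\gamma/\delta$ and $|n_1-n_2| \le \Gamma_1 \gamma/\delta$ for an absolute constant $\Gamma_1$.

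To assemble, fix $c_0 = 1/2$ and $s_0 = (1-(\tfrac12+\tfrac{c_0}{2})^2)^{1/2} = \sqrt7/4 > 1/2$, so $2s_0^3 + 3s_0^2 > 1$. If $\gamma/\delta \ge c_0$, then $\phi_\infty(C\lIm\cball{a}{\delta r}\rIm) \le \unitmeasure{\vdim} \le \unitmeasure{\vdim} c_0^{-\vdim}(\gamma/\delta)^{\vdim}$. If $\gamma/\delta < c_0$, pick $Q_0 \in C\lIm\cball{a}{\delta r}\rIm$, put $n_0 = Q_0^\ast(1)$, and split $C\lIm\cball{a}{\delta r}\rIm$ into $P^+ = \{Q : Q^\ast(1)\bullet n_0 \ge 0\}$ and its complement $P^-$. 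By the weak bound every pairwise inner product of unit vectors from $C\lIm\cball{a}{\delta r}\rIm$ has modulus $\ge s_0$. If two elements $Q,Q'$ of $P^+$ (or of $P^-$) had $Q^\ast(1)\bullet(Q')^\ast(1) < 0$, the Gram matrix of $n_0, Q^\ast(1),(Q')^\ast(1)$ would have off-diagonal entries $\alpha,\beta$ of one sign and $\mu$ of the other, all of modulus $\ge s_0$, so its determinant $1-\alpha^2-\beta^2-\mu^2+2\alpha\beta\mu \le 1 - 3s_0^2 - 2s_0^3 < 0$, contradicting positive semidefiniteness; hence within each of $P^\pm$ all pairs have nonnegative inner product, and the pairwise estimate gives $\diam P^\pm \le \Gamma_1\gamma/\delta$. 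Therefore $\phi_\infty(C\lIm\cball{a}{\delta r}\rIm) \le \phi_\infty(P^+)+\phi_\infty(P^-) \le 2\unitmeasure{\vdim}(\Gamma_1\gamma/\delta)^{\vdim}$, and taking $\Gamma = \max\{\unitmeasure{\vdim}c_0^{-\vdim},\,2\unitmeasure{\vdim}\Gamma_1^{\vdim}\}$ proves the lemma, since $\delta^{-\vdim}\gamma^{\vdim} = (\gamma/\delta)^{\vdim}$.

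The main obstacle is precisely the ``antiparallel'' direction: the pairwise estimate only bounds the distance between two supporting directions when they are nearly parallel, so a priori $C\lIm\cball{a}{\delta r}\rIm$ could split into two clusters of diameter of order $1$ about $n_0$ and $-n_0$. The three-point Gram-determinant argument is what is genuinely needed to rule out near-antiparallel pairs inside each cluster, collapsing each of $P^\pm$ to diameter of order $\gamma/\delta$ and hence yielding the stated content bound.
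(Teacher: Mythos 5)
Your proof is correct, and while it shares the overall strategy of the paper's argument (reduce to a pairwise ``tilt'' estimate showing that two touching directions at nearby points of $S$ must be close, then assemble a covering by a bounded number of small sets), the details of both steps are genuinely different. For the pairwise estimate the paper's route is to fix one of the two linear forms, say $q$, write the half-space $\{z\colon Q(z-b)\geq 0\}$ as the supergraph of an affine function $f$ over $\ker q$, and then bound $|\grad f(0)|$ by evaluating $f$ at the two points $p(b)$ and $p(d)$ (with $d$ a small perturbation of the touching witness $\xi$), finally converting the slope bound back to $|q-Q|$ via Allard \cite[8.9\,(5)]{MR0307015}. Your derivation stays entirely in the $2$-plane spanned by the two normals $n_1,n_2$, testing condition~(2) at each center against the unit tangent vector $w \in \ker Q_2 \cap \Span\{n_1,n_2\}$ with $n_1\bullet w = -\sigma$ and its mirror image, which produces the same $\gamma/\delta$ bound for $|n_1-n_2|$ by pure inner-product arithmetic; this avoids the graph representation and the appeal to Allard's estimate altogether. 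For the assembly the paper takes a maximal $1$-separated net $B$ of $C\lIm\cball{a}{\delta r}\rIm$; disjointness of half-balls bounds $\card B$ by a dimensional constant, and within each unit ball the pairwise estimate collapses the diameter. Your Gram-determinant trick replaces this by an exact two-set decomposition $P^+ \cup P^-$: once $\gamma/\delta < 1/2$ the weak bound forces all pairwise inner products $|n\bullet n'| \geq s_0 > 1/2$, and the positive semidefiniteness of the $3\times 3$ Gram matrix of $n_0, n, n'$ — together with the factorisation $2s_0^3+3s_0^2-1 = (2s_0-1)(s_0+1)^2 > 0$ — rules out two near-antiparallel pairs within one hemisphere, so each of $P^\pm$ has diameter $\lesssim \gamma/\delta$. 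This yields a more explicit constant and makes the dichotomy between near-parallel and near-antiparallel configurations (which the paper encodes implicitly in the hypothesis $|q-Q|<1$ of its pairwise claim) structurally transparent. Both are valid; your version is perhaps more elementary and self-contained, while the paper's supergraph argument parallels the technique reused in \ref{lemma:abstract_oscillation_estimate}.
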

\begin{proof}
	Rescaling, one may assume $a = 0$ and $r = 1$.

	Abbreviate $\adim = \vdim + 1$ and $A = C \lIm \cball{a}{\delta r}
	\rIm$. Using the action of $\mathbf{O} ( \adim )$ on $\mathbf{O}^\ast
	( \adim, 1 )$, one obtains $0 < \Delta < \infty$ depending only on
	$\vdim$ such that
	\begin{gather*}
		\mathscr{H}^\vdim ( \mathbf{O}^\ast ( \adim, 1 ) \cap
		\oball{q}{1/2} ) = \Delta \quad \text{whenever $q \in
		\mathbf{O}^\ast ( \adim, 1 )$}.
	\end{gather*}
	Suppose $B$ is a maximal subset of $A$ (with respect to inclusion)
	such that
	\begin{gather*}
		\text{$q_1, q_2 \in B$ with $\oball{q_1}{1/2} \cap
		\oball{q_2}{1/2} \neq \emptyset$ implies $q_1 = q_2$}.
	\end{gather*}
	One notes
	\begin{gather*}
		A \subset {\textstyle\bigcup} \{ \oball{q}{1} \with q \in B
		\}, \quad \card B \leq \Delta^{-1} \mathscr{H}^\vdim (
		\mathbf{O}^\ast ( \adim, 1 ) ).
	\end{gather*}

	Next, it will be shown: \emph{If $(c,q) \in C| \cball{a}{\delta}$,
	$(b,Q) \in C| \cball{a}{\delta}$, $q \in B$ and $|q-Q| < 1$ then
	\begin{gather*}
		|q-Q| \leq 4 \delta^{-1} \gamma.
	\end{gather*}}
	For this purpose choose $p \in \mathbf{O}^\ast ( \adim, 1 )$ with $q
	\circ p^\ast = 0$. Since $|q-Q| < 1$, one notes $Q(q^\ast (y) ) = y +
	(Q-q) ( q^\ast (y)) \to - \infty$ as $y \to - \infty$. Hence there
	exists an affine function $f : \rel^\vdim \to \rel$ such that
	\begin{gather*}
		\{ p^\ast (x) + q^\ast (y) \with \text{$(x,y) \in \rel^\vdim
		\times \rel$ and $y \geq f(x)$} \} =
		\classification{\rel^\adim}{z}{Q(z-b) \geq 0}.
	\end{gather*}
	Assume $\grad f (0) \neq 0$. Letting $u = | \grad f(0) |^{-1} \grad
	f(0)$ and $v = p^\ast ( u )$, by
	\eqref{item:closed_set_tilt:new_points} with $(b,Q)$ replaced by
	$(c,q)$, there exists $\xi$ such that
	\begin{gather*}
		\xi \in S, \quad q ( \xi-c) \leq \gamma, \quad (\xi-c) \bullet
		v \geq 4 \delta.
	\end{gather*}
	Since $Q ( \xi - b ) \geq - \gamma$ by
	\eqref{item:closed_set_tilt:nonnegative}, one obtains, defining $d =
	\xi + Q^\ast ( \gamma )$,
	\begin{gather*}
		|d-\xi| \leq \gamma, \quad Q(d-b) \geq 0, \quad q (d-c) \leq 2
		\gamma.
	\end{gather*}
	Defining $\kappa = p (b) \bullet u$ and $\lambda = p ( d ) \bullet u$,
	one notes
	\begin{gather*}
		\kappa \leq \delta, \quad \lambda \geq 2 \delta, \quad
		\lambda-\kappa \geq \delta
	\end{gather*}
	since $\sup \{ |b|, |c| \} \leq \delta$ and $\lambda = ( \xi - c )
	\bullet v + ( c + d-\xi ) \bullet v$, $|c+d-\xi| \leq 2 \delta$.
	Moreover, using $f(x) = f(0) + | \grad f(0)| x \bullet u$ for $x \in
	\rel^\vdim$,
	\begin{gather*}
		f ( \lambda u ) = f ( p ( d ) ) \leq q ( d ), \quad f ( \kappa
		u ) = f ( p (b) ) = q (b).
	\end{gather*}
	Combining this with the preceding estimate for $q(d-c)$, one infers
	\begin{gather*}
		| \grad f(0)| = ( \lambda - \kappa )^{-1} ( f ( \lambda u ) -
		f ( \kappa u ) ) \leq \delta^{-1} q ( d-b ) \leq 2 \delta^{-1}
		\gamma.
	\end{gather*}
	Since $q^\ast (1) \bullet Q^\ast (1) = 1 + (Q-q) (q^\ast (1)) > 0$,
	the assertion now follows from Allard \cite[8.9\,(5)]{MR0307015} in
	conjunction with \ref{miniremark:planes_1codim}.

	The assertion of the preceding paragraph implies
	\begin{gather*}
		\diam ( A \cap \oball{q}{1} ) \leq 8 \delta^{-1} \gamma \quad
		\text{whenever $q \in B$}
	\end{gather*}
	hence the conclusion with $\Gamma = \unitmeasure{\vdim} 2^{2\vdim}
	\Delta^{-1} \mathscr{H}^\vdim ( \mathbf{O}^\ast ( \vdim + 1 , 1 ) )$.
\end{proof}
\begin{miniremark} \label{miniremark:touching_ball}
	Suppose $1 < \adim \in \nat$, $p \in \orthproj{\adim}{\adim-1}$, $q
	\in \orthproj{\adim}{1}$, $p \circ q^\ast = 0$, $a \in \rel^\adim$, $0
	< R < \infty$ and $z \in \rel^\adim$ with $|z-a+q^\ast (R)| \geq R$,
	$|p (z-a) | \leq R$ and $q(z-a) \geq -R$.

	Then
	\begin{gather*}
		q(z-a) \geq - ( R^2 - | p (z-a) |^2 )^{-1/2} | p (z-a)|^2;
	\end{gather*}
	in fact
	\begin{align*}
		q(z-a) & = ( |z-a+q^\ast (R) |^2 - | p(z-a) |^2 )^{1/2} - R \\
		& \geq (R^2-|p(z-a)|^2)^{1/2} -R \geq - ( R^2 - | p(z-a)|^2
		)^{-1/2} | p (z-a)|^2.
	\end{align*}
\end{miniremark}
\begin{lemma} \label{lemma:local_estimate_contact_set}
	Suppose $\vdim \in \nat$ and $0 \leq M < \infty$.

	Then there exists a positive, finite number $\Gamma$ with the
	following property.

	If $\adim$, $p$, $U$, $V$, and $\psi$ are related to $\vdim$ as in
	\ref{miniremark:situation_general}, $p = \vdim = \adim-1$, $a \in
	\rel^\adim$, $0 < r < \infty$, $U = \oball{a}{r}$, $\| V \| ( U ) \leq
	M r^\vdim$, $0 < R < \infty$,
	\begin{gather*}
		C = \eqclassification{\spt \| V \| \times \mathbf{O}^\ast (
		\adim, 1 )}{(b,Q)}{ \oball{b-Q^\ast ( R )}{R} \cap \spt \| V
		\| = \emptyset },
	\end{gather*}
	and $\phi_\infty$ denotes the size $\infty$ approximating measure
	occuring in the construction of $\mathscr{H}^\vdim$ on
	$\mathbf{O}^\ast ( \adim, 1 )$, then
	\begin{gather*}
		\phi_\infty ( C \lIm \cball{a}{2^{-5}r} \rIm ) \leq \Gamma
		\big ( ( r/R )^\vdim + \psi ( U ) \big ).
	\end{gather*}
\end{lemma}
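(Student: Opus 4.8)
The plan is to bound $\phi_\infty$ of the ``bad'' set of pairs $(b,Q)$ where a ball of radius $R$ tangent at $b$ avoids $\spt\|V\|$, by a combination of a barrier argument and the curvature-free touching estimate already available in \ref{lemma:closed_set_tilt}. First I would fix $\delta$ to be a small absolute constant (depending only on $\vdim$), set $\gamma = \Gamma_{\ref{lemma:touching}}(\vdim,M)^{-1}$ — where $\Gamma_{\ref{lemma:touching}}$ is the constant from \ref{lemma:touching} — possibly shrunk further, and split $C\lIm\cball{a}{2^{-5}r}\rIm$ into two pieces according to whether the pair $(b,Q)$ satisfies the two conditions \eqref{item:closed_set_tilt:nonnegative} and \eqref{item:closed_set_tilt:new_points} of \ref{lemma:closed_set_tilt} with $S = \spt\|V\|\cap\oball{a}{r}$. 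The first piece, on which both conditions hold, is controlled directly by \ref{lemma:closed_set_tilt} with $\gamma$ chosen comparable to $r/R$ (using \ref{miniremark:touching_ball} to see that an avoided tangent ball of radius $R$ forces $Q(z-b)\geq -(R^2-|p(z-b)|^2)^{-1/2}|p(z-b)|^2 \geq -\gamma r$ for $z$ in the relevant neighbourhood once $\gamma r \gtrsim r^2/R$), yielding a bound of the form $\Gamma'\delta^{-\vdim}(r/R)^\vdim$.

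The second piece consists of pairs $(b,Q)\in C$ for which \eqref{item:closed_set_tilt:nonnegative} fails or \eqref{item:closed_set_tilt:new_points} fails. Failure of \eqref{item:closed_set_tilt:nonnegative} means there is $z\in S$ with $Q(z-b) < -\gamma r$; combined with membership in $C$ and \ref{miniremark:touching_ball}, this forces $|p(z-b)|$ to be bounded below by a definite multiple of $(\gamma r R)^{1/2}$, hence — provided $\gamma r \gtrsim r^2/R$ fails, i.e.\ in the regime $R \lesssim \gamma^{-1} r$ — one can show this cannot happen for $b$ near $a$ and $R$ large, so the genuinely relevant case is $R \lesssim r/\gamma$, in which $(r/R)^\vdim \gtrsim \gamma^\vdim$ already dominates the whole $\phi_\infty$-measure of $\mathbf{O}^\ast(\adim,1)$ up to an absolute constant, and the estimate is trivial. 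For the failure of \eqref{item:closed_set_tilt:new_points}: there is then $v\in\mathbf{S}^\vdim\cap\ker Q$ such that no $\xi\in S$ has both $(\xi-b)\bullet v \geq 4\delta r$ and $Q(\xi-b)\leq\gamma r$. This is exactly the hypothesis needed to apply \ref{lemma:touching} (after translating and setting up $P,Q$ with $P^\ast(v)\in\im P^\ast$): with $h = \inf Q\lIm\spt\|V\|\rIm$ and the given smallness $\psi(U)^{1/\vdim}\leq\gamma$, \ref{lemma:touching} produces, for every $\zeta\in\cball{P(b')}{r'/2}$ for a suitable ball $\oball{b'}{r'}\subset U$, a point $\xi\in\spt\|V\|$ with $Q(\xi)\leq\Gamma_{\ref{lemma:touching}}\gamma r' + h$ and $|P(\xi)-\zeta|\leq r'/4$ — contradicting the failure of \eqref{item:closed_set_tilt:new_points} once $\gamma$ is small enough that $\Gamma_{\ref{lemma:touching}}\gamma < \delta$ and once one observes that $h$ must lie near $Q(b)$ because $(b,Q)\in C$ means the tangent ball at $b$ is avoided so $b$ is essentially the lowest point in the $Q$-direction. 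Hence the second piece is empty after the constants are fixed, or at worst absorbed into the $\psi(U)$ term through the density hypothesis feeding into \ref{lemma:touching}.

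The remaining bookkeeping is to package the two regimes: when $R \geq c\,r/\gamma$ the tangent-ball barrier is effective and \ref{lemma:closed_set_tilt} plus the emptiness of the second piece give $\phi_\infty(C\lIm\cball{a}{2^{-5}r}\rIm)\leq \Gamma''((r/R)^\vdim + \psi(U))$; when $R < c\,r/\gamma$ we simply use $\phi_\infty(C\lIm\cball{a}{2^{-5}r}\rIm)\leq \phi_\infty(\mathbf{O}^\ast(\adim,1)\times\text{something}) \leq \mathscr{H}^\vdim(\mathbf{O}^\ast(\adim,1)) \leq \Gamma''' \leq \Gamma''' c^{-\vdim}\gamma^\vdim \leq \Gamma''''(r/R)^\vdim$, absorbing into the first term. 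Taking $\Gamma$ to be the supremum of the finitely many constants so produced — all depending only on $\vdim$ and $M$ since $\gamma$ and $\delta$ do — completes the proof. I expect the main obstacle to be the careful matching of \ref{lemma:touching} to the negation of condition \eqref{item:closed_set_tilt:new_points}: one must verify that the scale $r'$ of the auxiliary ball in \ref{lemma:touching}, the requirement $\oball{a}{r/8}\cap\spt\|V\|\neq\emptyset$, and the relation between $h$ and $Q(b)$ can all be arranged simultaneously for pairs $(b,Q)$ with $b\in\cball{a}{2^{-5}r}$, i.e.\ that there is enough room inside $\oball{a}{r}$, and that the density bound $\|V\|(U)\leq Mr^\vdim$ indeed controls the constant in \ref{lemma:touching} uniformly.
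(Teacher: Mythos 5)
Your overall plan is the same as the paper's: the main measure estimate comes from \ref{lemma:closed_set_tilt}, the monotonicity observation \ref{miniremark:touching_ball} supplies condition \eqref{item:closed_set_tilt:nonnegative}, the Harnack-type lemma \ref{lemma:touching} supplies condition \eqref{item:closed_set_tilt:new_points}, and there is a separate ``trivial'' regime in which one simply compares to $\mathscr{H}^\vdim ( \mathbf{O}^\ast ( \vdim+1, 1 ) )$. You have correctly identified all three ingredients and how they must interlock, and you correctly flagged the two places where care is needed --- feeding the mass bound $\| V \| ( U ) \leq M r^\vdim$ into \ref{lemma:touching} at the smaller scale (the paper uses $\Gamma_{\ref{lemma:touching}} ( \vdim, 2^\vdim M )$ precisely because the ball shrinks to $\oball{b}{r/2}$), and locating $h$ relative to $Q(b)$ via \ref{miniremark:touching_ball}.

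Where your writeup departs from being a clean proof is in the middle, and the issue is organisational rather than mathematical. You split $C \lIm \cball{a}{2^{-5}r} \rIm$ into a piece satisfying the conditions of \ref{lemma:closed_set_tilt} and a complementary piece, and then argue at length that the complementary piece is empty via a case distinction on the ``failure of condition \eqref{item:closed_set_tilt:nonnegative}''; in the process the symbol $\gamma$ is used simultaneously for the smallness threshold in \ref{lemma:touching} (a fixed constant, you say) and for the parameter in \ref{lemma:closed_set_tilt} (which you later take comparable to $r/R$), and the regimes ``$R$ large'' and ``$R$ small'' get tangled. The paper avoids all of this by introducing, at the outset, the single quantity $\gamma = r/R + \psi(U)^{1/\vdim}$. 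When $\gamma$ is not small the estimate is trivial, and when $\gamma$ is small one shows directly --- with no case split --- that \emph{every} $(b,Q) \in C$ with $b \in \cball{a}{r/2}$ satisfies both conditions of \ref{lemma:closed_set_tilt} with $\gamma$ there replaced by $\Delta\gamma$ and $\delta = 2^{-5}$: condition \eqref{item:closed_set_tilt:nonnegative} follows from \ref{miniremark:touching_ball} alone (the lower bound $-8 r^2/R \geq -\Delta\gamma r$ holds for all $z \in \spt\|V\|$ because $|P(z-b)| < 2r \leq R/2$ once $\gamma$ is small), and condition \eqref{item:closed_set_tilt:new_points} follows from \ref{lemma:touching} applied on $\oball{b}{r/2}$, where the inner infimum $\inf Q \lIm \oball{b}{2^{-4}r} \cap \spt \| V \| \rIm - h$ is controlled by the same $8r^2/R$ bound. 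There is never a ``failure'' to rule out; the failure-of-condition-(1) analysis and the resulting $R \lesssim r/\gamma$ subcase are entirely superfluous once the trivial regime is dispatched first. I recommend restructuring along those lines, clearly separating the two distinct roles of $\gamma$ and keeping the parameter of \ref{lemma:closed_set_tilt} proportional to $r/R + \psi(U)^{1/\vdim}$ throughout.
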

\begin{proof}
	Define $\Delta = 8 \sup \{ 1, \Gamma_{\ref{lemma:touching}} ( \vdim,
	2^\vdim M ) \}$ and
	\begin{gather*}
		\Gamma = 2^{6\vdim} \Delta^\vdim \sup \{
		\Gamma_{\ref{lemma:closed_set_tilt}} ( \vdim ),
		\mathscr{H}^\vdim ( \mathbf{O}^\ast ( \vdim + 1, 1 ) \}.
	\end{gather*}

	In order to prove that $\Gamma$ has the asserted property, suppose
	$\vdim$, $M$, $\adim$, $p$, $U$, $V$, $\psi$, $a$, $r$, $R$, $C$, and
	$\phi_\infty$ satisfy the hypotheses in the body of the lemma.

	Abbreviate $\gamma = r/R + \psi ( U )^{1/\vdim}$. If $\gamma \geq
	2^{-5} \Delta^{-1}$ then $\sup \{ r/R, \psi ( U )^{1/\vdim} \} \geq
	2^{-6} \Delta^{-1}$ and
	\begin{gather*}
		\phi_\infty ( C \lIm \cball{a}{2^{-5}r} \rIm ) \leq
		\mathscr{H}^\vdim ( \mathbf{O}^\ast ( \vdim+1, 1 ) ) \leq
		\Gamma \big ( (r/R)^\vdim + \psi ( U ) \big ).
	\end{gather*}
	Therefore one may assume $\gamma \leq 2^{-5} \Delta^{-1}$.

	Define $S = \spt \| V \|$ and let $D$ denote the set of all $(b,Q) \in
	S \times \mathbf{O}^\ast ( \adim,1 )$ such that the following two
	conditions hold:
	\begin{enumerate}
		\item \label{item:local_estimate_contact_set:1} $Q(z-b) \geq -
		\Delta \gamma r$ whenever $z \in S$.
		\item \label{item:local_estimate_contact_set:2} Whenever $v
		\in \mathbf{S}^\vdim \cap \ker Q$ there exists $\xi \in S$
		such that
		\begin{gather*}
			( \xi-b ) \bullet v \geq r/8 \quad \text{and} \quad Q
			( \xi-b ) \leq \Delta \gamma r.
		\end{gather*}
	\end{enumerate}
	Next, \emph{it will be shown $C | \cball{a}{r/2} \subset D$}. For this
	purpose suppose $(b,Q) \in C$ and $b \in \cball{a}{r/2}$ and choose $P
	\in \mathbf{O}^\ast ( \adim, \vdim )$ with $Q \circ P^\ast = 0$. By
	\ref{miniremark:touching_ball}, one estimates
	\begin{gather*}
		Q (z-b) \geq - ( R^2 - | P (z-b ) |^2 )^{-1/2} | P (z-b) |^2
		\geq - 8 R^{-1} r^2 \geq - \Delta \gamma r
	\end{gather*}
	whenever $z \in S$, hence $(b,Q)$ satisfies condition
	\eqref{item:local_estimate_contact_set:1}. In order to verify
	condition \eqref{item:local_estimate_contact_set:2}, let
	\begin{gather*}
		\zeta = (r/4) P(v) + P (b) \quad \text{and} \quad h = \inf Q
		\lIm \oball{b}{r/2} \cap \spt \| V \| \rIm.
	\end{gather*}
	Noting $| \zeta - P (b) | = r/4$ and, by the preceding estimate,
	\begin{gather*}
		\inf Q \lIm \oball{b}{2^{-4}r} \cap \spt \| V \| \rIm - h \leq
		8 R^{-1} r^2,
	\end{gather*}
	one infers from \ref{lemma:touching} with $M$, $a$, $r$, and $\gamma$
	replaced by $2^\vdim M$, $b$, $r/2$, and $2^4 \gamma$ the existence
	of $\xi \in \spt \| V \|$ such that
	\begin{gather*}
		|P(\xi)-\zeta| \leq r/8, \quad Q(\xi) \leq 8
		\Gamma_{\ref{lemma:touching}} ( \vdim , 2^\vdim M ) \gamma r +
		Q (b).
	\end{gather*}
	Since $P ( \xi-b ) - (r/4) P (v) = P ( \xi ) - \zeta$ and $|P (\xi-b)
	| \geq r/8$, noting
	\begin{gather*}
		2^{-6} r^2 \geq | P ( \xi-b ) - (r/4) P ( v ) |^2 = | P
		(\xi-b ) |^2 - (r/2) P (v) \bullet P ( \xi - b ) + 2^{-4} r^2,
		\\
		v = P^\ast ( P (v) ), \quad ( \xi - b ) \bullet v = P ( v )
		\bullet P ( \xi - b ) \geq r/8,
	\end{gather*}
	the condition \eqref{item:local_estimate_contact_set:2} is now
	evident. Therefore the conclusion follows by applying
	\ref{lemma:closed_set_tilt} with $\delta$, $\gamma$, and $C$ replaced
	by $2^{-5}$, $\Delta \gamma$, and $D$.
\end{proof}
\begin{lemma} \label{lemma:ode_comparison}
	Suppose $1 \leq \vdim < \infty$, $0 \leq s < r < \infty$, $0 \leq
	\beta < \infty$, $I = \{ t \with s < t \leq r \}$, and $f : I \to
	\rel$ is a nonnegative function satisfying
	\begin{gather*}
		\limsup_{u \to t-} f(u) \leq f(t) \leq f (r) + \beta
		\tint{t}{r} f(u)^{1-1/\vdim} \ud \mathscr{L}^1 u
	\end{gather*}
	whenever $t \in I$ where $0^0=1$.

	Then
	\begin{gather*}
		f(t)^{1/\vdim} \leq f(r)^{1/\vdim} + ( \beta/\vdim ) (r-t)
		\quad \text{for $t \in I$}.
	\end{gather*}
\end{lemma}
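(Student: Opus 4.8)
The plan is to run a continuation (open-closed) argument on the interval $I$, exactly paralleling the structure of \ref{lemma:calculus}. First I would reduce to a one-sided comparison: fix $r$ and set $\alpha$ to be any number with $f(r)^{1/\vdim} < \alpha < \infty$, then consider the set $J$ of all $t \in I$ such that
\begin{gather*}
	f(u)^{1/\vdim} \leq \alpha + ( \beta/\vdim )(r-u) \quad \text{whenever $t \leq u \leq r$}.
\end{gather*}
As in \ref{lemma:calculus}, one first records that $f$ is $\mathscr{L}^1 \restrict I$ measurable: whenever $s < t < r$ and $f(t) < y$, the left upper semicontinuity hypothesis $\limsup_{u \to t-} f(u) \leq f(t)$ gives $\density^{\ast 1}( \mathscr{L}^1 \restrict \{ u \with f(u) \geq y \}, t) \leq 1/2$, hence measurability via \cite[2.9.11]{MR41:1976}; this is needed to make sense of the integral in the hypothesis pointwise in $t$.

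Next I would show $J$ is a nonempty interval containing $r$ in its interior relative to $I$ and that it is relatively closed; combined, this forces $J = I$. Nonemptiness near $r$ follows from the continuity of $u \mapsto \tint{u}{r} f(v)^{1-1/\vdim} \ud \mathscr{L}^1 v$ together with $f(r)^{1/\vdim} < \alpha$ and the left upper semicontinuity of $f$ at points slightly below $r$. That $J$ is an interval is immediate from its definition. The crux is the closure step: if $s < t \in \Clos J$, then $f(u)^{1/\vdim} \leq \alpha + (\beta/\vdim)(r-u)$ for all $u$ with $t < u \leq r$ (by definition of $J$ and a limit), so $f(u)^{1-1/\vdim} \leq (\alpha + (\beta/\vdim)(r-u))^{\vdim-1}$ and the hypothesis yields
\begin{gather*}
	f(t) \leq f(r) + \beta \tint{t}{r} ( \alpha + (\beta/\vdim)(r-u) )^{\vdim-1} \ud \mathscr{L}^1 u = f(r) + \alpha^\vdim - \big ( \alpha + (\beta/\vdim)(r-t) \big )^\vdim + \text{(sign check)},
\end{gather*}
where the antiderivative computation gives $\beta \tint{t}{r}(\alpha+(\beta/\vdim)(r-u))^{\vdim-1} \ud \mathscr{L}^1 u = (\alpha + (\beta/\vdim)(r-t))^\vdim - \alpha^\vdim$; since $f(r) < \alpha^\vdim$, one concludes $f(t) < (\alpha + (\beta/\vdim)(r-t))^\vdim$, so the strict inequality at $t$ propagates to a neighbourhood and $t$ lies in the interior of $J$ relative to $I$. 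Hence $I = J$, giving $f(t)^{1/\vdim} \leq \alpha + (\beta/\vdim)(r-t)$ for all $t \in I$; letting $\alpha \downarrow f(r)^{1/\vdim}$ finishes.

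The main obstacle I anticipate is purely bookkeeping around the boundary case $s = 0$ (so $I$ is half-open at its left endpoint) and around the degenerate situation $f(r) = 0$ with $\vdim = 1$, where $0^0 = 1$ must be used consistently in $f(u)^{1-1/\vdim}$; the continuation argument itself is robust, but one must be careful that the antiderivative identity $\beta \tint{t}{r}(\alpha + (\beta/\vdim)(r-u))^{\vdim-1}\ud\mathscr{L}^1 u = (\alpha+(\beta/\vdim)(r-t))^\vdim - \alpha^\vdim$ holds for all real $\vdim \geq 1$ (it does, by the substitution $w = \alpha + (\beta/\vdim)(r-u)$). No auxiliary results beyond the measurability lemma \cite[2.9.11]{MR41:1976} and elementary calculus are needed; in particular this is a strictly easier variant of \ref{lemma:calculus}, since here the integral kernel does not carry the singular factor $u^{-1}$.
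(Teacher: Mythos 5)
Your proof is correct and follows essentially the same continuation argument as the paper, with only cosmetic differences: the paper introduces $\alpha$ with $f(r) < \alpha$ and compares against $\alpha^{1/\vdim} + (\beta/\vdim)(r-u)$, while you introduce $\alpha$ with $f(r)^{1/\vdim} < \alpha$ and compare against $\alpha + (\beta/\vdim)(r-u)$, which is the same thing after relabeling; the paper also dispatches $\vdim = 1$ as trivial at the outset rather than folding it into the general computation. Your observation that this is a simplified variant of \ref{lemma:calculus} without the singular $u^{-1}$ kernel matches the paper's organization.
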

\begin{proof}
	The case $\vdim=1$ is trivial. Suppose $f(r) < \alpha < \infty$ and
	consider the set $J$ of all $t \in I$ such that
	\begin{gather*}
		f(u)^{1/\vdim} \leq \alpha^{1/\vdim} + ( \beta/\vdim ) (r-u)
		\quad \text{whenever $t \leq u \leq r$}.
	\end{gather*}
	Clearly, $J$ is an interval and $r$ belongs to the interior of $J$
	relative to $I$. The same holds for $t$ with $s < t \in \Clos J$ since
	\begin{gather*}
		\begin{aligned}
			f(t) & \leq f(r) + \beta \tint{t}{r} (
			\alpha^{1/\vdim} + ( \beta/\vdim ) (r-u) )^{\vdim-1}
			\ud \mathscr{L}^1 u \\
			& \leq f(r) - \alpha + ( \alpha^{1/\vdim} + (
			\beta/\vdim ) (r-t) )^\vdim < ( \alpha^{1/\vdim} + (
			\beta/\vdim ) (r-t) )^\vdim.
		\end{aligned}
	\end{gather*}
	Therefore $I$ equals $J$.
\end{proof}
\begin{lemma} \label{lemma:upper_mass_bound}
	Suppose $\vdim$, $\adim$, $p$, $U$, $V$, and $\psi$ are as in
	\ref{miniremark:situation_general}, $p = \vdim$, $a \in \rel^\adim$,
	$0 < r < \infty$, $U = \oball{a}{r}$, $0 < \lambda \leq 1$, $0 \leq s
	\leq r$, and
	\begin{gather*}
		\measureball{\psi}{\oball{a}{\lambda t}} \leq \lambda^\vdim
		\measureball{\psi}{\oball{a}{t}} \quad \text{whenever $s < t
		\leq r$}.
	\end{gather*}

	Then
	\begin{gather*}
		t^{-1} \| V \| ( \oball{a}{t} )^{1/\vdim} \leq r^{-1} \| V \|
		( U )^{1/\vdim} + ( \vdim \lambda r )^{-1} (r-t) \psi ( U
		)^{1/\vdim}
	\end{gather*}
	whenever $\lambda s < t \leq r$.
\end{lemma}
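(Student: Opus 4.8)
The statement to be proved is Lemma \ref{lemma:upper_mass_bound}, which asserts a quantitative decay estimate for the mass ratio $t \mapsto t^{-\vdim}\measureball{\| V \|}{\oball{a}{t}}$ near $a$ under a scaling hypothesis on the ``mean curvature measure'' $\psi$. The plan is to reduce this to the monotonicity-type differential inequality encoded in Corollary \ref{corollary:density_ratio_estimate} and Remark \ref{remark:monotonicity}, and then to solve the resulting integral inequality using the elementary calculus lemma \ref{lemma:ode_comparison}. The key point is that the scaling hypothesis $\measureball{\psi}{\oball{a}{\lambda t}} \leq \lambda^\vdim \measureball{\psi}{\oball{a}{t}}$ lets one bound $\measureball{\| \delta V \|}{\oball{a}{t}} = \measureball{\psi}{\oball{a}{t}}^{1/\vdim}\,\|V\|(\oball{a}{t})^{?}$ — more precisely, feed it into the hypothesis of Corollary \ref{corollary:density_ratio_estimate} to get the factor $\beta$ there controlled by $\lambda^{-1}\psi(U)^{1/\vdim}$ times a power of the mass ratio.

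First I would set $a=0$ without loss of generality and fix $\lambda s < t \leq r$. Using Remark \ref{remark:monotonicity} (the Fubini reformulation of the last summand in Corollary \ref{corollary:monotonicity}) together with the Cauchy--Schwarz / H\"older estimate $\tint{\cball{0}{u}}{} z \bullet \eta(V,z) \ud \| \delta V \| z \leq \measureball{\| \delta V \|}{\cball{0}{u}}\, u \leq \psi(\cball{0}{u})^{1/\vdim} u\, \|V\|(\cball{0}{u})^{1-1/\vdim}$ (applying H\"older to the Radon--Nikodym density $|\mathbf{h}(V;\cdot)|$ with exponents $\vdim$ and $\vdim/(\vdim-1)$ against $\|V\|$), and then invoking the scaling hypothesis on $\psi$ to bound $\psi(\cball{0}{u}) \leq \psi(U)\,(u/r)^{\vdim}$ for $u$ in the relevant range $\lambda s < u \leq r$, one arrives at an inequality of the shape
\begin{gather*}
	u^{-\vdim}\measureball{\| V \|}{\cball{0}{u}} \leq r^{-\vdim}\measureball{\| V \|}{U} + \big(\text{const}\cdot \lambda^{-1} r^{-1}\psi(U)^{1/\vdim}\big) \tint{u}{r} w^{-1}\, \big(w^{-\vdim}\measureball{\| V \|}{\cball{0}{w}}\big)^{1-1/\vdim}\ud\mathscr{L}^1 w.
\end{gather*}
The monotonicity machinery also supplies the left semicontinuity $\limsup_{w\to u-}$ of the mass ratio needed as a hypothesis later. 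I would have to be careful to track the constant here; I expect a harmless numerical factor, and the statement's constant $(\vdim\lambda r)^{-1}$ together with the exponent $1/\vdim$ on $\psi(U)$ should come out exactly after substituting into the calculus lemma, possibly after a change of variable to absorb the $w^{-1}\log$-type kernel.

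Then I would apply Lemma \ref{lemma:ode_comparison} — or rather its logarithmic analogue Lemma \ref{lemma:calculus}, since the kernel here is $w^{-1}$ rather than constant, and the conclusion of the target lemma has the linear-in-$(r-t)$ form that matches \ref{lemma:ode_comparison} rather than \ref{lemma:calculus}. Here lies the main subtlety: the target conclusion $t^{-1}\|V\|(\oball{0}{t})^{1/\vdim} \leq r^{-1}\|V\|(U)^{1/\vdim} + (\vdim\lambda r)^{-1}(r-t)\psi(U)^{1/\vdim}$ is \emph{linear} in $(r-t)$, which is exactly the output of Lemma \ref{lemma:ode_comparison} applied to the function $f(t) = t^{-\vdim}\|V\|(\oball{0}{t})$ with parameter $\beta$ proportional to $\lambda^{-1}r^{-1}\psi(U)^{1/\vdim}$ — but only if the $w^{-1}$ weight in the integral kernel can be replaced by its supremum $(\lambda s)^{-1}$, or, more cleanly, by working with the substituted variable so that the kernel becomes constant. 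So the plan is: pass from $\cball{}{}$ to $\oball{}{}$ (harmless, monotone limits), write the inequality for $f(t)=t^{-\vdim}\measureball{\| V \|}{\oball{0}{t}}$ on $I = \{t : \lambda s < t \leq r\}$ in the form required by Lemma \ref{lemma:ode_comparison} after bounding $\tint{t}{r} w^{-1}(\cdots)\ud w$ — the cleanest route is to note that on $I$ the monotonicity identity gives the inequality directly with $\beta = \vdim^{-1}\lambda^{-1}r^{-1}\psi(U)^{1/\vdim}\cdot(\text{something})$ by first differentiating, or alternatively to quote Lemma \ref{lemma:calculus} and then crudely estimate $\log(r/t) \leq \lambda^{-1}(r-t)/r$ is \emph{false}, so instead one should redo the derivation so the kernel is already constant. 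I anticipate the correct bookkeeping is: from Corollary \ref{corollary:monotonicity} and the $\psi$-scaling, $r^{-\vdim}\measureball{\| V \|}{\oball{0}{r}} - t^{-\vdim}\measureball{\| V \|}{\oball{0}{t}} \geq -\vdim^{-1}\tint{t}{r}u^{-\vdim-1}\psi(U)^{1/\vdim}r^{-1}u\,\|V\|(\oball{0}{u})^{1-1/\vdim}\,u\,\ud u$ — wait, the powers of $u$ combine to $u^{-\vdim}\|V\|(\oball{0}{u})^{1-1/\vdim} = f(u)^{1-1/\vdim}$ times $u^{-1}$, confirming the $w^{-1}$ kernel is genuinely there, so Lemma \ref{lemma:calculus} is the right tool and the conclusion should involve $\log(r/t)$; but then to get the stated \emph{linear} conclusion one uses that the hypothesis is only claimed for $\lambda s < t \leq r$ and applies the bound in successive dyadic-type steps, or simply observes $\psi$-scaling improves the estimate. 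I would resolve this discrepancy by checking which of \ref{lemma:calculus}/\ref{lemma:ode_comparison} the author intends — the linear form strongly suggests that after using the $\psi$-scaling hypothesis the $u^{-1}$ weight in the kernel gets cancelled (because $\psi(\cball{0}{u}) \leq \lambda^{-\vdim}\psi(\cball{0}{\lambda^{-1}u})$ iterated gives a factor that eats the $u^{-1}$), leaving a constant kernel and hence Lemma \ref{lemma:ode_comparison} with $\beta = \vdim^{-1}\lambda^{-1}r^{-1}\psi(U)^{1/\vdim}$ applies verbatim to give exactly the claimed inequality. The main obstacle is therefore precisely this constant-and-kernel bookkeeping: making the $\psi$-scaling hypothesis cancel the $u^{-1}$ weight cleanly so that Lemma \ref{lemma:ode_comparison} applies with the sharp constant; everything else (reduction $a=0$, passage between open and closed balls, the H\"older step, left semicontinuity) is routine.
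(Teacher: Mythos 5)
Your route matches the paper's: pass the monotonicity identity (Simon's \cite[(17.3)]{MR756417}, equivalently \ref{corollary:monotonicity}/\ref{remark:monotonicity} here) plus H\"older to the integral inequality $f(t) \le f(r) + \int_t^r u^{-1}\psi(\oball{a}{u})^{1/\vdim}f(u)^{1-1/\vdim}\,du$ for $f(u) = u^{-\vdim}\measureball{\| V \|}{\oball{a}{u}}$, feed in the scaling hypothesis on $\psi$ to make the kernel constant, and close via Lemma \ref{lemma:ode_comparison}. Two of your constants need correcting, however: the scaling hypothesis (iterated downward from the trivially valid range $\lambda r \le u \le r$, one application per factor of $\lambda$) yields $u^{-\vdim}\measureball{\psi}{\oball{a}{u}} \le \lambda^{-\vdim}r^{-\vdim}\psi(U)$ for $\lambda s < u \le r$, so the bound to use is $\measureball{\psi}{\oball{a}{u}} \le \lambda^{-\vdim}(u/r)^\vdim\psi(U)$ rather than the $\psi(U)(u/r)^\vdim$ you wrote early on; and the $\beta$ to feed into Lemma \ref{lemma:ode_comparison} is $\lambda^{-1}r^{-1}\psi(U)^{1/\vdim}$ with no extra $\vdim^{-1}$, because that lemma's conclusion already produces $\beta/\vdim$, which then lands precisely on the stated coefficient $(\vdim\lambda r)^{-1}$.
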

\begin{proof}
	Assume $\psi ( U ) < \infty$. Observe that
	\begin{gather*}
		t^{-\vdim} \measureball{\psi}{\oball{a}{t}} \leq
		\lambda^{-\vdim} r^{-\vdim} \psi ( U ) \quad \text{whenever
		$\lambda s < t \leq r$};
	\end{gather*}
	in fact this evidently true if $\lambda r \leq t$ and if it is true
	for some $t$ with $s < t$ then it is true with $t$ replaced by
	$\lambda t$ according to the hypothesis on $\psi$. Therefore defining
	$f(t) = t^{-\vdim} \measureball{\| V \|}{\oball{a}{t}}$ for $0 < t
	\leq r$, the monotonicity identity, see \cite[(17.3)]{MR756417},
	and H\"older's inequality imply
	\begin{gather*}
		\begin{aligned}
			f(t) & \leq f(r) +
			\tint{t}{r} u^{-1} \psi ( \oball{a}{u}
			)^{1/\vdim} f(u)^{1-1/\vdim} \ud \mathscr{L}^1 u \\
			& \leq f (r) + \lambda^{-1} r^{-1} \psi ( U )^{1/\vdim}
			\tint{t}{r} f(u)^{1-1/\vdim} \ud \mathscr{L}^1 u
		\end{aligned}
	\end{gather*}
	whenever $\lambda s < t \leq r$ and the conclusion follows by applying
	\ref{lemma:ode_comparison}.
\end{proof}
\begin{lemma} \label{lemma:choice_of_radius}
	Suppose $\vdim$, $\adim$, $p$, $U$, $V$, and $\psi$ are as in
	\ref{miniremark:situation_general}, $p = \vdim$, $a \in \rel^\adim$,
	$0 < r < \infty$, $U = \oball{a}{r}$, $0 < \lambda \leq 1$, $0 \leq M
	< \infty$, $\psi ( \{ a \} ) < \isoperimetric{1}^{-1}$, and
	\begin{gather*}
		\unitmeasure{\vdim}^{1/\vdim} \density^\vdim_\ast ( \| V \|, a
		)^{1/\vdim} + ( \vdim \lambda)^{-1} \psi ( \{a\} )^{1/\vdim} <
		M^{1/\vdim}.
	\end{gather*}
	
	Then there exists $0 < s \leq r$ such that
	\begin{gather*}
		s^{-\vdim} \measureball{\| V \|}{\oball{a}{s}} \leq M, \quad
		s^\vdim + \measureball{\psi}{\oball{a}{s}} \leq 2
		\lambda^{-\vdim} \big ( ( \lambda s )^\vdim +
		\measureball{\psi}{\oball{a}{\lambda s}} \big ).
	\end{gather*}
\end{lemma}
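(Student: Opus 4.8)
The statement asks for a radius $s$ that simultaneously (a) controls the mass ratio of $\|V\|$ in $\oball{a}{s}$ by the constant $M$, and (b) makes the quantity $t^\vdim + \measureball{\psi}{\oball{a}{t}}$ "almost scaling-subadditive" in the sense that passing from radius $s$ to $\lambda s$ loses at most a factor $2\lambda^{-\vdim}$. The natural strategy is a density/selection argument combining the upper mass bound of \ref{lemma:upper_mass_bound} with the radius-selection principle of \ref{lemma:radius_from_calculus}.

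First I would reduce to the case $\psi(U)<\infty$ (otherwise there is nothing to arrange for $\psi$, and one argues more simply, or shrinks $U$). Apply \ref{lemma:radius_from_calculus} with $j=1$ (or $j=2$ if one wants to track the two monotone functions $t\mapsto t^\vdim$ and $t\mapsto\measureball{\psi}{\oball{a}{t}}$ separately) to the nondecreasing function $g(t)=t^\vdim+\measureball{\psi}{\oball{a}{t}}$ and the nonnegative function $f$ with $\liminf_{t\to 0+}t^{-\vdim}f(t)>0$; here the key point is that $\density^\vdim_\ast(\|V\|,a)>0$ (which follows from the density-hypothesis-type assumption implicit in \ref{miniremark:situation_general}, so $\liminf_{t\to 0+}t^{-\vdim}\measureball{\|V\|}{\oball{a}{t}}>0$), hence one may take $f(t)=\measureball{\|V\|}{\oball{a}{t}}$ as the function with strictly positive lower density that forces the existence of a good radius. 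That yields an $s\in(0,r]$ with $g(s)\le 2\lambda^{-\vdim}g(\lambda s)$, which is precisely condition (b). Since \ref{lemma:radius_from_calculus} produces not a single $s$ but a whole descending sequence of candidate radii (by reapplying at smaller scales), one can take $s$ arbitrarily small.

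The second condition (a), $s^{-\vdim}\measureball{\|V\|}{\oball{a}{s}}\le M$, is where \ref{lemma:upper_mass_bound} enters. Applying that lemma at radius $r$ with parameter $\lambda$ gives, for all $\lambda s'<t\le r$,
\begin{gather*}
	t^{-1}\|V\|(\oball{a}{t})^{1/\vdim}\le r^{-1}\|V\|(U)^{1/\vdim}+(\vdim\lambda r)^{-1}(r-t)\psi(U)^{1/\vdim}.
\end{gather*}
But I actually want the bound to degenerate onto the hypothesis $\unitmeasure{\vdim}^{1/\vdim}\density^\vdim_\ast(\|V\|,a)^{1/\vdim}+(\vdim\lambda)^{-1}\psi(\{a\})^{1/\vdim}<M^{1/\vdim}$, so the correct move is to apply \ref{lemma:upper_mass_bound} not at the fixed outer radius $r$ but at a small radius $\rho$ chosen so that $\rho^{-\vdim}\measureball{\|V\|}{\oball{a}{\rho}}$ is close to $\unitmeasure{\vdim}\density^\vdim_\ast(\|V\|,a)$ and $\rho^{-\vdim}\measureball{\psi}{\oball{a}{\rho}}$ is close to $\psi(\{a\})$ (possible since the lower density of $\|V\|$ is finite along some sequence and $\psi$ has an atom mass $\psi(\{a\})$ with $\density^{\ast\vdim}(\psi,a)$ essentially $\psi(\{a\})$ up to the ball-volume normalisation — here the hypothesis $\psi(\{a\})<\isoperimetric{1}^{-1}$, which via \ref{corollary:density_1d} or the monotonicity identity guarantees the atom does not blow up the density ratio uncontrollably). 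Then the subadditivity-type monotonicity estimate of \ref{lemma:upper_mass_bound} propagates this smallness down to all scales $\lambda s<t\le\rho$, and by the strict inequality in the hypothesis, for all sufficiently small $t$ one gets $t^{-\vdim}\measureball{\|V\|}{\oball{a}{t}}\le M$.

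Finally I would intersect the two constraints: the radii satisfying (b) form a cofinal descending sequence by the iterated use of \ref{lemma:radius_from_calculus}, and the radii satisfying (a) form an initial segment $(0,\varepsilon]$ for some $\varepsilon>0$ by the density argument above; any sufficiently small member of the first family lies in the second, giving the required $s$. \textbf{The main obstacle} will be the careful bookkeeping needed to feed the \emph{lower} density of $\|V\|$ at $a$ (a liminf along a sequence) together with the \emph{atom} $\psi(\{a\})$ into the hypotheses of \ref{lemma:upper_mass_bound}, whose monotonicity hypothesis "$\measureball{\psi}{\oball{a}{\lambda t}}\le\lambda^\vdim\measureball{\psi}{\oball{a}{t}}$" is itself only available for $t$ in the range where one has chosen the radius via \ref{lemma:radius_from_calculus}; reconciling these two "for $t$ in this range" clauses so that the ranges overlap is the delicate point, and it is exactly why the lemma is phrased with the auxiliary parameter $\lambda$ and the factor $2\lambda^{-\vdim}$ rather than asking for genuine subadditivity.
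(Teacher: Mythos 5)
You have correctly identified the two ingredients, \ref{lemma:upper_mass_bound} and \ref{lemma:radius_from_calculus}, but the ``intersection'' strategy you propose at the end does not close the argument, and the difficulty you flag in the final sentence is a real obstruction rather than a bookkeeping nuisance. The claim that the radii satisfying condition~(a) form an initial segment $(0,\varepsilon]$ is false in general: the hypothesis only controls the \emph{lower} density $\density_\ast^\vdim(\|V\|,a)$, so $t^{-\vdim}\measureball{\|V\|}{\oball{a}{t}}$ may exceed $M$ along some other sequence $t_m\to 0$ whenever the upper density is large; and since $\psi(\{a\})$ may be positive, the ratio $t^{-\vdim}\measureball{\psi}{\oball{a}{t}}$ blows up, so \ref{lemma:upper_mass_bound} cannot be applied from an outer radius $\rho$ down to $0$ without its monotonicity hypothesis on $\psi$, which is not automatic. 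Applying \ref{lemma:radius_from_calculus} alone gives a single radius (or, by iterating, a sequence) satisfying~(b), but there is no reason that any of these lands among the sparse set of scales where~(a) holds.

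The missing idea in the paper's proof is a case split tied precisely to where the $\psi$-monotonicity fails. After choosing $t$ so small that $t^{-1}\|V\|(\oball{a}{t})^{1/\vdim}+(\vdim\lambda)^{-1}\psi(\oball{a}{t})^{1/\vdim}\le M^{1/\vdim}$ (possible by the strict inequality in the hypothesis and the definitions of $\density_\ast^\vdim$ and $\psi(\{a\})$), one considers the set $I$ of $u\le t$ with $\measureball{\psi}{\oball{a}{\lambda u}}>\lambda^\vdim\measureball{\psi}{\oball{a}{u}}$. If $I=\varnothing$, then the monotonicity hypothesis of \ref{lemma:upper_mass_bound} holds on all of $(0,t]$, so~(a) holds \emph{for every} $u\le t$, and one is free to pick $s$ using \ref{lemma:radius_from_calculus} with $j=1$ to secure~(b). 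If $I\ne\varnothing$, then taking $s=\sup I$ one has $\measureball{\psi}{\oball{a}{\lambda s}}\ge\lambda^\vdim\measureball{\psi}{\oball{a}{s}}$ — which makes~(b) \emph{immediate}, no selection needed — while the monotonicity holds on $(s,t]$, so \ref{lemma:upper_mass_bound} applied on that range yields~(a) at $u=s$. Thus the two conditions are never arranged independently and then intersected; in each case one of them comes for free and the other follows from the lemma whose hypothesis is available precisely in that case.
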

\begin{proof}
	Assume $a \in \spt \| V \|$ and choose $0 < t \leq r$ such that
	\begin{gather*}
		t^{-1} \| V \| ( \oball{a}{t} )^{1/\vdim} + ( \vdim \lambda
		)^{-1} \psi ( \oball{a}{t} )^{1/\vdim} \leq M^{1/\vdim}.
	\end{gather*}
	Define $I$ to be the set of all $0 < u \leq t$ such that
	\begin{gather*}
		\measureball{\psi}{\oball{a}{\lambda u}} > \lambda^\vdim
		\measureball{\psi}{\oball{a}{u}}.
	\end{gather*}

	First, consider \emph{the case $I = \emptyset$}. Then by
	\ref{lemma:upper_mass_bound} with $s$ and $r$ replaced by $0$ and $t$
	one obtains
	\begin{gather*}
		u^{-\vdim} \measureball{\| V \|}{\oball{a}{u}} \leq M \quad
		\text{whenever $0 < u \leq t$}.
	\end{gather*}
	Applying \ref{lemma:radius_from_calculus} with $r$, $I$, $j$, $f(s)$
	and $f_1 (s)$ replaced by $t$, $\{ u \with 0 < u < t \}$, $1$,
	$s^\vdim$, and $s^\vdim + \measureball{\psi}{\oball{a}{s}}$ yields the
	conclusion in the present case.

	Second, consider \emph{the case $I \neq \emptyset$}. Taking $s = \sup
	I$, one notes
	\begin{gather*}
		\measureball{\psi}{\oball{a}{\lambda s}} \geq \lambda^\vdim
		\measureball{\psi}{\oball{a}{s}}, \quad s^{-\vdim}
		\measureball{\| V \|}{\oball{a}{s}} \leq M.
	\end{gather*}
	by \ref{lemma:upper_mass_bound} with $r$ replaced by $t$ and the
	conclusion is evident.
\end{proof}
\begin{theorem} \label{lemma:zero_sets_to_zero_sets}
	Suppose $\vdim$, $\adim$, $p$, $U$, and $V$ are as in
	\ref{miniremark:situation_general}, $p = \vdim = \adim-1$, $\| \delta
	V \|$ is absolutely continuous with respect to $\| V \|$, and $C$ is
	the relation consisting of all $(b,Q) \in \spt \| V \| \times
	\mathbf{O}^\ast ( \adim, 1 )$ such that for some $0 < R < \infty$
	\begin{gather*}
		\oball{b - Q^\ast (R)}{R} \cap \spt \| V \| = \emptyset.
	\end{gather*}

	Then
	\begin{gather*}
		\mathscr{H}^\vdim \big ( C \lIm
		\classification{A}{z}{\density_\ast^\vdim ( \| V \|, z ) <
		\infty} \rIm ) = 0 \quad \text{whenever $A \subset U$ and $\|
		V \| (A) = 0$}.
	\end{gather*}
\end{theorem}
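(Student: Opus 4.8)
The plan is to reduce to a quantitative, uniform statement and then combine the content estimate \ref{lemma:local_estimate_contact_set} with a Besicovitch covering. First I would write $C=\bigcup_{n\in\nat}C_n$, where $C_n$ consists of those $(b,Q)$ admitting an exterior ball of radius at least $1/n$; this is a decomposition because, if a ball of radius $R\geq 1/n$ externally tangent to $\spt\|V\|$ at $b$ misses $\spt\|V\|$, then so does the one of radius $1/n$ tangent at $b$ (it is contained in the former). Likewise $\classification{U}{z}{\density_\ast^\vdim(\|V\|,z)<\infty}=\bigcup_{j\in\nat}\classification{U}{z}{\density_\ast^\vdim(\|V\|,z)<j/\unitmeasure\vdim}$, and $A=\bigcup_k(A\cap\cball{0}{k})$. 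Since $C$ only pairs points of $\spt\|V\|$ with covectors and $\psi$ is absolutely continuous with respect to $\|V\|$, it suffices to prove, for fixed $n$, $M\in\nat$, and a bounded set $E\subseteq A\cap\spt\|V\|\cap\classification{U}{z}{\density_\ast^\vdim(\|V\|,z)<M/\unitmeasure\vdim}$ with $(\|V\|+\psi)(E)=0$, that $\mathscr H^\vdim(C_n\lIm E\rIm)=0$; the full statement then follows by countable additivity. I would obtain this by showing that the size $\infty$ approximating measure $\phi_\infty$ of $C_n\lIm E\rIm$ vanishes, which is equivalent to $\mathscr H^\vdim(C_n\lIm E\rIm)=0$ because a countable cover with small $\phi_\infty$-sum automatically consists of sets of small diameter.

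Fix $\varepsilon>0$ and, using $(\|V\|+\psi)(E)=0$, choose a bounded open $W$ with $E\subseteq W\subseteq U$ and $(\|V\|+\psi)(W)<\varepsilon$. For each $z\in E$ one has $\psi(\{z\})=0$, hence $\density_\ast^\vdim(\|V\|,z)\geq 1$ by \ref{remark:kuwert_schaetzle}, and, since $\psi(\cball{z}{u})^{1/\vdim}\to 0$ as $u\to 0+$ and $\|\delta V\|(\cball{z}{u})\leq\psi(\cball{z}{u})^{1/\vdim}\|V\|(\cball{z}{u})^{1-1/\vdim}$ by H\"older's inequality, \cite[2.5]{snulmenn.isoperimetric} gives a lower mass bound $\|V\|(\oball{z}{s})\geq(2\vdim\isoperimetric\vdim)^{-\vdim}s^\vdim$ for all sufficiently small $s$. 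Because $\unitmeasure\vdim\density_\ast^\vdim(\|V\|,z)<M$ and $\psi(\{z\})=0<\isoperimetric{1}^{-1}$, I may apply \ref{lemma:choice_of_radius} with $\lambda=2^{-5}$ on a small ball $\oball{z}{r}$ to obtain a radius $0<s_z<1/n$ with $\oball{z}{s_z}\subseteq W$, with $s_z$ in the regime where the lower mass bound holds, with $s_z^{-\vdim}\|V\|(\oball{z}{s_z})\leq M$, and with the doubling property $s_z^\vdim+\psi(\oball{z}{s_z})\leq 2^{5\vdim+1}\big((2^{-5}s_z)^\vdim+\psi(\oball{z}{2^{-5}s_z})\big)$. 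The closed balls $\cball{z}{2^{-5}s_z}$, $z\in E$, cover $E$; by the Besicovitch covering theorem I extract $\besicovitch\adim$ disjointed subfamilies whose union $\{\cball{z_i}{2^{-5}s_i}\}_{i}$ still covers $E$, writing $s_i=s_{z_i}$, $B_i=\cball{z_i}{2^{-5}s_i}$, $\widetilde B_i=\oball{z_i}{s_i}$.

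For each $i$, applying \ref{lemma:local_estimate_contact_set} to $V\restrict\widetilde B_i$ with $R=1/n$: each $Q\in C_n\lIm E\cap B_i\rIm$ arises from some $b\in\spt(\|V\|\restrict\widetilde B_i)\cap B_i$ for which $\oball{b-Q^\ast(1/n)}{1/n}$ misses $\spt(\|V\|\restrict\widetilde B_i)$, whence $\phi_\infty(C_n\lIm E\cap B_i\rIm)\leq\Gamma\big((s_i n)^\vdim+\psi(\widetilde B_i)\big)$ with $\Gamma=\Gamma_{\ref{lemma:local_estimate_contact_set}}(\vdim,M)$. By countable subadditivity of $\phi_\infty$ and $C_n\lIm E\rIm=\bigcup_i C_n\lIm E\cap B_i\rIm$, it remains to sum. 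Using the lower mass bound, $s_i^\vdim\leq(2\vdim\isoperimetric\vdim)^\vdim 2^{5\vdim}\|V\|(B_i)$, and using the doubling property together with the same lower mass bound, $\psi(\widetilde B_i)\leq 2^{5\vdim+1}\big((2\vdim\isoperimetric\vdim)^\vdim+1\big)(\|V\|+\psi)(B_i)$. Since within each of the $\besicovitch\adim$ disjointed subfamilies the $B_i$ are pairwise disjoint and contained in $W$, summation gives $\phi_\infty(C_n\lIm E\rIm)\leq c(n,\vdim,\adim,M)\,(\|V\|+\psi)(W)<c(n,\vdim,\adim,M)\,\varepsilon$ with $c$ independent of $\varepsilon$. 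Letting $\varepsilon\to 0+$ forces $\phi_\infty(C_n\lIm E\rIm)=0$, hence $\mathscr H^\vdim(C_n\lIm E\rIm)=0$, and the theorem follows by the reductions above.

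I expect the main obstacle to be making the covering sum converge. The raw estimate of \ref{lemma:local_estimate_contact_set} yields the two terms $(s_i n)^\vdim$ and $\psi(\widetilde B_i)$, neither of which is obviously controllable: $\sum_i s_i^\vdim$ is not bounded by Euclidean volume since $\vdim<\adim$, and the enlarged balls $\widetilde B_i$ overlap badly. The resolution is to exploit two features that are special to the present hypotheses: the \emph{lower} mass bound $\|V\|(\oball{z}{s})\geq(2\vdim\isoperimetric\vdim)^{-\vdim}s^\vdim$ at small scales — which rests on the density hypothesis, the monotonicity identity, and the critical integrability of the mean curvature — to trade $\sum_i s_i^\vdim$ for $\|V\|(W)$, small because $E$ is $\|V\|$-null; and the doubling of $s^\vdim+\psi(\oball{z}{s})$ furnished by \ref{lemma:choice_of_radius}, which moves the $\psi$-mass from the overlapping $\widetilde B_i$ onto the balls $B_i$ that are disjoint inside each Besicovitch subfamily. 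Checking that the exterior-ball relations match after restricting $V$ to $\widetilde B_i$, and that $\phi_\infty$-nullity upgrades to $\mathscr H^\vdim$-nullity, are the remaining routine points.
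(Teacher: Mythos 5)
Your proof is correct and follows essentially the same route as the paper's: decompose $C$ by exterior-ball radius and $A$ by density bound, pick a suitable scale via \ref{lemma:choice_of_radius} (using $\density_\ast^\vdim\geq 1$ from \ref{remark:kuwert_schaetzle} for the lower mass bound and the doubling of $s^\vdim+\psi$), apply \ref{lemma:local_estimate_contact_set} on each ball, and sum over a disjointed covering to get $\phi_\infty$ smallness. The only difference is that you invoke the Besicovitch covering theorem where the paper uses the Vitali $5r$-covering (hence the paper's extra factor $5^{-1}$ in $\lambda$), which is a cosmetic substitution.
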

\begin{proof}
	Suppose $A \subset U$ with $\| V \| ( A ) = 0$ and whenever $i \in
	\nat$ define $C_i$ to be relation of all $(a,Q)$ satisfying
	\begin{gather*}
		a \in A \cap \spt \| V \|, \quad \unitmeasure{\vdim}
		\density_\ast^\vdim ( \| V \|, a ) < i, \quad Q \in
		\mathbf{O}^\ast ( \adim, 1 ), \\
		\oball{a - Q^\ast (1/i)}{1/i} \cap \spt \| V \| = \emptyset.
	\end{gather*}
	Note $C \lIm \classification{A}{z}{ \density_\ast^\vdim ( \| V \|, z )
	< \infty } \rIm \subset \bigcup_{i=1}^\infty \im C_i$.

	In order to show $\mathscr{H}^\vdim ( \im C_i ) = 0$ for $i \in \nat$,
	fix such $i$ and define $\lambda = 2^{-5} \cdot 5^{-1}$ and $\Delta =
	2 \Gamma_{\ref{lemma:local_estimate_contact_set}} ( \vdim, i ) i^\vdim
	\lambda^{-\vdim} \sup \{ 1, 2\unitmeasure{\vdim}^{-1} \}$ and suppose
	$\varepsilon > 0$. Choose an open subset $Z$ of $U$ with $A \subset Z$
	and $( \| V \| + \psi ) (Z) \leq \varepsilon$ and, denoting by
	$\phi_\infty$ the size $\infty$ approximating measure occuring in the
	construction of $\mathscr{H}^\vdim$ on $\mathbf{O}^\ast ( \adim, 1 )$,
	define $F$ to the family of closed balls $\cball{a}{r}$ such that
	\begin{gather*}
		r \leq 1, \quad \cball{a}{r/\lambda} \subset Z, \quad
		\phi_\infty ( C_i \lIm \cball{a}{5r} \rIm ) \leq \Delta
		\measureball{( \| V \| + \psi )}{\cball{a}{r}}.
	\end{gather*}

	Next, \emph{it will be shown that $\dmn C_i \subset \bigcup F$}. For
	this purpose suppose $a \in \dmn C_i$. Noting $\density_\ast^\vdim (
	\| V \|, a ) \geq 1$ by \cite[2.7]{snulmenn.isoperimetric} and
	applying \ref{lemma:choice_of_radius} with $M=i$ and $r$ replaced by a
	suitably small number, one obtains $r$ such that
	\begin{gather*}
		0 < r \leq 1, \quad \cball{a}{r/\lambda} \subset Z, \quad
		\unitmeasure{\vdim}/2 \leq r^{-\vdim} \measureball{\| V
		\|}{\cball{a}{r}}, \\
		(r/\lambda)^{-\vdim} \measureball{\| V
		\|}{\oball{a}{r/\lambda}} \leq i, \quad (r/\lambda)^\vdim +
		\measureball{\psi}{\oball{a}{r/\lambda}} \leq 2
		\lambda^{-\vdim} \big ( r^\vdim +
		\measureball{\psi}{\oball{a}{r}} \big ).
	\end{gather*}
	Then \ref{lemma:local_estimate_contact_set} with $r$, $M$, and $R$
	replaced by $r/\lambda$, $i$, and $1/i$ yields
	\begin{gather*}
		\begin{aligned}
			& \phi_\infty ( C_i \lIm \cball{a}{5r} \rIm ) \leq
			\Gamma_{\ref{lemma:local_estimate_contact_set}} (
			\vdim, i ) i^\vdim \big ( ( r/\lambda )^\vdim +
			\measureball{\psi}{\oball{a}{r/\lambda}} \big ) \\
			& \qquad \leq 2
			\Gamma_{\ref{lemma:local_estimate_contact_set}} (
			\vdim, i ) i^\vdim \lambda^{-\vdim} ( r^\vdim +
			\measureball{\psi}{\oball{a}{r}} ) \leq \Delta
			\measureball{(\| V \| + \psi)}{\cball{a}{r}}.
		\end{aligned}
	\end{gather*}

	From \cite[2.8.5]{MR41:1976} one obtains a countable disjointed
	subfamily $G$ of $F$ with $\bigcup F \subset \bigcup \{ \widehat{S}
	\with S \in G \}$, here $\widehat{S} = \cball{a}{5r}$ if $S =
	\cball{a}{r}$, hence
	\begin{gather*}
		\begin{aligned}
			\phi_\infty ( \im C_i ) & \leq \tsum{S \in G}{}
			\phi_\infty \big ( C_i \big \lIm \widehat{S} \big \rIm
			\big ) \\
			& \leq \Delta \tsum{S \in G}{} ( \| V \| + \psi ) ( S
			) \leq \Delta ( \| V \| + \psi ) (Z) \leq \Delta
			\varepsilon
		\end{aligned}
	\end{gather*}
	and the conclusion follows.
\end{proof}
\section{Points of infinite density} \label{sec:infinite_density}
In this section barycentre estimates based on the monotonicity identity, see
\ref{thm:barycenter_estimate}, are used in order to obtain the necessary Lusin
condition for points of infinite density, see \ref{thm:phantom_estimate}.
Notice that it is unknown whether points of infinite density do exist under
the present hypotheses, see \ref{remark:nonempty}. The afore-mentioned result
\ref{thm:phantom_estimate} at least yields that the set of these points is
mapped to a set of measure zero under the generalised Gauss map.
\begin{lemma} \label{lemma:almost_barycenter_estimate}
	Suppose \ref{miniremark:situation}\,\eqref{item:situation:basic}
	holds, $0 < s < r$, $0 \leq \beta < \infty$,
	\begin{gather*}
		\measureball{\| \delta V \|}{\cball{a}{t}} \leq \beta r^{-1}
		\mu (r)^{1/\vdim} \mu (t)^{1-1/\vdim} \quad \text{for $s \leq
		t \leq r$},
	\end{gather*}
	and $g : \{ \lambda : s/r \leq \lambda \leq 1 \} \to \rel$ is a
	nonnegative, continuous function.

	Then
	\begin{gather*}
		\big | \tint{( \cball{a}{r} \without \cball{a}{s} ) \times
		\grass{\adim}{\vdim}}{} g ( |z-a|/r ) |z-a|^{-\vdim-1}
		\project{S} (z-a) \ud V (z,S) \big | \\
		\leq \big ( \tint{s/r}{1} g ( \lambda ) \lambda^{-1} ( 1 +
		\vdim^{-1} \beta \log (1/\lambda) )^{\vdim-1} \ud
		\mathscr{L}^1 \lambda \big ) r^{-\vdim} \mu (r) \beta.
	\end{gather*}
\end{lemma}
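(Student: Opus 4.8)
The plan is to derive the barycentre estimate by integrating the monotonicity identity \ref{corollary:monotonicity} against a well-chosen test function, then controlling the resulting tilt-excess type term by the growth estimate for the mass ratio coming from \ref{corollary:density_ratio_estimate}. First I would fix $a$ (say $a=0$ after translation) and observe that the integrand $|z-a|^{-\vdim-1}\project{S}(z-a)$ is exactly the quantity whose $\| V\|$ part appears in the first variation formula: for a fixed vector $v\in\rel^\adim$ and a radial cutoff $\zeta$, the test vectorfield $z\mapsto\eta(|z|)z$ produces $Dg(z)\bullet\project{S}$ with a term $|z|^{-1}\eta'(|z|)|\project{S}(z)|^2$, and the ``other half'' $|z|\eta'(|z|)+\vdim\eta(|z|)$ integrates against $\| V\|$ alone; this is the decomposition already exploited in the proof of \ref{thm:monotonicity}. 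So the first step is to write the left-hand side of the desired inequality as a sum of a term controlled by $\| V\|$-mass in an annulus and a term controlled by $\| \delta V\|$ through the first variation.

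Next I would bound the $\| V\|$-mass contribution. The key input is that $g$ is nonnegative, so after applying the Cauchy--Schwarz inequality in the $S$ variable (or simply $|\project{S}(z-a)|\le|z-a|$) one reduces the vector-valued integral to a scalar radial integral
\begin{gather*}
	\tint{s/r}{1} g(\lambda)\,\lambda^{-\vdim-1}\,\measureball{\| V\|}{\cball{a}{\lambda r}}\ \text{(times $\mathrm d\lambda$ up to constants)},
\end{gather*}
using Fubini's theorem exactly as in \ref{remark:monotonicity}. Now \ref{corollary:density_ratio_estimate}, whose hypothesis is precisely the stated bound $\measureball{\| \delta V\|}{\cball{a}{t}}\le\beta r^{-1}\mu(r)^{1/\vdim}\mu(t)^{1-1/\vdim}$ for $s\le t\le r$, gives
\begin{gather*}
	t^{-\vdim}\measureball{\| V\|}{\cball{a}{t}}\le\big(1+\vdim^{-1}\beta\log(r/t)\big)^\vdim\,r^{-\vdim}\mu(r)\quad\text{for $s\le t\le r$},
\end{gather*}
i.e. $\lambda^{-\vdim}\measureball{\| V\|}{\cball{a}{\lambda r}}\le(1+\vdim^{-1}\beta\log(1/\lambda))^\vdim r^{-\vdim}\mu(r)$. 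Substituting $t=\lambda r$ turns the radial integral into $\big(\tint{s/r}{1}g(\lambda)\lambda^{-1}(1+\vdim^{-1}\beta\log(1/\lambda))^\vdim\,\mathrm d\mathscr L^1\lambda\big)r^{-\vdim}\mu(r)$, which already has the shape of the right-hand side but with exponent $\vdim$ rather than $\vdim-1$; the missing factor of $\beta$ and the drop of one power of the logarithmic weight must come from the fact that the barycentre integrand contains $\project{S}(z-a)$, not $|z-a|$, so the naive bound $|\project{S}(z-a)|\le|z-a|$ is too lossy.

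The main obstacle, therefore, is to recover that extra factor $\beta$: one cannot simply bound $|\project{S}(z-a)|$ by $|z-a|$, since the estimate would then fail to vanish as $\beta\to 0$ (which it must, because for a stationary varifold through $a$ the barycentre integral vanishes by the monotonicity identity). The right approach is to keep the genuine first-variation structure: choose $\eta$ with $\eta'(t)=t^{-\vdim-1}g(t/r)$ on the annulus (and $\eta$ supported in $\{s<t<r\}$ after a harmless approximation of $g$ by compactly supported functions on the half-open interval), apply \ref{corollary:monotonicity} — or rather its underlying identity as in \ref{thm:monotonicity} — to express the barycentre integral as $-(\delta V)(g_\eta)$ plus a combination of $\measureball{\| V\|}{\cball{a}{t}}$-terms that telescope; the $\| V\|$-terms cancel up to boundary contributions because of the precise choice of $\eta$, leaving only the $\| \delta V\|$-term. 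Estimating that term by the hypothesis on $\| \delta V\|$ and then inserting the mass-ratio bound from \ref{corollary:density_ratio_estimate} for $\mu(t)^{1-1/\vdim}$ produces exactly one factor $\beta$ and exponent $\vdim-1$ on the logarithmic weight. I would carry the computation out by first treating $g$ smooth and compactly supported in $(s/r,1)$, deriving the identity, estimating, and then passing to general continuous nonnegative $g$ by monotone/uniform approximation and dominated convergence, noting that all bounds are in terms of $\int g(\lambda)\lambda^{-1}(\cdots)\,\mathrm d\lambda$ which is continuous under such approximation.
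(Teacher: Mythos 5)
Your final computation — Fubini to turn the $\|\delta V\|$-integral into $\int_s^r g(t/r)\,t^{-\vdim}\measureball{\|\delta V\|}{\cball{a}{t}}\,\ud\mathscr L^1 t$, insert the hypothesis on $\|\delta V\|$, then apply \ref{corollary:density_ratio_estimate} to $\mu(t)^{1-1/\vdim}$ and substitute $\lambda=t/r$ — is exactly the paper's estimate and is sound. The gap is in how you propose to reach the identity that starts that computation. You invoke \ref{corollary:monotonicity} and \ref{thm:monotonicity}, i.e.\ the first variation of the \emph{radial} vectorfield $z\mapsto\eta(|z-a|)(z-a)$, and describe a ``telescoping cancellation'' of $\measureball{\|V\|}{\cball{a}{t}}$-terms. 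But that vectorfield only ever produces scalar quantities: its first variation contains $|z-a|^{-1}\eta'(|z-a|)\,|\perpproject{S}(z-a)|^2$ and the trace term $\vdim\,\eta(|z-a|)$, never the vector $\project{S}(z-a)$. The monotonicity identity cannot express a vector-valued barycentre integral, so there is nothing to telescope.

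The vectorfield you actually need is the \emph{constant-direction} radial cutoff $g_p(z)=\theta(|z-a|)\,p^\ast(1)$ for $p\in\orthproj{\adim}{1}$, which you gesture at in your opening sentence but then abandon. For this choice $\project{S}\bullet Dg_p(z)=\theta'(|z-a|)\,|z-a|^{-1}\,p(\project{S}(z-a))$, so testing the first variation against $g_p$ for all $p$ gives directly the vector identity
\begin{gather*}
	\tint{(U\without\{a\})\times\grass{\adim}{\vdim}}{}\theta'(|z-a|)\,|z-a|^{-1}\,\project{S}(z-a)\ud V(z,S)=\tint{}{}\theta(|z-a|)\,\eta(V;z)\ud\|\delta V\|z,
\end{gather*}
with no $\|V\|$-mass terms appearing at all, so no cancellation is required. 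One then takes $\theta=f$ with $f(t)=-\int_{\sup\{s,t\}}^r g(u/r)u^{-\vdim}\ud\mathscr L^1 u$ for $t<r$ (so $\theta'(t)=g(t/r)t^{-\vdim}$ on $(s,r)$ and $0\notin\spt\theta'$), takes the modulus, and proceeds with the Fubini and density-ratio estimate as in your closing sentences. You should also note that for the conclusion to have the asserted power $\vdim-1$ of the logarithm, the exponent bookkeeping in \ref{corollary:density_ratio_estimate} ($\mu(t)^{1-1/\vdim}\le(1+\vdim^{-1}\beta\log(r/t))^{\vdim-1}(t/r)^{\vdim-1}\mu(r)^{1-1/\vdim}$) is what produces the drop from $\vdim$ to $\vdim-1$ — it is not, as your intermediate attempt suggests, a special feature of $\project{S}(z-a)$ versus $|z-a|$.
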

\begin{proof}
	Assume $a=0$ and let $\varepsilon
	> 0$ with $\cball{a}{r+\varepsilon} \subset U$. For
	$\theta \in \mathscr{E}^0 ( \rel )$ with $\spt \theta \subset \{ s
	\with s < r+\varepsilon \}$ define $g_p : U \to \rel^\adim$ by $g_p(z)
	= \theta (|z|) p^\ast (1)$ for $z \in U$, $p \in \orthproj{\adim}{1}$
	and compute
	\begin{gather*}
		\project{S} \bullet Dg_p (z) = \theta' (|z|) |z|^{-1}
		\project{S} ( p^\ast (1) ) \bullet z = \theta' (|z|) |z|^{-1}
		p ( \project{S} (z))
	\end{gather*}
	for $z \in U \without \{0 \}$, $S \in \grass{\adim}{\vdim}$. If $0
	\notin \spt \theta'$ then $g_p \in \mathscr{D} ( U, \rel^\adim )$ and
	computing $\delta V (g_p)$ for $p \in \orthproj{\adim}{1}$ yields
	\begin{gather*}
		\tint{( U \without \{0 \}) \times \grass{\adim}{\vdim} }{}
		\theta' (|z|) |z|^{-1} \project{S} (z) \ud V (z,S) = \tint{}{}
		\theta ( |z| ) \eta (V;z) \ud \| \delta V \| z \in \rel^\adim.
	\end{gather*}

	Define $f : \rel \to \rel$ by
	\begin{gather*}
		f (t) = - \tint{\sup\{s,t\}}{r} g(u/r) u^{-\vdim} \ud
		\mathscr{L}^1 u \quad \text{if $t < r$}, \qquad
		f(t)=0 \quad \text{else}
	\end{gather*}
	for $t \in \rel$. Approximating $f$ by functions $\theta$ as above,
	one infers
	\begin{gather*}
		\tint{( \cball{0}{r} \without \cball{0}{s} ) \times
		\grass{\adim}{\vdim}}{} g ( |z|/r ) |z|^{-\vdim-1}
		 \project{S} (z) \ud V (z,S) \\
		= \tint{\cball{0}{r}}{} f ( |z| ) \eta ( V;z ) \ud \| \delta V
		\| z.
	\end{gather*}
	The modulus of this vector does not exceed, using Fubini's theorem and
	\ref{corollary:density_ratio_estimate},
	\begin{gather*}
		\begin{aligned}
			& - \tint{\cball{0}{r}}{} f (|z|) \ud \| \delta V \| z
			= \tint{s}{r} g(t/r) t^{-\vdim} \measureball{\| \delta
			V \|}{\cball{0}{t}} \ud \mathscr{L}^1 t \\
			& \qquad \leq \beta r^{-1} \mu (r)^{1/\vdim}
			\tint{s}{r} g (t/r) t^{-\vdim} \mu (t)^{1-1/\vdim} \ud
			\mathscr{L}^1 t \\
			& \qquad \leq \beta r^{-\vdim} \mu (r) \tint{s}{r} g
			(t/r) t^{-1} ( 1 + \vdim^{-1} \beta \log (r/t)
			)^{\vdim-1} \ud \mathscr{L}^1 t
		\end{aligned}
	\end{gather*}
	and the conclusion follows.
\end{proof}
\begin{lemma} \label{lemma:auxiliary_estimate}
	Suppose
	\ref{miniremark:situation}\,\eqref{item:situation:basic}\,\eqref{item:situation:l2}
	hold, $0 < s < r$, $0 \leq \gamma < \infty$ and
	\begin{gather*}
		\phi ( t ) \leq \gamma^2 r^{-2} \mu (r)^{2/\vdim} \mu
		(t)^{1-2/\vdim} \quad \text{for $s \leq t \leq r$}.
	\end{gather*}

	Then
	\begin{gather*}
		\tint{s}{r} t^{2-\vdim} \ud_t \phi (t) \leq \big ( 1 +
		(\vdim-2) ( 1 + \vdim^{-1} \gamma \log (r/s))^{\vdim-2} \log
		(r/s) \big ) r^{-\vdim} \mu (r) \gamma^2.
	\end{gather*}
\end{lemma}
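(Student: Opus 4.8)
The plan is to integrate by parts in the Stieltjes integral and then feed in the two estimates that are available, namely the hypothesis on $\phi$ and the monotonicity‑type estimate \ref{corollary:density_ratio_estimate}. Since $t \mapsto t^{2-\vdim}$ is of class $\class{1}$ on $\{ t : s \le t \le r \}$ and $\phi$ is nondecreasing (hence of locally bounded variation on $[s,r]$), integration by parts for Lebesgue--Stieltjes integrals gives
\[
	\tint{s}{r} t^{2-\vdim} \ud_t \phi(t) = r^{2-\vdim} \phi(r) - s^{2-\vdim} \phi(s) + (\vdim-2) \tint{s}{r} t^{1-\vdim} \phi(t) \ud \mathscr{L}^1 t .
\]
First I would dispose of the boundary contribution: $\phi \ge 0$ forces $-\,s^{2-\vdim}\phi(s) \le 0$, while the hypothesis evaluated at $t = r$ yields $\phi(r) \le \gamma^2 r^{-2}\mu(r)$, so that $r^{2-\vdim}\phi(r) \le r^{-\vdim}\mu(r)\gamma^2$.

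For the remaining integral I would combine the hypothesis $\phi(t) \le \gamma^2 r^{-2}\mu(r)^{2/\vdim}\mu(t)^{1-2/\vdim}$ with the density ratio bound. The standing hypotheses in \ref{miniremark:situation}\,\eqref{item:situation:l2} supply the bound on $\| \delta V \|$ needed to apply \ref{corollary:density_ratio_estimate} (with its $s$ replaced by the running variable $t$ and its $\beta$ taken equal to $\gamma$), which gives $t^{-\vdim}\mu(t) \le (1+\vdim^{-1}\gamma\log(r/t))^\vdim r^{-\vdim}\mu(r)$ for $s \le t \le r$. Writing $\mu(t) = t^\vdim ( t^{-\vdim}\mu(t) )$ one then obtains
\[
	t^{1-\vdim}\phi(t) \le \gamma^2 r^{-2}\mu(r)^{2/\vdim}\, t^{-1} ( t^{-\vdim}\mu(t) )^{1-2/\vdim} \le \gamma^2 r^{-\vdim}\mu(r)\, t^{-1} ( 1+\vdim^{-1}\gamma\log(r/t) )^{\vdim-2} ,
\]
using the identity $r^{-2}\mu(r)^{2/\vdim} ( r^{-\vdim}\mu(r) )^{1-2/\vdim} = r^{-\vdim}\mu(r)$. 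Substituting $u = \log(r/t)$ and bounding the integrand $( 1+\vdim^{-1}\gamma u )^{\vdim-2}$ by its value at $u = \log(r/s)$ — legitimate since this function is nondecreasing in $u$ for $\vdim \ge 2$ and $\gamma \ge 0$, and equals $1$ when $\vdim = 2$ — bounds $(\vdim-2)\tint{s}{r} t^{1-\vdim}\phi(t)\ud\mathscr{L}^1 t$ by $(\vdim-2)( 1+\vdim^{-1}\gamma\log(r/s) )^{\vdim-2}\log(r/s)\, r^{-\vdim}\mu(r)\gamma^2$. Adding this to the boundary contribution produces exactly the asserted inequality.

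I do not expect a genuine obstacle here; this is a calculus lemma of the same flavour as \ref{lemma:calculus} and \ref{lemma:ode_comparison}. The two points that need a little care are: justifying the integration‑by‑parts identity for the Stieltjes integral (which rests only on $\phi$ being nondecreasing, hence of locally bounded variation, on $[s,r]$), and checking that the hypothesis carried along in \ref{miniremark:situation}\,\eqref{item:situation:l2} is in precisely the form required as input to \ref{corollary:density_ratio_estimate}, so that one may legitimately take $\beta = \gamma$ there and use the resulting density ratio bound at every radius $t \in [s,r]$ rather than only at $t = s$.
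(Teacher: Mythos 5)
Your proof is correct and takes essentially the same route as the paper: integration by parts for the Stieltjes integral, the density ratio bound of \ref{corollary:density_ratio_estimate}, and a monotone bound on $(1+\vdim^{-1}\gamma u)^{\vdim-2}$. The one step you flag but do not carry out is how the hypothesis of \ref{corollary:density_ratio_estimate} is actually obtained: it is not supplied by \ref{miniremark:situation}\,\eqref{item:situation:l2} alone but by combining the lemma's bound on $\phi$ with H\"older's inequality, since $\measureball{\| \delta V \|}{\cball{a}{t}} \leq \phi(t)^{1/2}\mu(t)^{1/2} \leq \gamma r^{-1}\mu(r)^{1/\vdim}\mu(t)^{1-1/\vdim}$, which is precisely what the paper invokes.
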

\begin{proof}
	By H\"older's inequality and \ref{corollary:density_ratio_estimate}
	\begin{gather*}
		t^{-\vdim} \mu (t) \leq \big ( 1 + \vdim^{-1} \gamma \log
		(r/t) \big )^{\vdim} r^{-\vdim} \mu (r) \quad \text{for $s
		\leq t \leq r$}.
	\end{gather*}
	By partial integration one obtains
	\begin{gather*}
		\tint{s}{r} t^{2-\vdim} \ud_t \phi (t) = r^{2-\vdim} \phi (r)
		- s^{2-\vdim} \phi (s) + (\vdim-2) \tint{s}{r} t^{1-\vdim}
		\phi (t) \ud \mathscr{L}^1 t.
	\end{gather*}
	Finally, one estimates
	\begin{gather*}
		\begin{aligned}
			\tint{s}{r} t^{1-\vdim} \phi ( t ) \ud \mathscr{L}^1 t
			& \leq \gamma^2 r^{-2} \mu(r)^{2/\vdim} \tint{s}{r}
			t^{1-\vdim} \mu (t)^{1-2/\vdim} \ud \mathscr{L}^1 t \\
			& \leq c \tint{s}{r} t^{-1} ( 1 + \vdim^{-1} \gamma
			\log (r/t))^{\vdim-2} \ud \mathscr{L}^1 t \\
			& \leq c ( 1 + \vdim^{-1} \gamma \log (r/s)
			)^{\vdim-2} \log (r/s)
		\end{aligned}
	\end{gather*}
	with $c = r^{-\vdim} \mu (r) \gamma^2$.
\end{proof}
\begin{lemma} \label{lemma:ks_monotonicity}
	Suppose \ref{miniremark:situation} holds.

	Then for $0 < \delta \leq 1$
	\begin{gather*}
		s^{-\vdim} \zeta (s) + (1-\delta) \tint{s}{t}
		u^{-\vdim-2} \ud_u \xi (u) \leq t^{-\vdim} \zeta (t) + ( 4
		\delta \vdim^2 )^{-1} \tint{s}{t} u^{2-\vdim} \ud_u \phi (u)
	\end{gather*}
	whenever $0 < s \leq t \leq r$.
\end{lemma}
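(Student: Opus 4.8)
The plan is to deduce the asserted inequality from the monotonicity identity of \ref{thm:monotonicity} (equivalently \ref{corollary:monotonicity}) by absorbing the contribution of the mean curvature, through a single application of Young's inequality with parameter $\delta$, partly into the tilt term on the left hand side and partly into the $\phi$ term on the right hand side.

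First I would write down the monotonicity identity between the radii $s$ and $t$. Approximating the characteristic function of $\{ u \with s < u \leq t \}$ by test functions in \ref{thm:monotonicity}, or applying \ref{corollary:monotonicity} with $r$ replaced by $t$, recalling \ref{remark:monotonicity}, and using that in the present situation $\delta V$ is represented by integration against the perpendicular generalised mean curvature vector $h$, one obtains an identity which, in the notation of \ref{miniremark:situation}, reads
\begin{gather*}
s^{-\vdim} \zeta(s) + \tint{s}{t} u^{-\vdim-2} \ud_u \xi(u) = t^{-\vdim} \zeta(t) + \vdim^{-1} \tint{\cball{a}{t}}{} \big ( \sup \{ |z-a|, s \}^{-\vdim} - t^{-\vdim} \big ) (z-a) \bullet h(z) \ud \| V \| z,
\end{gather*}
where $\tint{s}{t} u^{-\vdim-2} \ud_u \xi(u) = \tint{( \cball{a}{t} \without \cball{a}{s} ) \times \grass{\adim}{\vdim}}{} |z-a|^{-\vdim-2} | \perpproject{S} (z-a) |^2 \ud V(z,S)$ and $\tint{s}{t} u^{2-\vdim} \ud_u \phi(u) = \tint{( \cball{a}{t} \without \cball{a}{s} )}{} |z-a|^{2-\vdim} |h(z)|^2 \ud \| V \| z$.

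Next I would estimate the mean curvature remainder. Since $h$ is perpendicular to the approximate tangent plane, $(z-a) \bullet h(z) = \perpproject{S} (z-a) \bullet h(z)$ for $V$ almost all $(z,S)$, and since $0 \leq \sup \{ |z-a|, s \}^{-\vdim} - t^{-\vdim} \leq |z-a|^{-\vdim}$ for every $z \in \cball{a}{t} \without \{ a \}$, Cauchy--Schwarz together with Young's inequality $pq \leq \delta p^2 + (4\delta)^{-1} q^2$, applied with $p = |z-a|^{-(\vdim+2)/2} | \perpproject{S} (z-a) |$ and $q = \vdim^{-1} |z-a|^{(2-\vdim)/2} |h(z)|$, give the pointwise estimate
\begin{gather*}
\vdim^{-1} \big ( \sup \{ |z-a|, s \}^{-\vdim} - t^{-\vdim} \big ) | (z-a) \bullet h(z) | \leq \delta \, |z-a|^{-\vdim-2} | \perpproject{S} (z-a) |^2 + ( 4 \delta \vdim^2 )^{-1} |z-a|^{2-\vdim} |h(z)|^2
\end{gather*}
on $\cball{a}{t} \without \{ a \}$. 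Integrating over the annulus and moving the first summand to the left hand side turns the coefficient of $\tint{s}{t} u^{-\vdim-2} \ud_u \xi(u)$ into $1-\delta$, while the second summand produces the term $(4\delta\vdim^2)^{-1} \tint{s}{t} u^{2-\vdim} \ud_u \phi(u)$ on the right, which is exactly the claimed inequality.

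The main obstacle I expect is the bookkeeping at radius $s$: the remainder in \ref{corollary:monotonicity} runs over all of $\cball{a}{t}$ and a priori pairs with $(z-a) \bullet h$ rather than with $\perpproject{S}(z-a) \bullet h$, whereas the tilt Stieltjes integral only sees the annulus $\cball{a}{t} \without \cball{a}{s}$; hence the part of the mean curvature term supported on the inner ball $\cball{a}{s}$ must be matched against the correction terms entering the definitions of $\zeta$ (and possibly $\xi$) in \ref{miniremark:situation}, rather than against the annular tilt integral, and here the inequality $\sup\{|z-a|,s\}^{-\vdim} - t^{-\vdim} \leq |z-a|^{-\vdim}$, valid on all of $\cball{a}{t} \without \{a\}$, together with perpendicularity of $h$, is what makes this matching go through. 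Keeping this accounting precise, and checking the left continuity and Borel measurability of $u \mapsto \zeta(u), \xi(u), \phi(u)$ needed to pass from \ref{thm:monotonicity} to the Stieltjes form, is the only genuinely delicate point; everything else is the elementary Young inequality above, which is uniform in $0 < \delta \leq 1$.
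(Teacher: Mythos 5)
Your core mechanism is right: Young's inequality with parameter $\delta$, using that $(z-a) \bullet h(z) = \perpproject{S}(z-a) \bullet h(z)$ by perpendicularity, splits the mean curvature term into a $\delta$-multiple of the tilt integral and a $(4\delta\vdim^2)^{-1}$-multiple of the $\phi$ integral. However, the ``identity'' you write down with $\zeta$ in place of $\mu$ is not correct, and the way you propose to repair the bookkeeping does not close the gap.

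What is actually true is that, after substituting $\zeta = \mu - \vdim^{-1}\nu$ into \ref{thm:monotonicity_identity}, the remainder term reorganises so that it is supported \emph{only on the annulus} $\cball{a}{t} \without \cball{a}{s}$, with the simpler kernel $|z-a|^{-\vdim}$:
\begin{gather*}
	s^{-\vdim}\zeta(s) + \tint{s}{t} u^{-\vdim-2}\ud_u\xi(u) = t^{-\vdim}\zeta(t) + \vdim^{-1}\tint{s}{t} u^{-\vdim}\ud_u\nu(u),
\end{gather*}
and $\vdim^{-1}\tint{s}{t} u^{-\vdim}\ud_u\nu(u) = -\vdim^{-1}\tint{\cball{a}{t}\without\cball{a}{s}}{}|z-a|^{-\vdim}(z-a)\bullet\mathbf{h}(V;z)\ud\|V\|z$. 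This is obtained (as the paper does) by Stieltjes integration by parts on $\tint{s}{t} u^{-\vdim-1}\nu(u)\ud\mathscr{L}^1 u$; equivalently, the constant kernel $s^{-\vdim} - t^{-\vdim}$ that $\sup\{|z-a|,s\}^{-\vdim} - t^{-\vdim}$ reduces to on the inner ball $\cball{a}{s}$ cancels exactly against the $\vdim^{-1}\nu(s)$ and $\vdim^{-1}\nu(t)$ corrections that convert $\mu$ into $\zeta$. Your identity instead retains the full kernel over all of $\cball{a}{t}$ and has the sign of the remainder wrong.

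This matters because your Young step, applied pointwise on the inner ball, produces the extra terms $\delta\tint{0}{s}u^{-\vdim-2}\ud_u\xi(u)$ and $(4\delta\vdim^2)^{-1}\tint{0}{s}u^{2-\vdim}\ud_u\phi(u)$, which appear nowhere in the asserted inequality and cannot be absorbed by the annular tilt integral $(1-\delta)\tint{s}{t}u^{-\vdim-2}\ud_u\xi(u)$. The inequality $\sup\{|z-a|,s\}^{-\vdim} - t^{-\vdim} \leq |z-a|^{-\vdim}$ that you invoke is true but irrelevant here: the resolution is not an estimate but an exact cancellation, accomplished by partial integration. The measurability and left-continuity issues you flag as the only delicate point are routine; the actual delicate point is precisely the regrouping at radius $s$, and it must be done as an identity before Young's inequality is invoked.

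Once the correct annular form of the remainder is in hand, your pointwise estimate
\begin{gather*}
	\vdim^{-1}|z-a|^{-\vdim}|\perpproject{S}(z-a)|\,|\mathbf{h}(V;z)| \leq \delta|z-a|^{-\vdim-2}|\perpproject{S}(z-a)|^2 + (4\delta\vdim^2)^{-1}|z-a|^{2-\vdim}|\mathbf{h}(V;z)|^2,
\end{gather*}
integrated over the annulus, gives the lemma directly, so the remaining ingredients of your argument are sound.
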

\begin{proof}
	Suppose $0 < s \leq t \leq r$. By partial integration
	\begin{gather*}
		- \tint{s}{t} u^{-\vdim-1} \nu (u) \ud \mathscr{L}^1 u =
		  \vdim^{-1} \big ( t^{-\vdim} \nu (t) - s^{-\vdim} \nu (s) -
		  \tint{s}{t} u^{-\vdim} \ud_u \nu (u) \big )
	\end{gather*}
	Moreover,
	\begin{gather*}
		\begin{aligned}
			& \vdim^{-1} \big | \tint{\cball{a}{t} \without
			\cball{a}{s}}{} | z-a |^{-\vdim} (z-a) \bullet
			\mathbf{h} (V;z) \ud \| V \| z \big | \\
			& \qquad \leq \delta \tint{(\cball{a}{t} \without
			\cball{a}{s}) \times \grass{\adim}{\vdim}}{}
			|z-a|^{-\vdim-2} | \perpproject{S} (z-a) |^2 \ud V
			(z,S) \\
			& \qquad \phantom{\leq}~ + ( 4 \delta \vdim^2 )^{-1}
			\tint{\cball{a}{t} \without \cball{a}{s}}{}
			|z-a|^{2-\vdim} | \mathbf{h} ( V;z ) |^2 \ud \| V \|
			z.
		\end{aligned}
	\end{gather*}
	Therefore the conclusion follows from
	\ref{thm:monotonicity_identity}.
\end{proof}
\begin{remark}
	The argument is a variant of Kuwert and Sch\"atzle \cite[Appendix
	A]{MR2119722}.
\end{remark}
\begin{lemma} \label{lemma:cone_estimate_integral}
	Suppose \ref{miniremark:situation} holds, $0 < s < r$, $0 \leq \gamma
	< \infty$,
	\begin{gather*}
		\phi (t) \leq \gamma^2 r^{-2} \mu(r)^{2/\vdim}
		\mu(t)^{1-2/\vdim}, \quad t^{-\vdim} \zeta (t) \geq r^{-\vdim}
		\zeta (r)
	\end{gather*}
	whenever $s \leq t \leq r$, here $0^0=1$, $f : \{ \lambda : s/r <
	\lambda \leq 1 \} \to \rel$ is defined by
	\begin{gather*}
		f ( \lambda ) = \vdim^{-2} \big ( 1 + ( \vdim-2) ( 1 +
		\vdim^{-1} \gamma \log ( 1/\lambda ) )^{\vdim-2} \log ( 1/
		\lambda ) \big ) \quad \text{for $s/r < \lambda \leq 1$},
	\end{gather*}
	and $g : \{ \lambda \with s/r \leq \lambda \leq 1 \} \to \rel$ is a
	nonnegative, nondecreasing, continuous function.

	Then
	\begin{gather*}
		\tint{s}{r} g(u/r) u^{-\vdim-2} \ud_u \xi (u) \leq \big (
		g(s/r)f(s/r) + \tint{s/r}{1} f \ud g \big ) r^{-\vdim} \mu (r)
		\gamma^2.
	\end{gather*}
\end{lemma}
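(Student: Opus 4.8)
\textbf{Proof proposal for Lemma \ref{lemma:cone_estimate_integral}.}

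The plan is to integrate by parts in the Stieltjes integral $\tint{s}{r} g(u/r) u^{-\vdim-2} \ud_u \xi(u)$ so as to reduce the claim to the already-established estimate of \ref{lemma:ks_monotonicity} together with the auxiliary bound of \ref{lemma:auxiliary_estimate}. First I would apply \ref{lemma:ks_monotonicity} with $\delta = 1$: writing $\eta(u) = u^{-\vdim} \zeta(u) + (4\vdim^2)^{-1} \tint{s}{u} v^{2-\vdim} \ud_v \phi(v)$, that lemma yields that $u \mapsto \eta(u)$ is nondecreasing and in particular that $\tint{s}{t} u^{-\vdim-2} \ud_u \xi(u) \leq t^{-\vdim}\zeta(t) - s^{-\vdim}\zeta(s) + (4\vdim^2)^{-1} \tint{s}{t} u^{2-\vdim} \ud_u \phi(u)$ for $s \leq t \leq r$. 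Combining the hypothesis $t^{-\vdim}\zeta(t) \geq r^{-\vdim}\zeta(r)$ with this monotonicity, one gets an upper bound for the ``tail'' $\tint{t}{r} u^{-\vdim-2} \ud_u \xi(u)$ in terms of $r^{-\vdim}\mu(r)$; more precisely I expect the clean statement to be
\begin{gather*}
	\tint{t}{r} u^{-\vdim-2} \ud_u \xi(u) \leq \vdim^{-2}\tint{t}{r} u^{2-\vdim} \ud_u \phi(u)
\end{gather*}
after cancelling the $\zeta$ contributions, and then \ref{lemma:auxiliary_estimate} with $s$ replaced by $t$ bounds the right side by $f(t/r) r^{-\vdim}\mu(r)\gamma^2$ (this is exactly how $f$ is defined).

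Next I would perform a Stieltjes integration by parts on the weight $g$. Writing $F(u) = \tint{u}{r} v^{-\vdim-2} \ud_v \xi(v)$ for the tail, one has $\tint{s}{r} g(u/r) u^{-\vdim-2}\ud_u\xi(u) = -\tint{s}{r} g(u/r) \ud_u F(u)$, and integrating by parts gives
\begin{gather*}
	\tint{s}{r} g(u/r) u^{-\vdim-2} \ud_u \xi(u) = g(s/r) F(s) + \tint{s}{r} F(u) \ud_u (g(u/r))
	= g(s/r) F(s) + \tint{s/r}{1} F(r\lambda) \ud g(\lambda),
\end{gather*}
where the boundary term at $u=r$ vanishes since $F(r) = 0$, and where the substitution $\lambda = u/r$ is legitimate because $g$ is continuous and nondecreasing so $\ud g$ is a nonnegative Radon measure on $\{\lambda : s/r \leq \lambda \leq 1\}$. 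Since $F(u) \leq f(u/r) r^{-\vdim}\mu(r)\gamma^2$ by the previous paragraph and $\ud g \geq 0$, I can estimate $g(s/r) F(s) + \tint{s/r}{1} F(r\lambda)\ud g(\lambda) \leq \big(g(s/r) f(s/r) + \tint{s/r}{1} f\, \ud g\big) r^{-\vdim}\mu(r)\gamma^2$, which is the asserted inequality.

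The step I expect to require the most care is verifying that all the Stieltjes manipulations are valid — i.e.\ that $\xi$, $\zeta$, $\phi$ (presumably the monotone-type functions $u \mapsto \| V\|(\cball au)$, the tilt-excess integral, and the mean-curvature $L^2$ integral from the situation \ref{miniremark:situation} that is referenced but not shown in the excerpt) are of locally bounded variation on $(s,r]$ with the one-sided continuity properties needed to apply \ref{lemma:ks_monotonicity} and to integrate by parts, and that the substitution $\lambda = u/r$ respects the Stieltjes measures. The monotonicity identity (\ref{thm:monotonicity} and its corollaries) already supplies the structural facts: $\zeta$ differs from a nondecreasing function by an absolutely continuous term controlled by $\| \delta V\|$, and $\xi$ is nondecreasing. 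Once those regularity facts are in hand the computation is the routine chain: (i) cancel $\zeta$ using the hypothesis and \ref{lemma:ks_monotonicity}; (ii) bound the tail of $\xi$ by the tail of $\phi$; (iii) insert \ref{lemma:auxiliary_estimate}; (iv) integrate by parts against $\ud g$ and use $g \geq 0$, $\ud g \geq 0$. The constant bookkeeping is exactly matched by the definition of $f$, so no extra numerical factors should appear.
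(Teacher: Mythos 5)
Your overall strategy is essentially the paper's: bound the tail $\tint{t}{r} u^{-\vdim-2}\ud_u\xi(u)$ using \ref{lemma:ks_monotonicity} together with the hypothesis $t^{-\vdim}\zeta(t)\geq r^{-\vdim}\zeta(r)$, insert \ref{lemma:auxiliary_estimate} to replace the $\phi$-tail by $f(t/r)\,r^{-\vdim}\mu(r)\gamma^2$, and then swap the weight $g$ onto the tail by manipulating Stieltjes measures. The paper packages the last step as a Fubini argument with two auxiliary Radon measures $\psi_1,\psi_2$ rather than as an explicit integration by parts, but these are two dressings of the same computation (the boundary term $g(s/r)F(s)$ in your version is exactly the mass $\psi_2$ assigns to $\{v : v<s\}$ in theirs).

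The one step that does not go through as written is the invocation of \ref{lemma:ks_monotonicity} with $\delta=1$. With $\delta=1$ the coefficient $(1-\delta)$ in front of $\tint{s}{t} u^{-\vdim-2}\ud_u\xi(u)$ vanishes, so the lemma produces no information about $\xi$ at all. Your displayed intermediate inequality
\begin{gather*}
	\tint{s}{t} u^{-\vdim-2}\ud_u\xi(u) \leq t^{-\vdim}\zeta(t)-s^{-\vdim}\zeta(s)+(4\vdim^2)^{-1}\tint{s}{t} u^{2-\vdim}\ud_u\phi(u)
\end{gather*}
cannot in fact be extracted from \ref{lemma:ks_monotonicity} for any admissible $\delta\in(0,1]$: matching coefficients would force $(1-\delta)^{-1}(4\delta)^{-1}=4^{-1}$, i.e.\ $\delta(1-\delta)=1$, which has no real solution. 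The correct choice is $\delta=1/2$, as the paper uses; then $1-\delta=1/2$ and $(4\delta\vdim^2)^{-1}=(2\vdim^2)^{-1}$, and after cancelling the $\zeta$-terms via the hypothesis one obtains precisely the clean statement $\tint{t}{r} u^{-\vdim-2}\ud_u\xi(u)\leq\vdim^{-2}\tint{t}{r} u^{2-\vdim}\ud_u\phi(u)$ that you correctly anticipate in the next sentence. Your own numerology (the prefactor $\vdim^{-2}$ in the definition of $f$) already signals that $\delta=1/2$ was intended. With that replacement, the remainder of the argument is sound.
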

\begin{proof}
	From \ref{lemma:ks_monotonicity} with $\delta = 1/2$ one infers
	\begin{gather*}
		\tint{t}{r} u^{-\vdim-2} \ud_u \xi (u) \leq \vdim^{-2}
		\tint{t}{r} u^{2-\vdim} \ud_u \phi (u) \quad \text{for $s \leq
		t \leq r$}.
	\end{gather*}
	Choose Radon measures $\psi_1$ and $\psi_2$ on $\rel$ such that
	\begin{align*}
		\psi_1 ( \{ u \with u \leq t \} ) & = \tint{s}{\sup\{s,t\}}
		u^{-\vdim-2} \ud_u \xi (u) \quad \text{for $-\infty < t \leq
		r$}, \\
		\psi_2 ( \{ v \with v < t \} ) & = g (t/r) \quad \text{for
		$s \leq t \leq r$}.
	\end{align*}
	One computes, using Fubini's theorem and
	\ref{lemma:auxiliary_estimate},
	\begin{gather*}
		\begin{aligned}
			& \tint{s}{r} g(u/r) u^{-\vdim-2} \ud_u \xi (u) =
			\tint{\{u:s<u\leq r\}}{} g (u/r) \ud \psi_1 u \\
			& \qquad = \tint{\{ u : s < u \leq r\}}{} \psi_2 ( \{ v
			: v < u \} ) \ud \psi_1 u \\
			& \qquad = \tint{\{v : v <r \}}{} \psi_1 ( \{ u
			: \sup \{s,v\} < u \leq r \} ) \ud \psi_2 v \\
			& \qquad = \tint{\{ v : v < r \}}{} \tint{\sup
			\{s,v\}}{r} u^{-\vdim-2} \ud_u \xi (u) \ud \psi_2 v \\
			& \qquad \leq c \tint{\{v: v < r \}}{} f ( \sup
			\{ s, v \} /r ) \ud \psi_2 v = c \big ( g (s/r) f(s/r)
			+ \tint{s/r}{1} f \ud g \big ).
		\end{aligned}
	\end{gather*}
	where $c = r^{-\vdim} \mu (r) \gamma^2$.
\end{proof}
\begin{theorem} \label{thm:barycenter_estimate}
	Suppose \ref{miniremark:situation} holds, $0 \leq \gamma \leq \kappa <
	\infty$,
	\begin{gather*}
		\phi (s) \leq \gamma^2 r^{-2} \mu (r)^{2/\vdim} \mu
		(s)^{1-2/\vdim}, \quad s^{-\vdim} \zeta(s) \geq r^{-\vdim}
		\zeta (r)
	\end{gather*}
	whenever $0 < s \leq r$, here $0^0=1$, and $0 < \delta < \infty$.

	Then
	\begin{gather*}
		\big | \tint{\cball{a}{r}}{} |z-a|^{-\vdim-1+\delta} (z-a) \ud
		\| V \| z \big | \leq \Gamma r^{-\vdim+\delta} \mu (r) \gamma
	\end{gather*}
	where $\Gamma$ is a positive, finite number depending only on $\vdim$,
	$\delta$ and $\kappa$.
\end{theorem}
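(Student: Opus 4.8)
The plan is to estimate the barycentre integral $\big| \tint{\cball{a}{r}}{} |z-a|^{-\vdim-1+\delta}(z-a) \ud \| V \| z \big|$ by splitting the ball $\cball{a}{r}$ into dyadic annuli and integrating by parts against the weight $|z-a|^{-\vdim-1+\delta}$, then reassembling the pieces using the two monotonicity-type consequences already isolated in \ref{lemma:almost_barycenter_estimate} and \ref{lemma:cone_estimate_integral}. The key point is that the orthogonal projection $\project{S}(z-a)$ appearing after integration by parts decomposes as $(z-a) - \perpproject{S}(z-a)$, so one estimate controls the ``cone'' part coming from $\perpproject{S}(z-a)$ (this is governed by $\xi$, the $\perpproject{S}$-tilt excess, via \ref{lemma:cone_estimate_integral}) and the other controls the ``mean curvature'' part coming from $\delta V$ (governed by $\phi$ via \ref{lemma:almost_barycenter_estimate} and \ref{lemma:auxiliary_estimate}).

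\textbf{First steps.} First I would assume $a = 0$ and, by the hypothesis, use \ref{corollary:density_ratio_estimate} to record the growth estimate $t^{-\vdim}\mu(t) \leq (1 + \vdim^{-1}\gamma\log(r/t))^\vdim r^{-\vdim}\mu(r)$ for $0 < t \leq r$, which is what makes all the logarithmic integrals below finite and $\kappa$-controlled. Next I would approximate $|z|^{-\vdim-1+\delta}z$ by smooth compactly supported vector fields of the form $g(|z|)\,\mathrm{id}$ (cutting off near the origin, where $\density^\vdim(\|V\|,0)$ may be infinite but the weight $|z|^{-\vdim-1+\delta}$ is integrable against $\|V\|$ precisely because of the density growth bound just recorded), and apply $\delta V$ to it. This produces, as in the proof of \ref{thm:monotonicity_identity} and \ref{lemma:almost_barycenter_estimate}, an identity expressing $\tint{\cball{0}{r}}{}|z|^{-\vdim-1+\delta}z \ud \|V\|z$ as a sum of (i) a term $\tint{}{}g(|z|)|z|^{-\vdim-1}\perpproject{S}(z) \ud V(z,S)$ type contribution, (ii) a $\delta V$ contribution controlled as in \ref{lemma:almost_barycenter_estimate} with $\beta$ essentially $\gamma$, and (iii) boundary terms at radius $r$ of size $\leq \Gamma r^{-\vdim+\delta}\mu(r)$. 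For (i) I would further bound $|\perpproject{S}(z)|^2$ pointwise by $|z|^2$ but, more sharply, integrate $|z|^{-\vdim-2}|\perpproject{S}(z)|^2$ against the appropriate weight and recognize the result as $\tint{s}{r}g(u/r)u^{-\vdim-2}\ud_u\xi(u)$, which is exactly the quantity estimated by \ref{lemma:cone_estimate_integral} in terms of $\gamma^2 r^{-\vdim}\mu(r)$ times a $\kappa$-dependent constant.

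\textbf{Assembling the constant.} Summing the three contributions over a dyadic decomposition in $s$, or more cleanly integrating the weight $g(\lambda) = \lambda^{\delta-?}$ chosen so that the relevant integrals $\tint{0}{1}\lambda^{\cdots}(1+\vdim^{-1}\gamma\log(1/\lambda))^{\vdim-1}\ud\mathscr{L}^1\lambda$ converge — here the factor $\lambda^{\delta}$ with $\delta > 0$ is what forces convergence at $\lambda \to 0+$ and produces the power $r^{-\vdim+\delta}$ — I obtain a bound $\Gamma r^{-\vdim+\delta}\mu(r)(\gamma + \gamma^2/\gamma)$; since $\gamma^2 \leq \kappa\gamma$ this is $\leq \Gamma r^{-\vdim+\delta}\mu(r)\gamma$ with $\Gamma$ depending only on $\vdim$, $\delta$, $\kappa$. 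The main obstacle I anticipate is the bookkeeping near the origin: because $0$ may have infinite density, one cannot just evaluate boundary terms at radius $s$ and let $s \to 0+$ naively; one must check that $s^{-\vdim+\delta}\zeta(s) \to 0$ (which follows from the density growth bound and $\delta>0$) and that the cone term $\tint{0}{s}(\cdots)\ud_u\xi(u)$ vanishes in the limit, so that the $s=0$ endpoint contributes nothing. Verifying these limits carefully, and tracking how the $\kappa$-dependence enters through the logarithmic factors in \ref{lemma:cone_estimate_integral} and \ref{lemma:auxiliary_estimate}, is where the real work lies; the algebra of choosing $g$ and collecting constants is routine once that is in place.
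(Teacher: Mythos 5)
Your plan matches the paper's proof: decompose $(z-a)$ into its tangential part $\project{S}(z-a)$ (estimated directly by \ref{lemma:almost_barycenter_estimate}, giving $\gamma$ linearly) and its normal part $\perpproject{S}(z-a)$ (estimated via the cone estimate \ref{lemma:cone_estimate_integral} together with \ref{remark:basic_integrability}), and the weight is simply $g(\lambda)=\lambda^\delta$ applied over the whole ball once $\| V \|(\{a\})=0$ is noted — no dyadic decomposition or $s\to 0+$ limit is required. One step you leave implicit deserves to be said out loud: the cone lemma controls the \emph{quadratic} quantity $\tint{}{}|z-a|^{-\vdim-2+\delta}|\perpproject{S}(z-a)|^2\ud V$ with a $\gamma^2$, and you need Cauchy--Schwarz with the split $|z-a|^{(-\vdim+\delta)/2}\cdot|z-a|^{(-\vdim-2+\delta)/2}|\perpproject{S}(z-a)|$ to convert this into a bound on the \emph{linear} quantity $b_2$; the first factor's square integrates to $\Delta_2 r^{-\vdim+\delta}\mu(r)$ by the density growth bound, so $(\gamma^2)^{1/2}\cdot 1^{1/2}=\gamma$ directly. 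Your closing remark ``since $\gamma^2\le\kappa\gamma$'' is therefore unnecessary and suggests a slight muddle: the $\kappa$-dependence does not enter there, but only (as you do also say) in replacing $\gamma$ by $\kappa$ inside the logarithmic integrands so that the constants $\Delta_1,\Delta_2,\Delta_3$ are uniform over $0\le\gamma\le\kappa$.
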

\begin{proof}
	Note $\| V \| ( \{ a \} ) = 0$ by \ref{corollary:density_ratio_estimate}.
	Noting additionally \ref{remark:basic_integrability}, one defines
	\begin{align*}
		b_1 & = \tint{\cball{a}{r} \times \grass{\adim}{\vdim}}{}
		|z-a|^{-\vdim-1+\delta} \project{S} (z-a) \ud V (z,S) \in
		\rel^\adim, \\
		b_2 & = \tint{\cball{a}{r} \times \grass{\adim}{\vdim}}{}
		|z-a|^{-\vdim-1+\delta} \perpproject{S} (z-a) \ud V (z,S) \in
		\rel^\adim.
	\end{align*}
	By \ref{lemma:almost_barycenter_estimate}, one infers
	\begin{gather*}
		| b_1 | \leq \Delta_1 r^{-\vdim+\delta} \mu (r) \gamma
	\end{gather*}
	where $\Delta_1 = \tint{0}{1} \lambda^{-1+\delta} ( 1 + \vdim^{-1}
	\kappa \log (1/\lambda) )^{\vdim-1} \ud \mathscr{L}^1 \lambda <
	\infty$. By H\"older's inequality, \ref{remark:basic_integrability}
	and \ref{lemma:cone_estimate_integral} one obtains
	\begin{gather*}
		\begin{aligned}
			| b_2 | & \leq \big ( \tint{\cball{a}{r}}{}
			|z-a|^{-\vdim+\delta} \ud \| V \| z \big )^{1/2} \\
			& \phantom{\leq}~ \cdot \big ( \tint{\cball{a}{r}
			\times \grass{\adim}{\vdim}}{} | z-a
			|^{-\vdim-2+\delta} | \perpproject{S} (z-a) |^2 \ud V
			(z,S) \big )^{1/2} \\
			& \leq ( \Delta_2 \Delta_3 )^{1/2} r^{-\vdim+\delta}
			\mu (r) \gamma
		\end{aligned}
	\end{gather*}
	where $\Delta_2 = 1 + \sup \{ 0, \vdim-\delta \} \tint{0}{1}
	t^{-1+\delta} (1 + \vdim^{-1} \kappa \log ( 1/ t ) )^\vdim \ud
	\mathscr{L}^1 t < \infty$ and $\Delta_3 = \delta \tint{0}{1}
	\lambda^{-1+\delta} \vdim^{-2} \big ( 1 + ( \vdim-2) ( 1 + \vdim^{-1}
	\kappa \log ( 1/ \lambda))^{\vdim-2} \log ( 1 / \lambda ) \big ) \ud
	\mathscr{L}^1 \lambda < \infty$. Hence one may take $\Gamma =
	\Delta_1 + (\Delta_2\Delta_3)^{1/2}$.
\end{proof}
\begin{lemma} \label{lemma:abstract_oscillation_estimate}
	Suppose $0 < \delta \leq 1$, $0 < \varepsilon \leq 1$, $1 < \adim \in
	\nat$, $a \in \rel^\adim$, $0 < s \leq t \leq r < \infty$, $s + (
	\delta + \varepsilon ) r \leq t$, $\mu$ is a Radon measure on
	$\cball{a}{r}$, $q,Q \in \orthproj{\adim}{1}$ with $|q-Q|<1$, $b \in
	\cball{a}{s} \cap \spt \mu$, and
	\begin{gather*}
		\inf \{ q (z-a), Q (z-b) \} \geq -\varepsilon r \quad
		\text{for $\mu$ almost all $z$}, \\
		\mu ( \classification{\cball{a}{r}}{z}{(z-a) \bullet v \geq
		t} ) \geq \delta \measureball{\mu}{\cball{a}{r}} \quad
		\text{for $v \in \mathbf{S}^{\adim-1} \cap \ker q$}.
	\end{gather*}

	Then
	\begin{gather*}
		|q-Q| \leq 8 \delta^{-2} \big ( r^{-1} \tfint{}{}q (z-a)
		\ud \mu z + \varepsilon \big ).
	\end{gather*}
\end{lemma}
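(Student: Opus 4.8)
The plan is to run the geometric argument behind Allard's tilt estimate (cf.\ \cite[8.9]{MR0307015}): write the hyperplane $\ker Q$ as a graph over $\ker q$, test the spreading hypothesis in the direction of steepest descent of that graph, and average. First dispose of trivialities: since $|q-Q| < 1$ and $\tfint{}{}q(z-a)\ud\mu z \geq -\varepsilon r$ (because $q(z-a) \geq -\varepsilon r$ for $\mu$ almost all $z$, so the right side is $\geq 0$), the inequality is automatic once $8\delta^{-2}\bigl(r^{-1}\tfint{}{}q(z-a)\ud\mu z + \varepsilon\bigr) \geq 1$; hence one may assume it is $< 1$, and one may clearly assume $q \neq Q$.

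The linear-algebra step: choose $P \in \orthproj{\adim}{\adim-1}$ with $q \circ P^\ast = 0$, so that $P^\ast \circ P + q^\ast \circ q$ is the identity, and set $c = q^\ast(1) \bullet Q^\ast(1)$. From $|q-Q|^2 = 2(1-c)$ together with $|q-Q| < 1$ one gets $c \in \bigl(\tfrac12,1\bigr]$, and decomposing $Q$ against $P$ and $q$ gives $Q = (Q \circ P^\ast)\circ P + c\,q$ with $|Q \circ P^\ast|^2 = 1-c^2$; thus $|Q\circ P^\ast| = |q-Q|\,((1+c)/2)^{1/2}$, so $\tfrac{\sqrt3}{2}|q-Q| \leq |Q\circ P^\ast| \leq |q-Q|$ and in particular $|Q\circ P^\ast| > 0$. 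Put $v = -|Q\circ P^\ast|^{-1}P^\ast\bigl((Q\circ P^\ast)^\ast(1)\bigr) \in \mathbf{S}^{\adim-1}\cap\ker q$, so that $(Q\circ P^\ast)(P(w)) = -|Q\circ P^\ast|\,(w\bullet v)$ for every $w$, and hence $c\,q(z-a) = Q(z-b) + |Q\circ P^\ast|\,((z-b)\bullet v) + c\,q(b-a)$ for all $z$.

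The estimate proper: since $b \in \spt\mu$ and $q(\cdot - a) \geq -\varepsilon r$ $\mu$-almost everywhere, approximation gives $q(b-a) \geq -\varepsilon r$. With $E = \cball{a}{r}\cap\{z : (z-a)\bullet v \geq t\}$, the spreading hypothesis applied to $v$ gives $\mu(E) \geq \delta\,\mu(\cball{a}{r})$, and for $z \in E$ one has $(z-b)\bullet v \geq t - |a-b| \geq t-s \geq (\delta+\varepsilon)r$; combining with $Q(z-b) \geq -\varepsilon r$ and the displayed identity yields the pointwise bound $c\,q(z-a) \geq |Q\circ P^\ast|(\delta+\varepsilon)r - 2\varepsilon r$ for $\mu$ almost all $z \in E$. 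Feeding this bound on $E$ and the global bound $q(z-a) \geq -\varepsilon r$ into the splitting $\tfint{}{}q(z-a)\ud\mu = \mu(\cball{a}{r})^{-1}\bigl(\int_E q(z-a)\ud\mu + \int_{\cball{a}{r}\setminus E} q(z-a)\ud\mu\bigr)$, and using $\mu(E)\geq\delta\mu(\cball{a}{r})$ and $c\leq 1$, produces a lower bound of the shape $\tfint{}{}q(z-a)\ud\mu \geq \delta^2|Q\circ P^\ast|\,r - C\varepsilon r$ with $C$ a universal constant; rearranging and using $|q-Q| \leq \tfrac{2}{\sqrt3}|Q\circ P^\ast|$ gives the claim after absorbing $C$ and $\tfrac{2}{\sqrt3}$ into the factor $8$. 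The regime in which the pointwise bound on $E$ is not an improvement over $-\varepsilon r$ (i.e.\ $|Q\circ P^\ast|(\delta+\varepsilon) < 2\varepsilon$) is handled separately: there $|Q\circ P^\ast| \lesssim \varepsilon/\delta$, hence $|q-Q| \lesssim \delta^{-1}\varepsilon \leq \delta^{-2}\varepsilon$ and the conclusion is immediate.

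The part that needs real care is the bookkeeping of constants in the averaging step — ensuring the additive $\varepsilon$-errors (one from "$Q(z-b)\geq -\varepsilon r$", one from "$q(b-a)\geq -\varepsilon r$", one from the points of $\cball{a}{r}\setminus E$) end up with a coefficient that still fits under the clean form $8\delta^{-2}(\,\cdot\,+\varepsilon)$; this is exactly why the hypothesis is arranged with the extra slack "$+\varepsilon$" in $s+(\delta+\varepsilon)r\leq t$, and why the case split above (trivial when the right-hand side is $\geq 1$, trivial when $|Q\circ P^\ast|$ is already $\lesssim\varepsilon/\delta$) is needed to make the estimate routine rather than delicate. Everything else is elementary Euclidean geometry of hyperplanes and manipulation of the finite measure $\mu$.
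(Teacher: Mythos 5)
Your proof takes a genuinely different route. The paper represents $\{z : Q(z-b) \geq 0\}$ as a supergraph of an affine $g$ over $\ker q$, sets $h = 2\delta^{-1}(\tint{}{}q\ud\mu + \varepsilon)$ after normalising to $\mu(\cball{0}{1})=1$, applies a Chebyshev-type estimate to locate a single point $c \in \spt\mu$ with $q(c)\leq h$ in the far slab $\{z\bullet v\geq t\}$, shifts $c$ by at most $\varepsilon$ to land in the supergraph, and reads off $|\grad g(0)|\leq\delta^{-1}(h+2\varepsilon)$ by comparing the graph heights at $p(b)$ and at the shifted point; the conversion of $|\grad g(0)|$ to $|q-Q|$ is delegated to two inequalities of Allard. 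You instead split $Q = (Q\circ P^\ast)\circ P + c\,q$ with $c = q^\ast(1)\bullet Q^\ast(1)$, work from the exact identity $c\,q(z-a) = Q(z-b) + |Q\circ P^\ast|\,((z-b)\bullet v) + c\,q(b-a)$, and average the resulting pointwise lower bound for $q(\cdot-a)$ over the far slab $E$ --- replacing the graph picture, the Chebyshev step, the auxiliary ball shift and the appeal to Allard by a direct integral estimate and elementary linear algebra. For the same geometric content this is cleaner and entirely self-contained.

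However, the final constant absorption does not go through as written, and your case split does not make it routine. In the bad case you reach a bound of the form $|q-Q|\leq C\delta^{-2}\varepsilon$ and assert the conclusion is immediate; but the quantity $r^{-1}\tfint{}{}q(z-a)\ud\mu z+\varepsilon$ on the right of the claimed inequality is only known to be nonnegative and can be as small as $0$. Indeed, take $\mu$ concentrated on the hyperplane $\{z : q(z-a) = -\varepsilon r\}$, $b$ on that hyperplane, and $Q$ tilted inside $\ker q$ by an angle of order $\varepsilon$ so that $Q(z-b)\geq-\varepsilon r$ still holds on $\spt\mu$; the spreading hypothesis is then easily arranged, yet $r^{-1}\tfint{}{}q(z-a)\ud\mu z+\varepsilon=0$ while $|q-Q|$ is of order $\varepsilon>0$, so nothing of the form $|q-Q|\leq C\delta^{-2}\varepsilon$ implies the stated conclusion. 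Tracking the good case similarly leaves a residual: $\delta^2|Q\circ P^\ast|\leq r^{-1}\tfint{}{}q(z-a)\ud\mu z+\varepsilon+\delta\varepsilon$, and the extra $\delta\varepsilon$ (from the $q(b-a)\geq-\varepsilon r$ correction and the off-$E$ contribution) does not sit under $8\delta^{-2}(r^{-1}\tfint{}{}q(z-a)\ud\mu z+\varepsilon)$. For what it is worth, the paper's own proof arrives at exactly the same juncture --- $|\grad g(0)|\leq 2\delta^{-2}(\tint{}{}q\ud\mu+\varepsilon)+2\delta^{-1}\varepsilon$ --- before deferring to Allard; replacing $+\,\varepsilon$ by $+\,2\varepsilon$ in the lemma's conclusion makes both your averaging argument and the paper's single-point argument close cleanly in a few lines, so what you ran into appears to be a small slip in the stated constant rather than a defect of your route.
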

\begin{proof}
	Rescaling, one may assume $a = 0$, $r = 1$ and
	$\measureball{\mu}{\cball{0}{1}} = 1$.

	Choose $p \in \orthproj{\adim}{{\adim-1}}$ with $p \circ q^\ast = 0$.
	Since $|q-Q| < 1$, one notes $Q ( q^\ast (y) ) = y + (Q-q)(q^\ast (y))
	\to -\infty$ as $y \to - \infty$.  Hence there exists an affine
	function $g : \rel^{\adim-1} \to \rel$ such that
	\begin{gather*}
		\{ p^\ast ( x ) + q^\ast (y) \with \text{$(x,y) \in
		\rel^{\adim-1} \times \rel$ and $y \geq g (x)$} \} =
		\classification{\rel^\adim}{z}{Q(z-b) \geq 0}.
	\end{gather*}

	Define $h = 2 \delta^{-1} \big ( \int q (z) \ud \mu z + \varepsilon
	\big) \geq 0$ and assume $Dg(0) \neq 0$. This implies
	\begin{gather*}
		\mu ( \classification{\cball{0}{1}}{z}{q(z) > h} ) \leq
		\delta /2, \\
		\mu ( \classification{\cball{0}{1}}{z}{\text{$q(z) \leq h$
		and $z \bullet v \geq t$}}) > 0 \quad \text{for $v \in
		\mathbf{S}^{\adim-1} \cap \ker q$}.
	\end{gather*}
	Letting $u = | \grad g(0) |^{-1} \grad g(0)$ and $v = p^\ast (u)$,
	there exists $c \in \cball{0}{1}$ with
	\begin{gather*}
		c \in \spt \mu, \quad q (c) \leq h, \quad c \bullet v \geq
		t.
	\end{gather*}
	Since $Q(c-b) \geq - \varepsilon$, there exists $d \in
	\cball{c}{\varepsilon}$ with
	\begin{gather*}
		Q (d-b) \geq 0, \quad q (d) \leq h + \varepsilon,
		\quad d \bullet v \geq t-\varepsilon.
	\end{gather*}

	Defining $\kappa = p (b) \bullet u$ and $\lambda = p (d) \bullet
	u$, one notes
	\begin{gather*}
		\kappa \leq s, \quad t-\varepsilon \leq \lambda,
		\quad \lambda-\kappa \geq \delta, \\
		g ( \lambda u ) = g ( p (d) ) \leq q (d) \leq h + \varepsilon,
		\quad g ( \kappa u ) = g ( p (b)) = q ( b) \geq - \varepsilon
	\end{gather*}
	as $b \in \spt \mu$. Since $(\lambda-\kappa) | \grad g (0) | = g (
	\lambda u ) - g ( \kappa u )$, this implies
	\begin{gather*}
		| \grad g (0) | \leq \delta^{-1} ( h + 2\varepsilon )
	\end{gather*}
	from which the conclusion can be inferred for example by use of the
	inequalities noted in Allard \cite[8.9\,(5)]{MR0307015} and Allard
	\cite[3.2\,(9)]{MR840267}.
\end{proof}
\begin{lemma} \label{lemma:diameter_estimate}
	Suppose \ref{miniremark:situation} holds, $\vdim = \adim-1$, $0 < R <
	\infty$,
	\begin{gather*}
		D = \eqclassification{\spt \| V \| \times
		\orthproj{\adimI}{1}}{(b,Q)}{ \text{$|z-b+Q^\ast (R)| \geq R$
		for $z \in \spt \| V \|$}},
	\end{gather*}
	$a \in \dmn D$, $\density^\vdim ( \| V \|, a ) = \infty$, $0 < \lambda
	\leq 1$, and $\gamma : I \to \overline{\rel}$ defined by
	\begin{gather*}
		\gamma (s) = \sup \{ t \mu (t)^{-1/\vdim} \phi (u)^{1/2} \mu
		(u)^{1/\vdim-1/2} \with 0 < u \leq t \leq s\} \quad \text{for
		$s \in I$}
	\end{gather*}
	satisfies $\lim_{s \to 0+} \gamma (s) = 0$.

	Then $\card D \lIm \{a\} \rIm \leq 2$ and there exists $0 < s \leq r$
	such that $D \lIm \cball{a}{s} \rIm$ can be covered by at most two
	sets whose diameters do not exceed
	\begin{gather*}
		\Gamma ( \gamma ( \lambda s ) + s / R)
	\end{gather*}
	where $\Gamma$ is a positive, finite number depending only on $\vdim$.
\end{lemma}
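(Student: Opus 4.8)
The plan is to exploit the barycentre estimate \ref{thm:barycenter_estimate} together with the oscillation estimate \ref{lemma:abstract_oscillation_estimate} and the radius selection machinery of Section \ref{sec:finite_density}. First I would choose a suitable radius. Since $\density^\vdim(\|V\|,a)=\infty$ and $\gamma(s)\to0$ as $s\to0+$, I can use \ref{lemma:choice_of_radius} (or rather a variant of \ref{lemma:radius_from_calculus} applied to $t\mapsto t^{-\vdim}\mu(t)$ and the functions controlling $\phi$ and $\gamma$) to pick $0<s\le r$ on which the relevant monotone quantities do not drop too fast when passing from radius $s$ to radius $\lambda s$, so that $\mu(\lambda s)$ is comparable to $\lambda^\vdim\mu(s)$, the quantity $\phi$ is controlled in the sense of the hypothesis of \ref{thm:barycenter_estimate}, and $s^{-\vdim}\zeta(s)\ge r^{-\vdim}\zeta(r)$; here I would use that $\density^\vdim(\|V\|,a)=\infty$ forces the density ratio $t^{-\vdim}\mu(t)$ to be large at small radii while \ref{corollary:density_ratio_estimate} controls its growth, and that $\zeta$ is essentially nondecreasing after scaling.

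Second, I would set up the comparison between two contact directions. Suppose $(b_1,Q_1),(b_2,Q_2)\in D\restrict\cball{a}{s}$ with $|Q_1-Q_2|<1$ (after passing to the hemisphere where $Q_1^\ast(1)\bullet Q_2^\ast(1)\ge0$, using \ref{miniremark:planes_1codim}). The exterior ball condition of radius $R$ at $b_i$ combined with \ref{miniremark:touching_ball} gives $Q_i(z-b_i)\ge -c\,s^2/R$ for all $z\in\spt\|V\|$, which plays the role of the ``$\ge-\varepsilon r$'' hypothesis in \ref{lemma:abstract_oscillation_estimate} with $\varepsilon r\sim s^2/R$. For the lower bound hypothesis of \ref{lemma:abstract_oscillation_estimate}, I would argue that for each $v\in\mathbf S^{\adim-1}\cap\ker Q_1$ a definite fraction of the mass $\mu$ lies in the half-space $\{(z-a)\bullet v\ge t\}$ for appropriate $t\sim s$; this should follow from the lower density bound at $a$ (monotonicity, \ref{corollary:monotonicity}) together with \ref{lemma:touching} or a shifting argument as in \ref{lemma:shifting} that prevents all the mass from concentrating in a slab transverse to $v$. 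Then \ref{lemma:abstract_oscillation_estimate} yields
\begin{gather*}
	|Q_1-Q_2|\le 8\delta^{-2}\Big(s^{-1}\tfint{}{}Q_1(z-a)\ud\|V\|z+\varepsilon\Big),
\end{gather*}
and the average of $Q_1(z-a)$ is estimated by the barycentre estimate \ref{thm:barycenter_estimate} (with $\delta$ replaced by $1$, i.e.\ bounding $|\tint{\cball as}{}(z-a)\ud\|V\|z|\le\Gamma s^{-\vdim+1}\mu(s)\gamma(s)$), so that $s^{-1}\tfint{}{}Q_1(z-a)\ud\|V\|z\le\Gamma\gamma(\lambda s)$ after the radius choice guarantees $\mu$ is comparable on the scales involved. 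Combining, $|Q_1-Q_2|\le\Gamma(\gamma(\lambda s)+s/R)$.

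Third, the cardinality bound $\card D\restrict\{a\}\le2$ and the covering by two sets of small diameter follow by a dichotomy on the hemisphere decomposition of $\orthproj{\adimI}{1}$: any two elements $Q_1,Q_2$ of $D\restrict\cball as$ with $Q_1^\ast(1)\bullet Q_2^\ast(1)\ge0$ are within $\Gamma(\gamma(\lambda s)+s/R)$ of each other by the previous step; since $\orthproj{\adimI}{1}$ in codimension one is two antipodal hemispheres, $D\restrict\cball as$ meets each hemisphere in a set of diameter at most $\Gamma(\gamma(\lambda s)+s/R)$, giving the required two-set cover, and letting $s\to0+$ (so the diameters tend to $0$) forces at most one $Q$ per hemisphere at $a$ itself, hence $\card D\restrict\{a\}\le2$.

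The main obstacle I expect is verifying the mass lower bound in the half-space $\{(z-a)\bullet v\ge t\}$ uniformly over $v\in\mathbf S^{\adim-1}\cap\ker Q_1$ at a point of infinite density: one cannot simply invoke an upper Ahlfors bound, and one must combine the monotonicity identity (which at a point of infinite density degenerates) with the exterior ball / touching information to rule out that the varifold hugs a lower-dimensional set transverse to $v$. I anticipate this requires a careful application of \ref{lemma:touching} (which already packages a harnack-type argument giving new points of $\spt\|V\|$ in prescribed directions at controlled height) together with \ref{lemma:shifting} to transport mass, and a simultaneous bookkeeping of the error terms $\psi(\oball{a}{r})^{1/\vdim}$ and $s/R$, all of which must be absorbed into the radius selection so that the final constant $\Gamma$ depends only on $\vdim$.
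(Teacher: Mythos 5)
Your overall framework (pick a good radius, apply the oscillation estimate \ref{lemma:abstract_oscillation_estimate} with the exterior-ball condition supplying the ``$\ge-\varepsilon r$'' hypothesis via \ref{miniremark:touching_ball}, control the barycentre with \ref{thm:barycenter_estimate}, and split into two hemispheres via \ref{miniremark:planes_1codim}) tracks the paper's proof correctly, and you correctly identify the mass-lower-bound-in-a-half-space as the crux. But the tool you propose for that step will not work.

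You suggest obtaining the fraction of $\|V\|$ in $\{(z-a)\bullet v\ge t\}$ from \ref{lemma:touching} or a shifting argument as in \ref{lemma:shifting}. Both of those require an \emph{upper} Ahlfors bound $\|V\|(\oball ar)\le Mr^\vdim$ (explicitly, \ref{lemma:touching} assumes $\|V\|(U)\le Mr^\vdim$, and \ref{lemma:shifting} feeds into an estimate only when the mass and tilt in the tube are already under control). Here $\density^\vdim(\|V\|,a)=\infty$, so no such $M$ exists at any scale, and the premise fails. The paper avoids this by a blow-up argument that you do not propose: normalize the rescaled varifolds so that $\mu(s_i)^{-1}\|V\|\restrict\cball a{s_i}$ has total mass $\le1$; the almost-monotonicity that the radius selection arranges for $\zeta=\mu-\vdim^{-1}\nu$ then gives a lower density ratio bound for any subsequential weak limit $C$, while $\gamma\to0$ forces $\delta C=0$; combined with the half-space constraint $\spt\|C\|\subset\{z:q(z)\ge0\}$ coming from the exterior ball at $a$, Allard's monotonicity (the statement $\measureball{\|C\|}{\oball01}\le1\le\density^\vdim(\|C\|,0)\unitmeasure\vdim$) and the constancy theorem identify $C$ as the multiplicity-one plane $\unitmeasure\vdim^{-1}\var(\ker q\cap\oball01)$. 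This uniqueness is what gives \emph{both} the cardinality bound $\card D\lIm\{a\}\rIm\le2$ (which your argument derives by a limiting argument that implicitly already presumes the diameter estimate is uniform in $s$) \emph{and}, via weak convergence of $\mu(s_i)^{-1}\|V\|\restrict\oball a{s_i}$ to a plane, the mass lower bound in the half-space $\{(z-a)\bullet v\ge2s\}$ with a definite $\delta$ depending only on $\vdim$.

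A secondary point: the radius selection is not simply a comparability requirement on $\mu$ between scales $\lambda s$ and $s$, as in \ref{lemma:radius_from_calculus}. The paper needs, and you do not arrange, the almost-monotonicity
\begin{gather*}
(1+\Delta_2\gamma(t))\,t^{-\vdim}\zeta(t)\ \ge\ (1+\Delta_2\gamma(s_i))\,s_i^{-\vdim}\zeta(s_i)\quad\text{for }0<t\le s_i,
\end{gather*}
which exists precisely because $\gamma(s)\to0$ and $s^{-\vdim}\zeta(s)\to\infty$ (the latter using $|\nu|\le\gamma\mu$). This quantity later also produces, together with \ref{corollary:density_ratio_estimate}, the inequality $\gamma(3s)\le4\gamma(\lambda s)$ that lets you replace $\gamma(3s)$ by $\gamma(\lambda s)$ in the final bound; you only gesture at that step.

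So: correct scaffolding, correct identification of the hard step, but the proposed mechanism for the hard step is the one that demonstrably fails at a point of infinite density, and the blow-up/compactness argument that actually closes the gap is missing.
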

\begin{proof}
	Define $\Delta_1 = ( 1 + \vdim^{-1} \log ( 3/ \lambda ))^{\vdim-1}
	\log ( 3/\lambda )$ and choose $2 \leq \Delta_2 < \infty$ such that
	\begin{gather*}
		(1+u)(1+\Delta_1u)(1-u)^{-1} \leq 1 + \Delta_2 u/2 \quad
		\text{for $0 \leq u \leq 1/2$}.
	\end{gather*}

	First, it will be shown that there exists a sequence $s_i \in \rel$
	with $s_{i+1} < s_i \leq \inf \big \{ \frac{R}{66}, r/3 \big \}$ for
	$i \in \nat$ and $s_i \to 0$ as $i \to \infty$ such that
	\begin{gather*}
		\Delta_2 \gamma (3s_i) \leq 1, \quad (1 + \Delta_2 \gamma (t)
		) t^{-\vdim} \zeta (t) \geq ( 1 + \Delta_2 \gamma (s_i) )
		s_i^{-\vdim} \zeta (s_i)
	\end{gather*}
	whenever $0 < t \leq s_i$ and $i \in \nat$; in fact one verifies that
	$\lim_{t \to s+} \zeta (t) = \zeta (s)$ and $\lim_{t \to s+} \gamma
	(t) = \gamma (s)$ for $0 < s < r$ since the same holds for $\mu$,
	$\nu$ and $\phi$, hence the existence of the sequence is a consequence
	of $\lim_{s \to 0+} \gamma (s) =0$ and $\lim_{s \to 0+} s^{-\vdim}
	\zeta (s) = \infty$ as $| \nu | \leq \gamma \mu$.

	Next, it will be shown that whenever $T = \ker q$ for some $(a,q) \in
	D$
	\begin{gather*}
		\mu (s_i)^{-1} \tint{\oball{a}{s_i}}{} f (s_i^{-1} (z-a), S )
		\ud V (z,S) \to \unitmeasure{\vdim}^{-1} \tint{\oball{0}{1}
		\cap T}{} f (z,T) \ud \mathscr{H}^\vdim z
	\end{gather*}
	as $i \to \infty$ for $f \in \ccspace{\oball{0}{1} \times
	\grass{\adimI}{\vdim}}$; in fact every $C \in \Var_\vdim (
	\oball{0}{1} )$ occuring as limit of a subsequence of the rescaled
	varifolds satisfies
	\begin{gather*}
		\measureball{\| C \|}{\oball{0}{1}} \leq 1, \quad \delta C = 0
	\end{gather*}
	as $\lim_{s \to 0+} \gamma (s)= 0$ and, since, recalling $|\nu| \leq
	\gamma \mu$ and noting \ref{miniremark:touching_ball},
	\begin{gather*}
		(1 + \Delta_2 \gamma (t) ) ( 1 + \gamma (t) ) t^{-\vdim} \mu (t)
		\geq ( 1 + \Delta_2 \gamma (s_i) ) (1 - \gamma(s_i))
		s_i^{-\vdim} \mu (s_i), \\
		q(z-a) \geq -2 s_i^2/R
	\end{gather*}
	whenever $0 < t \leq s_i \leq R/2$ and $z \in \cball{a}{s_i} \cap \spt
	\| V \|$,
	\begin{gather*}
		\kappa^{-\vdim} \measureball{\| C \|}{\cball{0}{\kappa}} \geq
		1 \quad \text{for $0 < \kappa < 1$}, \quad \spt \| C \|
		\subset \classification{\rel^\adimI}{z}{q(z) \geq 0}
	\end{gather*}
	hence $\measureball{\| C \|}{\oball{0}{1}} \leq 1 \leq
	\density^\vdim ( \| C \|, 0 ) \unitmeasure{\vdim}$ and $\spt \| C \|
	\subset \ker q$ by Allard \cite[5.2\,(e)]{MR0307015} and $C =
	\unitmeasure{\vdim}^{-1} \var(T \cap \oball{0}{1})$ by Allard
	\cite[4.6\,(3)]{MR0307015}. Therefore $T$ is unique, in particular
	$\card D \lIm \{ a \} \rIm \leq 2$.

	With $\delta = \inf \{ (2 \unitmeasure{\vdim})^{-1} \mathscr{L}^\vdim (
	\classification{\oball{0}{1}}{(x_1,\ldots,x_\vdim)}{x_\vdim \geq 2/3})
	, 1/6 \}$ the assertions of the preceding paragraph and the fact that
	$D$ is closed relative to $U \times \orthproj{\adimI}{1}$ imply the
	existence of $0 < s \leq r$ with $s=s_i/3$ for some $i \in \nat$ such
	that
	\begin{gather*}
		\| V \| ( \classification{\oball{a}{3s}}{z}{ (z-a) \bullet v
		\geq 2 s} ) \geq \delta \measureball{\| V \|}{\cball{a}{3s}}
		\quad \text{for $v \in \mathbf{S}^{\vdim} \cap T$}, \\
		D \lIm \cball{a}{s} \rIm \subset
		\classification{\orthproj{\adimI}{1}}{Q}{\dist (Q, D \lIm \{ a
		\} \rIm )<1}.
	\end{gather*}
	Therefore, noting
	\begin{gather*}
		Q (z-b) \geq -32 s^2/R \quad \text{for $\| V
		\|$ almost all $z \in \cball{a}{3s}$}
	\end{gather*}
	whenever $(b,Q) \in D$ and $b \in \cball{a}{s}$ by
	\ref{miniremark:touching_ball}, one applies
	\ref{lemma:abstract_oscillation_estimate} for each $(a,q) \in D$ with
	$\varepsilon$, $t$, $r$, and $\mu$ replaced by $11s/R$, $2s$, $3s$,
	and $\| V \|$ to infer that $D \lIm \cball{a}{s} \rIm$ can be covered
	by at most two sets whose diameters do not exceed
	\begin{gather*}
		16 \delta^{-2} \Big ( (3s)^{-1} \big | \tfint{\cball{a}{3s}}{}
		z-a \ud \| V \| z \big | + 11s /R \Big ).
	\end{gather*}
	Since $\gamma$ is nondecreasing, \ref{thm:barycenter_estimate} yields
	\begin{gather*}
		(3s)^{-1} \big | \tfint{\cball{a}{3s}}{} z-a \ud \| V \| z
		\big | \leq \Gamma_{\ref{thm:barycenter_estimate}} ( \vdim,
		\vdim+1, 1 ) \gamma (3s).
	\end{gather*}
	Finally, defining $t=\lambda s$ and noting
	\begin{gather*}
		t^{-\vdim} \mu (t) \leq ( 1 + \Delta_1 \gamma (3s) )
		(3s)^{-\vdim} \mu (3s)
	\end{gather*}
	by \ref{corollary:density_ratio_estimate}, one estimates, recalling the
	choice of the $s_i$,
	\begin{gather*}
		( 1 + \Delta_2 \gamma (3s) ) ( 1 - \gamma (3s) ) (3s)^{-\vdim}
		\mu (s) \leq ( 1 + \Delta_2 \gamma (t) ) ( 1 + \gamma (t) )
		t^{-\vdim} \mu (t), \\
		\begin{aligned}
			1 + \Delta_2 \gamma (3s) & \leq ( 1 + \Delta_2
			\gamma (t) ) ( 1 + \gamma (3s) ) ( 1 + \Delta_1
			\gamma(3s)) ( 1 - \gamma (3s) )^{-1} \\
			& \leq ( 1 + \Delta_2 \gamma (t) ) ( 1 + (\Delta_2/2)
			\gamma (3s) ), \\
			& \leq 1 + 2 \Delta_2 \gamma (t) + ( \Delta_2/2 )
			\gamma ( 3s),
		\end{aligned}
	\end{gather*}
	hence $\gamma ( 3s ) \leq 4\gamma (t)$ and one may take $\Gamma = 16
	\delta^{-2} \sup \{ 4 \Gamma_{\ref{thm:barycenter_estimate}}( \vdim,
	\vdim+1, 1 ) , 11 \}$.
\end{proof}
\begin{remark} \label{remark:diameter_estimate}
	Note by H\"older's inequality
	\begin{gather*}
		\gamma (s) \leq \sup \{ t \mu (t)^{-1/\vdim} \with 0 < t \leq
		s \} \big ( \tint{\cball{a}{s}}{} | \mathbf{h} (V;z) |^\vdim
		\| V \| z \big )^{1/\vdim} \quad \text{for $s \in I$}.
	\end{gather*}
\end{remark}
\begin{theorem} \label{thm:phantom_estimate}
	Suppose $1 < \vdim \in \nat$, $U$ is an open subset of $\rel^\adimI$,
	$V \in \Var_\vdim ( U )$, $\| \delta V \|$ is a Radon measure,
	\begin{gather*}
		\delta V ( g ) = - \tint{}{} g(z) \bullet \mathbf{h} (V;z) \ud
		\| V \| z \quad \text{whenever $g \in \mathscr{D} (U,
		\rel^\adimI )$}, \\
		\mathbf{h} (V;\cdot ) \in \Lp{\vdim} ( \| V \| \restrict K ,
		\rel^\adimI ) \quad \text{whenever $K$ is a compact subset of
		$U$}, \\
		\project{S} ( \mathbf{h} (V;z)) = 0 \quad \text{for $V$ almost
		all $(z,S)$},
	\end{gather*}
	and $C$ is the relation consisting of all $(b,Q) \in \spt \| V \|
	\times \orthproj{\adimI}{1}$ such that for some $0 < R < \infty$
	\begin{gather*}
		| z-b+Q^\ast (R) | \geq R \quad \text{whenever $z \in \spt \|
		V \|$}.
	\end{gather*}

	Then
	\begin{gather*}
		\mathscr{H}^\vdim ( C \lIm
		\classification{U}{z}{\density^\vdim ( \| V \|, z ) = \infty}
		\rIm ) = 0.
	\end{gather*}
\end{theorem}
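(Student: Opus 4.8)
The plan is to decompose the set $Z = \classification{U}{z}{\density^\vdim ( \| V \|, z ) = \infty}$ according to whether a suitable ``concentration function'' is small at $z$, and to cover the bad part of $Z$ efficiently using the diameter estimate \ref{lemma:diameter_estimate} together with \ref{remark:diameter_estimate}. First I would fix notation in accordance with \ref{miniremark:situation}: for $a\in U$ and $0<r<\infty$ with $\cball ar\subset U$ write $\mu(s) = \measureball{\| V\|}{\cball as}$ and let $\phi$ be as there (so $\phi$ controls $\int |z-a|^{-\vdim-2}|\perpproject S(z-a)|^2\ud V$). For each $a$ define $\gamma_a(s)$ by the formula in \ref{lemma:diameter_estimate}. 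By H\"older's inequality as in \ref{remark:diameter_estimate}, $\gamma_a(s)\le \sup\{ t\mu(t)^{-1/\vdim}\with 0<t\le s\}\big(\int_{\cball as}|\mathbf h(V;z)|^\vdim\ud\| V\|\big)^{1/\vdim}$; since $\density^\vdim(\| V\|,z)=\infty$ forces $\mu(t)/t^\vdim\to\infty$, and since $z\mapsto\int_{\cball az}|\mathbf h|^\vdim\ud\| V\|\to 0$ as $z$ shrinks (the integrand is $\| V\|$-integrable locally), one sees $\lim_{s\to0+}\gamma_a(s)=0$ for every $a\in Z$ (with $\cball ar\subset U$). Thus the hypothesis of \ref{lemma:diameter_estimate} is met at \emph{every} point of $Z$, which is the crucial point.

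Next I would exhaust $U$ by compact sets and reduce to showing $\mathscr H^\vdim(C\lIm Z\cap K\rIm)=0$ for $K$ compact in $U$. Fix such $K$, fix $0<\rho<\dist(K,\rel^{\adimI}\without U)$, and fix $\varepsilon>0$. Choose an open set $W$ with $Z\cap K\subset W\subset\subset U$ and $\int_W|\mathbf h(V;\cdot)|^\vdim\ud\| V\|<\varepsilon$; this is possible since $\| V\|(Z)=0$ (points of infinite density form a $\| V\|$-null set by \cite[2.10.19\,(3)]{MR41:1976}, as $\density^\vdim(\| V\|,z)<\infty$ for $\| V\|$-a.e.\ $z$). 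Now apply \ref{lemma:diameter_estimate} at each $a\in Z\cap K$: with $\lambda$ a fixed small constant and $R$ ranging over the radii realising membership of $a$ in $\dmn C$, one obtains a radius $0<s_a\le\rho$ and a covering of $C\lIm\cball a{s_a}\rIm$ by at most two sets of diameter at most $\Gamma(\gamma_a(\lambda s_a)+s_a/R)$. The term $s_a/R$ is harmless because one can in addition demand $s_a\le\varepsilon R$ (the estimate is stated for $0<s\le r$ and small $s$ works). The term $\gamma_a(\lambda s_a)$ is where the smallness of $\int_W|\mathbf h|^\vdim$ enters, via \ref{remark:diameter_estimate}. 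To make the Hausdorff measure bound quantitative I would, as in \ref{lemma:zero_sets_to_zero_sets}, build a Vitali-type family of balls $\cball a{s_a}$ with $\cball a{s_a/\lambda}\subset W$ on which a density-ratio bound $\mu(s_a)\le M s_a^\vdim$ holds (via \ref{lemma:choice_of_radius}), extract by \cite[2.8.5]{MR41:1976} a countable disjointed subfamily whose dilates cover $Z\cap K$, and on each such ball bound the $\phi_\infty$-pre-measure of $C\lIm\cball a{5s_a}\rIm$ on $\orthproj{\adimI}{1}$ by a constant times $(\gamma_a(\lambda\cdot 5s_a))^\vdim$ times the covering-by-two-sets count.

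The sum of these contributions should be estimated by $\Gamma'\int_W|\mathbf h(V;\cdot)|^\vdim\ud\| V\|<\Gamma'\varepsilon$: each ball contributes $\lesssim\gamma_a(\cdot)^\vdim\lesssim \sup_t(t\mu(t)^{-1/\vdim})^\vdim\int_{\cball a{\cdot}}|\mathbf h|^\vdim\ud\| V\|$, and the factor $\sup_t(t\mu(t)^{-1/\vdim})^\vdim$ is bounded by the density-ratio lower bound $\mu(t)\ge\unitmeasure\vdim t^\vdim/(2\vdim\isoperimetric\vdim)^\vdim$ (valid near points of positive lower density, which all points of $\spt\| V\|$ are, by \ref{corollary:density_1d} and \ref{remark:kuwert_schaetzle}), so it is comparable to $1$; then the disjointedness of the balls and $\int_W|\mathbf h|^\vdim<\varepsilon$ gives the desired bound. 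Letting $\varepsilon\to0+$ yields $\mathscr H^\vdim(C\lIm Z\cap K\rIm)=0$, and exhausting $U$ finishes the proof.

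\textbf{Main obstacle.} The delicate step is making the covering argument uniform: \ref{lemma:diameter_estimate} produces, for each $a$, only \emph{some} radius $s_a$ (depending on the whole descending sequence in that lemma's proof and on the sequence of radii $R$ witnessing $a\in\dmn C$), and one must simultaneously arrange $s_a\le\varepsilon R$ for all relevant $R$, $\cball a{s_a/\lambda}\subset W$, and the density-ratio control $\mu(s_a)\le M s_a^\vdim$. I expect the $R$-dependence to be the subtle part: for a fixed $a$, different witnessing radii $R$ give different ``contact directions'', and the covering-by-two-sets must hold for the union over all $R$. The resolution is to note that \ref{lemma:diameter_estimate} already covers $D\lIm\cball a{s_a}\rIm$ where $D$ is the relation for a \emph{single} $R$, and that one runs the argument with $C_i$ (the sub-relations with $R\ge 1/i$, $\density_\ast^\vdim<i$ replaced by $\density^\vdim=\infty$ ruled in differently) exactly as in \ref{lemma:zero_sets_to_zero_sets}, so that on each Vitali ball only one scale of $R$ is relevant and the term $s_a/R\le\varepsilon$ is automatic; the infinite-density condition is what lets $\gamma_a(\lambda s_a)$ be made small \emph{independently of the particular choice dictated by the lemma}, because $\gamma_a(s)\to0$ monotonically. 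Assembling these pieces carefully, rather than any single estimate, is the real work.
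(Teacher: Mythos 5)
Your overall strategy — Vitali covering via \cite[2.8.5]{MR41:1976}, the diameter estimate \ref{lemma:diameter_estimate} together with \ref{remark:diameter_estimate}, the density lower bound to control the factor $\sup_t t\mu(t)^{-1/\vdim}$, and $\| V \| ( A ) = 0$ to shrink the domain of integration — is the same as the paper's, and the observation that $\lim_{s\to0+}\gamma_a(s)=0$ at every infinite-density point is exactly the entry ticket to \ref{lemma:diameter_estimate}. But there is a genuine gap in your handling of the $s_a/R$ term.

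Demanding $s_a\le\varepsilon R$ makes each term $(s_a/R)^\vdim\le\varepsilon^\vdim$ small, but the disjointed Vitali subfamily is in general countably infinite, so $\sum_a(s_a/R)^\vdim$ is not controlled: disjointedness of the balls $\cball{a}{s_a}\subset\rel^{\adimI}$ controls $\sum_a s_a^{\adimI}$, not $\sum_a s_a^\vdim$ (and $\adimI>\vdim$). The Vitali argument only closes if the per-ball bound is an integral of a fixed integrable function over the disjoint balls. That is precisely what the infinite-density hypothesis buys you and what your proposal does not exploit: since $\density^\vdim(\|V\|,a)=\infty$, for small $s$ one has $s^\vdim\le C^{-1}R^\vdim\mu(s/5)$ for any prescribed $C$, so $(s/R)^\vdim\lesssim\int_{\cball{a}{s/5}}1\ud\|V\|$. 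Consequently the correct integrand is $1+|\mathbf h(V;\cdot)|^\vdim$, not just $|\mathbf h|^\vdim$, and the open set $T\supset A$ must be chosen with $\int_T\big(1+|\mathbf h(V;\cdot)|^\vdim\big)\ud\|V\|\le\varepsilon$ — possible precisely because $\|V\|(A)=0$ kills the $\int_T 1$ contribution too. Then $\sum_{S\in G}\phi_\infty(D\lIm A\cap\widehat S\rIm)\le\sum_{S\in G}\int_S(1+|\mathbf h|^\vdim)\ud\|V\|\le\varepsilon$ closes the argument. Also, the detour through \ref{lemma:choice_of_radius} is not needed (and its hypothesis of a finite lower-density threshold is not natural at points where $\density^\vdim=\infty$); the density-ratio upper bound $\mu(s)\le Ms^\vdim$ that the finite-density case \ref{lemma:zero_sets_to_zero_sets} requires plays no role here — on the contrary, the \emph{unboundedness} of $\mu(s)/s^\vdim$ is the resource. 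With those two corrections your proof would align with the paper's.
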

\begin{proof}
	Suppose $0 < R < \infty$ and $\varepsilon > 0$. Let $A =
	\classification{U}{z}{ \density^\vdim ( \| V \|, z ) = \infty}$ and
	\begin{gather*}
		D = \eqclassification{\spt \| V \| \times
		\orthproj{\adimI}{1}}{(b,Q)}{ \text{$|z-b + Q^\ast (R) | \geq
		R$ for $z \in \spt \| V \|$}}.
	\end{gather*}
	Since $\| V \| (A) = 0$ by Allard \cite[5.1\,(4)]{MR0307015}, there
	exists an open subset $T$ of $U$ such that
	\begin{gather*}
		A \subset T, \quad \tint{T}{} 1 + | \mathbf{h} (V;z) |^\vdim
		\ud \| V \| z \leq \varepsilon.
	\end{gather*}
	Denoting by $\phi_\infty$ the size $\infty$ approximating measure
	occuring in the construction of $\mathscr{H}^\vdim$ on
	$\orthproj{\adimI}{1}$, consider the family $F$ of closed balls
	$\cball{a}{s} \subset T$ such that $0 < s \leq 1$ and
	\begin{gather*}
		\phi_\infty ( D \lIm A \cap \cball{a}{s} \rIm ) \leq
		\tint{\cball{a}{s/5}}{} 1 + | \mathbf{h} ( V;z ) |^\vdim \ud
		\| V \| z.
	\end{gather*}
	One verifies $A \cap \dmn D \subset \bigcup F$ by use of
	\ref{lemma:diameter_estimate} with $\lambda=1/5$ and
	\ref{remark:diameter_estimate}. Then \cite[2.8.5]{MR41:1976} yields
	the existence of a countable, disjointed subfamily $G$ of $F$ such
	that $\bigcup F \subset \bigcup_{S \in G} \widehat{S}$, hence
	\begin{gather*}
		\phi_\infty ( D \lIm A \rIm ) \leq \tsum{S \in G}{} \phi_\infty
		( D \lIm A \cap \widehat{S} \rIm ) \leq \tsum{S \in
		G}{} \tint{S}{} 1 + | \mathbf{h} ( V;z ) |^\vdim \ud \| V \| z
		\leq \varepsilon
	\end{gather*}
	and the conclusion follows.
\end{proof}
\begin{remark} \label{remark:perpendicularity}
	The condition $\project{S} ( \mathbf{h} (V;z)) = 0$ for $V$ almost all
	$(z,S)$ is satisfied if $V \in \IVar_\vdim ( U )$ by Brakke
	\cite[5.8]{MR485012}, see alternately \cite[4.8,\,9]{snulmenn.c2}.
\end{remark}
\begin{remark} \label{remark:nonempty}
	The author does not know of any example $V$ satisfying the hypotheses
	of the theorem such that $\classification{U}{z}{\density^\vdim ( \| V
	\|, z ) = \infty }$ is nonempty. In fact, the set is empty provided
	$\vdim =2$ and $V \in \IVar_\vdim (U)$ as shown by
	Kuwert and Sch\"atzle \cite[Appendix A]{MR2119722} even without the
	hypothesis $\vdim=\adim-1$.
\end{remark}
\section{Area formula for the generalised Gauss map}
Having established the relevant Lusin properties in Sections
\ref{sec:finite_density} and \ref{sec:infinite_density}, the area formula for
the generalised Gauss map (see \ref{thm:area_formula_codim1}) readily follows
from the author's previous second order rectifiability result
\cite[4.8]{snulmenn.c2}. It is worth noting that this result entirely concerns
the geometry of the support of the varifold in relation to Hausdorff measure.
In particular, the presence of points of density larger than one which is a
major source of the complexity of the geometry of such varifolds can be
ignored fully in applying the area formula \ref{thm:area_formula_codim1}.
\begin{theorem} \label{thm:area_formula_codim1}
	Suppose $\vdim$, $\adim$, $p$, $U$, and $V$ are as in
	\ref{miniremark:situation_general}, $2 \leq p = \vdim = \adim-1$, $V
	\in \IVar_\vdim ( U )$, $C$ is the relation consisting of all $(a,q)
	\in \spt \| V \| \times \mathbf{O}^\ast ( \adim, 1 )$ such that
	\begin{gather*}
		\oball{a-q^\ast (R)}{R} \cap \spt \| V \| = \emptyset,
	\end{gather*}
	and $B$ is an $\mathscr{H}^\vdim$ measurable subset of $C$.

	Then (see \ref{def:twice_diff})
	\begin{gather*}
		\begin{aligned}
			& \tint{\mathbf{O}^\ast ( \adim, 1 )}{} \mathscr{H}^0
			( \{ a \with (a,q) \in B \} ) \ud \mathscr{H}^\vdim q
			\\
			& \qquad = \tint{\spt \| V \|}{} \tint{
			\classification{\mathbf{O}^\ast ( \adim, 1)}{q}{(a,q)
			\in B}}{} | \discr ( q \circ \mathbf{b} ( \spt \| V
			\|; a ) ) | \ud \mathscr{H}^0 q \ud \mathscr{H}^\vdim
			a.
		\end{aligned}
	\end{gather*}
\end{theorem}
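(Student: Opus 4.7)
The strategy is to combine the two Lusin-type properties established in Sections \ref{sec:finite_density} and \ref{sec:infinite_density} with the author's second order rectifiability theorem \cite[4.8]{snulmenn.c2} to reduce the assertion to the classical area formula \ref{lemma:unit_sphere_bundle}\,\eqref{item:unit_sphere_bundle:fubini} on each of countably many $C^2$ submanifolds.

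\textbf{Step 1 (Reduction to a rectifiable piece).} First apply \cite[4.8]{snulmenn.c2} to obtain a countable family of $\vdim$ dimensional submanifolds $M_i$ of $\rel^\adim$ of class $2$ with $\| V \| ( U \without \bigcup_i M_i ) = 0$. Set $Z = U \without \bigcup_i M_i$ and partition $\spt \| V \|$ into $A_\infty = \{ z \with \density^\vdim ( \| V \|, z ) = \infty \}$ and $A_{\mathrm{fin}} = \spt \| V \| \without A_\infty$. By \ref{thm:phantom_estimate}, whose perpendicularity hypothesis holds for integral varifolds via \ref{remark:perpendicularity}, $\mathscr{H}^\vdim ( C \lIm A_\infty \rIm ) = 0$. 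By \ref{lemma:zero_sets_to_zero_sets} applied with the null set $Z$, $\mathscr{H}^\vdim ( C \lIm Z \cap A_{\mathrm{fin}} \rIm ) = 0$. Consequently, passing from $B$ to $B \cap ( (\bigcup_i M_i \cap A_{\mathrm{fin}}) \times \mathbf{O}^\ast ( \adim, 1 ) )$ alters neither side of the claimed identity, and the left hand side may be rewritten, after disjointifying via $E_i = M_i \without \bigcup_{j<i} M_j$, as a countable sum of contributions localised to the disjoint Borel sets $E_i$.

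\textbf{Step 2 (Second order agreement on each piece).} Invoke \ref{thm:locality_mean_curvature} with $M$ replaced by $M_i$: for $\mathscr{H}^\vdim$ almost all $a \in M_i \cap \spt \| V \|$, the support $\spt \| V \|$ is twice differentiable at $a$ with $\Tan ( \spt \| V \|, a ) = \Tan ( M_i, a )$ and $\mathbf{b} ( \spt \| V \|; a ) = \mathbf{b} ( M_i; a )$. Discarding a further $\mathscr{H}^\vdim$ null subset of each $E_i$, one may assume this agreement on all of $E_i$. For any such $a$ and any $q$ with $(a,q) \in C$, the defining condition forces $q$ to annihilate the tangent plane $\Tan ( \spt \| V \|, a ) = \Tan ( M_i, a )$, so the pair $(a,q)$ belongs to the unit normal bundle $N_i$ of $M_i$ in the sense of \ref{lemma:unit_sphere_bundle}\,\eqref{item:unit_sphere_bundle:manifold}. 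Therefore $B \cap ( E_i \times \mathbf{O}^\ast ( \adim, 1 ) )$ is $\mathscr{H}^\vdim$ almost contained in $N_i$.

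\textbf{Step 3 (Classical area formula and assembly).} On each $E_i$ apply \ref{lemma:unit_sphere_bundle}\,\eqref{item:unit_sphere_bundle:fubini} to the $\mathscr{H}^\vdim$ measurable set $A := B \cap ( E_i \times \mathbf{O}^\ast ( \adim, 1 ) ) \cap N_i$; since $\codim = 1$ in our setting, the inner integral on the right becomes counting measure summation over the at most two unit normals to $M_i$ at $a$. Using the identification $\mathbf{b} ( M_i; a ) = \mathbf{b} ( \spt \| V \|; a )$ from Step 2, the right hand side for $E_i$ coincides with the $E_i$ contribution to the right hand side of the theorem. Summing over $i$ and combining with the reductions of Step 1 yields the asserted identity.

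\textbf{Main obstacle.} The deep analytic content has already been extracted in \ref{lemma:zero_sets_to_zero_sets} (Harnack and touching ball estimates) and \ref{thm:phantom_estimate} (barycentre estimates using $p \geq 2$), so the work remaining is essentially bookkeeping. The most delicate point is the measurability and inclusion verification in Step 3, namely that after all exceptional $\mathscr{H}^\vdim$ null sets are removed, the restricted contact relation $B \cap ( E_i \times \mathbf{O}^\ast ( \adim, 1 ) )$ is $\mathscr{H}^\vdim$ measurable as a subset of $N_i$ and that the substitution of $\mathbf{b} ( \spt \| V \|; a )$ for $\mathbf{b} ( M_i; a )$ is legitimate on a set of full $\mathscr{H}^\vdim$ measure in $E_i \cap \spt \| V \|$; both checks rely on the full strength of \ref{thm:locality_mean_curvature}.
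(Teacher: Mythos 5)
Your proposal is correct and follows essentially the same route as the paper's proof: both use the second order rectifiability result from \cite[4.8]{snulmenn.c2} to cover $\spt \| V \|$ by $C^2$ submanifolds $M_i$, dispose of the exceptional set via \ref{lemma:zero_sets_to_zero_sets} together with \ref{thm:phantom_estimate} and \ref{remark:perpendicularity}, invoke the second order agreement from \ref{thm:locality_mean_curvature}\,\eqref{item:locality_mean_curvature:differentiability} to place the contact pairs in the unit normal bundles $N_i$, and then apply \ref{lemma:unit_sphere_bundle}\,\eqref{item:unit_sphere_bundle:fubini}. The only difference is cosmetic bookkeeping: you disjointify the $M_i$ into $E_i = M_i \without \bigcup_{j<i} M_j$ while the paper works directly with the sets $A_i = \{ a \in S \cap M_i : \mathbf{b}(M_i;a) = \mathbf{b}(S;a) \}$ and their complement $A$, but the two schemes differ only by $\mathscr{H}^\vdim$ null sets, to which both sides of the identity are insensitive.
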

\begin{proof}
	Abbreviate $S = \spt \| V \|$. Note $\density_\ast^\vdim ( \| V \|, a)
	\geq 1$ for $a \in S$ by \cite[2.7]{snulmenn.isoperimetric}, hence $\|
	V \| (A) = 0$ if and only if $\mathscr{H}^\vdim ( S \cap A ) = 0$ by
	\cite[2.10.19\,(3)]{MR41:1976} and Allard \cite[3.5\,(1)]{MR0307015}.

	From \cite[4.8]{snulmenn.c2} one obtains a sequence of $\vdim$
	dimensional submanifolds $M_i$ of $\rel^\adim$ of class $\class{2}$
	such that $\| V \| ( U \without \bigcup_{i=1}^\infty M_i ) = 0$.
	Define $A_i$ to be the set of $a \in S \cap M_i$ such that
	\begin{gather*}
		\mathbf{b} ( M_i ; a ) = \mathbf{b} (S;a),
	\end{gather*}
	and let $A = S \without \bigcup_{i=1}^\infty A_i$. From
	\ref{thm:locality_mean_curvature}\,\eqref{item:locality_mean_curvature:differentiability}
	and \ref{corollary:second_order_agreement} one infers $\| V \| ( ( S
	\cap M_i) \without A_i ) = 0$, hence $\mathscr{H}^\vdim (A) =0$.
	Moreover, denote by $N_i$ the set of $(a,q) \in M_i \times
	\mathbf{O}^\ast ( \adim, 1 )$ such that $\Tan (M_i,a) \subset \ker q$.

	Next, two special cases are considered. First, \emph{the case $B
	\subset C | A$}. Then the right integral equals $0$. From
	\ref{lemma:zero_sets_to_zero_sets} and \ref{thm:phantom_estimate} in
	conjunction with \ref{remark:perpendicularity} one obtains
	$\mathscr{H}^\vdim ( C \lIm A \rIm ) = 0$, hence also the left
	integral vanishes. Second, \emph{the case $B \subset C |A_i$ for some
	$i$}. If $(a,q) \in B$ then the differentiability of $S$ at $a$
	implies, since $\Tan (S,a) = \Tan (M_i,a)$ as $a \in A_i$, that
	\begin{gather*}
		\Tan (S,a) \subset \{ z \with q(z) \geq 0 \}, \quad \Tan (
		M_i, a ) = \ker q, \quad (a,q) \in N_i.
	\end{gather*}
	Therefore
	\ref{lemma:unit_sphere_bundle}\,\eqref{item:unit_sphere_bundle:fubini}
	implies
	\begin{gather*}
		\begin{aligned}
			& \tint{\mathbf{O}^\ast ( \adim,1 )}{} \mathscr{H}^0 (
			\{ a \with (a,q) \in B \} ) \ud \mathscr{H}^\vdim q \\
			& \qquad = \tint{M_i}{} \tint{
			\classification{\mathbf{O}^\ast ( \adim, 1 )}{q}{(a,q)
			\in B} }{} | \discr ( q \circ \mathbf{b} ( M_i ; a ) )
			| \ud \mathscr{H}^0 q \ud \mathscr{H}^\vdim a \\
			& \qquad = \tint{S}{} \tint{
			\classification{\mathbf{O}^\ast ( \adim, 1 )}{q}{(a,q)
			\in B} }{} | \discr ( q \circ \mathbf{b} ( S ; a ) ) |
			\ud \mathscr{H}^0 q \ud \mathscr{H}^\vdim a.
		\end{aligned}
	\end{gather*}

	Since $A_i$ is $\mathscr{H}^\vdim$ almost equal to $S \cap M_i$, the
	set $A_i$ is $\mathscr{H}^\vdim$ measurable. Using the structure of
	$N_i$ described in
	\ref{lemma:unit_sphere_bundle}\,\eqref{item:unit_sphere_bundle:manifold}\,\eqref{item:unit_sphere_bundle:tangent_space},
	one observes that this implies the $\mathscr{H}^\vdim$ measurability
	of $\classification{N_i}{(a,q)}{a \in A_i}$. Now, one readily combines
	the preceding two cases to obtain the conclusion.
\end{proof}
\section{A sharp lower bound on the mean curvature integral with critical
power for integral varifolds}
The area formula, see \ref{thm:area_formula_codim1}, permits to prove the
sharp lower bound on the mean curvature integral with critical power for
integral varifolds in \ref{thm:lowerbound_hm} along the lines of the
classic argument presented in \ref{thm:classical}.
\begin{theorem} \label{thm:lowerbound_hm}
	Suppose $\vdim$, $\adim$, $p$, $U$, and $V$ are as in
	\ref{miniremark:situation_general}, $2 \leq p = \vdim = \adim-1$, $0
	\neq V \in \IVar_\vdim ( U )$ and $\spt \| V \|$ is compact.

	Then
	\begin{gather*}
		( \vdim+1 ) \unitmeasure{\vdim+1} \vdim^\vdim \leq \tint{}{} |
		\mathbf{h} ( V ; \cdot )|^\vdim \ud \| V \|.
	\end{gather*}
\end{theorem}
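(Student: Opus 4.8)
The strategy is to imitate the classical argument from \ref{thm:classical}, replacing the smooth hypersurface $M$ there by the support $\spt\|V\|$ and using the area formula \ref{thm:area_formula_codim1} in place of \ref{lemma:unit_sphere_bundle}\,\eqref{item:unit_sphere_bundle:fubini}. Write $S=\spt\|V\|$, which is compact and nonempty since $V\neq0$. Let $C$ be the relation from \ref{thm:area_formula_codim1}, consisting of all $(a,q)\in S\times\mathbf{O}^\ast(\adim,1)$ for which there exists $0<R<\infty$ with $\oball{a-q^\ast(R)}{R}\cap S=\varnothing$; geometrically, $q$ is an outward normal at $a$ to a ball of radius $R$ touching $S$ from outside at $a$. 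The plan is: (i) show that the image of $C$ under the projection $(a,q)\mapsto q$ is all of $\mathbf{O}^\ast(\adim,1)$ up to an $\mathscr{H}^{\adim}$-null set; (ii) apply \ref{thm:area_formula_codim1} with a suitable $\mathscr{H}^{\adim}$-measurable $B\subset C$; (iii) bound the discriminant $|\discr(q\circ\mathbf{b}(S;a))|$ pointwise by $\vdim^{-\vdim}|q(\mathbf{h}(V;a))|^{\vdim}\le\vdim^{-\vdim}|\mathbf{h}(V;a)|^{\vdim}$ using \ref{miniremark:det_trace_inq} and \ref{corollary:second_order_agreement}; (iv) integrate over $q\in\mathbf{O}^\ast(\adim,1)$ using the same rotational-invariance constant $c$ that appears in \ref{thm:classical}, and compare with the sphere case to fix $c$.

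\textbf{Step (i): surjectivity of the Gauss map onto $\mathbf{O}^\ast(\adim,1)$.} Fix $q\in\mathbf{O}^\ast(\adim,1)$. Since $S$ is compact, the linear functional $z\mapsto q(z)$ attains its infimum over $S$ at some point $a_0\in S$. Enclosing $S$ in a large ball and sliding a ball of large radius $R$ in the direction $-q^\ast(1)$ until it first touches $S$, one obtains a point $a\in S$ and a radius $R$ with $\oball{a-q^\ast(R)}{R}\cap S=\varnothing$ and $q=\project{\mathbf{U}(a-q^\ast R,R)}$-outward normal at $a$; more precisely, for every $q$ there is at least one $a$ with $(a,q)\in C$, so $C$ projects onto all of $\mathbf{O}^\ast(\adim,1)$. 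Hence $\mathscr{H}^{\adim}$-almost every $q$ is hit. For the area formula I will take $B$ to be (an $\mathscr{H}^{\adim}$-measurable subset of) $C$ itself; measurability of $C$ follows because the defining condition is closed in the relevant variables (for each fixed $R$ it is a closed condition, and one takes a countable union over rational $R$), and one intersects with the $\mathscr{H}^{\vdim}$-measurable set of second-order differentiability points of $S$ coming from \cite[4.8]{snulmenn.c2} and \ref{thm:locality_mean_curvature}. This is the place where \ref{thm:area_formula_codim1} is really used: its left side counts, for a.e.\ $q$, the number of touching points, which is $\geq1$.

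\textbf{Steps (ii)--(iv): the estimate.} Applying \ref{thm:area_formula_codim1} with $B=C$ (after the measurable reduction above) gives
\begin{gather*}
	\mathscr{H}^{\adim}(\mathbf{O}^\ast(\adim,1))
	\leq \tint{\mathbf{O}^\ast(\adim,1)}{} \mathscr{H}^0(\{a\with(a,q)\in C\})\ud\mathscr{H}^{\adim}q
	= \tint{S}{}\tint{\classification{\mathbf{O}^\ast(\adim,1)}{q}{(a,q)\in C}}{}
	|\discr(q\circ\mathbf{b}(S;a))|\ud\mathscr{H}^0 q\ud\mathscr{H}^\vdim a.
\end{gather*}
If $(a,q)\in C$, then $S$ is twice differentiable at $a$ with $\Tan(S,a)\subset\{z\with q(z)\geq0\}$ (the touching ball lies on the side where $q\geq0$), so computing the second-order expansion at $a$ one gets $q\circ\mathbf{b}(S;a)(v,v)\geq0$ for $v\in\Tan(S,a)$; by \ref{miniremark:det_trace_inq}, $0\le\discr(q\circ\mathbf{b}(S;a))\le\vdim^{-\vdim}q(\trace\mathbf{b}(S;a))^{\vdim}=\vdim^{-\vdim}q(\mathbf{h}(V;a))^{\vdim}$ using $\trace\mathbf{b}(S;a)=\mathbf{h}(V;a)$ from \ref{corollary:second_order_agreement} (valid for $\mathscr{H}^\vdim$-a.e.\ $a$). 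Integrating over $q$ with $\tint{\classification{\mathbf{O}^\ast(\adim,1)}{q}{\Tan(S,a)\subset\ker q,\ q(w)\ge0}}{}q(w)^{\vdim}\ud\mathscr{H}^{\codim-1}q=c|w|^{\vdim}$ for the same dimensional constant $c$ as in \ref{thm:classical}, one obtains $\mathscr{H}^{\adim}(\mathbf{O}^\ast(\adim,1))\le\vdim^{-\vdim}c\tint{}{}|\mathbf{h}(V;\cdot)|^{\vdim}\ud\|V\|$. Evaluating both sides on the model case $V=\var(\iota\lIm\mathbf{S}^\vdim\rIm)$ — where $S$ is the round sphere, every $q$ outside a null set has exactly one touching point, and $\discr(q\circ\mathbf{b})=\vdim^{-\vdim}q(\mathbf{h})$ identically — identifies $\vdim^{-\vdim}c=(\vdim+1)\unitmeasure{\vdim+1}\mathscr{H}^{\adim}(\mathbf{O}^\ast(\adim,1))^{-1}$, wait — rather, as in the proof of \ref{thm:classical}, it yields the sharp constant $(\vdim+1)\unitmeasure{\vdim+1}\vdim^\vdim\le\tint{}{}|\mathbf{h}(V;\cdot)|^\vdim\ud\|V\|$.

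\textbf{Main obstacle.} The routine parts (the surjectivity, the discriminant bound, the dimensional-constant bookkeeping) are exactly as in \ref{thm:classical}. The genuine content — and the only place where integral-varifold theory beyond the classical picture enters — is the measurability and the ``Lusin-type'' control needed to legitimately apply \ref{thm:area_formula_codim1}: one must know that $S$ is twice differentiable $\mathscr{H}^\vdim$-a.e.\ with $\trace\mathbf{b}(S;\cdot)=\mathbf{h}(V;\cdot)$ (this is \cite[4.8]{snulmenn.c2} together with \ref{thm:locality_mean_curvature} and \ref{corollary:second_order_agreement}), and that the contribution of the bad sets — points of finite-but-nonuniform lower density and, more delicately, points of infinite density — is negligible under the Gauss map, which is precisely the purpose of \ref{lemma:zero_sets_to_zero_sets} and \ref{thm:phantom_estimate} that feed into the proof of \ref{thm:area_formula_codim1}. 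Since \ref{thm:area_formula_codim1} is already available, the remaining work here is only to assemble these ingredients and run the classical averaging argument; no new estimate is required.
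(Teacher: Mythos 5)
Your plan follows the paper's strategy (imitate \ref{thm:classical} with \ref{thm:area_formula_codim1}, \ref{miniremark:det_trace_inq}, and \ref{corollary:second_order_agreement}), but there is a genuine gap at the discriminant estimate. You write that $(a,q)\in C$ implies $q\circ\mathbf{b}(S;a)(v,v)\geq 0$ for $v\in\Tan(S,a)$, but this is false for the touching\mbox{-}ball condition defining $C$. The ball $\oball{a-q^\ast(R)}{R}$ has its centre where $q(\cdot-a)=-R$, so it lies on the $q\leq 0$ side (your parenthetical ``the touching ball lies on the side where $q\geq0$'' has the sign backwards), and its boundary curves with second fundamental form $-R^{-1}$ in the $q$\mbox{-}direction. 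Disjointness from the ball together with second-order differentiability yields only $q\circ\mathbf{b}(S;a)(v,v)\geq -R^{-1}|v|^2$, with $R$ uncontrolled as $(a,q)$ ranges over $C$. Without positive semidefiniteness, \ref{miniremark:det_trace_inq} does not apply and $\discr(q\circ\mathbf{b}(S;a))$ need not even be nonnegative. The paper circumvents this by working not with $C$ but with the smaller set $B$ of supporting\mbox{-}hyperplane pairs, that is, $(a,q)$ with $q(z-a)\geq 0$ for \emph{every} $z\in S$: one checks $B\subset C$ (for $(a,q)\in B$ and any $R>0$ the ball $\oball{a-q^\ast(R)}{R}$ is contained in the open halfspace $\{z\with q(z-a)<0\}$, hence disjoint from $S$), so $B$ is admissible in \ref{thm:area_formula_codim1}; your surjectivity argument already produces $(a_0,q)\in B$ via the minimiser $a_0$ of $q$ over compact $S$ (the sliding-ball detour is unnecessary and does not land you in $B$); and for $(a,q)\in B$ the halfspace condition passes to the second-order blow-up, giving the needed $q\circ\mathbf{b}(S;a)\geq 0$.

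Beyond that, the paper closes the estimate by showing $\mathscr{H}^0(\{q\with(a,q)\in B\})\leq 1$ for $\mathscr{H}^\vdim$\mbox{-}a.e.\ $a$ (excluding that both $(a,q)$ and $(a,-q)$ are supporting, which would put $S$ into an affine hyperplane and force $V$ stationary, contradicting the constancy theorem for compact support). Your alternative — summing $\vdim^{-\vdim}q(\mathbf{h}(V;a))^\vdim$ over the fibre and noting $c=1$ when $\codim=1$ — would also work, since $q\circ\mathbf{b}(S;a)\geq 0$ forces $q(\mathbf{h}(V;a))\geq 0$ and the fibre sum collapses to $\vdim^{-\vdim}|\mathbf{h}(V;a)|^\vdim$; but this route presupposes exactly the positivity that $C$ fails to deliver, so it too requires restricting to $B$.
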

\begin{proof}
	Abbreviate $S = \spt \| V \|$. Define $B$ to be the closed set
	of all $(a,q) \in S \times \mathbf{O}^\ast ( \adim,1 )$ such that
	\begin{gather*}
		q (z-a) \geq 0 \quad \text{whenever $z \in S$}.
	\end{gather*}
	Since for $q \in \mathbf{O}^\ast ( \adim, 1 )$ there exists $a \in S$
	with $q (a) = \inf q \lIm S \rIm$ hence $(a,q) \in B$, one notes
	\begin{gather*}
		\mathscr{H}^0 ( \{ a \with (a,q) \in B \} ) \geq 1 \quad
		\text{whenever $q \in \mathbf{O}^\ast ( \adim, 1 )$}.
	\end{gather*}
	If $(a,q) \in B$ and $S$ is twice differentiable at $a$ with $\dim
	\Tan ( S,a ) = \vdim$ then
	\begin{gather*}
		\Tan (S,a) \subset \ker q, \qquad q \circ \mathbf{b} ( S;a )
		(v,v) \geq 0 \quad \text{for $v \in \Tan (S,a)$};
	\end{gather*}
	in fact with $\tau$, $\phi_r$, and $Q$ as \ref{def:twice_diff} with
	$A$ replaced by $S$ one infers for $0 < r < \infty$
	\begin{gather*}
		\boldsymbol{\mu}_{1/r} \circ \boldsymbol{\tau}_{-a} \lIm S \rIm
		\subset \classification{\rel^\adim}{z}{q(z) \geq 0}, \quad
		\Tan (S,a) = \ker q, \\
		\phi_r \circ \boldsymbol{\tau}_{-a} \lIm S \rIm \subset
		\classification{\rel^\adim}{z}{q(z) \geq 0}, \quad \tau \lIm Q
		\rIm \subset \classification{\rel^\adim}{z}{q(z) \geq 0},  \\
		q \circ \mathbf{b} (S;a) (v,v) = D^2 (q \circ Q ) (0) (v,v)
		\geq 0 \quad \text{for $v \in \Tan (S,a)$}.
	\end{gather*}
	Recalling $\density_\ast^\vdim ( \| V \|, a ) \geq 1$ for $a \in S$
	from \cite[2.7]{snulmenn.isoperimetric} and noting
	\cite[2.10.19\,(3)]{MR41:1976} one then obtains from
	\ref{miniremark:det_trace_inq} in conjunction with
	\ref{corollary:second_order_agreement}
	\begin{gather*}
		0 \leq \discr ( q \circ \mathbf{b} ( S; a ) ) \leq
		\vdim^{-\vdim} | \mathbf{h} (V;a) |^\vdim \quad \text{whenever
		$(a,q) \in B$}
	\end{gather*}
	for $\mathscr{H}^\vdim$ almost all $a \in S$. Next, it will be shown
	\begin{gather*}
		\mathscr{H}^0 ( \{ q \with (a,q) \in B \} ) \leq 1 \quad
		\text{for $\mathscr{H}^\vdim$ almost all $a \in S$}.
	\end{gather*}
	In fact since $\Tan (S,a) \subset \classification{\rel^\adim}{z}{q(z)
	\geq 0}$ for $(a,q) \in B$ and $\Tan (S,a)$ is an $\vdim$ dimensional
	subspace for $\mathscr{H}^\vdim$ almost all $a \in S$ one only needs
	to exclude that for some $a \in S$ both $(a,q)$ and $(a,-q)$ belong to
	$B$. If this were the case $S$ would be contained in an $\vdim$
	dimensional affine subspace of $\rel^\adim$, hence $\mathbf{h} (V;z) =
	0$ for $\| V \|$ almost all $z$ by \cite[4.8]{snulmenn.c2} and $V$
	would be stationary in contradiction to the constancy theorem, see
	Allard \cite[4.6\,(3)]{MR0307015}.

	Therefore one obtains with the help of \ref{thm:area_formula_codim1}
	\begin{gather*}
		\begin{aligned}
			\mathscr{H}^\vdim ( \mathbf{O}^\ast ( \adim, 1 ) )
			& \leq \tint{\mathbf{O}^\ast ( \adim, 1 )}{}
			\mathscr{H}^0 ( \{ a \with (a,q) \in B \} ) \ud
			\mathscr{H}^\vdim q \\
			& \leq \tint{S}{}
			\tint{\classification{\mathbf{O}^\ast ( \adim, 1
			)}{q}{ (a,q) \in B } }{}  | \discr ( q \circ \mathbf{b}
			(S;a) ) | \ud \mathscr{H}^0 q \ud \mathscr{H}^\vdim a
			\\
			& \leq \vdim^{-\vdim} \tint{}{} | \mathbf{h} ( V ; \cdot
			) |^\vdim \ud \| V \|.
		\end{aligned}
	\end{gather*}
	Observing that equality holds in the preceding estimate in case $V =
	\var ( \mathbf{S}^\vdim ) \in \IVar_\vdim ( \rel^\adim )$, one readily
	infers the conclusion.
\end{proof}
\appendix
\section{Monotonicity identity}
This appendix gathers some simple consequences of the monotonicity identity.
\begin{theorem} [Monotonicity identity, second version] \label{thm:monotonicity_identity}
	Suppose $\vdim$, $\adim$, $U$, $V$, and $\eta$ are as in
	\ref{miniremark:situation_general_varifold}, and
	\begin{gather*}
		\mu (s) = \measureball{\| V \|}{\cball{a}{s}}, \quad \nu (s) =
		\tint{\cball{a}{s}}{} (z-a) \bullet \eta ( V ; z ) \ud \|
		\delta V \| z, \\
		\xi (s) = \tint{\cball{a}{s} \times \grass{\adim}{\vdim}}{} |
		\perpproject{S} (z-a) |^2 \ud V (z,S)
	\end{gather*}
	whenever $0 < s \leq r$.
	
	Then there holds
	\begin{gather*}
		s^{-\vdim} \mu (s) + \tint{s}{t} u^{-\vdim-2}
		\ud_u \xi (u) = t^{-\vdim} \mu (t) + \tint{s}{t} u^{-\vdim-1}
		\nu (u) \ud \mathscr{L}^1 u
	\end{gather*}
	whenever $0 < s \leq t \leq r$.
\end{theorem}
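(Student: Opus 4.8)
The plan is to deduce the identity from Corollary~\ref{corollary:monotonicity} together with Remark~\ref{remark:monotonicity}; the one non-formal point will be to rewrite the weighted varifold integral over a spherical annulus as a Stieltjes integral against $\xi$. Throughout, $a\in\rel^\adim$ and $0<r<\infty$ with $\cball{a}{r}\subset U$ are the data implicit in the definitions of $\mu$, $\nu$, $\xi$, and I fix $0<s\le t\le r$. When $s=t$ the asserted identity is trivial, both sides reducing to $s^{-\vdim}\mu(s)$, so assume $s<t$. Since $\cball{a}{r}$ is a compact subset of $U$, both $\measureball{\|V\|}{\cball{a}{r}}$ and $\measureball{\|\delta V\|}{\cball{a}{r}}$ are finite; in particular $|\nu(u)|\le u\,\measureball{\|\delta V\|}{\cball{a}{r}}$ and $\xi(u)\le u^{2}\measureball{\|V\|}{\cball{a}{r}}$ for $0<u\le r$, so all integrals below are absolutely convergent.

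First I would apply Corollary~\ref{corollary:monotonicity} with $r$ replaced by $t$ --- legitimate because $\cball{a}{t}\subset\cball{a}{r}\subset U$ --- to obtain
\begin{multline*}
 s^{-\vdim}\mu(s) + \tint{(\cball{a}{t}\without\cball{a}{s})\times\grass{\adim}{\vdim}}{} |z-a|^{-\vdim-2}\,|\perpproject{S}(z-a)|^2 \ud V(z,S) \\
 = t^{-\vdim}\mu(t) + \vdim^{-1}\tint{\cball{a}{t}}{} \big(\sup\{|z-a|,s\}^{-\vdim}-t^{-\vdim}\big)\,(z-a)\bullet\eta(V,z) \ud\|\delta V\|z .
\end{multline*}
By Remark~\ref{remark:monotonicity}, again with $r$ replaced by $t$, the last summand equals $\tint{s}{t} u^{-\vdim-1}\big(\tint{\cball{a}{u}}{}(z-a)\bullet\eta(V,z)\ud\|\delta V\|z\big)\ud\mathscr{L}^1u = \tint{s}{t}u^{-\vdim-1}\nu(u)\ud\mathscr{L}^1u$, the underlying Fubini step being justified by the bound on $\|\delta V\|$ recorded above. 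Thus the right-hand side is already in the desired shape, and it remains only to treat the annular integral on the left.

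The remaining point is the equality
\[
 \tint{(\cball{a}{t}\without\cball{a}{s})\times\grass{\adim}{\vdim}}{} |z-a|^{-\vdim-2}\,|\perpproject{S}(z-a)|^2 \ud V(z,S) = \tint{s}{t} u^{-\vdim-2}\ud_u\xi(u) .
\]
To prove it, let $\rho$ be the Borel regular measure over $U\times\grass{\adim}{\vdim}$ obtained by weighting $V$ with the nonnegative continuous function $(z,S)\mapsto|\perpproject{S}(z-a)|^2$, so that $\xi(u)=\rho(\cball{a}{u}\times\grass{\adim}{\vdim})$ for $0<u\le r$ and, since the weight vanishes at $z=a$, also $\xi(0+)=0$; as the closed balls increase with $u$ and $\rho$ is finite on $\cball{a}{r}\times\grass{\adim}{\vdim}$, the function $\xi$ is nondecreasing and right continuous on $[0,r]$. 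Put $\pi(z,S)=|z-a|$. Then the image measure of $\rho\restrict(\cball{a}{r}\times\grass{\adim}{\vdim})$ under $\pi$ is a finite Borel measure on $[0,r]$ which assigns to each $(\alpha,\beta]\subset[0,r]$ the value $\rho(\{\alpha<|z-a|\le\beta\}\times\grass{\adim}{\vdim})=\xi(\beta)-\xi(\alpha)$; hence it is exactly the Lebesgue--Stieltjes measure of $\xi$, so $\tint{s}{t}u^{-\vdim-2}\ud_u\xi(u)$ is the integral of the bounded Borel function $u\mapsto\mathbf{1}_{(s,t]}(u)\,u^{-\vdim-2}$ against it. Applying the change-of-variables formula for image measures, see \cite[2.4.18\,(1)]{MR41:1976}, rewrites this as $\tint{\pi^{-1}((s,t])}{}|z-a|^{-\vdim-2}\ud\rho$; and since $\pi^{-1}((s,t])=(\cball{a}{t}\without\cball{a}{s})\times\grass{\adim}{\vdim}$ and $\rho$ is $V$ weighted by $|\perpproject{S}(z-a)|^2$, this is the left-hand side. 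Substituting into the displayed consequence of Corollary~\ref{corollary:monotonicity} and rearranging yields the theorem.

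The only mildly delicate step is the last paragraph: pinning down the meaning of $\ud_u\xi(u)$ (right continuity of $\xi$ and the half-open convention $(s,t]$, matched to $\cball{a}{t}\without\cball{a}{s}=\{\,s<|z-a|\le t\,\}$) and identifying the Stieltjes integral with the varifold integral via the push-forward of the weighted measure $\rho$ under $z\mapsto|z-a|$. Everything else is a direct invocation of Corollary~\ref{corollary:monotonicity} and Remark~\ref{remark:monotonicity}.
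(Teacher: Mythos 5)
Your proof is correct and takes essentially the same route as the paper: the paper's own proof is the one-line assertion that the identity ``readily follows from \ref{thm:monotonicity}'', and you have simply spelled this out by passing through Corollary~\ref{corollary:monotonicity} and Remark~\ref{remark:monotonicity} (which are themselves immediate consequences of Theorem~\ref{thm:monotonicity}) and then explaining the one genuinely nontrivial step, namely the identification of the annular integral with the Stieltjes integral $\tint{s}{t}u^{-\vdim-2}\ud_u\xi(u)$ via the push-forward of the weighted varifold measure under $(z,S)\mapsto|z-a|$. Your care about right-continuity of $\xi$, the half-open interval convention $(s,t]$ matching $\cball{a}{t}\without\cball{a}{s}$, and the finiteness of all the integrals on the compact ball $\cball{a}{r}$ is exactly the bookkeeping that makes the ``readily follows'' rigorous; nothing is missing.
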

\begin{proof}
	This readily follows from \ref{thm:monotonicity}.
\end{proof}
\begin{lemma} \label{lemma:monotonicity}
	Suppose $\vdim, \adim \in \nat$, $\vdim \leq \adim$, $a \in
	\rel^\adim$, $0 < r < \infty$, $V \in \Var_\vdim ( \oball{a}{r} )$ and
	$\| \delta V \|$ is a Radon measure.

	Then there holds
	\begin{gather*}
		s^{-\vdim} \measureball{\| V \|}{\cball{a}{s}} \leq t^{-\vdim}
		\measureball{\| V \|}{\cball{a}{t}} + \tint{s}{t} u^{-\vdim}
		\measureball{\| \delta V \|}{\cball{a}{u}} \ud \mathscr{L}^1 u
	\end{gather*}
	whenever $0 < s < t < r$.
\end{lemma}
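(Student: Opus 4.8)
The statement is the classical monotonicity inequality for varifolds with locally bounded first variation, and the plan is to derive it directly from the monotonicity identity \ref{thm:monotonicity} (equivalently, from \ref{thm:monotonicity_identity} in the appendix, but to stay self-contained I would rather invoke \ref{thm:monotonicity}). First I would fix $a \in \rel^\adim$ and $0 < s < t < r$, and choose a cutoff function $\zeta \in \mathscr{D}^0 ( \{ u \with 0 < u < r \} )$ approximating the characteristic function of the interval $\{ u \with s < u \leq t \}$ from inside, so that $\zeta' \leq 0$ near $t$ and $\zeta' \geq 0$ near $s$; more precisely, I would take a monotone sequence $\zeta_i$ whose negative part of the derivative concentrates near $t$ and whose positive part concentrates near $s$. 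Plugging $\zeta_i$ into \ref{thm:monotonicity} and passing to the limit (using that $u \mapsto u^{-\vdim} \measureball{\| V \|}{\cball au}$ is of locally bounded variation, so its left and right limits exist at $s$ and $t$) yields the identity
\begin{gather*}
	t^{-\vdim} \measureball{\| V \|}{\cball at} - s^{-\vdim}
	\measureball{\| V \|}{\cball as} \\
	= \tint{(\cball{a}{t} \without \cball{a}{s}) \times
	\grass{\adim}{\vdim}}{} |z-a|^{-\vdim-2} | \perpproject{S} (z-a)
	|^2 \ud V(z,S) - \tint{s}{t} u^{-\vdim-1} (\delta V)_z \big(
	(z-a) \chi_{\cball{a}{u}} \big) \ud \mathscr{L}^1 u
\end{gather*}
up to the usual care with the boundary terms; this is exactly the content of \ref{corollary:monotonicity} together with \ref{remark:monotonicity}, so in fact I would simply cite those two results rather than redo the cutoff argument.

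From that identity the inequality is immediate: the first integral on the right is nonnegative, so it may be dropped, and for the second term I would estimate crudely, using $| (z-a) | \leq u$ on $\cball{a}{u}$, that
\begin{gather*}
	\Big| (\delta V)_z \big( (z-a) \chi_{\cball{a}{u}} \big) \Big|
	\leq u \, \measureball{\| \delta V \|}{\cball{a}{u}},
\end{gather*}
which turns the subtracted integral into $\tint{s}{t} u^{-\vdim} \measureball{\| \delta V \|}{\cball au} \ud \mathscr{L}^1 u$ with a favorable sign. Rearranging gives precisely
\begin{gather*}
	s^{-\vdim} \measureball{\| V \|}{\cball as} \leq t^{-\vdim}
	\measureball{\| V \|}{\cball at} + \tint{s}{t} u^{-\vdim}
	\measureball{\| \delta V \|}{\cball au} \ud \mathscr{L}^1 u,
\end{gather*}
which is the claim.

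There is essentially no hard step here; the only points requiring a modicum of attention are that \ref{corollary:monotonicity} is stated for $\cball ar \subset U$ whereas the lemma only assumes $V \in \Var_\vdim(\oball ar)$, and that one must handle the passage $t \uparrow r$. For the first point I would apply \ref{corollary:monotonicity} on $\cball{a}{\rho}$ for every $s < t < \rho < r$ and then let $\rho \uparrow r$; since $\cball{a}{\rho} \subset \oball{a}{r}$ for $\rho < r$ this is legitimate, and all three quantities appearing are monotone and finite in $\rho$ on $(0,r)$, so the limit passes through. The second point (that $t$ need not be bounded away from $r$) is handled the same way by taking $t < \rho < r$. Thus the proof is a one-line consequence of the already-established monotonicity identity, and I expect the write-up to consist mainly of the sentence ``this readily follows from \ref{corollary:monotonicity} and \ref{remark:monotonicity}'' together with the elementary bound $|(z-a)| \leq u$ on $\cball{a}{u}$.
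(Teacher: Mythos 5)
Your proof is correct and reaches the desired inequality, but it takes a slightly different route from the paper's own proof. The paper works directly with \ref{thm:monotonicity}: it defines $f(u) = u^{-\vdim}\measureball{\|V\|}{\cball{a}{u}}$, plugs in nonnegative test functions $\zeta$ supported in $\{u \with s \le u \le t\}$, estimates $(\delta V)$ on the resulting smooth vector field to get $\tint{0}{r}\zeta' f \ud\mathscr{L}^1 \le \tint{s}{t} u^{-\vdim}\measureball{\|\delta V\|}{\cball{a}{u}}\ud\mathscr{L}^1 u$, lets $\zeta$ approach a characteristic function at Lebesgue points of $f$, and finally approximates general $s,t$ from above using right-continuity of $u\mapsto\measureball{\|V\|}{\cball{a}{u}}$. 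You instead invoke \ref{corollary:monotonicity} together with \ref{remark:monotonicity}, where that limit passage has already been carried out, and then drop the nonnegative $|\perpproject{S}(z-a)|^2$ term and estimate the first-variation term via $|(z-a)\bullet\eta(V,z)| \le |z-a| \le u$ on $\cball{a}{u}$. Both are valid; yours is a legitimate shortcut that avoids re-handling Lebesgue points, while the paper's version is more self-contained (it does not route through the representative $\eta$).

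One small point: your closing concern about needing to take $\rho\uparrow r$ is based on a confusion of two different $r$'s. In \ref{corollary:monotonicity} the outer radius (called $r$ there) plays the role of $t$ here, and the hypothesis $\cball{a}{r}\subset U$ becomes $\cball{a}{t}\subset \oball{a}{r}$, which holds automatically since $t<r$. Likewise the lemma requires $t<r$ strictly, so no passage $t\uparrow r$ arises. The limiting argument you sketch is harmless but unnecessary.
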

\begin{proof}
	Let $I = \{ u \with 0 < u < r \}$ and define $f : I \to \rel$
	by $f (u) = u^{-\vdim} \measureball{\| V \|}{ \cball{a}{u} }$ for $u
	\in I$. Observing that \ref{thm:monotonicity} implies
	\begin{gather*}
		\tint{0}{r} \zeta' f \ud \mathscr{L}^1 \leq \tint{s}{t}
		u^{-\vdim} \measureball{\| \delta V \|}{ \cball{a}{u} } \ud
		\mathscr{L}^1 u
	\end{gather*}
	whenever $\zeta \in \mathscr{D}^0 ( I )$ with $\spt \zeta \subset \{ u
	\with s \leq u \leq t \}$ and $| \zeta (u) | \leq 1$ for $u \in I$,
	one may deduce the inequality in question provided $s$ and $t$ are
	$\mathscr{L}^1 \restrict I$ Lebesgue points of $f$ by letting $\zeta$
	approach the characteristic function of the interval $\{ u \with s
	\leq u \leq t \}$. Approximating $s$ and $t$ from above, the
	conclusion follows.
\end{proof}
\begin{lemma} \label{lemma:lower_density}
	Suppose $\adim \in \nat$ and $0 < \delta \leq 1$.

	Then there exists a positive, finite number $\Gamma$ with the
	following property.

	If $\vdim$, $\adim$, $U$, $V$, $p$, and $\psi$ are related to $\adim$
	as in \ref{miniremark:situation_general}, $a \in \spt \| V \|$, $0 < r <
	\infty$, $U = \oball{a}{r}$, $p = \vdim$, and
	$\measureball{\psi}{\oball{a}{r}} \leq \Gamma^{-1}$, then
	\begin{gather*}
		\measureball{\| V \|}{\oball{a}{r}} \geq (1-\delta)
		\unitmeasure{\vdim} r^\vdim.
	\end{gather*}
\end{lemma}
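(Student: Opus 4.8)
The plan is to deduce the statement from Lemma \ref{lemma:lower_mass_bound_Q} applied with $Q = 1$, the point being that under the mean curvature hypothesis a smallness bound on $\psi$ over $\oball{a}{r}$ converts, via H\"older's inequality, into the local first variation bound required there. First I would set $M = \sup \{ \adim, \delta^{-1}, 1 \}$, let $\Gamma_0 = \sup \{ 1, \Gamma_{\ref{lemma:lower_mass_bound_Q}} ( M ) \}$, and define $\Gamma = \Gamma_0^\adim$; since $M$ and $\Gamma_0$ depend only on $\adim$ and $\delta$, so does $\Gamma$. Replacing $\Gamma_{\ref{lemma:lower_mass_bound_Q}}(M)$ by the possibly larger number $\Gamma_0$ is harmless: the hypotheses of Lemma \ref{lemma:lower_mass_bound_Q} only become more restrictive and its conclusion is unchanged, so the conclusion of that lemma holds with $\Gamma_0$ in place of $\Gamma_{\ref{lemma:lower_mass_bound_Q}}(M)$.

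Next I would check the hypotheses of Lemma \ref{lemma:lower_mass_bound_Q} for $V$ on $U = \oball{a}{r}$ with this $\Gamma_0$, with $Q = 1$, and with $M$ as above. The requirements $\vdim \leq \adim \leq M$, $1 \leq Q \leq M$, $V \in \Var_\vdim ( \oball{a}{r} )$, $\| \delta V \|$ a Radon measure, $\density^\vdim ( \| V \|, z ) \geq 1$ for $\| V \|$ almost all $z$, and $a \in \spt \| V \|$ are immediate from \ref{miniremark:situation_general} and the present hypotheses. The density gap condition holds trivially, since $\| V \| ( \{ z \with \density^\vdim ( \| V \|, z ) < Q \} ) = \| V \| ( \{ z \with \density^\vdim ( \| V \|, z ) < 1 \} ) = 0$. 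It remains to verify $\measureball{\| \delta V \|}{\cball{a}{s}} \leq \Gamma_0^{-1} \| V \| ( \cball{a}{s} )^{1-1/\vdim}$ for $0 < s < r$. When $\vdim = 1$ one has $\psi = \| \delta V \|$ and the bound reads $\measureball{\| \delta V \|}{\cball{a}{s}} \leq \Gamma_0^{-1}$, which follows from $\measureball{\| \delta V \|}{\cball{a}{s}} = \measureball{\psi}{\cball{a}{s}} \leq \measureball{\psi}{\oball{a}{r}} \leq \Gamma^{-1} \leq \Gamma_0^{-1}$ (here $\| V \|( \cball{a}{s} )^{1-1/\vdim} = 1$ as $a \in \spt \| V \|$). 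When $\vdim > 1$ one has $\psi = \| V \| \restrict | \mathbf{h} ( V; \cdot ) |^\vdim$, and H\"older's inequality with exponents $\vdim$ and $\vdim/(\vdim-1)$ gives
\[
	\measureball{\| \delta V \|}{\cball{a}{s}} = \tint{\cball{a}{s}}{} | \mathbf{h}(V;z) | \ud \| V \| z \leq \measureball{\psi}{\cball{a}{s}}^{1/\vdim} \| V \| ( \cball{a}{s} )^{1-1/\vdim} \leq \measureball{\psi}{\oball{a}{r}}^{1/\vdim} \| V \| ( \cball{a}{s} )^{1-1/\vdim},
\]
and $\measureball{\psi}{\oball{a}{r}}^{1/\vdim} \leq \Gamma^{-1/\vdim} = \Gamma_0^{-\adim/\vdim} \leq \Gamma_0^{-1}$ because $\adim/\vdim \geq 1$ and $\Gamma_0 \geq 1$; here I use $\cball{a}{s} \subset \oball{a}{r}$ since $s < r$.

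Finally, Lemma \ref{lemma:lower_mass_bound_Q} yields $\measureball{\| V \|}{\oball{a}{r}} \geq ( Q - M^{-1} ) \unitmeasure{\vdim} r^\vdim = ( 1 - M^{-1} ) \unitmeasure{\vdim} r^\vdim \geq ( 1 - \delta ) \unitmeasure{\vdim} r^\vdim$, since $M \geq \delta^{-1}$ forces $M^{-1} \leq \delta$. There is no genuine obstacle here beyond tracking the constants and handling the degenerate exponent $1 - 1/\vdim = 0$ in the case $\vdim = 1$; the entire analytic content is supplied by Lemma \ref{lemma:lower_mass_bound_Q} together with the elementary H\"older estimate above.
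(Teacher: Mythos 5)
Your proof is correct and gives a valid internal deduction, but it is not the route the paper takes: the paper disposes of the lemma in one line by citing it as a special case of an external result (Menne, \emph{Some applications of the isoperimetric inequality for integral varifolds}, 2.6), which is proved there directly from the monotonicity formula and the isoperimetric inequality. Your argument instead reduces the statement to Lemma \ref{lemma:lower_mass_bound_Q} in the present paper, which is itself established by a compactness/contradiction argument via Allard's theory. The reduction is sound: with $Q=1$ the density-gap hypothesis of \ref{lemma:lower_mass_bound_Q} is trivially met, the first-variation ratio bound follows from the smallness of $\psi$ via the $(\vdim, \vdim/(\vdim-1))$ H\"older estimate on $\cball{a}{s} \subset \oball{a}{r}$ (with the degenerate case $\vdim=1$, where $\psi = \| \delta V \|$ and the exponent $1-1/\vdim$ vanishes, handled correctly using $a \in \spt \| V \|$), the monotonicity of replacing $\Gamma_{\ref{lemma:lower_mass_bound_Q}}(M)$ by the larger $\Gamma_0$ is the right observation, and $M \geq \delta^{-1}$ delivers $1-M^{-1} \geq 1-\delta$. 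What you gain is self-containment within the paper; what you lose, relative to a direct monotonicity-formula proof, is that the constant $\Gamma$ obtained from the compactness argument in \ref{lemma:lower_mass_bound_Q} is not effective, whereas the cited external result gives explicit constants.
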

\begin{proof}
	This is a special case of \cite[2.6]{snulmenn.isoperimetric}.
\end{proof}
\begin{miniremark} \label{miniremark:situation}
	The following hypotheses will be used frequently.
	\begin{enumerate}
		\item \label{item:situation:basic} Suppose $\vdim, \adim \in
		\nat$, $1 < \vdim < \adim$, $U$ is an open subset of
		$\rel^\adim$, $a \in \rel^\adim$, $0 < r < \infty$,
		$\cball{a}{r} \subset U$, $V \in \Var_\vdim ( U )$, $\| \delta
		V \|$ is a Radon measure, $\eta ( V ; \cdot ) : U \to
		\mathbf{S}^{\adim-1}$ is a $\| \delta V \|$ measurable
		function with
		\begin{gather*}
			( \delta V ) ( g ) = \tint{}{} g (z) \bullet \eta ( V;
			z ) \ud \| \delta V \| z \quad \text{whenever $g \in
			\mathscr{D} ( U, \rel^\adim)$},
		\end{gather*}
		see Allard \cite[4.3]{MR0307015}, $I = \{ s : 0 < s \leq r
		\}$, and the functions $\mu : I \to \rel$, $\nu : I \to \rel$
		and $\xi : I \to \rel$ are given by
		\begin{align*}
			\mu (s) & = \measureball{\| V \|}{\cball{a}{s}}, \\
			\nu (s) & = \tint{\cball{a}{s}}{} (z-a) \bullet \eta (
			V ; z ) \ud \| \delta V \| z, \\
			\xi (s) & = \tint{\cball{a}{s} \times
			\grass{\adim}{\vdim}}{} | \perpproject{S} (z-a) |^2
			\ud V (z,S).
		\end{align*}
		\item \label{item:situation:l2} Suppose, additionally to
		\eqref{item:situation:basic},
		\begin{gather*}
			( \delta V ) ( g ) = - \tint{}{} \mathbf{h} ( V; z )
			\bullet g (z) \ud \| V \| z \quad \text{for $g \in
			\mathscr{D} ( U, \rel^\adim )$}, \\
			\mathbf{h} ( V; \cdot ) \in \Lp{2} ( \| V \| \restrict
			K, \rel^\adim ) \quad \text{whenever $K$ is a compact
			subset of $U$},
		\end{gather*}
		$\zeta = \mu - \vdim^{-1} \nu$, and $\phi : I \to \rel$ is
		defined by $\phi (s) = \tint{\cball{a}{s}}{} | \mathbf{h}
		(V;z) |^2 \ud \| V \| z$ for $s \in I$.
		\item \label{item:situation:perpendicular} Suppose,
		additionally to \eqref{item:situation:basic} and
		\eqref{item:situation:l2},
		\begin{gather*}
			\project{S} ( \mathbf{h} (V;z) ) = 0 \quad \text{for
			$V$ almost all $(z,S)$}.
		\end{gather*}
	\end{enumerate}
\end{miniremark}
\begin{remark} \label{remark:basic_integrability}
	A basic consequence is the following. If $0 < \delta < \infty$ then
	\begin{align*}
		& \tint{\cball{a}{r} \without \cball{a}{s}}{}
		|z-a|^{-\vdim+\delta} \ud \| V \| z = \tint{s}{r}
		t^{-\vdim+\delta} \ud_t \mu (t) \\
		& \quad \leq r^{-\vdim+\delta} \mu (r) + ( \vdim-\delta)
		\tint{s}{r} t^{-\vdim-1+\delta} \mu (t) \ud \mathscr{L}^1 t \\
		& \quad \leq r^{-\vdim+\delta} \mu (r) \big ( 1 + \sup \{0,
		\vdim-\delta\} \tint{0}{1} t^{-1+\delta} ( 1 + \vdim^{-1}
		\beta \log (1/t))^\vdim \ud \mathscr{L}^1 t \big ) < \infty.
	\end{align*}
\end{remark}

\section{An observation concerning the Lusin property}
\subsection{Notation}
The notation of \cite[\S 1,\,\S 2]{snulmenn.decay} is used. The reader
might want to recall in particular the following less commonly used symbols:
$\nat$ denoting the positive integers, $\oball{a}{r}$ and $\cball{a}{r}$
denoting respectively the open and closed ball with centre $a$ and radius $r$,
and $\bigodot^i ( V,W )$ and $\bigodot^i V$ denoting the vector space of all
$i$ linear symmetric functions (forms) mapping $V^i$ into $W$ and $\rel$
respectively, see \cite[2.2.6, 2.8.1, 1.10.1]{MR41:1976}.
\subsection{A model case}
\begin{lemma} \label{lemma:simple_interpolation}
	Suppose $k, \vdim, \adim \in \nat$, $\vdim < \adim$, $a \in
	\rel^\vdim$, $0 < r < \infty$, and $u : \oball{a}{r} \to \rel^\codim$
	is of class $k$.

	Then
	\begin{gather*}
		\tsum{i=0}{k} r^i \norm{D^iu}{\infty}{a,r} \leq \Gamma \big (
		r^k \norm{D^ku}{\infty}{a,r} + r^{-\vdim} \norm{u}{1}{a,r}
		\big )
	\end{gather*}
	where $\Gamma$ is a positive, finite number depending only on $k$ and
	$\adim$.
\end{lemma}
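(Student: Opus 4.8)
The statement is a standard interpolation inequality for $C^k$ functions on a ball, estimating the full $C^k$ norm (suitably scaled) by the top-order seminorm plus the $L^1$ norm. The plan is to reduce to the unit ball by rescaling, then argue by contradiction using a compactness argument on $C^k$, or alternatively to give a direct scaling-and-covering argument. I would favour the contradiction route since it keeps constants implicit and avoids explicit kernel estimates.

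First I would rescale: set $v(y) = u(a + ry)$ for $y \in \oball{0}{1}$, so that $D^iv(y) = r^i D^iu(a+ry)$, hence $\norm{D^iv}{\infty}{0,1} = r^i \norm{D^iu}{\infty}{a,r}$, and a change of variables gives $\norm{v}{1}{0,1} = r^{-\vdim} \norm{u}{1}{a,r}$. Thus it suffices to prove the inequality with $r=1$ and $a=0$, i.e. $\sum_{i=0}^{k} \norm{D^iv}{\infty}{0,1} \leq \Gamma \big ( \norm{D^kv}{\infty}{0,1} + \norm{v}{1}{0,1} \big )$ for all $v$ of class $k$ on $\oball{0}{1}$, with $\Gamma$ depending only on $k$ and $\adim$.

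For this reduced statement I would argue by contradiction. Suppose it fails for some fixed $k$, $\adim$; then there is a sequence $v_j$ of class $k$ on $\oball{0}{1}$ with $\sum_{i=0}^{k} \norm{D^iv_j}{\infty}{0,1} = 1$ but $\norm{D^kv_j}{\infty}{0,1} + \norm{v_j}{1}{0,1} \to 0$. In particular the $v_j$ are uniformly bounded in $C^k$ and $D^kv_j \to 0$ uniformly; by Arzel\`a--Ascoli applied successively to $v_j$ and its derivatives up to order $k-1$ (the $(k-1)$-st derivatives are uniformly Lipschitz since $D^kv_j$ is uniformly bounded), a subsequence converges in $C^{k-1}$ on closed subballs to some $v$ of class $k-1$ with $D^kv = 0$ in the limit, so $v$ is a polynomial of degree at most $k-1$; and $\norm{v}{1}{0,1} = 0$ forces $v = 0$. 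But then $\sum_{i=0}^{k-1} \norm{D^iv_j}{\infty}{0,1} \to 0$, while $\norm{D^kv_j}{\infty}{0,1} \to 0$ by hypothesis, contradicting $\sum_{i=0}^k \norm{D^iv_j}{\infty}{0,1} = 1$. The only mild subtlety is that I should work on a slightly smaller concentric ball when extracting the uniform convergence and then recover the estimate on $\oball{0}{1}$ by a further covering/scaling step, or simply replace $\oball{0}{1}$ throughout by $\oball{0}{1}$ and note the bounds are interior; I would phrase the contradiction hypothesis with the sup norms over $\oball{0}{1}$ directly, which is fine since $D^kv_j$ bounded gives equicontinuity of lower derivatives right up to the boundary of any $\oball{0}{\rho}$ with $\rho<1$, and monotonicity in $\rho$ handles the rest.

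The main obstacle is purely bookkeeping: making sure the extracted constant $\Gamma$ depends only on $k$ and $\adim$ and not on $r$ or the particular ball, which the rescaling in the first step handles cleanly, and making sure the compactness extraction is legitimate, i.e. that $C^k$ bounds plus $D^k \to 0$ really do force convergence to a polynomial — this is where one uses that a uniform bound on $D^kv_j$ makes $D^{k-1}v_j$ uniformly Lipschitz, hence precompact in $C^0$, and one iterates downward. No genuinely hard analysis is involved; the result is elementary once the scaling normalisation is in place.
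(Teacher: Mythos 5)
Your compactness argument is correct in substance, and the rescaling reduction is exactly right. The only real wobble is the boundary discussion: the fallback ``monotonicity in $\rho$ handles the rest'' is not actually a proof, since knowing $\norm{D^iv_j}{\infty}{0,\rho}\to 0$ as $j\to\infty$ for each fixed $\rho<1$ does not by itself force $\norm{D^iv_j}{\infty}{0,1}\to 0$; the rate could degenerate as $\rho\to 1$, and the supremum over $\rho$ of a family of limits need not be the limit of the suprema. Fortunately the cleaner observation you make just before that sentence closes the issue: the uniform bound on $D^{i+1}v_j$ makes $D^iv_j$ uniformly Lipschitz on the convex set $\oball{0}{1}$ (for $i\leq k-1$), so each extends continuously to $\cball{0}{1}$ with the same sup norm, the extensions form an equibounded, equi-Lipschitz family on a compact set, and Arzel\`a--Ascoli yields uniform convergence on $\cball{0}{1}$ directly, with no $\rho<1$ detour or covering step needed. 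You should also state explicitly that $\norm{v}{1}{0,1}=0$ together with continuity of the degree-$(k-1)$ polynomial limit $v$ forces $v\equiv 0$, so that $D^0v,\dots,D^{k-1}v$ all vanish and the contradiction follows.

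For the record, the paper does not prove this lemma itself; it simply cites \cite[3.15]{snulmenn.decay}. Your compactness-contradiction route is a standard and perfectly valid alternative: it is more elementary at the cost of leaving $\Gamma$ nonconstructive, which is harmless here since the statement only claims $\Gamma$ depends on $k$ and $\adim$.
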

\begin{proof}
	See \cite[3.15]{snulmenn.decay}.
\end{proof}
\begin{lemma} \label{lemma:touching-appendix}
	Suppose $\vdim \in \nat$, $a \in \rel^\vdim$, $0 < r < \infty$, $u \in
	\Lp{1} ( \mathscr{L}^\vdim \restrict \oball{a}{r} )$, $u \geq 0$, $0
	\leq \gamma < \infty$ and $P_r : \rel^\vdim \to \rel$ for $0 < s \leq
	r$ are affine functions with $\lim_{s \to 0+} P_s (a) = 0$ and
	\begin{gather*}
		s^{-\vdim} \norm{u-P_s}{1}{a,s} \leq \gamma s \quad
		\text{whenever $0 < s \leq r$}.
	\end{gather*}

	Then
	\begin{gather*}
		| P_s (0) | / s + | D P_s (0) | \leq \Gamma \gamma \quad
		\text{for $0 < s \leq r$}
	\end{gather*}
	where $\Gamma$ is a positive, finite number depending only on $\vdim$.
\end{lemma}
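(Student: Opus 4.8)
<br>

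The plan is to compare the affine functions $P_s$ at dyadically related scales and feed the resulting telescoping estimate into the convergence hypothesis $P_s(a) \to 0$. First I would reduce to the case $a = 0$ by translation, which changes none of the hypotheses. The key linear-algebraic input is that an affine function on $\rel^\vdim$ is controlled in $\mathbf{C}^1$ by its $\mathbf{L}_1$ average on a ball: since the space of affine functions is finite dimensional and all norms on it are equivalent, there is a constant $\Gamma_1$ depending only on $\vdim$ with $|P(0)|/s + |DP(0)| \leq \Gamma_1 s^{-\vdim-1} \norm{P}{1}{0,s}$ for every affine $P$ and every $0 < s < \infty$ (alternatively one invokes \ref{lemma:simple_interpolation} with $k = 1$, noting that an affine function is of class $1$ with vanishing second derivative). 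Applying this to $P_s - P_{s/2}$ on the ball $\oball{0}{s/2}$ and using $\norm{P_s - P_{s/2}}{1}{0,s/2} \leq \norm{u - P_s}{1}{0,s} + \norm{u - P_{s/2}}{1}{0,s/2} \leq \gamma s^{\vdim+1} + \gamma(s/2)^{\vdim+1} \leq 2\gamma s^{\vdim+1}$, one obtains $|P_s(0) - P_{s/2}(0)| + (s/2)|DP_s(0) - DP_{s/2}(0)| \leq \Gamma_2 \gamma s$ for a dimensional constant $\Gamma_2$.

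Next I would sum these increments over the dyadic scales $s, s/2, s/4, \ldots$ Because the bound on $|DP_s(0) - DP_{s/2}(0)|$ carries a factor $2/s$, the sum $\sum_{j=0}^\infty |DP_{2^{-j}s}(0) - DP_{2^{-j-1}s}(0)| \leq \sum_{j=0}^\infty 2\Gamma_2 \gamma 2^{-j} = 4\Gamma_2\gamma$ converges, so $DP_{2^{-j}s}(0)$ is Cauchy and converges to some limit $L$ as $j \to \infty$; moreover $|DP_s(0) - L| \leq 4\Gamma_2\gamma$. Similarly $\sum_{j=0}^\infty |P_{2^{-j}s}(0) - P_{2^{-j-1}s}(0)| \leq \sum_{j=0}^\infty \Gamma_2 \gamma 2^{-j} s = 2\Gamma_2\gamma s < \infty$, so $P_{2^{-j}s}(0)$ converges, and by the hypothesis $\lim_{t \to 0+} P_t(a) = P_t(0) = 0$ the limit is $0$; hence $|P_s(0)| \leq 2\Gamma_2 \gamma s$, i.e. $|P_s(0)|/s \leq 2\Gamma_2\gamma$.

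It remains to identify the limit $L$ of the derivatives, or rather to bound $|L|$ directly. Here I would use the $\mathbf{L}_1$ closeness of $P_s$ to the fixed function $u$: for any two scales $0 < s' \leq s \leq r$ one has $\norm{P_s - P_{s'}}{1}{0,s'} \leq \norm{u-P_s}{1}{0,s} + \norm{u-P_{s'}}{1}{0,s'} \leq 2\gamma s^{\vdim+1}$, so applying the affine-norm-equivalence estimate on $\oball{0}{s'}$ gives $|DP_s(0) - DP_{s'}(0)| \leq \Gamma_1 (s')^{-\vdim-1} \cdot 2\gamma s^{\vdim+1} = 2\Gamma_1 \gamma (s/s')^{\vdim+1} s'$; this is not uniform, but taking $s' = s/2$ recovers the controlled increment already used, and the telescoped bound $|DP_s(0) - L| \leq 4\Gamma_2\gamma$ together with the triangle inequality and the observation that $L = \lim_{j} DP_{2^{-j}s}(0)$ does not depend on the choice of starting scale $s$ (any two dyadic chains interleave) shows $L$ is a single well-defined vector. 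To bound $|L|$ I would note $|DP_r(0)| \leq \Gamma_1 r^{-\vdim-1}\norm{P_r}{1}{0,r} \leq \Gamma_1 r^{-\vdim-1}(\norm{u-P_r}{1}{0,r} + \norm{u}{1}{0,r})$; the first term is $\leq \Gamma_1 \gamma$, but the second involves $\norm{u}{1}{0,r}$ which is not assumed bounded, so instead I would simply use $|L| \leq |DP_s(0)| + 4\Gamma_2\gamma$ and let $s \to 0$ — but $DP_s(0)$ need not stay bounded a priori except through $L$ itself. The clean way is to observe that $|DP_s(0)| \leq |L| + 4\Gamma_2\gamma$ for all small $s$ and $|DP_s(0) - DP_{s'}(0)| \leq 2\Gamma_1\gamma(s/s')^{\vdim+1}s'$; taking $s = r$ and $s' = r$ is vacuous, so the genuine content is the telescoping, which already gives $|DP_s(0)| \leq |DP_r(0)| + \sum$ over the finitely many dyadic steps from $r$ down to (above) $s$, and that sum is $\leq 4\Gamma_2\gamma$, while the very first derivative $|DP_r(0)|$ is bounded by... here is the main obstacle: \textbf{controlling $DP_r(0)$ itself.} I expect the resolution is that the statement is meant to bound the increments only, i.e. the intended reading produces $|P_s(0)|/s + |DP_s(0) - DP_r(0)| \leq \Gamma\gamma$, or that a hidden normalisation (such as $\norm{u}{1}{0,r} \leq \gamma r^{\vdim+1}$, which would follow from the $s = r$ instance of the hypothesis together with $|P_r(a)|$ small via the same telescoping bound $|P_r(0)| \leq$ const$\cdot\gamma r$ plus $\norm{P_r}{1}{0,r} \leq$ const$\cdot(|P_r(0)| + |DP_r(0)|r)r^\vdim$) closes the loop: from $\norm{u-P_r}{1}{0,r} \leq \gamma r^{\vdim+1}$ and $|P_r(0)| \leq 2\Gamma_2\gamma r$ one gets $\norm{P_r}{1}{0,r}$, hence $|DP_r(0)|r^\vdim \cdot$const $\leq \norm{P_r}{1}{0,r} \leq \norm{u}{1}{0,r} + \gamma r^{\vdim+1}$, and $\norm{u}{1}{0,r} \leq \norm{P_r}{1}{0,r} + \gamma r^{\vdim+1}$ — this is circular unless one pins $\norm{u}{1}{0,r}$. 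I would therefore present the proof in the form that bounds $|P_s(0)|/s$ unconditionally (via the telescoping and $P_t(a)\to 0$) and bounds $|DP_s(0)|$ by $|DP_r(0)| + \Gamma\gamma$, then absorb $|DP_r(0)|$ using \ref{lemma:simple_interpolation} applied to $P_r$ on $\oball{a}{r}$ in terms of $\norm{u}{1}{a,r}$ and $\gamma$; the statement as printed is plausibly the first half of a two-part lemma (analogous to \ref{lemma:touching}) whose second part supplies the missing ingredient, so I would flag that dependency explicitly rather than attempt to prove a bound on $|DP_r(0)|$ out of thin air.
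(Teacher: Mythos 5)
Your telescoping argument for $|P_s(a)|/s$ is correct and is essentially what the paper does for that half of the conclusion. However, there is a genuine gap in the gradient bound, one you explicitly flag: you cannot close the argument for $|DP_s|$ because you have no control on $|DP_r|$. Your conjectures about a missing normalisation or a two-part lemma are off the mark: the statement is self-contained, and the ingredient you are missing is the hypothesis $u \geq 0$, which you never use.

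Here is the paper's key observation. Since $DP_s$ is a constant linear form, the quantity $s|DP_s(a)|$ can be read off from the \emph{one-sided} deviation of $P_s$ below its value at the centre. Precisely, pick $\Delta_2$ so that $|\alpha| = \Delta_2 \int_{\oball{0}{1}} \alpha^+ \ud \mathscr{L}^\vdim$ for every $\alpha \in \Hom(\rel^\vdim,\rel)$ (a dimensional constant by rotation invariance). Applying this to $\alpha(y) = -\left< y, DP_s(a) \right>$ and rescaling gives
\begin{gather*}
s\,|DP_s(a)| = \Delta_2\, s^{-\vdim} \tint{\oball{a}{s}}{} \sup\{0,\, P_s(a) - P_s(x)\} \ud \mathscr{L}^\vdim x.
\end{gather*}
Now the nonnegativity of $u$ enters: for every $x$,
\begin{gather*}
P_s(a) - P_s(x) \leq P_s(a) - P_s(x) + u(x) = P_s(a) + (u - P_s)(x),
\end{gather*}
hence $\sup\{0, P_s(a) - P_s(x)\} \leq |P_s(a) + (u-P_s)(x)|$. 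Integrating,
\begin{gather*}
s\,|DP_s(a)| \leq \Delta_2 \unitmeasure{\vdim}\, |P_s(a)| + \Delta_2\, s^{-\vdim} \norm{u-P_s}{1}{a,s},
\end{gather*}
and both terms on the right are already bounded by a dimensional constant times $\gamma s$ (the first by your telescoping step, the second by hypothesis). No bound on $\norm{u}{1}{a,r}$ or on $|DP_r|$ is needed. So the route to the gradient bound is not via telescoping at all: nonnegativity turns an $\mathbf{L}_1$ estimate on $u - P_s$ plus a bound on $|P_s(a)|$ directly into a bound on $|DP_s(a)|$, because a downward excursion of $P_s$ below $P_s(a)$ must be compensated by $u - P_s$ when $u \geq 0$.
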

\begin{proof}
	Whenever $0 < s/2 \leq t \leq s \leq r$
	\begin{gather*}
		| P_s (a) - P_t (a) | \leq \Delta_1 t^{-\vdim}
		\norm{P_s-P_t}{1}{a,t} \leq 2^{\vdim+1} \Delta_1 \gamma s
	\end{gather*}
	by \ref{lemma:simple_interpolation} where $\Delta_1 =
	\Gamma_{\ref{lemma:simple_interpolation}} ( 2, \vdim + 1 )$, hence,
	using induction,
	\begin{gather*}
		| P_s (a) | = \lim_{t \to 0+} | P_s (a) - P_t (a) | \leq
		2^{\vdim+2} \Delta_1 \gamma s \quad \text{for $0 < s \leq r$}.
	\end{gather*}
	Taking $0 < \Delta_2 < \infty$ such that $| \alpha | = \Delta_2
	\int_{\oball{0}{1}} \alpha^+ \ud \mathscr{L}^\vdim$ for $\alpha
	\in \Hom ( \rel^\vdim, \rel )$, one infers for $0 < r \leq s$ that
	\begin{gather*}
		\begin{aligned}
			s | D P_s (a) | & = \Delta_2 s^{-\vdim}
			\tint{\oball{a}{s}}{} \sup \{ 0, P_s (a) - P_s (x) \}
			\ud \mathscr{L}^\vdim x \\
			& \leq \Delta_2 s^{-\vdim} \tint{\oball{a}{s}}{} | P_s
			(a) + ( u-P_s ) (x) | \ud \mathscr{L}^\vdim x \\
			& \leq \Delta_2 \unitmeasure{\vdim} | P_s (a) | +
			\Delta_2 s^{-\vdim} \norm{u-P_s}{1}{a,s} \leq \Delta_3
			\gamma s
		\end{aligned}
	\end{gather*}
	where $\Delta_3 = \Delta_2 ( \unitmeasure{\vdim} 2^{\vdim+2}
	\Delta_1 + 1)$ and one may take $\Gamma=2^{\vdim+1} \Delta_1 +
	\Delta_3$.
\end{proof}
\begin{lemma} \label{lemma:oscillation_bound}
	Suppose $\vdim \in \nat$, $a \in \rel^\vdim$, $0 < r < \infty$, $u
	\in \Sob{}{2}{\vdim} ( \oball{a}{r} )$ with $\weakD^0 u = u$, $0 \leq
	\gamma < \infty$, and $Q : \rel^\vdim \to \rel$ is a polynomial
	function of degree at most $2$ such that
	\begin{gather*}
		u-Q \geq 0, \qquad u(a) = Q(a), \qquad \left < v^2, D^2 Q (a)
		\right > \geq - \gamma | v |^2 \quad \text{for $v \in
		\rel^\vdim$}.
	\end{gather*}

	Then $Du(a) = DQ (a)$ and for $P : \rel^\vdim \to \rel$ defined by
	$P(x) = Q(a) + \left < x-a, DQ(a) \right >$ for $x \in \rel^\vdim$
	\begin{gather*}
		r^{-1} \norm{u-P}{\infty}{a,r} + r^{-1} \norm{\weakD
		(u-P)}{\vdim}{a,r} \leq \Gamma \big ( \tint{\oball{a}{r}}{}
		(\gamma + | \weakD^2 u (x) |)^\vdim \ud \mathscr{L}^\vdim x
		\big)^{1/\vdim}
	\end{gather*}
	where $\Gamma$ is a positive, finite number depending only on $\vdim$.
\end{lemma}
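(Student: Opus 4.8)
The statement is a \emph{one-sided} second order Poincar\'e type estimate: the hypotheses say that $u$ touches a polynomial $Q$ of degree $\leq2$ from above at the centre $a$, with the Hessian of $Q$ bounded below by $-\gamma$, and the goal is (i) to identify the gradient $Du(a)=DQ(a)$ and (ii) to bound the oscillation of $u-P$ (and of $\weakD(u-P)$ in $\mathbf{L}_\vdim$) over $\oball{a}{r}$ by the $\mathbf{L}_\vdim$ norm of $\gamma+|\weakD^2u|$. My plan is to reduce everything to an application of the already-proved interpolation inequality \ref{lemma:simple_interpolation} together with a touching/affine-approximation lemma in the spirit of \ref{lemma:touching-appendix}, after replacing $u$ by the smooth mollifications of $u-Q$.

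\textbf{Step 1: set up affine approximants from Sobolev--Poincar\'e.} First I would normalise so that $a=0$, $Q(a)=0$, $DQ(a)=0$ is \emph{not} assumed yet; instead, for each $0<s\leq r$ let $P_s:\rel^\vdim\to\rel$ be an affine function realising (up to a fixed dimensional constant) the infimum of $s^{-\vdim}\norm{u-P_s}{1}{a,s}$ over affine functions. The standard $\mathbf{W}^{2,\vdim}$ Sobolev--Poincar\'e inequality on $\oball{a}{s}$ (applied to $u$, using $\weakD^0u=u$) gives
\begin{gather*}
	s^{-\vdim}\norm{u-P_s}{1}{a,s}\leq\Gamma_1\,s\Big(\tfint{\oball{a}{s}}{}|\weakD^2u|^\vdim\ud\mathscr{L}^\vdim\Big)^{1/\vdim}.
\end{gather*}
I would then invoke \ref{lemma:touching-appendix} with $\gamma$ replaced by the (nondecreasing in $s$) quantity $\gamma'=\Gamma_1\big(\tfint{\oball{a}{r}}{}|\weakD^2u|^\vdim\big)^{1/\vdim}$ to conclude $\lim_{s\to0+}P_s(a)$ exists, $P_s(a)\to u(a)$ (Lebesgue point, since $u$ is continuous after the reduction), and $|P_s(a)|/s+|DP_s(0)|\leq\Gamma\gamma'$ for $0<s\leq r$; in particular the gradients $DP_s(0)$ converge and their limit is $Du(a)$ by the usual Lebesgue-point characterisation of the approximate differential.

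\textbf{Step 2: identify $Du(a)=DQ(a)$ using the one-sided touching.} This is where the hypotheses on $Q$ enter. Since $u-Q\geq0$ and $(u-Q)(a)=0$, the function $u-Q$ has a minimum at $a$; for the affine function $L(x)=\left<x-a,DQ(a)\right>$ I would compare $u-L$ with $Q-L$, noting $(Q-L)(x)=\tfrac12\left<(x-a)^2,D^2Q(a)\right>\geq-\tfrac12\gamma|x-a|^2$, so $u-L\geq-\tfrac12\gamma|x-a|^2$ on $\oball{a}{r}$. Feeding this lower bound into the Poincar\'e estimate of Step 1 (the affine minimiser $P_s$ then differs from $L$ by at most $C\gamma s$ in the relevant norm, by the same mean-value argument used in \ref{lemma:touching-appendix}) yields $|DP_s(0)-DQ(a)|\leq C\gamma'$ uniformly, hence in the limit $Du(a)=\lim DP_s(0)=DQ(a)$. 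Thus $P$ as defined in the statement equals $\lim_{s\to0+}(\text{affine part of }P_s)$ up to error controlled by $\gamma'$, and in particular $\norm{u-P}{1}{a,s}\leq C\gamma' s^{\vdim+1}$ for all $0<s\leq r$.

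\textbf{Step 3: upgrade $\mathbf{L}_1$ control to $\mathbf{L}_\infty$ and to $\weakD$ in $\mathbf{L}_\vdim$.} With $v=u-P$ we now know $v\in\mathbf{W}^{2,\vdim}(\oball{a}{r})$, $\weakD^2v=\weakD^2u$, and $r^{-\vdim}\norm{v}{1}{a,r}\leq C\gamma' r$. Applying \ref{lemma:simple_interpolation} with $k=2$ to $v$ (after mollification and passage to the limit, to legitimise the $C^k$ hypothesis) gives
\begin{gather*}
	\norm{v}{\infty}{a,r}+r\norm{\weakD v}{\infty}{a,r}+r^2\norm{\weakD^2v}{\infty}{a,r}\leq\Gamma_2\big(r^2\norm{\weakD^2v}{\infty}{a,r}+r^{-\vdim}\norm{v}{1}{a,r}\big),
\end{gather*}
which is \emph{not} quite enough because $\norm{\weakD^2v}{\infty}{a,r}$ need not be finite. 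The fix is the standard one: apply \ref{lemma:simple_interpolation} on the balls $\oball{a}{r/2}$ (where by mollification one has genuine $C^2$ functions) in a dyadic/covering argument, or more cleanly invoke the $\mathbf{W}^{2,\vdim}\hookrightarrow C^{0,\alpha}$--type scaling estimate: $r^{-1}\norm{v}{\infty}{a,r}\leq C\big(r^{-\vdim}\norm{\weakD v}{\vdim}{a,r}+r^{-1-\vdim}\norm{v}{1}{a,r}\big)$ combined with the $\mathbf{L}_\vdim$ Poincar\'e inequality $r^{-1}\norm{\weakD v}{\vdim}{a,r}\leq C\norm{\weakD^2v}{\vdim}{a,r}$ applied after subtracting the mean of $\weakD v$, and then controlling that mean by $C\gamma'$ via Step 2. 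Combining these and recalling $\gamma'=\Gamma_1(\tfint{}{}|\weakD^2u|^\vdim)^{1/\vdim}\leq\Gamma_1(\tfint{}{}(\gamma+|\weakD^2u|)^\vdim)^{1/\vdim}$ gives exactly the asserted bound on $r^{-1}\norm{u-P}{\infty}{a,r}+r^{-1}\norm{\weakD(u-P)}{\vdim}{a,r}$.

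\textbf{Main obstacle.} The conceptual content is light; the delicate point is the bookkeeping in Step 2 and the legitimacy of applying the $C^k$ interpolation lemma \ref{lemma:simple_interpolation} to a merely $\mathbf{W}^{2,\vdim}$ function. I expect the real work to be (a) making the comparison between the Poincar\'e-optimal affine $P_s$ and the candidate $P$ genuinely uniform in $s$ down to $s\to0$ using only the one-sided inequality $u-L\geq-\tfrac12\gamma|x-a|^2$ (the $\mathbf{L}_1$ oscillation of a function bounded below by a controlled quantity is controlled by its average plus that quantity — the argument already embedded in \ref{lemma:touching-appendix}), and (b) the mollification limiting argument, where one mollifies $u-P$ on slightly smaller balls, applies the smooth estimate, and lets the mollification parameter tend to $0$, using lower semicontinuity of the $\mathbf{L}_\vdim$ norm of $\weakD^2$ and uniform convergence of the mollifications to the (continuous) function $u-P$. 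Neither step is hard, but together they are the place where care is needed.
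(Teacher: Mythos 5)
Your overall strategy — Sobolev--Poincar\'e affine approximants plus the touching lemma \ref{lemma:touching-appendix} — is the right one and is what the paper does, but the order in which you apply the two ingredients creates a genuine gap, and the $\mathbf{L}_1$-based variant you chose makes Step~3 needlessly delicate.

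The gap is in Step~1. Lemma~\ref{lemma:touching-appendix} has $u\geq 0$ as a hypothesis, and this hypothesis is essential: the bound on $|DP_s|$ in its proof comes from the identity $s\,|DP_s(a)|=\Delta_2 s^{-\vdim}\int_{\oball as}\sup\{0,P_s(a)-P_s(x)\}\ud\mathscr L^\vdim x$, and the subsequent passage to $|P_s(a)+(u-P_s)(x)|$ uses $u\geq 0$ (so that $P_s(a)-P_s(x)\leq P_s(a)+(u-P_s)(x)$). At the point in your Step~1 where you ``invoke \ref{lemma:touching-appendix}'', no reduction to $u\geq 0$ has been made, so the conclusion $|P_s(a)|/s+|DP_s(0)|\leq\Gamma\gamma'$ is not available. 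Separately, the assertion that $\lim_{s\to0+}DP_s(0)=Du(a)$ ``by the usual Lebesgue-point characterisation'' is circular here: $Du(a)=DQ(a)$ is exactly the first assertion of the lemma, and a merely $\mathbf W^{2,\vdim}$ function need not be classically differentiable at the prescribed centre $a$, so the existence of $Du(a)$ cannot be presupposed.

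The paper avoids both problems by doing the reduction that your Step~2 is circling around, but doing it first and completely: replace $u(x)$ by $u(x)-P(x)+\tfrac12\gamma|x-a|^2$ (the sign of the quadratic correction must be chosen so that the hypothesis $\left<v^2,D^2Q(a)\right>\geq-\gamma|v|^2$ makes the new function nonnegative; the printed $-$ in the paper is a sign typo). After this one substitution one has $u\geq 0$, $u(a)=0$, $Q=0$, $\gamma=0$, and $\weakD^2$ changes by an additive $\gamma$-term that is absorbed into the right-hand side. Moreover the Sobolev--Poincar\'e inequality one should use is the $\mathbf L_\infty$-scaled form $s^{-1}\norm{u-P_s}{\infty}{a,s}+s^{-1}\norm{\weakD(u-P_s)}{\vdim}{a,s}\leq\Delta_1\norm{\weakD^2u}{\vdim}{a,s}$ (legitimate since $\mathbf W^{2,\vdim}$ embeds continuously in $\mathbf L_\infty$ on balls), not the $\mathbf L_1$ form: this renders your whole Step~3 unnecessary, because the triangle inequality $\norm{u}{\infty}{a,r}\leq\norm{u-P_r}{\infty}{a,r}+\norm{P_r}{\infty}{a,r}$ combined with \ref{lemma:touching-appendix} (now applicable since $u\geq 0$ and $P_s(a)\to u(a)=0$) gives the estimate directly, and $Du(a)=DQ(a)$ then follows from the estimate applied on balls of radius $s\to0+$, using continuity of $u$ and absolute continuity of $\int|\weakD^2u|^\vdim$. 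Incidentally, the scaling estimate you write in Step~3, $r^{-1}\norm v\infty{a,r}\leq C\big(r^{-\vdim}\norm{\weakD v}\vdim{a,r}+r^{-1-\vdim}\norm v1{a,r}\big)$, is dimensionally inconsistent for $\vdim>1$ and would need to be replaced by a correctly scaled Morrey-type inequality; this is a further reason to prefer the $\mathbf L_\infty$ Sobolev--Poincar\'e route.
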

\begin{proof}
	Possibly replacing $u(x)$ by $u(x)-P(x)-\gamma|x-a|^2/2$, one
	can assume $\gamma = 0$ and $Q = 0$. Note also that the first part of
	the conclusion follows from the second part as $u : \oball{a}{r} \to
	\rel$ is continuous.
	
	Twice applying Sobolev Poincar\'e's inequality, one obtains affine
	functions $P_s : \rel^\vdim \to \rel$ for $0 < s \leq r$ such that
	\begin{gather*}
		s^{-1} \norm{u-P_s}{\infty}{a,s} + s^{-1}
		\norm{\weakD(u-P_s)}{\vdim}{a,s} \leq \Delta_1 \norm{\weakD^2
		u}{\vdim}{a,s}
	\end{gather*}
	where $\Delta_1$ is a positive, finite number depending only on
	$\vdim$. Hence by \ref{lemma:touching-appendix}
	\begin{gather*}
		| P_r(0)|/r + |DP_r(0)| \leq \Delta_2 \norm{\weakD^2
		u}{\vdim}{a,r}
	\end{gather*}
	where $\Delta_2 = \Delta_1 \unitmeasure{\vdim}
	\Gamma_{\ref{lemma:touching-appendix}} ( \vdim )$ and one may take
	$\Gamma = \Delta_1 + (1+\unitmeasure{\vdim}^{1/\vdim}) \Delta_2$ in
	the remaining assertion.
\end{proof}
\begin{theorem}
	Suppose $\vdim \in \nat$, $U$ is an open subset of $\rel^\vdim$, $u
	\in \Sob{}{2}{\vdim} ( U )$ with $\weakD^0 u = u$, $A$ is the set of
	all $a \in U$ such that for some polynomial function $Q : \rel^\vdim
	\to \rel$ of degree at most $2$ there holds $u - Q \geq 0$ and $u (a)
	= Q(a)$.

	Then $A \subset \dmn Du$ and
	\begin{gather*}
		\mathscr{H}^\vdim ( Du \lIm E \rIm ) = 0 \quad \text{whenever
		$E\subset A$ with $\mathscr{L}^\vdim ( E ) = 0$}.
	\end{gather*}
\end{theorem}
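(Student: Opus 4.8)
The plan is to combine the pointwise estimate of Lemma~\ref{lemma:oscillation_bound} with a covering argument, exactly as in the proof of \ref{lemma:zero_sets_to_zero_sets} and \ref{thm:phantom_estimate}. First I would record that $A \subset \dmn Du$: if $a \in A$ with $u - Q \geq 0$ and $u(a) = Q(a)$ for a polynomial $Q$ of degree at most $2$, then Lemma~\ref{lemma:oscillation_bound} (applied on a small ball $\oball{a}{r} \subset U$ with $\gamma$ any bound for $-D^2Q(a)$ on the unit sphere, which is finite since $D^2Q(a)$ is a fixed symmetric form) gives $Du(a) = DQ(a)$; in particular $a \in \dmn Du$ and, since $u$ is continuous on balls, $u$ is in fact differentiable at $a$ in the classical sense with differential $DQ(a)$.

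Next I would stratify $A$ so that on each piece the curvature bound $\gamma$ is uniform. Write $A = \bigcup_{j=1}^\infty A_j$ where $A_j$ is the set of $a \in A$ admitting a polynomial $Q$ as above with $\langle v^2, D^2 Q(a)\rangle \geq -j|v|^2$ for $v \in \rel^\vdim$ and with $\oball{a}{1/j} \subset U$; it suffices to prove $\mathscr{H}^\vdim(Du\lIm E_j\rIm) = 0$ whenever $E_j \subset A_j$ with $\mathscr{L}^\vdim(E_j) = 0$, since $Du\lIm E\rIm \subset \bigcup_j Du\lIm E \cap A_j\rIm$. Fix $j$ and $\varepsilon > 0$; choose an open set $Z$ with $E_j \subset Z$, $\Clos Z$ contained in (a fixed open subset whose closure is a compact subset of) $U$, and $\int_Z (j + |\weakD^2 u|)^\vdim \ud \mathscr{L}^\vdim \leq \varepsilon$. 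The key point is that for $a \in A_j$ and $0 < r < 1/j$ with $\cball{a}{r} \subset Z$, Lemma~\ref{lemma:oscillation_bound} applied at $a$ with $\gamma = j$ yields, writing $P(x) = u(a) + \langle x-a, Du(a)\rangle$,
\begin{gather*}
	r^{-1} \norm{u - P}{\infty}{a,r} + r^{-1} \norm{\weakD(u-P)}{\vdim}{a,r} \leq \Gamma_{\ref{lemma:oscillation_bound}}(\vdim) \Big( \tint{\oball{a}{r}}{} (j + |\weakD^2 u(x)|)^\vdim \ud \mathscr{L}^\vdim x \Big)^{1/\vdim}.
\end{gather*}
In particular $\diam ( Du\lIm A_j \cap \cball{a}{r}\rIm ) \leq 2\Gamma_{\ref{lemma:oscillation_bound}}(\vdim) \big( r^{-\vdim}\int_{\oball{a}{r}} (j+|\weakD^2 u|)^\vdim \ud \mathscr{L}^\vdim\big)^{1/\vdim}$, because for $b \in A_j \cap \cball{a}{r}$ one has $Du(b) = DP_b$ for the corresponding affine $P_b$, and two such affine functions that are both close to $u$ in $\Lp{1}(\oball{a}{r})$ have close gradients by \ref{lemma:simple_interpolation} (or \ref{lemma:touching-appendix}). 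This bounds the $\mathscr{H}^\vdim$ size-$\infty$ premeasure of $Du\lIm A_j \cap \cball{a}{r}\rIm$ by $\unitmeasure{\vdim} (2\Gamma_{\ref{lemma:oscillation_bound}}(\vdim))^\vdim r^{-\vdim}\int_{\oball{a}{r}}(j+|\weakD^2u|)^\vdim\ud\mathscr{L}^\vdim$.

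Finally I would run the Vitali-type covering: the family $F$ of closed balls $\cball{a}{r}$ with $a \in E_j$, $0 < r < 1/j$, $\cball{a}{5r} \subset Z$, and such that the displayed size-$\infty$ estimate holds with $\cball{a}{5r}$ on the left and $\cball{a}{r}$ on the right covers $E_j$ (by the previous paragraph applied with radius $5r$, shrinking $r$), and by \cite[2.8.5]{MR41:1976} there is a countable disjointed subfamily $G$ with $\bigcup F \subset \bigcup_{S \in G} \widehat{S}$, $\widehat{S}$ the concentric ball of five times the radius. Then the size-$\infty$ premeasure of $Du\lIm E_j\rIm$ is at most $\sum_{S \in G} \unitmeasure{\vdim}(2\Gamma_{\ref{lemma:oscillation_bound}}(\vdim))^\vdim \int_S (j + |\weakD^2 u|)^\vdim \ud \mathscr{L}^\vdim \leq \unitmeasure{\vdim}(2\Gamma_{\ref{lemma:oscillation_bound}}(\vdim))^\vdim \varepsilon$ by disjointness, and letting $\varepsilon \to 0$ gives $\mathscr{H}^\vdim(Du\lIm E_j\rIm) = 0$. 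The only mild subtlety, and the step I would be most careful about, is the passage from the integral oscillation bound of \ref{lemma:oscillation_bound} to a genuine diameter bound on $Du\lIm A_j\cap\cball{a}{r}\rIm$: one must check that each $b \in A_j\cap\cball{a}{r}$ actually lies in $\dmn Du$ with $Du(b)$ the gradient of the comparison affine function, and that this affine function is $\Lp1$-close to $u$ on $\oball{a}{r}$ (not merely on a tiny ball around $b$) so that \ref{lemma:simple_interpolation} can be applied on the common ball $\oball{a}{r}$ — this is where one uses $u - Q_b \geq 0$, $u(b) = Q_b(b)$ together with the uniform curvature bound $j$ to control $\norm{u - P_b}{1}{a,r}$ in terms of $r^{\vdim+1}$ times the same curvature integral.
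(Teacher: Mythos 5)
Your overall plan matches the paper's (oscillation bound at a touching point, stratification by a uniform curvature bound, a diameter estimate for the image of the gradient on balls, a covering argument), but three of the steps as written do not go through.

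First, the diameter estimate is dimensionally off. You claim
\[
\diam Du \lIm A_j \cap \cball{a}{r} \rIm \leq 2\Gamma_{\ref{lemma:oscillation_bound}}(\vdim)\Big( r^{-\vdim}\tint{\oball{a}{r}}{}(j+|\weakD^2 u|)^\vdim\ud\mathscr{L}^\vdim\Big)^{1/\vdim},
\]
but $\Lemma~\ref{lemma:oscillation_bound}$ produces $r^{-1}\norm{\weakD(u-P)}{\vdim}{a,r} \leq \Gamma\norm{f_j}{1}{a,r}^{1/\vdim}$, so after multiplying by $r$ and comparing two affine derivatives via $\ref{lemma:simple_interpolation}$ the correct scaling is $\diam Du\lIm A_j\cap\cball{a}{r}\rIm \lesssim \norm{f_j}{1}{a,3r}^{1/\vdim}$ with no $r^{-\vdim}$ inside: the integral alone already has the units of (gradient)$^\vdim$. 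Your extra $r^{-\vdim}$ makes the right-hand side scale like a second derivative, not a gradient, and it kills the covering sum below because the terms do not telescope.

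Second, the derivation of the diameter estimate does not close as described. Lemma~\ref{lemma:oscillation_bound} gives an estimate on a ball centred at the touching point; applying it at $a$ on $\oball{a}{r}$ controls $u-P_a$ on $\oball{a}{r}$ but says nothing about $u-P_b$ on $\oball{a}{r}$. You notice this and sketch an attempt to control $\norm{u-P_b}{1}{a,r}$ directly, but that requires propagating the touching-point estimate at $b$ across a ball centred somewhere else. The clean way, which is what the paper does, is: for $b,c\in A_i\cap\cball{a}{r}$ put $s=|b-c|/2$ and $d=(b+c)/2$, apply Lemma~\ref{lemma:oscillation_bound} at $b$ on $\oball{b}{2s}$ and at $c$ on $\oball{c}{2s}$ (both contained in $\oball{a}{3r}$), observe $\cball{d}{s}\subset\cball{b}{2s}\cap\cball{c}{2s}$, and then estimate $|Du(b)-Du(c)|=|D(P_b-P_c)(d)|=\unitmeasure{\vdim}^{-1}s^{-\vdim}\norm{D(P_b-P_c)}{1}{d,s}$. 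This is where the factor $3r$ on the right-hand side comes from, and it is essential.

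Third, once the diameter estimate carries a dilation (it must: integral over $\oball{a}{3r}$ controlling the oscillation on $\cball{a}{r}$), your family $F$ with the estimate ``$\cball{a}{5r}$ on the left, $\cball{a}{r}$ on the right'' cannot be produced, because the estimate runs in the opposite direction — the integral over the smaller ball does not control the oscillation on the larger one. Plain Vitali then fails: the enlarged balls $\widehat{S}$ over which you need to integrate overlap, and disjointness of the small balls does not bound that overlap. The paper resolves this by taking radii of the form $h(s)$ adapted to the open set $V$, applying Vitali to obtain $\{\cball{s}{h(s)}\}$ disjointed with $\{\cball{s}{5h(s)}\}$ covering $E$, and then invoking the bounded-overlap estimate of \cite[3.1.12]{MR41:1976} for the balls $\cball{s}{15h(s)}$: $\card(\classification{S}{s}{\cball{s}{15h(s)}\cap\cball{x}{15h(x)}\neq\varnothing})\leq (189)^\vdim$. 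With this the sum $\sum_{s\in S}(\diam Du\lIm A_i\cap\cball{s}{5h(s)}\rIm)^\vdim \leq \Delta^\vdim\sum_{s\in S}\norm{f_i}{1}{s,15h(s)}\leq(189\Delta)^\vdim\varepsilon$ closes. Replacing your Vitali step by this bounded-overlap covering, and fixing the two issues above, gives the proof.
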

\begin{proof}
	The first part of the conclusion follows from
	\ref{lemma:oscillation_bound}.

	Define $A_i$ for $i \in \nat$ to be set of all $a \in A$ such that for
	some polynomial function $Q : \rel^\vdim \to \rel$ of degree at most
	$2$
	\begin{gather*}
		u-Q \geq 0, \qquad u(a) = Q (a), \qquad \left < v^2, D^2 Q (a)
		\right > \geq - i | v |^2 \quad \text{for $v \in \rel^\vdim$}
	\end{gather*}
	and note $A = \bigcup \{ A_i \with i \in \nat \}$. Moreover, define
	for $a \in A$ the $1$ jet $P_a : \rel^\vdim \to \rel$ of $u$ at $a$ by
	$P_a (x) = u(a) + \left < x-a, Du (a) \right >$ for $x \in \rel^\vdim$
	and $f_i (x) = (i + | \weakD^2 u (x) |)^\vdim$ whenever $i \in \nat$ and
	$x \in \dmn \weakD^2u$.

	Next, it will be shown with $\Delta = 2^{\vdim+1}
	\unitmeasure{\vdim}^{-1/\vdim} \Gamma_{\ref{lemma:oscillation_bound}} (
	\vdim )$
	\begin{gather*}
		\diam Du \lIm A_i \cap \cball{a}{r} \rIm \leq \Delta
		\norm{f_i}{1}{a,3r}^{1/\vdim}
	\end{gather*}
	whenever $i \in \nat$, $a \in A_i$, $0 < r < \infty$ and 
	$\cball{a}{3r} \subset U$. For this purpose let
	$b,c \in A_i \cap \cball{a}{r}$ with $b \neq c$ and note with
	$s = | b-c |/2$ and $d = \frac{1}{2} (b+c)$
	\begin{gather*}
		\cball{d}{s} \subset \cball{b}{2s} \cap \cball{c}{2s}, \quad
		\cball{b}{2s} \cup \cball{c}{2s} \subset \cball{a}{3r}.
	\end{gather*}
	From \ref{lemma:oscillation_bound} one infers
	\begin{gather*}
		\begin{aligned}
			& s^{-\vdim} \sup \{ \norm{\weakD (u-P_b)}{1}{b,2s},
			\norm{\weakD (u-P_c)}{1}{c,2s} \} \\
			& \qquad \leq 2^{\vdim}
			\unitmeasure{\vdim}^{1-1/\vdim}
			\Gamma_{\ref{lemma:oscillation_bound}} ( \vdim )
			\norm{f_i}{1}{a,3r}^{1/\vdim},
		\end{aligned}
	\end{gather*}
	hence
	\begin{gather*}
		\begin{aligned}
			|Du(b)-Du(c)| & = | D(P_b-P_c)(d) | \\
			& = \unitmeasure{\vdim}^{-1} s^{-\vdim}
			\norm{D(P_b-P_c)}{1}{d,s} \leq \Delta
			\norm{f_i}{1}{a,3r}^{1/\vdim}.
		\end{aligned}
	\end{gather*}

	Now, suppose $E \subset A$ with $\mathscr{L}^\vdim ( E ) = 0$ and
	$\varepsilon > 0$. Assume $E \subset A_i$ for some $i \in \nat$ and
	choose an open subset $V$ of $\rel^\vdim$ such that $E \subset V
	\subset U$ and $\int_V f_i \ud \mathscr{L}^\vdim \leq
	\varepsilon$. Define $h : V \to \rel$ by
	\begin{gather*}
		h(x) = {\textstyle\frac{1}{30}} \inf \{ 1, \dist (x,
		\rel^\vdim \without V ) \} \quad \text{for $x \in V$}.
	\end{gather*}
	Using \cite[2.8.5]{MR41:1976}, one obtains a countable subset $S$ of
	$E$ such that
	\begin{gather*}
		E \subset {\textstyle\bigcup} \{ \cball{s}{5h(s)} \with s \in
		S \}, \quad \text{$\{ \cball{s}{h(s)} \with s \in S \}$ is
		disjointed}.
	\end{gather*}
	From \cite[3.1.12]{MR41:1976} one infers
	\begin{gather*}
		\card ( \classification{S}{s}{\cball{s}{15h(s)} \cap
		\cball{x}{15h(x)} \neq \emptyset} ) \leq (189)^\vdim \quad
		\text{for $x \in V$}.
	\end{gather*}
	This yields, together with the assertion of the preceding paragraph,
	that
	\begin{gather*}
		Du \lIm E \rIm \subset {\textstyle\bigcup} \{
		Du \lIm A_i \cap \cball{s}{5h(s)} \rIm \with s \in S \}, \\
		\tsum{s \in S}{} ( \diam Du \lIm A_i \cap \cball{s}{5h(s)} \rIm
		)^\vdim \leq \Delta^\vdim \tsum{s \in S}{}
		\norm{f_i}{1}{s,15h(s)} \leq (189\Delta)^\vdim \varepsilon
	\end{gather*}
	since $\cball{s}{15h(s)} \subset V$ for $s \in S$, and the conclusion
	follows.
\end{proof}
\begin{remark}
	The diameter estimate is termed a ``Rado-Reichelderfer'' condition in
	Mal\'y \cite[\S 3]{MR1669167} and Hencl \cite[\S 5]{MR1924813}. It is
	employed analogously to Mal\'y \cite[Theorem 3.4]{MR1669167} and
	Hencl \cite[Theorems 5.1 and 3.5]{MR1924813}.
\end{remark}

\paragraph{Acknowledgements} The author would like to thank Prof.~Dr.~Gerhard
Huisken for suggesting the investigation of the validity of Theorem
\ref{thm:lowerbound_hm} which was the intitial motivation for this paper.
Moreover, the author would like to thank Dr.~S{\l}awomir Kolasi{\'n}ski and
Dr.~Leobardo Rosales for several conversations concerning Part
\ref{part:weakly} of this paper.

\newpage

{\small \noindent Max Planck Institut for Gravitational Physics (Albert
Einstein Institute) \newline Am M{\"u}hlen\-berg 1, 14476 Potsdam, Germany
\newline \texttt{Ulrich.Menne@aei.mpg.de} \medskip \newline University of
Potsdam, Institute for Mathematics, \newline Am Neuen Palais 10, 14469
Potsdam, Germany \newline \texttt{Ulrich.Menne@math.uni-potsdam.de} }

\addcontentsline{toc}{section}{\numberline{}References}
\bibliographystyle{halpha}
\newcommand{\etalchar}[1]{$^{#1}$}
\newcommand{\noopsort}[1]{} \newcommand{\singleletter}[1]{#1} \def\cprime{$'$}
  \def\cprime{$'$}

\end{document}